
\documentclass[hidelinks,11pt]{report}
\usepackage[utf8]{inputenc}

\usepackage{geometry} 
\geometry{a4paper}
\usepackage{graphicx} 

\usepackage{booktabs} 
\usepackage{array} 
\usepackage{paralist} 
\usepackage{verbatim} 
\usepackage{subfig} 
\usepackage{hyperref}
\usepackage{enumitem}
\setlist{noitemsep}
\usepackage{amsmath}
\usepackage{amssymb}
\usepackage{authblk}
\usepackage{bbm}
\usepackage{mathtools}
\usepackage{textcomp}
\usepackage{xcolor}
\usepackage{listings}
\usepackage{float}
\usepackage{centernot}
\usepackage{bm}
\usepackage[mathscr]{euscript}
\usepackage{tikz-cd}
\usepackage{tabularx}
\newcolumntype{Z}{>{\centering\let\newline\\\arraybackslash\hspace{0pt}}X}
\usepackage{afterpage}
\newcommand\blankpage{%
    \null
    \thispagestyle{empty}%
    \addtocounter{page}{-1}%
    \newpage}

\newcommand{\rfrak}{\mathfrak{r}}
\newcommand{\Rfrak}{\mathfrak{R}}
\newcommand{\Rfrakk}{\underline{\Rfrak}}

\newcommand{\Bcal}{\mathcal{B}}
\newcommand{\Ccal}{\mathcal{C}}
\newcommand{\Dcal}{\mathcal{D}}
\newcommand{\Ecal}{\mathcal{E}}
\newcommand{\Fcal}{\mathcal{F}}
\newcommand{\Gcal}{\mathcal{G}}

\newcommand{\Ical}{\mathcal{I}}
\newcommand{\Lcal}{\mathcal{L}}
\newcommand{\Mcal}{\mathcal{M}}
\newcommand{\Ocal}{\mathcal{O}}
\newcommand{\Pcal}{\mathcal{P}}
\newcommand{\Rcal}{\mathcal{R}}
\newcommand{\Scal}{\mathcal{S}}
\newcommand{\Tcal}{\mathcal{T}}
\newcommand{\Ucal}{\mathcal{U}}
\newcommand{\Vcal}{\mathcal{V}}

\newcommand{\Bb}{\mathbb{B}}

\newcommand{\Dbb}{\mathbb{D}}

\newcommand{\Fbb}{\mathbb{F}}
\newcommand{\Ibb}{\mathbb{I}}
\newcommand{\Jbb}{\mathbb{J}}
\newcommand{\Mbb}{\mathbb{M}}
\newcommand{\Nbb}{\mathbb{N}}
\newcommand{\Obb}{\mathbb{O}}
\newcommand{\Qbb}{\mathbb{Q}}
\newcommand{\Rbb}{\mathbb{R}}
\newcommand{\Tbb}{\mathbb{T}}

\newcommand{\Zbb}{\mathbb{Z}}
\newcommand{\yon}{\mathbf{y}}
\newcommand{\Set}{\mathbf{Set}}
\newcommand{\FinSet}{\mathbf{FinSet}}
\newcommand{\Pos}{\mathbf{Pos}}
\newcommand{\Mon}{\mathbf{Mon}}
\newcommand{\Grp}{\mathbf{Grp}}

\newcommand{\Top}{\mathbf{Top}}

\newcommand{\Arch}{\mathbf{Arch}}
\newcommand{\mArch}{\mathbf{mArch}}
\newcommand{\mSpan}{\mathbf{mSpan}}
\newcommand{\Hom}{\mathrm{Hom}}
\newcommand{\HOM}{\mathcal{H}\! \mathit{om}}

\newcommand{\Nat}{\mathrm{Nat}}
\newcommand{\Cat}{\mathbf{Cat}}

\newcommand{\TOP}{\mathfrak{TOP}}
\newcommand{\op}{^{\mathrm{op}}}
\newcommand{\co}{^{\mathrm{co}}}

\newcommand{\lr}[2]{( #1, #2 )}
\newcommand{\fllr}[2]{[ #1, #2 )}

\newcommand{\too}{\twoheadrightarrow}

\mathchardef\mhyphen="2D

\newcommand{\name}[1]{\ulcorner{#1}\urcorner}
\newcommand{\Fraisse}{Fra\"{i}ss\'{e} }
\DeclareMathOperator{\im}{im}
\DeclareMathOperator{\ob}{ob}

\DeclareMathOperator{\id}{id}

\DeclareMathOperator{\Sh}{Sh}

\DeclareMathOperator{\Geom}{Geom}
\DeclareMathOperator{\EssGeom}{EssGeom}

\DeclareMathOperator{\Ind}{Ind-}
\DeclareMathOperator{\Cont}{Cont}
\DeclareMathOperator{\Aut}{Aut}
\DeclareMathOperator{\End}{End}
\DeclareMathOperator{\Flat}{Flat}
\DeclareMathOperator{\Site}{Site}

\DeclareMathOperator{\Sub}{Sub}

\DeclareMathOperator{\Span}{Span}
\DeclareMathOperator{\Fix}{Fix}
\DeclareMathOperator{\colim}{colim}

\tikzset{
  no line/.style={draw=none,
    commutative diagrams/every label/.append style={/tikz/auto=false}},
  from/.style args={#1 to #2}{to path={(#1)--(#2)\tikztonodes}}
	}
\tikzset{symbol/.style={draw=none, every to/.append style={edge node = {node [sloped, allow upside down, auto=false] {$#1$}}}}}
\newcommand{\Setswith}[1]{\mathrm{PSh}(#1)}

\usepackage{amsthm}
\newtheorem{thm}{Theorem}[section]
\newtheorem{theorem}[thm]{Theorem}
\newtheorem{proposition}[thm]{Proposition}
\newtheorem{prop}[thm]{Proposition}
\newtheorem{lemma}[thm]{Lemma}
\newtheorem{fact}[thm]{Fact}
\newtheorem{corollary}[thm]{Corollary}
\newtheorem{crly}[thm]{Corollary}
\newtheorem{scholium}[thm]{Scholium}
\newtheorem{schl}[thm]{Scholium}
\newtheorem{conj}[thm]{Conjecture}
\theoremstyle{definition}
\newtheorem{definition}[thm]{Definition}
\newtheorem{dfn}[thm]{Definition}
\newtheorem{example}[thm]{Example}
\newtheorem{xmpl}[thm]{Example}
\theoremstyle{remark}
\newtheorem{rmk}[thm]{Remark}
\newtheorem{remark}[thm]{Remark}

\usepackage{fancyhdr} 
\pagestyle{fancy} 
\lhead{}\chead{}\rhead{}
\lfoot{}\cfoot{\thepage}\rfoot{}

\usepackage[nottoc,notlof,notlot]{tocbibind} 
\usepackage[titles,subfigure]{tocloft} 




\begin{document}

\pagenumbering{gobble}

\begin{titlepage}
   \begin{center}
       \vspace*{1cm}

       \Huge
       \textbf{Toposes of Monoid Actions}

       \vspace{0.5cm}

       \LARGE
       From discrete to topological monoids, through a lens of geometry, logic and category theory

       \vspace{1.2cm}

       \textbf{Morgan Rogers}

       \vspace{0.2cm}

       \normalsize
       Marie Sklodowska-Curie fellow of the Istituto Nazionale di Alta Matematica\\
       \vspace{0.2cm}
       \textbf{Thesis advisor: Professor Olivia Caramello}
            
       \vspace{1.1cm}
       
       \includegraphics[width=0.5\textwidth]{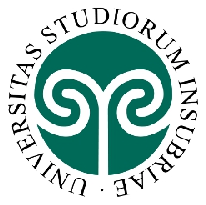}

       \vfill
       \large
       A thesis presented for the degree of\\
       Doctor of Philosophy in Computer Science and Mathematics of Computation

       \vspace{0.8cm}
            
       Dipartimento di Scienza e Alta Tecnologia\\
       Universit\`a degli Studi dell{'}Insubria\\
       Italia

       \vspace{0.2cm}

       October 2021
       
       \afterpage{\blankpage}
            
   \end{center}
\end{titlepage}

\vspace*{7cm}

\small
\begin{center}
\textbf{Abstract}

We study toposes of actions of monoids on sets. We begin with ordinary actions, producing a class of presheaf toposes which we characterize. As groundwork for considering topological monoids, we branch out into a study of supercompactly generated toposes (a class strictly larger than presheaf toposes). This enables us to efficiently study and characterize toposes of continuous actions of topological monoids on sets, where the latter are viewed as discrete spaces. Finally, we refine this characterization into necessary and sufficient conditions for a supercompactly generated topos to be equivalent to a topos of this form.
\end{center}

\normalsize

\afterpage{\blankpage}

\tableofcontents

\newpage

\pagenumbering{arabic}

\chapter*{Introduction}
\addcontentsline{toc}{chapter}{Introduction}

The most easily described examples of Grothendieck toposes are presheaf toposes: categories of the form $[\Ccal\op,\Set]$, whose objects are contravariant functors from a small category $\Ccal$ to the category of sets and whose morphisms are natural transformations between these. The initial motivation for this thesis was asking the question, `What does such a topos look like for the simplest choices of $\Ccal$?' In particular, what happens when $\Ccal$ is a preorder or a monoid?

It turns out that the preorder case is already very well understood: only a little theory is needed to verify that the topos of presheaves on a preorder is equivalent to the category of sheaves on the corresponding \textit{Alexandroff space}, and so is a special case of a localic topos. Localic toposes are extensively studied due to their direct connection with topology and geometry; this case is how sheaf toposes are motivated in \cite[Chapter II]{MLM}, for example, and most of the properties of toposes and geometric morphisms described in \cite[Part C]{Ele} are derived from this special case. As such, there are a variety of tools readily available to analyze toposes of presheaves on preorders from a geometric point of view.

On the other hand, while toposes of the form $\Setswith{M}$ feature as illustrative examples in introductory texts such as \cite{MLM} and \cite{TTT}, if one looks to the most comprehensive topos theory references to date, notably P.T. Johnstone's work \cite{Ele}, there is no systematic treatment of presheaves on monoids which parallels the one for presheaves on preorders. Chapter \ref{chap:TDMA} of this thesis addresses this disparity, mostly by recovering results from semigroup theory literature, and arrives at a characterization of such presheaf toposes. We take this opportunity to examine the extent to which monoids can be recovered from their toposes of right actions and the geometric morphisms induced by monoid (and semigroup) homomorphisms and biactions of monoids.

With the characterization in hand, we put the class of toposes of monoid actions to work as a conduit between topos theory and semigroup theory. More precisely, in Chapter \ref{chap:mpatti} we systematically investigate the correspondence between algebraic properties of a monoid $M$ and categorical properties of its topos of actions $\Setswith{M}$. This conduit carries insight in both directions, however: there are coincidences of properties which occur in localic toposes which do not occur for toposes of monoid actions, so this investigation yields some counterexamples to generalizations which one might consider making from the localic case.

Both in \cite[Chapter III.9]{MLM} and as a recurring example through \cite{Ele}, continuous actions of topological groups on sets (viewed as discrete spaces) are investigated. Given our success up to this point, a natural next step in the investigation of toposes of monoid actions would be to investigate continuous actions of topological monoids. This is indeed one of our major goals, but we put off the exposition of it until Chapter \ref{chap:TTMA}. The reason for this is that the arguments needed to characterize the properties of categories of topological monoid actions and extract canonical sites for them can be developed and applied in the much more general setting of the class of \textit{supercompactly generated} Grothendieck toposes. Seeking greater depth of understanding of these toposes and their relatives is the subject of Chapter \ref{chap:sgt}.

In order to set the stage for the final chapter, we give a brief exposition of the well-known theory of classifying toposes for geometric theories in Chapter \ref{chap:logic}. We ground this exposition in the goal of gaining a basic characterization of theories classified by supercompactly generated toposes, and we obtain two variants of such a characterization. We also examine how examples of such theories can be derived from known theories of presheaf type.

In Chapter \ref{chap:TTMA}, we reach the anticipated study of toposes of topological monoid actions. We start by characterizing continuity in terms of both `necessary clopens' which are comparable to the isotropy subgroups in the topological group case, and open congruences. We subsequently show that the continuous actions of a monoid are coreflective in the category of all actions (of the underlying discrete monoid), which is enough to show that this category forms a topos, which we ultimately characterize in terms of this coreflection, which is the inverse image part of a hyperconnected morphism. In the process, we pursue natural questions about the relationship between topological monoids and their toposes of actions, including the extent to which a monoid can be recovered from its topos of continuous actions. We ultimately arrive at classes of `powder monoids' having the right topological properties to present such toposes and then the class of `complete monoids' which are the canonical representatives of these toposes. We also examine how continuous homomorphisms between topological monoids induce geometric morphisms between the corresponding toposes, and how these interact with some of the factorization systems on geometric morphisms.

Our final goal continues in the vein of extending the theory for groups. Toposes associated to groups arise in topos-theoretic treatments of Galois theory such as \cite{TGT} and \cite{LGT}, and a selection of such results have been extended to the more general context of toposes associated to monoids. Notably, \cite{SGC} studies the actions of pro-finite topological monoids. In Chapter \ref{chap:TSGT}, we refine the characterization of toposes of topological monoid actions to the point of being able to construct some non-trivial examples of sites whose toposes of sheaves have this form. We then use the approach taken by Caramello in \cite{TGT}: by considering the supercompact objects (as a generalization of atoms in the group case) on either side of the resulting equivalence, we recover in the first instance a characterization of geometric theories whose classifying toposes are equivalent to toposes of actions of topological monoids, and in the second instance an analogue of Galois theory in the form of an anafunctor from a category underlying a suitable site to a category of \textit{open right congruences} on a topological monoid presenting the topos, which is faithful, fully faithful or an equivalence in good cases. Arguably the most important results of the thesis appear in this chapter, including a generalization of the \Fraisse construction from model theory, which demonstrates that the abstract framework has vast numbers of applications, of which we present a small selection.

In the Conclusion, we identify some prospective research directions and make some conjectures based on all of the material in the preceding chapters.

\vspace{0.5cm}

With the exception of Chapters \ref{chap:logic} and \ref{chap:TSGT}, all of the material in this thesis comes from existing articles and preprints, with duplicated definitions and results removed and some further abridgement. Chapter \ref{chap:TDMA} is a refinement and expansion of \cite{TDMA}. As we explain there, there are only a few original aspects of this chapter; it serves primarily as a foundation for the subsequent developments. Section \ref{sec:biact} of that chapter as well as the entirety of Chapter \ref{chap:mpatti} are reproduced (with permission) with minor modifications from collaborative work with Jens Hemelaer, \cite{MPaTP}. The material of Chapter \ref{chap:sgt} comes from \cite{SGT}, while the material of Chapter \ref{chap:TTMA} is from \cite{TTMA}. In a couple of places we mention some existing related completed work which there was not space to include here.

Unless otherwise stated, the unqualified noun `topos' refers to a Grothendieck topos. We have tried to keep the exposition relatively self-contained, but we regularly defer to more comprehensive texts for details, especially Mac Lane and Moerdijk's \cite{MLM} and Johnstone's reference works on topos theory, \cite{TT} and \cite{Ele}.

\newpage

\section*{Acknowledgements}
\addcontentsline{toc}{subsection}{Acknowledgements}

This work was supported by INdAM and the Marie Sklodowska-Curie Actions as a part of the \textit{INdAM Doctoral Programme in Mathematics and/or Applications Cofunded by Marie Sklodowska-Curie Actions}.

I would like to thank my thesis advisor Olivia Caramello for introducing me to topos theory. Whether or not I reach the end of this path, you gave me a glimpse of how the dreams I have had since I first encountered category theory might be achieved.

Deep gratitude goes to Jens Hemelaer for our hundreds of hours of collaboration, a mere fraction of which is represented here, because it was through that collaboration that I came to appreciate what our work could eventually mean to others.

Thanks to my Insubria colleagues Riccardo Zanfa and Joshua Wrigley for participating in this place and this topic with me, providing complementary perspectives and making life more bearable in general. A special mention goes to Riccardo's attention to detail which has caught many a typographical error of mine. 

Ivan Di Liberti and Axel Osmond get a joint mention as invaluable (and exceptionally efficient) navigators in the more turbulent waters of the category theory literature in their respective areas of expertise. 

More formal thanks go to the respective organizers and participants in the PSSL105 and SYCO6 conferences and the Firbush Theta Meetup in 2019; the Topics in Category Theory School, the Junior Mathematician Research Archive and the University of Augsburg Algebra Colloquium in 2020, and the Cambridge Category Theory Seminar, the Global Noncommutative Geometry Seminar, the York Semigroup Seminar, the Categories and Companions Symposium and Toposes online in 2021 for giving myself and my collaborator the opportunity to present our work. Additional thanks go to the founders of the Category Theory Zulip platform and, even though I didn't present any work there, to the organizers of the Category Theory $20{\rightarrow}21$ conference, for bringing the community together in invaluable ways during the pandemic\footnote{It was bound to get a mention somewhere..!}.

Most importantly, because none of this would matter without the ones I love, I send my gratitude to my partner \"{O}yk\"{u} Ko\c{c}, who has cherished and supported me for more than eighteen months now; to my parents James and Caroline who have been doing the same for over twenty-five years; to my sisters Hazel and Joanna who have always been proud of me; to \"Oyk\"u's parents Meral and B\"ulent for the warm hospitality, and to all of our extended families for the good times when I needed them.

\chapter{Toposes of Discrete Monoid Actions}
\label{chap:TDMA}

It is often observed that the category of presheaves on a monoid $M$ is precisely the category of (right) \textbf{actions of $M$} or \textbf{$M$-sets}. Indeed, considering a monoid as a one-object category, the data of a functor $M\op \to \Set$ can be identified with a set $X$ equipped with a monoid homomorphism $M\op \to \End(X)$, which can in turn be presented as a function $X \times M \to X$ which interacts appropriately with the monoid multiplication.

Categories of monoid actions have been studied by semigroup theorists and ring theorists in analogy with the representation theory of rings. For example, both Knauer in \cite{MEM} and Banaschewski in \cite{FCM} independently solved the Morita equivalence problem for (left) actions of discrete monoids which we re-derive in Section \ref{sec:disc}. Banaschewski presented some of the equivalent conditions characterizing these categories which we give in Theorem \ref{thm:point} and Knauer identified adjoint pairs of functors between the categories of $M$-sets and $N$-sets with $(M,N)$-biactions as we do in Section \ref{sec:biact}. Their results subsequently featured in a reference text \cite{MAC} on categories of monoid actions. In short, many of the essential results of this chapter are not original. Our main contributions, then, are presenting these categories from the perspective of topos-theory (the word `topos' appears exactly once amongst the three references above), and in the process integrating existing results into a pair of $2$-categorical equivalence results, Theorems \ref{thm:2equiv0} and \ref{thm:2equiv1}.

\subsection*{Overview}

We begin in Section \ref{sec:disc} by addressing the Morita equivalence problem for monoids, of identifying when two monoids have equivalent toposes of (right) actions, via idempotent completions. We then examine the topos-theoretic properties of these categories in Section \ref{sec:properties0} and present a characterization of them (as categories) in Section \ref{sec:characterize0}.

In Section \ref{sec:morphisms0} we augment the initial Morita equivalence result by showing that the mapping $M \mapsto \Setswith{M}$ is part of a 2-categorical equivalence between a 2-category of monoids and a 2-category of Grothendieck toposes and essential geometric morphisms; in Section \ref{sec:biact} we extend this to a 2-equivalence involving arbitrary geometric morphisms. As a prelude to the next chapter, in Section we reproduce some examples, again featured in \cite{MEM} and \cite{FCM}, demonstrating that Morita equivalence does not in general reduce to isomorphism and conversely giving some examples of properties which are Morita-invariant by virtue of forcing the Morita-equivalence classes to collapse to isomorphism classes.

\section{Monoids and their idempotent completions}
\label{sec:disc}

As was implicit above, we treat a monoid $M$ as a (small) category with a single object; the identity shall be denoted $1$. Our analysis takes place at three levels: the level of the monoids themselves, the level of their associated presheaf toposes, and the intermediate level of their idempotent completions.

\begin{rmk}
\label{rmk:sgrp0}
Even a priori, these considerations can easily be extended to semigroups, since any semigroup $S$ has a category of right $S$-sets, which is to say sets $X$ equipped with a semigroup homomorphism $S\op \to \End(X)$. By freely adding an identity element to $S$, it becomes a monoid $S_1$ such that $\Setswith{S_1}$ is equivalent to the category of such right $S$-sets, since a monoid homomorphism $S_1\op \to \End(X)$ necessarily sends the new identity to the identity of $\End(X)$, and is therefore determined by a semigroup homomorphism $S\op \to \End(X)$. It follows that for the purposes of a classification of toposes of this form there is no difference. However, we shall show later that semigroup homomorphisms, rather than monoid homomorphisms, are the right morphisms to consider in order to understand essential geometric morphisms between toposes and to properly describe Morita equivalence.
\end{rmk}

Recall that a category $\Ccal$ is \textbf{idempotent complete} (also called \textit{Cauchy complete} or \textit{Karoubi complete}) if all idempotent morphisms in $\Ccal$ split. Recall also that any given category $\Ccal$ has an \textbf{idempotent completion}, denoted $\check{\Ccal}$, equipped with a full and faithful functor $\Ccal \to \check{\Ccal}$ universal amongst functors from $\Ccal$ to idempotent complete categories up to equivalence. For a more detailed reminder and a construction of the idempotent completion in general, see the discussion in \cite{Ele} which begins just before Lemma A1.1.8.

For a monoid $M$, $\check{M}$ can be identified up to equivalence with a category whose objects are idempotents of $M$, and this is the definition of $\check{M}$ we shall use since the resulting idempotent splittings in this category are uniquely defined. Where necessary for clarity, we shall denote by $\underline{e}$ the object of $\check{M}$ corresponding to an idempotent $e$. The morphisms $\underline{e} \to \underline{d}$ in this category are morphisms $f$ of $M$ such that $fe = f = df$; composition is inherited from $M$. $M$ is included in $\check{M}$ as the full subcategory on the object $\underline{1}$; meanwhile, the monoid of endomorphisms of $\underline{e}$ is the monoid $eMe := \{eme \mid m \in M\}$ with identity element $e$.

\begin{dfn}
\label{dfn:indecproj}
Recall that an object $C$ of a category $\Ccal$ is called \textbf{projective} if whenever there exists a morphism $f: C \to B$ and an epimorphism $g:A \too B$, there is a lifting $f':C \to A$ with $f = gf'$.

Independently, an object $C$ of $\Ccal$ is called \textbf{indecomposable} (or \textbf{connected}) if $C$ is not initial and whenever $C \cong A \sqcup B$, one of the coproduct inclusions is an isomorphism.
\end{dfn}

To justify the introduction of idempotent completions, we point to \cite[Lemmas A1.1.9, A1.1.10]{Ele} and their natural corollary:
\begin{lemma}
\label{lem:idempotent}
For any category $\Ccal$, $\Setswith{\Ccal} \simeq \Setswith{\check{\Ccal}}$, and $\check{\Ccal}$ is equivalent to the full subcategory of $\Setswith{\Ccal}$ on the indecomposable projective objects. Thus, since the properties of Definition \ref{dfn:indecproj} are stable under equivalence, $\Setswith{\Ccal} \simeq \Setswith{\Dcal}$ if and only if $\check{\Ccal} \simeq \check{\Dcal}$.
\end{lemma}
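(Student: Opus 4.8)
The plan is to assemble the statement from the two cited Elephant lemmas together with the standard identification of the indecomposable projectives in a presheaf topos, and then read off the biconditional formally. Write $\yon\colon \Ccal \to \Setswith{\Ccal}$ for the Yoneda embedding, and recall that the inclusion $\iota\colon \Ccal \hookrightarrow \check{\Ccal}$ is full and faithful with $\check{\Ccal}$ idempotent complete. For the equivalence $\Setswith{\Ccal} \simeq \Setswith{\check{\Ccal}}$, the cleanest route is to observe that restriction along $\iota$ gives a functor $\iota^*\colon \Setswith{\check{\Ccal}} \to \Setswith{\Ccal}$ and to exhibit its quasi-inverse as the left Kan extension $\iota_!$ along $\iota$; that this is an equivalence is precisely \cite[Lemma A1.1.10]{Ele}. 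Conceptually, the point is that the presheaf construction is invariant under Cauchy (Morita) equivalence of the base, and $\Ccal$ and $\check{\Ccal}$ have the same idempotent completion since $\check{\check{\Ccal}} \simeq \check{\Ccal}$.

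Next I would identify $\check{\Ccal}$ with the full subcategory of $\Setswith{\Ccal}$ on the indecomposable projective objects. By \cite[Lemma A1.1.9]{Ele}, the restricted Yoneda embedding realizes $\check{\Ccal}$ as the full subcategory of retracts of representable presheaves, so it suffices to show that these retracts are exactly the indecomposable projectives. A representable $\yon C$ is projective because evaluation $\Hom(\yon C, -) \cong (-)(C)$ preserves epimorphisms (these being computed objectwise in a presheaf topos), and it is indecomposable because the element $\id_C \in (\yon C)(C)$ generates $\yon C$ under the presheaf action and so must lie in a single summand of any coproduct decomposition. A retract of $\yon C$ splits an idempotent $\yon(e)$ for an idempotent $e$ of $\Ccal$; it is projective as a retract of a projective, and the same generator argument applied to the fixed element $e$ shows it is indecomposable. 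Conversely, every presheaf is a canonical quotient of a coproduct of representables, so a projective presheaf is a retract of such a coproduct, and an indecomposable retract of a coproduct is already a retract of a single summand, i.e.\ of one representable. I expect this last step --- forcing an indecomposable projective to be a retract of a single representable rather than of a coproduct --- to be the main technical point, as it is where indecomposability and projectivity must genuinely be used together.

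Finally, the biconditional is formal. If $\check{\Ccal} \simeq \check{\Dcal}$, applying the first equivalence twice gives $\Setswith{\Ccal} \simeq \Setswith{\check{\Ccal}} \simeq \Setswith{\check{\Dcal}} \simeq \Setswith{\Dcal}$. Conversely, if $\Setswith{\Ccal} \simeq \Setswith{\Dcal}$, then since indecomposability and projectivity are defined purely in terms of (co)limits and epimorphisms, they are preserved by any equivalence of categories; the equivalence therefore restricts to the full subcategories on indecomposable projectives, which are $\check{\Ccal}$ and $\check{\Dcal}$ by the previous step, yielding $\check{\Ccal} \simeq \check{\Dcal}$.
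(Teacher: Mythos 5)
Your proposal is correct and takes essentially the same route as the paper, which gives no argument of its own but simply cites \cite[Lemmas A1.1.9, A1.1.10]{Ele} and states the result as their natural corollary: you invoke A1.1.10 for $\Setswith{\Ccal} \simeq \Setswith{\check{\Ccal}}$, A1.1.9 to realize $\check{\Ccal}$ as the retracts of representables, and then supply the standard identification of those retracts with the indecomposable projectives plus the formal biconditional. The one step the paper leaves entirely implicit — that an indecomposable projective presheaf is a retract of a \emph{single} representable, via pulling back a coproduct decomposition along the splitting — is precisely the point you flag, and your treatment of it is sound.
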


Thus $\Setswith{M} \simeq \Setswith{N}$ if and only if $\check{M} \simeq \check{N}$. We can directly pass from Lemma \ref{lem:idempotent} to the Morita equivalence result:
\begin{thm}
\label{thm:discMorita}
Two monoids $M$ and $N$ are Morita equivalent (that is, $\Setswith{M} \simeq \Setswith{N}$) if and only if there is an idempotent $e$ of $M$ with $N \cong eMe$ and $\beta, \beta' \in M$ such that $\beta \beta' = 1$, $\beta e = \beta$.
\end{thm}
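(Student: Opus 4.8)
The plan is to reduce everything to idempotent completions via Lemma \ref{lem:idempotent}, which already gives $\Setswith{M} \simeq \Setswith{N}$ if and only if $\check{M} \simeq \check{N}$; the task then becomes translating the algebraic data $e, \beta, \beta'$ into the statement $\check{M} \simeq \check{N}$. The organizing principle I would use is a single standard fact about idempotent completions: if $\Dcal$ is a full subcategory of an idempotent complete category $\Ccal$ such that every object of $\Ccal$ is a retract of some object of $\Dcal$, then the inclusion exhibits $\Ccal$ as $\check{\Dcal}$. Two elementary observations about $\check{M}$ feed into this: (i) the endomorphism monoid of the object $\underline{e}$ is exactly $eMe$, so the full subcategory of $\check{M}$ on $\underline{e}$ is $eMe$ viewed as a one-object category, whose idempotent completion is $\check{eMe}$; and (ii) every object $\underline{d}$ of $\check{M}$ is a retract of $\underline{1}$, since $\underline{d}$ is precisely the splitting in $\check{M}$ of the idempotent $d \in M = \End(\underline{1})$.

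For the forward direction, I would start from $N \cong eMe$, which gives $\check{N} \cong \check{eMe}$; by observation (i) this is the idempotent completion of the one-object subcategory $\{\underline{e}\} \subseteq \check{M}$. By the standard fact it then suffices to show every object of $\check{M}$ is a retract of $\underline{e}$, and by observation (ii) together with transitivity of retracts it is enough to exhibit $\underline{1}$ as a retract of $\underline{e}$. Here the hypotheses $\beta\beta' = 1$ and $\beta e = \beta$ do the work: $\beta$ defines a morphism $\underline{e} \to \underline{1}$ (because $\beta e = \beta$), $e\beta'$ defines a morphism $\underline{1} \to \underline{e}$ (because $e(e\beta') = e\beta'$), and their composite is $\beta e \beta' = \beta\beta' = \id_{\underline{1}}$. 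Hence $\check{M} \simeq \check{eMe} \cong \check{N}$.

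For the converse, fix an equivalence $F \colon \check{N} \to \check{M}$ and set $\underline{e} := F(\underline{1})$, which is $\underline{e}$ for some idempotent $e$ since every object of $\check{M}$ has this form. Then $eMe = \End_{\check{M}}(\underline{e}) \cong \End_{\check{N}}(\underline{1}) = N$, yielding $N \cong eMe$. Applying observation (ii) on the $N$-side, every object of $\check{N}$ is a retract of $\underline{1}$; essential surjectivity of $F$ together with the fact that functors preserve retracts then shows every object of $\check{M}$ is a retract of $\underline{e}$, and in particular $\underline{1}$ is. This gives morphisms $r \colon \underline{e} \to \underline{1}$ and $s \colon \underline{1} \to \underline{e}$ with $rs = \id_{\underline{1}}$; reading these back as elements of $M$ gives $re = r$ and $rs = 1$, so $\beta := r$ and $\beta' := s$ are the required elements.

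I expect the only delicate point to be the bookkeeping in the standard fact about idempotent completions, namely keeping straight that full faithfulness of the induced functor $\check{eMe} \to \check{M}$ is automatic while essential surjectivity is exactly the retract condition I verify. Everything else is a direct translation between retraction diagrams in $\check{M}$ and equations in $M$, which is routine given the explicit description of $\check{M}$ recalled before Definition \ref{dfn:indecproj}.
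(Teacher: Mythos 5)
Your proof is correct, and its converse direction (an equivalence $\check{N} \simeq \check{M}$ yields $e$, $\beta$, $\beta'$) is essentially the paper's: both take $\underline{e}$ to be the image of the unit object, identify $N \cong eMe$ by full faithfulness, and obtain $\beta, \beta'$ from a retraction exhibiting $\underline{1}$ as a retract of $\underline{e}$, forced by essential surjectivity. Where you genuinely diverge is the forward direction. The paper works inside $\check{N}$: it constructs the explicit conjugation map $g: M \to eMe$, $m \mapsto \beta' m \beta$, checks that it is an injective semigroup homomorphism whose image is $\End_{\check{N}}(\underline{\beta'\beta})$, so that $M$ is recovered as the endomorphism monoid of the object $\underline{\beta'\beta}$ of $\check{N}$, and then argues that $\check{N}$ is the idempotent completion of the full subcategory on that object. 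You instead stay inside $\check{M}$: the data $(\beta, e\beta')$ is read off directly as a retraction exhibiting $\underline{1}$ as a retract of $\underline{e}$, whence every object of $\check{M}$ is a retract of $\underline{e}$, and your retract-density fact gives $\check{M} \simeq \check{eMe} \simeq \check{N}$. Both executions rest on the same two pillars, namely Lemma \ref{lem:idempotent} and the universal property of idempotent completion restricted to retract-dense full subcategories, but yours is the mirror image of the paper's and is arguably cleaner: it makes the two directions of the theorem symmetric (each is a retract argument, one carried out in each completion), and it avoids the computational verifications attached to $g$ (the homomorphism property, injectivity, and the identification of the image). What the paper's route buys in exchange is the explicit embedding $m \mapsto \beta' m \beta$ of $M$ into $eMe$, which makes visible how $M$ sits inside $N$ as an endomorphism monoid and resonates with the later treatment of subsemigroups of the form $eMe$ in Lemma \ref{lem:includes}.
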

\begin{proof}
Given $\check{M} \simeq \check{N}$, there exists some object $\underline{e} \in \check{M}$ corresponding to $\underline{1}' \in \check{N}$, and hence $N$ is isomorphic to the monoid of endomorphisms of this $\underline{e}$, which is precisely $eMe$. Since the idempotent completion of the full subcategory on $\underline{e}$ is identified via the equivalence with an essentially wide subcategory of $\check{M}$, it must be that $\underline{1}$ is a retract of $\underline{e}$, whence we obtain the elements $\beta, \beta'$ satisfying the given properties.

Conversely, given $e, \beta, \beta'$ in $M$ with the given properties, note that replacing $\beta'$ with $e \beta'$ if necessary, one obtains elements with the additional property that $e \beta' = \beta'$. Consider the mapping $g: M \to eMe$ given by $m \mapsto \beta' m \beta$. We see that $g$ is a semigroup homomorphism since $\beta' mn \beta = \beta' m \beta \beta' n \beta$, it has the correct codomain since $e \beta' m \beta e = \beta' m \beta$, and it is injective since we may cancel $\beta'$ on the left and $\beta$ on the right. Moreover, the image of $g$ is precisely the set of endomorphisms of \underline{$\beta'\beta$} in $\check{N}$, whence $M$ is isomorphic to the endomorphism monoid of this object, and it follows that $\check{N}$ is the whole subcategory: the idempotent completions are equivalent, as claimed.
\end{proof}

By the proof, we see that the Morita equivalence condition is self-dual; alternatively, it is easily shown that $(M\op)^{\vee} \simeq \check{M}\op$. Either way, we immediately deduce a result which is not at all obvious from the algebraic description of the category of $M$-sets:
\begin{crly}
$\Setswith{M} \simeq \Setswith{N}$ if and only if $[M,\Set] \simeq [N,\Set]$; there is no need to distinguish between `left' and `right' Morita equivalence of monoids.
\end{crly}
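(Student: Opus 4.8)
The plan is to reduce the statement to the Morita-equivalence machinery already in place, together with the behaviour of idempotent completion under $(-)\op$. First I would record the translation between left and right actions: since $\Setswith{M} = \mathrm{PSh}(M)$ consists of the contravariant functors $M\op \to \Set$, the category $[M,\Set]$ of covariant functors is precisely $\Setswith{M\op}$, the topos of right $M\op$-sets. Thus the assertion ``$\Setswith{M} \simeq \Setswith{N}$ if and only if $[M,\Set] \simeq [N,\Set]$'' is literally the assertion that $\Setswith{M} \simeq \Setswith{N}$ holds exactly when $\Setswith{M\op} \simeq \Setswith{N\op}$ does; in words, right Morita equivalence of monoids coincides with left Morita equivalence. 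By Lemma \ref{lem:idempotent} (applied once to the pair $M,N$ and once to $M\op,N\op$) this is in turn equivalent to the statement that $\check{M} \simeq \check{N}$ if and only if $(M\op)^{\vee} \simeq (N\op)^{\vee}$.

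The crux is therefore the identity $(M\op)^{\vee} \simeq \check{M}\op$, asserting that idempotent completion commutes with the formation of opposites. I would prove this by exhibiting an isomorphism of categories directly from the explicit model of the idempotent completion used in the excerpt. On objects this is immediate: an idempotent of $M$ is the same datum as an idempotent of $M\op$, since $e^2 = e$ is unaffected by reversing the multiplication, so all three of $\check{M}$, $\check{M}\op$ and $(M\op)^{\vee}$ have the idempotents of $M$ as their objects. For morphisms, a map $\underline{e} \to \underline{d}$ in $\check{M}\op$ is by definition a map $\underline{d} \to \underline{e}$ in $\check{M}$, i.e. an $f \in M$ with $fd = f = ef$; whereas a map $\underline{e} \to \underline{d}$ in $(M\op)^{\vee}$ is an $f$ satisfying the analogous equations computed in $M\op$, which unwind to $ef = f = fd$ in $M$. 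These cut out the same subsets of $M$, and the reversal of composition order in $M\op$ is exactly cancelled by the reversal of arrows in $\check{M}\op$, so the assignment is functorial and bijective on hom-sets. This yields the desired isomorphism.

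With the crux in hand the conclusion is formal: the functor $(-)\op$ sends equivalences to equivalences and is its own inverse up to natural isomorphism, so $\check{M} \simeq \check{N}$ if and only if $\check{M}\op \simeq \check{N}\op$, and by the crux the latter holds if and only if $(M\op)^{\vee} \simeq (N\op)^{\vee}$. Chaining these equivalences with the two applications of Lemma \ref{lem:idempotent} from the first paragraph gives the corollary.

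I expect the only real obstacle to be bookkeeping rather than mathematics: three different ``opposites'' are in play at once --- the opposite monoid $M\op$, the opposite category $\check{M}\op$, and the reversal of arrows implicit in the definition of morphisms in $\check{M}$ --- and it is easy to conflate them or to get a direction wrong. As an alternative that sidesteps constructing the equivalence of completions, one could instead verify directly that the elementary criterion of Theorem \ref{thm:discMorita} is self-dual: using the normalisation from that proof (replacing $\beta'$ by $e\beta'$ so that $e\beta' = \beta'$ as well), the data $(e,\beta',\beta)$ witness the criterion with the roles of left and right multiplication interchanged, which is precisely the criterion characterising $\Setswith{M\op} \simeq \Setswith{N\op}$ once one notes $e\,M\op\,e = (eMe)\op$ and hence $N \cong eMe \iff N\op \cong e\,M\op\,e$.
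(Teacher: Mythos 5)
Your proof is correct and follows exactly the two routes the paper itself sketches: the paper's proof consists of the remark that one can either observe the criterion of Theorem \ref{thm:discMorita} is self-dual or ``easily show'' $(M\op)^{\vee} \simeq \check{M}\op$ and invoke Lemma \ref{lem:idempotent}, and you have supplied the details of both, with the bookkeeping of the three opposites handled correctly. No gaps; this is the paper's argument, just written out in full.
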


It is worth mentioning that the Morita equivalence presented here is distinct from the `Topos Morita Equivalence' for inverse semigroups discussed by Funk et al. in \cite{MEiS} (although the `Semigroup Morita equivalence' described there is the one introduced by Talwar in \cite{MES} based on the work of Knauer in \cite{MEM}). Indeed, the toposes considered there have as objects actions of an inverse semi-group $S$ on sets by \textit{partial isomorphisms}, which they show is equivalent to the topos of presheaves on the full subcategory $\check{S} \hookrightarrow \check{S}_1$ on the non-identity elements. Rather than constructing a detailed example to demonstrate the distinction, we point out that the Morita equivalences of \cite{MEiS} are non-trivial, whereas the extension of Morita equivalence for monoids to semigroups described in Section \ref{sec:disc} is trivial by Corollary \ref{crly:trivial}.3 below, a fact which appears as Proposition 5 in \cite{FCM}.

\section{Topos-theoretic properties}
\label{sec:properties0}

Recall that the forgetful functor $U:\Setswith{M} \to \Set$ sending a right $M$-set to its underlying set is both monadic and comonadic. In particular, it has left and right adjoints,
\[\begin{tikzcd}
\Set \ar[r, "(-) \times M", bend left = 50] \ar[r, "\Hom_{\Set}(M{,}-)"', bend right= 50] \ar[r, symbol = \bot, shift right = 5, near end] \ar[r, symbol = \bot, shift left = 5, near end] & {\Setswith{M}} \ar[l, "U"],
\end{tikzcd}\]
where the action of $M$ on $X \times M$ is simply multiplication on the right on the $M$ component and the action of $m \in M$ on $\Hom_{\Set}(M,-)$ sends $f \in \Hom_{\Set}(M,X)$ to $f\cdot m := (x \mapsto f(mx))$.

Monadicity is intuitive, since $\Setswith{M}$ is easily seen to be (equivalent to) the category of algebras for a free-forgetful adjunction: an algebra is a set $A$ equipped with a morphism $\alpha: A \times M \to A$ satisfying identities that correspond to those for an $M$-action. Meanwhile, transposing $\alpha$ and the identities it satisfies across the product-exponential adjunction in $\Set$ gives a presentation of this action as a coalgebra for the adjoint comonad.

\begin{dfn}
\label{dfn:gm}
Recall that for toposes $\Ecal$ and $\Fcal$, a \textbf{geometric morphism} $\phi:\Fcal \to \Ecal$ consists of an adjunction
\[\begin{tikzcd}
\Fcal \ar[r, "\phi_*"', shift right = 2] \ar[r, phantom, "\bot"] & \Ecal \ar[l, "\phi^*"', shift right = 2],
\end{tikzcd}\]
where $\phi_*$ is called the \textbf{direct image functor}, and $\phi^*$ is called the \textbf{inverse image functor}, which is required to preserve finite limits.

The geometric morphism $\phi$ is said to be \textbf{essential} if $\phi^*$ admits a further left adjoint, denoted $\phi_!$. A \textbf{point} of a Grothendieck topos $\Ecal$ is simply a geometric morphism $\Set \to \Ecal$. Finally, a geometric morphism is \textit{surjective} if its inverse image functor is comonadic.
\end{dfn}

Therefore we can summarize the preceding observations from a topos-theoretic perspective by saying that $U$ is the inverse image of an \textbf{essential surjective point} of $\Setswith{M}$; the existence of this point is the first property of note. We shall call this point the \textbf{canonical point} of $\Setswith{M}$, although we emphasize that the canonicity is relative to the presenting monoid $M$. Recall that a topos is said to have \textbf{enough points} if the inverse images functors of all its points are jointly conservative; it follows that every topos of monoid actions has enough points, since the canonical point is enough on its own!

There is another (essential) geometric morphism associated with the topos $\Setswith{M}$, but which does not depend on the presenting monoid: the \textbf{global sections} geometric morphism,
\begin{equation} \label{eq:can-point-and-global-sections}
\begin{tikzcd}
{\Setswith{M}} \ar[r, "C", bend left = 50] \ar[r, "\Gamma"', bend right= 40]
\ar[r, symbol = \bot, shift right = 4, near start] \ar[r, symbol = \bot, shift left = 6, near start] &
\Set \ar[l, "\Delta"', near end],
\end{tikzcd}
\end{equation}
where the global sections functor $\Gamma$ sends an $M$-set $X$ to its set $\Fix_M(X)= \HOM_M(1,X)$	of fixed points under the action of $M$, the constant sheaf functor $\Delta$ sends a set $Y$ to the same set with trivial $M$-action, and the connected components functor $C$ sends an $M$-set $X$ to its set of components under the action of $M$ (that is, to its quotient under the equivalence relation generated by $x \sim x \cdot m$ for $x \in X$, $m \in M$).

The fact that this geometric morphism is essential makes $\Setswith{M}$ a \textbf{locally connected topos}, which can equivalently be characterized by the fact that every object can be decomposed as a coproduct of indecomposable objects (see \cite[Lemma C3.3.6]{Ele}); this is a property of all presheaf toposes. Indeed, the indecomposable $M$-sets are precisely the equivalence classes of the equivalence relation generated by $x \sim y$ when $\exists m$ with $y = xm$; when $M$ is a group, these are simply the orbits of the action. Strengthening local connectedness is the fact that any presheaf topos is \textbf{supercompactly generated}, a property which we shall define, discuss and examine at length in its own right in Chapter \ref{chap:sgt}.

Next, note that the terminal object $1$ of $\Setswith{M}$ is the trivial action of $M$ on the one-element set. In particular, the only subobjects of $1$ (the \textbf{subterminal objects}) are $1$ itself and the empty $M$-action, which is to say that $\Setswith{M}$ is \textbf{two-valued}. We shall observe in Proposition \ref{prop:hype2} in Chapter \ref{chap:sgt} that two-valuedness of a Grothendieck topos is equivalent to the global sections geometric morphism being \textbf{hyperconnected}: the inverse image is full and faithful and its image is closed under subquotients. This can be deduced from the fact that $M$ is a `strongly connected category' (a category in which there is at least one morphism $A \to B$ for every ordered pair of objects $A,B$); see the discussion following \cite[Example A4.6.9]{Ele}.

This conclude that:
\begin{lemma}
\label{lem:orthog}
For a locale $X$, the localic topos $\Sh(X)$ is equivalent to a topos of the form $\Setswith{M}$ if and only if both $X$ and $M$ are trivial. Similarly, for any preorder $P$, $\Setswith{P} \simeq \Setswith{M}$ if and only if $P$ is equivalent to the one-element poset and $M$ is trivial.
\end{lemma}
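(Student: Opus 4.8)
The plan is to exploit the fact, recorded above, that every topos of the form $\Setswith{M}$ is two-valued, and to combine this with the description of the indecomposable projectives via idempotent completions in Lemma \ref{lem:idempotent}. The two ``if'' directions are immediate: a trivial locale $X$ has $\Sh(X) \simeq \Set$, and a trivial monoid $M$ has $\Setswith{M} = \Set$ as well (equally, if $P$ is a one-element poset then $[P\op,\Set] \simeq \Set$), so in each case both sides are $\Set$. All the work is in the ``only if'' directions.

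For the locale case I would first compare subterminal objects. In $\Sh(X)$ the poset of subobjects of the terminal object is (isomorphic to) the frame $\Ocal(X)$ of opens of $X$, whereas $\Setswith{M}$ is two-valued, so $\Sub(1)$ has exactly two elements. An equivalence $\Sh(X) \simeq \Setswith{M}$ therefore forces $\Ocal(X)$ to be the two-element frame, i.e. $X$ is the trivial locale and $\Sh(X) \simeq \Set$. It then remains only to show that $\Setswith{M} \simeq \Set$ implies $M$ trivial.

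For that last step I would appeal to Lemma \ref{lem:idempotent}: $\check{M}$ is equivalent to the full subcategory of $\Setswith{M}$ on the indecomposable projective objects, and since these properties are stable under equivalence this full subcategory corresponds under $\Setswith{M} \simeq \Set$ to the indecomposable projectives of $\Set$. In $\Set$ every object is projective, and the indecomposable objects are exactly the singletons, so up to isomorphism the only indecomposable projective is the terminal object, whose endomorphism monoid $\Hom_{\Set}(1,1)$ is trivial. Hence $\check{M}$ is equivalent to the terminal category. The crucial observation — and the one subtle point of the argument — is that $\underline{1}$ is always an object of $\check{M}$ with $\End_{\check{M}}(\underline{1}) = 1M1 = M$; since an equivalence of categories restricts to an isomorphism of endomorphism monoids, triviality of $\check{M}$ forces $M$ itself (not merely some Morita representative of it) to be trivial. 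This is precisely the place where one must take care not to conclude only that $M$ is Morita equivalent to the trivial monoid.

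For the preorder case I would argue analogously, computing $\Sub(1)$ directly in $[P\op,\Set]$: a subpresheaf of the terminal presheaf corresponds to a down-closed subset of $P$, so two-valuedness of $\Setswith{M}$ forces the only such subsets to be $\emptyset$ and $P$, while non-degeneracy (indeed $0 \neq 1$) forces $P \neq \emptyset$. Applying this to the principal down-set $\mathord{\downarrow} p$ of each element shows $q \leq p$ for all $p,q$, so $P$ is an indiscrete preorder, which as a category is equivalent to the one-element poset. Consequently $[P\op,\Set] \simeq \Set$, and the triviality of $M$ follows from exactly the same indecomposable-projective argument as in the locale case. (Alternatively, one could observe that $\Setswith{P}$ is localic, via its presentation as sheaves on the Alexandroff space of $P$, and deduce the preorder case from the locale case, reading off indiscreteness of $P$ from triviality of the frame of opens.) The main obstacle throughout is the final descent from $\check{M} \simeq \check{1}$ to $M \cong 1$, resolved by the endomorphism-monoid observation above.
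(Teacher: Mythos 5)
Your proof is correct, and its first halves coincide exactly with the paper's: both cases open by comparing subterminal objects (the frame $\Ocal(X)$, respectively the down-closed subsets of $P$) against the two-valuedness of $\Setswith{M}$, reducing everything to showing that $\Setswith{M} \simeq \Set$ forces $M$ trivial. Where you genuinely diverge is in that final step. The paper argues via the canonical point: under the equivalence $\Setswith{M} \simeq \Set$, the canonical point becomes the unique geometric morphism $\Set \to \Set$, so the induced comonad $A \mapsto \Hom_{\Set}(M,A)$ must be isomorphic to the identity, which forces $M$ to have exactly one element. You instead invoke Lemma \ref{lem:idempotent}: the indecomposable projectives of $\Set$ are the singletons, so $\check{M}$ is equivalent to the terminal category, and since $M = \End_{\check{M}}(\underline{1})$ and equivalences are fully faithful, $M$ itself is trivial. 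Both routes are sound. The paper's comonad computation is more self-contained, needing only the adjunction $U \dashv \Hom_{\Set}(M,-)$ already set up in Section \ref{sec:properties0}, and it extracts the cardinality of $M$ directly. Your route buys something else: it makes explicit (and correctly resolves) the point that one must not stop at ``$M$ is Morita equivalent to the trivial monoid'' — the passage from $\check{M} \simeq \mathbf{1}$ back to $M \cong 1$ via the endomorphism monoid of $\underline{1}$ is exactly the collapse of Morita equivalence to isomorphism that the paper treats separately in Corollary \ref{crly:trivial}, so your argument also serves as a small preview of that phenomenon.
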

\begin{proof}
The frame $\Ocal(X)$ of the locale $X$ is isomorphic to the frame of subterminal objects of $\Sh(X)$, but for any $M$, $\Setswith{M}$ is two-valued, so $\Ocal(X)$ is the initial frame, making $X$ the terminal locale, so $\Sh(X) \simeq \Set$. There is a unique geometric morphism $\Set \to \Set$ which must coincide with the canonical point described above, but the induced comonad therefore sends any object $A$ to $\Hom_{\Set}(M,A) \cong A$, which forces $M$ to have exactly one element and hence be trivial.

The subterminal objects of $\Setswith{P}$ can be identified with the downward closed sets, and it is easily seen that if any element is not a top element, the principal downset generated by that element gives a non-trivial subterminal object, and the topos fails to be two-valued; it follows that to be two valued, every element of $P$ must be a top element (and $P$ must be non-empty), which gives an equivalence with the one-element poset. The remainder of the argument is as above.
\end{proof}

Lemma \ref{lem:orthog} illustrates that the `conceptual orthogonality' between preorders and monoids as contrasting families of small categories extends in a concrete way to the topos-theoretic setting, and is represented in another sense in the orthogonality of hyperconnected and localic geometric morphisms.

\section{Characterization of presheaves on monoids}
\label{sec:characterize0}

Our next task is to characterize toposes of discrete monoid actions. The first problem, given a topos of the form $\Setswith{M}$, is to identify whether $M$, or at least some presenting monoid, can be recovered from the structure of the topos. By the Yoneda Lemma, we know that $M$ is the full subcategory on the representable corresponding to its unique object, which is indecomposable and projective; indeed, this representable object is precisely $M$ viewed as a right $M$-set. Does every indecomposable projective give a valid representation of the topos?

Certainly every such object provides us with an essential point. In \cite[Ex. 7.3]{TT}, one can find the following result for an arbitrary Grothendieck topos $\Ecal$ (in fact, it is stated there for a base topos possibly distinct from $\Set$), which provides a connection between essential points and indecomposable projective objects. The source cited there is not especially accessible, so we reprove it here.

\begin{lemma}
\label{lem:proj}
A functor $\phi: \Ecal \to \Set$ is the inverse image of an essential point if and only if it has the form $\Hom_{\Ecal}(Q,-)$ for $Q$ a projective indecomposable object.
\end{lemma}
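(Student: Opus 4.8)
The plan is to read off exactly what the data of an essential point imposes on the functor $\phi$, and then match that profile against representability by a projective indecomposable object. An essential point is an adjoint string $p_! \dashv p^* \dashv p_*$ in which $p^* = \phi$ preserves finite limits; consequently $\phi$ must preserve all colimits (being a left adjoint to $p_*$), all limits (having a left adjoint $p_!$), and finite limits. The two implications will revolve around these three preservation properties.

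For the forward direction I would set $Q := p_!(1)$, where $1$ is the singleton. The adjunction $p_! \dashv \phi$ gives $\Hom_{\Ecal}(Q, X) \cong \Hom_{\Set}(1, \phi X) \cong \phi X$ naturally in $X$, so $\phi \cong \Hom_{\Ecal}(Q,-)$. It then remains to see that $Q$ is projective and indecomposable, and here I would exploit that $\phi = p^*$ preserves all colimits. Preservation of epimorphisms, which holds because $\phi$ has the right adjoint $p_*$, is exactly the statement that $Q$ is projective, using that every epimorphism in a topos is effective. Preservation of the initial object forces $\Hom_{\Ecal}(Q,0) = \emptyset$, so $Q \not\cong 0$. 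Finally, preservation of binary coproducts lets one test a decomposition $Q \cong A \sqcup B$ by factoring $\id_Q \in \Hom_{\Ecal}(Q, A \sqcup B) \cong \Hom_{\Ecal}(Q,A) \sqcup \Hom_{\Ecal}(Q,B)$ through a single summand; the corresponding coproduct inclusion is then a split epimorphism, and being also a monomorphism it is an isomorphism, which gives indecomposability.

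For the converse I would start from $\phi = \Hom_{\Ecal}(Q,-)$ with $Q$ projective indecomposable. Being representable, $\phi$ preserves all limits; in particular it preserves finite limits (the condition for an inverse image functor), and since $\Ecal$ is locally presentable and $\phi$ is accessible, the adjoint functor theorem supplies the left adjoint $p_!$ witnessing essentialness. The substantive task is to produce the right adjoint $p_*$, that is, to show $\phi$ preserves all colimits. Coproducts are handled by indecomposability together with the disjointness and universality of coproducts in a topos: a morphism $Q \to \coprod_i X_i$ pulls the decomposition back to $Q \cong \coprod_i (Q \times_{\coprod_j X_j} X_i)$, and indecomposability (a coproduct being initial exactly when every summand is) forces all but one factor to be initial, so the morphism factors through a unique summand. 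Epimorphisms are preserved precisely because $Q$ is projective. For coequalizers, I would write a coequalizer in the exact category $\Ecal$ as the quotient $B \too B/K$ by an effective equivalence relation $K$, use preservation of finite limits to identify $\phi(K)$ with the kernel pair of $\phi(B) \too \phi(B/K)$, and compare this with the equivalence relation generated in $\Set$ by the image of the original pair. Granting coequalizer preservation, $\phi$ is cocontinuous and the adjoint functor theorem (valid since $\Ecal$ is locally presentable) delivers $p_*$, completing the essential point.

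I expect the coequalizer step to be the main obstacle. The coproduct and epimorphism cases are essentially formal consequences of indecomposability and projectivity respectively, but identifying $\phi(K)$ with the equivalence relation generated by $\phi$ of the original pair requires controlling how $\phi$ interacts with the transitive-closure construction of that relation; this is the point at which the exactness of the topos must be used carefully, and it is the delicate part of the argument.
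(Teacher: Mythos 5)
Your proposal follows essentially the same route as the paper's proof: representability of $\phi$ is equivalent to the existence of the left adjoint (special adjoint functor theorem in one direction, $Q := \phi_!(1)$ in the other), and the right adjoint is obtained by checking preservation of the initial object, coproducts, epimorphisms and coequalizers separately. Your treatment of the first three steps matches the paper's and is correct. The one place where your proof is incomplete is exactly the place you flag: you \emph{grant} coequalizer preservation rather than prove it, and the comparison you set up --- between $\phi(K)$, for $K$ the kernel pair of the coequalizer in $\Ecal$, and the equivalence relation generated in $\Set$ by the image of $(\phi(f),\phi(g))$ --- is left as an acknowledged obstacle. As written, this is a genuine gap: nothing in your argument yet controls the transitive-closure construction.

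The gap closes using only resources you have already assembled, and this is precisely what the paper does. Since $\phi = \Hom_{\Ecal}(Q,-)$ preserves monomorphisms (it preserves all limits) and epimorphisms (projectivity of $Q$), it preserves image factorizations. Write $R \hookrightarrow B \times B$ for the image of $\langle f,g\rangle$ (symmetrizing if necessary by taking the union with the opposite relation, which is again an image of a coproduct). The relational composites $R^n$ for $n > 1$ are computed from pullbacks and images, hence are preserved by $\phi$, as is the diagonal $R^0$. The equivalence relation $K$ generated by $R$ is the image of the induced map $\coprod_{n \geq 0} R^n \to B \times B$; since $\phi$ preserves coproducts (by indecomposability of $Q$, as you established) and images, $\phi(K)$ is exactly the equivalence relation generated in $\Set$ by the image of $(\phi(f),\phi(g))$. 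Finally, the coequalizer $C$ of $(f,g)$ is the quotient $B \too B/K$; as $\phi$ preserves epimorphisms and kernel pairs, $\phi(B) \to \phi(C)$ is a surjection with kernel pair $\phi(K)$, hence is the quotient of $\phi(B)$ by that generated equivalence relation --- that is, it is the coequalizer of $\phi(f)$ and $\phi(g)$. So the "delicate part" requires no exactness input beyond what you already have: preservation of finite limits, monos, epis and coproducts suffices, and with this paragraph inserted your argument becomes the paper's proof.
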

\begin{proof}
First, we observe that $\phi$ has a left adjoint if and only if it is representable. If $\phi = \Hom(Q,-)$, then $\phi$ certainly preserves all limits by their universal properties, so it has a left adjoint by the special adjoint functor theorem, say. Conversely, if $\phi$ has a left adjoint $\phi_!$, then for an object $E$ of $\Ecal$, it must be that $\phi(E) \cong \Hom_\Set(1,\phi(E)) \cong \Hom_\Ecal(\phi_!(1),E)$, so $\phi$ is represented by $Q:=\phi_!(1)$. Indeed, it follows that $\phi_!(A) = \coprod_{a \in A}Q$.

To demonstrate the existence of the right adjoint, we again invoke the special adjoint functor theorem, whence it suffices to check preservation of colimits.

Since the initial object is strict in a topos, $\Hom_\Ecal(Q,0) = \emptyset$ holds if and only if $Q \not\cong 0$.

To preserve coproducts, it is required that $\Hom_\Ecal(Q,\coprod_{i\in I} A_i) = \coprod_{i\in I}\Hom_\Ecal(Q,A_i)$; that is, every arrow from $Q$ to a coproduct must factor uniquely through one of the coproduct inclusions. If this is so and $Q \cong Q_1 \sqcup Q_2$ then the identity on $Q$ without loss of generality factors through the inclusion of $Q_1$, and since coproducts are disjoint in $\Ecal$, this forces $Q_2 \cong 0$, so $Q$ is necessarily indecomposable. Conversely, if $Q$ is indecomposable and $f \in \Hom_\Ecal(Q,\coprod_{i\in I} A_i)$ then consider $B_i = f^*(A_i)$. Since coproducts are stable under pullback, these form disjoint subobjects of $Q$ and $Q \cong \coprod_{i \in I} B_i$. Indecomposability of $Q$ forces $B_i \cong Q$ for some $i$, and hence one can uniquely identify $f$ with a member of $\Hom_\Ecal(Q,A_i)$.

Finally, $Q$ being projective is equivalent to $\Hom(Q,-)$ preserving epis, which we claim is equivalent to preserving coequalizers given the preservation of coproducts.

All epis in $\Ecal$ are regular, so preservation of coequalizers certainly implies preservation of epimorphisms. Conversely, given a parallel pair $f,g: A \rightrightarrows B$ in $\Ecal$, consider its factorization through the kernel pair of its coequalizer:
\[\begin{tikzcd}
A \ar[r, "\exists ! e"] & B' \ar[r, shift left, "f'"] \ar[r, shift right, "g'"'] & B \ar[r, "c", two heads] & C.
\end{tikzcd}\]
$\Hom(Q,-)$ preserving epis and monos ensures that it preserves image factorizations, so without loss of generality $R = \langle f,g \rangle$ is a relation on $B$ (else take its image in $B \times B$). For $n > 1$, $R^n$ is computed via pullbacks and images, so is also preserved by $\Hom(Q,-)$, as is the diagonal subobject $R^0$. Now, $c$ is precisely the quotient of $B$ by the equivalence relation generated by $R$, which is computed as the image of the coproduct of $R^n$ for $n \geq 0$, also preserved. Hence the coequalizer of $\Hom(Q,f)$ and $\Hom(Q,g)$ is the quotient of $\Hom(Q,B)$ by the generated equivalence relation, and is precisely $\Hom(Q,C)$. We conclude that $\Hom(Q,-)$ preserves all coequalizers.
\end{proof}

Considering the construction of $\check{M}$ described earlier and Lemma \ref{lem:idempotent}, it follows that each idempotent of a monoid $M$ induces an essential point of $\Setswith{M}$; we shall see a more precise correspondence in \ref{crly:idem} later. In particular, there are typically many essential points of $\Setswith{M}$, but not every such is a candidate for an essential \textit{surjective} point. Let us return to the more general setting briefly.

\begin{lemma}
\label{lem:monad}
Let $\phi$ be the essential point of a Grothendieck topos $\Ecal$ induced by an indecomposable projective object $Q$. Then the following are equivalent:
\begin{enumerate}
	\item $\phi^*$ is comonadic (equivalently, $\phi$ is surjective).
	\item $\phi^*$ is faithful.
	\item $\phi^*$ is conservative.
	\item $\phi^*$ is monadic.
	\item $Q$ is a \textbf{separator} (or \textbf{generator}).
\end{enumerate}
\end{lemma}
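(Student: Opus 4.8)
The plan is to make conservativity of $\phi^*$ the hub through which all five conditions pass, exploiting that $\phi^* \cong \Hom_\Ecal(Q,-)$ (by Lemma \ref{lem:proj}) is the inverse image of a geometric morphism, hence preserves finite limits and sits inside the string of adjoints $\phi_! \dashv \phi^* \dashv \phi_*$. First I would dispose of the two ``soft'' equivalences. Condition (5) is literally a restatement of (2): to say $Q$ is a separator is to say that for any distinct parallel pair $f,g : A \rightrightarrows B$ there is some $q : Q \to A$ with $fq \neq gq$, i.e. $\Hom(Q,f) \neq \Hom(Q,g)$, which is exactly faithfulness of $\phi^* = \Hom(Q,-)$. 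Next I would record that both monadic and comonadic functors reflect isomorphisms, so $(1) \Rightarrow (3)$ and $(4) \Rightarrow (3)$ hold for free.

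The heart of the argument is the equivalence $(2) \Leftrightarrow (3)$, where the topos structure does the real work. For $(2) \Rightarrow (3)$: a faithful functor reflects both monomorphisms and epimorphisms, so if $\phi^*(f)$ is an isomorphism -- in particular both mono and epi in $\Set$ -- then $f$ is both mono and epi in $\Ecal$; since a topos is balanced, $f$ is an isomorphism. For the converse $(3) \Rightarrow (2)$, suppose $\phi^*(f) = \phi^*(g)$ for $f,g : A \rightrightarrows B$ and let $E \rightarrowtail A$ be their equalizer. As $\phi^*$ preserves finite limits, $\phi^*(E) \to \phi^*(A)$ is the equalizer of two equal maps, hence an isomorphism; conservativity then forces $E \rightarrowtail A$ to be an isomorphism, so $f = g$. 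This is the one place where ``conservative $\Rightarrow$ faithful'', false for general functors, is rescued by preservation of finite limits.

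Finally I would bootstrap conservativity up to the two (co)monadicity statements via Beck's theorem in its ``crude'' form. For $(3) \Rightarrow (4)$: $\phi^*$ is a right adjoint (to $\phi_!$), $\Ecal$ is cocomplete so has all coequalizers, and $\phi^*$ preserves them (being itself a left adjoint in $\phi^* \dashv \phi_*$, it preserves all colimits); a conservative functor that preserves the coequalizers its cocomplete domain already possesses creates coequalizers of $\phi^*$-split pairs, so $\phi^*$ is monadic. Dually, for $(3) \Rightarrow (1)$: $\phi^*$ is a left adjoint (to $\phi_*$), $\Ecal$ is complete, and $\phi^*$ preserves finite limits hence equalizers, so the same conservativity argument run in the opposite direction makes $\phi^*$ comonadic, i.e. $\phi$ surjective. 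Together with $(1),(4) \Rightarrow (3)$ and $(2) \Leftrightarrow (3) \Leftrightarrow (5)$, this closes the cycle.

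The main obstacle I anticipate is marshalling the (co)monadicity step cleanly: one must keep track of which adjoint is being used, so that Beck's theorem is applied to coequalizers for monadicity but to equalizers for comonadicity, and check that the preservation hypotheses -- all colimits on one side, finite limits on the other -- are exactly the ones supplied by $\phi^*$ being a finite-limit-preserving functor equipped with both adjoints. Everything else reduces to the single observation that, for such a functor, conservativity and faithfulness coincide.
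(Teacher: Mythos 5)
Your proposal is correct and follows essentially the same route as the paper: conservativity is the hub, faithfulness and conservativity are identified via balancedness of the topos, (2)$\Leftrightarrow$(5) is definitional, and Beck's (crude) monadicity theorem handles the (co)monadicity statements using the adjoint string $\phi_! \dashv \phi^* \dashv \phi_*$. The only difference is that where the paper cites \cite[Lemma A4.2.6]{Ele} for the equivalence of (1), (2) and (3), you supply the explicit arguments yourself (the equalizer trick for conservative $\Rightarrow$ faithful, and the dual Beck argument for comonadicity), which makes the proof self-contained but does not change its substance.
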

\begin{proof}
(1 $\Leftrightarrow$ 2 $\Leftrightarrow$ 3) is a special case of Lemma A4.2.6 in \cite{Ele}. Being faithful, $\phi^*$ reflects monos and epis. Since $\Ecal$ is balanced, this is sufficient to reflect isomorphisms.

(2 $\Leftrightarrow$ 5) By definition.

(3 $\Leftrightarrow$ 4) Certainly $\Ecal$ has and $\phi^*$ preserves coequalizers of $\phi^*$-split pairs (and even coequalizers of reflexive pairs), since it has a left and right adjoint. Thus $\phi^*$ is monadic by Beck's monadicity theorem if and only if it is conservative.
\end{proof}

Applied to $\Setswith{M}$, the statement that the object $Q$ corresponding to the canonical point should be a separator is not especially surprising, since the objects of a topos coming from a site representing it always form a separating family, and in this instance there is just one object. More generally, we find that such separators are related very strongly to one another.

\begin{lemma}
\label{lem:retracts}
In an infinitary extensive, locally small category (and in particular in any Grothendieck topos) any pair of indecomposable projective separators are retracts of one another, and conversely if $Q, Q'$ are retracts of one another and $Q$ is an indecomposable projective separator, so is $Q'$.
\end{lemma}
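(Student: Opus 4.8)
The plan is to treat the two implications separately, combining three ingredients: that a separator $Q$ makes the canonical map $\coprod_{f \colon Q \to A} Q \to A$ into an epimorphism for every object $A$; that projectivity lets one lift the identity along such an epimorphism to a section; and that a morphism out of an indecomposable object into a coproduct factors through a single summand (exactly the argument reproduced in the proof of Lemma \ref{lem:proj}, which is valid in any infinitary extensive category).

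For the forward direction, suppose $Q$ and $Q'$ are both indecomposable projective separators. First I would use that $Q$ is a separator to see that the canonical morphism $p \colon \coprod_{f \colon Q \to Q'} Q \too Q'$, whose restriction to the copy indexed by $f$ is $f$ itself, is an epimorphism. Since $Q'$ is projective, $\id_{Q'}$ lifts along $p$ to a section $s \colon Q' \to \coprod_f Q$ with $ps = \id_{Q'}$. Since $Q'$ is indecomposable, $s$ factors through a single summand, say $s = \iota_{f_0} s'$ with $s' \colon Q' \to Q$; composing with $p$ then yields $f_0 s' = \id_{Q'}$, exhibiting $Q'$ as a retract of $Q$. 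Interchanging the roles of $Q$ and $Q'$ gives the reverse retraction, so the two are retracts of one another.

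For the converse, suppose $Q$ and $Q'$ are retracts of one another and $Q$ is an indecomposable projective separator; I must check $Q'$ inherits all three properties. Projectivity is the standard fact that a retract of a projective is projective (lift through the retraction onto $Q$, then compose back). That $Q'$ is a separator follows from the retraction $q \colon Q' \to Q$, $j \colon Q \to Q'$ with $qj = \id_Q$: given distinct parallel $f, g$, a map $u \colon Q \to \dom(f)$ with $fu \neq gu$ produces $uq \colon Q' \to \dom(f)$, which still separates $f$ and $g$ since $f(uq) = g(uq)$ would, after precomposition with $j$, force $fu = gu$. The genuinely delicate point is indecomposability. Writing $i \colon Q' \to Q$, $p \colon Q \to Q'$ with $pi = \id_{Q'}$ and assuming $Q' \cong A \sqcup B$, I would use indecomposability of $Q$ to factor $p$ through a single summand, say $p = \iota_A h$; since $p$ is a split epimorphism, $\iota_A$ then acquires the right inverse $hi$, while being a coproduct injection it is a monomorphism by disjointness of coproducts, so $\iota_A$ is an isomorphism. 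Non-initiality of $Q'$ follows from strictness of the initial object in an extensive category (a map $Q \to 0$ would force $Q \cong 0$, contradicting indecomposability of $Q$), completing the verification.

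The main obstacle I anticipate is indecomposability in the converse: a retract of an indecomposable object need not be indecomposable in an arbitrary category, so the argument must lean essentially on the extensive structure — disjointness and universality of coproducts — precisely to turn ``split epimorphism and monomorphism'' into ``isomorphism'' and to license the factor-through-a-summand step. The separator-implies-canonical-epimorphism step is transparent in a Grothendieck topos (classify the image of the canonical map and invoke faithfulness of $\Hom(Q,-)$), and is the only place where one should double-check what is required of the ambient category beyond extensivity.
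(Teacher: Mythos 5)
Your proof is correct and follows essentially the same route as the paper's: separator gives the canonical epimorphism $\coprod_{f\colon Q\to Q'} Q \too Q'$, projectivity splits it, indecomposability funnels the section through one summand, and the converse transfers the three properties along the retractions. The only difference is that you carefully justify indecomposability of the retract via extensivity (split epi coproduct inclusion $+$ mono $\Rightarrow$ iso), a step the paper simply asserts with "any retract of such an object also is" — a worthwhile elaboration, since that claim genuinely needs the extensive structure.
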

\begin{proof}
Let $\Ccal$ be an extensive category and suppose $Q, Q'$ are indecomposable projective separators. The collection of all morphisms $Q \to Q'$ is jointly epic, which is to say that the composite morphism $\coprod Q \too Q'$ is epic. Since $Q'$ is projective, this epimorphism splits; there is some $Q' \hookrightarrow \coprod Q$. But $Q'$ being indecomposable forces this morphism to factor through one of the coproduct inclusions, making $Q'$ a retract of $Q$. A symmetric argument makes $Q$ a retract of $Q'$.

Now suppose $Q,Q'$ are retracts of one another and $Q$ is an indecomposable projective separator. $Q'$ is indecomposable projective since any retract of such an object also is, and $Q'$ is a separator since $\Hom(Q',-)$ surjects onto $\Hom(Q,-)$ by composition with the epi $Q' \too Q$, so the former functor is conservative when the latter is.
\end{proof}

Now that we have established strong constraints on the surjective essential points of any topos, we show in this section that any such point gives a canonical representation of the topos as the category of presheaves on a monoid.

Given an indecomposable projective separator $Q$, $\phi^* = \Hom_{\Ecal}(Q,-)$ has left adjoint $\phi_! : \Set \to \Ecal$ given by $\phi_!(A) = \coprod_{a \in A} Q \cong A \times Q$, since $\phi_!$ must preserve coproducts and $\phi_!(1) \cong Q$ from the proof of Lemma \ref{lem:proj}.

\begin{lemma}
\label{lem:monadoid}
Let $\Phi := \phi^* \phi_!$ be the functor part of the monad induced by the essential surjective point $\phi$ as above. Then $\Phi(1) = \Hom_{\Ecal}(Q,Q)$, and the unit and multiplication morphisms exactly identify $\Phi(1)$ with the monoid of $\Ecal$-endomorphisms of $Q$.
\end{lemma}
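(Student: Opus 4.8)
The plan is to unwind both sides as explicit functors of sets and to identify the monad structure with composition in $\Ecal$. First I would compute the functor part of the monad on the terminal object: since $\phi^* = \Hom_\Ecal(Q,-)$ and $\phi_!(1) \cong Q$, we immediately get
\[
\Phi(1) = \phi^*\phi_!(1) = \Hom_\Ecal(Q, Q),
\]
so the underlying set of $\Phi(1)$ is the endomorphism set of $Q$. The substance of the claim is that the monad structure maps agree with the monoid structure of $\End_\Ecal(Q)$, namely that the unit picks out $\id_Q$ and the multiplication is composition (in one of the two possible orders, to be fixed by the convention).

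The key step is to track the multiplication map $\mu_1 \colon \Phi\Phi(1) \to \Phi(1)$ carefully. The multiplication of the adjunction-induced monad is $\mu = \phi^* \epsilon \phi_!$, where $\epsilon \colon \phi_! \phi^* \Rightarrow \id_\Ecal$ is the counit of $\phi_! \dashv \phi^*$. Evaluated appropriately, the counit component at $Q$ is the canonical evaluation-type map $\phi_!\phi^*(Q) = \coprod_{g \in \Hom_\Ecal(Q,Q)} Q \to Q$ whose restriction to the copy indexed by $g$ is exactly $g \colon Q \to Q$. Applying $\phi^* = \Hom_\Ecal(Q,-)$ and using that $\phi^*$ preserves the relevant coproducts (by indecomposability of $Q$, as established in Lemma \ref{lem:proj}), I would show that $\mu_1$ sends a pair $(g, f)$ of endomorphisms to their composite $g \circ f$ (respectively $f \circ g$). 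Similarly, the unit $\eta_1 \colon 1 \to \Phi(1) = \Hom_\Ecal(Q,Q)$ is the adjunction unit, which transposes $\id_{\phi_!(1)} = \id_Q$ and hence selects $\id_Q$; this matches the identity element of the endomorphism monoid.

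Concretely, I would carry this out by chasing a single generic element through the defining commutative diagrams: take arbitrary $f, g \in \Hom_\Ecal(Q,Q)$, realize the element of $\Phi\Phi(1) = \Hom_\Ecal(Q, \coprod_{g} Q)$ that they represent (using the coproduct decomposition delivered by $\phi^*$ preserving coproducts), and verify that applying $\phi^*\epsilon$ returns $g \circ f$. The unit law and associativity for $\Phi$ then become, under this identification, precisely the identity and associativity axioms for composition in $\Ecal$, so the monoid structures coincide on the nose.

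The main obstacle I expect is the bookkeeping of the coproduct indexing under $\phi^*$: one must be careful that $\phi^*$ genuinely commutes with the coproduct $\coprod_g Q$ appearing in $\phi_!\phi^*(Q)$, so that elements of $\Phi\Phi(1)$ really are faithfully encoded as \emph{pairs} of endomorphisms rather than as some larger or smaller set. This is exactly where indecomposability and projectivity of $Q$ are used, via Lemma \ref{lem:proj}, which guarantees $\Hom_\Ecal(Q, \coprod_{i} A_i) \cong \coprod_i \Hom_\Ecal(Q, A_i)$. Once this identification is pinned down, the rest is a routine naturality chase, and it also fixes the handedness convention (whether the resulting monoid is $\End_\Ecal(Q)$ or its opposite), which should be stated explicitly to match the right-action conventions used for $\Setswith{M}$.
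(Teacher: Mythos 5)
Your proof is correct and takes essentially the same route as the paper: identify $\Phi^2(1)$ with pairs of endomorphisms via preservation of coproducts, then check that the unit and multiplication of the monad are the identity and composition in $\Hom_{\Ecal}(Q,Q)$. The only cosmetic differences are that you derive coproduct preservation from Lemma \ref{lem:proj} (indecomposability and projectivity of $Q$) whereas the paper notes that $\Phi$ preserves coproducts because it has a right adjoint, and that the paper leaves the final step as a ``direct computation'' which you carry out explicitly via the counit-as-evaluation chase.
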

\begin{proof}
The first part is immediate by the preceding comments. By a similar argument to the above, since $\Phi$ has a right adjoint, it preserves coproducts, and hence $\Phi(X) \cong X \times \Phi(1)$; in particular, $\Phi^2(1) \cong \Phi(1) \times \Phi(1)$. The remainder follows by direct computation. \footnote{We must thank Todd Trimble for a valuable discussion on MathOverflow and via email in which he suggested this short proof.}
\end{proof}

\begin{thm}
\label{thm:point}
Let $\Ecal$ be any category. The following are equivalent:
\begin{enumerate}
	\item $\Ecal$ is equivalent to $\Setswith{M}$ for some monoid $M$.
	\item There exists a functor $\Ecal \to \Set$ which is monadic and comonadic.
	\item There exists a functor $\Ecal \to \Set$ which is monadic such that the free algebra on $1$ is indecomposable and projective.
	\item $\Ecal$ is a Grothendieck topos with at least one indecomposable projective separator.
	\item $\Ecal$ is a topos admitting an essential surjective point, $\Set \to \Ecal$.
\end{enumerate}
In particular, such an $M$ is recovered as the free algebra on the terminal object of $\Set$ for the monad $\Phi$ induced by the essential surjective point.
\end{thm}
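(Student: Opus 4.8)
\section*{Proof proposal}

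The plan is to prove a web of implications all funneling through condition~(1), using the structural lemmas already established. The backbone is the equivalence of the three genuinely topos-theoretic conditions (1), (4), (5), proved as the cycle (1)~$\Rightarrow$~(5)~$\Rightarrow$~(4)~$\Rightarrow$~(1). The first implication is immediate: the canonical point of $\Setswith{M}$, whose inverse image is the forgetful functor $U$, is an essential surjective point, as recorded in Section~\ref{sec:properties0}. For (5)~$\Rightarrow$~(4), Lemma~\ref{lem:proj} presents the inverse image of the essential point as $\Hom_\Ecal(Q,-)$ for an indecomposable projective $Q$, and Lemma~\ref{lem:monad} (surjective $\Leftrightarrow$ separator) upgrades $Q$ to a separator; since $\Ecal$ is by hypothesis a Grothendieck topos, this is exactly~(4). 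The implication (4)~$\Rightarrow$~(1) is the heart of the theorem: given an indecomposable projective separator $Q$, Lemma~\ref{lem:proj} makes $\phi^* = \Hom_\Ecal(Q,-)$ the inverse image of an essential point and Lemma~\ref{lem:monad} makes it monadic, so $\Ecal$ is equivalent to the category $\Set^{\Phi}$ of algebras for the induced monad $\Phi = \phi^*\phi_!$. By Lemma~\ref{lem:monadoid}, $\Phi(1) = \End_\Ecal(Q) =: M$ and the monad structure is precisely that of the free $M$-set monad, so $\Set^{\Phi} \simeq \Setswith{M}$; composing yields $\Ecal \simeq \Setswith{M}$ and simultaneously verifies the final ``in particular'' clause, since $M$ is by construction the underlying set $\Phi(1)$ of the free $\Phi$-algebra on the terminal object.

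To attach condition~(2) I would prove (1)~$\Rightarrow$~(2) and (2)~$\Rightarrow$~(1). The forward direction is simply the observation from Section~\ref{sec:properties0} that the forgetful functor $U$ of $\Setswith{M}$ is monadic and comonadic. The converse does not presuppose that $\Ecal$ is a topos, and I would handle it by exploiting both adjoints: a monadic-and-comonadic $F$ has a left adjoint $F_!$ (monadicity) and a right adjoint (comonadicity), so both $F$ and $F_!$ are left adjoints and preserve all colimits; hence the monad functor $T := F F_!$ preserves colimits, in particular coproducts. A coproduct-preserving endofunctor of $\Set$ is necessarily of the form $(-)\times T(1)$, because every set is a coproduct of copies of $1$; the monad structure then makes $M := T(1)$ a monoid with $\Set^{T} \simeq \Setswith{M}$, and monadicity of $F$ gives $\Ecal \simeq \Set^{T} \simeq \Setswith{M}$.

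Condition~(3) I would attach by (1)~$\Rightarrow$~(3) and (3)~$\Rightarrow$~(1). The forward direction is easy: in $\Setswith{M}$ the forgetful functor is monadic and its free algebra on $1$ is the representable right $M$-set $M$, which is indecomposable and projective. The converse is where I expect the main obstacle to lie. The natural strategy mirrors the one for (2): write $F \cong \Hom_\Ecal(Q,-)$ with $Q = F_!(1)$, use projectivity to see that $F$ preserves epimorphisms and indecomposability to see that $F$ preserves coproducts, whence $T = F F_!$ preserves coproducts and the argument of the previous paragraph applies. The delicate step is ``indecomposable $\Rightarrow \Hom_\Ecal(Q,-)$ preserves coproducts'': as the proof of Lemma~\ref{lem:proj} shows, this equivalence leans on $\Ecal$ being infinitary extensive, i.e.\ on coproducts being disjoint and stable under pullback. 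Condition~(3) only supplies a category monadic over $\Set$, and the weak, decomposition-based form of indecomposability in Definition~\ref{dfn:indecproj} is strictly weaker than coproduct-preservation outside extensive categories (a category of algebras over $\Set$ need not be extensive, and can even possess a zero object). The crux of this implication is therefore to bootstrap enough exactness of $\Set^{T}$ to license the coproduct-preservation argument; once extensivity is in place, the identification of the monad as $(-)\times M$ proceeds exactly as before.
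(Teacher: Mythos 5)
Your backbone is correct and is essentially the paper's own argument: the cycle (1) $\Rightarrow$ (5) $\Rightarrow$ (4) $\Rightarrow$ (1) through Lemmas \ref{lem:proj}, \ref{lem:monad} and \ref{lem:monadoid}, with $M$ recovered as $\Phi(1)$, and your direct proof of (2) $\Rightarrow$ (1) (both $F$ and $F_!$ are left adjoints, so $T = FF_!$ preserves coproducts and is therefore isomorphic to $(-)\times T(1)$, whose monad structures are monoid structures on $T(1)$) is exactly the mechanism driving Lemma \ref{lem:monadoid}.

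The genuine gap is the step you defer in (3) $\Rightarrow$ (1), and it cannot be repaired in the way you propose: with the decomposition-based reading of ``indecomposable'' from Definition \ref{dfn:indecproj}, the implication is not merely delicate but \emph{false}, so there is no extensivity of $\Set^T$ to bootstrap. Take $\Ecal = \Set_*$, pointed sets, the algebras for the monad $T(X) = X \sqcup 1$. The forgetful functor is monadic; epimorphisms of pointed sets are surjective, so the free algebra $Q = T(1)$ on one generator is projective; and $Q$ is indecomposable in the sense of Definition \ref{dfn:indecproj}, since $Q$ is not initial (the initial pointed set is the singleton) and in any coproduct decomposition $Q \cong A \sqcup B$ (a wedge) one summand must be the zero object, whence the other inclusion is an isomorphism. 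Thus $\Set_*$ satisfies (3) verbatim, yet it has a zero object, so it is not a topos, let alone any $\Setswith{M}$; correspondingly $\Hom_{\Set_*}(Q,-)$ fails to preserve coproducts. The correct repair is not more exactness but a stronger hypothesis: ``indecomposable'' in (3) must be read in the coproduct-preservation sense of Definition \ref{dfn:projectives} (which is Banaschewski's actual condition in \cite{FCM}, and which coincides with Definition \ref{dfn:indecproj} only once $\Ecal$ is already known to be a Grothendieck topos, cf.\ Proposition \ref{prop:connected-objects}). Under that reading there is nothing left to prove: your argument for (2) applies verbatim, since $F \cong \Hom_{\Ecal}(Q,-)$ and $F_!$ both preserve coproducts, so $T \cong (-)\times T(1)$ and monadicity finishes the job. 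For what it is worth, the paper's own proof shares this defect --- it dispatches (3) $\Leftrightarrow$ (4) with the single remark that monadic functors are faithful, which yields the separator but silently assumes $\Ecal$ is a Grothendieck topos --- so you located the weak point correctly; you just misjudged it as a hard step rather than a step that fails outright and forces a reinterpretation of the hypothesis.
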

\begin{proof}
Most of the proof is already established; the third point (a Corollary in \cite{FCM}) is equivalent to the fourth since monadic functors are faithful.

It remains to observe that if $M:= \Phi(1)$ is the monoid obtained from the monad as in Lemma \ref{lem:monadoid}, then an algebra for $\Phi$ is exactly a right $M$-set, and hence by monadicity, $\Ecal \simeq \Setswith{M}$.
\end{proof}

While we shall not explore this in any detail, we should note that Theorem \ref{thm:point} depended on the properties of $\Set$ which guaranteed that the monad $\Phi$ should have the particular form it does; more care would be needed when working over a more general topos.

\begin{rmk}
\label{rmk:endopt}
Before we conclude this section, we record that there is another approach to recovering $M$ from the canonical essential surjective point of $\Ecal \simeq \Setswith{M}$ that is somewhat easier to generalize, variants of it having appeared in \cite{TGT} and \cite{LGT} to respectively recover topological and localic group representations of toposes from their points. This approach shall generalize to the method we employ in later chapters on topological monoids.

Since the inverse image functor of the point is representable, by the usual Yoneda argument there is an isomorphism of monoids:
\[\End(U) := \Nat(\Hom_{\Ecal}(Q,-),\Hom_{\Ecal}(Q,-)) \cong \Hom_{\Ecal}(Q,Q)\op \cong M\op\]
and this provides another way of recovering $M$.
\end{rmk}

\section{Morphisms between monoids}
\label{sec:morphisms0}

A monoid homomorphism $f:N \to M$ induces an essential geometric morphism $\Setswith{N} \to \Setswith{M}$ whose inverse image is the restriction of $M$-actions along $N$. This morphism is always a surjection, being induced by a functor which is surjective on objects (see \cite[Example A4.2.7(b)]{Ele}). Notably, the canonical point studied in Section \ref{sec:disc} is induced by the inclusion of the trivial monoid into a given monoid $M$. This is not the only possible source of essential surjections, however: any equivalence is an essential surjection and we have already seen in Theorem \ref{thm:discMorita} that a monoid Morita-equivalent to $M$ is merely a sub-\textit{semigroup} of $M$. Let us examine inclusions of subsemigroups of this kind.

\begin{lemma}
\label{lem:includes}
Each (semigroup homomorphism) inclusion of $N = eMe$ into $M$ produces a fully faithful inclusion $\check{N} \hookrightarrow \check{M}$ of the respective idempotent completions. Hence the induced essential geometric morphism $\Setswith{eMe} \to \Setswith{M}$ is an \textbf{inclusion} (its direct image is full and faithful).
\end{lemma}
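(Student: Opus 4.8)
The plan is to describe the functor $\check{N} \to \check{M}$ explicitly at the level of elements, verify full faithfulness by a short direct computation, and then deduce that the induced geometric morphism is an inclusion from the standard behaviour of (essential) geometric morphisms induced by fully faithful functors, combined with Lemma~\ref{lem:idempotent}.

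First I would unwind the two idempotent completions. Writing $N = eMe$, whose identity element is $e$, an idempotent of $N$ is an element $d \in eMe$ with $d^2 = d$; since multiplication in $N$ is inherited from $M$, such a $d$ is simultaneously an idempotent of $M$, namely one satisfying $ed = de = d$. Thus the objects of $\check{N}$ form a subset of the objects of $\check{M}$, and a morphism $\underline{d} \to \underline{d'}$ of $\check{N}$ is an element $f \in eMe$ with $fd = f = d'f$, which is visibly also a morphism $\underline{d} \to \underline{d'}$ in $\check{M}$. This data assembles into a functor $\check{N} \to \check{M}$ that is the identity on objects-as-idempotents and on morphisms-as-elements, so functoriality (preservation of the identity $d$ of $\underline{d}$ and of composites $hf$, both computed as products in $M$) is immediate, and it is injective on objects.

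Faithfulness is then trivial, since a morphism of $\check{N}$ is literally an element of $M$ which the functor sends to that same element. The substance of the lemma is fullness, and this is the only step I expect to require any work. Given idempotents $d, d'$ of $N$ and a morphism $g \colon \underline{d} \to \underline{d'}$ of $\check{M}$, so $g \in M$ with $gd = g = d'g$, I must show $g \in eMe$. The key observation is that $d, d' \in eMe$ force $ed' = d'$ and $de = d$, whence $eg = (ed')g = d'g = g$ and $ge = g(de) = gd = g$; combining these gives $ege = g$, so $g \in eMe = N$. Thus $g$ is already a morphism $\underline{d} \to \underline{d'}$ of $\check{N}$ lying over itself, and the functor is full. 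Hence $\check{N} \hookrightarrow \check{M}$ is a fully faithful (full subcategory) inclusion.

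For the final assertion I would invoke Lemma~\ref{lem:idempotent} to identify $\Setswith{eMe} \simeq \Setswith{\check{N}}$ and $\Setswith{M} \simeq \Setswith{\check{M}}$, so that the essential geometric morphism in question is the one induced by $\check{N} \hookrightarrow \check{M}$, with inverse image the restriction of presheaves along this inclusion and direct and `lower' images its right and left Kan extensions. It is standard (see the theory of geometric morphisms induced by functors in \cite{Ele}) that such a morphism is an inclusion when the inducing functor is fully faithful: the direct image $\phi_*$ is the right Kan extension, and restricting a pointwise Kan extension back along a fully faithful functor recovers the original presheaf, so the counit $\phi^* \phi_* \to \id$ is an isomorphism and $\phi_*$ is full and faithful. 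The only point needing care is matching the directions, namely that the inverse image is restriction along $\check{N} \hookrightarrow \check{M}$; with that fixed, the conclusion is immediate.
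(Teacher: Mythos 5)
Your proof is correct and follows essentially the same route as the paper: the same element-wise computation (using $ed'=d'$ and $de=d$ to conclude $ege=g$, hence $g\in eMe$) establishes that $\check{N}$ is a full subcategory of $\check{M}$, and the geometric-morphism statement is deduced from full faithfulness of the inducing functor. The only difference is that where the paper simply cites \cite[Example A4.2.12(b)]{Ele} for the final step, you sketch the underlying standard argument (the counit $\phi^*\phi_* \to \id$ is an isomorphism because restricting a pointwise right Kan extension along a fully faithful functor recovers the original presheaf), which is exactly the content of that citation.
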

\begin{proof}
Observe that $eMe$ consists precisely of those elements $m \in M$ such that $eme = m$; in particular the idempotents of $eMe$ are indexed by idempotents $f \in M$ with $ef = f = fe$. In the idempotent completion $\check{M}$, recall that the morphisms $\underline{f} \to \underline{f}'$ (with $f,f' \in eMe$) are those $m \in M$ such that $f'mf = m$. But then $eme = ef'mfe = f'mf = m$. Hence $m$ lies in $N$ and $\check{N}$ is precisely the full subcategory of $\check{M}$ on the objects corresponding to the idempotents $f$ with $ef = f = fe$.

The proof that this makes the resulting geometric morphism an inclusion is described in \cite[Example A4.2.12(b)]{Ele}.
\end{proof}

More generally, \textit{any} semigroup homomorphisms $f:N \to M$ factors canonically as a monoid homomorphism to $f(1)Mf(1)$ followed by an inclusion of the above form. The equivalence in Theorem \ref{thm:2equiv0} below lifts this canonical factorization to the topos level, where it is a special case of the surjection--inclusion factorization of geometric morphisms described in \cite[Theorem A4.2.10]{Ele}.

\begin{dfn}
\label{dfn:conjugation}
Let $f,g:N \to M$ be semigroup homomorphisms. A \textbf{conjugation}\footnote{This is the author's own terminology.} $\alpha$ from $f$ to $g$, denoted $\alpha:f \Rightarrow g$ is an element $\alpha \in M$ such that $\alpha f(1') = \alpha = g(1') \alpha$ and for every $n \in N$, $\alpha f(n) = g(n) \alpha$. The conjugation $\alpha$ is said to be \textbf{invertible} if there exists a conjugation $\alpha': g \Rightarrow f$ with $\alpha' \alpha = f(1)$ and $\alpha \alpha' = g(1)$; note that $\alpha$ need not be a unit of $M$ to be invertible as a conjugation.
\end{dfn}

\begin{prop}
\label{prop:extend}
Let $M, N$ be monoids. Then functors $\check{f},\check{g}:\check{N} \to \check{M}$ correspond uniquely to semigroup homomorphisms $f,g: N \to M$, and any natural transformation $\check{\alpha}: \check{f} \rightarrow \check{g}$ is determined by the conjugation $\alpha = \check{\alpha}_{1'}:f \Rightarrow g$. A conjugation is invertible if and only if it corresponds to a natural isomorphism.
\end{prop}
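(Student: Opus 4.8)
The plan is to derive both correspondences from the universal property of the idempotent completion, supplemented by the observation that every object of $\check{N}$ is a retract of $\underline{1'}$.

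\emph{Functors versus semigroup homomorphisms.} Since $\check{M}$ is idempotent complete, the inclusion $N \hookrightarrow \check{N}$ induces a bijection between functors $\check{N} \to \check{M}$ and functors $N \to \check{M}$. A functor out of the one-object category $N$ is the data of an object $\underline{e} \in \check{M}$ (an idempotent $e \in M$) together with a monoid homomorphism $N \to \End_{\check{M}}(\underline{e}) = eMe$; post-composing with the inclusion $eMe \hookrightarrow M$ produces a semigroup homomorphism $f \colon N \to M$ with $f(1') = e$. Conversely any semigroup homomorphism $f$ has $f(1')$ idempotent and factors as $N \to f(1')Mf(1') \hookrightarrow M$, and these two passages are mutually inverse. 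This is precisely the asserted bijection between $\check{f},\check{g}$ and $f,g$, and it is here that \emph{semigroup} rather than monoid homomorphisms are forced, since $\check{f}$ may send $\underline{1'}$ to a non-terminal idempotent.

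\emph{Natural transformations versus conjugations.} First I would check that $\alpha := \check{\alpha}_{1'}$ is a conjugation: as a morphism $\underline{f(1')} \to \underline{g(1')}$ of $\check{M}$ it satisfies $\alpha f(1') = \alpha = g(1')\alpha$, and naturality of $\check{\alpha}$ against the endomorphisms $n \colon \underline{1'} \to \underline{1'}$ (the elements of $N$) reads $\alpha f(n) = g(n)\alpha$, which are exactly the remaining conjugation axioms. To see that $\alpha$ determines $\check{\alpha}$, I would use that each $\underline{d}$ is a retract of $\underline{1'}$: the element $d$ gives maps $\underline{d} \xrightarrow{d} \underline{1'}$ and $\underline{1'} \xrightarrow{d} \underline{d}$ whose composite $\underline{d} \xrightarrow{d} \underline{1'} \xrightarrow{d} \underline{d}$ is $\id_{\underline{d}}$. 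Feeding $d \colon \underline{1'} \to \underline{d}$ into the naturality square gives $\check{\alpha}_{\underline{d}}\, f(d) = g(d)\,\alpha$, and since $f(d)$ is the identity on the domain $\underline{f(d)}$ of $\check{\alpha}_{\underline{d}}$, the left side is just $\check{\alpha}_{\underline{d}}$; hence $\check{\alpha}_{\underline{d}} = g(d)\alpha = \alpha f(d)$. Conversely, given a conjugation $\alpha$ I would \emph{define} $\check{\alpha}_{\underline{d}} := g(d)\alpha$ and verify that it is a morphism $\underline{f(d)} \to \underline{g(d)}$, that its naturality square commutes, and that its value at $\underline{1'}$ is $\alpha$; each is a one-line calculation using $\alpha f(n) = g(n)\alpha$, the idempotency of $f(d),g(d)$, and the relations $nd = n = d'n$ defining a morphism $n \colon \underline{d} \to \underline{d'}$. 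This yields the bijection between natural transformations and conjugations.

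\emph{Invertibility.} I would propagate the two defining identities of an invertible conjugation through the component formula. If $\alpha' \colon g \Rightarrow f$ is inverse to $\alpha$, then the associated $\check{\alpha}'$ has $\check{\alpha}'_{\underline{d}} = f(d)\alpha' = \alpha' g(d)$, and computing in $M$ gives $(\check{\alpha}' \circ \check{\alpha})_{\underline{d}} = f(d)\alpha'\, g(d)\alpha = f(d)\,\alpha'\alpha = f(d)\,f(1') = f(d) = \id_{\underline{f(d)}}$, using $\alpha' g(d) = f(d)\alpha'$ and $\alpha'\alpha = f(1')$; the symmetric computation with $\alpha\alpha' = g(1')$ shows $\check{\alpha} \circ \check{\alpha}' = \id_{\check{g}}$, so $\check{\alpha}$ is a natural isomorphism. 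Conversely, the inverse of a natural isomorphism $\check{\alpha}$ corresponds by the previous part to a conjugation $\alpha' \colon g \Rightarrow f$, and evaluating the two triangle identities at $\underline{1'}$ yields $\alpha'\alpha = f(1')$ and $\alpha\alpha' = g(1')$, so $\alpha$ is invertible as a conjugation.

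\emph{Main obstacle.} The genuinely delicate step is the reconstruction in the second part: one must keep careful track of which idempotents ($f(1')$, $g(1')$, $f(d)$, $g(d)$) act as identities in $\check{M}$, so that the retract computation produces a morphism with the correct source and target and the conjugation axioms are applied in the right positions. Everything else is formal once this bookkeeping is set up.
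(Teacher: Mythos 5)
Your proof is correct and follows essentially the same route as the paper's: functors are matched with semigroup homomorphisms by restriction along $N \hookrightarrow \check{N}$ and canonical extension, the components of $\check{\alpha}$ are recovered from $\alpha$ using the canonical retraction of each $\underline{d}$ through $\underline{1}'$ (yielding $\check{\alpha}_{\underline{d}} = g(d)\alpha = \alpha f(d)$), and invertibility is verified componentwise. The only differences are presentational: you invoke the universal property of the idempotent completion where the paper constructs the extension $\check{f}(\underline{e}') = \underline{f(e')}$ directly, and you spell out the invertibility computation that the paper leaves ``by inspection.''
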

\begin{proof}
Of course, $f$ is the restriction of $\check{f}$ to $N$ (that is, to the full subcategory on $\underline{1}'$). This produces a semigroup homomorphism $N \to M$, since it gives a monoid homomorphism from $N$ to $eMe$, where $e$ is the idempotent such that $\underline{e} = f(\underline{1}')$; this monoid then includes into $M$ via a semigroup homomorphism as in Lemma \ref{lem:includes}.

Conversely, any semigroup homomorphism $f$ extends uniquely to a functor $\check{f}: \check{N} \to \check{M}$, since the splittings of the idempotents of $M$ must be mapped to the splittings of their images, which forces $\check{f}(\underline{e}') := \underline{f(e')}$, and a morphism $m': \underline{e}' \to \underline{d}'$ must be sent to the conjugate of $f(m'): \underline{f(1')} \to \underline{f(1')}$ by the splitting components $\underline{f(e')} \hookrightarrow \underline{f(1')}$ and $\underline{f(1')} \too \underline{f(d')}$.

Similarly, $\check{\alpha}$ determines and is determined by $\alpha := \check{\alpha}_{\underline{1}'}$ because the horizontal morphisms in the naturality square
\[\begin{tikzcd}
f(\underline{1}') \ar[r, two heads, shift left] \ar[d, "\alpha"]
& f(\underline{e}') \ar[d, "\check{\alpha}_{\underline{e}'}"] \ar[l, hook, shift left]\\
g(\underline{1}') \ar[r, two heads, shift left]
& g(\underline{e}') \ar[l, hook, shift left]
\end{tikzcd}\]
splits and $\alpha$ defined in this way is a conjugation by the definition of the morphisms in $\check{M}$ and by the conditions imposed by the naturality square.

Finally, $\check{\alpha}$ is a natural isomorphism if and only if $\alpha$ is an isomorphism in $\check{M}$, which by inspection corresponds to the condition in Definition \ref{dfn:equiv}.
\end{proof}

By introducing 2-cells, we have constructed a 2-category $\Mon_s$ of monoids, semigroup homomorphisms between them, and conjugations between those. In this setting it is appropriate to explicitly state the relevant notion of equivalence imposed by the 2-cells.

\begin{dfn}
\label{dfn:equiv}
A semigroup homomorphism $f: N \to M$ is an \textbf{equivalence} if there exists a further homomorphism $g:M \to N$, called its \textbf{pseudo-inverse}, along with invertible conjugations $\alpha: \id_{N} \Rightarrow gf$ and $\beta: fg \Rightarrow \id_{M}$.
\end{dfn}

Let $\TOP^*_{\mathrm{ess}}$ be the 2-category whose objects are Grothendieck toposes equipped with an essential surjective point, whose morphisms are essential geometric morphisms (not required to commute with the designated point), and whose 2-cells are geometric transformations, which are natural transformations between inverse image functors.

\begin{thm}
\label{thm:2equiv0}
The functor $M \mapsto \Setswith{M}$ is the object part of a 2-equivalence
\[\Mon_s\co \simeq \TOP^*_{\mathrm{ess}}.\]
\end{thm}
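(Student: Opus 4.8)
The plan is to exhibit $\Setswith{-}$ as a $2$-functor $\Mon_s\co \to \TOP^*_{\mathrm{ess}}$ and then verify the two conditions characterising a $2$-equivalence: essential surjectivity on objects, and that the induced functor on each hom-category is an equivalence (local equivalence). The appearance of $(-)\co$ rather than $\Mon_s$ itself is forced by a reversal of $2$-cells that I will track carefully: a conjugation $\alpha\colon f \Rightarrow g$ corresponds to a geometric transformation pointing the other way.

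First I would pin down the $2$-functor. On objects, send $M$ to $\Setswith{M}$ equipped with its canonical point $U=\Hom_{\Setswith M}(M,-)$ of Section \ref{sec:properties0}. On $1$-cells, send a semigroup homomorphism $f\colon N \to M$ to the essential geometric morphism induced by the functor $\check f\colon \check N \to \check M$ of Proposition \ref{prop:extend}, whose inverse image is restriction along $\check f$; functoriality follows from $\check g\check f = \check{gf}$ (a direct check from the construction in Proposition \ref{prop:extend}), from $\Setswith{\check N}\simeq\Setswith N$ (Lemma \ref{lem:idempotent}), and from the fact that restriction functors compose. On $2$-cells, send a conjugation $\alpha\colon f \Rightarrow g$, equivalently $\check\alpha\colon\check f\Rightarrow\check g$ by Proposition \ref{prop:extend}, to the geometric transformation obtained by whiskering, which runs from the morphism induced by $g$ to the one induced by $f$; this reversal is exactly what the $\co$ records. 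Essential surjectivity is then Theorem \ref{thm:point}: any $(\Ecal,p)$ in $\TOP^*_{\mathrm{ess}}$ has $\Ecal\simeq\Setswith M$ for $M=\End_\Ecal(Q)$ with $Q=p_!(1)$, and under this equivalence $p$ becomes the canonical point, since $p^*=\Hom_\Ecal(Q,-)$ is carried to $U$.

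The substance is the local equivalence: for fixed $N,M$ I must show
\[\Mon_s\co(N,M)\longrightarrow\TOP^*_{\mathrm{ess}}(\Setswith N,\Setswith M)\]
is an equivalence. By Proposition \ref{prop:extend} the source is (the $2$-cell-reversed version of) the category of functors $\check N \to \check M$ and natural transformations, so it suffices to identify the target with that same category, with $2$-cells reversed. Given an essential $\phi\colon \Setswith N \to \Setswith M$ I would restrict its leftmost adjoint $\phi_!$ to the indecomposable projectives. The key point is that $\phi_!$ preserves them: an object of $\check N$ yields an essential point $q$ of $\Setswith N$ with $q_!(1)$ that object (Lemmas \ref{lem:idempotent}, \ref{lem:proj}); the composite $\phi\circ q$ is again an essential point, so by Lemma \ref{lem:proj} its generating object $(\phi q)_!(1)=\phi_!(q_!(1))$ is indecomposable projective, i.e.\ an object of $\check M$. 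Thus $\phi_!|_{\check N}\colon\check N \to \check M$ is a functor. Conversely, the morphism induced by a functor $F$ has $\phi_!=\mathrm{Lan}_{F\op}$, which sends $\yon_c$ to $\yon_{Fc}$, so $\phi_!|_{\check N}=F$; and since every object of $\Setswith N$ is a canonical colimit of representables and $\phi_!$ is cocontinuous, $\phi$ is determined up to isomorphism by $\phi_!|_{\check N}$. This gives a bijection of $1$-cells up to isomorphism. For $2$-cells I would use mates: a geometric transformation $\phi\Rightarrow\psi$ is by definition a natural transformation $\phi^*\Rightarrow\psi^*$, whose mate across $\phi_!\dashv\phi^*$ and $\psi_!\dashv\psi^*$ is $\psi_!\Rightarrow\phi_!$; restricting to $\check N$ yields $(\psi_!|_{\check N})\Rightarrow(\phi_!|_{\check N})$, and the mates correspondence makes this a bijection. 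The reversal here matches the $\co$ and the reversed direction assigned to a conjugation above.

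I expect the main obstacle to be the claim that an essential geometric morphism is determined, up to isomorphism \emph{and coherently with its $2$-cells}, by the restriction $\phi_!|_{\check N}$: making rigorous the passage ``a cocontinuous functor out of $\Setswith N$ is the left Kan extension of its restriction to the dense subcategory of indecomposable projectives,'' and checking that this identification is compatible with the mates bijection, so that the two assignments assemble into genuinely pseudo-inverse functors between the hom-categories rather than merely matching objects and morphisms separately.
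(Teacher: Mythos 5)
Your proof is correct, and its skeleton coincides with the paper's: both reduce the statement to Proposition \ref{prop:extend} (semigroup homomorphisms and conjugations correspond to functors $\check{N} \to \check{M}$ and natural transformations) together with Theorem \ref{thm:point} for essential surjectivity on objects, with the $2$-cell reversal accounted for by $(-)\co$. The one genuine difference is how the middle step is handled: the paper simply cites \cite[Lemma A4.1.5]{Ele} for the fact that $\Ccal \mapsto \Setswith{\Ccal}$ is a full and faithful ($2$-cell reversing) $2$-functor on idempotent-complete small categories, whereas you re-derive exactly this local equivalence from the paper's own toolkit. Your derivation is sound and rather elegant at its key point: to see that $\phi_!$ carries indecomposable projectives to indecomposable projectives, you compose $\phi$ with the essential point classified by an object of $\check{N}$ and invoke Lemma \ref{lem:proj} on the composite, which avoids any direct computation with $\phi_!$; the rest (left Kan extension along $F\op$, the universal property of the free cocompletion for both $1$-cells and $2$-cells, and the mates correspondence) is the standard content of Johnstone's lemma. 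What the paper's route buys is brevity and a clean separation of concerns; what yours buys is self-containedness, at the price of the coherence bookkeeping you correctly flag at the end -- checking that the object-level and $2$-cell-level identifications assemble into pseudo-inverse functors on hom-categories -- which is routine but is precisely the content that the citation to \cite[Lemma A4.1.5]{Ele} packages away.
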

\begin{proof}
Directly, Proposition \ref{prop:extend} shows that the mapping $M \mapsto \check{M}$ is not only functorial but also full and faithful, and by \cite[Lemma A4.1.5]{Ele} the mapping $\Ccal \mapsto \Setswith{\Ccal}$ is a full and faithful (but 2-cell reversing) functor from the sub-2-category of $\Cat$ on the idempotent-complete small categories to the 2-category of Grothendieck toposes, essential geometric morphisms and natural transformations. Therefore it suffices to show that the image of the composite is the stated subcategory.

That the composite lands inside $\TOP^*_{\mathrm{ess}}$ follows from the observations in Section \ref{sec:disc}. Conversely, given an object $\Ecal$ of $\TOP^*_{\mathrm{ess}}$, the essential surjective point provides an $M$ with $\Setswith{M} \simeq \Ecal$ by Theorem \ref{thm:point}.
\end{proof}

This result can be compared directly with the 2-equivalence between the category $\Pos$ of posets, order-preserving functions and identity 2-cells and the corresponding 2-category of localic toposes with enough essential points, essential geometric morphisms between these and geometric transformations as 2-cells, which arises as a consequence of the fact that posets are Cauchy complete. It can also be thought of as a first step towards a parallel of the results in \cite[Section C1.4]{Ele} which gives a full equivalence of 2-categories between locales and localic toposes.

\begin{crly}
\label{crly:idem}
There is an contravariant equivalence of categories between $\check{M}$ and the category of essential points of $\Setswith{M}$.
\end{crly}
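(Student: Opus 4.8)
The plan is to assemble this corollary from the two structural results already in hand: Lemma \ref{lem:proj}, which identifies the inverse images of essential points of a Grothendieck topos with its indecomposable projective objects (via $\phi \mapsto \phi_!(1)$ and $Q \mapsto \Hom_{\Ecal}(Q,-)$), and Lemma \ref{lem:idempotent}, which realizes $\check{M}$ as the full subcategory of $\Setswith{M}$ on the indecomposable projectives through the (covariant) Yoneda embedding. The only genuinely new work is to track what these correspondences do to \emph{morphisms}, and to observe that the combined assignment reverses their direction, whence the contravariance in the statement.

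First I would fix the two categories. On one side, let $\mathbf{P}$ be the category whose objects are essential points $\phi:\Set \to \Setswith{M}$ and whose morphisms $\phi \to \psi$ are geometric transformations, i.e. natural transformations $\phi^* \Rightarrow \psi^*$ of inverse image functors (the same notion of $2$-cell used in Theorem \ref{thm:2equiv0}). On the other side is $\check{M}$, sitting inside $\Setswith{M}$ covariantly as the indecomposable projectives by Lemma \ref{lem:idempotent}. I then define $F:\mathbf{P} \to \check{M}$ on objects by $F(\phi) = \phi_!(1) =: Q_\phi$; by the proof of Lemma \ref{lem:proj} this is an indecomposable projective with $\phi^* \cong \Hom_{\Setswith{M}}(Q_\phi,-)$, hence an object of $\check{M}$. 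On morphisms, a geometric transformation $\theta:\phi^*\Rightarrow\psi^*$ becomes, under these representations, a natural transformation $\Hom_{\Setswith{M}}(Q_\phi,-)\Rightarrow\Hom_{\Setswith{M}}(Q_\psi,-)$, which by the Yoneda lemma is exactly a morphism $Q_\psi \to Q_\phi$ in $\Setswith{M}$, namely $F(\theta)$. Since this reverses direction, $F$ is contravariant.

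It then remains to check that $F$ is an equivalence. Essential surjectivity is immediate: every object of $\check{M}$ is an indecomposable projective $Q$ (Lemma \ref{lem:idempotent}), and by Lemma \ref{lem:proj} the functor $\Hom_{\Setswith{M}}(Q,-)$ is the inverse image of an essential point whose $F$-value is $Q$ up to the canonical isomorphism of representing objects. Full faithfulness is precisely the Yoneda bijection
\[
\Nat\bigl(\Hom_{\Setswith{M}}(Q_\phi,-),\,\Hom_{\Setswith{M}}(Q_\psi,-)\bigr)\;\cong\;\Hom_{\Setswith{M}}(Q_\psi,Q_\phi)
\]
used to define $F$ on morphisms, together with the fact (again Yoneda) that composition of geometric transformations corresponds, contravariantly, to composition in $\Setswith{M}$. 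This yields the claimed contravariant equivalence, and as a sanity check it refines Remark \ref{rmk:endopt}: restricted to the canonical point, whose object is $Q = M$, it recovers $\End(U) \cong \Hom_{\Setswith{M}}(Q,Q)\op \cong M\op$.

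The main obstacle here is not mathematical depth---the substance lives entirely in the two cited lemmas---but bookkeeping of conventions. The care required is to pin down the morphisms of ``the category of essential points'' as geometric transformations between inverse image functors, to confirm that the Yoneda identification of such transformations with $\Setswith{M}$-morphisms is genuinely functorial and \emph{contravariant}, and to be sure that this single contravariance is what survives after composing with the \emph{covariant} Yoneda embedding $\check{M}\hookrightarrow\Setswith{M}$ of Lemma \ref{lem:idempotent}, rather than being cancelled by it.
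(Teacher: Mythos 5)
Your proof is correct, but it takes a genuinely different route from the paper's. The paper derives the corollary in one line from Theorem \ref{thm:2equiv0}: a $2$-equivalence induces equivalences on hom-categories, and the hom-category $\Hom_{\Mon_s}(1,M)$ from the trivial monoid to $M$ is isomorphic to $\check{M}$ --- a semigroup homomorphism $1 \to M$ is exactly an idempotent $e \in M$, and a conjugation $e \Rightarrow d$ is exactly an element $\alpha$ with $\alpha e = \alpha = d\alpha$, i.e.\ a morphism $\underline{e} \to \underline{d}$ of $\check{M}$ --- while the corresponding hom-category in $\TOP^*_{\mathrm{ess}}$ from $\Set$ to $\Setswith{M}$ is precisely the category of essential points; the contravariance is the $2$-cell reversal built into $\Mon_s\co$. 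You instead bypass the $2$-equivalence entirely and assemble the statement from Lemma \ref{lem:proj} (inverse images of essential points are exactly the functors represented by indecomposable projectives), Lemma \ref{lem:idempotent} ($\check{M}$ is the full subcategory of indecomposable projectives), and the Yoneda lemma to handle morphisms. Both arguments are sound: yours is more self-contained and makes the source of the contravariance concrete (Yoneda reverses direction, and nothing else intervenes), whereas the paper's is shorter given the machinery already in place and makes transparent its closing remark that the same restriction constructs $\check{\Ccal}$ for any small category $\Ccal$ --- though your route generalizes equally well, since both cited lemmas hold at that level of generality. One cosmetic point: your $F$ strictly takes values in the full subcategory of $\Setswith{M}$ on the indecomposable projectives, which is equivalent to, not equal to, $\check{M}$ as the paper defines it (objects are idempotents, with $\underline{e}$ realized as the $M$-set $eM$), so the functor into $\check{M}$ is only obtained after composing with a quasi-inverse of the equivalence of Lemma \ref{lem:idempotent}; this is harmless but worth saying.
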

\begin{proof}
This is just a restriction of the 2-equivalence to the category of semigroup homomorphisms from the trivial monoid to the monoid $M$; observe that this is also true more generally, providing another way to construct $\check{\Ccal}$ for any small category $\Ccal$.
\end{proof}

The 2-equivalence has the usual Morita equivalence result as a direct consequence, so this provides an alternative route to that result:
\begin{crly}
\label{crly:discMorita}
Two monoids $M$ and $N$ are Morita equivalent (that is, $\Setswith{M} \simeq \Setswith{N}$) if and only if they are equivalent in the sense of Definition \ref{dfn:equiv} (which can be reduced to the conditions of Theorem \ref{thm:discMorita}).
\end{crly}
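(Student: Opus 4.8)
The plan is to deduce the corollary almost formally from the 2-equivalence of Theorem \ref{thm:2equiv0}, the only genuine work being to reconcile the internal notion of equivalence in $\TOP^*_{\mathrm{ess}}$ with bare Morita equivalence of toposes.

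First I would invoke the standard fact that a 2-equivalence of 2-categories preserves and reflects equivalences of objects: being essentially surjective and an equivalence on each hom-category, it carries an equivalence (a 1-cell with pseudo-inverse and invertible witnessing 2-cells) to an equivalence, and conversely. The 2-cell reversal recorded by the $(-)\co$ in $\Mon_s\co \simeq \TOP^*_{\mathrm{ess}}$ is immaterial at this level, since reversing the invertible 2-cells witnessing an equivalence leaves them invertible. Applying this to Theorem \ref{thm:2equiv0} gives: $M$ and $N$ are equivalent in the sense of Definition \ref{dfn:equiv} if and only if $\Setswith{M}$ and $\Setswith{N}$, regarded as objects of $\TOP^*_{\mathrm{ess}}$ each carrying its canonical essential surjective point from Section \ref{sec:disc}, are equivalent in $\TOP^*_{\mathrm{ess}}$.

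The remaining and genuinely load-bearing step, which I expect to be the main obstacle, is to identify equivalence in $\TOP^*_{\mathrm{ess}}$ with plain equivalence of the underlying toposes. Here I would argue on two fronts. On one hand, any equivalence of Grothendieck toposes is in particular an essential geometric morphism, its pseudo-inverse serving simultaneously as a further left adjoint to its inverse image, so a bare equivalence $\Setswith{M} \simeq \Setswith{N}$ is automatically a 1-cell of $\TOP^*_{\mathrm{ess}}$ admitting a pseudo-inverse, hence an equivalence there. On the other hand, since the 1-cells of $\TOP^*_{\mathrm{ess}}$ are explicitly \emph{not} required to commute with the designated points, equipping each topos with a chosen point imposes no constraint on the existence of such an equivalence; the points are mere decoration as far as the equivalence relation on objects is concerned. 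Thus equivalence in $\TOP^*_{\mathrm{ess}}$ holds exactly when $\Setswith{M} \simeq \Setswith{N}$, which is Morita equivalence by definition. Chaining the two biconditionals yields the stated equivalence.

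Finally, for the parenthetical reduction to the conditions of Theorem \ref{thm:discMorita}, I would unpack Definition \ref{dfn:equiv}: a pseudo-inverse $g$ together with invertible conjugations $\alpha:\id_N \Rightarrow gf$ and $\beta: fg \Rightarrow \id_M$ furnish, through the element-level description of conjugations, the idempotent $e$ and the elements $\beta,\beta'$ required in Theorem \ref{thm:discMorita}, and conversely those data reassemble into an equivalence in $\Mon_s$. Since both Definition \ref{dfn:equiv} and Theorem \ref{thm:discMorita} have by now been shown to characterize $\Setswith{M} \simeq \Setswith{N}$, this reduction is in any case forced; the only risk here is notational bookkeeping rather than any conceptual difficulty.
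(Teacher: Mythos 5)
Your proposal is correct and follows exactly the paper's route: the paper's own proof is a one-line appeal to Theorem \ref{thm:2equiv0} together with the observation that every equivalence of toposes is an essential geometric morphism, which is precisely the load-bearing step you identify and elaborate. Your expanded justification (2-equivalences preserve and reflect equivalences of objects, the $(-)\co$ is immaterial, and the designated points impose no constraint since 1-cells need not commute with them) is a faithful unpacking of that same argument rather than a different approach.
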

\begin{proof}
This is a trivial consequence of Theorem \ref{thm:2equiv0}, since all equivalences can be expressed as essential geometric morphisms.
\end{proof}

\section{Biactions}
\label{sec:biact}

It is natural to wonder whether we can extend the equivalence of Theorem \ref{thm:2equiv0} to include arbitrary geometric morphisms at the topos level. In order to achieve this, we introduce biactions.

\begin{dfn}
Let $M,N$ be monoids and let $B$ be a set equipped with a right $N$-action and a \textbf{compatible} left $M$-action, so that,
\begin{equation} \label{eq:compatible}
(m \cdot b) \cdot n = m \cdot (b\cdot n)
\end{equation} 
for all $m \in M$, $b \in B$ and $n \in N$. We call such a $B$ a \textbf{left-$M$-right-$N$-set}, or more concisely an $\lr{M}{N}$-set or (if the monoids are implicitly given) a \textbf{biaction}.
\end{dfn}

Given a right $M$-set $A$ and a $\lr{M}{N}$-set $B$, recall that the \textbf{tensor product} of $A$ and $B$ over $M$ is defined as the set:
\begin{equation*}
A \otimes_M B ~=~ (A \times B)/\!\sim\,
\end{equation*}
where ${\sim}$ is the equivalence relation generated by
\begin{equation*}
(am,b) \sim (a,mb)
\end{equation*}
for all $a \in A$, $b \in B$ and $m \in M$. The equivalence class of a pair $(a,b)$ is denoted by $a \otimes b$. The set $A \otimes B$ inherits a right $N$-action from $B$, defined by $(a \otimes b) \cdot n = a \otimes (b \cdot n)$, and this construction is functorial, so that $- \otimes_M B$ defines a functor $\Setswith{M} \to \Setswith{N}$.

Dually, given a right $N$-sets $A$ and a $\lr{M}{N}$-set $B$, we can consider the set
\begin{equation*}
\HOM_N(B,A)
\end{equation*}
of right $N$-set homomorphisms from $B$ to $A$. This set inherits a right $M$-action by letting
\begin{equation}
\label{eq:homaction}
	f\cdot m(b) := f(m \cdot b),
\end{equation}
for homomorphisms $f$ and elements $m \in M$ and $b \in B$. As one might expect, $\HOM_N(B,-)$ defines a functor $\Setswith{N} \to \Setswith{M}$.

Just as in ring theory, the functor $-\otimes_M B$ is left adjoint to $\HOM_N(B,-)$.
\begin{proposition}
\label{prop:adjunction}
Let $M$ and $N$ be monoids. Any $\lr{M}{N}$-set $B$ induces an adjunction:
\[\begin{tikzcd}[column sep=large]
\Setswith{N} \ar[r, bend right = 15, "{\HOM_N(B,-)}"'] \ar[r, phantom, "\bot"] & \Setswith{M} \ar[l, bend right = 15, "- \otimes_M B"']
\end{tikzcd}\]
Conversely, any adjunction between these categories (in this direction) has this form for some $\lr{M}{N}$-set $B$.
\end{proposition}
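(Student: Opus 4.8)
The plan is to prove the two directions separately: first exhibit the tensor--hom adjunction by hand, then invoke the universal property of the presheaf topos $\Setswith{M}$ to show that every adjunction in the given direction arises in this way.

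For the forward direction I would establish a bijection
\[
\HOM_N(A \otimes_M B,\, C) \;\cong\; \HOM_M\big(A,\, \HOM_N(B,C)\big)
\]
natural in the right $M$-set $A$ and the right $N$-set $C$. Given a right $N$-set map $g: A \otimes_M B \to C$, its transpose $\hat g: A \to \HOM_N(B,C)$ is $\hat g(a) = \big(b \mapsto g(a \otimes b)\big)$; conversely a right $M$-set map $h: A \to \HOM_N(B,C)$ transposes to $\check h(a \otimes b) = h(a)(b)$. The verifications are routine unwindings of the definitions: that $\hat g(a)$ is $N$-equivariant and that $\hat g$ is $M$-equivariant (the latter using the action \eqref{eq:homaction}), that $\check h$ is well defined on the quotient (using the defining relation $(am,b)\sim(a,mb)$) and $N$-equivariant, and that the two assignments are mutually inverse and natural. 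This yields $- \otimes_M B \dashv \HOM_N(B,-)$.

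For the converse, let $L \dashv R$ be any adjunction with $L: \Setswith{M} \to \Setswith{N}$. Being a left adjoint, $L$ preserves all colimits, and $\Setswith{M}$ is the free cocompletion of the one-object category $M$: precomposition with the Yoneda embedding $\yon: M \to \Setswith{M}$, whose image is the right-regular representable $M$, identifies cocontinuous functors $\Setswith{M} \to \Setswith{N}$ with arbitrary functors $M \to \Setswith{N}$. I would set $B := L(M)$, a right $N$-set; functoriality of $L$ turns the monoid isomorphism $M \cong \End_{\Setswith{M}}(M)$ (sending $m$ to left multiplication $\lambda_m$) into a homomorphism $M \to \End_{\Setswith{N}}(B)$, i.e.\ a left $M$-action on $B$ by $N$-set endomorphisms. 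The equation $\psi(bn)=\psi(b)n$ expressing $N$-equivariance of each such endomorphism is exactly the compatibility \eqref{eq:compatible}, so $B$ is an $\lr{M}{N}$-set.

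It then remains to identify the functor recovered from $B$ with $- \otimes_M B$ and to pin down $R$. The universal property sends $B$ to the left Kan extension of $B$ along $\yon$, which on a right $M$-set $A$ is the coend $\int^{*} A \cdot B$; in the one-object case this coend is precisely the coequalizer of $A \times M \times B \rightrightarrows A \times B$ presenting $A \otimes_M B$, so $L \cong - \otimes_M B$. Finally, since right adjoints are unique up to isomorphism and $- \otimes_M B \dashv \HOM_N(B,-)$ by the forward direction, $R \cong \HOM_N(B,-)$. I expect the main obstacle to be the careful identification of the left Kan extension with the tensor product --- matching the copower/coend formula against the relation-generated quotient defining $\otimes_M$, and checking that the induced left action on $B = L(M)$ is exactly the structure making this identification $N$-equivariant --- with the rest following formally from cocontinuity and uniqueness of adjoints.
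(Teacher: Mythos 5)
Your proof is correct and takes essentially the same approach as the paper: the same explicit transposition formulas establish the tensor--hom bijection, and the converse uses the same construction $B := L(M)$ with left $M$-action induced by applying $L$ to the left-multiplication endomorphisms of the representable right $M$-set $M$. The only difference is one of detail: the paper states the converse in a single sentence (and its phrase ``having a left adjoint'' is evidently a slip for ``having a right adjoint''), leaving the identification $L \cong - \otimes_M B$ implicit, whereas you spell it out via the free-cocompletion/coend argument and then invoke uniqueness of right adjoints.
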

\begin{proof}
For $X$ a right $M$-set and $Y$ a right $N$-set, the isomorphism $\HOM_M(Y \otimes_M B, X) \cong \HOM_N(Y, \HOM_N(B,X))$ sends $h$ in the former to the mapping $y \mapsto (b \mapsto h(y \otimes b))$. It is easy to check that this is well-defined, and has inverse mapping $k$ to $y \otimes b \mapsto k(y)(b)$.

Conversely, given $F:\Setswith{M} \to \Setswith{N}$ having a left adjoint, $B$ is given by $F(M)$, with left $M$-action induced by the left $M$-action by endomorphisms on $M$.
\end{proof}

Thus biactions are \textit{enough} to express all geometric morphisms; we need only cut down to those biactions such that the tensoring functor preserves finite limits.

\begin{dfn}[{\cite[VII.6, Definition 2]{MLM}}]
\label{dfn:filtering}
Recall that a functor $F: \Ccal \to \Set$ is called \textbf{flat} (or \textit{filtering}) if:
\begin{enumerate}[label = ({\alph*})]
\item $F(C)\neq \varnothing$ for some object $C$ of $\Ccal$;
\item for elements $a \in F(A)$ and $b \in F(B)$ there is an object $C$, morphisms $f : C \to A$ and $g : C \to B$, and an element $c \in F(C)$ such that $F(f)(c) = a$ and $F(f)(c) = b$;
\item for morphisms $f,g : B \to A$ in $\Ccal$ and $b \in F(B)$ such that $F(f)(b) = F(g)(b)$, there is a morphism $h : C \to B$ and an element $c \in F(C)$ such that $fh = gh$ and $F(h)(c) = b$. 
\end{enumerate}
\end{dfn}

\begin{proposition}
\label{prop:flat}
Consider monoids $M,N$ and a $\lr{M}{N}$-set $B$. Then $- \otimes_M B : \Setswith{M} \to \Setswith{N}$ preserves finite limits if and only if, ignoring the $N$-action, $B$ is flat as a functor $M \to \Set$, which is to say:
\begin{enumerate}[label = ({\alph*})]
\item $B$ is non-empty;
\item for elements $b,b' \in B$ there exists $a \in B$ and $m,m' \in M$ with $m \cdot a = b$ and $m' \cdot a = b'$; and
\item whenever $c \in B$ and $m,m' \in M$ with $m \cdot c = m' \cdot c$, there exists $d \in B$, $n \in M$ with $n \cdot d = c$ and $mp = m'p$.
\end{enumerate}
We call $B$ with these properties a \textbf{flat left-$M$-right-$N$-set}, or $\fllr{M}{N}$-set for short.
\end{proposition}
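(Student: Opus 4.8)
The plan is to reduce the claim to the classical equivalence between flatness of a functor and left-exactness of its associated tensor functor, exploiting the fact that the $N$-action on $B$ is irrelevant to the preservation of finite limits.

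First I would record that the forgetful functor $U \colon \Setswith{N} \to \Set$ is monadic (as recalled at the start of Section~\ref{sec:properties0}) and therefore creates limits; in particular it preserves finite limits and is conservative. Writing $T := {-} \otimes_M B$ and using that both $\Setswith{M}$ and $\Setswith{N}$ are complete, this lets me test preservation of finite limits by $T$ on underlying sets. Indeed, for a finite diagram $D$ in $\Setswith{M}$ with limit $L$, the canonical comparison $T(L) \to \lim(T \circ D)$ in $\Setswith{N}$ maps, under $U$ and using that $U$ preserves limits, to the canonical comparison morphism for $U T$; the latter is an isomorphism as soon as $UT$ preserves the limit, and then conservativity of $U$ forces $T(L) \to \lim(T \circ D)$ to be an isomorphism as well. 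The converse implication is immediate since $U$ preserves limits. Hence $T$ preserves finite limits if and only if $UT$ does, and $UT$ is exactly the tensor functor ${-} \otimes_M B \colon \Setswith{M} \to \Set$ built from the underlying left $M$-set of $B$, that is, from $B$ regarded as a functor $M \to \Set$.

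Next I would invoke the theorem of \cite[VII.6]{MLM} accompanying Definition~\ref{dfn:filtering}: for a small category $\Ccal$ and a functor $F \colon \Ccal \to \Set$, the cocontinuous extension ${-} \otimes_\Ccal F \colon \Setswith{\Ccal} \to \Set$ (the left Kan extension of $F$ along the Yoneda embedding, sending the representable $\yon(C)$ to $F(C)$) preserves finite limits if and only if $F$ is flat. Applying this with $\Ccal = M$ and $F = B$ identifies left-exactness of ${-} \otimes_M B \colon \Setswith{M} \to \Set$ with flatness of the left $M$-set $B$ viewed as a functor $M \to \Set$.

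Finally, it remains only to unwind the three filtering conditions of Definition~\ref{dfn:filtering} when $\Ccal = M$ has a single object, so that each object is $M$ itself, each morphism is an element of $M$, and the action of a morphism $m$ on $F = B$ is the map $b \mapsto m \cdot b$. Condition (a) becomes non-emptiness of $B$; condition (b), applied to $b, b' \in B$, yields $a \in B$ and $m, m' \in M$ with $m \cdot a = b$ and $m' \cdot a = b'$; and condition (c), applied to $m, m' \in M$ and $c \in B$ with $m \cdot c = m' \cdot c$, yields $d \in B$ and $n \in M$ with $n \cdot d = c$ and $mn = m'n$ — precisely the conditions listed in the statement. The only step requiring genuine care is the first reduction: once preservation of finite limits is seen to be detected on underlying sets, the rest is the cited theorem together with a routine translation of its hypotheses to the one-object case.
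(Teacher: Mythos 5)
Your proposal is correct and takes essentially the same route as the paper: the paper's proof consists precisely of citing the Mac Lane--Moerdijk theorem that flatness of $B$ as a functor $M \to \Set$ is equivalent to left-exactness of the associated tensor functor, together with the one-line observation that the $N$-action is irrelevant because the underlying-set functor on $\Setswith{N}$ creates limits. You have simply spelled out in full the reduction and the translation of the filtering conditions to the one-object case (correctly fixing, in passing, the typo $mp = m'p$ to $mn = m'n$), which the paper leaves implicit.
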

\begin{proof}
This appears as \cite[Theorem VII.7.2]{MLM}, an extension of which is also referred to as Diaconescu's theorem by Johnstone in \cite[Theorem B3.2.7]{Ele}. It should not be surprising that the $N$-action is irrelevant here, since the underlying set functor on $\Setswith{N}$ creates limits, so flatness is determined at the level of underlying sets.
\end{proof}

\begin{dfn}
\label{dfn:fllrmor}
A \textbf{morphism of $\lr{N}{M}$-sets} $B \to B'$ is a function $B \to B'$ which is both a left-$M$-set homomorphism and a right-$N$-set homomorphism. For fixed $M$ and $N$, these morphisms make the collection of $\lr{N}{M}$-sets into a category. The $\fllr{N}{M}$-sets form a full subcategory of the category of $\lr{N}{M}$-sets.
\end{dfn}

Let $\Mon_{bi}$ be the bicategory\footnote{This is a bicategory rather than a $2$-category, since the composition of $\fllr{N}{M}$-sets is has identities and associativity defined only up to isomorphism.} whose objects are monoids, whose 1-cells $M \to N$ are the $\fllr{M}{N}$-sets and whose 2-cells are $\lr{M}{N}$-homomorphisms between these. Also, let $\TOP^*$ be the $2$-category whose objects are Grothendieck toposes equipped with an essential surjective point, whose morphisms are geometric morphisms (still not required to commute with the designated point), and whose 2-cells are geometric transformations.

\begin{thm} \label{thm:2equiv1}
The functor $M \mapsto \Setswith{M}$ is the object part of a biequivalence,
\[\Mon_{bi} \simeq \TOP^*.\]
\end{thm}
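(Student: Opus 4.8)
The plan is to establish the biequivalence $\Mon_{bi} \simeq \TOP^*$ by verifying that the assignment $M \mapsto \Setswith{M}$, already fixed on objects, extends to a pseudofunctor which is essentially surjective on objects, essentially surjective on hom-categories (i.e.\ locally essentially surjective), and locally fully faithful. First I would address the action on 1-cells: Proposition~\ref{prop:flat} identifies the $\fllr{M}{N}$-sets $B$ precisely with those biactions for which $-\otimes_M B$ preserves finite limits, and Proposition~\ref{prop:adjunction} shows such a $B$ has $-\otimes_M B \dashv \HOM_N(B,-)$; thus each 1-cell $B:M \to N$ in $\Mon_{bi}$ determines a geometric morphism $\Setswith{M} \to \Setswith{N}$ whose inverse image is $-\otimes_M B$. (One must check the direction of the adjunction matches the convention for geometric morphisms, so that a $\fllr{M}{N}$-set indeed yields a geometric morphism with the correct domain and codomain.) On 2-cells, a homomorphism of biactions $B \to B'$ induces a natural transformation $-\otimes_M B \Rightarrow -\otimes_M B'$, i.e.\ a geometric transformation, and this assignment is the local functor on hom-categories.

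The heart of the argument is the local equivalence. For essential surjectivity on objects, every topos in $\TOP^*$ is of the form $\Setswith{M}$ by Theorem~\ref{thm:point}, using its essential surjective point. For local full faithfulness and local essential surjectivity, I would invoke the standard correspondence (Diaconescu's theorem, cited in Proposition~\ref{prop:flat}) between geometric morphisms $\Fcal \to \Setswith{\Ccal}$ and flat functors $\Ccal \to \Fcal$; specialized to $\Ccal = M$ a one-object category and $\Fcal = \Setswith{N}$, a flat functor $M \to \Setswith{N}$ is exactly a flat left-$M$-right-$N$-set, and geometric transformations between the induced morphisms correspond to natural transformations between these flat functors, which are precisely biaction homomorphisms. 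This gives an equivalence of categories between the hom-category $\Mon_{bi}(M,N)$ and the hom-category $\TOP^*(\Setswith{M},\Setswith{N})$ of geometric morphisms and geometric transformations. The converse half of Proposition~\ref{prop:adjunction} is what supplies essential surjectivity on 1-cells: any geometric morphism arises, up to isomorphism, from tensoring with $B = \phi_*(M)$ (equivalently the flat functor picked out by Diaconescu), and flatness of $B$ is forced by left exactness of the inverse image via Proposition~\ref{prop:flat}.

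I would then assemble these local equivalences into a biequivalence by checking pseudofunctoriality: the tensor product is associative and unital only up to coherent isomorphism, which is exactly why $\Mon_{bi}$ is a bicategory rather than a $2$-category, and these coherence isomorphisms $(A \otimes_M B) \otimes_N C \cong A \otimes_M (B \otimes_N C)$ and $B \otimes_N N \cong B$ furnish the comparison $2$-cells making $M \mapsto \Setswith{M}$ a genuine pseudofunctor. Since a pseudofunctor that is essentially surjective on objects and an equivalence on each hom-category is automatically a biequivalence, the result follows.

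The main obstacle I expect is bookkeeping the variance and coherence rather than any deep new idea. Concretely: (i) getting the handedness right, since tensoring with an $\fllr{M}{N}$-set is a functor $\Setswith{M}\to\Setswith{N}$ serving as an inverse image, so composition of $1$-cells in $\Mon_{bi}$ must be arranged to match composition of geometric morphisms (possibly with an op somewhere, as already flagged by the $2$-cell reversal in Theorem~\ref{thm:2equiv0}); and (ii) verifying the coherence conditions for the pseudofunctor, i.e.\ that the associativity and unit isomorphisms of the tensor product satisfy the pentagon and triangle axioms and are compatible with the composition of geometric morphisms. Neither step is conceptually hard given Diaconescu's theorem and Proposition~\ref{prop:adjunction}, but the coherence verification is where the genuine work lies, and it is the part most in need of care rather than the local equivalence, which is essentially a restatement of the classifying-topos universal property.
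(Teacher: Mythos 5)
Your proposal follows the paper's own proof essentially step for step: objects are handled by Theorem~\ref{thm:point}, 1-cells by Propositions~\ref{prop:adjunction} and~\ref{prop:flat} with composition given by associativity of the tensor product up to isomorphism, and the hom-category equivalence by identifying geometric transformations with biaction homomorphisms --- the paper does this last step by hand (a transformation $f^* \Rightarrow g^*$ is determined by its component at $M$, which naturality forces to be a biaction homomorphism), which is just the concrete form of the Diaconescu correspondence you invoke. The one slip is notational: the biaction recovered from a geometric morphism $\phi$ is $\phi^*(M)$, not $\phi_*(M)$ (the latter does not even typecheck), though your parenthetical identification with the flat functor given by Diaconescu's theorem shows you intend the former, and the variance bookkeeping you rightly flag is likewise glossed over in the paper itself.
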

\begin{proof}
The object level is established by Theorem \ref{thm:point}. The 1-cell level is established by Propositions \ref{prop:adjunction} and \ref{prop:flat}, once we have observed that, if $B$ is a $\fllr{M}{N}$-set and $A$ is a $\fllr{L}{M}$-set, then $A \otimes_M B$ is the $\fllr{L}{N}$-set corresponding to the inverse image functor $(- \otimes_L A) \otimes_M B$; this is simply the observation that the tensor product operation is associative up to isomorphism.

Given two parallel adjunction $f^* \dashv f_*$ and $g^* \dashv g_*$, a natural transformation $f^* \Rightarrow g^*$ is determined by its component $f^*(M) \to g^*(M)$, which is automatically a right-$N$-set homomorphism; it is also a left-$M$-set homomorphism by naturality with respect to the endomorphisms of $M$. Conversely, a $\lr{M}{N}$-set homomorphism $B \to B'$ induces a natural transformation $- \otimes_M B \to - \otimes_M B'$ by composition on the second component; commutation with the respective actions ensures that this is well-defined and an $M$-set homomorphism at each object $X$.
\end{proof}

\begin{rmk}
More generally, a (Lawvere) \textbf{distribution} $f : \Fcal \to \Ecal$ between toposes is any adjoint pair $f^* \dashv f_*$, where $f^*$ does not necessarily preserve finite limits. A morphism $f \Rightarrow g$ between distributions $f,g : \Fcal \to \Ecal$ is still a natural transformation $f^* \Rightarrow g^*$. The proof of Theorem \ref{thm:2equiv1} also establishes a biequivalence between the larger bicategories where we include all biactions at the monoid level and all distributions at the topos level.
\end{rmk}

Thus we have an algebraic characterization of arbitrary geometric morphisms between toposes of discrete monoid actions, as well as an alternative perspective on the extra adjunction $(f_! \dashv f^*)$ in an essential geometric morphism $f$. Explicitly, by direct calculation:

\begin{lemma}
\label{lem:esstensor}
Let $f: \Setswith{N} \to \Setswith{M}$ be an essential geometric morphism induced by a semigroup homomorphism $\phi:M \to N$. Then the $\fllr{M}{N}$-set corresponding to $(f^* \dashv f_*)$ is the left ideal $N \phi(1)$ equipped with left-$N$-action by multiplication and right-$M$-action by multiplication after applying $\phi$. In particular, when $\phi$ is a monoid homomorphism, the $\fllr{N}{M}$-set is simply $N$ equipped with the respective actions.

Meanwhile, the $\lr{M}{N}$-set corresponding to the extra adjunction $(f_! \dashv f^*)$ is the right ideal $\phi(1) N$ of $N$, similarly equipped with respective multiplication actions but with the handedness reversed.
\end{lemma}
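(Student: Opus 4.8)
The plan is to read the whole statement off the biaction--adjunction dictionary of Proposition~\ref{prop:adjunction}, which recovers the biaction attached to an adjunction by evaluating its \emph{left} adjoint on the appropriate regular representation, the two commuting actions being inherited from the two-sided regular action. So it suffices to identify each left adjoint concretely and evaluate it. Recall from the start of Section~\ref{sec:morphisms0} that the inverse image $f^{*}$ is the restriction-of-actions functor determined by $\phi$: it sends a right $N$-set $Y$ to $Y\phi(1)$ with right $M$-action $y \cdot m := y\phi(m)$. The one structural fact driving everything is that $\phi(1)$ is idempotent in $N$, being the image of the idempotent $1 \in M$ under a semigroup homomorphism, and that $\phi(1)\phi(m) = \phi(m) = \phi(m)\phi(1)$ for all $m \in M$.

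The technical core is to produce, naturally in the right $N$-set $Y$, a pair of isomorphisms
\[
Y \otimes_{N} N\phi(1) \;\cong\; Y\phi(1) \;\cong\; \HOM_{N}\bigl(\phi(1)N, Y\bigr),
\]
both resting on the idempotency of $\phi(1)$. For the first, I would send $y \otimes n\phi(1) \mapsto yn\phi(1)$; well-definedness and injectivity follow from $\phi(1)^{2} = \phi(1)$, which lets one slide $\phi(1)$ across the tensor. For the second, a right-$N$-set homomorphism out of the cyclic module $\phi(1)N$ is determined by the image of its generator $\phi(1)$, and the admissible images are exactly the elements fixed by right multiplication by $\phi(1)$, that is, the elements of $Y\phi(1)$.

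These two presentations of the single functor $f^{*}$ then give both halves of the lemma at once. Reading $f^{*} = {-}\otimes_{N} N\phi(1)$ displays $f^{*}$ as a tensoring functor, so Proposition~\ref{prop:adjunction} identifies the $\fllr{N}{M}$-set attached to $(f^{*} \dashv f_{*})$ as the left ideal $N\phi(1)$, carrying its left-$N$-action by multiplication and the right-$M$-action $x \cdot m = x\phi(m)$; dually $f_{*} = \HOM_{M}(N\phi(1),-)$. Reading instead $f^{*} = \HOM_{N}(\phi(1)N,-)$ displays $f^{*}$ as a hom-functor, so the same proposition identifies its further left adjoint $f_{!}$ as ${-}\otimes_{M}\phi(1)N$, whence the $\lr{M}{N}$-set attached to $(f_{!} \dashv f^{*})$ is the right ideal $\phi(1)N$ with the handedness reversed: right-$N$-action by multiplication and left-$M$-action $m \cdot x = \phi(m)x$. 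When $\phi$ is a genuine monoid homomorphism, $\phi(1) = 1$ and both ideals collapse to all of $N$.

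The routine parts are the two tensor--hom calculations; the only real obstacle is the bookkeeping for the non-unital case. Because $\phi$ is merely a semigroup homomorphism, $\phi(1)$ is a potentially non-trivial idempotent rather than the unit of $N$, and one must track it consistently: it is what shrinks $f^{*}(Y)$ from $Y$ to $Y\phi(1)$, and it is precisely the datum allowing the one functor $f^{*}$ to be presented simultaneously as a left adjoint (a tensor product) and as a right adjoint (a hom-functor). Verifying that $N\phi(1)$ and $\phi(1)N$ genuinely carry commuting actions of the stated handedness is then a direct application of $\phi(1)^{2} = \phi(1)$ together with $\phi(1)\phi(m) = \phi(m) = \phi(m)\phi(1)$.
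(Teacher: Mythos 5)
Your proof is correct, and it is precisely the ``direct calculation'' that the paper leaves implicit (Lemma \ref{lem:esstensor} is stated with no argument beyond that phrase): exhibiting the natural isomorphisms $f^*(Y) \cong Y\otimes_N N\phi(1) \cong \HOM_N(\phi(1)N,Y)$ and then reading off both adjunctions from Proposition \ref{prop:adjunction} is exactly the intended route, with the flatness claim (the ``fl'' in $\fllr{N}{M}$) coming for free from left exactness of $f^*$ via Proposition \ref{prop:flat}. You have also, correctly, resolved the statement's typos: the biaction attached to $(f^*\dashv f_*)$ is a $\fllr{N}{M}$-set rather than a $\fllr{M}{N}$-set (as the lemma's own second sentence confirms), and with $\phi\colon M\to N$ the induced essential morphism runs $\Setswith{M}\to\Setswith{N}$ under the conventions of Section \ref{sec:morphisms0}, which is forced in any case by your (correct) identification of $f^*$ with the restriction functor $Y\mapsto Y\phi(1)$.
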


In analogy with Corollary \ref{crly:idem}, we can deduce the well-known result that the category of points of $\Setswith{M}$ is equivalent to the category of flat left $M$-sets and left $M$-set homomorphisms.

In principle, Theorem \ref{thm:2equiv1} could be used to re-derive the Morita equivalence characterization of Theorem \ref{thm:discMorita} from the perspective of biactions, which would reflect classical Morita theory for rings originating in \cite{Morita}. For the purposes of the present thesis, however, beyond some discussion and examination of `trivial' biactions in the next chapter, our investigation of biactions ends here.

\section{Examples and corollaries}
\label{sec:xmpls0}

To end this chapter, we turn to a concrete examination of Morita equivalence. To begin, here is an example demonstrating that Morita equivalence is (in general) strictly weaker than isomorphism.

\begin{xmpl}
\label{xmpl:Schein}
The `Schein monoids' were described by Knauer in \cite{MEM}. Consider the monoid $M'$ of partial endomorphisms of $[0,1]$; that is, of those functions $A \to [0,1]$ where $A$ is some subset of $[0,1]$. The composite of two such morphisms $f: A \to [0,1]$ and $g:B \to [0,1]$ is defined to be the function $g \circ f: f^{-1}(B) \to [0,1]$.

Let $M$ be the submonoid of $M'$ generated by the inclusions $e_x: [0,x] \hookrightarrow [0,1]$ for $3/4 \leq x \leq 1$, the halving map $\beta':[0,1] \to [0,1]$ sending $a \mapsto a/2$ and the doubling map $\beta:[0,1/2] \to [0,1]$ which is a left inverse to $\beta'$. By inspection $e_{3/4}, \beta, \beta'$ satisfy the required conditions to generate a Morita equivalence; let $N = e_{3/4}Me_{3/4}$.

To see that $M$ and $N$ are not isomorphic, observe that the idempotents of $M$ are all of the form $e_x$ for some $x \in [0,1]$; a more detailed case analysis demonstrates that the idempotents are precisely $e_x$ with $x \in [3/2^{n+2},1/2^n]$ for some $n \geq 0$. The idempotents come with a canonical order given by $e_x < e_y$ if $x<y$, or equivalently if $e_x e_y = e_x$; this order is thus preserved by isomorphism. The non-identity idempotents of $M$ have no maximal element. However, the non-identity idempotents of $N$ do have a maximum (specifically $e_{1/2}$). Thus $M \not\cong N$. 
\end{xmpl}

This and further examples are collected in \cite{MAC}. It should be clear, however, that the conditions in Corollary \ref{thm:discMorita} force Morita equivalence to reduce to isomorphism in many important cases.

\begin{crly}
\label{crly:trivial}
Let $M$ be a monoid. Then for equivalence to coincide with isomorphism at $M$, any of the following conditions suffices:
\begin{enumerate}
	\item $M$ is commutative.
	\item $M$ is a group.
	\item Every right (or every left) invertible element of $M$ is invertible; equivalently, the non-units of $M$ are closed under multiplication (such as when $M=S_1$ for a semigroup $S$, or $M$ is finite).
	\item $M$ is left (or right) cancellative.
	\item The idempotents of $M$ satisfy the descending chain condition with respect to absorption on the right (or left).
	\item The left (or right) ideals of $M$ satisfy the descending chain condition.
\end{enumerate}
\end{crly}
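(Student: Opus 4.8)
The plan is to reduce everything to the explicit criterion of Theorem~\ref{thm:discMorita}. Suppose $\Setswith{M} \simeq \Setswith{N}$; then there are an idempotent $e \in M$ and elements $\beta, \beta' \in M$ with $\beta\beta' = 1$ and $\beta e = \beta$, such that $N \cong eMe$. As in the proof of Theorem~\ref{thm:discMorita} I may replace $\beta'$ by $e\beta'$ and so assume in addition $e\beta' = \beta'$. The whole corollary then follows from a single reduction: if, under a given hypothesis, the right-invertible element $\beta$ is forced to be a two-sided unit, then left-multiplying $\beta e = \beta$ by $\beta^{-1}$ gives $e = 1$, whence $N \cong eMe = M$. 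So for each listed condition it suffices either to prove that a right-invertible $\beta$ must be invertible, or to produce $e = 1$ directly.

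Conditions (1)--(4) are immediate. If $M$ is commutative (1), then $\beta\beta' = 1$ yields $\beta'\beta = 1$, and if $M$ is a group (2) every element is a unit; both are thus instances of (3). For (3) the hypothesis is precisely that the right-invertible $\beta$ is a unit, and I would separately record the elementary verification that this is equivalent to the non-units being closed under multiplication, together with the two cited cases: when $M = S_1$ for a semigroup $S$ no element of $S$ can be a unit, so the units are exactly $\{1\}$; and when $M$ is finite a surjective left translation $L_\beta$ is injective, forcing $\beta'\beta = 1$. For (4), if $M$ is left cancellative I cancel $\beta$ in $\beta e = \beta = \beta\cdot 1$ to obtain $e = 1$ outright; if $M$ is right cancellative I cancel $\beta'$ in $(\beta'\beta)\beta' = \beta'(\beta\beta') = \beta' = 1\cdot\beta'$ to obtain $\beta'\beta = 1$, so $\beta$ is a unit.

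The chain conditions (5) and (6) are the substance of the proof and where the main work lies. Here I would introduce the idempotents $p_n := (\beta')^n \beta^n$ for $n \geq 0$, with $p_0 = 1$ and $p_1 = \beta'\beta$. A short induction (peeling off a central $\beta\beta'$) gives $\beta^n(\beta')^n = 1$, from which one checks that each $p_n$ is idempotent and that $p_m p_n = p_n = p_n p_m$ whenever $m \leq n$; in particular the $p_n$ form a descending chain simultaneously for the absorption orders on idempotents ($a \leq b \iff ab = a$, and its left-handed variant) and for the principal one-sided ideals, since $Mp_{n+1} \subseteq Mp_n$ and $p_{n+1}M \subseteq p_n M$. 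The key computation is that this chain stabilizes exactly when $\beta$ is a unit: if $p_n = p_{n+1}$, then applying $\beta^n(-) (\beta')^n$ collapses the equality $\beta^n p_n (\beta')^n = \beta^n p_{n+1}(\beta')^n$ to $1 = \beta'\beta$; and for the ideal version, $Mp_n = Mp_{n+1}$ forces $p_n = p_{n+1}$ by a one-line manipulation. Thus any of the stated descending chain conditions forbids an infinite strictly descending chain, which forces $\beta'\beta = 1$ and hence $e = 1$.

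Finally, the handedness (\emph{left or right}) in (3)--(6) costs nothing: either the symmetric argument applies verbatim, or one invokes the self-duality of the Morita condition noted immediately after Theorem~\ref{thm:discMorita} (equivalently $(M\op)^{\vee} \simeq \check{M}\op$), which transports a right-handed hypothesis to the opposite monoid. The only genuinely delicate point is the construction in (5)--(6): exhibiting the powers $p_n = (\beta')^n\beta^n$ as the correct descending chain and verifying that non-stabilization is equivalent to $\beta$ failing to be a unit. Once that equivalence is in hand, the descending chain conditions apply mechanically.
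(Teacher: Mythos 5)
Your proposal is correct and follows essentially the same route as the paper's proof: reduce to the criterion of Theorem~\ref{thm:discMorita}, dispatch conditions (1)--(4) by showing $\beta$ is a unit or $e=1$ directly, and handle (5)--(6) via the descending chain of idempotents $(\beta')^n\beta^n$, whose stabilization forces $\beta'\beta=1$. Your write-up merely fills in details the paper leaves implicit (the $S_1$ and finite cases of (3), the ideal-stabilization step, and the handedness bookkeeping).
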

\begin{proof}
It suffices to examine the condition for equivalence in Corollary \ref{thm:discMorita}. We obtain an equivalence with $M$ whenever $M$ contains elements $\beta$, $\beta'$ and an idempotent $e$ with $\beta \beta' = 1$ and $\beta e = \beta$ (the equivalence is with $eMe$); if such a $\beta$ is necessarily an isomorphism, this forces $e=1$, so the Morita equivalence class is trivial and the equivalence collapses to an inner automorphism of $M$. In the first three cases, the equation $\beta \beta' = 1$ indeed forces $\beta$ to be an isomorphism, while in the fourth case $\beta e = \beta$ forces $e=1$ so there is nothing further to do.

For the last two conditions, note that $e_n:= {\beta'}^n {\beta}^n$ is an idempotent for every $n$, with the property that $e_n e_m = e_n = e_m e_n$ whenever $n\geq m$; if it is ever the case that $e_{n+1} = e_n$, then by multiplying on the left by ${\beta}^n$ and on the right by ${\beta'}^n$ it is again the case that $\beta' \beta = 1$. Thus for equivalence to be non-trivial $M$ must have an infinite descending chain of idempotents. By instead considering the ideals $Me_n$ we reach a similar conclusion for ideals.
\end{proof}

These conditions are variants of those which appear in \cite{MEM} and \cite{FCM}. They can also be interpreted as properties which are invariant under Morita equivalence. \textit{Any such property necessarily corresponds to an invariant at the topos-theoretic level.} If these can be identified, each gives its own immediate Corollary of Theorem \ref{thm:2equiv0}. For example:

\begin{crly}
\label{crly:Gequiv}
The mapping $G \mapsto \Setswith{G}$ is the object part of an equivalence between the 2-category $\Grp \simeq \Grp\co$ of groups, group homomorphisms and conjugations and the 2-category $\TOP^*_{\mathrm{at, ess}}$ of \textbf{atomic} Grothendieck toposes with an essential surjective point, essential geometric morphisms and natural transformations.
\end{crly}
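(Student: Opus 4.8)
The plan is to obtain this as a restriction of the $2$-equivalence of Theorem \ref{thm:2equiv0} to full sub-$2$-categories on either side, so that the only genuinely new content is the identification of \emph{which} monoids give rise to \emph{atomic} toposes. First I would pin down the monoid side. The full sub-$2$-category of $\Mon_s$ on group objects has as $1$-cells exactly the group homomorphisms, since any semigroup homomorphism $f:G \to H$ of groups sends the idempotent $f(1)$ to an idempotent of $H$, which must be $1_H$, so that $f$ is a monoid (hence group) homomorphism. Its $2$-cells are the conjugations of Definition \ref{dfn:conjugation}, which between group homomorphisms reduce to elements $\alpha$ with $g(n)=\alpha f(n)\alpha^{-1}$ --- the usual notion --- and every such $\alpha$ is invertible as a conjugation because $H$ is a group. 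This invertibility of all $2$-cells is precisely what yields $\Grp \simeq \Grp\co$ (send each conjugation to its inverse), so it is harmless to write the source $2$-category either way; I will call this full sub-$2$-category $\Grp$.

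The crux is the lemma that $\Setswith{M}$ is atomic if and only if $M$ is a group. For the forward direction I would use that $\Setswith{M}$ is locally connected, so that in an atomic topos every object is a coproduct of atoms, and in particular the connected (indecomposable) representable right $M$-set $M$ must itself be an atom, i.e.\ have no subobjects besides $0$ and itself. Since the subobjects of the right $M$-set $M$ are exactly the right ideals of $M$, this says the only right ideals are $\varnothing$ and $M$; hence for each $m\in M$ the nonempty right ideal $mM$ equals $M$, giving some $m'$ with $mm'=1$. Thus every element of $M$ is right-invertible, and a standard cancellation argument ($ab=1$ and $bc=1$ force $c=a$, whence $ba=1$) upgrades this to two-sided inverses, so $M$ is a group. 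Alternatively one may observe that an atomic topos is Boolean while $[\Ccal\op,\Set]$ is Boolean precisely when $\Ccal$ is a groupoid. For the converse, when $M=G$ is a group the representable $G$ is an atom --- any nonempty right ideal of a group is all of $G$ --- and it is a separator, so the locally connected topos $\Setswith{G}$ is generated by atoms and is therefore atomic, as is in any case well known for toposes of $G$-sets.

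Finally I would assemble the equivalence formally. Since atomicity is a property of toposes, $\TOP^*_{\mathrm{at, ess}}$ is the full sub-$2$-category of $\TOP^*_{\mathrm{ess}}$ on the atomic objects; by the lemma the object assignment $M \mapsto \Setswith{M}$ of Theorem \ref{thm:2equiv0} carries groups into $\TOP^*_{\mathrm{at, ess}}$, and conversely every object of $\TOP^*_{\mathrm{at, ess}}$ is $\Setswith{M}$ for some $M$ by Theorem \ref{thm:point}, which the lemma forces to be a group, so the restricted object map is essentially surjective. A $2$-equivalence restricted to full sub-$2$-categories that correspond under an essentially surjective object assignment is again a $2$-equivalence, since full faithfulness on the hom-categories is inherited from the fullness of the chosen sub-$2$-categories; this yields $\Grp \simeq \TOP^*_{\mathrm{at, ess}}$.

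The main obstacle is really the packaging of the invariant rather than any of the $2$-categorical bookkeeping, which is all inherited from Theorem \ref{thm:2equiv0}. The one substantive step is recognizing atomicity as the topos-theoretic shadow of the property of being a group (which is Morita-invariant by Corollary \ref{crly:trivial}), together with the elementary right-ideal argument supplying the nontrivial direction of the characterization.
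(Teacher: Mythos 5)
Your proof is correct, and the overall packaging (restrict Theorem \ref{thm:2equiv0} to corresponding full sub-$2$-categories and identify the essential image) is exactly the paper's; the difference lies in how the key lemma, that $\Setswith{M}$ is atomic if and only if $M$ is a group, is established. The paper defers this entirely to Theorem \ref{thm:atomic}, whose proof runs through the logical-functor definition of atomicity: since $\Delta$ always preserves exponentials, atomicity is equivalent to $\Delta$ preserving the subobject classifier, which forces $\Omega$ (the set of right ideals of $M$) to have exactly two elements, whence every element has a right inverse; for the converse it shows that a group action topos is Boolean and invokes the coincidence of Boolean and atomic for toposes with enough points (\cite[Corollary C3.5.2]{Ele}). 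You instead argue directly with atoms: atomicity forces the indecomposable representable $M$ to be an atom, so its subobjects (again the right ideals) are trivial, giving right inverses, which your cancellation argument upgrades to two-sided inverses (a step the paper simply asserts); conversely, for a group $G$ the representable is an atomic separator, so the topos is atomic by the separating-set-of-atoms characterization --- a fact the paper itself quotes in Proposition \ref{prop:xmpls}. Your route is more elementary and self-contained, avoiding logical functors and the Boolean/atomic coincidence; the paper's route buys more, since Theorem \ref{thm:atomic} simultaneously records Booleanness and preservation of the subobject classifier as equivalent Morita invariants, which is the point of its Chapter \ref{chap:mpatti} program. Both arguments ultimately hinge on the same observation that the right ideals of $M$ must be trivial.
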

\begin{proof}
Note that any semigroup homomorphism between groups is automatically a group homomorphism. Thus this equivalence is simply a restriction of the earlier one, and it suffices to show that the essential image is what we claim it is. We shall see both the definition of atomicity and this result in Theorem \ref{thm:atomic}.
\end{proof}

In the next chapter, we exhibit many instances of Morita-invariant properties and their corresponding topos-theoretic invariants, although this shall include only a couple of those mentioned in Corollary \ref{crly:trivial}. The reader may deduce that each such result provides, by restriction of the equivalences of Theorems \ref{thm:2equiv0} and \ref{thm:2equiv1}, a $2$-equivalence and a biequivalence. For reasons of conciseness, we shall not make these explicit, but we hope that they will eventually be usefully applied to provide insight into the 2-categorical structure at the topos level in terms of the more algebraic structure at the monoid level.

\chapter{Monoid Properties as Topos-theoretic Invariants}
\label{chap:mpatti}

The topos $\Setswith{M}$ holds a great deal more structure than the monoid $M$ alone. In particular, it is the natural setting in which to define a great variety of constructions and tools for examining the subtler properties of monoids. Perhaps more significantly, being a Grothendieck topos, the category $\Setswith{M}$ can be compared, either indirectly through its properties or directly via equivalences or geometric morphisms, to other toposes. In this chapter, we take the indirect approach, investigating correspondences between properties of the representing monoids and well-understood properties of the corresponding toposes from the topos-theoretic literature. These happen to include some of the classes identified in Corollary \ref{crly:trivial}, of monoids whose Morita equivalence classes are unique up to isomorphism.

While we saw in the last chapter that the canonical point is determined by the choice of the presenting monoid $M$, the global sections geometric morphism is uniquely determined by the topos, so that its properties are automatically Morita-invariant. We shall primarily concern ourselves with analysis of the properties of this geometric morphism in this chapter. We have already seen in Section \ref{sec:properties0} that the global sections morphism is hyperconnected and locally connected, so we examine properties which are supplementary to these.

Several of the properties of toposes we examine are geometric, in the sense that they inherit their names from properties of toposes $\Sh(X)$ of sheaves on a topological space (or more generally a locale), $X$. Accordingly, we supplement most of the definitions in this chapter with an illustration of what they mean for toposes of this form. In doing so, it will occasionally benefit us to exploit the equivalence, demonstrated in \cite[Corollary II.6.3]{MLM}, between $\Sh(X)$ and the category $\mathbf{LH}/X$ of \textbf{local homeomorphisms over $X$}, whose objects are \textit{local homeomorphisms} or \textit{\'etale maps} $E \to X$, and whose morphisms are continuous maps making the resulting triangle over $X$ commute. Passing through this equivalence, the components of the global sections morphism $f:\Sh(X) \to \Set$ acquire new interpretations. $f_*(\pi: E \to X)$ is the set of global sections of $\pi$, i.e.\ the set of continuous maps $s : X \to E$ such that $\pi \circ s = \mathrm{id}_X$ (this is where the name of this functor comes from for a general topos). $f^*$ sends a set $A$ to the $A$-fold cover $\pi_1:X \times A \to X$ of $X$. The inverse image functor $f^*$ has a left adjoint $f_!$ if and only if $X$ is a locally connected space, in which case $f_!(\pi:E \to X)$ is the set of connected components of $E$, which is where the name of locally connected geometric morphisms originates. Finally, $X$ is connected if and only if $f^*$ is full and faithful, which is why a geometric morphism with this property is called connected. Since hyperconnectedness of a topos implies connectedness, toposes of monoid actions are connected and locally connected over $\Set$, whence toposes of sheaves over connected, locally connected spaces are a good source of intuition for their properties. It should be stressed, however, that the global sections morphism of $\Sh(X)$ is only hyperconnected if $X$ is the one-point space, so these comparisons can never be realized as equivalences of toposes outside of the case where both the space and monoid are trivial. This is a strength of our property-oriented approach: it allows us to draw formal comparisons between classes of objects even when their corresponding toposes do not coincide.

In our examples we will always talk about the topos $\Sh(X)$ of sheaves on a \textit{sober topological space} $X$, which means that the points of $X$ correspond bijectively with the points of $\Sh(X)$, and that this topos has enough points. In instances where the requirement of having enough points is not explicitly mentioned, the results can be extended to encompass toposes of sheaves on suitable locales, should the reader desire it.

There has been recent interest in the toposes $\Setswith{M}$ from a geometrical point of view. Connes and Consani, in their construction of the Arithmetic Site \cite{connes-consani} \cite{connes-consani-geometry-as}, considered the special case where $M$ is the monoid of nonzero natural numbers under multiplication. In this case, the points of $\Setswith{M}$ are related to the finite ad\`{e}les in number theory. Related toposes are studied in \cite{sagnier}, \cite{arithmtop} and \cite{llb-three}. As mentioned in \cite{TGRM}, this geometric study of monoids is inspired by the idea of ``algebraic geometry over $\mathbb{F}_1$'' \cite{manin}. In this philosophy, commutative monoids are thought of as dual to affine $\mathbb{F}_1$-schemes, while the topos $\Setswith{M}$ is seen as the topos of quasi-coherent sheaves on the space corresponding to the monoid $M$, see \cite{pirashvili}.

Since the paper that was the basis of the present chapter was written, we have shown in joint work with Jens Hemelaer \cite{Fitzgerald} that categories of the form $\Setswith{M}$ can also be useful in resolving questions regarding endomorphism monoids arising in universal algebra and beyond.

The ``purely semigroup-theoretic content'' of many of the results presented in this chapter have turned out to be known results, in that after deriving them we discovered references for them in existing literature. However, this is typical when establishing a category-theoretic approach to any area of mathematics: reproving elementary results in context is a necessary first step in applying topos-theoretic machinery, since it illuminates the efficacy and potential for generalization of this approach. We believe that topos theory will ultimately be a fruitful source of new results in semigroup theory. Reciprocally, monoids shall provide a useful source of examples, properties, constructions and intuition for toposes distinct from the usual geometric and logical perspectives, and simpler than the larger context of presheaf toposes over categories with several objects. We share the goal of Funk and Hofstra in \cite{FunkHofstra} of drawing together the research communities in semigroup theory and topos theory, and hope this chapter's contents represent a contribution towards that goal.

\subsection*{Overview}

This chapter is organized as follows. In Section \ref{sec:bg} we expose how some features common to all toposes emerge in the special case of toposes of the form $\Setswith{M}$, since many of the properties we present will be characterized by how the functors constituting the global sections morphism interact with these features.

In the main body of the chapter, Section \ref{sec:1down}, we examine the consequences of additional properties of $\Setswith{M}$ on the monoid $M$. A great number of properties of geometric morphisms in the topos theory literature are inspired from geometry, since any topological space or locale has an associated Grothendieck topos (its category of sheaves), and the global sections functor of such a topos has properties determined by the properties of the original space; we therefore accompany our exposition for monoids with the parallel analysis for topological spaces, for which results are readily available in the topos theory literature. Our arguments are guided in some cases by the examples of toposes of group actions periodically presented in Johnstone's reference text \cite{Ele}.

In the Conclusion we explain directions in which the results accumulated in this chapter could be extended in future. We are aware of some properties of monoids which are expressible in terms of categorical properties of $\Setswith{M}$ but which we have not (yet) been able to express in terms of the global sections morphism; we outline these and some further properties of toposes which we shall not be able to explore in the present text.

\section{Background}
\label{sec:bg}

\subsection{Features of our toposes}
\label{ssec:features}

In \cite{MLM}, one can find exercises (at the end of Chapter I, for example) for identifying topos-theoretic structures in toposes of the form $\Setswith{M}$, and more generally in presheaf toposes; we recall some of this structure here.

Being a Grothendieck topos, the category of right $M$-sets has all limits and colimits. Since the functor $U$ from \eqref{eq:can-point-and-global-sections} preserves both limits and colimits, it follows that colimits and limits can be computed on underlying sets. We will need the following notation:
\begin{itemize}
\item $0$ for the initial object, i.e.\ the empty right $M$-set;
\item $1$ for the terminal object, i.e.\ the right $M$-set with one element;
\item $A \sqcup B$ for the coproduct (disjoint union) of two right $M$-sets $A$ and $B$;
\item $\bigsqcup_{i \in I} X_i$ for the coproduct (disjoint union) of a family of right $M$-sets $\{X_i\}_{i \in I}$;
\item $\colim_{i \in I} X_i$ for the colimit of a diagram $\{X_i\}_{i \in I}$.
\end{itemize}

For two right $M$-sets $X$ and $Y$, we write
\begin{equation*}
\HOM_M(X,Y) = \Hom_{\Setswith{M}}(X,Y)
\end{equation*}
to denote the set of morphisms from $X$ to $Y$; this is consistent with the earlier notation when $X$ happens to also be equipped with a compatible left action of some other monoid.

Toposes are cartesian closed, which is to say that for each pair of objects $P,Q$ in a topos $\Ecal$ there is an \textbf{exponential object} $Q^P$ such that for any third object $X$, we have an isomorphism 
\begin{equation} \label{eq:exponential-adjunction}
\Hom_{\Ecal}(X\times P,Q) \cong \Hom_{\Ecal}(X,Q^P) 
\end{equation}
natural in $X$ and $Q$, i.e.\ the functor $(-)^P$ is right adjoint to $- \times P$. In particular, for $X = Q^P$, the identity map on the right hand side corresponds on the left hand side to the \textbf{evaluation map} 
\begin{equation} \label{eq:evaluation-map}
\mathrm{ev}: Q^P \times P \longrightarrow Q.
\end{equation}

In $\Ecal = \Setswith{M}$, for two $M$-sets $P,Q$, the exponential $Q^P$ has as underlying set
\begin{equation}
\label{eq:exp1}
\HOM_M(M\times P,Q),
\end{equation}
with right $M$-action defined by $(f \cdot m)(n,p) = f(mn,p)$, for $f \in Q^P$, $m,n \in M$, $p \in P$, similarly to \eqref{eq:homaction} above (compare \cite[Proposition I.6.1]{MLM}). The evaluation map is then given by
\begin{align*} 
\HOM_M(M\times P,Q) \times P & \to \phantom{()}Q \\
(f,p) \phantom{()}& \mapsto f(1,p).
\end{align*}

If $F:\Fcal \to \Ecal$ is a functor preserving binary products, then there is a natural comparison morphism 
\begin{equation}\label{eq:theta}
\theta_{P,Q}: F(Q^P) \to F(Q)^{F(P)},
\end{equation}
obtained by applying $F$ to the evaluation map (\ref{eq:evaluation-map}), and then transposing back across the product-exponential adjunction in $\Ecal$. If $\theta_{P,Q}$ is an isomorphism for every pair of objects $P,Q$, then we say that $F$ is \textbf{cartesian-closed}, or that $F$ \textbf{preserves exponentials}. Note that the inverse image functor of a locally connected geometric morphism is always cartesian closed, by \cite[Proposition C3.3.1]{Ele}, so that in particular $\Delta$ is cartesian closed. The condition of cartesian-closedness can be weakened in two directions: either by restricting to a smaller collection of pairs $P,Q$ of objects on which $\theta_{P,Q}$ is required to be an isomorphism, or by asking that $\theta_{P,Q}$ have a property weaker than being an isomorphism. We discuss some cases of the former, applied to the connected components functor $C$, in Section \ref{ssec:exponential} and cases of the latter in Section \ref{ssec:trivial}.

Toposes also have subobject classifiers. That is, there is an object $\Omega_{\Ecal}$ in a topos $\Ecal$ equipped with a subobject $\top:1 \hookrightarrow \Omega_{\Ecal}$ such that every subobject $S \hookrightarrow A$, for $A$ an object of the topos, is the pullback of $\top$ along a unique `classifying morphism' $s: A \to \Omega_{\Ecal}$.
\begin{equation}
\begin{tikzcd}
S \ar[r] \ar[dr, phantom, "\lrcorner", very near start] \ar[d, hook] & 1 \ar[d,"{\top}", hook] \\
A \ar[r, "s"'] & \Omega_{\mathcal{E}}
\end{tikzcd}
\end{equation}

The subobject classifier of $\Ecal = \Setswith{M}$ is the set $\Omega$ of right ideals of $M$ equipped with the inverse image action for left multiplication, which for a right ideal $I$ and $m \in M$ is defined by $I\cdot m = \{m'\in M \mid mm' \in I\}$. The morphism $\top: 1 \to \Omega$ identifies $M \in \Omega$ as the largest ideal of $M$, so that the subobject $S$ of an object $A$ classified by a morphism $s: A \to \Omega$ is the subset of $A$ on the elements $a$ with $s(a) = M$.

If $F:\Fcal \to \Ecal$ is a functor preserving monomorphisms, then there is a canonical morphism
\begin{equation} \label{eq:chi}
\chi:F(\Omega_{\Fcal}) \to \Omega_{\Ecal}
\end{equation}
classifying the subobject $F(\top)$ of $F(\Omega_{\Fcal})$. $F$ is said to \textbf{preserve the subobject classifier} if $\chi$ is an isomorphism. The direct image functor of a geometric morphism preserves the subobject classifier if and only if it is \textbf{hyperconnected}, by \cite[Proposition A4.6.6(v)]{Ele}, whence we deduce that $\Gamma$ preserves the subobject classifier when $\Ecal$ is $\Setswith{M}$ for any monoid $M$.

Finally, suppose $F$ preserves monomorphisms \textit{and} finite products. We say $F$ is \textbf{logical} if it preserves the subobject classifier and exponentials. A geometric morphism whose direct image is logical is automatically an equivalence (\cite[Remark A4.6.7]{Ele}), so $\Gamma$ is logical if and only if $M$ is trivial; this appears as a condition in Theorem \ref{thm:trivial}. We shall see what happens when $\Delta$ is logical in Theorem \ref{thm:atomic}. However, since $\Delta$ already has so many strong preservation properties for an arbitrary monoid $M$, always having a left and right adjoint and preserving exponentials, that theorem is the only one we identify in this chapter expressed in terms of properties of $\Delta$.

\subsection{Properties of actions}

Observe that the global sections functor $\Gamma$ of $\Setswith{M}$ can be expressed as $\HOM_M(1,-)$, for $1$ the trivial right $M$-set, so that properties of $\Gamma$ can be expressed as properties of this $M$-set. As such, we examine properties of right $M$-sets which relate to this functor. These definitions work in any topos, but for clarity we formulate them only in our special case of a topos $\Setswith{M}$ with $M$ a monoid.

\begin{dfn}
\label{dfn:projectives}
Let $B$ be a right $M$-set, and consider the functor $\HOM_M(B,-): A \mapsto \HOM_M(B,A)$. Then we say that $B$ is: 
\begin{itemize}
\item \textbf{connected} or \textbf{indecomposable} if $\HOM_M(B,-)$ preserves arbitrary (small/set-indexed) coproducts;
\item \textbf{projective} if $\HOM_M(B,-)$ preserves epimorphisms;
\item \textbf{finitely presentable} if $\HOM_M(B,-)$ preserves filtered colimits.
\end{itemize}
The definitions for left $M$-sets are analogous.
\end{dfn}

Every right $M$-set can be written as the disjoint union of its connected components. Further:
\begin{proposition} \label{prop:connected-objects}
Let $X$ be a right $M$-set. Then the following are equivalent:
\begin{enumerate}
\item $X$ is connected/indecomposable;
\item $\HOM_M(X,-)$ preserves binary coproducts and the initial object;
\item $X$ is non-empty, and $X \cong X_1 \sqcup X_2$ implies that either $X_1$ is empty or $X_2$ is empty (compare Definition \ref{dfn:indecproj} above);
\item $C(X)=1$, with $C$ the connected components functor of (\ref{eq:can-point-and-global-sections}).
\end{enumerate}
\end{proposition}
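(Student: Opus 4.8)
The plan is to prove Proposition \ref{prop:connected-objects} by establishing a cycle of implications, exploiting the fact that in the topos $\Setswith{M}$ all colimits are computed on underlying sets (via the functor $U$), so that the abstract categorical conditions reduce to elementary statements about the underlying set of $X$ together with its $M$-action. First I would prove $(1) \Rightarrow (2)$, which is immediate: if $\HOM_M(X,-)$ preserves \emph{arbitrary} coproducts, then in particular it preserves binary coproducts, and it preserves the empty coproduct, i.e.\ the initial object $0$; note that $\HOM_M(X,0) = 0$ says precisely that there is no morphism $X \to 0$, i.e.\ $X$ is non-empty (since $0$ is strict).

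Next I would prove $(2) \Rightarrow (3)$. Preservation of the initial object gives $\HOM_M(X,0) = \varnothing$, which forces $X \neq \varnothing$. For the indecomposability condition, suppose $X \cong X_1 \sqcup X_2$. Preservation of binary coproducts gives a bijection $\HOM_M(X, X_1 \sqcup X_2) \cong \HOM_M(X,X_1) \sqcup \HOM_M(X,X_2)$; applying this to the identity morphism on $X$ (viewed as an element of the left-hand side), it must land in one of the two summands, say the first, meaning $\id_X$ factors through the inclusion $X_1 \hookrightarrow X$. Since coproduct inclusions are monic and disjoint in a topos, this forces $X_2 \cong 0$, i.e.\ $X_2$ is empty, as required.

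Then I would prove $(3) \Rightarrow (4)$. Here I would work directly with the connected components functor $C$ of \eqref{eq:can-point-and-global-sections}, which sends $X$ to the quotient of its underlying set by the equivalence relation generated by $x \sim x \cdot m$. Each equivalence class is itself a sub-$M$-set (it is closed under the action), and $X$ decomposes as the coproduct of these classes. If $X$ were to have two or more distinct classes, this would exhibit a nontrivial decomposition $X \cong X_1 \sqcup X_2$ with both summands non-empty, contradicting $(3)$; hence there is exactly one class (using that $X$ is non-empty to rule out the empty case), which says precisely $C(X) = 1$.

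Finally I would close the cycle with $(4) \Rightarrow (1)$. The cleanest route is to invoke the adjunction $C \dashv \Delta$ from \eqref{eq:can-point-and-global-sections}: for any set-indexed family $\{A_i\}_{i \in I}$ of $M$-sets, morphisms $X \to \coprod_i A_i$ correspond, after passing to connected components, to the fact that a connected object maps into a coproduct only by factoring through a single summand. Concretely, since the underlying set of a coproduct is the disjoint union and each component of $X$ must map into a single summand, when $C(X) = 1$ every morphism $X \to \coprod_i A_i$ factors through exactly one inclusion $A_i \hookrightarrow \coprod_i A_i$, giving the bijection $\HOM_M(X, \coprod_i A_i) \cong \coprod_i \HOM_M(X, A_i)$ that expresses preservation of coproducts. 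The main obstacle, and the step requiring the most care, is this last implication: one must verify that the factorization through a single summand is genuinely \emph{unique} and compatible across the whole indexing set, which is where disjointness and stability of coproducts under pullback in the topos (as used in the proof of Lemma \ref{lem:proj}) are doing the real work. The remaining implications are essentially bookkeeping, and the decomposition of an arbitrary $M$-set into connected components stated just before the proposition can be taken as given.
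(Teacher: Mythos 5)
Your proof is correct, but it takes a genuinely different route from the paper. The paper's own proof is essentially a pair of citations: the equivalence $(1 \Leftrightarrow 2 \Leftrightarrow 3)$ is quoted from Janelidze--Tholen's theorem on connected objects, valid in any infinitary extensive category, and $(3 \Leftrightarrow 4)$ is quoted from the Elephant's treatment of locally connected Grothendieck toposes; the only topos-specific input is that $\Setswith{M}$ is infinitary extensive and locally connected. You instead give a self-contained cycle $(1) \Rightarrow (2) \Rightarrow (3) \Rightarrow (4) \Rightarrow (1)$, exploiting the concrete structure of $\Setswith{M}$: colimits are computed on underlying sets, the equivalence classes of the relation generated by $x \sim x \cdot m$ are sub-$M$-sets, and a connected zigzag must land in a single summand of any coproduct since each summand is closed under the action. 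What your approach buys is that the reader needs no external references and sees exactly why the statement is true for monoid actions; what the paper's approach buys is brevity and the observation that $(1 \Leftrightarrow 2 \Leftrightarrow 3)$ has nothing to do with monoids at all, holding in any infinitary extensive category, and $(3 \Leftrightarrow 4)$ in any locally connected topos. One small cosmetic remark: in your step $(4) \Rightarrow (1)$ you announce the adjunction $C \dashv \Delta$ as "the cleanest route" but the argument you actually run is the concrete zigzag-into-one-summand argument, which is fine on its own; and the appeal to "stability of coproducts under pullback" is heavier machinery than you need there, since in $\Setswith{M}$ disjointness and uniqueness of the factoring summand are immediate from the fact that coproducts are disjoint unions of underlying sets and $X$ is non-empty.
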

\begin{proof}
The equivalences ($1 \Leftrightarrow 2 \Leftrightarrow 3$) hold in any infinitary extensive category by \cite[Theorem 2.1]{janelidze}. Grothendieck toposes are infinitary extensive, see e.g.\ \cite[\S4.3]{carboni-vitale}. The equivalence $(3 \Leftrightarrow 4)$ holds for locally connected Grothendieck toposes, and is discussed in \cite[around Lemma C3.3.6]{Ele}.
\end{proof}

In particular, the initial object is not indecomposable. Note that in the semigroup literature, $M$-sets are sometimes assumed to be non-empty by definition, see e.g.\ \cite{MAC}. We have not followed this convention, because it prevents the category of right $M$-sets from being a topos.

A right $M$-set will be called \textbf{free} if each connected component is isomorphic to $M$ (with right $M$-action given by multiplication). Free and projective right $M$-sets have a relationship which is familiar from the corresponding properties of modules for a ring:
\begin{proposition}
For $M$ a monoid and $P$ a right $M$-set, the following are equivalent:
\begin{enumerate}
\item $P$ is projective;
\item $P$ is a retract of a free right $M$-set;
\item $P \cong \bigsqcup_{i \in I} e_i M$ for some family $\{e_i\}_{i \in I}$ of idempotents in $M$. 
\end{enumerate}
\end{proposition}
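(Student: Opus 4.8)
My plan is to prove the three implications cyclically, $(1)\Rightarrow(2)\Rightarrow(3)\Rightarrow(1)$, after first recording a few elementary facts about $\Setswith{M}$ that I would use throughout. Namely: epimorphisms in $\Setswith{M}$ are exactly the underlying surjections (since $U$ from \eqref{eq:can-point-and-global-sections} preserves and reflects them); the representable $M$ is projective because $\HOM_M(M,-)\cong U$ preserves epis; retracts of projectives are projective, and coproducts of projectives are projective (lift a map out of $\bigsqcup_i P_i$ componentwise and recombine by the universal property); and, by a Yoneda computation, every $M$-set endomorphism of $M$ is left multiplication by a unique element, giving a monoid isomorphism $\End_{\Setswith{M}}(M)\cong M$ under which the image of an idempotent-by-$e$ is the right ideal $eM$.

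For $(1)\Rightarrow(2)$, I would use the counit of the free--forgetful adjunction $(-)\times M\dashv U$, namely the canonical map $\varepsilon_P\colon U(P)\times M\too P$, $(x,m)\mapsto x\cdot m$. Setting $m=1$ shows it is surjective, hence an epimorphism, and since $U(P)\times M=\bigsqcup_{x\in U(P)}M$ is free, projectivity of $P$ lets me lift $\id_P$ through $\varepsilon_P$ to a section. Thus $P$ is a retract of a free $M$-set.

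The crux is $(2)\Rightarrow(3)$, and it is also where I expect the main obstacle. I would first decompose $P=\bigsqcup_{j}P_j$ into its connected components (every $M$-set is such a coproduct, as $\Setswith{M}$ is locally connected; cf.\ Proposition \ref{prop:connected-objects}). Writing $s\colon P\to F$, $r\colon F\to P$ with $rs=\id_P$ for the retract onto a free $F=\bigsqcup_{x\in X}M$, I note that the connected components of $F$ are precisely its copies of $M$ (each copy is connected, being cyclic on $1$, and the action preserves copies). The delicate point is that a connected component $P_j\hookrightarrow P$ is \emph{not} obviously a retract of $P$, so I cannot simply pass to summands. Instead I argue directly: the composite $P_j\hookrightarrow P\xrightarrow{s}F$ is a monomorphism with connected image, so it factors through a single copy $C_{x_0}\cong M$; restricting $r$ to $C_{x_0}$ gives $\rho\colon M\to P$ with $\rho\circ(s|_{P_j})=\id_{P_j}$. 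Since $M$ is connected, $\rho(M)$ lies in one component of $P$, and as it meets $P_j$ it must land in $P_j$. Hence $P_j$ is a retract of $M$, and by the $\End(M)\cong M$ computation above such a retract is the image of left multiplication by an idempotent $e_j$, i.e.\ $P_j\cong e_jM$. Therefore $P\cong\bigsqcup_j e_jM$.

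Finally, for $(3)\Rightarrow(1)$, each $e_iM$ is the retract of $M$ cut out by the idempotent endomorphism given by left multiplication by $e_i$, hence projective; equivalently $\bigsqcup_{i}e_iM$ is a retract of the free $M$-set $\bigsqcup_i M$, so it is projective by the preliminary facts. I would close by remarking that the genuinely non-formal step is the factoring of each connected component through a single copy of $M$ in the free cover, which is what forces the local shape $eM$ and makes the decomposition in $(3)$ work; everything else is bookkeeping with the standard adjunctions and extensivity of $\Setswith{M}$.
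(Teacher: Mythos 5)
Your proof is correct. Note that for this proposition the paper offers no argument of its own: it simply defers to the literature (Kilp--Knauer--Mikhalev, \cite[III, 17]{MAC}), so there is nothing internal to compare against; what you have written is essentially the standard argument from that reference, made self-contained. All the steps check out: $(1)\Rightarrow(2)$ via the counit $U(P)\times M \too P$ of the free--forgetful adjunction; $(3)\Rightarrow(1)$ since each $e_iM$ splits the idempotent ``left multiplication by $e_i$'' on $M$ and coproducts of projectives are projective; and, most importantly, your treatment of $(2)\Rightarrow(3)$ correctly identifies and repairs the one genuinely delicate point. You are right that a connected component $P_j$ of $P$ need not be a retract of $P$ (e.g.\ for $M=\Zbb$ there is no equivariant map $\Zbb/2\to\Zbb/3$, so one cannot just collapse the other components onto $P_j$), and your fix --- push $P_j$ along the section $s$ into the free cover, use connectedness to land in a single copy of $M$, and then use connectedness again to see that the restricted retraction $\rho$ maps that copy back into $P_j$ --- is exactly what is needed; combined with the Yoneda computation $\End_{\Setswith{M}}(M)\cong M$, this forces $P_j\cong e_jM$ for an idempotent $e_j$. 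The only cosmetic remark is that injectivity of $s|_{P_j}$ plays no role: connectedness of the image suffices for the factorization through a single copy, so the word ``monomorphism'' could be dropped.
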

\begin{proof}
See e.g.\ \cite[III, 17]{MAC}.
\end{proof}

Meanwhile, the definition of finite presentability given above agrees with the one from universal algebra:
\begin{proposition}[{Cf. \cite[Corollary 3.13]{adamek-rosicky}}] \label{prop:finitely-presentable}
A right $M$-set $X$ is finitely presentable if and only if we can write $X$ as the colimit of a diagram
\begin{equation*}
\begin{tikzcd}
F \ar[r,shift left=1,"{a}"] \ar[r,shift right=1,"{b}"'] & F'
\end{tikzcd}
\end{equation*}
for some finitely generated free right $M$-sets $F$ and $F'$ and morphisms $a,b$.
\end{proposition}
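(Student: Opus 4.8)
The plan is to establish the two implications separately, leaning on the structure of $\Setswith{M}$ as a category monadic over $\Set$ whose free functor is $A \mapsto A \times M$; thus a finitely generated free right $M$-set is precisely a finite coproduct $\bigsqcup_{i=1}^{n} M$. I will freely use that $U$ preserves colimits (it has both adjoints), so that filtered colimits are computed on underlying sets; that finite limits commute with filtered colimits in $\Set$; and that $\Setswith{M}$, being a topos, is Barr-exact and has split idempotents.

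For the implication from a presentation to finite presentability, suppose $X = \mathrm{coeq}(F \rightrightarrows F')$ with $F, F'$ finitely generated free. Each finitely generated free is finitely presentable, since by Yoneda $\HOM_M(\bigsqcup_{i=1}^{n} M, -) \cong U(-)^{n}$, and a finite power of the filtered-colimit-preserving functor $U$ again preserves filtered colimits. Then $\HOM_M(X,-)$ is the equalizer of $\HOM_M(F',-) \rightrightarrows \HOM_M(F,-)$, a finite limit of functors preserving filtered colimits; as finite limits commute with filtered colimits, $\HOM_M(X,-)$ preserves them too, so $X$ is finitely presentable. (The same argument shows that the finitely presentable objects are closed under finite colimits, which I reuse below.)

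For the converse, let $X$ be finitely presentable. First I would show $X$ is finitely generated: presenting $X$ as the directed union of its finitely generated sub-$M$-sets, the identity factors through one of them, which a balancedness argument (split epimorphic mono in a topos) forces to be all of $X$; this gives a regular epimorphism $p : P \too X$ with $P$ finitely generated free. By exactness $X = \mathrm{coeq}(N \rightrightarrows P)$ for $N$ the kernel pair of $p$. Writing $N$ as the directed union of its finitely generated subobjects $N_j$ and setting $C_j := \mathrm{coeq}(N_j \rightrightarrows P)$, the interchange of colimits gives $X \cong \colim_j C_j$, a filtered colimit; covering each $N_j$ by a finitely generated free $P_j \too N_j$ leaves the coequalizer unchanged, so each $C_j = \mathrm{coeq}(P_j \rightrightarrows P)$ comes with a presentation by finitely generated frees.

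Here finite presentability of $X$ yields only that $\id_X$ factors through some $C_{j_0}$, exhibiting $X$ as a \emph{retract} $X \xrightarrow{s} C_{j_0} \xrightarrow{r} X$ with $rs = \id_X$. Converting this retract into an honest finite presentation is the step I expect to be the main obstacle, and I would resolve it by splitting the idempotent $e := sr$ on $C_{j_0}$: the splitting is the coequalizer $X = \mathrm{coeq}(e, \id : C_{j_0} \rightrightarrows C_{j_0})$. Writing $C_{j_0} = \mathrm{coeq}(P_{j_0} \xrightarrow{a,b} P)$ with quotient $\pi : P \to C_{j_0}$, projectivity of the free object $P$ lets me lift $e$ to $\tilde e : P \to P$ with $\pi \tilde e = e\pi$. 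A routine check of universal properties then identifies $X$ with the coequalizer of $[a, \tilde e],\,[b,\id] : P_{j_0} \sqcup P \rightrightarrows P$; since $P_{j_0} \sqcup P$ and $P$ are finitely generated free, this is the desired presentation, completing the proof.
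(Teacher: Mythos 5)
Your proof is correct. For comparison: the paper does not actually prove this proposition — it defers entirely to the citation \cite[Corollary 3.13]{adamek-rosicky}, where the result is established for finitary varieties in general. Your argument is essentially a self-contained specialization of that standard proof to $\Setswith{M}$, and its steps all check out: finitely generated free $M$-sets are finitely presentable because $\HOM_M(\bigsqcup_{i=1}^n M,-) \cong U(-)^n$ and finite limits commute with filtered colimits in $\Set$; the same commutation makes any coequalizer of a pair between such objects finitely presentable; conversely, a finitely presentable $X$ is finitely generated, hence a regular quotient $p: P \too X$ of a finitely generated free, and writing the kernel pair of $p$ as a directed union of finitely generated subobjects (each covered by a finitely generated free) exhibits $X$ as a filtered colimit of finitely presented objects $C_j$, so that $X$ is a retract of some $C_{j_0}$. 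You are right that the only delicate point is converting this retract into an honest presentation — in a general locally finitely presentable category one can conclude only that finitely presentable objects are retracts of finitely presented ones — and your resolution is exactly the right one: realize $X$ as the coequalizer of $(e,\id)$ for the idempotent $e = sr$ on $C_{j_0}$, lift $e$ along the regular epimorphism $\pi$ using projectivity of the free object $P$, and check that $r\pi$ universally coequalizes $[a,\tilde e]$ and $[b,\id]$; that verification (factor any $f$ with $fa=fb$ and $f\tilde e = f$ first through $\pi$, then through the splitting) is sound. Two cosmetic remarks: the appeal to balancedness is superfluous, since a split epimorphism that is monic is an isomorphism in any category; and your parenthetical claim that finitely presentable objects are closed under finite colimits, while true by the same commutation argument, is never actually used in the converse direction.
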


Next, observe that an alternative expression for the connected components functor is $C(X) = X \otimes_M 1$, where $1$ is the trivial left $M$-set. Thus properties of $C$ can be expressed as properties of this trivial left $M$-set. We therefore present some properties of left $M$-sets relating to the functor they induce via tensoring.

\begin{dfn}
\label{dfn:flats}
Let $B$ be a non-empty left $M$-set, and let $F : \Setswith{M} \to \Set$ be the functor $A \mapsto A \otimes_M B$. By Proposition \ref{prop:connected-objects}, $F$ preserves the terminal object if and only if $B$ is indecomposable. $F$ preserves arbitrary limits if and only if it is indecomposable projective (this follows from Lemma \ref{lem:proj}). More generally, we say that $B$ is:
\begin{itemize}
\item \textbf{monomorphism-flat} if $F$ preserves monomorphisms.
\item \textbf{finitely product-flat} if $F$ preserves finite products.
\item \textbf{product-flat} if $F$ preserves products.
\item \textbf{equalizer-flat} if $F$ preserves equalizers.
\item \textbf{pullback-flat} if $F$ preserves pullbacks.
\item \textbf{flat} if $F$ preserves finite limits.
\end{itemize}
The definitions for right $M$-sets are analogous.
\end{dfn}

It is \textit{very important} to note that this naming system differs from the naming conventions in semigroup theory literature, notably that of Bulman-Fleming and Laan in \cite{flatness}. Our terminology is the same when it comes to `finitely product-flat', `equalizer-flat' and `pullback-flat'. However, what we call `monomorphism-flat' is called `flat' in their paper. Our justification for this departure is that our naming system aligns more closely with that in the category theory literature, as we have already seen in the last chapter.

It follows from the definitions that, for example, flat left $M$-sets are pullback-flat. However, other interactions between the different notions of flatness are not so clear. We present some general facts which simplify the situation. These are (by now) well-known in category theory literature thanks to  authors such as Freyd and Scedrov in \cite{freyd-scedrov}, but were reached independently by semigroup theorists such as Bulman-Fleming in \cite{bulman-fleming}. We reproduce proofs here anyway.
\begin{proposition} \label{prop:flatness-properties}
Let $F: \Setswith{M} \to \Setswith{N}$ be a functor. Then:
\begin{enumerate}
\item Suppose that $F$ is nontrivial (i.e.\ $F(A) \neq 0$ for at least one $A$).  If $F$ preserves binary products, then it also preserves the terminal object; thus in order for a left $M$-set $B$ to be finitely product-flat, it suffices that $B \neq 0$ and $- \otimes_M B$ preserves binary products.
\item If $F$ preserves pullbacks, then it also preserves equalizers, so a pullback-flat object is equalizer-flat.
\item If $F$ preserves pullbacks and the terminal object, then it preserves all finite limits, so an object is flat if and only if it is indecomposable and pullback-flat.
\end{enumerate}
\end{proposition}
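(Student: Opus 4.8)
The plan is to handle the three parts in sequence, using throughout that finite limits in the target are generated by the terminal object, binary products and equalizers, and that $\Setswith{N}$ (like $\Set$) is two-valued, so its only subterminal objects are $0$ and $1$.

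For part (1), I would exploit $1\times 1\cong 1$. Applying the product-preserving functor gives a comparison isomorphism $F(1)\cong F(1)\times F(1)$, and since the two projections $1\times 1\rightrightarrows 1$ coincide (both are the unique map to the terminal object), so do the two projections of $F(1)\times F(1)$; this forces $F(1)$ to be subterminal, because any parallel pair $g,h$ into $F(1)$ satisfies $g=p_1\langle g,h\rangle=p_2\langle g,h\rangle=h$. Nontriviality then excludes $F(1)\cong 0$: choosing $A$ with $F(A)\neq 0$ and using $F(A)\cong F(A\times 1)\cong F(A)\times F(1)$ together with strictness of $0$ gives $F(1)\neq 0$, whence two-valuedness yields $F(1)\cong 1$. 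So $F$ preserves the terminal object and hence all finite products. For the object version one notes that $-\otimes_M B$ is nontrivial as soon as $B\neq 0$, since $M\otimes_M B\cong B$.

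For part (2), the guiding observation is that an equalizer is a pullback: $\mathrm{eq}(f,g)$ is the pullback of $\langle f,g\rangle\colon A\to B\times B$ along the diagonal $\Delta_B\colon B\to B\times B$. As $F$ preserves pullbacks, it carries this square to a pullback, exhibiting $F(\mathrm{eq}(f,g))$ as the pullback of $F\langle f,g\rangle$ and $F\Delta_B$ over $F(B\times B)$. The hard part — and the step I expect to be the main obstacle — is that $F$ need not preserve the product $B\times B$, so this cannot be identified with $\mathrm{eq}(Ff,Fg)$ naively. The resolution I would use is that $B\times B$ is itself the pullback of $B\to 1\leftarrow B$, so $F$ sends it to $F(B)\times_{F(1)}F(B)$, which is a subobject of $F(B)\times F(B)$; consequently $F(p_1),F(p_2)\colon F(B\times B)\rightrightarrows F(B)$ are jointly monic. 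A short diagram chase using $p_i\circ\Delta_B=\mathrm{id}$, $p_1\langle f,g\rangle=f$ and $p_2\langle f,g\rangle=g$, together with this joint monicity, then identifies the pullback with $\mathrm{eq}(Ff,Fg)$ on the nose. Specialising $F=-\otimes_M B$ gives that pullback-flat implies equalizer-flat.

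For part (3), preservation of the terminal object upgrades the previous argument: binary products are pullbacks of $A\to 1\leftarrow B$, hence are now preserved, and together with equalizers (from part (2)) and the terminal object this yields preservation of all finite limits. Combining this with the facts, recalled in Definition \ref{dfn:flats} and Proposition \ref{prop:connected-objects}, that $-\otimes_M B$ preserves the terminal object exactly when $B$ is indecomposable and that flatness means preservation of all finite limits, I obtain: $B$ is flat iff $-\otimes_M B$ preserves both pullbacks and the terminal object iff $B$ is pullback-flat and indecomposable. I expect parts (1) and (3) to be essentially formal once the joint-monicity observation driving part (2) is in place.
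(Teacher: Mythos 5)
Your proof is correct and follows essentially the same route as the paper's: the diagonal argument for part (1), the equalizer-as-pullback-along-the-diagonal together with the identification $F(B\times B)\cong F(B)\times_{F(1)}F(B)$ for part (2), and products-as-pullbacks-over-$1$ for part (3). The only differences are cosmetic: you spell out the nontriviality step in (1), which the paper leaves terse, and you replace the paper's composite-of-pullback-squares argument in (2) with an equivalent joint-monicity diagram chase.
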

\begin{proof} \ 
\begin{enumerate} 
\item If $F$ preserves binary products, then in particular the natural map
\begin{gather*}
F(1 \times 1) \longrightarrow F(1)\times F(1) \\
x \mapsto (x,x)
\end{gather*}
is an isomorphism, and hence has at most one element.
\item This is a special case of \cite[Chapter 1, \S 1.439]{freyd-scedrov}. Suppose that $F$ preserves pullbacks, and consider a diagram
\begin{equation*}
\begin{tikzcd}
A \ar[r,"{f}",shift left=1] \ar[r,"{g}"',shift right=1] & B
\end{tikzcd}
\end{equation*}
Then we can rewrite the equalizer $E$ of this diagram as a pullback:
\begin{equation*}
\begin{tikzcd}
E \ar[r] \ar[d] \ar[dr, phantom, "\lrcorner", very near start] & B \ar[d,"\delta"] \\
A \ar[r,"{(f,g)}"'] & B \times B 
\end{tikzcd}
\end{equation*}
where $\delta$ is the diagonal map. Pullbacks are preserved by $F$, so 
\begin{equation*}
\begin{tikzcd}
F(E) \ar[r] \ar[d] \ar[dr, phantom, "\lrcorner", very near start] & F(B) \ar[d,"F\delta"'] \ar[r, "\sim"'] \ar[dr, phantom, "\lrcorner", very near start] & F(B) \ar[d, "\delta"'] \\
F(A) \ar[r,"{(Ff,Fg)}"] & F(B) \times_{F(1)} F(B) \ar[r, hook] & F(B) \times F(B)
\end{tikzcd}
\end{equation*}
is a composite of pullback squares; the right hand one being a pullback is a consequence of the fact that the diagonal $\delta$ of $F(B)$ in $\Setswith{N}$ factors through $F\delta$ by the universal property of $F(B) \times F(B)$. It follows that $F(E)$ can be identified with the equalizer of the diagram
\begin{equation*}
\begin{tikzcd}
F(A) \ar[r,"{F(f)}",shift left=1] \ar[r,"{F(g)}"',shift right=1] & F(B)
\end{tikzcd},
\end{equation*}
so $F$ preserves equalizers.
\item Suppose that $F$ preserves pullbacks and the terminal object. A binary product can be seen as a pullback over the terminal object, so $F$ preserves binary products (and in fact all finite products). Moreover, $F$ preserves equalizers by the above. As is well-known, any functor preserving finite products and equalizers preserves all finite limits. 
\end{enumerate}
\end{proof}

In the category of sets, arbitrary coproducts are pseudo-filtered in the sense of \cite{bjerrum}: they commute with connected limits, including pullbacks and equalizers. The same holds in the category of $M$-sets for $M$ a monoid, since colimits and limits are computed on underlying sets. So we get the following:
\begin{corollary}[{cf.~\cite[III.3.9]{MAC}}]
\label{crly:eqpb-flat}
Let $B = \bigsqcup_{i \in I} B_i$ be a right $M$-set, with each $B_i$ indecomposable. Then:
\begin{enumerate}
\item $B$ is equalizer-flat if and only if $B_i$ is equalizer-flat for all $i \in I$;
\item $B$ is pullback-flat if and only if $B_i$ is pullback-flat for all $i \in I$.
\end{enumerate}
Since each of the $B_i$ is indecomposable, it follows that $B$ is pullback-flat if and only if $B_i$ is flat for all $i \in I$, so that pullback-flat $M$-sets are precisely the $M$-sets that can be written as disjoint unions of flat $M$-sets.
\end{corollary}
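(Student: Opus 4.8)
The plan is to exploit the distributivity of the tensor product over coproducts together with the fact, recalled just above, that coproducts commute with connected limits. Write $F = B \otimes_M -$ and $F_i = B_i \otimes_M -$ for the functors to $\Set$ induced by the respective right $M$-sets, in the sense of Definition \ref{dfn:flats}. Since $B = \bigsqcup_{i \in I} B_i$ and the tensor product distributes over coproducts in its left variable, there is a natural isomorphism $F(C) = B \otimes_M C \cong \bigsqcup_{i\in I} (B_i \otimes_M C) = \bigsqcup_{i\in I} F_i(C)$; concretely this holds because $B \times C = \bigsqcup_i (B_i \times C)$ and the generating relation of the tensor product respects this decomposition. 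Thus $F \cong \bigsqcup_{i\in I} F_i$ as functors.

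Both the equalizer and the pullback of a diagram are connected limits, so fix such a diagram $D$ of left $M$-sets with limit $L$. Applying $F$ gives $F(L) \cong \bigsqcup_i F_i(L)$, while the limit of the composite diagram is $\lim_D \bigsqcup_i F_i \cong \bigsqcup_i \lim_D F_i$, using that coproducts commute with connected limits in $\Set$. Under these identifications the canonical comparison morphism $F(L) \to \lim(F \circ D)$ is precisely the coproduct $\bigsqcup_i \bigl(F_i(L) \to \lim(F_i \circ D)\bigr)$ of the comparison morphisms for the $F_i$. A coproduct of functions of sets is a bijection exactly when each summand is, so $F$ preserves the limit of $D$ if and only if every $F_i$ does. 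Taking $D$ to be a parallel pair proves statement (1), and taking $D$ to be a cospan proves statement (2).

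For the closing assertion, recall from Proposition \ref{prop:flatness-properties}(3) that an $M$-set is flat if and only if it is both indecomposable and pullback-flat. Since each $B_i$ is indecomposable by hypothesis, $B_i$ is pullback-flat if and only if it is flat; combined with statement (2), this shows that $B$ is pullback-flat exactly when every $B_i$ is flat, and hence that the pullback-flat right $M$-sets are precisely those expressible as disjoint unions of flat $M$-sets.

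The step requiring the most care is the identification of the comparison morphism for $F$ with the coproduct of the comparison morphisms for the $F_i$: one must verify that the two natural isomorphisms $F \cong \bigsqcup_i F_i$ and $\lim_D \bigsqcup_i F_i \cong \bigsqcup_i \lim_D F_i$ are compatible with the limiting cones, so that the relevant triangle commutes. This is a routine naturality check, but it is the only place where genuine verification is needed; the remainder is formal once the two commutation properties of coproducts are in hand.
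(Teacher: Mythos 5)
Your proof is correct and follows essentially the same route as the paper, which states the corollary as an immediate consequence of the preceding remark that coproducts are pseudo-filtered (commute with connected limits, in particular equalizers and pullbacks) in $\Set$ and in $M$-sets. You have simply written out the details the paper leaves implicit — the decomposition $F \cong \bigsqcup_{i} F_i$, the identification of comparison maps, and the appeal to Proposition \ref{prop:flatness-properties} for the final assertion — all of which check out.
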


\section{Properties of the global sections morphism}
\label{sec:1down}

\subsection{Groups and atomicity}

The first property we study is expressed in terms of the logical structure of the toposes involved.
\begin{dfn}[{Cf. \cite[C3.5]{Ele}}]
A geometric morphism is \textbf{atomic} if its inverse image functor is logical. For a general geometric morphism this implies local connectedness. We say a Grothendieck topos is \textbf{atomic} if its global sections geometric morphism to $\Set$ is.
\end{dfn}

For Grothendieck toposes with enough points, atomicity coincides with a property of the internal logic of the topos.
\begin{dfn}
\label{dfn:bool}
A topos $\Ecal$ is \textbf{Boolean} if its subobject classifier is an internal Boolean algebra, or equivalently if every subobject of an object of $\Ecal$ has a complement.
\end{dfn}

\begin{xmpl}
For a sober topological space $X$, $\Sh(X)$ has enough points, so we have by \cite[Lemma C3.5.3]{Ele} that $\Sh(X)$ is atomic if and only if it is Boolean, if and only if $X$ is a discrete topological space.
\end{xmpl}

Completing the proof of Corollary \ref{crly:Gequiv} from the last chapter, we have the following.
\begin{thm}[Conditions for $\Setswith{M}$ to be Boolean]
\label{thm:atomic}
Let $M$ be a monoid. The following are equivalent.
\begin{enumerate}
\item $\Setswith{M}$ is atomic.
\item $\Delta$ preserves the subobject classifier
\item $M$ is a group.
\item $\Setswith{M}$ is Boolean.
\end{enumerate}
\end{thm}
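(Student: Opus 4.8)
The plan is to establish the four-way equivalence as a short cycle, leaning on the structural facts already assembled in Section \ref{sec:bg}. The quickest reduction is $(1) \Leftrightarrow (2)$: atomicity means the inverse image $\Delta$ of the global sections morphism is logical, and by definition logicality asks that $\Delta$ preserve finite limits, exponentials, and the subobject classifier. But $\Delta$ is an inverse image functor, so it automatically preserves finite limits, and it is cartesian closed because the global sections morphism is locally connected (as recorded in the discussion around \eqref{eq:theta}). Hence logicality of $\Delta$ collapses to the single remaining condition, that $\Delta$ preserve the subobject classifier, which is exactly $(2)$.

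The heart of the argument is $(2) \Leftrightarrow (3)$. Here I would compute the comparison morphism $\chi : \Delta(\Omega_{\Set}) \to \Omega$ of \eqref{eq:chi} explicitly. Since $\Omega$ is the set of right ideals of $M$ and $\Delta(\Omega_{\Set})$ is the two-element trivial action, $\chi$ is the $M$-set map whose image is the pair of fixed ideals $\{\varnothing, M\}$; being injective, it is an isomorphism precisely when these are the \emph{only} right ideals of $M$. So $(2)$ is equivalent to the purely algebraic statement that $M$ has no nontrivial right ideals. That $M$ a group forces this is immediate (any nonempty right ideal contains $a a^{-1} = 1$, hence all of $M$). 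For the converse, triviality of right ideals makes each principal right ideal $aM$ equal to $M$, so every element $a$ admits a right inverse $a'$ with $aa' = 1$; the decisive lemma is then that a monoid in which every element is right invertible is a group. This follows from the short computation: choosing $a''$ with $a'a'' = 1$, one gets $a'' = (aa')a'' = a(a'a'') = a$, whence $a'a = a'a'' = 1$ and $a'$ is a genuine two-sided inverse. This little right-inverse lemma is the main obstacle in the sense of being the one genuinely non-formal ingredient, though it is elementary.

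Finally I would close the loop between the logical condition and Booleanness. For $(3) \Rightarrow (4)$: when $M$ is a group the action on any $M$-set is by bijections, so the set-theoretic complement of a sub-$M$-set $S \hookrightarrow A$ is again closed under the action (if $x \notin S$ but $xm \in S$ then $x = (xm)m^{-1} \in S$), giving every subobject a complement, which is Booleanness by Definition \ref{dfn:bool}. For $(4) \Rightarrow (3)$ I would specialize Booleanness to the representable object $M$, whose subobjects are exactly the right ideals: each right ideal $I$ must then have a complementary right ideal. If some $aM$ were proper it would be nonempty with $1 \notin aM$, forcing the complement to contain $1$ and hence to be all of $M$, a contradiction; so $aM = M$ for every $a$, and the right-inverse lemma again yields that $M$ is a group. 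Assembling these implications gives $(1) \Leftrightarrow (2) \Leftrightarrow (3) \Leftrightarrow (4)$ and completes the deferred proof of Corollary \ref{crly:Gequiv}.
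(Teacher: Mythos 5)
Your proposal is correct, and three of its four legs match the paper's proof: the reduction of $(1)\Leftrightarrow(2)$ to the already-established facts that $\Delta$ is left exact and cartesian closed, the computation identifying $(2)$ with triviality of the lattice of right ideals (the paper phrases this as $\Omega_{\Setswith{M}}$ having a two-element underlying set, which is the same thing as your explicit description of the comparison map $\chi$ with image $\{\varnothing, M\}$), and the orbit/complement argument for $(3)\Rightarrow(4)$. Where you genuinely diverge is in closing the cycle: the paper proves $(4)\Leftrightarrow(1)$ by invoking the general theorem that a Grothendieck topos with enough points is Boolean if and only if it is atomic (citing Johnstone's Corollary C3.5.2, using the canonical surjective point of $\Setswith{M}$), whereas you prove $(4)\Rightarrow(3)$ directly by specializing Booleanness to the representable $M$-set $M$ and showing that a complemented proper principal right ideal $aM$ is impossible. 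Your route is more elementary and self-contained — it needs only the explicit right-inverse lemma, which you prove and which the paper leaves implicit in its phrase ``hence every element of $M$ has a right inverse, making $M$ a group'' — at the cost of not exhibiting the conceptually interesting general fact that Booleanness and atomicity coincide in the presence of enough points; the paper's citation buys brevity and situates the result inside standard topos-theoretic machinery.
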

\begin{proof}
($1 \Leftrightarrow 2$) By definition, since we have already seen that $\Delta$ preserves exponentials.

($2 \Rightarrow 3$) The subobject classifier in $\Set$ is the two element set, often denoted $\Omega = \{\top,\bot\}$ so that the canonical subobject is the inclusion of the singleton $\{\top\}$. But then $\Omega \cong 1\sqcup1$, and so the subobject classifier is preserved by $\Delta$ if and only if $\Omega_{\Setswith{M}}$ has an underlying set with two elements, which forces every principal ideal in $M$ to contain the identity element (since the ideal must be all of $M$), and hence every element of $M$ has a right inverse, making $M$ a group.

($3 \Rightarrow 4$) If $M$ is a group, every subobject of an $M$-set is a union of orbits; the remaining orbits form the complementary subobject.

($4 \Leftrightarrow 1$) Since $\Setswith{M}$ has enough points (having a canonical surjective point), by \cite[Corollary C3.5.2]{Ele} being Boolean is equivalent to being atomic over $\Set$.
\end{proof}

It should be noted that many of the remaining properties explored in this paper either trivially hold for groups or are incompatible with the property of being a group (for example, a group with any kind of absorbing element is automatically trivial).

\subsection{Right-factorable finite generation and strong compactness}

A Grothendieck topos is called \textbf{compact} if its geometric morphism to $\Set$ is \textbf{proper}, which is to say that it preserves filtered colimits of subobjects; we shall see a formal definition of this property in Definition \ref{dfn:polished} in the next chapter. Since any hyperconnected geometric morphism is proper (cf. \cite[C3.2.13]{Ele}), $\Setswith{M}$ is always compact. However, the stronger notion of being \textbf{strongly compact} is not always satisfied.

\begin{dfn}
A Grothendieck topos $\Ecal$ is called \textbf{strongly compact} if its global sections functor $\Gamma = \Hom_{\Ecal}(1,-)$ preserves filtered colimits. More generally, a geometric morphism $f : \Fcal \to \Ecal$ is called \textbf{tidy} if $f_*$ preserves \textit{$\Ecal$-indexed} filtered colimits (which coincide with ordinary filtered colimits when $\Ecal = \Set$). Thus a Grothendieck topos is strongly compact if and only if the global sections geometric morphism is tidy.
\end{dfn}

\begin{example}
For a topological space $X$, $\Sh(X)$ is a compact topos if and only if $X$ is a compact space in the usual sense. In \cite[Example C3.4.1(a)]{Ele}, Johnstone gives an example of a compact space $X$ such that $\Sh(X)$ is not strongly compact: $X$ consists of two copies of the unit interval $[0,1]$ with the respective copies of the open interval $(0,1)$ identified, as sketched below.
\begin{center}
\begin{tikzpicture}
\coordinate (A) at (0,0);
\coordinate (B) at (2,0);
\draw [-,thick] (A) -- (B);
\node[draw,circle,inner sep=.5pt,fill] at (-.05,.1) {};
\node[draw,circle,inner sep=.5pt,fill] at (-.05,-.1) {};
\node[draw,circle,inner sep=.5pt,fill] at (2.05,.1) {};
\node[draw,circle,inner sep=.5pt,fill] at (2.05,-.1) {};
\end{tikzpicture}
\end{center}
On the other hand, if $X$ is a spectral space or coherent space\footnote{Not to be confused with the coherence spaces appearing in linear logic.} (in the terminology of Hochster \cite{hochster} or Johnstone \cite{Ele} respectively), having a base of compact open sets stable under finite intersections, then $\Sh(X)$ is strongly compact by \cite[Proposition C4.1.13 and Corollary C4.1.14]{Ele}. In particular, $\Sh(X)$ is strongly compact if $X$ is a zero-dimensional compact Hausdorff space or if $X$ is the Zariski spectrum of a commutative ring. More generally, Johnstone shows in \cite[Corollary C4.1.14]{Ele} that any compact Hausdorff space is strongly compact.
\end{example}

In the case where $M$ is a group, we have the following result, which appears in \cite[Example C3.4.1(b)]{Ele}:
\begin{prop} \label{prop:strongly-compact-groups}
For $G$ a group, the topos $\Setswith{G}$ is strongly compact if and only if $G$ is finitely generated.
\end{prop}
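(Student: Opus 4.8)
The plan is to use the description of the global sections functor of $\Setswith{G}$ as the fixed-point functor $\Gamma(X) = \Fix_G(X)$, together with the fact (noted in Section~\ref{sec:bg}) that the forgetful functor $U$ preserves colimits, so that filtered colimits in $\Setswith{G}$ are computed on underlying sets. Both directions then revolve around a single observation: $\Fix_G$ is built from a \emph{finite} limit exactly when $G$ is finitely generated, and finite limits commute with filtered colimits in $\Set$.

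For the forward direction, suppose $G = \langle g_1, \dots, g_n\rangle$. In a group, $xg_i = x$ forces $xg_i^{-1} = x$, so fixing all the generators forces an element to be fixed by every word in the $g_i$; hence $\Fix_G(X) = \{x \in X : xg_i = x \text{ for } 1 \le i \le n\}$, which is the equalizer in $\Set$ of the two maps $X \rightrightarrows X^n$ given by $x \mapsto (xg_1,\dots,xg_n)$ and the diagonal $x \mapsto (x,\dots,x)$. This exhibits $\Gamma$, as a set-valued functor, as a finite limit assembled from the underlying-set functor and the generator-action maps. I would then invoke that $U$ preserves filtered colimits and that finite limits commute with filtered colimits in $\Set$ to conclude that $\Gamma$ preserves filtered colimits; hence $\Setswith{G}$ is strongly compact.

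For the converse, suppose $G$ is not finitely generated. I would take as indexing category the poset of finitely generated subgroups $H \le G$ ordered by inclusion; this is filtered, since $\langle H, H'\rangle$ is again finitely generated. To each such $H$ I associate the transitive right $G$-set $H\backslash G$ of right cosets, with the quotient maps $H\backslash G \to H'\backslash G$ for $H \subseteq H'$ as transition morphisms, giving a filtered diagram. A short computation shows $\Fix_G(H\backslash G) = \varnothing$ for every proper $H$ (a fixed coset $Hg$ would force $gGg^{-1}\subseteq H$, i.e.\ $H=G$), so $\colim_H \Fix_G(H\backslash G) = \varnothing$. On the other hand, since every element of $G$ lies in some finitely generated subgroup, any two cosets become identified sufficiently high in the system, so the filtered colimit $\colim_H (H\backslash G)$ collapses to the terminal object $1$, whose fixed-point set is a singleton. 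Thus $\Fix_G(\colim_H H\backslash G) = 1 \ne \varnothing = \colim_H \Fix_G(H\backslash G)$, so $\Gamma$ fails to preserve this filtered colimit and $\Setswith{G}$ is not strongly compact.

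The main obstacle is the converse, specifically verifying that the colimit of the coset diagram collapses to a point; this rests on the combinatorial fact that for non-finitely-generated $G$ any pair of cosets is merged once the indexing subgroup is enlarged to contain the relevant element, which is precisely where non-finite-generation enters. The forward direction is essentially formal once $\Fix_G$ is recognized as a finite limit.
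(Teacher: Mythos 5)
Your proof is correct, and it is essentially the group case of the argument the paper gives for its monoid generalization (Theorem \ref{thm:strongly-compact}): your filtered diagram of coset spaces $H\backslash G$ indexed by finitely generated subgroups is precisely the paper's diagram of quotients $M/{\sim}_S$ indexed by finite subsets $S$ (for a group, the right congruence generated by $s \sim 1$, $s \in S$, is exactly the partition into right cosets of $\langle S \rangle$), and your forward direction rests on the same point as the paper's — that fixedness is detected by the finitely many generators — merely packaged as commutation of an equalizer of colimit-preserving functors with filtered colimits rather than as an element chase. Both of your key computations (a fixed coset forces $gGg^{-1} \subseteq H$, hence $H = G$ since conjugation is an automorphism, and the collapse of the coset colimit to $1$) are valid, so there is no gap.
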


We will need the following definitions in our characterization of the monoids $M$ such that $\Setswith{M}$ is strongly compact.

\begin{definition}
Let $M$ be a monoid, and let $S \subseteq M$ be a subset. We say that $S$ is \textbf{right-factorable} if whenever $x \in S$ and $y \in M$ with $xy \in S$, we have $y \in S$. Dually, we may call a subset $S \subseteq M$ left-factorable if whenever $x \in M$ and $y \in S$ with $xy \in S$, we have $x \in S$.
\end{definition}

The above definitions are related to the two-out-of-three property in category theory, in the sense that if we view $M$ as a category with one object, then a submonoid of $M$ has the two-out-of-three property precisely if it is both right-factorable and left-factorable. If $M$ is a commutative, cancellative monoid, then a submonoid $S \subseteq M$ is right-factorable (or equivalently, left-factorable) if and only if $S$ is a \emph{saturated monoid} in the sense of Geroldinger and Halter-Koch \cite{geroldinger-survey}, i.e.\ $S = \mathbf{q}(S) \cap M$, where $\mathbf{q}(S)$ denotes the groupification of $S$.

If $(M_i)_{i \in I}$ is a family of right-factorable subsets of $M$, then the intersection $\bigcap_{i \in I} M_i$ is also right-factorable (and similarly for families of left-factorable subsets). So we can define the following:

\begin{definition}[{See also \cite[Section 2]{kobayashi}}]
Let $M$ be a monoid, and $S \subseteq M$ a nonempty subset. As is standard, we write $\langle S \rangle_M$ for the submonoid of $M$ generated by $S$. We define $\langle{S}\rangle\rangle_M$ to be the smallest right-factorable submonoid of $M$ that contains $S$; we call this the \textbf{right-factorable submonoid generated by $S$}; the extra bracket on the right is intended to evoke the asymmetric extra property this submonoid satisfies compared with $\langle S \rangle_M$. We say that $S$ \textbf{right-factorably generates} $M$ if $\langle{S}\rangle\rangle_M = M$. We call $M$ \textbf{right-factorably finitely generated} if there is a finite subset $S \subseteq M$ such that $S$ right-factorably generates $M$. Dually, we can define $\langle\langle{S}\rangle_M$ to be the left-factorable submonoid generated by $S$, i.e.\ the smallest left-factorable submonoid containing $S$.
\end{definition}

The properties of being right-factorable and right-factorably finitely generated appear in the semigroup literature under the name \textit{left unitary} and \textit{left unitarily finitely generated} see e.g.\ \cite[p.63]{howie-95}, \cite[Definition 4.38]{MAC}. We prefer to employ terms which convey the elementary notion in terms of elements of the monoid, viewed as morphisms.

The submonoid right-factorably generated by any subset can be computed inductively; a functionally identical construction is given by Kobayashi in \cite[before Proposition 2.4]{kobayashi}.
\begin{lemma}
\label{lem:construct}
Given a non-empty subset $S = S_0 \subseteq M$, inductively define for $i \in \Nbb$ the subset $S_{i+1}\subseteq M$ to be $\{m \in M \mid \exists t \in \langle S_i \rangle_M, \, tm \in \langle S_i \rangle_M\}$. Then $\langle S \rangle\rangle_M$ is precisely the subset $\bigcup_{i \in \Nbb} S_i$.
\end{lemma}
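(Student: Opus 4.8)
The plan is to establish the two inclusions $\bigcup_i S_i \subseteq \langle S \rangle\rangle_M$ and $\langle S \rangle\rangle_M \subseteq \bigcup_i S_i$ separately, writing $U := \bigcup_{i \in \Nbb} S_i$ for brevity. The single structural fact that drives both halves is the monotonicity estimate $\langle S_i \rangle_M \subseteq S_{i+1}$: indeed, for $m \in \langle S_i \rangle_M$ one may take $t = 1 \in \langle S_i \rangle_M$ in the definition of $S_{i+1}$, so that $tm = m \in \langle S_i \rangle_M$ witnesses $m \in S_{i+1}$. In particular $S_i \subseteq S_{i+1}$, so the $S_i$ form an increasing chain, and $1 \in \langle S_0 \rangle_M \subseteq S_1 \subseteq U$.

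For the inclusion $U \subseteq \langle S \rangle\rangle_M$ I would show $S_i \subseteq \langle S \rangle\rangle_M$ by induction on $i$. The base case $S_0 = S \subseteq \langle S \rangle\rangle_M$ is immediate. For the inductive step, assume $S_i \subseteq \langle S \rangle\rangle_M$; since $\langle S \rangle\rangle_M$ is a submonoid it then also contains $\langle S_i \rangle_M$. Given $m \in S_{i+1}$, pick $t \in \langle S_i \rangle_M$ with $tm \in \langle S_i \rangle_M$; both $t$ and $tm$ lie in $\langle S \rangle\rangle_M$, and right-factorability of $\langle S \rangle\rangle_M$ forces $m \in \langle S \rangle\rangle_M$. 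Hence $S_{i+1} \subseteq \langle S \rangle\rangle_M$, completing the induction, and therefore $U \subseteq \langle S \rangle\rangle_M$.

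For the reverse inclusion it suffices, by minimality of $\langle S \rangle\rangle_M$, to check that $U$ is itself a right-factorable submonoid containing $S$; the containments $S = S_0 \subseteq U$ and $1 \in U$ are already noted. Closure under multiplication follows from the monotonicity fact: if $a, b \in U$ then $a, b \in S_k$ for a common $k$ (using that the chain is increasing), whence $ab \in \langle S_k \rangle_M \subseteq S_{k+1} \subseteq U$. For right-factorability, suppose $x \in U$ and $y \in M$ with $xy \in U$; choosing $k$ large enough that $x, xy \in S_k \subseteq \langle S_k \rangle_M$, the element $t := x$ witnesses $y \in S_{k+1} \subseteq U$ directly from the defining formula for $S_{k+1}$. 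Thus $\langle S \rangle\rangle_M \subseteq U$, and the two inclusions give equality.

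The proof involves no real obstacle beyond bookkeeping; the one point that demands care is that the individual stages $S_i$ are in general neither submonoids nor right-factorable, so every closure property must be proved for the union $U$ and relies on first pinning down the inclusion $\langle S_i \rangle_M \subseteq S_{i+1}$ and the consequent nestedness of the chain.
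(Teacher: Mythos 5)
Your proof is correct and follows essentially the same route as the paper's: one inclusion by induction on $i$ using that $\langle S \rangle\rangle_M$ is a right-factorable submonoid, and the other by verifying that the union is itself a right-factorable submonoid containing $S$. The only difference is presentational — you make explicit the monotonicity fact $\langle S_i \rangle_M \subseteq S_{i+1}$ (via $t=1$) that the paper uses implicitly when it asserts the chain is increasing and that products land in the next stage.
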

\begin{proof}
We show first inductively that $S_i \subseteq \langle S \rangle\rangle_M$ for each $i$. By definition, $S_0 \subseteq \langle S \rangle\rangle_M$. Given that $S_{i-1} \subseteq \langle S \rangle\rangle_M$, to be closed under composition, $\langle S \rangle\rangle_M$ must certainly contain $\langle S_{i-1} \rangle_M$. It follows from the definition of $S_i$ that any right-factorable subset containing $S_{i-1}$ must contain $S_i$, as claimed.

Conversely, $\bigcup_{i \in \Nbb} S_i$ is right-factorable: given $x,y$ in the union, they are both contained in some $S_i$ for some sufficiently large index $i$, and hence $xy \in S_{i+1} \subseteq \bigcup_{i \in \Nbb} S_i$. Similarly, given $t \in S_i$, $m \in M$, with $tm \in \bigcup_{i \in \Nbb} S_i$, we have $tm \in S_j$ for some $j \geq i$, and hence $m \in S_{j+1}$. Thus we are done.
\end{proof}

\begin{rmk}
\label{rmk:leftOre}
If $S_0$ is a subset, or more particularly a submonoid, satisfying the left Ore condition (the dual of Definition \ref{dfn:rOre} below), we find that $S_2 = S_1$: for $m \in S_2$ we have $t = t_1 \cdots t_k$ and $s = s_1 \cdots s_l$ in $\langle S_1 \rangle_M$ such that $tm = s$ and moreover $x_1, \dotsc, x_k$, $y_1, \dotsc, y_l$ in $S_0$ with $x_it_i$ and $y_js_j$ in $\langle S_0 \rangle_M$ for each $i$ and $j$. The left Ore condition allows us to inductively construct from these an element $z \in S_0$ such that $zs$ and $zt$ are both members of $\langle S_0 \rangle_M$, whence $m \in S_1$. Thus the construction described in Lemma \ref{lem:construct} terminates after a single step.
\end{rmk}

\begin{lemma} \label{lem:quotient-rffg}
Let $S \subseteq M$ be a non-empty subset and let $\sim_S$ be the right congruence generated by the relations $x \sim 1$ for $x \in S$. Then:
\begin{equation*}
\langle{S}\rangle\rangle_M = \{ m \in M : m \sim_S 1 \}.
\end{equation*}
\end{lemma}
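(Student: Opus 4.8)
The plan is to set $R := \langle S\rangle\rangle_M$ and let $T := \{m \in M : m \sim_S 1\}$ be the $\sim_S$-class of the identity, then prove the two inclusions $R \subseteq T$ and $T \subseteq R$ separately. The first will follow from the universal (minimality) property of $\langle S\rangle\rangle_M$ once I verify that $T$ has the defining closure properties; the second will rest on a single computation showing that membership in $R$ is invariant along the elementary moves that generate $\sim_S$. Notably, neither direction needs the inductive description of Lemma \ref{lem:construct}; only the two defining features of $\langle S\rangle\rangle_M$ — being a submonoid containing $S$, and being right-factorable — are used.

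For $R \subseteq T$, I would check that $T$ is a right-factorable submonoid of $M$ containing $S$, so that minimality of $R$ forces $R \subseteq T$. That $S \subseteq T$ is immediate from the definition of $\sim_S$. That $T$ is a submonoid and is right-factorable both fall out of right compatibility of the congruence together with transitivity: since $a \sim_S 1$ implies $ab \sim_S b$, two elements $a,b \in T$ give $ab \sim_S b \sim_S 1$, and $x \in T$ with $xy \in T$ gives $y \sim_S xy \sim_S 1$. These are one-line verifications I would not belabour.

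For $T \subseteq R$, the crux is the observation that for every $x \in S$ and every $n \in M$ one has $xn \in R \iff n \in R$: the forward implication $n \in R \Rightarrow xn \in R$ uses that $R$ is a submonoid containing $x$, while the backward implication $xn \in R \Rightarrow n \in R$ is precisely right-factorability of $R$ applied to $x \in R$. Granting this, the cleanest route is to introduce the auxiliary relation
\[
a \equiv b \quad :\Longleftrightarrow \quad (\forall n \in M)\,\bigl(an \in R \iff bn \in R\bigr),
\]
check directly that $\equiv$ is a right congruence (right compatibility is immediate by reindexing the universally quantified $n$), and note that the displayed equivalence says exactly that $x \equiv 1$ for every $x \in S$. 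Since $\sim_S$ is the smallest right congruence containing these pairs, $\sim_S \subseteq \equiv$; taking $n = 1$ then gives $m \sim_S 1 \Rightarrow m \in R$, as desired.

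I expect the main obstacle to be handling the generated right congruence correctly. The congruence-comparison route above localizes this difficulty into the single minimality step $\sim_S \subseteq \equiv$, mirroring the way the first inclusion uses minimality of $R$. If one prefers a more hands-on argument, the alternative is to use the explicit description of $\sim_S$ as the reflexive–symmetric–transitive closure of the right-translated generating pairs $\{(xn,n) : x \in S,\, n \in M\}$: then $m \sim_S 1$ means there is a finite chain $m = c_0, c_1, \dots, c_k = 1$ with each $\{c_i,c_{i+1}\} = \{xn,n\}$, and the equivalence $xn \in R \iff n \in R$ propagates membership in $R$ along the chain down to $1 \in R$. Either way the real content is that one computation, $xn \in R \iff n \in R$; the rest is bookkeeping.
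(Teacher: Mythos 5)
Your proof is correct, and your first inclusion is exactly the paper's: show that the $\sim_S$-class of the identity is a right-factorable submonoid containing $S$, then invoke minimality of $\langle{S}\rangle\rangle_M$, with the same one-line verifications. Where you genuinely differ is the second inclusion. The paper works with the explicit presentation of the generated congruence: it decomposes $m \sim_S 1$ into a zigzag of one-step relations $m' \sim xm'$ with $x \in S$, and argues that if $m \notin R := \langle{S}\rangle\rangle_M$ while $1 \in R$, membership in $R$ would have to flip at some step, and both flip cases are impossible --- ruled out by closure under multiplication and by right-factorability, i.e.\ by exactly your key computation $xn \in R \iff n \in R$. You instead package that same computation into the auxiliary relation $\equiv$ and use the universal property of $\sim_S$ to get $\sim_S \subseteq \equiv$, then specialize at $n=1$. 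The mathematical content is identical, but the two routes demand different things: the paper's zigzag argument silently presupposes the concrete description of the right congruence generated by the pairs $(x,1)$ (the reflexive--symmetric--transitive closure of the right-translated pairs $(xn,n)$, $x \in S$, $n \in M$), whereas your comparison-congruence argument needs only the defining minimality of $\sim_S$, so both halves of your proof become minimality arguments and the whole thing is self-contained modulo the immediate check that $\equiv$ is a right congruence. Your stated fallback alternative is precisely the paper's proof.
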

\begin{proof}
$\subseteq$. It is enough to show that $\{ m \in M : m \sim_S 1 \}$ is a right-factorable submonoid. If $m \sim_S 1$ and $m' \sim_S 1$ then $mm' \sim_S m'$, and by transitivity $mm' \sim_S 1$. Further, if $m \sim_S 1$ and $mm' \sim_S 1$ then $mm' \sim_S m'$ so it follows that $m' \sim_S 1$.

$\supseteq$. Suppose $m \in M$ with $m \sim_S 1$ but $m \notin \langle{S}\rangle\rangle_M$. Then since we can decompose the relation $m \sim_S 1$ into a zigzag of one-step relations of the form $m' \sim xm'$ with $x \in S$, there must be some such relation with $xm' \in \langle{S}\rangle\rangle_M$ and $m' \notin \langle{S}\rangle\rangle_M$ or $xm' \notin \langle{S}\rangle\rangle_M$ and $m' \in \langle{S}\rangle\rangle_M$. Both cases lead to a contradiction.
\end{proof}

With the above definitions, it is possible to generalize Proposition \ref{prop:strongly-compact-groups} to arbitrary monoids $M$, by adapting Johnstone's proof \cite[C3.4.1(b)]{Ele}.
\begin{theorem}[Conditions for $\Setswith{M}$ to be strongly compact] \label{thm:strongly-compact}
For a monoid $M$, the following are equivalent:
\begin{enumerate}
\item $\Gamma$ preserves filtered colimits, i.e.\ $\Setswith{M}$ is strongly compact.
\item The terminal right $M$-set $1$ is finitely presentable.
\item $M$ is right-factorably finitely generated.
\end{enumerate}
\end{theorem}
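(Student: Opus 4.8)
The plan is to dispatch $(1)\Leftrightarrow(2)$ by unwinding definitions and then to concentrate all the work on $(2)\Leftrightarrow(3)$, where the two bridges are the presentation-theoretic description of finite presentability in Proposition \ref{prop:finitely-presentable} and the characterization $\langle S\rangle\rangle_M = \{m : m \sim_S 1\}$ from Lemma \ref{lem:quotient-rffg}. For $(1)\Leftrightarrow(2)$ there is nothing to prove beyond the identification $\Gamma = \HOM_M(1,-)$: by Definition \ref{dfn:projectives}, $\Gamma$ preserving filtered colimits is exactly the statement that the right $M$-set $1$ is finitely presentable.

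For $(3)\Rightarrow(2)$ I would argue directly. Given a finite $S$ with $\langle S\rangle\rangle_M = M$, Lemma \ref{lem:quotient-rffg} shows that the right congruence $\sim_S$ generated by the pairs $\{(x,1) : x\in S\}$ is the total relation, so $M/{\sim_S}\cong 1$. This quotient is the coequalizer of the pair $M^{\sqcup |S|}\rightrightarrows M$ of maps of finitely generated free right $M$-sets that send the generator indexed by $x\in S$ to $x$ and to $1$ respectively; hence $1$ is finitely presentable by Proposition \ref{prop:finitely-presentable}.

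For $(2)\Rightarrow(3)$, which I expect to be the crux, the idea is to realize $1$ as a canonical filtered colimit and use finite presentability to descend to a single stage. Order the finite subsets $S\subseteq M$ by inclusion (a directed poset), put $X_S := M/{\sim_S}$, and take the evident quotient maps $X_S\to X_{S'}$ for $S\subseteq S'$. Since colimits in $\Setswith{M}$ are computed on underlying sets, $\colim_S X_S$ is $M$ modulo the directed union $\bigcup_S \sim_S$, which is the congruence generated by all pairs $(x,1)$ and so is total; thus $\colim_S X_S\cong 1$. If $1$ is finitely presentable, the identity of $\colim_S X_S = 1$ factors through some stage $X_S$, i.e.\ $\Fix_M(X_S)$ is nonempty, so there is $m\in M$ with $mn\sim_S m$ for all $n\in M$. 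It then remains to turn this one fixed point into a finite right-factorable generating set, and here I claim $S' := S\cup\{m\}$ works: the new relation gives $1\sim_{S'} m$, hence $n\sim_{S'} mn$ for every $n$, and combining with $mn\sim_S m$ (inherited since $\sim_S\subseteq\sim_{S'}$) and $m\sim_{S'}1$ yields $n\sim_{S'}1$ for all $n$. So $\sim_{S'}$ is total, and Lemma \ref{lem:quotient-rffg} gives $\langle S'\rangle\rangle_M = M$.

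The main obstacle is the last move of $(2)\Rightarrow(3)$: finite presentability only guarantees that \emph{some} $X_S$ carries a fixed point $[m]$, which need not be the class of $1$, so right-factorable generation by $S$ itself may fail. The enlargement $S\rightsquigarrow S\cup\{m\}$ is exactly what repairs this gap, and it specializes correctly to Johnstone's group argument in \cite[C3.4.1(b)]{Ele}, where right-factorable generation collapses to ordinary generation and no enlargement is visible.
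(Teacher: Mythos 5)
Your proposal is correct, and it is half the paper's argument and half a genuinely different one. The direction you call the crux, $(2)\Rightarrow(3)$, is essentially the paper's proof of $(1)\Rightarrow(3)$ verbatim: the paper also takes the filtered colimit $\colim_S M/{\sim_S} \cong 1$ over finite subsets, uses preservation of this colimit by $\Gamma$ to find a fixed point $[a]$ in some $M/{\sim_S}$, and then performs exactly your enlargement $S' = S \cup \{a\}$ so that $[1]=[a]$ becomes a fixed point of $M/{\sim_{S'}}$, whence $\sim_{S'}$ is total and Lemma \ref{lem:quotient-rffg} applies; so your worry that the enlargement might be a gap is unfounded --- it is precisely the paper's move too. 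Where you genuinely diverge is the converse. The paper proves $(3)\Rightarrow(1)$ by a direct, hands-on verification that $\Gamma$ preserves filtered colimits: injectivity from filteredness, then surjectivity by pushing a fixed-under-$S$ element to a common stage and invoking the inductive construction of $\langle S\rangle\rangle_M$ from Lemma \ref{lem:construct} to upgrade ``fixed by $S$'' to ``fixed by $M$''. You instead prove $(3)\Rightarrow(2)$ by observing that $M/{\sim_S}$ is the coequalizer of the two maps $\coprod_{x\in S} M \rightrightarrows M$ sending the $x$-th generator to $x$ and to $1$ (a correct identification: the generating pairs $(xm,m)$ are stable under the right action, so the equivalence relation they generate is exactly the right congruence $\sim_S$), so that when $\langle S\rangle\rangle_M = M$ the terminal object is a coequalizer of finitely generated free $M$-sets, and Proposition \ref{prop:finitely-presentable} finishes. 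Your route is shorter and avoids Lemma \ref{lem:construct} entirely, at the cost of resting on the Ad\'amek--Rosick\'y characterization (Proposition \ref{prop:finitely-presentable}), which the paper states but does not prove; the paper's route is self-contained and, as a by-product, exhibits concretely how fixed points descend through filtered colimits, which is the mechanism reused in Scholium \ref{schl:tidiness}.
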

\begin{proof}
($1 \Leftrightarrow 2$) Recalling that $\Gamma = \HOM_M(1,-)$, this equivalence is definitional (Definition \ref{dfn:projectives}); see also Proposition \ref{prop:finitely-presentable}.

($1 \Rightarrow 3$) For each finite subset $S \subseteq M$, let $\sim_S$ be the right congruence defined in Lemma \ref{lem:quotient-rffg}. Clearly $\colim_S M/{\sim}_S \cong 1$, where this is a filtered colimit over all finite subsets of $M$. Now consider the comparison map
\begin{equation*}
\beta : \colim_S \,\Gamma(M/{\sim}_S) \longrightarrow 1.
\end{equation*}
If $\Gamma$ preserves filtered colimits, then in particular $\beta$ is an isomorphism. By surjectivity of $\beta$, $M/\sim_S$ has a fixed point for some finite subset $S$ of $M$, we will denote it by $[a]$ for some representative $a \in M$. Its projection in $M/{\sim}_{S'}$ is again a fixed point, for $S' = S\cup \{a\}$. Now $[1]$ is a fixed point in $M/{\sim}_{S'}$. But this means that $m \sim_{S'} 1$ for all $m \in M$. By Lemma \ref{lem:quotient-rffg}, $S'$ right-factorably generates $M$.

($3 \Rightarrow 1$) Take a finite subset $S \subseteq M$ such that $\langle{S}\rangle\rangle_M = M$. Consider a filtered colimit $\colim_i S_i$ of right $M$-sets $S_i$. We have to show that the natural map 
\begin{equation*}
\colim_i \Gamma(S_i) \longrightarrow \Gamma(\colim_i S_i)
\end{equation*}
is a bijection. Injectivity is immediate from filteredness, so we prove surjectivity. Take an element $x$ in $\Gamma(\colim_i S_i)$. Let $x$ be represented by an element $x_i \in S_i$. For each $s \in S$, we can find an index $j$ and a structure morphism $\phi_{ji} : S_i \to S_j$ such that $\phi_{ji}(xs) = \phi_{ji}(x)$. Since $S$ is finite we can find a common index $k$ such that $\phi_{ki}(xs) = \phi_{ki}(x)$ for all $s \in S$. Since $\phi_{ki}(x)$ is fixed by all $s \in S$, it is also fixed by all elements of $\langle{S}\rangle\rangle_M = M$: indeed, employing the inductive construction of Lemma \ref{lem:construct}, if $x$ is fixed by $S_i$, then it is fixed by $\langle S_i \rangle_M$, and given $m \in M$, $t \in \langle S_i \rangle_M$ with $tm \in \langle S_i \rangle_M$, we have $xm = xtm = x$, so $x$ is fixed by $S_{i+1}$, and hence inductively by $\bigcup_{i \in \Nbb} S_i = M$. So $x$ is represented by an element of $\Gamma(S_k)$.
\end{proof}
The (dual of the) equivalence ($2 \Leftrightarrow 3$) appears in the semigroup theory work of Dandan, Gould, Quinn--Gregson and Zenab \cite[Theorem 3.10]{dandan-gould-quinn-gregson-zenab}, amongst some other equivalent conditions.

\begin{example} \label{xmpl:rwg} \ 
If $M$ has a right-absorbing element $r$ in the sense of Definition \ref{dfn:absorb} below, then $M$ is right-factorably generated by $\{r\}$ (since $r\cdot m = r$ for all $m \in M$). In particular $M$ is right-factorably finitely generated, so $\Setswith{M}$ is strongly compact. Given a non-empty set $S$, the monoid $M=\End(S)$ of (total) functions $S \to S$ has a right-absorbing element for each element $s \in S$ given by the constant function at $s$, so $M$ is right-factorably finitely generated by the above.
\end{example}

\begin{example}
We already saw that for a group $G$, the topos $\Setswith{G}$ is strongly compact if and only if $G$ is finitely generated. We can use this to produce some more examples of monoids $M$ such that $\Setswith{M}$ is \textit{not} strongly compact. Let $G$ be a group and let $M \subseteq G$ be a submonoid such that
\begin{equation*}
G = \{ a^{-1}b : a, b \in M \}.
\end{equation*}
Note that $\langle{M}\rangle\rangle_G = G$. Now if $S \subseteq M$ is a finite set right-factorably generating $M$, then $\langle{S}\rangle\rangle_G$ contains $M$ and is closed under right factors in $G$, so $\langle{S}\rangle\rangle_G = G$. It follows that $G$ is finitely generated. Therefore, for the following monoids $M$ the topos $\Setswith{M}$ is not strongly compact:  
\begin{itemize}
\item The monoid $\mathbb{N}^{\times}$ of nonzero natural numbers under multiplication;
\item The monoid $R-\{0\}$ for $R$ an infinite commutative ring without zero-divisors (noting that finitely generated fields are finite);
\item The monoid of non-singular $n\times n$ matrices over $R$, for $R$ an infinite commutative ring without zero divisors.
\end{itemize}
\end{example}

The topos $\Setswith{\mathbb{N}^{\times}}$ is (the underlying topos of) the Arithmetic Site by Connes and Consani \cite{connes-consani}. It follows that this topos is not strongly compact. In \cite{connes-consani-geometry-as}, the monoid $\mathbb{N}^{\times}_0 = \mathbb{N}^{\times} \cup \{0\}$ is considered as well, and in this case the topos $\Setswith{\mathbb{N}^{\times}_0}$ is strongly compact, by Example \ref{xmpl:rwg}.

Note that the equivalence ($1 \Leftrightarrow 2$) in Theorem \ref{thm:strongly-compact} directly generalizes to geometric morphisms $f: \Setswith{M} \to \Setswith{N}$ induced by a tensor-hom adjunction as in Proposition \ref{prop:flat}, so we have the following equivalence:

\begin{schl}
\label{schl:tidiness}
Let $M$ and $N$ be monoids, and let $B$ be a left-$M$-right-$N$-set. Suppose that the action of $M$ is flat, so that the adjunction induced by $B$ is a geometric morphism $f : \Setswith{N} \to \Setswith{M}$. Then $f$ is tidy if and only if $B$ is finitely presented as right $N$-set.
\end{schl}

\subsection{Right absorbing elements and localness}

The properties that we explore in this subsection are derived from the concept of localness.
\begin{dfn}[cf. {\cite[C3.6]{Ele}}]
\label{dfn:local}
A Grothendieck topos is called \textbf{local} if its global sections functor has a right adjoint. More generally, a geometric morphism $f: \Fcal \to \Ecal$ is \textbf{local} if its direct image functor $f_*$ has an $\Ecal$-indexed right adjoint.
\end{dfn}

To get some geometrical intuition for this concept, we mention the following criterion for localness for the topos of sheaves on a topological space.

\begin{proposition}[{cf. \cite[C1.5, pp.~523]{Ele}}]
\label{prop:localspace}
Let $X$ be a sober topological space. Then the following are equivalent:
\begin{enumerate}
\item $X$ has a \textbf{focal point}, i.e.\ a point such that the only open set containing it is $X$ itself; being sober makes such a point unique if it exists.
\item $\Sh(X)$ is a local topos.
\end{enumerate}
\end{proposition}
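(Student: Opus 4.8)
The plan is to reduce the statement to a condition on the terminal object and then translate that condition through the dictionary between $\Sh(X)$ and $X$ afforded by sobriety. By Definition \ref{dfn:local}, $\Sh(X)$ is local precisely when $\Gamma = \Hom_{\Sh(X)}(1,-)$ admits a right adjoint. Since $\Gamma$ is a functor to $\Set$ out of a Grothendieck topos, the special adjoint functor theorem reduces the existence of this right adjoint to preservation of all small colimits; and by exactly the analysis carried out in the proof of Lemma \ref{lem:proj} (applied to $Q=1$, so that $\Gamma = \Hom(Q,-)$), a representable functor $\Hom(Q,-)$ preserves all small colimits if and only if $Q$ is indecomposable and projective. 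Thus it suffices to prove that the terminal object $1 \in \Sh(X)$ is indecomposable and projective if and only if $X$ has a focal point. Throughout I would use that the subterminal objects of $\Sh(X)$ are the open sets of $X$, and that $\Hom(1,F) = F(X)$ is the set of global sections.

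For the direction assuming a focal point $x_0$, I would verify indecomposability and projectivity separately. Indecomposability: any decomposition $1 \cong A \sqcup B$ exhibits $A,B$ as complementary subterminals, i.e.\ a clopen partition $X = U \sqcup V$; since $x_0$ lies in one part, say $U$, and $U$ is then an open set containing $x_0$, focality forces $U = X$ and $V = \varnothing$, so $1$ is indecomposable (equivalently $X$ is connected, cf.\ Proposition \ref{prop:connected-objects}). Projectivity: it is enough to split an arbitrary epimorphism $p : A \too 1$, the general lifting property then following from pullback-stability of epimorphisms in a topos. Such a $p$ is surjective on every stalk, so in particular $A_{x_0} \neq \varnothing$; but the only open neighbourhood of $x_0$ is $X$, whence the stalk $A_{x_0} = \colim_{x_0 \in U} A(U)$ collapses to $A(X)$. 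Therefore $A(X) = \Hom(1,A) \neq \varnothing$, and any such global section is a section of $p$ because $1$ is terminal. This stalk-collapse at the focal point is the crux of this direction.

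For the converse, assume $1$ is indecomposable and projective; I would show $X$ cannot be covered by proper open sets. Given opens $\{U_i\}$ with $\bigcup_i U_i = X$, the canonical map $\bigsqcup_i U_i \to 1$ is surjective on stalks, hence an epimorphism; projectivity splits it, producing a global section, while indecomposability gives $\Hom(1, \bigsqcup_i U_i) \cong \bigsqcup_i \Hom(1, U_i)$, and $\Hom(1,U_i)$ is nonempty exactly when $U_i = X$. Hence some $U_i = X$, so $\{X\}$ is a completely prime filter on $\mathcal{O}(X)$; by sobriety it is the neighbourhood filter of a unique point, which is then focal. The main obstacle to watch is precisely that indecomposability (connectedness of $X$) is by itself too weak — an open interval is connected yet has no focal point — so the argument must genuinely combine both preservation properties of $\Gamma$ to extract the covering condition, and must invoke sobriety (not merely the frame of opens) to manufacture the focal point from that condition.
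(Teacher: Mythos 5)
Your proof is correct. Note that the paper itself gives no argument for Proposition \ref{prop:localspace} --- it simply defers to Johnstone \cite[C1.5]{Ele} --- so the relevant comparison is with the techniques the paper uses elsewhere, and there your proposal fits well: the reduction of localness to ``$1$ is indecomposable and projective'' via the special adjoint functor theorem and the colimit-preservation analysis of Lemma \ref{lem:proj} is exactly the argument the paper runs for $\Setswith{M}$ in Theorem \ref{thm:rabsorb} (the equivalence of conditions $1$--$3$ there), and your two topological steps are the correct space-level translations. Each step checks out: epimorphisms of sheaves are stalkwise surjective, so an epi $p: A \too 1$ yields $A(X) = A_{x_0} \neq \emptyset$ when $x_0$ is focal, and any global section splits $p$ since $1$ is terminal; conversely, splitting the epi $\bigsqcup_i U_i \too 1$ associated to an open cover and factoring the resulting section through a single $U_i$ (using that $\Hom(1,-)$ preserves coproducts, cf.\ Proposition \ref{prop:connected-objects}) shows some $U_i = X$, and sobriety then realizes $\{X\}$ as the open-neighbourhood filter of a unique point. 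One cosmetic remark: non-emptiness of $X$, needed for $\{X\}$ to be a proper completely prime filter, should be flagged as coming from the non-initiality clause in indecomposability (Definition \ref{dfn:indecproj}). A slightly shorter route for $(1)\Rightarrow(2)$, closer to Johnstone's and to the paper's own proof of the colocal analogue (Proposition \ref{prop:colocalspace}), is to observe that focality makes $\Gamma$ naturally isomorphic to the stalk functor $x_0^*$, whose right adjoint is the skyscraper-sheaf functor $x_{0*}$; your stalk-collapse computation is precisely this observation in disguise, so the two arguments have the same mathematical content.
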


In particular, if $R$ is a commutative ring then the topos of sheaves on $\mathrm{Spec}(R)$ (with the Zariski topology) is a local topos if and only if $R$ has a unique maximal ideal, or in other words if and only if $R$ is a local ring.

\begin{dfn}
\label{dfn:absorb}
An element $m$ of a monoid $M$ is called \textbf{right absorbing} if it absorbs anything on its right, so $mn=m$ for all elements $n \in M$; \textbf{left absorbing} elements are defined dually. An element which is both left and right absorbing is called a \textbf{zero element}, because the $0$ of a commutative ring has this property in the ring's multiplicative monoid. This final case is of the broadest interest, but since left and right absorbing elements manifest themselves very differently in $\Setswith{M}$, we study them independently. Note that if a monoid has both left absorbing element $l$ and a right absorbing element $r$, then $l=r$ is a zero element (which is automatically unique).
\end{dfn}

This convention of handedness of absorbing elements is somewhat arbitrary, since one could alternatively take `right absorbing' to mean `absorbs all elements when multiplied on the right'. Both conventions appear in semigroup and semiring literature; we follow \cite{golan}.

A result allowing us to compare $\Gamma$ and $C$ which we have not yet had cause to introduce is the following, appearing in a more general form in Johnstone's work, \cite[Corollary 2.2(a)]{PLC}:
\begin{lemma}
\label{lem:alpha}
Let $f:\Fcal \to \Ecal$ be a connected essential geometric morphism, so the unit $\eta$ of $(f^* \dashv f_*)$ and the counit $\delta$ of $(f_! \dashv f^*)$ are isomorphisms. Write $\epsilon$ for the counit of the former and $\nu$ for the unit of the latter adjunction. Then there is a canonical comparison natural transformation $\alpha:f_* \to f_!$ which can be expressed as either $f_!\epsilon \circ (\delta_{f_*})^{-1}: f_* \to f_!f^*f_* \to f_!$ or as $(\eta_{f_!})^{-1} \circ f_*\nu: f_* \to f_*f^*f_! \to f_!$. 
\end{lemma}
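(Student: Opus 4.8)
The plan is to exploit connectedness in the precise form that $f^*$ is full and faithful, thereby reducing the asserted equality to a calculation \emph{downstairs} in $\Fcal$. Since $f^*$ is faithful, applying it to components gives an \emph{injection} $\Nat(f_*,f_!) \hookrightarrow \Nat(f^*f_*, f^*f_!)$ (using fullness as well, it is a bijection, but injectivity is all I need). Writing $\alpha_1 := f_!\epsilon \circ (\delta_{f_*})^{-1}$ and $\alpha_2 := (\eta_{f_!})^{-1} \circ f_*\nu$ for the two candidate transformations $f_* \to f_!$, it therefore suffices to prove $f^*\alpha_1 = f^*\alpha_2$. In fact I would show that each of them equals the single `obvious' composite $\nu \circ \epsilon : f^*f_* \to \id_{\Fcal} \to f^*f_!$, which immediately gives the result.

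For $\alpha_1$, I would first rewrite $f^*(\delta_{f_*})^{-1}$ by means of the triangle identity $f^*\delta \circ \nu f^* = \id_{f^*}$ of the adjunction $f_! \dashv f^*$. Because connectedness makes $\delta$, and hence $f^*\delta$, invertible, this identity forces $\nu f^* = (f^*\delta)^{-1}$, and whiskering by $f_*$ yields $(f^*\delta_{f_*})^{-1} = \nu_{f^*f_*}$, i.e.\ the component at $A$ is $\nu_{f^*f_* A}$. Consequently $f^*\alpha_1$ has component $f^*f_!(\epsilon_A) \circ \nu_{f^*f_* A}$ at $A$, and the naturality square of $\nu$ applied to the morphism $\epsilon_A : f^*f_* A \to A$ collapses this precisely to $\nu_A \circ \epsilon_A$. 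Hence $f^*\alpha_1 = \nu \circ \epsilon$.

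The computation for $\alpha_2$ is formally dual, now using the triangle identity $\epsilon f^* \circ f^*\eta = \id_{f^*}$ of $f^* \dashv f_*$: invertibility of $\eta$ gives $\epsilon f^* = (f^*\eta)^{-1}$, and whiskering by $f_!$ identifies $(f^*\eta_{f_!})^{-1}$ with $\epsilon$ at objects $f^*f_! A$, so that $f^*\alpha_2$ has component $\epsilon_{f^*f_! A} \circ f^*f_*(\nu_A)$. The naturality square of $\epsilon$ applied to $\nu_A : A \to f^*f_! A$ again collapses this to $\nu_A \circ \epsilon_A$, whence $f^*\alpha_2 = \nu \circ \epsilon = f^*\alpha_1$, and injectivity of $f^*$ on natural transformations completes the argument. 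I expect the only genuinely delicate point — the closest thing to an obstacle — to be the bookkeeping of whiskerings and the verification that the whiskered $2$-cells $f^*\delta_{f_*}$ and $f^*\eta_{f_!}$ really are invertible with the inverses claimed; this is exactly where the hypotheses that $\eta$ and $\delta$ are isomorphisms (i.e.\ connectedness) enter, and once these identifications are secured the two naturality squares do all the remaining work.
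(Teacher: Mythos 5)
Your proof is correct: both triangle identities are invoked with the right orientation, the rewritings $\nu f^* = (f^*\delta)^{-1}$ and $\epsilon f^* = (f^*\eta)^{-1}$ are legitimate consequences of the invertibility of $\delta$ and $\eta$, and the two naturality squares (of $\nu$ at $\epsilon_A$ and of $\epsilon$ at $\nu_A$) do collapse both whiskered composites to $\nu \circ \epsilon$, after which faithfulness of $f^*$ finishes the argument. Note, however, that the paper itself contains no proof of this lemma to compare against: it simply cites Johnstone, \cite[Corollary 2.2(a)]{PLC}, where the result appears in greater generality. Your argument is thus a self-contained verification of the cited fact, and its method — transporting everything along the fully faithful $f^*$ into $\Fcal$, where the canonical transformation becomes the evident composite $f^*f_* \xrightarrow{\epsilon} \id_{\Fcal} \xrightarrow{\nu} f^*f_!$ — is essentially the mechanism underlying Johnstone's treatment (connectedness makes whiskering by $f^*$ a bijection on the relevant natural transformations). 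A pleasant by-product of your formulation is that the identification $f^*\alpha = \nu \circ \epsilon$ immediately yields the concrete description the paper records just after the lemma: for the global sections morphism of $\Setswith{M}$, where $f^* = \Delta$, $f_* = \Gamma$, $f_! = C$, the composite $\Delta\Gamma(X) \to X \to \Delta C(X)$ visibly sends a fixed point of $X$ to the connected component containing it.
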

A more concrete description of this transformation $\alpha:\Gamma \to C$ for $\Setswith{M}$ is that it sends a fixed point of a right $M$-set $X$ to the connected component of $X$ containing it.

From the same paper, we obtain the following term.
\begin{dfn}
\label{dfn:PLC}
A connected, locally connected geometric morphism is said to be \textbf{punctually locally connected} if the natural transformation of Lemma \ref{lem:alpha} is epic. A connected, locally connected topos is called \textbf{punctually locally connected} if its global sections geometric morphism has this property, which can be interpreted in the case of $\Setswith{M}$ as the statement `every component has at least one fixed point'. 
\end{dfn}

In \cite{lawvere-menni}, Lawvere and Menni call a geometric morphism \textbf{pre-cohesive} if it is local, hyperconnected and essential, such that $f_!$ preserves finite products; it was shown in \cite{PLC} that the global sections geometric morphism of a Grothendieck topos satisfies these properties if and only if it is punctually locally connected.

\begin{rmk}
Unlike many of the other properties in this paper, punctual local connectedness is not a geometric property: as shown in \cite[Proposition 1.6]{PLC}, it forces a connected, locally connected geometric morphism to be hyperconnected, which means that the only sober space $X$ with $\Sh(X)$ punctually locally connected is the one-point space. 
\end{rmk}

\begin{lemma}
\label{lem:rfixed}
Suppose $M$ is a monoid with a right absorbing element $r$. Then for any right $M$-set $A$, we have $\Gamma(A) = Ar$.
\end{lemma}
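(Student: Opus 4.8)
The plan is to establish the set equality $\Gamma(A) = Ar$ directly by unwinding the two sides. Recall from Section~\ref{ssec:features} that $\Gamma(A) = \Fix_M(A) = \HOM_M(1,A)$ is the set of fixed points of the action, namely those $a \in A$ with $a \cdot m = a$ for every $m \in M$; meanwhile $Ar = \{a \cdot r \mid a \in A\}$ is the image of the right-multiplication-by-$r$ map $A \to A$. I would prove the equality by verifying the two inclusions separately, in each case invoking the defining property $rm = r$ of the right absorbing element (Definition~\ref{dfn:absorb}).

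For the inclusion $Ar \subseteq \Gamma(A)$, I would take an arbitrary element $a \cdot r \in Ar$ and check that it is fixed: for any $m \in M$,
\[
(a \cdot r) \cdot m = a \cdot (rm) = a \cdot r,
\]
where the final step uses $rm = r$. Hence $a \cdot r \in \Fix_M(A) = \Gamma(A)$.

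For the reverse inclusion $\Gamma(A) \subseteq Ar$, I would take a fixed point $a$ and simply evaluate the fixed-point condition at the element $r \in M$, obtaining $a = a \cdot r$, which exhibits $a$ as a member of the image $Ar$. Combining the two inclusions yields $\Gamma(A) = Ar$. I expect no genuine obstacle here: the statement is a one-line consequence of the absorbing identity, and its role is purely preparatory for the subsequent analysis of localness and punctual local connectedness.
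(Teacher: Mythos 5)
Your proof is correct and follows exactly the same two-inclusion argument as the paper: elements $a \cdot r$ are fixed because $(a\cdot r)\cdot m = a\cdot(rm) = a\cdot r$, and any fixed point $a$ satisfies $a = a\cdot r$, hence lies in $Ar$. No differences worth noting.
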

\begin{proof}
Clearly any element of $A$ of the form $ar$ with $r \in R$ is fixed by the action of $M$. Conversely, if $a$ is fixed by the action of $M$ then $ar = a$ for any $r \in R$. Thus every fixed point is in $Ar$.
\end{proof}

\begin{thm}[Conditions for $\Setswith{M}$ to be local]
\label{thm:rabsorb}
Let $M$ be a monoid, $\Gamma:\Setswith{M} \to \Set$ the global sections functor, $C:\Setswith{M} \to \Set$ the connected components functor and $\alpha:\Gamma \to C$ the natural transformation of Lemma \ref{lem:alpha}. The following are equivalent:
\begin{enumerate}
	\item $M$ has a right absorbing element.
	\item $\Gamma$ preserves epimorphisms, i.e.\ the right $M$-set $1$ is projective.
	\item $\Gamma$ has a right adjoint, $\gamma$. That is, $\Setswith{M}$ is local over $\Set$.
	\item $\alpha:\Gamma \to C$ is epic. That is, $\Setswith{M}$ is punctually locally connected (equivalently, pre-cohesive). \label{item:rab6}
	\item $C$ is full. \label{item:rab7}
	\item $\Gamma$ reflects the initial object, i.e.\ every non-empty right $M$-set has a fixed point.
	\item The category of points of $\Setswith{M}$ has an initial object.
\end{enumerate}
\end{thm}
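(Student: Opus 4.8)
The plan is to organize all seven conditions around the existence of a right absorbing element, condition (1), treating it as the algebraic hub, and to lean on Lemma \ref{lem:rfixed} throughout: when $r$ is right absorbing, $\Gamma(A)=Ar$, so every non-empty $A$ acquires the fixed point $ar$ for any $a\in A$. First I would dispatch the cluster (1), (2), (6), (4), (5), which I expect to be routine. For (1)$\Leftrightarrow$(2): by Definition \ref{dfn:projectives}, (2) says the right $M$-set $1$ is projective; since $M$ is the free right $M$-set on one generator and $M\too 1$ is epic, projectivity of $1$ is equivalent to this epimorphism splitting, and a section $1\to M$ is precisely a fixed point of $M$, i.e.\ a right absorbing element. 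For (1)$\Leftrightarrow$(6): apply (6) to $A=M$ to extract a fixed point of $M$, hence a right absorbing element, while the converse is the observation above.

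For (4)$\Leftrightarrow$(6) and (5)$\Leftrightarrow$(6) I would use that the components of an $M$-set are exactly its indecomposable non-empty sub-$M$-sets. Since $\alpha$ sends a fixed point to its component (as described after Lemma \ref{lem:alpha}), $\alpha$ epic means every component contains a fixed point, which is equivalent to (6). For fullness of $C$, a function $C(X)\to C(Y)$ is realized by sending each component of $X$, via the constant map at a chosen fixed point, into the designated component of $Y$ (such a map is equivariant precisely because the target value is fixed); conversely, applying fullness to the unique map $C(1)\to C(Y)$ for $Y$ indecomposable yields a morphism $1\to Y$, that is, a fixed point of $Y$, giving (6).

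For (2)$\Leftrightarrow$(3) I would argue through cocontinuity of $\Gamma$. As $\Setswith{M}$ is locally presentable, $\Gamma$ has a right adjoint iff it preserves all small colimits, so (3)$\Rightarrow$(2) is immediate. For (2)$\Rightarrow$(3), note $\Gamma$ already preserves coproducts (fixed points of a disjoint union distribute, equivalently $1$ is indecomposable) and, being right adjoint to $\Delta$, preserves finite limits; given a regular epi $q$ with kernel pair $R\rightrightarrows B$, preservation of epimorphisms makes $\Gamma(q)$ surjective while preservation of the kernel pair identifies $\Gamma(R)$ with the kernel pair of $\Gamma(q)$, so $\Gamma(q)$ is the coequalizer of $\Gamma(R)\rightrightarrows\Gamma(B)$ in $\Set$. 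Hence $\Gamma$ preserves coequalizers, thus all colimits, and the right adjoint exists.

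The main obstacle I expect is (1)$\Leftrightarrow$(7), handled via the identification of the category of points of $\Setswith{M}$ with the category of flat left $M$-sets and left $M$-set homomorphisms (recorded after Lemma \ref{lem:esstensor}). For (7)$\Rightarrow$(1): if $B_0$ is an initial flat left $M$-set, then since the free left $M$-set $M$ is flat there is a unique hom $g:B_0\to M$; composing with each right translation $f_a:M\to M$, $m\mapsto ma$ (a left $M$-set endomorphism) and invoking uniqueness forces $g(b)a=g(b)$ for all $a$, so $g(b)$ is right absorbing for any $b\in B_0$ (nonempty, by flatness). For (1)$\Rightarrow$(7): with $r$ right absorbing, hence idempotent, I would show the cyclic, indecomposable projective (hence, by Definition \ref{dfn:flats}, flat) left $M$-set $Mr$ is initial. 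The crux is that any flat left $M$-set $B$ has a \emph{unique} $r$-fixed point: applying flatness condition (b) to two $r$-fixed points $b_0,b_0'$ produces $a,m,m'$ with $ma=b_0$ and $m'a=b_0'$, whence $b_0=rb_0=r(ma)=ra=r(m'a)=rb_0'=b_0'$, while existence follows by taking $rc$ for any $c\in B$. This unique fixed point determines the unique morphism $Mr\to B$ (sending the generator $r$ to it), establishing initiality. I anticipate that the bookkeeping in this uniqueness argument, together with checking the variance of the $2$-cells so that initial flat left $M$-sets really correspond to initial points, will be the most delicate part of the whole proof.
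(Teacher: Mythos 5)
Most of your proposal is sound, and several steps take genuinely different routes from the paper's: your right-translation trick for (7)$\Rightarrow$(1), and especially your direct construction of an initial point as $Mr$ via the unique $r$-fixed point of a flat left $M$-set, replace the paper's appeal to a general result of Johnstone (local toposes have an initial point) for (3)$\Rightarrow$(7), making the cycle through (7) self-contained. The cluster (1), (2), (4), (5), (6) organized around fixed points is also correct, modulo the same use of choice the paper makes.

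There is, however, a genuine gap in your proof of (2)$\Rightarrow$(3). What you establish is that $\Gamma$ preserves coequalizers \emph{of kernel pairs}: $\Gamma(q)$ is surjective and $\Gamma(R)$ is the kernel pair of $\Gamma(q)$, so $\Gamma(q)$ is the coequalizer of $\Gamma(R)\rightrightarrows\Gamma(B)$. But the conclusion ``hence $\Gamma$ preserves coequalizers'' does not follow. To build all small colimits from coproducts you need preservation of coequalizers of \emph{arbitrary} parallel pairs $f,g:A\rightrightarrows B$, i.e.\ that $\Gamma(q)$ is the coequalizer of $\Gamma(f),\Gamma(g)$ — equivalently, that the equivalence relation on $\Gamma(B)$ generated by the image of $(\Gamma f,\Gamma g)$ is all of $\Gamma(R)$. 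This is not automatic: two fixed points $b,b'$ with $q(b)=q(b')$ are joined by a zigzag of $(f,g)$-identifications in $B$, but the intermediate elements of that zigzag and the witnesses in $A$ need not be fixed points, so a function on $\Gamma(B)$ coequalizing $\Gamma f$ and $\Gamma g$ has no a priori reason to identify $b$ and $b'$. This is exactly the nontrivial point handled in the paper's Lemma \ref{lem:proj} (via preservation of image factorizations, of the relation composites $R^n$, and of their union); note that preservation of epimorphisms enters there in an essential way beyond making $\Gamma(q)$ surjective, namely to make $\Gamma$ preserve images and hence generated equivalence relations.

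The repair is cheap given what you already have. Either cite Lemma \ref{lem:proj} directly: hypothesis (2) says $1$ is an indecomposable projective, so $\Gamma=\HOM_M(1,-)$ is the inverse image of an essential point and in particular has a right adjoint. Or use your own (1)$\Leftrightarrow$(2): with a right absorbing element $r$ in hand, Lemma \ref{lem:rfixed} gives $\Gamma(A)=Ar$, and multiplying a connecting zigzag through on the right by $r$ (using $rr=r$ and equivariance of $f$ and $g$) converts it into a zigzag of fixed points whose witnesses lie in $Ar=\Gamma(A)$; this shows the generated relation on $\Gamma(B)$ is all of $\Gamma(R)$, so $\Gamma$ preserves arbitrary coequalizers and hence all small colimits, and the adjoint functor theorem applies.
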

\begin{proof}
($1 \Leftrightarrow 2$) Since $1$ is indecomposable, it is projective if and only if there is an idempotent $e \in M$ such that $1 = eM$, but the latter equality holds if and only if $e$ is a right absorbing element.

($2 \Leftrightarrow 3$) $\Gamma = \HOM_M(1,-)$ has a right adjoint if and only if it preserves colimits, by the Special Adjoint Functor Theorem. This is the case if and only if $1$ is (indecomposable) projective.

($2 \Rightarrow 4$) Since this geometric morphism is hyperconnected and locally connected, it is punctually locally connected if (and only if) $\Gamma$ preserves epimorphisms, by \cite[Lemma 3.1(ii)]{PLC}. Alternatively, for $X$ a right $M$-set, observe that the unit $X \to \Delta C(X)$ is epic. If $\Gamma$ preserves epimorphisms, then in particular $\Gamma(X) \to \Gamma \Delta C(X) \cong C(X)$ is an epimorphism.

($4 \Leftrightarrow 5$)${}^*$ By the axiom of choice, any epimorphism in $\Set$ (in particular any component of $\alpha$) splits. Given a function $g:C(X) \to C(Y)$, we therefore obtain a morphism $X \to Y$ lifting $g$ by sending every $x \in X$ to the fixed point $\alpha_Y^{-1}\circ g([x])$, where $\alpha_Y^{-1}:C(Y) \to \Gamma(Y)$ is a splitting for $\alpha_Y$ and $[x]$ is the connected component of $X$ containing $x$, so $C$ is full. Conversely, by standard adjunction arguments, $C$ is full if and only if the unit $\nu$ of the adjunction $(C \dashv \Delta)$ is component-wise a split epimorphism. $C$ being full therefore makes each component of $\nu$, and hence of $\Gamma \nu$ and $\alpha = (\eta_{C})^{-1} \circ \Gamma\nu$ a split epimorphism.

($4 \Rightarrow 6$) Since $C$ reflects the initial object by inspection, $\alpha:\Gamma \to C$ being an epimorphism ensures that $\Gamma(X)$ is non-empty whenever $C(X)$ is.

($6 \Rightarrow 1$) Consider $M$ as a right $M$-set under multiplication. 

($3 \Rightarrow 7$) This holds for any topos by \cite[C3.6, Theorem 3.6.1]{Ele}.

($7 \Rightarrow 1$) Let $A$ be an initial object in the category of points of $\Setswith{M}$, i.e.\ in the category of flat left $M$-sets. Let $f : A \to M$ be the unique morphism to $M$. Note that flatness implies indecomposability, so in particular $A$ is non-empty. This means we can take a morphism $g: M \to A$. Because $A$ is initial, $gf$ is the identity, so $A$ is a retract of $M$, i.e.\ $A = Me$ for some idempotent $e \in M$. The morphisms of left $M$-sets $Me \to M$ correspond to the elements of $eM$, so if $A$ is initial, then $e$ is right absorbing.
\end{proof}

The equivalence between conditions $(1)$ and $(2)$ of Theorem \ref{thm:rabsorb} is shown by Bulman-Fleming and Laan in \cite[Corollary 3.6]{flatness}, and by Kilp \textit{et al.} in \cite[Proposition III.17.2(4)]{MAC}. As the proof suggests, several of the conditions are shown to be general consequences of one another in \cite{PLC}.

\begin{remark}
In the above theorem, we could replace $(2)$ with the statement $\Gamma$ preserves pushouts, or coequalizers, or reflexive coequalizers. Indeed, because $1$ is indecomposable, $\Gamma = \HOM_M(1,-)$ preserves coproducts, so if it preserves pushouts then it also preserves coequalizers (in particular reflexive coequalizers). Conversely, any epimorphism can be written as a reflexive coequalizer, so if $\Gamma$ preserves reflexive coequalizers then it preserves epimorphisms as well.
\end{remark}

\subsection{The right Ore condition and de Morgan toposes}
\label{ssec:rightOre}

We can view the preceding sections as investigations of `projectivity' properties of the terminal right $M$-set, which generally correspond to properties of $\Gamma$. We move on in this section to the examination of the `flatness' properties of terminal right $M$-set, corresponding to properties of the connected components functor, $C$. The first and weakest of these is monomorphism-flatness of the terminal left $M$-set; one equivalent property to this is the dual of one from the last section:
\begin{dfn}
\label{dfn:coPLC}
Dualizing Definition \ref{dfn:PLC}, we say a connected, locally connected geometric morphism $f$ is \textbf{co\-punctually locally connected} if the natural transformation $\alpha$ of Lemma \ref{lem:alpha} is a monomorphism. A Grothendieck topos is called \textbf{co\-punctually locally connected} if the global sections geometric morphism is copunctually locally connected, which can be interpreted in the case of $\Setswith{M}$ as `every component has at most one fixed point'.
\end{dfn}

\begin{example}
Like punctual local connectedness, copunctual local connectedness is not a geometric property. Suppose $X$ is a connected, locally connected sober space. Consider the global sections geometric morphism $f : \Sh(X) \to \Set$. Viewing the objects of $\Sh(X)$ as local homeomorphisms, the map $\alpha_E : f_*(E) \to f_!(E)$ sends each global section $s$ to the unique connected component of $E$ that contains the image $s(X)$. If $X$ has an open subset $U$ not equal to the empty set or all of $X$, we can construct a local homeomorphism $\pi:E \to X$ by taking $E$ to be the quotient of the disjoint union of two copies of $X$ which identifies the two copies of $U$, and take $\pi$ to be the natural projection map. This $E$ is connected since $X$ is, but has exactly $2$ global sections, so $\alpha_E$ fails to be monic. Thus $\Sh(X)$ is colocally punctually connected if and only if $X$ is the one point space.
\end{example}

Another equivalent property relates to the internal logic of $\Setswith{M}$.
\begin{dfn}
A topos $\Ecal$ is said to be \textbf{de Morgan} if its subobject classifier is de Morgan as an internal Heyting algebra. Equivalently, this says that for any subobject $A$ of an object $B$ of $\Ecal$, we have $(A \rightarrow 0) \cup ((A \rightarrow 0) \rightarrow 0) = B$ in the Heyting algebra of subobjects of $B$.
\end{dfn}

\begin{example}
Let $X$ be a topological space. Then from \cite[Example D4.6.3(b)]{Ele} we know that $\Sh(X)$ is de Morgan if and only if $X$ is \textbf{extremally disconnected}, i.e.\ the closure of every open subset of $X$ is again open.

The name `extremally disconnected' is a bit misleading, since the existence of a dense point in $X$ (see Proposition \ref{prop:veryconnected} below) makes $X$ both connected and extremally disconnected. An explanation for this confusion is that extremal disconnectedness was originally only defined by Gleason for Hausdorff spaces in \cite{gleason}. In that situation, extremal disconnectedness is strictly stronger than total disconnectedness.

More generally, if $X = \bigsqcup_{i \in I} X_i$ is a disjoint union of a family $\{X_i\}_{i \in I}$ of irreducible topological spaces in the sense to be defined in Proposition \ref{prop:veryconnected}, then it follows that $X$ is extremally disconnected. If $X$ is a variety (with the Zariski topology), then $X$ is extremally disconnected if and only if each of its connected components is irreducible.
\end{example}

\begin{dfn}
\label{dfn:rOre}
A monoid $M$ is said to satisfy the \textbf{right Ore condition} or is \textit{left reversible} if for every $m_1,m_2 \in M$ there exists $m_1',m_2' \in M$ with $m_1m_1'=m_2m_2'$, or equivalently $m_1M \cap m_2M \neq \emptyset$. We employ the former terminology, as used by Johnstone in \cite[Example A2.1.11]{Ele}; the latter is employed by Sedaghatjoo and Khaksari in \cite{sedaghatjoo-khaksari} and by Kilp \textit{et al.} in \cite{MAC}.
\end{dfn}

\begin{thm}[Conditions for {$\Setswith{M}$} to be de Morgan]
\label{thm:deMorgan}
Let $M$ be a monoid, $\Omega$ the subobject classifier of $\Setswith{M}$, $\Gamma, C: \Setswith{M} \to \Set$ the usual functors and $\alpha:\Gamma \to C$ the natural transformation from Lemma \ref{lem:alpha}. The following are equivalent:
\begin{enumerate}
	\item $C$ preserves monomorphisms, so the terminal left $M$-set is monomorphism-flat.
	\item Any two non-empty sub-$M$-sets of an indecomposable right $M$-set have non-empty intersection. \label{item:deMorgan-7}
	\item $M$ satisfies the right Ore condition.
	\item Any sub-$M$-set of an indecomposable right $M$-set is either empty or indecomposable. 
	\item $C(\Omega)$ is a two element set (combined with condition $1$, this means that $C$ preserves the subobject classifier). \label{item:deMorgan-5}
	\item $\Setswith{M}$ is de Morgan.
	\item $\alpha:\Gamma \to C$ is a monomorphism; that is, $\Setswith{M}$ is copunctually locally connected.
\end{enumerate}
\end{thm}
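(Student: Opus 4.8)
The plan is to take the right Ore condition (3) as a hub and prove each of the other six conditions equivalent to it, working throughout with the explicit description of connected components: on a right $M$-set $X$, the components are the classes of the equivalence relation generated by $x \sim xm$. First I would record the lemma that powers every ``forward'' implication: if $M$ satisfies the right Ore condition, then on \emph{any} right $M$-set $X$ this generated relation is already $x \sim y \iff \exists\, m,m' \in M$ with $xm = ym'$. Only transitivity is non-obvious, and it follows by applying the Ore condition to the two ``middle'' elements of a pair of composable cospans. In particular, in an indecomposable $X$ any two elements admit $m,m'$ with $xm = ym'$.

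With this lemma the combinatorial conditions (1), (2), (4) fall out quickly. For $(3)\Rightarrow(2)$, given non-empty sub-$M$-sets $A_1,A_2$ of an indecomposable $B$ and $a_i \in A_i$, the lemma yields $a_1 m = a_2 m' \in A_1 \cap A_2$. The equivalence $(2)\Leftrightarrow(4)$ is immediate from taking unions: a non-empty sub-$M$-set of an indecomposable set that splits into two disjoint non-empty sub-$M$-sets contradicts indecomposability, and conversely a pair of disjoint non-empty sub-$M$-sets has decomposable union. For $(2)\Rightarrow(3)$ I take $B = M$ and $A_i = m_i M$, giving $m_1 M \cap m_2 M \neq \varnothing$. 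For (1), $C$ preserves a mono $A \hookrightarrow B$ exactly when no two components of $A$ merge in $B$; the lemma shows that under Ore two elements of $A$ lying in one component of $B$ are already connected inside $A$ via $a_1 m = a_2 m' \in A$, so $(3)\Rightarrow(1)$, while if $m_1 M \cap m_2 M = \varnothing$ then $m_1 M \sqcup m_2 M \hookrightarrow M$ is a mono whose two components collapse to the single component of $M$, giving $(1)\Rightarrow(3)$.

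For (7), since $\alpha_X$ sends a fixed point to its component, the condition asserts that each component carries at most one fixed point; if $x_1,x_2$ are fixed and connected, Ore gives $x_1 = x_1 m = x_2 m' = x_2$, so $(3)\Rightarrow(7)$, and for the converse I build from disjoint $m_1 M, m_2 M$ the quotient of $M$ that collapses each ideal $m_i M$ to a point, producing a connected set with two distinct fixed points. For (5) I compute directly that the only fixed elements of $\Omega$ are $\varnothing$ and $M$, and that every non-empty ideal $I$ satisfies $I \cdot a = M$ for $a \in I$, hence lies in the component of $M$; thus $C(\Omega)$ has one or two elements, and it has two precisely when $\varnothing$ is isolated, i.e.\ when no principal right ideal is disjoint from a non-empty ideal, which is exactly right Ore.

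The De Morgan step (6) is the one I expect to be the main obstacle. Using the definition directly, I would compute the internal negation $\neg A = \{b : bM \cap A = \varnothing\}$ and show that $\neg A \cup \neg\neg A = B$ fails precisely when some $b$ admits $m,m'$ with $bm \in A$ and $bm' \in \neg A$. The Ore condition rules this out: choosing $s,t$ with $ms = m't$ places the common element $bms = bm't$ simultaneously in $A$ and in the $A$-avoiding set $(bm')M$, a contradiction, giving $(3)\Rightarrow(6)$; conversely, when $m_1 M \cap m_2 M = \varnothing$ the subobject $A = m_1 M$ of $B = M$ witnesses failure at $b = 1$. The delicate points are pinning down the internal negation in $\Setswith{M}$ correctly and verifying that the pointwise obstruction is already visible on $M$, so that no separate reduction to a generating object is needed; alternatively one may cite Johnstone's characterization of De Morgan presheaf toposes to obtain $(6)\Leftrightarrow(3)$ directly.
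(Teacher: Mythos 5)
Your proposal is correct, and its combinatorial core coincides with the paper's, but the decomposition and two of the arguments are genuinely different. The paper proves the cycle $1 \Rightarrow 2 \Rightarrow 3 \Leftrightarrow 4 \Rightarrow 1$ and attaches $5$, $6$, $7$ to it, whereas you use $(3)$ as a hub and promote the zigzag-collapsing argument (under right Ore, the generated connectedness relation on any right $M$-set is already the one-step relation $\exists\, m,m'$ with $xm = ym'$) to a standalone lemma; the paper uses exactly this idea, but only inside its proof of $(3 \Leftrightarrow 4)$. Your $(7)\Rightarrow(3)$ (collapsing two disjoint principal ideals to two distinct fixed points in a connected quotient of $M$) and your analysis of $C(\Omega)$ for $(5)$ are the paper's arguments. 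The two real divergences are these. First, the paper deduces $(1)\Rightarrow(7)$ abstractly, from the fact that the counit of a hyperconnected geometric morphism is monic, so that $\alpha_X = C\epsilon_X \circ (\delta_{\Gamma(X)})^{-1}$ is monic whenever $C$ preserves monomorphisms; you instead get $(3)\Rightarrow(7)$ concretely from your lemma (two connected fixed points satisfy $x_1 = x_1m = x_2m' = x_2$), which is more elementary but loses the general statement about hyperconnected locally connected morphisms. Second, and most notably, the paper obtains $(3)\Leftrightarrow(6)$ by citing Johnstone's characterization of de Morgan presheaf toposes (Example D4.6.3(a) of the \emph{Elephant}), while you compute the Heyting negation $\neg A = \{b \in B \mid bM \cap A = \emptyset\}$ in $\Setswith{M}$ explicitly and verify the law by hand; the step you flagged as delicate is in fact fine --- your formula for $\neg A$ is the largest sub-$M$-set disjoint from $A$, closure of $A$ and $\neg A$ under the action plus Ore forces $bms = bm't \in A \cap \neg A$, a contradiction, and conversely $A = m_1M \subseteq M$ with $b = 1$ witnesses failure when $m_1M \cap m_2M = \emptyset$. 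This makes the theorem self-contained where the paper defers to a reference, at the cost of a longer proof.
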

\begin{proof}
($1 \Rightarrow 2$) Let $A$ be indecomposable and let $A_1,A_2 \subseteq A$ be two non-empty sub-$M$-sets of $A$. Since $C(A_1\cup A_2) \hookrightarrow C(A)=1$ (and the first expression has at least one element), the two subsets must intersect, by the inclusion-exclusion principle.

($2 \Rightarrow 3$) Applying ($2$) to the principal $M$-sets generated by elements $m_1,m_2 \in M$ gives the right Ore condition.

($3 \Leftrightarrow 4$) Informally, given a finite zigzag connecting elements $a_1,a_2$ in an indecomposable right $M$-set $A$, the right Ore condition allows us to inductively `push out' the spans of this zigzag to obtain elements $m_1,m_2$ with $a_1\cdot m_1 = a_2 \cdot m_2$, so any sub-$M$-set containing $a_1$ intersects any containing $a_2$; it follows that any non-empty sub-$M$-set is indecomposable. Conversely, the union of a pair of principal ideals in $M$ being indecomposable means they must intersect, else we could not construct a connecting zigzag.

($4 \Rightarrow 1$) A subobject of an $M$-set $A$ is a coproduct of subobjects of the indecomposable components of $A$. In particular, ($4$) ensures that the image under $C$ of an inclusion of sub-$M$-sets is monic.

($3 \Leftrightarrow 5$) Recall from Section \ref{ssec:features} that the subobject classifier $\Omega$ of $\Setswith{M}$ is the collection of right ideals of $M$. Given a non-empty ideal $I$ of $M$ and $m \in I$, we have $I \cdot m = M$, so the connected component of $\Omega$ containing $M$ also contains all of the non-empty ideals. Since $\emptyset$ is a fixed point of $\Omega$, $C(\Omega)$ has two elements if and only if $I \cdot m$ is non-empty whenever $I$ is for every $m \in M$ (otherwise $C(\Omega)$ is a singleton). Since the action of $M$ on $\Omega$ is order-preserving, it is necessary and sufficient that this is true for the principal ideals, and $m_1M \cdot m \neq \emptyset$ if and only if $m_1M \cap mM \neq \emptyset$.

($3 \Leftrightarrow 6$) By \cite[Example D4.6.3(a)]{Ele}, the topos of presheaves on a category $\Ccal$ is de Morgan if and only if $\Ccal$ satisfies the right Ore condition, which reduces to Definition \ref{dfn:rOre} when $\Ccal$ has a single object. 

($1 \Rightarrow 7$) Since the counit of a hyperconnected geometric morphism is monic \cite[Proposition A4.6.6]{Ele}, we have $\alpha_X = C\epsilon_X \circ \left(\delta_{\Gamma(X)}\right)^{-1}$ is monic when $C$ preserves monomorphisms.

($7 \Rightarrow 3$) Let $m_1,m_2 \in M$. Generate a right congruence of $M$ from the relations $m_1n \sim m_1n'$ and $m_2n \sim m_2n'$ for all $n,n' \in M$. The quotient of $M$ by this congruence is indecomposable, so has at most one fixed point since $\alpha$ is monic; in particular, the equivalence classes represented by $m_1$ and $m_2$ are fixed and so must be equal. That is, $m_1M \cap m_2M \neq \emptyset$.
\end{proof}
The equivalences ($1 \Leftrightarrow 3$) and a partial form of ($3 \Leftrightarrow 4$) appear respectively as \cite[Exercise 12.2(2) and Exercise 11.2(2)]{MAC}.

\begin{remark}
\label{rmk:sufficiently}
Following \cite{lawvere-menni}, an object $X$ of a topos $\Ecal$ is called \textbf{contractible} if $X^A$ is connected for all objects $A$ of $\Ecal$. Lawvere and Menni say a pre-cohesive topos is \textbf{sufficiently cohesive} if every object can be embedded in a contractible object. Equivalently, a pre-cohesive topos is sufficiently cohesive if and only if the subobject classifier is connected, see \cite[Proposition 4]{cohesion}. As we observed in the proof of ($3 \Leftrightarrow 5$) above, the subobject classifier of $\Setswith{M}$ is connected if and only if $M$ \textit{does not} satisfy the right Ore condition. It follows easily that $\Setswith{M}$ is sufficiently cohesive if and only if $M$ has at least two right-absorbing elements.
\end{remark}

Since any group satisfies the right Ore condition, the equivalent properties of Theorem \ref{thm:deMorgan} are satisfied when $M$ is a group; we shall see further special cases while investigating stronger properties in subsequent sections. Since every monomorphism in a topos is regular, $C$ preserving equalizers implies $C$ preserves monomorphisms. However, preserving equalizers is a strictly stronger condition:
\begin{example}[Connected components functor can preserve monomorphisms but not equalizers]
Consider the commutative monoid $M = \mathbb{N}$ of natural numbers under addition. Clearly, $\mathbb{N}$ satisfies the right Ore condition, so $\Setswith{\mathbb{N}}$ is de Morgan. We prove that the functor $C$ does not preserve equalizers in this case. Consider the diagram of right $\mathbb{N}$-sets
\begin{equation} \label{eq:example-dm-not-tc}
\begin{tikzcd}
\mathbb{N} \ar[r,shift left=1,"{\id}"] \ar[r,shift right=1,"{s}"'] & \mathbb{N},
\end{tikzcd}
\end{equation}
with $s$ the successor map, i.e.\ $s(n) = n+1$. Then the equalizer of this diagram is empty. But applying the connected components functor to (\ref{eq:example-dm-not-tc}) gives
\begin{equation*}
\begin{tikzcd}
\{\ast \} \ar[r,shift left=1,"{C(\id)}"] \ar[r,shift right=1,"{C(s)}"'] & \{\ast \}
\end{tikzcd}
\end{equation*}
with $C(\id)$ and $C(s)$ both the identity map. So the equalizer of this diagram is $\{\ast\}$, which does not agree with $C(0) = 0$.
\end{example}

We shall see in Theorem \ref{thm:totally} the extent to which preservation of equalizers is stronger than preservation of monomorphisms for $C$. We first examine the conditions under which the connected components functor preserves finite products.

\subsection{Spans and strong connectedness}
\label{ssec:strong-connectedness}

\begin{dfn}
A geometric morphism $f:\Fcal \to \Ecal$ is called \textbf{strongly connected} if it is locally connected and its left adjoint $f_!$ preserves finite products.
\end{dfn}

If $f$ is strongly connected, then in particular $f_!(1)=1$, so $f$ is connected. This means that $f$ is strongly connected if and only if it is connected, locally connected, such that $f_!$ preserves binary products.

One justification for this name is a geometric one, but as we shall see in Proposition \ref{prop:veryconnected}, it coincides with a stronger property, called total connectedness, for toposes of the form $\Sh(X)$. Therefore we give some justification in terms of presheaf toposes in Example \ref{xmpl:strongconnect}.

Let $\Dcal$ be a small category. Recall that colimits of shape $\Dcal$ commute with finite products in $\Set$ if and only if $\Dcal$ is a \textbf{sifted category}; we refer the reader to the survey on sifted colimits \cite{Sifted} for background on these. Concretely, siftedness may be expressed as the requirement that for each pair of objects $D_1,D_2$ in $\Dcal$, the category $\mathrm{Cospan}(D_1,D_2)$ of cospans on this pair of objects is non-empty and connected. More abstractly, siftedness may be characterized by the diagonal functor $\Dcal \to \Dcal \times \Dcal$ being a final functor.

\begin{example}
\label{xmpl:strongconnect}
The presheaf topos $\Setswith{\Dcal}$ is always locally connected. The left adjoint $f_!$ in its global sections geometric morphism sends $X:\Dcal\op \to \Set$ to its colimit. It follows by considering the terminal object of $\Setswith{\Dcal}$ that $\Setswith{\Dcal}$ is connected if and only if $\Dcal$ is connected as a category, and that $f_!$ preserves finite products if and only if $\Dcal\op$ is sifted, which is to say that the categories of spans $\mathrm{Span}(D_1,D_2)$ with $D_1,D_2 \in \Dcal$ are connected too.
\end{example}

Applying this to the case where $\Dcal$ has just one object, we arrive at the following:
\begin{proposition}[Conditions for $\Setswith{M}$ to be strongly connected]
\label{prop:strongcon}
Let $M$ be a monoid and $C:\Setswith{M} \to \Set$ the connected components functor. The following are equivalent:
\begin{enumerate}
	\item $C$ preserves finite products, i.e.\ $\Setswith{M}$ is strongly connected over $\Set$, or the terminal left $M$-set is finitely product-flat.
	\item $C$ preserves binary products.
	\item The diagonal monoid homomorphism $D: M \to M \times M$ is initial as a functor.
	\item The category $\mathrm{Span}(M)$ of spans on $M$ is connected.
	\item Any product of two indecomposable $M$-sets is indecomposable.
	\item As a right $M$-set, $M \times M$ is indecomposable. \label{item:MxMindec}
\end{enumerate}
\end{proposition}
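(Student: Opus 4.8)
The plan is to treat $(1 \Leftrightarrow 2 \Leftrightarrow 3 \Leftrightarrow 4)$ as essentially a direct application of Example \ref{xmpl:strongconnect}, and then to close the loop through conditions $(5)$ and $(6)$ by two short arguments. Throughout I would use that $C$, being the functor $f_!$ of the (locally connected) global sections morphism, is a left adjoint and hence preserves all colimits, and that it is nontrivial since $C(M) = 1$. Since $M$ is a one-object, hence connected, category, $C$ always preserves the terminal object, because $C(1) = 1$; so $C$ preserves binary products if and only if it preserves finite products, which gives $(1 \Leftrightarrow 2)$ immediately (alternatively one may invoke Proposition \ref{prop:flatness-properties}(1), using that $C$ is nontrivial).

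For $(2 \Leftrightarrow 3 \Leftrightarrow 4)$ I would specialize Example \ref{xmpl:strongconnect} to the one-object category $M$, where $f_! = C$. That example identifies preservation of finite products by $f_!$ with siftedness of $M\op$; the standard characterization of sifted categories via finality of the diagonal then reads, after transporting from $M\op$ back to $M$, as initiality of $D : M \to M \times M$ (condition $3$), while the concrete cospan description of siftedness of $M\op$ is exactly connectedness of the span category $\mathrm{Span}(M)$ (condition $4$), since cospans in $M\op$ are spans in $M$. The one point demanding care here is the variance: siftedness is a property of $M\op$, so ``final diagonal of $M\op$'' must be dualized to ``initial diagonal of $M$''.

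To bring in $(5)$ and $(6)$ I would establish $(2 \Rightarrow 5 \Rightarrow 6)$ together with $(6 \Leftrightarrow 4)$, which closes the cycle. For $(2 \Rightarrow 5)$: if $C$ preserves binary products and $X, Y$ are indecomposable, then $C(X \times Y) \cong C(X) \times C(Y) \cong 1 \times 1 \cong 1$, so $X \times Y$ is indecomposable by Proposition \ref{prop:connected-objects}(4). For $(5 \Rightarrow 6)$: the right $M$-set $M$ is indecomposable, so apply $(5)$ with $X = Y = M$. The substantive remaining equivalence is $(6 \Leftrightarrow 4)$: I would observe that the underlying set of the right $M$-set $M \times M$ (carrying the diagonal action $(f,g) \cdot h = (fh, gh)$) is literally the set of objects of $\mathrm{Span}(M)$, and that a generating morphism $(fh, gh) \to (f,g)$ of the span category is witnessed by $h$ precisely when $(f,g) \cdot h = (fh, gh)$ in the $M$-set. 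Hence the equivalence relation cutting out connected components of $\mathrm{Span}(M)$ coincides with the one cutting out connected components of the $M$-set, so $\mathrm{Span}(M)$ is connected if and only if $M \times M$ is indecomposable. Note that this also yields $(6 \Rightarrow 5)$ for free via $6 \Rightarrow 4 \Rightarrow 2 \Rightarrow 5$, so there is no need to prove that implication directly.

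The main obstacle I anticipate is bookkeeping rather than conceptual. It lies in getting the variance right in $(2 \Leftrightarrow 3 \Leftrightarrow 4)$ (siftedness of $M\op$ versus initiality for $M$) and in matching the direction of span-morphisms with the direction of the $M$-action in $(6 \Leftrightarrow 4)$, so that the two notions of connected component genuinely agree. Once these conventions are pinned down, each implication reduces to a one-line verification resting on the characterization of indecomposability as $C(-) \cong 1$ from Proposition \ref{prop:connected-objects}.
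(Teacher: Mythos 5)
Your proposal is correct and follows essentially the same route as the paper: the equivalence of (1)--(4) via the siftedness discussion of Example \ref{xmpl:strongconnect}, the chain $(2 \Rightarrow 5 \Rightarrow 6)$ via $C(X \times Y) \cong C(X) \times C(Y) \cong 1$ with the special case $X = Y = M$, and closing the cycle by identifying span morphisms in $\mathrm{Span}(M)$ with the right multiplication action on $M \times M$. Your treatment is slightly more explicit about the $(1 \Leftrightarrow 2)$ step and the variance bookkeeping, and you prove $(6 \Leftrightarrow 4)$ where the paper only needs $(6 \Rightarrow 4)$, but these are cosmetic differences.
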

\begin{proof}
The equivalence of $(1)$, $(2)$, $(3)$ and $(4)$ follows from the preceding discussion.

($2 \Rightarrow 5 \Rightarrow 6$) Given indecomposable $M$-sets $X,Y$, we have $C(X \times Y) \cong C(X) \times C(Y) \cong 1$, so $X \times Y$ is indecomposable; the special case $X = Y = M$ gives ($6$).

($6 \Rightarrow 4$) A morphism of spans $m: (x,y) \to (xm,ym)$ is, by inspection, the action by right multiplication of $m \in M$ on the corresponding element $(x,y) \in M \times M$.
\end{proof}

\begin{example}
\label{xmpl:groupnotstrong}
Suppose $M$ is a non-trivial group. Then $C$ fails to preserve products since $M \times M$ decomposes into orbits indexed by the elements of $M$. This can also be deduced from Proposition \ref{prop:totally2} below combined with the result \cite[Scholium A2.3.9]{Ele} that a logical functor with a left adjoint which preserves finite limits must be an equivalence.
\end{example}

\begin{lemma}
\label{lem:localstrong}
Suppose $M$ is a monoid with a right absorbing element. Then $M \times M$ is indecomposable as a right $M$-set.
\end{lemma}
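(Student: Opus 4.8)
The plan is to use the characterization of indecomposability from Proposition \ref{prop:connected-objects}: since $M \times M$ is visibly non-empty (it contains $(1,1)$), it suffices to show that $C(M \times M) = 1$, i.e.\ that the equivalence relation generated by $(x,y) \sim (x,y)\cdot m = (xm, ym)$ collapses all of $M \times M$ to a single class. I would fix a right absorbing element $r$ (so $rm = r$ for all $m$) and prove that every $(a,b)$ lies in the connected component of the fixed point $(r,r)$.

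The first observation is that right multiplication by $r$ lands among the fixed points: for any $a$ the element $ar$ is again right absorbing, since $(ar)m = a(rm) = ar$ (this is the computation behind Lemma \ref{lem:rfixed}). Hence $(a,b)\cdot r = (ar,br)$ is a fixed point of $M \times M$, and $(a,b)$ is connected to it in one forward step. The crux is therefore to connect the fixed point $(ar,br)$ to $(r,r)$, and this is where I expect the only real obstacle to lie: when $M$ has several right absorbing elements $ar$ need not equal $r$, so the two coordinates must be corrected separately, and one cannot achieve this by acting on $(ar,br)$ itself, whose forward orbit is a single point because it is fixed. The resolution is to route the zigzag through auxiliary elements carrying a $1$ in one coordinate, whose forward orbits are large enough to meet two distinct fixed points at once.

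Concretely, I would exhibit the following zigzag. Using that $br$ is right absorbing (so $(br)(ar) = br$ and $(br)r = br$), the element $(1,br)$ satisfies $(1,br)\cdot(ar) = (ar,br)$ and $(1,br)\cdot r = (r,br)$, connecting $(ar,br)$ to $(r,br)$. Next, using that $r$ is right absorbing (so $r\cdot(br) = r$ and $r\cdot r = r$), the element $(r,1)$ satisfies $(r,1)\cdot(br) = (r,br)$ and $(r,1)\cdot r = (r,r)$, connecting $(r,br)$ to $(r,r)$. Chaining these yields
\[
(a,b) \to (ar,br) \leftarrow (1,br) \to (r,br) \leftarrow (r,1) \to (r,r),
\]
where each arrow is an instance of right multiplication. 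As $(a,b)$ was arbitrary, all of $M \times M$ lies in one connected component, so $M \times M$ is indecomposable.

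The remaining work is purely mechanical: one must keep track of which right absorbing identity justifies each equality (the defining $rm = r$, versus the derived fact that $xr$ absorbs on the right), but every verification is immediate once the zigzag is written out. I would close by noting that this lemma is precisely condition (6) of Proposition \ref{prop:strongcon}, so it records that a monoid with a right absorbing element has a strongly connected topos of actions, in contrast with the group case of Example \ref{xmpl:groupnotstrong}.
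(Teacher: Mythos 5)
Your proof is correct and takes essentially the same approach as the paper: an explicit zigzag in $M \times M$ built from the right absorbing element, using elements of the form $(1,\cdot)$ or $(\cdot,1)$ as bridges (the paper's zigzag is $(x,y) \to (xr,yr) \leftarrow (1,yr) \to (yr,yr) \leftarrow (1,1)$, one step shorter, connecting everything to $(1,1)$ rather than to $(r,r)$). All your verifications check out, so the difference is purely cosmetic.
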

\begin{proof}
At the level of toposes, one can prove that any punctually locally connected geometric morphism $f$ has leftmost adjoint $f_!$ preserving products, as Johnstone does in \cite[Proposition 2.7]{PLC}. More concretely, let $r$ be a right absorbing element and $(x,y) \in M \times M$. Then $(x,y)r = (xr,yr) = (1,yr)xr$ and $(1,1)yr = (yr,yr) = (1,yr)yr$, which gives a zigzag connecting $(x,y)$ and $(1,1)$, making $M \times M$ indecomposable as required.
\end{proof}

\subsection{Preserving exponentials}
\label{ssec:exponential}

In the context of $\Setswith{M}$ being a strongly connected topos, it makes sense to ask under which extra conditions the connected components functor is cartesian-closed; we explore this and some related conditions now for the sake of curiosity. First, we recall from \eqref{eq:exp1} in Section \ref{ssec:features} that for right $M$-sets $P$ and $Q$ we have $Q^P = \HOM_M(M \times P,Q)$ in $\Setswith{M}$. The comparison morphism $\theta_{P,Q}$ for $C$ sends the connected component $[g] \in C(Q^P)$ to the function sending a component $[p] \in C(P)$ to the component $[g(1,p)] \in C(Q)$, where $g:M \times P \to Q$, $p \in P$ and $g(1,p) \in Q$ are representative elements of their respective components.

For a general geometric morphism $f:\Fcal \to \Ecal$ to be locally connected, the inverse image functor not only needs a left adjoint $f_!$, but this adjoint must be \textbf{$\Ecal$-indexed}, which can be paraphrased as the condition that transposition from $\Fcal$ to $\Ecal$ across the adjunction $(f_! \dashv f^*)$ should preserve pullback squares of a suitable form. We refer the reader to \cite[Definition 1.2.1]{Cover} for a more precise statement of this, but we will only use the following fact.
\begin{fact}
Let $f:\Fcal \to \Ecal$ be a connected, locally connected geometric morphism. Then for every $Y$ in $\Fcal$, $X$ in $\Ecal$ we have:
\[f_!(Y \times f^*(X)) \cong f_!(Y) \times X \cong f_!(Y) \times f_!f^*(X),\]
naturally in $X$ and $Y$. Thus even when $f$ is not strongly connected, we can construct the canonical morphisms $\theta_{f^*(X),Y}: f_!(Y^{f^*(X)}) \to f_!(Y)^{f_!f^*(X)}$.
\end{fact}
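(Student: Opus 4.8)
The plan is to establish the three assertions in turn, the crux being the Frobenius-type isomorphism $f_!(Y \times f^*(X)) \cong f_!(Y) \times X$; once this is available the other two are formal. I would first write the canonical comparison map explicitly. The projections $p_1 : Y \times f^*(X) \to Y$ and $p_2 : Y \times f^*(X) \to f^*(X)$ give, after applying $f_!$, a morphism $f_!(p_1): f_!(Y \times f^*(X)) \to f_!(Y)$ together with $\delta_X \circ f_!(p_2) : f_!(Y \times f^*(X)) \to f_! f^*(X) \to X$, where $\delta$ is the counit of $(f_! \dashv f^*)$ (which exists regardless of connectedness). Pairing these yields a natural morphism
\[ f_!(Y \times f^*(X)) \longrightarrow f_!(Y) \times X. \]

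That this morphism is invertible is the projection formula, and it is one of the standard equivalent formulations of the $\Ecal$-indexedness of $f_!$. I would deduce it from the pullback-preservation condition recorded in \cite[Definition 1.2.1]{Cover}: the object $Y \times f^*(X)$ is the pullback of the cospan $Y \to 1 \leftarrow f^*(X)$, whose right leg is $f^*(!)$ for $! : X \to 1$ and whose apex $1$ is $f^*(1)$; since this leg has the special form required by indexedness, applying $f_!$ and inserting the Beck--Chevalley isomorphism transports the square to the product square for $f_!(Y) \times X$ in $\Ecal$, exhibiting the above map as an isomorphism. Naturality in $X$ and $Y$ is inherited from that of the projections, of $\delta$, and of the Beck--Chevalley comparison. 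This is the main obstacle: it is exactly the step where the indexed structure of $(f_! \dashv f^*)$, rather than the mere existence of $f_!$, is used, so the work lies in selecting the correct square (one leg of the form $f^*(-)$) and verifying that its $f_!$-image is the asserted product.

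The remaining assertions follow quickly. The isomorphism $f_!(Y) \times X \cong f_!(Y) \times f_! f^*(X)$ is just $\id_{f_!(Y)} \times \delta_X^{-1}$, where $\delta_X$ is invertible because $f$ is connected (Lemma \ref{lem:alpha}). For the comparison map, I would start from the evaluation morphism $\mathrm{ev}: Y^{f^*(X)} \times f^*(X) \to Y$ in $\Fcal$, apply $f_!$, and rewrite the domain using the first isomorphism with $Y$ replaced by $Y^{f^*(X)}$, followed by the second, as $f_!\bigl(Y^{f^*(X)}\bigr) \times f_! f^*(X)$. This produces a morphism $f_!\bigl(Y^{f^*(X)}\bigr) \times f_! f^*(X) \to f_!(Y)$ whose transpose across the product--exponential adjunction $\bigl(- \times f_! f^*(X)\bigr) \dashv (-)^{f_! f^*(X)}$ in $\Ecal$ is the desired $\theta_{f^*(X),Y}: f_!\bigl(Y^{f^*(X)}\bigr) \to f_!(Y)^{f_! f^*(X)}$.
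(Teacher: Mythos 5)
Your proof is correct and is essentially the paper's own approach: the paper states this Fact without proof, pointing to the indexedness condition paraphrased as ``transposition across $(f_! \dashv f^*)$ preserves pullback squares of a suitable form,'' and your argument --- transposing the pullback square for $Y \times f^*(X)$ over the cospan $Y \to 1 = f^*(1) \leftarrow f^*(X)$ via Beck--Chevalley to obtain the Frobenius isomorphism, invoking invertibility of the counit $\delta$ (Lemma \ref{lem:alpha}, which is where connectedness enters) for the second isomorphism, and transposing $f_!(\mathrm{ev})$ to define $\theta_{f^*(X),Y}$ --- is exactly the deduction the paper leaves implicit. No gaps to report.
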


\begin{dfn} \label{dfn:powers}
Let $f:\Fcal \to \Ecal$ be an essential geometric morphism satisfying $f_!(1) = 1$. Observe that since colimits are stable under pullback in a topos, we have $Y \times \coprod_{i=1}^n 1 = \coprod_{i=1}^n Y$ for any object $Y$, so that products of this form are preserved by $f_!$. Thus the canonical morphisms $\theta_{\left(\coprod_{i = 1}^n 1 \right),Y}: f_!(Y^{\left(\coprod_{i=1}^n 1\right)}) \to f_!(Y)^{\left(\coprod_{i=1}^n 1\right)}$ are well-defined. When they are isomorphisms for every $n$, we say $f$ is \textbf{finitely power-connected}.

Now suppose $f$ is a connected, locally connected geometric morphism. Then we say $f_!$ \textbf{preserves $\Ecal$-indexed powers}, or that $f$ is \textbf{power-connected} if $\theta_{f^*(X),Y}$ is an isomorphism for all objects $X \in \Ecal$ and $Y \in \Fcal$. Since $f^*(\coprod_{i \in I} 1) \cong \coprod_{i \in I} 1$ in $\Fcal$, this implies finite power-connectedness.

Finally, suppose $f$ is strongly connected. We say $f$ is \textbf{cartesian-closed-connected}\footnote{For want of a better name: given the negative results of this section, we have been unable to obtain enough useful intuition about this condition to inform a suitable choice of name.} if $f_!$ is cartesian-closed. By inspection, this implies being power-connected.
\end{dfn}

The first of these definitions is clearly implied by strong connectedness, since powers correspond to products in which all entries are equal. For the global sections morphisms of the toposes studied in this paper, however, it is equally strong:
\begin{crly}
The global sections morphism of $\Setswith{M}$ is finitely power-connected if and only if it is strongly connected, since $\Setswith{M}$ is strongly connected if and only if $M \times M = M^2$ is indecomposable by Proposition \ref{prop:strongcon}. See Proposition \ref{prop:veryconnected} below for this result in the case of toposes of the form $\Sh(X)$.
\end{crly}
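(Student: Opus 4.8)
The plan is to prove the two implications separately, leaning throughout on the characterization of strong connectedness via indecomposability of $M \times M$ supplied by Proposition \ref{prop:strongcon}. Recall that for the global sections morphism we have $f_! = C$, and that the prerequisite $f_!(1) = C(1) = 1$ for finite power-connectedness is automatic since $\Setswith{M}$ is connected.

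For the direction that strong connectedness implies finite power-connectedness (the easy one, already flagged in the text), I would first identify the relevant powers with finite products. Since $Y^{P_1 \sqcup P_2} \cong Y^{P_1} \times Y^{P_2}$ and $Y^1 \cong Y$ in any topos, the copower $\coprod_{i=1}^n 1$ of the terminal object gives $Y^{\left(\coprod_{i=1}^n 1\right)} \cong Y \times \cdots \times Y$, the $n$-fold product. If $C$ preserves all finite products, then each comparison morphism $\theta_{\left(\coprod_{i=1}^n 1\right),Y}$ is an isomorphism, which is precisely the assertion that $f$ is finitely power-connected.

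For the converse, the point is that finite power-connectedness already forces the single indecomposability condition that detects strong connectedness, so I would apply the hypothesis only to the instance $Y = M$, $n = 2$. Here $M^{\left(\coprod_{i=1}^2 1\right)} \cong M \times M$, while $M$ is indecomposable as a right $M$-set (it is the representable) so $C(M) = 1$ and hence $C(M)^{\left(\coprod_{i=1}^2 1\right)} = 1$. The isomorphism $\theta_{\left(\coprod_{i=1}^2 1\right),M} : C(M \times M) \to C(M)^2 = 1$ then yields $C(M \times M) = 1$, i.e.\ $M \times M = M^2$ is indecomposable. By the equivalence of conditions (1) and (6) in Proposition \ref{prop:strongcon}, $\Setswith{M}$ is strongly connected.

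There is essentially no obstacle beyond correctly identifying the powers as finite products and recalling that $C(M) = 1$; the substance of the result is imported wholesale from Proposition \ref{prop:strongcon}, which is exactly what collapses the a priori weaker power condition onto strong connectedness for these toposes. The only thing worth verifying is that the single instance $n = 2$, $Y = M$ genuinely suffices for the converse — and it does, precisely because strong connectedness of $\Setswith{M}$ is detected by the one object $M \times M$.
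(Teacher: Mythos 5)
Your proposal is correct and follows essentially the same route as the paper: the paper's justification, embedded in the statement of the corollary, is precisely that the single comparison $C(M\times M)\to C(M)^2 \cong 1$ forces $M\times M$ to be indecomposable, whence Proposition \ref{prop:strongcon} (condition 6 $\Leftrightarrow$ 1) gives strong connectedness, the other direction being the trivial observation that finite powers are special finite products. Your identification of $Y^{\left(\coprod_{i=1}^n 1\right)}$ with the $n$-fold product and the reduction to the single instance $n=2$, $Y=M$ is exactly the intended argument.
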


For our investigation of power-connectedness, we begin with the case where $M$ fails to satisfy the right Ore condition. In \cite[Proposition 3.4]{sedaghatjoo-khaksari} it is noted that a further condition coinciding with the right Ore condition is the property that every indecomposable $M$-set has finite width, in the sense that there exists an upper bound on the length of the zigzag needed to connect any pair of elements.
This is formalized as follows:
\begin{dfn}[{\cite[Section 1]{sedaghatjoo-khaksari}}] 
\label{dfn:width}
Let $M$ be a monoid and $A$ a right $M$-set. We say that two elements $a,b \in A$ can be \textbf{connected by a scheme of length $n$} if we can find $s_1,\dots,s_n,t_1,\dots,t_n \in M$, $a_1,\dots,a_n \in A$ such that
\begin{equation*}
a = a_1 s_1,~ a_1t_1=a_2s_2,~\dots,~a_{n-1}t_n = a_n s_n,~ a_n t_n = b.
\end{equation*}
\end{dfn}

\begin{proposition}
\label{prop:cc2}
Suppose that $M$ \textbf{does not} satisfy the right Ore condition. Then the connected components functor $C$ fails to preserve $\Set$-indexed powers; in particular, $C$ is not cartesian-closed.
\end{proposition}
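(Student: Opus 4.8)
The plan is to show that the comparison map fails to be injective for a suitable \emph{infinite} power. First I would unwind what the $\Set$-indexed powers look like concretely. For a set $X$, writing $\Delta(X)$ for the trivial right $M$-set on $X$, the object $Q^{\Delta(X)} = \HOM_M(M \times \Delta(X), Q)$ of \eqref{eq:exp1} is isomorphic to the set $\Map(X,Q)$ of all functions $X \to Q$ equipped with the \emph{pointwise} right $M$-action $(g\cdot m)(x) = g(x)\cdot m$, via $f \mapsto (x \mapsto f(1,x))$. Under this identification the comparison morphism becomes
\[
\theta_{\Delta(X),Q} \colon C(\Map(X,Q)) \longrightarrow C(Q)^{X}, \qquad [g] \mapsto \big(x \mapsto [g(x)]\big),
\]
using $C(\Delta(X)) \cong X$. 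This map is always surjective (choose a representative of $Q$ in each prescribed component), so the obstruction to preserving powers must be a failure of injectivity: I need functions $g,h \colon X \to Q$ which are componentwise connected, $g(x) \sim h(x)$ in $Q$ for every $x$, yet lie in distinct components of $\Map(X,Q)$.

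To manufacture such $g,h$ I would exploit the link between the right Ore condition and bounded width recalled just above the statement: by \cite[Proposition 3.4]{sedaghatjoo-khaksari}, $M$ satisfies the right Ore condition if and only if every indecomposable right $M$-set has finite width. Since $M$ fails the right Ore condition, there is therefore an indecomposable right $M$-set $Q$ of \emph{infinite} width; concretely, for each $n \in \Nbb$ there is a pair $a_n, b_n \in Q$ that cannot be connected by any scheme (in the sense of Definition \ref{dfn:width}) of length less than $n$. Taking $X = \Nbb$, define $g,h \colon \Nbb \to Q$ by $g(n) = a_n$ and $h(n) = b_n$. Because $Q$ is indecomposable we have $C(Q) = 1$, so $[g(n)] = [h(n)]$ for every $n$ automatically; equivalently $\theta_{\Delta(\Nbb),Q}$ lands in the one-point set $C(Q)^{\Nbb} = 1$.

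The crux is to verify that $g$ and $h$ nonetheless lie in different connected components of $\Map(\Nbb,Q)$. The key observation is that a scheme of length $N$ connecting $g$ to $h$ in $\Map(\Nbb,Q)$ --- made of functions $\phi_1,\dots,\phi_N \colon \Nbb \to Q$ and elements $s_i,t_i \in M$ with $g = \phi_1 s_1$, $\phi_i t_i = \phi_{i+1}s_{i+1}$ and $\phi_N t_N = h$ under the pointwise action --- restricts, upon evaluation at any coordinate $n$, to a scheme of the \emph{same} length $N$ connecting $a_n$ to $b_n$ in $Q$, reusing the very same $s_i,t_i$. If $g$ and $h$ were connected, some finite $N$ would have to work for all coordinates at once, contradicting the choice of $(a_n,b_n)$ for $n > N$. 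Hence $\Map(\Nbb,Q)$ has at least two components, $\theta_{\Delta(\Nbb),Q}$ is not injective, and $C$ fails to preserve the $\Set$-indexed power $Q^{\Delta(\Nbb)}$; since cartesian-closedness of $C$ would force power-connectedness by Definition \ref{dfn:powers}, $C$ is not cartesian-closed. The main obstacle is the passage from a single failure of right Ore to a set of globally unbounded width --- this is exactly the content of the cited characterization (the alternative being an explicit direct construction of $Q$ from a pair $m_1,m_2$ with $m_1M \cap m_2M = \varnothing$), after which the coordinatewise restriction of schemes does the rest. Note that a finite exponent $X$ would not suffice, since by Proposition \ref{prop:strongcon} a non-Ore monoid (for instance one with a right absorbing element) may still have $M\times M$ indecomposable.
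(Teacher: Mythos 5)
Your proof is correct and follows essentially the same route as the paper's: both arguments hinge on the observation that a scheme of length $N$ connecting two elements of $\Map(\Nbb,Q) \cong Q^{\Delta(\Nbb)}$ under the pointwise action restricts, coordinate by coordinate, to a scheme of the same length $N$ in $Q$, which is incompatible with coordinates chosen to have unboundedly growing width. The only difference is that where you obtain the infinite-width indecomposable $M$-set $Q$ by citing the Sedaghatjoo--Khaksari equivalence (which the paper itself quotes just before Definition \ref{dfn:width}), the paper constructs it explicitly from a pair $a,b$ with $aM \cap bM = \emptyset$ --- gluing countably many copies of the quotient of $M$ that collapses $aM$ and $bM$ to fixed points into a chain --- so its proof is self-contained where yours leans on the cited reference.
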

\begin{proof}
We shall construct an indecomposable right $M$-set $X$ such that $X^{\Delta(\Nbb)}$ is not indecomposable.

Since $M$ does not satisfy the right Ore condition, there is some pair of elements $a,b$ with $aM \cap bM = \emptyset$. Construct the $M$-set $S$ by quotienting by the equivalence relation $m \sim m'$ if and only if $m = m'$ or $m,m' \in aM$ or $m,m' \in bM$. The quotient map $M \too S$ sends the ideals $aM$ and $bM$ to distinct fixed points of $S$; abusing notation we call these fixed points $a$ and $b$ respectively.

Now let $X$ be the quotient of $\bigsqcup_{n \in \Nbb} S$ by the equivalence relation identifying the element $b$ of the $n$th copy of $S$ with the element $a$ of the $(n+1)$th copy of $S$. Denoting the image of $a$ from the $n$th copy by $a_n$, we observe that for $k \in \Nbb$, $a_0$ and $a_k \in X$ cannot be connected by a scheme of length less than $k$.

Elements of $X^{\Delta(\Nbb)}$ can be identified with $\Nbb$-indexed tuples of elements of $X$, which we notate as vectors. Take two elements $\vec{x},\vec{y} \in X^{\Delta(\Nbb)}$. If $\vec{x}$ and $\vec{y}$ can be connected by a scheme of length $1$, then this means that there is some $\vec{z} \in X^{\Delta(\Nbb)}$ and elements $s,t \in M$ such that $\vec{x} = \vec{z}s$ and $\vec{y} = \vec{z}t$. In particular, for each index $n$ we have $x_n = z_ns$ and $y_n = z_nt$ so these are connected by a scheme of length $1$ in $X$. Analogously, if $\vec{x}$ and $\vec{y}$ can be connected by a scheme of length $k$, then $x_n$ and $y_n$ can be connected by a scheme of length $k$ in $X$ for all indices $n \in \Nbb$.

Now let $x_n = a_n$ and $y_n = a_0$ for all $n \in \Nbb$. The resulting elements $\vec{x}$ and $\vec{y}$ are in separate components of $X^{\Delta(\Nbb)}$, since for any $k \in \Nbb$, an element $\vec{z}$ connected to $\vec{y}$ by a scheme of length less than $k$ cannot have $k$th component equal to $a_k$. Thus $X^{\Delta(\Nbb)}$ is not indecomposable, as claimed.
\end{proof}

For the case where $M$ satisfies the right Ore condition and $\Setswith{M}$ is power-connected, we have Theorem \ref{thm:labsorb}.\ref{item:powers} below. Finally, we examine the still stronger condition of cartesian-closed-connectedness.

\begin{proposition}
\label{prop:cc1}
Suppose that $M$ satisfies the right Ore condition and that the connected components functor $C$ is cartesian-closed. Then $M$ is trivial.
\end{proposition}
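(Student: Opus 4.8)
The plan is to test cartesian-closedness against a single, carefully chosen exponential, namely $M^M$, where $M$ denotes the monoid regarded as a right $M$-set by multiplication, and to extract a contradiction from the \emph{fixed points} of this exponential using the right Ore hypothesis. The whole argument uses only one instance of the comparison isomorphism, so no general machinery beyond what is already available is needed.

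First I would record that $M$ is indecomposable as a right $M$-set (it is the single orbit of $1$), so $C(M)=1$. Consequently the comparison morphism of \eqref{eq:theta} specializes to $\theta_{M,M}\colon C(M^M)\to C(M)^{C(M)}=1^1$, and cartesian-closedness of $C$ forces $\theta_{M,M}$ to be an isomorphism; in particular $C(M^M)$ is a one-element set, i.e.\ $M^M$ is indecomposable. Next I would compute the fixed points of $M^M$. Recall from Section~\ref{ssec:features} that $M^M$ has underlying set $\HOM_M(M\times M,M)$ with action $(f\cdot m)(n,p)=f(mn,p)$. A short check shows that $f$ is fixed precisely when $f(n,p)$ is independent of $n$ and the map $p\mapsto f(1,p)$ is a right $M$-set homomorphism $M\to M$; since such homomorphisms are exactly the maps $p\mapsto qp$ for $q\in M$, the fixed points are precisely the $f_q\colon(n,p)\mapsto qp$. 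As $f_q(1,1)=q$, distinct $q$ give distinct $f_q$, so the fixed points of $M^M$ are in bijection with $M$.

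The crux is then to combine these two facts. By the right Ore hypothesis, Theorem~\ref{thm:deMorgan} (condition 7, copunctual local connectedness in the sense of Definition~\ref{dfn:coPLC}) tells us that in every right $M$-set each connected component contains at most one fixed point. Applied to $M^M$, whose fixed points number $|M|$, this yields $|M|\le |C(M^M)|$. But we already found $|C(M^M)|=1$, whence $|M|\le 1$ and $M$ is trivial.

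The only genuinely computational step is the fixed-point identification together with the remark that homomorphisms $M\to M$ of right $M$-sets correspond to elements of $M$; everything else is a formal consequence of results established earlier. I expect the one conceptual hurdle to be spotting that the correct object to exponentiate is $M$ itself (so that the target $C(M)^{C(M)}$ collapses to a point while the fixed points of $M^M$ proliferate), and that the right Ore condition is exactly the hypothesis forbidding several fixed points from sharing a component. It is worth noting that only injectivity of $\theta_{M,M}$ is used, and that the same argument with $Q$ in place of the codomain $M$ shows more generally that every indecomposable right $M$-set has at most one element, which is another route to the same conclusion.
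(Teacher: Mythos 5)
Your proof is correct, and while it tests cartesian-closedness on the same object $M^M$ as the paper and begins identically (the isomorphism $\theta_{M,M}$ forces $C(M^M) \cong C(M)^{C(M)} \cong 1$, so $M^M$ is indecomposable), the way you extract triviality from this is genuinely different. The paper works with two explicit elements of $M^M = \HOM_M(M \times M, M)$, namely the projections $\pi_1$ and $\pi_2$: indecomposability plus the right Ore condition (which collapses connecting zigzags to single spans, the idea behind $(3 \Leftrightarrow 4)$ of Theorem \ref{thm:deMorgan}) yields $m_1, m_2$ with $\pi_1 \cdot m_1 = \pi_2 \cdot m_2$; since $\pi_2 \cdot m_2 = \pi_2$ while $\pi_1 \cdot m_1$ is independent of the second argument, $\pi_2$ must be too, forcing $M$ to be trivial. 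You instead classify the fixed points of $M^M$ --- your computation that they are exactly the maps $(n,p) \mapsto qp$, in bijection with $M$, is correct, and follows even faster from the adjunction $\Gamma(M^M) = \Hom_{\Setswith{M}}(1, M^M) \cong \HOM_M(M,M) \cong M$ --- and then quote a different clause of the same theorem, namely $(3 \Rightarrow 7)$: the right Ore condition makes $\alpha \colon \Gamma \to C$ monic, so the single component of $M^M$ carries at most one fixed point, giving $|M| \le 1$. Thus both arguments ultimately lean on Theorem \ref{thm:deMorgan}, but through different routes: the paper reruns the span-collapsing argument by hand on two concrete elements, whereas you use copunctual local connectedness as a black box against a fixed-point count. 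Your route is more modular and makes explicit two things the paper only records in passing: that injectivity of $\theta_{M,M}$ alone suffices (a fact the paper relies on later, in the proof of Theorem \ref{thm:trivial}), and that the same argument shows every indecomposable right $M$-set is a singleton; what the paper's route buys is a self-contained, two-line element computation that never needs the classification of fixed points.
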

\begin{proof}
Being cartesian-closed requires that $C(M^M) \cong C(M)^{C(M)} \cong 1$, so $M^M$ is indecomposable. In particular, the projection maps $\pi_1,\pi_2:M \times M \rightrightarrows M$ in $M^M = \HOM_M(M \times M, M)$ must be in the same component under the action described above. But since $M$ has the right Ore condition, they are in the same component if and only if there are $m_1,m_2$ with $\pi_1 \cdot m_1 = \pi_2 \cdot m_2$. By inspection of the action, $\pi_2 \cdot m_2 = \pi_2$ for all $m_2 \in M$, while $\pi_1 \cdot m_1$ is independent of the second argument for any $m_1 \in M$, so the same must also be true of $\pi_2$, which forces $M$ to be trivial.
\end{proof}

\subsection{Right collapsibility and total connectedness}

We can express localness of a connected geometric morphism $f$ (Definition \ref{dfn:local}) as the existence of a right adjoint to $f$ in the 2-category of Grothendieck toposes and geometric morphisms; see \cite[Theorem C3.6.1]{Ele}. From this perspective, the dual property to localness, appearing in \cite[Theorem C3.6.14]{Ele}, is total connectedness.

\begin{dfn}
A geometric morphism $f:\Fcal \to \Ecal$ is called \textbf{totally connected} if it is locally connected and the left adjoint $f_!$ preserves finite limits.
\end{dfn}

The following is adapted from Johnstone's results, \cite[Example C3.6.17(a)]{Ele} and \cite[Lemma 1.1]{PLC}. 
\begin{proposition}
\label{prop:veryconnected}
Let $X$ be a sober topological space. Then the following are equivalent:
\begin{enumerate}
\item $\Sh(X)$ is totally connected.
\item $\Sh(X)$ is strongly connected.
\item $\Sh(X)$ is finitely power-connected.
\item Any non-empty open subset of $X$ is connected and a finite intersection of these is nonempty.
\item $X$ is \textbf{irreducible}: if $X = X_1 \cup X_2$ for closed subsets $X_1$ and $X_2$, then $X_1 = X$ or $X_2 = X$.
\item $X$ has a \textbf{dense point}, i.e.\ a point that is contained in all non-empty open sets.
\end{enumerate}
\end{proposition}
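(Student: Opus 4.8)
The plan is to treat the three topos-theoretic conditions $(1),(2),(3)$ and the three point-set conditions $(4),(5),(6)$ separately, and then to forge two bridges between the groups. The implications $(1)\Rightarrow(2)\Rightarrow(3)$ are immediate: finite products are among finite limits, and powers are products all of whose factors coincide, so these follow from the discussion following Definition \ref{dfn:powers}. It therefore suffices to establish the cycle among the topological conditions, and then to prove $(3)\Rightarrow(4)$ and $(6)\Rightarrow(1)$, which together close the loop.

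For the purely topological equivalences I would argue by hand. For $(4)\Leftrightarrow(5)$: if $X$ is irreducible then every non-empty open is itself irreducible, hence connected, and any two non-empty opens must meet (otherwise their complements exhibit $X$ as a union of two proper closed sets), so finite intersections of non-empty opens are non-empty by induction; conversely, if $U_1\cap U_2=\varnothing$ for non-empty opens $U_1,U_2$ then $X=(X\setminus U_1)\cup(X\setminus U_2)$ contradicts irreducibility. For $(5)\Leftrightarrow(6)$ I would use sobriety: a dense point forces irreducibility directly, and conversely an irreducible sober space is an irreducible closed subset of itself, so sobriety supplies a unique generic point, which is exactly a dense point.

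To prove $(3)\Rightarrow(4)$ I would exploit the description of $f_!$ as the connected-components functor $\pi_0$ under the equivalence $\Sh(X)\simeq\mathbf{LH}/X$; note that finite power-connectedness already presupposes that $f$ is essential, i.e.\ that $X$ is locally connected. Given non-empty connected opens $U,V$, set $Y=U\sqcup V$, so that $f_!(Y)$ has two elements, while the product $Y\times Y$ in $\Sh(X)$ is the fibre product over $X$, namely $U\sqcup(U\cap V)\sqcup(U\cap V)\sqcup V$. Finite power-connectedness makes the canonical map $f_!(Y\times Y)\to f_!(Y)^2$ a bijection, so counting components forces $2+2\,\lvert\pi_0(U\cap V)\rvert=4$; hence $U\cap V$ is connected and non-empty. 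Using local connectedness to find connected non-empty opens inside arbitrary non-empty opens, this shows that any two non-empty opens meet, and that no non-empty open can split as a disjoint union of two non-empty opens, giving exactly $(4)$.

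The main obstacle is $(6)\Rightarrow(1)$, where I must upgrade from finite products to all finite limits. Let $x_0$ be the dense point; by the equivalences above $X$ is then irreducible and locally connected, so $\Sh(X)$ is locally connected and $f_!\cong\pi_0$. I would show that $\pi_0$ coincides with the stalk functor $p^*\colon E\mapsto E_{x_0}$ of the point $p\colon\Set\to\Sh(X)$ at $x_0$; since an inverse image functor preserves finite limits, this identification makes $f_!$ left exact and hence exhibits $\Sh(X)$ as totally connected. Surjectivity of the natural map $\pi^{-1}(x_0)\to\pi_0(E)$ is easy: each component of $E$ is open, its image under the open map $\pi$ is a non-empty open, and the dense point lies in every non-empty open. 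The delicate point is injectivity: if $e,e'$ in the fibre lie in one component, I would connect them by a finite chain of connected opens on which $\pi$ restricts to a homeomorphism, with associated local sections $s_i$; the agreement locus of two sections of an \'etale map is open, and being a non-empty open in an irreducible space it must contain $x_0$, so consecutive sections agree at $x_0$, forcing $e=e'$. I expect this chain argument, which sidesteps any appeal to Hausdorffness of the \'etale space, to be the crux of the proof; the remaining naturality and the identification $f_!\cong p^*$ are then routine.
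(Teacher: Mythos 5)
Your proof is correct and follows essentially the same route as the paper's: the same trivial chain $(1)\Rightarrow(2)\Rightarrow(3)$, the same bridge $(3)\Rightarrow(4)$ via finite power-connectedness, the same elementary point-set arguments linking $(4)$, $(5)$, $(6)$, and the same key idea for $(6)\Rightarrow(1)$, namely identifying $f_!$ with the inverse image functor of the point corresponding to the dense point. Two local differences are worth noting. For $(3)\Rightarrow(4)$ the paper is slightly more economical: it applies power-connectedness to a non-empty open $U$ viewed as a subterminal object, where $U\times U\cong U$ makes the canonical map the diagonal $f_!(U)\to f_!(U)\times f_!(U)$, whose invertibility forces $U$ to be connected outright; this removes the need for your preliminary reduction (via local connectedness) to connected opens before running the component count on $(U\sqcup V)\times(U\sqcup V)$ — though your count $2+2\,\lvert\pi_0(U\cap V)\rvert=4$ is valid and delivers connectedness of the intersection in one stroke. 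Conversely, your treatment of $(6)\Rightarrow(1)$ is more complete than the paper's: the paper merely asserts that each connected component of an \'etale space over $X$ meets the fibre over the dense point in exactly one point, whereas your chain-of-local-sections argument (consecutive sections agree on a non-empty open, which necessarily contains the dense point) actually proves the injectivity half of that assertion. One small slip of phrasing: in $(4)\Leftrightarrow(5)$ your ``conversely'' sentence restates the implication ``disjoint non-empty opens exist $\Rightarrow$ $X$ not irreducible,'' which is the direction already covered; for $(4)\Rightarrow(5)$ you should instead start from a decomposition $X=X_1\cup X_2$ into proper closed sets and observe that $X\setminus X_1$ and $X\setminus X_2$ are disjoint non-empty opens, contradicting $(4)$ — the same complementation duality run the other way, so nothing mathematical is missing.
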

\begin{proof}
($1 \Rightarrow 2 \Rightarrow 3$) These implications are trivial.
 
($3 \Rightarrow 4$) Let $X$ be a connected, locally connected topological space such that $\Sh(X)$ is finitely power-connected with global sections geometric morphism $f : \Sh(X) \to \Set$. For two open subsets $U$ and $V$, their product as subterminal objects in $\Sh(X)$ is given by the intersection $U \cap V$. Since $f_!$ preserves finite powers, we know that the natural map $f_!(U) \to f_!(U) \times f_!(U)$ coincides with the diagonal and is an isomorphism, which shows that $U$ is connected whenever it is non-empty (since $U$ has at least one connected component). Moreover, for non-empty $U,V$, if we had $U \cap V$ empty, $U \cup V$ would be a disconnected open set, a contradiction.

($4 \Leftrightarrow 5$) Given closed subsets $X_1,X_2 \subset X$ with $X_1 \cup X_2 = X$, we have $(X - X_1) \cap (X - X_2) = \emptyset$. Given that intersections of non-empty open sets are non-empty, this means that one of $(X - X_1)$ or $(X - X_2)$ must be empty. Hence $X$ is irreducible. Conversely, given two disjoint open subsets, their complements are closed and cover $X$, so one of them must be empty. 

($4 \Rightarrow 6$) Condition ($4$) ensures that the non-empty open sets of $X$ form a completely prime filter, and hence correspond to a point contained in every open set.

($6 \Rightarrow 1$) Having a dense point ensures that $X$ is connected and locally connected. Expressing $\Sh(X)$ as the category of local homeomorphisms over $X$, each connected component of an object $E \to X$ must meet the fibre over the dense point in exactly one point. The inverse image functor of (the geometric morphism corresponding to) this point, which gives the set of points of $E$ lying in the fibre over it, is therefore isomorphic to the connected components functor $f_!$. Being the inverse image functor of a geometric morphism means that $f_!$ preserves finite limits, as required.
\end{proof}

For a ring $R$ without non-zero nilpotent elements, $\mathrm{Spec}(R)$ is irreducible (for the Zariski topology) if and only if $R$ does not have zero divisors (i.e.\ $R$ is a domain).

As in the last section, we can express the connected components functor as sending an $M$-set to its colimit as a functor. Thus $C$ preserves finite limits if and only if colimits of shape $M\op$ commute with finite limits in $\Set$. A well-known result regarding commuting limits and colimits is that colimits of shape $\Dcal$ commute with finite limits in a topos if and only if $\Dcal$ is \textbf{filtered}, which means concretely that:
\begin{itemize}
	\item $\Dcal$ is non-empty,
	\item For any pair of objects $P,Q$ of $\Dcal$, there is a cospan from $P$ to $Q$.
	\item For any pair of parallel morphisms $f,g:P \rightrightarrows Q$ there is some $h:Q \to R$ in $\Dcal$ with $hf = hg$.
\end{itemize}
One might compare these conditions to those for flat functors in Definition \ref{dfn:filtering}. We correspondingly say that $\Dcal$ is \textbf{cofiltered} if $\Dcal\op$ satisfies these conditions. Applying this to $M$ as a one-object category as in the last section, the first two conditions are trivial, so we arrive at the following definition.
\begin{definition}[see {\cite{sedaghatjoo-khaksari}, \cite[III, Definition 14.1]{MAC}}]
We say that $M$ is \textbf{right collapsible} if for any pair $m_1,m_2$ of elements of $M$, there exists $m \in M$ with $m_1m=m_2m$, that is, if $M$ is cofiltered as a category.
\end{definition}

\begin{theorem}[Conditions for $\Setswith{M}$ to be totally connected]
\label{thm:totally}
The following are equivalent, for $C$ the connected components functor:
\begin{enumerate}
	\item $C$ preserves finite limits, i.e.\ $\Setswith{M}$ is totally connected, or the terminal left $M$-set $1$ is flat.
	\item $M$ is cofiltered as a category.
	\item $M$ is right collapsible.
	\item $C$ preserves pullbacks. 
	\item $C$ preserves equalizers. 
	\item The category of points of $\Setswith{M}$ has a terminal object.
\end{enumerate}
\end{theorem}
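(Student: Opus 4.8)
The plan is to separate the purely formal equivalences from the two implications that require an honest construction. The equivalences $(1)\Leftrightarrow(2)\Leftrightarrow(3)$ are already supplied by the discussion immediately preceding the theorem: $C$ sends a right $M$-set, regarded as a presheaf $M\op\to\Set$, to its colimit, and the colimit functor $[M\op,\Set]\to\Set$ preserves finite limits exactly when $M\op$ is filtered, which for a one-object category unwinds to the right collapsibility condition $(3)$. So I would open by citing that discussion to establish $(1)\Leftrightarrow(2)\Leftrightarrow(3)$, and then bring $(4)$, $(5)$ and $(6)$ into the fold.

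To incorporate $(4)$ and $(5)$ I would run the cycle $(1)\Rightarrow(4)\Rightarrow(5)\Rightarrow(3)$. Here $(1)\Rightarrow(4)$ is immediate, as pullbacks are finite limits, and $(4)\Rightarrow(5)$ is Proposition \ref{prop:flatness-properties}(2), which says that preserving pullbacks forces preserving equalizers. The one step with content is $(5)\Rightarrow(3)$: given $m_1,m_2\in M$, I would consider the parallel pair of right $M$-set endomorphisms given by left multiplication, $L_{m_1},L_{m_2}:M\rightrightarrows M$, $L_a(x)=ax$. Their equalizer, computed on underlying sets, is $E=\{x\in M: m_1x=m_2x\}$ with its induced right action. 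Because $M$ is indecomposable, $C(M)=1$, so $C(L_{m_1})$ and $C(L_{m_2})$ are both the identity of $1$ and their equalizer in $\Set$ is $1$. If $C$ preserves equalizers then $C(E)\cong1$, in particular $C(E)\neq\varnothing$, which forces $E\neq\varnothing$; any $m\in E$ satisfies $m_1m=m_2m$ and witnesses right collapsibility. Together with the preamble this makes $(1)$--$(5)$ equivalent.

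For $(6)$ I would use the identification of the category of points of $\Setswith{M}$ with the category of flat left $M$-sets and their homomorphisms. For $(1)\Rightarrow(6)$: if $1$ is flat then it lies in this category, and being terminal among all left $M$-sets it is automatically terminal in the full subcategory of flat ones. For $(6)\Rightarrow(1)$: the free left $M$-set $M$ is flat (indeed $-\otimes_M M$ is the underlying-set functor, which preserves all limits), hence is a point; and a left $M$-set homomorphism $M\to T$ is determined by the image of $1$, giving a bijection between such homomorphisms and the underlying set of $T$. A terminal object $T$ of the category of points must therefore have exactly one element, and the only one-element left $M$-set is $1$ with trivial action; since $T$ is a point, $1$ is flat, which is $(1)$.

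I expect the main obstacle to be spotting the correct diagram in $(5)\Rightarrow(3)$: the surrounding steps are bookkeeping, but this is the point at which right collapsibility has to be manufactured as a non-emptiness statement about an equalizer that $C$ preserves. The only other place demanding care is $(6)\Rightarrow(1)$, where one must remember that $M$ itself is a flat (free) left $M$-set, so that counting homomorphisms out of it pins the terminal point down to the trivial action.
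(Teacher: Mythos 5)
Your proposal is correct and follows essentially the same route as the paper: $(1\Leftrightarrow2\Leftrightarrow3)$ from the preceding discussion, $(1\Rightarrow4\Rightarrow5)$ via Proposition \ref{prop:flatness-properties}, $(5\Rightarrow3)$ by applying $C$ to the parallel pair of left-multiplication maps $M\rightrightarrows M$ and reading off non-emptiness of the equalizer, and $(1\Leftrightarrow6)$ by identifying points with flat left $M$-sets and using that homomorphisms out of the flat object $M$ are counted by elements. The only difference is expository: you spell out why the preserved equalizer is a singleton and why a terminal point must be the trivial one-element action, details the paper compresses into a sentence.
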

\begin{proof}
($1 \Leftrightarrow 2 \Leftrightarrow 3$) follows from the discussion above.

($1 \Rightarrow 4 \Rightarrow 5$) The first implication is trivial, the latter is Proposition \ref{prop:flatness-properties}.

($5 \Rightarrow 3$) For $m_1,m_2 \in M$, consider the diagram
\begin{equation*}
\begin{tikzcd}
M \ar[r,shift left=1,"{m_1 \cdot}"] \ar[r,shift right=1,"{m_2 \cdot}"'] & M
\end{tikzcd}.
\end{equation*}
Because $C$ preserves equalizers, the equalizer of this diagram is non-empty, so there is an $m \in M$ with $m_1m = m_2m$.

($1 \Leftrightarrow 6$) The category of points of $\Setswith{M}$ can be identified with the category of flat left $M$-sets and homomorphisms of $M$-sets between them. Since in particular $M$ (with left action given by multiplication) is flat, the category of points has a terminal object if and only if $1$ is flat (on the left), since any other non-empty $M$-set admits more than one homomorphism from $M$.
\end{proof}

\begin{rmk}
Note that a functor into $\Set$ is flat in the sense of Definition \ref{dfn:filtering} if and only if its category of elements is filtered in the above sense; since the general definition of the category of elements is rather involved, we mention only in passing that the category of elements for the terminal left $M$-set is precisely $M\op$, which gives an alternative proof of the equivalence ($1 \Leftrightarrow 2$) in the above. 
\end{rmk}

\begin{remark}
In the above we recovered Sedaghatjoo and Khaksari's result \cite[Lemma 3.7]{sedaghatjoo-khaksari}, which is the equivalence ($1 \Leftrightarrow 3$). The equivalence ($4 \Leftrightarrow 5$) can also be seen as the statement that the left $M$-set with one element is pullback-flat if and only if it is equalizer-flat. In the semigroup literature, it is shown more generally that equalizer-flatness and pullback-flatness coincide for cyclic $M$-sets, see Kilp \textit{et al.} \cite[III,Theorem 16.7]{MAC}.
\end{remark}

In \cite[Corollary 3.8]{sedaghatjoo-khaksari}, Sedaghatjoo and Khaksari show that a monoid $M$ is right collapsible if and only if it is right Ore and $M \times M$ is indecomposable as a right $M$-set with the diagonal action. By the characterizations in preceding sections, this means that $\Setswith{M}$ is totally connected if and only if it is de Morgan and strongly connected. We give an alternative proof of this fact:
\begin{proposition}
\label{prop:totally2}
Let $M$ be a monoid. Then $\Setswith{M}$ is totally connected if and only if it is de Morgan and strongly connected.
\end{proposition}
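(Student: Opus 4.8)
The plan is to work entirely through the algebraic reformulations of the three properties supplied by Theorem~\ref{thm:totally}, Theorem~\ref{thm:deMorgan} and Proposition~\ref{prop:strongcon}, rather than manipulating the functor $C$ directly. Recall that total connectedness is equivalent to $M$ being right collapsible (Theorem~\ref{thm:totally}(3)), that the de Morgan condition is equivalent to the right Ore condition (Theorem~\ref{thm:deMorgan}(3)), and that strong connectedness is equivalent to $M \times M$ being indecomposable as a right $M$-set (Proposition~\ref{prop:strongcon}, item~\ref{item:MxMindec}). The forward implication is then immediate: if $\Setswith{M}$ is totally connected then $C$ preserves all finite limits, so in particular it preserves finite products (giving strong connectedness) and, since every monomorphism in a topos is an equalizer, it preserves monomorphisms (giving the de Morgan condition via Theorem~\ref{thm:deMorgan}(1)).

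For the converse I would reduce to the purely algebraic claim that the right Ore condition together with indecomposability of $M \times M$ forces right collapsibility. The key observation is that right collapsibility of $M$ is exactly the assertion that every \emph{principal} sub-$M$-set of $M \times M$ meets the diagonal: a witness $m$ to $m_1 m = m_2 m$ is the same datum as an element $(m_1 m, m_2 m)$ lying in both the principal sub-$M$-set $(m_1,m_2)M$ and the diagonal $\Delta = \{(n,n) : n \in M\} \cong M$. I would now invoke the characterization of the de Morgan condition given by Theorem~\ref{thm:deMorgan}, item~\ref{item:deMorgan-7}: any two non-empty sub-$M$-sets of an \emph{indecomposable} right $M$-set have non-empty intersection. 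Strong connectedness supplies precisely the hypothesis that $M \times M$ is indecomposable, so applying this to the two non-empty sub-$M$-sets $(m_1,m_2)M$ and $\Delta$ yields a common element $(m_1 m, m_2 m) = (n,n)$, whence $m_1 m = m_2 m = n$ and $M$ is right collapsible. By Theorem~\ref{thm:totally} this makes $\Setswith{M}$ totally connected.

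The only real subtlety --- and the step I would isolate as the crux --- is the translation of right collapsibility into the geometric statement ``every principal sub-$M$-set of $M \times M$ meets the diagonal'', since once this is in place the de Morgan intersection property and the indecomposability of $M \times M$ slot together without further work. It is worth noting that the same argument can be phrased categorically: the equalizer of the parallel pair $m_1\cdot, m_2\cdot : M \rightrightarrows M$ is the pullback of the diagonal of $M$ against $(m_1,m_2) : M \to M \times M$, so the content is that $C$ preserving finite products and monomorphisms forces it to preserve this particular equalizer; but the element-level phrasing above is the most economical route and makes transparent why exactly the conjunction of the two weaker conditions is needed.
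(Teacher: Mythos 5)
Your proof is correct, but it is organized differently from the paper's. The paper argues entirely at the level of the functor $C$: it takes an \emph{arbitrary} equalizer diagram $E \hookrightarrow X \rightrightarrows Y$, reduces via coproduct preservation to the case where $X$ and $Y$ are indecomposable, and then applies the intersection property (item~\ref{item:deMorgan-7} of Theorem~\ref{thm:deMorgan}) to the \emph{graphs} of $f$ and $g$ inside the indecomposable object $X \times Y$, whose intersection is $E$; this directly establishes preservation of all equalizers, hence of all finite limits. You instead collapse the whole statement to a single algebraic implication --- right Ore plus indecomposability of $M \times M$ forces right collapsibility --- proved by intersecting $(m_1,m_2)M$ with the diagonal $\Delta$ in $M \times M$, and then you invoke the equivalence ($1 \Leftrightarrow 3$) of Theorem~\ref{thm:totally} to climb back up to total connectedness. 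The key mechanism (item~\ref{item:deMorgan-7} applied to two sub-$M$-sets of an indecomposable product) is the same in both proofs; your configuration is essentially the paper's graph argument specialized to $X = Y = M$ with the two left-multiplication maps. What your route buys is economy and transparency: it is in effect a topos-theoretic proof of the Sedaghatjoo--Khaksari result \cite[Corollary 3.8]{sedaghatjoo-khaksari}, which the paper cites as the pre-existing algebraic counterpart of this proposition. What the paper's route buys is self-containedness: it never leans on the equivalence between total connectedness and right collapsibility, whose proof in Theorem~\ref{thm:totally} rests on the general theory of (co)filtered colimits commuting with finite limits, whereas your argument does depend on that equivalence. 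Both dependencies are legitimate since Theorem~\ref{thm:totally} precedes the proposition, so there is no circularity in your version.
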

\begin{proof}
If the connected components functor preserves finite limits, then it certainly also preserves finite products and monomorphisms. Conversely, suppose that $C$ preserves finite products and monomorphisms. To show that $C$ preserves all finite limits, it is enough to show that $C$ preserves equalizers, so suppose we are given an equalizer diagram,
\begin{equation*}
\begin{tikzcd}
E \ar[r, hook] & X \ar[r,shift left=1,"{f}"] \ar[r,shift right=1,"{g}"'] & Y.
\end{tikzcd}
\end{equation*}
Since $C(E)$ is a subobject of $C(X)$ and $C$ preserves coproducts, it is enough to show that $E$ is non-empty whenever $X$ and $Y$ are indecomposable. Because $C$ preserves finite products, $X \times Y$ is indecomposable. Therefore, consider the two (non-empty) subobjects
\begin{equation*}
\{ (x,y) \in X \times Y : f(x) = y \} \qquad \{ (x,y) \in X \times Y : g(x) = y \}.
\end{equation*}
Their intersection is isomorphic to $E$, and is non-empty by part \ref{item:deMorgan-7} of Theorem \ref{thm:deMorgan}.
\end{proof}

Proposition \ref{prop:totally2} enables us to show that when $M$ is a monoid with $\Setswith{M}$ strongly connected, $C$ need not preserve all monomorphisms. This demonstrates (for example) that the properties of a topos of being de Morgan and strongly connected are independent.
\begin{example}[Strongly connected $\not\Rightarrow$ de Morgan]
Let $M = \{1,a,b\}$ be the three-element monoid with $a$ and $b$ right-absorbing. In this case, $\Setswith{M}$ can be identified with the topos of reflexive graphs. This topos also appears in the work of Connes and Consani \cite{connes-consani-gromov}, where the objects of this topos are seen as sets equipped with a certain notion of reflexive relation. It follows from Lemma \ref{lem:localstrong} that $\Setswith{M}$ is strongly connected. However, $aM \cap bM = 0$ so $\Setswith{M}$ is not de Morgan.
\end{example}

\subsection{Left absorbing elements and colocalness}

\begin{dfn}
\label{dfn:colocal}
A more direct dual of Definition \ref{dfn:local} in the context of essential geometric morphisms is the existence of an `extra left adjoint'. We say that a locally connected Grothendieck topos $\Ecal$ with global sections morphism $f$ is \textbf{colocal} if the left adjoint $f_!$ has a further left adjoint. More generally, we might say a locally connected geometric morphism $f : \Fcal \to \Ecal$ is \textbf{colocal} if its left adjoint $f_!$ has a further $\Ecal$-indexed left adjoint.
\end{dfn}

There is a characterization of topological spaces $X$ such that $\Sh(X)$ is colocal comparable to Proposition \ref{prop:localspace}.
\begin{proposition}
\label{prop:colocalspace}
Let $X$ be a locally connected sober topological space. Then $\Sh(X)$ is a colocal topos if and only if $X$ has a (necessarily unique) \textbf{dense open point}, i.e.\ a point $x_0 \in X$ such that $\{x_0\}$ is an open set that is contained in all other open sets.
\end{proposition}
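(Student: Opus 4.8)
The plan is to reduce colocalness to a representability condition on the leftmost adjoint $f_!$ and then translate that condition into the stated topological property. The key preliminary is that, for any cocomplete category $\mathcal{C}$, a functor $G\colon\mathcal{C}\to\Set$ has a left adjoint if and only if it is representable: if $F\dashv G$ then $G\cong\Hom_{\mathcal{C}}(F(1),-)$, while conversely $\Hom_{\mathcal{C}}(P,-)$ always has the copower $A\mapsto\coprod_{A}P$ as a left adjoint. Since $X$ is locally connected, $\Sh(X)$ is locally connected and $f_!$ exists; applying the observation to $f_!\colon\Sh(X)\to\Set$ shows that $\Sh(X)$ is colocal exactly when $f_!$ is representable. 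All of the work then concerns when this happens, viewing $\Sh(X)\simeq\mathbf{LH}/X$.

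For the forward direction, suppose $X$ has a dense open point $x_0$. Density places $x_0$ in every nonempty open set, so by the implication $(6)\Rightarrow(1)$ of Proposition~\ref{prop:veryconnected} the topos is totally connected and, as in the proof of that proposition, $f_!\cong x_0^*$, the stalk functor at $x_0$. Because $\{x_0\}$ is open and contained in every nonempty open set, it is the smallest open neighbourhood of $x_0$, so the filtered colimit computing the stalk stabilises there and $x_0^*(E)\cong\Gamma(\{x_0\},E)\cong\Hom_{\mathbf{LH}/X}\!\bigl(\{x_0\}\hookrightarrow X,\,E\bigr)$. Thus $f_!$ is represented by the \'etale map $\{x_0\}\hookrightarrow X$, hence has a left adjoint, and $\Sh(X)$ is colocal.

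For the converse, assume $\Sh(X)$ is colocal, so $f_!\cong\Hom_{\Sh(X)}(P,-)$ for some object $P$. Being representable, $f_!$ preserves all limits, in particular finite ones, so $\Sh(X)$ is totally connected; Proposition~\ref{prop:veryconnected} then supplies a dense point $x_0$ with $f_!\cong x_0^*$. The crux is to recover that $\{x_0\}$ is open from the abstract object $P$, and I would do this by evaluating both descriptions of $f_!$ on subterminal objects $V\hookrightarrow X$ (open subsets). On one side, $x_0^*(V)$ is a singleton when $x_0\in V$, i.e.\ when $V\neq\varnothing$ by density, and empty otherwise. On the other side, writing $U_0:=\pi(P)$ for the image of the \'etale map $P\to X$ (open, since local homeomorphisms are open maps), the set $\Hom_{\Sh(X)}(P,V)$ is a singleton exactly when $U_0\subseteq V$ and empty otherwise. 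Comparing, $U_0$ is a nonempty open set contained in every nonempty open set. Finally, since a sober space is $T_0$ and $x_0\in U_0$, any second point $y\in U_0$ could be separated from $x_0$ by an open $W$ containing exactly one of them; that $W$ is nonempty, hence contains $U_0$, forcing the other point into $W$ too, a contradiction. Therefore $U_0=\{x_0\}$ is open and contained in every nonempty open set, exhibiting $x_0$ as a dense open point. Uniqueness is immediate: two dense open points each have a singleton open set contained in the other's, forcing them equal.

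The main obstacle is precisely the identification $U_0=\{x_0\}$ in the converse: representability only yields an abstract representing object $P$, so the substance lies in extracting its support by evaluation on subterminals and then invoking $T_0$-separation to collapse this smallest nonempty open set to a single point. Everything else is bookkeeping resting on Proposition~\ref{prop:veryconnected} and the representable/left-adjoint correspondence.
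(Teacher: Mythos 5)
Your proof is correct, but it is organized around a different pivot than the paper's. You reduce colocalness to representability of $f_!$ (the same ``$\Set$-valued functor has a left adjoint iff it is representable'' observation that appears in Lemma \ref{lem:proj}), and then do all the work on the representing object: in the forward direction you exhibit $f_!\cong x_0^*$ as represented by the subterminal $\{x_0\}$, and in the converse you extract the support $U_0=\pi(P)$ of the representing \'etale space and show by evaluating both descriptions of $f_!$ on subterminals that $U_0$ is a minimal non-empty open, which $T_0$-separation then collapses to a single point. The paper never mentions representability here: its forward direction constructs the left adjoint by hand (the extension-by-empty sheaf $F_S$ supported on $\{x_0\}$ --- which is exactly the copower $\coprod_S\{x_0\}$ your representability argument produces, so the two constructions coincide), and its converse instead uses that a right adjoint $f_!$ preserves \emph{arbitrary} limits, applying this to the meet of all non-empty opens to manufacture the minimal non-empty open directly, before finishing with the same sobriety argument. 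Your route buys an explicit identification of the representing object and reuses machinery already in the paper; the paper's route avoids any discussion of \'etale-space supports and gets the minimal open in one line from limit preservation. One small point you should make explicit: non-emptiness of $U_0$ needs the comparison at $V=\varnothing$ (since $x_0^*(\varnothing)=\varnothing$, the object $P$ cannot be initial), as your later claim $x_0\in U_0$ relies on it.
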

\begin{proof}
Suppose that $X$ has a dense open point $x_0$. From the proof of Proposition \ref{prop:veryconnected}, we know that the connected components functor $f_!$ of $\Sh(X)$ coincides with the inverse image functor for the geometric morphism corresponding to $x_0$. Moreover, we can construct a left adjoint to $f_!$ which maps a set $S$ to the sheaf $F_S$ defined as
\begin{equation*}
F_S(U) = \begin{cases}
S \quad & \text{if }U = \{x_0\} \\
0 \quad & \text{otherwise.}
\end{cases},
\end{equation*}
So $\Sh(X)$ is a colocal topos.

Conversely, suppose that $\Sh(X)$ is a colocal topos. Then the connected components functor preserves arbitrary limits. In particular, it preserves the terminal object and monomorphisms, which shows that each non-empty open subset of $X$ is connected. Now consider the diagram $\{U_i\}_{i \in I}$ of all non-empty open subsets of $X$. We have $C(\bigwedge_{U \in \Ocal(X)} U) = \bigwedge_{U \in \Ocal(X)} C(U) = 1$, so $\bigwedge_{U \in \Ocal(X)} U$ is a minimal non-empty open subset. Because $X$ is sober, this minimal open subset contains exactly one point.
\end{proof}

For a commutative ring $R$ without zero-divisors, we find that the topos of sheaves on $\mathrm{Spec}(R)$ (with the Zariski topology) is colocal if and only if there is an $f \in R$ such that $R[f^{-1}]$ is a field (necessarily the field of fractions of $R$). In this case, $R$ is called a Goldman domain. If we assume that $R$ is noetherian, then $R$ is a Goldman domain if and only if $R$ has only finitely many prime ideals; see \cite[Theorem 12.4]{PeteLClarkNotes}.

\begin{lemma}
\label{lem:lconnected}
Suppose $M$ is a monoid with a left absorbing element $l$. Then for any right $M$-set $A$, we have $C(A) \cong Al$, for $C$ the connected components functor.
\end{lemma}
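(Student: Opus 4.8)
The plan is to mirror the proof of Lemma \ref{lem:rfixed}, exploiting the duality between left and right absorbing elements (fixed points and connected components being the dual notions). Recall that $C(A)$ is the quotient of $A$ by the equivalence relation generated by $a \sim am$ for $a \in A$ and $m \in M$, and that $l$ being \emph{left absorbing} means $ml = l$ for every $m \in M$; in particular $ll = l$. The two maps I want to compare are the ``multiply on the right by $l$'' map $\mu : A \to Al$, $a \mapsto al$, and the restriction to $Al$ of the canonical quotient $A \to C(A)$.

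First I would check that $\mu$ is constant on connected components, so that it descends to a well-defined map $\bar\mu : C(A) \to Al$, $[a] \mapsto al$. It suffices to verify this on a single generating step: if $a' = am$, then $\mu(a') = (am)l = a(ml) = al = \mu(a)$ by left absorption, and since each step and its reverse preserve the value $(-)\cdot l$, the value is constant along any zigzag, hence on the whole equivalence class. This $\bar\mu$ is surjective by the very definition of $Al$.

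Next I would exhibit the inverse. Let $\iota : Al \to C(A)$ send $al$ to its component $[al]$. For $\bar\mu \circ \iota$ I compute $al \mapsto [al] \mapsto (al)l = a(ll) = al$, using $ll = l$, so $\bar\mu \circ \iota = \id_{Al}$. For $\iota \circ \bar\mu$ I compute $[a] \mapsto al \mapsto [al]$; since $al = a\cdot l$ is one generating step away from $a$, we have $[al] = [a]$, so $\iota \circ \bar\mu = \id_{C(A)}$. Therefore $C(A) \cong Al$.

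I do not expect any real obstacle here: both verifications reduce to the identities $ml = l$ and $ll = l$. The only points genuinely requiring care are confirming that $\mu$ respects the \emph{full} equivalence relation rather than merely its generators (immediate from transitivity along zigzags, as noted), and recording that the resulting bijection is an isomorphism of \emph{sets}, since $C$ takes values in $\Set$ and $Al$ is being regarded simply as a subset of $A$.
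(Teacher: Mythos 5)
Your proposal is correct and follows essentially the same route as the paper's proof: both rest on the two identities $ml = l$ and $ll = l$, using left absorption to show that $a \mapsto al$ is constant on connected components and that $a \sim al$, which together give the bijection $C(A) \cong Al$. The paper merely packages this as "every class has a unique representative of the form $a\cdot l$" rather than writing out the mutually inverse maps, but the content is identical.
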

\begin{proof}
Recall that we can express $C(A)$ as the set of equivalence classes of $A$ under the equivalence relation generated by $a \sim a\cdot m$ for $a \in A$, $m \in M$. Clearly every equivalence class has a representative of the form $a \cdot l$, so it suffices to show that if $a \sim b$ then $a \cdot l = b \cdot l$ (so this representative is unique). Indeed, for $a \sim b$ to hold there must be a finite sequence of elements of $A$, $a=a_0,a_1,\dotsc,a_k=b$ and elements $m_0,\dotsc,m_{k-1}$ and $n_1,\dotsc,n_k$ with $a_i \cdot m_i = a_{i+1} \cdot n_{i+1}$ for $i=0,\dotsc,k-1$. Then we have $a_i \cdot l = a_i \cdot m_il = a_{i+1} \cdot n_{i+1}l = a_{i+1} \cdot l$ and so inductively $a \cdot l = b \cdot l$, as required.
\end{proof}

The above lemma is the dual of Lemma \ref{lem:rfixed}. We will use it to prove $(1 \Rightarrow 6)$ in the following theorem.

\begin{thm}[Conditions for $\Setswith{M}$ to be colocal]
\label{thm:labsorb}
Let $M$ be a monoid, $C:\Setswith{M} \to \Set$ its connected components functor and $\Gamma:\Setswith{M} \to \Set$ its global sections functor. The following are equivalent:
\begin{enumerate}
	\item $M$ has a left absorbing element.
	\item $M$ has a minimal non-empty right ideal whose monoid of endomorphisms as a right $M$-set is trivial. \label{item:minimal}
	\item The category of indecomposable right $M$-sets has an initial object.
	\item Every indecomposable right $M$-set has a minimal non-empty subobject admitting no non-trivial endomorphisms. \label{item:minimal2}
	\item The category of essential points of $M$ has a terminal object.
	\item $C$ has a left adjoint $c$ which preserves connected limits.
	\item $C$ has a left adjoint $c$; that is, $\Setswith{M}$ is colocal over $\Set$.
	\item $C$ preserves arbitrary products, so the terminal left $M$-set is product-flat.
	\item $C$ preserves $\Set$-indexed powers. \label{item:powers}
\end{enumerate}
\end{thm}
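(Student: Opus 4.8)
The plan is to run the cycle $(1)\Rightarrow(6)\Rightarrow(7)\Rightarrow(8)\Rightarrow(9)\Rightarrow(1)$ through the functor conditions, to dispatch $(3)$ and $(5)$ as direct two-way consequences of $(1)$, and to keep $(2)$ and $(4)$ for last. The unifying engine is the remark that $C\colon\Setswith{M}\to\Set$ has a left adjoint precisely when it is representable: a left adjoint $c$ forces $C\cong\Hom_{\Setswith{M}}(c(1),-)$, while conversely $\Hom_{\Setswith{M}}(Q,-)$ always has the left adjoint $S\mapsto\coprod_S Q$. Combined with this, the key computation is that whenever $l$ is left absorbing---equivalently $l$ is an idempotent fixed by every left translation, i.e.\ $gl=l$ for all $g$---Lemma~\ref{lem:lconnected} gives $C(A)=Al=\{a\in A:al=a\}\cong\Hom_{\Setswith{M}}(lM,A)$, so $C$ is represented by $lM$ with left adjoint $c=\Delta(-)\times lM$. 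Since $\Delta$ preserves all limits and $(-)\times lM$ preserves connected limits, $c$ preserves connected limits; this yields $(1)\Rightarrow(6)$, and $(6)\Rightarrow(7)\Rightarrow(8)\Rightarrow(9)$ are formal (all limits $\Rightarrow$ products $\Rightarrow$ $\Set$-indexed powers).

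To close the functor cycle I would prove $(9)\Rightarrow(1)$. By Proposition~\ref{prop:cc2}, preservation of $\Set$-indexed powers forces $M$ to satisfy the right Ore condition. Next, taking $Y=M$ and using $M^{\Delta(S)}\cong\prod_S M$ (functions $S\to M$ under the pointwise right action), condition $(9)$ makes $\prod_S M$ indecomposable for every set $S$. Apply this with $S=M$ to the elements $\id_M$ and the constant function at $1$: under the right Ore condition any two elements of an indecomposable $M$-set are joined by a single span (the span-pushing argument in the proof of $(3\Leftrightarrow4)$ of Theorem~\ref{thm:deMorgan}), so there are $m_1,m_2\in M$ with $s\,m_1=m_2$ simultaneously for all $s\in M$. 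Evaluating at $s=1$ gives $m_1=m_2$, whence $s\,m_1=m_1$ for every $s$ and $m_1$ is left absorbing. (Alternatively, $(7)\Rightarrow(1)$ directly: a left adjoint $c$ makes $C\cong\Hom_{\Setswith{M}}(Q,-)$ with $Q=c(1)\neq 0$, and the unique map $Q\to M$ provided by $\Hom_{\Setswith{M}}(Q,M)\cong C(M)=1$ is fixed by every left translation, so its values are left absorbing.)

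Conditions $(3)$ and $(5)$ are then quick and mutually dual. For $(1)\Rightarrow(3)$, $lM$ is cyclic, hence indecomposable, and $\Hom_{\Setswith{M}}(lM,B)=C(B)=1$ for every indecomposable $B$, so $lM$ is initial among indecomposable right $M$-sets; for $(3)\Rightarrow(1)$, an initial indecomposable $I_0$ has a unique map $u\colon I_0\to M$, and precomposing with each left translation $\lambda_n$ (an endomorphism of the right $M$-set $M$) leaves $u$ unchanged, so $n\cdot u(q)=u(q)$ and $u(q)$ is left absorbing. For $(5)$, Corollary~\ref{crly:idem} identifies the category of essential points with $\check{M}\op$, so $(5)$ says $\check{M}$ has an initial object; the hom-set $\underline{e_0}\to\underline{1}$ in $\check{M}$ is $Me_0$, so $\underline{e_0}$ is initial iff $Me_0=\{e_0\}$, i.e.\ iff $e_0$ is left absorbing, and one checks $\underline{l}$ is initial when $l$ is left absorbing using $gl=l$.

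The main obstacle is the pair $(2)$, $(4)$, and specifically the converse into $(1)$. The forward directions are clean: the set $L=lM$ of left absorbing elements is the \emph{smallest} non-empty right ideal---for any non-empty right ideal $J$ and $x\in J$ we have $x\cdot l=l\in J$, so $l\in J$ and $lM\subseteq J$---and $\End_{\Setswith{M}}(lM)\cong C(lM)=1$, giving $(1)\Rightarrow(2)$; while for $(1)\Rightarrow(4)$, given an indecomposable $A$ and any $a\in A$, the subobject $(al)M$ is minimal and every $c\in(al)M$ satisfies $c\cdot l=al$, which forces any endomorphism to fix the generator, so $\End_{\Setswith{M}}$ is trivial (and $(4)\Rightarrow(2)$ by taking $A=M$). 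The delicate converse is $(2)\Rightarrow(1)$. Here one must use the triviality of the endomorphism monoid in an essential way: a bare minimal right ideal with trivial endomorphisms can also come from a \emph{right} absorbing element (e.g.\ a singleton fixed-point ideal in a local topos), so the argument has to exhibit the ideal as the \emph{least} right ideal, equivalently as a two-sided ideal, whereupon each left translation restricts to an endomorphism of it and triviality forces that endomorphism to be the identity---making every element left absorbing. Proving this two-sidedness from minimality together with triviality of endomorphisms is where the real semigroup-theoretic content lies, and it is the step I expect to be hardest.
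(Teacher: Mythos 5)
Your functor-theoretic cycle and your treatment of (3) and (5) are correct and essentially coincide with the paper's own proof: $(1)\Rightarrow(6)$ is Lemma~\ref{lem:lconnected} plus representability $C \cong \HOM_M(lM,-)$ with $c(X) \cong \Delta(X) \times lM$; $(9)\Rightarrow(1)$ is word-for-word the paper's argument (right Ore via Proposition~\ref{prop:cc2}, then the $|M|$-indexed power of $M$ and the tuples $(m)_{m \in M}$ and $(1)_{m \in M}$); and for (3) and (5) you give correct, mildly rearranged variants (the paper routes $(3)\Rightarrow(4)\Rightarrow(1)$, whereas you close $(3)\Rightarrow(1)$ directly by composing the unique map $I_0 \to M$ with left translations, which is fine).

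The genuine gap is at (2) and (4): in your implication graph these conditions only receive arrows, $(1)\Rightarrow(4)\Rightarrow(2)$, and never give one back, and you explicitly leave $(2)\Rightarrow(1)$ unproven, so the equivalence is not established for these two items. The obstruction you describe is real but self-inflicted: it comes from reading ``minimal'' as ``minimal element of the poset of non-empty right ideals (resp.\ subobjects)''. Under that weak reading the implication is genuinely false, by exactly the counterexample you half-state: if $r$ is right absorbing then $rM = \{r\}$ is a minimal non-empty right ideal with trivial endomorphism monoid, yet the three-element monoid $\{1,a,b\}$ with $a,b$ right absorbing has no left absorbing element. So the intended (and the only correct) reading of ``minimal'' in (2) and (4) is \emph{least}, i.e.\ contained in every non-empty right ideal (resp.\ subobject) --- which is precisely what the paper's proofs of $(1)\Rightarrow(2)$ and $(3)\Rightarrow(4)$ deliver. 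With that reading, the step you expected to be hardest is three lines, and it is the paper's $(4)\Rightarrow(1)$: let $A$ be the least non-empty subobject of $M$ and fix $m \in M$; the map $f : A \to M$, $a \mapsto ma$, is a right $M$-set homomorphism whose image $mA$ is a non-empty subobject of $M$, so $A \subseteq mA$ by leastness; then $f^{-1}(A) = \{a \in A \mid ma \in A\}$ is a non-empty subobject (non-empty because $A \subseteq mA$), hence equals $A$, so $f$ restricts to an endomorphism of $A$; triviality of $\End(A)$ forces $ma = a$ for all $a \in A$, and since $m$ was arbitrary, every element of $A$ is left absorbing. The identical argument applied to the least ideal in (2) gives $(2)\Rightarrow(1)$: the two-sidedness you were seeking ($mA \subseteq A$) falls out of leastness, with no further semigroup-theoretic input. (Incidentally, the paper's only genuine return path is this $(4)\Rightarrow(1)$; its printed proof of $(2)\Rightarrow(3)$ in fact argues from a left absorbing element, so it too implicitly relies on the leastness reading of (2).)
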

\begin{proof}
($1 \Rightarrow 2$) Any non-empty right ideal of $M$ must contain the collection of left-absorbing elements, and the subset of left-absorbing elements is a right ideal of $M$. Moreover, any $M$-endomorphism of an ideal of $M$ must fix the set of left-absorbing elements (since $f(l) = f(l) \cdot l = l$ for any left-absorbing $l$), so this minimal ideal has no non-trivial endomorphisms.

($2 \Rightarrow 3$) Let $l$ be a left-absorbing element. Then the set of left-absorbing elements can be written as $lM$, in particular it is indecomposable projective as a right $M$-set. We claim that $lM$ is an initial object in the category of indecomposable right $M$-sets. Take an indecomposable right $M$-set $X$. Note that morphisms $lM \to X$ correspond to elements of $Xl \cong X \otimes_M Ml$. Because $Ml = 1$ as a left $M$-set, we have $Xl = C(X)=1$, so there is a unique morphism $lM \to X$.

($3 \Rightarrow 4$) Let $A$ be the initial object in the category of indecomposable right $M$-sets. Then for every indecomposable right $M$-set $X$, there is a unique morphism $f: A \to X$. The image $Q$ of $f$ is necessarily contained in every indecomposable subobject of $X$, which in turn implies that it is contained in every subobject. Further, the unique morphism $\pi : A \to Q$ is an epimorphism, so for any endomorphism $g : Q \to Q$ we have $g \circ \pi = \pi$, which shows $g = 1$. 

($4 \Rightarrow 1$) Consider the minimal non-empty subobject $A$ of $M$. For arbitrary $m \in M$, take the morphism $f: A \to M$, $a \mapsto ma$. Then $f(A)$ contains $A$ and $f^{-1}(A) = A$ by minimality of $A$. So $f$ defines an endomorphism of $A$, which is trivial by assumption. This shows that $ma=a$, for all $m \in M$ and all $a \in A$. In other words, each element of $A$ is a left-absorbing element. 

($1 \Leftrightarrow 5$) Recall that the category of essential points can be identified with the category of indecomposable projective left $M$-sets. If $l$ is a left absorbing element, then the terminal left $M$-set is projective, since we can write it as $1 = Ml$. Conversely, if the category of essential points has a terminal object, then $1$ is projective, so there is an idempotent $e \in M$ such that $1 = Me$. But then $e$ is a left absorbing element.

($1 \Rightarrow 6$) By Lemma \ref{lem:lconnected}, if $l$ is a left-absorbing element of $M$ then $C = \HOM_M(lM,-)$, so $C$ preserves limits. It follows from the Special Adjoint Functor Theorem that $C$ has a left adjoint $c$. Using Proposition \ref{prop:adjunction}, we know that $c(X) \cong X \times lM$, whence it preserves connected limits. Note that it preserves products if and only if $lM$ has a single element (so $l$ is a zero element).

($6 \Rightarrow 7 \Rightarrow 8 \Rightarrow 9$) These implications are trivial.

($9 \Rightarrow 1$) By Proposition \ref{prop:cc2}, if $\Setswith{M}$ is power-connected, then $M$ must satisfy the right Ore condition. Consider the product of $|M|$ copies of $M$ in $\Setswith{M}$. This is indecomposable by assumption, so there are elements $s,t \in M$ such that $(m)_{m \in M} \cdot s = (1)_{m \in M} \cdot t$, which is to say such that $ms = t$ for all $m \in M$. Taking $m=1$ we have that $s=t$ is a left-absorbing element, as required.
\end{proof}

The equivalence of conditions 1 and 7 appears as Proposition 3.9 of \cite{sedaghatjoo-khaksari}; we underline once again that their `right zero' elements are our `left absorbing' elements.

\begin{remark}
Between the preservation of finite products by $C$ in Proposition \ref{prop:strongcon} and the preservation of arbitrary products in Theorem \ref{thm:labsorb}, we can also investigate intermediate sizes of products, which can be equivalently stated as the requirement that colimits over $M\op$ commute with such products (see the discussion in Section \ref{ssec:strong-connectedness}). Surprisingly, by \cite[Theorem 3.1]{adamek-boubek-velebil} for an arbitrary small category $\Dcal$, commuting with even countably infinite products forces commutation with equalizers and hence with all equally large limits. That is, we may as well expand our considerations from products to arbitrary limits. We can also conclude that the equivalence ($7 \Leftrightarrow 8$) in Theorem \ref{thm:labsorb} is true in general for presheaf toposes.

We recall some classical terminology. Let $\kappa$ be a regular cardinal. Then a \textbf{$\kappa$-small category} is a category for which the collection of morphisms is a set of cardinality \textit{strictly smaller} than $\kappa$. Further, \textbf{$\kappa$-small limits} are limits of diagrams indexed by $\kappa$-small categories. For example, $\omega$-small limits are finite limits, for $\omega = |\Nbb|$. We say that a category $\Dcal$ is \textbf{$\kappa$-filtered} if every diagram $F : I \to \Dcal$, with $I$ a $\kappa$-small category, has a cone over it; this is equivalent to $\Dcal$-colimits commuting with $\kappa$-small limits. Dually, $\Dcal$ is \textbf{$\kappa$-cofiltered} if $\Dcal\op$ is $\kappa$-filtered.

It follows that the connected components functor preserves $\kappa$-small limits if and only if $M$ has the property that for any family $\{m_i\}_{i \in I}$ with $|I|<\kappa$, there is an $m$ such that:
\begin{equation*}
m_i m = m_j m
\end{equation*}
for all $i,j \in I$. We may call monoids with this property \textbf{right $\kappa$-collapsible}.

Suppose that $M$ is right $\kappa$-collapsible for some $\kappa > |M|$. Then there is some $l \in M$ such that $ml = m'l$ for all $m,m' \in M$, so by taking $m' = 1$ we see that $l$ is left absorbing (see also the proof of Theorem \ref{thm:labsorb}). However, it is possible that $M$ is $|M|$-collapsible but does not have a left-absorbing element. We can construct examples as follows. Let $\kappa$ be a regular cardinal. Then we can identify $\kappa$ with the set of ordinals of cardinality strictly smaller than $\kappa$. The union of two ordinals in $\kappa$ is still in $\kappa$, so the union defines a (commutative idempotent) monoid structure on $\kappa$. Now let $\{\alpha_i\}_{i \in I}$ be a family of ordinals in $\kappa$, with $|I|<\kappa$. Then the union $\alpha = \bigcup_{i \in I} \alpha_i$ is again in $\kappa$, because $\kappa$ is regular. Clearly, $\alpha_i \cup \alpha = \alpha \cup \alpha_j$ for all $i,j \in I$. So this monoid is right $\kappa$-collapsible, but it does not have a left absorbing element.
\end{remark}

\subsection{Zero elements}

\begin{dfn}
Let $f:\Fcal \to \Ecal$ be a connected, locally connected geometric morphism. We say $f$ is \textbf{bilocal} if it is both local and colocal. We say $f$ is \textbf{bipunctually locally connected} if it is both punctually and copunctually locally connected. As usual, we say a Grothendieck topos has these properties if its global sections morphism does.
\end{dfn}

In \cite[Definitions 1 and 2]{cohesion}, Lawvere introduced the terms \textbf{of quality type} and \textbf{category of cohesion} over a base category. For Grothendieck toposes over $\Set$, these respectively coincide with the bipunctual local connectedness presented above and condition \ref{item:cohesion} of Theorem \ref{thm:zero} below, so that in the case of toposes of the form $\Setswith{M}$ over $\Set$ they coincide.

\begin{thm}[Conditions for $\Setswith{M}$ to be bilocal]
\label{thm:zero}
Let $M$ be a monoid, $\Gamma$ and $C$ the usual functors and $\alpha:\Gamma \to C$ the natural transformation of Lemma \ref{lem:alpha}. The following are equivalent:
\begin{enumerate}
	\item $M$ has a zero element.
	\item $\Gamma$ is full.
	\item $\alpha$ is a split monomorphism.
	\item $\alpha$ is an isomorphism: $\Setswith{M}$ is bilocally punctually connected.
	\item $\Gamma$ has a right adjoint and $C$ has a left adjoint: $\Setswith{M}$ is bilocal.
	\item $\alpha$ is epic and $C$ preserves $\Set$-indexed powers. \label{item:cohesion}
	\item $\Gamma$ has a right adjoint and $C$ preserves monomorphisms.
	\item $C$ has a left adjoint which preserves the terminal object of $\Set$.
\end{enumerate}
\end{thm}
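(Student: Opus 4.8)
The plan is to show that every condition is equivalent to $(1)$, organising the argument around the observation (Definition \ref{dfn:absorb}) that a zero element is exactly a simultaneous left and right absorbing element, together with the fact that localness and colocalness have already been characterised: by Theorem \ref{thm:rabsorb}, $\Gamma$ admits a right adjoint (equivalently $\alpha$ is epic) iff $M$ has a right absorbing element, and by Theorem \ref{thm:labsorb}, $C$ admits a left adjoint (equivalently $C$ preserves $\Set$-indexed powers) iff $M$ has a left absorbing element. With these in hand, conditions $(5)$ and $(6)$ are immediate: each is literally the conjunction of ``right absorbing exists'' and ``left absorbing exists'', which by Definition \ref{dfn:absorb} is ``zero exists''. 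Similarly $(8)$ presupposes colocalness, so a left absorbing $l$ exists; using the explicit left adjoint $c(X) \cong X \times lM$ from the proof of Theorem \ref{thm:labsorb}, preservation of the terminal object says $lM$ is a singleton, which forces $l$ to be right absorbing as well, hence a zero.

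The crux of the proof is the following lemma, which converts the de Morgan condition into the missing absorbing element: if $M$ has a right absorbing element and satisfies the right Ore condition, then $M$ has a zero. Indeed, if $r$ is right absorbing then $rM = \{r\}$, so for two right absorbing elements $r_1, r_2$ the right Ore condition gives $r_1M \cap r_2 M \neq \emptyset$, forcing $r_1 = r_2$; thus the right absorbing element is unique. Now for any $m \in M$ the element $mr$ satisfies $(mr)n = m(rn) = mr$, so $mr$ is again right absorbing and hence equals $r$ by uniqueness; that is, $r$ is left absorbing, so $r$ is a zero. The converse is trivial, since a zero $z$ is right absorbing and $m_1 z = z = m_2 z$ witnesses the right Ore condition.

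With this lemma the remaining conditions fall into line. By Theorem \ref{thm:deMorgan}, $C$ preserves monomorphisms (equivalently $\alpha$ is monic) iff $M$ satisfies the right Ore condition. Hence $(7)$ is ``right absorbing $+$ right Ore'', which is ``zero'' by the lemma; and $(4)$, that $\alpha$ be an isomorphism, is ``$\alpha$ epic and monic'', i.e. ``right absorbing $+$ right Ore'', again ``zero''. For $(3)$, $\alpha$ a split monomorphism is in particular monic, giving the right Ore condition; evaluating at the regular action $X = M$, where $C(M) = 1$ and $\Gamma(M) = \Fix(M)$ is the set of right absorbing elements, the split monomorphism $\alpha_M$ into a singleton must have singleton domain, so a right absorbing element exists, and the lemma again yields a zero. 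Conversely $(1) \Rightarrow (4) \Rightarrow (3)$ since isomorphisms are split monos.

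For $(2)$ I would use the standard fact that, for the adjunction $\Delta \dashv \Gamma$, fullness of $\Gamma$ is equivalent to each counit component $\epsilon_X : \Delta\Gamma(X) \hookrightarrow X$ (the inclusion of the fixed points with trivial action) being a split monomorphism. Testing this on a nonzero indecomposable $X$: any equivariant retraction $X \to \Delta\Gamma(X)$ is constant on components, hence constant, which forces $X$ to have exactly one fixed point (and nonzero $X$ with no fixed point cannot admit such a retraction, as the initial object is strict). Thus $\Gamma$ full implies every indecomposable $M$-set has exactly one fixed point, i.e. $\alpha$ is an isomorphism, so $(2) \Rightarrow (4)$. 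For the reverse, if $\alpha$ is an isomorphism then sending each element to the unique fixed point of its component defines an equivariant retraction of $\epsilon_X$ for every $X$, so $\Gamma$ is full; hence $(4) \Rightarrow (2)$. I expect the main obstacle to be precisely this pair of ``one-sided-looking'' conditions $(2)$ and $(3)$: a priori $\Gamma$ full or $\alpha$ split mono seem to concern only the localness (fixed-point) side, and the content of the theorem is that, once combined with the right Ore information they secretly encode, they force the second, colocal, absorbing element into existence — which is exactly what the key lemma delivers.
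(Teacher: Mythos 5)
Your proof is correct, and it reaches the theorem by a genuinely different route than the paper in the places where there is real work to do. The paper proves the cycle $(1)\Rightarrow(2)\Rightarrow(3)\Rightarrow(4)\Rightarrow(5)\Rightarrow(7)\Rightarrow(1)$ plus $(5)\Leftrightarrow(6)$ and $(5)\Leftrightarrow(8)$, whereas you prove every condition equivalent to $(1)$ in hub-and-spoke fashion; both organizations lean on Theorems \ref{thm:rabsorb}, \ref{thm:labsorb} and \ref{thm:deMorgan}, but the key steps differ. The main divergence is your lemma that a right absorbing element together with the right Ore condition forces a zero: the paper instead closes the loop at $(7)\Rightarrow(1)$ via Lemma \ref{lem:localstrong}, Proposition \ref{prop:totally2} and Theorem \ref{thm:totally} (local $\Rightarrow$ strongly connected, then de Morgan $+$ strongly connected $\Rightarrow$ totally connected $\Rightarrow$ right collapsible), and only then collapses two right absorbing elements. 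Your lemma is elementary, self-contained, and actually works from the weaker hypothesis (right Ore rather than right collapsible), which is a nice observation of independent interest. A second difference is at $(3)$: the paper, having extracted a right absorbing element from $\Gamma(M)\neq\emptyset$ exactly as you do, concludes that $\alpha$ is epic by Theorem \ref{thm:rabsorb} and finishes with the purely categorical fact that an epic split mono is an isomorphism, never touching the Ore condition; your route instead extracts right Ore from monicity and invokes your lemma -- both are valid, the paper's being slightly more economical. Third, for $(2)$ both arguments rest on the standard adjunction fact that $\Gamma$ is full iff the counit of $(\Delta\dashv\Gamma)$ is a split monomorphism, but the paper pushes the counit through $C$ using the formula of Lemma \ref{lem:alpha} to get $(2)\Rightarrow(3)$, while you analyze retractions on indecomposables to get $(2)\Leftrightarrow(4)$ directly; your converse $(4)\Rightarrow(2)$ is essentially the paper's $(1)\Rightarrow(2)$ fixed-point construction. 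Finally, for $(8)$ you exploit the explicit formula $c(X)\cong X\times lM$ from the proof of Theorem \ref{thm:labsorb}, where the paper argues via uniqueness of left adjoints out of $\Set$ determined by their value at $1$; these are close in spirit. In short: what your approach buys is elementarity and a reusable lemma; what the paper's buys is brevity at $(3)\Rightarrow(4)$ and tighter integration with the total-connectedness results of the chapter.
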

\begin{proof}
($1 \Rightarrow 2$) Each component of each $M$-set has a unique fixed point, obtained by acting by the unique zero element. Given a morphism $g: \Gamma(X) \to \Gamma(Y)$ we can map each element of $X$ to the fixed point in the same component and then apply $g$ to obtain an $M$-set homomorphism $X \to Y$ whose image under $\Gamma$ is $g$.

($2\Rightarrow 3 \Rightarrow 4$) Let $\epsilon$ be the counit of $(\Delta \dashv \Gamma)$. Then $\Gamma$ is full if and only if $\epsilon$ is a split monomorphism, which makes $C\epsilon$ and hence $\alpha$ a split monomorphism. In particular, $\alpha_M: \Gamma(M) \to C(M) = 1$ is a split monomorphism, meaning there is a morphism $1 \to \Gamma(M)$ in $\Set$ and so $\Gamma(M)$ is non-empty, which means $M$ has a right absorbing element and $\alpha$ must be epic and hence an isomorphism.

($4 \Rightarrow 5$) Since $\Gamma$ and $C$ are naturally isomorphic, $\Delta$ is a right and left adjoint to both of them.

($5 \Leftrightarrow 6$) By Theorem \ref{thm:rabsorb}, we have that $\alpha$ is epic if and only if $\Gamma$ has a right adjoint and by Theorem \ref{thm:labsorb} we have that $C$ has a left adjoint if and only if $C$ preserves $\Set$-indexed powers.

($5 \Rightarrow 7$) This is immediate.

($7 \Rightarrow 1$) By Lemma \ref{lem:localstrong}, $M$ being local makes $\Setswith{M}$ strongly connected, and so $C$ also preserving monomorphisms makes $\Setswith{M}$ totally connected and hence $M$ is right collapsible by Proposition \ref{prop:totally2} and Theorem \ref{thm:totally}. Applying right collapsibility to any pair of right absorbing elements shows that they must be equal, so there is a unique right absorbing element, which is thus a zero element.

($5 \Leftrightarrow 8$) One direction follows from the fact that $\Delta$ preserves $1$. Conversely, any left adjoint functor whose domain is $\Set$ is determined by the image of $1$. In particular, if the left adjoint of $C$ preserves $1$ it is naturally isomorphic to $\Delta$, and hence their right adjoints are naturally isomorphic too, so $\Delta$ is also a right adjoint to $\Gamma$.
\end{proof}

Note that ($4 \Leftrightarrow 7$) appears in a more general form as \cite[Proposition 3.7]{PLC}. Observe also that $\Gamma$ being full implies that $C$ is full, but now in an entirely constructive way!

\begin{remark}
Since any monoid with a right-absorbing element has either exactly one, which is necessarily a zero element, or at least two, we have a dichotomy between Lawvere's sufficiently cohesive toposes from Remark \ref{rmk:sufficiently} above and their toposes of quality type. This dichotomy is shown by Menni in \cite[Corollary 4.6]{menni-continuous-cohesion} to hold for arbitrary pre-cohesive presheaf toposes.
\end{remark}

\subsection{Trivializing conditions}
\label{ssec:trivial}

Many of the conditions encountered in this chapter suggest lines of investigation for further properties. As it turns out, many of these directions turn out to be dead ends, in the sense that they force the monoid to be trivial. In this section we present a variety of these conditions. As promised in an earlier section, we include some alternative weakenings of the concept of cartesian-closedness in this list.

\begin{dfn}
Suppose $F:\Fcal \to \Ecal$ is a functor between toposes which preserves products. $F$ is \textbf{sub-cartesian-closed} if the comparison morphisms $\theta_{P,Q}$ of \eqref{eq:theta} in Section \ref{ssec:features} are monomorphisms for every pair $P,Q$ of objects of $\Fcal$. If $F$ moreover preserves monomorphisms and the comparison morphism $\chi$ of \eqref{eq:chi} is a monomorphism, we say $F$ is \textbf{sublogical}.
\end{dfn}

Sublogical functors appear in the definition of \textbf{open} geometric morphisms: a geometric morphism is called open if its inverse image functor is sub-cartesian-closed. We therefore refer the reader once again to Johnstone \cite[Section C3.1]{Ele} for background on this concept, where a different but equivalent definition is given. Since any hyperconnected morphism is open, \cite[Corollary C3.1.9]{Ele}, we have that $\Delta$ is sub-logical for any monoid $M$.

\begin{thm}[Conditions for $\Setswith{M}$ to be equivalent to $\Set$]
\label{thm:trivial}
Let $M$ be a monoid, $\Setswith{M}$ its topos of right actions, and $\Gamma, \Delta, C$ the usual functors. The following are equivalent:
\begin{enumerate}
	\item $M$ is the trivial monoid.
	\item The geometric morphism $(\Delta \dashv \Gamma)$ is an equivalence.
	\item $\Gamma$ is full and faithful, or the above geometric morphism is an inclusion of toposes.
	\item $\Gamma$ is faithful.
	\item The geometric morphism $(\Delta \dashv \Gamma)$ is localic.
	\item $C$ is full and faithful.
	\item $C$ is faithful.
	\item $\Gamma$ is cartesian-closed or logical.
	\item $\Gamma$ is sub-cartesian-closed or sub-logical.
	\item $\Delta$ is logical and $C$ preserves products.
	\item $C$ is logical, cartesian-closed, sub-logical or sub-cartesian-closed. \label{item:trivial10}
	\item $\Gamma$ reflects binary coproducts or binary products or the terminal object or monomorphisms.
\end{enumerate}
\end{thm}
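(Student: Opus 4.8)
The plan is to use that condition~(1) makes every functor in sight an identity on $\Set$, so it implies all the others at once; the content is to show each of~(2)--(12) forces $M$ to be trivial. Two observations carry most of the weight. First, since $\Delta$ is always full and faithful (the global sections morphism being hyperconnected), the counit of $(\Delta \dashv \Gamma)$ at $X$ is the inclusion $\Delta\Gamma(X) = \Delta\Fix_M(X) \hookrightarrow X$ of the fixed points as a trivial subobject, and a right adjoint is faithful exactly when its counit is componentwise epic; thus $\Gamma$ is faithful iff $\Fix_M(X) = X$ for all $X$, i.e. iff $M$ is trivial (test on $X = M$). Dually the unit of $(C \dashv \Delta)$ is $x \mapsto [x]$, so $C$ is faithful iff every connected component is a singleton, again iff $M$ is trivial. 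I therefore try to reduce each condition to ``$\Gamma$ faithful'' (condition~(4)) or ``$C$ faithful'' (condition~(7)).

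For the $\Gamma$-cluster: (3) is ``$\Gamma$ full and faithful,'' which is by definition the inclusion case and implies faithfulness; (2) gives $\Gamma$ faithful as half of an equivalence; (5) is immediate because the global sections morphism is always hyperconnected, and a morphism that is simultaneously hyperconnected and localic is an equivalence, returning to~(2). For~(8), $\Gamma$ already preserves finite limits and the subobject classifier (Section~\ref{ssec:features}), so if it is also cartesian closed it is logical, and a geometric morphism with logical direct image is an equivalence by \cite[Remark A4.6.7]{Ele}. The trick for~(9) is that, since $\Gamma(Q^P) \cong \Hom_{\Setswith{M}}(1 \times P, Q) = \HOM_M(P,Q)$ while $\Gamma(Q)^{\Gamma(P)} = \Hom_{\Set}(\Fix_M P, \Fix_M Q)$, a direct check identifies the comparison map $\theta_{P,Q}$ with $f \mapsto \Gamma(f)$; hence ``$\Gamma$ sub-cartesian-closed'' ($\theta_{P,Q}$ monic for all $P,Q$) is literally ``$\Gamma$ faithful,'' and sub-logical implies sub-cartesian-closed.

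For the $C$-cluster: (6) gives $C$ faithful; for~(11) I use that logical implies cartesian closed, and that Propositions~\ref{prop:cc1} and~\ref{prop:cc2} already force triviality when $C$ is cartesian closed (if $M$ is not right Ore it is not even power-connected, otherwise~\ref{prop:cc1} applies). The sub-variants need a refinement: running the construction of~\ref{prop:cc2} with $P = \Delta(\Nbb)$, when $M$ fails the right Ore condition, gives an object with $C(X^{\Delta(\Nbb)})$ of cardinality $\ge 2$ but $C(X)^{C(\Delta(\Nbb))} = 1$, so $\theta^C$ is not monic; thus $C$ sub-cartesian-closed already forces the right Ore condition, after which $\theta^C_{M,M}$ monic forces $M^M$ indecomposable and the argument of~\ref{prop:cc1} gives triviality (and sub-logical implies sub-cartesian-closed). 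Condition~(10) is handled separately: $\Delta$ logical means $\Setswith{M}$ is atomic, so $M$ is a group by Theorem~\ref{thm:atomic}, while $C$ preserving products is impossible for a nontrivial group by Example~\ref{xmpl:groupnotstrong}; hence $M$ is trivial.

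Finally, for the reflection conditions~(12) I build, for any nontrivial $M$, one witness object $X$ with $\Fix_M(X) \cong 1$ yet $\lvert X\rvert \ge 2$: if $M$ has no right absorbing element then $\Fix_M(M) = \emptyset$ and $X = 1 \sqcup M$ works; otherwise the set $R$ of right absorbing elements is a nonempty proper sub-$M$-set (proper since $1 \notin R$), collapsing $R$ to a single point is a right congruence, and $X = M/R$ has $\Fix_M(X) \cong 1$ with $\lvert X\rvert \ge 2$. Then $X \to 1$ witnesses failure of reflecting monomorphisms and the terminal object; the cone $(X;\id,\id)$ over $(X,X)$ is a product only after applying $\Gamma$, witnessing failure for binary products; and the cocone $X \hookrightarrow X \sqcup 1 \hookleftarrow X$ is a coproduct only after $\Gamma$, witnessing failure for binary coproducts. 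I expect the main obstacle to be the sub-cartesian-closed/sub-logical case of~(11): unlike the full cartesian-closed case, having merely monic rather than invertible comparison maps means the right Ore condition must be re-extracted from~\ref{prop:cc2} in this weaker regime before the triviality argument of~\ref{prop:cc1} can be run.
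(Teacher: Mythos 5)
Your proof is correct, and most of its ingredients coincide with the paper's: the counit/unit characterizations of faithfulness of $\Gamma$ and $C$, the fact that a geometric morphism which is both hyperconnected and localic is an equivalence, the appeal to Propositions \ref{prop:cc2} and \ref{prop:cc1} for condition (11), Example \ref{xmpl:groupnotstrong} (via Theorem \ref{thm:atomic}) for condition (10), and the witness $M$-set obtained by collapsing the right absorbing elements for condition (12). You differ from the paper in two genuine ways. First, the decomposition: the paper proves cycles (e.g.\ $2 \Leftrightarrow 3 \Rightarrow 4 \Rightarrow 5 \Rightarrow 2$), using faithfulness of $\Gamma$ only to deduce that the counit is epic and hence that the morphism is localic; you instead run a hub-and-spoke reduction to conditions (4) and (7) and push those to (1) directly, using that the counit $\Delta\Gamma(X) \hookrightarrow X$ is already monic by hyperconnectedness, so that faithfulness makes it invertible (toposes are balanced), and testing on $X = M$ forces every element of $M$ to be right absorbing. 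Second, and more substantially, for condition (9) the paper invokes \cite[Proposition C3.1.8]{Ele} to identify sub-logical direct images with inclusions (so $9 \Leftrightarrow 3$), whereas you identify the comparison map $\theta_{P,Q} \colon \Gamma(Q^P) \to \Gamma(Q)^{\Gamma(P)}$ with the action of $\Gamma$ on hom-sets under the identification $\Gamma(Q^P) \cong \HOM_M(P,Q)$, so that sub-cartesian-closedness of $\Gamma$ is \emph{literally} faithfulness ($9 \Leftrightarrow 4$); this computation is right (a fixed point of $Q^P = \HOM_M(M \times P,Q)$ is determined by its restriction $f(1,-)$, which is an $M$-set map $P \to Q$, and $\Gamma(\mathrm{ev})$ restricts such a map to fixed points), and it buys a self-contained argument where the paper leans on the Elephant. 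The only slip is notational: in condition (12) the second leg of your cocone ``$X \hookrightarrow X \sqcup 1 \hookleftarrow X$'' must be the composite $X \to 1 \hookrightarrow X \sqcup 1$, which is not monic, so the hooked arrow is misleading; but with that reading the reflection argument is unaffected.
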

\begin{proof}
($1 \Leftrightarrow 2$) This is immediate after noting that the trivial monoid is the only monoid which can represent $\Set$ as a presheaf topos.

($2 \Leftrightarrow 3 \Rightarrow 4 \Rightarrow 5 \Rightarrow 2$) Since any equivalence is an inclusion and $\Gamma$ is faithful if and only if the counit of $(\Delta \dashv \Gamma)$ is epic, which is sufficient to make the geometric morphism localic. But a geometric morphism which is hyperconnected and localic is automatically an equivalence.

($2 \Rightarrow 6 \Rightarrow 7 \Rightarrow 5$) The components of an equivalence are always full and faithful. The second implication is trivial, and $C$ is faithful if and only if the unit of $(C \dashv \Delta)$ is a monomorphism, which is again sufficient to make the geometric morphism localic.

($3 \Leftrightarrow 8 \Rightarrow 9 \Leftrightarrow 3$) Since $\Gamma: \Setswith{M} \to \Set$ always preserves the subobject classifier by \cite[Proposition A4.6.6(v)]{Ele}, it is logical if and only if it is cartesian-closed, and the latter is equivalent to the global sections morphism being full and faithful by Lemma A4.2.9 there. Being sub-logical (or equivalently sub-cartesian-closed) is an apparently weaker condition, but is still equivalent to the geometric morphism being an inclusion by \cite[Proposition C3.1.8]{Ele}.

($1 \Leftrightarrow 10$) This is the content of Example \ref{xmpl:groupnotstrong}.

($2 \Rightarrow 11 \Rightarrow 1$) The components of an equivalence are always logical. Conversely, we observed in the proof of Theorem \ref{thm:deMorgan} that $C(\Omega)$ always has one or two elements, so that the comparison morphism $\chi$ for $C$ is always monic. Being sublogical is therefore equivalent to being sub-cartesian-closed. The remaining implication is contained in the proofs of Propositions \ref{prop:cc2} and \ref{prop:cc1}, since in both cases we actually showed that one of the comparison morphisms failed to be monic.

($2 \Rightarrow 12 \Rightarrow 1$) All of the functors involved in an equivalence preserve and reflect all limits and colimits. If $\Gamma$ reflects coproducts, consider two cases. First, if $\Gamma(M) = \emptyset$, then $\Gamma(1 \sqcup M) = \Gamma(1) \sqcup \Gamma(0)$ forces $1\sqcup M \cong 1\sqcup 0$, a contradiction. On the other hand, if $\Gamma(M)$ is non-empty then $M$ has some right absorbing elements; consider the $M$-set $X$ obtained by identifying them all. Then in particular $\Gamma(X) = 1 = \Gamma(1)$ and hence $\Gamma(X\sqcup 1) \cong \Gamma(1) \sqcup  \Gamma(1)$ forces $X\sqcup 1 \cong 1\sqcup 1$ and hence $X \cong 1$; it follows that every element of $M$ was right-absorbing and so $M \cong 1$ as required. The arguments for binary products, the terminal object or monomorphisms are similar: replace the reflected coproducts with the reflections of $\Gamma(M \times 1) = \Gamma(0) \times \Gamma(1)$ or $\Gamma(X \times 1) = \Gamma(1) \times \Gamma(1)$ in the first case; $\Gamma(M\sqcup 1) = \Gamma(1)$ and $\Gamma(X) = \Gamma(1)$ in the second case and with the morphisms $M \too 1$ and $X \too 1$ in the final case.
\end{proof}

\section{Conclusion}
\label{sec:conclusion}

\subsection{Summary table}
\label{ssec:table}

We summarize some of the properties and results obtained in this paper regarding the global sections functor and connected components functor in Table \ref{table:results1} and Table \ref{table:results2}.
\begin{table}[!h] 
\caption{Summary of results regarding $\Gamma$} \label{table:results1}
\begin{tabularx}{\linewidth}{Z|ZZZZ} 
\toprule[0.1em]
Topos property     & Topological property        & Monoid property                   & $\Gamma$ preserves                                          & Examples                                                                   \\ \midrule[0.1em]
Local              & $\exists$ focal point       & $\exists$ right \mbox{absorbing} \mbox{element} & all colimits (equiv. \mbox{epimorphisms})  & multiplicative monoid of a ring, $\End(S)$                        \\ \hline
Strongly compact   & e.g.\ spectral or compact Hausdorff              & right-weakly finitely generated   & filtered colimits                               & as in the cell above, as well as finitely generated monoids                \\ 
\bottomrule[0.1em]
\end{tabularx}
\end{table}

\begin{table}[h!] 
\caption{Summary of results regarding $C$} \label{table:results2}
\begin{tabularx}{\linewidth}{Z|ZZZZ} 
\toprule[0.1em]
Topos property     & Topological property        & Monoid property    & $C$ preserves                         & Examples                                                                   \\ \midrule[0.1em]
Colocal            & $\exists$ open dense point  & $\exists$ left absorbing element                                   & all limits (equiv. products)          & $\End(S)\op$                                                      \\ \hline
Totally connected  & irreducible                 & right collapsible                                     & finite limits (equiv. equalizers)     & $\End(S)\op$, $(\mathbb{N},\mathrm{max})$                         \\ \hline
Strongly connected & irreducible                 & $M \times M$ is indecomposable                            & finite products                       & $\End(S)\op$, $\End(S)$                                  \\ \hline
de Morgan          & extremally disconnected     & right Ore                                                & monomorphisms                         & $\End(S)\op$, any commutative monoid \\ \bottomrule[0.1em]
\end{tabularx}
\end{table}

\subsection{Notable omissions}
\label{ssec:omish}

This chapter is far from an exhaustive presentation of what topos-theoretic properties mean for toposes of the form $\Setswith{M}$ and the monoids presenting them: we have focused exclusively on those properties expressible in terms of the functors constituting the global sections geometric morphism. Before closing out the chapter, we give a couple of examples of other topos-theoretic properties and their translations into monoid properties.

A Grothendieck topos $\Ecal$ is said to be an \textbf{\'etendue} if there is an object $X$ of $\Ecal$ such that the slice topos $\Ecal/X$ is localic, i.e.\ such that $\Ecal/X$ is equivalent to the category of sheaves on a locale. This can alternatively be stated as the existence of an atomic geometric morphism to $\Ecal$ from a localic topos. By \cite[Lemma C5.2.4]{Ele}, a presheaf topos $\Setswith{\Ccal}$ is an \'etendue if and only if every morphism in the category $\Ccal$ is a monomorphism. For a monoid $M$, it follows that $\Setswith{M}$ is an \'etendue if and only if for all $a,b,m \in M$, the equality $ma=mb$ implies that $a = b$. Monoids with this property are usually called \textbf{left cancellative}.

An object $A$ of a topos is called \textbf{decidable} if the diagonal subobject $A \hookrightarrow A \times A$ has a complement. In particular, if $A$ is a right $M$-set, then $A$ is decidable if for two distinct elements $a,b \in A$ we have $a \cdot m \neq b \cdot m$ for all $m \in M$. Subobjects of decidable objects are again decidable. We say a topos is \textbf{locally decidable} if every object is a quotient of a decidable object. By \cite[Remark C5.4.3]{Ele}, $\Setswith{M}$ is locally decidable if and only if $\Setswith{M\op}$ is an \'etendue. So $\Setswith{M}$ is locally decidable if and only if for all $a,b,m \in M$, the equality $am=bm$ implies that $a = b$, which is to say that $M$ is \textbf{right cancellative}, dually to the above.

We shall discuss some broader directions for further investigation in Section \ref{sec:disc} of the Conclusion.

\chapter{Supercompactly Generated Toposes}
\label{chap:sgt}

In this chapter we shall present and thoroughly investigate the class of \textit{supercompactly generated toposes}. By a supercompactly generated topos, we mean a topos with a separating set of supercompact objects (see Definition \ref{dfn:scompact}). This class includes all regular toposes (Example \ref{xmpl:regular}), as well as all presheaf toposes and some other important classes described in Proposition \ref{prop:xmpls}, and so is of general interest in topos theory. Our motivation for examining this broad class of toposes is that, as we shall eventually see in Theorem \ref{thm:hype2}, any topos admitting a \textit{hyperconnected} morphism from a supercompactly generated Grothendieck topos (such as a presheaf topos) will also be such a topos. This notably includes our motivating example of toposes of continuous actions of a topological monoid on sets, which we shall present in Chapter \ref{chap:TTMA}; such toposes typically do not fall into any of the other aforementioned classes (see Example \ref{xmpl:profin}), whence our desire to develop a general theory of these toposes ahead of time.

Since it is convenient to do so, we also study \textit{compactly generated toposes}, which are conceptually similar enough that we can prove analogous results about them with little extra work. For brevity, some of the section headings refer only to the supercompact naming conventions.

Some relevant results appear in Section 4.1 of the thesis of Bridge, \cite{TAC}\footnote{Bridge refers to our supercompactly generated toposes as `\textit{locally supercompact}'; the reason for the terminology choice of the present author is that the adverb ``locally'' is already overloaded in topos theory literature.}; this is a theoretical excursion from the main topics of that thesis, but one yield of their results is that the notion of Krull-Gabriel dimension for regular toposes respects inclusions of regular subtoposes. Where there are overlaps between the basic results of that paper and our work, the main distinction of the present chapter is the emphasis on topos-theoretic machinery: the present author avoids reasoning directly with sheaves as far as possible, which allows for more concise categorical proofs.

Our developments of the relevant classes of geometric morphisms for studying these toposes draw from and generalize the definitions and results for proper and relatively proper geometric morphisms in \cite[Chapters I and V]{Compact}. We show how those notions of compactness for geometric morphisms interact with the notion for objects inside a topos, and ultimately show that the definitions we arrive at are the `right' notion of morphisms between (super)compactly generated toposes, since they are induced by morphisms between the natural sites for these toposes.

For the site-theoretic material, we rely on the monograph \cite{Dense}, which contains general results about site representations of toposes; we quote some of those results without proof here. As such, the principal and finitely generated sites of Section \ref{sec:principal} can be appreciated as illustrative applications of the abstract techniques from that monograph.

In \cite{SPM}, one can find `\textit{$B$-sites}', which are a restricted class of the principal sites found here. These sites are subsequently constrained further so as to yield toposes of actions of `locally prodiscrete' monoids\footnote{We were unable to find a satisfactory definition of the property of local prodiscreteness in the literature.}, but their developments raise the question of which toposes are generated from less restricted sites. The present chapter provides some answers.

\subsection*{Overview}

The structure of this chapter is as follows. In Section \ref{ssec:super}, we recall the definitions of supercompact and compact objects in a topos. This leads us to formally define supercompactly and compactly generated toposes in Section \ref{ssec:scompgen}.

In Section \ref{ssec:Cs}, we turn to the full subcategories of supercompact and compact objects in a topos, presenting the structure they inherit from their ambient toposes, with a focus on monomorphisms, epimorphisms, and the classes of \textit{funneling} and \textit{multifunneling} colimits, which we introduce in Definitions \ref{dfn:funnel} and \ref{dfn:multifun}. The purpose of this investigation is to present these subcategories as canonical sites for supercompactly and compactly generated toposes in Section \ref{ssec:cansite}, which we do in Theorem \ref{thm:canon}. In Section \ref{ssec:twoval}, we investigate some extra conditions on a supercompactly generated topos which guarantee further properties of its category of supercompact objects.

In Section \ref{ssec:proper}, we examine some classes of geometric morphism whose inverse image functors preserve supercompact or compact objects, introducing the notions of \textit{pristine} and \textit{polished} geometric morphisms in analogy with proper geometric morphisms (Definition \ref{dfn:polished}), before focusing on relative versions of these properties (Definition \ref{dfn:relpolished}) which are more directly useful in our analysis. Having established these definitions, we examine how some more familiar classes of geometric morphism interact with supercompactly and compactly generated toposes: surjections and inclusions in Section \ref{ssec:surj}, then hyperconnected morphisms in Section \ref{ssec:hype}. This exploration gives us several tools for constructing such toposes, which are summarized in Theorem \ref{thm:closure}.

The focus of Section \ref{sec:principal} is a broader site-theoretic investigation. In Section \ref{ssec:stable}, we exhibit the categorical data of \textit{principal} and \textit{finitely generated} sites, which are natural classes of sites whose categories of sheaves are supercompactly and compactly generated toposes, respectively. In Section \ref{ssec:representable}, we examine the morphisms between the representable sheaves on these sites, and then show in Section \ref{ssec:quotient} how a general such site may be reduced via a canonical congruence without changing the resulting topos. We use what we have learned about these sites in Section \ref{ssec:redcat} to characterize the categories of supercompact and compact objects which were the subject of Section \ref{ssec:cansite} as \textit{reductive} and \textit{coalescent} categories, respectively (Definition \ref{dfn:reductive}), satisfying additional technical conditions. We make the correspondence between such categories and the toposes they generate explicit in Theorem \ref{thm:correspondence}, briefly examining the special cases of these categories which produce localic toposes in Section \ref{ssec:localic}. It is natural to compare reductive and coalescent categories to the well-known classes of (locally) regular and coherent categories, which we do in Section \ref{ssec:regcoh}.

Moving onto morphisms, we recall the definition of \textit{morphisms of sites} in Section \ref{ssec:morsites}, showing that, according to the class of sites under consideration, these induce the relatively pristine, polished or proper geometric morphisms introduced in Section \ref{ssec:proper}. More significantly, restricting to canonical sites, we are able to extend the correspondences of Theorem \ref{thm:correspondence} to some $2$-equivalences between $2$-categories of sites and $2$-categories of toposes. We also to examine comorphisms of sites in Section \ref{ssec:comorsite}, which provide some results about points of various classes of topos.

Finally, to ground the discussion, we present some more examples and counterexamples of reductive and coalescent sites and supercompactly and compactly generated toposes in Section \ref{ssec:xmpl}.

\section{Supercompact and compact objects}
\label{sec:supercompact}

Throughout, when $(\Ccal,J)$ is a site, we write $\ell:\Ccal \to \Sh(\Ccal,J)$ for the composite of the Yoneda embedding and the sheafification functor, assuming the Grothendieck topology is clear in context, and call the images $\ell(C)$ of the objects $C \in \Ccal$ the \textbf{representable sheaves}.

\subsection{Supercompact objects}
\label{ssec:super}

The following definitions can be found in
\cite[Definition 2.1.14]{TST}:

\begin{definition}
\label{dfn:scompact}
An object $C$ of a category $\Ecal$ is \textbf{supercompact} (resp. \textbf{compact}) if any jointly epic family of morphisms $\{A_i \to C \mid i \in I\}$ contains an epimorphism (resp. a finite jointly epic sub-family). 
\end{definition}

Clearly every supercompact object is compact. Compact objects are more widely studied, notably in the lifting of the concept of compactness from topological spaces to toposes reviewed in \cite{Compact}. Since the two classes of objects behave very similarly, we treat them in parallel. 

In a topos, we may re-express the definitions of supercompact and compact objects in terms of their subobjects. As is standard, we can further convert any statement about subobjects of an object $X$ in a topos $\Ecal$ into a statement about the subterminal object in the slice topos $\Ecal/X$.

\begin{lemma}
\label{lem:scompact}
An object $C$ of a Grothendieck topos $\Ecal$ is supercompact (resp. compact) if and only if every covering of $C$ by a family (resp. a directed family) of subobjects $A_i \hookrightarrow C$ contains an isomorphism. This occurs if and only if the global sections functor $\Gamma: \Ecal/C \to \Set$ preserves arbitrary (resp. directed) unions of subobjects.
\end{lemma}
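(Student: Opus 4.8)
The plan is to establish the two biconditionals in turn: first translating between jointly epic families of arbitrary morphisms and covering families of \emph{subobjects}, and then recasting the resulting covering condition as a preservation property of the slice.

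For the first equivalence, the key device is the (epi, mono) factorization available in any topos: a family $\{f_i : A_i \to C\}$ is jointly epic precisely when the union $\bigcup_i \im(f_i)$ of its images, taken as subobjects of $C$, is all of $C$, and a single $f_i$ is epic precisely when $\im(f_i) = C$. In the supercompact case I would argue as follows. If $C$ is supercompact and $\{A_i \hookrightarrow C\}$ is a covering family of subobjects, then this jointly epic family contains an epimorphism; but a monomorphism that is also epic is an isomorphism, since a topos is balanced, so the family contains an isomorphism. Conversely, given any jointly epic $\{f_i : A_i \to C\}$, the images form a covering by subobjects, one of which must equal $C$ by hypothesis, whence the corresponding $f_i$ is epic. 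For the compact case the same translation applies, but the matching notion on the subobject side is a \emph{directed} covering: given a directed covering $\{A_i \hookrightarrow C\}$, compactness yields a finite jointly epic subfamily whose union is $C$, and directedness then produces a single member of the family lying above that union, hence equal to $C$; conversely, from a jointly epic $\{f_i\}$ one forms the directed family of finite unions $\bigcup_{i \in F}\im(f_i)$ indexed by finite $F \subseteq I$, and an isomorphism in this family is exactly a finite jointly epic subfamily.

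For the second equivalence, I would pass to the slice topos $\Ecal/C$, whose terminal object is $\mathrm{id}_C$ and whose subobject lattice $\Sub_{\Ecal/C}(1)$ is identified with $\Sub_{\Ecal}(C)$. The global sections functor $\Gamma = \Hom_{\Ecal/C}(1,-)$ is a right adjoint, hence preserves monomorphisms, so it restricts to a monotone map $\Sub_{\Ecal}(C) \to \Sub_{\Set}(1) = \{0 \le 1\}$. The crucial computation is that, for a subobject $A \hookrightarrow C$, a global section is a section of $A \hookrightarrow C$ over $C$, which (the inclusion being monic) exists if and only if $A \hookrightarrow C$ is split epic, i.e.\ an isomorphism; thus $\Gamma(A \hookrightarrow C)$ is the singleton when $A = C$ and empty otherwise. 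Consequently, for a family $\{A_i\}$ of subobjects, $\bigvee_i \Gamma(A_i) = 1$ iff some $A_i = C$, while $\Gamma(\bigvee_i A_i) = 1$ iff $\bigcup_i A_i = C$. Since monotonicity already gives $\bigvee_i \Gamma(A_i) \le \Gamma(\bigvee_i A_i)$, preservation of the union amounts exactly to the implication ``$\bigcup_i A_i = C \Rightarrow \exists i,\ A_i = C$'', which is precisely the covering condition of the first equivalence; restricting to directed families of subobjects yields the parenthetical (compact) version.

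The routine verifications — that images behave as claimed under joint epimorphy, that the slice subobject lattices match, and that the two automatic inequalities hold — are standard. The one point demanding care is the compact case of the first equivalence, where the reformulation of ``finite jointly epic subfamily'' as ``directed covering containing an isomorphism'' must be handled in both directions, via finite unions of images and via directedness. This is where I expect essentially all of the bookkeeping to lie, and it is the step I would write out most carefully.
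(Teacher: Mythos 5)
Your first equivalence is correct and follows essentially the same route as the paper: images and balancedness in one direction; in the compact case, directedness to absorb the finite jointly epic subfamily into a single member, and finite unions of images for the converse. No issues there.

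The second equivalence, however, has a genuine gap, and it comes from how you read the statement. You interpret ``$\Gamma$ preserves arbitrary (resp.\ directed) unions of subobjects'' as a statement about the restriction of $\Gamma$ to $\Sub_{\Ecal/C}(1) \cong \Sub_{\Ecal}(C)$, i.e.\ about subterminal objects of the slice only. The intended meaning --- the one the paper proves, and the one required when the lemma is later invoked to say that $\Ecal/C \to \Set$ is proper (resp.\ pristine) in the sense of Definition \ref{dfn:polished} --- is that $\Gamma$ preserves unions of subobjects of \emph{every} object $f : A \to C$ of $\Ecal/C$, not just of the terminal one. Your argument does establish ``covering condition $\Leftrightarrow$ preservation of unions of subterminals,'' and the implication from full preservation to the covering condition is indeed the trivial restriction to subterminals; but the converse implication, from the covering condition to preservation of unions of subobjects of an arbitrary $A \to C$, is not addressed and is not vacuous. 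It is exactly the content of the paper's second paragraph: given subobjects $A_i \hookrightarrow A$ whose union is $A$ and a global section $x : C \to A$ over $C$, one pulls the $A_i$ back along $x$ to subobjects $C_i \hookrightarrow C$, uses pullback-stability of unions in a topos to see that the $C_i$ cover $C$, applies the covering condition to conclude that some $C_i \hookrightarrow C$ is an isomorphism, and deduces that $x$ factors through the corresponding $A_i$; this shows every element of $\Gamma(\bigcup_i A_i)$ lies in some $\Gamma(A_i)$. Without this pullback step you have proven a formally weaker statement than the lemma asserts, and the weaker statement is not the one used downstream.
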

\begin{proof}
For the first part, suppose that we are given a family (resp. a directed family) of subobjects covering a supercompact (resp. compact) object $C$. Then one of the monomorphisms involved must be epic and hence an isomorphism (resp. this collection contains a finite covering family, but the union of these subobjects is also a member of the family and must be covering). In the opposite direction it suffices to consider images of the morphisms in an arbitrary covering family resp. finite unions of these images). 

The remainder of the proof is modeled after that in \cite[C1.5.5]{Ele}. Given an object $f: A \to C$ of $\Ecal/C$ which is a union (resp. a directed union) of subobjects $A_i \hookrightarrow A \to C$ and given a global section $x: C \to A$ of $f$, we may consider the pullbacks:
\[\begin{tikzcd}
C_i \ar[r, hook] \ar[d]
\ar[dr, phantom, "\lrcorner", very near start] &
C \ar[d, "x"]\\
A_i \ar[r, hook] & A.
\end{tikzcd}\]
By extensivity, $C$ is the union of the $C_i$, and by the above one of the $C_i \hookrightarrow C$ must be an isomorphism, so that $x$ factors through one of the $A_i$, which gives the result.

Conversely, given a jointly epic family (resp. directed family) of subobjects $C_i \hookrightarrow C$ considered as subterminals in $\Ecal/C$, we may apply $\Gamma$ to see that one of them must be an isomorphism, as required.
\end{proof}

\subsection{Supercompactly generated toposes}
\label{ssec:scompgen}

The collections of supercompact and compact objects in \textit{any} Grothendieck topos are conveniently tractable:

\begin{lemma}
\label{lem:scsite}
Let $\Ecal \simeq \Sh(\Ccal,J)$ be a Grothendieck topos of sheaves on a small site $(\Ccal,J)$. Then the supercompact objects are quotients of the representable sheaves $\ell(C)$ for $C \in \Ccal$. In particular, they are indexed (up to isomorphism) by a set. Similarly, the compact objects are quotients of finite coproducts of the images
$\ell(C)$, and so (up to isomorphism) also form a set.
\end{lemma}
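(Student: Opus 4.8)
The plan is to lean on the fact that the representable sheaves $\ell(C)$ form a separating family for $\Ecal \simeq \Sh(\Ccal,J)$, a standard property of sheaf toposes (sheafification preserves colimits and every presheaf is a colimit of representables). First I would unwind what this means: for every object $F$ of $\Ecal$, the collection of all morphisms $\ell(C) \to F$, as $C$ ranges over $\Ccal$ and over all such morphisms, is jointly epic, so the canonical map $\coprod \ell(C) \too F$ out of the coproduct indexed by these morphisms is an epimorphism.

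Both quotient statements then follow by directly applying Definition \ref{dfn:scompact} to this jointly epic family. If $F$ is supercompact, the family must contain an epimorphism $\ell(C) \too F$, exhibiting $F$ as a quotient of a single representable. If instead $F$ is compact, the family contains a finite jointly epic subfamily $\{\ell(C_k) \to F\}_{k=1}^{n}$, and combining these into a single arrow $\coprod_{k=1}^{n} \ell(C_k) \too F$ yields an epimorphism, so $F$ is a quotient of a finite coproduct of representables.

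The step needing the most care, and the one I would flag as the main obstacle, is the bound by a set up to isomorphism. Here I would use that a Grothendieck topos is well-powered and that its epimorphisms are effective: every quotient $X \too Q$ is the coequalizer of its kernel pair, and $Q$ is determined up to isomorphism by that kernel pair, which is a subobject of $X \times X$. Thus isomorphism classes of quotients of a fixed $X$ inject into the small poset $\Sub(X \times X)$. Since $\Ccal$ is small, the representables $\ell(C)$ form a set, and finite coproducts of them are indexed by finite tuples of objects of $\Ccal$ and hence also form a set; applying the quotient bound to each representable, respectively to each finite coproduct of representables, gives that the supercompact, respectively compact, objects form a set up to isomorphism.
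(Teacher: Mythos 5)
Your proof is correct and follows essentially the same route as the paper: use that the representables $\ell(C)$ form a separating family, so the collection of all morphisms from representables into a given object is jointly epic, and then apply the definition of supercompact (resp.\ compact) to extract a single epimorphism (resp.\ a finite jointly epic subfamily, combined into one epimorphism from a finite coproduct). Your additional argument that quotients of a fixed object form a set, via kernel pairs and well-poweredness, is a correct elaboration of a point the paper's proof leaves implicit.
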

\begin{proof}
Given a supercompact object $Q$, since the objects $\ell(C)$ are separating in $\Ecal$, the collection of morphisms $\ell(C) \to Q$ (is inhabited and) jointly epimorphic. It follows that one such must be epimorphic.

Given a compact object $Q$, the above argument instead yields a (possibly empty) finite jointly epimorphic family of morphisms $\ell(C_i)\to Q$, which corresponds to an epimorphism $\coprod_{i\in I} \ell(C_i) \too Q$, as claimed. 
\end{proof}

Lemma \ref{lem:scsite} ensures that we can always consider the full subcategories on the supercompact (resp. compact) objects, equipped with the canonical topology induced by the topos (which shall be recalled in Definition \ref{dfn:effective} below) as an essentially small site $(\Ccal_s,J_{can}^{\Ecal}|_{\Ccal_s})$ (resp. $(\Ccal_c,J_{can}^{\Ecal}|_{\Ccal_c})$). By the Comparison Lemma, the induced canonical comparison morphism $\Ecal \to\Sh(\Ccal_s, J_{can}^{\Ecal}|_{\Ccal_s})$ is an equivalence if and only if the collection of supercompact objects is separating, and similarly for the compact case. We shall continue to use $\Ccal_s$ and $\Ccal_c$ to denote these categories in the remainder; for simplicity, we shall actually assume that we have chosen a representative set of the supercompact or compact objects, such as the quotients of representables in Lemma \ref{lem:scsite}, so that we are working with small sites.

\begin{definition}
We say a topos is \textbf{supercompactly generated} (resp. \textbf{compactly generated}) if its collection of supercompact (resp. compact) objects is separating.
\end{definition}

\begin{example}
\label{xmpl:regular}
The syntactic category of a regular theory can be recovered up to effective completion (cf. Definition \ref{dfn:regeff} below) from its classifying topos as the full category of regular objects. In general, we say that an object $X$ in a topos is \textbf{regular} if $X$ is supercompact and for any cospan
\[\begin{tikzcd}
Y \ar[r, "f"] & X & Z \ar[l, "g"']
\end{tikzcd}\]
with $Y$ and $Z$ supercompact, the pullback $Y \times_X Z$ is also supercompact. In particular, classifying toposes of regular theories are special cases of supercompactly generated toposes. The same can be said when `supercompact' is replaced by `compact' and `regular' is replaced by `coherent'. See \cite{SCCT} for this result and a more detailed discussion (note that Caramello refers to regular objects as \textit{supercoherent} objects). We shall return to examination of categories of regular and coherent objects in Section \ref{ssec:regcoh}.
\end{example}

Supercompactly generated Grothendieck toposes include several other important established classes of Grothendieck topos.

\begin{proposition}
\label{prop:xmpls}
\begin{enumerate}[label = ({\roman*})]
	\item Every atomic topos is supercompactly generated.
	\item Every supercompactly generated topos is compactly generated and locally connected.
	\item Every presheaf topos is supercompactly generated.
\end{enumerate}
\end{proposition}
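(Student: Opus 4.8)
The plan is to prove the three items largely independently, in each case by exhibiting or recognising a separating family of supercompact objects and verifying that its members are indeed supercompact.

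For (iii) I would show directly that every representable presheaf $\yon(C)$ in $\Setswith{\Ccal} = [\Ccal\op,\Set]$ is supercompact; since the representables are separating by the Yoneda lemma, this suffices. Given a jointly epic family $\{f_i : A_i \to \yon(C)\}$, joint epicness in a presheaf topos is pointwise joint surjectivity, so in particular the element $\id_C \in \yon(C)(C) = \Hom_\Ccal(C,C)$ is hit: there are an index $i$ and some $a \in A_i(C)$ with $f_{i,C}(a) = \id_C$. Transposing $a$ across Yoneda to a map $\bar a : \yon(C) \to A_i$, this reads $f_i \circ \bar a = \id_{\yon(C)}$, so $f_i$ is a split epimorphism; in particular the family contains an epimorphism. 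For (i) I would invoke the standard characterisation of atomic Grothendieck toposes as those possessing a separating set of atoms, i.e.\ nonzero objects whose only subobjects are $0$ and themselves (cf.\ \cite[C3.5]{Ele}). By Lemma \ref{lem:scompact}, an object is supercompact exactly when every covering of it by subobjects contains an isomorphism; for an atom $A$ the only subobjects are $0$ and $A$, and a covering family cannot consist solely of the zero subobject (its union would be $0 \neq A$), so it contains $A \hookrightarrow A$. Hence every atom is supercompact, and the separating set of atoms is a separating set of supercompact objects.

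For (ii), the compactly generated assertion is immediate: every supercompact object is compact (remark after Definition \ref{dfn:scompact}), so a separating set of supercompact objects is a fortiori a separating set of compact objects. The substantial content is local connectedness, i.e.\ showing the inverse image $\Delta$ of the global sections morphism admits a left adjoint $f_!$. I would argue in three moves. First, supercompact objects are connected: if $C \cong A \sqcup B$ then the coproduct inclusions are jointly epic, so supercompactness forces one of them to be epic, hence (as a mono in a balanced category) an isomorphism, killing the other summand; and supercompact objects are nonzero, since the empty jointly epic family on $0$ contains no epimorphism. By Proposition \ref{prop:connected-objects} and Definition \ref{dfn:projectives}, connectedness means $\Hom(C,-)$ preserves coproducts. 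Second, I would deduce that $\Delta$ preserves small products: evaluating $\Hom(G,\Delta(\prod_j S_j))$ and $\Hom(G,\prod_j \Delta(S_j))$ for $G$ ranging over the separating family of connected supercompact objects, both sides reduce to $\prod_j S_j$ and the canonical comparison map becomes a bijection; since a separating set in a Grothendieck topos is jointly conservative (a short argument via image factorisations and the subobject classifier), the comparison map is an isomorphism. As $\Delta$ already preserves finite limits, together with products it preserves all small limits. Third, the Special Adjoint Functor Theorem applied to $\Delta : \Set \to \Ecal$ (with $\Set$ complete, well-powered and cogenerated) produces the left adjoint $f_!$, so $\Ecal$ is locally connected.

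The step I expect to be the main obstacle is this product-preservation property of $\Delta$, since it is the only place where supercompactness is genuinely exploited (through connectedness) and where one must combine the coproduct-preservation of connected objects with joint conservativity of the separating family. The remaining ingredients — the representable and atom computations, and the final appeal to the adjoint functor theorem — are routine once this is in hand.
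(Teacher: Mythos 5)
Your proof is correct, and parts (i) and (iii) are essentially the paper's own arguments with the details written out: the paper likewise invokes the characterization of atomic toposes by a separating set of atoms and observes these are supercompact, and for presheaf toposes it makes exactly your split-epimorphism observation (representables are irreducible projectives, so any jointly epic family over one contains a \emph{split} epimorphism). Where you genuinely diverge is the locally connected half of (ii). The paper disposes of it in one line by noting that supercompact objects are indecomposable and quoting a known characterization, namely that a topos is locally connected if and only if it has a separating set of indecomposable objects (Theorem 2.7 of \cite{SCCT}). You instead reprove one direction of that characterization from scratch: supercompact objects are connected, so $\Hom(G,-)$ preserves coproducts for each generator $G$; hence $\Delta$ preserves small products (using joint conservativity of a separating family in a balanced category); and SAFT then produces the extra left adjoint. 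This buys self-containedness at the cost of length, and the argument is sound. The only point worth flagging is that SAFT hands you a mere left adjoint to $\Delta$, i.e.\ essentialness of the global sections morphism, whereas the official definition of local connectedness asks for an indexed left adjoint; over $\Set$ these coincide, since any left adjoint of $\Delta$ is automatically $\Set$-indexed, and the paper itself works with this identification (cf.\ its appeal to \cite[Lemma C3.3.6]{Ele}), so this is a convention to acknowledge rather than a gap.
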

\begin{proof}
For (i), recall that a Grothendieck topos is atomic if and only if it has a separating set of atoms, and these are easily seen to be supercompact. For (ii), we can similarly observe that any supercompact object is compact and indecomposable, then recall (by Theorem 2.7 of \cite{SCCT}, say) that a topos is locally connected if and only if it has a separating set of indecomposable objects.

For (iii), note that the representable presheaves are irreducible projectives so they are in particular supercompact (every jointly epic family over a representable presheaf contains a \textit{split} epimorphism).
\end{proof}

\begin{example}
\label{xmpl:profin}
Let $(M,\tau)$ be a monoid equipped with a topology. We may consider the full subcategory of the topos $[M\op,\Set]$ of right actions of $M$ on sets on those actions which are continuous with respect to the product of $\tau$ and the discrete topology on the underlying set; call this category $\Cont(M,\tau)$. In Chapter \ref{chap:TTMA}, the results of the present chapter will be used to show that this category is a supercompactly generated Grothendieck topos.

Given that it is a topos, however, we can show directly that it is supercompactly generated: the supercompact objects are the \textit{continuous principal $M$-sets}, which is to say those generated by a single element. Since we can clearly cover any continuous $M$-set with the principal sub-$M$-sets generated by its elements, these objects form a separating collection in the topos.

We can produce examples of such toposes which do not fall into any of the classes described in Proposition \ref{prop:xmpls}; see Remark \ref{rmk:sgtxmpl} in a Chapter \ref{chap:TTMA}.
\end{example}

\subsection{Categories of supercompact objects}
\label{ssec:Cs}

We now examine properties of the categories $\Ccal_s$ and $\Ccal_c$ in a general topos $\Ecal$.

\begin{lemma}
\label{lem:closed}
Let $\Ecal$ be a Grothendieck topos and let $\Ccal_s$, $\Ccal_c$ be the categories of supercompact and compact objects of $\Ecal$ respectively. These categories are closed in $\Ecal$ under quotients.
\end{lemma}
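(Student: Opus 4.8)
The plan is to prove the statement directly from the defining covering properties: I will show that whenever $X$ is supercompact (resp. compact) and $q\colon X \too Y$ is an epimorphism in $\Ecal$, the object $Y$ is again supercompact (resp. compact). Since the two arguments are formally identical, I would treat them in parallel, the key idea being to pull the relevant covering family back along $q$ into $X$, apply the hypothesis on $X$, and then push the conclusion back down to $Y$.

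First I would take an arbitrary jointly epic family $\{f_i\colon A_i \to Y \mid i \in I\}$ and rephrase joint epimorphy as the assertion that the induced map $\coprod_{i} A_i \too Y$ is an epimorphism. Pulling this epimorphism back along $q$, I form the pullbacks $B_i := A_i \times_Y X$ with projections $B_i \to X$ and $B_i \to A_i$. The two exactness facts I rely on here, both standard in any Grothendieck topos, are that coproducts are universal (stable under pullback), so that the pullback of $\coprod_i A_i \to Y$ along $q$ is $\coprod_i B_i \to X$, and that epimorphisms are stable under pullback. Together these yield that $\coprod_i B_i \too X$ is epic, i.e.\ that the family $\{B_i \to X \mid i \in I\}$ is jointly epic.

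Now I would invoke the hypothesis on $X$. In the supercompact case, some single projection $B_{i_0} \to X$ is an epimorphism; then the composite $B_{i_0} \to X \xrightarrow{q} Y$ is a composite of epimorphisms, hence epic. But by the pullback square this composite equals $f_{i_0}$ precomposed with the projection $B_{i_0} \to A_{i_0}$, and since a morphism whose composite (on its right) with another is epic must itself be epic, $f_{i_0}$ is an epimorphism. Thus the original family contains an epimorphism and $Y$ is supercompact. In the compact case, compactness of $X$ gives a finite subfamily $\{B_{i_k} \to X\}_{k=1}^{n}$ with $\coprod_{k} B_{i_k} \too X$ epic; composing with $q$ and factoring through $\coprod_k A_{i_k}$ exactly as before shows $\coprod_k A_{i_k} \to Y$ is epic, so $\{f_{i_k}\}_{k=1}^{n}$ is a finite jointly epic subfamily and $Y$ is compact.

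I do not expect a serious obstacle here: the whole argument rests only on the exactness properties of a topos (universality of coproducts and pullback-stability of epimorphisms), so the content is essentially bookkeeping. The one point deserving a little care is ensuring that the pulled-back family $\{B_i \to X\}$ genuinely remains jointly epic, and that the epimorphism produced downstairs is correctly identified as (a subfamily of) the original $f_i$ via the pullback projections; both follow cleanly once joint epimorphy is encoded as a single epimorphism out of a coproduct.
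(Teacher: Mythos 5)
Your proof is correct and follows essentially the same route as the paper's: pull the covering family over the quotient back along the epimorphism, apply supercompactness (resp.\ compactness) of the domain, and then identify the resulting epimorphism (resp.\ finite jointly epic subfamily) with members of the original family via the pullback squares. The paper's version is merely terser, leaving the exactness bookkeeping (universality of coproducts, pullback-stability of epimorphisms, right-cancellation of epimorphisms) implicit where you spell it out.
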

\begin{proof}
Given an epimorphism $k: D \too C$ with $D$ supercompact and a covering family over $C$, pulling back this family along $k$ we immediately conclude that one of the constituent morphisms must be an epimorphism. Thus $C$ is a member of $\Ccal_s$. The argument for $\Ccal_c$ is analogous, except that we end up with a finite family of morphisms.
\end{proof}

Lemma \ref{lem:closed} has as a consequence that when considering a covering family of supercompact or compact objects over an object $X$ of $\Ecal$, we may without loss of generality assume that the morphisms in the family are monomorphisms. That is, we may restrict attention to covering families of (super)compact \textit{subobjects} when we so choose, because a family of morphisms with common codomain in a topos is jointly epic if and only if the union of their images is the maximal subobject. 

The subcategories inherit some further structure from $\Ecal$.

\begin{corollary}
\label{crly:images}
For $\Ecal$, $\Ccal_s$, $\Ccal_c$ as in Lemma \ref{lem:closed}, $\Ccal_s$ and $\Ccal_c$ are closed under image factorizations in $\Ecal$, so that in particular they have image factorizations.
\end{corollary}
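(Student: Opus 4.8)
The plan is to reduce the statement directly to Lemma \ref{lem:closed}, exploiting the fact that an image is always a quotient of its domain. Given a morphism $f: C \to D$ with $C, D \in \Ccal_s$, I would take its image factorization $C \too \im(f) \hookrightarrow D$ as computed in $\Ecal$; this exists because every Grothendieck topos has (epi, mono)-factorizations, and we have already noted that all epimorphisms in $\Ecal$ are regular, so this factorization is canonical. The middle object $\im(f)$ is by construction a quotient of $C$, so Lemma \ref{lem:closed} (closure of $\Ccal_s$ under quotients) immediately yields $\im(f) \in \Ccal_s$. The identical argument with ``compact'' in place of ``supercompact'' handles $\Ccal_c$, where one instead uses closure of $\Ccal_c$ under quotients.

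The second clause, ``so that in particular they have image factorizations'', then requires only checking that the factorization found in $\Ecal$ genuinely serves as an image factorization \emph{internal} to the subcategory. First I would observe that, $\Ccal_s$ being a full subcategory of $\Ecal$, the monomorphism $\im(f) \hookrightarrow D$ remains monic in $\Ccal_s$ (monomorphisms are preserved under passage to any subcategory), while the epimorphism $C \too \im(f)$ remains epic in $\Ccal_s$ (epicness in $\Ecal$ means cancellability against all test objects, which includes in particular the test objects available in the full subcategory). Hence the factorization is an (epi, mono)-factorization taking place entirely within $\Ccal_s$, and by uniqueness of such factorizations in the ambient topos it is the image factorization there.

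There is essentially no obstacle in this corollary: the substantive content lies wholly in Lemma \ref{lem:closed}, and the remaining verifications are formal consequences of the fullness of the inclusions $\Ccal_s \hookrightarrow \Ecal$ and $\Ccal_c \hookrightarrow \Ecal$. The only point meriting a word of care is interpretive rather than technical, namely that ``having image factorizations'' for the subcategories should be read as inheriting them from $\Ecal$; the argument above shows that this inherited notion coincides with the intrinsic one, so no ambiguity arises.
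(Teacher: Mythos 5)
Your core argument is correct and is exactly the paper's proof: the image of a morphism between supercompact (resp.\ compact) objects is a quotient of its domain, so Lemma \ref{lem:closed} places it in $\Ccal_s$ (resp.\ $\Ccal_c$), and fullness of the subcategory ensures both factoring morphisms lie there. Your observations that $\Ecal$-monomorphisms remain monic and $\Ecal$-epimorphisms remain epic in a full subcategory are also correct.

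However, your closing claim --- that ``the inherited notion coincides with the intrinsic one, so no ambiguity arises'' --- is false in general, and the paper explicitly warns against it in the remark immediately following this corollary: the resulting orthogonal factorization systems on $\Ccal_s$ and $\Ccal_c$ are \emph{not} between all monomorphisms and all epimorphisms of the subcategory, only between those inherited from $\Ecal$. Since the corollary is stated for an arbitrary Grothendieck topos $\Ecal$, the intrinsic classes can be strictly larger than the inherited ones: in the paper's later example where $\Ccal_s$ is the category $A \xleftarrow{l} B \xrightarrow{r} C$ sitting inside its presheaf topos, the morphisms $l$ and $r$ are vacuously epic \emph{and} monic in $\Ccal_s$, yet not epic in $\Ecal$. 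In particular the pair (all $\Ccal_s$-epis, all $\Ccal_s$-monos) cannot be orthogonal --- in an orthogonal factorization system any morphism lying in both classes is invertible --- and intrinsic (epi, mono)-factorizations need not be unique: there $l$ factors both as $l \circ \id_B$ and as $\id_A \circ l$, giving non-isomorphic middle objects $B$ and $A$. What your argument actually establishes is only that the inherited factorization is \emph{one} factorization into a $\Ccal_s$-epi followed by a $\Ccal_s$-mono; the uniqueness you invoke lives in $\Ecal$, not in $\Ccal_s$. This does not damage the corollary itself, whose content is precisely the inherited factorization system, but the side claim should be removed.
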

\begin{proof}
Given a morphism $C \to C'$ between supercompact (resp. compact) objects, the image object $C''$ in the factorization $C \too C'' \hookrightarrow C'$ is also supercompact (resp. compact) by Lemma \ref{lem:closed}, whence the factoring morphisms lie in $\Ccal_s$ (resp. $\Ccal_c$) since it is a full subcategory.
\end{proof}

Note that the resulting orthogonal factorization systems on $\Ccal_s$ and $\Ccal_c$ are not between all monomorphisms and all epimorphisms; only between those inherited from $\Ecal$. We spend the rest of this section deriving an intrinsic characterization of these morphisms.

\begin{definition}
\label{dfn:funnel}
We say a small indexing category $\Dcal$ is a \textbf{funnel} if it has a weakly terminal object. A \textbf{funneling diagram} in an arbitrary category $\Ccal$ is a functor $F: \Dcal \to \Ccal$ with $\Dcal$ a funnel. We call the colimit of such a diagram, if it exists, a \textbf{funneling colimit}.
\end{definition}

Given a funneling diagram $F:\Dcal \to \Ccal$, its colimit is determined by an object $C$ of $\Ccal$ equipped with an epimorphism $f: F(D_0) \too C$, where $D_0$ is a weakly terminal object of $\Dcal$, since all legs of the colimit cone factor through this one.

\begin{xmpl}
An example of a funnel is the following:
\[\begin{tikzcd}[row sep = small]
A_i \ar[dr, "f_i", shift left] \ar[dr, "f'_i"', shift right] & \\
\vdots & D. \\
A_j \ar[ur, "f_j", shift left] \ar[ur, "f'_j"', shift right] &
\end{tikzcd}\]

Consider the topos $\Setswith{M}$. We can present any right $M$-set $X$ as the colimit of a funneling diagram of the above general shape, where the weakly terminal object is sent to a coproduct of copies of $M$, and all other objects are sent to copies of $M$:
\[\begin{tikzcd}[row sep = small]
M \ar[dr, "f_i", shift left] \ar[dr, "f'_i"', shift right] & \\
\vdots & \coprod_{i \in I} M. \\
M \ar[ur, "f_j", shift left] \ar[ur, "f'_j"', shift right] &
\end{tikzcd}\]
Here, the set $I$ indexes a set of generators of $X$ and the morphisms $(f_i,f'_i)$ identify pairs of elements which are to be identified.
\end{xmpl}

Recall that a morphism $h:D \to C$ in a category $\Ccal$ is called a \textbf{strict epimorphism} if whenever another morphism $k: D \to E$ satisfies the condition that for each parallel pair $p,q: B \rightrightarrows D$ with $h \circ p = h \circ q$ we have $k \circ p = k \circ q$, it follows that $k$ factors uniquely through $h$. The dual concept appears in \cite[Theorem 4.1]{TGT}.

\begin{lemma}
In a small category $\Ccal$, a morphism $h:C' \to C$ is a strict epimorphism if and only if there exists a funneling diagram $F:\Dcal \to \Ccal$ with weakly terminal object $C'$ whose colimit is expressed by $h$.
\end{lemma}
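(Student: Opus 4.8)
The plan is to recognise that both conditions — ``$h$ is a strict epimorphism'' and ``$h$ is the leg at the weakly terminal object of a funneling colimit'' — are two packagings of the \emph{same} universal property: that of being the joint coequalizer of all parallel pairs into $C'$ which $h$ coequalizes.

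For the forward direction I would build a \emph{canonical} funnel $\Dcal$ directly out of $h$. Its objects are a single distinguished object $\top$ together with one object $(B,p,q)$ for every parallel pair $p,q\colon B\rightrightarrows C'$ in $\Ccal$ with $hp=hq$ (since $\Ccal$ is small these form a set, so $\Dcal$ is small), and its only non-identity arrows are two maps $\iota_1,\iota_2\colon (B,p,q)\to\top$. I would let $F$ send $\top\mapsto C'$, $(B,p,q)\mapsto B$, $\iota_1\mapsto p$ and $\iota_2\mapsto q$, so that $\top$ is weakly terminal and $F(\top)=C'$. Unwinding the cocone conditions for $\iota_1$ and $\iota_2$, a cocone on $F$ with vertex $E$ is precisely a morphism $k\colon C'\to E$ satisfying $kp=kq$ for every such pair $(p,q)$ — that is, a $k$ coequalizing exactly the pairs that $h$ coequalizes. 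The definition of strict epimorphism says precisely that $h$ is the initial such $k$, so $(C,h)$ is the colimit of $F$ with leg $h$ at $\top$, as required.

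For the converse, suppose $h=\lambda_{D_0}\colon F(D_0)=C'\to C$ is the leg at a weakly terminal object $D_0$ of a funneling colimit $\lambda\colon F\Rightarrow\Delta C$, and verify the strict-epimorphism universal property directly. Given $k\colon C'\to E$ coequalizing every pair $(p,q)$ with $hp=hq$, I would manufacture a cocone $\mu\colon F\Rightarrow\Delta E$ by setting $\mu_{D_0}=k$ and, for each $D$, $\mu_D=k\circ F(d)$ for some chosen $d\colon D\to D_0$. The crux — and the step I expect to be the main obstacle — is well-definedness: two choices $d,d'\colon D\to D_0$ give $F(d),F(d')\colon F(D)\rightrightarrows C'$ with $h\circ F(d)=\lambda_D=h\circ F(d')$, so $(F(d),F(d'))$ is one of the pairs coequalized by $h$, whence $k\circ F(d)=k\circ F(d')$ by hypothesis on $k$. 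Naturality of $\mu$ over an arbitrary arrow $\gamma\colon D\to D''$ then follows by computing $\mu_D$ through the factorization $d''\circ\gamma$ for a chosen $d''\colon D''\to D_0$. The universal property of the colimit yields a unique $m\colon C\to E$ with $m\circ\lambda=\mu$; in particular $m\circ h=\mu_{D_0}=k$, and uniqueness of $m$ delivers uniqueness of the factorization, so $h$ is a strict epimorphism.

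Finally, I would note this is coherent with strict epimorphisms being epimorphisms: as observed after Definition~\ref{dfn:funnel}, the colimit legs are jointly epic and all factor through $\lambda_{D_0}$, forcing $\lambda_{D_0}=h$ to be epic — matching the epimorphism $f\colon F(D_0)\too C$ recorded in that remark.
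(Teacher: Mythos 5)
Your proof is correct and takes essentially the same route as the paper's: the forward direction realizes $h$ as the colimit of the canonical funnel of all parallel pairs into $C'$ that $h$ coequalizes, and the converse turns a coequalizing $k$ into a cocone by composition with the diagram's morphisms and factors it uniquely through the colimit. The only difference is one of exposition — you carefully verify the well-definedness and naturality of the cocone $\mu$, steps the paper compresses into the phrase ``it clearly induces a cone.''
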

\begin{proof}
By definition, if $h$ is a strict epimorphism, it is a colimit for the diagram consisting of all pairs of morphisms with domain $C'$ which $h$ coequalizes. Conversely, if $h$ expresses the colimit of any funneling diagram, and $k$ coequalizes all of the same parallel pairs that $h$ does, then it clearly induces a cone by composition with the morphisms of the funneling diagram, whence it has a universal factorization through $h$, as required.
\end{proof}

Notably, strict epimorphisms include isomorphisms and regular epimorphisms. Continuing with the parallel treatment of compactly generated toposes, we arrive at the following definitions.

\begin{definition}
\label{dfn:multifun}
A small indexing category $\Dcal$ is a \textbf{multifunnel} if it has a (possibly empty) finite collection of objects $D_1, \dotsc, D_n$ to which all other objects admit morphisms\footnote{A reader interested in the obvious generalizations of this concept to higher cardinalities might prefer to employ a name such as `finitely funneled' to emphasize the finitary aspect.}. A colimit of a multifunneling diagram (a diagram indexed by a multifunnel) in a category $\Ccal$ shall be called a \textbf{multifunneling colimit}, and is defined by a finite jointly epic family from the images of the objects $D_1, \dotsc, D_n$. A finite jointly epic family obtained in this way will be called a \textbf{strictly epic finite family}.
\end{definition}

\begin{lemma}
\label{lem:multi}
A category has multifunneling colimits if and only if it has finite coproducts and funneling colimits.
\end{lemma}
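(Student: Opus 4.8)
The plan is to prove the two implications separately. The forward direction is a matter of recognizing special cases, while the reverse direction requires an explicit construction, which is where the real content lies.

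For the forward direction, suppose $\Ccal$ has multifunneling colimits. I would observe that both finite coproducts and funneling colimits are themselves multifunneling colimits. A funnel is precisely a multifunnel whose finite weakly terminal collection $D_1,\dots,D_n$ has $n=1$, so every funneling diagram is a multifunneling diagram and funneling colimits therefore exist. Likewise, the discrete category on a finite set $\{1,\dots,n\}$ is a multifunnel, taking the weakly terminal collection to be all of its objects (each receiving its identity); hence finite coproducts exist, including the initial object, which corresponds to the empty multifunnel $n=0$.

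For the reverse direction, assume $\Ccal$ has finite coproducts and funneling colimits, and let $F:\Dcal \to \Ccal$ be a multifunneling diagram with weakly terminal collection $D_1,\dots,D_n$. By the preceding lemma, funneling colimits with a given weakly terminal object $C$ are exactly the strict epimorphisms out of $C$, so the idea is to exhibit $\colim F$ as a strict quotient of the finite coproduct $C := \coprod_{k=1}^n F(D_k)$, with injections $\iota_k : F(D_k) \to C$. Concretely I would build a funnel $\Dcal'$ with a top object $T$ and, for each object $D$ of $\Dcal$ equipped with a pair of morphisms $\alpha : D \to D_i$ and $\alpha' : D \to D_j$ in $\Dcal$, an object carrying two arrows to $T$; the funneling diagram $F' : \Dcal' \to \Ccal$ sends $T \mapsto C$ and sends these two arrows to $\iota_i \circ F(\alpha)$ and $\iota_j \circ F(\alpha')$. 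Since $\Dcal$ is small, $\Dcal'$ is small, and it is genuinely a funnel with weakly terminal object $T$, so $\colim F'$ exists.

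The crux is to check that cocones under $F'$ correspond bijectively and naturally to cocones under $F$. Given $c : C \to L$, write $c_k = c \circ \iota_k$; the cocone condition for $F'$ says exactly that $c_i \circ F(\alpha) = c_j \circ F(\alpha')$ for every $D$ and every such pair $\alpha,\alpha'$. This lets me set $\lambda_D := c_i \circ F(\alpha)$ independently of the chosen arrow into the weakly terminal collection, and a short diagram chase — composing with an arbitrary $\gamma : D \to D'$ and an arrow $D' \to D_j$ — confirms that $(\lambda_D)$ is a cocone under $F$; conversely a cocone under $F$ restricts on the $D_k$ to such a $c$. Matching universal properties then identifies $\colim F'$ with $\colim F$, which consequently exists. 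I expect the only delicate point — the main obstacle — to be the verification that the $\lambda_D$ assembled from $c$ form a genuine cocone, that is, that $\Dcal'$ encodes exactly the right relations: one must index pairs by all objects $D$ (not merely the $D_k$) and by all pairs of arrows into the weakly terminal collection, so that both the ambiguity in the choice of arrow $D \to D_k$ and the compatibility across a general $\gamma : D \to D'$ are simultaneously forced. Once this indexing is set up correctly, everything else is bookkeeping.
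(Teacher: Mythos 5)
Your proof is correct and takes essentially the same approach as the paper: the forward direction recognizes finite coproducts and funneling colimits as special cases of multifunneling colimits, and the reverse direction forms the coproduct $\coprod_{k=1}^n F(D_k)$, composes the diagram's morphisms with the coproduct inclusions to obtain a funneling diagram, and identifies the two colimits via their universal properties. Your pair-indexed construction of $\Dcal'$ and the cocone-correspondence check simply make explicit the bookkeeping that the paper's terser proof leaves implicit.
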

\begin{proof}
Clearly finite coproducts and funneling colimits are special cases of multifunneling colimits. Conversely, given a multifunneling diagram $F: \Dcal \to \Ccal$ with weakly terminal objects $F(D_1),\dotsc,F(D_n)$, consider the coproduct of these objects. Composing the morphisms in the diagram $F$ with the coproduct inclusions, we get a funneling diagram. The universal property of the coproduct ensures that the colimit of this diagram coincides with the colimit of $F$.
\end{proof}

\begin{remark}
\label{rmk:simple}
Note that we can make the further simplification, implicit in the diagram of Definition \ref{dfn:funnel}, that all of the non-identity morphisms in a funneling diagram have the weakly terminal object as their codomains, since given $t:A_i \to A_j$, there exists some morphism $f_j:A_j \to D$, and the cocone commutativity conditions for $f_j \circ t$ and $f_j$ ensure that $\lambda_i = \lambda_D \circ (f_j \circ t) = \lambda_j \circ t$ is automatically satisfied, so we may omit $t$ from the diagram. 
\end{remark}

\begin{lemma}
\label{lem:closed2}
Let $\Ecal$ be a Grothendieck topos and let $\Ccal_s$, $\Ccal_c$ be the categories of supercompact and compact objects of $\Ecal$ respectively. Then $\Ccal_s$ is closed in $\Ecal$ under funneling colimits and $\Ccal_c$ is closed in $\Ecal$ under multifunneling colimits.
\end{lemma}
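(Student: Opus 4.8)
The plan is to reduce both assertions to the closure of $\Ccal_s$ and $\Ccal_c$ under quotients, which is already established in Lemma \ref{lem:closed}, so that the only real content is an identification of these colimits as suitable quotients.

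For the funneling case I would argue as follows. Let $F:\Dcal \to \Ecal$ be a funneling diagram whose values all lie in $\Ccal_s$, let $D_0$ be a weakly terminal object of $\Dcal$, and let $C = \colim F$ be computed in $\Ecal$. As recorded immediately after Definition \ref{dfn:funnel}, the colimit leg $\lambda_{D_0}: F(D_0) \to C$ is an epimorphism: every object of $\Dcal$ admits a morphism to $D_0$, so by commutativity of the colimit cocone each leg factors through $\lambda_{D_0}$, and a colimit cocone is jointly epic. Hence $C$ is a quotient of the supercompact object $F(D_0)$, and Lemma \ref{lem:closed} gives $C \in \Ccal_s$ at once.

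For the multifunneling case I would invoke Lemma \ref{lem:multi}: a multifunneling colimit $C$ of a diagram valued in $\Ccal_c$ is the colimit of an associated funneling diagram whose weakly terminal object is the coproduct $F(D_1)\sqcup \cdots \sqcup F(D_n)$ of the (finitely many) weakly terminal values. By the funneling argument just given, $C$ is then a quotient of this coproduct, so by Lemma \ref{lem:closed} it suffices to know that a finite coproduct of compact objects is compact.

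The one step that requires genuine work — and the place I expect the only obstacle, though a mild one — is therefore the closure of $\Ccal_c$ under finite coproducts, which I would prove using extensivity of the topos. The empty coproduct is the initial object $0$, which is compact because the empty family is a (vacuously) finite jointly epic family covering it; note this is exactly why $0$ is compact but not supercompact. For a binary coproduct $A \sqcup B$ with $A,B$ compact and an arbitrary jointly epic family $\{g_k: X_k \to A\sqcup B\}_{k\in K}$, I would pull back along the two coproduct inclusions: by extensivity each $X_k$ splits as $X_k^A \sqcup X_k^B$ over $A$ and $B$, and since pullback preserves unions of subobjects in a topos the families $\{X_k^A \to A\}$ and $\{X_k^B \to B\}$ are jointly epic. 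Compactness of $A$ and $B$ then yields finite subsets $K_A, K_B \subseteq K$ giving jointly epic subfamilies, whence $\{g_k\}_{k \in K_A \cup K_B}$ is a finite jointly epic subfamily over $A\sqcup B$; iterating handles all finite coproducts. This is precisely the ``restrict a cover to a summand'' bookkeeping already used implicitly in Lemmas \ref{lem:scompact} and \ref{lem:closed}, so no new difficulty arises.
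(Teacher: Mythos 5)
Your proof is correct. The funneling half is exactly the paper's argument: the colimit leg from the weakly terminal object is epic, so the colimit is a quotient of a supercompact object and Lemma \ref{lem:closed} finishes. Where you diverge is the multifunneling half. The paper does not pass through coproducts at all: it notes that the colimit of a multifunneling diagram is determined by a finite jointly epic family $\{F(D_i) \to C\}_{i=1}^n$, and then directly adapts the pullback argument from the proof of Lemma \ref{lem:closed} --- pull an arbitrary cover of $C$ back along each of the finitely many legs, use compactness of each $F(D_i)$ to extract a finite subcover there, and take the union of the resulting finite index sets. Your route instead invokes Lemma \ref{lem:multi} to replace the multifunneling diagram by a funneling diagram over the finite coproduct $F(D_1)\sqcup\cdots\sqcup F(D_n)$, which shifts the burden onto proving that $\Ccal_c$ is closed under finite coproducts; your extensivity argument for that (splitting each $X_k$ over the summands, pulling back the cover, and reassembling the two finite subcovers) is sound, including the observation that $0$ is compact via the empty family. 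The trade-off: the paper's argument is shorter and needs only pullback-stability of jointly epic families, while yours additionally uses extensivity but records as a by-product the closure of $\Ccal_c$ under finite coproducts --- a fact the paper never states explicitly yet tacitly relies on elsewhere (e.g.\ in Lemma \ref{lem:scsite}, where compact objects are described as quotients of finite coproducts of representables, and in the proof of Lemma \ref{lem:multi} itself). Both proofs are valid; yours is a mild reorganization that makes that auxiliary fact explicit.
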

\begin{proof}
Let $F:\Dcal \to \Ccal_s$ be a funneling diagram with weakly terminal object $F(D)$. In $\Ecal$, this diagram has a colimit determined by an epimorphism $f: F(D) \too C$; the colimit $C$ is supercompact by Lemma \ref{lem:closed}, as required. The argument for multifunneling colimits in $\Ccal_c$ is analogous, except that we must pull back along each member of a finite family in the proof of Lemma \ref{lem:closed} to obtain the finite covering subfamily of a given covering family over the colimit.
\end{proof}

\begin{corollary}
\label{crly:strict}
Let $\Ecal$ be a supercompactly (resp. compactly) generated topos and $\Ccal_s$, $\Ccal_c$ the usual subcategories. Then a morphism of $\Ccal_s$ is an epimorphism in $\Ecal$ if and only if it is a strict epimorphism in $\Ccal_s$, and a finite family of morphisms into $C$ in $\Ccal_c$ is jointly epic in $\Ecal$ if and only if it is a strictly epic finite family in $\Ccal_c$.
\end{corollary}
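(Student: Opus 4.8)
The plan is to first establish the single-morphism statement for both $\Ccal_s$ and $\Ccal_c$ — that a morphism $h \colon C' \to C$ between (super)compact objects is an epimorphism in $\Ecal$ if and only if it is a strict epimorphism in $\Ccal_s$ (resp. $\Ccal_c$) — and then to deduce the finite-family version for $\Ccal_c$ using Lemma \ref{lem:multi}. Indeed, a finite family $\{f_j \colon C_j \to C\}$ is jointly epic in $\Ecal$ precisely when the induced morphism $\coprod_j C_j \to C$ is epic in $\Ecal$, and by Lemma \ref{lem:multi} it is a strictly epic finite family precisely when that same morphism is a strict epimorphism; since $\coprod_j C_j$ again lies in $\Ccal_c$ by Lemma \ref{lem:closed2}, the finite-family case reduces to the single-morphism case. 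The two essential tools will be that every epimorphism in a topos is the coequalizer of its kernel pair, and that (super)compact generation lets us cover any object by a jointly epic family of (super)compact objects.

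For the substantive direction, suppose $h \colon C' \to C$ is an epimorphism in $\Ecal$; I would show $h$ is a strict epimorphism in $\Ccal_s$. Take any $k \colon C' \to E$ in $\Ccal_s$ that coequalizes every parallel pair $p, q \colon B \rightrightarrows C'$ in $\Ccal_s$ with $h p = h q$. Let $r_1, r_2 \colon R \rightrightarrows C'$ be the kernel pair of $h$ in $\Ecal$, and choose a jointly epic family $\{b_i \colon B_i \to R\}$ with each $B_i$ supercompact, which exists because $\Ecal$ is supercompactly generated. Setting $p_i = r_1 b_i$ and $q_i = r_2 b_i$, each pair $(p_i, q_i)$ lies in $\Ccal_s$ and satisfies $h p_i = h q_i$, so by hypothesis $k r_1 b_i = k r_2 b_i$ for all $i$; joint epimorphicity of the $b_i$ then forces $k r_1 = k r_2$. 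Thus $k$ coequalizes the kernel pair, and since $h$ is the coequalizer of $r_1, r_2$ in the topos $\Ecal$, there is a factorization of $k$ through $h$ in $\Ecal$, unique because $h$ is epic; by fullness of $\Ccal_s \hookrightarrow \Ecal$ this factorization lies in $\Ccal_s$. The compact case is identical, covering $R$ by compact objects instead.

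For the converse, suppose $h \colon C' \to C$ is a strict epimorphism in $\Ccal_s$. By the lemma preceding Definition \ref{dfn:multifun}, $h$ is the colimit in $\Ccal_s$ of a funneling diagram $F \colon \Dcal \to \Ccal_s$ with weakly terminal object $C'$. By Lemma \ref{lem:closed2} the colimit of $F$ computed in $\Ecal$ again lands in $\Ccal_s$, and since $\Ccal_s$ is full, this $\Ecal$-colimit agrees with the one computed in $\Ccal_s$; hence $h$ is the leg of the $\Ecal$-colimit cocone of $F$ corresponding to the weakly terminal object. In any category a colimit cocone is jointly epic, and since $C'$ is weakly terminal every leg factors through $h$; together these force $h$ to be an epimorphism in $\Ecal$. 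The compact case is the same, noting that funneling colimits are a special case of multifunneling colimits, so $\Ccal_c$ is closed under them too.

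The main obstacle is the substantive direction, and specifically the step justifying that one may restrict the strict-epimorphism test to parallel pairs out of (super)compact objects: the definition of strict epimorphism in $\Ccal_s$ only supplies coequalizing information for such pairs, whereas effectivity of $h$ in $\Ecal$ concerns its full kernel pair $R$, which need not itself be (super)compact. The bridge is exactly (super)compact generation, which provides a jointly epic family of (super)compact objects over $R$ and thereby transfers the coequalizing hypothesis from the generators to $R$; without this separation hypothesis the argument breaks down. Everything else — the coproduct reduction for finite families, fullness of the inclusions, and the cocone computations — is routine given Lemmas \ref{lem:closed2} and \ref{lem:multi}.
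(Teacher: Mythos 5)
Your single-morphism argument, in both directions and for both $\Ccal_s$ and $\Ccal_c$, is correct and is in substance the paper's own proof: the forward direction rests on regularity of epimorphisms in a topos plus a cover of the domain of a coequalized pair by (super)compact generators (the paper packages this as a funneling diagram and invokes its characterization of strict epimorphisms as funneling colimits, whereas you verify the definition directly against the kernel pair), and your converse is the paper's converse verbatim: Lemma \ref{lem:closed2} plus fullness of the subcategory.

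The gap is in your reduction of the family statement to the single-morphism one. You claim, citing Lemma \ref{lem:multi}, that $\{f_j \colon C_j \to C\}$ is a strictly epic finite family precisely when $[f_j]\colon \coprod_j C_j \to C$ is a strict epimorphism in $\Ccal_c$. Lemma \ref{lem:multi} (really, its proof) gives only one implication: a multifunneling colimit can be recomputed as a funneling colimit after composing into the coproduct of the weakly terminal objects, so a strictly epic family does induce a strict epimorphism out of $\coprod_j C_j$. That is the direction needed for the easy half of the Corollary (strictly epic family $\Rightarrow$ jointly epic in $\Ecal$). The converse implication --- which is exactly what your chain uses for the substantive half, jointly epic in $\Ecal$ $\Rightarrow$ strictly epic family --- does not follow from Lemma \ref{lem:multi} and is not automatic. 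By Definition \ref{dfn:multifun}, exhibiting $\{f_j\}$ as strictly epic requires a multifunneling diagram in which every non-terminal object maps to some \emph{single} $C_j$; a strict epimorphism $\coprod_j C_j \too C$ only hands you a funneling diagram whose objects map into the coproduct, and a morphism from a compact object into $\coprod_j C_j$ need not factor through any one summand, since compact objects, unlike supercompact ones, are not indecomposable (consider the identity on $C_1 \sqcup C_2$). This is precisely why the reduction would be harmless for $\Ccal_s$ but is a real issue for $\Ccal_c$.

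The statement is true, but repairing it needs an extra argument: given a pair $p,q \colon D \rightrightarrows \coprod_j C_j$ coequalized by $[f_j]$ and a compact cover $\{C_i \hookrightarrow D\}$, pull each $C_i$ back along the coproduct inclusions to decompose it as $C_i = \coprod_{j,j'} C_i^{j,j'}$, where $p$ restricted to $C_i^{j,j'}$ lands in $C_j$ and $q$ lands in $C_{j'}$ (this uses disjointness and pullback-stability of coproducts in a topos); each summand of a compact object is again compact, so these restricted pairs assemble into a genuine multifunneling diagram in $\Ccal_c$ whose colimit is expressed by $\{f_j\}$. With that paragraph added, your proof is complete. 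Note that the paper sidesteps this by proving the family case directly (``the argument for $\Ccal_c$ is analogous'') rather than reducing to the coproduct, though its analogous argument implicitly requires the same decomposition.
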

\begin{proof}
This could be deduced by checking the conditions of the dual of \cite[Proposition 4.9]{TGT}, but rather than reproducing that result, we give a direct proof.

Suppose $h: C' \too C$ is epic in $\Ecal$ with $C',C$ in $\Ccal_s$. Any epimorphism in $\Ecal$ is regular, so is the coequalizer of some pair $p,q:D \rightrightarrows C'$. Since $D$ is covered by supercompact objects, composing $p$ and $q$ with the monomorphisms $C_i \hookrightarrow D$ such that $C_i$ is in $\Ccal_s$ we obtain a funneling diagram in $\Ccal_s$ whose colimit is still $C$, as required. The argument for $\Ccal_c$ is analogous.

Conversely, the inclusion of $\Ccal_s$ and $\Ccal_c$ into $\Ecal$ preserves funneling (resp. multifunneling) colimits by Lemma \ref{lem:closed2}, whence the strict epimorphisms (resp. strictly epic finite families) from these categories are still epic in $\Ecal$.
\end{proof}

In fact, epimorphic families in $\Ccal_c$ are better behaved than those in $\Ccal_s$ in general:
\begin{lemma}
\label{lem:compactepi}
Let $\Ecal$ be any Grothendieck topos. Then a family of morphisms in $\Ccal_c$ with common codomain is jointly epic in $\Ccal_c$ if and only if it is so in $\Ecal$. In particular, when $\Ecal$ is compactly generated, every jointly epimorphic family (including every epimorphism) in $\Ccal_c$ is strict.
\end{lemma}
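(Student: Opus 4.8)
The plan is to establish the biconditional for an arbitrary Grothendieck topos $\Ecal$ first, and then read off the ``in particular'' clause from it using compactness of the codomain. Throughout, the common codomain $C$ is an object of $\Ccal_c$, hence compact.

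One implication is purely formal. Since $\Ccal_c$ is a \emph{full} subcategory of $\Ecal$, testing joint epimorphy of a family $\{f_i : C_i \to C\}$ in $\Ccal_c$ means testing it against a smaller class of codomains (the compact ones) than in $\Ecal$; hence any family that is jointly epic in $\Ecal$ is automatically jointly epic in $\Ccal_c$. For the substantive converse I would argue contrapositively. Suppose $\{f_i : C_i \to C\}$ is \emph{not} jointly epic in $\Ecal$. In a topos joint epimorphy is equivalent to the union of images being the whole object, so the subobject $m : U := \bigvee_i \im(f_i) \hookrightarrow C$ is proper, i.e.\ not an isomorphism. Form the cokernel pair of $m$: the pushout $C \sqcup_U C$ together with its two coprojections $u_1, u_2 : C \rightrightarrows C \sqcup_U C$. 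Each $f_i$ factors through its image and hence through $m$, so $u_1 \circ f_i = u_2 \circ f_i$ for every $i$; and the standard topos fact that every monomorphism is the equalizer of its cokernel pair gives that the equalizer of $u_1, u_2$ is $m$, whence $u_1 \neq u_2$ precisely because $m$ is not an isomorphism.

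The one point requiring care is that this pair must witness failure of joint epimorphy \emph{in $\Ccal_c$}, i.e.\ that the codomain $C \sqcup_U C$ is compact. This is where the closure properties established earlier do the work: the two coprojections assemble into a coequalizer $C \sqcup C \twoheadrightarrow C \sqcup_U C$, so $C \sqcup_U C$ is a quotient of $C \sqcup C$; the latter is compact because $\Ccal_c$ is closed under finite coproducts (a special case of the multifunneling colimits of Lemma \ref{lem:closed2}, via Lemma \ref{lem:multi}), and then $C \sqcup_U C$ is compact by closure under quotients (Lemma \ref{lem:closed}). Thus $u_1, u_2$ lie in $\Ccal_c$ and the family is not jointly epic there, completing the biconditional. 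I expect this construction of a compact-codomain test pair to be the main obstacle; once closure under finite coproducts and quotients supplies the compactness, the equalizer-of-cokernel-pair identity finishes the argument at once.

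For the final clause, assume $\Ecal$ is compactly generated and let $\{f_i\}$ be jointly epic in $\Ccal_c$. By the biconditional it is jointly epic in $\Ecal$; since its codomain $C$ is compact, compactness yields a finite jointly epic subfamily, which by Corollary \ref{crly:strict} is a strictly epic finite family. In particular a single epimorphism $h$ of $\Ccal_c$ is epic in $\Ecal$, and as every epimorphism in a topos is regular, $h$ is a strict epimorphism (expressible as a funneling colimit by the funnel lemma). This gives the asserted strictness.
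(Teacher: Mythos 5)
Your proof is correct, but it takes a genuinely different route from the paper's. The paper proves both directions of the biconditional simultaneously by observing that joint epimorphy of $\{f_i : C_i \to C\}$ is itself a colimit condition: the family is jointly epic precisely when the diagram consisting of the $C_i$ mapping into two copies of $C$ has colimit $C$ with identity legs. Since that diagram is a multifunneling diagram, Lemma \ref{lem:closed2} says its colimit is created by the inclusion $\Ccal_c \hookrightarrow \Ecal$, and the equivalence falls out formally. Your argument unfolds this abstract step into its concrete content: when the family is not jointly epic in $\Ecal$, the colimit of that very diagram is exactly your cokernel pair $C \sqcup_U C$, and you check by hand that it lies in $\Ccal_c$ (closure under finite coproducts and quotients, via Lemmas \ref{lem:multi}, \ref{lem:closed2} and \ref{lem:closed}) and that its coprojections $u_1 \neq u_2$ coequalize every $f_i$ (monomorphisms in a topos being equalizers of their cokernel pairs). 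So both proofs rest on the same closure lemmas; the paper's is shorter and symmetric in the two directions, while yours isolates the trivial direction and exhibits an explicit pair of morphisms in $\Ccal_c$ witnessing failure, at the cost of invoking additional topos facts (unions of images detect joint epimorphy, regularity of monomorphisms).

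Two small remarks on the final clause. Your parenthetical ``every epimorphism in a topos is regular, hence strict'' only yields strictness in $\Ecal$; it is Corollary \ref{crly:strict} (which you do cite, and whose proof is where compact generation enters, replacing kernel pairs by compact covers) that converts joint epimorphy in $\Ecal$ into strictness \emph{in $\Ccal_c$}, so that remark is redundant rather than load-bearing. Also, passing from a strictly epic finite subfamily to strictness of the whole, possibly infinite, family is left implicit; this does hold (cover the pullbacks $C_i \times_C C_j$ by compact objects to see that the factorization supplied by the subfamily is compatible with every member of the family), but the paper is equally terse on this point, so I would not count it against you.
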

\begin{proof}
Observe that a family of morphisms $f_i:C_i \to C$ in a category is jointly epimorphic if and only if the diagram:
\[\begin{tikzcd}
C_i \ar[dr, phantom, "\ddots"] \ar[drr, "f_i", bend left]
\ar[ddr, "f_i"', bend right] & & \\
& C_j \ar[r, "f_j"] \ar[d, "f_j"'] & C \ar[d, equal] \\
& C \ar[r, equal] & C
\end{tikzcd}\]
is a colimit diagram. But the diagram (after removing the copy of $C$ in the lower right corner) is clearly an instance of a multifunneling colimit, so by Lemma \ref{lem:closed2} its colimit is created by the inclusion of $\Ccal_c$ into $\Ecal$, so a family is jointly epic in $\Ccal_c$ if and only if it is so in $\Ecal$, as required.
\end{proof}

Having extensively discussed the epimorphisms, we should also discuss monomorphisms in the subcategories under investigation.

\begin{lemma}
\label{lem:monocoincide}
Monomorphisms in $\Ccal_s$ and $\Ccal_c$ coincide with monomorphisms in $\Ecal$ when $\Ecal$ is supercompactly or compactly generated, respectively.
\end{lemma}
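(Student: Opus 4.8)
The plan is to prove the two directions separately, noting that only one of them actually requires the generation hypothesis. First I would dispatch the easy inclusion: if $f\colon A \to B$ is a monomorphism in $\Ecal$, then since $\Ccal_s$ (resp.\ $\Ccal_c$) is a \emph{full} subcategory of $\Ecal$, any parallel pair $g,h$ in the subcategory with $fg = fh$ already satisfies $g = h$ by cancellation in $\Ecal$, because $g$ and $h$ are themselves morphisms of $\Ecal$. Hence $f$ is monic in the subcategory. This direction uses only fullness and would hold for any full subcategory whatsoever.

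The substantive direction is the converse: a morphism $f\colon A \to B$ that is monic in $\Ccal_s$ is monic in $\Ecal$. Here I would invoke the hypothesis that $\Ecal$ is supercompactly generated, i.e.\ that the supercompact objects form a separating family. Concretely, suppose $g, h \colon T \to A$ are morphisms in $\Ecal$ with $fg = fh$ but $g \neq h$. By the separating property there is a supercompact object $S$ and a morphism $s\colon S \to T$ with $gs \neq hs$. Since $A$, $B$ and $S$ all lie in $\Ccal_s$ and the subcategory is full, the equal composites $f(gs) = (fg)s = (fh)s = f(hs)$ form an equality between morphisms of $\Ccal_s$; monicity of $f$ in $\Ccal_s$ then forces $gs = hs$, a contradiction. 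Hence $g = h$, and $f$ is monic in $\Ecal$.

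The compact case is identical word for word, replacing $\Ccal_s$ by $\Ccal_c$ and `supercompact' by `compact', since compact generation means precisely that the compact objects are separating, and separation still distinguishes a parallel pair by precomposition with a single morphism out of a compact object. I do not anticipate a genuine obstacle: the proof is a short diagram chase. The one point to handle with care is to use separation in the correct form, namely the contrapositive of the jointly-epic-family definition of a separating set — that a pair of distinct parallel morphisms into $A$ can always be distinguished by precomposition with a \emph{single} morphism from a (super)compact object — rather than the raw jointly-epic statement, which would only give distinguishing by some member of a family.
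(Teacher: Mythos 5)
Your proposal is correct and follows essentially the same route as the paper's proof: the easy direction by fullness of the subcategory, and the converse by using that the (super)compact objects form a separating family, so that fullness places the relevant composites inside $\Ccal_s$ (resp.\ $\Ccal_c$), where monicity equalizes them. The only difference is cosmetic — the paper phrases the separation step directly via a jointly epic cover of $Q$ by supercompact subobjects, whereas you use the equivalent contrapositive form with a single distinguishing morphism out of a supercompact object.
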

\begin{proof}
Certainly a monomorphism of $\Ecal$ lying in $\Ccal_s$ or $\Ccal_c$ is still monic, since there are fewer morphisms which it needs to distinguish in general.

Suppose $\Ecal$ is supercompactly generated and let $s : A \hookrightarrow B$ be a monomorphism in $\Ccal_s$, $Q$ an object of $\Ecal$ and $f, g : Q \rightrightarrows A$ such that $sf = sg$. Covering $Q$ with supercompact subobjects $q_i: Q_i \hookrightarrow Q$ , consider $f q_i, g q_i: Q_i \rightrightarrows A$, which are morphisms in $\Ccal_s$. These are equalized by $s$ and hence are equal for every $i$. The $q_i$ being jointly epic then forces $f = g$. Thus $s$ is monic in $\Ecal$, as claimed. The argument for $\Ccal_c$ is analogous, replacing supercompact subobjects with compact ones.
\end{proof}

Once again, we can immediately strengthen this result for $\Ccal_c$.

\begin{lemma}
\label{lem:compactmono}
For any Grothendieck topos $\Ecal$, the monomorphisms of $\Ecal$ lying in $\Ccal_c$ are regular monomorphisms there. In particular, when $\Ecal$ is compactly generated, every monomorphism in $\Ccal_c$ is regular.
\end{lemma}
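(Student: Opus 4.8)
The plan is to realize the monomorphism as the equalizer of its cokernel pair, a construction that, unlike the usual one, stays inside $\Ccal_c$. The reflexive way to exhibit a monomorphism $s: A \hookrightarrow B$ in a topos as a regular monomorphism is to write it as the equalizer of its classifying map $\chi_A: B \to \Omega$ and the composite $B \to 1 \xrightarrow{\top} \Omega$; but this is useless here, because the subobject classifier $\Omega$ is almost never compact, so this equalizer diagram does not lie in $\Ccal_c$. The remedy is to use a different parallel pair whose codomain is visibly compact, namely the cokernel pair of $s$.

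First I would form the cokernel pair of $s$ in $\Ecal$, i.e. the pushout
\[
\begin{tikzcd}
A \ar[r, "s"] \ar[d, "s"'] & B \ar[d, "i_1"] \\
B \ar[r, "i_2"'] & P,
\end{tikzcd}
\]
so that $P = B \sqcup_A B$. Since $P$ is a quotient of the finite coproduct $B \sqcup B$, and $B \sqcup B$ is compact (being a finite coproduct, hence a multifunneling colimit, of compact objects, so in $\Ccal_c$ by Lemma \ref{lem:closed2}), Lemma \ref{lem:closed} shows that $P$ is compact. Thus the parallel pair $i_1, i_2: B \rightrightarrows P$ lies entirely in $\Ccal_c$. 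Next I would invoke two facts: that in any topos every monomorphism is regular (it is an equalizer, e.g. of $\chi_A$ and $\top \circ {!}_B$), and the general categorical fact that a regular monomorphism is always the equalizer of its own cokernel pair. Together these give that $s$ is the equalizer of $i_1, i_2$ in $\Ecal$.

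Finally, because $A$, $B$ and $P$ all lie in the full subcategory $\Ccal_c$, this equalizer relation is inherited by $\Ccal_c$: for any test object $T$ of $\Ccal_c$, which is in particular an object of $\Ecal$, the universal property of the equalizer in $\Ecal$ supplies the required unique factorization, and it is a morphism of $\Ccal_c$ by fullness. Hence $s$ is a regular monomorphism in $\Ccal_c$. For the concluding sentence, when $\Ecal$ is compactly generated Lemma \ref{lem:monocoincide} identifies the monomorphisms of $\Ccal_c$ with the monomorphisms of $\Ecal$ that lie in $\Ccal_c$, so the argument above applies to every monomorphism of $\Ccal_c$. The only genuinely delicate point — and the crux of the proof — is the choice of parallel pair: one must avoid the subobject classifier and instead use the cokernel pair, whose codomain is a quotient of a finite coproduct and therefore remains compact; everything else is routine transfer of a limit along a full subcategory inclusion.
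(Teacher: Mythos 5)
Your proof is correct and follows essentially the same route as the paper's: both exhibit the monomorphism as the equalizer of its cokernel pair, observe that this parallel pair stays inside $\Ccal_c$ (the pushout being a multifunneling colimit, equivalently a quotient of the compact object $B \sqcup B$), and then transfer the equalizer property along the full inclusion, finishing with Lemma \ref{lem:monocoincide} in the compactly generated case. The only divergence is how the equalizer step is justified: the paper invokes adhesivity of toposes (a pushout of a mono along itself is also a pullback, so $e$ equalizes its cokernel pair), whereas you combine the facts that every mono in a topos is regular and that regular monomorphisms are effective, i.e., equalizers of their own cokernel pairs --- both justifications are valid.
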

\begin{proof}
If $e:C' \hookrightarrow C$ is a morphism in $\Ccal_s$ which is monic in $\Ecal$, consider its cokernel pair in $\Ecal$:
\[\begin{tikzcd}
C' \ar[r, "e", hook] \ar[d, "e"', hook] \ar[dr, phantom, "\lrcorner", very near start] &
C \ar[d, hook, "s"] \\
C \ar[r, hook, "t"'] & D.
\ar[ul, phantom, "\ulcorner", very near start]
\end{tikzcd}\]
Since a topos is an adhesive category, this is also a pullback square and so $e$ is the equalizer of $s$ and $t$. Since pushouts are multifunnel colimits, $D$ lies in $\Ccal_s$, so the same is true there.
\end{proof}

Thus we can make Corollary \ref{crly:images} more precise.
\begin{corollary}
\label{crly:orthog}
If $\Ecal$ is supercompactly generated, then $\Ccal_s$ has an orthogonal (strict epi,mono)-factorization system. More generally, if $\Ecal$ is merely compactly generated, $\Ccal_c$ has an orthogonal (epi,mono)-factorization system.
\end{corollary}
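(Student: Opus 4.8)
The plan is to obtain both factorization systems by restricting the canonical (epi, mono) orthogonal factorization system of the ambient topos $\Ecal$ to the full subcategories $\Ccal_s$ and $\Ccal_c$, and then to use the preceding lemmas to identify which classes of $\Ccal_s$- and $\Ccal_c$-morphisms the restricted classes correspond to. Everything reduces to assembling Corollaries \ref{crly:images} and \ref{crly:strict} with Lemmas \ref{lem:monocoincide} and \ref{lem:compactepi}.

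First I would dispatch the factorization half. Given a morphism $f:A \to B$ in $\Ccal_s$, factor it in $\Ecal$ as $A \too I \hookrightarrow B$. By Lemma \ref{lem:closed} the image $I$ is a quotient of the supercompact object $A$, hence supercompact, so (as already recorded in Corollary \ref{crly:images}) the whole factorization lies in $\Ccal_s$. The epic leg $A \too I$ is an epimorphism in $\Ecal$, so by Corollary \ref{crly:strict} it is a \emph{strict} epimorphism in $\Ccal_s$; the monic leg $I \hookrightarrow B$ remains a monomorphism in $\Ccal_s$ by Lemma \ref{lem:monocoincide}. This is the required (strict epi, mono)-factorization.

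Next comes orthogonality. Suppose we are given a commuting square in $\Ccal_s$ whose left edge $e$ is a strict epimorphism and whose right edge $m$ is a monomorphism. By Corollary \ref{crly:strict}, $e$ is an epimorphism in $\Ecal$, and by Lemma \ref{lem:monocoincide}, $m$ is a monomorphism in $\Ecal$; the orthogonality of the (epi, mono)-system of $\Ecal$ then furnishes a unique diagonal filler in $\Ecal$. Because $\Ccal_s$ is a \emph{full} subcategory and the filler's domain and codomain already lie in $\Ccal_s$, that filler is a morphism of $\Ccal_s$, and its uniqueness is inherited since the relevant hom-sets coincide. As strict epimorphisms and monomorphisms both contain the isomorphisms, the existence of factorizations together with this orthogonality is exactly what is needed to conclude that (strict epi, mono) is an orthogonal factorization system on $\Ccal_s$.

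The compact case runs identically, with one simplification: by Lemma \ref{lem:compactepi} a morphism in $\Ccal_c$ is epic in $\Ccal_c$ precisely when it is epic in $\Ecal$, and every such epimorphism is automatically strict, so the left class is simply the epimorphisms of $\Ccal_c$ and we obtain an honest (epi, mono)-system. The only point that genuinely requires care throughout is the bookkeeping of \emph{which} notion of epimorphism lives in \emph{which} category: in $\Ccal_s$ the correct left class is the strict epimorphisms (equivalently, the $\Ecal$-epimorphisms), and a priori the naive epimorphisms of $\Ccal_s$ need not coincide with these, since epimorphisms are not preserved or reflected by a full inclusion in general. This is precisely why the $\Ccal_s$-statement is phrased with strict epimorphisms while the $\Ccal_c$-statement can afford plain epimorphisms; once this distinction is respected, the transfer of orthogonality across the full inclusion is immediate.
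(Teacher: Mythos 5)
Your proposal is correct and follows essentially the same route as the paper: the corollary there is presented as an immediate refinement of Corollary \ref{crly:images}, assembling exactly the ingredients you use — the image factorization inherited from $\Ecal$, the identification of $\Ecal$-epimorphisms in $\Ccal_s$ with strict epimorphisms (Corollary \ref{crly:strict}), the coincidence of monomorphisms (Lemma \ref{lem:monocoincide}), and Lemma \ref{lem:compactepi} for the $\Ccal_c$ case — with orthogonality transferred across the full inclusion. Your closing remark about carefully distinguishing $\Ccal_s$-epimorphisms from $\Ecal$-epimorphisms is precisely the point the paper itself emphasizes in Section \ref{ssec:twoval}.
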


For later reference, we observe that even though the categories $\Ccal_s$ and $\Ccal_c$ need not have finite products (see Example \ref{xmpl:nonreg}), we can still extend Corollaries \ref{crly:images} and \ref{crly:orthog} with factorizations of spans through jointly monic spans. Corollary \ref{crly:orthog} is the case $I = 1$ of the following Lemma.

\begin{lemma}
\label{lem:jmonic}
Let $\{f_j: B \to A_j \mid j \in I\}$ be a collection of morphisms with common domain in $\Ccal_s$ or $\Ccal_c$. Then there exists a strict epimorphism $e: B \too R$ and morphisms $\{r_j: R \to A_j \mid j \in I\}$ which are jointly monic, such that $f_j = r_j \circ e$.
\end{lemma}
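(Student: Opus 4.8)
The plan is to do all the real work in the ambient topos $\Ecal$, where small products exist, and then transport the resulting factorization back into the subcategory using the closure results already established. The key observation is that although $\Ccal_s$ and $\Ccal_c$ need not contain the product $\prod_{j\in I} A_j$ (this is exactly the pathology flagged before the statement), I only ever need this product as an auxiliary object computed in $\Ecal$; the object $R$ I eventually produce will re-enter the subcategory on its own.

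Concretely, I would first form the product $P := \prod_{j\in I} A_j$ in $\Ecal$ (which exists since $\Ecal$ is a Grothendieck topos and $I$ is a set), together with the morphism $\langle f_j\rangle_{j\in I}: B \to P$ induced by the family. Taking the (epi, mono)-factorization of this morphism in $\Ecal$ yields $B \xrightarrow{e} R \xrightarrow{m} P$ with $e$ epic and $m$ monic. Since $R$ is a quotient of the supercompact (resp. compact) object $B$, Lemma \ref{lem:closed} forces $R$ to be supercompact (resp. compact), so $R \in \Ccal_s$ (resp. $\Ccal_c$). Setting $r_j := \pi_j \circ m : R \to A_j$, these morphisms lie in the subcategory by fullness, and $f_j = \pi_j \circ m \circ e = r_j \circ e$ because $\pi_j \circ \langle f_j\rangle = f_j$.

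It then remains to verify the two qualifying adjectives, and here is where the cited machinery does the work. That $e$ is a \emph{strict} epimorphism in $\Ccal_s$ follows from Corollary \ref{crly:strict}, since $e$ is epic in $\Ecal$; in the compact case, $e$ is epic in $\Ccal_c$ by restriction and hence strict by Lemma \ref{lem:compactepi}. For joint monicity of $\{r_j\}$, note that the morphism $R \to P$ induced by the $r_j$ is exactly $m$, which is monic in $\Ecal$; consequently, for any parallel pair $g,h : Q \rightrightarrows R$ (with $Q$ in $\Ecal$, in particular for $Q$ in $\Ccal_s$ or $\Ccal_c$) satisfying $r_j g = r_j h$ for all $j$, we have $m g = m h$ and therefore $g = h$. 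Thus $\{r_j\}$ is jointly monic, completing the factorization.

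There is essentially no hard computation here; the only real subtlety — and the thing I would be careful to get right — is that the auxiliary product $P$ lives outside $\Ccal_s$ and $\Ccal_c$, so the entire argument hinges on two transfers across the inclusion into $\Ecal$: that the image $R$ re-enters the subcategory (Lemma \ref{lem:closed}) and that ``epic in $\Ecal$'' upgrades to ``strict epic in the subcategory'' (Corollary \ref{crly:strict}, resp. Lemma \ref{lem:compactepi}). Making sure strictness and joint monicity are tested in, and inherited from, the correct category is the whole content, and each of these points is covered by an earlier lemma.
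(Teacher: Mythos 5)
Your proof is correct, but it is not the paper's argument; the two constructions are dual to one another. The paper builds $R$ as a \emph{colimit}: it takes the funneling colimit, inside $\Ccal_s$ (resp.\ $\Ccal_c$), of the diagram of all parallel pairs into $B$ that are coequalized by every $f_j$ --- a joint coequalizer --- which stays in the subcategory by Lemma \ref{lem:closed2}. With that route, strictness of $e$ is immediate (strict epimorphisms are exactly the morphisms presenting funneling colimits), all the $f_j$ factor through $e$ by the cocone property, and the part requiring thought is joint monicity of the factors. Your route inverts the division of labour: by realizing $R$ as the image of $\langle f_j\rangle : B \to \prod_j A_j$ computed in $\Ecal$, joint monicity is free (it is inherited from monicity of $m$ and the universal property of the product), while strictness of $e$ becomes the nontrivial point, which you outsource to Corollary \ref{crly:strict} (resp.\ Lemma \ref{lem:compactepi}). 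The one thing to be aware of is that this outsourcing imports the hypothesis that $\Ecal$ is supercompactly (resp.\ compactly) generated, since that is what Corollary \ref{crly:strict} and the relevant clause of Lemma \ref{lem:compactepi} assume; the paper's colimit construction of $e$ and of the factorization needs no such hypothesis (only Lemma \ref{lem:closed2}, valid in any Grothendieck topos). Since the lemma is stated as an extension of Corollary \ref{crly:orthog}, which carries exactly this hypothesis, and every later application of it is in the (super)compactly generated setting, this costs you nothing here --- but your proof genuinely proves a slightly less general statement than the colimit construction does, and it is worth being explicit about where the generation hypothesis enters. A minor stylistic bonus of your approach is that it makes the relationship to the usual (epi, mono) image factorization in $\Ecal$ completely transparent, whereas the paper's $R$ is only identified with that image after the fact.
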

\begin{proof}
Let $e$ be the strict epimorphism obtained from the funneling colimit of the collection of all parallel pairs of morphism which are coequalized by all of the $f_j$. By definition, all of the $f_j$ factorize through it, and by construction the factors form a jointly monic family.
\end{proof}

\subsection{Canonical sites of supercompact objects}
\label{ssec:cansite}

We have now done enough work to usefully apply the proof of Giraud's theorem and obtain a canonical site of definition for a supercompactly or compactly generated Grothendieck topos.

\begin{definition}
\label{dfn:effective}
Recall that a sieve $S$ on an object $C$ of a category $\Ccal$ is \textbf{effective-epimorphic} if, when $S$ is viewed as a full subcategory of $\Ccal/C$, $C$ is the colimit of the (possibly large) diagram $D_S: S \hookrightarrow \Ccal/C \to \Ccal$ obtained by composing with the forgetful functor. A sieve generated by a single morphism $f$ is effective-epimorphic if and only if the morphism is a strict epimorphism. Such a sieve $S$ is \textbf{universally} effective-epimorphic if its pullback along the functor $\Ccal/D \to \Ccal/C$ induced by a morphism $f: D \to C$ is effective-epimorphic for any $f$.

The \textbf{canonical Grothendieck topology} $J_{can}^{\Ccal}$ on $\Ccal$ is the topology whose covering sieves are precisely the universally effective-epimorphic ones. If $\Ccal$ is a Grothendieck topos, this coincides with the Grothendieck topology whose covering sieves are those containing small jointly epic families.
\end{definition}

We recall the following result, which appears as \cite[Lemma 4.35]{Dense}:

\begin{lemma}
\label{lem:coincide}
Let $\Ecal$ be a Grothendieck topos and $\Ccal$ a small full separating subcategory of $\Ecal$. Let $S$ be a sieve in $\Ccal$ on an object $C$ and let $D_S$ be the diagram in $\Ccal$ described in Definition \ref{dfn:effective}. Suppose that the colimit of $D_S$ in $\Ecal$ lies in $\Ccal$. Then $S$ is universally effective-epimorphic in $\Ccal$ if and only if it is the restriction to $\Ccal$ of a sieve containing a small jointly epic family in $\Ecal$.
\end{lemma}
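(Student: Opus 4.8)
The plan is to reduce everything to the sheaf representation $\Ecal \simeq \Sh(\Ccal, K)$ afforded by the Comparison Lemma, where $K := J_{can}^{\Ecal}|_{\Ccal}$ is the topology whose covering sieves are exactly the restrictions to $\Ccal$ of the $J_{can}^{\Ecal}$-covering sieves (equivalently, those sieves containing a family that is jointly epic in $\Ecal$). Writing $\mathbf{a}$ for the associated sheafification functor and $\yon$ for the Yoneda embedding into $\Setswith{\Ccal}$, the first observation is that for any $A \in \Ccal$ the presheaf $\yon(A) = \Hom_{\Ecal}(-,A)|_{\Ccal}$ is already a $K$-sheaf, so that $\ell(A) = \mathbf{a}\,\yon(A) \cong A$ inside $\Ecal$. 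The second, and central, observation is that for any sieve $R$ on an object $C$ of $\Ccal$ the diagram $D_R$ of Definition \ref{dfn:effective} is precisely the diagram of representables indexed by the category of elements of $R$, viewed as a subpresheaf $R \hookrightarrow \yon(C)$; hence by the density (co-Yoneda) theorem its presheaf colimit is $R$ itself, and since $\mathbf{a}$ preserves colimits we obtain a canonical identification $\colim_{\Ecal} D_R \cong \mathbf{a}(R)$, compatible with the comparison map to $\mathbf{a}\,\yon(C) \cong C$. Combining these, $R$ is a $K$-covering sieve if and only if the monomorphism $R \hookrightarrow \yon(C)$ is $K$-dense, i.e.\ if and only if $\colim_{\Ecal} D_R \cong C$.

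With these tools the right-hand condition becomes transparent: saying that $S$ is the restriction to $\Ccal$ of a sieve containing a small jointly epic family is exactly saying $S \in K$, which by the previous paragraph is equivalent to $\colim_{\Ecal} D_S \cong C$. I would then prove the forward implication as follows. If $S$ is universally effective-epimorphic in $\Ccal$ then in particular it is effective-epimorphic, so $C \cong \colim_{\Ccal} D_S$. By the standing hypothesis $\colim_{\Ecal} D_S$ lies in $\Ccal$; since a colimit computed in $\Ecal$ that happens to land in the full subcategory $\Ccal$ is automatically the colimit computed in $\Ccal$, the two colimits agree, whence $\colim_{\Ecal} D_S \cong C$ and therefore $S \in K$. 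This is the only place the hypothesis is used, and it is genuinely needed: it is what bridges the a priori different colimits taken in $\Ccal$ and in $\Ecal$.

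For the converse I would exploit that $K$ is a Grothendieck topology. Assume $S \in K$, equivalently $\colim_{\Ecal} D_S \cong C$. For an arbitrary $f : D \to C$ in $\Ccal$, the pullback sieve $f^{*}S$ is again $K$-covering by stability of $K$ under pullback, so the central identification gives $\colim_{\Ecal} D_{f^{*}S} \cong \mathbf{a}(f^{*}S) \cong \ell(D) \cong D$, which lies in $\Ccal$. Landing in the full subcategory $\Ccal$, this colimit is simultaneously $\colim_{\Ccal} D_{f^{*}S}$, so $D \cong \colim_{\Ccal} D_{f^{*}S}$; that is, $f^{*}S$ is effective-epimorphic in $\Ccal$. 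As $f$ was arbitrary, $S$ is universally effective-epimorphic, completing the equivalence.

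The main obstacle is the central identification $\colim_{\Ecal} D_R \cong \mathbf{a}(R)$ together with the dictionary between $K$-covering sieves and $K$-dense subobjects of representables: one must check carefully that the indexing category of $D_R$ really is the category of elements of the subpresheaf $R \hookrightarrow \yon(C)$, that $\ell(A) \cong A$ for the objects $A$ of $\Ccal$ appearing in the diagram, and that sheafification turns the co-Yoneda presentation into the required colimit in $\Ecal$. An equivalent but more hands-on route, avoiding sheafification, would compute $\colim_{\Ecal} D_R$ as the union of the images of the morphisms of $R$, using that $\Ccal$ is separating so that overlaps of images are detected by pullbacks covered by objects of $\Ccal$, and that $\Ecal$, being a topos, has the descent needed to glue such unions; but this amounts to reproving the same density fact by hand.
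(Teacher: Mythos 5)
Your proof is correct, and in fact there is nothing in the thesis to compare it against: the paper quotes this lemma verbatim from \cite[Lemma 4.35]{Dense} without reproducing a proof, so your argument supplies a self-contained justification where the text defers to a citation. Your route — transporting everything across the Comparison Lemma equivalence $\Ecal \simeq \Sh(\Ccal,K)$ with $K = J_{can}^{\Ecal}|_{\Ccal}$, identifying $\colim_{\Ecal} D_R$ with $\mathbf{a}(R)$ via the density (co-Yoneda) theorem, and using that sheafification inverts a monomorphism of presheaves exactly when it is $K$-dense — is the standard sheaf-theoretic dictionary, and you deploy it correctly; in particular the asymmetric use of the standing hypothesis (only in the forward direction) is exactly right, since in the converse direction $\colim_{\Ecal}D_S \cong C$ lands in $\Ccal$ automatically. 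Two points should be made explicit in a full write-up. First, the converse direction needs $K$ to be pullback-stable, i.e.\ that the restriction of the canonical topology to a full separating subcategory is a genuine Grothendieck topology; this is part of the Giraud/Comparison-Lemma package the thesis already invokes in Theorem \ref{thm:canon}, but it is doing real work in your argument. Second, your parenthetical claim that the restrictions to $\Ccal$ of $J_{can}^{\Ecal}$-covering sieves are exactly the sieves of $\Ccal$ that are jointly epic in $\Ecal$ — which is what lets you read the right-hand side of the lemma as ``$S \in K(C)$'' — deserves its own short argument: given a covering sieve $T$ of $\Ecal$, each $g \colon E \to C$ in $T$ can be precomposed with a jointly epic family from objects of $\Ccal$ (separating) to land in $T \cap \Ccal$ by fullness, so $T \cap \Ccal$ is jointly epic; conversely the $\Ecal$-sieve generated by a jointly epic $\Ccal$-sieve $S$ restricts back to $S$ itself, again by fullness. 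With those two routine checks spelled out, the proof stands.
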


\begin{theorem}
\label{thm:canon}
Suppose $\Ecal$ is supercompactly generated. Let $J_r$ be the Grothendieck topology on $\Ccal_s$ whose covering sieves are those containing strict epimorphisms. Then $\Ecal \simeq \Sh(\Ccal_s,J_r)$.

Similarly, if $\Ecal$ is compactly generated, and $J_c$ is the Grothendieck topology on $\Ccal_c$ whose covering sieves are those containing strictly epic (equivalently, jointly epic) finite families. Then $\Ecal \simeq \Sh(\Ccal_c,J_c)$.
\end{theorem}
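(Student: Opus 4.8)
The plan is to get the equivalence from the Comparison Lemma and then identify the induced topology with $J_r$ (resp. $J_c$), the only real ingredient being the definition of supercompactness (resp. compactness) together with Corollary \ref{crly:strict}.

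First I would invoke the Comparison Lemma. Since $\Ecal$ is supercompactly generated, the category $\Ccal_s$ of supercompact objects is, after passing to a representative set as in Lemma \ref{lem:scsite}, a small full \emph{separating} subcategory of $\Ecal$. The remarks preceding Definition \ref{dfn:effective} then give that the canonical comparison functor $\Ecal \to \Sh(\Ccal_s, J_{can}^{\Ecal}|_{\Ccal_s})$ is an equivalence, where $J_{can}^{\Ecal}|_{\Ccal_s}$ is the topology induced on $\Ccal_s$ by the canonical topology of $\Ecal$. Unwinding the definition of the induced topology and the description of $J_{can}^{\Ecal}$ in Definition \ref{dfn:effective}, a sieve $S$ on an object $C \in \Ccal_s$ is $J_{can}^{\Ecal}|_{\Ccal_s}$-covering precisely when the family of morphisms comprising $S$ is jointly epic in $\Ecal$. (Lemma \ref{lem:coincide} gives the same identification phrased through universally effective-epimorphic sieves, the colimit hypothesis there being supplied by the closure of $\Ccal_s$ under funneling colimits in Lemma \ref{lem:closed2}.)

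It then remains to check $J_{can}^{\Ecal}|_{\Ccal_s} = J_r$. For $J_r \subseteq J_{can}^{\Ecal}|_{\Ccal_s}$: if $S$ contains a strict epimorphism $h$ of $\Ccal_s$, then $h$ is epic in $\Ecal$ by Corollary \ref{crly:strict}, so the morphisms of $S$ are already jointly epic and $S$ is covering. For the reverse inclusion: if $S$ is $J_{can}^{\Ecal}|_{\Ccal_s}$-covering, its morphisms form a jointly epic family $\{A_i \to C\}$ in $\Ecal$ with each $A_i$ and $C$ supercompact; by the very definition of supercompactness of $C$ this family must contain an epimorphism $h : A_i \to C$ of $\Ecal$, and by Corollary \ref{crly:strict} this $h$ is a strict epimorphism in $\Ccal_s$, so $S \in J_r$. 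Hence the two topologies coincide and $\Ecal \simeq \Sh(\Ccal_s, J_r)$. The compact case is entirely parallel: replace $\Ccal_s$ by $\Ccal_c$, supercompactness by compactness, and the extraction of a single epimorphism by the extraction of a finite jointly epic subfamily; the Comparison Lemma yields $\Ecal \simeq \Sh(\Ccal_c, J_{can}^{\Ecal}|_{\Ccal_c})$, and compactness of $C$ lets one cut a covering family down to a finite jointly epic subfamily, which by Lemma \ref{lem:compactepi} (or Corollary \ref{crly:strict}) is exactly a strictly epic finite family, while conversely any strictly epic finite family is jointly epic in $\Ecal$. Thus $J_{can}^{\Ecal}|_{\Ccal_c} = J_c$.

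The genuinely substantive content has already been discharged by the earlier lemmas, above all Corollary \ref{crly:strict}, which reconciles the intrinsic notion of (strict) epimorphism in $\Ccal_s$ and $\Ccal_c$ with epimorphy in $\Ecal$; given that, the remaining work is essentially bookkeeping. The one point to watch is therefore not a calculation but a matter of correctly reading off the induced topology: one must be sure that ``covering in $J_{can}^{\Ecal}|_{\Ccal_s}$'' really does translate into ``jointly epic in $\Ecal$'' (this is what the Comparison Lemma plus Definition \ref{dfn:effective} deliver), and that the passage from a jointly epic family to a single strict epimorphism is legitimate — which is precisely the definition of supercompactness, so it is built in rather than an extra hypothesis.
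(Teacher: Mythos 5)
Your proof is correct and follows essentially the same route as the paper's: Giraud's theorem/the Comparison Lemma gives $\Ecal \simeq \Sh(\Ccal_s, J_{can}^{\Ecal}|_{\Ccal_s})$, and the induced topology is matched with $J_r$ (resp.\ $J_c$) using the definition of supercompactness (resp.\ compactness) together with the identification of epimorphisms of $\Ecal$ with strict epimorphisms of $\Ccal_s$ (resp.\ strictly epic finite families of $\Ccal_c$). The only cosmetic difference is that you invoke Corollary \ref{crly:strict} for that identification where the paper re-derives it inline via Lemma \ref{lem:coincide} and Lemma \ref{lem:closed2}, which is the same underlying argument.
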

\begin{proof}
By Giraud's theorem, given a (small, full) separating subcategory $\Ccal$ of objects in a Grothendieck topos $\Ecal$, we have an equivalence of toposes $\Ecal \simeq \Sh(\Ccal, J_{can}^{\Ecal}|_{\Ccal})$, where $J_{can}^{\Ecal}|_{\Ccal}$ is the restriction of the canonical topology on $\Ecal$ to $\Ccal$, whose covering sieves are the intersections of $J_{can}^{\Ecal}$-sieves with $\Ccal$. Thus it suffices to show in each case that the restriction of the canonical topology is the topology described in the statement.

By Lemma \ref{lem:closed2}, the principal (resp. finitely generated) sieves $S$ on $\Ccal_s$ (resp. $\Ccal_c$), whose corresponding diagrams $D_S$ are funneling (resp. multifunneling) diagrams, have colimits contained in $\Ccal_s$ (resp. $\Ccal_c$), so Lemma \ref{lem:coincide} applies. Thus these are effective epimorphic sieves if and only if the generating morphism is a strict epimorphism in $\Ccal_s$ (resp. the generating morphisms form a strictly epimorphic finite family in $\Ccal_c$).

Now given any sieve $S$ containing a jointly epic family on an object of $\Ccal_s$ (resp. $\Ccal_c$) in $\Ecal$, by the definition of supercompact (resp. compact) objects, $S$ must contain an epimorphism (resp. a finite covering family). In particular, every $J_{can}^{\Ecal}|_{\Ccal_s}$-covering sieve contains a $J_{can}^{\Ecal}|_{\Ccal_s}$-covering \textit{principal} sieve. Similarly, every $J_{can}^{\Ecal}|_{\Ccal_c}$-covering sieve contains a $J_{can}^{\Ecal}|_{\Ccal_c}$-covering \textit{finitely generated} sieve.

It follows that the strict epimorphisms in $\Ccal_s$ are precisely the morphisms generating universally effective-epimorphic families and similarly for strict jointly epimorphic families in $\Ccal_c$), as required. Alternatively, see the proof of Proposition \ref{prop:representable} below for a direct argument showing that the strict epimorphisms (resp. strictly epic finite families) are stable.
\end{proof}

\begin{remark}
\label{rmk:Pcompact}
It should be clear by now from our joint treatment of
supercompactness and compactness that much of our analysis can be applied to more general notions of compactness. Indeed, Theorem \ref{thm:canon} is an explicit special case of \cite[Proposition 4.36]{Dense}.

Suppose $P$ is some property of pre-sieves (families of morphisms with common codomain); then we may define \textit{$P$-compact} objects in a topos $\Ecal$ as those for which every jointly epic covering family contains a jointly epic presieve satisfying $P$. If $P$ satisfies suitable composition and stability criteria, which Caramello specifies, and the full subcategory $\Ccal_P$ of $\Ecal$ on the $P$-compact objects is separating and closed under certain colimits, then $\Ecal$ is equivalent to the category of sheaves on $\Ccal_P$ for the topology generated by the effective-epimorphic $P$-presieves in $\Ccal_P$. For supercompactness, $P$ is the property `is a singleton', while for ordinary compactness, $P$ is the property `is finite', and our earlier results show that these do satisfy Caramello's criteria. More generally, any property which descends along epimorphisms can yield an interesting class of toposes; this is the methodology of \cite{SCGI}.

While we shall not attempt to extend the present chapter to this most general case, we encourage the reader to explore whether any given topos of interest to them is $P$-compactly generated for some suitable property $P$, and if so to compute the corresponding site produced by Caramello's result.
\end{remark}

The advantage of the intrinsic expressions for the Grothendieck topologies in Theorem \ref{thm:canon} is that it guarantees that the categories of (super)compact objects contain enough information to completely reconstruct the toposes \textit{by themselves}. This immediately gives us results such as the following:
\begin{corollary}
\label{crly:MoritaCs}
Suppose $\Ecal$ and $\Ecal'$ are supercompactly generated toposes and $\Ccal_s$, $\Ccal'_s$ are their respective categories of supercompact objects. Then $\Ecal \simeq \Ecal'$ if and only if $\Ccal_s \simeq \Ccal'_s$.
\end{corollary}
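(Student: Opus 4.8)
The plan is to derive this immediately from the intrinsic site description of Theorem \ref{thm:canon}, which represents any supercompactly generated topos as $\Sh(\Ccal_s, J_r)$ where $J_r$ depends only on the internal categorical structure of $\Ccal_s$. The whole argument then reduces to checking that both the subcategory $\Ccal_s$ and the topology $J_r$ are invariants of the respective structures being compared.

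First I would dispatch the forward direction. Assuming an equivalence $\Ecal \simeq \Ecal'$, I would observe that supercompactness --- ``every jointly epic family into the object contains an epimorphism'' (Definition \ref{dfn:scompact}) --- is phrased purely in terms of jointly epic families and epimorphisms, both of which are preserved and reflected by any equivalence of categories. Hence the equivalence restricts to the full subcategories of supercompact objects, yielding $\Ccal_s \simeq \Ccal'_s$.

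For the converse I would exploit the fact that $J_r$ is defined intrinsically: by Theorem \ref{thm:canon} its covering sieves are exactly those containing a strict epimorphism, and being a strict epimorphism is characterized by a universal property (among morphisms coequalizing the same parallel pairs) that is manifestly stable under equivalence of categories. Therefore any equivalence $G: \Ccal_s \simeq \Ccal'_s$ carries $J_r$-covering sieves to $J_r'$-covering sieves and back, and thus lifts to an equivalence $\Sh(\Ccal_s, J_r) \simeq \Sh(\Ccal'_s, J_r')$. Composing this with the equivalences $\Ecal \simeq \Sh(\Ccal_s, J_r)$ and $\Ecal' \simeq \Sh(\Ccal'_s, J_r')$ supplied by Theorem \ref{thm:canon} gives $\Ecal \simeq \Ecal'$, as required.

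I do not expect any genuine obstacle here; the one point requiring a moment's care is confirming that the topology $J_r$ really is formulated without reference to the ambient topos $\Ecal$ --- that is, that the strict-epimorphism condition is internal to $\Ccal_s$ and does not secretly depend on $\Ecal$. This is precisely what the intrinsic formulation in Theorem \ref{thm:canon} (as opposed to the restricted canonical topology $J_{can}^{\Ecal}|_{\Ccal_s}$) was established to guarantee, so the corollary follows with essentially no additional work.
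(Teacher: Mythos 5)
Your proposal is correct and follows the paper's own route: the paper derives this corollary immediately from Theorem \ref{thm:canon}, whose whole purpose was to replace the restricted canonical topology $J_{can}^{\Ecal}|_{\Ccal_s}$ by the intrinsic topology $J_r$ generated by strict epimorphisms, so that $\Ecal \simeq \Sh(\Ccal_s, J_r)$ is reconstructible from $\Ccal_s$ alone. Your two directions — invariance of supercompactness under equivalence, and transport of the intrinsically defined $J_r$ along an equivalence $\Ccal_s \simeq \Ccal'_s$ — are exactly the content the paper is appealing to.
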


\subsection{Cokernels, well-supported objects and two-valued toposes}
\label{ssec:twoval}

We saw in Lemmas \ref{lem:compactepi} and \ref{lem:compactmono} that when $\Ecal$ is compactly generated, every epimorphism in $\Ccal_c$ is strict and every monomorphism in $\Ccal_c$ is regular. This leads us to wonder under what extra conditions these facts hold true in $\Ccal_s$, given that $\Ecal$ is supercompactly generated.

\begin{example}
To properly motivate this section, we show that epimorphisms in $\Ccal_s$ need not coincide with those in $\Ecal$. Let $\Dcal$ be the category
\[\begin{tikzcd}
A & B \ar[l, "l"'] \ar[r, "r"] & C.
\end{tikzcd}\]
In the topos $\Ecal$ of presheaves on $\Dcal$ it is easily calculated that the supercompact objects are precisely the representables, so $\Dcal$ coincides with $\Ccal_s$ (we shall see in Proposition \ref{prop:localic} that this argument is valid for all posets). The morphisms $l$ and $r$ are trivially epic in $\Dcal$ but are not epic in $\Ecal$.
\end{example}

Our main tool in this section is the following definition.
\begin{definition}
Given a morphism $f:A \to B$, its \textbf{cokernel}\footnote{Not to be confused with the \textit{cokernel pairs} mentioned in the proof of Lemma \ref{lem:compactmono}.} $B \to B/f$ is the pushout:
\[\begin{tikzcd}
A \ar[r, "f"] \ar[d, "!"'] & B \ar[d] \\
1 \ar[r, "x"] & B/f \ar[ul, "\ulcorner", phantom, very near start].
\end{tikzcd}\]
\end{definition}

Cokernels are useful for understanding epimorphisms thanks to the following result.
\begin{lemma}
\label{lem:isokernel}
A morphism $f: A \to B$ of a topos $\Ecal$ is an epimorphism if and only if the lower morphism $x:1 \to B/f$ of its cokernel is an isomorphism (or equivalently, an epimorphism).
\end{lemma}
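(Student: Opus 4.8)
The plan is to exploit a single structural fact — that any morphism out of the terminal object is monic — together with the behaviour of pushouts in a topos. First I would record the preliminary observation that $x : 1 \to B/f$ is always a monomorphism, since any two morphisms with codomain $1$ agree; because a topos is balanced, this already shows that $x$ is an isomorphism if and only if it is an epimorphism, which settles the parenthetical equivalence in the statement. It then remains only to prove that $f$ is epic if and only if $x$ is epic.

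For the forward implication I would read off the defining square that $x$ is precisely the pushout of $f$ along the unique map $!: A \to 1$. Since the pushout of an epimorphism is again an epimorphism (this requires no hypothesis beyond the existence of the pushout), $f$ epic forces $x$ epic, and hence, by the monic observation and balancedness, an isomorphism.

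The converse is where the real work lies, and it is the step I expect to be the main obstacle: the defining square is a pushout of the span $B \xleftarrow{f} A \xrightarrow{!} 1$, in which neither leg is a monomorphism in general, so adhesivity cannot be applied to it directly. My plan is to reduce to the monic case by factoring $f = m \circ e$ with $e$ a strict epimorphism and $m : I \hookrightarrow B$ the image inclusion, and to show that the cokernel sees only the image, i.e. $B/f \cong B/m$. Concretely, I would push out $e$ along $!$ first: the resulting map $1 \to P$ is epic (pushout of an epi) and monic (terminal domain), hence an isomorphism, so $P \cong 1$. The pasting lemma for pushouts then identifies $B/f$ with the pushout of the monomorphism $m$ along $!: I \to 1$, carrying $x$ to the corresponding point $1 \to B/m$.

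With $f$ replaced by the monomorphism $m$, I would invoke adhesivity of the topos exactly as in the proof of Lemma \ref{lem:compactmono}: the pushout of $m$ along $!$ is simultaneously a pullback, which exhibits $m$ as the pullback of $x : 1 \to B/m$ along the remaining leg $B \to B/m$. A pullback of an isomorphism is an isomorphism, so $x$ iso forces $m$ iso, whence $f = m \circ e$ is a composite of an epimorphism and an isomorphism and is therefore epic. The only points demanding care are the correct bookkeeping in the pushout pasting lemma during the reduction to the image, and confirming that the adhesive square is the one with the monomorphism $m$ in the position being pushed out.
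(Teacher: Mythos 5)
Your proof is correct and takes essentially the same route as the paper's: the forward direction via pushout-of-epi together with the observation that morphisms out of $1$ are monic (so epic quotients of $1$ are isomorphisms in a balanced topos), and the converse via the (epi, mono) factorization $f = m \circ e$, the pushout pasting lemma, and adhesivity applied to the pushout along the monomorphism $m$. The only cosmetic difference is bookkeeping: you use the composition form of the pasting lemma plus uniqueness of pushouts to identify $B/f \cong B/m$ before invoking adhesivity, whereas the paper uses the cancellation form (left square and outer rectangle pushouts imply the right square is a pushout) to reach the same monic square.
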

\begin{proof}
If $f$ is an epimorphism we have:
\[\begin{tikzcd}
A \ar[r, "f", two heads] \ar[d, "!"'] & B \ar[d] \\
1 \ar[r, "x", two heads] & B/f
\ar[ul, "\ulcorner", phantom, very near start],
\end{tikzcd}\] 
since the pushout of an epimorphism is epic. But any quotient of $1$ in a topos is an isomorphism, as required.

Conversely, if $x:1 \to B/f$ is an isomorphism, we can consider the (epi,mono) factorization $f = m \circ e$:
\[\begin{tikzcd}
A \ar[rr, bend left, "f"] \ar[r, "e", two heads] \ar[d, "!"'] &
A' \ar[r, "m", hook] \ar[d, "!"'] & B \ar[d] \\
1 \ar[rr, bend right, "x"'] \ar[r, "\sim"] &
1 \ar[r, "\sim"] \ar[ul, "\ulcorner", phantom, very near start] &
B/f.
\end{tikzcd}\]
By the first part, the left hand square and outside rectangle are both pushouts, which makes the right hand square a pushout. But in a topos (or any adhesive category), a pushout square in which the upper horizontal morphism is monic is also a pullback square. Thus $m$ is an isomorphism, and $f$ is epic.
\end{proof}

Recall that an object $A$ of a category with a terminal object is \textbf{well-supported} if the unique morphism $!_A:A \to 1$ is an epimorphism in $\Ecal$. Using cokernels, we obtain a partial dual to Lemma \ref{lem:monocoincide} even without requiring $\Ecal$ to be (super)compactly generated.
\begin{lemma}
\label{lem:presic}
Let $\Ecal$ be a topos and $\Ccal_s$ the usual subcategory. Let $A$ be an object of $\Ccal_s$ which is well-supported as an object of $\Ecal$. Then a morphism $A \to B$ of $\Ccal_s$ is an epimorphism in that category if and only if it is an epimorphism in $\Ecal$.
\end{lemma}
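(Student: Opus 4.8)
The plan is to prove the two directions separately, with the forward implication routine and the reverse implication the substantive one, proved via the cokernel machinery already established in Lemma \ref{lem:isokernel}. First I would dispatch the easy direction: if $f : A \to B$ is an epimorphism in $\Ecal$, then it is automatically an epimorphism in $\Ccal_s$, and this requires no hypothesis on $A$. Since $\Ccal_s$ is a \emph{full} subcategory, any parallel pair $g,h : B \rightrightarrows C$ in $\Ccal_s$ that needs separating is in particular a parallel pair in $\Ecal$, so $gf = hf$ forces $g = h$. This is exactly dual to the observation made for monomorphisms in the proof of Lemma \ref{lem:monocoincide}.

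For the reverse direction, suppose $f$ is an epimorphism in $\Ccal_s$ and form its cokernel, the pushout of $1 \xleftarrow{!_A} A \xrightarrow{f} B$ with legs $x : 1 \to B/f$ and $q : B \to B/f$. By Lemma \ref{lem:isokernel}, it suffices to show that $x$ is an epimorphism in $\Ecal$. The crucial preliminary step --- and the unique place where the hypothesis is used --- is to observe that $B/f$ itself lies in $\Ccal_s$. Indeed, $A$ being well-supported means $!_A$ is epic; since pushouts preserve epimorphisms, its pushout $q : B \to B/f$ is epic; and as $B$ is supercompact, its quotient $B/f$ is supercompact as well, by the closure under quotients of Lemma \ref{lem:closed}.

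With $B/f \in \Ccal_s$ secured, I would finish by a short diagram chase. Because $!_B \circ f = !_A$ by uniqueness of maps into $1$, and the cokernel square commutes, we have $q \circ f = x \circ !_A = (x \circ !_B) \circ f$. Both $q$ and $x \circ !_B$ are morphisms $B \to B/f$ between objects of $\Ccal_s$, hence morphisms of $\Ccal_s$, so the assumption that $f$ is epic \emph{in $\Ccal_s$} yields $q = x \circ !_B$. Since $q$ is epic, the composite $x \circ !_B$ is epic, whence $x$ is epic in $\Ecal$; Lemma \ref{lem:isokernel} then delivers that $f$ is epic in $\Ecal$, as required.

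I expect the main obstacle to be precisely the identification of $B/f$ as a supercompact object: this is what allows the $\Ccal_s$-epimorphism hypothesis to take effect, and it is exactly where well-supportedness is indispensable. The example preceding the lemma shows that without such a hypothesis a $\Ccal_s$-epimorphism need not be an $\Ecal$-epimorphism, so any correct argument must use the hypothesis essentially; routing it through the cokernel, so that well-supportedness forces $q$ to be epic and hence $B/f$ to be a quotient of the supercompact object $B$, is the natural mechanism for doing so.
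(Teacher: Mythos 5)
Your proof is correct and follows essentially the same route as the paper's: both form the cokernel pushout, use well-supportedness of $A$ to conclude $q:B \too B/f$ is epic and hence $B/f$ is supercompact by Lemma \ref{lem:closed}, then use the factorization $q \circ f = (x \circ !_B)\circ f$ together with epicness of $f$ in $\Ccal_s$ to get $q = x \circ !_B$, so that $x$ is epic and Lemma \ref{lem:isokernel} applies. The only cosmetic difference is that the paper additionally notes $x$ is then an isomorphism (any quotient of $1$ in a topos is), which your argument does not need to invoke explicitly.
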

\begin{proof}
An epimorphism of $\Ecal$ lying in $\Ccal_s$ clearly remains epic there.

Conversely, if $e : A \too B$ is epic in $\Ccal_s$, consider the cokernel $B \to B/e$:
\[\begin{tikzcd}
A \ar[r, "e"] \ar[d, "!_{A}"', two heads] &
B \ar[d, "q", two heads] \\
1 \ar[r, "x"] &
B/e \ar[ul, "\ulcorner", phantom, very near start].
\end{tikzcd}\]
By assumption $B/e$ is supercompact as $q$ is epic in $\Ecal$. The unique morphism $!_{A}:A \to 1$ factors through $e$ via the unique morphism $!_{B}:B \to 1$. Thus we have $qe = x!_{A} = x!_{B}e$, and since these expressions are composed of morphisms lying in $\Ccal_s$ where $e$ is epic, it follows that $q = x!_{B}$, whence $x$ is epic and hence an isomorphism. Hence $e$ is epic in $\Ecal$, by Lemma \ref{lem:isokernel}.
\end{proof}

It is worth noting that the existence of any well-supported supercompact object forces the terminal object of $\Ecal$ to be supercompact. Moreover, this proof concertedly fails when the object $A$ is not well-supported:

\begin{lemma}
\label{lem:coker}
Let $\Ecal$, $\Ccal_s$ be as above. Then $\Ccal_s$ is closed under cokernels if and only if every supercompact object is well-supported.
\end{lemma}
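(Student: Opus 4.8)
The plan is to prove the two implications separately, with the reverse direction being the routine one and the forward direction requiring a well-chosen test object.

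For the direction \emph{well-supported $\Rightarrow$ closed under cokernels}, I would start from an arbitrary morphism $f:A\to B$ in $\Ccal_s$ and look at the cokernel square. By definition the map $q:B\to B/f$ is the pushout of $!_A:A\to 1$ along $f$. Since $A$ is supercompact it is well-supported by hypothesis, so $!_A$ is an epimorphism; as pushouts of epimorphisms are again epimorphisms, $q$ is epic. Hence $B/f$ is a quotient of the supercompact object $B$, and is therefore supercompact by Lemma \ref{lem:closed}. This direction is essentially immediate once one notes that the relevant leg of the cokernel pushout is precisely the pushout of $!_A$.

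For the converse, \emph{closed under cokernels $\Rightarrow$ well-supported}, the idea is to feed the closure hypothesis a morphism whose cokernel detects well-supportedness. First I would bootstrap the fact that the terminal object itself is supercompact: assuming $\Ccal_s$ is non-empty (the empty case being vacuous), pick any $A_0\in\Ccal_s$ and observe that the cokernel of $\id_{A_0}$ is the pushout of $!_{A_0}$ along an isomorphism, hence is $1$; closure then gives $1\in\Ccal_s$. With $1$ now supercompact, for an arbitrary supercompact $A$ the morphism $!_A:A\to 1$ lies in $\Ccal_s$, so its cokernel $Q:=1/!_A\cong 1\sqcup_A 1$ is supercompact by hypothesis. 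The two coprojections $x,q:1\rightrightarrows Q$ make $[x,q]:1\sqcup 1\to Q$ epic (a pushout is a quotient of the coproduct), so $\{x,q\}$ is jointly epic; supercompactness of $Q$ forces one of them to be an epimorphism, and since both legs of the defining span are $!_A$ there is an automorphism of $Q$ swapping $x$ and $q$, so in either case $x$ is epic. Then Lemma \ref{lem:isokernel}, applied to the cokernel of $!_A$, converts epicness of $x$ into epicness of $!_A$, i.e.\ well-supportedness of $A$.

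I expect the main obstacle to be the forward direction: specifically, realizing that the right morphism to test is $!_A:A\to 1$ (rather than, say, $\id_A$, whose cokernel is trivially $1$), and that making this test legitimate first requires the separate observation that $1\in\Ccal_s$. The remaining subtlety is the clean translation between ``a coprojection of the pushout $1\sqcup_A 1$ is epic'' and ``$!_A$ is epic,'' which is exactly what Lemma \ref{lem:isokernel} supplies; the symmetry of the span lets one avoid treating $x$ and $q$ as genuinely different cases.
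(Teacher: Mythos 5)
Your proof is correct, and both directions run on the same engine as the paper's: the easy direction is identical (the pushout of the epimorphism $!_A$ is epic, so the cokernel is a quotient of $B$ and Lemma \ref{lem:closed} applies), and the hard direction applies the closure hypothesis to a morphism into $1$ and uses supercompactness of the resulting pushout $1 \sqcup_{(-)} 1$ to force one coprojection to be epic. The implementation differs in two ways worth recording. First, the paper tests with the support $U \hookrightarrow 1$ of $A$ (supercompact as a quotient of $A$) and concludes $U \cong 1$ directly from the fact that any epimorphism out of $1$ in a topos is an isomorphism, whereas you test with $!_A$ itself and convert epicity of the coprojection $x\colon 1 \to 1 \sqcup_A 1$ into epicity of $!_A$ via Lemma \ref{lem:isokernel}; these are equivalent mechanisms, since Lemma \ref{lem:isokernel} packages the same adhesivity argument, and your use of the swap automorphism of $1 \sqcup_A 1$ cleanly handles the case distinction. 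Second, and more substantively, your preliminary bootstrap --- that closure under cokernels already forces $1 \in \Ccal_s$, by taking the cokernel of an identity morphism --- does not appear in the paper's proof, and the paper actually needs it: applying the closure hypothesis to $U \hookrightarrow 1$ presupposes that this is a morphism of $\Ccal_s$, i.e.\ that $1$ is supercompact, which is not otherwise known at that point in the argument. So your version is slightly more careful than the paper's own, at the modest cost of one extra (trivial) step.
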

\begin{proof}
Given a supercompact object $A$, consider its \textbf{support}, which is the subterminal object $U \hookrightarrow 1$ in the factorization of the morphism $!_A$. By Lemma \ref{lem:closed}, $U$ is supercompact, so if $\Ccal_s$ is closed under cokernels, the pushout of $U \hookrightarrow 1$ along itself must be in $\Ccal_s$. But the colimit morphisms $1 \rightrightarrows 1 +_U 1$ are jointly epic, so one of them must be an epimorphism and hence an isomorphism, which forces $U \cong 1$. Thus $A$ is well-supported, as required.

Conversely, if every object $A$ of $\Ccal_s$ is well-supported then the cokernel (in $\Ecal$) of a morphism $A \to B$ in $\Ccal_s$ is a quotient of $B$ and so is supercompact. Thus $\Ccal_s$ is closed under cokernels, as required.
\end{proof}

We shall see a relevant sufficient condition under which the set-up of Lemma \ref{lem:coker} arises in Proposition \ref{prop:hype2}. In this setting we can also strengthen Lemma \ref{lem:monocoincide}.

\begin{scholium}
\label{schl:regularmono}
Let $\Ecal$ be a topos such that every object of $\Ccal_s$ is well-supported. Then monomorphisms in $\Ccal_s$ inherited from $\Ecal$ are regular. In particular, if $\Ecal$ is also supercompactly generated, then all monomorphisms in $\Ccal_s$ are regular.
\end{scholium}
\begin{proof}
By Proposition \ref{prop:hype2}, the hypotheses guarantee that the cokernel of a morphism in $\Ccal_s$ also lies in that category.

As remarked in the proof of Lemma \ref{lem:isokernel}, the pushout square defining the cokernel of an inclusion of supercompact objects $i: A \hookrightarrow B$ is also a pullback in $\Ecal$. It follows easily that $i$ is the equalizer in $\Ecal$ of the morphisms $q,x!_B: B \rightrightarrows B/i$ described in the proof of Lemma \ref{lem:presic}, and is consequently also their equalizer in $\Ccal_s$.

If $\Ecal$ is supercompactly (resp. compactly) generated then we may apply Lemma \ref{lem:monocoincide} to conclude that the above applies to all monomorphisms of $\Ccal_s$ (resp. $\Ccal_c$).
\end{proof}

On the other hand, we shall see in Example \ref{xmpl:freemon} that it is not in general possible to strengthen the properties of epimorphisms in $\Ccal_s$ beyond the consequence of the proof of Theorem \ref{thm:canon} that they are strict. In particular, they are not regular in general.

Finally, we explicitly record how Lemma \ref{lem:presic} simplifies the expression for the Grothendieck topology $J_r$ induced on $\Ccal_s$ from Theorem \ref{thm:canon}.
\begin{corollary}
\label{crly:site}
Let $\Ecal$ be a supercompactly generated topos and $\Ccal_s$ its full subcategory of supercompact objects. Suppose every object of $\Ccal_s$ is well-supported. Let $J_r$ be the topology on $\Ccal_s$ whose covering sieves are precisely those containing epimorphisms. Then $\Ecal \simeq \Sh(\Ccal_s,J_r)$.
\end{corollary}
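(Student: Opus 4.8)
The plan is to show that, under the extra hypothesis, the topology described in the statement coincides with the topology $J_r$ of Theorem \ref{thm:canon}, so that the asserted equivalence $\Ecal \simeq \Sh(\Ccal_s, J_r)$ is a mere restatement of that theorem. Both topologies are defined by declaring a sieve covering precisely when it contains a distinguished kind of morphism — a strict epimorphism in Theorem \ref{thm:canon}, an epimorphism of $\Ccal_s$ here — so it suffices to prove that, for morphisms of $\Ccal_s$, these two classes agree once every object of $\Ccal_s$ is well-supported.

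First I would invoke Corollary \ref{crly:strict}, which identifies the strict epimorphisms of $\Ccal_s$ with exactly the $\Ecal$-epimorphisms lying in $\Ccal_s$; this step uses only that $\Ecal$ is supercompactly generated and is independent of any well-supportedness assumption. Next I would bring in the hypothesis: since every object of $\Ccal_s$ is well-supported, every morphism of $\Ccal_s$ has well-supported domain, so Lemma \ref{lem:presic} applies to all of them, giving that a morphism of $\Ccal_s$ is an epimorphism in $\Ccal_s$ if and only if it is an epimorphism in $\Ecal$.

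Chaining the two biconditionals then yields, for an arbitrary morphism of $\Ccal_s$, that being an epimorphism in $\Ccal_s$ is equivalent to being an epimorphism in $\Ecal$, which in turn is equivalent to being a strict epimorphism in $\Ccal_s$. Consequently a sieve on an object of $\Ccal_s$ contains an epimorphism if and only if it contains a strict epimorphism, so the topology of the present statement is literally the topology $J_r$ of Theorem \ref{thm:canon}, and the equivalence follows immediately.

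The main point to be careful about — rather than a genuine obstacle — is verifying that the well-supportedness hypothesis is precisely what licenses the use of Lemma \ref{lem:presic} for every morphism of $\Ccal_s$, not merely for those whose source is known in advance to be well-supported; once this is observed, the content of the corollary is simply the promised simplification of the description of $J_r$, and no new equivalence of toposes needs to be established.
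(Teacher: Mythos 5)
Your proof is correct and takes essentially the same route as the paper, which states this corollary precisely as recording how Lemma \ref{lem:presic} simplifies the description of the topology $J_r$ from Theorem \ref{thm:canon}: under the well-supportedness hypothesis, Lemma \ref{lem:presic} identifies epimorphisms of $\Ccal_s$ with $\Ecal$-epimorphisms, and Corollary \ref{crly:strict} identifies these with strict epimorphisms of $\Ccal_s$, so the two topologies coincide and no new equivalence needs to be proved.
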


\subsection{Proper, polished and pristine morphisms}
\label{ssec:proper}

In this section we present some classes of geometric morphism whose inverse image functors interact well with supercompact and compact objects. The material in this section contains more technical topos theoretic concepts than that in the earlier sections, so may be skipped on a first reading; the key result is Proposition \ref{prop:relpres}. 

The latter part of Lemma \ref{lem:scompact}, regarding compact objects, states precisely that the geometric morphism $\Ecal/C \to \Set$ is proper in the sense of \cite[Definition I.1.8]{Compact} (see also \cite[C3.2.12 and C1.5.5]{Ele}), or equivalently that $\Ecal/C$ is a compact Grothendieck topos. We can generalize Moerdijk and Vermeulen's definition of proper morphisms to the arbitrary union case in order to capture the idea of supercompactness. In order to do this, we recall some classic topos-theoretic constructions, from \cite[Chapter 2]{TT}.

Recall from \cite[Definition 2.11]{TT} that an \textbf{internal category} $\Ibb$ in a topos (or, more generally, a cartesian category) $\Ecal$ consists of:
\begin{enumerate}[label = ({\roman*})]
	\item An \textit{objects of objects} $I_0$ and an \textit{object of morphisms} $I_1$ in $\Ecal$, and
	\item Morphisms $i:I_0 \to I_1$, $d,c:I_1 \rightrightarrows I_0$ and $m: I_2 \to I_1$,
\end{enumerate}
where $I_2 := c \times_{I_0} d$ is the \textit{object of composable pairs}, defined as the pullback:
\[\begin{tikzcd}
I_2 \ar[r, "\pi_2"] \ar[d,"\pi_1"']
\ar[dr, phantom, "\lrcorner", very near start] &
I_1 \ar[d, "d"]\\
I_1 \ar[r, "c"'] &
I_0.
\end{tikzcd}\]
The morphisms define the \textit{identity morphisms}, \textit{domains}, \textit{codomains} and \textit{composition}, respectively, and are required to satisfy the equations $di = ci = \id_{I_0}$, $dm = d\pi_1$, $cm = c\pi_2$, $m(\id \times m) = m(m \times \id)$ and $m(\id \times i) = m(i \times \id) = \id_{I_1}$. These data and equations are a diagrammatic translation of the axioms for ordinary categories in $\Set$. An \textbf{internal functor} between internal categories is a pair of morphisms between the respective objects of objects and objects of morphisms commuting with the respective structure morphisms.

\begin{definition}
\label{dfn:diagram}
We say an internal category $\Ibb$ in $\Ecal$ is \textbf{filtered} if the usual definition of filteredness, cf. \cite[\S VII.6]{MLM}, is satisfied internally. Since this internalization is rarely made explicit, we explain it in full here: $\Ibb$ is filtered if and only if the following three conditions are satisfied,
\begin{itemize}
	\item $\Ibb$ is internally \textbf{inhabited}\footnote{This is the 	constructive term for `non-emptiness', meaning `has an element'.}, which is to say that $I_0$ is a well-supported object.
	\item The morphism
	$\begin{tikzcd}
	c \times_{I_0} c \ar[r, hook] & I_1 \times I_1
	\ar[r, "d \times d"] & I_0 \times I_0,
	\end{tikzcd}$
	is an epimorphism; the domain is the `object of pairs of morphisms with common codomain'.
	\item Let $A \hookrightarrow I_1 \times I_1 \times I_1$ be the subobject $\left(d \times_{I_0} d \times \id_{I_1}\right) \cap \left(c \times_{I_0} c \times_{I_0} d\right)$, which is the internalization of $\{f,g,h \in I_1 \mid d(f) = d(g) \wedge c(f) = c(g) = d(h)\}$. Let $E$ be the equalizer of the morphisms $m \circ \pi_1, m \circ \pi_2: A \rightrightarrows I_1$, the subobject consisting of the triples satisfying $h \circ f = h \circ g$. Finally, let $B \hookrightarrow I_1 \times I_1$ be $(c \times_{I_0} c) \times_{I_1 \times I_1} (d \times_{I_0} d)$, the `object of parallel pairs of morphisms'. We require that the projection $\pi_1 \times \pi_2: E \to B$ be an epimorphism.
\end{itemize}
\end{definition}
While the constructions of Definition \ref{dfn:diagram} are a little technical, reasoning about filtered internal categories always amounts to a categorical re-expression of the usual external reasoning for such categories.

Given an internal category $\Ibb$ in $\Ecal$, we recall from \cite[Definition 2.14]{TT} that an (internal) \textbf{diagram} of shape $\Ibb$ consists of:
\begin{enumerate}
 	\item An object $a: F_0 \to I_0$ of $\Ecal/I_0$, and
 	\item A morphism $b: F_1 \to F_0$, 
\end{enumerate}
where $F_1$ is the defined as the pullback
\[\begin{tikzcd}
F_1 \ar[r, "\pi_2"] \ar[d,"\pi_1"']
\ar[dr, phantom, "\lrcorner", very near start] &
F_0 \ar[d, "a"]\\
I_1 \ar[r, "d"'] &
I_0,
\end{tikzcd}\]
such that $ab = c\pi_2$, $b(\id \times i) = \id_{F_0}$, and $e(e \times \id) = e(\id \times m)$. In $\Set$, this data captures the encoding of a functor into $\Set$ via the Grothendieck construction.

There is an accompanying notion of natural transformation, and hence we obtain the diagram category $[\Ibb,\Ecal]$. This is a topos over $\Ecal$, which is proved using the comonadicity theorem in \cite[Corollary 2.33]{TT}.

For any geometric morphism $f:\Fcal \to \Ecal$, since $f^*$ preserves finite limits, we can apply $f^*$ to the data of an internal category $\Ibb$ in $\Ecal$ to obtain an internal category $f^*(\Ibb)$ in $\Fcal$. There is an induced pullback square of diagram toposes:\footnote{See \cite[Corollary B3.2.12]{Ele} for an explanation of why this square is a pullback.}
\begin{equation}
\label{eq:pbdiagram}
\begin{tikzcd}[column sep = large]
{[f^*(\Ibb){,}\Fcal]} \ar[r, "f^{\Ibb}"] \ar[d,"\pi"']
\ar[dr, phantom, "\lrcorner", very near start] &
{[\Ibb{,}\Ecal]} \ar[d, "\pi"]\\
\Fcal \ar[r, "f"] &
\Ecal,
\end{tikzcd}
\end{equation}
Each vertical morphism labelled $\pi$ has inverse image functor sending an object to the `constant diagram of shape $\Ibb$'. As well as a right adjoint $\pi_*$, this functor always has an $\Ecal$-indexed left adjoint $\pi_!$. The functors $\pi_*$ and $\pi_!$ send an internal diagram to its (internal) limit and colimit, respectively.

\begin{definition}
\label{dfn:polished}
Let $E$ be an object of $\Ecal$ and $\Ibb$ an internal category (resp. inhabited internal category $\Ibb$; filtered internal category $\Ibb$) in $\Ecal/E$. Let $f:\Fcal \to \Ecal$ be a geometric morphism. Consider the special case of \eqref{eq:pbdiagram} where we take the lower geometric morphism to be $f/E$:
\begin{equation}
\label{eq:polished}
\begin{tikzcd}[column sep = large]
{[f^*(\Ibb){,}\Fcal/f^*(E)]} \ar[r, "(f/E)^{\Ibb}"] \ar[d,"\pi"'] \ar[dr, phantom, "\lrcorner", very near start] &
{[\Ibb{,}\Ecal/E]} \ar[d, "\pi"]\\
\Fcal/f^*(E) \ar[r, "f/E"] &
\Ecal/E.
\end{tikzcd}
\end{equation}

We call a geometric morphism $f:\Fcal\to \Ecal$ \textbf{pristine} (resp. \textbf{polished}; \textbf{proper}) if the square above satisfies the condition $\pi_! \circ (f/E)^{\Ibb}_*(V) \cong (f/E)_* \circ \pi_!(V)$ for every subterminal object $V$ of $[f^*(\Ibb),\Fcal/f^*(E)]$, for every choice of $E$ and $\Ibb$. This can be understood as stating that the direct image of $f$ preserves $\Ecal$-indexed (resp. $\Ecal$-indexed inhabited; $\Ecal$-indexed directed) unions of subobjects.

We call a Grothendieck topos \textbf{supercompact} (resp. \textbf{compact}) if its unique geometric morphism to $\Set$ is pristine (resp. proper), which by Lemma \ref{lem:scompact} occurs if and only if the terminal object has the corresponding property. The global sections morphism of a Grothendieck topos is polished if and only if the topos is supercompact or degenerate (that is, the terminal object is either supercompact or initial).
\end{definition}

Note that Moerdijk and Vermeulen denote $\pi_!$ by $\infty^*$ because, in the proper case, $\pi_!$ preserves finite limits and hence is the inverse image functor of a geometric morphism in the opposite direction. In the pristine and polished cases the left adjoint $\pi_!$ is in general not left exact, so this notation no longer makes sense.

\begin{remark}
A filtered \textit{preorder} is ordinarily called \textbf{directed}; we have of course already seen this notion in Lemma \ref{lem:scompact}. It is intuitive that a filtered diagram of subobjects can be re-expressed as a directed preorder-indexed diagram by identifying parallel indexing morphisms. We do this informally in Definition \ref{dfn:polished}, but it is formally justified: we shall see in Corollary \ref{crly:hypepres} that hyperconnected morphisms are proper, recovering the fact (\cite[Corollary I.2.5]{Compact}) that a geometric morphism is proper if and only if its localic part is, and the localic part of the canonical geometric morphism $[\Ibb,\Ecal] \to \Ecal$ is the corresponding morphism $[\Ibb',\Ecal] \to \Ecal$, where $\Ibb'$ is the internal preorder reflection of $\Ibb$.
\end{remark}

\begin{example}
\label{xmpl:compshf}
The presheaf topos $[\Ccal\op,\Set]$ is supercompact if and only if $\Ccal$ is a funnel in the sense of Definition \ref{dfn:funnel}. Indeed, if $\Ccal$ has a weakly terminal object $C_0$ then there is by inspection an epimorphism $\yon(C_0) \too 1$ in $[\Ccal\op,\Set]$, and conversely if $1$ is supercompact then one of the morphisms $\yon(C) \to 1$ must be an epimorphism, since these are jointly epic, whence every object of $\Ccal$ admits at least one morphism to the corresponding object of $\Ccal$. More generally, $[\Ccal\op,\Set]$ is compact if and only if $\Ccal$ is a multifunnel in the sense of Definition \ref{dfn:multifun}.
\end{example}

An immediate consequence of Definition \ref{dfn:polished} is that we can relativize the concepts of supercompactness and compactness to depend on the base topos over which we work (so far we have been implicitly working over $\Set$). Viewing a geometric morphism $\Fcal \to \Ecal$ as expressing $\Fcal$ as a topos over $\Ecal$, an object $X$ of $\Fcal$ is \textbf{$\Ecal$-compact} if the composite geometric morphism $\Fcal/X \to \Fcal \to \Ecal$ is proper, for example. Conversely, since in this thesis we will only be concerned with objects which are supercompact relative to some fixed base topos $\Scal$, it makes sense to employ the broader classes of geometric morphism introduced in \cite[Chapter V]{Compact}.

\begin{definition}
\label{dfn:relpolished}
Let $p:\Ecal \to \Scal$ and $q:\Fcal \to \Scal$ be toposes over $\Scal$. A geometric morphism $f:\Fcal \to \Ecal$ over $\Scal$ is \textbf{$\Scal$-relatively pristine} (resp. \textbf{$\Scal$-relatively polished}; \textbf{$\Scal$-relatively proper}) if its direct image preserves arbitrary $\Scal$-indexed unions (resp. inhabited $\Scal$-indexed unions; directed $\Scal$-indexed unions) of subobjects. Explicitly, this requires that the respective conditions of Definition \ref{dfn:polished} hold for diagrams $\Ibb$ in $\Ecal$ of the form $p^*(\Ibb')$, where $\Ibb'$ is a diagram category of the appropriate type in $\Scal$. We shall assume $\Scal$ is $\Set$ in the remainder, and so we drop the `$\Scal$-' prefixes.
\end{definition}

All of these definitions appear hard to work with in general for the simple reason that internal diagram categories take a significant amount of computation to express and work with concretely (which is to say externally) in any given case. However, the $\Scal$-relative notions conveniently coincide with their external counterparts, in a sense made precise in Lemma \ref{lem:relprop} below. Also, all of the notions are clearly stable under slicing and composition, from which we can extract general consequences which are sufficient for our purposes.

By inspection, we have the following relationships between Definitions \ref{dfn:polished} and \ref{dfn:relpolished}.
\begin{lemma}
\label{lem:relvsnorel}
Consider a commuting triangle of geometric morphisms:
\[\begin{tikzcd}
\Fcal \ar[rr, "f"] \ar[dr, "q"'] & & \Ecal \ar[dl, "p"]\\
& \Set. &
\end{tikzcd}\]
Then:
\begin{enumerate}
	\item If $f$ is pristine, it is relatively pristine.
	\item $p$ and $q$ are relatively pristine morphism if and only if they are pristine.
	\item If $f$ is relatively pristine and $p$ is pristine, then $q$ is pristine.
\end{enumerate}
The same statements hold when `pristine' is replaced by `polished' or `proper'.
\end{lemma}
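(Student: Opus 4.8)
The plan is to reduce all three statements, in each of the three variants (pristine, polished, proper), to two structural observations about Definitions \ref{dfn:polished} and \ref{dfn:relpolished}, after which (3) follows from stability under composition. The first observation is that the relative conditions form a subfamily of the absolute ones; the second is that this subfamily exhausts the absolute family precisely when the codomain is the base $\Set$.

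First I would record the subfamily remark, which yields (1) and the forward direction of (2). For a geometric morphism $f:\Fcal \to \Ecal$, the absolute condition of Definition \ref{dfn:polished} demands the identity $\pi_! \circ (f/E)^{\Ibb}_* \cong (f/E)_* \circ \pi_!$ for \emph{every} object $E$ of $\Ecal$ and every internal category $\Ibb$ in $\Ecal/E$ of the relevant type, while Definition \ref{dfn:relpolished} demands it only for $E = p^*(E')$ and $\Ibb = p^*(\Ibb')$ pulled back from the base along $p:\Ecal\to\Set$. Since the pulled-back diagrams are a subclass of all internal diagrams in $\Ecal$ (and $p^*$ sends inhabited, resp. filtered, categories to inhabited, resp. filtered ones, so the variants match up), the absolute hypothesis is stronger; hence pristine implies relatively pristine. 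This is exactly (1), and specialising to $p$ and to $q$ gives the forward implication of (2), with the polished and proper cases identical word for word.

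Next I would prove the reverse direction of (2) via the coincidence at the base. Regarding $p:\Ecal\to\Set$ (and likewise $q:\Fcal\to\Set$) as a topos over $\Scal=\Set$, the structure morphism of the codomain $\Set$ over $\Set$ is the identity, so the pulled-back diagrams $\mathrm{id}^*(\Ibb') = \Ibb'$ of the relative condition range over all internal categories $\Ibb'$ in $\Set$, which is precisely the family appearing in the absolute condition. Thus for morphisms whose codomain is the base the relative and absolute notions literally coincide, giving relatively pristine $\Rightarrow$ pristine and completing (2); again nothing changes for polished or proper. Statement (3) is then the composition argument: using $q = p\circ f$, so $q_* = p_*\circ f_*$ and $q^* = f^*\circ p^*$, a $\Set$-indexed union in $\Fcal$ is indexed by a diagram of the form $q^*(\Ibb') = f^*\!\big(p^*(\Ibb')\big)$ for $\Ibb'$ in $\Set$. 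Applying $f_*$ preserves it because $f$ is relatively pristine (the indexing diagram has the required $p^*(\Ibb')$-pulled-back shape), and the resulting $p^*(\Ibb')$-indexed union in $\Ecal$ is preserved by $p_*$ because $p$ is pristine (equivalently relatively pristine, by (2)); composing the two preservation isomorphisms shows $q_* = p_* f_*$ commutes with $\pi_!$, so $q$ is pristine, and the same chaining with ``inhabited'' or ``directed'' throughout handles the polished and proper cases.

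The main obstacle I anticipate is making the composition in (3) fully rigorous at the level of the diagram toposes, rather than just at the informal ``$\Set$-indexed union'' level: one must check that stacking the two instances of the pullback square \eqref{eq:pbdiagram} --- for $f$ on the diagram $p^*(\Ibb')$ and for $p$ on $\Ibb'$ --- produces the pullback square \eqref{eq:polished} for $q$ on $\Ibb'$, and that the three left adjoints $\pi_!$ relate so that the two Beck--Chevalley-type isomorphisms compose associatively. Once the relevant square is known to be a pullback (by \cite[Corollary B3.2.12]{Ele}, applied twice) this is a routine diagram chase, and it is exactly the ``stability under slicing and composition'' asserted in the text preceding the lemma; I would either invoke that assertion directly or spell out the single compatible stacking of squares to keep the argument self-contained.
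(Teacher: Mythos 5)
Your proposal is correct, and it is essentially the paper's own argument made explicit: the paper offers no written proof beyond the phrase ``by inspection'' together with the preceding remark that these notions are stable under slicing and composition, and your three steps (relative conditions as a subfamily of absolute ones, coincidence of the two families when the codomain is $\Set$, and the composition $q_* = p_* \circ f_*$ applied to $q^*(\Ibb') = f^*(p^*(\Ibb'))$-indexed unions) are exactly that inspection. The one point worth keeping in any write-up is the check you already flag, that $p^*$ sends inhabited (resp.\ filtered) internal categories to inhabited (resp.\ filtered) ones, since this is what makes the polished and proper variants go through verbatim.
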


A handy consequence of this for our objects of interest is the following:
\begin{corollary}
\label{crly:polished}
Let $f: \Fcal \to \Ecal$ be a geometric morphism between Grothendieck toposes. If $f$ is relatively pristine (resp. relatively polished, relatively proper), then $f^*$ preserves supercompact (resp. `supercompact or initial', compact) objects.
\end{corollary}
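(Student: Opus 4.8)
The plan is to reduce the statement to the dictionary between (super)compactness of an \emph{object} $C$ and (super)compactness of the \emph{slice topos} $\Ecal/C$, and then feed this into Lemma \ref{lem:relvsnorel}. Concretely, I would fix a supercompact (resp.\ compact; `supercompact or initial') object $C$ of $\Ecal$ and show that $f^*(C)$ has the corresponding property in $\Fcal$.

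First I would record the translation supplied by Lemma \ref{lem:scompact} together with Definition \ref{dfn:polished}: writing $p' : \Ecal/C \to \Set$ for the structure morphism of the slice, its direct image is the global sections functor $\Gamma : \Ecal/C \to \Set$, so $C$ is supercompact in $\Ecal$ precisely when $p'$ is pristine, $C$ is compact precisely when $p'$ is proper, and $C$ is `supercompact or initial' precisely when $p'$ is polished. In the polished clause the `degenerate' alternative of Definition \ref{dfn:polished} is exactly the case $C \cong 0$, since $0$ is strict initial and hence $\Ecal/0$ is the terminal topos; this is what matches the word `initial'. The same translation in $\Fcal$ shows that it suffices to prove that the structure morphism $q' : \Fcal/f^*(C) \to \Set$ is pristine (resp.\ proper, polished).

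Next I would form the sliced geometric morphism $f/C : \Fcal/f^*(C) \to \Ecal/C$ and observe that it lies in a triangle over $\Set$, namely $q' = p' \circ (f/C)$; this is immediate from compatibility of slicing with composition, using that $f$ itself lies over $\Set$. Since the relatively pristine / polished / proper conditions are stable under slicing (as noted after Definition \ref{dfn:relpolished}), the hypothesis on $f$ transfers to $f/C$. Now all the hypotheses of Lemma \ref{lem:relvsnorel}(3) are in place: $f/C$ is relatively pristine and $p'$ is pristine, whence $q' = p' \circ (f/C)$ is pristine; the polished and proper cases are handled by the same sentence, invoking the corresponding clauses of Lemma \ref{lem:relvsnorel}. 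Translating back through the first step gives that $f^*(C)$ is supercompact (resp.\ `supercompact or initial', compact), as required.

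The main obstacle, and essentially the only point needing genuine care, is the bookkeeping around the slice: checking that $f/C$ is well-defined, that the triangle $q' = p' \circ (f/C)$ commutes strictly over $\Set$, and that `stable under slicing' is applied with the correct base topos. A cleaner but more hands-on alternative, avoiding internal-diagram machinery, is a direct argument: given a covering of $f^*(C)$ by subobjects $\{B_i \hookrightarrow f^*(C)\}_{i \in I}$, apply $f_*$, which preserves the relevant union by relative pristineness, pull the resulting cover of $f_* f^*(C)$ back along the unit $\eta_C : C \to f_* f^*(C)$ (unions of subobjects being stable under pullback) to obtain a cover of $C$ by subobjects $C_i \hookrightarrow C$, use supercompactness of $C$ to find some $C_{i_0} \cong C$, and finally use that the transpose of $\eta_C$ is the identity on $f^*(C)$ to conclude that $B_{i_0} \hookrightarrow f^*(C)$ is split epic, hence an isomorphism. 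I would present the slice reduction as the main proof, since it is the evident intended use of Lemma \ref{lem:relvsnorel}, and keep the direct argument in reserve as motivation.
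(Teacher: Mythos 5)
Your main argument is exactly the paper's proof: slice over $C$, use that supercompactness (resp.\ compactness, `supercompact or initial'-ness) of $C$ makes the global sections morphism of $\Ecal/C$ pristine (resp.\ proper, polished), transfer the relative property of $f$ to $f/C$ by the stability under slicing noted after Definition \ref{dfn:relpolished}, and conclude with Lemma \ref{lem:relvsnorel}.3. Your reserve direct argument (pushing the cover forward along $f_*$, pulling back along the unit $\eta_C$, and splitting via the transpose) is also sound and is essentially the mechanism behind Lemma \ref{lem:relprop}, but the slice reduction you lead with is precisely the proof in the paper.
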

\begin{proof}
Given an object $E$ of $\Ecal$ and a relatively pristine (resp. relatively polished, relatively proper) morphism $f$, consider the triangle:
\[\begin{tikzcd}
\Fcal/f^*(E) \ar[rr, "f/E"] \ar[dr] & & \Ecal/E \ar[dl]\\
& \Set. &
\end{tikzcd}\]
If $E$ is supercompact (resp. `supercompact or initial', compact), then the global sections morphism of $\Ecal/E$ is also pristine (resp. polished, proper), so $f^*(E)$ must be supercompact (resp. supercompact or initial, compact) by Lemma \ref{lem:relvsnorel}.3.
\end{proof}

In order to make more explicit arguments, we now extend the characterization of relatively proper geometric morphisms in \cite[Proposition V.3.7(i)]{Compact}.
\begin{lemma}
\label{lem:relprop}
A geometric morphism $f:\Fcal \to \Ecal$ over $\Scal$ is relatively pristine (resp. relatively polished, relatively proper) if and only if for any $\Scal$-indexed\footnote{Taking $\Scal$ to be $\Set$, $\Scal$-indexed just means set-indexed, or small.} (resp. $\Scal$-inhabited, $\Scal$-directed) jointly epimorphic family $\{g_i: X_i \hookrightarrow f^*(Y)\}$ of subobjects in $\Fcal$ there exists a jointly epimorphic family $\{h_j: Y_j \to Y\}$ in $\Ecal$ such that each $f^*(h_j)$ factors through some $g_i$.
\end{lemma}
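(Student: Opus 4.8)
The plan is to prove both directions by unwinding Definition \ref{dfn:relpolished} into its concrete external meaning, using Lemma \ref{lem:scompact} as the template for translating statements about internal directed unions into statements about external jointly epimorphic families. Since we are working over $\Scal = \Set$, the internal diagram categories $p^*(\Ibb')$ appearing in Definition \ref{dfn:relpolished} reduce to ordinary (external) diagrams, so the content of relative pristineness (resp. polishedness, properness) is exactly that $f_*$ commutes with the formation of arbitrary (resp. inhabited, directed) set-indexed unions of subobjects. The statement to be proved is a reformulation of this commutation as a \emph{factorization} property, and the key conceptual point is the adjunction $f^* \dashv f_*$: preservation of a union by $f_*$ is equivalent to a lifting condition on covers in $\Fcal$ pulled back from $\Ecal$.

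First I would fix an object $Y$ of $\Ecal$ and a jointly epimorphic family $\{g_i : X_i \hookrightarrow f^*(Y)\}$ of subobjects in $\Fcal$ of the appropriate indexing type (set-indexed, inhabited, or directed). Replacing the family by the diagram of its subobjects, its union is $f^*(Y)$ itself. The task is to see that relative pristineness (etc.) of $f$ forces the existence of a jointly epic family $\{h_j : Y_j \to Y\}$ in $\Ecal$ with each $f^*(h_j)$ factoring through some $g_i$. For the forward direction, I would transpose across the adjunction: applying $f_*$ to the union $\bigcup_i X_i = f^*(Y)$ and using the hypothesis that $f_*$ preserves this union, one obtains that the subobjects $f_*(g_i) : f_*(X_i) \hookrightarrow f_*f^*(Y)$ cover $f_*f^*(Y)$. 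Composing with the unit $\eta_Y : Y \to f_*f^*(Y)$ and pulling the covering family back along $\eta_Y$ yields subobjects of $Y$; taking $Y_j$ to be the domains of these pullbacks (or, more robustly, the relevant subobjects arising from $f_* g_i$ restricted along $\eta$), one checks by the triangle identities that the transpose $f^*(h_j) \to f^*(Y)$ factors through $g_i$. The directedness/inhabitedness type of the family is preserved throughout because pullback and the passage through $f_*$ respect the shape of the indexing diagram.

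For the converse, I would assume the factorization property and show $f_*$ preserves the relevant unions. Given a union $\bigcup_i A_i = A$ of subobjects of an object $Z$ of $\Fcal$ of the appropriate type, I want to show $\bigcup_i f_*(A_i) = f_*(A)$ in $\Ecal$. Since $f_*$ preserves monomorphisms, each $f_*(A_i) \hookrightarrow f_*(A)$, and it suffices to show the union is all of $f_*(A)$, i.e.\ that a generalized element $Y \to f_*(A)$ (equivalently, by transposition, a morphism $f^*(Y) \to A$) locally factors through some $A_i$. Pulling the cover $\{A_i \hookrightarrow A\}$ back along $f^*(Y) \to A$ gives a jointly epic family of subobjects of $f^*(Y)$ of the same type, to which I apply the hypothesis to obtain a jointly epic $\{h_j : Y_j \to Y\}$ in $\Ecal$ with each $f^*(h_j)$ factoring through the pullback of some $A_i$; transposing back shows $Y \to f_*(A)$ factors locally through the $f_*(A_i)$, which is precisely the covering condition. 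The step I expect to be the main obstacle is handling the bookkeeping of the indexing \emph{type} (ensuring that the constructed family $\{h_j\}$ in $\Ecal$ is genuinely set-indexed, inhabited, or directed in lockstep with the given family, especially in the directed case where one must verify that pullbacks and the factorizations assemble into a directed system rather than merely a jointly epic set), together with verifying carefully that the relative (internal over $\Set$) formulation of Definition \ref{dfn:relpolished} really does collapse to the external union-preservation used here; for this last point I would lean on the cited \cite[Proposition V.3.7(i)]{Compact} for the proper case and note that the pristine and polished cases follow by the identical argument with the indexing type adjusted.
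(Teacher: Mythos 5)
Your proposal is correct and follows essentially the same route as the paper's proof: in the forward direction the covering family is obtained as the direct images of the $g_i$ under the sliced morphism $f/Y$ --- your recipe of applying $f_*$ and pulling back along the unit $\eta_Y$ is precisely the formula for $(f/Y)_*$ on subobjects, with the factorization coming from the slice counit --- and your converse, like the paper's, pulls the given family back along a transposed map and applies the factorization hypothesis. The only wrinkle is that you phrase everything via the global $f_*$ (and your converse proves preservation of unions of subobjects of an arbitrary object $Z$ of $\Fcal$, rather than the sliced condition on subobjects of $f^*(Y)$ required by Definition \ref{dfn:relpolished}); this is exactly the identification you flag at the end, and it is routine rather than a gap, since $f_*$ preserves pullbacks and unions of subobjects in a topos are stable under pullback, so the global and sliced formulations coincide.
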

\begin{proof}
Suppose $f$ has one of the relative properties and we are given a collection of subobjects of the relevant type. By assumption, their union is preserved by $(f/Y)_*$, whence there are subobjects $h_j: Y_j \hookrightarrow Y$ (the images under $(f/Y)_*$ of the subobjects) which are also jointly epic. By construction, each of these must have image under $f^*$ which factors through one or more of the $X_i$.

Conversely, given an $\Scal$-indexed diagram (of the relevant type) of subterminal objects $\{g_i: X_i \to f^*(Y)\}$ in $\Fcal/f^*(Y)$, let $U \hookrightarrow f^*(Y)$ be the union of the $g_i$, and let $Y' \hookrightarrow Y$ be its image under $(f/Y)_*$ in $\Ecal/Y$. Applying $f^*$, we have a monomorphism $f^*(Y') \hookrightarrow U$, so we can pull back to get a jointly epimorphic family of the appropriate shape $\{g'_i: X'_i \to f^*(Y')\}$ over $f^*(Y')$. This data is of the required form to apply the hypotheses, and the covering family of $Y'$ thus provided ensures that the union of the images of the $g_i$ under $(f/Y)_*$ is precisely $Y'$.
\end{proof}

It is an indirect consequence of Lemma \ref{lem:relprop} that the converse of Corollary \ref{crly:polished} cannot hold in general. Indeed, if the only compact object of $\Ecal$ is the initial object, such as in Example \ref{xmpl:Nlocalic} below, then the preservation of compact or supercompact objects by $f^*$ is a vacuous condition, but the required properties in the characterization of Lemma \ref{lem:relprop} are clearly non-trivial. However, $\Ecal$ failing to have enough supercompact (resp. compact) objects is the only obstacle.

\begin{proposition}
\label{prop:relpres}
Let $f:\Fcal \to \Ecal$ be a geometric morphism, and suppose $\Ecal$ is supercompactly generated. Then $f$ is relatively pristine if and only if $f^*$ preserves supercompact objects, and relatively polished if and only if $f^*$ preserves `supercompact or initial' objects. If $\Ecal$ is merely compactly generated, then $f$ is relatively proper if and only if $f^*$ preserves compact objects.
\end{proposition}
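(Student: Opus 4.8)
The plan is to split the biconditional into its two directions. The forward implications---that a relatively pristine (resp.\ polished, proper) morphism has an inverse image preserving supercompact (resp.\ `supercompact or initial', compact) objects---are already recorded in Corollary \ref{crly:polished}, and crucially do \emph{not} use any generation hypothesis. So only the converses remain, and it is exactly here that supercompact (resp.\ compact) generation of $\Ecal$ is needed. For all three converses I would argue through the characterization of the relative properties given by Lemma \ref{lem:relprop}: it suffices, given a small (resp.\ inhabited, directed) jointly epic family of subobjects $\{g_i : X_i \hookrightarrow f^*(Y)\}$ in $\Fcal$, to exhibit a jointly epic family $\{h_j : Y_j \to Y\}$ in $\Ecal$ such that each $f^*(h_j)$ factors through some $g_i$.

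For the pristine case, assume $\Ecal$ is supercompactly generated and $f^*$ preserves supercompact objects. Given $Y$, I would take $\{h_j : Y_j \to Y\}$ to be the family of \emph{all} morphisms into $Y$ whose domain is supercompact; this is jointly epic because the supercompact objects separate $\Ecal$, and it is a small family by Lemma \ref{lem:scsite}. Each $f^*(Y_j)$ is supercompact by hypothesis. Pulling the family $\{g_i\}$ back along $f^*(h_j) : f^*(Y_j) \to f^*(Y)$ yields a jointly epic family of subobjects of $f^*(Y_j)$ (a jointly epic family of subobjects remains jointly epic after pullback, since in a topos the pullback functor preserves unions of subobjects). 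The supercompact case of Lemma \ref{lem:scompact} then forces one member of this pulled-back family to be an isomorphism, which says precisely that $f^*(h_j)$ factors through the corresponding $g_i$. Lemma \ref{lem:relprop} then delivers relative pristineness. The proper case is formally identical, reading `compact' for `supercompact' and drawing the generating family from the compact objects of the compactly generated topos $\Ecal$: a \emph{directed} family of subobjects stays directed after pullback (the pullback functor is monotone on the poset of subobjects) and stays jointly epic, so the directed case of Lemma \ref{lem:scompact} again supplies the isomorphism, and the factorization follows.

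The polished case is where the `or initial' clause earns its place, and I expect this to be the only genuinely delicate point of the proof. Here $f^*(Y_j)$ is known only to be supercompact \emph{or} initial. When it is supercompact, the pristine argument applies unchanged. When $f^*(Y_j)$ is initial, the morphism $f^*(h_j) : 0 \to f^*(Y)$ factors through \emph{any} subobject $g_i$ by strictness of the initial object---and at least one $g_i$ is available precisely because the family $\{g_i\}$ is \emph{inhabited}, which is exactly the restriction attached to the polished case in Lemma \ref{lem:relprop}. This interlocking is the structural reason the inhabited hypothesis is the right partner for the weakened preservation property, and the main thing to check carefully is that it fits together cleanly with the separating family of supercompact objects, including the degenerate subcase where $Y$ is itself initial (so $f^*(Y)\cong 0$ and the empty family over $Y$ is vacuously jointly epic, the factorization condition holding vacuously). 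Once these bookkeeping points are confirmed, all three converses close, and combined with Corollary \ref{crly:polished} they give the stated equivalences.
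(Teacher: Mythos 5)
Your proposal is correct and follows essentially the same route as the paper's own proof: the forward directions are delegated to Corollary \ref{crly:polished}, and the converses are established via Lemma \ref{lem:relprop} by covering $Y$ with supercompact (resp.\ compact) pieces, pulling the given family of subobjects back along their images under $f^*$, and invoking Lemma \ref{lem:scompact}, with the initial-object/inhabited-family interplay handling the polished case exactly as in the paper. The only cosmetic difference is that you cover $Y$ by all morphisms from supercompact objects where the paper uses supercompact subobjects, and you spell out the degenerate cases that the paper dispatches with a parenthetical.
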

\begin{proof}
Given an arbitrary (resp. inhabited) jointly epic collection of subobjects $\{ g_i: X_i \hookrightarrow f^*(Y) \}$ in $\Fcal$, consider a (possibly empty) covering of $Y$ by supercompact subobjects $h_j : Y_j \hookrightarrow Y$ in $\Ecal$. Since each $f^*(Y_j)$ is supercompact (resp. supercompact or initial), pulling back the inclusions $g_i$ along $f^*(h_j)$, we conclude that one of the resulting inclusions $f^*(Y_j)$ (if there are any) must be an isomorphism by Lemma \ref{lem:scompact}; in the inhabited case, this is trivially true when $f^*(Y_j)$ is initial. Hence the $f^*(h_j)$ each factor through one of the $g_i$, whence the criteria of Lemma \ref{lem:relprop} are fulfilled. The compactly generated case, with a directed family of subobjects, is analogous.
\end{proof}

Since the initial object in any topos is strict, the distinction between relatively pristine and relatively polished morphisms is indeed as small as this proposition makes it seem.

\begin{lemma}
\label{lem:0refl}
Given a geometric morphism $f: \Fcal \to \Ecal$, $f^*$ reflects the initial object if and only if $f_*$ preserves it.
\end{lemma}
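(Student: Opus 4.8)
The plan is to reduce everything to two standard ingredients: the strictness of the initial object in a topos (any morphism with codomain $0$ is an isomorphism, a fact already invoked in the proof of Lemma \ref{lem:proj}) and the unit and counit of the adjunction $f^* \dashv f_*$. Both implications then follow purely formally, with no site-theoretic or sheaf-theoretic input.

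First I would treat the forward direction. Assuming $f^*$ reflects the initial object, I would feed the object $0$ into the counit. The component $\epsilon_0 : f^* f_*(0) \to 0$ is a morphism with codomain $0$, so strictness forces it to be an isomorphism, giving $f^* f_*(0) \cong 0$. Since $f^*$ reflects $0$ by hypothesis, this yields $f_*(0) \cong 0$, which is exactly the statement that $f_*$ preserves the initial object.

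For the converse I would run the dual argument through the unit. Assume $f_*(0) \cong 0$ and let $E$ be an object of $\Ecal$ with $f^*(E) \cong 0$. Applying $f_*$ gives $f_* f^*(E) \cong f_*(0) \cong 0$, and then the unit component $\eta_E : E \to f_* f^*(E) \cong 0$ is again a morphism with codomain $0$. Strictness makes $\eta_E$ an isomorphism, so $E \cong 0$; hence $f^*$ reflects the initial object.

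There is no genuine obstacle here: the proof is entirely formal once strictness of $0$ is in hand, and the only asymmetry worth flagging is that $f^*$ automatically \emph{preserves} $0$ (being a left adjoint), so the real content of the lemma is the interaction between the right adjoint $f_*$ and the strict initial object, captured symmetrically by the two triangle-unit computations above.
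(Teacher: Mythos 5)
Your proof is correct and follows the same route as the paper's: the forward direction via the counit at $0$ plus strictness, and the converse via the unit plus preservation of $0$ by $f_*$. The extra remark about $f^*$ automatically preserving $0$ is accurate but not needed; the argument itself matches the paper's essentially word for word.
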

\begin{proof}
If $f^*$ reflects $0$, then considering the counit $f^*f_*(0) \to 0$, we conclude that $f^*f_*(0)$ is initial (by strictness of the initial object), whence $f_*(0) \cong 0$ so $f_*$ preserves the initial object. Conversely, if $f_*$ preserves $0$, then given $C$ with $f^*(C) \cong 0$ the unit $C \to f_*f^*(C) \cong 0$ is a morphism to $0$, whence $C$ is initial.
\end{proof}

\begin{corollary}
\label{crly:pristine}
Let $f: \Fcal \to \Ecal$ be a geometric morphism, and suppose $\Ecal$ is supercompactly generated. Then $f$ is relatively pristine if and only if it is relatively polished and $f^*$ reflects the initial object.
\end{corollary}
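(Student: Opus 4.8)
The plan is to reduce everything to the characterisations of Proposition \ref{prop:relpres}. Since $\Ecal$ is supercompactly generated, that result identifies relative pristineness with the condition that $f^*$ preserve supercompact objects, and relative polishedness with the condition that $f^*$ preserve `supercompact or initial' objects. Phrased this way the statement becomes the purely formal claim that $f^*$ preserves supercompact objects if and only if it preserves `supercompact or initial' objects \emph{and} reflects the initial object. The one small external fact I would record at the outset is that a supercompact object is never initial: by Lemma \ref{lem:scompact} the empty family covers only $0$ yet contains no isomorphism, so $0$ fails the supercompactness criterion.

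For the forward implication, suppose $f$ is relatively pristine. Relative polishedness then holds with essentially no work, since inhabited unions of subobjects are a special case of arbitrary ones in the sense of Definition \ref{dfn:relpolished} (equivalently, via Proposition \ref{prop:relpres}, $f^*$ preserving supercompact objects preserves `supercompact or initial' objects because an inverse image functor preserves the initial object as a colimit). To see that $f^*$ reflects $0$, I would invoke the hypothesis that $\Ecal$ is supercompactly generated: given $C$ with $f^*(C) \cong 0$, cover $C$ by supercompact subobjects $Q \hookrightarrow C$, whose images are again supercompact by Lemma \ref{lem:closed}. Each such inclusion yields a monomorphism $f^*(Q) \hookrightarrow f^*(C) \cong 0$, so $f^*(Q) \cong 0$ by strictness of the initial object; but $f^*(Q)$ is supercompact and hence non-initial, a contradiction unless there are no such subobjects, which forces $C \cong 0$.

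For the converse, suppose $f$ is relatively polished and $f^*$ reflects the initial object. Let $Q$ be supercompact. By Proposition \ref{prop:relpres}, $f^*(Q)$ is `supercompact or initial'; were it initial, reflection of the initial object would force $Q \cong 0$, contradicting the supercompactness of $Q$. Hence $f^*(Q)$ is supercompact, so $f^*$ preserves supercompact objects and, by Proposition \ref{prop:relpres} again, $f$ is relatively pristine. (One could instead route the reflection hypothesis through Lemma \ref{lem:0refl}, but the direct argument above seems cleanest.)

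The hard part — indeed the only step that is not bookkeeping — is the reflection argument in the forward direction, where the assumption that $\Ecal$ is supercompactly generated is genuinely used: it is exactly what guarantees that a non-initial object admits a non-initial supercompact subobject on which to run the strictness contradiction. Once Proposition \ref{prop:relpres} and the non-initiality of supercompact objects are in hand, the remaining implications are immediate.
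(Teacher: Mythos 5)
Your proof is correct, and your backward direction coincides with the paper's. The forward direction, however, takes a genuinely different route for establishing that $f^*$ reflects the initial object. The paper observes that $0$ is the \emph{empty} union of subobjects of any object, so relative pristineness (which asks $f_*$ to preserve arbitrary $\Set$-indexed unions of subobjects, the empty one included) gives $f_*(0) \cong 0$ directly, and Lemma \ref{lem:0refl} then converts preservation of $0$ by $f_*$ into reflection of $0$ by $f^*$; as the paper explicitly notes, this implication holds even when $\Ecal$ is \emph{not} supercompactly generated. You instead route everything through Proposition \ref{prop:relpres}: preservation of supercompact objects, a cover of $C$ by supercompact subobjects, strictness of the initial object, and the non-initiality of supercompact objects together force $C \cong 0$. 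This is perfectly valid under the corollary's hypotheses—your use of supercompact generation is legitimate since it is assumed—but it proves slightly less: the paper's argument isolates the fact that pristine $\Rightarrow$ polished plus reflection of $0$ requires no hypothesis on $\Ecal$ whatsoever, whereas your argument genuinely consumes the generation hypothesis in that direction. What your approach buys in exchange is uniformity: both directions reduce to the single object-level characterization of Proposition \ref{prop:relpres}, with no appeal to the union-preservation formulation of Definition \ref{dfn:relpolished} or to Lemma \ref{lem:0refl}.
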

\begin{proof}
A (relatively) pristine morphism is (relatively) polished. Considering $0$ as an empty union of subobjects of any given object, it is preserved by $f_*$ by relative pristineness, so $f^*$ reflects $0$ by Lemma \ref{lem:0refl}, as required. Note that this implication holds even when $\Ecal$ is not supercompactly generated.

Conversely, given that $f$ is relatively polished and $f^*$ reflects $0$, we have that $f^*$ preserves `supercompact or initial' objects by Proposition \ref{prop:relpres}, but a supercompact object $X$ of $\Ecal$ is sent to an initial object if and only if it is initial, which is impossible, so $f^*(X)$ is supercompact, as required.
\end{proof}

We caution the reader that this ostensibly small difference between relatively polished and relatively pristine morphisms greatly impacts the frequency with which these classes of morphisms occur, as witnessed in Corollary \ref{crly:totconn} and Lemma \ref{lem:point} below.

\subsection{Inclusions and surjections}
\label{ssec:surj}

Recall, as we saw in Chapter \ref{chap:TDMA}, that a geometric morphism $f:\Fcal \to \Ecal$ is a \textbf{surjection} if $f^*$ is faithful, or equivalently if $f^*$ is a comonadic functor. Meanwhile, a geometric morphism $f:\Fcal \to \Ecal$ is an \textbf{inclusion} (or \textit{embedding}) if its direct image $f_*$ is full and faithful. See \cite[\S VII.4]{MLM} or \cite[A4.2]{Ele} for general results regarding these, which we shall assume familiarity with. In this section we examine how these two types of geometric morphism interact with supercompact and compact objects, as well as some of the classes of geometric morphism introduced in the last section.

Recall that a geometric inclusion $f: \Fcal \to \Ecal$ is \textbf{closed} if there is some subterminal object $U$ in $\Ecal$ such that $f_*f^*$ sends an object $X$ to the pushout of the product projections from $X \times U$. The following result illustrates why the pristine, polished and proper morphisms are too restrictive for analyzing supercompactly generated subtoposes.

\begin{lemma}
\label{lem:incl}
An inclusion of toposes $f: \Fcal \to \Ecal$ is proper if and only if it is closed, if and only if it is polished. An inclusion is pristine if and only if it is an equivalence.
\end{lemma}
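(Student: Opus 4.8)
The plan is to prove the two biconditionals separately, using the known equivalence between proper inclusions and closed inclusions together with the characterization of relative properness via Lemma \ref{lem:relprop}, specialized to the fact that $f^*$ is faithful (so $f^*$ reflects epimorphisms and jointly epimorphic families) when $f$ is an inclusion. Recall that for an inclusion $f_*$ is fully faithful, so the counit $f^*f_* \Rightarrow \id_{\Fcal}$ is an isomorphism; I would use this repeatedly to identify subobjects in $\Fcal$ with subobjects in the image of $f_*$.

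First I would establish that proper $\Leftrightarrow$ closed for inclusions. The implication that a closed inclusion is proper is standard (and follows from the fact that the closed-subtopos direct image $f_*f^*(X)$ is the pushout under $X \times U$, which one checks preserves directed unions of subobjects since $(-)\times U$ does and the pushout construction does too). For the converse, I would argue that a proper inclusion has direct image preserving the directed union of \emph{all} subterminal objects below a given subterminal, and that identifying the subterminal $U$ as $f_*f^*(0)$ (or rather the subterminal classifying the complement) recovers the closed-inclusion data; concretely, properness forces the subtopos to be of the form ``closed complement of an open'', which is exactly closedness. I would lean on \cite[Chapter I]{Compact} for the precise matching and avoid reproving it from scratch.

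Next I would prove proper $\Leftrightarrow$ polished for inclusions. Since every pristine/polished morphism is proper, the polished direction immediately gives proper. For the reverse, the point is that for an inclusion the distinction between preserving \emph{inhabited} directed unions and \emph{all} directed unions of subobjects collapses: the only extra case a polished (versus proper) morphism must handle is the empty union, i.e.\ the initial object, and a closed inclusion automatically has $f_*$ preserving $0$ (equivalently, $f^*$ reflects $0$) precisely because a closed subtopos contains the whole topos's initial object faithfully. More carefully, I would use Lemma \ref{lem:0refl}: for an inclusion $f^*$ is faithful and hence reflects $0$, so $f_*$ preserves $0$; thus a proper inclusion already satisfies the empty-union condition, which is the only additional requirement to be polished. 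Hence proper $\Leftrightarrow$ polished for inclusions.

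Finally, for pristine $\Leftrightarrow$ equivalence, I would use Corollary \ref{crly:pristine} in spirit: a pristine morphism is polished and additionally $f^*$ reflects $0$; but I actually want the stronger conclusion. An inclusion is pristine iff $f_*$ preserves \emph{all} (including empty, and more importantly arbitrary non-directed) unions of subobjects. The key observation is that pristineness of an inclusion forces $f_*$ to preserve all joins in subobject lattices, and combined with $f_*$ being fully faithful and $f^*$ preserving finite limits, this makes $f^*$ also preserve all colimits, so $f^*$ is both fully faithful (being the inverse of the fully faithful $f_*$ up to the counit iso) and an equivalence. The cleanest route: an inclusion corresponds to a Grothendieck topology/local operator $j$ on $\Ecal$, and $f_*$ preserves arbitrary unions of subobjects iff $j$ preserves arbitrary unions, iff $j$ is the identity local operator, iff $f$ is an equivalence. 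The main obstacle I anticipate is making the last step rigorous without invoking heavy local-operator machinery; I would instead argue directly that if $f$ is pristine then $f^*(U)$ is supercompact-reflecting enough to force the closed subterminal $U$ witnessing closedness to be $0$, so that the closed inclusion is the whole topos, i.e.\ an equivalence. The delicate point is ruling out non-closed pristine inclusions, which is handled by first noting pristine $\Rightarrow$ proper $\Rightarrow$ closed, and then showing a \emph{closed} pristine inclusion must be open-and-closed with empty closed part, hence an equivalence.
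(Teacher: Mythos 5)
Your first and third parts are essentially the paper's argument: the equivalence proper $\Leftrightarrow$ closed is quoted from the literature (the paper uses \cite[Remark C3.2.9]{Ele}, you use \cite{Compact}), and your treatment of the pristine case — reduce to a closed inclusion with subterminal $U$, then use reflection of the initial object to force $U \cong 0$ — is exactly the paper's proof, modulo the vague phrase ``supercompact-reflecting enough,'' which should just be: every subobject of $U$ is sent by $f^*$ to $0$, so $f^*$ reflects $0$ only if $U$ is initial.

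However, your middle step (proper $\Leftrightarrow$ polished) contains a genuine gap built on two errors. First, you have the definitions misaligned: the difference between polished and proper is \emph{not} the empty union — polished requires preserving \emph{arbitrary inhabited} unions of subobjects, whereas proper only requires \emph{directed} ones; the empty union is what separates \emph{pristine} from polished (your own third paragraph states this correctly, contradicting your second). Second, the claim ``for an inclusion $f^*$ is faithful and hence reflects $0$, so $f_*$ preserves $0$'' is false: faithfulness of $f^*$ characterizes \emph{surjections}, not inclusions, and a nontrivial closed inclusion precisely fails to reflect $0$, since $f^*$ kills the open complement $U$ (indeed $f_*(0) \cong U \neq 0$). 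Were your claim true, every proper inclusion would reflect $0$, hence by your own third part be an equivalence — absurd, since nontrivial closed subtoposes exist. The correct route, and the one the paper takes, is: proper $\Rightarrow$ closed (standard), and closed $\Rightarrow$ polished because the explicit pushout description of the direct image of a closed inclusion shows it preserves arbitrary \emph{inhabited} unions of subobjects (this is the content of \cite[Remark C3.2.9]{Ele}); combined with polished $\Rightarrow$ proper (directed families are inhabited), the three conditions are equivalent.
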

\begin{proof}
The first part is the conclusion of \cite[Remark C3.2.9]{Ele}, where it is observed that a closed inclusion has a direct image functor preserving arbitrary inhabited ($\Ecal$-indexed) unions of subobjects and that closed inclusions are stable under slicing, and conversely that any proper inclusion is closed.

Given that $f$ is a closed inclusion, any nontrivial subobject of the corresponding subterminal object $U$ in $\Ecal$ is sent by $f^*$ to the initial object in $\Fcal$, so the initial object is reflected if and only if $U$ is initial, in which case $f$ is an equivalence.
\end{proof}

The relative versions of these properties are well-behaved with respect to surjections and inclusions.

\begin{proposition}
\label{prop:surjinc}
Consider a factorization of a geometric morphism $f$,
\begin{equation}
\label{eq:factors}
\begin{tikzcd}
\Fcal \ar[rr, "f"] \ar[dr, "q"'] & & \Ecal\\
& \Gcal \ar[ur, "p"']. &
\end{tikzcd}
\end{equation}
\begin{enumerate}
	\item Suppose that $p$ is an inclusion. Then if $f$ is relatively pristine (resp. relatively polished, relatively proper), so is $q$.
	\item Suppose that $q$ is a surjection. Then if $f$ is relatively pristine (resp. relatively polished, relatively proper), so is $p$.
\end{enumerate}
It follows that a geometric morphism is relatively pristine (resp. relatively polished, relatively proper) if and only if both parts of its surjection-inclusion factorization are.
\end{proposition}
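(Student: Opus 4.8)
The plan is to prove the two numbered parts separately and then read off the final equivalence from them together with the stability of all three relative properties under composition, noted after Definition \ref{dfn:relpolished}. I will use two equivalent descriptions of the relative properties: the defining one, that the direct image preserves the relevant (arbitrary, inhabited or directed) unions of subobjects, and the covering-family reformulation of Lemma \ref{lem:relprop}. Throughout, both $p^*$ and $q^*$, being inverse image functors, are left exact and cocontinuous, so they preserve monomorphisms, pullbacks, images, unions of subobjects and jointly epic families, and they carry a family of subobjects to one of the same type (arbitrary, inhabited, or directed); I use these facts freely.

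For part 1, I would work with the direct-image description. Since $p$ is an inclusion, the counit gives a natural isomorphism $p^* p_* \cong \id_{\Gcal}$. Let $\{X_i \hookrightarrow A\}$ be a family of subobjects in $\Fcal$ of the relevant type, with union $U$. By hypothesis $f_* = p_* q_*$ preserves this union, so $p_* q_* U = \bigcup_i p_* q_* X_i$ in $\Ecal$. Applying the union-preserving functor $p^*$ and using $p^* p_* \cong \id_{\Gcal}$ on each term (naturality ensures this identification respects the monomorphisms, so it really is an identification of subobjects) yields $q_* U = \bigcup_i q_* X_i$ in $\Gcal$. Hence $q_*$ preserves the union, giving $q$ the corresponding relative property; the computation is identical for all three types.

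For part 2, I would instead verify the condition of Lemma \ref{lem:relprop} for $p$. Since $q$ is a surjection, $q^*$ is faithful, hence reflects epimorphisms; as a topos is balanced, a monomorphism that $q^*$ sends to an isomorphism is itself an isomorphism. Given a family $\{g_i : X_i \hookrightarrow p^*(Y)\}$ of subobjects in $\Gcal$ of the relevant type, apply $q^*$ to obtain a family $\{q^*(g_i) : q^*(X_i) \hookrightarrow q^* p^*(Y) = f^*(Y)\}$ in $\Fcal$, again jointly epic and of the same type. By the relative property of $f$ there is a jointly epic family $\{h_j : Y_j \to Y\}$ in $\Ecal$ with each $f^*(h_j)$ factoring through some $q^*(g_i)$. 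To reflect this factorisation down to $\Gcal$, form the pullback $P \hookrightarrow p^*(Y_j)$ of $g_i$ along $p^*(h_j)$; since $q^*$ preserves pullbacks, $q^*(P)$ is the pullback of $q^*(g_i)$ along $f^*(h_j)$, which is all of $f^*(Y_j)$ precisely because $f^*(h_j)$ factors through $q^*(g_i)$. Thus $q^*(P \hookrightarrow p^*(Y_j))$ is an isomorphism, so $P \hookrightarrow p^*(Y_j)$ is an isomorphism, i.e.\ $p^*(h_j)$ factors through $g_i$. This is exactly the condition of Lemma \ref{lem:relprop}, so $p$ has the relative property.

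I expect the reflection step in part 2 to be the main obstacle: part 1 is essentially formal once $p^* p_* \cong \id_{\Gcal}$ is available, but in part 2 there is no such identity on the $p_*$ side, so one must route the whole covering condition through $q^*$ and then pull the factorisation back, which is where faithfulness of $q^*$ and balancedness of the topos are essential. Finally, the concluding equivalence follows at once: every geometric morphism has a surjection--inclusion factorisation $\Fcal \xrightarrow{q} \Gcal \xrightarrow{p} \Ecal$ with $q$ a surjection and $p$ an inclusion; if $f$ is relatively pristine (resp.\ polished, proper), then part 1 applied to the inclusion $p$ gives the property for $q$, and part 2 applied to the surjection $q$ gives it for $p$, while the converse is the stability of these properties under composition.
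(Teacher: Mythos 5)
Your proof is correct, and the concluding deduction from the two parts plus closure under composition matches the paper exactly. Part 2 is essentially the paper's own argument: the paper also applies $q^*$, invokes Lemma \ref{lem:relprop} for $f$, and then reflects the factorization back down using conservativity of $q^*$ (it phrases the reflection via the intersection of $g_i$ with the image of $p^*(h_j)$, which is the same pullback you form), so there is nothing to separate the two beyond wording.

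Part 1 is where you genuinely diverge. The paper never leaves Lemma \ref{lem:relprop}: given a family of subobjects of $q^*(Z)$, it writes $Z \cong p^*(Y)$ with $Y = p_*(Z)$ (possible since $p$ is an inclusion), views the family as living over $f^*(Y)$, obtains the covering family $\{h_j \colon Y_j \to Y\}$ from the hypothesis on $f$, and observes that $\{p^*(h_j)\}$ is the required family for $q$ because $q^*p^*(h_j) = f^*(h_j)$. Your route through direct images, via $q_* \cong p^* f_*$, is slicker, but be aware that it silently substitutes for the paper's definition a naive external one: Definition \ref{dfn:relpolished} (through Definition \ref{dfn:polished}) asks that the \emph{sliced} direct images $(f/E)_*$ preserve unions of subobjects of objects of the form $f^*(E)$, not that the plain $f_*$ preserve unions of subobjects of arbitrary objects $A$ of $\Fcal$. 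The two conditions are in fact equivalent --- one direction follows by pulling a family of subobjects of $A$ back along the counit $\epsilon_A$ and checking $(f/f_*(A))_*(\epsilon_A^*(-)) \cong f_*(-)$ via the triangle identity, the other by pulling back along the unit --- but this equivalence is a small lemma in its own right which the paper does not supply (Lemma \ref{lem:relprop} is its substitute for exactly this purpose). So either prove that equivalence first, or run your computation slice-wise (noting $(q/Z)_* \cong (p/Y)^* (f/Y)_*$ and that $p/Y$ is again an inclusion), or simply phrase part 1 through Lemma \ref{lem:relprop} as the paper does. What your version buys, once the equivalence is in hand, is a one-line formal argument; what the paper's version buys is that the whole proposition runs through a single working characterization with no appeal to the indexed definitions.
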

\begin{proof}
We use the characterization of these properties from Lemma \ref{lem:relprop}.

1. Given a jointly epic family (resp. inhabited family, directed family) $\{g_i: X_i \hookrightarrow q^*(Z)\}$ in $\Ecal$, we may express $Z$ up to isomorphism as $p^*(Y)$ (taking $Y = p_*(Z)$, say), so this can without loss of generality be seen as a family $\{g_i: X_i \hookrightarrow f^*(Y)\}$. Since $f$ is relatively pristine (resp. relatively polished, relatively proper), we have a jointly epic family $\{h_j:Y_j \to Y\}$ such that each $f^*(h_j)$ factors through some $g_i$, and hence $\{p^*(h_j): p^*(Y_j) \to Z\}$ is the required family to fulfill the criterion of Lemma \ref{lem:relprop}.

2. Given a jointly epic family (resp. inhabited family, directed family) $\{g_i: X_i \hookrightarrow p^*(Y)\}$ in $\Ecal$, we have $\{q^*(g_i): q^*(X_i) \hookrightarrow f^*(Y)\}$ in $\Fcal$ being of the desired form to ensure that there is a covering family $\{h_j: Y_j \to Y\}$ such that each $f^*(h_j)$ factors through some $q^*(g_i)$. But then $q^*$ being conservative forces the $p^*(h_j)$ to factor through $g_i$, as required. Indeed, the intersection of $g_i$ with the image of $p^*(h_j)$ is preserved by $q^*$, and one of the sides of the resulting pullback square is sent to an isomorphism, which is reflected by $q^*$.
\end{proof}

\begin{corollary}
\label{crly:ptcompact}
Any topos $\Ecal$ with a surjective point is supercompact. Any topos with a finite jointly surjective collection of points is compact.
\end{corollary}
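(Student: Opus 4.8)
The plan is to deduce both statements from Proposition \ref{prop:surjinc}.2 together with Lemma \ref{lem:relvsnorel}.2, in each case treating the point (or finite family of points) as the surjective leg of a factorization of a morphism between base toposes.

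For the first statement, let $p : \Set \to \Ecal$ be a surjective point and let $\gamma : \Ecal \to \Set$ be the unique global sections morphism. I would factor the essentially unique geometric morphism $\Set \to \Set$ as $\gamma \circ p$. Since any geometric endomorphism of $\Set$ is (equivalent to) the identity, this composite is an equivalence and hence pristine; a fortiori it is relatively pristine. As $p$ is by hypothesis a surjection, Proposition \ref{prop:surjinc}.2 applies to the factorization $\Set \xrightarrow{p} \Ecal \xrightarrow{\gamma} \Set$ and yields that $\gamma$ is relatively pristine. By Lemma \ref{lem:relvsnorel}.2 a structure morphism to the base $\Set$ is relatively pristine if and only if it is pristine, so $\gamma$ is pristine, which by Definition \ref{dfn:polished} is exactly the assertion that $\Ecal$ is supercompact.

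For the second statement, a finite jointly surjective family of points $\{p_k : \Set \to \Ecal\}_{k=1}^n$ assembles into a single geometric morphism $P : \coprod_{k=1}^n \Set \to \Ecal$, and joint surjectivity of the family is precisely surjectivity of $P$. The domain $\coprod_{k=1}^n \Set \simeq \Set^n$ is the topos of presheaves on the discrete category with $n$ objects, which is a multifunnel, so by Example \ref{xmpl:compshf} it is compact; equivalently its global sections morphism to $\Set$ is proper, hence relatively proper. Factoring the global sections morphism of $\Set^n$ as $\gamma \circ P$ and applying the proper case of Proposition \ref{prop:surjinc}.2 shows that $\gamma : \Ecal \to \Set$ is relatively proper, whence proper by Lemma \ref{lem:relvsnorel}.2, so that $\Ecal$ is compact.

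The argument involves no genuine difficulty; the only points requiring care are bookkeeping---checking that the composite lands on the correct base morphism, and orienting the factorization so that the (jointly) surjective point is the leg $q$ of Proposition \ref{prop:surjinc}.2---and, for the compact case, recognizing the finite family as a surjection out of $\Set^n$ and invoking Example \ref{xmpl:compshf} to see that this base topos is compact. As a sanity check one can also give an elementary direct proof: applying the jointly conservative inverse image functor(s) of the point(s) to a covering of $1$ by subobjects reduces the question to coverings of the terminal set, where a cover must contain (or be refined to a finite subfamily containing) the whole terminal set, and conservativity then lifts the resulting isomorphism back to $\Ecal$.
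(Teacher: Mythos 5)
Your proof is correct and is essentially the paper's own argument: both factor the (essentially unique) morphism $\Set \to \Set$ (resp.\ $\Set^n \simeq \Set/K \to \Set$) through the given surjection and apply Proposition \ref{prop:surjinc}.2 together with Lemma \ref{lem:relvsnorel} to conclude that the global sections morphism of $\Ecal$ is pristine (resp.\ proper). Your two additions---justifying compactness of $\Set^n$ via the multifunnel criterion of Example \ref{xmpl:compshf}, and the closing elementary sanity check via conservativity of the inverse images---are harmless elaborations of steps the paper leaves implicit.
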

\begin{proof}
The hypotheses correspond to the existence of a surjection $\Set \to \Ecal$, or a surjection $\Set/K \to \Ecal$ with $K$ finite. The unique morphism $\Set \to \Set$ is (an equivalence, and hence) pristine, while the morphism $\Set/K \to \Set$ is proper. Thus by Proposition \ref{prop:surjinc}.2 and Lemma \ref{lem:relvsnorel}, we conclude that $\Ecal$ is supercompact (resp. compact) over $\Set$.
\end{proof}

\begin{xmpl}
We saw in Chapter \ref{chap:TDMA} that the first part of Corollary \ref{crly:ptcompact} applies to toposes of the form $\Setswith{M}$ for $M$ a monoid; the latter part similarly applies to $\Setswith{\Ccal}$ for $\Ccal$ any small category with a finite number of objects, even if their idempotent completions may have many more objects.
\end{xmpl}

When the codomain of the geometric morphism is supercompactly (resp. compactly) generated, the simpler characterization of Proposition \ref{prop:relpres} comes to our aid.

\begin{lemma}
\label{lem:incl2}
Suppose that $\Ecal$ is supercompactly (resp. compactly) generated and $f: \Fcal \to \Ecal$ is an inclusion. If $f$ is relatively polished (resp. relatively proper), $\Fcal$ is also supercompactly (resp. compactly) generated.
\end{lemma}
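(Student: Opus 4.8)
The plan is to transport a separating family of supercompact objects from $\Ecal$ across the inclusion by applying the inverse image functor $f^*$, exploiting the fact that a relatively polished morphism sends supercompact objects to supercompact-or-initial ones. The bulk of the technical content has already been packaged into Proposition \ref{prop:relpres}, so the argument here should be a short transport lemma rather than anything involved.

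First I would record the two facts that do all the work. Since $f$ is an inclusion, $f_*$ is full and faithful, equivalently the counit $\epsilon: f^*f_* \to \id_{\Fcal}$ is an isomorphism. Since $\Ecal$ is supercompactly generated and $f$ is relatively polished, Proposition \ref{prop:relpres} (equivalently Corollary \ref{crly:polished}) tells us that $f^*$ sends every supercompact object of $\Ecal$ to an object of $\Fcal$ that is either supercompact or initial. Next I would take an arbitrary object $Y$ of $\Fcal$ and produce a jointly epic family of supercompact objects over it. As $\Ecal$ is supercompactly generated, its supercompact objects separate, so there is a jointly epic family $\{g_i: X_i \to f_*(Y)\}_{i \in I}$ with each $X_i$ supercompact in $\Ecal$. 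Applying $f^*$, which preserves all colimits and in particular joint epimorphisms, and then composing with the isomorphism $\epsilon_Y : f^*f_*(Y) \xrightarrow{\sim} Y$, yields a jointly epic family $\{f^*(X_i) \to Y\}_{i \in I}$ in $\Fcal$. Each $f^*(X_i)$ is supercompact or initial; discarding those that are initial leaves the family jointly epic, because a morphism out of the strict initial object has image $0$ and thus contributes nothing to the union of images. Hence $Y$ is covered by supercompact objects of $\Fcal$, and since $Y$ was arbitrary the supercompact objects of $\Fcal$ form a separating family, i.e.\ $\Fcal$ is supercompactly generated.

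The compact case runs identically but is even cleaner: relative properness gives, via Proposition \ref{prop:relpres}, that $f^*$ preserves compact objects outright, with no initial caveat. Covering $f_*(Y)$ by a jointly epic family of compact objects of $\Ecal$ and applying $f^*$ then produces a jointly epic family of compact objects over $Y$ directly, so no discarding step is needed.

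I do not expect a genuine obstacle here, since the heavy lifting is done by Proposition \ref{prop:relpres}; the only point requiring care is the verification that $f^*$ carries a jointly epic family to a jointly epic family and that deleting the maps with initial source preserves joint epimorphness. Both reduce to $f^*$ preserving coproducts and epimorphisms together with strictness of the initial object in a topos, so they are routine once stated explicitly.
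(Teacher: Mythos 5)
Your proof is correct and follows essentially the same route as the paper's: both apply Proposition \ref{prop:relpres} to see that $f^*$ sends the separating family of supercompact (resp. compact) objects of $\Ecal$ to supercompact-or-initial (resp. compact) objects, and then use the inclusion (via the counit isomorphism) to conclude these images separate $\Fcal$. Your version merely spells out the covering argument and the harmless discarding of initial objects, which the paper leaves implicit.
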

\begin{proof}
By Proposition \ref{prop:relpres}, the separating collection of supercompact (resp. compact) objects in $\Ecal$ is mapped by the inverse image of the relatively polished (resp. relatively proper) inclusion $f$ to a separating collection of supercompact or initial objects (resp. compact objects) in $\Fcal$. 
\end{proof}

We shall see in Corollary \ref{crly:incl} that Lemma \ref{lem:incl2} is optimal, in the sense that a topos is supercompactly (resp. compactly) generated if and only if it admits a relatively polished (resp. relatively proper) inclusion into a presheaf topos.

\begin{corollary}
\label{crly:factor2}
Suppose that $\Ecal$ is supercompactly (resp. compactly) generated. Then a geometric morphism $f: \Fcal \to \Ecal$ is relatively polished (resp. relatively proper) if and only if both parts of its surjection-inclusion factorization have inverse images preserving supercompact or initial (resp. compact) objects.
\end{corollary}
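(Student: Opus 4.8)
The plan is to run everything through the surjection-inclusion factorization $f = p \circ q$, where $q:\Fcal \to \Gcal$ is a surjection and $p:\Gcal \to \Ecal$ is an inclusion, and to combine Proposition \ref{prop:surjinc} (which says $f$ is relatively polished, resp. relatively proper, exactly when both $p$ and $q$ are) with the intrinsic characterization of these properties supplied by Proposition \ref{prop:relpres}. The statement to be proved asserts an equivalence between a condition on $f$ and a pair of conditions on the inverse images $p^*$ and $q^*$, so I will prove the two implications separately.

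For the forward direction, suppose $f$ is relatively polished (resp. relatively proper). By Proposition \ref{prop:surjinc} both $p$ and $q$ inherit the same property, and then Corollary \ref{crly:polished} immediately gives that $p^*$ and $q^*$ each preserve supercompact or initial (resp. compact) objects. This direction therefore requires no hypothesis on how the intermediate topos $\Gcal$ is generated.

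The backward direction is where the care is needed, and I would carry it out in a definite order. Assume $p^*$ and $q^*$ both preserve supercompact or initial (resp. compact) objects. I would first treat the inclusion $p$: since its codomain $\Ecal$ is supercompactly (resp. compactly) generated by hypothesis, Proposition \ref{prop:relpres} applies directly and tells us that $p$ is relatively polished (resp. relatively proper). At this point one cannot yet apply Proposition \ref{prop:relpres} to the surjection $q$, because its codomain is $\Gcal$, which is not assumed to be supercompactly (resp. compactly) generated; this mismatch is the main obstacle. It is resolved by Lemma \ref{lem:incl2}: a relatively polished (resp. relatively proper) inclusion into a supercompactly (resp. compactly) generated topos has supercompactly (resp. compactly) generated domain, so $\Gcal$ is now known to be supercompactly (resp. compactly) generated. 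With $\Gcal$ of the right type, Proposition \ref{prop:relpres} applies to $q$ and, using the hypothesis that $q^*$ preserves supercompact or initial (resp. compact) objects, yields that $q$ is relatively polished (resp. relatively proper).

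Finally, having shown that both $p$ and $q$ are relatively polished (resp. relatively proper), Proposition \ref{prop:surjinc} recombines them to conclude that $f = p \circ q$ is relatively polished (resp. relatively proper), completing the equivalence. The only genuinely non-formal step is the middle one, where the generation property must be transported from $\Ecal$ to $\Gcal$ across the inclusion before the surjection half can be analysed; the rest is a bookkeeping assembly of Proposition \ref{prop:surjinc}, Corollary \ref{crly:polished}, and Proposition \ref{prop:relpres}.
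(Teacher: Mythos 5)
Your proof is correct and follows essentially the same route as the paper's: applying Proposition \ref{prop:relpres} to the surjection-inclusion factorization of Proposition \ref{prop:surjinc}, with Lemma \ref{lem:incl2} supplying the key step that the intermediate topos is supercompactly (resp. compactly) generated so that Proposition \ref{prop:relpres} can be applied to the surjection half. You have merely spelled out in full the assembly that the paper compresses into one sentence, and correctly identified the transport of the generation property to the intermediate topos as the only non-formal step.
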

\begin{proof}
This follows from applying Proposition \ref{prop:relpres} to the factorization in Proposition \ref{prop:surjinc}, using Lemma \ref{lem:incl2} to conclude that the intermediate topos must be supercompactly (resp. compactly) generated.
\end{proof}

More generally, surjections interact well with supercompact and compact objects in another way.

\begin{lemma}
\label{lem:surjreflect}
Suppose $f:\Fcal \to \Ecal$ is a surjective geometric morphism. Then $f^*$ reflects supercompact, compact and initial objects. 
\end{lemma}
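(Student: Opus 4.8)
The plan is to exploit the two facts that characterize a surjection at the level of the inverse image functor: $f$ is a surjection precisely when $f^*$ is faithful, and $f^*$, being the inverse image of a geometric morphism, preserves all colimits and all finite limits. The first thing I would record is that any faithful functor reflects both monomorphisms and epimorphisms (by the evident cancellation argument pushed through the functor), and that since $\Ecal$ is a topos, hence balanced, $f^*$ therefore also reflects isomorphisms. This is exactly the principle already invoked in the proof of Lemma \ref{lem:monad}.

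For supercompactness, I would argue as follows. Suppose $f^*(X)$ is supercompact and let $\{g_i : A_i \to X\}_{i \in I}$ be a jointly epic family in $\Ecal$. Because $f^*$ preserves coproducts and epimorphisms, it carries jointly epic families to jointly epic families, so $\{f^*(g_i) : f^*(A_i) \to f^*(X)\}$ is jointly epic in $\Fcal$; supercompactness of $f^*(X)$ then supplies an index $i$ with $f^*(g_i)$ epic, and reflection of epimorphisms forces $g_i$ to be epic. Hence $X$ is supercompact. The compact case runs identically with finite subfamilies: a jointly epic finite subfamily of the $f^*(g_i)$ corresponds to an epimorphism $\coprod_k f^*(A_{i_k}) \too f^*(X)$, which is (up to the canonical isomorphism) $f^*$ applied to $\coprod_k A_{i_k} \to X$, so reflection of epimorphisms shows the latter is epic, i.e. $\{g_{i_k}\}$ is jointly epic, and $X$ is compact.

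For the initial object I would dispense with jointly epic families entirely: since $f^*$ preserves the initial object, if $f^*(X) \cong 0$ then applying $f^*$ to the unique morphism $0 \to X$ yields the morphism $0 \to 0$ in $\Fcal$, which is an isomorphism, and reflection of isomorphisms gives that $0 \to X$ is an isomorphism, so $X$ is initial. (This could alternatively be derived from Lemma \ref{lem:0refl} after checking that $f_*$ preserves $0$, but the direct argument is shorter.)

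I do not anticipate any genuine obstacle. The only points that require care are the bookkeeping that $f^*$ transports jointly epic (finite) families to jointly epic (finite) families, which rests on preservation of coproducts together with the fact that a left adjoint preserves epimorphisms, and the passage from ``reflects monos and epis'' to ``reflects isomorphisms'', which uses that a topos is balanced.
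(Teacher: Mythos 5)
Your proof is correct, but it takes a lighter route than the paper's. The paper invokes the full strength of the equivalence ``surjection $=$ comonadic inverse image'': it identifies $\Ecal$ with the category of coalgebras for the comonad $f^*f_*$ on $\Fcal$, pushes a jointly epic family of coalgebra maps down to $\Fcal$ (where $f^*$, as the forgetful functor, preserves colimits), applies (super)compactness there, and then uses the fact that $f^*$ \emph{creates} colimits — via the colimit-diagram characterization of jointly epic families from Lemma \ref{lem:compactepi} — to lift the resulting epimorphism (resp.\ finite jointly epic subfamily) back up to $\Ecal$; for the initial object it routes through Lemma \ref{lem:0refl} and strictness of $0$. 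You instead use only the weaker definitional fact that $f^*$ is faithful, together with the standard observations that faithful functors reflect epimorphisms and monomorphisms and that a balanced category upgrades this to reflection of isomorphisms (exactly as in Lemma \ref{lem:monad}); the lifting step that the paper accomplishes by creation of colimits you accomplish by reflection of epimorphisms, and your direct argument for the initial object avoids Lemma \ref{lem:0refl} entirely. Both proofs share the same skeleton (transport a jointly epic family along $f^*$, use the hypothesis downstream, pull the conclusion back), but yours is more elementary and self-contained, while the paper's makes the coalgebra structure explicit, which is the perspective it reuses elsewhere. One small bookkeeping point you handled correctly and should keep explicit: passing between ``jointly epic family'' and ``epimorphism from the coproduct'' requires $f^*$ to preserve the relevant coproducts, which holds since inverse image functors preserve all small colimits.
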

\begin{proof}
Since the inverse image functor of $f$ is comonadic, we have an equivalence between $\Ecal$ and the topos of coalgebras for the comonad on $\Fcal$ induced by $f$. Without loss of generality we work with coalgebras.

Given a coalgebra $(X,\alpha: X \to f^*f_*(X))$ and a jointly epic family $g_i: (U_i,\beta_i) \to (X,\alpha)$, since $f^*$ preserves arbitrary colimits, the underlying family of morphisms $g_i:U_i \to X$ in $\Fcal$ must be jointly epic. Thus if $X = f^*(X,\alpha)$ is supercompact (resp. compact), one of the $g_i$ must be an epimorphism (resp. there is a finite jointly epic subfamily of the $g_i$). Since $f^*$ moreover creates colimits, we conclude (via the same colimit diagram employed in Lemma \ref{lem:compactepi}) that $g_i$ is an epimorphism in $\Ecal$ too (resp. that the finite subfamily lifts to a jointly epic finite subfamily). Thus $(X,\alpha)$ is supercompact, as required.

Preservation of the initial object by $f_*$, which is equivalent to reflection of $0$ by $f^*$ by Lemma \ref{lem:0refl}, is due to strictness of the initial object forcing $f^*f_*(0) \cong 0$.
\end{proof}

It follows from Lemma \ref{lem:surjreflect} and Corollary
\ref{crly:pristine} that a geometric surjection is relatively pristine
if and only if it is relatively polished.

\subsection{Hyperconnected morphisms}
\label{ssec:hype}

Recall that a geometric morphism $f:\Fcal \to \Ecal$ is \textbf{hyperconnected} if $f^*$ is full and faithful and its image is closed in $\Fcal$ under subobjects and quotients (up to isomorphism); we have briefly encountered such morphisms in previous chapters. Recall also that $f$ is \textbf{localic} if every object in $\Fcal$ is a subquotient of one of the form $f^*(X)$. See \cite[Section A4.6]{Ele} for some background on hyperconnected and localic morphisms, including the hyperconnected--localic factorization of a general geometric morphism.

When it comes to hyperconnected morphisms into $\Set$, there are various alternative characterizations; note that these rely on properties of $\Set$, so are not true constructively.

\begin{proposition}
\label{prop:hype2}
Let $\Ecal$ be a Grothendieck topos. Then the following are equivalent:
\begin{enumerate}
	\item The unique geometric morphism $\Ecal \to \Set$ is hyperconnected.
	\item $\Ecal$ is \textbf{two-valued}: the only subterminal objects are the initial and terminal objects.
	\item Every object of $\Ecal$ is either well-supported (the unique morphism $X \to 1$ is an epimorphism) or initial, but not both.
	\item $\Ecal$ is non-degenerate and has a separating set of well-supported objects.
\end{enumerate}
\end{proposition}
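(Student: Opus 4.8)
The plan is to establish the cycle $(1)\Rightarrow(2)\Rightarrow(3)\Rightarrow(4)\Rightarrow(2)$ together with the single implication $(2)\Rightarrow(1)$, which together render all four conditions equivalent. Throughout I write $\Delta$ for the inverse image and $\Gamma=\Hom_\Ecal(1,-)$ for the direct image of the unique morphism $\Ecal\to\Set$, and I repeatedly use that $\Delta B\cong\coprod_B 1$, that $\Delta$ preserves finite limits (being an inverse image) and all colimits (being a left adjoint), and that in a Grothendieck topos coproducts are disjoint and universal and every epimorphism is the coequalizer of its kernel pair.

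The three implications among $(2)$, $(3)$, $(4)$ and the implication $(1)\Rightarrow(2)$ are the routine part. For $(1)\Rightarrow(2)$ I would observe that subterminal objects are exactly the subobjects of $1=\Delta(1)$; since the image of $\Delta$ is closed under subobjects and $\Delta$ is full and faithful, each such subobject is $\Delta(S)$ for $S$ a subset of a singleton, hence is $0$ or $1$, and these are distinct because a full and faithful functor reflects the non-isomorphism $\emptyset\not\cong\{*\}$. For $(2)\Rightarrow(3)$ I would factor $X\to 1$ as $X\too U\hookrightarrow 1$ through its support $U$; two-valuedness forces $U\cong 0$ or $U\cong 1$, the former making $X$ initial by strictness of $0$ and the latter making $X$ well-supported, the two cases being exclusive since a well-supported initial object would give $0\cong 1$. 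For $(3)\Rightarrow(4)$, applying $(3)$ to $X=1$ shows $1$ is well-supported, hence not initial, so $\Ecal$ is non-degenerate; and from any small separating set one discards the initial members, which can never witness a distinction $fh\neq gh$, to obtain a separating set of well-supported objects.

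For $(4)\Rightarrow(2)$ I would take a subterminal $U\hookrightarrow 1$ with $U\not\cong 0$; since the generators cover $U$ and $U$ is not initial, some well-supported generator $G$ admits a map $G\to U$, and then the composite $G\to U\hookrightarrow 1$ is the (epic) unique morphism $G\to 1$, which forces the mono $U\hookrightarrow 1$ to be epic and hence an isomorphism.

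The substantial step, and the one I expect to be the main obstacle, is $(2)\Rightarrow(1)$, namely verifying full-faithfulness of $\Delta$ together with closure of its image under subobjects and quotients. To compute $\Hom_\Ecal(1,\Delta B)$ I would pull the coproduct injections $1_b\hookrightarrow\Delta B$ back along a morphism $g\colon 1\to\Delta B$; the resulting subterminals partition $1$, and two-valuedness with $1\not\cong 0$ forces exactly one of them to be $1$, identifying $g$ with an element of $B$, whence $\Hom_\Ecal(\Delta A,\Delta B)\cong\prod_A B\cong\Hom_\Set(A,B)$ and $\Delta$ is full and faithful. Closure under subobjects follows from extensivity: a subobject $S\hookrightarrow\Delta B=\coprod_B 1$ decomposes as $\coprod_b(S\cap 1_b)$ with each summand subterminal, hence $0$ or $1$, exhibiting $S$ as $\Delta$ of a subset of $B$. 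For closure under quotients I would present an epimorphism $\Delta B\too Q$ as the coequalizer of its kernel pair $R\rightrightarrows\Delta B$; subobject-closure identifies $R$ with $\Delta R'$ and fullness identifies the two projections with $\Delta$ of maps $R'\rightrightarrows B$, so preservation of colimits by the left adjoint $\Delta$ yields $Q\cong\Delta(\mathrm{coeq}(R'\rightrightarrows B))$. The essential subtlety here is that two-valuedness does \emph{not} make every object constant (for instance $\Setswith{M}$ is two-valued yet has many non-constant objects), so closure under quotients is a genuine constraint that I would prove only by feeding the subobject-closure back through the effective-epi/kernel-pair presentation as above.
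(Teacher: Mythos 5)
Your proof is correct, and it diverges from the paper's in a genuinely interesting way. For the easy implications the two arguments essentially coincide: the paper also proves $(2\Leftrightarrow 3)$ by factoring $X \to 1$ through its support, and its $(3\Leftrightarrow 4)$ likewise prunes initial objects from a generating set (it closes the loop with ``inhabited colimits of well-supported objects are well-supported'' to get $(4)\Rightarrow(3)$, where you instead prove $(4)\Rightarrow(2)$ by a covering argument; both are fine). The real difference is $(2)\Rightarrow(1)$. The paper dispatches it in one line by invoking the hyperconnected--localic factorization of $\Ecal \to \Set$: the localic part is the localic reflection $\Sh(\Sub_{\Ecal}(1))$, which two-valuedness collapses to $\Set$, so the morphism equals its hyperconnected part. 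This is short but leans on the factorization theorem and the identification of the intermediate topos with sheaves on the locale of subterminals. You instead verify the definition of hyperconnectedness directly: $\Hom_{\Ecal}(1,\Delta B)\cong B$ via disjointness and universality of coproducts plus two-valuedness, subobject-closure via extensivity, and quotient-closure by feeding subobject-closure and fullness through the kernel-pair presentation of an epimorphism and using that $\Delta$ preserves coequalizers. This is more elementary and self-contained (no appeal to the factorization machinery), at the cost of more work; it also yields as a by-product an explicit description of the constant objects and their subobjects and quotients. Your closing remark is well taken and is exactly the right thing to flag: since toposes such as $\Setswith{M}$ are two-valued yet full of non-constant objects, closure of the image of $\Delta$ under quotients is the genuinely non-trivial clause, and your kernel-pair argument is where that content lives.
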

\begin{proof}
($1 \Leftrightarrow 2$) The inverse image of a hyperconnected geometric morphism is full and faithful and closed under subobjects, so in particular the only subobjects of $1$ in $\Ecal$ is $0$. Conversely, if $\Ecal$ is two-valued, consider the hyperconnected-localic factorization of the unique geometric morphism $\Ecal \to \Set$; the intermediate topos is the localic reflection of $\Ecal$, equivalent to the topos of sheaves on the locale of subterminal objects of $\Ecal$, so is equivalent to $\Set$. Thus the morphism $\Ecal \to \Set$ is hyperconnected.

($2 \Leftrightarrow 3$) If $\Ecal$ is two-valued, the monic part of the (epi,mono) factorization of $X\to 1$ is non-trivial if and only if $X \cong 0$, so $X$ is either initial or well-supported. Conversely, any non-trivial subterminal object fails to be well-supported, so if $3$ holds there can be no proper subterminals and $\Ecal$ is two-valued.

($3 \Leftrightarrow 4$) Since $\Ecal$ is a Grothendieck topos, it has some generating set of objects; any such set is still generating after excluding the initial object, and since $0$ is distinct from $1$, any generating set contains a non-initial object, so we obtain a generating set of well-supported objects as required. Conversely, an inhabited colimit of well-supported objects is well-supported, so any non-initial object is well-supported if there is a separating set of well-supported objects. 
\end{proof}

In particular, since supercompact objects are not initial, we obtain a necessary and sufficient condition for the hypotheses in Section \ref{ssec:twoval} to hold:
\begin{corollary}
\label{crly:epic}
If $\Ecal$ is a two-valued (Grothendieck) topos, every supercompact object in $\Ecal$ is well-supported. In particular, a morphism $A \to B$ in the subcategory $\Ccal_s$ of supercompact objects objects of $\Ecal$ is an epimorphism if and only if it is epic in $\Ecal$, so every epimorphism in $\Ccal_s$ is strict. Conversely, if $\Ecal$ is supercompactly generated, then $\Ccal_s$ is closed under cokernels if and only if $\Ecal$ is two-valued.
\end{corollary}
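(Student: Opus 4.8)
The plan is to dispatch the three assertions in turn, drawing on Proposition~\ref{prop:hype2}, Lemma~\ref{lem:presic}, Lemma~\ref{lem:coker} and Corollary~\ref{crly:strict}. For the first assertion I would begin by recording that a supercompact object is never initial: over the initial object $0$ the empty family of morphisms is vacuously jointly epic yet contains no epimorphism, so $0$ fails the defining condition of Definition~\ref{dfn:scompact}. Since Proposition~\ref{prop:hype2} (equivalence $2 \Leftrightarrow 3$) says that in a two-valued topos every object is either well-supported or initial, a supercompact object, not being initial, must be well-supported.

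The ``in particular'' clause is then almost immediate. Given a morphism $A \to B$ in $\Ccal_s$, its domain $A$ is supercompact, hence well-supported by the previous step, so Lemma~\ref{lem:presic} applies directly to yield that the morphism is epic in $\Ccal_s$ if and only if it is epic in $\Ecal$. To upgrade this to the strictness statement I would compose the equivalence with Corollary~\ref{crly:strict}, which identifies the morphisms of $\Ccal_s$ that are epic in $\Ecal$ with the strict epimorphisms of $\Ccal_s$. I note that this last step silently invokes supercompact generation (the proof of Corollary~\ref{crly:strict} covers the domain of a coequalizer pair by supercompact objects) rather than mere two-valuedness, and I would flag this dependence.

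For the converse I would assume $\Ecal$ supercompactly generated. Lemma~\ref{lem:coker} already reduces ``$\Ccal_s$ is closed under cokernels'' to ``every supercompact object is well-supported'', so the task is to match the latter with two-valuedness. One implication is precisely the first assertion. For the other, if every supercompact object is well-supported then, $\Ecal$ being supercompactly generated, the supercompact objects form a separating family of well-supported objects, and Proposition~\ref{prop:hype2} (equivalence $4 \Leftrightarrow 2$) delivers two-valuedness.

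The main obstacle is the degenerate topos: it is vacuously supercompactly generated with empty $\Ccal_s$, so $\Ccal_s$ is trivially closed under cokernels, yet it is not two-valued. The converse therefore genuinely needs the non-degeneracy that is built into condition~$(4)$ of Proposition~\ref{prop:hype2}, and I would state this hypothesis explicitly; in a non-degenerate supercompactly generated topos there is at least one supercompact object, so the separating family is non-empty and the appeal to Proposition~\ref{prop:hype2}$(4)$ is legitimate. Apart from this edge case, and the noted reliance of the strictness claim on supercompact generation, every step is a direct citation.
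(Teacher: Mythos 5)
Your proof is correct and follows exactly the route the paper intends: the corollary is stated without an explicit proof, being an assembly of Proposition \ref{prop:hype2} (supercompact objects are non-initial, hence well-supported when the topos is two-valued), Lemma \ref{lem:presic}, Corollary \ref{crly:strict} and Lemma \ref{lem:coker} --- precisely the citations you use. Both of your caveats are legitimate refinements of the paper's statement rather than defects in your argument: the strictness claim really does rest on Corollary \ref{crly:strict}, whose proof covers the kernel pair of the epimorphism by supercompact objects and so genuinely invokes supercompact generation, and no argument from two-valuedness alone is offered in the paper (though every downstream use --- Proposition \ref{prop:prince}, Remark \ref{rmk:strict}, Corollary \ref{crly:site} --- concerns supercompactly generated toposes, so nothing later is affected); likewise the degenerate topos is vacuously supercompactly generated with $\Ccal_s = \emptyset$ trivially closed under cokernels while failing conditions (1), (3) and (4) of Proposition \ref{prop:hype2}, so the converse does need the non-degeneracy conventionally understood to be part of ``two-valued''.
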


Thus we can give the counterexample, promised earlier, to the hypothesis that epimorphisms in $\Ccal_s$ are regular when every supercompact object of $\Ecal$ is well-supported.
\begin{example}
\label{xmpl:freemon}
Let $M$ be the free monoid on two generators, viewed as a category, and let $\Ecal = [M\op,\Set]$, where the objects are viewed as right $M$-sets. From the results in this chapter and observations in Chapter \ref{chap:TDMA}, we know that this topos is two-valued and supercompactly generated; the supercompact objects in this topos are precisely the principal right $M$-sets.

Given a principal $M$-set $N$ generated by $n$, the quotient of $N$ by a relation generated by a pair of morphisms $f,g: N \rightrightarrows M$ is obtained by identifying $f(n)k$ with $g(n)k$ for each $k \in M$. A case-by-case analysis of the possible pairs of elements $f(n),g(n)$ demonstrates that there is no pair of which the epimorphism $M \too 1$ is a coequalizer, so this epimorphism is not regular in the category of supercompact objects of $\Ecal$.
\end{example}

Returning to a more general setting, the main reason for our interest in hyperconnected morphisms, however, is that they create the structure of supercompactly (and compactly) generated Grothendieck toposes which we study in this chapter.

\begin{corollary}
\label{crly:hypepres}
If $f: \Fcal \to \Ecal$ is a hyperconnected geometric morphism between Grothendieck toposes, then it is pristine. Thus (being surjective) $f^*$ preserves and reflects supercompact, compact and initial objects.
\end{corollary}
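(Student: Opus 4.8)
The plan is to verify the defining condition of Definition \ref{dfn:polished} directly, exploiting the fact that a hyperconnected inverse image functor is an isomorphism on subobject lattices. Recall that, unwinding the internal-diagram formulation, $f$ is pristine precisely when $f_*$ preserves $\Ecal$-indexed unions of subobjects. Since hyperconnected morphisms are stable under slicing (\cite[A4.6]{Ele}), for each object $E$ of $\Ecal$ the sliced morphism $f/E$ is again hyperconnected; this lets me treat the condition for a fixed $E$ and internal category $\Ibb$ in $\Ecal/E$ by working, without loss of generality, as though $E=1$ and reasoning about unions of subobjects of objects pulled back along $f^*$.

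The key observation I would isolate first is that, for every object $Y$ of $\Ecal$, the inverse image functor restricts to an isomorphism of complete lattices $f^* : \Sub_{\Ecal}(Y) \xrightarrow{\sim} \Sub_{\Fcal}(f^*Y)$. Indeed, $f^*$ is faithful and exact, so it preserves and reflects monomorphisms and is order-reflecting on subobjects; fullness then makes it injective on subobjects, while closure of the image of $f^*$ under subobjects makes it surjective onto $\Sub_{\Fcal}(f^*Y)$. Being an order isomorphism, it preserves and reflects arbitrary joins and meets; in particular it commutes with arbitrary unions (which in any case $f^*$ preserves, being colimit- and image-preserving). Because the unit of $(f^* \dashv f_*)$ is an isomorphism, applying $f_*$ to a subobject $S \hookrightarrow f^*Y$ and using the natural isomorphism $f_*f^*Y \cong Y$ exhibits $f_*$ (restricted to these subobjects) as the inverse of this lattice isomorphism; hence $f_*$ carries a union $\bigcup_i f^*(T_i)$ to $\bigcup_i T_i$ and preserves arbitrary unions of subobjects pulled back from $\Ecal$.

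The main obstacle is upgrading this external lattice isomorphism to the genuinely $\Ecal$-indexed (internal) statement demanded by Definition \ref{dfn:polished}, in which the union is indexed by an internal category $\Ibb$ and $\pi_!$ computes an internal colimit. The point is that the isomorphism $\Sub_{\Ecal}(Y) \cong \Sub_{\Fcal}(f^*Y)$ is natural in $Y$ and compatible with pullback, so it is an isomorphism of the associated subobject fibrations, and the slicing-stability of hyperconnectedness makes it stable under change of base; translating through the diagram toposes of \eqref{eq:pbdiagram} then yields the required identity $\pi_! \circ f^{\Ibb}_*(V) \cong f_* \circ \pi_!(V)$ for every subterminal $V$. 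This is precisely the computation that recovers Moerdijk and Vermeulen's reduction (\cite[Corollary I.2.5]{Compact}) through the internal preorder reflection mentioned in the remark preceding Example \ref{xmpl:compshf}.

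Finally, once $f$ is known to be pristine, the remaining assertions follow from results already in hand. Pristineness implies relative pristineness by Lemma \ref{lem:relvsnorel}.1, so $f^*$ preserves supercompact objects by Corollary \ref{crly:polished}; since a pristine morphism is a fortiori proper (directed unions being a special case of arbitrary unions), the same corollary gives that $f^*$ preserves compact objects, and $f^*$ preserves the initial object automatically, being colimit-preserving. For the reflection statements, a hyperconnected morphism is surjective, its inverse image being faithful, so Lemma \ref{lem:surjreflect} shows that $f^*$ reflects supercompact, compact and initial objects.
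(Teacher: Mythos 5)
Your first step and your closing deductions are sound: the subobject-lattice isomorphism $\Sub_{\Ecal}(Y) \cong \Sub_{\Fcal}(f^*Y)$, with $f_*$ inducing its inverse, is a correct consequence of hyperconnectedness, and the final paragraph (pristine implies relatively pristine by Lemma \ref{lem:relvsnorel}, hence preservation by Corollary \ref{crly:polished}; reflection by Lemma \ref{lem:surjreflect}) matches the paper's own reasoning. The problem is the middle step, which is where the entire content of the statement lives and which you assert rather than prove. The identity demanded by Definition \ref{dfn:polished} is $\pi_! \circ (f^{\Ibb})_*(V) \cong f_* \circ \pi_!(V)$, and $\pi_!$ is an internal \emph{colimit} functor, not an operation on subobject lattices: its value on a subterminal object is typically not subterminal (for $\Ibb$ discrete, $\pi_!$ is just the forgetful functor $\Ecal/I_0 \to \Ecal$, sending a subterminal $V_0 \hookrightarrow I_0$ to the object $V_0$). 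Consequently an isomorphism of subobject fibrations, however natural and pullback-stable, says nothing by itself about how $f_*$ interacts with $\pi_!$; ``translating through the diagram toposes'' is precisely the step that needs an argument.

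What is actually needed, and what the paper's proof supplies, is the following pair of inputs. First, every subterminal $V$ of $[f^*(\Ibb),\Fcal]$ is of the form $(f^{\Ibb})^*(U)$ for a subterminal $U$ of $[\Ibb,\Ecal]$: the paper gets this from pullback-stability of hyperconnectedness ($f^{\Ibb}$ is a pullback of $f$ in \eqref{eq:pbdiagram}, hence hyperconnected, hence an equivalence on subterminals); your pullback-compatible lattice isomorphism could in principle also deliver this, since a subterminal of the diagram topos is a subobject $V_0 \hookrightarrow f^*(I_0)$ satisfying a closure condition expressed via pullback along the structure maps, but you would need to say so. Second, and indispensably, the Beck--Chevalley commutation $\pi_!\circ (f^{\Ibb})^* \cong f^* \circ \pi_!$, i.e.\ the fact that $\pi_!$ is an $\Ecal$-indexed left adjoint; this is a general property of diagram toposes, independent of hyperconnectedness, and it is what makes the computation close: $f_*\pi_!(V) \cong f_*\pi_!(f^{\Ibb})^*(U) \cong f_*f^*\pi_!(U) \cong \pi_!(U) \cong \pi_!(f^{\Ibb})_*(V)$, using full faithfulness of $f^*$ and of $(f^{\Ibb})^*$. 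Your proposal never invokes either pullback-stability of hyperconnectedness or the indexedness of $\pi_!$, so the required identity is not derived; with these two inputs made explicit, your outline collapses into the paper's short computation, which is Moerdijk--Vermeulen's proof that hyperconnected morphisms are proper (\cite[Proposition I.2.4]{Compact}) with $\pi_!$ in place of $\infty^*$.
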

\begin{proof}
We extend the proof that hyperconnected morphisms are proper, \cite[Proposition I.2.4]{Compact}, replacing $\infty^*$ with $\pi_!$.

Suppose $f : \Fcal \to \Ecal$ is hyperconnected, and consider a diagram of the form
\begin{equation}
\label{eq:polished2}
\begin{tikzcd}[column sep = large]
{[f^*(\Ibb){,}\Fcal]} \ar[r, "f^{\Ibb}"] \ar[d,"\pi"']
\ar[dr, phantom, "\lrcorner", very near start] &
{[\Ibb{,}\Ecal]} \ar[d, "\pi"]\\
\Fcal \ar[r, "f"] &
\Ecal.
\end{tikzcd}
\end{equation}
Since $f^{\Ibb}$ is a pullback of $f$, it is hyperconnected too, so that any $V \hookrightarrow 1$ in $[f^*(\Ibb){,}\Fcal]$ is of the form $(f^{\Ibb})^*(U)$ for some $U \hookrightarrow 1$ in $[\Ibb{,}\Ecal]$ (the restriction of a hyperconnected morphism to the subterminal objects is an equivalence). Thus,
\[f_*\pi_!(V) = f_*\pi_!(f^{\Ibb})^*(U) = f_*f^*\pi_!(U) = \pi_!(U),\]
where the last equality holds since $f^*$ is full and faithful. But $U = (f^{\Ibb})_*(f^{\Ibb})^*(U) = (f^{\Ibb})_*(V)$, so $f_*\pi_!(V) = \pi_!(f^{\Ibb})_*(V)$, as required. The same argument applied to slices gives the result.

Preservation of supercompact and compact objects by $f^*$ then follows from Proposition \ref{prop:relpres} and Corollary \ref{crly:pristine} (preservation of the initial object is automatic), while reflection follows from Lemma \ref{lem:surjreflect}.
\end{proof}

\begin{theorem}
\label{thm:hype2}
Let $f:\Fcal \to \Ecal$ be a hyperconnected geometric morphism between elementary toposes. If $\Fcal$ is a Grothendieck topos, so is $\Ecal$. Assuming this is so, if $\Fcal$:
\begin{enumerate}[label = ({\roman*})]
	\item is supercompactly generated, or
	\item is compactly generated, or
	\item has enough points, or
	\item is two-valued,
\end{enumerate}
then $\Ecal$ inherits that property.
\end{theorem}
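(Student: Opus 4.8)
The plan is to treat the four inheritance properties as applications of the behaviour of hyperconnected morphisms already recorded in Corollary \ref{crly:hypepres}, and to dispatch the background claim that $\Ecal$ is again a Grothendieck topos by a comonadicity argument for cocompleteness together with an image-factorization argument for generators.

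For the background claim, recall that $f$ hyperconnected makes $f^*$ full and faithful, hence faithful, so $f$ is a surjection and $f^*$ is comonadic by Definition \ref{dfn:gm}; thus $\Ecal$ is equivalent to the category of coalgebras for the (left exact) comonad $\mathbb{G}=f^*f_*$ on $\Fcal$. I would first observe that this coalgebra category is cocomplete: the colimit of a diagram of coalgebras is computed on underlying objects in $\Fcal$, the structure map being induced by the universal property of the colimit, so $f^*$ creates colimits. Crucially this needs no cocontinuity of $\mathbb{G}$, which is the main pitfall: $f_*$ need not preserve filtered colimits even for hyperconnected $f$ (e.g.\ $\Gamma\colon\Setswith{M}\to\Set$ fails to unless $M$ is right-factorably finitely generated, by Theorem \ref{thm:strongly-compact}), so one cannot invoke an ``accessible comonad'' theorem. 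To supply a small separating set for $\Ecal$, I would take a small separating set $\{G_k\}$ of $\Fcal$ and, for a non-equal parallel pair $g,h\colon Y\rightrightarrows Z$ in $\Ecal$, use faithfulness of $f^*$ and separation in $\Fcal$ to find $s\colon G_k\to f^*Y$ with $f^*(g)s\neq f^*(h)s$; factoring $s$ as $G_k\twoheadrightarrow I\hookrightarrow f^*Y$ and using that the essential image of $f^*$ is closed under subobjects, write $I=f^*(J)$ with $J\hookrightarrow Y$, so that $J$ distinguishes $g$ and $h$. Since subquotients of each $G_k$ form a set and $f^*$ reflects isomorphisms, these $J$ range over an essentially small family. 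A cocomplete elementary topos with a small separating set is a Grothendieck topos, giving the claim.

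With $\Ecal$ known to be Grothendieck, Corollary \ref{crly:hypepres} applies, so $f^*$ preserves and reflects supercompact, compact and initial objects. For (i) and (ii) I would re-run the separating-set argument above, now taking $\{G_k\}$ to be the separating family of supercompact (resp.\ compact) objects of $\Fcal$ and recording that the object $I$, being a quotient of the (super)compact $G_k$, is (super)compact by Lemma \ref{lem:closed}, whence the object $J$ with $f^*(J)\cong I$ is (super)compact by the reflection clause of Corollary \ref{crly:hypepres}. The resulting $J$ thus form a separating family of supercompact (resp.\ compact) objects, which is a set by Lemma \ref{lem:scsite}; this is precisely supercompact (resp.\ compact) generation of $\Ecal$.

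For (iii), every point $p\colon\Set\to\Fcal$ yields a point $f\circ p\colon\Set\to\Ecal$ with inverse image $p^*f^*$; since $f^*$ is full and faithful it reflects isomorphisms, so if a morphism $g$ of $\Ecal$ is non-invertible then $f^*(g)$ is non-invertible in $\Fcal$, and enough-points of $\Fcal$ furnishes a point $p$ with $p^*f^*(g)$ non-invertible. Hence $\{f\circ p\}$ is already jointly conservative and $\Ecal$ has enough points. For (iv), I would use that for a hyperconnected morphism the restriction of $f^*$ to subterminal objects is an equivalence (exactly as exploited in the proof of Corollary \ref{crly:hypepres}): $1_\Fcal=f^*(1_\Ecal)$ and every subterminal of $\Fcal$ is $f^*$ of a unique subterminal of $\Ecal$, so $\Sub(1_\Ecal)\cong\Sub(1_\Fcal)$. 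Two-valuedness of $\Fcal$ then forces the only subterminals of $\Ecal$ to be $0$ and $1$, i.e.\ $\Ecal$ is two-valued, via the equivalent characterizations of Proposition \ref{prop:hype2}. The main obstacle throughout is the first paragraph: one must resist the temptation to build $\Ecal$ from an accessibility property of $\mathbb{G}$ and instead extract cocompleteness and generation directly.
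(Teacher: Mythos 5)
Your proposal is correct and follows essentially the same route as the paper: both establish the background claim by using that $f^*$ (being comonadic) creates colimits, taking image factorizations of maps from a separating family of $\Fcal$, invoking closure of the image of $f^*$ under subobjects to land back in $\Ecal$, and then applying Giraud's theorem, after which (i)--(ii) follow from Lemma \ref{lem:closed} plus Corollary \ref{crly:hypepres}, (iii) from composing points with $f$, and (iv) from the equivalence on subterminals. The only (cosmetic) difference is that you certify the separating set by distinguishing parallel pairs, whereas the paper exhibits every object of $\Ecal$ as a colimit of objects from that set.
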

\begin{proof}
Let $\Ccal$ be a (full subcategory on a) small separating set of objects in $\Fcal$. Then every object of $\Fcal$ is a colimit of a diagram in $\Ccal$. Given an object $Q$ of $\Ecal$, we can express $f^*(Q)$ as such a colimit; write $\lambda_i: C_i \to f^*(Q)$ with $C_i \in \Ccal$ for the legs of the colimit cone. By taking image factorizations of the $\lambda_i$ we obtain an expression for $f^*(Q)$ as a colimit where the legs of the colimit cone are all monomorphisms. Since the image of $f^*$ is closed under subobjects, we obtain an expression for $f^*(Q)$ as a colimit of objects of the form $f^*(D_i)$ with $D_i$ in $\Ecal$, which moreover are quotients of objects in the separating subcategory $\Ccal$ of $\Fcal$.

Thus, since $f^*$ creates all small colimits, the quotients of objects in $\Ccal$ lying in $\Ecal$ form a separating set. Also, $\Ecal$ is locally small since $\Fcal$ is. Thus by the version of Giraud's Theorem that appears in \cite[C2.2.8(v)]{Ele}, say, $\Ecal$ is a Grothendieck topos.

The inheritance of property (i) (resp. (ii)) follows from Corollary \ref{crly:hypepres}, taking $\Ccal$ in the above to be $\Ccal_s$ (resp. $\Ccal_c$) and noting that the objects $f^*(D_i)$ in the argument above are supercompact (resp. compact) in $\Fcal$ by Lemma \ref{lem:closed}, whence the $D_i$ are so in $\Ecal$ by Corollary \ref{crly:hypepres}.

For (iii), if $\Fcal$ has enough points, which is to say that there is a collection of geometric morphisms $\Set \to \Fcal$ whose inverse images are jointly faithful, then composing each point with the hyperconnected morphism to $\Ecal$ gives such a collection for $\Ecal$. Finally, for (iv), note once again that the restriction of $f$ to subterminal objects is an equivalence.
\end{proof}
In spite of our reliance on Corollary \ref{crly:hypepres} here, we shall see in Example \ref{xmpl:surjprec} that we cannot extend Theorem \ref{thm:hype2}(i) or (ii) to relatively pristine or relatively proper surjections, although parts (iii) and (iv) do apply in that situation.

For use in a later chapter, we record the following general result which has the supercompact and compact cases of Theorem \ref{thm:hype2} as special cases:
\begin{schl}
\label{schl:descend}
Suppose $\Fcal \to \Ecal$ is a hyperconnected geometric morphism. Let $P$ be a property of objects of a topos which descends along epimorphisms, in the sense that given an epimorphism $A \too B$, if $A$ satisfies $P$ then $B$ must also. Suppose moreover that objects with property $P$ are preserved and reflected by the inverse image of a hyperconnected geometric morphism $f:\Fcal \to \Ecal$. Then if $\Fcal$ has a separating set of objects with property $P$, so does $\Ecal$; explicitly, the latter set may be taken to be the collection of quotients of objects in the separating set for $\Fcal$ which lie in $\Ecal$.
\end{schl}
Several examples of properties which descend along epimorphisms appear in \cite[\S 4]{SCGI}, including the property of being an atom (having no non-trivial subobjects). It is easily checked that atoms are preserved and reflected by a fully faithful functor which is closed under subobjects, so Scholium \ref{schl:descend} applies to atomic toposes, for example.

We can summarize the results from the last two sections as stability results for supercompactly and compactly generated toposes.

\begin{theorem}
\label{thm:closure}
Suppose $\Ecal$ is a supercompactly (resp. compactly) generated Grothendieck topos. If $\Fcal$ is:
\begin{enumerate}
	\item The domain of a closed inclusion $f:\Fcal \to \Ecal$, or 	more generally, the domain of a relatively polished (resp. relatively proper) inclusion into $\Ecal$,
	\item The domain of a local homeomorphism $g: \Fcal \simeq \Ecal/X \to \Ecal$, or
	\item The codomain of a hyperconnected morphism $h: \Ecal \to \Fcal$,
\end{enumerate}
then $\Fcal$ is also a supercompactly (resp. compactly) generated Grothendieck topos.
\end{theorem}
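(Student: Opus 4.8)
The plan is to treat the three cases separately, since each corresponds to a different earlier result and they share no common argument beyond the fact that in each instance one must produce a separating family of supercompact (resp. compact) objects. In all three cases, that $\Fcal$ is again a Grothendieck topos is either immediate (subtoposes and slices of Grothendieck toposes are Grothendieck) or already contained in the cited result, so the genuine content is the inheritance of the generation property.

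For case (1), I would first observe that by Lemma \ref{lem:incl} a closed inclusion is both proper and polished, hence by Lemma \ref{lem:relvsnorel}.1 relatively proper and relatively polished; this reduces the closed case to the stated ``more general'' relatively polished (resp. relatively proper) case. The conclusion is then exactly Lemma \ref{lem:incl2}: an inclusion of the appropriate relative type into a supercompactly (resp. compactly) generated topos has supercompactly (resp. compactly) generated domain, because by Proposition \ref{prop:relpres} its inverse image carries the separating family of supercompact (resp. compact) objects of $\Ecal$ to a separating family of supercompact-or-initial (resp. compact) objects of $\Fcal$.

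Case (3) is the codomain-of-a-hyperconnected-morphism case, and here I would simply invoke Theorem \ref{thm:hype2}. Relabelling so that $h:\Ecal \to \Fcal$ plays the role of the hyperconnected morphism, with domain $\Ecal$ supercompactly (resp. compactly) generated and codomain $\Fcal$, parts (i) and (ii) of that theorem give precisely that $\Fcal$ inherits the property, and its preamble supplies that $\Fcal$ is a Grothendieck topos since $\Ecal$ is.

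Case (2), the local homeomorphism $g:\Fcal \simeq \Ecal/X \to \Ecal$, has no ready-made lemma and is where I expect the main (though still routine) work. The key is to characterize the supercompact (resp. compact) objects of $\Ecal/X$ directly, using that the forgetful functor $g_!:\Ecal/X \to \Ecal$ creates colimits and therefore preserves and reflects both epimorphisms and joint epimorphy (a family is jointly epic in $\Ecal/X$ iff its $g_!$-image is jointly epic in $\Ecal$). From this one deduces that $(a: A \to X)$ is supercompact (resp. compact) in $\Ecal/X$ if and only if $A$ is supercompact (resp. compact) in $\Ecal$. Granting this, a separating family is produced by slicing: taking $\mathcal{C}$ a separating set of supercompact (resp. compact) objects of $\Ecal$, the objects $(C \xrightarrow{\phi} X)$ with $C \in \mathcal{C}$ and $\phi$ ranging over $\Hom_{\Ecal}(C,X)$ are supercompact (resp. compact) by the characterization, and for any $(A \to X)$ the morphisms $C \to A$ out of objects of $\mathcal{C}$, being jointly epic in $\Ecal$, lift to a jointly epic family of the required form in $\Ecal/X$. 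Hence $\Ecal/X$ is supercompactly (resp. compactly) generated. The only subtlety is the compact variant, where throughout ``contains an epimorphism'' must be replaced by ``contains a finite jointly epic subfamily'', but this transfers across $g_!$ by the same colimit-creation argument.
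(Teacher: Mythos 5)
Your proposal is correct and follows essentially the same route as the paper: case (1) is exactly Lemma \ref{lem:incl2} (which the paper re-derives on the spot via Corollary \ref{crly:polished}), case (3) is the same direct appeal to Theorem \ref{thm:hype2}, and case (2) lifts a separating set of supercompact (resp.\ compact) objects of $\Ecal$ to the slice after checking that supercompactness of $(A \to X)$ in $\Ecal/X$ is detected on the underlying object $A$. The only cosmetic difference is in that last verification: the paper tests supercompactness via Lemma \ref{lem:scompact} and the identity $(\Ecal/X)/(Y \to X) \simeq \Ecal/Y$, whereas you argue through creation of colimits by the forgetful functor $g_!$ — two equally routine justifications of the same fact.
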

\begin{proof}
Let $\Ccal_s$ and $\Ccal_c$ be the (separating) subcategories of supercompact and compact objects in $\Fcal$, respectively.

1. For any inclusion, the images of objects in $\Ccal_s$ (resp. $\Ccal_c$) under $f^*$ form a separating set for $\Ecal$. The stated properties ensure that these objects are all supercompact or initial (resp. compact), so that $\Ecal$ is supercompactly (resp. compactly) generated, by Corollary \ref{crly:polished}.

2. The objects with domain in $\Ccal_s$ (resp. $\Ccal_c$) in any slice $\Fcal/X$ form a separating set. These lifted objects inherit the property of being supercompact (resp. compact), by Lemma \ref{lem:scompact} and the standard result $(\Fcal/X)/(Y \to X) \simeq\Fcal/Y$.

3. This is immediate from Theorem \ref{thm:hype2}.
\end{proof}

\section{Principal Sites}
\label{sec:principal}

So far, we have established properties of `canonical' sites for supercompactly and compactly generated toposes. In the spirit of Caramello's work, \cite{SCGI}, we obtain in this section a broader class of sites whose toposes of sheaves have these properties.

As in Section \ref{sec:supercompact}, we write $\ell:\Ccal \to \Sh(\Ccal,J)$ for the composite of the Yoneda embedding and the sheafification functor. We will occasionally make use of the following fact regarding representable sheaves, which is easily derived from the fact that $\ell$ is a dense morphism of small-generated sites, in the sense described in \cite{Dense}:
\begin{fact}
\label{fact1}
A sieve $T$ on $\ell(C)$ in $\Sh(\Ccal,J)$ is jointly epimorphic if and only if the sieve $\{f:D \to C \mid \ell(f)\in T\}$ is $J$-covering.
\end{fact}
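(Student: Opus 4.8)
The plan is to reduce the statement to the standard correspondence between $J$-covering sieves on an object $C$ of $\Ccal$ and jointly epic families of representable sheaves over $\ell(C)$, and then to bridge the gap between \emph{arbitrary} morphisms into $\ell(C)$ and those arising from $\Ccal$ by exploiting the density of $\ell$ as a morphism of sites. Throughout I write $S := \{f:D \to C \mid \ell(f) \in T\}$, which is a sieve on $C$ because $T$ is a sieve and $\ell$ is a functor.

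First I would record the key lemma: for any sieve $S$ on $C$, the family $\{\ell(f) \mid f \in S\}$ is jointly epic in $\Sh(\Ccal,J)$ if and only if $S$ is $J$-covering. This follows because the sheafification functor $a$ is a left exact left adjoint, so it preserves images and unions; hence the union $\bigcup_{f \in S}\im\ell(f)$ inside $\ell(C) = a(\yon C)$ is exactly $a$ applied to the subobject $S \hookrightarrow \yon C$. That sheafified inclusion is an isomorphism precisely when $S \hookrightarrow \yon C$ is a $J$-dense monomorphism, which is the defining property of $S$ being a $J$-covering sieve. With this lemma in hand, the implication $(\Leftarrow)$ is immediate: if $S$ is $J$-covering then $\{\ell(f) \mid f \in S\}$ is jointly epic, and since this family is contained in $T$, the sieve $T$ is jointly epic too.

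For $(\Rightarrow)$ I must show that, when $T$ is jointly epic, the generating subfamily $\{\ell(f) \mid f \in S\}$ of $T$ is already jointly epic, so that the lemma delivers that $S$ is $J$-covering. The obstruction is that a general morphism $t:E \to \ell(C)$ in $T$ need not factor through any $\ell(f)$; even when $E = \ell(D)$ is representable, $t$ corresponds via the sheaf Yoneda isomorphism $\Hom_{\Sh(\Ccal,J)}(\ell(D),\ell(C)) \cong a(\yon C)(D)$ to an element that, by the plus-construction, is only $J$-\emph{locally} of the form $\ell(h)$. The density of $\ell$ as a (dense) morphism of small-generated sites, in the sense of \cite{Dense}, is exactly what licenses the refinement: for each representable morphism $t:\ell(D) \to \ell(C)$ there is a $J$-covering sieve $R$ on $D$ such that $t \circ \ell(g) = \ell(h_g)$ for some $h_g$ in $\Ccal$ for every $g \in R$. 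I would then cover an arbitrary $E$ by representables, refine each resulting representable morphism in this way, and use that a composite of jointly epic families is jointly epic to assemble, from the jointly epic family $T$, a jointly epic family all of whose members have the form $\ell(h)$ with $\ell(h) \in T$, i.e.\ with $h \in S$. Applying the lemma in reverse to this subfamily of $\{\ell(h) \mid h \in S\}$ gives that $S$ is $J$-covering.

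The main obstacle is precisely this refinement step, i.e.\ controlling morphisms out of representables into $\ell(C)$ up to $J$-local passage into the image of $\ell$; once the relevant density statement from \cite{Dense} is invoked, the remaining bookkeeping with jointly epic families is routine. I expect that, alternatively, the entire equivalence can be read off directly from the characterization of covering sieves under a dense morphism of sites in \cite{Dense}, using Lemma \ref{lem:coincide} as the template, which would shorten the argument at the cost of delegating more to the cited monograph.
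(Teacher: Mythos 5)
Your proof is correct, and it takes essentially the route the paper itself indicates: the paper offers no written-out argument for Fact \ref{fact1}, merely attributing it to the density of $\ell$ as a morphism of small-generated sites in the sense of \cite{Dense}, and your two ingredients (the standard fact that a sieve $S$ on $C$ is $J$-covering iff $a(S) \hookrightarrow \ell(C)$ is an isomorphism, plus the local factorization of morphisms $\ell(D) \to \ell(C)$ through images of $\Ccal$-morphisms) are precisely the denseness conditions being invoked there. Your self-contained refinement argument for the forward direction is sound — jointly epic families compose, and the sieve condition on $T$ guarantees the locally-obtained $\ell(h_g)$ land in $S$ — so the only difference from the paper is that you make explicit what it delegates to the cited monograph.
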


\subsection{Stable classes}
\label{ssec:stable}

\begin{definition}
\label{dfn:stable}
Let $\Ccal$ be a small category. A class $\Tcal$ of \textit{morphisms} in $\Ccal$ is called \textbf{stable} if it satisfies the following three conditions:
\begin{enumerate}
	\item $\Tcal$ contains all identities.
	\item $\Tcal$ is closed under composition.
	\item For any $f:C\to D$ in $\Tcal$ and any morphism $g$ in $\Ccal$ with codomain $D$, there exists a commutative square,
	\begin{equation}
	\begin{tikzcd}
	A \ar[r, "f'"] \ar[d, "g'"'] & B \ar[d, "g"]\\
	C \ar[r, "f"'] & D
	\end{tikzcd}
	\label{eq:stable}
	\end{equation}
	in $\Ccal$ with $f'\in \Tcal$.
\end{enumerate}
These correspond to the necessary and sufficient conditions for $\Tcal$-morphisms to be singleton presieves generating a Grothendieck topology, as presented in \cite[Exercise III.3]{MLM}. We call the resulting Grothendieck topology the \textbf{principal topology generated by $\Tcal$}.
\end{definition}

In \cite{SPM}, a stable class of morphisms is called
\textit{semi-localizing}, in reference to a related definition in \cite{GZ}. The authors call a principal topology an \textit{$A$-topology}, presumably because the atomic topology is an example of a principal topology; see Example \ref{xmpl:atomic}. We have chosen a naming convention that we believe to be more evocative in this context.

Continuing the parallel investigation of compactness, we obtain a related concept by replacing individual morphisms by finite families of morphisms.
\begin{definition}
\label{dfn:stable2}
Let $\Tcal'$ be a class of \textit{finite families} of morphisms with specified common codomain in $\Ccal$. We say $\Tcal'$ is \textbf{stable} if
\begin{enumerate}[label = {\arabic*}'.]
	\item $\Tcal'$ contains the families whose only member is the identity.
	\item $\Tcal'$ is closed under multicomposition, in that if $\{f_i:D_i \to C \mid i=1,\dotsc,n\}$ is in $\Tcal'$ and so are $\{g_{j,i}: E_j \to D_i \mid j=1,\dotsc,m_i\}$ for each $i = 1,\dotsc,n$, then $\{f_i \circ g_{j,i}\}$ is also a member of $\Tcal'$.
	\item For any $\{f_i:D_i \to C \mid i=1,\dotsc,n\}$ in $\Tcal'$ and any morphism $g:B \to C$ in $\Ccal$, there is a $\Tcal'$-family $\{h_j:A_j \to B \mid j=1,\dotsc,m\}$ such that each $g \circ h_j$ factors through one of the $f_i$.
\end{enumerate}
These are necessary and sufficient conditions for $\Tcal'$-families to generate a Grothendieck topology, which we call the \textbf{topology (finitely) generated by $\Tcal'$}.
\end{definition}

Note that we do not require $\Ccal$ to have pullbacks in the above definitions, so it is sensible to compare them with the usual notion of stability with respect to pullbacks.
\begin{lemma}
\label{lem:pbstable}
Let $\Tcal$ be a stable class of morphisms in $\Ccal$ with the additional `push-forward' property:
\begin{enumerate}
	\setcounter{enumi}{3}
	\item Given any morphism $f$ of $\Ccal$ such that $f \circ g \in \Tcal$ for some morphism $g$ of $\Ccal$, we have $f \in \Tcal$.
\end{enumerate}
This in particular is true of the class of epimorphisms, for example. Then morphisms in $\Tcal$ are stable under any pullbacks which exist in $\Ccal$.
\end{lemma}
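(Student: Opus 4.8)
The goal is to show that if $\Tcal$ is a stable class satisfying the extra push-forward property (4), then morphisms in $\Tcal$ are stable under pullback in the usual sense: whenever a pullback square exists in $\Ccal$ with one leg in $\Tcal$, the opposite leg also lies in $\Tcal$.

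The plan is to combine the stability axiom (3), which produces \emph{some} commutative square completing a cospan with an element of $\Tcal$, with axiom (4), which lets us transfer membership in $\Tcal$ across factorizations. Concretely, suppose we are given $f:C \to D$ in $\Tcal$ and a morphism $g:B \to D$, and suppose moreover that an honest pullback of $f$ along $g$ exists in $\Ccal$, say with vertices forming the square whose legs are $\bar f:P \to B$ and $\bar g:P \to C$ (so $g\bar f = f\bar g$, and $(P,\bar f,\bar g)$ is universal). I want to conclude $\bar f \in \Tcal$. First I would invoke axiom (3) applied to the same cospan $f,g$ to obtain a commutative square as in \eqref{eq:stable} with top edge $f' \in \Tcal$ and vertical edges $g',g$; that is, a morphism $g':A \to C$ and $f':A \to B$ with $g f' = f g'$ and $f' \in \Tcal$.

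Next I would use the universal property of the genuine pullback $P$: since $g f' = f g'$, the pair $(f',g')$ factors uniquely through $P$ via some mediating morphism $u:A \to P$ with $\bar f u = f'$ and $\bar g u = g'$. Now the equation $\bar f u = f'$ exhibits $f'$ as a composite $\bar f \circ u$, with $f' \in \Tcal$. Applying the push-forward property (4) to this factorization — taking the ``$f$'' of axiom (4) to be $\bar f$ and the ``$g$'' to be $u$ — yields exactly $\bar f \in \Tcal$, which is what we wanted. Finally I would verify the parenthetical remark that the class of all epimorphisms satisfies (4): if $f g$ is epic then $f$ is epic, which is immediate since any pair equalized after postcomposing with $f$ is already equalized by $fg$.

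The argument is essentially a diagram chase, so there is no serious obstacle; the only subtlety worth flagging is the logical structure of the statement. Axiom (3) only guarantees the \emph{existence} of \emph{some} square with top edge in $\Tcal$, not that the canonical pullback square has this form — so the content of the lemma is precisely that, \emph{when} a pullback happens to exist, its universal property lets us pull the $\Tcal$-membership of the axiom-(3) square back onto the pullback leg via property (4). The main step to get right is therefore the mediating factorization and the correct application of (4) to the composite $\bar f \circ u = f'$; once that is set up, membership $\bar f \in \Tcal$ drops out. I would present the proof as: set up the pullback and the axiom-(3) square, produce the mediating map $u$, read off $f' = \bar f u \in \Tcal$, and apply (4).
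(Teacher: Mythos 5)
Your proof is correct and is essentially the paper's own argument: the paper likewise takes the square supplied by axiom (3), uses the universal property of the pullback to factor the $\Tcal$-morphism $f'$ through the pullback leg, and then applies the push-forward property (4) to conclude that leg lies in $\Tcal$. Your write-up just makes the mediating morphism $u$ and the application of (4) explicit, which the paper compresses into one sentence.
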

\begin{proof}
Given a pullback of a $\Tcal$ morphism, the comparison between any square provided by \eqref{eq:stable} and this pullback provides a factorization of a $\Tcal$ morphism through the pullback, so by the assumed property the pullback is also in $\Tcal$, as required.
\end{proof}

As remarked in \cite{SPM}, we can extend a stable class of morphisms $\Tcal$ to the stable class $\hat{\Tcal}$ of morphisms whose principal sieves contain a member of $\Tcal$ (that is, morphisms $f$ such that $f \circ g \in \Tcal$ for some $g$) without changing the resulting principal topology. Thus we can safely assume that stable classes satisfy axiom 4 of Lemma \ref{lem:pbstable} if we so choose. The advantage of enforcing this assumption is that it gives a one-to-one correspondence between stable classes of morphisms in a category and the principal Grothendieck topologies on that category, since we can recover the classes as those morphisms which generate covering sieves.

\begin{remark}
\label{rmk:split}
The class of identity morphisms in any category satisfies axioms 1,2 and 3, but in order to satisfy axiom 4 it must be extended to the class of split epimorphisms, which is easily verified to satisfy all four axioms in any category. Thus the class of split epimorphisms corresponds to the trivial Grothendieck topology. It is worth noting also that every split epimorphism is regular and hence strict.
\end{remark}

We may similarly extend a class $\Tcal'$ to a maximal class $\hat{\Tcal}'$. However, for a class of finite families to be maximal, it must be closed under supersets as well as under the equivalent of the push-forward property of Lemma \ref{lem:pbstable}, so we have two extra axioms:
\begin{enumerate}[label = {\arabic*}'.]
	\setcounter{enumi}{3}
	\item Given a finite family $\mathfrak{f} = \{f_i:D_i \to C\}$ of morphisms in $\Ccal$ such that every morphism in some $\Tcal'$-family over $C$ factors through one of the $f_i$, we have $\mathfrak{f} \in \Tcal'$.
	\item Any finite family $\mathfrak{f} = \{f_i:D_i \to C\}$ of morphisms in $\Ccal$ containing a $\Tcal'$-family $\mathfrak{f}'$ is also a member of $\Tcal'$.
\end{enumerate}
The equivalent of the pullback stability statement of Lemma \ref{lem:pbstable} is as follows.

\begin{scholium}
\label{schl:finpb}
Suppose that $\Tcal'$ is a stable class of finite families of morphisms in a category $\Ccal$ satisfying the additional axioms 4' and 5'. Then $\Tcal'$ is stable under any pullbacks which exist in $\Ccal$, in that given a family $\{g_i:E_i \to C\}$ in $\Tcal'$ and $h:C'\to C$ such that the pullback of $g_i$ along $h$ exists for each $i$, the family $\{h^*(g_i):E'_i \to C'\}$ is in $\Tcal'$.
\end{scholium}
We leave the proof, and the verification that enforcing axioms 4' and 5' gives a one-to-one correspondence between stable classes of finite families and finitely generated Grothendieck topologies, to the reader.

\begin{definition}
\label{dfn:sites}
Let $\Ccal$ be a small category, $\Tcal$ a stable class of its morphisms; we denote the corresponding \textbf{principal} (Grothendieck) \textbf{topology} by $J_{\Tcal}$. We call a site $(\Ccal,J_{\Tcal})$ constructed in this way a \textbf{principal site}. Similarly, for a stable class of finite families $\Tcal'$ on $\Ccal$, we have a corresponding \textbf{finitely generated} (Grothendieck) \textbf{topology} denoted $J_{\Tcal'}$; a site of the form $(\Ccal,J_{\Tcal'})$ shall be called a \textbf{finitely generated site}.\footnote{We fear this terminology may result in some confusion if adopted more widely, since the `finitely generated' condition refers to the Grothendieck topology and not the underlying category, but it should cause no problems in the present thesis.}
\end{definition}

\begin{proposition}
\label{prop:representable}
Let $\Ccal$ be a small category and $J$ a Grothendieck topology on it. Then the representable sheaves are all supercompact if and only if $J = J_{\Tcal}$ is a principal topology for some stable class $\Tcal$ of morphisms in $\Ccal$. In particular, the topos of sheaves on a principal site $(\Ccal,J_{\Tcal})$ is supercompactly generated.

Similarly, the representable sheaves are all compact in $\Sh(\Ccal,J)$ if and only if $J = J_{\Tcal'}$ for a stable class $\Tcal'$ of finite families of morphisms in $\Ccal$, and the topos of sheaves on a finitely generated site $(\Ccal,J_{\Tcal'})$ is compactly generated.
\end{proposition}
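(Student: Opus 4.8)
The plan is to prove both biconditionals (and their compact analogues) by identifying, directly from the topos, the stable class that generates $J$, using \textbf{Fact \ref{fact1}} as the sole bridge between sieves in $\Sh(\Ccal,J)$ and $J$-covering sieves in $\Ccal$. The crucial methodological point is that although a morphism between representable sheaves need not arise from a morphism of $\Ccal$ (this is precisely the subtlety addressed in the subsequent subsections), I will only ever extract morphisms \emph{from $J$-covering sieves in $\Ccal$}, so every morphism produced genuinely lives in $\Ccal$.

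For the forward direction (supercompact representables $\Rightarrow$ principal topology) I would set $\Tcal := \{f : D \to C \text{ in } \Ccal \mid \ell(f) \text{ is epic}\}$. By Fact \ref{fact1}, $f \in \Tcal$ if and only if the principal sieve $(f)$ is $J$-covering, since every member of $(f)$ factors through $f$, so joint epimorphy of the images reduces to $\ell(f)$ being epic. Axioms 1 and 2 of Definition \ref{dfn:stable} are immediate from $\ell(\id)=\id$ and closure of epimorphisms under composition. For axiom 3, given $f \in \Tcal$ and any $g : B \to D$, I would take the pullback sieve $g^*(f)$, which is $J$-covering by the stability axiom for Grothendieck topologies; Fact \ref{fact1} makes its image jointly epic over $\ell(B)$, and supercompactness of $\ell(B)$ produces a single $f' \in g^*(f)$ with $\ell(f')$ epic. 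Then $f' \in \Tcal$ and $gf'$ factors through $f$, i.e. $gf' = fg'$ for some $g'$, which is exactly the square \eqref{eq:stable}. Finally $J = J_\Tcal$: any $J$-covering sieve is jointly epic (Fact \ref{fact1}), hence by supercompactness contains a $\Tcal$-morphism, while conversely any sieve containing $f \in \Tcal$ contains the covering sieve $(f)$ and so is itself covering.

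For the converse (principal topology $\Rightarrow$ supercompact representables), I would take a jointly epic family $\{u_i : X_i \to \ell(C)\}$, pass to the sieve $T$ it generates (still jointly epic), and apply Fact \ref{fact1} to see that $S = \{f \mid \ell(f) \in T\}$ is $J$-covering. Since $J = J_\Tcal$, the sieve $S$ contains some $f \in \Tcal$; as $(f)$ is then covering, Fact \ref{fact1} forces $\ell(f)$ to be epic, and since $\ell(f) \in T$ factors through some $u_i$, that $u_i$ is epic. This shows each $\ell(C)$ is supercompact, and because the representables are separating in $\Sh(\Ccal,J)$, the topos is supercompactly generated.

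The compact case runs in perfect parallel, replacing single morphisms by finite families: set $\Tcal'$ to consist of the finite families whose $\ell$-images are jointly epic, verify axioms 1$'$--3$'$ of Definition \ref{dfn:stable2} (using the transitivity axiom of $J$ for multicomposition in 2$'$, and compactness of $\ell(B)$ to extract a \emph{finite} subfamily in 3$'$), show $J = J_{\Tcal'}$, and for the converse extract a finite jointly epic subfamily of $\{u_i\}$ exactly as above. I do not anticipate a serious obstacle; the one point demanding care — and the reason Fact \ref{fact1} is doing the heavy lifting — is to resist reasoning with arbitrary maps between representable sheaves and instead always refine covering \emph{data} down to generators inside $\Ccal$, letting (super)compactness convert the statement ``$S$ is $J$-covering'' into ``$S$ contains a principal, resp. finitely generated, covering sieve.''
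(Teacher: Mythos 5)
Your proposal is correct, and its skeleton coincides with the paper's: define $\Tcal$ as the class of morphisms $f$ with $\ell(f)$ epimorphic, verify stability, and show $J = J_{\Tcal}$ by trading covering sieves for jointly epic families via Fact~\ref{fact1}. Where you genuinely diverge is the verification of axiom 3 of Definition~\ref{dfn:stable}. The paper forms the pullback of $\ell(f)$ along $\ell(g)$ \emph{in the topos}, covers it by representables, and uses supercompactness of $\ell(B)$ to extract an epimorphism; the cost is that the resulting factorizations live in $\Sh(\Ccal,J)$, and the final passage back to a commutative square \emph{in $\Ccal$} is left implicit (``we ultimately recover the square''). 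You instead pull back the principal sieve $(f)$ \emph{inside $\Ccal$} using the stability axiom for Grothendieck topologies, so that every morphism you extract from $g^*((f))$ factors through $f$ in $\Ccal$ by definition; this produces the square \eqref{eq:stable} with no bridging step and is arguably the cleaner route. The one point you must tighten is the claim that ``$\ell(f)$ epic iff $(f)$ is $J$-covering'': this does not follow verbatim from Fact~\ref{fact1}, which concerns sieves $T$ of the topos and their preimages $\{u \mid \ell(u) \in T\}$; applied to the sieve generated by $\ell(f)$ it only shows that the possibly larger sieve $\{u \mid \ell(u) \text{ factors through } \ell(f) \text{ in } \Sh(\Ccal,J)\}$ is $J$-covering. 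What you need is the two-sided refinement: a sieve $S$ of $\Ccal$ is $J$-covering if and only if $\{\ell(h) \mid h \in S\}$ is jointly epic. This is standard — the union of the images of the $\ell(h)$, $h \in S$, is the subobject of $\ell(C)$ corresponding to the $J$-closure of $S$, so it is all of $\ell(C)$ exactly when $S$ is covering — and the paper itself leans on the same refinement when it asserts that $J_{\Tcal}$-covering sieves ``are certainly $J$-covering.'' With that lemma made explicit, your argument, including the parallel compact case, is complete.
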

\begin{proof}
By Fact \ref{fact1}, for $J = J_{\Tcal}$, given a covering family on $\ell(C)$, the sieve it generates must contain an epimorphism which is the image of a $\Tcal$-morphism. Since this epimorphism factors through some member of the covering family, that member must also be an epimorphism. Thus $\ell(C)$ is supercompact, as required.

Conversely, given that $\ell(C)$ is supercompact for every $C$, let $\Tcal$ be the class of morphisms $f$ such that $\ell(f)$ is epimorphic. We first claim that $\Tcal$ is a stable class. Indeed, axioms 1, 2 and 4 are immediate; to see that axiom 3 holds, suppose that $f: C \to D$ is in $\Tcal$ and $g: B \to D$ is any $\Ccal$ morphism. Then we may consider the pullback of $\ell(f)$ along $\ell(g)$ in $\Sh(\Ccal,J)$:
\[\begin{tikzcd}
A \ar[r, "f'", two heads] \ar[d, "g'"']
\ar[dr, phantom, "\lrcorner", very near start] &
\ell(B) \ar[d, "\ell(g)"]\\
\ell(C) \ar[r, "\ell(f)"', two heads] & \ell(D).
\end{tikzcd}\]
Since $A$ is covered by objects of the form $\ell(C')$, by supercompactness of $\ell(B)$ there must be an epimorphism $\ell(C')\too \ell(B)$ factoring through the pullback, and in turn the sieve it generates must contain an epimorphism in the image of $\ell$, so that we ultimately recover the square \eqref{eq:stable} required for axiom 3.

Now we show that $J = J_{\Tcal}$. Given a $J$-covering sieve $S$ on $C$ (generated by a family of morphisms, for example), the sieve generated by $\ell(S) := \{\ell(g) \mid g\in S\}$ necessarily covers $\ell(C)$, and therefore $\ell(S)$ contains an epimorphism by supercompactness, so that the original sieve must have contained a member of $\Tcal$, which gives $J \subseteq J_{\Tcal}$. Conversely, $J_{\Tcal}$-covering sieves are certainly $J$-covering, so $J_{\Tcal} \subseteq J$. Thus $J$ is a principal topology, as claimed.

The argument for finitely generated sites is almost identical; we need only replace $\Tcal$-morphisms with finite $\Tcal'$-families in the first part, and define $\Tcal'$ families to be those finite families which are mapped by $\ell$ to jointly epic families in the second part.
\end{proof}

In particular, we may extend a stable class of morphisms $\Tcal$ to a stable class of families of morphisms $\Tcal'$ by viewing the morphisms in $\Tcal$ as one-element families. Then it is clear that $J_{\Tcal} = J_{\Tcal'}$.

Intermediate between the two classes of sites discussed so far are a class which we call \textbf{quasi-principal sites}: these are sites $(\Ccal,J)$ such that for every object $C \in \Ccal$, either the empty sieve is a covering sieve on $C$ or every covering sieve on $C$ contains a principal sieve. Observe that if $\Ccal'$ is the full subcategory of $\Ccal$ on the latter class of objects (which we can always construct over $\Set$), then $\Sh(\Ccal,J) \cong \Sh(\Ccal',J_{\Tcal})$, where $\Tcal$ is the class of morphisms generating principal covering sieves.

The following result subsumes Lemma 4.11 of \cite{TAC}; our work up to this point allows us to avoid any direct manipulation with sheaves in the proof.

\begin{corollary}
\label{crly:incl}
Let $(\Ccal,J)$ be a small site. Then the inclusion $\Sh(\Ccal,J) \to [\Ccal\op,\Set]$ is relatively pristine (resp. relatively proper) if and only if $J$ is a principal (resp. finitely generated) topology. The inclusion is relatively polished if and only if $(\Ccal,J)$ is a quasi-principal site.
\end{corollary}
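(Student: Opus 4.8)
The plan is to reduce all three equivalences to statements about which objects the sheafification functor preserves, and then to read those off from the behaviour of representables. Write $i\colon \Sh(\Ccal,J)\to[\Ccal\op,\Set]$ for the inclusion, so that $i^*$ is sheafification $\mathbf{a}$ and $i^*\circ\yon=\ell$. By Proposition~\ref{prop:xmpls} the presheaf topos $[\Ccal\op,\Set]$ is both supercompactly and compactly generated, so Proposition~\ref{prop:relpres} applies with $\Ecal=[\Ccal\op,\Set]$: the morphism $i$ is relatively pristine (resp.\ relatively polished, relatively proper) if and only if $i^*$ preserves supercompact (resp.\ `supercompact or initial', compact) objects. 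The two facts that make these preservation conditions tractable are that $\mathbf{a}$, being a left adjoint, preserves epimorphisms and finite coproducts, and that by Lemma~\ref{lem:scsite} (applied with the trivial topology) every supercompact object of $[\Ccal\op,\Set]$ is a quotient of a single representable $\yon(C)$, while every compact object is a quotient of a finite coproduct of representables.

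For the pristine/principal equivalence I would argue as follows. If $i$ is relatively pristine then $\mathbf{a}$ preserves supercompact objects; since each $\yon(C)$ is supercompact, each $\ell(C)=\mathbf{a}(\yon(C))$ is supercompact, so $J$ is principal by Proposition~\ref{prop:representable}. Conversely, if $J$ is principal then every $\ell(C)$ is supercompact; given a supercompact $Q$ in $[\Ccal\op,\Set]$, choose an epimorphism $\yon(C)\too Q$, apply $\mathbf{a}$ to obtain an epimorphism $\ell(C)\too\mathbf{a}(Q)$, and conclude that $\mathbf{a}(Q)$ is supercompact as a quotient of a supercompact object (Lemma~\ref{lem:closed}). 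Hence $\mathbf{a}$ preserves supercompact objects and $i$ is relatively pristine. The proper/finitely generated equivalence is identical after replacing `supercompact', `representable', and `principal' by `compact', `finite coproduct of representables', and `finitely generated', using that $\mathbf{a}$ preserves finite coproducts and that the empty coproduct $0$ is compact and sent to $0$.

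The relatively polished case is the one requiring real care, and is where I expect the main obstacle to lie. The same quotient argument (now using $\mathbf{a}(0)\cong 0$) shows that $\mathbf{a}$ preserves `supercompact or initial' objects if and only if every $\ell(C)$ is either supercompact or initial. It remains to recognise this as exactly the quasi-principal condition. Here I would use Fact~\ref{fact1}: the empty sieve is $J$-covering on $C$ precisely when the empty family is jointly epimorphic over $\ell(C)$, i.e.\ when $\ell(C)\cong 0$; and for $\ell(C)\not\cong 0$, translating jointly epimorphic families over $\ell(C)$ into $J$-covering sieves on $C$ shows that $\ell(C)$ is supercompact if and only if every $J$-covering sieve on $C$ contains a principal covering sieve (one generated by a morphism $f$ with $\ell(f)$ epic). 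Combining these two observations object by object yields: every $\ell(C)$ is supercompact or initial if and only if for each $C$ either the empty sieve covers $C$ or every covering sieve on $C$ contains a principal sieve, which is precisely the statement that $(\Ccal,J)$ is quasi-principal.

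The delicate points to get right are the bookkeeping of the initial object --- that $0$ is compact but not supercompact, so that it is harmless in the proper and polished cases but must be excluded in the pristine case --- and the faithful translation between supercompactness of $\ell(C)$ and the sieve-theoretic condition via Fact~\ref{fact1}, which essentially re-runs the computation in the proof of Proposition~\ref{prop:representable}. As a consistency check, the resulting hierarchy (principal contained in quasi-principal, with finitely generated permitting empty covers) matches Corollary~\ref{crly:pristine}: passing from quasi-principal to principal amounts exactly to forbidding the initial representables, i.e.\ to requiring $\mathbf{a}$ to reflect the initial object.
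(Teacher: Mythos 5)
Your proof is correct and follows essentially the same route as the paper's: reduce via Proposition \ref{prop:relpres} to preservation of (super)compact objects by sheafification, use the quotients-of-representables description (Lemmas \ref{lem:scsite} and \ref{lem:closed}) together with Proposition \ref{prop:representable} to identify the preservation condition with principality/finite generation, and handle the polished case by noting that the empty sieve covers $C$ exactly when $\ell(C)$ is initial. The only difference is that you spell out the sieve-theoretic bookkeeping for the quasi-principal case (via Fact \ref{fact1}) in more detail than the paper, which simply asserts it.
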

\begin{proof}
Since $[\Ccal\op,\Set]$ is supercompactly generated, by Proposition \ref{prop:relpres}, the inclusion $\Sh(\Ccal,J) \to [\Ccal\op,\Set]$ is relatively pristine if and only if the sheafification functor preserves supercompact objects; this in particular requires all of the objects $\ell(C)$ to be supercompact, which occurs if and only if $J$ is principal by Proposition \ref{prop:representable}. But any other supercompact objects are quotients of representables, so $\ell(C)$ being supercompact for every $C$ is also sufficient. As usual, the relatively proper case is analogous.

For relative polishedness, we relax the conditions above to requiring that each representable is sent to a supercompact or initial object, and note that the empty sieve is covering on $C$ if and only if $\ell(C)$ is initial.
\end{proof}

\subsection{Morphisms between sheaves on principal sites}
\label{ssec:representable}

In order to better understand the relationship between a principal site and the topos it generates, we employ some results from \cite{Dense}, which enable us to describe morphisms in a Grothendieck topos $\Sh(\Ccal,J)$ in terms of those in a presenting site $(\Ccal,J)$.

For a general site $(\Ccal,J)$, the functor $\ell:\Ccal \to \Sh(\Ccal,J)$ is neither full nor faithful. To describe the full collection of morphisms in the sheaf topos, several notions are introduced in \cite[\S 2]{Dense}, of which we introduce the relevant special cases here.

For morphisms $h,k:A \rightrightarrows B$, we say $h$ and $k$ are \textbf{$J$-locally equal} (written $h \equiv_J k$) if there is a $J$-covering sieve $S$ on $A$ such that $h\circ f = k \circ f$ for every $f \in S$. If $J$ is principal (resp. finitely generated) then this is equivalent to saying that there is some $\Tcal$-morphism which equalizes $h$ and $k$ (resp. a $\Tcal'$-family whose members all equalize $h$ and $k$). This leads naturally to the following moderately technical definitions:

\begin{definition}
\label{dfn:Tspan}
Let $\Ccal$ be a small category and $\Tcal$ a stable class of morphisms in $\Ccal$. Then for objects $A,B$ in $\Ccal$, a \textbf{$\Tcal$-span} from $A$ to $B$ is a span
\begin{equation}
\begin{tikzcd}
& E \ar[dl,"f"'] \ar[dr,"g"] &\\
A & & B,
\end{tikzcd}
\label{eq:span}
\end{equation}
such that $f$ is in $\Tcal$. A \textbf{$\Tcal$-arch} is a $\Tcal$-span such that for any $h,k:D\rightrightarrows E$ with $f \circ h = f \circ k$ we have $g \circ h \equiv_{J_{\Tcal}} g \circ k$.

Similarly, for $\Tcal'$ a stable class of finite families of morphisms on $\Ccal$, a \textbf{$\Tcal'$-span} is a finite (possibly empty) family of spans:
\begin{equation}
\begin{tikzcd}
& E_i \ar[dl,"f_i"'] \ar[dr,"g_i"] &\\
A & & B,
\end{tikzcd}
\label{eq:multispan}
\end{equation}
such that $\{f_1, \dotsc, f_n\}$ is in $\Tcal'$. A \textbf{$\Tcal'$-multiarch} is a $\Tcal'$-multispan such that for any $h:D \to E_i$, $k:D\to E_{i'}$ with $f_i \circ h = f_{i'}\circ k$ we have $g_i \circ h \equiv_{J_{\Tcal'}} g_{i'} \circ k$.

The constituent morphisms in any span or multispan will be referred to as their \textbf{legs}.
\end{definition}

\begin{lemma}
\label{lem:Trel}
Let $(\Ccal,J_\Tcal)$ be a principal site. Let $\Arch_\Tcal(A,B)$ be the collection of $\Tcal$-arches from $A$ to $B$ in $\Ccal$. For each $\Tcal$-arch $(t,g) \in \Arch_\Tcal(A,B)$, there is a (necessarily unique) morphism $\ell(t,g): \ell(A) \to \ell(B)$ in $\Sh(\Ccal, J_{\Tcal})$ such that $\ell(t,g) \circ \ell(t) = \ell(g)$. The mapping $\ell$ so defined is a surjection from $\Arch_\Tcal(A,B)$ to the set of morphisms from $\ell(A)$ to $\ell(B)$ in $\Sh(\Ccal,J_{\Tcal})$.

Similarly, letting $\mArch_{\Tcal'}(A,B)$ be the set of $\Tcal'$-multiarches from $A$ to $B$, $\ell$ induces a surjection from $\mArch_{\Tcal'}(A,B)$ to $\Hom_{\Sh(\Ccal,J_{\Tcal'})}(A,B)$.
\end{lemma}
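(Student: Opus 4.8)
The plan is to reduce everything to two standard facts about the sheafification functor $\ell = \mathbf{a}\circ\yon$ on a principal site, both available from the theory of \cite{Dense}. First, for $\Ccal$-morphisms $p,q$ one has $\ell(p)=\ell(q)$ if and only if $p\equiv_{J_\Tcal}q$, which by principality means some $\Tcal$-morphism equalizes $p$ and $q$. Second, the \emph{local representability} of morphisms out of representables: the unit $\yon(E)\to\ell(E)$ is a local epimorphism, so that for any $\phi\colon \ell(D)\to\ell(E)$ the sieve of $f$ with $\phi\circ\ell(f)$ in the image of $\ell$ is $J_\Tcal$-covering, and by principality it contains a $\Tcal$-morphism $s$, giving $\phi\circ\ell(s)=\ell(\bar\phi)$ for some $\bar\phi$ in $\Ccal$. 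I will also use that $t\in\Tcal$ forces $\ell(t)$ to be an epimorphism (by Fact \ref{fact1}, the singleton $\{\ell(t)\}$ is jointly epic), hence a \emph{regular} epimorphism since $\Sh(\Ccal,J_\Tcal)$ is a topos.

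For the first assertion, uniqueness of $\ell(t,g)$ is immediate from $\ell(t)$ being epic. For existence I factor $\ell(g)$ through the regular epimorphism $\ell(t)$: it suffices to show $\ell(g)$ coequalizes the kernel pair $\pi_1,\pi_2\colon R:=\ell(E)\times_{\ell(A)}\ell(E)\rightrightarrows\ell(E)$. Since the representables separate, I test $\ell(g)\pi_1=\ell(g)\pi_2$ against each $\rho\colon \ell(D)\to R$, i.e.\ against each pair $a=\pi_1\rho,\ b=\pi_2\rho$ with $\ell(t)a=\ell(t)b$. Local representability together with the stability axioms (closure under composition and the square-completion axiom~3) produces a single $\Tcal$-morphism $s\colon D'\to D$ and $\Ccal$-morphisms $\bar a,\bar b\colon D'\rightrightarrows E$ with $a\ell(s)=\ell(\bar a)$ and $b\ell(s)=\ell(\bar b)$. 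From $\ell(t)a=\ell(t)b$ I get $\ell(t\bar a)=\ell(t\bar b)$, i.e.\ $t\bar a\equiv_{J_\Tcal}t\bar b$, so a further $\Tcal$-morphism $r$ satisfies $t\bar a r=t\bar b r$ \emph{strictly}. Now the arch condition applies to the pair $\bar a r,\bar b r$, yielding $g\bar a r\equiv_{J_\Tcal}g\bar b r$, hence $\ell(g)\ell(\bar a)\ell(r)=\ell(g)\ell(\bar b)\ell(r)$; cancelling the epimorphisms $\ell(r)$ and $\ell(s)$ gives $\ell(g)a=\ell(g)b$. As $\rho$ was arbitrary, $\ell(g)$ coequalizes the kernel pair and factors uniquely through $\ell(t)$.

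For surjectivity, given $\phi\colon \ell(A)\to\ell(B)$, local representability furnishes $t\in\Tcal$ and $g\colon E\to B$ with $\phi\circ\ell(t)=\ell(g)$. This $(t,g)$ is a $\Tcal$-arch: for $h,k\colon D\rightrightarrows E$ with $th=tk$ one computes $\ell(gh)=\phi\ell(th)=\phi\ell(tk)=\ell(gk)$, whence $gh\equiv_{J_\Tcal}gk$ by the first fact. The morphism $\ell(t,g)$ built above then satisfies $\ell(t,g)\ell(t)=\ell(g)=\phi\ell(t)$, and since $\ell(t)$ is epic, $\ell(t,g)=\phi$.

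The compact case is strictly parallel: a finite $\Tcal'$-family $\{f_i\colon E_i\to A\}$ gives a regular epimorphism $\coprod_i\ell(E_i)\too\ell(A)$, the multiarch legs $g_i$ assemble into a morphism out of the coproduct, and one factors through the coequalizer of the kernel pair exactly as before, now invoking the multiarch condition on pairs $h\colon D\to E_i$, $k\colon D\to E_{i'}$ with $f_ih=f_{i'}k$ and using the finite-family stability axioms (multicomposition and axiom~$3'$) to build common refinements. The main obstacle throughout is precisely this bridging between the sheaf topos and $\Ccal$: morphisms $\ell(D)\to\ell(E)$ are not of the form $\ell(\bar a)$ on the nose, so the argument needs the two-stage refinement — first representing $a,b$ by genuine $\Ccal$-morphisms on a common $\Tcal$-cover, then upgrading the resulting \emph{local} equality to a \emph{strict} one so that the (strictly-stated) arch condition can be invoked — and the care required to keep all refinements inside $\Tcal$ (resp.\ $\Tcal'$) is where the stability axioms earn their keep.
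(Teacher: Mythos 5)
Your proof is correct, but it takes a genuinely different route from the paper's. The paper disposes of the lemma in a single line, by citing \cite[Proposition 2.5]{Dense}, which already packages the two "standard facts" you start from (faithfulness of $\ell$ up to $J_{\Tcal}$-local equality, and local representability of morphisms between representables) together with their consequences for morphisms out of $\ell(A)$. You instead rebuild the statement from those lower-level facts plus purely topos-internal machinery: Fact \ref{fact1} makes $\ell(t)$ a (regular) epimorphism, so existence of $\ell(t,g)$ reduces to showing that $\ell(g)$ coequalizes the kernel pair of $\ell(t)$, which you verify by testing against the separating family of representables and running the two-stage refinement — first a common $\Tcal$-cover on which both legs of a test pair become representable (this is where axioms 2 and 3 of Definition \ref{dfn:stable} are used), then a further $\Tcal$-morphism upgrading the local equality $t\bar a\equiv_{J_\Tcal}t\bar b$ to a strict one, so that the arch condition (which is stated for strict equality) can be invoked. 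What your route buys is transparency: it shows exactly where principality (covering sieves contain $\Tcal$-morphisms), the stability axioms, and the arch condition each enter, all of which the citation hides; what it costs is length, and a dependence on topos-level facts (epis are regular, representables separate) rather than on the site-level generalities of \cite{Dense}.

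One caveat on the multiarch case: "strictly parallel" conceals one extra step. A morphism $\ell(D)\to\coprod_i\ell(E_i)$ from a representable need not factor through a single coproduct inclusion, so before local representability and the multiarch condition can be applied you must first decompose $\ell(D)$, using disjointness and pullback-stability of coproducts, into the finitely many pullbacks of the summands along the two test morphisms $a$ and $b$, cover these pieces by representables, and then use Fact \ref{fact1} to extract a finite $\Tcal'$-family refining the resulting covering; only then does the single-arch argument go through, with cancellation of jointly epic families replacing cancellation of single epimorphisms. This is routine to fill in, but it is a genuine additional move with no counterpart in the single-arch case.
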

\begin{proof}
This is immediate from \cite[Proposition 2.5]{Dense}.
\end{proof}

Intuitively it seems that the collections $\Arch_\Tcal(A,B)$ ``should'' be the morphisms of a category. However, while \cite[Proposition 2.5(iv)]{Dense} suggests a composition of arches coming from covering families that generate sieves, this composition produces a maximal family of arches presenting the composite rather than a single $\Tcal$-arch; there is a similar problem for multiarches. We therefore examine what structure exists in general, and identify some sufficient conditions under which arches and multiarches admit a composition operation.

\begin{lemma}
\label{lem:archcat}
Let $(\Ccal,J_{\Tcal})$ be a principal site. For each pair of objects $A$ and $B$ in $\Ccal$, let $\Span_\Tcal(A,B)$ be the collection of $\Tcal$-spans from $A$ to $B$. Then $\Span_\Tcal(A,B)$ admits a canonical categorical structure, where a morphism $x: (t:E \to A, g:E \to B) \to (t':E' \to A, g':E' \to B)$ is a morphism $x: E \to E'$ with $t = t' \circ x$ and $g = g' \circ x$. This restricts to give a category structure on $\Arch_\Tcal(A,B)$ too.

Expanding upon this, if $(\Ccal,J_{\Tcal'})$ is a finitely generated site, there is a canonical categorical structure on each collection of $\Tcal'$-multispans $\mSpan_{\Tcal'}(A,B)$, where $\vec{x}: (t_i:E_i \to A, g_i:E_i \to B) \to (t'_j:E'_j \to A, g'_j:E'_j \to B)$ consists of an index $j$ for each index $i$, and a morphism $x_i:E_i \to E'_j$ with $t_i = t'_j \circ x_i$ and $g_i = g'_j \circ x_i$. Note that any permutation of the spans forming a given $\Tcal'$-multispan form a $\Tcal'$-multispan which is isomorphic in this category. Once again, this structure restricts to the collections of multiarches.
\end{lemma}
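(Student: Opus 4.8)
The statement of Lemma \ref{lem:archcat} asserts that the collections $\Span_\Tcal(A,B)$, and by restriction $\Arch_\Tcal(A,B)$, carry a canonical categorical structure, and likewise for the multispan versions; the content is almost entirely a matter of verifying that the proposed data genuinely form a category and that the claimed restrictions are legitimate.

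The plan is to proceed in three movements. First, for the span case, I would verify that $\Span_\Tcal(A,B)$ with the indicated morphisms is a category: composition of $x:(t,g) \to (t',g')$ and $y:(t',g') \to (t'',g'')$ is just composition $y \circ x: E \to E''$ in $\Ccal$, and one checks $t = t' \circ x = t'' \circ y \circ x$ and similarly $g = g'' \circ (y \circ x)$, so $y \circ x$ is again a morphism of spans; identities are the identity morphisms of $\Ccal$, and associativity and unit laws are inherited directly from $\Ccal$. This is routine.

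Second, I would check that the structure \emph{restricts} to $\Arch_\Tcal(A,B)$. Here the key point is that being a $\Tcal$-arch is a property of a span (the condition relating $f \circ h = f \circ k$ to $g \circ h \equiv_{J_\Tcal} g \circ k$), not extra structure, and the morphisms of $\Span_\Tcal(A,B)$ are morphisms of the underlying spans. So all that is needed is to observe that a morphism of spans between two arches is automatically a morphism of arches — there is no additional compatibility to impose — so the full subcategory of $\Span_\Tcal(A,B)$ on the arches is exactly $\Arch_\Tcal(A,B)$ with the inherited structure. I would also note that for a morphism $x:(t,g)\to(t',g')$ to exist one needs $t = t'\circ x$, and since $t \in \Tcal$, the push-forward axiom (axiom 4 of Lemma \ref{lem:pbstable}, which we may assume) guarantees $t' \in \Tcal$ as well, so the target is genuinely a $\Tcal$-span; this keeps the definition internally consistent.

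Third, for the multispan case I would carry out the analogous verification, where the bookkeeping of the index assignment is the only real subtlety. A morphism $\vec{x}:(t_i,g_i)_i \to (t'_j,g'_j)_j$ is a function on index sets $i \mapsto j(i)$ together with maps $x_i: E_i \to E'_{j(i)}$ respecting the legs; composition composes the index functions and the component morphisms, and the identity uses the identity index function together with identity morphisms. The \textbf{main obstacle} I anticipate is precisely confirming that composition of multispan morphisms is well-defined and associative at the level of the index assignments together with the morphisms — i.e. that composing $i \mapsto j(i)$ with $j \mapsto k(j)$ and the corresponding $x_i$, $y_j$ yields a legitimate multispan morphism with components $y_{j(i)} \circ x_i: E_i \to E''_{k(j(i))}$ satisfying the leg conditions — and then checking that permutations of the spans in a multispan yield isomorphic objects (the isomorphism being given by the permutation on indices together with identity component maps). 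None of this is deep, but it requires care to state the data of a multispan morphism cleanly so that the category axioms are transparent; once the data is packaged correctly, the verification is mechanical, and the restriction to $\mArch_{\Tcal'}(A,B)$ follows exactly as in the span case since being a multiarch is again a property rather than structure.
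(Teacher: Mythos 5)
Your proposal is correct and amounts to exactly the routine verification that the paper leaves implicit (Lemma \ref{lem:archcat} is stated without proof, the category axioms being inherited directly from $\Ccal$ as you describe). One small remark: your appeal to the push-forward axiom to ensure $t' \in \Tcal$ is unnecessary, since both the source and target of a morphism in $\Span_\Tcal(A,B)$ are already $\Tcal$-spans by definition of the objects of that category, so no internal consistency check is needed there.
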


\begin{proposition}
\label{prop:composition}
Suppose that $\Tcal$ is a stable class of morphisms in a small category $\Ccal$ such that axiom 3 of Definition \ref{dfn:stable} provides stability squares \textbf{weakly functorially}. That is, calling an ordered pair of morphisms $(h:A \to D,s: B \to D)$ with $s \in \Tcal$ a \textbf{$\Tcal$-cospan} from $A$ to $B$, suppose the stability axiom defines a mapping from $\Tcal$-cospans to $\Tcal$-spans satisfying the following conditions:
\begin{enumerate}
	\item For any $\Tcal$-morphism $t:B \to A$, the $\Tcal$-span coming from $(\id_A,t)$ is isomorphic in $\Span_{\Tcal}(A,B)$ to $(t,\id_B)$.
	\item If $f:A \to D$, $g:B \to D$ and $k:A' \to A$ such that $g$ is a $\Tcal$-morphism, the $\Tcal$-span obtained by applying the stability mapping along $f$ and then $k$ is isomorphic in $\Span_{\Tcal}(A',B)$ to that obtained by applying it along $f \circ k$.
	\item If $f:A \to D$, $g:B \to D$ and $e:B' \to B$ such that $e$ and $g$ are $\Tcal$-morphisms, the $\Tcal$-span obtained by applying the stability mapping to $g$ and then $e$ is isomorphic in $\Span_{\Tcal}(A,B')$ to that obtained by applying it along $g \circ e$.
\end{enumerate}
Then there is a weak composition on $\Tcal$-arches, in the sense that there are mappings
\[ \circ : \Arch_{\Tcal}(B,C) \times \Arch_{\Tcal}(A,B) \to
\Arch_{\Tcal}(A,C), \]
which are associative and unital up to isomorphism of $\Tcal$-arches. Moreover, this composition is natural in the second component up to isomorphism, in the sense that for each fixed $\Tcal$-arch $(u,h)$ in $\Arch_{\Tcal}(B,C)$, a morphism $x: (t,g) \to (t',g')$ in $\Arch_{\Tcal}(A,B)$ induces a morphism $(u,h) \circ x: (u,h) \circ (t,g) \to (u,h) \circ (t',g')$ in $\Arch_{\Tcal}(A,C)$, and the resulting mapping $(u,h) \circ - : \Arch_{\Tcal}(A,B) \to \Arch_{\Tcal}(A,C)$ is functorial up to unit and associativity isomorphisms.
\end{proposition}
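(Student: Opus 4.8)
The plan is to define the composition of $\Tcal$-arches explicitly using the weak-functorial stability mapping, and then verify associativity, unitality, and functoriality up to the isomorphisms in $\Span_{\Tcal}$ supplied by hypotheses 1, 2 and 3. First I would set up the composition. Given $\Tcal$-arches $(t,g) \in \Arch_{\Tcal}(A,B)$ and $(u,h) \in \Arch_{\Tcal}(B,C)$, I have a cospan $(g : E \to B, u : F \to B)$ with $u \in \Tcal$; applying the stability mapping produces a $\Tcal$-span
\[
\begin{tikzcd}
& P \ar[dl, "u'"'] \ar[dr, "g'"] &\\
E & & F,
\end{tikzcd}
\]
with $u' \in \Tcal$. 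I then set $(u,h)\circ(t,g) := (t \circ u', h \circ g')$ as a span from $A$ to $C$. Since $\Tcal$ is closed under composition (axiom 2), $t \circ u' \in \Tcal$, so this is a genuine $\Tcal$-span; I would then check it is a $\Tcal$-arch, which follows because $\ell(t \circ u', h \circ g') = \ell(u,h)\circ\ell(t,g)$ is forced to be the composite of the corresponding sheaf morphisms by Lemma \ref{lem:Trel} — the arch condition is exactly the statement that $\ell(t \circ u', h \circ g')$ is well-defined, and well-definedness is inherited from the two factors.

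Next I would verify unitality. The identity $\Tcal$-arch on $A$ is $(\id_A, \id_A)$; composing on either side amounts to applying the stability mapping to a cospan one of whose legs is an identity, and hypothesis 1 (together with hypotheses 2 and 3 specialized to identity maps) gives precisely that the resulting $\Tcal$-span is isomorphic in $\Span_{\Tcal}$ to the original arch. For associativity, given a third arch $(v,k)\in\Arch_{\Tcal}(C,D)$, both bracketings $((v,k)\circ(u,h))\circ(t,g)$ and $(v,k)\circ((u,h)\circ(t,g))$ are built from two successive applications of the stability mapping; I would compare them by constructing the evident comparison morphism in $\Span_{\Tcal}(A,D)$ and showing it is an isomorphism using hypotheses 2 and 3, which are exactly the coherence conditions asserting that applying the stability mapping along a composite agrees (up to canonical span-isomorphism) with applying it in two stages. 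The key bookkeeping is to track which leg plays the role of the $\Tcal$-morphism at each stage so that the correct hypothesis applies.

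For naturality in the second component, given a morphism $x : (t,g)\to(t',g')$ in $\Arch_{\Tcal}(A,B)$ and a fixed arch $(u,h)$, I would produce the induced morphism $(u,h)\circ x$ by observing that $x$ commutes with the cospan legs feeding into the stability mapping, so the universal/weakly-functorial nature of that mapping yields a morphism between the two output spans compatible with the projections to $A$ and $C$. Functoriality of $(u,h)\circ-$ up to the unit and associativity isomorphisms then reduces to the same coherence data. The main obstacle I anticipate is associativity: because the stability squares are only weakly functorial and only defined up to isomorphism in $\Span_{\Tcal}$, one must carefully chase the comparison isomorphisms of hypotheses 2 and 3 through a diagram involving two independent applications of the mapping, ensuring the comparison is genuinely an isomorphism of arches and that the choice of isomorphisms is coherent enough that no further pentagon-type condition is needed at this level (the statement only claims associativity up to isomorphism, so strict coherence of the isomorphisms themselves is not required, which keeps the argument finite). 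A secondary subtlety is confirming throughout that each constructed span really is a $\Tcal$-\emph{arch} and not merely a $\Tcal$-span, but as noted this is automatic from Lemma \ref{lem:Trel} once the corresponding sheaf morphism is identified as a legitimate composite.
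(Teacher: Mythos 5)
Your proposal matches the paper's proof essentially step for step: the composition is defined by applying the stability square to the cospan formed by the right leg of the first arch and the ($\Tcal$-)left leg of the second, archness of the composite is verified (the paper leaves this as ``easily checked''; your route via Lemma \ref{lem:Trel}, exhibiting $\ell(u,h)\circ\ell(t,g)$ as the factoring sheaf morphism, is a correct way to do it), and unitality, associativity and naturality in the second component are attributed to the coherence hypotheses exactly as the paper does. No gaps beyond those already present in the paper's own (equally terse) verification of the unit and associativity laws.
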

\begin{proof}
Even without the listed conditions, stability provides a putative definition of the composition operation: given a consecutive pair of $\Tcal$-arches, we simply apply the stability axiom to the pair of morphisms with common codomain,
\[\begin{tikzcd}
& & P \ar[dl, "\Tcal \ni t''"'] \ar[dr, "g''"] & & \\
& E \ar[dl, "t"'] \ar[dr, "g"] & & E' \ar[dl, "t'"'] \ar[dr, "g'"] & \\
A & & B & & B';
\end{tikzcd}\]
that the resulting $\Tcal$-span $(t \circ t'', g' \circ g'')$ is a $\Tcal$-arch is easily checked. The extra conditions are needed to make this operation weakly unital and associative. The naturality in the second component is a direct consequence of the second condition.
\end{proof}

For brevity, we leave the analogous statement and proof of Proposition \ref{prop:composition} for finitely generated sites to the reader, noting that the analogue of $\Tcal$-cospans will not be duals of $\Tcal'$-multispans, but the more restrictive shape of diagram relevant to the stability axiom 3'.

In the best case scenario where it \textit{is} possible to construct a weak composition on arches, we \textit{may} obtain a bicategory (see \cite[Definition 1.1]{bicat} for a definition of bicategory) from the principal site $(\Ccal,J_{\Tcal})$, whose $0$-cells are the objects of $\Ccal$, whose $1$-cells are $\Tcal$-arches and whose $2$-cells are morphisms between these.

\begin{remark}
\label{rmk:sievecat}
The fact that $\Tcal$-arches do not assemble into a bicategory in full generality is not merely an artifact of us having restricted ourselves to the data of the stable classes of morphisms (resp. finite families), rather than the principal (resp. finitely generated) Grothendieck topologies they generate. If we expand our collections of morphisms to multiarches indexed by arbitrary $J_{\Tcal}$-covering families, the construction in \cite[Proposition 2.5(iv)]{Dense} does give a canonical family representing the composite, but it typically fails to be unital, since composing with an identity $\Tcal$-span gives a strictly larger family. We can restrict to $J$-covering sieves to avoid this problem, but even then, without pullbacks the composition may not be weakly associative, since multi-composition of $J$-covering sieves is not necessarily associative in the required sense.
\end{remark}

One situation where the hypotheses of Proposition \ref{prop:composition} are satisfied is when $\Ccal$ has pullbacks, by Lemma \ref{lem:pbstable}.

\begin{corollary}
Let $(\Ccal,J_{\Tcal})$ be a principal site where $\Ccal$ has pullbacks, such as the canonical sites on locally regular categories we shall see in Definition \ref{dfn:regcoh}. Then the objects of $\Ccal$, the $\Tcal$-spans on $\Ccal$ and the morphisms between these assemble into a bicategory. In particular, the composition operations of Proposition \ref{prop:composition} are natural in the first component as well as the second in this case.
\end{corollary}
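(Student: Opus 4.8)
The plan is to recognise this corollary as nothing more than the restriction of the classical bicategory of spans $\Span(\Ccal)$ to those spans whose left leg lies in $\Tcal$, and to check that the $\Tcal$-condition survives each operation involved. Since $\Ccal$ has pullbacks, I would first invoke the standard fact that spans in $\Ccal$ form a bicategory: objects as $0$-cells, spans $A \leftarrow E \to B$ as $1$-cells, morphisms of spans as $2$-cells, horizontal composition computed by a chosen pullback, identity $1$-cells $(\id_A,\id_A)$, and associators and unitors supplied by the universal property of the pullbacks, with the pentagon and triangle being the usual coherence diagrams. Under this reading the hom-categories $\Span_{\Tcal}(A,B)$ of Lemma \ref{lem:archcat} are exactly the full subcategories of the hom-categories of $\Span(\Ccal)$ on the spans with left leg in $\Tcal$.

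Next I would verify that $\Tcal$-spans are closed under the bicategory operations, so that they span a locally full sub-bicategory. Closure under identities is axiom 1 of Definition \ref{dfn:stable}. For closure under composition, assuming (without loss of generality, and without changing $J_{\Tcal}$) that $\Tcal$ has been enlarged to satisfy the push-forward axiom 4, Lemma \ref{lem:pbstable} guarantees that $\Tcal$-morphisms are stable under pullback. Given $\Tcal$-spans $(t,g): A \leftarrow E \to B$ and $(u,h): B \leftarrow F \to C$, the composite has left leg $t \circ \pi_1$, where $\pi_1: E \times_B F \to E$ is the pullback of $u \in \Tcal$ along $g$; hence $\pi_1 \in \Tcal$, and $t \circ \pi_1 \in \Tcal$ by axiom 2. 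The coherence $2$-cells of $\Span(\Ccal)$ are built from pullback projections which themselves remain $\Tcal$-spans, so they restrict to the sub-bicategory and the coherence axioms are inherited; this establishes the bicategory of $\Tcal$-spans.

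For the \emph{In particular} clause I would show that $\Tcal$-arches cut out a further sub-bicategory, so that the composition of Proposition \ref{prop:composition} inherits bifunctoriality. Identity spans are vacuously arches, and I claim pullback composition preserves the arch condition. Given arches $(t,g)$ and $(u,h)$ with composite $(t\pi_1, h\pi_2)$ and a parallel pair $a,b: D \rightrightarrows E \times_B F$ with $t\pi_1 a = t\pi_1 b$, the arch condition on $(t,g)$ applied to $\pi_1 a, \pi_1 b$ yields $g\pi_1 a \equiv_{J_{\Tcal}} g\pi_1 b$; since $g\pi_1 = u\pi_2$ in the pullback, this reads $u\pi_2 a \equiv_{J_{\Tcal}} u\pi_2 b$. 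Choosing a $\Tcal$-morphism $s$ witnessing this local equality and applying the arch condition on $(u,h)$ to $\pi_2 a s, \pi_2 b s$ gives $h\pi_2 a s \equiv_{J_{\Tcal}} h\pi_2 b s$, and re-assembling the covering sieve of $s$ with the one witnessing the latter equality, via the transitivity axiom of $J_{\Tcal}$, produces a single $J_{\Tcal}$-covering sieve on $D$ on which $h\pi_2 a$ and $h\pi_2 b$ agree, i.e. $h\pi_2 a \equiv_{J_{\Tcal}} h\pi_2 b$, which is precisely the arch condition for the composite. Finally, because $\Ccal$ has pullbacks the weak composition of Proposition \ref{prop:composition} coincides with this pullback composition; horizontal composition in a bicategory is a functor of two variables and so is functorial in each, and restricting to arches therefore yields naturality in the first component as well as the second.

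The only genuinely non-formal step is the preservation of the arch condition under pullback composition: the $\equiv_{J_{\Tcal}}$ bookkeeping, in which one must transport a local equality across a $\Tcal$-morphism and then glue covering sieves using the transitivity of the principal topology. Everything else is either the standard span-bicategory construction or a direct application of Lemma \ref{lem:pbstable}, so I expect that $\equiv_{J_{\Tcal}}$ argument to be where the real work lies.
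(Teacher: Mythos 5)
Your proof is correct, but it is packaged differently from the paper's. The paper stays inside its own framework: it verifies the three weak-functoriality conditions of Proposition \ref{prop:composition} directly, taking chosen pullbacks as the stability squares (the mediating isomorphisms coming from their universal properties), obtains the bicategory coherence axioms from the uniqueness in those universal properties, and proves naturality in the first component by an explicit construction: it shows the third condition of Proposition \ref{prop:composition} holds for an \emph{arbitrary} morphism $e:B' \to B$, not just a $\Tcal$-morphism, by pasting a pullback square onto the defining rectangle, the pulled-back morphism $e'$ supplying the required morphism of $\Tcal$-spans. You instead invoke the classical bicategory $\Span(\Ccal)$ and exhibit the $\Tcal$-spans (and then the $\Tcal$-arches) as a locally full sub-bicategory closed under identities and horizontal composition, so that coherence and the bifunctoriality of composition---hence naturality in \emph{both} variables---come for free from the standard construction. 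Your route buys exactly that ``for free''; the paper's route buys self-containedness (it never appeals to the classical span bicategory) and an explicit description of the $2$-cell induced by $e$. Your check that the arch condition survives pullback composition is the verification the paper waves through in the proof of Proposition \ref{prop:composition} (``easily checked''), and your version is sound, though it can be streamlined: for a principal topology, $\equiv_{J_{\Tcal}}$ is witnessed by a single $\Tcal$-morphism, so the two witnesses simply compose by axiom 2 of Definition \ref{dfn:stable}; no gluing of covering sieves is needed.

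One point deserves emphasis: your ``without loss of generality'' enlargement of $\Tcal$ to satisfy the push-forward axiom 4 is not cosmetic. Without some such hypothesis, pullback projections of $\Tcal$-morphisms need not lie in $\Tcal$ (take $\Tcal$ to be the class of identities: a pullback of an identity is in general only an isomorphism), and then $\Tcal$-spans are not closed under pullback composition, so the corollary as literally stated can fail. The paper makes the same assumption silently---in its diagram the pullback projections are decorated as $\Tcal$-morphisms, which is precisely Lemma \ref{lem:pbstable} and hence axiom 4---and the discussion following that lemma licenses it. Strictly speaking the enlargement changes the class of $\Tcal$-spans (though not $J_{\Tcal}$), so the honest reading of the statement, on both your account and the paper's, is that $\Tcal$ is assumed to satisfy axiom 4; making this explicit, as you do, is an improvement rather than a gap.
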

\begin{proof}
The verification of the conditions in Proposition \ref{prop:composition} is straightforward; note that we actually require a specified choice of pullbacks, but the mediating isomorphisms are provided by universal properties. For the final claim, we observe that in the third condition of Proposition \ref{prop:composition}, we no longer need to restrict ourselves to the case where $e:B' \to B$ is a $\Tcal$-morphism, since we can complete the defining rectangle with a pullback square ($\Tcal$-morphisms are indicated with double-headed arrows):
\[\begin{tikzcd}
A' \ar[dr, phantom, "\lrcorner", very near start]
\ar[rr, bend left, "(g\circ e)'", two heads]
\ar[d, "f''"'] \ar[r, "e'"'] &
A \ar[dr, phantom, "\lrcorner", very near start]
\ar[r,"g'"', two heads] \ar[d, "f'"'] & B \ar[d, "f"] \\
B' \ar[r, "e"] \ar[rr, bend right, "g\circ e"', two heads] &
B \ar[r,"g", two heads] & D.
\end{tikzcd}\]
The morphism $e'$ provides the morphism of $\Tcal$-spans corresponding to $e$ to make the composition natural (again, up to relevant isomorphisms) in the first component, as claimed.

The commutativity of the associativity and identity coherence diagrams which are required to formally make this a bicategory are guaranteed by the uniqueness in the universal property of the pullbacks involved.
\end{proof}

As usual, the analogous result for finitely generated sites holds, but we omit the proof.

Whether it satisfies all of the requirements of a bicategory or not, however, the relationship between the $\Tcal$-arch structure and the subcategory of $\Sh(\Ccal,J_{\Tcal})$ on the representables is simply that of collapsing the $2$-cells, in the following sense:

\begin{lemma}
\label{lem:Tarch}
Two $\Tcal$-arches (resp. $\Tcal'$-multiarches) from $A$ to $B$ on a principal (resp. finitely generated) site are identified by $\ell$ if and only if they are in the same component in the category $\Arch_\Tcal(A,B)$ (resp. $\mArch_{\Tcal'}(A,B)$) described in Lemma \ref{lem:archcat}.
\end{lemma}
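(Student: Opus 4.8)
The plan is to unwind the definitions and reduce the claim to an application of the surjection in Lemma \ref{lem:Trel}. Recall that $\ell$ sends each $\Tcal$-arch $(t,g)$ to the unique morphism $\ell(t,g)$ satisfying $\ell(t,g)\circ \ell(t) = \ell(g)$, and that by Lemma \ref{lem:archcat} a morphism $x\colon (t,g) \to (t',g')$ in $\Arch_\Tcal(A,B)$ is a morphism $x\colon E \to E'$ in $\Ccal$ with $t = t'\circ x$ and $g = g'\circ x$. So first I would prove the \emph{if} direction: suppose $(t,g)$ and $(t',g')$ lie in the same connected component of $\Arch_\Tcal(A,B)$. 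It suffices to treat the case of a single connecting morphism $x\colon (t,g)\to(t',g')$ (the general case follows by composing finitely many such zigzag steps, using that $\ell$ being a function respects the equality at each step). Given such an $x$, applying $\ell$ to the relations $t = t'\circ x$ and $g = g'\circ x$ gives $\ell(t) = \ell(t')\circ\ell(x)$ and $\ell(g) = \ell(g')\circ\ell(x)$. Then
\[
\ell(t',g')\circ\ell(t) = \ell(t',g')\circ\ell(t')\circ\ell(x) = \ell(g')\circ\ell(x) = \ell(g) = \ell(t,g)\circ\ell(t).
\]
Since $t \in \Tcal$, the morphism $\ell(t)$ is an epimorphism in $\Sh(\Ccal,J_\Tcal)$ (this is exactly the supercompactness/covering content of Fact \ref{fact1} applied to the principal sieve generated by $t$), so we may cancel it on the right to conclude $\ell(t,g) = \ell(t',g')$.

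For the \emph{only if} direction, suppose $\ell(t,g) = \ell(t',g') =: \phi$ as morphisms $\ell(A)\to\ell(B)$. The strategy is to form a common refinement of the two arches by pulling back — or rather, since $\Ccal$ need not have pullbacks, by applying the stability axiom (Definition \ref{dfn:stable}, axiom 3) to the cospan formed by $t$ and $t'$. This produces an object $E''$ with $\Tcal$-morphisms into $E$ and $E'$ whose composites with $t$ and $t'$ agree $J_\Tcal$-locally; I would then need to check that the induced arch $(t'',g'')$ through $E''$ is a genuine $\Tcal$-arch and that it maps under the category structure of Lemma \ref{lem:archcat} to both $(t,g)$ and $(t',g')$, placing all three in the same component. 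The key point is that both $(t,g)$ and $(t',g')$ present the same sheaf morphism $\phi$, so the two legs $g$ and $g'$ become $J_\Tcal$-locally equal once precomposed with the refining $\Tcal$-morphisms; this $J_\Tcal$-local equality, after passing to a further $\Tcal$-refinement that witnesses it, upgrades to an honest equality of morphisms in $\Ccal$, yielding the connecting morphism in $\Arch_\Tcal(A,B)$.

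The main obstacle is precisely this last upgrade in the only-if direction: extracting a concrete zigzag in $\Arch_\Tcal(A,B)$ from the abstract equality of sheaf morphisms. The equality $\ell(t,g)=\ell(t',g')$ only guarantees agreement after $J_\Tcal$-localization, so I expect to invoke the explicit description of morphisms between representables from \cite[Proposition 2.5]{Dense} (the same result underlying Lemma \ref{lem:Trel}) to identify the covering sieve along which $g$ and $g'$ agree, and then use that every $J_\Tcal$-covering sieve contains a principal sieve generated by a single $\Tcal$-morphism to reduce the witnessing data to one refining arch. Care is needed because the stability squares are not functorial in general (this is exactly the difficulty flagged in Remark \ref{rmk:sievecat} and addressed hypothetically in Proposition \ref{prop:composition}), so I would phrase the argument purely in terms of the existence of connecting morphisms in the category $\Arch_\Tcal(A,B)$ rather than any composition operation, thereby sidestepping the associativity/unitality issues entirely. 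The finitely generated / multiarch case is handled by the same argument with $\Tcal$-morphisms replaced by $\Tcal'$-families and single refinements replaced by the finite common refinements provided by stability axiom 3', so I would state it runs \emph{mutatis mutandis}.
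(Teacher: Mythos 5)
Your proof is correct, and its overall skeleton matches the paper's: the forward direction is the same argument (your right-cancellation of the epimorphism $\ell(t)$ is exactly the uniqueness clause of Lemma \ref{lem:Trel} that the paper invokes), and both converses end by manufacturing a common refining arch with connecting morphisms to $(t,g)$ and $(t',g')$. The one genuine difference is \emph{how} the common refinement is produced. The paper quotes \cite[Proposition 2.5(iii)]{Dense} at full strength: it directly supplies a $\Tcal$-morphism $s:D \to A$ and morphisms $h:D\to E$, $h':D\to E'$ with $t\circ h = s = t'\circ h'$ and $g\circ h \equiv_{J_{\Tcal}} g'\circ h'$, from which one extracts a $\Tcal$-morphism $u$ equalizing the two legs. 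You instead build the common domain yourself via stability axiom 3 applied to the cospan $(t,t')$, and then only need the more elementary parallel-morphism fact (two parallel morphisms of $\Ccal$ are identified by $\ell$ if and only if they are $J_{\Tcal}$-locally equal) applied to $g\circ g''$ and $g'\circ f''$, since both arches present the same sheaf morphism $\phi$; this is a slightly more self-contained route that uses less of the cited proposition. Two small inaccuracies in your write-up, neither fatal: the stability square gives a $\Tcal$-morphism on only \emph{one} of the two legs out of $E''$ (say $f'':E''\to E'$), not both as you state — but this suffices, since the arch you construct has $A$-leg $t\circ g''\circ u = t'\circ f''\circ u$, which lies in $\Tcal$ by closure under composition, and the connecting morphisms $g''\circ u$, $f''\circ u$ in $\Arch_{\Tcal}(A,B)$ need not lie in $\Tcal$ at all; and the composites with $t$ and $t'$ agree on the nose by the stability square (it is the composites with $g$ and $g'$ that agree only locally). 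Your decision to phrase everything in terms of existence of connecting morphisms, avoiding any composition operation on arches, is exactly the right precaution and is also how the paper's proof proceeds.
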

\begin{proof}
In one direction, if $x: (t:E \to A, g:E \to B) \to (t':E' \to A, g':E' \to B)$, then by definition the unique morphism $\ell(t,g): \ell(A) \to \ell(B)$ with $\ell(g) = \ell(t,g) \circ \ell(t)$ also satisfies $\ell(g') = \ell(t,g) \circ \ell(t')$, whence $\ell(t',g') = \ell(t,g)$. Thus $\ell$ identifies $\Tcal$-spans in the same component.

Conversely, applying \cite[Proposition 2.5(iii)]{Dense}, two $\Tcal$-arches $(t:E \to A, g:E \to B)$ and $(t':E' \to A, g':E' \to B)$ induce the same morphism in $\Sh(\Ccal,J_{\Tcal})$ if and only if there is a $\Tcal$-morphism $s:D \to A$ and morphisms $h:D \to E$ and $h':D' \to E'$ with $t \circ h = s = t' \circ h'$ and $g \circ h \equiv_{J_{\Tcal}} g' \circ h'$. Expanding on the latter condition, this implies the existence of some $\Tcal$-morphism $u:E \to D$ equalizing $g \circ h$ and $g' \circ h'$. But then $(t \circ h \circ u, g \circ h \circ u) = (t' \circ h' \circ u, g' \circ h' \circ u)$ is easily shown to be a $\Tcal$-arch, and it admits morphisms $h \circ u$ and $h' \circ u'$ to $(t,g)$ and $(t',g')$ respectively, whence these are in the same connected component, as required.
\end{proof}

We can strengthen Lemma \ref{lem:Tarch} to reconstruct the full subcategory of $\Sh(\Ccal,J_{\Tcal})$ on the representable sheaves.

\begin{scholium}
Let $(\Ccal,J_{\Tcal})$ be a principal site. Then the full subcategory of $\Sh(\Ccal,J_{\Tcal})$ on the representable sheaves is equivalent to the category whose objects are the objects of $\Ccal$ and whose morphisms $A \to B$ are indexed by the connected components of the category $\Arch_{\Tcal}(A,B)$.

Similarly, if $(\Ccal,J_{\Tcal'})$ is a finitely generated site, then the full subcategory of $\Sh(\Ccal,J_{\Tcal'})$ on the representable sheaves is equivalent to the category whose objects are the objects of $\Ccal$ and whose morphisms $A \to B$ are indexed by the connected components of the category $\mArch_{\Tcal'}(A,B)$.
\end{scholium}
\begin{proof}
Observe that in the definition of composition given in the proof of Proposition \ref{prop:composition}, \textit{any} choice of stability square will produce a $\Tcal$-arch (resp. $\Tcal'$-multiarch) lying in the same component of $\Arch_\Tcal(A,C)$ (resp. $\mArch_{\Tcal'}(A,C)$), since this $\Tcal$-arch (resp. $\Tcal'$-multiarch) will necessarily be mapped by $\ell$ to the composite of the morphisms corresponding to the pair of arches (resp. multiarches) being composed. Thus, even without weak functoriality, the composition is well-defined on connected components, as required.
\end{proof}

In the subcanonical case, where all $\Tcal$-morphisms are strict epimorphisms (resp. all $\Tcal'$-families are jointly strictly epimorphic), the computations from this section simplify greatly. Indeed, $\ell$ is full and faithful in this case, which means that every component of each category $\Arch_{\Tcal}(A,B)$ (resp. $\mArch_{\Tcal'}(A,C)$) contains a unique (multi)arch of the form $(\id_A,f)$. In a $\Tcal$-arch $(t,g)$, $g$ coequalizes every pair of morphisms which $t$ coequalizes by definition, whence the morphism $\ell(t,g)$ corresponds to the unique morphism $A \to B$ factorizing $g$ through $t$; the morphisms representing multiarches are recovered analogously from the universal properties of jointly strictly epic families.

\subsection{Quotients of principal sites}
\label{ssec:quotient}

Rather than directly computing the category of representable sheaves in $\Sh(\Ccal,J_{\Tcal})$ via $\Tcal$-arches, we might hope to simplify things by first modifying the principal site.

In Kondo and Yasuda's definition of `$B$-site', they assume that the underlying category is an `$E$-category', which is to say that every morphism is an epimorphism, \cite[Definitions 4.1.1, 4.2.1]{SPM}, which seems very restrictive. However, by taking the quotient of $\Ccal$ by the canonical congruence, we show here that we may assume that $\Tcal$ is contained in the class of epimorphisms of $\Ccal$ without loss of generality, since the corresponding topos of sheaves is equivalent to that on the original site.

\begin{proposition}
\label{prop:congruence}
Let $(\Ccal,J_{\Tcal})$ be a principal site. Then there is a canonical congruence $\sim$ on $\Ccal$ such that $(\Ccal/{\sim}, J_{\Tcal/{\sim}})$ is a principal site with $\Tcal/{\sim}$ a subclass of the epimorphisms of $\Ccal/{\sim}$, and with $\Sh(\Ccal,J_{\Tcal}) \simeq \Sh(\Ccal/{\sim},J_{\Tcal/{\sim}})$.

Similarly, if $(\Ccal,J_{\Tcal'})$ is a finitely generated site, there is a congruence $\sim$ on $\Ccal$ such that $(\Ccal/{\sim}, J_{\Tcal'/{\sim}})$ is a finitely generated site with $\Tcal'/{\sim}$ a subclass of the epimorphisms of $\Ccal/{\sim}$, and with $\Sh(\Ccal,J_{\Tcal'}) \simeq \Sh(\Ccal/{\sim},J_{\Tcal'/{\sim}})$.
\end{proposition}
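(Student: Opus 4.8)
The plan is to construct the congruence $\sim$ directly from the functor $\ell:\Ccal \to \Sh(\Ccal,J_\Tcal)$. Define $f \sim g$ for parallel morphisms $f,g:A \to B$ in $\Ccal$ precisely when $\ell(f) = \ell(g)$, which by the discussion of $J$-local equality in Section \ref{ssec:representable} means $f \equiv_{J_\Tcal} g$, i.e.\ there is a $\Tcal$-morphism $t:E \to A$ with $ft = gt$. First I would check that this is a genuine congruence: it is clearly an equivalence relation on each hom-set, and compatibility with composition on both sides follows because $\ell$ is a functor (so $\ell(f)=\ell(g)$ forces $\ell(hfk)=\ell(hgk)$). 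Thus $\Ccal/{\sim}$ is a well-defined category with a quotient functor $\Ccal \to \Ccal/{\sim}$, and $\ell$ factors through it as $\bar{\ell}:\Ccal/{\sim} \to \Sh(\Ccal,J_\Tcal)$, with $\bar\ell$ now faithful by construction.

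Next I would transport the stable class. Set $\Tcal/{\sim}$ to be the image of $\Tcal$ under the quotient functor (equivalently, the morphisms $[t]$ of $\Ccal/{\sim}$ that have a $\Tcal$-representative), and verify it is a stable class of morphisms on $\Ccal/{\sim}$ in the sense of Definition \ref{dfn:stable}. Axioms 1 and 2 are immediate since $\Tcal$ contains identities and is closed under composition. For the stability axiom 3, given $[t]\in\Tcal/{\sim}$ and an arbitrary $[g]$ with matching codomain, I would lift to representatives $t\in\Tcal$ and $g$ in $\Ccal$, apply axiom 3 in $(\Ccal,J_\Tcal)$ to get a stability square \eqref{eq:stable} with an upper edge $f'\in\Tcal$, and then project this square down to $\Ccal/{\sim}$; its commutativity is preserved and $[f']\in\Tcal/{\sim}$. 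The key claim is then that $\Tcal/{\sim}$ consists of epimorphisms in $\Ccal/{\sim}$: if $[t]\in\Tcal/{\sim}$ and $[h][t]=[k][t]$, then $\ell(h)\ell(t)=\ell(k)\ell(t)$ with $\ell(t)$ epic in the topos (because $t\in\Tcal$ generates a covering sieve, so $\ell(t)$ is an epimorphism of representable sheaves by Fact \ref{fact1}), whence $\ell(h)=\ell(k)$, i.e.\ $[h]=[k]$ by the very definition of $\sim$.

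The remaining and most substantial step is the equivalence $\Sh(\Ccal,J_\Tcal) \simeq \Sh(\Ccal/{\sim},J_{\Tcal/{\sim}})$. I would apply the Comparison Lemma to the quotient functor $\Ccal \to \Ccal/{\sim}$, which is bijective on objects and full, viewing it as a morphism (or comorphism) of sites. One must confirm that $J_{\Tcal/{\sim}}$ is exactly the topology whose covering sieves pull back to $J_\Tcal$-covering sieves: a sieve in $\Ccal/{\sim}$ is $J_{\Tcal/{\sim}}$-covering iff it contains a $\Tcal/{\sim}$-morphism iff its preimage in $\Ccal$ contains a $\Tcal$-morphism iff that preimage is $J_\Tcal$-covering. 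Since the quotient functor is full and essentially surjective and identifies precisely the $J_\Tcal$-locally equal morphisms, it is a \emph{dense} morphism of sites inducing an equivalence of sheaf categories; concretely, one checks that a presheaf on $\Ccal$ is a $J_\Tcal$-sheaf if and only if it factors through $\Ccal/{\sim}$ as a $J_{\Tcal/{\sim}}$-sheaf, using that any two $\sim$-related morphisms are already forced to act identically on a separated presheaf. The main obstacle I anticipate is precisely this sheaf-theoretic bookkeeping: showing that quotienting by $\sim$ neither loses nor creates sheaves, i.e.\ that the separatedness and gluing conditions match up across the two sites. The finitely generated case is handled in complete parallel, replacing single $\Tcal$-morphisms by $\Tcal'$-families throughout: $f\sim g$ iff some $\Tcal'$-family over the domain equalizes $f$ and $g$, the transported class $\Tcal'/{\sim}$ is stable by projecting the families appearing in axioms 2' and 3', its members are jointly epic (each family image is jointly epic in the topos, hence jointly epic in $\Ccal/{\sim}$), and the Comparison Lemma again yields $\Sh(\Ccal,J_{\Tcal'})\simeq\Sh(\Ccal/{\sim},J_{\Tcal'/{\sim}})$.
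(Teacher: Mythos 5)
Your proposal is correct and takes essentially the same route as the paper: the paper defines $\sim$ as the congruence induced by $\ell$ (made explicit as the existence of an equalizing $\Tcal$-morphism, equivalently $J_{\Tcal}$-local equality), notes that $\Tcal/{\sim}$-morphisms become epimorphisms, and deduces the equivalence $\Sh(\Ccal,J_{\Tcal}) \simeq \Sh(\Ccal/{\sim},J_{\Tcal/{\sim}})$ from the quotient functor being a cover-preserving, cover-lifting, flat morphism of sites together with the denseness conditions of the cited monograph, exactly as you do. The only cosmetic difference is that you obtain the congruence property for free from functoriality of $\ell$, whereas the paper verifies it by a direct diagram chase using stability of $\Tcal$; these are interchangeable since the kernel of $\ell$ on parallel morphisms is precisely $J_{\Tcal}$-local equality.
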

\begin{proof}
The congruence $\sim$ is that induced by the functor $\ell:\Ccal \to \Sh(\Ccal,J_{\Tcal})$. Explicitly, let $f\sim f':C \to D$ whenever there is a morphism $h:C' \rightrightarrows C$ in $\Tcal$ with $fh=f'h$. To verify that this is a congruence, suppose we are also given morphisms $g \sim g':D \rightrightarrows E$ which are equalized by $k:D'\to D$. Stability of $k$ along $fh = f'h$ gives $k' \in \Tcal$:
\[\begin{tikzcd}
C'' \ar[d, "k'"] \ar[rr] & & D' \ar[d, "k"] & \\
C' \ar[r, "h"] & C \ar[r,shift left, "f"] \ar[r,shift right, "f'"'] &
D  \ar[r,shift left, "g"] \ar[r,shift right, "g'"'] & E;
\end{tikzcd}\]
$\Tcal$ is closed under composition and $gfhk' = g'f'hk'$, so $gf \sim g'f'$ as required.

By the definition of the congruence it is immediate that the morphisms in $\Tcal/{\sim}$ are epimorphisms. Moreover, the canonical functor $(\Ccal,J_{\Tcal}) \to (\Ccal/{\sim},J_{\Tcal/{\sim}})$ is a morphism and comorphism of sites\footnote{See Definitions \ref{dfn:morsite} and \ref{dfn:comorphism} below.}, since it is cover-preserving, cover-lifting and flat. Either by checking denseness conditions of \cite{Dense} or since the congruence is induced by the functor $\ell$, we recover the equivalence $\Sh(\Ccal,J_{\Tcal}) \simeq \Sh(\Ccal/{\sim},J_{\Tcal/{\sim}})$.

The congruence for a finitely generated site has $f \sim f'$ whenever $fh_i = f'h_i$ for each $h_i$ in a $\Tcal'$-family. The remainder of the proof is analogous.
\end{proof}

The reduced site $(\Ccal/{\sim},J_{\Tcal/{\sim}})$ in Proposition \ref{prop:congruence} can be obtained in various alternative ways. By construction, the functor $\Ccal \to \Ccal/{\sim}$ is the universal functor with domain $\Ccal$ sending $\Tcal$-morphisms to epimorphisms, so it is not surprising thanks to the proof of Proposition \ref{prop:representable} that $\ell$ canonically factors through it. This site also coincides with the one obtained by lifting the hyperconnected--localic factorization of the geometric equivalence of toposes $\Sh(\Ccal,J_{\Tcal}) \simeq \Sh(\Sh(\Ccal,J_{\Tcal}), J_{\mathrm{can}})$ to the level of sites, also described in \cite[\S 6.5]{Dense}. See Section \ref{ssec:morsites} below for more on morphisms of sites.

After making this simplification, $J_{\Tcal}$-local equality (resp. $J_{\Tcal'}$-local equality) reduces to ordinary equality, so that for example a $\Tcal$-arch from $A$ to $B$ is a $\Tcal$-span as in \eqref{eq:span} such that for any $h,k:D\rightrightarrows C$ with $f \circ h = f \circ k$ we have $g \circ h = g \circ k$. In particular, by considering the arches in which the $\Tcal$-morphism is an identity, we see that the functor $\ell:(\Ccal/{\sim},J_{\Tcal/{\sim}}) \to \Sh(\Ccal,J_{\Tcal})$ is \textit{faithful}, and further that $\ell:(\Ccal,J_{\Tcal}) \to \Sh(\Ccal,J_{\Tcal})$ is faithful if and only if every $\Tcal$-morphism is an epimorphism, which is to say that the congruence ${\sim}$ is trivial.

\begin{corollary}
\label{crly:full}
Let $(\Ccal,J_{\Tcal})$ be a principal site and let ${\sim}$ be the congruence on $\Ccal$ from Proposition \ref{prop:congruence}. Then the functor $\ell:(\Ccal,J_{\Tcal}) \to \Sh(\Ccal,J_{\Tcal})$ is full if and only if every $\Tcal/{\sim}$-morphism in $\Ccal/{\sim}$ is a \textbf{strict} epimorphism.

Similarly, if $(\Ccal,J_{\Tcal'})$ is a finitely generated site, then $\ell:(\Ccal,J_{\Tcal'}) \to \Sh(\Ccal,J_{\Tcal'})$ is full if and only if every $\Tcal'/{\sim}$-family in $\Ccal/{\sim}$ is a strictly epimorphic family.
\end{corollary}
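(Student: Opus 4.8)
The strategy is to reduce everything to the already-simplified reduced site, where $\ell$ is faithful and $J_{\Tcal}$-local equality collapses to genuine equality, and then to characterize fullness of $\ell$ in terms of which representable morphisms arise from $\Tcal$-arches whose $\Tcal$-leg is an identity. By Proposition \ref{prop:congruence} we have an equivalence $\Sh(\Ccal,J_{\Tcal}) \simeq \Sh(\Ccal/{\sim},J_{\Tcal/{\sim}})$, and $\ell:(\Ccal,J_{\Tcal}) \to \Sh(\Ccal,J_{\Tcal})$ factors through the canonical functor $\Ccal \to \Ccal/{\sim}$ followed by the (faithful) $\ell:(\Ccal/{\sim},J_{\Tcal/{\sim}}) \to \Sh(\Ccal/{\sim},J_{\Tcal/{\sim}})$. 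Since the canonical functor $\Ccal \to \Ccal/{\sim}$ is full (it is a quotient), $\ell$ on $\Ccal$ is full if and only if $\ell$ on $\Ccal/{\sim}$ is full. Thus I may assume without loss of generality that ${\sim}$ is trivial, i.e. every $\Tcal$-morphism is already an epimorphism, and that I am working on the reduced site.

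**The key equivalence.**
On the reduced site, by Lemma \ref{lem:Trel} every morphism $\ell(A) \to \ell(B)$ in $\Sh(\Ccal,J_{\Tcal})$ is represented by some $\Tcal$-arch $(t,g)$, and fullness of $\ell$ amounts to saying that every such morphism is already in the image of $\ell$ on honest $\Ccal$-morphisms, which (since $\ell$ is faithful here) means each arch lies in the same connected component as an arch of the form $(\id_A, f)$ for some $f:A \to B$ in $\Ccal$. By Lemma \ref{lem:Tarch}, this component condition is exactly the statement that $\ell(t,g) = \ell(\id_A,f) = \ell(f)$. So fullness of $\ell$ is equivalent to: for every $\Tcal$-arch $(t:E \to A,\, g:E \to B)$ there exists $f:A \to B$ in $\Ccal$ with $g = f \circ t$. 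This is precisely the assertion that every $\Tcal$-morphism $t$ has the property that any $g$ which (on the reduced site) coequalizes every parallel pair coequalized by $t$ factors through $t$ — that is, $t$ is a \emph{strict epimorphism} in the sense recalled just before Lemma in Section \ref{ssec:Cs}. I would verify both directions of this: given fullness, take an arbitrary $g$ coequalizing the kernel pairs of a $\Tcal$-morphism $t$, form the $\Tcal$-arch $(t,g)$, and extract the factorization $f$; conversely, if every $\Tcal$-morphism is strict epic, then for any arch $(t,g)$ the defining condition of an arch guarantees $g$ coequalizes everything $t$ does, so strictness yields the required factorization $f$ with $g = f \circ t$, whence $\ell(t,g) = \ell(f)$ lands in the image of $\ell$.

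**The obstacle and the finitely generated case.**
The main subtlety, and the step I expect to require the most care, is checking that \emph{fullness} really forces \emph{strictness} and not merely some weaker epimorphism condition: I must confirm that the factorizing morphism $f$ supplied by fullness is genuinely a $\Ccal$-morphism (not just a $\Sh$-morphism) and that the coequalizing hypothesis in the definition of strict epimorphism is exactly matched by the arch condition on the reduced site, where $\equiv_{J_{\Tcal}}$ has collapsed to equality. This matching is what makes the equivalence tight; it is precisely why the reduction to the reduced site in Proposition \ref{prop:congruence} is indispensable, since on the original site the local-equality subtlety would block a clean statement. For the finitely generated case the argument is formally parallel: a $\Tcal'$-multiarch from $A$ to $B$ is carried by $\ell$ to a morphism factoring the family $\{g_i\}$ through the jointly (strictly) epimorphic family $\{t_i\}$, and fullness is equivalent to every such multiarch lying in the component of one of the form $(\{\id\},\{f\})$, which translates into every $\Tcal'/{\sim}$-family being a strictly epimorphic family. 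I would state the multiarch version by substituting Lemma \ref{lem:Tarch}'s multiarch clause and the definition of strictly epic finite family throughout, and leave the routine book-keeping to the reader, exactly as the surrounding results do.
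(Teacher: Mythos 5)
Your proof is correct and follows essentially the same route as the paper: reduce along the congruence of Proposition \ref{prop:congruence}, then use Lemmas \ref{lem:Trel} and \ref{lem:Tarch} to identify fullness of $\ell$ with the condition that every $\Tcal$-arch $(t,g)$ factors as $g = f \circ t$, which (since $\Tcal/{\sim}$-morphisms are epimorphisms, so such factorizations are automatically unique) is exactly strictness. The only cosmetic difference is that in the ``strict implies full'' direction the paper simply cites subcanonicality of the reduced site, whereas you unwind the same fact by hand through the arch machinery.
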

\begin{proof}
If the hypothesis holds, then the site $(\Ccal/{\sim},
J_{\Tcal/{\sim}})$ is subcanonical, so the induced functor to the topos is full and faithful, whence $\ell$ with domain $(\Ccal,J_{\Tcal})$ is full.

Conversely, given that $\ell$ is full, suppose that $t:E \to A$ is in $\Tcal$. Suppose that we have $g:E \to B$ such that whenever $h,k:E'\to E$ with $t \circ h = t \circ k$, we have $g \circ h = g \circ k$. Then $(t,g)$ is a $\Tcal$-arch, and there is a morphism $\ell(t,g)$ in $\Sh(\Ccal,J_\Tcal)$ completing the triangle. By fullness of $\ell$, this is the image of a morphism $A \to B$ in $\Ccal/{\sim}$, and by the definition of $\sim$ there is at most one such morphism, whence $t/{\sim}$ is a strict epimorphism, as claimed.

The argument for finitely generated sites is analogous.
\end{proof}

It follows from Corollary \ref{crly:full} that the restriction of the codomain of $\ell$ to the full subcategory of $\Sh(\Ccal,J_{\Tcal})$ on the objects of the form $\ell(C)$ is the universal functor sending $\Tcal$-morphisms to strict epimorphisms. This site can alternatively be obtained by considering the lifting of the surjection--inclusion factorization of the geometric equivalence of toposes to the level of sites, as seen in \cite[\S 6.1]{Dense}.

\begin{example}
\label{xmpl:atomic}
Recall that a category $\Ccal$ satisfies the \textbf{right Ore condition} if any cospan can be completed to a commutative square. This is exactly the condition needed to make the class of \textit{all} morphisms of $\Ccal$ stable, and the corresponding principal topology is more commonly called the \textbf{atomic topology}, $J_{at}$, while the site $(\Ccal, J_{at})$ is called an \textbf{atomic site}. The above results show that we may reduce any atomic site to one in which every morphism is epic (hence a `$B$-site' in the terminology of Kondo and Yasuda).
\end{example}

\subsection{Reductive and coalescent categories}
\label{ssec:redcat}

Returning to our study of the subcategories of supercompact and compact objects, we observe that the epimorphisms they inherit from $\Ecal$ always meet most of the requirements for stability.

\begin{lemma}
\label{lem:stable}
For any Grothendieck topos $\Ecal$, let $\Ccal_s$, $\Ccal_c$ the usual subcategories. Then the class $\Tcal$ of epimorphisms in $\Ccal_s$ which are inherited from $\Ecal$ satisfies axioms 1,2 and 4 for stable classes, while the class $\Tcal'$ of finite jointly epimorphic families on $\Ccal_c$ inherited from $\Ecal$ satisfies axioms 1',2',4' and 5'.
\end{lemma}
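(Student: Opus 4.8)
The plan is to check each of the listed axioms by direct appeal to elementary properties of epimorphisms and jointly epic families in the ambient topos $\Ecal$. The crucial bookkeeping point is that, by definition, a morphism of $\Ccal_s$ belongs to $\Tcal$ precisely when it is \emph{epic in $\Ecal$} (not merely in $\Ccal_s$), and a finite family in $\Ccal_c$ belongs to $\Tcal'$ precisely when it is \emph{jointly epic in $\Ecal$}. Since $\Ccal_s$ and $\Ccal_c$ are full subcategories of $\Ecal$ closed under composition, every verification amounts to applying a standard cancellation fact for (families of) epimorphisms, tested against arbitrary morphisms of $\Ecal$; in particular no use is made of $\Ecal$ being supercompactly or compactly generated.

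For the singleton class $\Tcal$ I would argue as follows. Axiom~1 holds because every identity is epic in $\Ecal$. Axiom~2 holds because a composite of two epimorphisms of $\Ecal$ is again epic. For axiom~4, suppose $f \circ g \in \Tcal$, i.e.\ $f\circ g$ is epic in $\Ecal$, and let $h,k$ be morphisms of $\Ecal$ with $h\circ f = k\circ f$; then $h\circ(f\circ g) = k\circ(f\circ g)$ forces $h=k$, so $f$ is epic in $\Ecal$ and lies in $\Tcal$. This is just the right-cancellation property of epimorphisms, and the later factor $f$ automatically lies in $\Ccal_s$ by fullness.

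For the family class $\Tcal'$ the arguments are the evident analogues. Axiom~1' holds since the singleton family $\{\id\}$ is jointly epic. For axiom~2', given $\{f_i\}$ jointly epic and each $\{g_{j,i}\}_j$ jointly epic over $D_i$, a pair $p,q$ equalized by every composite $f_i\circ g_{j,i}$ is first seen to be equalized by each $f_i$ (using joint epi-ness of $\{g_{j,i}\}_j$ for fixed $i$) and hence to satisfy $p=q$ (using joint epi-ness of $\{f_i\}_i$); the composites remain in $\Ccal_c$ since it is closed under composition. For axiom~4', if some jointly epic $\Tcal'$-family $\{g_k\}$ factors through $\mathfrak{f}=\{f_i\}$ as $g_k = f_{i(k)}\circ h_k$, then any $p,q$ equalized by every $f_i$ are equalized by every $g_k$, whence $p=q$ and $\mathfrak{f}\in\Tcal'$; this is the family-level right-cancellation. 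Axiom~5' is immediate: a finite family containing a jointly epic subfamily is itself jointly epic, since agreement of $p,q$ on the whole family entails agreement on the subfamily.

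The main thing to stress is that there is no real obstacle: the lemma deliberately omits the stability axioms~3 and~3', which are precisely the ones demanding the construction of stability (pullback) squares and which require genuine work (together with the further structure studied in the subsequent sections). The only care needed in the present proof is to keep epi-ness measured in $\Ecal$ throughout rather than in $\Ccal_s$ or $\Ccal_c$—a distinction that matters in general, as the opening example of Section~\ref{ssec:twoval} shows—but since every factoring morphism produced above lies in the relevant full subcategory and joint epi-ness is a condition against all morphisms of $\Ecal$, each of the cancellation arguments applies verbatim.
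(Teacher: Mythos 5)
Your proof is correct and follows essentially the same route as the paper's: identities and closure under (multi)composition are checked via the standard stability of (jointly) epic families in $\Ecal$, with epi-ness always measured in $\Ecal$. The paper merely dismisses axioms 4, 4$'$ and 5$'$ as ``straightforward''; your explicit right-cancellation arguments are exactly the intended verifications, and your emphasis that axioms 3 and 3$'$ are deliberately omitted matches the paper's discussion following the lemma.
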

\begin{proof}
Clearly $\Ccal_s$ and $\Ccal_c$ inherit identities from $\Ecal$, so $\Tcal$ contains these and $\Tcal'$ contains the singleton families of the identities. Since epimorphisms are stable under composition in $\Ecal$, $\Tcal$ is closed under composition. Multicomposition of finite jointly covering families in $\Ecal$ is similarly direct, giving the second axiom for $\Tcal'$. Axioms 4' and 5' are straightforward.
\end{proof}

By Theorem \ref{thm:canon}, if $\Ecal$ is supercompactly generated (resp. compactly generated) then axiom 3 (resp. axiom 3') must also be satisfied in each case. Note that the converse fails: stability of $\Ecal$-epimorphisms in $\Ccal_s$ or stability of $\Ecal$-epimorphic finite families in $\Ccal_c$ are not sufficient to guarantee that $\Ecal$ is supercompactly or compactly generated. Indeed, if $\Ecal$ a non-degenerate topos where the only compact object is the initial object, such as that exhibited in Example \ref{xmpl:R}, the stability axioms are trivially satisfied but $\Ecal$ is neither supercompactly nor compactly generated.

In the remainder of this section, we refine the concepts of principal and finitely generated sites in order to obtain a characterization of the categories $\Ccal_s$ (resp. $\Ccal_c$) of supercompact (resp. compact) objects in supercompactly (resp. compactly) generated toposes.

\begin{definition}
\label{dfn:reductive}
We say a small category $\Ccal$ is \textbf{reductive} if it has funneling colimits and its class of strict epimorphisms is stable. The \textbf{reductive topology} $J_r$ on a reductive category is the principal topology generated by its class of strict epimorphisms.

We say $\Ccal$ is \textbf{coalescent} if it has multifunneling colimits and its class of (strictly) epic finite families is stable. The \textbf{coalescent topology} $J_c$ on a coalescent category is the finitely generated topology on its class of strictly epic finite families.
\end{definition}

\begin{remark}
Some justification for this naming and notation is warranted. 

The names of the categories are intended to evoke the presence of funneling (resp. multifunneling) colimits, since any diagram in them of the respective shapes `is reduced' (resp. `coalesces') by composing with a suitable epimorphism (resp. jointly epimorphic family). If we consider the reductive category
\[\Ccal : = A \rightrightarrows B \too C,\]
featuring in Example \ref{xmpl:nonreg} below, for example, we can identify a functor $F: \Ccal \to \Set$ with a directed graph, equipped with a mapping on its set of vertices which identifies nodes in the same connected component; if $F$ preserves funneling colimits, then the image of the epimorphism $B \too C$ must exactly be the map reducing the graph to its set of connected components.

The names were also chosen to have their first few letters in common with \textit{regular} and \textit{coherent} respectively, since the regular and coherent objects in a topos are respectively subclasses of the supercompact and compact objects. Thus, while the $r$ and $c$ in $J_r$ and $J_c$ stand for \textit{reductive} and \textit{coalescent} respectively, we shall see that when a category is both regular and reductive, the regular topology (see Definition \ref{dfn:regtop} below) coincides with the reductive topology, so the $r$ on $J_r$ could also mean `regular'; similarly for coalescent and coherent.
\end{remark}

While not every stable class of finite families need contain a stable class of singleton morphisms, we record the fact that this does happen when the families involved are strictly epimorphic families.

\begin{lemma}
\label{lem:collred}
Any coalescent category is a reductive category with finite colimits and a strict initial object.
\end{lemma}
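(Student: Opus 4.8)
The plan is to prove the statement in three parts, matching the three claims: that a coalescent category $\Ccal$ (i) has finite colimits, (ii) has a strict initial object, and (iii) is reductive. The governing idea is to leverage Lemma~\ref{lem:multi}, which tells us that multifunneling colimits decompose into finite coproducts together with funneling colimits, so that a coalescent category automatically possesses both of these ingredients, and then to assemble finite colimits from coproducts and coequalizers while isolating the singleton strict epimorphisms needed for reductivity.

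First I would establish (i). By Definition~\ref{dfn:reductive}, a coalescent category has multifunneling colimits, so by Lemma~\ref{lem:multi} it has finite coproducts and funneling colimits. The empty coproduct gives an initial object $0$, and in particular coequalizers of parallel pairs are a special case of funneling colimits (taking the funnel to have a single weakly terminal object receiving a parallel pair of morphisms). Since a category with finite coproducts and coequalizers has all finite colimits (by the standard construction of finite colimits from these), this yields (i) immediately.

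Next I would prove (ii), that the initial object $0$ is strict, meaning that any morphism $X \to 0$ forces $X \cong 0$. Here I expect the main obstacle to lie, since strictness of the initial object does not follow from the mere existence of finite colimits; it must be extracted from the stability of strictly epic finite families. The natural approach is to observe that the empty family is a strictly epic finite family covering $0$ (it is the multifunneling colimit of the empty diagram), and then to use the stability axiom (axiom 3' of Definition~\ref{dfn:stable2}): pulling back the empty covering family of $0$ along a morphism $g: X \to 0$ must yield a $\Tcal'$-family over $X$ each of whose members factors through a member of the empty family over $0$ --- which is impossible unless the $\Tcal'$-family over $X$ is itself empty. An empty strictly epic family on $X$ exhibits $X$ as a multifunneling colimit of the empty diagram, i.e.\ as an initial object, so $X \cong 0$, giving strictness.

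Finally I would prove (iii), that $\Ccal$ is reductive. It has funneling colimits by the first step, so it remains to show that its class of strict epimorphisms is stable, i.e.\ satisfies axioms 1--3 of Definition~\ref{dfn:stable}. Axioms 1 and 2 (identities and closure under composition) are routine for strict epimorphisms. For axiom 3, I would note that a singleton strictly epic family is precisely a strict epimorphism, and that strictly epic finite families are stable by hypothesis; the task is then to verify that the stability square for a singleton family can be chosen with the pulled-back leg again a \emph{singleton} strict epimorphism rather than merely a strictly epic finite family. This is where Lemma~\ref{lem:collred}'s placement suggests using the finite coproducts just established: given a strict epimorphism $f: C \to D$ and $g: B \to D$, stability of finite families produces a family $\{h_j\}$ over $B$ each $g\circ h_j$ factoring through $f$; taking the coproduct of the $h_j$ and using that a coproduct of a strictly epic family is a single strict epimorphism (coproduct inclusions being jointly epic, combined with the funneling-colimit description of strict epimorphisms) collapses this to a single strict epimorphism $h: \coprod_j A_j \to B$ furnishing the required square. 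The coproduct step is the delicate point, and I would rely on the characterization of strict epimorphisms as funneling colimits (the Lemma following Definition~\ref{dfn:funnel}) to justify that this coproduct is genuinely strict.
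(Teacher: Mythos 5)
Your proposal is correct and follows essentially the same route as the paper's proof: finite colimits via Lemma~\ref{lem:multi}, strictness of the initial object by pulling back the empty strictly epic family, and stability of strict epimorphisms by pulling back the singleton family and collapsing the resulting finite family through its coproduct (whose strictness is justified exactly as you say, via the funneling-colimit description). The one point you call ``routine''---closure of strict epimorphisms under composition---actually needs axiom~2' of Definition~\ref{dfn:stable2} applied to singleton families rather than being automatic in a general category, but the paper glosses this identically, so there is no substantive difference.
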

\begin{proof}
If $\Ccal$ is a coalescent category, it certainly has the required colimits, so it suffices to show stability of strict epimorphisms. Indeed, if $t:D \too C$ is strict and $g: B \to C$ is any morphism in $\Ccal$, then since $\{t\}$ is a strictly epic family, there is some strictly epic family $\{h_j: A_j \to B \mid i = 1, \dotsc, m\}$ over $B$ such that each $g \circ h_j$ factors through $t$. Factoring this family through the coproduct $A_1 + \cdots + A_m$ gives a strict epimorphism completing the required stability square (even in the case $m=0$).

To see that the initial object is strict, observe that the empty family is a strict jointly epic family on the initial object, so given any morphism $A \to 0$, stability forces the empty family to be jointly epic over $A$, whence $A$ is also an initial object.
\end{proof}

It would be easy to mistakenly conclude based on the results presented thus far that the subcategory of supercompact objects in the category of sheaves on a reductive category $\Ccal$ should be equivalent to $\Ccal$. The flaw in this reasoning lies in the fact that, while the functor $\ell: \Ccal \to \Sh(\Ccal,J_r)$ is full and faithful (since $(\Ccal,J_r)$ is a subcanonical site) and this functor preserves strict epimorphisms, it \textit{does not} preserve all funneling colimits; a similar argument applies for coalescent categories.

\begin{example}
\label{xmpl:tworel}
Consider the following categories, $\Ccal$ and $\Ccal'$ on the left and right respectively. It is easily checked that they are reductive, with strict epimorphisms identified with two heads.
\[\begin{tikzcd}[row sep = small]
R_1 \ar[dr, shift left] \ar[dr, shift right] & &
R_2 \ar[dl, shift left] \ar[dl, shift right] \\
& A \ar[dd, two heads] \ar[dl, phantom] & \\
\cdot & & \\
& B &
\end{tikzcd}
\hspace{1em}
\begin{tikzcd}[row sep = small]
R_1 \ar[dr, shift left] \ar[dr, shift right]
\ar[dd, dashed, shift left] \ar[dd, dashed, shift right] & &
R_2 \ar[dl, shift left] \ar[dl, shift right]
\ar[dd, dashed, shift left] \ar[dd, dashed, shift right] \\
& A \ar[dd, two heads] \ar[dr, two heads] \ar[dl, two heads] & \\
C \ar[dr, two heads] & & D \ar[dl, two heads] \\
& B &
\end{tikzcd}\]
The coequalization is that suggested by the positioning, so that in the first diagram, the coequalizer of the pair coming from $R_1$ is the terminal object $B$, but in the second diagram, $C$ is the coequalizer of the pair coming from $R_2$.

One can calculate directly that the category of supercompact objects in $\Sh(\Ccal,J_r) \simeq \Sh(\Ccal',J_r)$ is equivalent to $\Ccal'$. Indeed, the functor $\ell: \Ccal \to \Sh(\Ccal,J_r)$ does not preserve the coequalizer diagram $R_1 \rightrightarrows A \too B$.
\end{example}

We shall see a further example of a failure of $\ell$ to preserve coequalizers (and hence funneling colimits) in Example \ref{xmpl:TF}. In order to understand which colimits are preserved by $\ell$, we apply criteria derived in \cite[Corollary 2.25]{Dense}, which we recall here; we refer the reader to that monograph once again for the proof.
\begin{lemma}
\label{lem:colimitpres}
Let $(\Ccal,J)$ be a site, $F: \Dcal \to \Ccal$ a diagram and $\vec{\lambda} = \{\lambda_D: F(D) \to C_0 \mid D \in \Dcal\}$ a cocone under $F$ with vertex $C_0$. Then $\vec{\lambda}$ is sent by the canonical functor $\ell:\Ccal \to \Sh(\Ccal,J)$ to a colimit cone if and only if:
\begin{enumerate}[label = ({\roman*})]
	\item For any object $C$ and morphism $g:C \to C_0$ in $\Ccal$, there is a $J$-covering family $\{f_i: C_i \to C \mid i \in I\}$ and for each $i \in I$, an object $D_i$ of $\Dcal$ and an arrow $h_i: C_i \to F(D_i)$ such that $\lambda_{D_i} \circ h_i = g \circ f_i$.
	\item For any object $C$ in $\Ccal$ and morphisms $g_1:C \to F(X)$, $g_2:C \to F(Y)$ such that $\lambda_{X} \circ g_1 = \lambda_{Y} \circ g_2$, there is a $J$-covering family $\{f_i: C_i \to C \mid i \in I\}$ such that for each $i \in I$, $g_1 \circ f_i$ and $g_2 \circ f_i$ lie in the same connected component of $(C_i \downarrow F)$.
\end{enumerate}
\end{lemma}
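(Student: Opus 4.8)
The plan is to reduce the statement to a condition on sheaves and then unwind the sheaf axioms, recovering the cited \cite[Corollary 2.25]{Dense} in the process. Recall that $\ell = \mathbf{a}\circ\yon$ is the composite of the Yoneda embedding with sheafification, and that for any object $C$ of $\Ccal$ and any sheaf $G$ the sheafification adjunction together with the Yoneda lemma furnish a bijection $\Hom_{\Sh(\Ccal,J)}(\ell(C),G)\cong G(C)$, natural in both variables. Consequently $\ell\circ\vec{\lambda}$ exhibits $\ell(C_0)$ as the colimit of $\ell\circ F$ if and only if, for every sheaf $G$, applying $\Hom_{\Sh(\Ccal,J)}(-,G)$ turns this cocone into a limit cone; by the displayed bijection this says exactly that
\[
\rho_G : G(C_0)\longrightarrow \lim_{D\in\Dcal} G(F(D)),\qquad t\longmapsto \bigl(G(\lambda_D)(t)\bigr)_{D\in\Dcal},
\]
is a bijection for every sheaf $G$, where the codomain is the set of compatible families $(s_D)_{D}$ with $s_D\in G(F(D))$ and $G(F(\delta))(s_{D'})=s_D$ for every $\delta:D\to D'$ in $\Dcal$. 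The whole argument then amounts to matching bijectivity of each $\rho_G$ against conditions (i) and (ii).

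The technical engine I would isolate first is that $G$ carries connected zigzags in the comma categories to equalities on compatible families: if $u:C\to F(D)$ and $u':C\to F(D')$ lie in the same connected component of $(C\downarrow F)$, then for every compatible family $(s_D)_D$ one has $G(u)(s_D)=G(u')(s_{D'})$. Indeed a single comma morphism $\delta:D\to D'$ with $F(\delta)\circ u=u'$ gives $G(u')(s_{D'})=G(u)\bigl(G(F(\delta))(s_{D'})\bigr)=G(u)(s_D)$, and one iterates along the zigzag. Granting this, injectivity of $\rho_G$ follows from condition (i) alone: given $t,t'$ with $\rho_G(t)=\rho_G(t')$, apply (i) to $g=\id_{C_0}$ to get a $J$-cover $\{f_i:C_i\to C_0\}$ with $f_i=\lambda_{D_i}\circ h_i$; then $G(f_i)(t)=G(h_i)G(\lambda_{D_i})(t)=G(h_i)G(\lambda_{D_i})(t')=G(f_i)(t')$, and separatedness of $G$ forces $t=t'$.

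For surjectivity I would take a compatible family $(s_D)_D$, invoke (i) at $g=\id_{C_0}$ to obtain a cover $\{f_i:C_i\to C_0\}$ with lifts $h_i:C_i\to F(D_i)$, and set $t_i:=G(h_i)(s_{D_i})\in G(C_i)$. The crux is that $(t_i)$ is a matching family: for any probes $a:B\to C_i$, $b:B\to C_j$ with $f_i a=f_j b$, the composites satisfy $\lambda_{D_i}(h_i a)=\lambda_{D_j}(h_j b)$, so condition (ii) yields a cover of $B$ on which $h_i a$ and $h_j b$ become connected in $(B\downarrow F)$, whence the zigzag-invariance lemma gives $G(h_i a)(s_{D_i})=G(h_j b)(s_{D_j})$; thus the $t_i$ agree on overlaps up to a cover, and matching follows by separatedness. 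Since $G$ is a sheaf the $t_i$ glue to a unique $t\in G(C_0)$, and a final verification—restricting along the covering sieve obtained by pulling the cover $\{f_i\}$ back along each $\lambda_D$ and again appealing to (ii) and separatedness—confirms $G(\lambda_D)(t)=s_D$, so $\rho_G(t)=(s_D)_D$. The converse implications, that bijectivity of every $\rho_G$ forces (i) and (ii), I would obtain by testing against the representable sheaves $\ell(C_0)$ and $\ell(F(X))$ and the presheaf colimit of $F$, exploiting that a map out of a representable into a colimit in a sheaf topos factors locally through the cocone and that the colimit identifies exactly the images forced by the diagram.

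The main obstacle will be the surjectivity direction, and within it the matching-family verification together with the final restriction step. Because $\Ccal$ need not have pullbacks, one cannot reason with genuine fibre products over $C_0$; instead every overlap must be probed by arbitrary $B\to C_i$, $B\to C_j$, and the identifications tracked through $G$ via zigzags in $(B\downarrow F)$ rather than via a single comparison map. The zigzag-invariance lemma is precisely what keeps this manageable, but the bookkeeping of covers-of-covers—using closure of $J$ under composition and its local character—and the non-circular confirmation that the glued section is carried by the cocone to each $s_D$ are where the care is concentrated.
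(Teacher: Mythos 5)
First, a point of reference: the paper does not prove this lemma at all — it is quoted verbatim from \cite[Corollary 2.25]{Dense}, with the proof explicitly deferred to that monograph. So your proposal is a reconstruction of an omitted argument rather than something to be matched against a proof in the text. Your reduction and your forward direction are correct and well executed: the identification $\Hom_{\Sh(\Ccal,J)}(\ell(C),G)\cong G(C)$ for sheaves $G$, the reformulation as bijectivity of $\rho_G$, the zigzag-invariance observation, injectivity from (i) plus separatedness, and the surjectivity argument (matching family via (ii), gluing, final check along the sieve pulled back along each $\lambda_D$) are all sound, and you correctly compensate for the absence of pullbacks by probing overlaps with arbitrary $a:B\to C_i$, $b:B\to C_j$.

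The gap is in the converse. Conditions (i) and (ii) demand data and equations \emph{in $\Ccal$}: the $h_i$ must be genuine $\Ccal$-morphisms and $\lambda_{D_i}\circ h_i=g\circ f_i$ must hold on the nose. Testing against representables only yields data in $\Sh(\Ccal,J)$, and $\ell$ is in general neither full nor faithful: a local factorization $\ell(g)\circ\ell(f_i)=k_i'\circ k_i$ through a cocone leg produces $k_i\in\ell(F(D_i))(C_i)$, which is only \emph{locally} of the form $\ell(h_i)$; and even then $\ell(\lambda_{D_i}h_i)=\ell(gf_i)$ only gives \emph{local} equality, i.e.\ a further cover $\{e_{ik}\}$ with $\lambda_{D_i}h_ie_{ik}=gf_ie_{ik}$. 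Both repairs need transitivity of $J$ and are missing from your sketch; relatedly, your phrase ``the colimit identifies exactly the images forced by the diagram'' is false as stated — the sheaf colimit identifies maps that are only locally connected by zigzags, which is precisely why (ii) contains a covering family. A clean fix that also subsumes your forward direction: conditions (i) and (ii) say exactly that the canonical presheaf map $P:=\colim(\yon\circ F)\to\yon(C_0)$ is locally surjective and locally injective, i.e.\ a local isomorphism; since sheafification $\mathbf{a}$ preserves colimits, inverts precisely the local isomorphisms, and the class of maps inverted by a functor satisfies two-out-of-three, comparing with the units $P\to\mathbf{a}(P)$ and $\yon(C_0)\to\ell(C_0)$ shows that $\ell\vec{\lambda}$ is a colimit cone if and only if $P\to\yon(C_0)$ is a local isomorphism — which is the conjunction of (i) and (ii).
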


Observe that the first condition can be simplified.

\begin{lemma}
\label{lem:simplify}
Condition (i) of Lemma \ref{lem:colimitpres} is equivalent to the requirement that $\{\lambda_D: F(D) \to C_0 \mid D \in \Dcal\}$ is a $J$-covering family. 
\end{lemma}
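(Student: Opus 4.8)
The plan is to show that condition (i) of Lemma \ref{lem:colimitpres} holds if and only if the cocone $\vec{\lambda}$ is a $J$-covering family, where by ``$J$-covering family'' we mean that the sieve generated by $\{\lambda_D \mid D \in \Dcal\}$ on $C_0$ is $J$-covering. One direction is essentially immediate: if condition (i) holds, then specializing to $C = C_0$ and $g = \id_{C_0}$ produces a $J$-covering family $\{f_i : C_i \to C_0\}$ such that each $f_i$ factors through some $\lambda_{D_i}$, namely $f_i = \lambda_{D_i} \circ h_i$. Since the sieve generated by the $\lambda_D$ contains all these $f_i$, and a sieve containing a $J$-covering family is itself $J$-covering (by the transitivity/saturation axioms for a Grothendieck topology), it follows that $\{\lambda_D\}$ generates a $J$-covering sieve, as required.

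For the converse, suppose $\{\lambda_D : F(D) \to C_0 \mid D \in \Dcal\}$ is $J$-covering. I would take an arbitrary object $C$ and morphism $g : C \to C_0$ and produce the data required by condition (i). The natural move is to pull back the covering family along $g$: using the stability axiom for $J$, the sieve generated by $\{\lambda_D\}$ pulls back along $g$ to a $J$-covering sieve on $C$. Concretely, the pullback sieve consists of those morphisms $f : C' \to C$ such that $g \circ f$ factors through some $\lambda_D$. Choosing a generating covering family $\{f_i : C_i \to C \mid i \in I\}$ contained in this pullback sieve, for each $i$ we obtain by definition an object $D_i$ of $\Dcal$ and a morphism $h_i : C_i \to F(D_i)$ with $\lambda_{D_i} \circ h_i = g \circ f_i$. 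This is exactly the data demanded by condition (i).

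The one subtlety to handle carefully is the precise form of the stability axiom: the Grothendieck topology axioms give stability of covering \emph{sieves} under pullback, so I would phrase the argument in terms of sieves rather than chosen families, and only at the end extract a generating family $\{f_i\}$ of the pullback sieve together with the factorizations through the $\lambda_{D_i}$. The factorizations come for free from membership in the pullback sieve, since a morphism $f_i$ lies in that sieve precisely when $g \circ f_i$ lies in the sieve generated by the $\lambda_D$, i.e. factors through one of the $\lambda_D$; that factorization is the required $h_i$ (after composing with any morphism witnessing membership in the sieve, the codomain being some $F(D_i)$).

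I do not anticipate a serious obstacle here; this is a routine unwinding of the Grothendieck topology axioms, and the main thing to get right is matching the existential data in condition (i) to the generators of the pulled-back covering sieve. If one wished to be fully explicit, the only mild care needed is that the ``covering family'' in the statement should be read as a family generating a covering sieve, so that both directions are statements about sieves; with that reading the equivalence is transparent.
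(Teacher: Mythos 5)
Your proof is correct and follows essentially the same route as the paper's: one direction by specializing condition (i) to $g = \id_{C_0}$ and observing that a sieve containing a covering family is covering, the other by pulling back the sieve generated by $\vec{\lambda}$ along $g$ using stability. Your extra care in phrasing the stability argument at the level of sieves is a fine (and harmless) elaboration of what the paper states more tersely.
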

\begin{proof}
Consider the case where $g$ is the identity on $C_0$. There must be some $J$-covering family $\{f_i: C_i \to C \mid i \in I\}$ and for each $i \in I$, an object $D_i$ of $\Dcal$ and an arrow $h_i: C_i \to F(D_i)$ such that $\lambda_{D_i} \circ h_i = g \circ f_i$. Since every morphism in this covering family factors through a leg of the colimit cone, the legs of the cone must form a $J$-covering family. Conversely, given any morphism $g:C \to C_0$, since $J$-covering families are required to be stable, pulling back $\vec{\lambda}$ gives the required $J$-covering family over $C$ to fulfill condition (i).
\end{proof}

Applying this in the particular case of funneling or multifunneling colimits in a principal or finitely generated site $(\Ccal,J)$, we get the following:
\begin{proposition}
\label{prop:colims}
Let $(\Ccal,J_{\Tcal})$ be a principal site and $F: \Dcal \to \Ccal$ a funneling diagram with weakly terminal object $F(D_0)$ and $\vec{\lambda} = \{\lambda_D: F(D) \to C_0 \mid D \in \Dcal\}$ a cocone under $F$ with vertex $C_0$. Then $\vec{\lambda}$ is sent by the canonical functor $\ell:\Ccal \to \Sh(\Ccal,J_{\Tcal})$ to a colimit cone if and only if:
\begin{enumerate}[label = ({\roman*})]
	\item $\lambda_{D_0} \in \Tcal$.
	\item For any object $C$ in $\Ccal$ and morphisms $g_1, g_2:C \rightrightarrows F(D_0)$, such that $\lambda_{D_0} \circ g_1 = \lambda_{D_0} \circ g_2$, there is a $\Tcal$-morphism $t: C' \to C$ such that $g_1 \circ t$ and $g_2 \circ t$ lie in the same connected component of $(C' \downarrow F)$.
\end{enumerate}

Similarly, if $(\Ccal,J_{\Tcal'})$ is a finitely generated site and $F: \Dcal \to \Ccal$ a multifunneling diagram with weakly terminal objects $F(D_1),\dotsc,F(D_n)$ and $\vec{\lambda} := \{\lambda_D: F(D) \to C_0 \mid D \in \Dcal\}$ a cocone under $F$ with vertex $C_0$, then $\vec{\lambda}$ is sent by $\ell:\Ccal \to \Sh(\Ccal,J_{\Tcal'})$ to a colimit cone if and only if:
\begin{enumerate}[label = ({\roman*})]
	\item $\{\lambda_{D_1},\dotsc,\lambda_{D_n}\} \in \Tcal'$.
	\item For any object $C$ in $\Ccal$ and morphisms $g_1:C \to F(D_k)$ and $g_2: C \to F(D_l)$ with $1 \leq k,l \leq n$, such that $\lambda_{D_k} \circ g_1 = \lambda_{D_l} \circ g_2$, there is a $\Tcal'$-family $\{t_i: C_i \to C \mid 1 \leq i \leq N\}$ such that $g_1 \circ t_i$ and $g_2 \circ t_i$ lie in the same connected component of $(C_i \downarrow F)$.
\end{enumerate}
\end{proposition}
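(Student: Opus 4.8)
The plan is to obtain Proposition \ref{prop:colims} as a direct specialization of the general colimit-preservation criterion in Lemma \ref{lem:colimitpres}, using Lemma \ref{lem:simplify} to dispatch the first condition and the structural simplification of funneling diagrams (Remark \ref{rmk:simple}) to reduce the second. First I would treat the principal-site case. By Lemma \ref{lem:simplify}, condition (i) of Lemma \ref{lem:colimitpres} is equivalent to the family $\vec{\lambda}$ being $J_{\Tcal}$-covering; since the diagram is funneling with weakly terminal object $F(D_0)$, every leg $\lambda_D$ factors through $\lambda_{D_0}$ (all legs of the colimit cone factor through the weakly terminal leg, as noted after Definition \ref{dfn:funnel}), so the sieve generated by $\vec{\lambda}$ is exactly the sieve generated by $\lambda_{D_0}$. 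A principal covering sieve is generated by a single $\Tcal$-morphism, so $\vec{\lambda}$ is covering if and only if $\lambda_{D_0} \in \Tcal$ (using that stable classes may be taken to satisfy the push-forward axiom 4 of Lemma \ref{lem:pbstable}, so that a morphism generating a covering sieve lies in $\Tcal$). This gives condition (i) of the Proposition.

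Next I would reduce condition (ii) of Lemma \ref{lem:colimitpres}. In the general statement one must consider arbitrary pairs $g_1 : C \to F(X)$, $g_2 : C \to F(Y)$ with $\lambda_X \circ g_1 = \lambda_Y \circ g_2$; the claim is that for a funneling diagram it suffices to check pairs landing in the weakly terminal object $F(D_0)$. Given any such $g_1, g_2$, compose with chosen morphisms $F(X) \to F(D_0)$ and $F(Y) \to F(D_0)$ in the funnel (Remark \ref{rmk:simple} lets us assume the non-identity morphisms have $F(D_0)$ as codomain); this produces $g_1', g_2' : C \rightrightarrows F(D_0)$ with $\lambda_{D_0} \circ g_1' = \lambda_{D_0} \circ g_2'$, and any $\Tcal$-morphism witnessing that $g_1' t, g_2' t$ lie in the same component of $(C' \downarrow F)$ also witnesses it for the original $g_1, g_2$, since the connecting morphisms to $F(D_0)$ provide the relevant zigzag in the comma category. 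Finally, a $J_{\Tcal}$-covering family over $C$ may be refined to (indeed contains) a single $\Tcal$-morphism $t : C' \to C$ by principality, so the covering-family clause of Lemma \ref{lem:colimitpres}(ii) collapses to the single-$\Tcal$-morphism clause stated in the Proposition.

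The finitely generated case proceeds identically, mutatis mutandis: condition (i) becomes $\{\lambda_{D_1},\dotsc,\lambda_{D_n}\} \in \Tcal'$ because the legs factor through the finitely many weakly terminal legs and a finitely generated covering sieve contains a $\Tcal'$-family (using axioms 4' and 5'), while condition (ii) is checked on pairs $g_1 : C \to F(D_k)$, $g_2 : C \to F(D_l)$ landing in the weakly terminal objects, with the covering family refined to a $\Tcal'$-family. I expect the only genuinely delicate point to be the reduction of Lemma \ref{lem:colimitpres}(ii) to pairs landing in the weakly terminal object(s): one must check carefully that composing $g_1, g_2$ with the funnel morphisms into $F(D_0)$ (resp. $F(D_k), F(D_l)$) neither loses nor creates same-component relations in the relevant comma categories, i.e. that the equation $\lambda_{D_0} \circ g_1' = \lambda_{D_0} \circ g_2'$ genuinely follows and that connectedness transfers back along these composites. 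Everything else is a routine unwinding of the principal/finitely-generated topology definitions.
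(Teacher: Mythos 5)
Your proposal is correct and follows essentially the same route as the paper's own proof: both reduce Proposition \ref{prop:colims} to Lemma \ref{lem:colimitpres}, use Lemma \ref{lem:simplify} together with the factorization of all cocone legs through the weakly terminal leg(s) and the push-forward axioms to identify condition (i), and handle condition (ii) by restricting to pairs landing in $F(D_0)$ (resp.\ $F(D_k)$, $F(D_l)$), transferring connectedness in the comma category back along the funnel morphisms, and collapsing covering families to a single $\Tcal$-morphism (resp.\ a $\Tcal'$-family). The point you flag as delicate is exactly the one the paper addresses, and your zigzag argument for it is the paper's.
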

\begin{proof}
For (i) in each case, we apply Lemma \ref{lem:simplify} and then the fact that every morphism in the cone factors through $\lambda_{D_0}$ (resp. one of the $\lambda_{D_k}$) to deduce, thanks to stability axiom 4 (resp. axioms 4' and 5') that condition (i) of Lemma \ref{lem:colimitpres} is equivalent to the given statement.

Condition (ii) in each case is a consequence of condition (ii) in Lemma \ref{lem:colimitpres}, having simply taken the special case $X = Y = D_0$ (resp. $X = D_k$ and $Y = D_l$). Conversely, for arbitrary $g_1:C \to F(X)$ and $g_2:C \to F(Y)$ such that $\lambda_{X} \circ g_1 = \lambda_{Y} \circ g_2$, we may extend this via any of the morphisms $p_1: X \to D_0$ and $p_2: Y \to D_0$ in the diagram (resp. $p_1: X \to D_k$ and $p_2: Y \to D_l$) so that $\lambda_{D_0} \circ Fp_1 \circ g_1 = \lambda_{D_0} \circ Fp_2 \circ g_2$ (resp. $\lambda_{D_k} \circ Fp_1 \circ g_1 = \lambda_{D_k} \circ Fp_2 \circ g_2$). Given a $\Tcal$-morphism $t:C' \to C$ such that $Fp_1 \circ g_1 \circ t$ and $Fp_2 \circ g_2 \circ t$ are in the same connected component of $(C' \downarrow F)$ (resp. a $\Tcal'$-family $\{t_i: C_i \to C\}$ such that $Fp_1 \circ g_1 \circ t_i$ and $Fp_2 \circ g_2 \circ t_i$ are in the same connected component of $(C_i \downarrow F)$), it is clear that $g_1 \circ t$ and $g_2 \circ t$ (resp. $g_1 \circ t_i$ and $g_2 \circ t_i$ for each $i$) also lie in this same component, as required.
\end{proof}

As an illustration of the second half of Proposition \ref{prop:colims}, we deduce the special case of binary coproducts (cf. Lemma \ref{lem:multi}).
\begin{corollary}
\label{crly:coproduct}
For $(\Ccal,J_{\Tcal'})$ a finitely generated site, a cospan $\lambda_1: X_1 \rightarrow Y \leftarrow X_2: \lambda_2$ is mapped by $\ell: \Ccal \to \Sh(\Ccal,J_{\Tcal'})$ to a coproduct cocone if and only if:
\begin{enumerate}[label = ({\roman*})]
	\item $\{\lambda_1,\lambda_2\} \in \Tcal'$,
	\item Whenever $f_1: C \to X_1$ and $f_2: C \to X_2$ have $\lambda_1 \circ f_1 = \lambda_2 \circ f_2$, the empty family is $\Tcal'$ covering on $C$, and
	\item Whenever $f,f':C \rightrightarrows X_1$ are coequalized by $\lambda_1$, there is a $\Tcal'$-covering family on $C$ consisting of morphisms equalizing $f$ and $f'$ (and similarly for pairs of morphisms into $X_2$).
\end{enumerate}
If $\Tcal'$-covering families are jointly epic, we can replace the last condition by the condition that $\lambda_1$ and $\lambda_2$ must be monic.
\end{corollary}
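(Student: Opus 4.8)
The plan is to apply the second half of Proposition \ref{prop:colims} to the specific multifunneling diagram that computes a binary coproduct: take $F:\Dcal \to \Ccal$ where $\Dcal$ is the discrete category on two objects $D_1, D_2$, with $F(D_1) = X_1$ and $F(D_2) = X_2$. The collection $\{D_1, D_2\}$ serves as the weakly terminal objects of this (trivial) multifunnel, and the cocone $\vec\lambda$ has legs $\lambda_1, \lambda_2$; thus $\ell$ sending $\vec\lambda$ to a colimit cone is precisely the statement that $\ell(\lambda_1), \ell(\lambda_2)$ form a coproduct cocone. Condition (i) of Proposition \ref{prop:colims} then reads $\{\lambda_1, \lambda_2\} \in \Tcal'$, which is condition (i) of the corollary verbatim.

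The key step is to unwind condition (ii) of Proposition \ref{prop:colims} by computing the connected components of the comma category $(C_i \downarrow F)$. Since $\Dcal$ is discrete, an object of $(C_i \downarrow F)$ is a pair $(D, h)$ with $D \in \{D_1, D_2\}$ and $h: C_i \to F(D)$, and the only morphisms are identities; hence $(D, h)$ and $(D', h')$ lie in the same connected component exactly when $D = D'$ and $h = h'$. I would then split condition (ii) according to the index pair $(k,l)$ of the morphisms $g_1: C \to F(D_k)$, $g_2: C \to F(D_l)$. When $k \neq l$, say $g_1: C \to X_1$ and $g_2: C \to X_2$ with $\lambda_1 \circ g_1 = \lambda_2 \circ g_2$, the objects $(D_1, g_1 \circ t_i)$ and $(D_2, g_2 \circ t_i)$ can never share a component, so the requested $\Tcal'$-family $\{t_i\}$ satisfies the condition only by being empty; this recovers condition (ii) of the corollary. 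When $k = l = 1$ we have $f, f': C \rightrightarrows X_1$ coequalized by $\lambda_1$, and the component condition reduces to $f \circ t_i = f' \circ t_i$, giving the requirement that some $\Tcal'$-covering family equalize $f$ and $f'$; the case $k = l = 2$ yields the symmetric statement for $X_2$. Together these are exactly condition (iii).

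For the final reformulation I would add the hypothesis that $\Tcal'$-covering families are jointly epic. Under it, a $\Tcal'$-covering family $\{t_i\}$ with $f \circ t_i = f' \circ t_i$ for all $i$ forces $f = f'$ by cancellation, so condition (iii) implies $\lambda_1 f = \lambda_1 f' \Rightarrow f = f'$, i.e. $\lambda_1$ is monic, and symmetrically $\lambda_2$ is monic. Conversely, if $\lambda_1$ and $\lambda_2$ are monic, then whenever $f, f'$ are coequalized by $\lambda_1$ we already have $f = f'$, so the singleton family $\{\id_C\}$, which is $\Tcal'$-covering by axiom 1', trivially equalizes them; hence condition (iii) holds. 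This gives the claimed equivalence.

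The main obstacle I anticipate is the bookkeeping around the connected components of $(C_i \downarrow F)$, and in particular recognizing that the cross-index case $k \neq l$ is satisfiable only by the empty family; this is the precise mechanism through which the coproduct-specific disjointness condition (ii)—rather than a further equalizing condition—emerges from the generic statement of Proposition \ref{prop:colims}. The remaining cases are a routine specialization together with the elementary cancellation argument for the monic reformulation.
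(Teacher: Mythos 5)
Your proposal is correct and follows essentially the same route as the paper: the paper's proof is a one-line application of Proposition \ref{prop:colims} to a finite discrete diagram, noting that two morphisms lie in the same connected component of $(C \downarrow F)$ if and only if they are equal, which is impossible for distinct codomains — exactly the mechanism you identify for extracting conditions (ii) and (iii). Your explicit treatment of the monic reformulation (joint epicness forcing $f = f'$, and the identity family in the converse) is also the intended elementary argument, which the paper leaves implicit.
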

\begin{proof}
This is a direct application of Proposition \ref{prop:colims} for multifunneling colimits in the case of a finite discrete diagram, where two morphisms are in the same connected component of $(C \downarrow F)$ if and only if they are equal (which is impossible if they have distinct codomains).
\end{proof}

Imposing these conditions on funneling and multifunneling colimit cones in reductive and coalescent categories, we arrive at the following definitions. 

\begin{definition}
\label{dfn:redeff}
We say a reductive or coalescent category $\Ccal$ is \textbf{effectual} if the respective conditions of Proposition \ref{prop:colims} hold for every funneling (resp. multifunneling) colimit diagram. 

Explicitly, a reductive category is effectual if for every funneling diagram $F: \Dcal \to \Ccal$ with colimit expressed by $\lambda: F(D_0) \too C_0$, given morphisms $g_1, g_2:C \rightrightarrows F(D_0)$ in $\Ccal$ such that $\lambda \circ g_1 = \lambda \circ g_2$, there is a strict epimorphism $t: C' \too C$ such that $g_1 \circ t$ and $g_2 \circ t$ lie in the same connected component of $(C' \downarrow F)$.

Similarly, a coalescent category is effectual if for every multifunneling diagram $F: \Dcal \to \Ccal$ with colimit expressed by $\{\lambda_i: F(D_i) \to C_0\}$, given morphisms $g_1:C \to F(D_k)$ and $g_2:C \to F(D_l)$ such that $\lambda_k \circ g_1 = \lambda_l \circ g_2$, there is a jointly strictly epimorphism family $\{t_j: C'_j \too C : 1 \leq j \leq m\}$ such that $g_1 \circ t_j$ and $g_2 \circ t_j$ lie in the same connected component of $(C'_j \downarrow F)$ for each index $j$.
\end{definition}

Recall that a category $\Ccal$ with finite coproducts is said to be \textbf{positive} if and only if coproduct inclusions are monomorphisms and finite coproducts are disjoint, in the sense that for all objects $A$ and $B$ of $\Ccal$, the following square is a pullback:
\[\begin{tikzcd}
0 \ar[r] \ar[d] & B \ar[d] \\
A \ar[r] & A + B.
\end{tikzcd}\]
Applying Corollary \ref{crly:coproduct}, we deduce:
\begin{corollary}
\label{crly:poscoal}
An effectual coalescent category is positive.
\end{corollary}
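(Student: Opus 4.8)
The goal is to show that an effectual coalescent category $\Ccal$ is positive, which means verifying two things: that coproduct inclusions are monomorphisms, and that finite coproducts are disjoint (the displayed square is a pullback). The plan is to transport these conditions to the topos $\Sh(\Ccal,J_c)$ via the functor $\ell:\Ccal \to \Sh(\Ccal,J_c)$, using the fact that being effectual is exactly the condition (from Proposition \ref{prop:colims}) guaranteeing that $\ell$ preserves multifunneling colimits, hence in particular binary coproducts.

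First I would invoke Corollary \ref{crly:coproduct}: since $\Ccal$ is effectual, for any coproduct diagram $A + B$ in $\Ccal$ the coproduct inclusions $\iota_A: A \to A+B$ and $\iota_B: B \to A+B$ are sent by $\ell$ to a genuine coproduct cocone in $\Sh(\Ccal,J_c)$. In particular, the three conditions of Corollary \ref{crly:coproduct} must hold for this cospan. The second and third conditions of that Corollary are precisely what we need: condition (ii), applied with $f_1,f_2$ arbitrary morphisms making $\iota_A \circ f_1 = \iota_B \circ f_2$, says the empty family is $\Tcal'$-covering on their common domain $C$, which (since $\Tcal'$-families are jointly epic and by strictness of the initial object, Lemma \ref{lem:collred}) forces $C$ to be initial; and since $\ell$ reflects monos (monomorphisms in $\Ccal_c$ coincide with those in $\Ecal$, and $\ell$ is full and faithful on a subcanonical site) the ``$\lambda_1,\lambda_2$ monic'' reformulation of condition (iii) gives that $\iota_A,\iota_B$ are monomorphisms in $\Sh(\Ccal,J_c)$.

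Now the key point is that $\ell:(\Ccal,J_c) \to \Sh(\Ccal,J_c)$ is full and faithful, since a coalescent category equipped with its coalescent topology is a subcanonical site (the $\Tcal'$-families are jointly strictly epic, so the representable presheaves are sheaves). A full and faithful functor reflects monomorphisms and reflects limits onto its image, so from the fact that $\ell(\iota_A)$ and $\ell(\iota_B)$ are monic I can conclude $\iota_A, \iota_B$ are monic in $\Ccal$. Likewise, disjointness is the statement that the square with corners $0, A, B, A+B$ is a pullback; I would observe that coproducts are disjoint in the topos $\Sh(\Ccal,J_c)$ (being a Grothendieck topos, it is extensive), that $\ell$ sends $A+B$ to the topos-coproduct and $0$ to the topos-initial object (the latter because $\ell$ preserves the strict initial object as an empty multifunneling colimit), and that $\ell$ reflects pullbacks between objects in its essential image. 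Hence the defining square of disjointness in $\Ccal$ maps to a pullback in $\Sh(\Ccal,J_c)$ and is therefore itself a pullback.

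The main obstacle is bookkeeping about exactly which structure $\ell$ preserves and reflects, rather than any deep difficulty. Specifically, I must be careful that $\ell$ preserves the initial object: this is not automatic for an arbitrary site, but follows here because in a coalescent category the initial object is the colimit of the empty multifunneling diagram, and effectuality guarantees $\ell$ preserves it (the empty family being $\Tcal'$-covering on $0$). I also need that the pullback computing disjointness is reflected, which works because all four objects $0, A, B, A+B$ lie in the image of $\ell$ and $\ell$ is full and faithful, so a cone in $\Ccal$ over this diagram is limiting iff its $\ell$-image is. Once these preservation-and-reflection facts are assembled, positivity transfers directly from the ambient topos, where it holds by extensivity, and the proof is complete.
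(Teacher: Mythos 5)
Your proof is correct, and it shares its backbone with the paper's: both arguments rest on Corollary \ref{crly:coproduct}, via the observation that effectuality makes $\ell$ preserve every binary coproduct (and, as you note, the initial object, being the empty multifunneling colimit). Where you differ is in how positivity is then extracted. The paper's very terse proof stays entirely inside $\Ccal$: since $\ell$ sends each coproduct cocone to a coproduct, the conditions of Corollary \ref{crly:coproduct} must hold, and those conditions \emph{are} positivity --- the monic reformulation of condition (iii) asserts directly that the inclusions are monomorphisms \emph{in} $\Ccal$, while condition (ii) says that any $C$ with $\iota_A f_1 = \iota_B f_2$ admits the empty family as a $\Tcal'$-cover, i.e.\ is initial, which together with strictness of $0$ (Lemma \ref{lem:collred}) is exactly the statement that the disjointness square is a pullback. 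You instead route the conclusion through the topos: $\ell$ is full and faithful (the coalescent site being subcanonical), coproducts in the Grothendieck topos $\Sh(\Ccal,J_c)$ are disjoint with monic inclusions, and fully faithful functors reflect monomorphisms and limits, so positivity pulls back along $\ell$. This transport argument is valid --- your care about $\ell$ preserving $0$ and about all four corners of the square lying in the image of $\ell$ is precisely what the reflection step needs --- but it is heavier than necessary: once you have observed that conditions (ii) and (iii) of Corollary \ref{crly:coproduct} hold, positivity is already in hand, with no appeal to extensivity or reflection (indeed your second paragraph slightly inverts the direction of condition (iii), placing the monomorphisms in $\Sh(\Ccal,J_c)$ first and reflecting them back, which is redundant but harmless). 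What your version buys is robustness: it would go through knowing only abstractly that $\ell$ is fully faithful and preserves finite coproducts, independently of the precise site-theoretic phrasing of Corollary \ref{crly:coproduct}.
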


With these definitions to hand, we can finally express a definitive correspondence result between supercompactly or compactly generated toposes and their canonical sites from Theorem \ref{thm:canon}.

\begin{theorem}
\label{thm:correspondence}
Up to equivalence, there is a one-to-one correspondence between supercompactly generated Grothendieck toposes and essentially small effectual, reductive categories. The correspondence sends a topos to its essentially small category of supercompact objects and a reductive category to the topos of sheaves for the reductive topology on that category.

Similarly, there is an up-to-equivalence correspondence between compactly generated Grothendieck toposes and effectual coalescent categories.
\end{theorem}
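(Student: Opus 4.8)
The plan is to establish the claimed bijection by constructing two maps between the relevant classes and showing they are mutually inverse up to equivalence, leveraging the machinery already assembled. In one direction, suppose $\Ecal$ is supercompactly generated. Theorem \ref{thm:canon} already tells us that $\Ecal \simeq \Sh(\Ccal_s, J_r)$, where $\Ccal_s$ is the category of supercompact objects. So the first task is to verify that $\Ccal_s$ is an essentially small \emph{effectual reductive} category. Essential smallness follows from Lemma \ref{lem:scsite}. That $\Ccal_s$ has funneling colimits is Lemma \ref{lem:closed2}, and stability of its strict epimorphisms is exactly what Theorem \ref{thm:canon} guarantees (via Lemma \ref{lem:stable}, axioms 1,2,4 are automatic, and axiom 3 is supplied by the fact that $J_r$ is a genuine Grothendieck topology on $\Ccal_s$); thus $\Ccal_s$ is reductive. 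The effectuality condition of Definition \ref{dfn:redeff} is precisely the statement that the inclusion $\ell: \Ccal_s \to \Sh(\Ccal_s, J_r) \simeq \Ecal$ sends funneling colimit cones in $\Ccal_s$ to colimit cones, which holds because $\ell$ is the (full, faithful) inclusion of $\Ccal_s$ into $\Ecal$ and funneling colimits in $\Ccal_s$ are computed in $\Ecal$ by Lemma \ref{lem:closed2}; applying the criterion of Proposition \ref{prop:colims} then yields exactly the effectuality clauses.

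In the other direction, suppose $\Ccal$ is an essentially small effectual reductive category equipped with its reductive topology $J_r$. By Proposition \ref{prop:representable}, $\Sh(\Ccal, J_r)$ is supercompactly generated, since $(\Ccal, J_r)$ is a principal site. The crucial point is then to identify the category of supercompact objects of $\Sh(\Ccal, J_r)$ with $\Ccal$ itself. Since the class of strict epimorphisms consists of strict (hence effective) epimorphisms, $(\Ccal, J_r)$ is subcanonical, so $\ell: \Ccal \to \Sh(\Ccal, J_r)$ is full and faithful and its image lands in the supercompact objects (these are quotients of representables by Lemma \ref{lem:scsite}, and representables are supercompact by Proposition \ref{prop:representable}). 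Effectuality is what ensures that $\ell$ preserves the funneling colimits witnessing every supercompact object as a quotient of a representable, so that \emph{every} supercompact object is already (isomorphic to) one of the form $\ell(C)$. This gives an equivalence $\Ccal \simeq \Ccal_s$, which is the content needed to close the loop.

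I would then assemble these two verifications into the stated one-to-one correspondence, noting that Corollary \ref{crly:MoritaCs} already shows that the assignment $\Ecal \mapsto \Ccal_s$ is injective up to equivalence on toposes, and the two constructions are inverse to one another by the above. The compactly generated case runs entirely in parallel: replace supercompact by compact, reductive by coalescent, funneling by multifunneling, strict epimorphisms by strictly epic finite families, Lemma \ref{lem:closed2} and Theorem \ref{thm:canon} supply the coalescent structure, Proposition \ref{prop:representable} gives compact generation of $\Sh(\Ccal, J_c)$, and the second half of Proposition \ref{prop:colims} supplies the effectuality criterion.

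The main obstacle I anticipate is the direction $\Ccal \simeq \Ccal_s$, specifically proving that effectuality forces \emph{every} supercompact object of $\Sh(\Ccal, J_r)$ to be representable rather than merely a proper quotient of a representable. The inclusion $\ell$ is full and faithful and preserves strict epimorphisms, but as Example \ref{xmpl:tworel} explicitly warns, it need \emph{not} preserve all funneling colimits for a general reductive category; the effectuality hypothesis is exactly the extra condition (isolated via Proposition \ref{prop:colims}) that rescues preservation. The delicate step is therefore to check that when $Q$ is supercompact in $\Sh(\Ccal, J_r)$, the chosen epimorphism $\ell(C) \too Q$ exhibits $Q$ as a funneling colimit in $\Ccal$ whose colimit cone satisfies the two clauses of Definition \ref{dfn:redeff}, so that $\ell$ carries it to the colimit $Q$ in the topos and hence $Q \cong \ell(C')$ for the colimit $C'$ computed in $\Ccal$. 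Getting this matching precise — in particular confirming that the relevant parallel pairs and their local coequalization in $\Ccal$ correspond exactly to the genuine coequalization in $\Ecal$ — is where the care is required.
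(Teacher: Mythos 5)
Your proposal is correct and follows essentially the same route as the paper's own proof: one direction is Theorem \ref{thm:canon} plus the observation that $\Ccal_s$ (resp. $\Ccal_c$) is closed in $\Ecal$ under funneling (resp. multifunneling) colimits, hence effectual; the other uses subcanonicality of the reductive (resp. coalescent) topology and effectuality to identify the representables with all supercompact (resp. compact) objects. The "delicate step" you flag — that every supercompact object, being a funneling colimit of representables, is itself representable precisely because effectuality makes $\ell$ preserve that colimit — is exactly the pivot of the paper's argument as well.
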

\begin{proof}
Passing from a topos to its subcategory of supercompact (resp. compact) objects and back again gives an equivalent topos by Theorem \ref{thm:canon}; the small category must be an effectual reductive (resp. effectual coalescent) category, since the subcategory of supercompact (resp. compact) objects is closed under funneling (resp. multifunneling) colimits, so the inclusion must preserve them.

In the other direction, since the strict (resp. strict finite family) Grothendieck topology is subcanonical, a reductive (resp. coalescent) category is included faithfully as a full subcategory of the corresponding sheaf topos, and all of the representable sheaves are supercompact (resp. compact). The supercompact (resp. compact) objects are funneling (resp. multifunneling) colimits of the representable sheaves in this topos, but these colimits are preserved by $\ell$ by construction when the category is effectual, so the category of representable sheaves coincides with the category of supercompact (resp. compact) objects, as required.
\end{proof}

We shall extend this correspondence to an equivalence of $2$-categories in Section \ref{ssec:morsites}. Since geometric morphisms shall come into play at that point, we add here the following extra definition:
\begin{definition}
\label{dfn:augmented}
A reductive category $\Ccal$ is \textbf{augmented} if it has an initial object. The \textbf{augmented reductive topology} $J_{r+}$ on such a category has covering sieves generated by singleton or empty strictly jointly epic families. The resulting \textbf{augmented reductive site} $(\Ccal,J_{r+})$ is quasi-principal.
\end{definition}

Before moving on, a word of warning. In spite of Lemma \ref{lem:collred}, an effectual coalescent category \textit{may fail to be an effectual reductive category}, because the condition for a funneling colimit in a coalescent category to be preserved upon passing to the corresponding topos is weaker than the condition on reductive categories; we shall see a counterexample in Example \ref{xmpl:wedge}. Even when a coalescent category is `reductive-effectual', the corresponding toposes from Theorem \ref{thm:correspondence} are always distinct, since we have seen that the initial object of a topos is never supercompact but always compact.

\subsection{Localic supercompactly generated toposes}
\label{ssec:localic}

Recall that a Grothendieck topos is \textbf{localic} if it is of the form $\Sh(L)$, the category of (set-valued) sheaves on some locale $L$; this is equivalent to the global sections geometric morphism being localic in the sense described in Section \ref{ssec:hype}. A further characterization is the fact that their set of subterminal objects is separating; these subterminals form a frame isomorphic to the frame of opens of the locale $L$.

The conditions for $\Sh(L)$ to be supercompactly or compactly generated reduce to conditions on the frame of subterminal objects. It is worth mentioning that the cases where $\Sh(L)$ has enough coherent objects produces the notion of \textit{coherent frame}, while having enough regular objects gives a \textit{supercoherent frame}. These are respectively discussed by Banaschewski and a coauthor in \cite{cohframe} and \cite{superframe}, where they employ the subobject characterizations in the form of `way below' and `totally below' relations, respectively.

\begin{proposition}
\label{prop:localic}
A localic topos is supercompactly generated if and only if it is equivalent to $\Setswith{\Ccal}$ for some poset $\Ccal$. Moreover, any poset is an instance of an effectual reductive category.
\end{proposition}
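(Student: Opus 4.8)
The plan is to prove the biconditional and the supplementary final sentence separately, reserving most of the effort for the forward implication of the biconditional, which is the only substantive part.

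The reverse implication and the final sentence are quick. If $\Ecal \simeq \Setswith{\Ccal}$ for a poset $\Ccal$, then $\Ecal$ is supercompactly generated by Proposition \ref{prop:xmpls}(iii), and it is localic because the down-closed subsets of $\Ccal$, which are precisely its subterminal objects, form a separating frame (this is the Alexandroff presentation). For the supplementary claim I would verify Definitions \ref{dfn:reductive} and \ref{dfn:redeff} directly for an arbitrary poset $P$. The key elementary observations are: (a) a morphism $p \le q$ in $P$ is a strict epimorphism only when $p = q$, since a strict epimorphism is the colimit of a funneling diagram and in a poset such a colimit is attained at the weakly terminal object; hence the strict epimorphisms are exactly the identities, and this class is trivially stable; (b) every funneling diagram $F: \Dcal \to P$ with weakly terminal object $D_0$ has colimit $F(D_0)$, since all $F(D) \le F(D_0)$, so $F(D_0)$ is the least upper bound, and thus $P$ has funneling colimits and is reductive; and (c) effectuality is vacuous, because any parallel pair $g_1, g_2: C \rightrightarrows F(D_0)$ in a poset is automatically equal, so the required strict epimorphism may be taken to be $\id_C$.

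For the forward implication, suppose $\Ecal$ is localic and supercompactly generated. The crucial first step is to show that every supercompact object $Q$ of $\Ecal$ is subterminal. Since $\Ecal$ is localic, its subterminal objects form a separating family, so the maps $U \to Q$ from subterminal objects $U$ are jointly epic; supercompactness of $Q$ then yields a single epimorphism $e : U \too Q$. A quotient of a subterminal object is again subterminal: the product of epimorphisms $e \times e$ is epic, and under the isomorphism $U \cong U \times U$ it exhibits $\Delta_Q \circ e$ as an epimorphism, forcing the diagonal $\Delta_Q$ to be epic and hence, being monic, an isomorphism. Thus $Q$ is subterminal, and the category $\Ccal_s$ of supercompact objects is a full subcategory of the poset of subterminal objects, hence is equivalent to a poset $P$.

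The final step identifies the canonical site. By Theorem \ref{thm:canon}, $\Ecal \simeq \Sh(\Ccal_s, J_r)$, and by Corollary \ref{crly:strict} the strict epimorphisms of $\Ccal_s$ are precisely its morphisms that are epic in $\Ecal$. But a morphism of $P$ is a monomorphism of subterminals, so if it is also epic in $\Ecal$ it is an isomorphism; therefore the only strict epimorphisms in $P$ are identities, $J_r$ is the trivial topology, and $\Ecal \simeq \Sh(P, J_r) \simeq \Setswith{P}$. I expect the main obstacle to be the subterminality step: one must combine the localic hypothesis that subterminals separate with supercompactness cleanly, and check carefully that a quotient of a subterminal is subterminal. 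Everything downstream is then a routine identification of the reductive topology as trivial, and the supplementary claim feeds back in to confirm that $P$ is indeed one of the effectual reductive categories matched to $\Ecal$ by Theorem \ref{thm:correspondence}.
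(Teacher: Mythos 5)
Your proof is correct and follows essentially the same route as the paper's: both reduce the forward direction to showing that every supercompact object of a localic topos is subterminal (using that a quotient of a subterminal is subterminal) and that the reductive topology on the resulting poset of supercompact subterminals is therefore trivial, so that Theorem \ref{thm:canon} exhibits the topos as presheaves on a poset. Your write-up is somewhat more explicit than the paper's --- notably the diagonal argument for quotients of subterminals and the direct verification that any poset is an effectual reductive category, which the paper's proof leaves implicit --- but the underlying ideas coincide.
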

\begin{proof}
The `if' direction is a consequence of Proposition \ref{prop:xmpls} and the fact such a presheaf topos is necessarily localic.

Conversely, if $\Ecal$ is supercompactly generated, each subterminal must be covered by its supercompact subobjects, so the supercompact subterminal objects generate the topos. A subterminal object is supercompact if and only if it has no proper cover by strictly smaller subterminals. This forces the canonical topology on the supercompact objects to be trivial, whence $\Ecal \simeq [\Ccal_s^{\mathrm{op}},\Set]$. By considering the expression of $\Ecal$ as a category of sheaves on a locale, we see that the supercompact objects must all be quotients of subterminals, and hence themselves subterminals, so $\Ccal_s$ is a poset.
\end{proof}

To characterize localic compactly generated toposes, we need to present a more obscure definition.

\begin{definition}
\label{dfn:JSL}
A \textbf{join semilattice} is a poset having all finite joins, including the bottom element. We say a join semilattice is \textbf{distributive} if for any triple of objects $(a,b,c)$ with $a \leq b \vee c$, there are elements $b' \leq b$ and $c' \leq c$ such that $a = b' \vee c'$; see \cite[\S II.5]{GLT}. This in particular holds in any distributive lattice.
\end{definition}

Distributivity, which inductively extends to arbitrary finite joins, is precisely the condition ensuring that the collection of finite join covers is a stable class of finite families. Note that finite join covers are coproduct injections, so that \textit{any distributive join semilattice is a coalescent category}. Considering the join of any element with itself, we see that any non-degenerate distributive lattice is an example of a coalescent category which fails to be positive (and hence fails to be effectual).

\begin{proposition}
\label{prop:localic2}
A localic topos is compactly generated if and only if it is the category of sheaves on a distributive join semilattice with respect to the topology that makes finite joins covering. 
\end{proposition}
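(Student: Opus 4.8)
The plan is to follow the two-directional structure of the proof of Proposition \ref{prop:localic}, using the poset of compact subterminal objects as the relevant site. For the ``if'' direction, let $L$ be a distributive join semilattice with the topology $J$ whose covering families are the finite join covers. First I would check that $\Sh(L,J)$ is localic: since $L$ is a poset, each representable presheaf $\yon(U)$ is a subobject of the terminal presheaf, and as sheafification is left exact (preserving the terminal object and monomorphisms) it sends these to subterminal sheaves $\ell(U)$; the representables being separating, $\Sh(L,J)$ then has a separating set of subterminals and is localic. Compact generation is immediate from the observation recorded before the statement that distributivity makes the finite join covers a stable class of finite families, so that $(L,J)$ is a finitely generated site and Proposition \ref{prop:representable} applies.

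For the ``only if'' direction, suppose $\Ecal$ is localic and compactly generated, and let $L$ be the (essentially small) poset of compact subterminal objects of $\Ecal$, viewed as a full subcategory. I would first verify that $L$ is a distributive join semilattice. It has a bottom element $0$ (the empty join, which is compact) and is closed under binary joins, since $U \vee V$ is a quotient of the finite coproduct $U \sqcup V$ of compact objects and hence compact by Lemmas \ref{lem:closed} and \ref{lem:closed2}. For distributivity, given $a \leq b \vee c$ in $L$, I would use compact generation to write each of the subterminals $a \wedge b$ and $a \wedge c$ as the join of the compact subterminals below it (obtained as images of a jointly epic family of compact objects, which are compact by Lemma \ref{lem:closed}); then the frame identity $a = (a \wedge b) \vee (a \wedge c)$ exhibits $a$ as a join of compact subterminals each lying below $b$ or below $c$, and compactness of $a$ yields a finite subcover. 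Grouping the finitely many generators below $b$ and below $c$ and taking their (compact) joins produces $b' \leq b$ and $c' \leq c$ in $L$ with $a = b' \vee c'$, as required by Definition \ref{dfn:JSL}.

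Next I would show that $L$ is separating in $\Ecal$: since $\Ecal$ is localic its subterminals are separating, and by the same image-factorization argument every subterminal is the join of the compact subterminals beneath it, which form a jointly epic family into it; hence the compact subterminals separate all of $\Ecal$. The Comparison Lemma then gives $\Ecal \simeq \Sh(L, J_{can}^{\Ecal}|_L)$, and it remains to identify the restricted canonical topology. A family of monomorphisms $\{V_i \hookrightarrow U\}$ in $L$ is jointly epic in $\Ecal$ precisely when $\bigvee_i V_i = U$; since each $U \in L$ is compact, such a family always contains a finite jointly epic subfamily, that is, a finite join cover, and conversely every finite join cover is jointly epic. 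Thus $J_{can}^{\Ecal}|_L$ is exactly the topology making finite joins covering, which completes the identification.

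The hard part will be the distributivity verification, since the naive choice $b' = a \wedge b$, $c' = a \wedge c$ fails: meets of compact subterminals need not be compact, so $a \wedge b$ may not lie in $L$. The key step is therefore the combination of compact generation (to resolve $a \wedge b$ and $a \wedge c$ into joins of compact subterminals) with the compactness of $a$ itself (to extract a finite subcover), which together let one replace $a \wedge b$ and $a \wedge c$ by genuinely compact approximants $b'$ and $c'$. Everything else reduces to routine applications of the closure properties from Section \ref{ssec:Cs} and the Comparison Lemma.
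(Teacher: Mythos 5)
Your proposal is correct and follows essentially the same route as the paper: both take the poset of compact subterminal objects as the site and identify the induced topology with the finite-join topology. The paper's own proof is much terser (it asserts the semilattice structure from closure of compact subterminals under finite unions and says "a similar argument applies"), so your careful distributivity verification via compact generation plus compactness of $a$, and your explicit treatment of the "if" direction via Proposition \ref{prop:representable}, are exactly the details the paper leaves implicit.
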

\begin{proof}
Unlike in the supercompact case, the compact subterminal objects no longer populate all of $\Ccal_c$, but a similar argument applies: if $\Ccal$ is the subcategory of $\Ecal$ on the compact subterminal objects, then $\Ccal$ is a distributive join semilattice since a finite union of compact subterminal objects is compact, and so $\Ecal = \Sh(\Ccal,J)$ where $J$ is the topology on $\Ccal$ whose covering families are finite joins.
\end{proof}

By applying Theorem \ref{thm:closure}, we can immediately conclude that the above characterizations apply to the localic reflections of supercompactly and compactly generated toposes.
\begin{corollary}
The localic reflection of any supercompactly generated topos is a (localic) presheaf topos. The localic reflection of a compactly generated topos is a topos of sheaves on a distributive join semilattice.
\end{corollary}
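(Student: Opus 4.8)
The plan is to derive this corollary directly from Theorem \ref{thm:closure} combined with the two localic characterizations, Propositions \ref{prop:localic} and \ref{prop:localic2}. The key observation is that the localic reflection of a topos $\Ecal$ is exactly the codomain of the hyperconnected part of the hyperconnected--localic factorization of the unique geometric morphism $\Ecal \to \Set$; equivalently, it is the topos of sheaves on the locale of subterminal objects of $\Ecal$. So the localic reflection receives a hyperconnected morphism \emph{from} $\Ecal$, which is precisely the situation covered by part (3) of Theorem \ref{thm:closure}.

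First I would set up the factorization: given a supercompactly generated (resp. compactly generated) Grothendieck topos $\Ecal$, write its localic reflection as $\Lcal$, so that there is a hyperconnected geometric morphism $h: \Ecal \to \Lcal$. By Theorem \ref{thm:closure}(3), since $\Ecal$ is supercompactly (resp. compactly) generated and $\Lcal$ is the codomain of a hyperconnected morphism from $\Ecal$, the topos $\Lcal$ is also supercompactly (resp. compactly) generated. This is the crux of the argument, and it rests entirely on the already-established inheritance result; no new work is needed here.

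Next I would invoke the localic characterizations to identify the resulting topos concretely. The localic reflection $\Lcal$ is by construction a localic topos. In the supercompact case, Proposition \ref{prop:localic} tells us that a localic supercompactly generated topos is equivalent to $\Setswith{\Ccal}$ for some poset $\Ccal$, i.e.\ a localic presheaf topos; this gives the first statement. In the compact case, Proposition \ref{prop:localic2} tells us that a localic compactly generated topos is the category of sheaves on a distributive join semilattice with the topology whose covers are the finite joins; this gives the second statement.

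I do not anticipate a genuine obstacle, since every ingredient is quoted from earlier in the chapter; the only point requiring care is the justification that the localic reflection is indeed the codomain of a hyperconnected morphism from $\Ecal$, so that Theorem \ref{thm:closure}(3) applies. This is standard (the hyperconnected--localic factorization, as recalled in Section \ref{ssec:hype}, has the localic reflection as its intermediate topos, and the hyperconnected factor is the morphism $\Ecal \to \Lcal$), and the two-valued case of Proposition \ref{prop:hype2} already exhibited this factorization explicitly, so I would simply cite it. In short, the proof is: apply the hyperconnected--localic factorization to obtain $h:\Ecal\to\Lcal$, apply Theorem \ref{thm:closure}(3) to conclude $\Lcal$ is (super)compactly generated, and then read off the concrete description from Proposition \ref{prop:localic} (resp. Proposition \ref{prop:localic2}).
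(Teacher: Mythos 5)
Your proof is correct and is essentially identical to the paper's argument: the paper also obtains the result by applying Theorem \ref{thm:closure} (the hyperconnected case) to the hyperconnected part of the hyperconnected--localic factorization of the global sections morphism, and then reading off the conclusion from Propositions \ref{prop:localic} and \ref{prop:localic2}. No gaps.
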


\begin{rmk}
\label{rmk:Stone}
The preprint version of this paper contained an extensive discussion of the above results in the context of Caramello's work on Stone-type dualities, \cite{Stonetype}, which we merely summarize here.

Besides the posets and distributive lattices which we obtain as sites in Propositions \ref{prop:localic} and \ref{prop:localic2}, we may also consider the corresponding locales, which are respectively characterized as having a base of supercompact or compact opens, respectively. Extracting those locales having enough points, and selecting sufficient sets of points appropriately, we recover classical Stone-type dualities.

Taking either essential points or all points in the supercompactly generated case, we respectively obtain Alexandroff spaces (which are not sober!) and Alexandroff locales, and by comparing these with posets, recover two versions of Alexandroff duality, cf. \cite[Theorems 4.1 and 4.2]{Stonetype}.

Taking all points of a compactly generated locale (assuming the axiom of choice, there are enough of these) we recover the duality presented by Gr\"{a}tzer in \cite[\S II.5]{GLT} and refined by Celani and Calomino in \cite[Theorem 20]{DSLat}. This latter duality is more interesting for two reasons. First, it is a non-trivial extension of the better-known Stone-type duality between distributive lattices and coherent locales recalled in \cite[\S 4.2]{Stonetype} with reference to \cite[II.3.2, II.3.3]{Stone}. Second, this route to the duality entirely circumvents the voluminous proofs required by Gr\"{a}tzer to present the result.

We have not examined the maps between spaces involved here. Caramello provides many other examples of this Stone-type duality machinery corresponding to other `$C$-compactness' properties (the localic analogue of the $P$-compactness properties referenced in Remark \ref{rmk:Pcompact} above) in \cite{Stonetype}. The final results can typically be extended to $(1,2)$-categorical equivalences.
\end{rmk}

\subsection{Locally regular and coherent categories}
\label{ssec:regcoh}

As we observed earlier, supercompactness and compactness have been studied in the context of regular and coherent toposes, which are toposes of sheaves on regular and coherent categories respectively, equipped with suitable Grothendieck topologies. Here we recall the definitions of these classes of categories, as well as some more general classes, for comparison with reductive and coalescent categories.

\begin{definition}
\label{dfn:regcoh}
Recall that an epimorphism $e$ in a category $\Ccal$ is \textbf{extremal} if whenever $e = m \circ g$ with $m$ a monomorphism, then $m$ is an isomorphism.

A category is \textbf{locally regular} if it is closed under connected finite limits, it has an orthogonal (extremal epi, mono) factorization system, and every span factors through a jointly monic pair via an extremal epimorphism. Such a category is \textbf{regular} if it also has finite products (equivalently, a terminal object). Clearly, a slice (also called an `over-category') of a locally regular category is regular.

We say a category is \textbf{locally coherent} if it is locally regular and finite unions of subobjects (including the minimal subobject) exist and are stable under pullback. Such a category is \textbf{coherent} if it also has finite products.
\end{definition}

\begin{lemma}
\label{lem:extremal}
Every extremal epimorphism in a locally regular category is regular.
\end{lemma}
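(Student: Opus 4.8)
The plan is to show that any extremal epimorphism $e \colon A \to B$ is the coequalizer of its own kernel pair, so that it is regular. Since a locally regular category is closed under connected finite limits, the kernel pair $(a,b) \colon K \rightrightarrows A$ of $e$ (the pullback $K = A \times_B A$) exists, and $ea = eb$ is immediate. So I must verify the universal property: given any $f \colon A \to C$ with $fa = fb$, there is a unique $g \colon B \to C$ with $ge = f$. Uniqueness is free because an extremal epimorphism is in particular an epimorphism; all the content lies in producing $g$.

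\textbf{The reduction.} Applying the defining span-factorization axiom to the span $B \xleftarrow{e} A \xrightarrow{f} C$, I would obtain an extremal epimorphism $d \colon A \to R$ and a jointly monic pair $(m \colon R \to B,\, n \colon R \to C)$ with $e = md$ and $f = nd$. Using that $(m,n)$ is jointly monic together with $ea=eb$ and $fa=fb$, I compute $m(da) = ea = eb = m(db)$ and $n(da) = fa = fb = n(db)$, whence $da = db$. The key claim is then that $m$ is a monomorphism. Granting this, since $e = md$ is extremal and $m$ is monic, the very definition of extremal epimorphism forces $m$ to be an isomorphism; setting $g := n m^{-1}$ yields $ge = nm^{-1}md = nd = f$, completing the universal property. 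Thus the whole argument reduces to showing $m$ is monic.

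\textbf{Monicity of $m$: the main obstacle.} The monomorphism $m$ is monic exactly when the two projections $\mathrm{pr}_1,\mathrm{pr}_2 \colon R \times_B R \rightrightarrows R$ of its kernel pair coincide. I would consider the canonical comparison morphism $\phi := d \times_B d \colon A \times_B A \to R \times_B R$, which satisfies $\mathrm{pr}_1 \circ \phi = da$ and $\mathrm{pr}_2 \circ \phi = db$. Since $da = db$, the two projections agree after precomposition with $\phi$, so it suffices to prove that $\phi$ is an epimorphism. Here $\phi$ decomposes as
\[
A \times_B A \xrightarrow{\,d \times_B \mathrm{id}_A\,} R \times_B A \xrightarrow{\,\mathrm{id}_R \times_B d\,} R \times_B R,
\]
and a direct check shows each factor is a pullback of the extremal epimorphism $d$. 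The argument therefore hinges on the fact that \emph{extremal epimorphisms in a locally regular category are stable under pullback}: given this, both factors, and hence $\phi$, are extremal (thus ordinary) epimorphisms, forcing $\mathrm{pr}_1 = \mathrm{pr}_2$ and so $m$ monic.

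\textbf{On the obstacle.} I expect pullback-stability of extremal epimorphisms to be the genuinely substantive point; the remainder is bookkeeping. It is a standard consequence of the locally regular axioms, obtained precisely from the span-factorization condition together with orthogonality of the (extremal epi, mono) factorization system: factoring a pullback $e' = m'k$ of an extremal epi into an extremal epi $k$ followed by a monomorphism $m'$, one uses the jointly monic structure of the pullback cone and the lifting property of extremal epimorphisms against monomorphisms to show $m'$ is an isomorphism. I would either cite this as the defining good behaviour of locally regular categories or insert it as a short preliminary lemma; either way, once it is available, the proof of regularity proceeds exactly as outlined above. (Alternatively, one notes that $e$ and $d$ have literally equal kernel pairs, since $dx=dy \Rightarrow ex=ey$ and, by $da=db$, conversely; pullback-stability is what upgrades this to an isomorphism $R \cong B$.)
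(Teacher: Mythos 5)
Your proof is correct modulo one external ingredient, and it follows the same skeleton as the paper's proof (which adapts \cite[Proposition A1.3.4]{Ele}): reduce to showing the extremal epimorphism coequalizes its kernel pair, factor the relevant span through a jointly monic pair via an extremal epimorphism $d$, prove that the leg over the codomain is monic, and let extremality finish. The two proofs differ only in how that monicity is established. The paper tests its leg $g$ against an arbitrary parallel pair $k,l \colon E \rightrightarrows D$ with $gk = gl$, builds a two-fold pullback of $d$ against $k$ and $l$ whose diagonal is an extremal epimorphism ``by stability'', factors the resulting pair of maps through the kernel pair of the original morphism, deduces $hk = hl$, and invokes joint monicity of $(g,h)$ to get $k = l$. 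You instead work directly with the kernel pair of $m$: having extracted $da = db$ from joint monicity at the outset, you push this equality forward along the comparison $\phi \colon A \times_B A \to R \times_B R$, which is epic as a composite of two pullbacks of $d$. Your route is shorter and arguably cleaner, confining all use of stability to the single statement that $\phi$ is epic, and it consumes exactly the same ingredients as the paper's argument: pullback-stability of extremal epimorphisms plus their closure under composition.

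The claim you should withdraw is that pullback-stability is ``a standard consequence of the locally regular axioms''. Your sketched derivation cannot be completed: to apply orthogonality you would need the extremal epimorphism being pulled back and the monomorphism $m'$ to occupy opposite sides of a commutative square, but the morphism along which one pulls back points the wrong way for any such square to exist. Nor can the derivation be repaired, because stability is genuinely independent of the other stated axioms: $\Top$ has all finite limits, an orthogonal (extremal epi, mono) $=$ (quotient, injection) factorization system, and span factorizations through jointly monic pairs, yet quotient maps are notoriously not stable under pullback. (In $\Top$ quotient maps happen to be regular epimorphisms anyway, so this threatens only the proposed derivation, not the lemma.) The correct reading is your fallback option: stability is part of what ``locally regular'' is meant to entail. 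This is also how the paper treats it — its proof of this lemma says ``by stability'', and Theorem \ref{thm:intersect} refers to the extremal epimorphisms forming ``a stable class by assumption'' — so with that convention your argument is complete and stands on exactly the same footing as the paper's.
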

\begin{proof}
We adapt the proof, \cite[Proposition A1.3.4]{Ele}, that in a regular category, covers are regular epimorphisms; we omit the composition symbol for conciseness in this proof.

Let $f:A \too B$ be an extremal epimorphism in a locally regular category and let $a,b: R \rightrightarrows A$ be its kernel pair. We show that $f$ coequalizes $a$ and $b$.

Suppose $c:A \to C$ has $ca = cb$, and factorize the span $(f,c)$ as an extremal epimorphism followed by a jointly monic pair:
\[\begin{tikzcd}[row sep = small]
& & B \\
A \ar[urr, "f", two heads, bend left] \ar[drr, "c"', bend right] \ar[r, "d", two heads] &
D \ar[ur, "g"] \ar[dr, "h"'] & \\
& & C.
\end{tikzcd}\]
If we can show that $g$ is monic, then extremalness of $f$ will force it to be an isomorphism, so that $c = hg^{-1}f$ factors through $f$.

Given $k,l:E \rightrightarrows D$ with $gk = gl$, consider the following diagram composed of pullback squares:
\[\begin{tikzcd}
P \ar[rr, "m", bend left] \ar[dd, "n"', bend right] \ar[d, two heads]
\ar[dr, "p", two heads] \ar[r, two heads] &
\cdot \ar[d, two heads] \ar[r] & A \ar[d, two heads, "d"] \\
\cdot \ar[d] \ar[r, two heads] &
E \ar[d, "k"] \ar[r, "l"] & D \\
A \ar[r, two heads, "d"'] & D &
\end{tikzcd}\]
The morphism labelled $p$ is a composite of extremal epimorphisms (by stability) and hence is itself an extremal epimorphism. From this diagram and the preceding assumptions, we have $fm = gdm = gkp = glp = gdn = fn$, whence $(m,n)$ factors through $(a,b)$ via some morphism $q:P \to R$, and we have:
\[hkp = hdm = cm = caq = cbq = cn = hdn = hlp,\]
whence $hk = hl$ by epicness of $p$, but since $(g,h)$ was jointly monic, we have $k = l$, which completes the proof.
\end{proof}

\begin{definition}
\label{dfn:regtop}
The \textbf{regular topology} on a regular or locally regular category is simply the principal topology generated by the extremal epimorphisms, and similarly the \textbf{coherent topology} on a coherent or locally coherent category is the finitely generated topology generated by the finite jointly extremal epic families.
\end{definition}

By Lemma \ref{lem:extremal}, we may replace `extremal' with `regular' in the descriptions of the stable classes in this definition, whence we see that these sites are subcanonical. Accordingly we obtain a regular (resp. locally regular, coherent, locally coherent) topos of sheaves on such a site, where here the adjective merely indicates that the topos can be generated by such a site; any topos is automatically a coherent (and hence regular, locally regular and locally coherent) category\footnote{The reason for the somewhat unfortunate naming convention which we are extending here is explained in \cite[D3.3]{Ele}.}.

By Proposition \ref{prop:representable}, any locally regular topos is supercompactly generated, and any locally coherent topos is compactly generated. We are therefore led to wonder when the classes of categories coincide.

\begin{theorem}
\label{thm:intersect}
A small category is locally regular with funneling colimits if and only if it is a reductive category with pullbacks.

A small category is locally coherent with multifunneling colimits if and only if it is a coalescent category with pullbacks.

In each case, we can remove the ``locally'' adjective in exchange for adding a terminal object.
\end{theorem}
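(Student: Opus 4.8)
The plan is to prove both equivalences in parallel and to defer the ``regular'' versions to the very end. Throughout I will use that, in any of these categories, a binary product $A\times B$ exists as soon as a terminal object $1$ does, being the pullback of $A\to 1\leftarrow B$; thus ``finite products'' and ``terminal object'' are interchangeable hypotheses. Consequently a regular category is exactly a locally regular category with a terminal object (Definition \ref{dfn:regcoh}), and a reductive (resp.\ coalescent) category with pullbacks and a terminal object is one with all finite limits. So once the ``locally'' versions are established, adding a terminal object to both sides yields the final sentence immediately.

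For the forward implication, suppose $\Ccal$ is locally regular with funneling colimits. Pullbacks are connected finite limits, so $\Ccal$ has them, and funneling colimits are assumed. It remains to see that the strict epimorphisms form a stable class. In any category one has (regular epi) $\Rightarrow$ (strict epi) $\Rightarrow$ (extremal epi), while Lemma \ref{lem:extremal} closes the loop with (extremal) $\Rightarrow$ (regular) in a locally regular category; hence all three classes coincide. Stability then amounts to the pullback-stability of extremal epimorphisms, which I would extract from the $(\text{extremal epi},\text{mono})$ factorization together with the jointly-monic-span factorization axiom in the standard relational manner. The locally coherent $\Rightarrow$ coalescent case is identical, with single morphisms replaced by finite jointly extremal-epic families and multifunneling colimits in place of funneling ones.

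For the reverse implication, suppose $\Ccal$ is reductive with pullbacks; funneling colimits are given. I would first produce the orthogonal $(\text{strict epi},\text{mono})$ factorization by forming the kernel pair of a morphism (a pullback) and its coequalizer (a funneling colimit), using pullback-stability of strict epimorphisms to verify that the comparison map is monic and that the system is orthogonal; by the coincidence above these strict epimorphisms are exactly the extremal ones. The jointly-monic-span factorization is then Lemma \ref{lem:jmonic}, which uses only funneling colimits. The remaining, and principal, obstacle is to show that $\Ccal$ has all connected finite limits, equivalently (since it has pullbacks) that it has equalizers: pullbacks alone never suffice for equalizers, so the full strength of the reductive hypothesis — that \emph{every} parallel pair, not merely a kernel pair, has a coequalizer — must be used essentially. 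I would reduce the equalizer of $f,g\colon A\rightrightarrows B$ to the equalizer of the jointly monic projection pair $p_1,p_2\colon A\times_B A \rightrightarrows A$ of their pullback, and then build the equalizer of a jointly monic pair from pullbacks, the factorization system, and funneling colimits; I expect this construction to be the crux of the theorem. As an alternative viewpoint and a consistency check, pullback-stability of the supercompact objects means that in $\Sh(\Ccal,J_r)\simeq\Ecal$ of Theorem \ref{thm:correspondence} every supercompact object is \emph{regular} in the sense of Example \ref{xmpl:regular}, so $\Ccal$ should coincide with a category of regular objects, which the theory of such objects identifies as locally regular and thereby furnishes the missing equalizers.

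Finally, for the coalescent/coherent equivalence the same scheme applies once I record the extra coherent ingredients: multifunneling colimits supply finite coproducts (Lemma \ref{lem:multi}); finite unions of subobjects arise as images of the induced maps out of these coproducts and are pullback-stable because images (strict epimorphisms) and coproducts are; and positivity, hence disjointness of the coproducts, is Corollary \ref{crly:poscoal}. Together with the connected finite limits obtained above, these are precisely the conditions defining local coherence, closing the argument.
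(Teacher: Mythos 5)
Your forward direction is essentially the paper's: Lemma \ref{lem:extremal} collapses extremal, strict and regular epimorphisms into a single class, and stability of that class is then read off (the paper takes it as part of the local regularity/coherence hypotheses rather than re-deriving it relationally, but nothing important hinges on that). The genuine problem is the reverse direction, where you correctly identify the existence of equalizers as the crux and then do not prove it. Your reduction of $\mathrm{Eq}(f,g)$ to the equalizer of the jointly monic projections $p_1,p_2\colon A\times_B A\rightrightarrows A$ is valid but idle: joint monicity does not make the construction any easier, and the promised construction ``from pullbacks, the factorization system, and funneling colimits'' is precisely what is missing. The idea you need (and which the paper uses) is to bring the \emph{coequalizer} of the pair into play: given $h,k\colon A\rightrightarrows B$, form their coequalizer $c\colon B\too C$ (a funneling colimit); then the slice $\Ccal/C$ has a terminal object and pullbacks, hence all finite limits, and since $ch=ck$ the pair $h,k$ lives in $\Ccal/C$; its equalizer there --- concretely, the pullback of $(h,k)\colon A\to B\times_C B$ along the diagonal $B\to B\times_C B$, all of which exists in $\Ccal$ --- is easily checked to be an equalizer in $\Ccal$ as well. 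You even remark that the full strength of coequalizers of arbitrary parallel pairs ``must be used essentially,'' which is the right instinct, but the proposal never converts that instinct into the construction.

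Your fallback argument does not repair the gap, because it rests on a false identification. Theorem \ref{thm:correspondence} identifies $\Ccal$ with the category of supercompact objects of $\Sh(\Ccal,J_r)$ only when $\Ccal$ is \emph{effectual}; for a general reductive category the functor $\ell$ fails to preserve funneling colimits and $\Ccal$ is a proper full subcategory of $\Ccal_s$ (Example \ref{xmpl:tworel}). Moreover, the paper explicitly notes at the end of Section \ref{ssec:regcoh} that the regular objects of a supercompactly generated topos need not be stable under pullback and hence need not form a locally regular category, so ``the theory of such objects'' cannot be invoked to furnish the missing equalizers. Finally, note that the coalescent half inherits the same gap, since local regularity is an ingredient of local coherence; your construction of binary unions as images of $A+B\to X$ is a reasonable alternative to the paper's pushout-over-the-intersection construction, but it only becomes usable once the equalizer argument is actually supplied.
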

\begin{proof}
In one direction, by Lemma \ref{lem:extremal} we have that in a locally regular category, the classes of extremal, strict and regular epimorphisms all coincide, since any strict epimorphism is extremal, and they form a stable class by assumption, whence a locally regular category with funneling colimits is a reductive category with pullbacks.

Conversely, given a reductive category $\Ccal$ with pullbacks, we must show that $\Ccal$ has equalizers, since a category has connected finite limits if and only if it has both pullbacks and equalizers; the remaining conditions follow from Corollary \ref{crly:orthog} and Lemma \ref{lem:jmonic}, thanks to Lemma \ref{lem:pbstable}. Given a pair of morphisms $h,k: A \rightrightarrows B$ in $\Ccal$, consider their coequalizer $c:B \too C$. Then $\Ccal/C$ is regular, since it has pullbacks and a terminal object (so all finite limits), and it inherits the required factorization system, including that for spans, from $\Ccal$. Therefore there exists an equalizer of $h$ and $k$ as morphisms over $C$, and it is clear that this will also be their equalizer in $\Ccal$.

For the locally coherent case, by considering the strictly epic finite families of subobjects, we see that the fact that unions are stable under pullback ensures that strictly epic finite families form a stable class, as required.

Conversely, given a coalescent category $\Ccal$ with pullbacks, the argument above, Corollary \ref{crly:orthog} and Lemma \ref{lem:jmonic} give that $\Ccal$ is locally regular. For finite unions of subobjects, observe that it suffices to consider nullary and binary unions. The former are guaranteed by the strict initial object of a coalescent category, seen in Lemma \ref{lem:collred}. For the latter, observe that the union can be expressed as the pushout (a multifunneling colimit) along the intersection of the two subobjects (the pullback of the monomorphisms defining the subobjects), since any subobject containing the given pair of subobjects forms a cone under this diagram. The fact that strictly epic families are stable under pullback ensures that these unions are too, again thanks to Lemma \ref{lem:pbstable}.
\end{proof}

\begin{example}
A distributive join semilattice has pullbacks and a top element if and only if it is a distributive lattice, so the (bounded) distributive lattices are precisely the coherent distributive join semilattices.
\end{example}

The reader may have noticed that we did not include the properties of effectiveness for regular and coherent categories in Definition \ref{dfn:regcoh}: 
\begin{definition}
\label{dfn:regeff}
A (locally) regular category is \textbf{effective}\footnote{Referred to as \textbf{Barr-exactness} in older texts; we follow Johnstone in our terminology.} if all equivalence relations are kernel pairs.
\end{definition}

We chose a similar name, `effectual', for the concepts appearing in Definition \ref{dfn:redeff} because both effectuality and effectiveness are conditions equivalent to the relevant categories being recoverable from the associated topos. Indeed, a locally regular, effective category $\Ccal$ can be recovered from the topos of sheaves on $\Ccal$ for the regular topology, $\Sh(\Ccal,J_r)$, as the category of \textit{regular objects}, which were defined in Example \ref{xmpl:regular}. Similarly, if $\Ccal$ is locally coherent, positive and effective, it can be recovered from $\Sh(\Ccal,J_c)$ as the category of \textit{coherent objects}, which are defined analogously. As a special case, we recover the familiar correspondences between effective regular categories and regular toposes, or between effective, positive coherent categories (also known as \textit{pretoposes}) and coherent toposes. These results are comparable to Theorem \ref{thm:correspondence}. The concepts of effectuality and effectiveness are directly related:
\begin{proposition}
\label{prop:effective}
Let $\Ccal$ be a reductive category with pullbacks. Then if $\Ccal$ is effectual as a reductive category, it is also effective as a regular category.
\end{proposition}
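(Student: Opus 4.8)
The plan is to establish effectiveness directly: given an equivalence relation $r_1, r_2 : R \rightrightarrows A$ in $\Ccal$, I will show that it is the kernel pair of its own coequalizer. Since the coequalizer of a parallel pair is a funneling colimit (its indexing diagram $R \rightrightarrows A$ is a funnel with weakly terminal object $A$), the coequalizer $c : A \too C$ exists in $\Ccal$ and is expressed by a strict epimorphism. Because $\Ccal$ has pullbacks, the kernel pair $k_1, k_2 : K := A \times_C A \rightrightarrows A$ of $c$ also exists, and $(k_1,k_2)$ is a jointly monic pair. Since $c r_1 = c r_2$, the universal property of the kernel pair yields a comparison morphism $j : R \to K$ with $k_i \circ j = r_i$; this $j$ is monic because $(r_1, r_2)$ is jointly monic. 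It remains to prove that $j$ is an isomorphism, which identifies $R$ with the kernel pair of $c$ and hence shows (by Theorem \ref{thm:intersect}, $\Ccal$ being locally regular) that every equivalence relation is a kernel pair.

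To do this I would apply effectuality to the funneling diagram $F$ given by $r_1, r_2 : R \rightrightarrows A$ (whose colimit cocone is $c : A \too C$) and to the pair of morphisms $k_1, k_2 : K \rightrightarrows A$, which satisfy $c \circ k_1 = c \circ k_2$ by definition of the kernel pair. Effectuality provides a strict epimorphism $t : K' \too K$ such that $k_1 \circ t$ and $k_2 \circ t$ lie in the same connected component of $(K' \downarrow F)$. The key step is to unwind this connectedness condition: in $(K' \downarrow F)$ the only non-trivial arrows run from an object $(p : K' \to R)$ to the two objects $(r_1 p : K' \to A)$ and $(r_2 p : K' \to A)$, so $k_1 t$ and $k_2 t$ being connected means they are related by the equivalence relation on $\Hom_{\Ccal}(K', A)$ generated by the pairs $\{(r_1 p, r_2 p)\}$. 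Here I would use that $R$ is an internal equivalence relation — reflexivity supplies the diagonal $A \to R$, symmetry the swap $R \to R$, and transitivity a map $R \times_A R \to R$ — to show that this generated relation is \emph{already} represented by $R$; that is, the zigzag collapses to a single $p : K' \to R$ with $r_1 p = k_1 t$ and $r_2 p = k_2 t$.

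With such a $p$ in hand, $j \circ p$ and $t$ have equal composites with both $k_1$ and $k_2$, so joint monicity of the kernel pair forces $t = j \circ p$. Since $t$ is a strict (hence extremal) epimorphism factoring through the monomorphism $j$, extremality makes $j$ an isomorphism, completing the identification of $R$ with the kernel pair of $c$. I expect the main obstacle to be precisely the collapse of the connected-component condition: verifying that the relation generated on each hom-set $\Hom_{\Ccal}(K', A)$ by $R$ coincides with $R$ itself requires carefully translating the reflexivity, symmetry and transitivity structure of the internal equivalence relation into statements about morphisms out of $K'$, and in particular invoking the pullback $R \times_A R$ for the transitivity step. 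Everything else — existence of the coequalizer as a funneling colimit, existence and joint monicity of the kernel pair, and the final extremal-epi-through-a-mono argument (which rests on Lemma \ref{lem:extremal}) — is routine given the earlier results.
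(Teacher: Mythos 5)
Your proof is correct and is essentially the paper's own argument: both hinge on applying effectuality to a pair of morphisms coequalized by the coequalizer of $R$ and then collapsing the resulting zigzag in the comma category using reflexivity, symmetry and transitivity of the internal equivalence relation (the transitivity step performed via the pullback $R \times_A R$), exactly as you sketch. The only difference is in the finishing move: the paper applies effectuality to an \emph{arbitrary} pair $(g_1,g_2)$ coequalized by the coequalizer and concludes by factoring the span $(g_1 \circ t, g_2 \circ t)$ as a strict epimorphism followed by a jointly monic span which must land in $R$, whereas you specialize to the kernel pair $(k_1,k_2)$ and finish with the extremal-epi-through-a-monomorphism argument; both endings are valid, and yours makes slightly more direct use of the pullback hypothesis.
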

\begin{proof}
First suppose that $\Ccal$ is effectual, let $a,b:R \rightrightarrows A$ be an equivalence relation on $A$, and let $\lambda: A \too B$ be its coequalizer. We must show that $(a,b)$ is the kernel pair of $c$. Given $g_1,g_2: C \rightrightarrows A$ with $\lambda \circ g_1 = \lambda \circ g_2$, by effectuality of $\Ccal$ there is a strict epimorphism $t: C' \too C$ such that $g_1 \circ t$ and $g_2 \circ t$ lie in the same connected component of $(C' \downarrow F)$, where $F: \Dcal \to \Ccal$ is the diagram picking out the parallel pair $(a,b)$.

The only form a connecting zigzag can have in $(C' \downarrow F)$ is (omitting the morphisms from $C'$ and any identity morphisms):
\[\begin{tikzcd}
& R \ar[dl, "x_1"'] \ar[dr, "y_1"] & & R \ar[dl, "x_2"'] \ar[dr, "y_2"]
& & \cdots \ar[dl, "x_3"'] \ar[dr, "y_{n-1}"] & & R \ar[dl, "x_n"'] \ar[dr, "y_n"] & \\
A && A && A && A && A,
\end{tikzcd}\]
with each $x_i$ and $y_i$ equal to $a$ or $b$. By reflexivity of $R$ we may construct a zigzag consisting of $n > 0$ spans. We show by downward induction that there must always be a zigzag of exactly $1$ span, which corresponds to a factorization of the span $(g_1 \circ t,g_2 \circ t)$ through the relation $(a,b)$.

Clearly if $x_1 = y_1$ we may omit the first zigzag, and similarly for all of the others, so we may assume that $x_i \neq y_i$. By symmetry of $R$, if $(x_i,y_i) = (b,a)$, this factorizes through $(a,b)$, which gives an alternative zigzag of the same length in $(C' \downarrow F)$, so we may assume $x_i = a$ and $y_i = b$. If $n \geq 2$, we may factor through the pullback of $x_2$ along $y_1$, and transitivity of $R$ means that we get a strictly shorter zigzag; iterating this, we reach a zigzag with $n = 1$, as required.

Finally, taking the (regular epimorphism, relation) factorization of $(g_1 \circ t,g_2 \circ t)$, we conclude that the resulting relation (and hence $(g_1,g_2)$) must factor uniquely through $R$, as required.
\end{proof}

As we shall see in Example \ref{xmpl:countable}, the converse of Proposition \ref{prop:effective} fails, which is why we did not employ the same name for these concepts.

\begin{remark}
While we provided a direct proof of Proposition \ref{prop:effective} for completeness, we could more succinctly have reasoned as follows. When a category is both regular and reductive, the strict and regular topologies on the category coincide. If such a category is effectual, therefore, all of the supercompact objects in its topos of sheaves are regular, and hence it must also be effective, since it is equivalent to the category of regular objects in its associated topos.
\end{remark}

More generally, one might take an interest in the regular objects in a supercompactly generated topos, or the coherent objects in a compactly generated topos. However, this class of objects need not be stable under pullback in general, and hence may not assemble into a locally regular (resp. locally coherent) category. Nonetheless, by considering the induced Grothendieck topology on this subcategory, we obtain a supercompactly generated subtopos of the original topos. Iterating this process recursively, in the countable limit we obtain a maximal pullback-stable class of regular objects, although the resulting subcategory still may not be a locally regular category in the sense of Definition \ref{dfn:regcoh}, since that definition also required the presence of equalizers. Since it is unclear to us whether this class of objects or the corresponding subtopos have an interesting universal property, and since we lack interesting specific examples of this construction, we terminate our analysis here.

\subsection{Morphisms of sites}
\label{ssec:morsites}

Morphisms of sites are most easily defined on sites whose underlying category has finite limits. However, there is no reason for this property to hold in a general principal or finitely generated site. We must therefore use the more general definition of morphism of sites, which we quote from \cite[Definition 3.2]{Dense}.

\begin{definition}
\label{dfn:morsite}
Let $(\Ccal,J)$ and $(\Dcal,K)$ be sites. Then a functor $F: \Ccal \to \Dcal$ is a \textbf{morphism of sites} if it satisfies the following conditions:
\begin{enumerate}
	\item $F$ sends every $J$-covering family in $\Ccal$ to a $K$-covering family in $\Dcal$.
	\item Every object $D$ of $\Dcal$ admits a $K$-covering family $\{g_i: D_i \to D \mid i \in I\}$ by objects $D_i$ admitting morphisms $h_i: D_i \to F(C'_i)$ to objects in the image of $F$.
	\item For any objects $C_1,C_2$ of $\Ccal$ and any span $(\lambda'_1:D \to F(C_1), \lambda'_2:D \to F(C_2))$ in $\Dcal$, there exists a $K$-covering family $\{g_i: D_i \to D \mid i \in I\}$, a family of spans in $\Ccal$, $\{(\lambda_1^i: C'_i \to C_1, \lambda_2^i: C'_i \to C_2) \mid i \in I \},$ and a family of morphisms in $\Dcal$, $\{h_i: D_i \to F(C_i)\}$, such that the following diagram commutes:
	\[\begin{tikzcd}[column sep = small]
		& D_i \ar[dl, "h_i"'] \ar[dr, "g_i"] & \\
		F(C'_i) \ar[d, "F(\lambda_1^i)"']
		\ar[drr, "F(\lambda_2^i)", very near start] & &
		D \ar[dll, "\lambda'_1"', very near start]
		\ar[d, "\lambda'_2"] \\
		F(C_1) & & F(C_2)
	\end{tikzcd}\]
	\item For any pair of arrows $f_1,f_2:C_1 \rightrightarrows C_2$ in $\Ccal$ and any arrow $\lambda':D \to F(C_1)$ of $\Dcal$ satisfying $F(f_1) \circ \lambda' = F(f_2) \circ \lambda'$, there exist a $K$-covering family in $\Dcal$ $\{g_i:D_i \to D \mid i \in I\},$ and a family of morphisms of $\Ccal$ $\{\lambda^i:C'_i \to C_1 \mid i \in I\},$ satisfying $f_1 \circ \lambda^i = f_2 \circ \lambda^i$ for all $i \in I$ and of morphisms of $\Dcal$ $\{h_i:D_i \to F(C'_i) \mid i \in I\},$ making the following squares commutative:
	\[\begin{tikzcd}
		D_i \ar[d, "h_i"'] \ar[r, "g_i"] & D \ar[d, "g"] \\
		F(C'_i) \ar[r, "F(\lambda^i)"'] & F(C_1)
	\end{tikzcd}\]
\end{enumerate}
\end{definition}

\begin{remark}
\label{rmk:findiagcover}
It is not difficult to show by induction on finite diagrams that the last three conditions are equivalent to the following more condensed condition:

Given a finite diagram $A:\Ibb \to \Ccal$ and a cone $L'$ over $F \circ A$ in $\Dcal$ with apex $D$, there is a $K$-covering family of morphisms $\{g_i: D_i \to D \mid i \in I\}$ and cones $L_i$ over $A$ in $\Ccal$ with apex $C'_i$ such that $L \circ g_i$ factors through $F(L_i)$ for each $i \in I$, in the sense that there exist morphisms $h_i:D_i \to F(C_i)$ with
\[\begin{tikzcd}
	D_i \ar[d, "h_i"'] \ar[r, "g_i"] & D \ar[d, "\lambda'_j"] \\
	F(C'_i) \ar[r, "F(\lambda^i_j)"'] & F(A(X_j))
\end{tikzcd}\]
for each $i$, where $\lambda'_j$ and $\lambda^i_j$ are the $j$th legs of cones $L'$ and $L_i$ respectively.

Moreover, when the domain site \textit{does} have finite limits, these three conditions reduce to the requirement that $F$ preserves finite limits.
\end{remark}

A functor is a morphism of sites precisely if $\ell_{\Dcal} \circ F:\Ccal \to \Sh(\Dcal,K)$ is a $J$-continuous flat functor, so that this composite extends along $\ell_{\Ccal}$ to provide the inverse image functor of a geometric morphism $\Sh(\Dcal,K) \to \Sh(\Ccal,J)$.

\begin{corollary}
\label{crly:sitepristine}
Suppose $(\Ccal,J)$ and $(\Dcal,K)$ are principal (resp. quasi-principal, finitely generated) sites. Then any morphism of sites $F: (\Ccal,J) \to (\Dcal,K)$ induces a relatively pristine (resp. relatively polished, relatively proper) geometric morphism $f:\Sh(\Dcal,K) \to \Sh(\Ccal,J)$.
\end{corollary}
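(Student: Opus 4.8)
The plan is to exploit the characterization of relatively pristine, polished and proper morphisms from Lemma~\ref{lem:relprop}, which reduces each property to a factorization condition on jointly epimorphic families of subobjects. The key observation is that a morphism of sites $F:(\Ccal,J) \to (\Dcal,K)$ induces a geometric morphism $f:\Sh(\Dcal,K) \to \Sh(\Ccal,J)$ whose inverse image functor $f^*$ satisfies $f^* \circ \ell_{\Ccal} \cong \ell_{\Dcal} \circ F$. Since $(\Ccal,J)$ is principal (resp. quasi-principal, finitely generated), the representable sheaves $\ell_{\Ccal}(C)$ are supercompact (resp. supercompact-or-initial, compact) by Proposition~\ref{prop:representable}. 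Thus the natural move is to combine this with Proposition~\ref{prop:relpres}: since $\Sh(\Ccal,J)$ is supercompactly (resp. compactly) generated, it suffices to show that $f^*$ preserves supercompact (resp. `supercompact or initial', compact) objects.

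First I would recall that the supercompact objects of $\Sh(\Ccal,J)$ are exactly the quotients of representables $\ell_{\Ccal}(C)$ by Lemma~\ref{lem:scsite}, and that $f^*$ preserves quotients (being a left exact left adjoint, it preserves all colimits, in particular epimorphisms). Therefore preservation of supercompact objects by $f^*$ reduces to showing that each $f^*(\ell_{\Ccal}(C)) \cong \ell_{\Dcal}(F(C))$ is supercompact in $\Sh(\Dcal,K)$. But this is immediate from Proposition~\ref{prop:representable} applied to the principal site $(\Dcal,K)$: its representable sheaves are all supercompact. In the finitely generated case, compact objects are quotients of finite coproducts of representables (again Lemma~\ref{lem:scsite}), and $f^*$ preserves finite coproducts and quotients, so $f^*$ sends compact objects to finite-coproduct-quotients of the compact objects $\ell_{\Dcal}(F(C))$, which are compact. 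For the quasi-principal (relatively polished) case, the representables of $(\Dcal,K)$ are supercompact-or-initial, and I would invoke the relative polishedness criterion of Proposition~\ref{prop:relpres} accordingly.

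The main subtlety to pin down carefully is the identification $f^*(\ell_{\Ccal}(C)) \cong \ell_{\Dcal}(F(C))$, which is exactly the statement that the inverse image of the geometric morphism induced by a morphism of sites agrees with $\ell_{\Dcal} \circ F$ on representables; this is the defining property recalled just before the statement (that $\ell_{\Dcal} \circ F$ is $J$-continuous flat and extends along $\ell_{\Ccal}$ to yield $f^*$). Once this is in hand, the argument is a routine chaining of Proposition~\ref{prop:representable} (representables are (super/compact) on the codomain site), the closure of the relevant class of objects under the colimits preserved by $f^*$, and Proposition~\ref{prop:relpres} (the simplified characterization valid because the domain topos is (super)compactly generated). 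I do not expect any genuine obstacle; the only care needed is bookkeeping the three parallel cases and ensuring that in the relatively polished case one correctly tracks the possibility of initial images, using that $\ell_{\Dcal}(F(C))$ is initial precisely when the empty sieve $K$-covers $F(C)$, consistent with the quasi-principal hypothesis.

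Alternatively, and perhaps more in the spirit of the surrounding development, one could argue directly via Lemma~\ref{lem:relprop} without passing through Proposition~\ref{prop:relpres}: given a jointly epic family of subobjects $\{g_i: X_i \hookrightarrow f^*(Y)\}$ in $\Sh(\Dcal,K)$, cover $Y$ by representables $\ell_{\Ccal}(C_j) \to Y$ in $\Sh(\Ccal,J)$, so that each $f^*(\ell_{\Ccal}(C_j)) \cong \ell_{\Dcal}(F(C_j))$ is supercompact (resp. compact), and then pull back the $g_i$ along these supercompact objects to extract, via Lemma~\ref{lem:scompact}, the required factorizations; the stability axioms of the principal (resp. finitely generated) topology on $(\Dcal,K)$ guarantee the family assembled this way is of the correct type. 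I would present the shorter route through Proposition~\ref{prop:relpres} as the main proof, since it is cleaner, and mention this direct verification only if the identification of $f^*$ on representables needs reinforcing.
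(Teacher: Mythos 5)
Your proposal is correct and follows essentially the same route as the paper: identify $f^*$ on representables with $\ell_{\Dcal}\circ F$, invoke Proposition \ref{prop:representable} to see these images are supercompact (resp. supercompact-or-initial, compact), and conclude via Proposition \ref{prop:relpres}. The only difference is that you spell out the closure-under-quotients (and finite coproducts) step that the paper leaves implicit in its one-line proof and only makes explicit in the remark that follows.
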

\begin{proof}
Since the conditions on the sites ensure that the representables are supercompact (resp. supercompact or initial, compact), and the restriction of the inverse image functor to these is precisely $\ell_{\Dcal} \circ F$, we conclude that $f^*$ preserves these objects, whence $f$ is relatively pristine (resp. relatively polished, relatively proper) by Proposition \ref{prop:relpres}.
\end{proof}

\begin{remark}
\label{rmk:morsite}
More generally, suppose $(\Ccal,J)$ is any small-generated site such that $\Sh(\Ccal,J)$ is supercompactly (resp. compactly) generated and $(\Dcal,K)$ is a principal (resp. finitely generated) site. The geometric morphism induced by a morphism of sites $F: (\Ccal,J) \to (\Dcal,K)$ has inverse image functor sending any funneling (resp. multifunneling) colimit of representables to a colimit of supercompact (resp. compact) objects of the same shape, whence by Lemma \ref{lem:closed2} it must in particular preserve supercompact (resp. compact) objects. As such, we can replace the hypotheses of Corollary \ref{crly:sitepristine} with these weaker conditions if we so choose.
\end{remark}

Beyond individual morphisms, it is natural to make the extra step of forming a $2$-category of sites. Indeed, any natural transformation between functors underlying morphisms of sites induces a natural transformation between the inverse images of the corresponding geometric morphisms; for subcanonical sites, this mapping is full and faithful. Thus, for example, any equivalence of sites (an equivalence of categories which respects the Grothendieck topologies) lifts to an equivalence between the corresponding toposes.

We say that a principal site $(\Ccal,J_{\Tcal})$ is \textit{epimorphic} (resp. \textit{strictly epimorphic}) if $\Tcal$ is contained in the class of epimorphisms (resp. strict epimorphisms). An (effectual) reductive site is an (effectual) reductive category equipped with its reductive topology. We employ the \textit{ad hoc} notation of $\mathbf{EffRedSite}$, $\mathbf{RedSite}$, $\mathbf{StrEpPSite}$, $\mathbf{EpPSite}$ and $\mathbf{PSite}$ for the $2$-categories of effectual reductive sites, reductive sites, strictly epimorphic principal sites, epimorphic principal sites and all principal sites respectively, each endowed with morphisms of sites as $1$-cells and natural transformations as $2$-cells. Clearly, we have forgetful $2$-functors:
\begin{equation}
\label{eq:redsite}
\mathbf{EffRedSite} \to \mathbf{RedSite} \to \mathbf{StrEpPSite} \to \mathbf{EpPSite} \to \mathbf{PSite}.
\end{equation}

We apply analogous terminology and notation for the comparable kinds of finitely generated sites and coalescent sites. For example, we write $\mathbf{EffCoalSite}$, $\mathbf{PosCoalSite}$ and $\mathbf{FGSite}$ for the $2$-categories of effectual coalescent sites, positive coalescent sites and finitely generated sites, respectively. There results an analogous chain of $2$-functors:
\begin{equation}
\label{eq:coalsite}
\begin{tikzcd}[column sep = 5pt]
\mathbf{EffCoalSite} \ar[r] & \mathbf{PosCoalSite} \ar[r] &
\mathbf{CoalSite} \ar[r] & \mathbf{StrEpFGSite} \ar[r] & 
\mathbf{EpFGSite} \ar[r] & \mathbf{FGSite}.
\end{tikzcd}
\end{equation}

Consolidating the results of Section \ref{ssec:representable}, we find that several of these forgetful functors have adjoints. 

\begin{corollary}
\label{crly:reflect}
Let $(\Ccal,J_{\Tcal})$ be a principal site, ${\sim}$ the canonical congruence of Proposition \ref{prop:congruence}, $\ell(\Ccal)$ the full subcategory of $\Sh(\Ccal,J_{\Tcal})$ on the representable sheaves and $\Ccal_s$ the (essentially small) category of supercompact objects in that topos. Then the canonical functors underlie morphisms of sites:
\[\begin{tikzcd}
(\Ccal_s{,}J_r)  &
(\ell(\Ccal){,}J_{can}|_{\ell(\Ccal)}) \ar[l] &
(\Ccal/{\sim}{,}J_{\Tcal/{\sim}}) \ar[l] &
(\Ccal{,}J_{\Tcal}) \ar[l]
\end{tikzcd}\]
which are the units of reflections to the forgetful functors
\[\mathbf{EffRedSite} \to \mathbf{StrEpPSite} \to \mathbf{EpPSite} \to \mathbf{PSite}\]
found in Diagram \eqref{eq:redsite}.

Similarly, if $(\Ccal,J_{\Tcal'})$ is a finitely generated site, then with analogous notation, we have morphisms of sites:
\[\begin{tikzcd}
(\Ccal_c{,}J_c) &
(\ell(\Ccal){,}J_{can}|_{\ell(\Ccal)}) \ar[l] &
(\Ccal/{\sim}{,}J_{\Tcal'/{\sim}}) \ar[l] &
(\Ccal{,}J_{\Tcal'}) \ar[l]
\end{tikzcd}\]
which are units for the reflections of the forgetful functors
\[\mathbf{EffCoalSite} \to \mathbf{StrEpFGSite} \to \mathbf{EpFGSite} \to \mathbf{FGSite}\]
appearing in Diagram \eqref{eq:coalsite}.

All of these units induce equivalences at the level of the associated toposes.
\end{corollary}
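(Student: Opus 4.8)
The plan is to prove Corollary~\ref{crly:reflect} by exhibiting each of the indicated functors as a morphism of sites and verifying the universal property of a reflection (left adjoint to the relevant forgetful $2$-functor) at each stage, then invoking the preceding results to conclude that each unit induces an equivalence of toposes. I would treat the chain of reductive sites in detail and remark that the coalescent case is entirely analogous, substituting finite families for single morphisms throughout.

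First I would verify that each of the four displayed functors underlies a morphism of sites in the sense of Definition~\ref{dfn:morsite}. For the three functors $(\Ccal,J_{\Tcal}) \to (\Ccal/{\sim}, J_{\Tcal/{\sim}}) \to (\ell(\Ccal), J_{can}|_{\ell(\Ccal)}) \to (\Ccal_s, J_r)$, the key observation is that each is \emph{dense} in the sense of \cite{Dense}: the quotient functor $\Ccal \to \Ccal/{\sim}$ was already checked in Proposition~\ref{prop:congruence} to be a morphism and comorphism of sites inducing an equivalence of toposes; the functor $\Ccal/{\sim} \to \ell(\Ccal)$ is the corestriction of $\ell$ to the representables, which is faithful (and full precisely under the hypotheses of Corollary~\ref{crly:full}) and obviously cover-preserving and cover-lifting; and $\ell(\Ccal) \hookrightarrow \Ccal_s$ is the inclusion of the representable sheaves into the full subcategory of supercompact objects, which by Lemma~\ref{lem:scsite} are all quotients of representables, so this inclusion is dense and cover-preserving for the canonical topology restricted appropriately. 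In each case the conditions of Definition~\ref{dfn:morsite} hold because the functor is flat and $J$-continuous, which is exactly what is needed for the Comparison Lemma to yield an equivalence of sheaf toposes.

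Next I would establish the adjunction/reflection property. The strategy is to show that precomposition with each unit induces an equivalence of hom-categories of morphisms of sites into an arbitrary object of the smaller $2$-category. For the reflection $\mathbf{PSite} \to \mathbf{EpPSite}$, the point is that the universal property of Proposition~\ref{prop:congruence} (the functor $\Ccal \to \Ccal/{\sim}$ is universal among functors sending $\Tcal$-morphisms to epimorphisms) lifts to a universal property among morphisms of sites: any morphism of sites from $(\Ccal,J_{\Tcal})$ to an epimorphic principal site must send $\Tcal$-morphisms to covering (hence epic) morphisms, so it factors uniquely through the congruence. For $\mathbf{EpPSite} \to \mathbf{StrEpPSite}$ I would use the discussion following Corollary~\ref{crly:full}, namely that $\ell(\Ccal)$ with the canonical topology is the universal target making $\Tcal$-morphisms strict epimorphisms, this being the lift of the surjection--inclusion factorization to the level of sites as recorded in \cite[\S 6.1]{Dense}. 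Finally, for $\mathbf{StrEpPSite} \to \mathbf{EffRedSite}$ the reflection sends a strictly epimorphic principal site to its category of supercompact objects with the reductive topology; Theorem~\ref{thm:correspondence} guarantees this is effectual reductive, and the universal property follows because $\Ccal_s$ is closed under funneling colimits in the topos while $\ell(\Ccal)$ need not be, so $\Ccal_s$ is precisely the funneling-colimit completion realized inside the topos.

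The main obstacle I anticipate is the careful verification that these are genuine $2$-categorical \emph{reflections} rather than merely object-level adjunctions: I must check that each unit is natural and that the triangle identities hold up to the coherent isomorphisms of morphisms of sites, using that natural transformations between morphisms of sites correspond to natural transformations between inverse image functors (full and faithfully, for subcanonical sites, as noted just before the statement). The final clause—that every unit induces an equivalence at the level of associated toposes—is then the easiest part: the first unit is handled by Proposition~\ref{prop:congruence}; the remaining three are instances of the Comparison Lemma, since in each case the functor is a dense morphism of sites between sites presenting canonically equivalent toposes (the target site is obtained from the source by a construction that does not alter the category of sheaves, as established in Sections~\ref{ssec:representable} and~\ref{ssec:quotient} and Theorem~\ref{thm:correspondence}). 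I would close by remarking that the coalescent chain is proved mutatis mutandis, replacing $\Tcal$-morphisms by $\Tcal'$-families, strict epimorphisms by strictly epic finite families, and funneling colimits by multifunneling colimits throughout.
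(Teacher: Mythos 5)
Your decomposition is the same as the paper's — the same chain of three units, with the first two universal properties discharged via Proposition \ref{prop:congruence} and Corollary \ref{crly:full} exactly as the paper does — but at the one step that genuinely requires a new argument, the universality of $(\ell(\Ccal),J_{can}|_{\ell(\Ccal)}) \to (\Ccal_s,J_r)$ with respect to morphisms of sites into effectual reductive sites, your justification is an assertion rather than a proof. You claim this follows because ``$\Ccal_s$ is precisely the funneling-colimit completion realized inside the topos.'' But $\Ccal_s$ carries no abstract cocompletion universal property that you can invoke here: the funneling colimits in $\Ccal_s$ are computed in $\Sh(\Ccal,J_{\Tcal})$, the functor $\ell$ does \emph{not} preserve funneling colimits (this failure is exactly the content of Examples \ref{xmpl:tworel} and \ref{xmpl:TF}), and morphisms of sites need not preserve them either. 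So there is no direct way to define an extension of a morphism of sites $F$ along $\ell(\Ccal) \hookrightarrow \Ccal_s$ ``colimit by colimit,'' nor to see that any such extension would be well defined, land in the target category, or be unique.

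The missing idea is to route the extension through the toposes, which is what the paper's proof does. A morphism of sites $F:(\Ccal,J_{\Tcal}) \to (\Ccal',J_r)$ with $\Ccal'$ effectual reductive induces a geometric morphism $g:\Sh(\Ccal',J_r) \to \Sh(\Ccal,J_{\Tcal})$ whose inverse image restricts to $F$ on representables; in particular $g^*$ sends representable sheaves to representable sheaves of $(\Ccal',J_r)$, which are supercompact since that site is principal. Since every object of $\Ccal_s$ is a quotient of a representable (Lemma \ref{lem:scsite}), $g^*$ preserves quotients, and supercompact objects are closed under quotients (Lemma \ref{lem:closed}), the functor $g^*$ maps $\Ccal_s$ into the category of supercompact objects of $\Sh(\Ccal',J_r)$ — which is precisely $\Ccal'$ because $\Ccal'$ is effectual reductive (Theorem \ref{thm:correspondence}). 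This restriction of $g^*$ is the required factorization $(\Ccal_s,J_r) \to (\Ccal',J_r)$, and it is unique up to isomorphism among morphisms of sites inducing the same geometric morphism, since any two such must agree (up to isomorphism) on quotients of representables. Once this mechanism is in place, the remainder of your proposal — the first two reflections, the coalescent chain mutatis mutandis, and the equivalence-of-toposes clause — does go through.
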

\begin{proof}
We omit the straightforward checks that these are indeed morphisms of sites. The universality of the middle and right hand units has been discussed in and beneath Proposition \ref{prop:congruence} and Corollary \ref{crly:full}; it remains only to show that the final morphism $(\ell(\Ccal){,}J_{can}|_{\ell(\Ccal)}) \to (\Ccal_s{,}J_r)$ is universal.

Let $\Ccal'$ be an effectual, reductive category. A morphism of sites $F: (\Ccal,J_{\Tcal}) \to (\Ccal',J_r)$ corresponds to a geometric morphism $\Sh(\Ccal',J_r) \to \Sh(\Ccal,J_{\Tcal})$ whose inverse image functor restricts to $F$ on the representable sheaves, and so sends these to supercompact objects in $\Sh(\Ccal',J_r)$. Since a quotient of a supercompact object is supercompact and inverse image functors preserve quotients, $F$ extends uniquely (up to isomorphism) to a morphism of sites $(\Ccal_s,J_r) \to (\Ccal',J_r)$ inducing the same geometric morphism, as required.

As usual, the proof for finitely generated sites is analogous.
\end{proof}

The morphisms appearing in Corollary \ref{crly:reflect} allow us to give another characterization of reductive and coalescent categories.

\begin{lemma}
\label{lem:redsect}
Let $(\Ccal,J_{\Tcal})$ be a strictly epimorphic principal site in which $\Tcal$ is the class of all strict epimorphisms of $\Ccal$. Then, assuming the axiom of choice, $\Ccal$ is reductive if and only if the (underlying functor of the) composed unit morphism $(\Ccal,J_{\Tcal}) \to (\Ccal_s,J_r)$ has a left adjoint.

Similarly, a strictly epimorphic finitely generated site $(\Ccal,J_{\Tcal'})$ where $\Tcal'$ consists of the strict jointly epic families has $\Ccal$ coalescent if and only if the morphism of sites $(\Ccal,J_{\Tcal}) \to (\Ccal_c,J_c)$ has a left adjoint.
\end{lemma}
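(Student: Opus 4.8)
The plan is to make the underlying functor of the composed unit explicit, and then reduce the statement to the existence of funneling (resp.\ multifunneling) colimits in $\Ccal$. First I would note that, since $\Tcal$ is the class of all strict epimorphisms and these are in particular epimorphisms, the canonical congruence $\sim$ of Proposition \ref{prop:congruence} is trivial, so by Corollary \ref{crly:full} and the discussion following it the functor $\ell:(\Ccal,J_{\Tcal})\to \Sh(\Ccal,J_{\Tcal})$ is full and faithful. Thus $(\Ccal,J_{\Tcal})$ is subcanonical, and the composed unit of Corollary \ref{crly:reflect} has underlying functor the full and faithful inclusion $\iota:\Ccal \hookrightarrow \Ccal_s$, $C \mapsto \ell(C)$, of the representable sheaves into the supercompact objects.

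The technical heart is the following bijection, valid for any funneling diagram $F:\Dcal \to \Ccal$ whenever $\colim_{\Ccal_s}\iota F$ exists: because $\iota$ is full and faithful,
\[
\Hom_{\Ccal_s}\!\bigl(\colim_{\Ccal_s}\iota F,\, \iota C\bigr)
\cong \lim_{\Dcal}\Hom_{\Ccal_s}(\iota F, \iota C)
\cong \lim_{\Dcal}\Hom_{\Ccal}(F, C),
\]
naturally in $C$, the right-hand side being the set of cocones under $F$ with apex $C$. The hard part — and the obstacle to any naive argument — is that $\iota$ does \emph{not} preserve funneling colimits (Example \ref{xmpl:tworel}), so $\colim_{\Ccal}F$ and $\colim_{\Ccal_s}\iota F$ are genuinely distinct objects; it is this adjunction bijection, not any preservation property, that relates them.

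For the forward direction I would assume $\Ccal$ reductive and build the left adjoint pointwise. Given a supercompact object $Q$, Lemma \ref{lem:scsite} provides a quotient $\ell(C_0)\too Q$, which is a strict epimorphism in $\Ccal_s$ by Corollary \ref{crly:strict} and so presents $Q$ as the funneling colimit of the diagram of all parallel pairs into $\ell(C_0)$ that it coequalizes. Refining those pairs along jointly epic families of representables (representables are separating, and joint epicness ensures the coequalizer is unchanged) yields a funneling diagram $F_Q$ \emph{valued in representables} with $\colim_{\Ccal_s}\iota F_Q \cong Q$. Since $\Ccal$ is reductive it has the funneling colimit $L(Q):=\colim_{\Ccal}F_Q$, and taking $C=L(Q)$ in the displayed bijection and tracking $\id_{L(Q)}$ produces a universal arrow $\eta_Q:Q\to\iota L(Q)$; by the pointwise criterion for adjunctions, $\iota$ then has a left adjoint. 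Selecting the presentation $F_Q$ for each $Q$ is where the axiom of choice enters, as elsewhere in the chapter.

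Conversely, suppose $\iota$ has a left adjoint $L$. Stability of strict epimorphisms is already built into the hypothesis that $(\Ccal,J_{\Tcal})$ is a principal site, so by Definition \ref{dfn:reductive} it suffices to produce funneling colimits in $\Ccal$. Given a funneling diagram $F:\Dcal\to\Ccal$, the diagram $\iota F$ is funneling in $\Ccal_s$, and $\Ccal_s$ has funneling colimits since it is effectual reductive by Theorem \ref{thm:correspondence}; set $Q:=\colim_{\Ccal_s}\iota F$. The chain $\Hom_{\Ccal}(L(Q),C)\cong\Hom_{\Ccal_s}(Q,\iota C)\cong\lim_{\Dcal}\Hom_{\Ccal}(F,C)$ exhibits $L(Q)$ as representing cocones under $F$, so $L(Q)\cong\colim_{\Ccal}F$ exists; hence $\Ccal$ has all funneling colimits and is reductive. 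The coalescent case is entirely analogous, replacing $\iota:\Ccal\hookrightarrow\Ccal_s$, funnels, single strict epimorphisms and Corollary \ref{crly:strict} by $\iota:\Ccal\hookrightarrow\Ccal_c$, multifunnels, strictly epic finite families and Lemma \ref{lem:compactepi}, and invoking the compact half of Theorem \ref{thm:correspondence}.
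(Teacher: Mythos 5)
Your proof is correct and follows essentially the same route as the paper's: both directions construct the adjoint pointwise by presenting a supercompact object as a funneling colimit of representables, lifting along the full and faithful $\ell$, and using the colimit in $\Ccal$ as the value of the left adjoint (with choice entering at the selection of presentations), while the converse recovers funneling colimits in $\Ccal$ from those in $\Ccal_s$ via the adjunction. The only difference is cosmetic: you phrase the universal arrow and the converse through explicit hom-set bijections where the paper speaks of universal morphisms and of left adjoints preserving colimits.
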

\begin{proof}
If $\Ccal$ is reductive and $\Tcal$ is its class of strict
epimorphisms, consider a supercompact object $C$ in $\Sh(\Ccal,J_{\Tcal}) = \Sh(\Ccal,J_r)$. $C$ is a quotient of some representable $\ell(C_0)$, and so is the colimit of some funneling diagram in $\ell(\Ccal)$ with weakly terminal object $\ell(C_0)$. Lifting this to a funneling diagram in $\Ccal$, call its colimit $L(C)$. There is a universal morphism $\eta: C \to \ell(L(C))$, since the image of the strict epimorphism $\ell(C_0 \too L(C))$ forms a cone under the original funnel in $\Sh(\Ccal,J_{\Tcal})$. This $\eta$ is the universal morphism from $C$ to a representable object, since given $C \to \ell(D)$, we have that the composite $\ell(C_0) \to C \to \ell(D)$ is a morphism in the image of $\ell$ (since $\ell$ is full and faithful on a strict principal site) forming a cone under the same funnel, so there is a factoring morphism $\ell(L(C)) \to \ell(D)$, as required. This universality means that $\ell(L(C))$ is well-defined up to isomorphism, and we can use choice to select a representative for each $C$; the universality then ensures that $L$ is functorial, and is a left adjoint to the inclusion $\Ccal \to \Ccal_s$, as required.

Conversely, suppose we have a left adjoint functor $L: \Ccal_s \to \Ccal$. Given a funnel in $\Ccal$, consider its colimit in $\Ccal_s$; this is preserved by $L$, so the colimit exists in $\Ccal$, which is enough to make $\Ccal$ a reductive category.

The argument for coalescent categories is analogous, passing via a finite coproduct to define $L(C)$ in the first part.
\end{proof}

In other words, any reductive category is a coreflective subcategory of an effectual reductive category, and similarly any coalescent category is a coreflective subcategory of an effectual coalescent category.

\begin{theorem}
\label{thm:2equiv}
Let $\mathbf{SG}\TOP_{\mathrm{rel prec}}$ be the $2$-category of supercompactly generated Grothendieck toposes, relatively pristine geometric morphisms and all geometric transformations. Then the object mapping:
\begin{align*}
\mathbf{PSite} &\to \mathbf{SC}\TOP_{\mathrm{rel prec}}\op \\
(\Ccal,J_{\Tcal}) &\mapsto \Sh(\Ccal,J_{\Tcal}).
\end{align*}
extends to a $2$-functor between these $2$-categories. This functor is faithful on $2$-cells if we restrict the domain to $\mathbf{EpPSite}$. It is full and faithful on $2$-cells and faithful (up to isomorphism) on $1$-cells if we restrict the domain to $\mathbf{StrEpPSite}$. Finally, it is a $2$-categorical equivalence if we restrict the domain to $\mathbf{EffRedSite}$.

Analogously, letting $\mathbf{CG}\TOP_{\mathrm{rel prop}}$ be the $2$-category of compactly generated Grothendieck toposes and relatively proper geometric morphisms, there is a $2$-functor whose effect on objects is:
\begin{align*}
\mathbf{FGSite} &\to \mathbf{CG}\TOP_{\mathrm{rel prop}}\op \\
(\Ccal,J_{\Tcal'}) &\mapsto \Sh(\Ccal,J_{\Tcal'}).
\end{align*}
This restricts to an equivalence between the $2$-category $\mathbf{EffCoalSite}$ and $\mathbf{CG}\TOP_{\mathrm{rel prop}}\op$.

Finally, there is an equivalence between the $2$-category $\mathbf{SG}\TOP_{\mathrm{rel pol}}\op$ of supercompactly generated Grothendieck toposes, relatively polished geometric morphisms and all geometric transformations (with $1$-cells reversed), and the $2$-category $\mathbf{EffRed}^{+} \mathbf{Site}$ of effective augmented reductive sites of Definition \ref{dfn:augmented}.
\end{theorem}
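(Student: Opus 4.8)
The plan is to establish the final $2$-equivalence by closely mirroring the proof of the first $2$-equivalence in the theorem (the one between $\mathbf{EffRedSite}$ and $\mathbf{SG}\TOP_{\mathrm{rel prec}}\op$), modifying it to account for the difference between relatively pristine and relatively polished morphisms. The key conceptual input is Corollary \ref{crly:pristine}: a relatively polished morphism into a supercompactly generated topos is relatively pristine precisely when its inverse image additionally reflects the initial object. Correspondingly, at the site level, a quasi-principal site (the augmented reductive sites of Definition \ref{dfn:augmented}) differs from an ordinary reductive site exactly by allowing the empty sieve to be covering on some objects, i.e.\ by allowing objects to be sent to the initial object of the topos. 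So the whole proof is a systematic tracking of this single extra piece of data through the correspondence.

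First I would set up the $2$-functor on objects via $(\Ccal,J_{r+}) \mapsto \Sh(\Ccal,J_{r+})$, which lands in supercompactly generated toposes by Corollary \ref{crly:incl}, since a quasi-principal site yields a relatively polished inclusion into the presheaf topos, and a relatively polished inclusion preserves supercompact generation by Lemma \ref{lem:incl2}. On $1$-cells, I would invoke Corollary \ref{crly:sitepristine}: a morphism of quasi-principal sites induces a relatively polished geometric morphism, so the functor does map into $\mathbf{SG}\TOP_{\mathrm{rel pol}}\op$. The action on $2$-cells is as before, via the induced natural transformations between inverse image functors. To prove this is an equivalence, I would construct the inverse on objects by sending a supercompactly generated topos $\Ecal$ to the full subcategory $\Ccal_s$ of supercompact objects \emph{together with a freely adjoined initial object}, equipped with the augmented reductive topology $J_{r+}$. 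The adjoined initial object is the crucial feature: it is exactly the representable that $\ell$ sends to the initial object of $\Ecal$, accounting for the empty covering sieves, and it makes the category augmented in the sense of Definition \ref{dfn:augmented}. By Corollary \ref{crly:site} and Theorem \ref{thm:canon}, $\Sh(\Ccal_s, J_{r+}) \simeq \Ecal$, recovering the topos up to equivalence.

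Next I would verify essential surjectivity and the bijection on hom-categories. Essential surjectivity follows from the reconstruction just described, using Theorem \ref{thm:correspondence} for the effectual reductive category $\Ccal_s$ and checking that augmenting with the initial object precisely encodes the empty covering families. For fullness and faithfulness on $1$-cells and $2$-cells, I would argue as in the $\mathbf{EffRedSite}$ case: relatively polished geometric morphisms between these toposes have inverse images preserving `supercompact or initial' objects by Proposition \ref{prop:relpres}, hence restrict to functors $\Ccal_s \cup \{0\} \to \Ccal'_s \cup \{0\}$ that are morphisms of augmented reductive sites, and conversely every such morphism of sites arises this way by the universal property packaged in Corollary \ref{crly:reflect}. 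The $2$-cell correspondence is full and faithful because the sites are subcanonical, so $\ell$ is full and faithful and natural transformations between inverse images correspond bijectively to natural transformations between the underlying functors.

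The main obstacle I anticipate is the careful bookkeeping at the initial object: I must confirm that the freely adjoined initial object in $\Ccal_s \cup \{0\}$ interacts correctly with the reductive topology to yield exactly $J_{r+}$ (whose covering sieves are generated by singleton \emph{or empty} strictly jointly epic families), and that morphisms of the resulting augmented sites are in bijection with relatively polished --- rather than relatively pristine --- geometric morphisms. Concretely, a morphism of sites is permitted to send a non-initial object to the initial object, which corresponds precisely to an inverse image functor that need not reflect the initial object; this is the site-level shadow of the distinction in Corollary \ref{crly:pristine}. I would need to check that the augmented reductive site genuinely is quasi-principal (stated in Definition \ref{dfn:augmented}) so that Corollary \ref{crly:sitepristine} applies, and that the effectuality condition on $\Ccal_s$ is unaffected by the adjunction of an initial object, since the initial object contributes no non-trivial funneling colimits. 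Once this compatibility is pinned down, the remaining verifications are routine adaptations of the already-established pristine case.
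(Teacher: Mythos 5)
Your treatment of the final equivalence is essentially correct and is exactly the argument the paper has in mind: the paper proves the principal-site chain in detail and then dismisses the remaining cases with ``the argument for finitely generated sites and quasi-principal sites is analogous,'' and your proposal is a faithful working-out of that analogy. The key points are all sound: Corollary \ref{crly:sitepristine} gives $2$-functoriality into relatively polished morphisms; Proposition \ref{prop:relpres} lets a relatively polished morphism restrict to a functor on `supercompact or initial' objects, hence to a morphism of augmented sites; the inverse construction adjoins the initial object of $\Ecal$ to $\Ccal_s$ (which, since $0$ is strict and supercompact objects are non-initial, coincides with the free adjunction of an initial object); and effectuality and the $2$-cell correspondence (via subcanonicity of $J_{r+}$) are unaffected by the augmentation.

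However, as a proof of the stated theorem your proposal has a genuine coverage gap: it only establishes the last paragraph. The first paragraph's claims --- $2$-functoriality on all of $\mathbf{PSite}$, faithfulness on $2$-cells over $\mathbf{EpPSite}$ (which needs the faithfulness of $\ell$ on epimorphic principal sites established after Proposition \ref{prop:congruence}), full faithfulness on $2$-cells and faithfulness on $1$-cells over $\mathbf{StrEpPSite}$ (via full faithfulness of $\ell$ on subcanonical sites, Corollary \ref{crly:full}), and the equivalence for $\mathbf{EffRedSite}$ --- together with the entire compactly generated chain, are never proven; indeed your plan to ``mirror the proof of the first $2$-equivalence in the theorem'' is circular, since that equivalence is itself part of the statement being proven. (Your own argument, stripped of the initial-object bookkeeping, does supply it, but that needs to be said explicitly.) Two minor citation slips: the reconstruction $\Sh(\Ccal_s \cup \{0\}, J_{r+}) \simeq \Ecal$ follows from Theorem \ref{thm:canon} together with the Comparison Lemma (denseness of $\Ccal_s$ in the augmented site), not from Corollary \ref{crly:site}, which concerns the two-valued case; and the converse direction on $1$-cells rests on full faithfulness of $\ell$ for the subcanonical augmented site rather than on Corollary \ref{crly:reflect}.
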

\begin{proof}
The claim of $2$-functoriality is fulfilled thanks to Corollary \ref{crly:sitepristine}.

Faithfulness when we restrict to $\mathbf{EpPSite}$ is by virtue of the observations after Proposition \ref{prop:congruence} that $\ell$ is faithful (on $\Tcal$-arches and hence) on morphisms coming from the site, and this applies in particular to the components of natural transformations.

Fullness on $2$-cells and faithfulness on $1$-cells when we restrict further to $\mathbf{StrEpPSite}$ is more directly derived from full faithfulness of $\ell$ for such subcanonical sites, and the fact that natural transformations between inverse image functors are determined by their components at the representables. Conversely, any such natural transformation (including a natural isomorphism) restricts along $\ell$ to a natural transformation between the underlying morphisms of sites.

The equivalence when we restrict to $\mathbf{EffRedSite}$ is thanks to Theorem \ref{thm:correspondence}. Indeed, since an effectual reductive category is equivalent to the category of supercompact objects in the corresponding topos, any relatively pristine morphism between such toposes restricts to a morphism of sites between the underlying effective reductive sites.

As ever, the argument for finitely generated sites and quasi-principal sites is analogous.
\end{proof}

\subsection{Comorphisms of sites and points}
\label{ssec:comorsite}

We would be remiss not to also discuss comorphisms of sites.

\begin{definition}
\label{dfn:comorphism}
Given sites $(\Ccal,J)$ and $(\Dcal,K)$, a functor $F:\Ccal \to \Dcal$ is a \textbf{comorphism of sites} $F: (\Ccal,J) \to (\Dcal,K)$ if it has the \textbf{cover-lifting property}, so that for any object $C$ of $\Ccal$ and $K$-covering sieve $S$ on $F(C)$, there exists a $J$-covering sieve $R$ on $C$ with $F(R) \subseteq S$. Such a comorphism of sites induces a geometric morphism $f:\Sh(\Ccal,J) \to \Sh(\Dcal,K)$, whose inverse image functor maps a sheaf $X$ to $\mathbf{a}_{J}(\Hom_{\Dcal}(F(-),X))$.
\end{definition}

While comorphisms of sites could in principle provide a way to extend the correspondence of Theorem \ref{thm:correspondence} to a covariant duality, we introduce them for another purpose. First, we make the following observation regarding toposes as canonical sites:
\begin{lemma}
\label{lem:adjoint}
Let $f:\Fcal \to \Ecal$ be a geometric morphism. The inverse image functor $f^*$ is a comorphism of sites $(\Ecal,J_{can}) \to (\Fcal,J_{can})$ if and only if the direct image $f_*$ has a right adjoint.
\end{lemma}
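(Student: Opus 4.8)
The plan is to prove the biconditional by analyzing when the inverse image functor $f^*$ satisfies the cover-lifting property for the canonical topologies, and relating this to the existence of a right adjoint for $f_*$. The key insight is that for the canonical topologies $J_{can}$ on $\Ecal$ and $\Fcal$, covering sieves are precisely those containing small jointly epic families (as recalled in Definition \ref{dfn:effective}). So $f^*$ being a comorphism of sites means: for every object $E$ of $\Ecal$ and every small jointly epic family $\{Y_i \to f^*(E)\}$ in $\Fcal$, there is a small jointly epic family $\{g_j : E_j \to E\}$ in $\Ecal$ such that the sieve generated by $\{f^*(g_j)\}$ is contained in the sieve generated by $\{Y_i \to f^*(E)\}$, i.e. each $f^*(g_j)$ factors through some $Y_i$.

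First I would unwind both sides into concrete conditions. The cover-lifting condition above says exactly that $f^*$ preserves the property that jointly epic families can be ``lifted'' — but the cleanest reformulation is that $f^*$ must send covers down in a way compatible with the topology. On the other side, $f_*$ having a right adjoint $f^!$ is, by the Special Adjoint Functor Theorem (applicable since these are Grothendieck toposes, hence locally small with all limits and a generating set), equivalent to $f_*$ preserving all small colimits. So the real content of the proof reduces to showing: \emph{$f^*$ is a comorphism of sites for the canonical topologies if and only if $f_*$ preserves small colimits.} The standard fact I would invoke is that a geometric morphism is \textbf{essential} (meaning $f^*$ has a left adjoint $f_!$) precisely when $f_*$ preserves the relevant limits, but here we want the dual: $f_*$ preserving colimits is what yields a right adjoint to $f_*$.

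The main technical bridge is the formula in Definition \ref{dfn:comorphism}: a comorphism of sites $F$ induces a geometric morphism whose inverse image sends $X$ to $\mathbf{a}_J(\Hom_{\Dcal}(F(-),X))$. Applying this with $F = f^*$ and the canonical topologies, the induced geometric morphism $g : \Sh(\Ecal,J_{can}) \to \Sh(\Fcal,J_{can})$, i.e. $g : \Ecal \to \Fcal$ (using $\Sh(\Ecal,J_{can}) \simeq \Ecal$ by Giraud), has inverse image $g^* = \mathbf{a}(\Hom_{\Fcal}(f^*(-),-))$. I would argue that the direct image of this induced morphism must coincide with $f^*$, so that $g^*$ is a right adjoint to $f^*$; tracing through, the \emph{right} adjoint to $f_*$ appears precisely as the inverse image $g^*$ of the comorphism-induced morphism. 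Concretely, when $f^*$ is a comorphism of sites, the functor $\Hom_{\Fcal}(f^*(-), X)$ is already a sheaf for $J_{can}$ (the cover-lifting property is exactly what guarantees this, by the characterization in Fact \ref{fact1} and the denseness results of \cite{Dense}), so sheafification is unnecessary and $g^*(X) = \Hom_{\Fcal}(f^*(-),X)$, which is manifestly right adjoint to $f_*$ via the Yoneda-style computation $\Hom_{\Ecal}(f_*(Y), g^*(X)) \cong \Hom_{\Fcal}(Y, X)$.

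The hard part will be verifying the precise equivalence between the cover-lifting property and $\Hom_{\Fcal}(f^*(-),X)$ being a canonical sheaf for every $X$, since this is where the topology-theoretic content lives; I expect to lean on Fact \ref{fact1} and the denseness apparatus of \cite{Dense} rather than manipulating sheaves by hand. The converse direction — assuming $f_*$ has a right adjoint and deducing cover-lifting — should follow by running this computation backwards: a right adjoint $f^!$ to $f_*$ forces $f_*$ to preserve colimits hence epimorphisms and jointly epic families, and one checks that this preservation is exactly equivalent to $f^*$ lifting canonical covers. I would be careful to confirm that the adjunction directions match up correctly (the right adjoint to $f_*$ corresponds to the \emph{inverse} image of the comorphism-induced morphism, not its direct image), as this is the most error-prone bookkeeping in the argument.
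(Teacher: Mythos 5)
Your overall route is the paper's: the forward direction goes through the geometric morphism $g:\Ecal \to \Fcal$ induced by the comorphism $f^*$ via the formula of Definition \ref{dfn:comorphism}, and the converse goes through the fact that $f_*$, having a right adjoint, preserves colimits and hence jointly epic families, so that it is a morphism of sites $(\Fcal,J_{can}) \to (\Ecal,J_{can})$ whose left adjoint $f^*$ is then cover-lifting (the paper simply cites \cite[Lemma VII.10.3]{MLM} for this last step; your sketch of ``running the computation backwards'' amounts to reproving it and does work). However, your forward direction has a genuine error in the adjunction bookkeeping, and it occurs exactly at the step you flagged as error-prone --- indeed your cautionary parenthetical is itself backwards. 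The correct identification is: $\Hom_{\Fcal}(f^*(-),X) \cong \Hom_{\Ecal}(-,f_*X)$ by the adjunction $f^* \dashv f_*$, so this presheaf is \emph{representable} and therefore already a $J_{can}$-sheaf (representability, not cover-lifting, is why sheafification is unnecessary). Consequently $g^* \cong f_*$; since $g^*$ is the inverse image of a geometric morphism it possesses a right adjoint, namely the direct image $g_*$, and \emph{that} is the desired right adjoint of $f_*$.

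By contrast, you assert that $g_* = f^*$ and conclude that ``the right adjoint to $f_*$ appears precisely as the inverse image $g^*$.'' Both statements fail, the second for variance reasons alone: $g^*$ and $f_*$ are both functors $\Fcal \to \Ecal$, whereas any right adjoint of $f_*$ must go $\Ecal \to \Fcal$. Your ``Yoneda-style computation'' $\Hom_{\Ecal}(f_*(Y),g^*(X)) \cong \Hom_{\Fcal}(Y,X)$, once the correct identification $g^*\cong f_*$ is substituted, asserts that $f_*$ is full and faithful, which holds only when $f$ is an inclusion; similarly $g_* = f^*$ would force $f^*$ to be simultaneously left and right adjoint to $f_*$, false in general. (Your intermediate claim that $g^*$ is right adjoint to $f^*$ happens to be true --- precisely because $g^*\cong f_*$ --- but it is not what the lemma requires, which is a right adjoint \emph{of} $f_*$.) The repair is one line: replace your identifications with $g^*\cong f_*$ and take $g_*$ as the right adjoint; with that fixed, the rest of your outline goes through and the appeal to the Special Adjoint Functor Theorem becomes unnecessary.
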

\begin{proof}
Suppose $f^*$ is a comorphism of sites. The induced geometric morphism has inverse image functor defined by $X \mapsto \mathbf{a}_{J_{can}}(\Hom_{\Fcal}(f^*(-),X))$, and by inspection, this coincides with $f_*$. As such, $f_*$ is the inverse image functor of a geometric morphism, and so has a right adjoint. Conversely, if $f_*$ has a right adjoint, then it is a morphism of sites since it is left exact and preserves jointly epimorphic families, whence its left adjoint $f^*$ is a comorphism of sites by \cite[Lemma VII.10.3]{MLM}.
\end{proof}

Indeed, comparing this definition with Lemma \ref{lem:relprop}, we obtain the following:

\begin{proposition}
\label{prop:comorph}
Suppose $\Fcal$ is a Grothendieck topos in which every small jointly epimorphic family $\{x_i: X_i \to X\}$ can be refined to a jointly epimorphic family of monomorphisms $\{x'_j: X'_j \hookrightarrow X\}$, in the sense that each $x'_j$ factors through some $x_i$. Then $f: \Fcal \to \Ecal$ is relatively pristine if and only if its inverse image functor is a comorphism of sites $(\Ecal,J_{can}) \to (\Fcal,J_{can})$, if and only if $f_*$ has a right adjoint.
\end{proposition}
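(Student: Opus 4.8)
The plan is to prove the chain of equivalences in Proposition \ref{prop:comorph} by combining Lemma \ref{lem:relprop} (the characterization of relatively pristine morphisms via factorization of covering families) with Lemma \ref{lem:adjoint} (which already gives the equivalence between $f^*$ being a comorphism of canonical sites and $f_*$ having a right adjoint). Thus the second equivalence is free, and the real content is to connect relative pristineness to $f^*$ being a comorphism of sites $(\Ecal,J_{can}) \to (\Fcal,J_{can})$. The hypothesis on $\Fcal$ — that every jointly epic family refines to a jointly epic family of monomorphisms — is exactly what is needed to translate between the monomorphism-based statement in Lemma \ref{lem:relprop} and the sieve-based cover-lifting condition of Definition \ref{dfn:comorphism}.

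First I would unpack the cover-lifting condition for $f^*$ with respect to the canonical topologies. Recall from Definition \ref{dfn:effective} that on a Grothendieck topos the canonical covering sieves are precisely those containing a small jointly epic family. So $f^*$ is a comorphism of sites if and only if, for every object $Y$ of $\Ecal$ and every $J_{can}$-covering sieve $S$ on $f^*(Y)$ in $\Fcal$, there is a $J_{can}$-covering sieve $R$ on $Y$ in $\Ecal$ with $f^*(R) \subseteq S$. Using the hypothesis on $\Fcal$, any such covering sieve $S$ can be taken to contain a jointly epic family of monomorphisms $\{g_i : X_i \hookrightarrow f^*(Y)\}$. The condition that $f^*(R) \subseteq S$ for a covering sieve $R$ generated by a family $\{h_j : Y_j \to Y\}$ then amounts precisely to the requirement that each $f^*(h_j)$ factor through some member of $S$, and since $S$ is generated (up to the monomorphisms $g_i$) by these subobjects, this is the statement that each $f^*(h_j)$ factors through some $g_i$. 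This is exactly the conclusion furnished by Lemma \ref{lem:relprop} for relatively pristine $f$.

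The two directions then go as follows. For the forward direction, suppose $f$ is relatively pristine. Given a canonical covering sieve $S$ on $f^*(Y)$, refine it using the hypothesis to a jointly epic family of monomorphisms $\{g_i : X_i \hookrightarrow f^*(Y)\}$ contained in $S$; these form a small (hence $\Set$-indexed) jointly epic family of subobjects, so Lemma \ref{lem:relprop} supplies a jointly epic family $\{h_j : Y_j \to Y\}$ in $\Ecal$ with each $f^*(h_j)$ factoring through some $g_i \in S$. The sieve $R$ generated by the $h_j$ is $J_{can}$-covering and satisfies $f^*(R) \subseteq S$, so $f^*$ is cover-lifting. For the converse, suppose $f^*$ is a comorphism of sites and let $\{g_i : X_i \hookrightarrow f^*(Y)\}$ be an arbitrary small jointly epic family of subobjects in $\Fcal$; the sieve it generates is $J_{can}$-covering, so cover-lifting produces a $J_{can}$-covering sieve $R$ on $Y$ with $f^*(R)$ contained in that sieve, meaning each generating morphism $h_j$ of $R$ has $f^*(h_j)$ factoring through some $g_i$. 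Since $R$ covers $Y$, its generators form a jointly epic family, which is precisely the criterion of Lemma \ref{lem:relprop} certifying that $f$ is relatively pristine.

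I expect the main obstacle to be the careful bookkeeping in matching sieves with jointly epic families of subobjects, and in particular verifying that the refinement hypothesis on $\Fcal$ is invoked correctly so that the sieve-level cover-lifting statement and the subobject-level statement of Lemma \ref{lem:relprop} genuinely coincide. One must be attentive to the fact that Lemma \ref{lem:relprop} is phrased for arbitrary small (not merely monomorphic) jointly epic families, so in the converse direction no refinement is needed, whereas in the forward direction the hypothesis is essential to guarantee that the lifted family lands inside the given sieve $S$ rather than merely generating a comparable sieve. Once this correspondence is set up cleanly, the equivalence with $f_*$ having a right adjoint is immediate from Lemma \ref{lem:adjoint}, completing the proof.
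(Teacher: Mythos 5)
Your proposal is correct and follows essentially the same route as the paper's own proof: both reduce the first equivalence to Lemma \ref{lem:relprop} by using the refinement hypothesis to pass between canonical covering sieves and jointly epic families of subobjects, and both obtain the second equivalence directly from Lemma \ref{lem:adjoint}. Your write-up simply makes explicit the sieve-versus-subobject bookkeeping that the paper leaves implicit.
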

\begin{proof}
Recall that the covering sieves for the canonical topology on a topos are those containing small jointly epimorphic families. Given a jointly epimorphic family on an object of the form $f^*(Y)$, we may by assumption refine it to a jointly epimorphic family of monomorphisms, whence from the characterization of relative pristine morphisms in Lemma \ref{lem:relprop}, $f$ is relatively pristine if and only if $f^*$ is a comorphism of sites. The final line follows from Lemma \ref{lem:adjoint}.
\end{proof}

Any localic topos satisfies the conditions of Proposition
\ref{prop:comorph}, but so does the topos of actions of the
two-element monoid, for example. We apply it in the case that $\Fcal$ is $\Set$.

Recall that a geometric morphism is \textbf{totally connected} if it is locally connected and the extra left adjoint preserves finite limits; see \cite[\S C3.6]{Ele} for basic results about these morphisms. A topos is said to be totally connected if its global sections geometric morphism is.

\begin{corollary}
\label{crly:totconn}
A Grothendieck topos has a relatively pristine point if and only if it is totally connected, and such a point is unique up to isomorphism.
\end{corollary}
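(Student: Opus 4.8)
The plan is to recognize that the statement is an equivalence with two directions, plus a uniqueness claim, and that almost all the work has already been done in the preceding material. I would first unpack what a ``relatively pristine point'' means: a point is a geometric morphism $p: \Set \to \Ecal$, and it is relatively pristine (over $\Set$) precisely when $p_*$ preserves arbitrary unions of subobjects, equivalently (by Lemma \ref{lem:relprop}) when $p^*$ preserves small jointly epimorphic families in the manner specified. The key leverage is Proposition \ref{prop:comorph} applied with $\Fcal = \Set$: since every jointly epimorphic family in $\Set$ can trivially be refined to a jointly epic family of monomorphisms (take the images, or note that in $\Set$ surjectivity is detected pointwise), the hypotheses are satisfied, so $p$ is relatively pristine if and only if $p_*$ has a right adjoint.

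Next I would connect ``$p_*$ has a right adjoint'' to total connectedness. Writing the point as the adjoint triple $p^* \dashv p_*$, the existence of a right adjoint $p_*^!$ to $p_*$ means we have a chain $p^* \dashv p_* \dashv p_*^!$. I would compare this with the structure of a totally connected topos: the global sections morphism $\gamma: \Ecal \to \Set$ is totally connected when it is locally connected (so there is $\gamma_! \dashv \gamma^* \dashv \gamma_*$) and $\gamma_!$ preserves finite limits. The bridge is that a left-exact left adjoint $\gamma_!$ to $\gamma^*$ is itself the inverse image of a point, and that point is exactly our $p$ with $p^* = \gamma_!$, $p_* = \gamma^*$, $p_*^! = \gamma_*$. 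Thus $p_*$ having a right adjoint says $p_* \cong \gamma^*$ admits a right adjoint (namely $\gamma_*$), and the finite-limit-preservation of $p^* = \gamma_!$ is automatic because $p^*$ is the inverse image of a geometric morphism. Conversely, if $\Ecal$ is totally connected, then $\gamma_!$ is left exact and so defines a point whose direct image is $\gamma^*$, which has right adjoint $\gamma_*$, making that point relatively pristine. I would cite \cite[\S C3.6]{Ele}, in particular the characterization of totally connected morphisms as those admitting a left-exact left adjoint whose left adjoint is the inverse image of a point, to make this identification clean.

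For uniqueness, I would argue that any relatively pristine point $p$ must have $p_* \cong \gamma^*$, since $p_*$ is a right adjoint to $p^*$ that itself has a further right adjoint, and the leftmost functor in the adjoint string over $\Set$ is pinned down: $p^*$ preserves finite limits and colimits (having both adjoints) and its value on the terminal object, together with preservation of colimits, determines it, forcing $p^* \cong \gamma_!$ and hence $p \cong$ the canonical totally connected point. More directly, a left adjoint to $\gamma^*: \Set \to \Ecal$ is unique up to natural isomorphism when it exists, so the point realizing it is unique up to isomorphism.

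The main obstacle I expect is verifying carefully that the hypothesis of Proposition \ref{prop:comorph} genuinely holds for $\Fcal = \Set$ and, more delicately, that ``$p_*$ has a right adjoint'' is equivalent to total connectedness rather than merely to local connectedness or localness of the point. The subtle point is the direction of the adjoint chain: I must make sure that the right adjoint to $p_*$ corresponds to $\gamma_*$ (the global sections of $\Ecal$) and that left-exactness of $p^*$ supplies exactly the finite-limit-preservation of $\gamma_!$ required by the definition of totally connected, with no gap. I would handle this by explicitly matching the three-functor string $p^* \dashv p_* \dashv p_*^!$ against $\gamma_! \dashv \gamma^* \dashv \gamma_*$ and invoking uniqueness of adjoints.
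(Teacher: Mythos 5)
Your overall route is the same as the paper's: apply Proposition \ref{prop:comorph} with $\Fcal = \Set$ (whose hypothesis you verify correctly) to reduce ``relatively pristine point'' to ``$p_*$ has a right adjoint'', then identify the resulting adjoint triple with $\gamma_! \dashv \gamma^* \dashv \gamma_*$. The converse direction (totally connected $\Rightarrow$ the point $(\gamma_! \dashv \gamma^*)$ is relatively pristine) and the uniqueness claim, \emph{once} $p_* \cong \gamma^*$ is known, are handled correctly.

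However, in the forward direction there is a gap in how you propose to establish that identification. You say you would ``match the three-functor string $p^* \dashv p_* \dashv p_*^!$ against $\gamma_! \dashv \gamma^* \dashv \gamma_*$ and invoke uniqueness of adjoints''; but at that stage $\gamma_!$ is not yet known to exist --- its existence is precisely what you are trying to prove --- so there is no second string to match against, and uniqueness of adjoints has nothing to anchor to. The missing idea, which is exactly the one-line move in the paper's proof, is that $p_*$ (being a right adjoint, hence left exact) together with its right adjoint $p_*^!$ constitutes a geometric morphism $\Ecal \to \Set$; since $\Set$ is terminal among Grothendieck toposes, this morphism must be the global sections morphism, forcing $p_* \cong \gamma^*$ and $p_*^! \cong \gamma_*$, after which $p^*$ is a left-exact left adjoint to $\gamma^*$ and everything else follows. (Equivalently: $p_*$ preserves colimits and the terminal object, and a cocontinuous functor out of $\Set$ is determined by its value on $1$, so $p_*(A) \cong \coprod_{a \in A} 1 \cong \gamma^*(A)$.) Relatedly, your first uniqueness argument applies this ``value on the terminal object plus colimit preservation'' determination to the wrong functor: it is valid for $p_* : \Set \to \Ecal$, not for $p^* : \Ecal \to \Set$ --- a cocontinuous left exact functor out of $\Ecal$ is not determined by its value on $1$. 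Your fallback (``a left adjoint to $\gamma^*$ is unique up to isomorphism'') is fine, but it too presupposes the identification $p_* \cong \gamma^*$ just described.
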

\begin{proof}
By Proposition \ref{prop:comorph}, a geometric morphism from $\Set$ is relatively pristine if and only if its direct image functor has a right adjoint, which must then coincide with the direct image of the unique global sections morphism. As such, this happens if and only if the inverse image functor of the global sections morphism has a left adjoint preserving finite limits, making the topos totally connected.
\end{proof}

In light of Corollary \ref{crly:totconn}, considering the points of a localic topos demonstrates how large the (ostensibly subtle) difference between relatively polished and relatively pristine morphisms can be.

\begin{lemma}
\label{lem:point}
Every point of a localic topos is relatively polished.
\end{lemma}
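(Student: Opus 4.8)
The plan is to use the characterization of relatively polished morphisms given in Lemma \ref{lem:relprop}, specialized to the case $\Scal = \Set$ and $\Fcal = \Set$ with $f: \Set \to \Ecal$ a point of a localic topos $\Ecal$. Recall that $f$ is relatively polished precisely when, for any \emph{inhabited} jointly epic family of subobjects $\{g_i: X_i \hookrightarrow f^*(Y)\}$ in $\Set$, there is a jointly epic family $\{h_j: Y_j \to Y\}$ in $\Ecal$ such that each $f^*(h_j)$ factors through some $g_i$. The key simplification is that subobjects in $\Set$ are extremely well-behaved: a subobject of a set is just a subset, and any inhabited set is itself the union of its singleton subsets, each of which is already covering when we only need \emph{one} of the $g_i$ to witness a factorization.

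First I would exploit the localic hypothesis. Since $\Ecal$ is localic, its subterminal objects form a separating frame $\Ocal$; equivalently, every object of $\Ecal$ is a subquotient of a coproduct of subterminals, and the point $f$ corresponds to a frame homomorphism (a completely prime filter) on $\Ocal$. The crucial structural fact I would invoke is that $f^*$ preserves the property that singleton subobjects generate: concretely, for any object $Y$ of $\Ecal$, the set $f^*(Y)$ is covered by its elements, i.e.\ by morphisms $1 \to f^*(Y)$, and each such element transposes across the adjunction $f^* \dashv f_*$ to a morphism from the terminal object of $\Ecal$ into $Y$. In a localic topos, however, morphisms out of $1$ are governed by the subterminal structure, so I would use that $\Ecal$ satisfies the refinement condition of Proposition \ref{prop:comorph} (or at least its inhabited analogue): any jointly epic family can be refined to a jointly epic family of subobjects. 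This lets me transport the covering data from $\Set$ back to $\Ecal$ along $f_*$.

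The main technical step is then to produce, from an inhabited jointly epic family of subsets $\{g_i: X_i \hookrightarrow f^*(Y)\}$, the required covering family $\{h_j: Y_j \to Y\}$ in $\Ecal$. Here I would argue as follows: take any element $x \in f^*(Y)$ (which exists since the family is inhabited and jointly epic, so $f^*(Y)$ is inhabited), which lies in some $X_i$; transpose to get a morphism $\tilde{x}: 1 \to Y$ in $\Ecal$ with $f^*(\tilde{x})$ picking out $x$, hence factoring through $g_i$. Running over enough elements $x$ to cover $f^*(Y)$, the corresponding family $\{\tilde{x}\}$ is sent by $f^*$ to a jointly epic family, and since $f^*$ reflects the maximality of subobjects on a localic base (again using that subterminals separate and $f$ restricts to an honest point of the locale), the family $\{\tilde{x}: 1 \to Y\}$ is already jointly epic in $\Ecal$. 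Each $f^*(\tilde{x})$ factors through the $g_i$ containing $x$ by construction, so the criterion of Lemma \ref{lem:relprop} is met and $f$ is relatively polished.

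The hard part will be justifying that the transposed family $\{\tilde{x}: 1 \to Y\}$ is genuinely jointly epic in $\Ecal$ and not merely jointly epic after applying $f^*$; this is exactly the place where localness is indispensable, since for a general topos a point need not detect enough global elements to cover an object. I expect the cleanest route is to reduce to the locale $L$ presenting $\Ecal$, observe that a point of $\Sh(L)$ is a point of $L$ in the localic sense, and use that the inverse image of a localic point preserves and reflects covers of the images of elements because the relevant joins in the frame are computed by the completely prime filter; alternatively, one can cite Proposition \ref{prop:comorph} directly, noting that localic toposes satisfy its refinement hypothesis, to reduce relative polishedness to a property of $f_*$ that is automatic for points of localic toposes. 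The contrast with Corollary \ref{crly:totconn}, which shows relative \emph{pristineness} of a point forces total connectedness and hence is rare, is precisely the reflection-of-$0$ obstruction isolated in Corollary \ref{crly:pristine}: dropping that requirement is what makes the polished condition hold universally here.
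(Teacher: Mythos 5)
Your starting point (Lemma \ref{lem:relprop} with $\Fcal = \Set$) matches the paper's, but the core of your argument contains a genuine error that the rest of the proof cannot recover from. You claim that an element $x \in f^*(Y)$, i.e.\ a morphism $1 \to f^*(Y)$ in $\Set$, transposes across the adjunction $f^* \dashv f_*$ to a morphism $1 \to Y$ in $\Ecal$. It does not: that adjunction gives $\Hom_{\Set}(f^*(Y),A) \cong \Hom_{\Ecal}(Y,f_*(A))$, which transposes morphisms \emph{out of} $f^*(Y)$, not into it; producing a global element of $Y$ from an element of the stalk would require a left adjoint $f_!$ with $f_!(1) \cong 1$, i.e.\ an essential connected point, which a point of a localic topos does not generally admit. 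Even if such elements $\tilde{x}$ existed, your claim that they form a jointly epic family in $\Ecal$ is false: localic toposes typically contain objects with no global sections at all (nontrivial sheaves on $\mathbb{R}$, say), and $f^*$ does not reflect jointly epic families — that would make the point a surjection. Your fallback via Proposition \ref{prop:comorph} fails for a different reason: that proposition characterizes relatively \emph{pristine} morphisms (and its refinement hypothesis concerns the domain $\Set$, not $\Ecal$), and by Corollary \ref{crly:totconn} a point is relatively pristine only when the topos is totally connected, so this route would "prove" something false for $\Sh(\mathbb{R})$. The pristine/polished distinction you correctly flag at the end is precisely what this reduction collapses.

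The fix is much simpler, and you already hold the key ingredient when you observe that subterminals separate in a localic topos: take the jointly epic family $\{h_j : X_j \to Y\}$ demanded by Lemma \ref{lem:relprop} to be a covering of $Y$ by \emph{subterminal} objects, fixed once and for all, independently of the given inhabited family $\{g_i : X_i \hookrightarrow f^*(Y)\}$. Since $f^*$ preserves finite limits, each $f^*(X_j)$ is subterminal in $\Set$, hence empty or a singleton. If empty, $f^*(h_j)$ factors through any $g_i$ (one exists because the family is inhabited — this is exactly where polished differs from pristine); if a singleton, its image is an element of $f^*(Y)$, which lies in some $X_i$ by joint epimorphy, so $f^*(h_j)$ factors through that $g_i$. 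This is the paper's proof: no global elements, no reflection of covers, no appeal to $f_*$.
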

\begin{proof}
We use Lemma \ref{lem:relprop}. Since $\Set$ is atomic, given a point $p:\Set \to \Ecal$ and an object $X$ of $\Ecal$, it suffices to consider the minimal inhabited covering of $p^*(X)$ by singletons $\{ x_i : 1 \hookrightarrow p^*(X) \}$ or, if $p^*(X)$ is empty, by the identity $\{ 0 \hookrightarrow p^*(X) \}$. The characterization of localic toposes as being generated by subterminal objects means we have a covering of $X$ by subterminals $\{ y_j: X_j \to X \}$, and since $f^*(X_j)$ is necessarily subterminal for every $j$, these factorize through this minimal covering, as required.
\end{proof}

Returning to relatively pristine points in the case of supercompactly generated toposes, we find a convenient characterization of when such a point exists.

\begin{proposition}
\label{prop:precpoint}
Given a principal site $(\Ccal,J_{\Tcal})$, the topos $\Sh(\Ccal,J_{\Tcal})$ has a relatively pristine point (and so is totally connected) if and only if $\Ccal\op$ is filtered.
\end{proposition}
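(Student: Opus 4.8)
The plan is to combine Corollary \ref{crly:totconn} with Theorem \ref{thm:totally}-style reasoning, but routed through the site $(\Ccal,J_{\Tcal})$ rather than through the subcategory of supercompact objects. By Corollary \ref{crly:totconn}, $\Sh(\Ccal,J_{\Tcal})$ has a relatively pristine point if and only if it is totally connected, and such a point is then unique. So it suffices to characterize total connectedness of $\Sh(\Ccal,J_{\Tcal})$ in terms of $\Ccal$. Total connectedness means the connected components functor $f_!$ (the left adjoint to $f^*$ in the locally connected global sections morphism, which exists since $\Sh(\Ccal,J_{\Tcal})$ is supercompactly generated and hence locally connected by Proposition \ref{prop:xmpls}(ii)) preserves finite limits, i.e.\ $f_!$ is the inverse image of a point.

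First I would identify $f_!$ concretely. On representable sheaves $\ell(C)$, the connected components functor sends $\ell(C)$ to the terminal object $1$ of $\Set$, since each $\ell(C)$ is supercompact and in particular indecomposable (by Proposition \ref{prop:xmpls}(ii), supercompact objects are connected), so $f_!(\ell(C)) = 1$ for every $C \in \Ccal$. Since $f_!$ preserves all colimits and every object is a colimit of representables, $f_!$ is determined by this value together with its action on the generating morphisms; concretely $f_!$ computes the set of connected components, i.e.\ the colimit over (the opposite of) $\Ccal$ weighted appropriately. The key point is that $f_!$ agrees with the functor sending a sheaf to $\colim$ of its restriction along $\Ccal\op$, so total connectedness reduces to the statement that colimits of shape $\Ccal\op$ commute with finite limits in $\Set$.

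The heart of the argument is then the well-known fact that $\Ccal\op$-indexed colimits commute with finite limits in $\Set$ if and only if $\Ccal\op$ is filtered; this is exactly the criterion invoked in the discussion preceding Theorem \ref{thm:totally}. I would make this precise by checking that $f_!$ factors through the colimit functor $\colim_{\Ccal\op}: [\Ccal\op,\Set] \to \Set$ restricted to sheaves, using that $f_! \circ \shf{J_{\Tcal}}$ (sheafification followed by connected components) coincides with $\colim_{\Ccal\op}$ on presheaves because sheafification is a colimit-preserving localization that does not alter the connected-components computation (each $\Tcal$-covering collapses to the same component). Granting this identification, $f_!$ preserves finite limits precisely when $\colim_{\Ccal\op}$ does, which by the commuting-limits result holds exactly when $\Ccal\op$ is filtered.

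The main obstacle I anticipate is the careful verification that passing to sheaves does not disturb the filteredness criterion: one must confirm that $f_!$ really is (up to natural isomorphism) the composite of sheafification's left adjoint with the presheaf colimit, so that the finite-limit-preservation question genuinely transfers from $[\Ccal\op,\Set]$ to $\Sh(\Ccal,J_{\Tcal})$. This requires knowing that the connected-components functor on the sheaf topos is computed by the same colimit as on the presheaf topos, which follows because the representables remain a separating family of connected objects and $f_!$ is uniquely determined by sending each to $1$ and preserving colimits; the sheafification comparison morphism is then an isomorphism on the relevant colimits. Once this identification is secured, the equivalence with filteredness of $\Ccal\op$ is immediate, and uniqueness of the point comes for free from Corollary \ref{crly:totconn}.
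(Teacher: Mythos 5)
Your proposal is correct in substance, but it takes a genuinely different route from the paper's proof. The paper argues entirely through the site machinery: by Theorem \ref{thm:2equiv}, a relatively pristine point corresponds to a morphism of sites out of the canonical reductive site $(\Ccal_s,J_r)$ into the site for $\Set$, and by the reflection of Corollary \ref{crly:reflect} such a morphism exists if and only if there is a morphism of sites $(\Ccal,J_{\Tcal}) \to (1,J_{can})$; since the unique functor $\Ccal \to 1$ automatically preserves covers ($J_{\Tcal}$-covers being inhabited), the remaining conditions of Definition \ref{dfn:morsite} unravel to precisely filteredness of $\Ccal\op$. You instead route through Corollary \ref{crly:totconn} and the classical commutation of filtered colimits with finite limits, with the key lemma being that the connected components functor $f_!$ of $\Sh(\Ccal,J_{\Tcal})$ is computed by the presheaf colimit. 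That lemma is true, and provable essentially as you sketch: $f_!\circ\shf{J_{\Tcal}}$ and $\colim_{\Ccal\op}$ are both cocontinuous functors on $[\Ccal\op,\Set]$ sending every representable to $1$ --- the former because $\ell(C)$ is supercompact by Proposition \ref{prop:representable}, hence connected and non-initial --- so they are isomorphic, and precomposing with the (limit-preserving) inclusion of sheaves gives $f_! \cong \colim_{\Ccal\op}$ on sheaves. This is exactly where principality of the topology enters; for a general site the identification fails outright (for sheaves on a space, $F(\emptyset)=1$ forces the presheaf colimit of every inhabited sheaf to be a single point, which is not $\pi_0$). The trade-off: the paper's proof is a two-line consequence of machinery it has already built and treats both implications uniformly, while yours is more elementary, exhibits the pristine point's inverse image concretely as the colimit functor, and makes visible why the hypothesis of principality is needed.

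Two points should be made explicit in a write-up. First, in the direction ``pristine point $\Rightarrow$ $\Ccal\op$ filtered,'' preservation of finite limits by $f_!$ \emph{on sheaves} is a priori weaker than preservation by $\colim_{\Ccal\op}$ on all presheaves; the transfer works because $\colim_{\Ccal\op} \cong f_!\circ\shf{J_{\Tcal}}$ is then a composite of finite-limit-preserving functors, so you must invoke left-exactness of sheafification here --- your ``precisely when'' silently uses this. Second, the functor you compose with the presheaf colimit to obtain $f_!$ is sheafification's \emph{right} adjoint (the inclusion of sheaves), not its left adjoint as written; with that correction your factorization $f_! \cong \colim_{\Ccal\op}\circ\,\iota$ and the companion identity $f_!\circ\shf{J_{\Tcal}} \cong \colim_{\Ccal\op}$ together yield both directions.
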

\begin{proof}
By Theorem \ref{thm:2equiv}, any relatively pristine point of $\Sh(\Ccal,J_{\Tcal})$ comes from a morphism of sites $(\Ccal_s,J_s) \to (1,J_{can})$, where $\Ccal_s$ is the usual subcategory in $\Sh(\Ccal,J_{\Tcal})$. Composing with the canonical morphism of sites $(\Ccal,J_{\Tcal}) \to (\Ccal_s,J_s)$ from Corollary \ref{crly:reflect}, we conclude that such a point exists if and only if there exists a morphism of sites $(\Ccal,J_{\Tcal}) \to (1,J_{can})$. Since there is exactly one functor $\Ccal \to 1$, which automatically preserves covers (since all $J_{\Tcal}$ are inhabited), the result follows after observing that the remaining conditions of Definition \ref{dfn:morsite} correspond to filteredness of $\Ccal\op$.
\end{proof}
Note that since the condition on $\Ccal$ is independent of the choice of principal topology it is equipped with, we may deduce that any relatively pristine subtopos of $\Sh(\Ccal,J_{\Tcal})$ is totally connected when the hypotheses hold.

We immediately recover the following result, which although deducible from \cite[Example C3.6.17(c)]{Ele}, does not seem to have been recorded explicitly anywhere that we know of.
\begin{corollary}
Any regular topos is totally connected. 
\end{corollary}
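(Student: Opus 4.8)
The plan is to derive this as an immediate consequence of Proposition \ref{prop:precpoint} together with the observation that a regular topos arises as sheaves on a regular category with its regular topology, which is a principal site. First I would recall that a regular topos is by definition the topos of sheaves $\Sh(\Ccal, J_r)$ on a regular category $\Ccal$ equipped with the regular topology $J_r$ generated by the extremal (equivalently, by Lemma \ref{lem:extremal}, regular) epimorphisms. Since the extremal epimorphisms form a stable class (this is part of the definition of locally regular in Definition \ref{dfn:regcoh}), the site $(\Ccal, J_r)$ is a principal site in the sense of Definition \ref{dfn:sites}. Thus Proposition \ref{prop:precpoint} applies verbatim.

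The core of the argument is then to verify that $\Ccal\op$ is filtered, or equivalently that $\Ccal$ is cofiltered. A regular category has all finite limits: it has finite products (equivalently a terminal object, by Definition \ref{dfn:regcoh}) and, being locally regular, it has connected finite limits, hence all finite limits. A category with all finite limits is automatically cofiltered: it is nonempty (it has a terminal object), every pair of objects $A, B$ admits a cone (via the product $A \times B$ with its projections), and every parallel pair $f, g : A \rightrightarrows B$ admits a morphism $h$ with $hf = hg$ — indeed one may take the equalizer $e : E \to A$ of $f$ and $g$, since $fe = ge$. Dualizing, $\Ccal\op$ is filtered, so Proposition \ref{prop:precpoint} yields that $\Sh(\Ccal, J_r)$ has a relatively pristine point and is totally connected.

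I expect no real obstacle here; the only point requiring a moment's care is confirming that the regular topology is genuinely a principal topology so that Proposition \ref{prop:precpoint} is applicable, rather than merely a Grothendieck topology. This is guaranteed by Definition \ref{dfn:regtop} and the stability of the extremal epimorphisms. A concise write-up would read as follows.

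\begin{proof}
A regular topos is of the form $\Sh(\Ccal, J_r)$ for $\Ccal$ a regular category and $J_r$ its regular topology, which by Definition \ref{dfn:regtop} is the principal topology generated by the (stable) class of extremal epimorphisms. Hence Proposition \ref{prop:precpoint} applies: $\Sh(\Ccal, J_r)$ is totally connected if and only if $\Ccal\op$ is filtered. But a regular category has all finite limits, so $\Ccal$ is cofiltered (it has a terminal object; any pair of objects admits a cone via their product; and any parallel pair is equalized by its equalizer), whence $\Ccal\op$ is filtered. Therefore $\Sh(\Ccal, J_r)$ is totally connected.
\end{proof}
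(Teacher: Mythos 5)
Your proof is correct and follows exactly the paper's route: the paper's own proof is the one-line observation that a regular category has finite limits, hence $\Ccal\op$ is filtered, so Proposition \ref{prop:precpoint} applies. Your write-up merely fills in the routine verifications (that the regular topology is principal, and the explicit check of cofilteredness via terminal object, products and equalizers), which the paper leaves implicit.
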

\begin{proof}
It suffices to observe that if $\Ccal$ has finite limits, $\Ccal\op$ is filtered, so Proposition \ref{prop:precpoint} applies.
\end{proof}
In particular, by \cite[Theorem C3.6.16(iv)]{Ele}, the category of models of a regular theory in any Grothendieck topos has a terminal object (easily described as the model in which every sort is interpreted as the terminal object). The logic of supercompactly generated toposes shall be the subject of the next chapter.

\subsection{Examples of reductive categories}
\label{ssec:xmpl}

In this final subsection, we present examples of reductive and coalescent categories, as well as principal and finitely generated sites and their toposes of sheaves, in order to address some hypotheses about relationships between the concepts presented in this chapter. We begin with localic examples, then examples relating to categories of finite sets, then abelian categories.

\begin{example}
In the proof of Corollary \ref{crly:hypepres}, we observe that hyperconnected morphisms are pristine, so that any supercompactly generated two-valued topos is supercompact. Taking $\Ccal$ to be any non-trivial poset with a maximal element and taking $\Ecal$ to be the category of presheaves on this poset provides an example of a supercompactly generated, supercompact topos which is not two-valued.
\end{example}

\begin{example}
\label{xmpl:nonsc}
The objects of a reductive category need not be supercompact within this category, in spite of Corollary \ref{crly:strict}. For example, consider the four-element lattice:
\[\begin{tikzcd}
	& 1 & \\
	a \ar[ur] & & b \ar[ul] \\
	& 0 \ar[ur] \ar[ul] &
\end{tikzcd}\]
By Proposition \ref{prop:localic}, it forms a reductive category. However, the colimit of the span $a \leftarrow 0 \rightarrow b$ is $1$, which is to say that the arrows from $a$ and $b$ to $1$ form a strictly epic family containing no strict epimorphism. Even if we relax to mere epimorphisms here, the empty family is strictly epic over $0$ yet has no inhabited subfamilies.

By considering the lattice of subsets of $\Nbb$ as a distributive join semilattice, we similarly find that objects of coalescent categories need not be compact in those categories.
\end{example}

\begin{example}
\label{xmpl:R}
A familiar example of a localic, locally connected topos which is not supercompactly (or even compactly) generated is the topos of sheaves on the real numbers: no non-trivial open sets in the reals are compact.
\end{example}

\begin{example}
\label{xmpl:Nlocalic}
As a more original non-example, here is a localic, totally connected topos which is not compactly generated. Consider the poset $P$ whose objects are the natural numbers (excluding $0$), and with order given by $n \leq m$ if and only if $n$ is divisible by $m$, so that $1$ is terminal. Endow this poset with the Grothendieck topology $J$ whose covering sieves on a natural number $n$ are those containing cofinitely many prime multiples of $n$.

All of these sieves are connected and effective epimorphic; that is, $(P,J)$ is a localic, locally connected, subcanonical site. For every $n$, $\ell(n)$ is therefore an indecomposable subterminal object of $\Sh(P,J)$. Since $P$ has finite limits, the topos $\Sh(P,J)$ is moreover totally connected. To show that $\Sh(P,J)$ fails to be compactly generated it suffices to show that none of the $\ell(n)$ are. But by construction each $\ell(n)$ has a nontrivial infinite covering family by other representables which contains no finite subcovers. Thus this topos has no supercompact objects, and the only compact object is the initial object.
\end{example}

\begin{example}
\label{xmpl:surjprec}
As promised earlier, we demonstrate that it is not possible to extend Theorem \ref{thm:hype2}(i) or (ii) to relatively pristine or relatively proper surjections.

Consider the poset $P$ constructed as a fractal tree with countably many roots and branches. Explicitly, it has elements \textit{non-empty} finite sequences of natural numbers, $\coprod_{n=1}^\infty \Nbb^n$, with $\vec{x} \leq \vec{y}$ if $\vec{y}$ is an initial segment of $\vec{x}$. The Alexandroff locale $L$ corresponding to $P$ has opens which are downward-closed subsets in this ordering, so that for any sequence $\vec{x}$ in an open set, all extensions of $\vec{x}$ also lie in that open.

Consider the collection of opens $U$ such that whenever $(x_1,\dotsc,x_{k-1},x_k) \in U$, we have $(x_1,\dotsc,x_{k-1},y) \in U$ for cofinitely many values of $y$. This collection is closed under finite intersections and arbitrary unions (we needed the sequences in $P$ to be non-empty to ensure that the empty intersection of opens was included here), which makes it a subframe of $\Ocal(L)$. There is a corresponding quotient locale $L'$ of $L$, and hence a geometric surjection $s: \Sh(L) \to \Sh(L')$. Moreover, this surjection is relatively pristine. Indeed, if $X$ is a sheaf on $L'$ and we are given a covering of $s^*(X)$ in $\Sh(L)$, we may without loss of generality assume that $s^*(X)$ is covered by supercompact opens of $L$, and each supercompact open contains the open of $L'$ consisting of the strict extensions of sequences it contains; $X$ is necessarily the union of these in $\Sh(L')$.

However, $\Sh(L')$ is not supercompactly or even compactly generated, since the opens of the form $s^*(U)$ are not compact, with the exception of the initial open, despite $s^*$ preserving supercompact objects.
\end{example}

\begin{example}
\label{xmpl:nonreg}
There exist reductive categories without equalizers, products or pullbacks (or even pullbacks of monomorphisms), so which in particular are not locally regular. Indeed, simplifying Example \ref{xmpl:tworel}, consider the category of presheaves on $f,g: A \rightrightarrows B$. The subcategory of supercompact objects in this topos is simply the coequalizer diagram $A \rightrightarrows B \too C$, so that in particular the pair of monomorphisms $f,g$ has neither an equalizer nor a pullback, and the product $B \times B$ does not exist. We obtain a similar example from any finite category containing a parallel pair of morphisms lacking an equalizer. 
\end{example}

\begin{example}
\label{xmpl:simple}
Any discrete category (a category with no non-identity morphisms) with more than one object is a reductive and locally regular category which is not regular.

The free finite cocompletion of such a discrete category $\Ccal$ is a coalescent category, since it is equivalent to $[\Ccal\op,\FinSet]$, where $\FinSet$ is the category of finite sets\footnote{This expression for the free finite cocompletion applies if and only if $\Ccal$ has finite hom-sets, since this is necessary and sufficient for the representable presheaves to lie in $[\Ccal\op,\FinSet]$. This is trivially the case when $\Ccal$ is discrete.}, and this is a coalescent category, whence $[\Ccal\op,\FinSet]$ has funneling colimits computed pointwise. $\FinSet$ is in some sense the archetypal coalescent category, since by inspection, $\Sh(\FinSet,J_c) \simeq \Set$. Since $\FinSet$ has pullbacks, these free finite cocompletions are coherent categories.
\end{example}

\begin{example}
We may also consider the category $\FinSet_+$ of \textit{inhabited} finite sets; since a funneling colimit of inhabited finite sets is inhabited and finite, and taking the pullback in $\FinSet$ of an epimorphism in $\FinSet_+$ along a morphism with inhabited domain gives another epimorphism with inhabited domain, we have that $\FinSet_+$ is another example of a reductive category without pullbacks (since the two inclusions $1 \rightrightarrows 2$ `should' have empty intersection).

By reintroducing the empty set and declaring that the empty sieve is covering over it, we obtain an augmented reductive site with underlying category $\FinSet$ having an equivalent topos of sheaves; since the coalescent topology on $\FinSet$ is a refinement of the augmented reductive one, we see that we have a (relatively proper) inclusion of toposes:
\[\Set \simeq \Sh(\FinSet,J_c) \hookrightarrow \Sh(\FinSet_+,J_r), \]
which is not an equivalence since the sheaves represented by the finite sets of cardinality at least $2$ are supercompact in the latter topos but merely compact in the former.
\end{example}

\begin{example}
As a dual construction, we observe that $\FinSet\op$ is a coalescent category, and $\Sh(\FinSet\op,J_c)$ embeds into the \textit{classifying topos for the theory of objects}, $[\FinSet,\Set]$. Incidentally, the latter topos provides a counterexample to the hypothesis that every point of every Grothendieck topos is relatively polished, which we might suppose as an extension of Lemma \ref{lem:point}: the points of $[\FinSet,\Set]$ correspond to sets (objects of $\Set$), and the correspondence sends a geometric morphism to the set which is the image of the representable functor $\yon(1)$. But $\yon(1)$ is supercompact, so a point corresponding to any set with more than one element fails to be relatively polished.
\end{example}

\begin{example}
\label{xmpl:simplex}
For yet another related example, consider the simplex category $\Delta$, whose objects are \textit{inhabited} finite ordinals,
\[ [n] = \{0, \dotsc, n-1\}, \, \text{ } \, n \geq 1 \]
and whose morphisms are the order-preserving maps between these. Clearly this does not have pullbacks (since the intersection of the two inclusions $[1] \rightrightarrows [2]$ would be the empty ordinal which is not an object of $\Delta$).

$\Delta$ has funneling colimits: given any collection of morphisms into the object $[n]$ of $\Delta$, their colimit is the quotient of $[n]$ identifying $f(x),f(x)+1,\cdots,g(x)$ (or $g(x),g(x)+1,\cdots,f(x)$) for each parallel pair $f,g:[n']\rightrightarrows [n]$ in the diagram and each $x \in [n']$. Moreover, each epimorphism $g:[n] \too [m]$ is split (so in particular is strict) by the monomorphism $\mathrm{min}(g^{-1}):[m]\rightrightarrows [n]$, say.

In particular, by Remark \ref{rmk:split} the collection of strict epimorphisms is stable, which makes $\Delta$ a reductive category with $\Sh(\Delta,J_r) = [\Delta\op,\Set]$, the topos of \textit{simplicial sets}. This recovers a non-trivial fact about the topos of simplicial sets: every quotient of a representable simplicial set is also representable, so that every simplicial set is a union of its representable subsets. We shall see some related examples in Section \ref{ssec:xmpl1}.
\end{example}

\begin{example}
\label{xmpl:nonpresh}
To contrast Examples \ref{xmpl:simple} and \ref{xmpl:simplex}, we recall an example of a supercompactly generated topos which is not equivalent to a presheaf topos; compare also Proposition \ref{prop:localic} above.

Consider the Schanuel topos, $\Sh(\FinSet\op_{\mathrm{mono}},J_{at})$. We see that $(\FinSet\op_{\mathrm{mono}},J_{at})$ is an atomic site with pullbacks, but moreover it is a reductive and regular site, since all of the morphisms in the category are regular epimorphisms which are stable under pullback.

We know of two ways to show that this topos is not a presheaf topos. The first is to show that the site is effectual, which explicitly involves identifying an algorithm which, given a cofunnel $F$ in $\FinSet_{\mathrm{mono}}$ with weakly initial object $F(D_0)$ and $x,y: F(D_0) \rightrightarrows C$ equalized by its limit, constructs an inclusion $i: C \hookrightarrow C'$ and a connecting zigzag between $i \circ x$ and $i \circ y$ in $(F \downarrow C')$. It then follows that the supercompact objects (equivalently, atoms) in the Schanuel topos are precisely the representable sheaves coming from $\FinSet\op_{\mathrm{mono}}$. However, if $I$ is a finite set, $A$ is any set of cardinality larger than that of $I$, and we have inclusions from a further finite set $B$ into both $A$ and $I$, there can be no monomorphism completing the triangle,
\begin{equation*}
\label{eq:injective}
\begin{tikzcd}
& A \ar[dl, dashed] \\
I & B \ar[u, hook] \ar[l, hook],
\end{tikzcd}
\end{equation*}
whence $I$ is not injective in $\FinSet_{\mathrm{mono}}$ and hence is not projective in $\FinSet\op_{\mathrm{mono}}$. It follows that no object of $\FinSet\op_{\mathrm{mono}}$ is projective, whence the Schanuel topos has no indecomposable projective objects, but is non-degenerate, and so is not a presheaf topos.

An alternative proof, which we thank Olivia Caramello for describing to us, is to observe that the category of representables in a presheaf topos can be identified, up to idempotent-completion, with the full subcategory of finitely presentable objects in its category of points; see the next chapter for a more precise explanation of this. We also have that a presheaf topos is atomic if and only if the representing category is a groupoid. The Schanuel topos classifies infinite decidable objects, so its category of points corresponds to the category of infinite sets (and monomorphisms); since any inhabited full subcategory of this category is not a groupoid (every infinite set has an injective endomorphism which is not invertible) and the topos is non-degenerate, it again follows that this cannot be a presheaf topos.
\end{example}

\begin{example}
\label{xmpl:abelian}
Since any abelian category is effective regular, any small abelian category with funneling colimits is reductive. This is the case for the finitely presented (right) modules of a (right) Noetherian ring, say, since these coincide with finitely generated modules and so the category is closed under quotients in the large category of modules. For example, the category of finitely generated abelian groups is a reductive category with finite limits and colimits.

In order to construct a small abelian category which does \textit{not} have funneling colimits, we look for a coherent ring $R$ whose collection of finitely generated ideals is not closed under infinite intersections (note that if the ring is an integral domain, coherence ensures that it will be closed under finite intersections). Let $I$ be an infinitely generated ideal which is obtained as such an intersection. In the large category of modules, we may identify $R/I$ as the colimit of the funneling diagram consisting of the inclusions of finitely generated sub-ideals of $I$, along with the parallel zero maps, into the ring, viewed as a (right) module over itself. If the colimit of this diagram existed in the category of finitely presented $R$-modules, it would have to be a quotient of $R$ by some finitely generated ideal, but by construction there is no initial finitely presented quotient under this diagram, whence the colimit does not exist.

Consider the ring $R$ of eventually constant sequences valued in the field on two elements (with point-wise operations). Observe that all finitely generated ideals in this ring are principal. An ideal generated by a sequence $g$ is the cokernel of the module homomorphism $R \to R$ sending $x$ to $x \cdot (1-g)$, so this is indeed a coherent ring. For each index $i$ we have a `basis element' $e_i$ which is $1$ at $i$ and $0$ elsewhere. The ideal $I_i$ generated by $(1-e_i)$ consists of those sequences which are $0$ at $i$. Consider $\bigcap_{j = 1}^{\infty} I_{2j}$: this consist of sequences which are non-zero only at odd indices, but by the eventually constant criterion, no single sequence can generate this ideal, whence the ideal fails to be finitely generated.

We would like to thank Jens Hemelaer for helping us to identify the sufficient structure needed to find this counterexample and Ryan C. Schwiebert for identifying a ring realizing that structure (via math.stackexchange.com).

Note that a non-trivial abelian category cannot be coherent or coalescent since the initial object is not strict in such a category (cf Lemma \ref{lem:collred}).
\end{example}

\begin{example}
\label{xmpl:TF}
As an example of a regular and reductive category that fails to be effective, let alone effectual, we adapt \cite[before Example A1.3.7]{Ele}.

Consider the category $\mathbf{TF}_{fg}$ of finitely-generated torsion-free abelian groups. By considering it as a reflective subcategory of the category of finitely generated abelian groups, we find that it is regular and has all coequalizers. We can moreover check that it has funneling colimits, since the full category of abelian groups is cocomplete and any quotient of a finitely generated group is finitely generated (to obtain the quotient in $\mathbf{TF}_{fg}$, the torsion parts of such a quotient are annihilated). Thus it is a reductive category with finite limits. However, as Johnstone points out, the equivalence relation
\[R = \{(a,b) \in \Zbb \times \Zbb \mid a \equiv b \, \mathrm{mod} \,
2 \} \cong \Zbb \times \Zbb \]
is not a kernel pair of any morphism in $\mathbf{TF}_{fg}$, so this category is not effective regular.
\end{example}

\begin{example}
\label{xmpl:countable}
For an example of a regular and reductive category that is effective but not effectual, we modify \cite[Example D3.3.9]{Ele}. Let $\Set_\omega$ be the full subcategory of $\Set$ on the finite and countable sets. This has pullbacks and funneling colimits which are stable under pullback, inherited from $\Set$. Moreover, all epimorphisms are regular, so this is a regular and reductive category (indeed, it is a coherent and coalescent category too!). It also inherits the property of being effective from $\Set$.

However, it is not effectual. Johnstone exhibits the following coequalizer diagram:
\begin{equation}
\label{eq:Ncoeq}
\begin{tikzcd}
\Nbb \ar[r, shift left, "\id"] \ar[r, shift right, "s"'] &
\Nbb \ar[r, two heads] & 1,
\end{tikzcd}
\end{equation}
where $s$ is the successor function. He concludes by considering the natural number object in $\Sh(\Set_\omega, J_c)$ that this coequalizer is not preserved by the canonical functor $\ell: \Set_\omega \to \Sh(\Set_\omega,J_c)$; we could deduce the failure of effectuality from that. Instead, we prove it directly by considering the morphisms
\[\begin{tikzcd}
\Nbb \ar[rr, shift left, "\id"]
\ar[rr, shift right, "g := 2 \times -"'] & & \Nbb,
\end{tikzcd}\]
which are clearly coequalized by the epimorphism in \eqref{eq:Ncoeq}. If $\Set_\omega$ were effectual, there would exist some epimorphism $t: X \too \Nbb$ such that $t$ and $g \circ t$ lie in the same connected component of $(X \downarrow F)$, where $F$ is the parallel arrow diagram $(\id,s)$ whose coequalizer is shown in \eqref{eq:Ncoeq}. However, given any finite zigzag,
\[\begin{tikzcd}
X \ar[dd, "t"'] \ar[r, equal] & X \ar[d] \ar[r, equal]& X \ar[dd] \ar[rr, equal] & & X \ar[dd] \ar[r, equal] & X \ar[d] \ar[r, equal] & X \ar[dd, "g \circ t"] \\
& \Nbb \ar[dl, "x_1"'] \ar[dr, "y_1"] & & \cdots \ar[dl, "x_2"'] \ar[dr, "y_{n-1}"] & & \Nbb \ar[dl, "x_n"'] \ar[dr, "y_n"] & \\
\Nbb && \Nbb && \Nbb && \Nbb,
\end{tikzcd}\]
where $x_i$ and $y_i$ are $\id$ or $s$, composing with any morphism $m: 1 \to X$ such that $t \circ m > n$ as elements of $\Nbb$, we conclude that since the difference between the image of $m$ in the first copy of $\Nbb$ and the last copy of $\Nbb$ is at most $n$, we have a contradiction. That is, no zigzag of finite length is sufficient to connect $t$ and $g \circ t$ in $(X \downarrow F)$.
\end{example}

\begin{example}
\label{xmpl:wedge}
Finally, we provide an example of an effectual coalescent category which is not effectual as a reductive category. Consider the following category, $\Ccal$:
\[\begin{tikzcd}
R_1 \ar[dr, shift left] \ar[dr, shift right] \ar[r, "c_1"] &
Z \ar[d, shift left = 3] \ar[d, shift left]
\ar[d, shift right] \ar[d, shift right = 3] &
R_2 \ar[dl, shift left] \ar[dl, shift right] \ar[l, "c_2"'] \\
& A, &
\end{tikzcd}\]
where the composition is such as to make $Z$ the coproduct of $R_1$ and $R_2$. We make this into a strictly epimorphic finitely generated site with the stable class $\Tcal'$ consisting of the singleton families on the identity morphisms and the pair $\{c_1,c_2\}$. This embeds into the effectual coalescent category $\Ccal_c$ of compact objects in $\Sh(\Ccal,J_{\Tcal'})$. Consider the funneling diagram consisting of the pairs $R_1 \rightrightarrows A$ and $R_2 \rightrightarrows A$. By construction, the colimit of this diagram will also coequalize all of the morphisms $Z \to A$, since $c_1$ and $c_2$ are jointly epimorphic in $\Ccal_c$. However, there is no strict epimorphism $C' \too Z$ verifying the definition of effectuality for a reductive category. Indeed, in the completion of $\Ccal_c$ to an effectual reductive category, $c_1$ and $c_2$ are no longer jointly epimorphic, so that the funneling colimit described fails to coequalize the morphisms $Z \to A$.
\end{example}

\chapter{Logic of Supercompactly Generated Toposes}
\label{chap:logic}

Having introduced the classes of supercompactly generated toposes\footnote{From this point forward we put aside compactly generated toposes, since supercompactly generated toposes will be more relevant in the subsequent chapters and are simpler to study.}, a natural question to ask about these toposes is `which theories do they classify?' Recall that, given a fragment of logic $\Lcal$ interpretable in any Grothendieck topos and a theory $\Tbb$ in that fragment, a Grothendieck topos $\Ecal$ is said to \textit{classify $\Tbb$} if there are equivalences of categories
\begin{equation}
\label{eq:equivalence}
\Geom(\Fcal,\Ecal) \simeq \Tbb\text{-mod}(\Fcal),
\end{equation}
which are natural in the argument $\Fcal$ as it varies over Grothendieck toposes. In the categorical logic literature, the fragment $\Lcal$ is typically taken to be geometric logic, or a sub-fragment thereof such as coherent or regular logic. Geometric logic is important because of the classical result that every geometric theory has a classifying topos, and conversely that every Grothendieck topos classifies a geometric theory (in fact, many geometric theories; see \cite[Proposition D3.1.12 and Remark D3.1.13]{Ele}, originating in \cite[Theorem 9.1.1 and Proposition 9.1.5]{MakkaiReyes}). Meanwhile, regular and coherent logics form important fragments of geometric logic because they are expressive enough to include the host of theories studied in mathematical practice which involve only unary (resp. finitary) logical connectives and existential quantification. All three fragments have well-developed semantics in the context of toposes, with formulae-in-context interpreted as subobjects of products, and entailments between formulae in the same context realized as containments of those subobjects.

Recall from Example \ref{xmpl:regular} in the last chapter that a topos classifying a regular theory is called a regular topos. We saw that any regular topos is automatically supercompactly generated, as a consequence of Proposition \ref{prop:representable}. As such, regular theories form a strict sub-class of the geometric theories classified by supercompactly generated toposes.

The purpose of the present short chapter is to recall enough basic categorical logic results and constructions, mostly sourced from \cite[Part D]{Ele} and \cite{TST}, in order to characterize the theories which are classified by supercompactly generated toposes and present some examples. This will be refined in Chapter \ref{chap:TSGT} to the special case of toposes of topological monoid actions.

\subsection*{Overview}

In Section \ref{sec:bg'} we recount the basics of geometric logic up to the point of constructing the (geometric) syntactic category. In Section \ref{sec:geometric} we present and apply the relevant details of Caramello's framework from \cite{TST} to obtain two characterizations of theories classified by supercompactly generated toposes.

In Section \ref{sec:apply}, we leverage these results to present some examples. Since all presheaf toposes are supercompactly generated, the class of \textit{supercompactly generated theories} trivially includes all examples of theories of presheaf type, and many of the quotients which we consider end up being of presheaf type too, but we have managed to identify some non-trivial supercompactly generated quotients of these theories.

Finally, in Section \ref{sec:observe} we reflect on our progress.

Except insofar as any of the examples have original aspects, there is very little original material to report in the contents of this chapter, but it may be enlightening for the reader who has not seen the details of the syntactic category framework worked through in full to the point of explicitly computing quotients from first principles.

\section{Background}
\label{sec:bg'}

\subsection{Geometric categories}
\label{ssec:geomcat}

For the purposes of discussing categorical logic in a topos theory context, we shall need a class of categories which we have not encountered yet.

\begin{dfn}
A \textbf{geometric category} is a well-powered regular category whose subobject lattices have all small unions, and such that these unions are stable under pullback. In particular, this means that covers by (small) unions of subobjects form a Grothendieck pretopology on any geometric category. The Grothendieck topology generated from it coincides with the \textit{canonical topology} $J_{can}$, which we encountered in Definition \ref{dfn:effective} of the last chapter; we call a geometric category endowed with this topology a \textbf{geometric site}.
\end{dfn}

Every Grothendieck topos is a geometric category, and conveniently the category of sheaves for the canonical topology on a Grothendieck topos is equivalent to that topos. The main purpose of this section is to concisely recount how the syntax for a geometric theory $\Tbb$ gives rise to a geometric site whose topos of sheaves classifies $\Tbb$. However, in the subsequent developments we will need some tools which we can derive categorically in advance.

Recall that a sieve $S$ on an object $C$ in a category $\Ccal$ is said to be \textbf{$J$-closed} for a Grothendieck topology $J$ on $\Ccal$ if and only if, for any arrow $f: D \to C$ in $\Ccal$, $f^*(S) \in J$ implies that $f \in S$. Such sieves are of interest because the lattice of subobjects of a sheaf represented by $C$ in $\Sh(\Ccal,J)$ is isomorphic to the lattice of $J$-closed sieves on $C$.

Let $(\Ccal,J_{can})$ be a geometric site. We shall write $\ell:\Ccal \to \Sh(\Ccal,J_{can})$ for the composite of the Yoneda embedding with the $J_{can}$-sheafification functor, which is automatically full and faithful. We paraphrase \cite[Lemma 3.1.6(ii)]{TST} in order to characterize $J_{can}$-closed sieves in a geometric category.
\begin{lemma}
\label{lem:Jclosed}
Let $S$ be a sieve on an object $C$ of a geometric category $\Ccal$. Then $S$ is $J_{can}$-closed if and only if $S$ is a principal sieve generated by a monomorphism.
\end{lemma}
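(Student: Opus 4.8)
The plan is to prove both implications directly, isolating first a single technical fact that does most of the work: for a monomorphism $m : U \hookrightarrow C$ in $\Ccal$, the principal sieve $\langle m \rangle$ (all morphisms into $C$ factoring through $m$) is $J_{can}$-covering if and only if $m$ is an isomorphism. One direction is immediate. For the other, every member of $\langle m \rangle$ factors through $m$, so the union of the images of any subfamily is contained in $U$ as a subobject of $C$; since a family is jointly epic precisely when the union of its images is the maximal subobject (which is exactly how the canonical pretopology of unions of subobjects is defined on a geometric category), a covering principal sieve on $m$ forces $U = C$, i.e.\ $m$ to be an isomorphism.

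For the `if' direction, suppose $S = \langle m \rangle$ for a monomorphism $m : C' \hookrightarrow C$, and let $f : D \to C$ be any morphism with $f^*(S) \in J_{can}$. Pulling back $m$ along $f$ yields a monomorphism $m' : D' \hookrightarrow D$ (monos are stable under pullback), and by the universal property of the pullback a morphism $h$ lies in $f^*(S)$ exactly when $f \circ h$ factors through $m$, equivalently when $h$ factors through $m'$. Thus $f^*(S) = \langle m' \rangle$, so by the key fact $m'$ is an isomorphism; chasing the pullback square then shows $f$ factors through $m$, whence $f \in S$. Hence $S$ is $J_{can}$-closed.

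For the `only if' direction, suppose $S$ is $J_{can}$-closed. Using well-poweredness, the images $\im(g)$ for $g \in S$ form a small set of subobjects of $C$, so I can form their union $m : U \hookrightarrow C$, which exists because subobject lattices in a geometric category admit small unions stable under pullback. Every $g \in S$ factors through $m$, giving $S \subseteq \langle m \rangle$; I claim equality, for which it suffices to show $m \in S$ (since $S$ is a sieve). To apply closedness to the arrow $m$ itself I would compute $m^*(S)$: each $g \in S$ corestricts to $\bar g : \dom(g) \to U$ with $m \circ \bar g = g \in S$, so $\bar g \in m^*(S)$, and because $m$ is monic the images of the $\bar g$ union to all of $U$, making $\{\bar g\}_{g \in S}$ jointly epic onto $U$. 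Hence $m^*(S) \in J_{can}$, and $J_{can}$-closedness yields $m \in S$, so $S = \langle m \rangle$ as required.

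The main obstacle I anticipate is bookkeeping around images and unions: making precise that a family is jointly epic iff the union of its images is the whole object, and that transporting subobjects of $C$ through the monomorphism $m$ to subobjects of $U$ commutes with taking images and unions. These are standard properties of well-powered geometric categories, so I would cite them cleanly (e.g.\ from the regular/geometric-category results in \cite[D1.3]{Ele}) rather than re-derive the regular-category machinery. As a conceptual sanity check I would also note the alternative route: $J_{can}$-closed sieves on $C$ correspond bijectively to subobjects of $\ell(C)$ in $\Sh(\Ccal,J_{can})$, and these to subobjects of $C$ since $\ell$ is full and faithful; the principal mono-sieves are exactly the representatives singled out by this correspondence.
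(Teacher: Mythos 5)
Your proof is correct and takes essentially the same route as the paper's: for the `only if' direction you form the union of the images of the sieve via well-poweredness and apply closedness to that monomorphism, and for the `if' direction you pull the generating mono back along $f$ and use that a covering principal sieve generated by a mono forces it to be an isomorphism. The only cosmetic difference is that you isolate this last observation as a standalone key fact, whereas the paper inlines it.
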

\begin{proof}
First, suppose $S$ is a $J_{can}$-closed sieve, and let $s:B \hookrightarrow C$ be the union of the images of all morphisms in $S$. Since $\Ccal$ is well-powered, we may reduce to a presieve $\{r_i:B_i \to C \mid i \in I\}$ generating $S$ such that $s = \bigcup_{i \in I}\im(r_i)$. By construction, $S$ is a subsieve of the sieve generated by $s$. Conversely, each of the $r_i$ factors through $s$, say $r_i = s \circ h_i$, whence $s^*(S)$ is generated by the $h_i$, but these are jointly epic since images and unions are stable under pullback, so $s \in S$ and the sieve generated by $s$ is contained in $S$, as required.

Now suppose $S$ is principal, generated by a monomorphism $s:B \hookrightarrow C$. Given $f:A \to C$ such that $f^*(S)$ contains a small covering family, these all factor through the pullback $f^*(s)$ by definition (which is a monomorphism since $s$ is), and hence so does the union of their images, which is the identity. As such, $f^*(s)$ is an isomorphism, so $f$ factors through $s$ and hence lies in $S$.
\end{proof}

An immediate consequence is that subobjects of representable sheaves over a geometric site are representable.
\begin{prop}
\label{prop:subobj}
Let $(\Ccal,J_{can})$ be a geometric site and $\ell:\Ccal \to \Sh(\Ccal,J_{can})$ the functor described above. Then the image of $\ell$ induces isomorphisms on subobject lattices.
\end{prop}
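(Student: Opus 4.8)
The plan is to unwind exactly what the statement asserts and show it follows almost immediately from Lemma \ref{lem:Jclosed}. The claim "the image of $\ell$ induces isomorphisms on subobject lattices" means that for each object $C$ of the geometric site $(\Ccal,J_{can})$, the functor $\ell$ restricts to an isomorphism of posets between $\Sub_{\Ccal}(C)$, the subobjects of $C$ in $\Ccal$, and $\Sub_{\Sh(\Ccal,J_{can})}(\ell(C))$, the subobjects of the representable sheaf $\ell(C)$. Since $\ell$ is full and faithful (as recalled in the paragraph before the lemma) and preserves monomorphisms (being the composite of Yoneda, which preserves all limits, with sheafification, which is left exact), the assignment $s \mapsto \ell(s)$ certainly sends subobjects to subobjects and is order-preserving and injective on subobjects. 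So the content is surjectivity: every subobject of $\ell(C)$ arises from a subobject of $C$.

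First I would set up the standard identification, recalled in the excerpt just above Lemma \ref{lem:Jclosed}, that the lattice $\Sub_{\Sh(\Ccal,J_{can})}(\ell(C))$ of subobjects of the representable sheaf is isomorphic to the lattice of $J_{can}$-closed sieves on $C$. This is a general fact about sheaf toposes and may be invoked directly. Then I would apply Lemma \ref{lem:Jclosed}, which identifies the $J_{can}$-closed sieves on $C$ in a geometric category as precisely the principal sieves generated by a monomorphism $s: B \hookrightarrow C$. Two such monomorphisms generate the same closed sieve if and only if they represent the same subobject of $C$ in $\Ccal$, so the lattice of $J_{can}$-closed sieves is canonically isomorphic to $\Sub_{\Ccal}(C)$.

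Combining these two identifications gives the chain of poset isomorphisms
\[
\Sub_{\Ccal}(C) \;\cong\; \{J_{can}\text{-closed sieves on } C\} \;\cong\; \Sub_{\Sh(\Ccal,J_{can})}(\ell(C)),
\]
and I would check that the composite isomorphism is exactly the map $s \mapsto \ell(s)$ induced by $\ell$: a monomorphism $s:B\hookrightarrow C$ generates the closed sieve corresponding under sheafification to the subobject $\ell(B)\hookrightarrow \ell(C)$, because $\ell$ preserves monomorphisms and the representable-sheaf correspondence sends the sieve generated by $s$ to the image of $\ell(s)$. Order-preservation in both directions is immediate from monotonicity of $\ell$ and of sieve generation.

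The main obstacle, such as it is, is not any deep difficulty but rather the bookkeeping of verifying that the composite poset isomorphism genuinely coincides with the map induced by $\ell$, rather than merely being an abstract isomorphism of the same two lattices; one must confirm that the subobject of $\ell(C)$ associated to a $J_{can}$-closed sieve generated by $s$ is represented by $\ell(s)$ and not some other mediating monomorphism. This amounts to tracing the sheafification of the representable sieve, and is routine given that $\ell$ is full, faithful, and left exact. I therefore expect the proof to be short, leaning entirely on Lemma \ref{lem:Jclosed} together with the standard closed-sieve description of subobjects of representables.
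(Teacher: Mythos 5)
Your proposal is correct and follows essentially the same route as the paper's own proof: both identify subobjects of $\ell(C)$ with $J_{can}$-closed sieves on $C$ and then invoke Lemma \ref{lem:Jclosed} to match these with principal sieves generated by monomorphisms, i.e.\ with subobjects of $C$ in $\Ccal$. The only difference is that you spell out the (routine) verification that the composite isomorphism is the map induced by $\ell$, which the paper compresses into a single sentence.
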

\begin{proof}
As discussed above, subobjects of a representable $\ell(C)$ correspond to $J_{can}$-closed sieves on $C$ in $\Ccal$. A subobject $s:B \hookrightarrow C$ is sent by $\ell$ to the subobject corresponding to the $J_{can}$-closure of the principal sieve generated by $s$, but by Lemma \ref{lem:Jclosed} all such sieves are already $J_{can}$-closed and all $J_{can}$-closed sieves are principal, so $\ell$ induces an isomorphism, as claimed.
\end{proof}

\subsection{Geometric languages}
\label{ssec:geomlang}

We summarize the account given in \cite[\S D1.1]{Ele}, although this is by no means the only place it can be found. Recall that a \textbf{signature} $\Sigma$ contains the basic, generating data for the formal languages considered in categorical logic, namely the \textit{sorts} $A,B,\dotsc$, \textit{function symbols} $f,g,\dotsc$, and \textit{relation symbols} $R,S,\dotsc$. The function and relation symbols are each equipped with a type consisting of a list of sorts, denoted for example as,
\[f: A_1,\dotsc,A_n \to B \hspace{9pt}\text{ and }\hspace{9pt} R \hookrightarrow A_1,\dotsc,A_n, \]
respectively; this notation evokes the intended semantics which we shall recap in the next subsection. From here on, we employ the notation $\vec{A}$ as shorthand for the generic list of sorts $A_1,\dotsc,A_n$.

Given a signature, we may consider \textit{variables}, which are formal symbols $x,y,\dotsc$ equipped with a \textit{type} consisting of a sort in $\Sigma$. From these we inductively build \textit{terms}, also each having a type consisting of a single sort: individual variables are terms of their respective types, while for terms $t_i$ of type $A_i$ and a function symbol $f:\vec{A} \to B$, $f(t_1,\dotsc,t_m)$ is a term of type $B$.

Everything up to this point can be applied to every fragment of infinitary first-order logic. In the next step of inductively constructing \textit{formulae} from terms, on the other hand, the construction operations which are permitted determine the fragment of logic needed to interpret, manipulate or reason about the resulting language. For geometric logic, the following expressions are well-formed (which is to say permissible) geometric formulae:
\begin{itemize}
	\item \textit{Relations}: $R(t_1,\dotsc,t_n)$, where $R$ is a relation symbol of type $R \hookrightarrow \vec{A}$ and $t_i$ is a term of type $A_i$;
	\item \textit{Equality}: $t_1 = t_2$ where $t_1,t_2$ are terms of the same type;
	\item \textit{Finitary conjunctions}: If $\phi_1,\dotsc,\phi_n$ are geometric formulae, then so is $\bigwedge_{i=1}^n \phi_i$ (which may also be denoted with infix wedges); this includes the empty conjunction, called \textit{truth} and denoted $\top$, as a special case;
	\item \textit{Infinitary disjunctions}: If $\{\phi_i : i \in I\}$ is a set of geometric formulae whose free variables form a finite set (see below), then $\bigvee_{i \in I} \phi_i$ is a geometric formula; this includes the empty disjunction, called \textit{false} and denoted $\bot$, as a special case;
	\item \textit{Existential quantification}: for any geometric formula $\phi$ containing a free variable $x$ (see below), $(\exists x)\phi$ is a geometric formula where $x$ is now a bound variable.
\end{itemize}
Informally, a variable appearing in a formula $\phi$ is \textit{bound} if the formula includes quantification over it, and \textit{free} otherwise; Johnstone explicitly specifies the free variables of formulae in tandem with their inductive construction in \cite[Definition D1.1.3]{Ele}. A geometric formula with no free variables is called a \textit{propositional formula}.

We will occasionally need to be able to substitute variables in formulae. If $\phi$ features (pairwise distinct) free variables $\vec{x} = x_1,\dotsc,x_n$, and $\vec{x}' = x'_1,\dotsc,x'_n$ is a list of variables of the same respective types, we write $\phi[\vec{x}'/\vec{x}]$ for the formula obtained by uniformly substituting $x'_i$ for $x_i$ in $\phi$.

A \textit{context} for a term $t$ or a geometric formula $\phi$ is a finite list $\vec{x}$ of variables including all free variables appearing in $\phi$; finiteness of contexts is the reason we required the $\phi_i$ to have only finitely many free variables between them in the rule for infinite disjunctions. We write $[]$ for the empty context.

Accordingly, a geometric \textbf{formula-in-context} is a pair, denoted $\{ \vec{x} : \phi\}$, consisting of a geometric formula $\phi$ and a valid context $\vec{x}$ for that formula. Formally, a context $\vec{x}$ comes equipped with its type $\vec{A}$; since we will mostly deal with single-sorted theory, there will usually only be one possible type, but when we need to make the type of a context explicit in formulae-in-context or in the sequents defined below, we expand it to $\vec{x}:\vec{A}$.

A \textbf{geometric sequent} over $\Sigma$ is a formal expression of the form,
\[\phi \vdash_{\vec{x}} \psi,\]
where $\phi$ and $\psi$ are geometric formulae over $\Sigma$ and $\vec{x}$ is a valid context for both $\phi$ and $\psi$; when we need to indicate the context, we call this a sequent \textit{over $\vec{x}$}. The informal intended interpretation of such a sequent is,
\[`(\forall \vec{x}) \, \phi(\vec{x}) \Rightarrow \psi(\vec{x})';\]
since geometric formulae do not include universal quantification or implication operations, this interpretation will be made formal only in the semantics used to interpret geometric logic. A sequent over the empty context is called a \textit{sentence}.

A \textbf{geometric theory} $\Tbb$ over a signature $\Sigma$ simply consists of a collection of geometric sequents, the \textit{axioms} of the theory.

\subsection{Semantics}
\label{ssec:geomsem}

We will primarily be interested in models of theories in Grothendieck toposes, but any geometric category has the structure needed to interpret geometric theories, as follows. This is a summary of \cite[\S D1.2]{Ele}.

A \textbf{$\Sigma$-structure} in a geometric category $\Ecal$ consists of an interpretation of each sort $A$ in $\Sigma$ as an object $\Mbb(A)$ of $\Ecal$, each function symbol $f:\vec{A} \to B$ as a morphism
\[\Mbb f : \Mbb(A_1) \times \cdots \times \Mbb(A_n) \to \Mbb(B)\]
in $\Ecal$, and each relation symbol $R \hookrightarrow \vec{A}$ in $\Sigma$ as a relation
\[\Mbb(R) \hookrightarrow \Mbb(A_1) \times \cdots \times \Mbb(A_n)\]
in $\Ecal$. We shall abbreviate $\Mbb(A_1) \times \cdots \times \Mbb(A_n)$ to $\Mbb(\vec{A})$ from now on.\footnote{We have chosen the notation `$\Mbb$' to help distinguish models of theories from the monoids discussed elsewhere in the thesis, which are typically denoted by `$M$'.}

Given a $\Sigma$-structure $\Mbb$, we may interpret the derived structures from the previous section inductively, following their constructions. First, a term-in-context $\vec{x} \cdot t$ is either a single variable $x_i$ in $\vec{x}:\vec{A}$, which is interpreted as the $i$th projection
\[ [[\vec{x} \cdot t]]_\Mbb = \pi_i: \Mbb(\vec{A}) \to \Mbb(A_i), \]
or it is of the form $f(t_1,\dotsc,t_m)$ for some function symbol $f:\vec{C} \to B$ and terms $t_i:C_i$, in which case $[[\vec{x} \cdot t]]_\Mbb$ is defined as the composite:
\[\Mbb(\vec{A}) \xrightarrow{({[[\vec{x}\cdot t_1]]}_\Mbb{,} \dotsc {,} {[[\vec{x}\cdot t_m]]}_\Mbb)} \Mbb(\vec{C}) \xrightarrow{\Mbb f} \Mbb(B).\]

Having interpreted terms, a geometric formula-in-context $\{\vec{x} \cdot \phi\}$ over $\Sigma$ is interpreted as a subobject of $\Mbb(\vec{A})$ determined by the structure of $\phi$. In brief, a relation $R(t_1,\dotsc,t_m)$ is interpreted by pulling back the interpretation of $R$ along the product of the morphisms interpreting the terms $t_i$; equalities of terms are interpreted as equalizers of the terms' interpretations; conjunctions and disjunctions are interpreted as intersections and unions of subobjects, respectively; and existential quantification is interpreted as the image of the projection of a given subobject along the relevant product projection morphism.

Given $\Sigma$-structures $\Mbb,\Mbb'$ in $\Ecal$, a \textbf{$\Sigma$-structure homomorphism} from $\Mbb$ to $\Mbb'$ consists of morphisms $h_A: \Mbb(A) \to \Mbb'(A)$ in $\Ecal$ for each sort $A$ in $\Sigma$ which commute with the respective interpretations of function symbols and relation symbols. Thanks to the universal properties used in the interpretations of the derived structures, these homomorphisms extend to the full structures, in the sense that (products of) the $h_A$ commute with the interpretations of terms, and there exist canonical morphisms derived from the $h_A$ between the interpretations of geometric formulae-in-context.

Let $\Tbb$ be a geometric theory over $\Sigma$. A $\Sigma$-structure $\Mbb$ in $\Ecal$ is a \textbf{model of $\Tbb$} if for each sequent $\phi \vdash_{\vec{x}} \psi$ in $\Tbb$ is \textit{satisfied} in $\Mbb$, in the sense that there is a containment of subobjects $[[\vec{x} \cdot \phi]]_\Mbb \subseteq [[\vec{x} \cdot \psi]]_\Mbb$. Morphisms between models are just $\Sigma$-structure homomorphisms as above, so the category of $\Tbb$-models is a full subcategory of the category of $\Sigma$-structures. We write $\Tbb\text{-mod}(\Ecal)$ for this category.

There are numerous details of this account that we have omitted, but we have all of the terminology we shall need, so we shall move on.

\subsection{Syntactic Categories}
\label{ssec:syncat}

In many ways, the above account is an entirely intuitive (albeit inevitably long-winded, even in summary) account of how formal language structure can be translated into categorical structure or vice versa. The challenge of categorical logic lies in determining an exhaustive collection of deduction rules on the formal language side which entirely characterize the behaviour on the categorical side. This is the content of \cite[\S D1.3]{Ele}, and we unapologetically omit most of it, with the exception of the following `normal form'-type result:
\begin{fact}[{\cite[Lemma D1.3.8(ii)]{Ele}}]
\label{fact:normalform}
Any geometric formula-in-context is provably equivalent (for any $\Tbb$) to one of the form $\{\vec{x} \cdot \bigvee_{i \in I}(\exists \vec{y})\phi_i\}$, where each $\phi_i$ is a finite conjunction of relations or equalities; we call such a $\phi_i$ a \textbf{regular formula}.
\end{fact}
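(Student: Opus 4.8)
The plan is to prove the normal form result by induction on the structure of geometric formulae, following the inductive construction of geometric formulae-in-context recalled in Section \ref{ssec:geomlang}. Recall that a \emph{regular formula} is a finite conjunction of relations or equalities, and we wish to show that every geometric formula-in-context $\{\vec{x} \cdot \phi\}$ is provably equivalent to one in the canonical shape $\{\vec{x} \cdot \bigvee_{i \in I}(\exists \vec{y}_i)\phi_i\}$ with each $\phi_i$ regular. The strategy is to verify that the class of formulae admitting such a representation is closed under each of the five formula-forming operations of geometric logic, so that by structural induction every geometric formula lies in this class.

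First I would handle the base cases: a relation $R(t_1,\dotsc,t_n)$ and an equality $t_1 = t_2$ are already regular formulae, hence trivially in the desired form with a singleton index set $I$ and empty existential block. Next, for finitary conjunctions, I would take two formulae already in normal form and distribute conjunction over the disjunctions using the distributive law of geometric logic (which is available precisely because geometric logic includes finite meets, arbitrary joins, and the frame distributivity $\phi \wedge \bigvee_i \psi_i \dashv\vdash \bigvee_i (\phi \wedge \psi_i)$). The existential quantifiers must be pulled outside the conjunction after a suitable renaming of bound variables to avoid capture, using the provable equivalence $((\exists \vec{y})\alpha) \wedge ((\exists \vec{z})\beta) \dashv\vdash (\exists \vec{y}\,\vec{z})(\alpha \wedge \beta)$ when $\vec{y},\vec{z}$ are chosen disjoint and not free in the other conjunct; the inner conjunction $\alpha \wedge \beta$ of two regular formulae is again regular since regular formulae are closed under finite conjunction. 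The disjunction case is immediate, since a disjunction of formulae already in normal form is, after flattening the nested disjunctions (joins are associative), again a single disjunction of existentially-quantified regular formulae. For existential quantification, I would use that $(\exists x)\bigvee_{i \in I} \psi_i \dashv\vdash \bigvee_{i \in I}(\exists x)\psi_i$ and that $(\exists x)(\exists \vec{y})\phi_i \dashv\vdash (\exists x\,\vec{y})\phi_i$, which keeps the form intact.

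The main obstacle, and the step requiring the most care, is the conjunction case: both the bookkeeping of bound-variable renaming to prevent variable capture when commuting existential quantifiers past conjunction, and the verification that the relevant provable equivalences of geometric logic are indeed available. These are exactly the deduction rules of the geometric fragment tabulated in \cite[\S D1.3]{Ele}, which we have deliberately not reproduced in full; the proof therefore amounts to a careful but routine invocation of those rules, and indeed the statement is quoted directly as \cite[Lemma D1.3.8(ii)]{Ele}. Since the result is standard and its proof is a mechanical structural induction using the syntactic deduction calculus, I would present the argument in outline and defer the complete verification to the cited reference, emphasising only that the infinitary disjunction and the distributivity of conjunction over arbitrary disjunction are what make geometric logic (as opposed to, say, a fragment lacking infinite joins) exactly the right setting for this normal form to hold.
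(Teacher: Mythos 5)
Your proposal is correct and is essentially the paper's own treatment: the paper states this result as a \textbf{Fact} and defers entirely to Johnstone's Lemma D1.3.8(ii), whose proof is exactly the structural induction you outline (regular base cases, distributivity plus the Frobenius law $((\exists \vec{y})\alpha) \wedge \beta \dashv\vdash (\exists \vec{y})(\alpha \wedge \beta)$ for conjunction, and flattening for disjunctions and existential blocks). The only cases you leave tacit, $\top$ as the empty regular conjunction and $\bot$ as the empty disjunction ($I = \emptyset$), are trivially absorbed by the stated normal form.
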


For our purposes, the important takeaway is that, for a suitable collection of deduction rules (rules for transforming sequents into related sequents), most of which are intuitive formalizations of rules derived in classical first-order logic, there is a sound and complete proof theory for the categorical semantics of geometric languages; as such, we shall appeal to the reader's `logical intuition' when manipulation according to these deduction rules is required. The completeness result for geometric categories, which is the subject of \cite[\S D1.4]{Ele}, involves building the \textbf{syntactic category} $\Ccal_{\Tbb}$ for the theory $\Tbb$, as follows:

Objects of $\Ccal_{\Tbb}$ consist of \textit{formulae-in-context} $\{\vec{x} \cdot \phi\}$ up to `$\alpha$-equivalence', which is to say up to substitution of the variables in the context and formula for other variables of the same types (such that none of the substituted variables coincide with any variables already present); this allows us to assume without loss of generality that the contexts of any given finite set of formulae-in-context are disjoint when defining morphisms.

A morphism $\{\vec{x} \cdot \phi\} \to \{\vec{y} \cdot \psi\}$ in the syntactic category is a \textit{$\Tbb$-provable equivalence class of geometric formulae} $[\theta]$, where $\theta(\vec{x},\vec{y})$ is a formula in the union of the contexts such that the following three sequents are provable in $\Tbb$:
\begin{align*}
\phi &\vdash_{\vec{x}} (\exists \vec{y}) \theta, \\
\theta &\vdash_{\vec{x},\vec{y}} \phi \wedge \psi, \\
\theta \wedge \theta[\vec{y}'/\vec{y}] &\vdash_{\vec{x},\vec{y},\vec{y}'} \vec{y} = \vec{y}',
\end{align*}
where $\vec{y}'$ is any context of the same type as $\vec{y}$.

\begin{prop}
\label{prop:geomstruct}
The structure $\Ccal_{\Tbb}$ defined above is a geometric category. 
\end{prop}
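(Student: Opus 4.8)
The plan is to verify directly that $\Ccal_{\Tbb}$ satisfies each of the defining conditions of a geometric category: that it is well-powered and regular, that subobject lattices have all small unions, and that these unions are stable under pullback. This is a standard but lengthy construction, and the bulk of the work amounts to translating each categorical requirement into a condition on formulae-in-context and then discharging it using the soundness and completeness of the geometric proof theory summarized in Section \ref{ssec:syncat}. Throughout, I would lean on Fact \ref{fact:normalform} to reduce arbitrary formulae to the normal form $\{\vec{x} \cdot \bigvee_{i \in I}(\exists \vec{y})\phi_i\}$ with each $\phi_i$ regular, which makes the disjunctions (needed for unions) explicit.

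First I would establish the finite-limit structure: the terminal object is $\{[] \cdot \top\}$, and products and equalizers are built syntactically from conjunctions and equality formulae. The key point here is that each universal property reduces, via the definition of morphisms as $\Tbb$-provable-functional formulae $[\theta]$, to provability of a handful of sequents, which one checks using the deduction rules. Next I would identify the subobjects of an object $\{\vec{x} \cdot \phi\}$: a monomorphism into it is represented (up to isomorphism in the slice) by a formula $\{\vec{x} \cdot \psi\}$ with $\psi \vdash_{\vec{x}} \phi$ provable, so that the subobject lattice of $\{\vec{x} \cdot \phi\}$ is identified with the poset of (provable-equivalence classes of) geometric formulae entailing $\phi$ in context $\vec{x}$, ordered by provable entailment. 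Well-poweredness then follows because, in any given context $\vec{x}:\vec{A}$, the formulae form a set (up to $\alpha$-equivalence and provable equivalence), so each subobject lattice is small. Within this identification, finite intersections are interpreted by conjunction $\wedge$ and arbitrary small unions by disjunction $\bigvee$ — both of which are legitimate geometric formula constructors, the disjunction being permissible precisely because all the disjuncts share the finite context $\vec{x}$ — so the subobject lattices automatically carry all small joins.

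For regularity I would exhibit the image factorization: a morphism $[\theta]:\{\vec{x} \cdot \phi\} \to \{\vec{y} \cdot \psi\}$ factors through the subobject $\{\vec{y} \cdot (\exists \vec{x})\theta\}$ of its codomain, and one checks that this is the image by verifying it is the smallest subobject through which $[\theta]$ factors (again a provability check) and that the first factor is a cover. That covers are stable under pullback, and that the (cover, mono) factorization is orthogonal, are the substantive regularity axioms; these follow from the interpretation of pullback of subobjects as substitution/conjunction and from the Frobenius and distributivity deduction rules for $\exists$ and $\wedge$. Finally, pullback-stability of small unions is the statement that substitution commutes with $\bigvee$, i.e. the infinitary distributive law $\theta \wedge \bigvee_i \chi_i \dashv\vdash \bigvee_i(\theta \wedge \chi_i)$, which is one of the defining geometric deduction rules.

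The main obstacle I anticipate is not any single clever idea but rather the orthogonality and pullback-stability verifications, where one must be careful about contexts and variable substitution: the interpretation of a pullback of subobjects involves conjoining a formula with a substituted copy of another, and one must confirm that the syntactic operations genuinely model the categorical pullback universally, not merely up to a weaker equivalence. Managing the bookkeeping of free variables (ensuring contexts are disjoint where required and matched where required) is where errors typically creep in. Since all of these are fundamentally soundness-and-completeness applications of the proof theory of \cite[\S D1.3, \S D1.4]{Ele}, the cleanest exposition is to note that this proposition is precisely the content of that reference, and to record that the only verifications are the routine translations just outlined; I would therefore state the result with the categorical constructions made explicit and defer the detailed sequent manipulations to \cite[\S D1.4]{Ele}.
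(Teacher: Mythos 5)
Your proposal is correct and follows essentially the same route as the paper's proof: the same syntactic constructions for the terminal object, products, equalizers and images, unions as disjunctions, pullback-stability via the distributivity rule, well-poweredness via Fact \ref{fact:normalform}, and the same deferral of the detailed sequent manipulations to \cite[\S D1.4]{Ele}. The only presentational difference is that the paper makes explicit that well-poweredness rests on reducing subobjects to disjunctions of \emph{regular} formulae (of which there are only a set), whereas you state the set-ness of formulae up to provable equivalence directly — a claim that is justified precisely by the normal-form reduction you invoke at the outset.
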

\begin{proof}
We summarize Lemmas D1.4.1, D1.4.2 and D1.4.10 of \cite{Ele} and refer the reader to those results for some of the missing details. First, to show that $\Ccal_{\Tbb}$ is a category, we define the composite
\[\{\vec{x} \cdot \phi\} \xrightarrow{[\theta]} \{\vec{y} \cdot \psi\} \xrightarrow{[\gamma]} \{\vec{z} \cdot \chi\}\]
to be $[(\exists \vec{y})(\gamma \wedge \theta)]$, while the identity $\{\vec{x} \cdot \phi\} \to \{\vec{x}' \cdot \phi\}$ is $[\phi \wedge (\vec{x} = \vec{x}')]$.

Note that formulae in the same context which are provably equivalent in the sense alluded to in Fact \ref{fact:normalform} are automatically isomorphic in the syntactic category via the $\Tbb$-provable equivalence class of the formula which is a conjunction of them both (up to substitution) with a suitable equality formula.

To demonstrate the presence of finite limits, we observe that $\{[] \cdot \top\}$ is a terminal object, that $\{\vec{x},\vec{y} \cdot (\phi \wedge \psi)\}$ is the binary product of $\{\vec{x} \cdot \phi\}$ and $\{\vec{y} \cdot \psi\}$, and that the equalizer of a pair,
\[\begin{tikzcd}
\{\vec{x} \cdot \phi\} \ar[r, shift left, "{[\theta]}"] \ar[r, shift right, "{[\gamma]}"'] & \{\vec{y} \cdot \psi\}
\end{tikzcd}\]
is $\{\vec{x}' \cdot (\exists \vec{y})(\theta[\vec{x}'/\vec{x}] \wedge \gamma[\vec{x}'/\vec{x}])\}$ (the respective projection maps are given by suitable conjunctions of the formulae involved with equalities of variables).

The image of $[\theta]$ as above is $\{\vec{y} \cdot (\exists \vec{x}) \theta\}$, and a morphism is a cover (extremal epimorphism) if and only if $\psi \vdash_{\vec{y}} (\exists \vec{x})\theta$ is provable in $\Tbb$. The union of a collection of subobjects $\{\vec{x} \cdot \phi_i\} \hookrightarrow \{\vec{x} \cdot \psi\}$ is simply $\{\vec{x} \cdot \bigvee_{i \in I} \phi_i\}$; in particular, the minimal subobject is $\{\vec{x} \cdot \bot\}$. The stability of unions under pullback amounts to a consequence of distributivity of finite conjunctions over arbitrary disjunction (this is one of the intuitive deduction rules which we omitted earlier).

Finally, well-poweredness is a further consequence of Fact \ref{fact:normalform}, since any subobject can be expressed as a disjunction of regular formulae, and there are essentially only a set of such formulae over any signature (up to $\alpha$-equivalence).
\end{proof}

Completeness of the theory $\Tbb$ with respect to models in geometric categories follows from the very existence of the syntactic category, since this category comes with a canonical model of $\Tbb$ given by interpreting a formula-in-context $\{\vec{x}\cdot \phi\}$ as the corresponding object of the category; all of the axioms of $\Tbb$ are satisfied by construction. Moreover, this model is universal: the category of models of $\Tbb$ in any geometric category $\Dcal$ is equivalent to the category of geometric functors $\Ccal_{\Tbb} \to \Dcal$, which is to say those functors preserving the defining finite limit and subobject structure of geometric category.

Since $\Ccal_{\Tbb}$ is a geometric category, we can equip it with the canonical Grothendieck topology, which we shall now denote $J_{\Tbb}$. Explicitly, considering the descriptions of unions and covers in the proof of Proposition \ref{prop:geomstruct}, $J_{\Tbb}$-covering families are sets of $\Tbb$-provably functional formulae,
\[\left\{ \{\vec{x}_i \cdot \phi_i\} \xrightarrow{[\theta_i]} \{\vec{x} \cdot \phi\} \bigm\vert i \in I \right\},\]
such that $\phi \vdash_{\vec{x}} \bigvee_{i \in I} (\exists \vec{x}_i) \theta_i$ is provable in $\Tbb$.

The resulting topos $\Sh(\Ccal_{\Tbb},J_{\Tbb})$ is the classifying topos of $\Tbb$, because any geometric functor from $\Ccal_{\Tbb}$ to a Grothendieck topos $\Fcal$ has a unique factorization as the functor $\ell_{\Tbb}:\Ccal_{\Tbb} \to \Sh(\Ccal_{\Tbb},J_{\Tbb})$ followed by the inverse image of a geometric morphism.

\begin{rmk}
\label{rmk:othersyntax}
Note that while we describe $\Ccal_{\Tbb}$ as `the' syntactic category here, for a theory $\Tbb$ lying in a smaller fragment of geometric logic such as regular logic (in the sense that the geometric formulae appearing in the axioms of $\Tbb$ can be built from the strict subset of the available operations which are permitted in regular logic), we may construct a smaller syntactic category consisting of just the regular formulae-in-context. This will be a regular category in the sense of Definition \ref{dfn:regcoh}, and is the canonical regular category containing a model of $\Tbb$. It comes equipped with the corresponding Grothendieck topology. An important `conservativity' result in categorical logic is that the topos of sheaves on this regular site is equivalent to the topos of sheaves on the geometric syntactic site described above.
\end{rmk}

In using the geometric syntactic site, we will need the following result, whose proof is extracted from that of \cite[Theorem 6.1.3(i)]{TST} (which previously appeared in \cite[Theorem 2.2(i)]{universal}).
\begin{corollary}
The functor $\ell_{\Tbb}$ is closed under subobjects. That is, given any geometric formula-in-context $\{\vec{x} \cdot \phi\}$ and a subobject $S \hookrightarrow \ell_{\Tbb}(\{\vec{x} \cdot \phi\})$, there exists a geometric formula $\psi$ in the same context such that $S \cong \ell_{\Tbb}(\{\vec{x} \cdot \psi\})$.
\end{corollary}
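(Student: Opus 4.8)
The plan is to combine the subobject-lattice isomorphism from Proposition \ref{prop:subobj} with the description of the classifying topos as sheaves on the geometric syntactic site. The key observation is that the corollary is asking us to show that every subobject of a representable sheaf $\ell_{\Tbb}(\{\vec{x}\cdot\phi\})$ is again representable, and this is precisely the content of applying Proposition \ref{prop:subobj} to the geometric site $(\Ccal_{\Tbb},J_{\Tbb})$. So at the highest level, the proof is almost immediate: since $\Ccal_{\Tbb}$ is a geometric category (Proposition \ref{prop:geomstruct}) and $J_{\Tbb}$ is its canonical topology, Proposition \ref{prop:subobj} tells us that $\ell_{\Tbb}$ induces isomorphisms on subobject lattices, whence every subobject of $\ell_{\Tbb}(\{\vec{x}\cdot\phi\})$ is the image under $\ell_{\Tbb}$ of a subobject of $\{\vec{x}\cdot\phi\}$ in $\Ccal_{\Tbb}$.

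The remaining work is therefore purely about identifying what the subobjects of $\{\vec{x}\cdot\phi\}$ look like inside $\Ccal_{\Tbb}$ itself, and confirming that each arises from a geometric formula in the same context. First I would recall from the proof of Proposition \ref{prop:geomstruct} that the subobjects of $\{\vec{x}\cdot\phi\}$ are exactly the formulae-in-context $\{\vec{x}\cdot\psi\}$ with $\psi \vdash_{\vec{x}} \phi$ provable in $\Tbb$, with the monomorphism given by the $\Tbb$-provable equivalence class of $\psi \wedge (\vec{x}=\vec{x}')$; indeed the union of subobjects computation there already shows the subobject lattice of $\{\vec{x}\cdot\phi\}$ is the lattice of such $\psi$ ordered by provable entailment. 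Thus a subobject $S \hookrightarrow \ell_{\Tbb}(\{\vec{x}\cdot\phi\})$ corresponds under the isomorphism of Proposition \ref{prop:subobj} to a subobject $\{\vec{x}\cdot\psi\}$ of $\{\vec{x}\cdot\phi\}$, and applying $\ell_{\Tbb}$ gives $S \cong \ell_{\Tbb}(\{\vec{x}\cdot\psi\})$, with $\psi$ a geometric formula in the context $\vec{x}$, exactly as required.

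The only point needing care—and what I expect to be the main (if modest) obstacle—is checking that the hypotheses of Proposition \ref{prop:subobj} genuinely apply to the syntactic site, and in particular that $J_{\Tbb}$ really is $J_{can}$ on $\Ccal_{\Tbb}$. This was asserted when defining $J_{\Tbb}$, but it relies on the fact that in a geometric category the canonical topology is generated by the small-union covers, and on Lemma \ref{lem:Jclosed}'s characterization of $J_{can}$-closed sieves as those generated by a single monomorphism. I would verify that the $\ell_{\Tbb}$ appearing in this corollary is the same full and faithful functor (Yoneda followed by sheafification) to which Proposition \ref{prop:subobj} refers, so that its induced maps on subobject lattices are the isomorphisms in question. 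Once this identification is in place, the statement follows formally, with no further syntactic manipulation of formulae needed beyond recalling the description of monomorphisms in $\Ccal_{\Tbb}$.
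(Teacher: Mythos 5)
Your proposal is correct and follows essentially the same route as the paper: both identify subobjects of $\ell_{\Tbb}(\{\vec{x}\cdot\phi\})$ with $J_{\Tbb}$-closed sieves, apply Proposition \ref{prop:subobj} (valid since $\Ccal_{\Tbb}$ is geometric by Proposition \ref{prop:geomstruct}) to pull these back to subobjects in $\Ccal_{\Tbb}$, and then invoke the standard description of subobjects in the syntactic category as formulae $\{\vec{x}\cdot\psi\}$ with $\psi \vdash_{\vec{x}} \phi$ provable. The only cosmetic difference is that the paper sources that last description from Johnstone's Lemma D1.4.4(iv) rather than from the proof of Proposition \ref{prop:geomstruct}, which only sketches it.
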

\begin{proof}
Just as in any sheaf topos, the subobjects of a representable sheaf $\ell_{\Tbb}(\{\vec{x} \cdot \phi\})$ in $\Sh(\Ccal_{\Tbb},J_{\Tbb})$ correspond to $J_{\Tbb}$-closed sieves on $\{\vec{x} \cdot \phi\}$ in $\Ccal_{\Tbb}$. But since $\Ccal_{\Tbb}$ is a geometric category, these correspond to subobjects in $\Ccal_{\Tbb}$ by Proposition \ref{prop:subobj}, which to quote \cite[Lemma D1.4.4(iv)]{Ele} are represented by formulae-in-context of the form,
\[\{\vec{x} \cdot \psi[\vec{x}'/ \vec{x}]\} \xrightarrow{[\psi \wedge (\vec{x}' = \vec{x})]} \{\vec{x} \cdot \phi\},\]
where $\psi$ is a geometric formula such that $\psi \vdash_{\vec{x}} \phi$ is $\Tbb$-provable, and $\{\vec{x} \cdot \psi\} \leq \{\vec{x} \cdot \chi\}$ as subobjects if and only if $\psi \vdash_{\vec{x}} \chi$ is $\Tbb$-provable.
\end{proof}

\section{Geometric theories}
\label{sec:geometric}

Having established the basics of classifying toposes and syntactic sites, we examine in this part the classes of geometric theory which will be relevant to our investigation.

\subsection{Theories of presheaf type}
\label{ssec:preshtype}

A (geometric) theory $\Tbb$ is said to be a \textbf{theory of presheaf type} if its classifying topos $\Set[\Tbb]$ is equivalent to $\Setswith{\Dcal}$ for some small category $\Dcal$.\footnote{We write $\Dcal$ for various small categories in this section, including those underlying principal sites, to avoid confusion with the geometric syntactic categories $\Ccal_{\Tbb}$.}

We saw in Corollary \ref{crly:idem} of Chapter \ref{chap:TDMA} a special case of the fact that the category of points of $\Setswith{\Dcal}$ can be identified with the category $\Flat(\Dcal,\Set)$ of flat functors from $\Dcal$ to $\Set$. This is moreover equivalent to the \textbf{inductive completion} $\Ind(\Dcal\op)$ of $\Dcal\op$, which is to say the free cocompletion of $\Dcal\op$ with respect to filtered colimits, which is discussed and constructed explicitly in \cite[\S C4.2]{Ele}. Since it shall be relevant in Chapter \ref{chap:TSGT}, we observe that the categories which are equivalent to ones of the form $\Ind\Dcal$ for $\Dcal$ a small category are precisely the \textit{finitely accessible categories}, which is to say those having filtered colimits and a separating set of finitely presentable objects; \cite{Accessible} is a standard reference for accessible categories more generally.

We have also seen that $\Dcal$ can be recovered up to idempotent-completion as the opposite of the category of essential points of $\Setswith{\Dcal}$. However, if we are given only the category of points $\Ind(\Dcal\op)$ then we must identify the essential points as the \textbf{finitely presentable objects}, which is to say those objects $P$ in $\Ind(\Dcal\op)$ such that the representable functor $\Hom_{\Ind(\Dcal\op)}(P,-) : \Ind(\Dcal\op) \to \Set$ preserves filtered colimits.

Since points correspond to models of $\Tbb$ in $\Set$, we can make the further identification of $\Dcal\op$ with the \textbf{finitely presentable models} of $\Tbb$ in $\Set$, which have the corresponding defining property.

There is another way of identifying $\Dcal$ in the data associated to the classifying topos of $\Tbb$. Consider the presentation of this topos via the syntactic site $(\Ccal_{\Tbb},J_{\Tbb})$. By Proposition \ref{prop:subobj}, representable sheaves in $\Sh(\Ccal_{\Tbb},J_{\Tbb})$ are closed under subobjects and hence under retracts (up to isomorphism) and any projective indecomposable object is necessarily a retract of a representable. As such, we can identify the objects of $\Dcal$ with sheaves represented by geometric formulae-in-context $\ell_{\Tbb}(\{\vec{x} \cdot \phi\})$. These can be characterized intrinsically as follows:
\begin{dfn}[{\cite[Definition 6.1.9]{TST}}]
A geometric formula-in-context $\{\vec{x} \cdot \phi\}$ is said to be \textbf{$\Tbb$-irreducible} if and only if, for any $J_{\Tbb}$-covering family $\{\theta_i(\vec{x},\vec{x}_i) \mid i \in I\}$ of $\Tbb$-provably functional formulae, there exists some $i \in I$ and a $\Tbb$-provably functional formula $\theta'(\vec{x},\vec{x}_i)$ from $\{\vec{x} \cdot \phi\}$ to $\{\vec{x}_i \cdot \phi_i\}$ such that $\phi \vdash_{\vec{x}} (\exists \vec{x}_i) (\theta' \wedge \theta_i)$. In other words, one of the morphisms is a split epimorphism.
\end{dfn}

In particular, using Fact \ref{fact:normalform} to put $\phi$ into disjunctive normal form and considering the covering family of morphisms of the form,
\[\{\vec{x}_i \cdot (\exists \vec{y})\phi_i\} \xrightarrow{[\phi_i(\vec{x}) \wedge (\vec{x}_i=\vec{x})]} \{\vec{x} \cdot \bigvee_{i \in I}(\exists \vec{y})\phi_i\},\]
this means that any $\Tbb$-irreducible formula is provably equivalent to a formula involving only finite conjunctions and finitely many existential quantifications.

The $\Tbb$-model corresponding to a $\Tbb$-irreducible formula $\{\vec{x} \cdot \phi\}$ is the corresponding representable functor on the syntactic category. Taking a provably equivalent finite formula in the above sense therefore gives a `finite presentation' of the model, in a literal sense. This model $\Mbb$ will have the universal property that for any other $\Set$-model $\Mbb'$, we have:
\[ \Hom_{\Tbb\text{-mod}(\Set)}(\Mbb,\Mbb') \cong [[\vec{x} \cdot \phi]]_{\Mbb'}.\]
Expanded fully, this yields the definition of \textit{finitely presented model} in \cite[Definition 6.1.11(b)]{TST}. Passing via $\Dcal$, we have arrived at the equivalence between the category of \textit{finitely presentable} models of $\Tbb$ and the category of \textit{finitely presented} models of $\Tbb$ in $\Set$. 

It is worth noting that in the setting where $\Tbb$ has free models, we can treat the formula-in-context as a finite presentation in an algebraic sense, as noted in \cite[\S 9.4]{MakkaiReyes}: $\Mbb$ can be identified with the quotient of the free model on the finite set of typed variables $\vec{x}$ by the congruence generated by the formula $\phi$. This does not apply to a generic theory of presheaf type. The theories of ordered structures we shall consider in Section \ref{ssec:xmpl1} do not admit free structures, for instance, but the algebraic special case lends us enough intuition to concretely reconstruct an irreducible formula presenting a given finitely presentable model; as we shall see, this is the primary challenge when attempting to apply this theory. 

For the remainder of the theoretical section, we assume that we are provided with a functor (a choice of representing finitary formulae),
\[i: \Dcal \to \Ccal_{\Tbb}\]
which induces an equivalence $\Setswith{\Dcal} \simeq \Sh(\Ccal_{\Tbb},J_{\Tbb})$.

A consequence of the above investigation, which we make explicit because we will emulate it later, is the syntactic characterization of theories of presheaf type.
\begin{lemma}[{\cite[Corollary 6.1.10]{TST}}]
\label{lem:irreducible}
A geometric theory $\Tbb$ over a signature $\Sigma$ is of presheaf type if and only if every geometric formula over $\Sigma$ is $\Tbb$-provably equivalent to a disjunction of $\Tbb$-irreducible formulae.
\end{lemma}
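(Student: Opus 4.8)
The plan is to prove the biconditional in Lemma~\ref{lem:irreducible} by exploiting the equivalence between $\Tbb$ being of presheaf type and the representable sheaves $\ell_{\Tbb}(\{\vec{x} \cdot \phi\})$ being characterized syntactically via $\Tbb$-irreducibility. The key observation tying the two conditions together is that $\Tbb$-irreducible formulae correspond precisely to the indecomposable projective (equivalently, supercompact-and-projective) objects in $\Sh(\Ccal_{\Tbb},J_{\Tbb})$, and that a topos is a presheaf topos if and only if it has a separating set of such objects (by Lemma~\ref{lem:idempotent} and the characterization in Theorem~\ref{thm:point}).

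First I would prove the forward direction. Suppose $\Tbb$ is of presheaf type, so $\Sh(\Ccal_{\Tbb},J_{\Tbb}) \simeq \Setswith{\Dcal}$ for some small $\Dcal$, with the representing functor $i:\Dcal \to \Ccal_{\Tbb}$ picking out irreducible formulae. Given an arbitrary geometric formula-in-context $\{\vec{x} \cdot \phi\}$, I consider the representable sheaf $\ell_{\Tbb}(\{\vec{x} \cdot \phi\})$. Since $\Setswith{\Dcal}$ is locally connected (indeed every object decomposes as a coproduct of indecomposable projectives, which here are retracts of the objects $\ell_{\Tbb}(i(D))$), the object $\ell_{\Tbb}(\{\vec{x} \cdot \phi\})$ is covered by a jointly epic family of indecomposable projectives, each of which is (a retract of, hence by closure under subobjects from the Corollary above) a representable $\ell_{\Tbb}(\{\vec{x}_j \cdot \psi_j\})$ with $\psi_j$ irreducible. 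Translating this covering back through Fact~\ref{fact1} and the description of $J_{\Tbb}$-covering families in terms of $\Tbb$-provably functional formulae, the joint epimorphism becomes exactly the provable sequent $\phi \vdash_{\vec{x}} \bigvee_j (\exists \vec{y}_j)\theta_j$, exhibiting $\phi$ as a disjunction of (images of) irreducible formulae. Using the normal-form reductions from Fact~\ref{fact:normalform} and the characterization of $\Tbb$-irreducible formulae, this disjunction can be taken to consist of irreducible formulae in the same context.

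For the converse, suppose every geometric formula is $\Tbb$-provably equivalent to a disjunction of $\Tbb$-irreducible formulae. I would let $\Dcal$ be the full subcategory of $\Ccal_{\Tbb}$ on the $\Tbb$-irreducible formulae-in-context. The hypothesis says precisely that every object $\{\vec{x} \cdot \phi\}$ of $\Ccal_{\Tbb}$ is covered, in the $J_{\Tbb}$-topology, by the irreducible formulae mapping into it; since $\Ccal_{\Tbb}$ is separating in $\Sh(\Ccal_{\Tbb},J_{\Tbb})$ and each irreducible object maps to an indecomposable projective (a split epimorphism out of it splits by the very definition of $\Tbb$-irreducibility, so $\ell_{\Tbb}$ sends it to a projective object, and irreducibility forces indecomposability), the representables $\ell_{\Tbb}(\{\vec{x} \cdot \phi\})$ for $\phi$ irreducible form a separating set of indecomposable projectives. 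By the Comparison Lemma the topos is equivalent to $\Sh(\Dcal, J_{can}|_{\Dcal})$; the fact that every object of $\Dcal$ is projective forces this induced topology to be trivial, so $\Sh(\Ccal_{\Tbb},J_{\Tbb}) \simeq \Setswith{\Dcal}$ and $\Tbb$ is of presheaf type.

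The main obstacle will be the careful bookkeeping in translating between the syntactic and semantic descriptions: specifically, verifying that $\Tbb$-irreducibility of $\{\vec{x} \cdot \phi\}$ is genuinely equivalent to $\ell_{\Tbb}(\{\vec{x} \cdot \phi\})$ being an indecomposable projective object of the classifying topos, and that the $J_{\Tbb}$-covering families translate faithfully into provable disjunction sequents of the required form. The definition of $\Tbb$-irreducibility is phrased so that a covering family must contain a split epimorphism, which is exactly the projectivity-plus-indecomposability condition; making this correspondence precise, and ensuring the normal-form manipulations preserve the same context, is where the real work lies, though each individual step follows from results already established (Proposition~\ref{prop:subobj}, Fact~\ref{fact1}, and the closure-under-subobjects Corollary). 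I would take care to note that this is essentially a repackaging of \cite[Corollary 6.1.10]{TST}, so completeness of the argument rests on citing the syntactic machinery already developed rather than reproving it.
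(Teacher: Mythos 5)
You are proving a statement that the thesis itself does not prove: Lemma~\ref{lem:irreducible} is quoted from Caramello (\cite[Corollary 6.1.10]{TST}) and presented as ``a consequence of the above investigation,'' so the comparison here is with the intended argument, which your skeleton largely reproduces: in the forward direction, indecomposable projectives are retracts of representables, hence (by closure of the image of $\ell_{\Tbb}$ under subobjects) representable by $\Tbb$-irreducible formulae, and the covering of $\ell_{\Tbb}(\{\vec{x}\cdot\phi\})$ by them translates through Fact~\ref{fact1} into a provable sequent $\phi \vdash_{\vec{x}} \bigvee_j (\exists \vec{x}_j)\theta_j$; in the converse, density of the irreducibles plus triviality of the induced topology and the Comparison Lemma give the presheaf presentation. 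Up to the sentence ending ``\ldots(images of) irreducible formulae,'' and throughout your converse, this is correct.

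The genuine gap is your final move in the forward direction: ``this disjunction can be taken to consist of irreducible formulae in the same context.'' That upgrade is false, and no normal-form manipulation (Fact~\ref{fact:normalform} only produces regular formulae) can repair it. The disjuncts $(\exists\vec{x}_j)\theta_j$ are \emph{images} of irreducible formulae, i.e.\ quotients of indecomposable projectives, and indecomposable projectives are not closed under quotients --- in contrast with supercompact objects, whose closure under quotients (Lemma~\ref{lem:closed}) is precisely what makes the literal same-context statement of Theorem~\ref{thm:scompform} work. Concretely, take $\Tbb$ to be the theory of abelian groups, which is algebraic and hence of presheaf type, with classifying topos $[\mathbf{Ab}_{fp},\Set]$. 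The formula $\{x \cdot (\exists y)(x = y + y)\}$ is interpreted as the subfunctor $N \mapsto 2N$ of the underlying-set functor $U = \Hom(\Zbb,-)$. Its indecomposable projective subobjects are exactly the torsion subfunctors $N \mapsto N[n] = \{ z \in N \mid nz = 0\}$ for odd $n$ (a representable subfunctor $\Hom(A,-) \hookrightarrow U$ forces $A$ cyclic with chosen generator $a$, and landing in $2(-)$ forces $a \in 2A$, hence $A \cong \Zbb/n$ with $n$ odd), and their union is the odd-torsion subfunctor, which at $N = \Zbb$ is $\{0\} \neq 2\Zbb$. So this formula is \emph{not} $\Tbb$-provably equivalent to any same-context disjunction of $\Tbb$-irreducible formulae, even though $\Tbb$ is of presheaf type. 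The statement you should be proving --- and what \cite[Corollary 6.1.10]{TST} actually asserts --- is the one you reach a sentence earlier: every geometric formula $\{\vec{x}\cdot\phi\}$ is $\Tbb$-provably equivalent to a disjunction $\bigvee_j(\exists\vec{x}_j)\theta_j$ of existentially quantified $\Tbb$-provably functional formulae $[\theta_j]$ from $\Tbb$-irreducible formulae $\{\vec{x}_j\cdot\psi_j\}$ to $\{\vec{x}\cdot\phi\}$; the Lemma's wording is an informal paraphrase of this. Deleting the upgrade step (and correcting the minor slip that objects of a presheaf topos decompose as coproducts of indecomposables, not of indecomposable \emph{projectives} --- you only need the jointly epic family, which is fine) leaves a correct proof.
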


\subsection{Supercompactly generated theories}

We shall characterize the theories classified by supercompactly generated toposes both as quotients of theories of presheaf types and in syntactic terms.

\subsubsection*{Quotient theory characterization}

Let $\Tbb$ be a theory of presheaf type, classified by a topos $\Setswith{\Dcal}$ as in the last subsection. In \cite[Chapter 8]{TST}, Caramello gives a detailed presentation of the relationship between Grothendieck topologies on the category $\Dcal$ and quotients of the theory $\Tbb$. In particular, Theorem 8.1.3 of \cite{TST} tells us that $\Sh(\Dcal,J)$ classifies the \textit{$J$-homogeneous $\Tbb$-models} across Grothendieck toposes $\Fcal$, while Theorem 8.1.8 guarantees that there is a unique (up to syntactic equivalence) geometric quotient theory $\Tbb'$ of $\Tbb$ whose models coincide with the $J$-homogeneous ones, which Caramello goes on to abstractly derive. As a particular example, she observes in Remark 8.1.9 that the topos of sheaves for the atomic topology on $\Dcal$, when $\Dcal$ satisfies the right Ore condition, corresponds to the Booleanization of any theory classified by $\Setswith{\Dcal}$. We do not reproduce her developments in full; instead, we directly apply her Theorem 8.1.12 to the special case of the principal sites introduced in the last chapter.

\begin{prop}
\label{prop:quotienttheory}
Let $\Tbb$ be a theory of presheaf type and let $\Tcal$ be a stable collection of morphisms in the category $\Dcal$ which is dual to the category of finitely presentable $\Tbb$-models in $\Set$. Then the quotient of $\Tbb$ which is classified by $\Sh(\Dcal,J_{\Tcal})$ is obtained by adding all axioms of the form
\[ (\psi \vdash_{\vec{y}} (\exists \vec{x})\theta), \]
where $[\theta] : \{\vec{x} \cdot \phi\} \to \{\vec{y} \cdot \psi\}$ is a morphism in $\Ccal_{\Tbb}$ corresponding via the functor $i:\Dcal \to \Ccal_{\Tbb}$ to a morphism in $\Tcal$ (so $\phi(\vec{x})$ and $\psi(\vec{y})$ are formulae over the signature of $\Tbb$ presenting $\Tbb$-models in $\Dcal$).
\end{prop}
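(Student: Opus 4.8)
The plan is to apply Caramello's Theorem 8.1.12 from \cite{TST}, which describes the quotient theory associated to an arbitrary Grothendieck topology $J$ on $\Dcal$ in terms of sieves, and to specialize it to the case where $J = J_{\Tcal}$ is a principal topology. The general theorem produces, for each $J$-covering sieve, an axiom asserting that the formula representing the codomain is covered (in the disjunctive sense) by the formulae representing the domains of the morphisms in the sieve, glued along the provably-functional formulae $[\theta]$ corresponding to those morphisms. The whole point of specializing to a principal topology is that every covering sieve is generated by a \emph{single} morphism in $\Tcal$, so the infinitary disjunction collapses to a single existential formula and the axioms take the stated simple form $\psi \vdash_{\vec{y}} (\exists \vec{x})\theta$.

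First I would recall, via Fact \ref{fact1} and the identification $i:\Dcal \to \Ccal_{\Tbb}$, how covering sieves in $(\Dcal, J_{\Tcal})$ translate into covering families in $\Ccal_{\Tbb}$: a $J_{\Tcal}$-covering sieve on an object $\{\vec{y}\cdot\psi\} = i(D)$ is, by Definition \ref{dfn:stable} and the definition of the principal topology, one that contains a single $\Tcal$-morphism $f:C \to D$, which $i$ sends to a $\Tbb$-provably functional formula $[\theta]:\{\vec{x}\cdot\phi\}\to\{\vec{y}\cdot\psi\}$. Next I would invoke Caramello's characterization of the quotient: the topos $\Sh(\Dcal, J_{\Tcal})$ classifies exactly those $\Tbb$-models in which every $J_{\Tcal}$-covering sieve is sent to a jointly epimorphic family, and this condition is axiomatized by requiring, for each such generating morphism, that the interpretation of $\psi$ factor through the image of $\theta$ — which is precisely the sequent $\psi \vdash_{\vec{y}} (\exists\vec{x})\theta$. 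Because the $\Tcal$-morphisms generate the topology (and by the remarks following Proposition \ref{prop:representable}, it suffices to impose the covering condition on generating principal sieves), imposing these axioms for all $\Tcal$-morphisms is both necessary and sufficient.

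The verification then amounts to two matching directions. For soundness, I would check that every model satisfying the added axioms is $J_{\Tcal}$-homogeneous, i.e.\ sends $J_{\Tcal}$-covering sieves to jointly epic families; this is immediate once the axioms force the single generating morphism of each principal covering sieve to become a cover, since a sieve containing a cover is itself sent to a jointly epic family. For completeness, I would check conversely that any $J_{\Tcal}$-homogeneous model satisfies the axioms, which is the definitional unwinding of ``the image of $\theta$ is all of $\psi$''. Appealing to the uniqueness (up to syntactic equivalence) of the quotient classified by $\Sh(\Dcal, J_{\Tcal})$ guaranteed by Caramello's Theorem 8.1.8, the theory so axiomatized must be the desired quotient.

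The main obstacle I anticipate is purely bookkeeping rather than conceptual: one must be careful that the functor $i:\Dcal \to \Ccal_{\Tbb}$ is only defined up to the choice of representing $\Tbb$-irreducible formulae, so a single $\Tcal$-morphism corresponds to a $\Tbb$-provable equivalence class $[\theta]$ and not a literal formula, and one must confirm that the axiom $\psi \vdash_{\vec{y}} (\exists\vec{x})\theta$ is independent (up to $\Tbb$-provable equivalence) of this choice. A secondary subtlety is ensuring that restricting attention to the \emph{generating} principal covering sieves, rather than all $J_{\Tcal}$-covering sieves, genuinely captures the full topology — this is exactly where the stability axioms of Definition \ref{dfn:stable} and the supercompactness of the representables (Proposition \ref{prop:representable}) are used, so I would cite those to close the gap rather than re-deriving the sieve-theoretic machinery.
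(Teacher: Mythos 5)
Your proposal is correct and follows essentially the same route as the paper: the paper offers no separate proof of this proposition, deriving it exactly as you do by specializing Caramello's Theorem 8.1.12 (together with Theorems 8.1.3 and 8.1.8 on $J$-homogeneous models and uniqueness of the quotient) to principal topologies, where every covering sieve contains a single $\Tcal$-morphism and the axioms collapse to $\psi \vdash_{\vec{y}} (\exists\vec{x})\theta$. Your additional attention to the bookkeeping issues (well-definedness up to $\Tbb$-provable equivalence of $[\theta]$, and sufficiency of the generating principal sieves via the stability axioms) is a sensible fleshing-out of details the paper leaves implicit.
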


Note that this is a natural extension of the atomic case mentioned by Caramello \cite[Corollary 8.1.14]{TST}.

Proposition \ref{prop:quotienttheory} is a relative characterization of theories with supercompactly generated classifying toposes, since we can present any such topos with a principal site to which the Proposition may be applied, thanks to the existence of generic theories, cf.\ Section \ref{ssec:generic} below.

\subsubsection*{Syntactic characterization}

By Lemma \ref{lem:closed} in Chapter \ref{chap:sgt}, a Grothendieck topos is supercompactly generated if and only if every object is covered by its supercompact subobjects. As we have seen, the geometric syntactic category of a theory $\Tbb$ forms a full, generating subcategory of the classifying topos of $\Tbb$ which, by Proposition \ref{prop:subobj}, is closed under subobjects. As such, a classifying topos of $\Tbb$ which is supercompactly generated necessarily has enough \textit{representable} supercompact objects. We thus arrive at the following definition:
\begin{dfn}[{\cite[Definition 3.1]{SCCT}}]
\label{dfn:Tscompact}
Let $\Tbb$ be a theory over a signature $\Sigma$. A geometric formula-in-context $\phi(\vec{x})$ over $\Sigma$ is \textbf{$\Tbb$-supercompact} if whenever a sequent of the form
\[ \phi(\vec{x}) \vdash_{\vec{x}} \bigvee_{i \in I} \psi_i(\vec{x}),\]
is provable in $\Tbb$, the sequent
\[ \phi(\vec{x}) \vdash_{\vec{x}} \psi_i(\vec{x})\]
is provable in $\Tbb$ for some $i \in I$.
\end{dfn}

\begin{thm}
\label{thm:scompform}
A geometric theory $\Tbb$ over a signature $\Sigma$ has a supercompactly generated classifying topos if and only if every geometric formula over $\Sigma$ is $\Tbb$-provably equivalent to a disjunction of $\Tbb$-supercompact formulae. We naturally call such a theory \textbf{supercompactly generated}.
\end{thm}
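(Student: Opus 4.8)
The plan is to prove both directions by relating the abstract criterion from Lemma~\ref{lem:closed} (every object is covered by its supercompact subobjects) to the syntactic statement about $\Tbb$-supercompact formulae, passing through the syntactic site $(\Ccal_{\Tbb}, J_{\Tbb})$ and the fact (Proposition~\ref{prop:subobj}) that subobjects of representables are again represented by formulae-in-context.

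First I would handle the ``only if'' direction. Suppose $\Sh(\Ccal_{\Tbb},J_{\Tbb})$ is supercompactly generated. Given an arbitrary geometric formula-in-context $\{\vec{x} \cdot \phi\}$, its representable sheaf $\ell_{\Tbb}(\{\vec{x}\cdot\phi\})$ must be covered by its supercompact subobjects. By the Corollary following Proposition~\ref{prop:geomstruct} (closure of $\ell_{\Tbb}$ under subobjects) each such subobject is represented by some $\{\vec{x}\cdot\psi_i\}$ with $\psi_i \vdash_{\vec{x}} \phi$, and the covering condition translates, via Fact~\ref{fact1} relating jointly epimorphic sieves to $J_{\Tbb}$-covering sieves, into the provability of $\phi \vdash_{\vec{x}} \bigvee_{i} \psi_i$; together with $\psi_i \vdash_{\vec x}\phi$ this gives a $\Tbb$-provable equivalence of $\phi$ with $\bigvee_i \psi_i$. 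The key sub-step is then to check that each $\psi_i$, being supercompact \emph{as an object} of the topos, is a $\Tbb$-supercompact \emph{formula} in the sense of Definition~\ref{dfn:Tscompact}: an entailment $\psi_i \vdash_{\vec{x}} \bigvee_{j} \chi_j$ expresses a cover of $\ell_{\Tbb}(\{\vec{x}\cdot\psi_i\})$ by the subobjects $\ell_{\Tbb}(\{\vec{x}\cdot(\psi_i\wedge\chi_j)\})$, and supercompactness of the object forces one inclusion to be an isomorphism, which via Proposition~\ref{prop:subobj} means $\psi_i \vdash_{\vec{x}} \chi_j$ is provable for some $j$.

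Conversely, for the ``if'' direction, I would assume every geometric formula is $\Tbb$-provably equivalent to a disjunction of $\Tbb$-supercompact formulae and show the classifying topos is supercompactly generated. The representables $\ell_{\Tbb}(\{\vec{x}\cdot\phi\})$ form a separating family in $\Sh(\Ccal_{\Tbb},J_{\Tbb})$, so by Lemma~\ref{lem:closed} it suffices to exhibit each as a union of its supercompact subobjects. Writing $\phi$ as $\bigvee_i \psi_i$ with each $\psi_i$ $\Tbb$-supercompact, the subobjects $\ell_{\Tbb}(\{\vec{x}\cdot\psi_i\})$ jointly cover $\ell_{\Tbb}(\{\vec{x}\cdot\phi\})$ (again by the provable disjunction and Fact~\ref{fact1}), and the reverse of the argument above shows that a $\Tbb$-supercompact formula represents a supercompact object: any jointly epic family of subobjects of $\ell_{\Tbb}(\{\vec{x}\cdot\psi_i\})$ corresponds, through closure under subobjects and Proposition~\ref{prop:subobj}, to an entailment $\psi_i \vdash_{\vec{x}} \bigvee_j \chi_j$, whence $\psi_i \vdash_{\vec{x}} \chi_j$ for some $j$, giving an isomorphism in the family. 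Thus the supercompact representables separate, and the topos is supercompactly generated.

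The main obstacle I anticipate is the careful bookkeeping in the translation between the logical and categorical sides: making precise that a $J_{\Tbb}$-covering family of subobjects of a representable corresponds exactly to a provable disjunctive entailment over the same context, and that ``being a supercompact object'' and ``being a $\Tbb$-supercompact formula'' match up on the nose. This rests entirely on Proposition~\ref{prop:subobj} (subobjects of representables are representable and the subobject lattices are preserved) and the closure of $\ell_{\Tbb}$ under subobjects, so the proof should be short modulo invoking those results; the delicate point is ensuring the disjunctions of subobjects are \emph{small} (so that the supercompactness condition, which concerns arbitrary jointly epic families, is genuinely equivalent to the syntactic condition on disjunctions), which is guaranteed by well-poweredness of $\Ccal_{\Tbb}$ from Proposition~\ref{prop:geomstruct} together with the normal-form result Fact~\ref{fact:normalform}.
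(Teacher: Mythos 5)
Your proposal is correct and follows essentially the same route as the paper's proof: both reduce the statement to the observation that, since the syntactic category is closed under subobjects in the classifying topos (Proposition~\ref{prop:subobj}), the representable $\ell_{\Tbb}(\{\vec{x}\cdot\phi\})$ is supercompact exactly when $\{\vec{x}\cdot\phi\}$ is a $\Tbb$-supercompact formula, via the subobject characterization of supercompactness. The paper states this more tersely, but the content — including the translation of covering sieves of subobjects into provable disjunctive entailments — is the same.
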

\begin{proof}
By the subobject characterization of supercompact objects of Lemma \ref{lem:scompact}, a representable object is supercompact in $\Sh(\Ccal_{\Tbb},J_{\Tbb})$ if and only its preimage along $\ell_{\Tbb}$ is supercompact, which exactly corresponds to the formula-in-context being $\Tbb$-supercompact. Thus there are enough of these if and only if every geometric formula-in-context is $\Tbb$-provably equivalent to a disjunction of $\Tbb$-supercompact formulae.
\end{proof}

\section{Applications}
\label{sec:apply}

Now we shall present some concrete applications of the theory above at various levels of generality.

It will be useful to explicitly recall from Definition \ref{dfn:stable} and Lemma \ref{lem:pbstable} of the last chapter that a class $\Tcal$ of morphisms in $\Ccal$ is called \textbf{stable} if it satisfies the following four conditions:
\begin{enumerate}
	\item $\Tcal$ contains all identities.
	\item $\Tcal$ is closed under composition.
	\item For any $f:C\to D$ in $\Tcal$ and any morphism $g$ in $\Ccal$ with codomain $D$, there exists a commutative square,
	\begin{equation}
	\begin{tikzcd}
	A \ar[r, "f'"] \ar[d, "g'"'] & B \ar[d, "g"]\\
	C \ar[r, "f"'] & D
	\end{tikzcd}
	\end{equation}
	in $\Ccal$ with $f'\in \Tcal$.
	\item Given any morphism $f$ of $\Ccal$ such that $f \circ g \in \Tcal$ for some morphism $g$ of $\Ccal$, we have $f \in \Tcal$.
\end{enumerate}

\subsection{The generic cases}
\label{ssec:generic}

First, let $\Dcal$ be any idempotent complete small category. Recall from \cite[Theorem 2.1.11]{TST} that we can view $\Dcal$ as a syntactic category for the theory $\Tbb_{\Dcal}$ of \textbf{flat functors on $\Dcal$}. This theory is defined over the signature $\Sigma_{\Dcal}$ having:
\begin{itemize}
	\item a sort $\name{C}$ for each object $C$ in $\Dcal$,
	\item a function symbol $\name{f}:\name{C} \to \name{D}$ for each morphism $f:C \to D$ in $\Dcal$, and
	\item no relation symbols.
\end{itemize}
The axioms of $\Tbb_{\Dcal}$ are of the form,
\begin{gather}
  \label{seq:id}
  \top \vdash_{x:\name{C}} \name{\id_C}(x) = x\\
  \label{seq:triangle}
  \top \vdash_{x} \name{h}(\name{g}(x)) = \name{f}(x)\\
  \label{seq:inhabited}
  \top \vdash_{[]} \bigvee_{C \in \ob(\Dcal)} (\exists x:\name{C}) \top\\
  \label{seq:span}
  \top \vdash_{y_1:\name{C_1}, \, y_2:\name{C_2}} \bigvee_{C_1 \xleftarrow{p_1} C \xrightarrow{p_2} C_2} (\exists x:\name{C})\left( (\name{p_1}(x) = y_1) \wedge (\name{p_2}(x) = y_2) \right)\\
  \label{seq:equalize}
  \name{f}(x) = \name{g}(x) \vdash_{x:\name{C}} \bigvee_{B \xrightarrow{h} C, f \circ h = g \circ h} (\exists w:\name{B})\left( \name{h}(w) = x \right),
\end{gather}
where we have an axiom of the form \eqref{seq:id} for each object $C$ of $\Dcal$, an axiom of the form \eqref{seq:triangle} for each commuting triangle $f = g \circ h$ in $\Dcal$, one of the form \eqref{seq:span} for each pair of objects $C_1,C_2$ of $\Dcal$, and an axiom of the form \eqref{seq:equalize} for each pair of parallel morphisms $f,g:C \rightrightarrows D$ in $\Dcal$.

\begin{rmk}
Some of these axioms will typically be redundant or reducible depending on the structure of the category. We can omit cases of axiom \eqref{seq:triangle} where $g$ or $h$ is an identity morphism, for example.

If $\Dcal$ has a terminal object, then the disjunction in \eqref{seq:inhabited} can be reduced to the singleton consisting of that object. Similarly, if the product of $C_1$ and $C_2$ exists, then the disjunction in \eqref{seq:span} can be reduced to the singleton consisting of the product span, and if the equalizer of $f,g$ exists, the disjunction in \eqref{seq:equalize} can be reduced to a singleton. More generally, given a final subcategory of either $\Dcal$, the category of spans over $(C_1,C_2)$, or the category of equalizers of $(f,g)$, the objects of this subcategory index a smaller, $\Tbb$-provably equivalent disjunction for the right-hand side of \eqref{seq:inhabited}, \eqref{seq:span} and \eqref{seq:equalize}, respectively.
\end{rmk}

Syntactically, we identify each object $C$ of $\Dcal$ with the formula-in-context $\{x:\name{C} \cdot \top\}$, and each morphism $f:C \to C'$ with the $\Tbb_{\Dcal}$-provable equivalence class of formulae represented by $\name{f}(x) = y$, where of course $x:\name{C}$ and $y:\name{C'}$.

As such, a principal site $(\Dcal,J_{\Tcal})$ classifies the theory obtained by adding to $\Tbb_{\Dcal}$ the axioms
\[\top \vdash_{y:\name{C'}} (\exists x:\name{C})(\name{f}(x) = y),\]
ranging over $f \in \Tcal$.

We can in principle achieve much simpler, more elegant theories by taking advantage of the existence of simpler (presentations of) theories of presheaf type, which is what we shall attempt to do in the remainder of the chapter.

\subsection{Essentially Algebraic theories}
\label{ssec:essalg}

Recall that a single-sorted theory $\Tbb$ over a signature $\Sigma$ is called \textit{essentially algebraic} if it can be expressed in terms of partial operations and equations between these. When all of the partial operations are total, we obtain \textit{algebraic theories} as a special case. When the operations of an essentially algebraic theory are finitary, its models are classified by a category with finite limits, in the sense that we may construct a \textit{cartesian syntactic category} for such a theory $\Tbb$ (cf.\ Remark \ref{rmk:othersyntax}), which we shall denote $\Bcal_{\Tbb}$ to avoid confusing it with the geometric syntactic category introduced earlier, with the universal property that models of the theory in any category with finite limits correspond to limit-preserving functions from the $\Bcal_{\Tbb}$. 

Any essentially algebraic theory $\Tbb$ is of presheaf type; indeed, in the finitary case the category $\Dcal$ of finitely presentable models is exactly $\Bcal_{\Tbb}\op$. For example, letting $\Grp_{fp}$ be the category of finitely presentable groups, the topos $[\Grp_{fp},\Set]$ classifies the theory of groups. The usual signature for this theory, consisting of only one sort, one constant symbol and a pair of function symbols, is evidently much smaller than the canonical signature which carries a sort for each finitely presented group!

Let $\Tbb$ be any theory. We write $\Tbb\text{-mod}_{fp}(\Set)$ for its category of finitely presentable models in $\Set$ and define $\Dcal_{\Tbb} := \Tbb\text{-mod}_{fp}(\Set)\op$. Since any finite colimit of finitely presentable objects is finitely presentable, (cf. \cite[Proposition 1.3]{Accessible}) when $\Tbb$ is an essentially algebraic theory, $\Dcal_{\Tbb}$ always has all finite limits, as expected. Thus a stable class in $\Dcal_{\Tbb}$ consists of a class of morphisms in $\Tbb\text{-mod}_{fp}(\Set)$ which contains identities and is stable under composition, pushouts and left factors.

\begin{rmk}
These requirements bear a striking (and perhaps misleading) resemblance to the \textit{saturated classes} found in \cite[Chapter 1]{Osmond} (alternatively, in \cite{spectra1}), which generate the left class in an orthogonal factorization system on the larger category $\Tbb\text{-mod}(\Set)$.

A class $\Vcal$ of morphisms in $\Tbb\text{-mod}_{fp}(\Set)$ is said to be a \textbf{saturated class} if:
\begin{itemize}
	\item $\Vcal$ contains isomorphisms;
	\item $\Vcal$ is stable under composition;
	\item $\Vcal$ is right-cancellative, in the sense that given a commuting triangle
	\[ \begin{tikzcd}
		X \ar[dr, "g"'] \ar[rr, "f"] & & Y, \\
		& Z \ar[ur, "h"'] &
	\end{tikzcd}\]
	where $f$ and $g$ are in $\Vcal$, $h$ also lies in $\Vcal$;
	\item $\Vcal$ is closed under finite colimits in $\Tbb\text{-mod}_{fp}(\Set)^2$;
	\item $\Vcal$ is closed under pushouts along arbitrary morphisms in $\Tbb\text{-mod}_{fp}(\Set)$.
\end{itemize}

Unfortunately, the right-cancellation property, besides being slightly too strong, is on the wrong side; a saturated class in the above sense `behaves like a class of epimorphisms', whereas we require a class that `behaves like a pushout-stable class of monomorphisms'.
\end{rmk}

The fact that $\Dcal_{\Tbb}$ has finite limits means that any principal topology on this category will yield a regular topos, since all of the representables will become regular objects. We include some amongst our examples anyway.

\subsection{Examples}
\label{ssec:xmpl1}

In each of the following examples, we apply Proposition \ref{prop:quotienttheory} to principal sites $(\Ccal,J_{\Tcal})$ for which the topos of presheaves on the underlying category $\Ccal$ classifies a known theory (simpler than the generic one described above).

\subsubsection{Theories of objects}
\label{sssec:objects}

Recall that the theory of objects $\Obb$ is the empty theory on a signature with a single sort and no relation or function symbols. It is classified by $[\FinSet,\Set]$, so $\Dcal_{\Obb} = \FinSet\op$. Let $\Tcal$ be a stable class in $\Dcal_{\Obb}$.

All of the monomorphisms in $\FinSet$, with the exception of those of the form $0 \hookrightarrow n$, are split monomorphisms, so $\Tcal\op$ must contain these; these constitute the stable class corresponding to the trivial topology. Moreover, $\Tcal\op$ contains $0 \hookrightarrow n$ for some $n>0$ if and only if it contains $0 \hookrightarrow 1$.

Axiom 4 of stable classes means that $\Tcal\op$ contains a morphism $m \to n$ of $\FinSet$ with $m>0$ if and only if $\Tcal\op$ contains the epic part of this morphism. If this epimorphism is non-trivial, in the sense that it identifies two elements $x \neq y$, we can take a pushout of this morphism along $m \to 2$ sending $x$ to $0$ and $y$ to $1$, so $2 \too 1$ is in $\Tcal\op$. From there, we may push out along a monomorphism $2 \hookrightarrow k + 1$ to produce an epimorphism $k+1 \too k$ identifying any given choice of elements. Since any non-trivial epimorphism in $\FinSet$ is a composite of such morphisms, it follows that $\Tcal\op$ contains all epimorphisms!

In summary, there are four possibilities for a stable class in $\Dcal_{\Obb}$:
\begin{enumerate}[label = {\alph*})]
	\item $\Tcal\op$ consists of just the split monomorphisms;
	\item $\Tcal\op$ contains all monomorphisms;
	\item $\Tcal\op$ contains the split monomorphisms and all epimorphisms;
	\item $\Tcal\op$ contains all morphisms.
\end{enumerate}
To identify the theories classified by the respective classes, we need to identify the formulae corresponding to the objects and morphisms. The set with $n$ elements has a finite presentation as the formula-in-context $\{x_0, \dotsc, x_{n-1} \cdot \top\}$, and the formula corresponding to a function $\sigma:m \to n$ is (recalling that the morphism in $\Dcal_{\Obb}$ goes in the opposite direction) is simply
\[x_0,\dotsc,x_{m-1},x'_0,\dotsc,x'_{n-1} \cdot \bigwedge_{i=0}^{n-1} (x'_{\sigma(i)} = x_i).\]
Our observations above demonstrate that it suffices to consider the morphisms $0 \hookrightarrow 1$ and $2 \too 1$, so that $\Sh(\Dcal_{\Obb},J_{\Tcal})$ respectively classifies:
\begin{enumerate}[label = {\alph*})]
	\item the theory of objects;
	\item the theory of inhabited objects, adding the axiom $\top \vdash_{[]} (\exists x) \top$;
	\item the theory of objects with at most one element, adding the axiom $\top \vdash_{x_0,x_1} (\exists x)(x = x_0 \wedge x = x_1)$;
	\item the theory of objects with exactly one element.
\end{enumerate}
Incidentally, observe that all of these are theories of presheaf type: (a) is the presheaf topos we started with, while (b), (c) and (d) are respectively classified by $[\FinSet_+,\Set]$, $[\bullet \to \bullet,\Set]$ and $\Set$, where $\FinSet_+$ is the category of inhabited finite sets.

Next, let $\Obb_*$ be the theory of \textbf{pointed objects}, which has a single sort and a single constant symbol, $0$. Letting $\FinSet_*$ be the category of finite pointed sets, we have that $\Dcal_{\Obb_*} = \FinSet_*\op$; note that this is distinct from $\FinSet_+$, since the distinguished point must be preserved by all morphisms. Without loss of generality we can take $0$ to be the distinguished point in an $n$-element set $\{0,\dotsc,n-1\}$; this covers all isomorphism classes of finite pointed sets. Let $\Tcal$ be a stable class in $\Dcal_{\Obb_*}$.

All monomorphisms in $\Dcal_{\Obb_*}$ split, so $\Tcal\op$ must contain them all. Thus, as above, we may reduce to considering which epimorphisms lie in $\Tcal\op$. Given any non-trivial epimorphism $g:n \to m$ in $\Tcal\op$, suppose $x \neq y$ are identified by $g$ and without loss of generality that $y \neq 0$. Then we can consider the mapping $n \to 2$ sending $x$ to $0$ and all other elements to $1$; the pushout of $g$ along this gives the morphism $2 \too 1$. Pushing out further and composing, we conclude that $\Tcal\op$ must contain all functions in which elements are successively identified with $0$, notably including $3 \too 1$. Observe that the mapping $3 \too 2$ mapping $1$ and $2$ to $1$ is a left factor of $3 \too 1$, and so must also be in $\Tcal\op$, despite not identifying elements with $0$. Pushing out this function and composing again, we conclude that $\Tcal\op$ must contain arbitrary epimorphisms.

In summary, we have just two principal topologies:
\begin{enumerate}[label = {\alph*})]
	\item $\Tcal\op$ consists of just the monomorphisms;
	\item $\Tcal\op$ contains all morphisms.
\end{enumerate}
To identify the corresponding theories, we must identify formulae presenting the finite pointed sets. A pointed set with $n + 1$ elements is the free pointed set on $n$ generators, since the free construction freely adds a distinguished point. Thus, $n$ is presented by the formula $\{x_1, \dotsc,x_{n-1} \cdot \top \}$. Accordingly, the $\Obb_*$-provably functional formula corresponding to a function $\sigma: n \to m$ is,
\[x_1,\dotsc,x_{n-1},x'_1,\dotsc,x'_{m-1} \cdot \bigwedge_{i = 1}^{n-1} (x'_{\sigma(i)} = x_i),\]
where any instances of $x_0$ on the right-hand side are substituted for the constant symbol $0$.

To determine the quotients, we need only identify the $\Obb_*$-provably functional formulae corresponding to the map $2 \to 1$, which is simply $x_1 \cdot x_1 = 0$. As such, we find that the corresponding toposes $\Sh(\Dcal_{\Obb_*},J_{\Tcal})$ respectively classify: 
\begin{enumerate}[label = {\alph*})]
	\item the theory of pointed objects;
	\item the theory of objects with exactly one element (obtained by adding $\top \dashv_x x = 0$).
\end{enumerate}
The latter topos is by inspection equivalent to $\Set$.

\begin{rmk}
While it has been excised from the account above, the translation from morphisms into formulae was very helpful in identifying the minimal classes involved here; for example, the formula corresponding to the function $3 \too 2$ described mapping $1$ and $2$ to $1$ is $x_1,x'_1,x'_2 : x_1 = x'_1 \wedge x_1 = x'_2$, which means that the theory classified by the topos of sheaves for the Grothendieck topology generated by that morphism is obtained by adding the axiom $\top \dashv_{x'_1,x'_2} (\exists x) x = x'_1 \wedge x = x'_2$. By inspection, this gives another presentation of the theory of objects with one element, so we deduced that $\Tcal\op$ must contain $2 \too 1$ if and only if it contained $3 \too 2$.
\end{rmk}

As a third example in the same vein, consider the theory $\Dbb$ of decidable objects, obtained from the theory of objects by adding a binary `apartness' relation ${\#}$ and the axioms,
\[(x \# x) \vdash_x \bot \hspace{1cm} \text{ and } \hspace{1cm} \top \vdash_{x,y} (x \# y)\vee (x = y).\]
By \cite[Proposition D3.2.7]{Ele}, this theory is classified by the topos of presheaves on the category $\Dcal_{\Dbb} := \FinSet_{\mathrm{inj}}\op$, where $\FinSet_{\mathrm{inj}}$ is the category of finite sets and injections.

This time we jump straight to the presentations of objects by formulae. The simplest option is to identify the $n$-element set with the formula-in-context
\[\left\{x_1,\dotsc,x_n \cdot \bigwedge_{1 \leq i < j \leq n}(x_i \# x_j) \right\},\]
while an injective function $\sigma:n \hookrightarrow m$ corresponds to the usual $\Dbb$ formula,
\[x_1,\dotsc,x_{n},x'_1,\dotsc,x'_{m} \cdot \left(\bigwedge_{k=1}^{n} (x'_{\sigma(k)} = x_k) \right) \wedge \left(\bigwedge_{1 \leq i < j \leq m}(x'_i \# x'_j) \right).\]
Thus, given a stable class $\Tcal$ of morphisms in $\Dcal_{\Dbb}$, the axiom which must be added to $\Dbb$ if $\sigma \in \Tcal\op$ can be reduced to,
\[ \bigwedge_{1 \leq i < j \leq n}(x_i \# x_j) \vdash_{\vec{x}} (\exists \vec{x}') \left(\bigwedge_{1 \leq i < j \leq m}(x'_i \# x'_j) \right), \]
or less formally, `any set of $n$ distinct elements can be extended to a set of $m$ distinct elements'.

Knowing this, it will come as no surprise that if $\Tcal\op$ contains a morphism $\sigma: n \hookrightarrow m$ then we can use stability axioms 3 and 4 to show that $\Tcal\op$ must contain all of the morphisms $n' \hookrightarrow m'$ with $n \leq n' \leq m' \leq m$. Moreover, no other morphisms are necessary besides identity morphisms, since the resulting collection of morphisms is clearly closed under axioms 1, 2 and 4, and given any morphism $n' \to m''$ with $n' \geq n$ and $m'' \geq m$, we can complete a stability square with an identity morphism.

As such, the stable classes in $\Dcal_{\Dbb}$ can be indexed by finite or countable subsets $S \subseteq \Nbb$, where $S$ is the collection of cardinalities $N$ such that $\Tcal\op$ contains no non-trivial covers of the set of size $N$. The class $\Tcal$ corresponding to $S$ produces a subtopos of $\Setswith{\Dcal_{\Dbb}}$ classifying a theory which we shall denote $\Dbb(S)$: the theory of `decidable objects such that any collection of $n$ distinct elements can be extended to a collection of $n^+$ distinct elements, where $n^+ := \inf \{N \in S \cup \{\infty\} \mid N \geq n\}$', or more succinctly, the theory of `decidable objects with $N$ elements, where $N \in S \cup \{\infty\}$'. With this notation, $\Dbb = \Dbb(\Nbb)$, while the other extreme case, $\Dbb(\emptyset)$ is the theory of infinite decidable sets discussed in \cite[Example D3.4.10]{Ele}.

\begin{lemma}
The classifying topos of $\Dbb(S)$ is a presheaf topos if and only if $S$ is infinite.
\end{lemma}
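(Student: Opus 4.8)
The plan is to determine, for each choice of $S \subseteq \Nbb$, whether the topos $\Sh(\Dcal_{\Dbb},J_{\Tcal})$ classifying $\Dbb(S)$ is a presheaf topos. By Lemma \ref{lem:irreducible}, this amounts to deciding whether $\Dbb(S)$ is of presheaf type, equivalently (by the discussion in Section \ref{ssec:preshtype}) whether the topos is generated by its indecomposable projective objects, or equivalently whether its category of points is finitely accessible with the representable $\Tcal$-objects recovered as the finitely presentable models. Since $\Sh(\Dcal_{\Dbb},J_{\Tcal})$ is a priori only supercompactly generated (and in fact, because $\Dcal_{\Dbb}$ has finite limits, regular), the question is whether the representable sheaves $\ell(n)$ remain \emph{projective} after sheafification — a presheaf topos is exactly one with enough indecomposable projectives, and by Lemma \ref{lem:idempotent} these are the representables up to idempotent completion.

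First I would handle the two extreme cases to build intuition: for $S = \Nbb$ the topology $J_{\Tcal}$ is trivial and the topos is $\Setswith{\Dcal_{\Dbb}}$, manifestly a presheaf topos; for $S = \emptyset$ (the theory of infinite decidable sets, i.e.\ the Schanuel topos), Example \ref{xmpl:nonpresh} already shows it is \emph{not} a presheaf topos, since no object of $\FinSet\op_{\mathrm{mono}} = \Dcal_{\Dbb}$ is projective there and the topos is non-degenerate. The task is then to interpolate between these. The key structural fact is that the models of $\Dbb(S)$ in $\Set$ are the decidable (i.e.\ ordinary) sets whose cardinality lies in $S \cup \{\infty\}$, with monomorphisms between them, and that the finitely presentable ones are exactly the finite sets of cardinality in $S$ (the finite models), together with any infinite models only if they happen to be finitely presentable — which they are not when $S$ is infinite, since then arbitrarily large finite models exist below them and a finitely presentable object cannot be a filtered colimit with no bound.

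The main argument splits on whether $S$ is infinite. If $S$ is infinite, I would show directly that $\Dbb(S)$ is of presheaf type by exhibiting enough $\Dbb(S)$-irreducible formulae: for each $n \in S$ the formula $\{x_1,\dotsc,x_n \cdot \bigwedge_{i<j}(x_i \# x_j)\}$ is $\Dbb(S)$-supercompact, and because $S$ is cofinal in $\Nbb$ (being infinite) I can realise any model as a filtered colimit of these finite ones, so every geometric formula is $\Dbb(S)$-provably equivalent to a disjunction of such irreducibles. Concretely, the category of finitely presentable models is the full subcategory of $\FinSet_{\mathrm{inj}}$ on cardinalities in $S$, and the topos of presheaves on its opposite is equivalent to $\Sh(\Dcal_{\Dbb},J_{\Tcal})$ by the Comparison Lemma, since the objects of cardinality not in $S$ are covered by (and hence redundant relative to) the larger ones available. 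Conversely, if $S$ is finite, let $N$ be its maximum (or $N = 0$ if $S = \emptyset$); then every model has cardinality either in $S$ or infinite, and the covering axioms force every finite set of size $\geq N$ to extend arbitrarily, so that the representables $\ell(n)$ for $n > N$ become non-projective after sheafification. Mirroring the non-projectivity argument of Example \ref{xmpl:nonpresh}, I would show that the maximal finite model (size $N$) admits no injection completing the relevant triangle into an infinite model over a common finite subobject, whence no object is indecomposable projective and the non-degenerate topos fails to be a presheaf topos.

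The hard part will be the finite case, specifically verifying cleanly that \emph{no} object of the topos is indecomposable projective, rather than merely that the evident representables fail to be. One must rule out that sheafification creates new projectives as retracts of representables, which requires the full-subcategory / idempotent-completion analysis of Lemma \ref{lem:idempotent} together with the observation (as in the second proof in Example \ref{xmpl:nonpresh}) that the category of points of $\Dbb(S)$ for finite $S$ has no nonempty full subcategory which is a groupoid of finitely presentable objects. I would therefore lean on the characterisation that a topos is of presheaf type if and only if its category of points is finitely accessible and the representables coincide, up to idempotent completion, with its finitely presentable objects; the finite-$S$ obstruction is precisely that the points (sets of cardinality in $S \cup \{\infty\}$, with monomorphisms) contain infinite objects that are not filtered colimits of the finitely presentable finite models in a way compatible with projectivity, breaking the required equivalence.
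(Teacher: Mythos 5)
Your infinite case is essentially the paper's argument: for $N \in S$ the only $J_{\Tcal}$-covering sieve on $N$ is the maximal one, so these objects are $J_{\Tcal}$-irreducible; when $S$ is infinite they cover every object of $\Dcal_{\Dbb}$, and the Comparison Lemma exhibits the topos as presheaves on the full subcategory they span.

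The finite case has a genuine gap. Your central claim --- that for finite $S$ ``no object is indecomposable projective'' --- is false whenever $S \neq \emptyset$. For each $N \in S$, membership of $S$ means precisely that the only $J_{\Tcal}$-covering sieve on $N$ is the maximal one; consequently sheafification does not disturb sections at $N$, the functor $\Hom(\ell(N),-) \cong (-)(N)$ preserves epimorphisms and coproducts, and $\ell(N)$ \emph{is} an indecomposable projective of $\Sh(\Dcal_{\Dbb},J_{\Tcal})$ (take $S = \{5\}$: the sheaf $\ell(5)$ is one). What fails for finite $S$ is that the indecomposable projectives do not form a \emph{separating} family, and the triangle-completion argument of Example \ref{xmpl:nonpresh} cannot show this: that argument needs models of arbitrarily large finite cardinality, which is exactly what finite $S$ forbids. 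Indeed, the triangle you propose to obstruct actually closes --- an embedding of a common finite subobject into an infinite model always extends along any embedding into the $N$-element model, since the infinite model has room.

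The paper closes the finite case with a counting argument about points, which your final paragraph gestures at without landing. Infinite models are never finitely presentable (by Example \ref{xmpl:nonpresh} for $\Dbb(\emptyset)$, together with closure of the infinite models under filtered colimits), so $\Dbb(S)$ has exactly $|S|$ finitely presentable models, yet at least $|S|+1$ non-isomorphic $\Set$-models. If the classifying topos were a presheaf topos, it would be presheaves on the opposite of its category of finitely presentable models (which here is already idempotent-complete: all idempotent injections of finite sets are identities), and its category of points would then be the Ind-completion of the category of finitely presentable models. But a filtered diagram of injections among finite sets of cardinality at most $\max S$ stabilizes, so this Ind-completion has only $|S|$ objects up to isomorphism --- a contradiction. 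The obstruction is purely one of accessibility: an infinite model cannot be a filtered colimit of models of bounded finite cardinality. Your phrase ``in a way compatible with projectivity'' points at the wrong issue; projectivity plays no role in this step.
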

\begin{proof}
If $S$ is infinite, then for every finite set $n$, there exists a minimal $N \in S$ with $n \leq N$, which is the largest number such that $n \hookrightarrow N$ is in $\Tcal\op$; the $N \in S$ therefore correspond to the $J_{\Tcal}$-irreducible objects in the sense of \cite[\S 8.2.2]{TST}, and there are enough of them to cover all of the object of $\Dcal_{\Dbb}$.

On the other hand, we know that none of the infinite decidable sets are finitely presentable in the category of $\Dbb(S)$-models in $\Set$ (for any $S$), since we proved this for $\Dbb(\emptyset)$ in Example \ref{xmpl:nonpresh} from the last chapter, and the category of infinite decidable sets is clearly closed under filtered colimits in the category of $\Dbb$-models. Thus, when $S$ is finite there are exactly $|S|$ finitely presentable models of $\Dbb(S)$, but at least $|S| + 1$ non-isomorphic points, which is more than the category of presheaves on any category with $|S|$ objects can have (in fact there is a proper class of points, but one too many is enough to accomplish the argument).
\end{proof}

\subsubsection{Boolean algebras}
\label{sssec:Boole}

Recall that classical Stone duality identifies the category of Boolean algebras with the dual of the category of profinite sets. In fact, the latter category is more commonly identified as the category of compact Hausdorff totally disconnected topological spaces, but the profinite set description is more useful for us because it identifies the category of Boolean algebras as the inductive completion of the category of finite Boolean algebras. As such, the theory of Boolean algebras is classified by the topos $\Setswith{\FinSet}$. For completeness, we include a presentation of the theory $\Bb$ of Boolean algebras which has one sort, two binary infix function symbols ${\cup}$ and ${\cap}$, a unary operation $\neg$ and two constant symbols $0$ and $1$ subject to:
\begin{align*}
\top & \vdash_{x,y,z} (x \cup y) \cup z = x \cup (y \cup z) &
\top & \vdash_{x,y,z} (x \cap y) \cap z = x \cap (y \cap z) \\
\top & \vdash_{x,y} x \cup y = y \cup x &
\top & \vdash_{x,y} x \cap y = y \cap x \\
\top & \vdash_{x} x \cup 1 = 1 &
\top & \vdash_{x} x \cap 0 = x \\
\top & \vdash_{x,y,z} x \cup (y \cap z) = (x \cup y) \cap (x \cup z) &
\top & \vdash_{x,y,z} x \cap (y \cup z) = (x \cap y) \cup (x \cap z) \\
\top & \vdash_{x} x \cup \neg x = 1 &
\top & \vdash_{x} x \cap \neg x = 0
\end{align*}
Obviously, we use the rounded cup and cap for the binary operations to distinguish them from the logical operations in our logical formalism.

The identification of the category of finite Boolean algebras with the dual of finite sets is straightforward, via powersets. In particular, writing $n$ for the set $\{0,\dotsc,n-1\}$ as before, the Boolean algebra $\Pcal(X)$ corresponds to the formula-in-context
\[\left\{x_0,\dotsc,x_{n-1} \cdot \left(\bigwedge_{0 \leq i < j \leq n-1} (x_i \cap x_j = 0)\right) \wedge \left( \bigcup_{i=0}^{n-1} x_i = 1\right) \right\},\]
where the large cup expression has the evident shorthand meaning, becoming $0$ when $n = 0$, while a function $\sigma:n \to m$ corresponds to the $\Bb$-provably functional formula,
\[x_0,\dotsc,x_{n-1},x'_0,\dotsc,x'_{m-1} \cdot \left(\bigwedge_{k=0}^{n-1} (x'_{\sigma(k)} = x_k)\right) \wedge \left(\bigwedge_{0 \leq i < j \leq n-1} (x_i \cap x_j = 0)\right) \wedge \left( \bigcup_{i=0}^{n-1} x_i = 1\right); \]
note that the expression is covariant with $\sigma$ here.

Let $\Dcal_{\Bb} := \FinSet$. Proceeding more or less dually to the `theory of objects' situation, we observe first that all epimorphisms in $\Dcal_{\Bb}$ split, so any stable class $\Tcal$ of morphisms in $\Dcal_{\Bb}$ contains them; by axiom 4, this implies that a morphism lies in $\Tcal$ if and only if its image does.

If $\Tcal$ contains any non-trivial monomorphism $n \hookrightarrow m$ with $0 < n < m$, then pulling back along a suitable morphism $2 \to m$, we conclude that the subobject classifier map $1 \hookrightarrow 2$ of $\FinSet$ (recalling that this category is an elementary topos) lies in $\Tcal$, whence every monomorphism does by further pullback. Finally, if $0 \hookrightarrow n$ is a member of $\Tcal$ for any $n$, then it is so for every $n$, but moreover $1 \hookrightarrow 2$ is a right factor of $0 \hookrightarrow 2$. Thus we somewhat disappointingly have only two possibilities:
\begin{enumerate}[label = {\alph*})]
	\item $\Tcal$ consists of just the split epimorphisms;
	\item $\Tcal$ contains all morphisms;
\end{enumerate}
these respectively classify:
\begin{enumerate}[label = {\alph*})]
	\item the theory of Boolean algebras;
	\item the theory of the trivial Boolean algebra.
\end{enumerate}
The latter topos is once again equivalent to $\Set$, and is the double-negation subtopos of $\Setswith{\FinSet}$.

Things get more interesting when we restrict the morphisms in the category by considering \textit{decidable} Boolean algebras, but we shall not present that example here. See \cite[Example D3.4.12]{Ele} for this example and the extremal case of the atomic topology on the resulting category.

\subsubsection{Ordered structures}
\label{sssec:order}

Consider the augmented\footnote{The ordinary simplex category $\Delta$ considered in Example \ref{xmpl:simplex} of the last chapter contains only the \textit{non-empty} finite ordinals.} simplex category $\Delta_0$, whose objects are finite ordinals,
\[ [n] = \{0, \dotsc, n-1\} \]
and whose morphisms are the order-preserving maps between these. This can equivalently be expressed as the category of finite total orders, which are models of the theory $\Tbb$ on a signature with a single sort and a single binary relation symbol subject to the axioms:
\begin{align*}
\top & \vdash_{x} x \leq x \\
\left(\exists y \right) \left((x \leq y) \wedge (y \leq z)\right) & \vdash_{x,z} x \leq z \\
(x \leq y) \wedge (y \leq x) & \vdash_{x,y} x=y \\
\top & \vdash_{x,y} (x \leq y) \vee (y \leq x).
\end{align*}

Note that this category doesn't have pullbacks, since although it has a terminal object, the product $[2] \times [2]$ does not exist: there are two morphisms $[1] \to [2]$, so such a product would necessarily have four elements, but $[4]$ is not this product, since there are ten morphisms $[2] \to [4]$ but, since there are three morphisms $[2] \to [2]$, there would need to be exactly nine morphisms $[2] \to [2] \times [2]$. On the other hand, $\Delta_0$ does have pullbacks of monomorphisms.

The category $\Setswith{\Delta_0}$ is known to classify the theory $\Ibb$ of \textit{bounded} total orders (also known as \textit{intervals}), which is obtained from $\Tbb$ by adding two constant symbols $0$ and $1$, as well as the extra axiom:
\begin{equation*}
\top \vdash_{x}(0 \leq x) \wedge (x \leq 1).
\end{equation*}

It will serve us to partly verify this claim\footnote{We take for granted that the stated theories have these categories of finitely presentable models.} by demonstrating that the category of finite bounded total orders is the dual of the augmented simplex category.

\begin{lemma}
\label{lem:reversal}
Let $m,n \in \Nbb$ with $n > 0$. Then there is a one-to-one, order-reversing correspondence between maps $[m] \to [n]$ and maps $[n-1] \to [m+1]$.
\end{lemma}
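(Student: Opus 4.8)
The statement I need to prove is Lemma \ref{lem:reversal}: for $m, n \in \Nbb$ with $n > 0$, there is a one-to-one, order-reversing correspondence between maps $[m] \to [n]$ and maps $[n-1] \to [m+1]$ in the augmented simplex category $\Delta_0$.

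The plan is to exhibit the correspondence via an explicit, geometrically natural construction rather than a brute-force counting argument. The key idea is that an order-preserving map $\sigma : [m] \to [n]$ is completely determined by the preimages of its values, equivalently by the "cut points" it induces. First I would set up the duality between order-preserving maps and their \emph{fibres}: given $\sigma : [m] \to [n]$, for each $j \in [n]$ record the threshold describing where the fibre $\sigma^{-1}(\{0, \dotsc, j-1\})$ ends. This assigns to $\sigma$ a weakly increasing sequence of $n-1$ values in $\{0, \dotsc, m\} = [m+1]$ (one threshold for each of the $n-1$ "internal boundaries" between the $n$ values of the codomain), and I would check that this assignment is itself an order-preserving map $\tau : [n-1] \to [m+1]$. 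The reason $n > 0$ is required is precisely that we need at least one value in the codomain so that the $n-1$ internal cuts are well-defined; the case $n = 0$ would make $[n-1]$ meaningless and the fibre description degenerate.

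The main steps in order: (i) define the forward map $\sigma \mapsto \tau$ by the threshold/fibre construction, writing $\tau(j) = \#\{\, i \in [m] : \sigma(i) \leq j \,\}$ for $j \in [n-1]$, and verify $\tau$ is order-preserving with the correct (co)domain; (ii) define the inverse map $\tau \mapsto \sigma$ by $\sigma(i) = \#\{\, j \in [n-1] : \tau(j) \leq i \,\}$, and verify it lands in $[n]$ and is order-preserving; (iii) check these two assignments are mutually inverse by a direct computation on the counting formulas; (iv) confirm the correspondence is order-reversing with respect to the pointwise orderings on hom-sets, which follows because increasing $\sigma$ pointwise pushes thresholds down, decreasing $\tau$ pointwise. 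I would present the counting formulas and state that the verifications are routine combinatorial checks, without grinding through every inequality.

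The step I expect to be the main obstacle is pinning down the precise indexing conventions so that the off-by-one shifts ($n$ versus $n-1$, $m$ versus $m+1$) line up correctly and the order-\emph{reversing} (rather than order-preserving) nature of the bijection comes out cleanly. The bookkeeping with half-open intervals and the boundary cases (the constant maps, or maps hitting $[1]$) is where sign/shift errors creep in, so I would be careful to fix a single convention at the outset and sanity-check it against a small example such as $m = n = 2$, where both sides should have the same cardinality as computed in the discussion preceding the lemma. Once the convention is fixed, the order-reversal is essentially the observation that the two counting formulas are "conjugate" in the sense of Galois-type correspondences between weakly increasing step functions, so the bijection automatically inverts the pointwise order.
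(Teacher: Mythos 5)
Your proposal is correct and is essentially the paper's own argument: your threshold formula $\tau(j) = \#\{\, i \in [m] : \sigma(i) \leq j \,\}$ is exactly the paper's defining formula for the transpose, and the paper's graph-transposition picture is just a visualization of that same counting construction. Your explicit inverse formula $\sigma(i) = \#\{\, j \in [n-1] : \tau(j) \leq i \,\}$ is a welcome addition, since the paper only asserts invertibility from the picture.
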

\begin{proof}
Given an increasing function $\sigma:[m] \to [n]$, consider its graph (in the sense of a bar chart); in Figure \ref{fig:graphs} we give an example with $m = 8$ and $n = 7$.
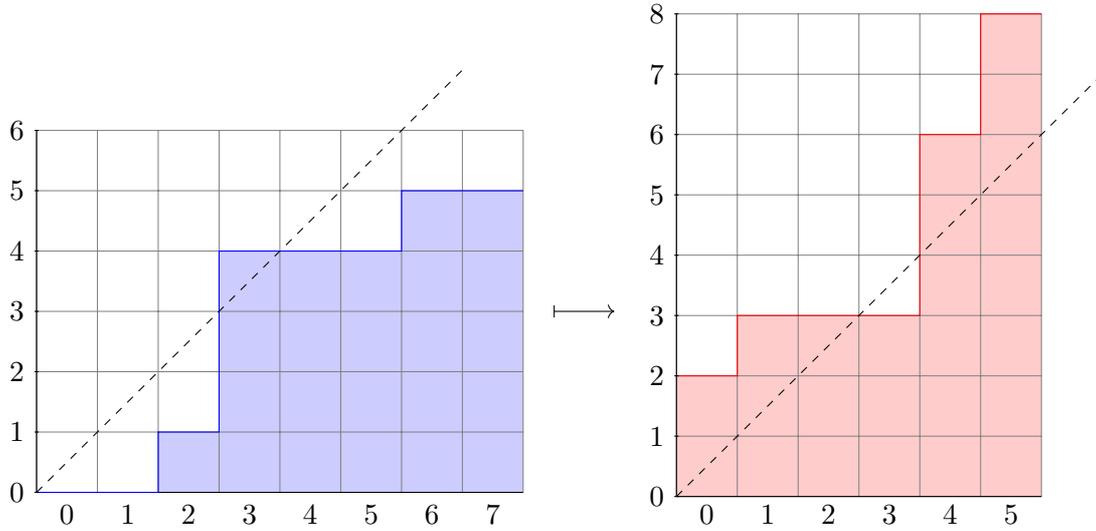
\begin{figure}
\[\begin{tikzpicture}[scale = 0.8]
\fill[blue!20!white] (0,0) -- (2,0) -- (2,1) -- (3,1) -- (3,4) -- (6,4) -- (6,5) -- (8,5) -- (8,6) -- (8,0) -- (0,0);
\draw[step=1cm,gray,very thin] (0,0) grid (8,6);
\draw (0,0) -- (8,0);
\draw (0,0) -- (0,6);
\draw[blue] (0,0) -- (2,0) -- (2,1) -- (3,1) -- (3,4) -- (6,4) -- (6,5) -- (8,5);
\draw[dashed] (0,0) -- (7,7);
\foreach \x in {0,...,7}
\draw (\x cm,-1pt) + (0.5 cm,0pt) node[anchor=north] {$\x$};
\foreach \y in {0,...,6}
\draw (1pt,\y cm) -- (-1pt,\y cm) node[anchor=east] {$\y$};
\draw [|->] (8.5,3) -- (9.5,3);
\end{tikzpicture}
\begin{tikzpicture}[scale = 0.8]
\draw[white] (-1,2) -- (-1,4);
\fill[red!20!white] (0,0) -- (0,2) -- (1,2) -- (1,3) -- (4,3) -- (4,6) -- (5,6) -- (5,8) -- (6,8) -- (6,0) -- (0,0);
\draw[step=1cm,gray,very thin] (0,0) grid (6,8);
\draw (0,0) -- (6,0);
\draw (0,0) -- (0,8);
\draw[red] (0,2) -- (1,2) -- (1,3) -- (4,3) -- (4,6) -- (5,6) -- (5,8) -- (6,8);
\draw[dashed] (0,0) -- (7,7);
\foreach \x in {0,...,5}
\draw (\x cm,1pt) + (0.5 cm,0pt) node[anchor=north] {$\x$};
\foreach \y in {0,...,8}
\draw (1pt,\y cm) -- (-1pt,\y cm) node[anchor=east] {$\y$};
\end{tikzpicture}\]
\caption{The graph of a morphism $\sigma:[8] \to [7]$ and its transpose $\sigma^{\circ}:[6] \to [9]$.}
\label{fig:graphs}
\end{figure}
Such a graph has $m$ bars of height at most $n-1$. If we transpose this graph along the diagonal and invert the shading, we obtain the graph of an increasing function $[n-1] \to [m+1]$, with $n-1$ bars of height at most $m$. Specifically, this is the graph of the function $\sigma^{\circ}:[n-1] \to [m+1]$ defined by
\begin{equation}
\label{eq:sigmaop}
	\sigma\op(k) := \# \{x \mid \sigma(x) \leq k \}.
\end{equation}
Since this transformation is clearly invertible, this defines the desired bijection; that it reverses the order is similarly straightforward.
\end{proof}

\begin{crly}
$\Delta_0\op$ is equivalent to the category of finite bounded total orders.
\end{crly}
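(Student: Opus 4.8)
The plan is to promote the order-reversing bijection of Lemma~\ref{lem:reversal} from a statement about hom-sets to a statement about categories, by checking that the transpose operation $\sigma \mapsto \sigma\op$ is functorial (contravariantly) and compatible with the order-reversal of the objects. Concretely, I would set up the assignment on objects sending the object $[n]$ of $\Delta_0$ to the finite bounded total order that it becomes under the duality; the content of Lemma~\ref{lem:reversal} is precisely that $\Hom_{\Delta_0}([m],[n])$ is in order-reversing bijection with the hom-set in the opposite direction between the dual objects, so the bulk of the object-level and arrow-level bookkeeping is already available.

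First I would make the object correspondence explicit. Lemma~\ref{lem:reversal} pairs maps $[m]\to[n]$ with maps $[n-1]\to[m+1]$, so the natural reading is that $[n]$ (as a \emph{source} of morphisms) corresponds to $[n-1]$ shifted appropriately when viewed as a \emph{target}; I would fix the indexing conventions once and for all, interpreting finite bounded total orders as finite ordinals with designated top and bottom (so an $N$-element bounded order corresponds to a particular object of $\Delta_0\op$), and verify that this assignment is a bijection on isomorphism classes of objects. Then I would define the functor $\Delta_0\op \to \Ibb\text{-mod}_{fp}(\Set)$ on arrows using the transpose formula \eqref{eq:sigmaop}, $\sigma\op(k) := \#\{x \mid \sigma(x) \leq k\}$.

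Next I would check functoriality: that $(\tau \circ \sigma)\op = \sigma\op \circ \tau\op$ and that identities go to identities. This is the one genuinely computational step, but it is routine — the transpose in \eqref{eq:sigmaop} is, up to the order-reversal, the usual counting/adjoint description of order-preserving maps between finite ordinals (it sends a map to its ``fibre-threshold'' map), and composition of such threshold maps unwinds directly. Since Lemma~\ref{lem:reversal} already gives that the transpose is a bijection on each hom-set, establishing functoriality upgrades this family of hom-set bijections to a fully faithful functor, and essential surjectivity follows from the object bijection. Invoking the standard fact that a fully faithful, essentially surjective functor is an equivalence then yields the result.

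The main obstacle I anticipate is purely notational rather than mathematical: the off-by-one shifts (the discrepancy between $[n]$ as a source and $[n-1]$ or $[m+1]$ as a target, and the role of the two constants $0,1$ in $\Ibb$ as the designated bounds) make it easy to state the object correspondence incorrectly, and the order-reversal means one must be careful that $\Delta_0\op$, not $\Delta_0$, is what matches the bounded total orders. I would therefore spend most of the care on pinning down conventions so that the bijection of Lemma~\ref{lem:reversal} lines up on the nose with composition, after which the equivalence is immediate. Since the claim is stated as a corollary, a short proof that simply organizes Lemma~\ref{lem:reversal} into an equivalence — rather than re-deriving anything — is the appropriate level of detail.
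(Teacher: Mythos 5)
Your proposal is correct and takes essentially the same route as the paper: identify $[n]$ with the $(n{+}1)$-element bounded total order, note that a bounded-order morphism is determined by its restriction to intermediate elements (an order-preserving map $[n-1]\to[m+1]$), apply Lemma~\ref{lem:reversal} to get the hom-set bijections, and check compatibility with composition as a routine bookkeeping step. The paper likewise leaves that last verification as "straightforward," so your level of detail is appropriate.
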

\begin{proof}
The correspondence identifies the ordinal $[n]$ with the bounded total order having $n+1$ elements. Indeed, a morphism from the $n+1$-element total order to that with $m+1$ elements must send the top and bottom elements to their counterparts, but its restriction to the intermediate elements can be identified with an order-preserving function $[n-1] \to [m+1]$, which correspond to morphisms $[m] \to [n]$ by Lemma \ref{lem:reversal}. It is straightforward to check that this identification respects composition.
\end{proof}

Let $\Dcal_{\Tbb}:= \Delta_0$, and let $\Tcal$ be a stable class in $\Dcal_{\Tbb}$. As in several of the `theories of objects' examples, all epimorphisms are split, so $\Tcal$ must contain all of these, and the (epi,mono)-factorization system means that $\Tcal$ is once again determined by the monomorphisms it contains.

If $\Tcal$ contains any non-trivial monomorphism $[n] \hookrightarrow [m]$, then picking out an element not lying in the image and pulling back, the morphism $[0] \hookrightarrow [1]$ lies in $\Tcal$, and hence every morphism does by axioms 3 and 4 as usual. As such, the only supercompactly generated subtopos of $\Setswith{\Delta_0}$ is the double negation subtopos, equivalent to $\Set$ and classifying the empty total order.

Dually, let $\Dcal_{\Ibb} := \Delta_0\op$. Given a stable class $\Tcal$ in $\Dcal_{\Ibb}$, $\Tcal\op$ contains all of the monomorphisms with non-empty domain, since these are all split. Given any non-trivial epimorphism $[n] \too [m]$ in $\Tcal\op$, pushing out along a suitable morphism $[m] \too [2]$ demonstrates that $[2] \too [1]$ lies in $\Tcal\op$, and hence (by inductive application of axioms 2 and 3) all epimorphisms lie in $\Tcal\op$. Finally, if any $[0] \hookrightarrow [n]$ is in $\Tcal\op$ then they all are, by axioms 2 and 4, so we have four possibilities in total, whose corresponding theories have descriptions analogous to the quotients of the theory of objects described earlier.

\section{Observations}
\label{sec:observe}

We focused our efforts in the last section on applying the method of Section \ref{ssec:preshtype} to theories classified by toposes of presheaves on categories $\Dcal$ simple enough that we could hope to characterize all of the principal topologies on them. Besides the case of the theory of decidable objects $\Dbb$, the results were not especially exciting, in the sense that most of the theories we ended up with were of presheaf type.

There are other situations where this is bound to happen. Recall from Proposition \ref{prop:localic} of Chapter \ref{chap:sgt} that a localic topos is supercompactly generated if and only if it is equivalent to $\Setswith{\Dcal}$ for some poset $\Dcal$. Recall also that a theory has a localic classifying topos if and only if it is Morita-equivalent to a propositional theory. Since any subtopos of a localic topos is localic, we conclude:
\begin{crly}
Every supercompactly generated propositional theory is of presheaf type.
\end{crly}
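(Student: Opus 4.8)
The plan is to combine the localic classification of supercompactly generated toposes (Proposition \ref{prop:localic}) with the standard fact relating propositional theories to localic classifying toposes, as already signposted in the paragraph preceding the statement. Let $\Tbb$ be a supercompactly generated propositional theory. ``Supercompactly generated'' means the classifying topos $\Set[\Tbb]$ is a supercompactly generated Grothendieck topos; ``propositional'' means $\Tbb$ is built over a signature with no sorts carrying variables, so that all the relevant formulae are sentences (sequents over the empty context). The first step is therefore to invoke the recalled result that a theory is Morita-equivalent to a propositional theory if and only if its classifying topos is localic. Since $\Tbb$ is propositional, $\Set[\Tbb]$ is a localic topos.

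Second, I would feed this localic topos into Proposition \ref{prop:localic}. By hypothesis $\Set[\Tbb]$ is supercompactly generated, and we have just established it is localic; Proposition \ref{prop:localic} then yields directly that $\Set[\Tbb] \simeq \Setswith{\Dcal}$ for some poset $\Dcal$ (equivalently, for some small category $\Dcal$ which happens to be a poset). This is precisely the defining condition for $\Tbb$ to be of presheaf type, as recorded at the start of Section \ref{ssec:preshtype}: a theory is of presheaf type exactly when its classifying topos is equivalent to $\Setswith{\Dcal}$ for some small category $\Dcal$. Hence $\Tbb$ is of presheaf type, which is the claim.

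The bookkeeping to watch is the role of ``any subtopos of a localic topos is localic,'' which is mentioned just before the corollary; strictly it is not needed for the argument I have outlined, since $\Tbb$ being propositional already forces $\Set[\Tbb]$ to be localic outright, rather than arising as a subtopos. If one instead wanted to present $\Tbb$ as a quotient of a presheaf-type theory and argue via the subtopos inheriting localness, that would be an alternative route, but the direct route through Proposition \ref{prop:localic} is cleaner and avoids the quotient machinery entirely.

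The only genuine obstacle is making sure the equivalence ``propositional $\Leftrightarrow$ localic classifying topos'' is available at the required level of generality; this is a well-known result in categorical logic (the classifying topos of a propositional geometric theory is the topos of sheaves on the locale presented by that theory, and conversely every localic topos so arises), and the excerpt explicitly appeals to it in the sentence immediately preceding the statement, so I would cite it as given. Everything else is an immediate chaining of definitions and of Proposition \ref{prop:localic}, with no calculation required.
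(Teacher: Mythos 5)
Your proposal is correct and follows essentially the same route as the paper, whose ``proof'' is exactly the paragraph preceding the corollary: a propositional theory has a localic classifying topos, and Proposition \ref{prop:localic} then identifies a supercompactly generated localic topos as $\Setswith{\Dcal}$ for a poset $\Dcal$, which is the definition of presheaf type. Your side remark is also accurate: the ``any subtopos of a localic topos is localic'' observation is context from the chapter's quotient-theory viewpoint rather than a step needed for the direct argument.
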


Considering the work from the previous chapter, we must stress that the toposes we have arrived at in the above analysis are merely the \textit{relatively pristine} subtoposes of the presheaf toposes under consideration, in the sense of Definition \ref{dfn:relpolished}. Indeed, any full subcategory of $\Dcal_{\Tbb}$ (for $\Tbb$ any of the theories considered) provides a non-trivial presheaf subtopos corresponding to an essential \textit{relatively polished} inclusion, and these have further relatively pristine subtoposes. Actually, this procedure produces all of the relatively polished subtoposes:
\begin{schl}
Let $f: \Ecal \hookrightarrow \Setswith{\Dcal}$ be a geometric inclusion (over $\Set$). Then $f$ is relatively polished if and only if it factors as a relatively pristine inclusion followed by an essential inclusion.
\end{schl}
\begin{proof}
In the proof of Corollary \ref{crly:incl} in the last chapter, we observed that an inclusion into $\Setswith{\Dcal}$ is relatively polished if and only if the corresponding Grothendieck topology $J$ on $\Dcal$ is \textit{quasi-principal}; letting $\Dcal'$ be the full subcategory of $\Dcal$ on the objects over which the empty sieve is not $J$-covering, the induced topology $J|_{\Dcal'}$ is principal, and $f$ factors as
\[\Sh(\Dcal',J|_{\Dcal'}) \hookrightarrow \Setswith{\Dcal'} \hookrightarrow \Setswith{\Dcal}, \]
where the latter is an essential inclusion, as required. This proof is manifestly non-constructive: it can fail in a topos where not every object is decidable.
\end{proof}

There may be still further supercompactly generated (but not relatively polished) subtoposes of the presheaf toposes considered in this chapter. In spite of our syntactic characterization of supercompactly generated toposes, the logical framework is ill-equipped to detect them: verifying that a general classifying topos is supercompactly generated typically requires a very robust meta-logical understanding of the proof theory of $\Tbb$. Theorem \ref{thm:scompform} is more useful in the opposite direction: if a classifying topos can be shown to be supercompact by other means, this imposes a powerful constraint on the proof theory.

It is also worth noting that all of our specific examples began from known theories of presheaf type. This is symptomatic of some of the obstacles one is confronted with when trying to apply this method in a more general setting. Foremost is the fact that, while we do get a theory for any principal site $(\Dcal,J_{\Tcal})$, as we saw in Section \ref{ssec:generic}, this standard theory is automatically just as complicated as $\Dcal$. To find simpler (and therefore more informative) theories, we must rely on having access ahead of time to a simpler theory which is classified by the presheaf topos $\Setswith{\Dcal}$, which is typically only possible in special cases.

Even given a theory $\Tbb$ which is classified by $\Setswith{\Dcal}$, identifying the quotient theory classified by a subtopos of the form $\Sh(\Dcal,J_{\Tcal})$ will typically be difficult because it requires us to first find explicit finitary presentations of the objects of $\Dcal\op$ as $\Tbb$-models and to identify $\Tbb$-provably functional formulae presenting the morphisms of $\Tcal$ or $\Tcal'$. As such, we think it would be useful to have a way to present supercompactly generated theories syntactically, much as regular, coherent and geometric theories are presented by axioms over a signature, as we described earlier. We shall discuss this idea briefly in Section \ref{ssec:redlogic} of the Conclusion.

\chapter{Toposes of Topological Monoid Actions}
\label{chap:TTMA}

In Chapter \ref{chap:TDMA}, we investigated properties of presheaf toposes of the form $\Setswith{M}$ for a monoid $M$, whose objects are sets equipped with a right action of $M$. A natural direction to generalize this study is to view sets as discrete spaces and to consider the actions of a topological monoid on them. In order to analyze this case, we parallel the analogous categories for topological groups, which are well-studied.

For a topological group $(G,\tau)$, the category $\Cont(G,\tau)$ of continuous $G$-actions on discrete topological spaces is a Grothendieck topos. One way to prove this is to observe that there is a canonical geometric morphism $\Setswith{G} \to \Cont(G,\tau)$ which is a surjection, see \cite[A4.2.4(a)]{Ele}. The inverse image functor of this morphism is the forgetful functor which sends a continuous $(G,\tau)$-set to its underlying $G$-set. The direct image functor is constructed explicitly by Mac Lane and Moerdijk in \cite[\S VII.3]{MLM}: it sends a $G$-set $X$ to the subset consisting of those elements whose `isotropy subgroup' is open; it follows that the counit of this morphism is monic and so (by \cite[A4.6.6]{Ele}, say) that the geometric morphism is moreover hyperconnected.

In this article we begin by extending these observations to categories of continuous actions of monoids. We take a rather classical approach at first: rather than considering genuine topological monoids (that is, monoids in the category of topological spaces), we consider endowing the underlying set of a monoid $M$ with an arbitrary topology $\tau \subseteq \Pcal(M)$. This approach is motivated by the fact that no part of the description of a continuous $(M,\tau)$-set relies on the fact that the topology $\tau$ makes $M$ a topological monoid, and we shall indeed see that the argument is valid even when this fails. \textit{A reader critical of this decision should be reassured by the fact that, as we shall eventually see in Theorem \ref{thm:tau} and Proposition \ref{prop:ctsx}, any `monoid with a topology' is in any case Morita-equilavent to a genuine topological monoid.}

\subsection*{Overview}

In Section \ref{ssec:necessary}, we exhibit the necessary data to establish that the forgetful functor from the category $\Cont(M,\tau)$ of continuous actions of a monoid with respect to an arbitrary topology $\tau$ to the topos $\Setswith{M}$ is left exact and comonadic (Proposition \ref{prop:hyper}). The adjoint can be expressed using either clopen subsets of $(M,\tau)$ or open relations. From the existence of this adjunction we conclude that $\Cont(M,\tau)$ is an elementary topos, so that the forgetful functor is the inverse image of a hyperconnected geometric morphism, just as in the group case. In Section \ref{ssec:apply}, we apply the theoretical results from Chapter \ref{chap:sgt} to deduce that any topos of the form $\Cont(M,\tau)$ is moreover a supercompactly generated Grothendieck topos, which brings us to an intuitive Morita-equivalence result in terms of the (essentially small) category of continuous principal $M$-sets, Corollary \ref{crly:Morita'}. Finally, in \ref{ssec:jcp} we show another property of the categories of principal continuous $M$-sets which has not yet been covered, indicating that our characterization of toposes of the form $\Cont(M,\tau)$ is not yet complete.

In Section \ref{sec:montop}, we examine the question of how much is recoverable about a topology $\tau$ on a monoid $M$ from the hyperconnected morphism $\Setswith{M} \to \Cont(M,\tau)$. To do this, we construct the classical powerset $\Pcal(M)$ of $M$ as a right $M$-set in Section \ref{ssec:inverse}, and from this object recover in Section \ref{ssec:action} a canonical topology $\tilde{\tau}$, contained in $\tau$, making $(M,\tilde{\tau})$ a genuine topological monoid with an equivalent category of actions; given that Morita-equivalence for topological monoids is non-trivial, this is as good a result as we could have hoped for. In Section \ref{ssec:powder}, we show that we can further reduce this topological monoid to obtain a Hausdorff monoid $(\tilde{M},\tilde{\tau})$, still retaining the same topos of actions. The resulting class of representative topological monoids for toposes of the form $\Cont(M,\tau)$, which we call \textit{powder monoids}, have many special properties. In Section \ref{ssec:prodiscrete}, we show that this class includes, but is not limited to, the classes of prodiscrete monoids and nearly discrete groups; indeed, the reduction of a monoid to a powder monoid is analogous of the reduction of a group to a nearly discrete group in \cite[Example A2.1.6]{Ele}.

In Section \ref{sec:surjpt}, we consider the canonical surjective point of $\Cont(M,\tau)$, which is the composite of the canonical essential surjective point of $\Setswith{M}$ and the hyperconnected morphism obtained in Section \ref{sec:properties}. Our aim is to characterize toposes of this form in terms of the existence of a point of this form, just as Caramello does for topological groups in \cite{TGT}. First, in Section \ref{ssec:EqRel}, we obtain a canonical small site for $\Setswith{M}$ whose objects are right congruences, equivalent to the site of principal $M$-sets, and show that hyperconnected morphisms out of $\Setswith{M}$ correspond to suitable subsites of this one. In particular, this provides a small site for $\Cont(M,\tau)$ (Scholium \ref{schl:Morita2}). By taking a limit indexed over such a site, we show in Section \ref{ssec:complete} that, analogously to the case of groups in \cite{TGT}, we can recover a presentation for the codomain of a hyperconnected geometric morphism out of $\Setswith{M}$ as a topos of topological monoid actions. This presentation is obtained by topologizing the monoid of endomorphisms of the canonical point. Thus, the existence of a point factorizing as an essential surjection followed by a hyperconnected morphism characterizes this class of toposes (Theorem \ref{thm:characterization}). Moreover, the resulting \textit{complete monoids} are powder monoids (Proposition \ref{prop:Lpowder}), and any powder monoid presenting the same topos (equipped with the same canonical point) admits a dense injective monoid homomorphism to the canonical representative (Corollary \ref{crly:extend}). Paralleling the introduction of (algebraic) bases for topological groups, we show in Section \ref{ssec:base} that we can re-index the limit defining a complete monoid over a \textit{base of open congruences} in order to obtain a simpler expression for it and in certain cases deduce further properties. We briefly consider the topologies on the original monoid $M$ induced by hyperconnected morphisms out of $\Setswith{M}$ in Section \ref{ssec:factor}.

Finally, since in Chapter \ref{chap:TDMA} we saw that semigroup homomorphisms correspond to essential geometric morphisms between toposes of discrete monoid actions, in Section \ref{sec:homomorphism} we show that continuous semigroup homomorphisms between topological monoids induce geometric morphisms between the corresponding toposes of continuous actions (Lemma \ref{lem:cts}). As such, we show that $\Cont(-)$ defines a $2$-functor extending the presheaf construction for discrete monoids in Chapter \ref{chap:TDMA}, which we may restrict to the class of complete monoids. In Section \ref{ssec:intrinsic} we record some intrinsic properties of the hyperconnected geometric morphism $\Setswith{M} \to \Cont(M,\tau)$ when $M$ is a powder monoid or complete monoid, enabling us in Sections \ref{ssec:id} and \ref{ssec:monhom} to examine how, when a geometric morphism $g$ is induced by a continuous semigroup homomorphism $\phi$ between complete monoids, the properties of $g$ are reflected as properties of $\phi$. We show that the surjection--inclusion factorization of $g$ is canonically represented by the factorization of $\phi$ into a monoid homomorphism followed by an inclusion of a subsemigroup (Theorem \ref{thm:inccomplete}). Moreover, the hyperconnected--localic factorization of $g$ can be identified with the dense--closed factorization of $\phi$ (Theorem \ref{thm:locextend}). In both cases, the intermediate monoid is complete. Finally, in \ref{ssec:monads} we show that the classes of monoids we have been working with throughout assemble into reflective sub-($2$-)categories of the ($2$-)category of topological monoids.

\section{Properties of Categories of Continuous Monoid Actions}
\label{sec:properties}

\subsection{Necessary Clopens}
\label{ssec:necessary}

Throughout, we will refer to pairs $(M,\tau)$ where $M$ is a monoid (in $\Set$) and $\tau \subseteq \Pcal(M)$ is a topology on $M$. The multiplication on $M$ will be largely left implicit, but when we need to make it explicit we shall denote it by $\mu$. There is no assumption here that $\tau$ makes $\mu$ continuous; when it makes makes $\mu$ continuous in its first (resp. second) argument, we say that \textit{$\tau$ makes the multiplication of $M$ left (resp. right) continuous}.

\begin{rmk}
As mentioned above, the motivation for using these `monoids with topologies' rather than genuine topological monoids is that the definition of continuous $M$-set to follow applies without modification to this larger class of objects, and because it shall turn out to be useful to think of the topology as equipped (rather than intrinsic) structure. We reassure the reader that we shall eventually be able to reduce any `monoid with a topology' to a Morita-equivalent topological monoid.
\end{rmk}

Consider a (right) $M$-set, expressed in the form of a set $X$ equipped with a right action $\alpha: X \times M \to X$ subject to the usual conditions. We say this is an \textbf{$(M,\tau)$-set} if the action $\alpha$ is continuous when $X \times M$ is endowed with the product topology of the discrete topology on $X$ and the topology $\tau$ on $M$. An $(M,\tau)$-set will be referred to simply as a \textbf{continuous $M$-set} when the topology $\tau$ is understood.

We begin by exhibiting necessary and sufficient conditions for an $M$-set to be continuous.
\begin{lemma}
\label{lem:Inx}
Let $M$ be a monoid equipped with a topology $\tau$ and $X$ an $M$-set. Then $X$ is an $(M,\tau)$-set if and only if for each $x \in X$ and $p \in M$, the set
\[\Ical_x^p := \{m \in M \mid xm = xp\}  \]
is open in $\tau$. We call the collection of all such $\Ical_x^p$ the \textbf{necessary clopens} for $X$.
\end{lemma}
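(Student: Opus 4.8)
The plan is to verify directly that the continuity of the action $\alpha: X \times M \to X$, where $X$ carries the discrete topology, is equivalent to the openness of each set $\Ical_x^p$. The key observation is that in the product $X \times M$, since $X$ is discrete, a basic open neighbourhood of a point $(x,p)$ has the form $\{x\} \times U$ for $U$ an open neighbourhood of $p$ in $\tau$. Continuity of $\alpha$ at $(x,p)$ means that for every open set $V \subseteq X$ containing $\alpha(x,p) = xp$, there is such a basic neighbourhood $\{x\} \times U$ with $\alpha(\{x\} \times U) \subseteq V$.

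First I would exploit the discreteness of $X$ to reduce to the cleanest possible target open set: since $X$ is discrete, the singleton $\{xp\}$ is open, so continuity at $(x,p)$ forces the existence of an open $U \ni p$ with $\alpha(x,m) = xm = xp$ for all $m \in U$. But this says precisely that $U \subseteq \Ical_x^p$, i.e. that $\Ical_x^p$ contains an open neighbourhood of $p$. Running this over all $p \in M$ (and noting $p \in \Ical_x^p$ always, since $xp = xp$), we conclude that $\Ical_x^p$ is a union of open neighbourhoods of its points, hence open. This gives the `only if' direction.

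For the converse, I would assume each $\Ical_x^p$ is open and check continuity of $\alpha$ at an arbitrary point $(x,p)$. Given any open $V \subseteq X$ with $xp \in V$, the set $U := \Ical_x^p$ is by hypothesis an open neighbourhood of $p$, and for every $m \in U$ we have $xm = xp \in V$, so $\alpha(\{x\} \times U) \subseteq V$. Since the sets $\{x\} \times U$ with $U \in \tau$ form a neighbourhood base at $(x,p)$ in the product topology, this establishes continuity at every point, hence continuity of $\alpha$.

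There is no serious obstacle here; the argument is essentially a routine unwinding of the definition of the product topology combined with the fact that discreteness of $X$ lets us take the target open set to be a singleton. The only point requiring a word of care is justifying the terminology `necessary \emph{clopens}': one should note that each $\Ical_x^p$ is not merely open but also closed, since its complement $\{m \in M \mid xm \neq xp\}$ is the union of the open sets $\Ical_x^q$ over those $q$ with $xq \neq xp$ (equivalently, $M$ is partitioned into the distinct sets of the form $\Ical_x^p$, each of which is open, so each is also closed). I would include this brief remark to account for the name, though it is not strictly needed for the stated equivalence.
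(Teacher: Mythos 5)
Your proof is correct and follows essentially the same route as the paper's: both arguments are a routine unwinding of the product topology using the discreteness of $X$ to reduce to singleton target opens, yours phrased via pointwise continuity and neighbourhood bases, the paper's via preimages of singletons sliced along the subspaces $\{x\} \times M$. Your closing remark on why the sets are clopen (the $\Ical_x^p$ for fixed $x$ partition $M$) also matches the observation the paper makes immediately after the lemma.
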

\begin{proof}
For continuity we precisely require that for each open subset $U \subseteq X$, its preimage under the action is open. Since $X$ is discrete, without loss of generality we may assume $U = \{x'\}$ for some $x' \in X$. A subset of $X \times M$ is open if and only if its intersection with each open subspace of the form $\{x\} \times M$ with $x \in X$ is open.

Thus we require $\{m \in M \mid xm = x'\}$ to be open for each pair $x, x' \in X$. However, if $x' \neq xp$ for every $p \in M$, the corresponding set is empty and so automatically open. Otherwise, $x' = xp$ for some $p$, which gives the result.
\end{proof}

To justify the name `necessary clopens' rather than merely `necessary opens', note that for each fixed $x$, the sets $\Ical_x^p$ partition $M$, so $\Ical_x^p$ being open for every $p$ forces each such set to also be closed. 

An $M$-set $X$ being continuous requires the `stabilizer submonoids' $\Ical_x^1$ to be both open and closed for every $x$. When $M$ is a topological group we know that this condition is actually sufficient, since the other subsets in the partition are simply the right cosets of $\Ical_x^1$ (which are open because a topological group acts on itself by homeomorphisms) but this is not the case for monoids in general.

While necessary clopens are the most direct generalization of (the right cosets of) the stabilizer subgroups for the action of a group on a set, we can avoid the additional need to index over these by working with equivalence relations:
\begin{crly}
\label{crly:Rnx}
Let $M$ be a monoid equipped with a topology $\tau$ and $X$ an $M$-set. Then $X$ is an $(M,\tau)$-set if and only if for each $x \in X$, the equivalence relation
\[\rfrak_x := \{(p,q) \in M \times M \mid xp = xq\}  \]
is open in the product topology $\tau \times \tau$ on $M \times M$. When $M$ is a group, $\rfrak_x$ is the relation that partitions $M$ into the right cosets of the stabilizer subgroup of $x$.
\end{crly}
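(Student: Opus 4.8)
The plan is to deduce this corollary directly from Lemma \ref{lem:Inx}, since the two continuity criteria are really just two ways of packaging the same information about the action. First I would observe the relationship between the necessary clopens $\Ical_x^p$ and the equivalence relation $\rfrak_x$: for each fixed $x \in X$, the open subsets $\Ical_x^p \subseteq M$ (as $p$ ranges over $M$) are precisely the equivalence classes of $\rfrak_x$, since $p$ and $q$ lie in the same class exactly when $xp = xq$, and $\Ical_x^p = \{m \mid xm = xp\}$ is the class of $p$. Thus $\rfrak_x = \bigcup_{p \in M} \Ical_x^p \times \Ical_x^p$, viewing the relation as the union of the squares of its classes.

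From this identity the equivalence of the two conditions is immediate in both directions. If $X$ is an $(M,\tau)$-set, then by Lemma \ref{lem:Inx} each $\Ical_x^p$ is open in $\tau$, so each product $\Ical_x^p \times \Ical_x^p$ is open in $\tau \times \tau$, and $\rfrak_x$, being a union of such open rectangles, is open in the product topology. Conversely, suppose $\rfrak_x$ is open in $\tau \times \tau$ for each $x$. Fix $p \in M$ and note that $\{p\} \times \Ical_x^p = (\{p\} \times M) \cap \rfrak_x$ is relatively open in the subspace $\{p\} \times M$; since this subspace is homeomorphic to $(M,\tau)$ via the second projection, $\Ical_x^p$ is open in $\tau$. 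Hence by Lemma \ref{lem:Inx} the action is continuous. I would write this as a short two-paragraph argument.

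There is essentially no obstacle here — the content is purely the bookkeeping translating a partition-into-open-classes statement into an openness statement about the associated relation, together with the elementary fact that a set is open in a product iff it is a union of open rectangles (one direction) and that slices of opens are open (the other direction). The final sentence of the corollary, that $\rfrak_x$ partitions $M$ into right cosets of the stabilizer subgroup when $M$ is a group, requires no real proof: when $M = G$ is a group, $xp = xq$ iff $pq^{-1}$ fixes $x$ under the left translation action of the stabilizer, so the classes of $\rfrak_x$ are exactly the right cosets $\mathrm{Stab}(x)\backslash{}$-style cosets of $\Ical_x^1$, matching the parenthetical remark already made after Lemma \ref{lem:Inx}. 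I would simply note this identification in a clause and leave the verification to inspection.
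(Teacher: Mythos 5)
Your proof is correct and takes essentially the same route as the paper: both directions rest on the identity $\rfrak_x = \bigcup_{p \in M} \Ical_x^p \times \Ical_x^p$ together with Lemma \ref{lem:Inx}, and your slice argument for the converse is just a repackaging of the paper's open-rectangle argument (an open set's slices are open precisely because it is a union of rectangles). The group-case remark is likewise dispatched by inspection in both treatments.
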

\begin{proof}
If $X$ is an $(M,\tau)$-set, by Lemma \ref{lem:Inx} we have $\Ical_x^n \in \tau$ for each $n \in M$, and $\rfrak_x$ is precisely $\bigcup_{n \in M} \Ical_x^n \times \Ical_x^n$, so is open in $\tau \times \tau$. On the other hand, if $\rfrak_x$ is open, for fixed $p\in M$ each $(p,q) \in \rfrak_x$ is contained in an open rectangle $U_q \times V_q$ with $U_q,V_q$ open in $\tau$. Thus $\Ical_x^p = \bigcup_{(p,q)\in \rfrak_x}U_q$ is open, as required.
\end{proof}

Working concretely with an equivalence relation from Corollary \ref{crly:Rnx} is equivalent to working with all of the clopens in a partition at once. For each result to follow we can therefore give an expression in terms of either the necessary clopens or the open relations.

\begin{prop}
\label{prop:hyper}
Suppose a monoid $M$ is equipped with a topology $\tau$. Then the forgetful functor $V : \Cont(M,\tau) \to \Setswith{M}$ is left exact and comonadic; its right adjoint $R$ sends an $M$-set $X$ to:
\begin{align*}
R(X) & := \{x \in X \mid \forall p,q \in M, \, \Ical^p_{xq} \in \tau \} \\
& = \{x \in X \mid \forall q \in M, \, \rfrak_{xq} \in \tau \times \tau \}.
\end{align*}
Moreover, if $\tau$ makes the multiplication of $M$ left continuous then the expression for $R(X)$ simplifies to
\begin{align*}
R(X) & := \{x \in X \mid \forall p \in M, \, \Ical^p_x \in \tau \}\\
& = \{x \in X \mid \rfrak_{x} \in \tau \times \tau\}.
\end{align*}
\end{prop}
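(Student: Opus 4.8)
The plan is to prove Proposition \ref{prop:hyper} by verifying directly that the stated functor $R$ is right adjoint to the forgetful functor $V$, then using this adjunction together with the elementary properties of $V$ to conclude that $V$ is left exact and comonadic. First I would check that the two displayed formulae for $R(X)$ agree: the equality $\{x \mid \forall p,q, \, \Ical^p_{xq} \in \tau\} = \{x \mid \forall q, \, \rfrak_{xq} \in \tau \times \tau\}$ is exactly the content of Corollary \ref{crly:Rnx} applied pointwise, since $\rfrak_{xq} \in \tau \times \tau$ is equivalent to $\Ical^p_{xq} \in \tau$ for all $p$. So the two characterizations of $R(X)$ are the same by that Corollary, and I need only work with whichever is convenient.

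Next I would show that $R(X)$ is a genuine sub-$M$-set of $X$ and that it is continuous. For the sub-$M$-set property, suppose $x \in R(X)$ and $m \in M$; I must show $xm \in R(X)$, i.e.\ that $\Ical^p_{(xm)q} = \Ical^p_{x(mq)} \in \tau$ for all $p,q$. But $(xm)q = x(mq)$, so this clopen is among the ones already required to be open by $x \in R(X)$ (with $mq$ in place of $q$); hence $R(X)$ is closed under the action. For continuity, by Lemma \ref{lem:Inx} I must check that for each $x \in R(X)$ and each $p \in M$ the set $\Ical^p_x$ (computed inside $R(X)$, but this coincides with the ambient one) is open — and this holds precisely because $x \in R(X)$ supplies openness of $\Ical^p_{xq}$ for all $q$, in particular for $q = 1$. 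Then I would verify the adjunction $V \dashv R$ by exhibiting the natural bijection: a morphism of continuous $M$-sets $C \to X$ from a continuous $C$ into an arbitrary $M$-set $X$ must land inside $R(X)$, because for $c \in C$ and its image $x$, continuity of $C$ forces $\Ical^p_{cq} = \Ical^p_{xq}$ to be open for all $p,q$ (the action homomorphism identifies these stabilizer-type sets), so the image lies in $R(X)$; conversely any map into $R(X)$ is a fortiori a map into $X$. This establishes $\Hom_{\Cont(M,\tau)}(C,X) \cong \Hom_{\Cont(M,\tau)}(C,R(X))$ naturally, which is the required adjunction since $V$ is fully faithful on the subcategory.

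The simplification under left continuity I would handle separately: when $\tau$ makes multiplication left continuous, the condition ``$\Ical^p_{xq} \in \tau$ for all $q$'' follows from ``$\Ical^p_x \in \tau$''. Concretely, left continuity means left translation $m \mapsto qm$ is continuous, and one computes $\Ical^p_{xq} = \{m \mid xqm = xqp\} \supseteq \{m \mid qm \in \Ical^{qp}_x\}$; more precisely $\Ical^p_{xq}$ is the preimage of the open set $\Ical^{\,\cdot}_x$-data under left multiplication by $q$, so openness of the necessary clopens of $x$ propagates to those of $xq$. I would write this out as: $m \in \Ical^p_{xq} \iff xqm = xqp \iff qm \in \rfrak_x\text{-class of } qp$, and left continuity of $\mu$ makes the preimage of the open relation $\rfrak_x$ under $m \mapsto (qm, qp)$ open. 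Hence the quantifier over $q$ becomes redundant and $R(X)$ reduces to $\{x \mid \forall p, \, \Ical^p_x \in \tau\} = \{x \mid \rfrak_x \in \tau \times \tau\}$.

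Finally, left exactness and comonadicity: $V$ creates limits because limits in $\Cont(M,\tau)$ are computed on underlying $M$-sets (a limit of continuous actions is continuous, since the necessary clopens of the limit are intersections/inverse images of those of the factors), so in particular $V$ preserves finite limits. Comonadicity then follows from Beck's (dual) monadicity theorem: $V$ has a right adjoint $R$, it reflects isomorphisms (a morphism of continuous $M$-sets that is an iso of underlying $M$-sets is an iso, as its inverse is automatically continuous), and $\Cont(M,\tau)$ has and $V$ preserves equalizers of $V$-split pairs, these being computed on underlying sets. I expect the main obstacle to be the careful verification in the general (not-left-continuous) case that a morphism from a continuous $M$-set necessarily factors through $R(X)$ — that is, pinning down exactly why continuity of the domain forces \emph{all} the clopens $\Ical^p_{xq}$ (quantified over both $p$ and $q$) to be open at the image points, rather than just the stabilizers. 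This is where the asymmetry between the general formula and the simplified one lives, and it is the step that genuinely uses the full strength of Lemma \ref{lem:Inx} applied throughout the orbit of each element.
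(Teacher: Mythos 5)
There is a genuine gap, and it sits exactly at the step you yourself flagged as the main obstacle. In verifying that a morphism $f:C \to X$ from a continuous $M$-set lands inside $R(X)$, you assert that ``continuity of $C$ forces $\Ical^p_{cq} = \Ical^p_{xq}$ \ldots (the action homomorphism identifies these stabilizer-type sets).'' This equality is false in general: an $M$-set homomorphism only gives the inclusion $\Ical^p_{cq} \subseteq \Ical^p_{f(c)q}$ (if $cqm = cqp$ then applying $f$ gives $xqm = xqp$), and the reverse inclusion fails whenever $f$ is not injective --- take $f$ any non-trivial quotient map, e.g.\ $C = M$ with the regular action of a cancellative monoid mapping to $X = 1$, where $\Ical^p_c = \{p\}$ but $\Ical^p_x = M$. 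Since an open \emph{subset} of a set does not make that set open, the inclusion alone does not yield openness of $\Ical^p_{xq}$, so your argument for the universal property of $R(X)$ does not go through as written.

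The repair (which is how the paper argues) is a union-of-neighbourhoods argument rather than an identification: for each $m \in \Ical^p_{xq}$, the set $\Ical^m_{cq}$ is open by continuity of $C$, contains $m$, and is contained in $\Ical^p_{xq}$ --- indeed $m' \in \Ical^m_{cq}$ means $cqm' = cqm$, hence $xqm' = xqm = xqp$. Therefore $\Ical^p_{xq} = \bigcup_{m \in \Ical^p_{xq}} \Ical^m_{cq}$ is a union of opens, hence open, and the image of $f$ lies in $R(X)$. With this step corrected, the rest of your proposal (the sub-$M$-set check, the $q=1$ observation for continuity of $R(X)$, the preimage argument $\Ical^p_{xq} = q^*(\Ical^{qp}_x)$ under left continuity, and the left-exactness-plus-conservativity route to comonadicity) matches the paper's proof essentially line for line; note only that your claim that ``limits in $\Cont(M,\tau)$ are computed on underlying $M$-sets'' should be restricted to \emph{finite} limits, which is all that left exactness requires.
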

\begin{proof}
The definition ensures that $R(X)$ is closed under the action of $M$, since for any $x \in R(X)$ and $q \in M$, $\Ical_{xq}^p$ is open for every $p \in M$ by assumption, ensuring $xq \in R(X)$. Taking $q = 1$ for each $x \in R(X)$ demonstrates (by Lemma \ref{lem:Inx}) that $R(X)$ is a continuous $M$-set.

The inclusion $R(X) \hookrightarrow X$ is the universal morphism from a continuous $M$-set into $X$. Indeed, suppose $f : Y \to X$ is an $M$-set homomorphism with $Y$ a continuous $M$-set. Given $m \in \Ical^p_{f(y)q}$, there is an inclusion of subsets $\Ical_{yq}^m \subseteq \Ical_{f(y)q}^m$ since each $m' \in \Ical_{yq}^m$ has $f(y)qm = f(yqm) = f(yqm') = f(y)qm'$. So every $\Ical_{f(y)q}^p$ is open and the image of $f$ is contained in $R(X)$. It follows that $X \mapsto R(X)$ is a right adjoint for the forgetful functor, as required.

Since $V$ is full and faithful, it is conservative. A finite limit of discrete spaces is discrete, so a finite limit of continuous $(M,\tau)$-sets is precisely the limit of the corresponding $M$-sets. Thus $V$ is left exact, in particular preserving all equalizers. By any version of the (co)monadicity theorem, it follows that $V$ is comonadic.

Finally, observe that for $x \in R(X)$, $p,q \in M$, we have:
\[\Ical_{xq}^p = \{m \in M \mid xqm = xqp\} = \{m \in M \mid qm \in \Ical_{x}^{qp}\} = q^*(\Ical_{x}^{qp}),\]
where $q^*$ is the inverse image of multiplication on the left by $q$ (which shall be described in more detail in Section \ref{ssec:inverse}). Thus if $\tau$ makes multiplication by $q$ continuous then $\Ical_{xq}^p$ is open whenever $\Ical_{x}^{qp}$ is, whence we obtain the simplified expressions.
\end{proof}

We call $R(X)$ the subset of \textbf{continuous elements} of $X$ with respect to $\tau$ (even when multiplication is not left continuous with respect to $\tau$).

\begin{crly}
\label{crly:topos}
$\Cont(M,\tau)$ is an elementary topos.
\end{crly}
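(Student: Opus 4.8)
The plan is to deduce Corollary \ref{crly:topos} directly from Proposition \ref{prop:hyper}, using the standard fact that an elementary topos structure is transported along a comonadic (equivalently, the inverse image of a surjective geometric morphism) functor. First I would recall that $\Setswith{M}$ is an elementary topos (indeed a Grothendieck topos), as established in Chapter \ref{chap:TDMA}. By Proposition \ref{prop:hyper}, the forgetful functor $V : \Cont(M,\tau) \to \Setswith{M}$ is left exact and comonadic, with right adjoint $R$ given by the subset of continuous elements.

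The key step is to invoke the comonadicity result: since $V$ is comonadic, $\Cont(M,\tau)$ is equivalent to the category of coalgebras for the left-exact comonad $VR$ induced on $\Setswith{M}$. The crucial structural input is that $V$ preserves finite limits (it is left exact, as shown in Proposition \ref{prop:hyper}), so the induced comonad $G := VR$ is a left-exact (i.e.\ finite-limit-preserving) comonad on the topos $\Setswith{M}$. There is a well-known theorem — this is precisely the content of \cite[A4.2.1]{Ele}, the topos-theoretic incarnation of which states that if $\Ecal$ is an elementary topos and $G$ is a left-exact comonad on $\Ecal$, then the category $\Ecal_G$ of $G$-coalgebras is again an elementary topos, and the forgetful functor $\Ecal_G \to \Ecal$ is the inverse image of a surjective geometric morphism. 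Applying this with $\Ecal = \Setswith{M}$ and $G = VR$ yields immediately that $\Cont(M,\tau) \simeq (\Setswith{M})_G$ is an elementary topos.

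Concretely, I would phrase the proof as: by Proposition \ref{prop:hyper}, $V$ exhibits $\Cont(M,\tau)$ as the category of coalgebras for the left-exact comonad $VR$ on the elementary topos $\Setswith{M}$; since the category of coalgebras for a left-exact comonad on an elementary topos is itself an elementary topos (\cite[A4.2.1]{Ele}, and $V$ is its associated surjective inverse image functor), the result follows. It is worth noting explicitly that this realizes $V$ as the inverse image of a \emph{hyperconnected} geometric morphism $\Setswith{M} \to \Cont(M,\tau)$: $V$ is full and faithful with image closed under subobjects and quotients (the latter being the extra content beyond mere surjectivity), matching the behaviour flagged in the introduction to this chapter for the topological group case.

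The main obstacle is not the topos-theoretic machinery — that is entirely standard once comonadicity and left-exactness of $V$ are in hand — but rather the bookkeeping needed to confirm that the cited result applies verbatim. In particular, one must be careful that the comonad $VR$ is genuinely left exact: this requires that the \emph{right} adjoint $R$ need not be left exact, but that $V$ (the functor whose composite with $R$ forms the comonad, read in the correct order) is, which is exactly what Proposition \ref{prop:hyper} supplies. Provided the orientation of the adjunction $V \dashv R$ is tracked correctly, there is no further difficulty, and the corollary is a one-line consequence of the preceding proposition together with the standard descent of topos structure along left-exact-comonadic functors.
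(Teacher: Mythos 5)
Your proof is correct and is essentially identical to the paper's: Proposition \ref{prop:hyper} exhibits $V$ as a left-exact comonadic functor, so $\Cont(M,\tau)$ is the category of coalgebras for the cartesian comonad $VR$ on $\Setswith{M}$, and \cite[Theorem A4.2.1]{Ele} gives the result. The only small remark is that your worry about left-exactness of the comonad is vacuous: $R$ is a right adjoint and hence preserves all finite limits automatically, so $VR$ is cartesian as soon as $V$ is left exact, which Proposition \ref{prop:hyper} supplies.
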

\begin{proof}
We have shown that $\Cont(M,\tau)$ is equivalent to the category of algebras for a cartesian comonad on $\Setswith{M}$, which by \cite[Theorem A4.2.1]{Ele} makes $\Cont(M,\tau)$ an elementary topos.
\end{proof}

\begin{rmk}
\label{rmk:sgrp}
One might wonder what can be said of the continuous actions of a \textit{semigroup} endowed with a topology. In Remark \ref{rmk:sgrp0} of Chapter \ref{chap:TDMA}, we observed that an action of a semigroup $S$ extends canonically to an action of the monoid $S_1$ obtained by adjoining a unit element (which must act as the identity). Given a topology on $S$, we may extend it to a topology on $S_1$ with an equivalent category of actions by making the singleton consisting of the adjoined unit an open subset, and extending this to a topology by taking unions with the existing opens. Thus once again, no generality is lost by considering only monoids equipped with topologies rather than arbitrary semigroups. 
\end{rmk}

\subsection{Toposes of actions are supercompactly generated}
\label{ssec:apply}

In light of Corollary \ref{crly:topos}, Proposition \ref{prop:hyper} demonstrates that the adjunction $(V \dashv R)$ is a hyperconnected geometric morphism $\Setswith{M} \to \Cont(M,\tau)$. Recalling the properties of $\Setswith{M}$ from Chapter \ref{chap:TDMA}, we may apply Theorem \ref{thm:hype2} of Chapter \ref{chap:sgt} to conclude that:

\begin{crly}
\label{crly:conttop}
Any topos of the form $\Cont(M,\tau)$ is a supercompactly generated, two-valued Grothendieck topos with enough points.
\end{crly}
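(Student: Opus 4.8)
The plan is to invoke Theorem \ref{thm:hype2} directly, since essentially all of the work has already been done in the preceding material. The key observation is that Proposition \ref{prop:hyper} provides us with an adjunction $(V \dashv R)$ in which $V : \Cont(M,\tau) \to \Setswith{M}$ is left exact and comonadic. A functor which is left exact (preserves finite limits) and arises as the inverse image part of an adjunction with a full and faithful right adjoint whose essential image is closed under subobjects and quotients is precisely the inverse image of a hyperconnected geometric morphism. Thus the first step is to record that the adjunction of Proposition \ref{prop:hyper} exhibits $V$ as the inverse image of a hyperconnected geometric morphism $\Setswith{M} \to \Cont(M,\tau)$; this is already asserted in the paragraph opening Section \ref{ssec:apply}, so it may be taken as established, with $V$ full and faithful (hence its image closed under subobjects and quotients as required for hyperconnectedness) following from the comonadicity and left exactness.

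Next I would assemble the properties of the domain $\Setswith{M}$ that we need, all of which were established in Chapter \ref{chap:TDMA} and the earlier parts of Chapter \ref{chap:sgt}. Specifically, $\Setswith{M}$ is a Grothendieck topos which is supercompactly generated (by Proposition \ref{prop:xmpls}(iii), every presheaf topos is supercompactly generated), which has enough points (indeed its canonical point is conservative on its own, as noted in Section \ref{sec:properties0}), and which is two-valued (this is the hyperconnectedness of its global sections morphism, equivalently Proposition \ref{prop:hype2}, established in Section \ref{sec:properties0}). These are exactly properties (i), (iii) and (iv) in the statement of Theorem \ref{thm:hype2}.

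The final step is then a single application of Theorem \ref{thm:hype2} to the hyperconnected morphism $f : \Setswith{M} \to \Cont(M,\tau)$. That theorem guarantees first that, since $\Setswith{M}$ is a Grothendieck topos, so too is its codomain $\Cont(M,\tau)$ (we already know from Corollary \ref{crly:topos} that $\Cont(M,\tau)$ is an elementary topos, so the content here is that it is in fact Grothendieck). Then parts (i), (iii) and (iv) of Theorem \ref{thm:hype2} transfer supercompact generation, having enough points, and two-valuedness respectively from $\Setswith{M}$ to $\Cont(M,\tau)$, which is precisely the conjunction asserted in the corollary.

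There is essentially no obstacle here: the corollary is a bookkeeping consequence of the machinery already developed, and the only subtlety worth flagging is verifying that the morphism supplied by Proposition \ref{prop:hyper} genuinely qualifies as hyperconnected in the sense required by Theorem \ref{thm:hype2}. This amounts to confirming that a comonadic, left-exact inverse image functor has full and faithful right adjoint with image closed under subobjects and quotients; comonadicity gives that $V$ is the inverse image of a surjection, and the explicit description of $R$ as a reflector onto continuous elements makes $V$ full and faithful, so the closure conditions are immediate. Once that identification is in hand, the three inheritance clauses of Theorem \ref{thm:hype2} apply verbatim.
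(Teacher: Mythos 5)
Your proposal is correct and takes exactly the paper's route: Proposition \ref{prop:hyper} together with Corollary \ref{crly:topos} exhibits $(V \dashv R)$ as a hyperconnected geometric morphism $\Setswith{M} \to \Cont(M,\tau)$, and Theorem \ref{thm:hype2}(i), (iii), (iv) then transfers supercompact generation, enough points and two-valuedness from the presheaf topos $\Setswith{M}$, whose possession of these properties you cite correctly.

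One slip in your supplementary justification of hyperconnectedness should be flagged, even though it does not invalidate the argument (you may, as you note, take the hyperconnectedness as already asserted in the paper before the corollary). Closure of the image of $V$ under subobjects and quotients is \emph{not} a formal consequence of $V$ being left exact, comonadic and full and faithful: the constant-sheaf functor $\Delta \colon \Set \to \Sh(\Rbb)$ has all three properties, yet $\Sh(\Rbb) \to \Set$ is localic rather than hyperconnected, precisely because subobjects and quotients of constant sheaves need not be constant. (Note also that hyperconnectedness asks for the \emph{inverse image} $V$, not the right adjoint $R$, to be full and faithful with image closed under subquotients; your first paragraph states this backwards, though you later work with $V$.) In the case at hand the closure properties do hold, but they need a short direct check using the necessary clopens of Lemma \ref{lem:Inx}: a sub-$M$-set of a continuous $M$-set has the same necessary clopens, and for a quotient $q \colon X \too Y$ each $\Ical_y^p$ with $y = q(x)$ is a union of open sets of the form $\Ical_x^{m_0}$ with $m_0 \in \Ical_y^p$. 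Alternatively, one can argue as the paper does for the group case: the counit of $(V \dashv R)$ is monic because $R(X) \hookrightarrow X$ is an inclusion, and then \cite[Proposition A4.6.6]{Ele} upgrades the surjection to a hyperconnected morphism.
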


As such, we can employ all of the theory developed in Chapter \ref{chap:sgt} to toposes of topological monoid actions. The fact that $\Setswith{M}$ is supercompactly generated is implicitly important in Hemelaer's work in \cite{TGRM}: when identifying those toposes of $G$-equivariant sheaves on a space $X$ which are equivalent to one of the form $\Setswith{M}$, they arrive at the definition of a \textit{minimal basis}, which corresponds to a base of supercompact open sets.

As we already discussed in Chapter \ref{chap:sgt}, the supercompact objects in $\Setswith{M}$ and hence in $\Cont(M,\tau)$ have a straightforward algebraic description.

\begin{dfn}
We shall call an object $N$ in $\Setswith{M}$ a \textbf{principal}\footnote{Some readers might prefer the term \textit{cyclic}.} \textbf{right $M$-set} if it is a quotient of $M$, in that there exists an epimorphism $M \too N$. Such an $M$-set is generated by a single element, the image of $1 \in M$ under the given epimorphism. Similarly, given a topology $\tau$ on $M$, we say an $(M,\tau)$-set $N$ is \textbf{principal} if $V(N)$ is a principal right $M$-set.

Similarly, we call an object $Q$ \textbf{finitely generated} if there is a finite jointly epic family of morphisms $M \to Q$, or equivalently if it has a finite (possibly empty) set of generators.
\end{dfn}

\begin{prop}
\label{prop:prince}
The supercompact objects of $\Ecal:= \Cont(M,\tau)$ are precisely the principal $M$-sets. As such, these form an effectual, reductive category, whose properties we record here:
\begin{enumerate}
	\item All monomorphisms in $\Ccal_s$ are regular and coincide with those in $\Ecal$;
	\item All epimorphisms in $\Ccal_s$ are strict and coincide with those in $\Ecal$;
	\item The classes of epimorphisms and monomorphisms in $\Ccal_s$ form an orthogonal factorization system;
	\item $\Ccal_s$ has a terminal object $1$, and every object is well-supported;
	\item $\Ccal_s$ has cokernels coinciding with those in $\Ecal$.
\end{enumerate}
\end{prop}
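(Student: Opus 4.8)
The plan is to establish that the supercompact objects of $\Ecal = \Cont(M,\tau)$ coincide with the principal $M$-sets, and then to observe that the five listed structural properties are immediate consequences of results already proven in Chapter \ref{chap:sgt}, once we know $\Ecal$ is supercompactly generated (which is Corollary \ref{crly:conttop}) and two-valued (also Corollary \ref{crly:conttop}).

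First I would identify the supercompact objects. By Corollary \ref{crly:conttop}, $\Ecal$ is supercompactly generated, so by Lemma \ref{lem:scsite} every supercompact object is a quotient of a representable, i.e.\ a quotient of $\ell(C)$ for $C$ in a presenting site. Concretely, any continuous $M$-set $X$ is covered by the principal sub-$M$-sets generated by its elements (as observed in Example \ref{xmpl:profin}), and each such principal sub-$M$-set is the image of a morphism $1 \cdot M \to X$ picking out a generator; here $1\cdot M$ denotes the free continuous $M$-set on one generator, which is $M$ acting on itself (this is continuous since $\rfrak_m$ for the regular action is the diagonal-type relation, always open). Thus the principal continuous $M$-sets form a separating family. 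Since $V$ is full, faithful and preserves and reflects epimorphisms (being the inverse image of a hyperconnected morphism, by Corollary \ref{crly:hypepres} $V$ preserves and reflects supercompact objects), an $(M,\tau)$-set $N$ is supercompact in $\Ecal$ if and only if $V(N)$ is supercompact in $\Setswith{M}$, which by the discrete theory of Chapter \ref{chap:TDMA} (or again Lemma \ref{lem:closed} applied to the presheaf topos) happens exactly when $V(N)$ is a quotient of $M$, i.e.\ exactly when $N$ is principal. This gives the first sentence of the Proposition, and it immediately follows that $\Ccal_s$ is an effectual reductive category by Theorem \ref{thm:correspondence}.

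For the five enumerated properties, the strategy is to cite the general theory directly. Properties (1) and (2): by Corollary \ref{crly:conttop}, $\Ecal$ is two-valued, so by Corollary \ref{crly:epic} every supercompact object is well-supported, whence every epimorphism in $\Ccal_s$ is strict and coincides with the epimorphisms of $\Ecal$ (this is exactly Corollary \ref{crly:epic}), and by Scholium \ref{schl:regularmono} every monomorphism of $\Ccal_s$ inherited from $\Ecal$ is regular; that these coincide with the monomorphisms of $\Ecal$ is Lemma \ref{lem:monocoincide}. Property (3) is then Corollary \ref{crly:orthog} together with the strengthening just recorded (epimorphisms are strict). Property (4): the terminal object $1$ of $\Ecal$ is the trivial one-element continuous $M$-set, which is principal (a quotient of $M$), hence lies in $\Ccal_s$; well-supportedness of every object of $\Ccal_s$ is again Corollary \ref{crly:epic}, using two-valuedness. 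Property (5): by Lemma \ref{lem:coker}, $\Ccal_s$ is closed under cokernels precisely when every supercompact object is well-supported, which we have just established, and these cokernels coincide with those computed in $\Ecal$ (the cokernel being a quotient of a supercompact object, hence supercompact by Lemma \ref{lem:closed}).

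I expect no serious obstacle here, since the Proposition is essentially a bookkeeping corollary assembling results from Chapter \ref{chap:sgt} for the specific topos $\Cont(M,\tau)$; the only point requiring genuine care is the very first step, namely verifying cleanly that the supercompact objects are exactly the principal $(M,\tau)$-sets. The subtlety is that supercompactness is a property in $\Ecal$, whereas principality is defined via the underlying $M$-set $V(N)$, so one must invoke that $V$ (as the inverse image of a hyperconnected morphism, by Proposition \ref{prop:hyper} and Corollary \ref{crly:hypepres}) preserves and reflects supercompact objects in order to transport the algebraic characterization from $\Setswith{M}$ to $\Cont(M,\tau)$. Once this bridge is in place, every remaining item is a direct citation.
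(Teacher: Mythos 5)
Your main line of argument is correct and is essentially the paper's own proof: identify the supercompact objects of $\Setswith{M}$ with the principal $M$-sets (quotients of the representable), transport this along the hyperconnected morphism of Proposition \ref{prop:hyper} using the fact that its inverse image $V$ preserves and reflects supercompact objects (Corollary \ref{crly:hypepres}), and then assemble properties (1)--(5) by citing Lemma \ref{lem:monocoincide}, Scholium \ref{schl:regularmono}, Corollary \ref{crly:strict} and Lemma \ref{lem:presic} (or, as you do, Corollary \ref{crly:epic} via two-valuedness), Corollary \ref{crly:orthog}, Proposition \ref{prop:hype2}, Lemma \ref{lem:coker} and Lemma \ref{lem:closed}. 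This is the same decomposition as the paper's, resting on the same load-bearing results.

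However, one parenthetical claim in your first paragraph is false and must be struck: $M$ acting on itself by right multiplication is \emph{not} in general a continuous $(M,\tau)$-set, and there is no ``free continuous $M$-set on one generator.'' By Lemma \ref{lem:Inx}, continuity of the right regular action requires $\Ical_1^p = \{m \in M \mid m = p\} = \{p\}$ to be open for every $p$, so it holds if and only if $\tau$ is discrete; equivalently, $\rfrak_1$ is the diagonal relation, and openness of the diagonal in $\tau \times \tau$ forces discreteness (you may be conflating this with \emph{closedness} of the diagonal, which is the Hausdorff condition). The failure of the representable $M$-set to lie in $\Cont(M,\tau)$ is in fact central to the whole theory: it is why such toposes need not be presheaf toposes (cf.\ Remark \ref{rmk:sgtxmpl}). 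Fortunately your error is dispensable: the separating-family detour it supports is not needed, since the transport of supercompactness along $V$ already yields the identification on its own, and in any case the covering of a continuous $M$-set by its principal sub-$M$-sets (which you state correctly, following Example \ref{xmpl:profin}) suffices for the separating family without invoking any free object. Delete the aside and the proof stands as the paper's.
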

\begin{proof}
Clearly this is true in $\Setswith{M}$, since by definition the principal $M$-sets are exactly the quotients of the representable $M$-set $M$. It follows that the supercompact objects of $\Cont(M,\tau)$ are the continuous principal $M$-sets, since the inverse image of a hyperconnected morphism preserves and reflects supercompact objects. Properties 1 to 3 are derived in Lemma \ref{lem:monocoincide} and Scholium \ref{schl:regularmono}; Corollary \ref{crly:strict} and Lemma \ref{lem:presic}, and Corollary \ref{crly:orthog}, respectively. Properties 4 and 5 are consequences of Proposition \ref{prop:hype2} and Lemma \ref{lem:coker}.
\end{proof}

Thus the statement that $\Cont(M,\tau)$ is supercompactly generated is a formalization of the intuitive fact that every $(M,\tau)$-set is the union of its principal sub-$M$-sets. This enables us to extract a site presentation for $\Cont(M,\tau)$. From Theorem \ref{thm:canon} and Corollary \ref{crly:MoritaCs}, we have:
\begin{crly}
\label{crly:Morita'}
Let $\Ccal_s$ be the category of continuous principal $(M,\tau)$-sets. Then we have $\Cont(M,\tau) \simeq \Sh(\Ccal_s,J_r)$. In particular, topological monoids $(M,\tau)$ and $(M',\tau')$ are \textbf{Morita equivalent}, which is to say that $\Cont(M,\tau) \simeq \Cont(M',\tau')$, if and only if they have equivalent categories of continuous principal $M$-sets.
\end{crly}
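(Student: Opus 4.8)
The proof is essentially a direct application of the two results cited immediately before the statement, namely Theorem \ref{thm:canon} and Corollary \ref{crly:MoritaCs}, combined with the identification of the supercompact objects carried out in Proposition \ref{prop:prince}. The plan is therefore to assemble these pieces rather than to reprove anything from scratch.

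First I would invoke Proposition \ref{prop:prince}, which establishes that the category $\Ccal_s$ of supercompact objects of $\Cont(M,\tau)$ is precisely the category of continuous principal $(M,\tau)$-sets, and moreover that this is an effectual reductive category with the reductive topology $J_r$ whose covering sieves are those containing strict epimorphisms. Since Corollary \ref{crly:conttop} guarantees that $\Cont(M,\tau)$ is supercompactly generated, Theorem \ref{thm:canon} applies and yields the equivalence $\Cont(M,\tau) \simeq \Sh(\Ccal_s,J_r)$ directly; this is the first assertion.

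For the Morita-equivalence statement, I would apply Corollary \ref{crly:MoritaCs}, which states that two supercompactly generated toposes $\Ecal$ and $\Ecal'$ are equivalent if and only if their categories of supercompact objects are equivalent. Taking $\Ecal = \Cont(M,\tau)$ and $\Ecal' = \Cont(M',\tau')$, both of which are supercompactly generated by Corollary \ref{crly:conttop}, and identifying the relevant subcategories of supercompact objects with the respective categories of continuous principal $M$-sets via Proposition \ref{prop:prince}, the equivalence $\Cont(M,\tau) \simeq \Cont(M',\tau')$ holds if and only if the two categories of continuous principal sets are equivalent, as claimed.

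There is no serious obstacle here: the substantive work has already been done in Proposition \ref{prop:prince} (the identification of supercompact objects as continuous principal $M$-sets, together with verification that they form an effectual reductive category) and in the general machinery of Chapter \ref{chap:sgt}. The only point requiring a word of care is ensuring that the abstract ``category of supercompact objects'' appearing in Theorem \ref{thm:canon} and Corollary \ref{crly:MoritaCs} is genuinely identified, up to equivalence, with the concretely described category $\Ccal_s$ of continuous principal $(M,\tau)$-sets; but this identification is exactly the content of Proposition \ref{prop:prince}, so the argument reduces to a citation of the two named results applied through that identification.
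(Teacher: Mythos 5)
Your proposal is correct and matches the paper's own argument exactly: the statement is presented there as an immediate consequence of Theorem \ref{thm:canon} and Corollary \ref{crly:MoritaCs}, applied through the identification of the supercompact objects of $\Cont(M,\tau)$ with the continuous principal $(M,\tau)$-sets established in Proposition \ref{prop:prince}. Your extra care in checking supercompact generation via Corollary \ref{crly:conttop} is precisely the implicit hypothesis the paper relies on, so nothing is missing.
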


\begin{xmpl}
\label{xmpl:infeq}
This result can be practically applied. For example, it shows that any monoid endowed with a topology for which there are infinitely many distinct isomorphism classes of continuous principal actions cannot be Morita-equivalent to any finite monoid. Of course, when the monoids involved are large enough, even the categories of principal actions can be hard to work with, so some alternative ways of generating Morita equivalences are desirable; we shall see some in subsequent sections.
\end{xmpl}

\begin{xmpl}
\label{xmpl:zero}
To present a more categorical example, recall from Definition \ref{dfn:absorb} of Chapter \ref{chap:mpatti} that a \textbf{zero element} of a monoid $M$ is an element $z \in M$ such that $mz = z = zm$ for all $m \in M$.

Let $(M,\tau)$ be a topological monoid with a zero element and $(M',\tau')$ another topological monoid. Then if $\Cont(M,\tau) \simeq \Cont(M',\tau')$, it must be that every principal $(M',\tau')$-set has a unique fixed point, since this is true in $\Setswith{M}$ and the category of principal $(M,\tau)$-sets is a full subcategory containing $1$. In particular, if $M'$ is a group and $M$ is as above, then $\Cont(M,\tau) \simeq \Cont(M',\tau')$ if and only if both $\tau$ and $\tau'$ are indiscrete topologies.
\end{xmpl}

In Section \ref{ssec:EqRel}, we shall provide an alternative presentation of the site $\Ccal_s$ of continuous principal $M$-sets in terms of right congruences.

\begin{rmk}
\label{rmk:compact}
In Chapter \ref{chap:sgt}, we also treat the broader class of \textbf{compactly generated toposes}. Without going into extraneous detail, the compact objects of $\Cont(M,\tau)$ are the \textit{finitely generated continuous $M$-sets}, and the category of these provides a larger site expression for $\Cont(M,\tau)$ and another Morita equivalence condition. We felt that there was not sufficient added theoretical value to cover this perspective in detail in this chapter, although they may eventually provide further sites for toposes of $M$-sets which can be identified with sites arising elsewhere.
\end{rmk}

A feature of hyperconnected morphisms which was not covered in Chapter \ref{chap:sgt} is that they provide a way to compute exponential objects in the codomain topos using those in the domain topos.

\begin{lemma}
\label{lem:expo}
Let $h:\Fcal \to \Ecal$ be a (hyper)connected geometric morphism and let $X$, $Y$ be objects of $\Ecal$. Then the exponential object $Y^X$ in $\Ecal$ can be computed as $h_*\left(h^*(Y)^{h^*(X)}\right)$.
\end{lemma}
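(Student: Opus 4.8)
The plan is to verify the required adjunction directly, using the defining universal property of exponential objects from equation \eqref{eq:exponential-adjunction} together with the full faithfulness of $h^*$ and the standard adjunction manipulations available for a connected geometric morphism. Concretely, I want to exhibit a natural isomorphism
\[
\Hom_{\Ecal}(Z \times X, Y) \cong \Hom_{\Ecal}\left(Z, h_*\left(h^*(Y)^{h^*(X)}\right)\right)
\]
natural in $Z$, since the right-hand object is then forced (by Yoneda) to be the exponential $Y^X$ in $\Ecal$.

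First I would rewrite the left-hand side by passing across the adjunction $(h^* \dashv h_*)$ in reverse: because $h$ is connected, $h^*$ is full and faithful, so the unit $\eta$ is an isomorphism and $\Hom_{\Ecal}(A,B) \cong \Hom_{\Fcal}(h^*A, h^*B)$ for all $A,B$. Applying this with $A = Z \times X$ and $B = Y$, and using that $h^*$ preserves finite products (being an inverse image functor, hence left exact), gives
\[
\Hom_{\Ecal}(Z \times X, Y) \cong \Hom_{\Fcal}\left(h^*(Z) \times h^*(X), h^*(Y)\right).
\]
Next I would apply the exponential adjunction \eqref{eq:exponential-adjunction} inside $\Fcal$ to the factor $h^*(X)$, converting the right-hand side into $\Hom_{\Fcal}\left(h^*(Z), h^*(Y)^{h^*(X)}\right)$. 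Finally I would apply the ordinary adjunction $(h^* \dashv h_*)$ once more, this time in the forward direction, to obtain $\Hom_{\Ecal}\left(Z, h_*\left(h^*(Y)^{h^*(X)}\right)\right)$, as desired. Chaining these isomorphisms yields the natural isomorphism sought, and each step is natural in $Z$.

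The only subtlety worth flagging is the interplay between the two uses of the $(h^* \dashv h_*)$ adjunction: the first uses full faithfulness of $h^*$ to move $\Hom_{\Ecal}$ into $\Hom_{\Fcal}$ on both arguments simultaneously, whereas the second is the genuine hom-set adjunction with $h_*$ appearing on the codomain side. One should check that these are compatible and that naturality in $Z$ is preserved throughout; this is routine but is where care is needed, since the first isomorphism is \emph{not} simply the adjunction but rather its restriction afforded by the invertibility of the unit. I expect this compatibility check to be the main (though modest) obstacle, and I note that hyperconnectedness is not actually needed here — mere connectedness of $h$ (equivalently, full faithfulness of $h^*$) suffices, which is why the statement is phrased with the parenthetical ``(hyper)''.
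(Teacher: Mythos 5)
Your proposal is correct and is essentially the paper's own proof: the paper verifies the same universal property by the same chain of isomorphisms
\[
\Hom_{\Ecal}\left(Z,h_*\left(h^*(Y)^{h^*(X)}\right)\right) \cong \Hom_{\Fcal}\left(h^*(Z),h^*(Y)^{h^*(X)}\right) \cong \Hom_{\Ecal}(Z\times X,Y),
\]
using the $(h^*\dashv h_*)$ adjunction, the exponential adjunction in $\Fcal$, left exactness of $h^*$, and full faithfulness of $h^*$ — merely written in the opposite direction to yours. Your closing observation that only connectedness (not hyperconnectedness) is needed matches the paper's phrasing of the statement.
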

\begin{proof}
We check the universal property:
\begin{align*}
& \Hom_{\Ecal}\left(Z,h_*\left(h^*(Y)^{h^*(X)}\right)\right) \\ \cong \, &
\Hom_{\Fcal}(h^*(Z),h^*(Y)^{h^*(X)}) \\ \cong \, &
\Hom_{\Ecal}(Z \times X,Y),
\end{align*}
where the latter isomorphism is obtained from full faithfulness of $h^*$.
\end{proof}

\begin{crly}
\label{crly:expo}
Let $X$, $Y$ be $(M,\tau)$-sets. Then the exponential object $Y^X$ in $\Cont(M,\tau)$ is $R\left(\Hom_{\Setswith{M}}(M \times V(X),V(Y))\right)$, which consists of the continuous elements of the exponential object $V(Y)^{V(X)}$ in $\Setswith{M}$.
\end{crly}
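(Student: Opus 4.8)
The plan is to read off Corollary~\ref{crly:expo} directly from Lemma~\ref{lem:expo} by applying it to the specific hyperconnected morphism at hand. Recall that Proposition~\ref{prop:hyper} establishes that the forgetful functor $V:\Cont(M,\tau)\to\Setswith{M}$ is the inverse image of a hyperconnected geometric morphism $h:\Setswith{M}\to\Cont(M,\tau)$, so that $h^*=V$ and $h_*=R$. Since $V$ is full and faithful (being a comonadic functor arising from a cartesian comonad, as noted in the proof of Proposition~\ref{prop:hyper}), the hypotheses of Lemma~\ref{lem:expo} are met.

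First I would substitute $h^*=V$ and $h_*=R$ into the formula from Lemma~\ref{lem:expo}, obtaining that for $(M,\tau)$-sets $X,Y$ the exponential $Y^X$ in $\Cont(M,\tau)$ equals $R\!\left(V(Y)^{V(X)}\right)$, where the inner exponential is computed in $\Setswith{M}$. Then I would invoke the explicit description of exponentials in $\Setswith{M}$ recalled in Section~\ref{ssec:features}: by equation~\eqref{eq:exp1}, for right $M$-sets $P,Q$ the exponential $Q^P$ has underlying set $\HOM_M(M\times P,Q)$ with the right action given by $(f\cdot m)(n,p)=f(mn,p)$. Applying this with $P=V(X)$ and $Q=V(Y)$ yields $V(Y)^{V(X)}=\HOM_{\Setswith{M}}(M\times V(X),V(Y))$, so that $Y^X\cong R\!\left(\HOM_{\Setswith{M}}(M\times V(X),V(Y))\right)$, which is exactly the stated formula.

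Finally, the identification of $R(Z)$ with ``the continuous elements of $Z$'' is immediate from the definition of $R$ given in Proposition~\ref{prop:hyper}, where $R(Z)$ is precisely the subset of continuous elements of $Z$ with respect to $\tau$; this is just a matter of recalling the terminology introduced there, so the clause ``which consists of the continuous elements of the exponential object $V(Y)^{V(X)}$ in $\Setswith{M}$'' requires no further argument.

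Honestly, there is no serious obstacle here: the statement is a direct corollary, and the only thing to be careful about is bookkeeping — confirming that the hyperconnected morphism of Proposition~\ref{prop:hyper} has $V$ and $R$ as its inverse and direct image in the correct variance, and that full faithfulness of $V$ is what licenses Lemma~\ref{lem:expo}. The mild subtlety worth a sentence of comment is that Lemma~\ref{lem:expo} only needs $h$ to be connected (full faithfulness of $h^*$), which we have \emph{a fortiori} since $h$ is hyperconnected; no further property of $\tau$ (such as making multiplication continuous) is required.
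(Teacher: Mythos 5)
Your proposal is correct and follows essentially the same route as the paper: the paper's proof likewise applies Lemma \ref{lem:expo} to the hyperconnected morphism $(V \dashv R)$ from Proposition \ref{prop:hyper}, then computes $V(Y)^{V(X)}$ in $\Setswith{M}$ via the universal property $\Hom_{\Setswith{M}}(M,V(Y)^{V(X)}) \cong \Hom_{\Setswith{M}}(M \times V(X),V(Y))$ with the action $(h \cdot m)(n,p) = h(mn,p)$ — the same identification you draw from \eqref{eq:exp1}. Your closing remark that only connectedness (not full hyperconnectedness) of $h$ is needed is accurate and a worthwhile observation, though the paper does not make it explicitly.
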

\begin{proof}
Applying Lemma \ref{lem:expo}, it suffices to compute $V(Y)^{V(X)}$ in $\Setswith{M}$. The underlying set is given by $\Hom_{\Setswith{M}}(M,V(Y)^{V(X)}) \cong \Hom_{\Setswith{M}}(M \times V(X),V(Y))$, by the universal property of exponentials. $M$ acts by multiplication in the first component, so that given $h:M \times V(X) \to V(Y)$, $h \cdot m$ is the mapping $(n,p) \mapsto h(mn,p)$.
\end{proof}

\subsection{The joint covering property}
\label{ssec:jcp}

One might wonder if the properties of $\Cont(M,\tau)$ identified in Corollary \ref{crly:conttop} are enough to characterize toposes of this form. For comparison, in the work of Caramello in \cite{TGT}, it is shown amongst many other results that a topos is equivalent to the topos of actions of a topological group if and only if it is an atomic, two-valued topos admitting a special surjective point\footnote{The inverse image of this point is an extension of the $J_{at}$-flat functor represented by a $\Ccal$-universal and $\Ccal$-ultrahomogeneous object $u$ in $\Ind\Ccal$; see \cite[Theorem 3.5]{TGT}.}. These conditions look a lot like the properties in Corollary \ref{crly:conttop}, except we have replaced `atomic' by `supercompactly generated' and have weakened the existence of a special point to the mere existence of \textit{enough} points.

Of course, we also know that toposes of the form $\Cont(M,\tau)$ have a canonical surjective point, obtained as the composite of the canonical point of $\Setswith{M}$ and the hyperconnected morphism $\Setswith{M} \to \Cont(M,\tau)$. Here we observe an additional property of categories of principal $M$-sets and an example of a topos having all of the properties of Corollary \ref{crly:conttop} but whose category of supercompact objects fails to have this additional property.

\begin{dfn}
\label{dfn:jcp}
We say a small category $\Ccal$ has the \textbf{joint covering property} if for any pair of objects $A,B$ of $\Ccal$ there exists an object $N$ of $\Ccal$ admitting epimorphisms to $A$ and $B$.
\end{dfn}

If $\Ccal$ is a poset, the joint covering property is equivalent to $\Ccal$ having a lower bound for any pair of elements. If $\Ccal$ has binary products, it corresponds to the property that the projection maps from any binary product should be epimorphisms. The category of non-empty sets has this property; more generally, the category of well-supported objects of a topos always has this property. In contrast, any non-trivial category with a strict initial object must fail to have the joint covering property.

\begin{lemma}
\label{lem:Mjcp}
Consider the topos $\Setswith{M}$; let $\Ccal_s$ be its subcategory of supercompact objects. Then $\Ccal_s$ has the joint covering property.
\end{lemma}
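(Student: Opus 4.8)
The plan is to exhibit, for any pair of principal $M$-sets $A$ and $B$, an explicit principal $M$-set $N$ together with epimorphisms $N \too A$ and $N \too B$. The key observation is that principal $M$-sets are precisely the quotients of the representable $M$-set $M$, and that $M$ itself maps onto every principal $M$-set: if $A$ is generated by an element $a$, then the $M$-set homomorphism $M \to A$ sending $1 \mapsto a$ (equivalently $m \mapsto am$) is an epimorphism, since its image is a sub-$M$-set containing the generator $a$ and hence all of $A$.

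The construction is then immediate: given principal $A$ and $B$, take $N := M$ (the representable $M$-set). First I would fix generators $a \in A$ and $b \in B$, which exist by the definition of principal right $M$-set as a quotient of $M$. Next, the two homomorphisms $f: M \to A$, $m \mapsto am$ and $g: M \to B$, $m \mapsto bm$ are each epic by the argument above, so $M$ admits epimorphisms to both $A$ and $B$. Since $M$ is itself supercompact in $\Setswith{M}$ (indeed, the representable is an indecomposable projective, as recorded in the discussion following Lemma \ref{lem:proj} and in Proposition \ref{prop:xmpls}(iii)), it lies in $\Ccal_s$, so this witnesses the joint covering property.

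There is essentially no hard part here, which is the point: the category of supercompact objects of $\Setswith{M}$ has a \emph{weakly terminal} object, namely the representable $M$-set $M$, through which every principal $M$-set is covered. This reflects the fact, noted in Example \ref{xmpl:compshf}, that $\Setswith{M}$ is a supercompact topos (its terminal object is supercompact, and more strongly the one-object category $M$ is a funnel). The only point requiring a word of care is confirming that the homomorphism $m \mapsto am$ is genuinely epic rather than merely having dense image; but in $\Setswith{M}$ epimorphisms are computed on underlying sets (the forgetful functor $U$ preserves and reflects them), and the image of $m \mapsto am$ is the whole of $A$ by surjectivity of the chosen quotient map $M \too A$. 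Thus $N = M$ serves uniformly for every pair, and the joint covering property holds.
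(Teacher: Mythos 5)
Your proof is correct, but it takes a genuinely different route from the paper. The paper's proof takes the product $N_1 \times N_2$ of the two principal $M$-sets and uses the principal sub-$M$-set generated by the pair of generators $(n_1,n_2)$ as the joint cover; you instead observe that the representable $M$-set $M$ itself already covers every object of $\Ccal_s$, so a single object serves uniformly for all pairs. Your argument is shorter and isolates the real reason the lemma is trivial for $\Setswith{M}$: the category $\Ccal_s$ has an object admitting an epimorphism to every other object (note this makes $M$ \emph{weakly initial} in $\Ccal_s$, not weakly terminal as you wrote --- every principal $M$-set receives a map from $M$, but generally admits no map \emph{to} $M$). What the paper's construction buys is compatibility with the intended application: the joint cover it produces is a subobject of $N_1 \times N_2$, and this is precisely the shape of cover that transfers along a hyperconnected morphism $\Setswith{M} \to \Cont(M,\tau)$, whose image is closed under products and subobjects (Proposition \ref{prop:hypejcp}, Corollary \ref{crly:contjcp}). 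Your cover $M$ does not transfer in this way, since the right action of $M$ on itself is typically \emph{not} continuous for a nontrivial topology $\tau$, so $M$ need not lie in $\Cont(M,\tau)$ at all; the transfer proposition still rescues any joint cover via an image factorization through the product, but the paper's choice makes that step transparent.
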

\begin{proof}
Given principal $M$-sets $N_1,N_2$ with generators $n_1,n_2$, consider the product $N_1 \times N_2$. The principal sub-$M$-set $N$ of this product generated by $(n_1,n_2)$ clearly admits the desired epimorphisms to $N_1$ and $N_2$.
\end{proof}

By applying a topological argument, we could directly extend the proof of Lemma \ref{lem:Mjcp} to the corresponding result for $\Cont(M,\tau)$. However, in the spirit of our study of hyperconnected morphisms in Chapter \ref{chap:sgt}, we once again give a more general argument for hyperconnected morphisms.

\begin{prop}
\label{prop:hypejcp}
Let $\Fcal$ be a topos and $\Ccal'_s$ its subcategory of supercompact objects. Suppose $\Ccal'_s$ has the joint covering property and $f:\Fcal \to \Ecal$ is a hyperconnected geometric morphism. Then the corresponding subcategory $\Ccal_s$ of $\Ecal$ also has the joint covering property.
\end{prop}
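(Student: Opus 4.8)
The plan is to exploit the key structural facts about hyperconnected morphisms established in Chapter \ref{chap:sgt}, rather than reasoning topologically. Recall from Corollary \ref{crly:hypepres} that the inverse image functor $f^*$ of a hyperconnected morphism $f:\Fcal \to \Ecal$ preserves and reflects supercompact objects, and that $f^*$ preserves and reflects epimorphisms (being fully faithful and closed under subobjects and quotients). This means $f^*$ restricts to an equivalence-like correspondence between $\Ccal_s$ (the supercompact objects of $\Ecal$) and $\Ccal'_s$ (those of $\Fcal$): every supercompact object $A$ of $\Ecal$ yields a supercompact object $f^*(A)$ of $\Fcal$, and this assignment is full and faithful since $f^*$ is.

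First I would take an arbitrary pair of objects $A, B$ in $\Ccal_s$. Applying $f^*$, we obtain supercompact objects $f^*(A), f^*(B)$ in $\Fcal$, hence objects of $\Ccal'_s$. Since $\Ccal'_s$ has the joint covering property by hypothesis, there is an object $N'$ of $\Ccal'_s$ together with epimorphisms $N' \too f^*(A)$ and $N' \too f^*(B)$ in $\Fcal$. The next step is to transport $N'$ back along $f$: because $f^*$ reflects supercompact objects and is fully faithful, I would identify $N'$ with $f^*(N)$ for some supercompact object $N$ of $\Ecal$ (using that $f^*$ is an equivalence onto its image, which on supercompact objects lands exactly in $\Ccal'_s$).

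Then I would lift the two epimorphisms. Since $f^*$ is full, the epimorphisms $f^*(N) \too f^*(A)$ and $f^*(N) \too f^*(B)$ are of the form $f^*(g)$ and $f^*(h)$ for morphisms $g:N \to A$ and $h:N \to B$ in $\Ecal$. Because $f^*$ reflects epimorphisms (equivalently, a morphism of $\Ecal$ is epic iff its image under the hyperconnected $f^*$ is epic, by Corollary \ref{crly:hypepres}), both $g$ and $h$ are epimorphisms in $\Ecal$. Thus $N$ is an object of $\Ccal_s$ admitting epimorphisms to $A$ and $B$, which is exactly the joint covering property for $\Ccal_s$.

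The main obstacle to watch is the precise sense in which $f^*$ establishes a correspondence of the subcategories of supercompact objects: I need that $f^*$ is not merely preserving supercompact objects but that its essential image on $\Ccal_s$ is precisely $\Ccal'_s$ (so that the joint-cover witness $N'$ genuinely descends to an object of $\Ecal$). This is guaranteed by the combination of preservation and reflection of supercompactness in Corollary \ref{crly:hypepres} together with full faithfulness of $f^*$; the fullness is what allows the lifting of the two epimorphisms, and the reflection of epimorphisms is what upgrades the lifted morphisms back to epimorphisms in $\Ecal$. Everything else is a routine diagram chase, so no lengthy calculation is required.
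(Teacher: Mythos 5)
The step that fails is your descent of the joint-cover witness: ``because $f^*$ reflects supercompact objects and is fully faithful, I would identify $N'$ with $f^*(N)$ for some supercompact object $N$ of $\Ecal$.'' Reflection (Corollary \ref{crly:hypepres}) only says that $X$ is supercompact in $\Ecal$ whenever $f^*(X)$ is supercompact in $\Fcal$; together with full faithfulness this makes the restricted functor $\Ccal_s \to \Ccal'_s$ full and faithful, but nothing makes it essentially surjective, and in general it is not (if $f^*$ were essentially surjective it would be an equivalence). For the canonical hyperconnected morphism $\Setswith{M} \to \Cont(M,\tau)$ of Chapter \ref{chap:TTMA}, the essential image of $\Ccal_s$ inside $\Ccal'_s$ consists of the \emph{continuous} principal $M$-sets only; for instance, with $(\Zbb,\tau)$ as in Example \ref{xmpl:Z+}, the object $\Zbb$ is supercompact in $\Setswith{\Zbb}$ but is not a continuous $(\Zbb,\tau)$-set, so it lies in $\Ccal'_s$ but not in the image of $\Ccal_s$. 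Hence the witness $N'$ supplied by the hypothesis need not come from $\Ecal$ at all, and overcoming precisely this is the content of the proposition. (A smaller misreading: Definition \ref{dfn:jcp} asks for epimorphisms \emph{in the small category} $\Ccal'_s$, not in $\Fcal$; these classes genuinely differ, as the example opening Section \ref{ssec:twoval} shows.)

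The paper's proof repairs exactly this point: instead of descending $N'$ itself, form the induced morphism $N' \to f^*(A) \times f^*(B) \cong f^*(A \times B)$ and take its image factorization $N' \too N \hookrightarrow f^*(A \times B)$ in $\Fcal$. Then $N$ is a quotient of $N'$, hence supercompact by Lemma \ref{lem:closed}, and it is a subobject of an object in the image of $f^*$; since hyperconnectedness means that this image is closed under subobjects (and $f^*$ preserves finite products), we get $N \cong f^*(N_0)$, with $N_0$ in $\Ccal_s$ by reflection of supercompactness. The projections $N \to f^*(A)$ and $N \to f^*(B)$ are right factors of the given epimorphisms of $\Ccal'_s$, hence are themselves epimorphisms in $\Ccal'_s$, and the full faithfulness of $\Ccal_s \to \Ccal'_s$ (which you correctly identified, and which is what lets one ignore the discrepancy between epimorphisms in $\Ccal'_s$ and in $\Fcal$) transfers them to epimorphisms $N_0 \to A$ and $N_0 \to B$ in $\Ccal_s$. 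Your remaining steps --- lifting morphisms by fullness and reflecting epimorphy by faithfulness --- then go through unchanged.
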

\begin{proof}
Since $f$ is hyperconnected, $\Ecal$ is closed in $\Fcal$ under products and subobjects, so that any joint cover in $\Ccal'_s$ of a pair of objects in $\Ccal_s$ also lies in $\Ccal_s$. Note that since the functor $\Ccal_s \to \Ccal'_s$ is full and faithful, we do not need to worry whether epimorphisms in $\Ccal'_s$ coincide with those in $\Fcal$: any epimorphism in $\Ccal'_s$ will also be one in $\Ccal_s$.
\end{proof}

\begin{crly}
\label{crly:contjcp}
The category of principal $(M,\tau)$-sets in $\Cont(M,\tau)$ has the joint covering property. 
\end{crly}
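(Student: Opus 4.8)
The plan is to deduce the result as an immediate application of Proposition \ref{prop:hypejcp} combined with Lemma \ref{lem:Mjcp}. Everything needed is already in place: Lemma \ref{lem:Mjcp} establishes that the category of supercompact objects of $\Setswith{M}$ has the joint covering property, and Proposition \ref{prop:hyper} (together with Corollary \ref{crly:topos}) tells us that the forgetful functor $V : \Cont(M,\tau) \to \Setswith{M}$ is the inverse image of a hyperconnected geometric morphism $\Setswith{M} \to \Cont(M,\tau)$.

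First I would fix notation, writing $\Fcal = \Setswith{M}$ with its subcategory $\Ccal'_s$ of supercompact objects, and $\Ecal = \Cont(M,\tau)$ with subcategory $\Ccal_s$. By Lemma \ref{lem:Mjcp}, $\Ccal'_s$ has the joint covering property. By Proposition \ref{prop:hyper} the adjunction $(V \dashv R)$ is a hyperconnected geometric morphism $f : \Setswith{M} \to \Cont(M,\tau)$, precisely as observed at the start of Section \ref{ssec:apply}. Then I would simply apply Proposition \ref{prop:hypejcp} to this hyperconnected morphism: its hypotheses---that the subcategory of supercompact objects of the domain $\Fcal$ has the joint covering property, and that $f$ is hyperconnected---are exactly what we have verified, so the conclusion is that $\Ccal_s$, the subcategory of supercompact objects of $\Cont(M,\tau)$, has the joint covering property.

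By Proposition \ref{prop:prince}, the supercompact objects of $\Cont(M,\tau)$ are precisely the principal $(M,\tau)$-sets, so $\Ccal_s$ is exactly the category of continuous principal $(M,\tau)$-sets named in the statement. This completes the argument.

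There is essentially no obstacle here, since the work has been front-loaded into Lemma \ref{lem:Mjcp} and Proposition \ref{prop:hypejcp}; the corollary is purely a matter of checking that the hypotheses of the latter are met by the hyperconnected morphism $V$. The only point warranting a word of care---and it is one already addressed inside the proof of Proposition \ref{prop:hypejcp}---is that one need not separately verify that epimorphisms in $\Ccal_s$ coincide with those in $\Ecal$ or in $\Ccal'_s$: since the comparison functor $\Ccal_s \to \Ccal'_s$ is full and faithful (being a restriction of the fully faithful $V$), any joint cover found in $\Ccal'_s$ of a pair of objects lying in $\Ccal_s$ remains a joint cover in $\Ccal_s$, with its covering morphisms still epic there. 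Thus the result follows formally, and I would present it in two short sentences rather than repeating the topological argument sketched for Lemma \ref{lem:Mjcp}.
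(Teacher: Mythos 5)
Your proposal is correct and is precisely the paper's intended derivation: the corollary is stated immediately after Proposition \ref{prop:hypejcp} with no separate proof, because it follows by applying that proposition (with $\Fcal = \Setswith{M}$, $\Ecal = \Cont(M,\tau)$, and the hyperconnected morphism of Proposition \ref{prop:hyper}) together with Lemma \ref{lem:Mjcp}, and identifying the supercompact objects of $\Cont(M,\tau)$ with the continuous principal $M$-sets via Proposition \ref{prop:prince}. Your closing remark about not needing to recheck that epimorphisms agree across the subcategories is also exactly the caveat handled inside the proof of Proposition \ref{prop:hypejcp}, so nothing is missing.
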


\begin{xmpl}
\label{xmpl:nonjcp}
At this point we can present an example of a two-valued, supercompactly generated topos with enough points which is not equivalent to $\Cont(M,\tau)$ for any topological monoid $(M,\tau)$. Consider the following category, $\Ccal$:
\[\begin{tikzcd}
	X \ar[r, shift left, two heads] \ar[loop left] &
	1 \ar[l, shift left, hook] \ar[r, shift left, hook] &
	Y \ar[l, shift left, two heads] \ar[loop right, "{,}"]
\end{tikzcd}\]
where identity morphisms are omitted and the outside loops are the idempotent endomorphisms whose splitting gives the terminal object. We can check directly that this is a reductive category: there are relatively few colimits that need to be checked, and since all of the (strict) epimorphisms split, they are stable and the reductive topology coincides with the trivial topology, as noted in Remark \ref{rmk:split}.

Therefore, let $\Ecal$ be the presheaf topos $\Sh(\Ccal,J_r) \simeq \Setswith{\Ccal}$, which is supercompactly generated and, being a presheaf topos, has enough points. We can compute directly that the category $\Ccal_s$ of supercompact objects of $\Ecal$ is equivalent to $\Ccal$ (meaning $\Ccal$ is an effective reductive category); since $X$ and $Y$ are well-supported, this also verifies two-valuedness. But $\Ccal$ does not have the joint covering property (there is no joint cover of $X$ and $Y$) and hence $\Ecal$ is not equivalent to a topos of the form $\Cont(M,\tau)$.

For a family of related examples, we can let $M$ and $M'$ be non-trivial monoids each having a zero element (see Example \ref{xmpl:zero} above). Then their idempotent-completions each have a terminal object; we may construct a category $\Ccal$ by gluing these idempotent completions along their respective terminal objects. The category of presheaves on this category will have the properties of Corollary \ref{crly:conttop}, but $\Ccal_s$ will not have the joint covering property (because there can be no joint covering of $M$ and $M'$). The above is the case where $M = M'$ is the two-element monoid with both elements idempotent.
\end{xmpl}

\begin{rmk}
\label{rmk:strict}
The category of supercompact objects in $\Cont(M,\tau)$ has the even more restrictive property that the covering morphisms in Definition \ref{dfn:jcp} may be chosen to be \textit{strict} epimorphisms, although as we saw in Corollary \ref{crly:epic} of Chapter \ref{chap:sgt}, this is equivalent to them being mere epimorphisms in any two-valued topos. Conversely, this `strict joint covering property' for supercompact objects actually forces two-valuedness of a supercompactly generated topos. The ordinary joint covering property does not have this implication, since the category of supercompact objects in the topos of presheaves on any meet semi-lattice has the joint covering property, and any non-trivial such topos is not two-valued.
\end{rmk} 

Even including the joint covering property to the list of properties derived previously, it is not clear at this point whether we obtain a complete characterization of toposes of the form $\Cont(M,\tau)$, since there is no canonical way of reconstructing a topological monoid given only the reductive category of principal $(M,\tau)$-sets and no additional data (such as their underlying sets). In particular, we have not yet arrived at a complete answer to the question of when a supercompactly generated, two-valued Grothendieck topos $\Ecal$ is equivalent to one of the form $\Cont(M,\tau)$. We shall return to this question in Section \ref{sec:surjpt}., but we shall see in Chapter \ref{chap:TSGT} that adding the joint covering property to the list of properties can be sufficient when $\Ccal_s$ is small enough.

\section{Monoids with topologies}
\label{sec:montop}

In this section we examine the extent to which the topology on the monoid $(M,\tau)$ can be recovered from the hyperconnected geometric morphism $\Setswith{M} \to \Cont(M,\tau)$.

\subsection{Powersets and inverse image actions}
\label{ssec:inverse}

If $M$ acts on a set $X$ on the \textit{left}, then $M$ has a corresponding \textit{right} action on its powerset $\Pcal(X)$ via the `inverse image' action, $A \mapsto g^*(A) = \{x \in X \mid gx \in A\}$; it is easily checked that $(gh)^* = h^*g^*$. Note that if $M$ is a group then $g^*$ is simply (element-wise) left multiplication by $g^{-1}$.

If $t : X \to Y$ is a homomorphism of left $M$-sets, so $t(g \cdot x) = g\cdot t(x)$ for every $x \in X$, then we can define $t^{-1} : \Pcal(Y) \to \Pcal(X)$ sending $B$ to $t^{-1}(B)$, since
\begin{align*}
g^*(t^{-1}(B)) & = \{x \in X \mid g \cdot x \in t^{-1}(B)\}\\
&= \{x \in X\mid t(g \cdot x) \in B\}\\
&= \{x \in X\mid g \cdot t(x) \in B\}\\
&= \{x \in X\mid t(x) \in g^*(B)\} = t^{-1}(g^*(B)).
\end{align*}
Thus we obtain a functor $\Pcal : [M,\Set]\op \to \Setswith{M}$, which is self-adjoint: the dual functor $\Pcal\op: \Setswith{M} \to [M,\Set]\op$ is left adjoint to $\Pcal$. This adjunction is, by construction, a lifting of the powerset adjunction on $\Set$ along the forgetful functor from $\Setswith{M}$, in the sense that the following diagram commutes:
\[\begin{tikzcd}
\Set \ar[d,bend right,"\Pcal\op"'] \ar[d, phantom, "\dashv"] & {\Setswith{M}} \ar[l,"U"'] \ar[d,bend right,"\Pcal\op"'] \ar[d, phantom, "\dashv"]\\
\Set\op \ar[u,bend right,"\Pcal"'] & {[M,\Set]}\op \ar[l,"U"'] \ar[u,bend right,"\Pcal"'].
\end{tikzcd}\]
The purpose of introducing this adjunction is to identify some special $M$-sets. First and foremost, the action of $M$ on itself by left multiplication gives a canonical right $M$-action on $\Pcal(M)$ which (even \textit{a priori}) seems a good starting point from which to recover a topology.

In Chapter \ref{chap:TDMA}, we were able to identify a representing monoid $M$ for $\Setswith{M}$ as the representing object for the forgetful functor $U$ in the diagram above. We can do something very similar here:
\begin{lemma}
\label{lem:represent}
$\Pcal(M)$ represents the composite functor $\Pcal\op \circ U : \Setswith{M} \to \Set\op$. In particular, it is uniquely determined as an object of $\Setswith{M}$ by the choice of representing monoid $M$.
\end{lemma}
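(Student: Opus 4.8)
The plan is to establish a natural isomorphism
\[
\Hom_{\Set\op}\bigl(\Pcal\op(U(X)), S\bigr) \cong \Hom_{\Setswith{M}}\bigl(X, \Pcal(M)\bigr)
\]
and identify the left-hand side with $\Hom_{\Set}(S, U(X))$ via the ordinary powerset self-adjunction in $\Set$, thereby exhibiting $\Pcal(M)$ as the value at $S=1$ (or rather, exhibiting $\Pcal\op\circ U$ as corepresented by $\Pcal(M)$ in the appropriate sense). Concretely, since the defining square in the excerpt commutes, we have $\Pcal\op\circ U = \Pcal\op\circ U$ where the outer $\Pcal\op$ is the one on $\Setswith{M}$; composing with $U:[M,\Set]\op \to \Set\op$ and using that the adjunction $\Pcal\op \dashv \Pcal$ lifts the $\Set$-level powerset adjunction, the claim reduces to unwinding what it means for $\Pcal(M)$ to represent this functor. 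First I would fix the convention that ``$\Pcal(M)$ represents $\Pcal\op\circ U$'' means there is a natural bijection
\[
\Hom_{\Setswith{M}}\bigl(X,\Pcal(M)\bigr) \cong U\bigl(\Pcal\op(U(X))\bigr),
\]
i.e. morphisms of right $M$-sets $X \to \Pcal(M)$ correspond naturally to subsets of the underlying set $U(X)$.

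The key computation is to describe these morphisms explicitly. A morphism of right $M$-sets $f: X \to \Pcal(M)$ assigns to each $x \in X$ a subset $f(x) \subseteq M$, subject to the equivariance condition $f(x\cdot m) = f(x)\cdot m$, where the right action on $\Pcal(M)$ is the inverse image action $f(x)\cdot m = m^*(f(x)) = \{m' \in M \mid m m' \in f(x)\}$. The plan is to show that such an $f$ is determined by, and freely determined by, the single subset
\[
A_f := \{x \in X \mid 1 \in f(x)\} \subseteq U(X),
\]
and conversely that any subset $A \subseteq U(X)$ arises this way. Given $A$, one recovers $f$ by setting $f(x) = \{m \in M \mid x\cdot m \in A\}$; I would check that this is equivariant (a direct manipulation with the inverse image action) and that the two assignments $f \mapsto A_f$ and $A \mapsto f$ are mutually inverse. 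The equivariance check amounts to verifying $\{m' \mid x\cdot(mm') \in A\} = m^*\{m'' \mid x\cdot m'' \in A\}$, which is immediate from associativity of the action; that $1 \in f(x) \iff x \in A$ recovers $A_f = A$; and that the reconstructed $f$ agrees with an arbitrary equivariant $f$ follows because $m \in f(x) \iff 1 \in m^*(f(x)) = f(x\cdot m) \iff x\cdot m \in A_f$.

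Next I would verify naturality: given a morphism $t: X \to X'$ of right $M$-sets, the induced maps on $\Hom(-,\Pcal(M))$ and on $U(-)$ (via preimage $t^{-1}$) commute with the bijection. This is a routine diagram chase using that $A_{f\circ t} = t^{-1}(A_f)$, which follows directly from the definition of $A_{(-)}$. Having the natural bijection $\Hom_{\Setswith{M}}(X,\Pcal(M)) \cong \Hom_{\Set}(U(X),1)$-type identification — more precisely the identification with subsets of $U(X)$, i.e. with $U(\Pcal\op(U(X)))$ — establishes that $\Pcal(M)$ corepresents $\Pcal\op\circ U$ as claimed. The final sentence of the statement, that $\Pcal(M)$ is thereby uniquely determined as an object of $\Setswith{M}$ by the choice of $M$, is then immediate from the uniqueness of representing objects up to canonical isomorphism (Yoneda).

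\textbf{Main obstacle.} I expect the only genuinely delicate point to be keeping the variance and the two competing powerset adjunctions straight: the functor in question is $\Pcal\op\circ U$ landing in $\Set\op$, so ``representing'' it is a statement in $\Set\op$, and one must be careful whether ``represents'' is being used in the corepresentable sense relative to the self-adjoint $\Pcal$. The computations themselves (equivariance of the inverse image action, the bijection between equivariant $f$ and subsets $A_f$, and naturality via $t^{-1}$) are all short and formal; the real care is in pinning down the precise meaning of the statement so that it matches the commuting square and the self-adjunction $\Pcal\op \dashv \Pcal$ exhibited just above the Lemma, and in confirming that the right $M$-action used throughout is exactly the inverse image action $m^*$ rather than direct multiplication.
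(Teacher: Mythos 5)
Your proposal is correct: the equivariance of $f_A$, the mutual inversion of $f \mapsto A_f$ and $A \mapsto f_A$, and the naturality check via $t^{-1}$ all go through exactly as you describe, and your reading of what ``represents'' means for a $\Set\op$-valued functor matches the paper's. The paper, however, dispatches the lemma in two lines rather than by element-level verification: it transposes across the lifted self-adjunction $\Pcal\op \dashv \Pcal$ displayed just above the lemma, giving $\Hom_{\Setswith{M}}(X,\Pcal(M)) \cong \Hom_{[M,\Set]}(M,\Pcal\op(X))$, and then applies the Yoneda lemma to the left $M$-set $M$ to identify the latter with the underlying set of $\Pcal\op(X)$, i.e.\ with $\Pcal\op(U(X))$. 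Your explicit bijection is precisely the unwinding of this abstract argument: your subset $A_f = \{x \in X \mid 1 \in f(x)\}$ is the image of $1 \in M$ under the adjoint transpose $M \to \Pcal\op(X)$ of $f$, and your reconstruction $f_A(x) = \{m \in M \mid x \cdot m \in A\}$ is the inverse transposition. What the paper's route buys is brevity and automatic naturality (both isomorphisms are instances of natural ones, so no diagram chase is needed); what your route buys is a self-contained, concrete description of the correspondence, which is in fact the form the paper uses later, e.g.\ in the proof of Theorem \ref{thm:factor}, where subsets such as $\Ical_{[1]}^{p}$ are converted into morphisms $M \to \Pcal(M)$ and morphisms $M/r \to \Pcal(M)$ are analyzed via the image of the generator $[1]$.
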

\begin{proof}
Passing around the square and applying Yoneda, we obtain natural isomorphisms:
\[\Hom_{\Setswith{M}}(X,\Pcal(M)) \cong \Hom_{[M,\Set]}(M,\Pcal\op(X)) \cong \Pcal\op(U(X)),\]
as required. 
\end{proof}
We can in fact deduce that this composite functor is comonadic, so that $\Setswith{M}$ is comonadic over $\Set\op$, but since the existing tools for comparing toposes with cotoposes (beyond those used to show the existence of colimits in toposes) are not well-developed to the author's knowledge, we shall take a different route to derive further properties of $\Pcal(M)$.

Note that the two-element set $2$ represents $\Pcal: \Set \to \Set\op$. By passing through the available adjunctions, we find that for every right $M$-set $X$,
\begin{align*}
\Hom_{\Setswith{M}}(X,\Pcal(M)) & \cong \Pcal\op(U(X))\\
&\cong \Hom_{\Set}(U(X),2)\\
&\cong \Hom_{\Setswith{M}}(X,\Hom_{\Set}(M,2)).
\end{align*}
That is, $\Pcal(M) \cong \Hom_{\Set}(M,2)$ as right $M$-sets, which is clear at the level of underlying sets, but the fact that the actions coincide was not apparent \textit{a priori}.

\begin{rmk}
Localic geometric morphisms over a topos $\Ecal$ correspond to internal locales in $\Ecal$, by \cite[Lemma 1.2]{factorizationI}, say. The correspondence sends a morphism $f: \Fcal \to \Ecal$ to the internal locale $f_*(\Omega_{\Fcal})$, where $\Omega_{\Fcal}$ is the subobject classifier of $\Fcal$.

Recalling that $2$ is the subobject classifier for $\Set$, we have just shown that $\Pcal(M)$ is (the underlying object of) the internal locale corresponding to the canonical point of $\Setswith{M}$; this provides another way to deduce the second statement in Lemma \ref{lem:represent}, and endows $\Pcal(M)$ with a canonical order relation (which coincides with the usual inclusion ordering).
\end{rmk}

\begin{lemma}
\label{lem:Pcal}
$\Pcal(M)$ is an internal Boolean algebra in $\Setswith{M}$. In particular, it has a distinguished non-trivial automorphism, complementation. There are exactly two morphisms $1 \to \Pcal(M)$. Also, $\Pcal(M)$ has the subobject classifier $\Omega$ of $\Setswith{M}$ as a subobject. Finally, $\Pcal(M)$ is a coseparator. 
\end{lemma}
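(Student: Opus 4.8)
The plan is to verify the four claims about $\Pcal(M)$ in sequence, relying on the identification $\Pcal(M) \cong \Hom_{\Set}(M,2)$ as right $M$-sets established just before the statement, together with the internal-logic machinery recalled in Section \ref{ssec:features} of Chapter \ref{chap:mpatti}.

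First, to see that $\Pcal(M)$ is an internal Boolean algebra, I would argue that the ordinary Boolean operations $\cap, \cup, (-)^c$ on subsets of $M$ are morphisms of right $M$-sets. Since the $M$-action is the inverse image action $A \mapsto m^*(A)$ and each $m^*: \Pcal(M) \to \Pcal(M)$ preserves arbitrary unions, intersections and complements (being the inverse image of a function), the required diagrams commute, and the Boolean algebra axioms hold because they hold pointwise on underlying sets. Complementation $A \mapsto A^c$ is then a morphism of $M$-sets which is its own inverse, giving the distinguished non-trivial automorphism; non-triviality follows since $M \neq \emptyset$ means $\emptyset$ and $M$ are distinct fixed points with $\emptyset^c = M$.

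Second, the global points $1 \to \Pcal(M)$ correspond to fixed points of the action, i.e.\ subsets $A \subseteq M$ with $m^*(A) = A$ for all $m$; I would check directly that this forces $A = \emptyset$ or $A = M$ (if $a \in A$ then taking $m$ with $ma = a$ — namely $m=1$ — and more pertinently using that $1 \in m^*(A)$ iff $m \in A$, fixedness gives $A = M$ once $A$ is inhabited). This yields exactly two such morphisms, matching $\top,\bot$. Third, for $\Omega \hookrightarrow \Pcal(M)$, recall from Section \ref{ssec:features} that $\Omega$ is the set of \emph{right ideals} of $M$ with the inverse image action; since every right ideal is in particular a subset and the action on $\Omega$ is the restriction of the action on $\Pcal(M)$, the inclusion of right ideals into all subsets is a monomorphism of right $M$-sets, exhibiting $\Omega$ as a subobject.

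The main obstacle, and the step requiring the most care, is the final claim that $\Pcal(M)$ is a coseparator: I must show that for any parallel pair $f, g : X \rightrightarrows Y$ of distinct morphisms of right $M$-sets there is a morphism $h : Y \to \Pcal(M)$ with $hf \neq hg$. My plan is to exploit the representability isomorphism $\Hom_{\Setswith{M}}(Y,\Pcal(M)) \cong \Pcal\op(U(Y)) \cong \Hom_{\Set}(U(Y),2)$ from Lemma \ref{lem:represent}: morphisms $Y \to \Pcal(M)$ correspond naturally to arbitrary functions $U(Y) \to 2$ on the underlying set, i.e.\ to arbitrary subsets of $U(Y)$, with no compatibility with the action required. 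Given $x \in U(X)$ with $f(x) \neq g(x)$, I would take $h$ corresponding to the subset $\{f(x)\} \subseteq U(Y)$ (or its characteristic function); naturality of the representation means $hf$ and $hg$ correspond, on the element $x$, to the pullback subsets $(Uf)^{-1}\{f(x)\}$ and $(Ug)^{-1}\{f(x)\}$ of $U(X)$, and since $x$ lies in the former but not the latter, $hf \neq hg$. The key point making this work is precisely that $\Pcal(M)$ represents $\Pcal\op \circ U$, so that maps into it are unconstrained set-maps on underlying sets, which is exactly the flexibility a coseparator needs; I would state this cleanly rather than belabour the diagram chase.
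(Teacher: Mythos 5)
Your proof is correct, but it takes a genuinely more concrete route than the paper on three of the four claims. The paper argues abstractly throughout: the Boolean algebra structure is inherited from $2$ because Boolean algebras are finite-limit structures and $\Pcal(M) \cong \Hom_{\Set}(M,2)$ is the image of $2$ under the direct image of the canonical point; the count of global points comes from the fact that the point composed with the global sections morphism is the identity on $\Set$, so $\Gamma(\Pcal(M)) \cong 2$; and the subobject $\Omega \hookrightarrow \Pcal(M)$ is extracted from the standard construction of the subobject classifier in a topos of coalgebras (as an equalizer of two endomorphisms of the free coalgebra on $\Omega_{\Set}$, which is exactly $\Pcal(M)$) — this actually \emph{re-derives} the right-ideal description of $\Omega$ and yields the stronger conclusion, via injectivity of subobject classifiers, that $\Omega$ is a \emph{retract} of $\Pcal(M)$. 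You instead verify equivariance of the pointwise Boolean operations under the inverse image action, compute the fixed points of $\Pcal(M)$ directly (your argument is sound: $a \in A$ fixed gives $1 \in a^*(A) = A$, whence $m \in m^*(A) = A$ for all $m$), and observe that the right-ideal description of $\Omega$ recalled in Chapter \ref{chap:mpatti} literally exhibits it as a sub-$M$-set of $\Pcal(M)$ — legitimate and much shorter, though it presupposes that description rather than recovering it, and it does not give the retract. For the coseparator claim the two proofs coincide in substance: the paper says $\Pcal \circ U$ is faithful as a composite of faithful functors, and your construction of a distinguishing map $h$ corresponding to the singleton $\{f(x)\}$ is precisely that faithfulness unwound element-by-element via the naturality of Lemma \ref{lem:represent}. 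What the paper's abstraction buys is the conceptual identification of $\Pcal(M)$ as the free coalgebra on the subobject classifier of $\Set$, which is reused later (e.g.\ in Scholium \ref{schl:Tseparator}); what your approach buys is a self-contained verification requiring no appeal to the comonadicity machinery.
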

\begin{proof}
The structure of a Boolean algebra involves only finite limits, so Boolean algebras are preserved by both direct and inverse image functors; thus $\Pcal(M)$ inherits the Boolean algebra structure from $2$. The two morphisms $1 \to \Pcal(M)$ correspond to the empty set and the whole of $M$; these are the only two since composing the canonical point with the global sections morphism must give the identity geometric morphism on $\Set$, which means $\Gamma(\Pcal(M)) = \Hom_{\Setswith{M}}(1,\Pcal(M)) \cong 2$.

The usual argument showing that the category of coalgebras for a left exact comonad is a topos (see \cite[\S V.8]{MLM}) exhibits the subobject classifier as an equalizer of two endomorphisms of the free coalgebra on the subobject classifier; this free algebra is precisely $\Pcal(M)$. More specifically, the endomorphisms are the identity and the morphism sending a subset $A$ to those $m \in M$ for which $m^*(A) = M$. From these expressions we recover the fact that $\Omega \hookrightarrow \Pcal(M)$ is the collection of right ideals of $M$. Since the subobject classifier of a topos is always injective, we in fact can conclude that $\Omega$ is a retract of $\Pcal(M)$.

Finally, the functor $\Pcal \circ U$ is a composite of faithful functors so it is faithful, meaning its representing object must be a coseparator.
\end{proof}

Having established some key properties of $\Pcal(M)$ as an object of $\Setswith{M}$, we examine how the necessary clopens from Lemma \ref{lem:Inx} behave as elements of $\Pcal(M)$.

\begin{lemma}
\label{lem:InA}
Given $A \in \Pcal(M)$, $p \in M$, we have $\Ical_{A}^{p} = \Ical_{M \backslash A}^{p}$; moreover,
\[\Ical_{A}^{p} \subseteq
\begin{cases}
A & \text{if } p \in A\\
M \backslash A & \text{if } p \notin A.
\end{cases}\]
\end{lemma}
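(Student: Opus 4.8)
The plan is to unwind both the inverse image action on $\Pcal(M)$ and the definition of the necessary clopens, reducing everything to elementary set-theoretic bookkeeping. Recall from Section \ref{ssec:inverse} that the right action of $m \in M$ on a subset $A \in \Pcal(M)$ is given by $A \cdot m = m^*(A) = \{x \in M \mid mx \in A\}$, so that by the definition of the necessary clopens (Lemma \ref{lem:Inx}) applied to the $M$-set $\Pcal(M)$ and its element $A$,
\[ \Ical_A^p = \{m \in M \mid m^*(A) = p^*(A)\}. \]
All of the work will be in re-expressing the two instances of the condition $m^*(A) = p^*(A)$ in a form adapted to each claim.

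For the first equality, I would observe that the inverse image action commutes with complementation: for any $m \in M$,
\[ m^*(M \backslash A) = \{x \in M \mid mx \in M \backslash A\} = M \backslash m^*(A). \]
This is the element-level manifestation of the fact, recorded in Lemma \ref{lem:Pcal}, that complementation is an automorphism of the internal Boolean algebra $\Pcal(M)$. Consequently $m^*(A) = p^*(A)$ holds if and only if $M \backslash m^*(A) = M \backslash p^*(A)$, that is, if and only if $m^*(M \backslash A) = p^*(M \backslash A)$, which is exactly the condition $m \in \Ical_{M \backslash A}^p$. Hence $\Ical_A^p = \Ical_{M \backslash A}^p$.

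For the inclusions, the key device is to detect membership in $A$ by evaluating the inverse image action at the identity: for any $m \in M$ we have $1 \in m^*(A)$ if and only if $m \cdot 1 = m \in A$, so $1 \in m^*(A) \iff m \in A$, and likewise $1 \in p^*(A) \iff p \in A$. Now suppose $m \in \Ical_A^p$, so that $m^*(A) = p^*(A)$; then $1$ lies in one of these sets precisely when it lies in the other, giving $m \in A \iff p \in A$. Splitting into the two cases yields $\Ical_A^p \subseteq A$ when $p \in A$ and $\Ical_A^p \subseteq M \backslash A$ when $p \notin A$, as required.

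There is no substantial obstacle here beyond keeping the variance of the action straight: since $(gh)^* = h^* g^*$, the formula $A \cdot m = m^*(A)$ genuinely defines a right action, and the only point requiring care is to test membership in $A$ by applying $m^*$ and evaluating at $1$, rather than attempting to manipulate $m$ directly. With these two observations in hand, both halves of the statement are immediate.
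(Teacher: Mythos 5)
Your proof is correct and follows essentially the same route as the paper's: the first equality via the fact that inverse images respect complementation, and the inclusions by testing membership in $A$ through evaluation of $m^*(A)$ and $p^*(A)$ at the identity element. The only cosmetic difference is that the paper handles the two cases by a without-loss-of-generality appeal to the first part, whereas you derive the biconditional $m \in A \iff p \in A$ directly; the content is identical.
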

\begin{proof}
By definition, $\Ical_{A}^{p} = \{m \in M \mid m^*(A) = p^*(A)\}$. Since inverse images respect complementation, we have $m^*(A) = p^*(A)$ if and only if $m^*(M \backslash A) = p^*(M \backslash A)$, and hence $\Ical_{A}^{p} = \Ical_{M \backslash A}^{p}$, as claimed.

Now, without loss of generality, suppose $p \in A$, else we may exchange $A$ and $M \backslash A$. Then $1 \in p^*(A)$. Given $m \in \Ical_{A}^{p}$, it follows that $1 \in m^*(A)$ which forces $m \in A$. Thus $\Ical_{A}^{p} \subseteq A$.
\end{proof}

\begin{lemma}
\label{lem:InA2}
Suppose $X$ is any $M$-set, $x \in X$ and $p \in M$. Let $A = \Ical_{x}^{p} \in \Pcal(M)$. Then for any $p' \in A$, the inclusion in Lemma \ref{lem:InA} holds with equality: $\Ical_{A}^{p'} = A$.
\end{lemma}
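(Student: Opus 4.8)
The plan is to prove the equality $\Ical_A^{p'} = A$ by establishing the two inclusions separately: the inclusion $\Ical_A^{p'} \subseteq A$ is immediate from the preceding lemma, while $A \subseteq \Ical_A^{p'}$ follows from a short computation showing that the inverse-image action $n^*(A)$ is constant as $n$ ranges over $A$.

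For the first inclusion, I would simply invoke Lemma \ref{lem:InA} with the element $p'$ playing the role of $p$ there. Since $p' \in A$ by hypothesis, that lemma gives $\Ical_A^{p'} \subseteq A$ with no further work. For the reverse inclusion, recall that $A = \Ical_x^p = \{n \in M \mid xn = xp\}$ and that, viewing $A$ as an element of the right $M$-set $\Pcal(M)$, we have $\Ical_A^{p'} = \{m \in M \mid m^*(A) = (p')^*(A)\}$ by the description recalled in Lemma \ref{lem:InA}. The key observation is that for $n \in A$ the value $n^*(A)$ does not depend on the chosen representative $n$: since $n \in A$ means $xn = xp$, associativity of the action gives
\[ n^*(A) = \{m \in M \mid x(nm) = xp\} = \{m \in M \mid (xn)m = xp\} = \{m \in M \mid (xp)m = xp\}, \]
and the final expression is manifestly independent of $n$. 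Comparing the case of an arbitrary $n \in A$ with the case $n = p'$ (permissible precisely because $p' \in A$) yields $n^*(A) = (p')^*(A)$, hence $n \in \Ical_A^{p'}$. This gives $A \subseteq \Ical_A^{p'}$, and combining with the first inclusion produces the desired equality.

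I do not anticipate any genuine obstacle here; the argument is a direct unwinding of the definition of the inverse-image action on $\Pcal(M)$ from Section \ref{ssec:inverse}. The only point deserving care is the chain $x(nm) = (xn)m = (xp)m$, where the substitution $xn = xp$ is valid exactly because $n \in A$; this is what collapses $n^*(A)$ to a set independent of $n$ and thereby forces every element of $A$ to lie in $\Ical_A^{p'}$.
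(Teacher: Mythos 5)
Your proof is correct and follows essentially the same route as the paper: the paper likewise gets $\Ical_A^{p'} \subseteq A$ for free from Lemma \ref{lem:InA} and proves the reverse inclusion by computing $m^*(A) = \{m' \in M \mid xp'm' = xp\} = (p')^*(A)$ for $m \in A$, using exactly your observation that $xm = xp = xp'$ collapses $m^*(A)$ to a set independent of $m$.
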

\begin{proof}
Suppose $m \in A$ so that $xm = xp = xp'$. Then $m^*(A) = \{m' \in M \mid xmm' = xp\} = \{m' \in M \mid xp'm' = xp\} = p'{}^*(A)$, so $m \in \Ical_{A}^{p'}$. This proves the reverse inclusion to that in Lemma \ref{lem:InA}.
\end{proof}

Note that the complement of $A$ in Lemma \ref{lem:InA2} may split into multiple sets of the form $\Ical_{A}^{p}$ for $p \notin A$, but we at least retain that $\Ical_{x}^{p} \subseteq \Ical_{A}^{p}$ for each $p$.

\subsection{Action topologies}
\label{ssec:action}

We have by now developed sufficient tools to reconstruct a topology from the hyperconnected morphism $\Setswith{M} \to \Cont(M,\tau)$.

\begin{thm}
\label{thm:tau}
Suppose $M$ is a monoid equipped with a topology $\tau$, and $V,R$ are as in Proposition \ref{prop:hyper}. Consider $\Pcal(M)$ equipped with the inverse image action. Then the underlying set of
\[T := VR(\Pcal(M)) = \{A \subseteq M \mid \forall p, q \in M, \, \Ical_{q^*(A)}^{p} \in \tau \}\]
is a base of clopen sets for a topology $\tilde{\tau} \subseteq \tau$ such that $\Cont(M,\tilde{\tau}) = \Cont(M,\tau)$ as sub-categories of $\Setswith{M}$. Moreover, $\tilde{\tau}$ is the coarsest topology on $M$ with this property.
\end{thm}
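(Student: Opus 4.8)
The plan is to establish three things about the set $T := VR(\Pcal(M))$: first that it forms a base of clopens for a topology $\tilde{\tau}$ contained in $\tau$; second that $\Cont(M,\tilde{\tau}) = \Cont(M,\tau)$; and third that $\tilde{\tau}$ is coarsest among topologies with this last property. The key conceptual point is that $T$ is the underlying set of the object of \emph{continuous} elements of $\Pcal(M)$, where continuity is measured against $\tau$ itself, so each $A \in T$ comes with openness of all the necessary clopens $\Ical_{q^*(A)}^p$. In particular, taking $q = p = 1$ shows each $A \in T$ satisfies $A = \Ical_A^1 \in \tau$ (using Lemma \ref{lem:InA2} with $x$ the element of $\Pcal(M)$ and $p' = 1$ when $1 \in A$, and handling the complement via Lemma \ref{lem:InA}), so $T \subseteq \tau$ and the generated topology $\tilde{\tau}$ is contained in $\tau$.

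First I would verify that $T$ is closed under the operations needed to be a base of clopens. Since $\Pcal(M)$ is an internal Boolean algebra (Lemma \ref{lem:Pcal}) and $VR$ is the inverse image of a hyperconnected morphism composed with its direct image, $T$ is a sub-object of $\Pcal(M)$ closed under finite meets and complements; concretely, $M \in T$, and if $A, B \in T$ then $A \cap B \in T$ and $M \setminus A \in T$, because the defining condition $\Ical_{q^*(A)}^p \in \tau$ is preserved under these Boolean operations (inverse images commute with them, and finite intersections and complements of the relevant necessary clopens remain open). This makes $T$ a clopen base closed under finite intersection, hence a genuine base for a topology $\tilde{\tau}$, and every $A \in T$ is clopen in $\tilde{\tau}$.

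Next I would show $\Cont(M,\tilde\tau) = \Cont(M,\tau)$. The inclusion $\Cont(M,\tau) \subseteq \Cont(M,\tilde\tau)$ is easy since $\tilde\tau \subseteq \tau$ makes continuity against $\tilde\tau$ a weaker demand — wait, this is the direction requiring care, so I would argue via the necessary clopens of Lemma \ref{lem:Inx}: an $M$-set $X$ is an $(M,\tau)$-set iff all $\Ical_x^p \in \tau$. The crux is to show that whenever $X$ is an $(M,\tau)$-set, each $\Ical_x^p$ already lies in $\tilde\tau$, i.e. is a union of basic opens from $T$. Here I would use Lemma \ref{lem:InA2}: setting $A = \Ical_x^p$, that lemma gives $\Ical_A^{p'} = A$ for $p' \in A$, and more generally one checks that $A$ and its relevant translates satisfy the membership condition defining $T$, so that $A \in T \subseteq \tilde\tau$. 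This shows $(M,\tau)$-continuity implies $(M,\tilde\tau)$-continuity, giving the reverse inclusion of categories; combined with the trivial inclusion this yields equality.

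Finally, for coarseness (minimality), suppose $\sigma$ is any topology with $\Cont(M,\sigma) = \Cont(M,\tau)$. Then $\Pcal(M)$ computed with the inverse image action has the same object of $\sigma$-continuous elements as its object of $\tau$-continuous elements, namely $T$, because the object $VR(\Pcal(M))$ depends only on the category $\Cont(M,\tau) = \Cont(M,\sigma)$ as a subcategory of $\Setswith{M}$, not on the particular topology presenting it. Since each $A \in T$ is $\sigma$-continuous, the same argument as above (via Lemmas \ref{lem:InA} and \ref{lem:InA2}, taking $X = \Pcal(M)$, $x = A$, $p = 1$) forces $A = \Ical_A^1 \in \sigma$, whence $T \subseteq \sigma$ and therefore $\tilde\tau \subseteq \sigma$. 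The main obstacle I anticipate is the careful bookkeeping in the second step: confirming that the necessary clopens $\Ical_x^p$ of an arbitrary $(M,\tau)$-set genuinely belong to $T$ (and not merely to $\tau$) requires chaining the inclusions of Lemmas \ref{lem:InA} and \ref{lem:InA2} across all the translates $q^*(\Ical_x^p)$ and checking the quantified membership condition defining $T$ holds uniformly, which is where the interplay between the inverse-image action and the indexing over $p, q \in M$ must be handled with precision.
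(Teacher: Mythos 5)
Your three-step architecture (clopen base, equality of the two categories via necessary clopens, coarseness via invariance of $T$ under change of presenting topology) matches the paper's proof, and your abstract argument that $T$ is closed under finite intersections and complements -- as the image of an internal Boolean algebra under the finite-limit-preserving composite $VR$, with monic counit -- is sound; it is in fact the route the paper itself takes later, in Theorem \ref{thm:factor}. It genuinely bypasses the hands-on verification the paper gives here, which needs a pointwise argument because $\Ical_{q^*(A\cap B)}^{p}$ is \emph{not} equal to $\Ical_{q^*(A)}^{p}\cap\Ical_{q^*(B)}^{p}$ (it only contains it), so your parenthetical concrete justification would not work on its own. However, the proposal has two genuine gaps.

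First, the identity $A = \Ical_A^1$, which you invoke both to get $T \subseteq \tau$ and again in the coarseness step, is false. Lemma \ref{lem:InA2} applies only to sets of the form $\Ical_x^p$, and a general element of $T$ need not be of that form: take $M = (\Nbb,+)$ with the discrete topology (so that $T = \Pcal(M)$) and $A = \{0,2\}$, where the monoid identity is written additively as $0$; then $\Ical_A^{0} = \{0\} \neq A$. The correct replacement is the paper's: by Lemma \ref{lem:InA}, $A = \bigcup_{t \in A}\Ical_A^{t}$, and each $\Ical_A^{t} = \Ical_{1^*(A)}^{t}$ is open whenever $A \in T$, so $A$ is open; the same repair works in the coarseness step. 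Second, and more seriously, the crux of the theorem -- that for a $\tau$-continuous $X$ each necessary clopen $A = \Ical_x^p$ actually lies in $T$, i.e.\ $\Ical_{q^*(A)}^{p'} \in \tau$ for \emph{all} $p',q$ -- is exactly what you defer as ``careful bookkeeping'', and it does not follow by chaining Lemmas \ref{lem:InA} and \ref{lem:InA2}. The missing computation is: given $p_1 \in \Ical_{q^*(A)}^{p'}$, any $p_2 \in \Ical_{xq}^{p_1}$ satisfies $xqp_2 = xqp_1$, hence $p_2^*q^*(A) = \{m \in M \mid xqp_2m = xp\} = \{m \in M \mid xqp_1m = xp\} = p_1^*q^*(A)$, so $\Ical_{xq}^{p_1} \subseteq \Ical_{q^*(A)}^{p_1} = \Ical_{q^*(A)}^{p'}$; since $\Ical_{xq}^{p_1}$ is open ($X$ being $\tau$-continuous at the element $xq$), the set $\Ical_{q^*(A)}^{p'}$ is a union of opens. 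Without this step -- the mathematical heart of the proof -- the claimed equality $\Cont(M,\tilde{\tau}) = \Cont(M,\tau)$ remains unproven.
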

\begin{proof}
We extracted the expression for $T$ from the construction of $R$ in Proposition \ref{prop:hyper}. By Lemma \ref{lem:InA}, every $A \subseteq M$ is a union over its elements $t$ of the sets $\Ical_{A}^{t}$, so if $A \in T$ then $A$ is necessarily open. Similarly, Lemma \ref{lem:InA} guarantees that $M \backslash A \in T$ whenever $A \in T$, since $\Ical_{M \backslash A}^{p} = \Ical_{A}^{p}$ and $q^*(M \backslash A) = M \backslash q^*(A)$. It follows that each $A \in T$ is clopen with respect to $\tau$.

To show that $T$ is a base for a topology it suffices to show that $A \cap B$ is in $T$ whenever $A$ and $B$ are. Directly,
\begin{equation}
\label{eq:intersection}
\Ical_{q^*(A \cap B)}^{p} = \{m \in M \mid (qm)^*(A) \cap (qm)^*(B) = (qp)^*(A) \cap (qp)^*(B)\};
\end{equation}
if $p'$ is any element of this set, then by inspection $\Ical_{q^*(A)}^{p'} \cap \Ical_{q^*(B)}^{p'} \subseteq \Ical_{q^*(A \cap B)}^{p'} = \Ical_{q^*(A \cap B)}^{p}$ is an open neighbourhood of $p'$ contained in it, ensuring that the latter is open. We conclude $A \cap B \in T$, as required.

If $X$ is an $M$-set which is continuous with respect to the generated topology $\tilde{\tau}$, then $\Ical_{x}^{p} \in \tilde{\tau} \subseteq \tau$ for every $x \in X$, $p \in M$ so $X$ is continuous with respect to $\tau$.

Conversely, if $X$ is continuous with respect to $\tau$, so $\Ical_{x}^{p} \in \tau$ for all $x \in X$ and $p \in M$, we want to show that each $\Ical_{x}^{p} \in \tilde{\tau}$. Writing $A = \Ical_{x}^{p}$, this is equivalent to showing that $\Ical_{q^*(A)}^{p} \in \tau$ for each $p, q \in M$. Given $p_1 \in \Ical_{q^*(A)}^{p}$, consider the open set $\Ical_{xq}^{p_1}$. We have $p_2 \in \Ical_{xq}^{p_1}$ if and only if $xqp_2 = xqp_1$. Consequently,
\[p_2^*q^*(A) = \{m \in M \mid xqp_2m = xp\} = \{m \in M \mid xqp_1m = xp\} = p_1^*q^*(A).\]
But $\Ical_{q^*(A)}^{p_1}$ is precisely $\{m \in M \mid m^*q^*(A) = p_1^*q^*(A)\}$ so we conclude $\Ical_{xq}^{p_1} \subseteq \Ical_{q^*(A)}^{p_1} = \Ical_{q^*(A)}^{p}$, and hence the latter is open as required.

Finally, to show that $\tilde{\tau}$ is the coarsest such topology, suppose $\tau'$ is some topology on $M$ such that any $M$-set $X$ is continuous with respect to $\tau'$ if and only if it is continuous with respect to $\tau$. Then the respective inclusions of $\Cont(M,\tau)$ and $\Cont(M,\tau')$ into $\Setswith{M}$ are isomorphic. Thus $T$ is computed in the same way with respect to either topology, and by repeating the above argument, we have $\tilde{\tau} \subseteq \tau'$, as claimed.
\end{proof}

\begin{rmk}
Note that the caveat `as subcategories of $\Setswith{M}$' in Theorem \ref{thm:tau} likely cannot be removed in full generality, since a sufficiently large monoid could admit two topologies with distinct categories of continuous $M$-sets which happen to be equivalent. We have not constructed such an example, since in this chapter we are primarily interested in examining $\Cont(M,\tau)$ as a topos under $\Setswith{M}$.
\end{rmk}

\begin{dfn}
\label{dfn:action}
The topology $\tilde{\tau}$ derived in Theorem \ref{thm:tau} will be called the (right) \textbf{action topology induced by $\tau$}. By the final statement of Theorem \ref{thm:tau}, the construction of $\tilde{\tau}$ is idempotent (see Lemma \ref{lem:G3} below for a deeper exploration of this). As such, we say $\tau$ is an \textbf{action topology} if $\tilde{\tau} = \tau$.
\end{dfn}

We will continue to employ the notation $T:=VR(\Pcal(M))$ for the Boolean algebra of necessary clopens when the topology $\tau$ is understood. Rather than considering the full action topology $\tilde{\tau}$, it will sometimes be more convenient to work directly with $T$, since this is an object residing in the toposes we are studying.

\begin{schl}
\label{schl:Tseparator}
Considering the Boolean algebra $T$ as an object of $\Cont(M,\tau)$, it inherits all of the properties we observed in $\Pcal(M)$ in Lemmas \ref{lem:represent} and \ref{lem:Pcal}: it represents $\Pcal\op \circ U \circ V: \Cont(M,\tau) \to \Set\op$, is a complete internal Boolean algebra with exactly two global sections, and is a coseparator which contains the subobject classifier of $\Cont(M,\tau)$ as an (order-inheriting) subobject. Explicitly, the subobject classifier of $\Cont(M,\tau)$ consists of the left ideals of $M$ lying in $T$.
\end{schl}
\begin{proof}
For the first part, we extend the proof of Lemma \ref{lem:represent} with the observation that
\[\Hom_{\Cont(M,\tau)}(X,R(\Pcal(M))) \cong \Hom_{\Setswith{M}}(V(X),\Pcal(M)),\]
where $R(\Pcal(M))$ is $T$ viewed as an object of $\Cont(M,\tau)$.

For the second part, all of the arguments in the proof of Lemma \ref{lem:Pcal} carry over with $T$ in place of $\Pcal(M)$.
\end{proof}

\begin{schl}
\label{schl:Tsuff}
Let $X$ be a right $M$-set continuous with respect to $\tau$, and let $T$ be as in Theorem \ref{thm:tau}. Then for every $x \in X$, $p \in M$, we have $\Ical_{x}^{p} \in T$. In particular, we do not need to generate $\tilde{\tau}$ in order to verify continuity.
\end{schl}
\begin{proof}
Consider the construction in the proof of Theorem \ref{thm:tau}; in it, we showed that $\Ical_{q^*(\Ical_{x}^{p})}^{p'} \in \tau$ for each $x \in X$ and $p, p', q \in M$. But this is exactly the condition needed for $\Ical_{x}^{p}$ to be in $T$, since it ensures that the action of $M$ on $\Pcal(M)$ is continuous on the sub-$M$-set generated by $\Ical_{x}^{p}$.
\end{proof}

\begin{schl}
\label{schl:base}
The clopen sets of the form $\Ical_A^p$ for $A \in T$ (or more generally, the necessary clopens of all $(M,\tau)$-sets) also form a base for $\tilde{\tau}$.
\end{schl}
\begin{proof}
Given $a \in A$, we have $a \in \Ical_A^a \subseteq A$, so each member of $T$ is a union of members of the given form, as required.
\end{proof}

\subsection{Powder monoids}
\label{ssec:powder}

Action topologies have much more convenient properties than arbitrary topologies. Most notably:
\begin{prop}
\label{prop:ctsx}
The multiplication on $M$ is continuous with respect to $\tilde{\tau}$ for any starting topology $\tau$.
\end{prop}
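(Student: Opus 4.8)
The plan is to show that multiplication $\mu: M \times M \to M$ is continuous when both factors and the codomain carry the action topology $\tilde\tau$. Since $\tilde\tau$ has the necessary clopens of $(M,\tau)$-sets as a base (Scholium \ref{schl:base}), it suffices to check that the preimage $\mu^{-1}(U)$ of a basic open $U$ is open in the product topology $\tilde\tau \times \tilde\tau$. Equivalently, given $(a,b) \in M \times M$ with $ab \in U$, I would produce a basic open rectangle $V \times W \ni (a,b)$ with $\mu(V \times W) \subseteq U$. The key idea is that $\tilde\tau$ is an \emph{action} topology, so I can exploit the fact that $M$ itself, acting on $\Pcal(M)$ (or on $T = VR(\Pcal(M))$) by the inverse image action, is a continuous $M$-set by construction, together with the simplified behaviour of necessary clopens under left multiplication established in Proposition \ref{prop:hyper}.

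First I would reduce to a convenient choice of basic open. By Scholium \ref{schl:base} I may take $U$ to be of the form $\Ical_A^p$ for some $A \in T$ and $p \in M$, and by Lemma \ref{lem:InA2} (applied with $X = \Pcal(M)$, $x = A$) such sets of the form $\Ical_A^{p'}$ with $p' \in \Ical_A^p$ are themselves members of $T$; this lets me move freely between ``being a necessary clopen'' and ``being a member of $T$''. The crucial computation is the factorization identity from the proof of Proposition \ref{prop:hyper}: for the inverse image action, $\Ical_{Cq}^{p} = q^*(\Ical_C^{qp})$ where $q^*$ is inverse image under left multiplication by $q$. This identity is exactly what converts a statement about the product into a statement about one variable at a time, which is the heart of proving joint continuity from the separate pieces.

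The concrete steps I would carry out: given $(a,b)$ with $ab \in U = \Ical_A^p$ for $A \in T$, set $W := \Ical_A^{ab}$, which is an open neighbourhood of $b$ (it is a necessary clopen of the continuous $M$-set $T$, hence in $\tilde\tau$ by Scholium \ref{schl:Tseparator}/\ref{schl:base}). For the first coordinate, I would use the inverse image action of $a$ on $A$: since $A \in T$ and $T$ is a continuous $M$-set, $a^*(A) \in T$ as well, and I would take $V$ to be an appropriate necessary clopen of $T$ at the element $a^*(A)$, designed so that for $a' \in V$ one has ${a'}^*(A)$ agreeing with $a^*(A)$ to the extent needed. Then for $a' \in V$ and $b' \in W$ I must verify $a'b' \in \Ical_A^p$, i.e.\ $(a'b')^*(A) = p^*(A)$; unwinding via $(a'b')^* = {b'}^* {a'}^*$ and the agreements forced by membership in $V$ and $W$, together with the factorization identity, this should collapse to the required equality. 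The main obstacle I anticipate is precisely the \emph{joint} nature of the estimate: continuity of the action gives control of $x \mapsto xm$ for fixed $m$, but here both arguments vary, so the careful point is choosing $V$ using the inverse image action $a^*$ (not merely a stabilizer-type clopen) so that varying $a$ to $a'$ changes ${a'}^*(A)$ in a way compatible with the fixed target $W$. I would lean on the fact that $\tilde\tau$ renders the \emph{entire} inverse image action on $T$ continuous—this is the structural reason the two separate continuities can be bootstrapped into joint continuity, and it is what ultimately makes $(M,\tilde\tau)$ a genuine topological monoid rather than merely one with separately continuous multiplication.
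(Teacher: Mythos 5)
Your overall strategy coincides with the paper's: around each $(a,b) \in \mu^{-1}(U)$ produce a rectangle of necessary clopens of the continuous $M$-set $T$, using the factorization $(a'b')^* = {b'}^*{a'}^*$ together with the closure of $T$ under the inverse image action. However, your concrete choices are wrong, and the step you defer (``this should collapse to the required equality'') is exactly where they fail. The set $W := \Ical_A^{ab}$ is \emph{not} a neighbourhood of $b$: since $ab \in U = \Ical_A^p$ means $(ab)^*(A) = p^*(A)$, one has $\Ical_A^{ab} = \Ical_A^p = U$, which is a neighbourhood of the product $ab$, not of $b$. Indeed $b \in \Ical_A^{ab}$ would force $b^*(A) = {b}^*(a^*(A))$, which already fails for a group acting on itself whenever $aA \neq A$. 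Your prescription for $V$ has the dual confusion: the set of $a'$ with ${a'}^*(A) = a^*(A)$ is $\Ical_A^a$, a necessary clopen at the element $A$ of $T$, not a clopen ``at the element $a^*(A)$''.

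The repair is essentially to swap your two prescriptions. Take $V := \Ical_A^a$ and $W := \Ical_{a^*(A)}^{b}$; both lie in $T$ (hence in $\tilde{\tau}$) by Scholium \ref{schl:Tsuff}, since $A \in T$ and $a^*(A) \in T$ because $T$ is closed under the inverse image action. Then for $a' \in V$ and $b' \in W$,
\[
(a'b')^*(A) = {b'}^*\left({a'}^*(A)\right) = {b'}^*\left(a^*(A)\right) = b^*\left(a^*(A)\right) = (ab)^*(A) = p^*(A),
\]
so $a'b' \in U$ as required. The paper's own proof is this argument in a marginally simpler form: it targets an open $A \in \tilde{\tau}$ directly rather than a clopen of the form $\Ical_A^p$, and shows $\Ical_A^a \times a^*(A) \subseteq \mu^{-1}(A)$ --- for $m \in \Ical_A^a$ and $n \in a^*(A)$ one has $n \in a^*(A) = m^*(A)$, i.e.\ $mn \in A$ --- with openness of $a^*(A)$ coming from the fact that the inverse image action commutes with arbitrary unions and $T$ is closed under the action.
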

\begin{proof}
Given $A \in \tilde{\tau}$ and a pair $(a,b) \in \mu^{-1}(A)$, we have $a \in \Ical_{A}^{a}$ and $b \in a^*(A)$ by inspection. Since the inverse image action commutes with arbitrary unions and the generating set $T$ of $\tilde{\tau}$ is closed under the action of $M$ on $\Pcal(M)$, we deduce that $a^*(A) \in \tilde{\tau}$. Given any $m \in \Ical_{A}^{a}$, $n \in a^*(A)$, we have $m^*(A) = a^*(A)$ and hence $n \in m^*(A)$ and $mn \in A$. Thus $\Ical_{A}^{a} \times a^*(A) \subseteq \mu^{-1}(A)$. It follows that $\mu^{-1}(A) \in \tilde{\tau} \times \tilde{\tau}$, as required.
\end{proof}

Thus, almost miraculously, $(M,\tilde{\tau})$ is a topological monoid. That is, from the perspective of continuous actions on discrete sets, there is no loss in generality in assuming that the topology on the monoid makes its multiplication continuous, which shall come as a relief to the modern algebraist.

Passing to the action topology also sheds the extraneous local richness of the original topology which the discrete sets being acted on are oblivious of.
\begin{lemma}
\label{lem:connected}
Let $(M,\tau)$ be a locally connected topological monoid. Then $\tilde{\tau}$ is generated by the connected components with respect to $\tau$. In particular, if $(M,\tau)$ is connected, $(M,\tilde{\tau})$ is indiscrete.
\end{lemma}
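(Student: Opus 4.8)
The plan is to show that $\tilde{\tau}$, the action topology, is generated by the connected components of $(M,\tau)$. The key observation is that by Scholium \ref{schl:base}, $\tilde{\tau}$ has a base consisting of the necessary clopens $\Ical_x^p$ of $(M,\tau)$-sets $X$, and by Theorem \ref{thm:tau} every member of $T$ (and hence every open set in $\tilde{\tau}$) is clopen with respect to $\tau$. So the first step is to recall that each $\Ical_x^p$ is $\tau$-clopen, which is immediate from the remark following Lemma \ref{lem:Inx} (for fixed $x$, the sets $\Ical_x^p$ partition $M$ into $\tau$-open pieces, forcing each to be closed as well).

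Next I would use local connectedness. In a locally connected space, the connected components of open sets are themselves open, and every open set is a disjoint union of its connected components; equivalently, the clopen sets are exactly the unions of connected components. Therefore every $\tilde{\tau}$-open set, being $\tau$-clopen, is a union of $\tau$-connected components. This shows $\tilde{\tau}$ is coarser than (contained in) the topology $\tau_c$ generated by the connected components. For the reverse inclusion, I would verify that $\tau_c$ is itself an action topology in the sense that its opens are necessary clopens, or more directly that each connected component lies in $T$: since a connected component $C$ is $\tau$-clopen and the inverse image action $q^*$ sends $\tau$-clopens to $\tau$-clopens (because $\tau$ makes multiplication continuous once we pass to the topological monoid $(M,\tau)$, via Proposition \ref{prop:ctsx} applied to the ambient topology, or directly since $q^*$ preserves complements), one checks that $\Ical_{q^*(C)}^p$ is $\tau$-open for all $p,q$, placing $C$ in $T \subseteq \tilde{\tau}$.

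The main obstacle I expect is the reverse inclusion, namely confirming that every connected component actually belongs to $\tilde{\tau}$ rather than merely that $\tilde{\tau}$-opens are unions of components. The subtle point is that membership in $T$ requires the stronger condition $\Ical_{q^*(A)}^p \in \tau$ for \emph{all} $p,q \in M$, not just that $A$ is clopen. I would handle this by exploiting that the inverse image operations $q^*$ are continuous maps $(M,\tau) \to (M,\tau)$ (using that the connected-component structure is preserved by continuous maps: the preimage of a connected-component-union under a continuous map is again clopen, hence a union of components), so that $q^*(C)$ remains a union of connected components and each $\Ical_{q^*(C)}^p$, being a piece of the clopen partition determined by $q^*(C)$, is $\tau$-open.

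The final statement about the connected case then follows immediately: if $(M,\tau)$ is connected, its only connected component is $M$ itself, so the topology generated by the components is the indiscrete topology $\{\emptyset, M\}$, whence $\tilde{\tau}$ is indiscrete. I would state this as a one-line corollary of the general claim, noting that a connected space has $M$ as its unique nonempty clopen union of components, so $T = \{\emptyset, M\}$ and the generated topology is indiscrete.
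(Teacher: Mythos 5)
Your two-containment structure is the same as the paper's, and the easy direction is fine: every element of $T$ is $\tau$-clopen, and any clopen set is a union of connected components (if a component $C$ meets a clopen $A$, then $C \cap A$ is nonempty clopen in $C$, so $C \subseteq A$), which with local connectedness gives $\tilde{\tau} \subseteq \tau_c$. The gap is exactly at the step you flag as the main obstacle, and your proposed fix does not close it. The claim that ``each $\Ical_{q^*(C)}^p$, being a piece of the clopen partition determined by $q^*(C)$, is $\tau$-open'' is circular: that this partition consists of \emph{open} pieces is precisely what must be proved, and it does not follow from $q^*(C)$ being clopen or a union of components. All the continuity you invoke is of preimage type, and preimages only give that $\Ical_A^p = \bigcap_{y \in p^*(A)} \{m \mid my \in A\} \cap \bigcap_{y \notin p^*(A)} \{m \mid my \notin A\}$ is an intersection of clopen sets, i.e.\ \emph{closed}; an infinite intersection of clopens need not be open, and a partition of a locally connected space into closed sets need not have open pieces (partition $[0,1]$ into singletons). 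So nothing you have written rules out $\Ical_A^p$ failing to be open.

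The missing idea --- the heart of the paper's proof --- uses continuity of right multiplication and \emph{images} of connected sets, not preimages. Let $A$ be any union of components and $m \in \Ical_A^p$. For $y \in m^*(A)$, the set $C_m y$ is connected (continuous image of the component $C_m$ under $(-)\cdot y$) and meets $A$ at $my$; since $A$ is a union of components, $C_m y \subseteq A$, so $y \in m'^*(A)$ for every $m' \in C_m$. By symmetry ($m \in C_{m'}$) one gets $m'^*(A) = m^*(A) = p^*(A)$, hence $C_m \subseteq \Ical_A^p$. Thus $\Ical_A^p$ is a union of components, which local connectedness makes open. Inserting this argument in place of your partition claim repairs the proof; note also that since $(M,\tau)$ is a topological monoid by hypothesis (no appeal to Proposition \ref{prop:ctsx} is needed, nor is it applicable to $\tau$), Proposition \ref{prop:hyper} reduces membership in $T$ to the case $q = 1$, i.e.\ to $\Ical_C^p \in \tau$ for all $p$, which is how the paper phrases it. Your deduction of the connected case from the general statement is correct.
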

\begin{proof}
If $\tau$ makes $M$ locally connected, the connected components of $M$ are clopen. For each $x \in M$ let $C_x$ be the connected component containing $x$. We claim each $C_x$ is a member of $\tilde{\tau}$. Indeed, given $p \in M$, $p^*(C_x)$ is clopen (since $(M,\tau)$ \textit{is} a topological monoid here), and hence is some union of connected components. Observe that $\Ical_{C_x}^{p} = \{m \in M \mid m^*(C_x) = p^*(C_x)\}$ contains $C_p$: multiplication on the right is continuous, so whenever $py \in C_x$, we have $my \in C_x$ for every $m$ in $C_p$. It follows that $\Ical_{C_x}^{p}$ is a union over $q \in \Ical_{C_x}^{p}$ of components $C_q$, and so is open. Thus $C_x \in \tilde{\tau}$ and since these are the minimal clopen sets we are done.

If $(M,\tau)$ is connected, the only clopen subsets of $M$ are $\emptyset$ and $M$, so $\tilde{\tau}$ contains only these.
\end{proof}

Lemma \ref{lem:connected} means that, for example, $\Rbb$ with its usual topology goes from being Hausdorff (or even stronger, normal) to being indiscrete upon passing to $\tilde{\tau}$. On the other hand, other properties of a topology $\tau$ are preserved by passing to $\tilde{\tau}$. For example:
\begin{lemma}
\label{lem:compactau}
Suppose $\tau$ is a compact topology on a monoid $M$. Then $\tilde{\tau}$, being a coarser topology than $\tau$, is compact too.
\end{lemma}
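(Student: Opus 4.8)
The plan is to invoke the elementary topological principle that any topology coarser than a compact topology is again compact; the only input specific to our setting is the inclusion $\tilde{\tau} \subseteq \tau$, which was established in Theorem \ref{thm:tau}. No finer structural features of the action topology are needed.

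Concretely, I would argue as follows. Suppose $\{U_i \mid i \in I\}$ is a cover of $M$ by $\tilde{\tau}$-open sets, so that $M = \bigcup_{i \in I} U_i$. Since $\tilde{\tau} \subseteq \tau$, each $U_i$ is also $\tau$-open, and hence $\{U_i \mid i \in I\}$ is a $\tau$-open cover of $M$. By compactness of $(M,\tau)$ there is a finite subset $I_0 \subseteq I$ with $M = \bigcup_{i \in I_0} U_i$. As the sets $U_i$ for $i \in I_0$ already lie in $\tilde{\tau}$, this exhibits a finite $\tilde{\tau}$-subcover, whence $(M,\tilde{\tau})$ is compact.

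There is no genuine obstacle here: the argument does not rely on any of the finer data of the action topology (such as the base $T$ of necessary clopens or the continuity of multiplication from Proposition \ref{prop:ctsx}), but only on the comparison $\tilde{\tau} \subseteq \tau$. Indeed, the same reasoning shows that any topological property which is inherited by coarser topologies — compactness being the paradigmatic example — passes from $(M,\tau)$ to $(M,\tilde{\tau})$.
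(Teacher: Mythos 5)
Your proof is correct and matches the paper's reasoning exactly: the paper states this lemma without any further proof, treating it as immediate from the inclusion $\tilde{\tau} \subseteq \tau$ established in Theorem \ref{thm:tau} together with the elementary fact that a coarser topology than a compact one is compact. Your explicit finite-subcover argument simply spells out this standard observation.
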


By definition, action topologies are \textbf{zero-dimensional}, since they have a base of clopen sets. See \cite[Section I.4, Figure 9]{Counter} for a helpful diagram of how this property interacts with some basic separation properties. We list some of them here:
\begin{lemma}
\label{lem:T0T2}
Suppose $\tau = \tilde{\tau}$ is an action topology and $(M,\tau)$ is Kolmogorov (satisfies the $T_0$ separation axiom). Then $(M,\tau)$ has the properties (listed in order of decreasing strength) of being totally separated and regular, totally disconnected and Urysohn, and Hausdorff ($T_2$).
\end{lemma}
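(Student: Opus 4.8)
The plan is to exploit the zero-dimensionality of an action topology: by definition $\tau = \tilde\tau$ has a base of clopen sets, so the strategy is to show that zero-dimensionality together with the $T_0$ axiom already forces all of the separation properties listed, and then to tie this back to the diagram in \cite[Section I.4, Figure 9]{Counter}. The core observation is that in a zero-dimensional space, any two points can be separated by a clopen set once they can be distinguished at all, and a clopen partition $\{U, M\setminus U\}$ is the strongest possible kind of separation.

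First I would prove the top of the hierarchy, total separatedness, directly. Given distinct $a, b \in M$, the $T_0$ axiom provides an open set containing one but not the other, say $a \in W$, $b \notin W$. Since $\tilde\tau$ has a base of clopen sets (Scholium \ref{schl:base} gives such a base explicitly via the necessary clopens $\Ical_A^p$), there is a clopen $U$ with $a \in U \subseteq W$, whence $b \notin U$. Then $U$ and $M \setminus U$ are disjoint clopen sets covering $M$ with $a \in U$, $b \in M\setminus U$, which is exactly the statement that $a$ and $b$ lie in different quasi-components; this is total separatedness. Regularity I would extract from zero-dimensionality as well: given a point $x$ and a closed set $C$ not containing it, the complement $M \setminus C$ is an open neighbourhood of $x$, so contains a clopen neighbourhood $U \ni x$; then $U$ and $M \setminus U$ are disjoint opens separating $x$ from $C$. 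Combined with $T_0$ (which upgrades to $T_1$ in the presence of the other axioms), this yields a regular $T_1$ space.

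The remaining implications then follow either from the standard chain of separation axioms or by repeating the clopen-separation argument at lower strength: total separatedness implies total disconnectedness (the quasi-components, hence the connected components, are singletons), and the existence of a clopen $U$ separating $a$ from $b$ trivially provides open sets $U$ and $M\setminus U$ whose \emph{closures} $U$ and $M\setminus U$ (clopen, so equal to themselves) are still disjoint, giving the Urysohn property; Hausdorffness is immediate from the same clopen pair. I would present these as a single descending cascade, remarking that each property is formally weaker than the one above it so that only the strongest few need independent verification, with the rest being citations to the implications recorded in \cite{Counter}.

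I do not expect a serious obstacle here, since the whole statement is essentially a consequence of the base of clopens that an action topology carries by construction; the only point requiring a little care is the bookkeeping of which separation axiom implies which, so that the phrase ``in order of decreasing strength'' is justified and no implication is used in the wrong direction. The mild subtlety is that $T_0$ alone does not give $T_1$ in general, so I would note explicitly that in a zero-dimensional space the clopen separation witnessing $T_0$ automatically separates \emph{both} ways, upgrading $T_0$ to total separatedness (and hence to $T_1$) at once; this is the single step where one must invoke zero-dimensionality rather than reasoning in an arbitrary topological space.
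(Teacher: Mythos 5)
Your proposal is correct and follows essentially the same route as the paper, which in fact offers no written proof at all: Lemma \ref{lem:T0T2} is stated there as an immediate consequence of the observation that action topologies are zero-dimensional (having a base of clopen sets), with the chain of separation-axiom implications deferred to the diagram in \cite[Section I.4, Figure 9]{Counter}. Your write-up simply makes explicit the standard clopen-separation arguments (including the correct key point that zero-dimensionality upgrades $T_0$ to total separatedness symmetrically) that the paper's citation tacitly invokes.
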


\begin{dfn}
\label{dfn:powder}
A topological monoid $(M,\tau)$ which is $T_0$ and such that $\tau$ is a right action topology (that is, such that $\tau$ has a basis of clopen sets $U$ such that $\Ical_U^p = \{q \mid q^*(U) = p^*(U)\} \in \tau$ for every $p$) shall be called a (right) \textbf{powder monoid}; the name is motivated the separation properties exhibited in Lemma \ref{lem:T0T2}. We have avoided the name `action monoid' since it conflicts with the terminology `monoid actions'.
\end{dfn}

Note that there is implicit asymmetry in Definition \ref{dfn:powder}, and indeed we may define a \textit{left} powder monoid to be a topological monoid $(M,\tau)$ which is Hausdorff and such that $\tau$ is a right action topology on $M\op$. We shall discuss these in more detail in Section \ref{ssec:monads}. 
For the time being, we write `powder monoid' to mean `right powder monoid' unless otherwise stated.

\begin{rmk}
The properties of Lemma \ref{lem:T0T2} do not characterize powder monoids. For example, $\Qbb$ with its usual topology is a $T_0$ and zero-dimensional topological group, but we find that, just like $\Rbb$, its corresponding action topology is trivial.
\end{rmk}

\begin{thm}
\label{thm:Hausd}
Given a monoid with an arbitrary topology $(M,\tau)$, there is a canonical (right) powder monoid, which we shall by an abuse of notation denote by $(\tilde{M},\tilde{\tau})$, such that $\Cont(M,\tau) \simeq \Cont(\tilde{M},\tilde{\tau})$ and the canonical points of these toposes coincide. 
\end{thm}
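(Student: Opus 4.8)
The plan is to construct $(\tilde M,\tilde\tau)$ as a quotient of $(M,\tilde\tau)$ by the topological indistinguishability relation, and then verify that this quotient is a right powder monoid with the same topos of continuous actions. Concretely, I would first pass from $(M,\tau)$ to $(M,\tilde\tau)$ using Theorem \ref{thm:tau}, so that the multiplication is continuous (Proposition \ref{prop:ctsx}), $\tilde\tau$ is a zero-dimensional action topology, and $\Cont(M,\tau) = \Cont(M,\tilde\tau)$ as subcategories of $\Setswith{M}$. By Theorem \ref{thm:tau} it thus suffices to produce from $(M,\tilde\tau)$ a $T_0$ topological monoid with an equivalent topos of continuous actions, since Lemma \ref{lem:T0T2} then guarantees the remaining separation properties automatically.

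For the $T_0$ quotient, I would define $a \sim b$ precisely when $a$ and $b$ are topologically indistinguishable, i.e. they lie in exactly the same members of the base $T = VR(\Pcal(M))$ of necessary clopens; equivalently, when $a^*(A) = b^*(A)$ for every $A \in T$, which by the description of $T$ is the relation of being separated by no continuous $M$-set at the level of right multiplication. The first step is to show $\sim$ is a monoid congruence: closure under multiplication on both sides follows because $T$ is closed under the inverse-image $M$-action (Theorem \ref{thm:tau}) and because multiplication is $\tilde\tau$-continuous, so left and right translation preserve indistinguishability. I would then set $\tilde M := M/{\sim}$ with the quotient topology, whose opens are exactly the images of members of $\tilde\tau$ that are unions of $\sim$-saturated basic clopens — and since every $A \in T$ is already $\sim$-saturated by the definition of $\sim$, the quotient map $q\colon M \to \tilde M$ induces a bijection between $\tilde\tau$ and $\tilde{\tilde\tau}$ on $\tilde M$, making $q$ both continuous and open and the quotient $T_0$ by construction.

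Next I would verify the equivalence of toposes. The key observation is that a right $M$-set $X$ is continuous with respect to $\tilde\tau$ if and only if every $\Ical_x^p$ lies in $T$ (Scholium \ref{schl:Tsuff}), and since each such set is $\sim$-saturated, the action of $M$ on any continuous $X$ factors through $\tilde M$: if $a \sim b$ then $xa = xb$ for every $x$ in a continuous $X$, because $a$ and $b$ are not separated by the necessary clopens of $X$. This yields a functor $\Cont(M,\tilde\tau) \to \Cont(\tilde M,\tilde{\tilde\tau})$, and conversely pulling back along $q$ gives a functor in the other direction; I would check these are mutually inverse equivalences and that they commute with the respective forgetful functors to $\Set$, so that the canonical points (the composites of the canonical point of the presheaf topos with the hyperconnected morphism from Proposition \ref{prop:hyper}) are identified. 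Finally I would confirm $(\tilde M,\tilde\tau)$ is genuinely a right powder monoid: its topology is $T_0$ by construction, and it remains an action topology because the base $T$ descends to a base of clopens on $\tilde M$ satisfying the defining condition of Definition \ref{dfn:powder}, the relation $\Ical_U^p$ being computed identically before and after quotienting since $q$ is open and the necessary clopens are saturated.

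The main obstacle I expect is the careful bookkeeping showing that $\sim$ is simultaneously a \emph{two-sided} monoid congruence \emph{and} compatible with the inverse-image $M$-action on $\Pcal(M)$ that defines $T$; the right-handed asymmetry built into action topologies (Definition \ref{dfn:powder}) means left-translation compatibility is not automatic and must be extracted from $\tilde\tau$-continuity of multiplication via Proposition \ref{prop:ctsx}. A secondary subtlety is verifying that the quotient topology really coincides with the action topology of $\tilde M$ — that is, that passing to the $T_0$ quotient does not create new continuous actions — which I would handle by checking that $q$ induces an isomorphism of the Boolean algebras of necessary clopens, so that the defining data $T$ is preserved under the quotient and the idempotence of the action-topology construction (Definition \ref{dfn:action}) guarantees $\tilde{\tilde\tau} = \tilde\tau$ on $\tilde M$.
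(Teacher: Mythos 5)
Your proposal is correct and follows essentially the same route as the paper's proof: pass to the action topology $\tilde{\tau}$ via Theorem \ref{thm:tau}, quotient by topological indistinguishability after checking it is a two-sided congruence, observe that indistinguishable elements act identically on every continuous $M$-set (via the necessary clopens $\Ical_x^m$) so that continuous actions descend to the quotient with underlying sets unchanged, and transfer the topology along the frame isomorphism. The only cosmetic difference is that you derive the congruence property from continuity of multiplication (Proposition \ref{prop:ctsx}), whereas the paper argues it directly from openness of $m_1^*(U)$ and $\Ical_U^{m_1}$; both are valid and rest on the same underlying facts about $\tilde{\tau}$.
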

\begin{proof}
We first construct the action topology $\tilde{\tau}$ corresponding to $\tau$ from Theorem \ref{thm:tau}. We shall show that the equivalence relation on $M$ relating points which are topologically indistinguishable with respect to $\tilde{\tau}$ is a two-sided congruence on $M$.

Suppose $m_1,m'_1$ and $m_2,m'_2$ are two pairs of topologically indistinguishable points in the sense that every open set of $\tilde{\tau}$ containing $m_i$ also contains $m'_i$ and vice versa. Then given an open set $U \in \tilde{\tau}$ containing $m_1m_2$, we have $m_2 \in m_1^*(U)$ and hence $m'_2 \in m_1^*(U)$. Moreover, $m_1 \in \Ical_U^{m_1}$ so $m'_1 \in \Ical_U^{m_1}$, which is to say that $m_1^*(U) = {m'_1}^*(U)$ and so $m'_1m'_2 \in U$.

Moreover, the actions of any pair of topologically indistinguishable points $m,m'$ of $(M,\tilde{\tau})$ on any $(M,\tilde{\tau})$-set are forced to be equal: if we had $xm \neq xm'$ we would have $\Ical_x^m$ containing $m$ but not $m'$ and therefore not open. Thus the continuous actions restrict to the quotient $\tilde{M}$ of $M$ by this relation.

The monoid $\tilde{M}$ inherits its topology from $(M,\tilde{\tau})$; we abuse notation and call the inherited topology $\tilde{\tau}$ too, since the frames of open sets of the two topologies are isomorphic. It is easily checked that $\tilde{\tau}$ is still an action topology on $\tilde{M}$, and $(\tilde{M},\tilde{\tau})$ is Kolmogorov by construction, as required.

Since we have not modified the forgetful functor (the underlying sets of the actions remain the same), this construction depends only on $M$ and the canonical point of $\Cont(M,\tau)$.
\end{proof}

While convenient, it is unavoidable that the construction of Theorem \ref{thm:Hausd} relies on our original representing monoid $(M,\tau)$. In Section \ref{ssec:complete}, we shall construct a representing powder monoid for a topos of the form $\Cont(M,\tau)$, in general different from the one constructed above, which depends only on the canonical point.

\subsection{Prodiscrete Monoids, Nearly Discrete Groups}
\label{ssec:prodiscrete}

There are plenty of nontrivial examples of powder monoids.

\begin{dfn}
Recall that a \textbf{prodiscrete monoid} is a topological monoid $(M,\tau)$ obtained as a (projective) limit of discrete monoids,
\[M = \varprojlim_{i\in I} M_i,\]
with $\tau$ the coarsest topology making each projection map continuous, which has a base of opens of the form $\pi_i^{-1}(\{m_i\})$ with $m_i \in M_i$. Often the limit is taken to be filtered or such that all of the monoid homomorphisms involved are surjections, but we do not require these restrictions.
\end{dfn}

\begin{xmpl}
\label{xmpl:idemclosed}
For those readers unfamiliar with prodiscrete monoids, we construct an example now which will be useful later. Consider the `truncated addition' monoids $N_{a,1}$, indexed by integers, $a \geq 0$ consisting of the integers $\{0,\dotsc,a\}$ equipped with the operation $\mu(p,q) = \min\{p+q,a\}$. For each $a \leq a'$ we have a surjective monoid homomorphism $N_{a',1} \too N_{a,1}$. The projective limit of the resulting sequence of monoids can be identified with $\Nbb \cup \{\infty\}$, equipped with addition extended in the obvious way, and the topology on it coincides with the one-point (Alexandrov) compactification of $\Nbb$ as a discrete topological space.
\end{xmpl}

\begin{prop}
\label{prop:prodisc}
Any prodiscrete monoid is a (right) powder monoid.
\end{prop}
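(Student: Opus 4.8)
The plan is to verify directly that a prodiscrete monoid $(M,\tau)$ satisfies the two defining conditions of a right powder monoid (Definition \ref{dfn:powder}): that it is $T_0$, and that $\tau$ is a right action topology, meaning it has a basis of clopen sets $U$ for which $\Ical_U^p \in \tau$ for every $p \in M$. First I would fix notation: write $M = \varprojlim_{i \in I} M_i$ with projection maps $\pi_i : M \to M_i$, and recall that the basic opens are the sets $\pi_i^{-1}(\{m_i\})$ for $i \in I$ and $m_i \in M_i$. Since each $M_i$ is discrete, singletons are clopen there, and as $\pi_i$ is continuous, each basic open $\pi_i^{-1}(\{m_i\})$ is clopen in $M$ (its complement is $\pi_i^{-1}(M_i \setminus \{m_i\})$, a finite or infinite union of basic opens). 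So $\tau$ already has a basis of clopen sets; this is the zero-dimensionality that will do most of the work.

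The $T_0$ property is quick: if $m \neq m'$ in the limit, they must differ in some coordinate, so $\pi_i(m) \neq \pi_i(m')$ for some $i$, and then $\pi_i^{-1}(\{\pi_i(m)\})$ is a basic open containing $m$ but not $m'$. The substantive step is checking the action-topology condition on the basic clopens $U = \pi_i^{-1}(\{m_i\})$. Here I would exploit the fact that the multiplication on $M$ is the limit of the (continuous, being maps of discrete spaces) multiplications on the $M_i$, so that $\pi_i$ is a monoid homomorphism and $\pi_i(pq) = \pi_i(p)\pi_i(q)$. The plan is to compute $p^*(U)$ explicitly: for $q \in M$,
\[
q \in p^*(U) \iff pq \in U \iff \pi_i(p)\pi_i(q) = m_i \iff \pi_i(q) \in \{n \in M_i \mid \pi_i(p) n = m_i\},
\]
so that $p^*(U) = \pi_i^{-1}(S_p)$ where $S_p := \{n \in M_i \mid \pi_i(p) n = m_i\} \subseteq M_i$. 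Crucially $p^*(U)$ depends on $p$ only through $\pi_i(p)$, and since $M_i$ is discrete there are at most $|M_i|$ distinct such preimages as $p$ ranges over $M$; each is clopen in $M$.

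It then remains to show $\Ical_U^p = \{m \in M \mid m^*(U) = p^*(U)\}$ is open. By the computation above, $m^*(U) = p^*(U)$ holds as soon as $S_m = S_p$, which is determined by $\pi_i(m)$; hence $\Ical_U^p$ is a union of the level sets $\pi_i^{-1}(\{n\})$ over those $n \in M_i$ with $\{n' \mid n n' = m_i\} = S_p$, and in particular $\Ical_U^p \supseteq \pi_i^{-1}(\{\pi_i(p)\})$ and is a union of basic opens, so it is open. I expect the main (though mild) obstacle to be the bookkeeping in this last step: one must be careful that $m^*(U) = p^*(U)$ as subsets of $M$ really is equivalent to the equality $S_m = S_p$ of subsets of $M_i$, which follows because $\pi_i^{-1}$ is injective on subsets of $M_i$ (as $\pi_i$ is surjective onto its image, or at any rate because distinct subsets of $M_i$ have distinct preimages provided they meet the image of $\pi_i$ — a point that may need a brief remark if the $\pi_i$ are not assumed surjective). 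Assembling these observations shows the basic clopens witness the action-topology condition, so $(M,\tau)$ is a right powder monoid, completing the proof.
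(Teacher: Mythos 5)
Your proposal is correct and takes essentially the same route as the paper's proof: both observe that for a basic open $U$ the set $p^*(U)$ is again a $\pi_i$-preimage determined only by $\pi_i(p)$, so that $\Ical_U^p$ is a union of fibres of $\pi_i$ (in particular it contains $\pi_i^{-1}(\{\pi_i(p)\})$ around each of its points) and hence is open. The only cosmetic differences are that the paper works with preimages $\pi_i^{-1}(A)$ of arbitrary subsets $A \subseteq M_i$ rather than singletons and leaves the $T_0$ check implicit, while your worry about $\pi_i^{-1}$ being injective on subsets is harmless: openness only needs that membership of $m$ in $\Ical_U^p$ depends on $\pi_i(m)$ alone, which is the one-way implication you already established.
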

\begin{proof}
It suffices to show that open sets of the form $U = \pi_i^{-1}(A)$ with $A \subseteq M_i$ are continuous elements of the topology. Indeed, given $\alpha = (a_i)_{i \in I} \in M$,
\begin{align*}
\Ical_U^{\alpha} &=
\{\beta = (b_i)_{i \in I} \in M \mid \alpha^*(U) = \beta^*(U)\}\\
&= \{\beta \in M \mid \forall c \in M_i, \, a_ic \in A \Leftrightarrow b_ic \in A \}\\
&\supseteq \{\beta \in M \mid b_i = a_i\} = \pi_i^{-1}(\{a_i\})
\end{align*}
contains an open neighbourhood of $\alpha$ and hence is open in the prodiscrete topology $\tau$, and by a similar argument, for any $\alpha' = (a'_i)_{i \in I}$,
\[\alpha'{}^*(U) = \pi_i^{-1}({a'_i}^*(A))\]
is of the same form, so is open in $\tau$ as required.
\end{proof}

Given the motivation of the present work, it is natural to wonder what happens when we apply the construction of Theorem \ref{thm:Hausd} to groups.

\begin{dfn}
A topological group is said to be \textbf{nearly discrete} if the intersection of all open subgroups contains only the identity element; see Johnstone \cite[Example A2.1.6]{Ele} or Caramello \cite[comments following Proposition 2.4]{TGT}.
\end{dfn}

\begin{lemma}
\label{lem:powdergrp}
A topological group is a (right) powder monoid if and only if it is nearly discrete and has a neighbourhood base of open subgroups at the identity; we accordingly call such groups \textbf{powder groups}.
\end{lemma}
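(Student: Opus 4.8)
The plan is to reduce everything to the classical group-theoretic picture via the observation, already recorded after Lemma \ref{lem:Inx}, that for a topological group the necessary clopens of a continuous action are cosets of open subgroups. Concretely, for a right $G$-set $X$ and $x \in X$, the set $\Ical_x^1$ is the stabiliser of $x$, hence a subgroup, and $\Ical_x^p = \Ical_x^1 \cdot p$ is one of its right cosets; thus $X$ is continuous precisely when all its stabilisers are open subgroups. Conversely every $\tau$-open subgroup $H$ is realised as the stabiliser of the coset $H$ in the right $G$-set $H \backslash G$, which is continuous because the stabiliser of a coset $Hg$ is the conjugate $g^{-1}Hg$, again open. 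So the necessary clopens of the $(G,\tau)$-sets are exactly the right cosets of the $\tau$-open subgroups. Invoking Scholium \ref{schl:base}, these form a base for the action topology $\tilde{\tau}$, and since a basic open $Hp$ contains the identity if and only if $Hp = H$, the $\tau$-open subgroups constitute a neighbourhood base for $\tilde{\tau}$ at the identity.

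For the forward implication I would use that $(G,\tau)$ being a powder group means exactly that $G$ is $T_0$ and $\tau = \tilde{\tau}$ (Definitions \ref{dfn:powder} and \ref{dfn:action}). The equality $\tau = \tilde{\tau}$ together with the base analysis above immediately supplies a neighbourhood base of open subgroups at the identity. To extract near discreteness I would translate the $T_0$ axiom through this base: since (right) translation is a homeomorphism it suffices to separate each $g \neq 1$ from $1$, and with a base of open subgroups this is possible if and only if some open subgroup omits $g$; hence $T_0$ is equivalent to the intersection of all open subgroups being $\{1\}$, which is precisely near discreteness.

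For the reverse implication I would assume $G$ is nearly discrete with a neighbourhood base $\mathcal{B}$ of open subgroups at the identity. Near discreteness yields $T_0$ by the same translation argument. It then remains to show $\tau = \tilde{\tau}$; as $\tilde{\tau} \subseteq \tau$ always holds by Theorem \ref{thm:tau}, I only need $\tau \subseteq \tilde{\tau}$. Because right translation is a homeomorphism, the right cosets $Hg$ with $H \in \mathcal{B}$ form a base for $\tau$; but each such $H$ is in particular a $\tau$-open subgroup, so its cosets already belong to the base for $\tilde{\tau}$ identified in the first paragraph. Every $\tau$-basic open is therefore $\tilde{\tau}$-open, giving $\tau = \tilde{\tau}$ and hence that $(G,\tau)$ is a powder group.

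The genuinely substantive step is the first one: establishing that the base for $\tilde{\tau}$ coincides exactly with the cosets of $\tau$-open subgroups, rather than merely refining or being refined by them. This requires both halves of the correspondence between continuous actions and open subgroups — that open subgroups produce continuous actions and that stabilisers of continuous actions are open — and care with handedness, since working with right actions forces right cosets and the conjugation computation $g^{-1}Hg$ for the stabiliser of a coset. Once this identification is secured, both directions are routine bookkeeping with bases and translations, so I expect no further obstacle beyond keeping the left/right conventions straight.
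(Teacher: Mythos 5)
Your proof is correct and is essentially the paper's argument in different packaging: the paper's key computation $\Ical_U^1 = U$ for an open subgroup $U$ (via Lemma \ref{lem:InA}) is the same fact as your identification of $H$ as the stabiliser of the coset $H$ in $H\backslash G$, since the orbit of $H$ under the inverse-image action on $\Pcal(G)$ is isomorphic to $H\backslash G$ as a right $G$-set, and both proofs then combine this with translation homeomorphisms and the observation that, given a base of open subgroups at the identity, the $T_0$ condition amounts to near discreteness. The only real difference is organisational: you first establish the clean identification of the base of $\tilde{\tau}$ with the right cosets of the $\tau$-open subgroups (via Scholium \ref{schl:base}) and make the $T_0$/near-discreteness equivalence explicit, whereas the paper verifies membership in the action topology directly and leaves both of those steps terse.
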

\begin{proof}
Given such a topological group $(G,\tau)$, $g \in G$ and a neighbourhood $U \in \tau$ of $g$, since multiplication by any element of $G$ on either side preserves opens, we may without loss of generality suppose that $g = 1 \in U$, and so that $U$ is an open subgroup.

Then $\Ical_U^1$, being the set of $h \in G$ such that $h^{-1}U = U$, contains (and so is equal to) $U$ and in particular is open in $G$, ensuring that $U$ is in the action topology corresponding to $\tau$, whence $\tau$ is an action topology and hence $(G,\tau)$, being Hausdorff (since given any element distinct from the identity we can find an open subgroup which does not contain it), is a powder monoid.

Conversely, a powder group has a basis of the identity consisting of the isotropy subgroups $\Ical_U^1$ with $U$ varying over the open neighbourhoods of the identity, and being Hausdorff forces such a group to be nearly discrete.
\end{proof}

\begin{xmpl}
\label{xmpl:notpro}
Lemma \ref{lem:powdergrp} allows us to present an example of a powder monoid which is not a prodiscrete monoid. Consider the group of automorphisms of $\Nbb$ with the stabilizers of finite subsets defined to be open subgroups (as suggested in \cite[Example A2.1.6]{Ele}). Any prodiscrete group is the limit of its quotients by normal open subgroups, but this group has no proper open normal subgroups. Incidentally, this nearly discrete group is one of the many powder monoids representing the Schanuel topos which we encountered in Example \ref{xmpl:nonpresh} of Chapter \ref{chap:sgt}, and which we shall discuss further in Example \ref{xmpl:notpro2} below.
\end{xmpl}

Of course, the class of prodiscrete groups forms a subclass of the class of powder groups. An even more refined class is the following:
\begin{dfn}
A group is \textbf{profinite} if it is (expressible as) a directed projective limit of finite groups. They can alternatively be characterized as compact, totally disconnected groups; see \cite[Proposition 1.1.7]{Profinite}.
\end{dfn}

\begin{schl}
\label{schl:Hausgrp}
If $(G,\tau)$ is a group with an arbitrary topology, then the corresponding powder monoid $(\tilde{G},\tilde{\tau})$ is a powder group. If $(G,\tau)$ is compact, then $(\tilde{G},\tilde{\tau})$ is profinite.
\end{schl}
\begin{proof}
The proof that the equivalence relation identifying $\tilde{\tau}$-indistinguishable points respects multiplication also demonstrates that the equivalence class containing the inverse $g^{-1}$ of an element $g$ is an inverse for the equivalence class of $g$ in the quotient $\tilde{M}$. Thus $(\tilde{G},\tilde{\tau})$ is a group, and hence a powder group, as claimed. Adding Lemma \ref{lem:compactau} gives the profinite result.
\end{proof}

Thus applying Theorem \ref{thm:Hausd} to any topological group brings us naturally to the class of groups described by Johnstone in \cite[Example A2.1.6]{Ele} as canonical representatives for toposes of topological group actions, which are precisely the powder groups. This is not the end of the story, however: see Remark \ref{rmk:completegrp}.

\section{The Canonical Surjective Point}
\label{sec:surjpt}

In Section \ref{sec:properties}, we made extensive use of the hyperconnected morphism $f: \Setswith{M} \to \Cont(M,\tau)$ which we constructed to demonstrate that $\Cont(M,\tau)$ is a topos. In this section, we show that the existence of a hyperconnected morphism with domain $\Setswith{M}$ entirely characterizes toposes of this form.

From Chapter \ref{chap:TDMA}, we know that the (discrete) monoids presenting $\Setswith{M}$ correspond to the surjective essential points of that topos. There may be multiple possible presentations, but there is a unique one for each point. Without loss of generality, we fix a presentation of $\Setswith{M}$ and its corresponding canonical point (whose inverse image functor is the forgetful functor), and examine equivalent presentations in the next section.

Suppose we are given a topos $\Ecal$ having a point $p$ which factorizes through the canonical point of $\Setswith{M}$ via a hyperconnected morphism $h$:
\begin{equation}
\label{eqn:p}
\begin{tikzcd}
\Set \ar[r, bend left=50, "- \times M"] \ar[r, bend right=50, "\Hom_{\Set}(M{,}-)"']
\ar[r, phantom, shift left=6, "\bot", near end] \ar[r, phantom, shift right=4, "\bot", near end]
& {\Setswith{M}} \ar[l, "U"', near start] \ar[r, shift right = 2, "h_*"'] \ar[r, phantom, "\bot"]
& \Ecal, 
\ar[l, shift right = 2, "h^*"']
\end{tikzcd}
\end{equation}
where $U$ is the usual forgetful functor. Applying Theorem \ref{thm:hype2} of Chapter \ref{chap:sgt} once again, it follows that $\Ecal$ is a supercompactly generated, two-valued Grothendieck topos whose category of supercompact objects has the joint covering property. The given point of $\Ecal$ is surjective, being a composite of surjective morphisms, but also localic, since we can construct arbitrary coproducts of the terminal object $Uh^*(1)$ to cover any set. 

In order to have a complete picture of what $\Ecal$ can look like, our first task is to classify the hyperconnected morphisms under $\Setswith{M}$. For this task, we re-express the canonical site of principal $M$-sets in terms of relations on the monoid.

\subsection{Equivariant relations and congruences}
\label{ssec:EqRel}

We can use the powerset adjunction of Section \ref{ssec:inverse} to construct another canonical object of $\Setswith{M}$. Since $M \times M$ is naturally equipped with a left $M$-action, we obtain an inverse image action on $\Pcal(M \times M)$, the set of all relations on $M$ \textit{viewed as a set}. However, this contains more relations than we need!

When we consider $M \times M$ as an object of $\Setswith{M}$, its subobjects (which correspond to the relations on $M$ \textit{as an $M$-set}) are its sub-right-$M$-sets. We call such subobjects (right) \textbf{equivariant relations} on $M$, because they are the relations $r$ with the property that $(p,q) \in r$ implies $(pm,qm) \in r$ for every $m \in M$.\footnote{We have chosen to consistently use a lower case $r$ for relations to avoid a clash of notation with the right adjoint functor $R$ constructed earlier.}

\begin{lemma}
\label{lem:relations}
The sets of equivariant relations, reflexive relations, symmetric relations and transitive relations are sub-$M$-sets of $\Pcal(M\times M)$ with the inverse image action, each inheriting the ordering from $\Pcal(M \times M)$. Thus their intersection, the collection $\Rfrak$ of \textbf{right congruences}, is an ordered sub-$M$-set of $\Pcal(M \times M)$.
\end{lemma}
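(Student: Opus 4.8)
The plan is to verify directly that each of the four defining properties of relations is preserved by the inverse image action, since each is a closure condition that interacts transparently with diagonal left multiplication. Recall that $M \times M$ carries the diagonal left $M$-action $g \cdot (p,q) = (gp,gq)$, so that for a relation $r \subseteq M \times M$ the inverse image action is $g^*(r) = \{(p,q) \mid (gp,gq) \in r\}$. I would treat each of the four classes of relation in turn and check it is closed under $g^*$.

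First, for equivariance: an equivariant relation is by definition a sub-right-$M$-set of $M \times M$, where the right action is $(p,q) \cdot m = (pm,qm)$. Given an equivariant $r$, I would take $(p,q) \in g^*(r)$ and $m \in M$; then $(gp,gq) \in r$, and since $r$ is closed under the right action, $(gp \cdot m, gq \cdot m) = (gpm,gqm) \in r$. By associativity $gpm = g(pm)$ and $gqm = g(qm)$, so $(g(pm),g(qm)) \in r$, that is $(pm,qm) \in g^*(r)$. The only conceptual content here is the commutation of left and right multiplication, which is exactly the biaction compatibility already exploited in Section \ref{sec:biact}.

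Reflexivity, symmetry and transitivity are even more immediate, since each condition is phrased purely in terms of membership and $g^*$ is defined by pulling back membership along the diagonal. Explicitly, reflexivity of $g^*(r)$ follows from $(p,p) \in g^*(r) \Leftrightarrow (gp,gp) \in r$ together with reflexivity of $r$; symmetry follows by applying symmetry of $r$ to the pair $(gp,gq)$; and transitivity follows by applying transitivity of $r$ to $(gp,gq)$ and $(gq,gs)$. Hence each of the four collections is closed under $g^*$ and so forms a sub-$M$-set of $\Pcal(M \times M)$.

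Finally, I would observe that the inverse image action is order-preserving: $r \subseteq r'$ gives $g^*(r) \subseteq g^*(r')$ straight from the definition, so each of the four sub-$M$-sets inherits the inclusion ordering from $\Pcal(M \times M)$ as an ordered $M$-set. Since the intersection of sub-$M$-sets is again a sub-$M$-set (being closed under the action whenever each factor is) and intersection respects the order, $\Rfrak$, being the intersection of the equivariant, reflexive, symmetric and transitive relations, is an ordered sub-$M$-set of $\Pcal(M \times M)$, as claimed. No step presents a genuine obstacle; the one point that merits care is the equivariance case, where one must invoke associativity to commute the left action defining $g^*$ past the right action defining equivariance.
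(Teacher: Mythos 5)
Your proof is correct and follows essentially the same route as the paper's: a direct element-wise check that each of the four closure conditions is preserved by the inverse image action $g^*$, with the equivariance case resting on the commutation of left and right multiplication, followed by the observation that $g^*$ is monotone and that intersections of sub-$M$-sets are sub-$M$-sets. The only cosmetic difference is that the paper verifies reflexivity by noting $\Delta \subseteq k^*(\Delta)$ and invoking monotonicity of $k^*$, whereas you check the diagonal condition directly; these are the same argument in different clothing.
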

\begin{proof}
Equivariance of a relation is unaffected by composition on the left; that is, if $r$ is an equivariant relation, then so is $k^*(r)$, so these form a sub-$M$-set.

The diagonal relation $\Delta : M \hookrightarrow M \times M$ clearly satisfies $k^*(\Delta) \supseteq \Delta$ for every $k \in M$, and the inverse image action preserves containment, so reflexive relations form a sub-$M$-set. 

Given a symmetric relation $r \hookrightarrow M \times M$ and $(m,m') \in k^*(r)$, we have $(km, km') \in r$ and hence $(km', km) \in r$ and $(m',m) \in k^*(r)$, so symmetric relations form a sub-$M$-set.

By a similar argument, if $(m,m'), (m',m'') \in k^*(r)$ then $(m,m'') \in k^*(r)$, so transitive relations form a sub-$M$-set.
\end{proof}

\begin{rmk}
Let $\Omega$ be the subobject classifier of $\Setswith{M}$. The \textbf{internal power-object} of $M$, the exponential object $\Omega^M$, has an underlying set which coincides with the collection of equivariant relations. Indeed, the elements of $\Omega^M$ correspond to elements of
\[\Hom_{\Setswith{M}}(M,\Omega^M) \cong \Hom_{\Setswith{M}}(M \times M,\Omega) \cong \Sub_{\Setswith{M}}(M\times M).\]
However, the action of $M$ on $\Omega^M$ is by inverse images \textit{only in the first component}, so $\Omega^M$ does not coincide with the first sub-$M$-set of $\Pcal(M \times M)$ described in Lemma \ref{lem:relations}. In fact, for $M$ a non-trivial monoid, the subsets of $\Omega^M$ on the reflexive, symmetric or transitive relations are typically not even sub-$M$-sets.
\end{rmk}

Now observe that the relations $\rfrak_x$ of Corollary \ref{crly:Rnx} are always right congruences, so we can examine their behaviour as elements of $\Rfrak$, just as we considered the behaviour of necessary clopens as elements of $\Pcal(M)$ in the last section.

\begin{lemma}
\label{lem:rinrfrak}
Let $r \in \Rfrak$ and $p \in M$. Then, as elements of $\Rfrak$ with the restriction of the action and ordering from $\Pcal(M \times M)$, we have $r \subseteq \rfrak_r$ and $p^*(\rfrak_r) = \rfrak_{p^*(r)}$. That is, $\rfrak_{(-)}$ is an order-increasing $M$-set endomorphism of $\Rfrak$.
\end{lemma}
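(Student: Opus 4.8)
The plan is to deduce all three assertions from the single more basic observation that, for \emph{any} right $M$-set $X$ and any $x \in X$, the assignment $x \mapsto \rfrak_x$ defines an $M$-set homomorphism $X \to \Rfrak$. Specialising to $X = \Rfrak$ equipped with its inverse image action (so that $r \cdot p = p^*(r)$) then produces the endomorphism $\rfrak_{(-)}$ together with the equivariance identity of the second claim, while the containment $r \subseteq \rfrak_r$ is the only part that genuinely exploits the fact that $r$ is a congruence rather than an arbitrary element of $\Rfrak$.

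First I would record well-definedness: for every $x$ the relation $\rfrak_x$ is a right congruence (as already noted in the discussion preceding this lemma), so $\rfrak_{(-)}$ indeed lands in $\Rfrak$, and $\Rfrak$ carries the inherited order and action by Lemma \ref{lem:relations}. Next I would establish the equivariance claim by unwinding both sides using the contravariant functoriality $(gh)^* = h^*g^*$ of the inverse image action from Section \ref{ssec:inverse}: for $(m,m') \in M \times M$,
\[(m,m') \in p^*(\rfrak_r) \iff (pm,pm') \in \rfrak_r \iff (pm)^*(r) = (pm')^*(r) \iff m^*(p^*(r)) = (m')^*(p^*(r)),\]
and the final condition is exactly membership of $(m,m')$ in $\rfrak_{p^*(r)}$. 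The same computation carried out for a general $X$ shows $\rfrak_{x \cdot m} = m^*(\rfrak_x) = \rfrak_x \cdot m$, confirming the homomorphism property; this part is purely formal.

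The substantive step, which I expect to be the main obstacle, is the containment $r \subseteq \rfrak_r$, equivalently the implication $(p,q) \in r \Rightarrow p^*(r) = q^*(r)$. I would prove the inclusion $p^*(r) \subseteq q^*(r)$ and then appeal to symmetry of $r$ to interchange $p$ and $q$. For the inclusion, suppose $(pm,pm') \in r$; since $r$ is a right congruence, $(p,q) \in r$ gives $(pm,qm) \in r$ and $(pm',qm') \in r$, and then symmetry and transitivity splice the chain $(qm,pm),\,(pm,pm'),\,(pm',qm')$ into $(qm,qm') \in r$, that is $(m,m') \in q^*(r)$. This zigzag is the only place where three of the defining properties of a congruence (equivariance, symmetry and transitivity) are used simultaneously, so the care required is in tracking the composites; everything else reduces to the inverse image bookkeeping above.

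Finally I would assemble the pieces: the equivariance identity together with well-definedness exhibits $\rfrak_{(-)}$ as an $M$-set endomorphism of $\Rfrak$, and the containment shows it is \emph{order-increasing} in the sense of being inflationary for the inclusion order, i.e.\ $r \subseteq \rfrak_r$ for every $r$. No separate verification of monotonicity is needed for the statement as phrased, since the order-theoretic content is precisely this containment.
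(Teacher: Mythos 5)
Your proposal is correct and follows essentially the same route as the paper: the equivariance identity is verified by the same $(pm)^*(r) = m^*(p^*(r))$ bookkeeping (the paper notes both sides equal $\{(x,y) \mid x^*(p^*(r)) = y^*(p^*(r))\}$), and the containment $r \subseteq \rfrak_r$ is established by the same equivariance--symmetry--transitivity zigzag showing $(p,q) \in r$ forces $p^*(r) = q^*(r)$. Your packaging of the formal part as an $M$-set homomorphism $X \to \Rfrak$ for arbitrary $X$ is a harmless mild generalization, not a different argument.
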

\begin{proof}
We must show that $(x,y) \in r$ implies that $x^*(r) = y^*(r)$. Indeed, since $r$ is equivariant, for any $(p,q) \in M \times M$, $(xp,yp)$ and $(xq,yq)$ are in $r$, whence $(xp,xq) \in r$ if and only if $(yp,yq) \in r$, which gives the desired equality. Both $p^*(\rfrak_r)$ and $\rfrak_{p^*(r)}$ are equal to the set $\{(x,y) \in M \times M \mid x^*(p^*(r)) = y^*(p^*(r))\}$.
\end{proof}

\begin{rmk}
\label{rmk:order}
While $\rfrak$ may be order-increasing, there is no reason for it to be order-preserving: $r \subseteq s$ does not imply $\rfrak_r \subseteq \rfrak_s$ in general, since given $(m,n)$ such that $(mu,mv) \in r$ if and only if $(nu,nv) \in r$, we could still have $(mu,mv) \in s$ with $(nu,nv) \notin s$ if the former pair of elements are related in $s$ but not $r$. However, by expanding the definitions we at least find that $\rfrak_{r \cap s} \supseteq \rfrak_r \cap \rfrak_s$, in analogy with \eqref{eq:intersection}.
\end{rmk}

\begin{rmk}
\label{rmk:tauhat}
Suppose $M$ is a monoid equipped with a topology $\tau$, and $V,R$ are as in Proposition \ref{prop:hyper}. Consider $\Rfrak$ equipped with the inverse image action. In light of Theorem \ref{thm:tau}, we might consider the continuous right congruences, which is to say those lying in $\Tcal:= VR(\Rfrak)$, and construct a topology $\hat{\tau}$ on $M$ by taking the equivalence classes with respect to congruences lying in $\Tcal$ as a base of clopen sets. We might then wonder if $\Cont(M,\tau) \simeq \Cont(M,\hat{\tau})$; this turns out not to be the case.

It follows from Remark \ref{rmk:order} that the base of $\hat{\tau}$ is closed under intersections. We can also show that $\tilde{\tau} \subseteq \hat{\tau}$. Let $A \in T$ be a clopen set in $\tilde{\tau}$ and consider the congruence $\rfrak_A$, which we know to be open in $\tau \times \tau$. Then the inclusion $\rfrak_A \subseteq \rfrak_{\rfrak_A}$ from Lemma \ref{lem:rinrfrak} ensures that the latter is also in $\tau \times \tau$, and similarly stability under the inverse image action is guaranteed, so the equivalence classes $\Ical_A^p$ (and by extension $A$) are open in $\hat{\tau}$.

However, there is no reason that the opposite inclusion should hold, or even that $\hat{\tau}$ should be contained in $\tau$, since the congruence classes of $r$ need not be in $\tau$ when those of $\rfrak_r$ are. Indeed, consider a monoid $M$ with two distinct right-absorbing elements $x,y$, so that $xm = x$ and $ym = y$ for all $m \in M$. Let $\tau$ be the topology on $M$ generated by asserting that every singleton except $\{x\}$ and $\{y\}$ is open, and also $\{x,y\}$ is open. Then the diagonal relation $\Delta: M \to M \times M$ has $\rfrak_{\Delta} = \{(p,q) \mid p^*(\Delta) = q^*(\Delta)\}$, which is open in $\tau \times \tau$ since $x^*(\Delta) = M \times M = y^*(\Delta)$, so $(x,y)$ and $(y,x) \in \rfrak_{\Delta}$. As such, $\hat{\tau}$ is the discrete topology in this case. In particular, while continuity with respect to $\tau$ requires $x$ and $y$ to act identically, continuity with respect to $\hat{\tau}$ does not, so $\Cont(M,\tau) \not\simeq \Cont(M,\hat{\tau})$.

Moreover, the construction of $\hat{\tau}$ is not obviously idempotent, and we have not even been able to prove that $(M,\hat{\tau})$ is a topological monoid (recall Proposition \ref{prop:ctsx}), let alone that $\hat{\tau}$ is an action topology.
\end{rmk}

Instead, the reason we introduced the object of right congruences was to provide a canonical indexing of principal $M$-sets.

\begin{lemma}
\label{lem:prel}
The quotients of $M$ in $\Setswith{M}$ correspond precisely to the right congruences on $M$ (which are internal equivalence relations on $M$ in this topos).
\end{lemma}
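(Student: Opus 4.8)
The plan is to exhibit the correspondence in both directions and to verify it is a bijection by appealing to the exactness of the topos $\Setswith{M}$. Recall from Section \ref{sec:properties0} that the forgetful functor $U:\Setswith{M} \to \Set$ is both monadic and comonadic, so it preserves and reflects all limits and colimits; in particular limits, colimits and (being conservative) isomorphisms in $\Setswith{M}$ are all computed on underlying sets. This lets me pass freely between statements about $M$-sets and statements about their underlying sets.

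First I would identify right congruences with internal equivalence relations, justifying the parenthetical claim. A subobject of $M \times M$ in $\Setswith{M}$ is precisely a sub-$M$-set, that is, an equivariant relation on $M$ in the sense of Lemma \ref{lem:relations}. The property of such a subobject being an internal equivalence relation is the conjunction of internal reflexivity, symmetry and transitivity, each of which is expressed by a finite-limit diagram; since $U$ preserves and reflects finite limits, these internal conditions hold if and only if the underlying relation on the set $M$ is reflexive, symmetric and transitive. Thus the internal equivalence relations on $M$ are exactly the equivariant equivalence relations, which is the defining description of the right congruences collected in $\Rfrak$.

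Next I would set up the two constructions. Given a quotient, i.e.\ an epimorphism $q: M \too N$, I form its kernel pair $r_q \hookrightarrow M \times M$; since $U$ creates limits, $r_q = \{(p,p') \mid q(p) = q(p')\}$ as a set, and equivariance is immediate from $q$ being an $M$-set homomorphism: $q(p) = q(p')$ forces $q(pm) = q(p)m = q(p')m = q(p'm)$. Conversely, given a right congruence $r$, the equivariance of $r$ makes the right action $[p]\cdot m := [pm]$ well defined on the quotient set $M/r$, and the canonical surjection $M \too M/r$ is then an epimorphism of right $M$-sets, being surjective on underlying sets.

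Finally I would check these constructions are mutually inverse. Every epimorphism in a topos is regular, hence is the coequalizer of its own kernel pair; applying $U$, the coequalizer of $r_q$ is computed on underlying sets and recovers $N$ up to canonical isomorphism, so $q \mapsto r_q \mapsto M/r_q$ returns $q$. In the other direction, $\Setswith{M}$ is a Grothendieck topos and hence Barr-exact, so every internal equivalence relation is effective, i.e.\ is the kernel pair of its own quotient map; thus $r \mapsto (M \too M/r) \mapsto r$ is the identity. (Both verifications may equally be done by hand on underlying sets, where the statement reduces to the elementary correspondence between surjections out of a set and equivalence relations on it.) The correspondence is moreover order-preserving for the inclusion ordering on $\Rfrak$ and the quotient ordering, since a larger congruence identifies more elements. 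The routine-but-essential step—and the only place any real care is needed—is the identification of internal equivalence relations with set-theoretic congruences, which hinges on $U$ reflecting finite limits; there is no genuine obstacle beyond bookkeeping, as the bijection is the standard correspondence between quotients and effective equivalence relations specialized to the exact category $\Setswith{M}$.
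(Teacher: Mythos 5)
Your proof is correct and is essentially the paper's argument: the paper's inverse construction $q \mapsto \rfrak_{q(1)}$ is literally your kernel pair, since an $M$-set homomorphism satisfies $q(p) = q(1)\cdot p$, so the two bijections coincide. The only difference is presentational: where the paper checks mutual inversion ``by inspection,'' you invoke regularity of epimorphisms and Barr-exactness of the topos (and you additionally spell out the parenthetical identification of right congruences with internal equivalence relations), which discharges the same steps with heavier but perfectly sound machinery.
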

\begin{proof}
Any right congruence $r$ on $M$ gives a quotient $M \too M/r$. Conversely, given a quotient $q:M \too N$, let $r := \rfrak_{q(1)}$; then we clearly have $N \cong M/r$, and this operation is an inverse to the preceding one by inspection.
\end{proof}

Note that we specifically refer to quotients of $M$ in Lemma \ref{lem:prel}, rather than principal $M$-sets, since distinct congruences can give isomorphic principal $M$-sets: each generator of a principal $M$-set presents it as a quotient in a distinct way. Nonetheless, we do get all principal $M$-sets at least once in this way.

Our next task is to recover the categorical structure on these objects. For a start, the natural ordering on the collection of congruences $\Rfrak$ is reflected in the subcategory $\Ccal_s$ of supercompact objects of $\Ecal$. Indeed, if $r \subseteq r'$ there is a corresponding quotient map $M/r \too M/r'$.

\begin{lemma}
\label{lem:factor}
Let $\Ccal$ be the full subcategory of $\Setswith{M}$ on objects of the form $M/r$. Then any morphism $g: M/r_1 \to M/r_2$ in $\Ccal$ factors uniquely as a quotient map of the form described above followed by an inclusion of the form $M/m^*(r_2) \hookrightarrow M/r_2$.
\end{lemma}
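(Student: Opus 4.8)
The plan is to describe the morphism $g: M/r_1 \to M/r_2$ entirely in terms of the element of $M$ that it picks out, then read off the required factorization from the structure of congruences. First I would observe that a morphism of right $M$-sets out of a principal $M$-set $M/r_1$ is completely determined by the image of the generator $[1]$, the class of $1 \in M$. Write $g([1]) = [m]$ for some $m \in M$, where $[m]$ denotes the class of $m$ in $M/r_2$. For this to be well-defined we need precisely that $r_1 \subseteq \rfrak_{[m]} = m^*(r_2)$, using Lemma \ref{lem:rinrfrak} to identify the congruence $\rfrak_{[m]}$ associated to the element $[m] \in M/r_2$ as $m^*(r_2)$ (here $[m]$ is viewed as an element of the quotient, and its associated congruence in the sense of Lemma \ref{lem:prel} is $m^*(r_2)$). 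Thus the data of $g$ is equivalent to a choice of $m \in M$ such that $r_1 \subseteq m^*(r_2)$.

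**Exhibiting the two factors.**

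Given such an $m$, I would define the intermediate object to be $M/m^*(r_2)$. The first factor is the canonical quotient map $M/r_1 \too M/m^*(r_2)$, which exists because $r_1 \subseteq m^*(r_2)$ by the well-definedness condition above. The second factor is the map $M/m^*(r_2) \to M/r_2$ sending $[1] \mapsto [m]$; this is well-defined precisely because $m^*(r_2)$ is, by its very definition, the largest congruence $s$ with the property that $[1] \mapsto [m]$ defines a map $M/s \to M/r_2$. I would then verify that this second map is a \emph{monomorphism}: an element $[n] \in M/m^*(r_2)$ maps to $[mn] \in M/r_2$, and two elements $[n], [n']$ have $[mn] = [mn']$ iff $(mn, mn') \in r_2$ iff $(n,n') \in m^*(r_2)$ iff $[n] = [n']$ in $M/m^*(r_2)$, so the map is injective, hence monic in $\Setswith{M}$. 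This is exactly the inclusion $M/m^*(r_2) \hookrightarrow M/r_2$ of the stated form. Composing, $[1] \mapsto [1] \mapsto [m]$ recovers $g$.

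**Uniqueness.**

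For uniqueness I would argue that the (epi, mono)-factorization of $g$ in $\Setswith{M}$ is unique up to isomorphism (this is the standard orthogonal factorization system on a topos, also recorded for $\Ccal_s$ in Corollary \ref{crly:orthog}), so it suffices to check that the factorization just produced is the (epi, mono)-factorization, i.e.\ that the quotient map $M/r_1 \too M/m^*(r_2)$ is epic and the inclusion is monic. The latter was just shown, and the quotient map is epic as a quotient. It remains to confirm that the image object is forced to have the form $M/m^*(r_2)$: the image of $g$ is the principal sub-$M$-set of $M/r_2$ generated by $[m]$, which is isomorphic to $M/\rfrak_{[m]} = M/m^*(r_2)$ by Lemma \ref{lem:prel} and Lemma \ref{lem:rinrfrak}. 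Hence any factorization into a quotient followed by a monic inclusion of the stated shape agrees with this one.

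The main obstacle I anticipate is bookkeeping rather than conceptual: one must be careful that the congruence associated to the \emph{element} $[m]$ of the quotient $M/r_2$ is computed as $m^*(r_2)$ and not merely as $r_2$, and that the condition $r_1 \subseteq m^*(r_2)$ (rather than $r_1 \subseteq r_2$) is the correct well-definedness hypothesis. This is precisely the content of Lemma \ref{lem:rinrfrak}, so the proof is largely an exercise in applying that lemma correctly; the genuine mathematical work has already been done there.
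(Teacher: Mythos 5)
Your proof is correct and follows essentially the same route as the paper's: determine $g$ by the image $[m]$ of the generator, exhibit the quotient--inclusion factorization through $M/m^*(r_2)$, and deduce uniqueness (on the nose, not just up to isomorphism) from the orthogonal (epi, mono) factorization system of Corollary \ref{crly:orthog} together with the fact that the intermediate object is pinned down by the generator. One cosmetic point: the identity $\rfrak_{[m]} = m^*(r_2)$ is really a direct computation rather than an instance of Lemma \ref{lem:rinrfrak} (which concerns $\rfrak_r$ for $r$ an element of the $M$-set $\Rfrak$ with the inverse image action), but you verify it inline in your monomorphism check anyway, so nothing is missing.
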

\begin{proof}
Consider the canonical generator $[1]$ of $M/r_1$. Let $m$ be any representative of $g([1])$. Then the image part of $g$ is precisely the inclusion described in the statement of the Lemma, and the factoring quotient map is of the desired form.

For uniqueness, note that epimorphisms are orthogonal to monomorphisms in $\Ccal_s$ by Corollary \ref{crly:orthog}, so any other such factorization has an isomorphic intermediate object. But the isomorphism must commute with the quotient maps, in particular preserving the canonical generator, and hence the corresponding relations are equal.
\end{proof}

As such, we extend the partial order $\Rfrak$ to a category $\Rfrakk$ as follows.

\begin{dfn}
\label{dfn:Rfrak}
The objects of $\Rfrakk$ are the right congruences in $\Rfrak$. A morphism $r_1 \to r_2$ is an equivalence class $[m]$ of $r_2$ such that $r_1 \subseteq m^*(r_2)$, and composition is given by multiplication in $M$, which is easily seen to be compatible with the containment condition.
\end{dfn}

That this category is well-defined follows from Lemma \ref{lem:rinrfrak}, since $(m,m') \in r$ implies $m^*(r) = {m'}^*(r)$. By inspection of Lemma \ref{lem:factor}, the resulting category is isomorphic to the full subcategory $\Ccal$ appearing there, and \textit{we identify $\Rfrakk$ with that category}. In particular, since that category is equivalent to the category $\Ccal_s$ of all principal $M$-sets, we have the following.

\begin{crly}
\label{crly:Rsite}
The topos $\Setswith{M}$ is equivalent to $\Sh(\Rfrakk,J_r)$; the strict epimorphisms in $\Rfrakk$ are precisely those indexed by an equivalence class containing $1 \in M$.
\end{crly}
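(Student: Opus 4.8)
The plan is to deduce Corollary~\ref{crly:Rsite} directly from the general machinery already assembled, rather than reprove anything from scratch. The key observation is that Definition~\ref{dfn:Rfrak}, together with the identification made immediately before the statement, presents $\Rfrakk$ as (isomorphic to) the full subcategory $\Ccal$ of $\Setswith{M}$ on the objects of the form $M/r$. By Lemma~\ref{lem:prel}, every such object is a quotient of $M$, hence a principal right $M$-set, and conversely every principal $M$-set arises as some $M/r$ (choosing a generator presents it as a quotient, and Lemma~\ref{lem:prel} turns that quotient into a congruence). Therefore $\Rfrakk$ is equivalent to the category $\Ccal_s$ of supercompact objects of $\Setswith{M}$, as recorded in Proposition~\ref{prop:prince} (or directly from the fact that supercompact objects in $\Setswith{M}$ are exactly the principal $M$-sets).

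Given this equivalence, the first half of the statement is an immediate application of Theorem~\ref{thm:canon}: since $\Setswith{M}$ is supercompactly generated (it is a presheaf topos, so Proposition~\ref{prop:xmpls}(iii) applies), we have $\Setswith{M} \simeq \Sh(\Ccal_s, J_r)$, where $J_r$ is the reductive topology generated by strict epimorphisms. Transporting $J_r$ across the equivalence $\Ccal_s \simeq \Rfrakk$ gives $\Setswith{M} \simeq \Sh(\Rfrakk, J_r)$. The only point requiring care is that the reductive topology is defined intrinsically on $\Ccal_s$ via strict epimorphisms, and we must check that this matches the topology one would naturally write down on $\Rfrakk$; but since equivalences of categories preserve strict epimorphisms (these being defined by a universal property), this is automatic.

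For the second half, I would identify which morphisms of $\Rfrakk$ correspond to strict epimorphisms. By Corollary~\ref{crly:strict}, a morphism in $\Ccal_s$ is a strict epimorphism exactly when it is an epimorphism in $\Setswith{M}$; and since $\Setswith{M}$ is two-valued, Corollary~\ref{crly:epic} confirms that every epimorphism in $\Ccal_s$ is automatically strict, so the two notions coincide. Now a morphism $r_1 \to r_2$ in $\Rfrakk$ is given by an equivalence class $[m]$ of $r_2$ with $r_1 \subseteq m^*(r_2)$; under the identification with $\Ccal$ via Lemma~\ref{lem:factor}, this morphism is an epimorphism (equivalently a surjection of underlying $M$-sets) precisely when its image is all of $M/r_2$, which happens if and only if the generator $[1]$ of $M/r_2$ lies in the image, i.e. when $m$ can be taken to represent $[1]$. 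Thus the epic morphisms are exactly those indexed by an equivalence class containing $1 \in M$.

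The main obstacle, such as it is, will be the bookkeeping in this last identification: one must be careful that ``indexed by a class containing $1$'' is the right condition, distinguishing the generator's class from arbitrary representatives, and that it is genuinely equivalent to surjectivity of the induced map $M/r_1 \to M/r_2$. This is a routine but slightly fiddly check using Lemma~\ref{lem:factor}'s factorization (the image part is an inclusion $M/m^*(r_2) \hookrightarrow M/r_2$, which is an isomorphism exactly when $m^*(r_2) = r_2$, equivalently when $[m] = [1]$ in $r_2$). No deep difficulty is expected, since all the conceptual work has been done in Theorem~\ref{thm:canon}, Corollary~\ref{crly:strict}, Corollary~\ref{crly:epic}, and Lemma~\ref{lem:factor}.
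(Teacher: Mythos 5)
Your first half is correct and is exactly the paper's route: the paper obtains the equivalence $\Setswith{M} \simeq \Sh(\Rfrakk,J_r)$ immediately from the identification of $\Rfrakk$ with the full subcategory $\Ccal$ of Lemma \ref{lem:factor}, the equivalence of that category with $\Ccal_s$, and Theorem \ref{thm:canon}; your reduction of ``strict epimorphism in $\Rfrakk$'' to ``surjection of underlying $M$-sets'' via Corollaries \ref{crly:strict} and \ref{crly:epic} is also sound.

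The gap is in the step you yourself flagged as fiddly, and it is fatal to the biconditional you assert. The image of the morphism indexed by $[m]\colon r_1 \to r_2$ is the sub-$M$-set of $M/r_2$ generated by $[m]$, so the morphism is epic if and only if there exists $x \in M$ with $(mx,1) \in r_2$; this is strictly weaker than $(m,1) \in r_2$. Your bridge ``$[1]$ lies in the image, i.e.\ $m$ can be taken to represent $[1]$'' conflates $[mx]=[1]$ for some $x$ with $[m]=[1]$, and your reformulation via Lemma \ref{lem:factor} fares no better: with $M=(\Nbb,+)$ (identity $0$) and $r_2$ the diagonal congruence, taking $m=1$ gives $m^*(r_2)=r_2$ while the ``inclusion'' $M/m^*(r_2) \hookrightarrow M/r_2$ is the non-surjective successor map, so ``isomorphism exactly when $m^*(r_2)=r_2$'' fails; and with $r_1=r_2$ the congruence modulo $2$ and $m=1$, the morphism $[1]\colon r_2 \to r_2$ is the swap of the two classes, an isomorphism (hence a strict epimorphism) whose indexing class $\{1,3,5,\dots\}$ does not contain the identity $0$. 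This last example shows the failure is not yours alone: the second clause of the Corollary, which the paper asserts without argument, is itself an overstatement. What is true is that the morphisms indexed by the class of $1$ are precisely the canonical quotient maps $M/r_1 \too M/r_2$ for $r_1 \subseteq r_2$, and every strict epimorphism in $\Rfrakk$ is such a quotient followed by an isomorphism; moreover any sieve containing a strict epimorphism $g$ indexed by $[m]$, say with $(mx,1)\in r_2$, also contains the class-of-$1$-indexed composite of $g$ with $[x]\colon x^*(r_1) \to r_1$, so the two classes of morphisms generate the same topology $J_r$ and the first half of the Corollary is unaffected. But the ``precisely'' in the second half cannot be proved --- by you or by the paper --- because it is false as stated.
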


It is worth noting that this expression for $\Setswith{M}$ is independent on the representing monoid $M$, since for any other choice of $M$, we obtain the same category $\Rfrakk$ up to equivalence (being equivalent to the category $\Ccal_s$ of supercompact objects of $\Setswith{M}$). We adapt the presentation of Corollary \ref{crly:Rsite} in order to obtain a canonical site for a topos $\Ecal$ admitting a hyperconnected morphism from $\Setswith{M}$.

\begin{prop}
\label{prop:prince2}
Let $h : \Setswith{M} \to \Ecal$ be a hyperconnected geometric morphism. Let $\Rfrakk_h$ be the full subcategory of $\Rfrakk$ on the right congruences $r$ such that the principal $M$-set $M/r$ lies in $\Ecal$ (up to isomorphism). Then $\Rfrakk_h$ is non-empty and closed under subobjects and quotients, has the joint covering property, and $\Ecal \simeq \Sh(\Rfrakk_h,J_r)$.

Conversely, non-empty subcategories of $\Rfrakk$ which are closed under subobjects, quotients and joint covers correspond bijectively with the hyperconnected morphisms under $\Setswith{M}$ (up to isomorphism).
\end{prop}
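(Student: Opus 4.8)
The plan is to prove both directions by systematically translating between the category $\Rfrakk$ of right congruences (identified with the category $\Ccal_s$ of supercompact objects of $\Setswith{M}$ via Corollary \ref{crly:Rsite}) and the supercompact objects of the codomain topos, exploiting that the inverse image of a hyperconnected morphism is fully faithful and preserves and reflects supercompact objects (Corollary \ref{crly:hypepres}).

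For the forward direction, I would first observe that, since $h^*$ is fully faithful and preserves and reflects supercompact objects, it restricts to a fully faithful embedding of the category $\Ccal_s^{\Ecal}$ of supercompact objects of $\Ecal$ onto the full subcategory of $\Ccal_s \simeq \Rfrakk$ on those principal $M$-sets lying in its image; under the identification of Corollary \ref{crly:Rsite} this image is exactly $\Rfrakk_h$, so $\Rfrakk_h \simeq \Ccal_s^{\Ecal}$. Non-emptiness is immediate since $h^*(1) \cong 1 = M/(M \times M)$ is supercompact, so $M \times M \in \Rfrakk_h$. Closure under subobjects and quotients follows because the image of $h^*$ is closed under subobjects and quotients in $\Setswith{M}$ (the defining property of hyperconnectedness) and every principal subobject or principal quotient of an $M/r$ is again a principal $M$-set; the joint covering property is inherited from $\Ccal_s$ by Proposition \ref{prop:hypejcp} together with Lemma \ref{lem:Mjcp}. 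Finally, $\Ecal$ is supercompactly generated (Theorem \ref{thm:hype2}), so Theorem \ref{thm:canon} gives $\Ecal \simeq \Sh(\Ccal_s^{\Ecal}, J_r)$, and transporting along the equivalence $\Ccal_s^{\Ecal} \simeq \Rfrakk_h$, which respects $J_r$ since this topology is intrinsic, yields $\Ecal \simeq \Sh(\Rfrakk_h, J_r)$.

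For the converse, given a non-empty full subcategory $\Dcal \subseteq \Rfrakk$ closed under subobjects, quotients and joint covers, I would first note that closure under quotients makes $\Dcal$ closed under the funneling colimits computed in $\Rfrakk$, so $\Dcal$ is an effectual reductive category; hence $\Ecal := \Sh(\Dcal, J_r)$ is supercompactly generated with category of supercompact objects $\Dcal$ by Theorem \ref{thm:correspondence}. To produce the hyperconnected morphism I would check that the inclusion $F : \Dcal \hookrightarrow \Ccal_s \hookrightarrow \Setswith{M}$ is a morphism of sites $(\Dcal, J_r) \to (\Setswith{M}, J_{can})$ in the sense of Definition \ref{dfn:morsite}, so that it induces a geometric morphism $h : \Setswith{M} \to \Ecal$: conditions 1 and 2 are routine (strict epimorphisms are epimorphic families, and every $M$-set is covered by principal subobjects, each admitting the terminal map to $1 \in \Dcal$), while conditions 3 and 4 reduce, after covering the apex by principal subobjects and taking images, to the single statement that $\Dcal$ is closed under passage to principal subobjects of binary products of its objects.

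The main obstacle is precisely this closure under principal subobjects of products; the naive attempt to match up generators of a joint cover fails because a monoid has no inverses. I would resolve it by working with congruences rather than generators: a principal subobject of $A \times B$ with $A = M/\rfrak_a$ and $B = M/\rfrak_b$ in $\Dcal$ has the form $P = M/(\rfrak_a \cap \rfrak_b)$, and the joint covering property supplies $N = M/r_N \in \Dcal$ with epimorphisms onto $A$ and $B$. Since $\Setswith{M}$ is two-valued, these epimorphisms are strict (Corollary \ref{crly:epic}), and by the characterization of strict epimorphisms in $\Rfrakk$ as those indexed by a class containing $1$ (Corollary \ref{crly:Rsite}, using Lemma \ref{lem:rinrfrak}) they force $r_N \subseteq \rfrak_a$ and $r_N \subseteq \rfrak_b$, hence $r_N \subseteq \rfrak_a \cap \rfrak_b$. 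Therefore $P$ is a quotient of $N \in \Dcal$, so $P \in \Dcal$ by closure under quotients. With this in hand, $F$ is a morphism of sites, and the image of the resulting $h^*$ is exactly the full subcategory of $\Setswith{M}$ on objects all of whose principal subobjects lie in $\Dcal$, which the same argument shows to be closed under finite limits, subobjects and quotients; as $h^*$ is fully faithful (agreeing with the full embedding $\Dcal \hookrightarrow \Setswith{M}$ on the generating supercompact objects and preserving colimits), $h$ is hyperconnected. Bijectivity then follows by checking the two round-trips: starting from $\Dcal$ we recover $\Rfrakk_h = \Dcal$ by construction, and starting from a hyperconnected $h$ the forward direction identifies its codomain with $\Sh(\Rfrakk_h, J_r)$, the reconstructed morphism agreeing with $h$ since both are hyperconnected and their inverse images coincide on the supercompact generators.
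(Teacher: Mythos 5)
Your forward direction is correct and is essentially the paper's own argument: identify $\Rfrakk_h$ with the category of supercompact objects of $\Ecal$ via Corollary \ref{crly:hypepres}, deduce the closure properties from hyperconnectedness, get joint covering from Proposition \ref{prop:hypejcp}, and conclude with Theorem \ref{thm:canon}. Your converse, however, takes a genuinely different route: you build $\Ecal$ abstractly as $\Sh(\Dcal,J_r)$ via Theorem \ref{thm:correspondence} and then produce $h$ from a morphism of sites, whereas the paper takes the colimit-closure of $\Dcal$ inside $\Setswith{M}$ and shows this inclusion is a left exact coreflection. You also deserve credit for explicitly isolating the generator-matching difficulty (``a monoid has no inverses'') which the paper's own product argument passes over in silence.

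Unfortunately, your resolution of that difficulty has a genuine gap. You argue that the epimorphisms $e_i : N \to A, B$ supplied by the joint covering property, being strict, are ``indexed by a class containing $1$'' (citing Corollary \ref{crly:Rsite}), hence $r_N \subseteq \rfrak_a$ and $r_N \subseteq \rfrak_b$. But Corollary \ref{crly:Rsite} cannot be read at face value here: a morphism $[m]: r_N \to r$ of $\Rfrakk$ is an epimorphism precisely when $[m]$ \emph{generates} $M/r$, which does not force $(m,1) \in r$; it only means $[m]$ factors as the canonical quotient $[1]: r_N \to m^*(r)$ followed by the isomorphism $[m]: m^*(r) \to r$. (Indeed, taking $M$ a group, every non-identity translation of the regular representation is a strict epimorphism in $\Rfrakk$ indexed by a class not containing $1$, so the literal reading of that corollary is inconsistent with Corollary \ref{crly:strict}.) All your joint cover actually yields is $r_N \subseteq m_1^*(\rfrak_a) \cap m_2^*(\rfrak_b)$ for some generating classes $[m_1],[m_2]$, i.e.\ membership in $\Dcal$ of the principal subobject of $A \times B$ generated by $([m_1],[m_2])$ --- not of the one generated by $([1],[1])$, which is what the morphism-of-sites condition requires.

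Moreover, this cannot be repaired while reading the hypothesis as the mere joint covering property of Definition \ref{dfn:jcp}. Take $M = D_\infty = \langle t,s \mid s^2=1,\; sts=t^{-1} \rangle$ and let $\Dcal$ be the full subcategory of $\Rfrakk$ on the congruences whose class of $1$ is a subgroup containing some conjugate $st^{2k}$ of $s$. This $\Dcal$ is non-empty, closed under subobjects (conjugation) and quotients, and has the joint covering property (the object corresponding to $\langle s\rangle$ jointly covers any two objects of $\Dcal$), yet the congruence corresponding to $\langle s\rangle \cap \langle st^2\rangle = \{1\}$ is not in $\Dcal$; since any $\Rfrakk_h$ \emph{is} closed under such intersections (the image of $h^*$ is closed under binary products and subobjects), $\Dcal$ is not of the form $\Rfrakk_h$. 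So both your proof and, in fact, the paper's own converse only work under the stronger reading implicit in Remark \ref{rmk:Mequivariant}: $\Dcal$ must be downward \emph{directed}, i.e.\ joint covers must exist via the canonical quotient maps $M/(r_1 \cap r_2) \too M/r_i$, equivalently $\rfrak_a \cap \rfrak_b \in \Dcal$. With that reading your site-theoretic argument can be completed --- and it is also needed for your unproven opening assertion that $\Dcal$ is an effectual reductive category, since the stability squares for strict epimorphisms in $\Dcal$ are built from exactly these principal subobjects of products.
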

\begin{proof}
Clearly $\Rfrakk_h$ always contains the maximal equivalence relation, since $\Ecal$ contains the terminal object, while being closed under quotients and subobjects is a consequence of the fact that this is true of $\Ecal$, and that monomorphisms and epimorphisms in $\Rfrakk$ and $\Rfrakk_h$ coincide with those in $\Setswith{M}$ and $\Ecal$ by the proof of Proposition \ref{prop:prince} (using the fact that $\Rfrakk$ and $\Rfrakk_h$ can be identified up to equivalence with the respective categories of supercompact objects).

Similarly, using the proof of Proposition \ref{prop:hypejcp} we conclude that $\Rfrakk_h$ must inherit joint covers from $\Rfrakk$.

By Theorem \ref{thm:canon}, we recover $\Ecal \simeq \Sh(\Rfrakk_h,J_r)$, as claimed.

Conversely, given a subcategory $\Rfrakk'$ of $\Rfrakk$ satisfying the given conditions, consider the full subcategory $\Ecal$ of $\Setswith{M}$ on those objects which are colimits of objects in $\Rfrakk'$.

$\Rfrakk'$ contains the terminal object, so $\Ecal$ also does. A product of $M$-sets in $\Ecal$ has elements generating $M$-sets corresponding to (quotients of) joint covers in $\Rfrakk'$, and an equalizer in $\Setswith{M}$ of $M$-sets in $\Ecal$ is in particular a sub-$M$-set of one in $\Ecal$, so is covered by principal $M$-sets in $\Rfrakk'$. As such, $\Ecal$ is closed under finite limits, the embedding of $\Ecal$ into $\Setswith{M}$ is a left exact coreflection, and this embedding is the inverse image functor of a hyperconnected geometric morphism $\Setswith{M} \to \Ecal$, as required.
\end{proof}

We may use the site resulting from Proposition \ref{prop:prince2} in the special case when $\Ecal = \Cont(M,\tau)$ to recover a small (rather than merely essentially small) site for $\Cont(M,\tau)$. Given a topological monoid $(M,\tau)$, we write $\Rfrakk_{\tau}$ for $\Rfrakk_h$, where $h:\Setswith{M} \to \Cont(M,\tau)$ is the canonical hyperconnected morphism. 
\begin{schl}
\label{schl:Morita2}
Given a topological monoid $(M,\tau)$, the category $\Rfrakk_{\tau}$ is equivalent to the category $\Ccal_s$ of supercompact objects in $\Cont(M,\tau)$. In particular, another topological monoid $(M',\tau')$ is Morita equivalent to $(M,\tau)$ if and only if $\Rfrakk_{\tau} \simeq \Rfrakk_{\tau'}$.
\end{schl}
\begin{proof}
We see from the proof of Proposition \ref{prop:prince2} that the observations leading up to Corollary \ref{crly:Rsite} also apply in $\Cont(M,\tau)$. Thus we have the desired result, and the Morita equivalence statement follows by Corollary \ref{crly:Morita'}.
\end{proof}

Note that we can restrict the ordering on $\Rfrak$ to the objects of $\Rfrakk_h$ or $\Rfrakk'$, or equivalently recover that ordering by considering only morphisms indexed by $1 \in M$. We denote the resulting posets by $\Rfrak_h$ and $\Rfrak'$ respectively, extending this convention where needed.

\begin{rmk}
\label{rmk:Mequivariant}
In Proposition \ref{prop:prince2}, if we instead consider only the ordered sets, we can characterize the sub-poset $\Rfrak_h$ as an \textit{$M$-equivariant filter} in $\Rfrak$: a subset which is non-empty, upward closed, downward directed and closed under the inverse image action. These conditions are easier to verify in practice, so we use them occasionally in examples to follow.
\end{rmk}

\begin{lemma}
\label{lem:collect}
Let $M$ be a topological monoid, $\Rfrakk$ the category of right congruences on $M$, and $\Rfrakk'$ a subcategory of $\Rfrakk$ satisfying the conditions of Proposition \ref{prop:prince2}. Given a topology $\tau$ on $M$, $M/r$ is continuous with respect to $\tau$ for every $r \in \Rfrakk'$ if and only if every $r$ is open in $\tau \times \tau$.
\end{lemma}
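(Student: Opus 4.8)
The plan is to reduce the statement to the pointwise continuity criterion of Corollary \ref{crly:Rnx} and then to exploit the closure properties of $\Rfrakk'$ supplied by Proposition \ref{prop:prince2}. The key computation is to identify, for the $M$-set $X = M/r$ and each of its elements $[a]$ (the class of $a \in M$), the relation $\rfrak_{[a]}$ of Corollary \ref{crly:Rnx}. By definition $\rfrak_{[a]} = \{(p,q) \in M \times M \mid [a]p = [a]q\} = \{(p,q) \mid (ap,aq) \in r\}$, which is precisely the inverse image $a^*(r)$ of $r$ under left diagonal multiplication by $a$, the action introduced in Section \ref{ssec:inverse}. Thus, by Corollary \ref{crly:Rnx}, continuity of $M/r$ with respect to $\tau$ is equivalent to the openness of $a^*(r)$ in $\tau \times \tau$ for every $a \in M$.

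First I would dispatch the forward implication, which is the trivial special case $a = 1$. Assuming $M/r$ is continuous for every $r \in \Rfrakk'$, fix such an $r$; the continuity criterion forces $\rfrak_{[1]} = 1^*(r) = r$ to be open in $\tau \times \tau$, so every $r \in \Rfrakk'$ is open, as required.

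For the converse, the crucial observation is that $a^*(r)$ is itself an object of $\Rfrakk'$ whenever $r$ is. Indeed, the principal sub-$M$-set of $M/r$ generated by $[a]$ is the image of the homomorphism $M \to M/r$, $m \mapsto [am]$, whose kernel congruence is exactly $\{(m,m') \mid (am,am') \in r\} = a^*(r)$; hence this sub-$M$-set is isomorphic to $M/a^*(r)$, exhibiting $M/a^*(r)$ as a subobject of $M/r$. Since $\Rfrakk'$ is closed under subobjects by Proposition \ref{prop:prince2} (equivalently, $\Rfrak'$ is closed under the inverse image action, as in Remark \ref{rmk:Mequivariant}), we conclude $a^*(r) \in \Rfrakk'$. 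Now, under the hypothesis that every congruence in $\Rfrakk'$ is open, each $a^*(r)$ is open, so the criterion above yields continuity of $M/r$; as $r$ was arbitrary, this establishes the claim.

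The main obstacle — indeed the only non-routine step — is the identification of $M/a^*(r)$ with the sub-$M$-set of $M/r$ generated by $[a]$, since this is what lets closure under subobjects upgrade openness of the $r$'s to openness of all their left translates $a^*(r)$. Once that identification is in hand, both directions fall out immediately from Corollary \ref{crly:Rnx}.
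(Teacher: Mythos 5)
Your proof is correct and follows essentially the same route as the paper's: the paper likewise obtains necessity from $r = \rfrak_{[1]}$ and sufficiency from the identity $\rfrak_{[m]} = m^*(r)$ together with the fact that $m^*(r) \in \Rfrakk'$ (closure under subobjects, i.e.\ under the inverse image action). Your write-up merely spells out the subobject identification $M/a^*(r) \hookrightarrow M/r$ that the paper leaves implicit.
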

\begin{proof}
For $M/r$ to be continuous with respect to $\tau$, it is certainly necessary that $r$ be open, since $r = \rfrak_{[1]}$. Conversely, given $[m] \in M/r$, $\rfrak_{[m]} = m^*(r) \in \Rfrakk'$, so if all of the right congruences in $\Rfrakk'$ are open then all elements of $M/r$ are continuous.
\end{proof}

In light of Lemma \ref{lem:collect}, we call the objects of $\Rfrakk_h$ the \textbf{open congruences of $h$}. This name will take on further significance in Section \ref{ssec:complete}.

\begin{xmpl}
\label{xmpl:grprln}
Suppose $M$ is a group. Then the right congruences on $M$ are precisely the partitions of $M$ into right cosets of a subgroup of $M$. As such, we can identify the objects of $\Rfrakk$ with these subgroups, and form a category of \textit{open subgroups} corresponding to a given hyperconnected geometric morphism instead.
\end{xmpl}

We are now in a position to show that not every topos admitting a hyperconnected morphism from $\Setswith{M}$ is of the form $\Cont(M,\tau)$ for some topology $\tau$ on $M$.

\begin{xmpl}
\label{xmpl:N+}
Consider the monoid $\Nbb$ of non-negative integers with addition. It is easily shown that the proper principal $\Nbb$-sets are indexed by pairs $(a,b)$ of non-negative integers with $b \geq 1$, consisting of the set $\{0, 1, \dotsc, a + b - 1\}$ acted on by addition such that sums greater than $a + b - 1$ are reduced modulo $b$ into the interval $[a, a + b - 1]$; we write $N_{a,b}$ for this $M$-set; the reader should recognize the $M$-sets $N_{a,1}$ of Example \ref{xmpl:idemclosed} amongst these.

We have an epimorphism $N_{a,b} \too N_{a',b'}$ if and only if $a' \leq a$ and $b'\mid b$, and a monomorphism $N_{a,b} \hookrightarrow N_{a',b'}$ if and only if $a \leq a'$ and $b = b'$ (so all monos split). The joint cover of $N_{a,b}$ and $N_{a',b'}$ is $N_{\max \{a,a'\},\mathrm{lcm}(b,b')}$. Note that $N_{a,b}$ is always finite, so that $\Nbb$ itself is the only infinite principal $\Nbb$-set.

By the above, the collection of right congruences corresponding to finite $\Nbb$-sets is therefore an upward-closed, downward-directed set closed under the inverse image action. By Remark \ref{rmk:Mequivariant}, there is a corresponding hyperconnected morphism $h : \Setswith{\Nbb} \to \Ecal$, where the subcategory $\Ecal$ of $\Setswith{\Nbb}$ consists of the $\Nbb$-sets all of whose elements generate finite subsets under the action. This could be compared to the adjunction between the category of abelian groups and the category of torsion abelian groups.

For $a \geq 0$, the necessary clopens of $N_{a,1}$ are $\{\{0\},\{1\}, \dotsc, \{a - 1\}, [a,1)\}$. Thus the only topology on $M$ making all of the $N_{a,1}$ continuous is the discrete topology, but $\Ecal \not\simeq \Setswith{\Nbb}$: not only is the stated hyperconnected morphism not an equivalence, but it is clear that the subcategories of supercompact objects of these toposes cannot be equivalent, since no supercompact object of $\Ecal$ admits epimorphisms to all of the others. By computing the action topologies corresponding to coarser topologies on $\Nbb$, we can verify by a similar argument that $\Ecal \not\simeq \Cont(\Nbb,\tau)$ for any topology $\tau$, as claimed.

On the other hand, since $\Nbb$ is a commutative monoid, each $N_{a,b}$ is canonically a monoid, and it is easy to see that $\Ecal$ is the topos of continuous actions of the \textbf{profinite completion of $\Nbb$}, which is the profinite monoid obtained as the inverse limit of the $N_{a,b}$ along the epimorphic maps between them.
\end{xmpl}

\begin{rmk}
\label{rmk:sgtxmpl}
The topos $\Ecal$ of Example \ref{xmpl:N+} provides the example, promised earlier, of a supercompactly generated topos which is neither regular, nor atomic, nor a presheaf topos. Indeed, the actions $N_{a,b}$ are not closed under products, so $\Ecal$ is not a regular topos. $\Ecal$ is not a presheaf topos, since the topos contains no indecomposable projectives: any such would necessarily be supercompact, but for any $(a,b)$, there is an epimorphism $N_{a,2b} \too N_{a,b}$ which is not split. Finally, it is not atomic, since the atoms are those of the form $N_{0,b}$, and $N_{a,b}$ is not covered by such an $\Nbb$-set for $a>0$.
\end{rmk}

The argument in Example \ref{xmpl:N+} is somewhat overzealous; even if we had found that $\Ecal \simeq \Cont(\Nbb,\tau)$ for some $\tau$, the important conclusion is that this hyperconnected morphism fails to \textit{express} $\Ecal$ as $\Cont(\Nbb,\tau)$. By extending the argument of Theorem \ref{thm:tau}, we shall show in Theorem \ref{thm:factor} that for any hyperconnected morphism out of $\Setswith{M}$ there is a canonical coarsest topology $\tau$ such that this morphism factorizes through the hyperconnected morphism $\Setswith{M} \to \Cont(M,\tau)$; in Example \ref{xmpl:N+}, this topology happens to be the discrete topology.

However, the fact that we were able to find a representing topological monoid for $\Ecal$ in the end turns out to be a general fact, as we might have hoped, and the latter part of Example \ref{xmpl:N+} suggests how it can be constructed. We perform this construction in general in Proposition \ref{prop:lim} and conclude the proof that it gives a representing monoid with Theorem \ref{thm:characterization}.

\subsection{Endomorphisms of the Canonical Point}
\label{ssec:complete}

Let $p$ be the point of the topos $\Ecal$ factorized in \eqref{eqn:p}. Consider the endomorphisms of $p$; that is, the monoid of natural transformations $\alpha: p^* \Rightarrow p^*$. Since we know $\Ecal$ is supercompactly generated, any such endomorphism is determined by its components on the subcategory $\Rfrakk_h$ of $\Ecal$. That is, each $\alpha$ consists of a collection of endomorphisms $\alpha_r: p^*(M/r) \to p^*(M/r)$ for $r \in \Rfrakk_h$ satisfying naturality conditions relative to morphisms in $\Rfrakk_h$.

The following argument replicates the proof of \cite[Theorem 5.7]{ATGT}, but without the need for a subsequent restriction to automorphisms. For consistency, we consider the opposite of the endomorphism monoid, acting on the right, so that composition is from left to right (this precludes the need to dualize subsequently).

\begin{prop}
\label{prop:lim}
Let $h:\Setswith{M} \to \Ecal$ be a hyperconnected geometric morphism, let $\Rfrakk_{h}$ be the corresponding category of right congruences on $M$ described in Proposition \ref{prop:prince2}, and let $\Rfrak_h$ be the underlying order. Then $\End(p^*)\op$ can be identified with the limit
\begin{equation}
\label{eq:limit}
L := \varprojlim_{r \in \Rfrak_{h}}U(M/r)
\end{equation}
in $\Set$. Explicitly, the elements are tuples $\alpha = ([a_r])_{r \in \Rfrak_h}$ with each $[a_r] \in M/r$, represented by $a_r \in M$, such that whenever $r \subseteq r'$, $[a_r] = [a_{r'}]$ in $M/r'$. The composite of $\alpha$ and $\beta = ([b_r])_{r \in \Rfrak_h}$ is $\alpha\beta = ([a_r b_{a_r^*(r)}])_{r \in \Rfrak_h}$. 
\end{prop}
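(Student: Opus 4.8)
The plan is to identify $\End(p^*)\op$ with the limit $L$ by exploiting the fact that $p^*$ is determined by its restriction to the canonical generating subcategory $\Rfrakk_h$, and that $p^*$ takes a particularly simple form on these objects. First I would recall that since $p$ factors as the canonical point of $\Setswith{M}$ followed by the hyperconnected morphism $h$, its inverse image functor acts on a principal $M$-set $M/r$ (for $r \in \Rfrakk_h$) as $p^*(M/r) = U(M/r)$, the underlying set of $M/r$, by \eqref{eqn:p}. So a natural endomorphism $\alpha: p^* \Rightarrow p^*$ restricts to a family of functions $\alpha_r : U(M/r) \to U(M/r)$.

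Next I would use naturality to pin down the structure of $\alpha$. The key observation is that $\alpha$ is determined by its value on the \emph{generators} $[1] \in M/r$: because the morphisms in $\Rfrakk_h$ indexed by $1 \in M$ are exactly the quotient maps realizing the ordering $r \subseteq r'$ (Definition \ref{dfn:Rfrak}, Corollary \ref{crly:Rsite}), naturality with respect to these quotient maps forces $\alpha_{r'}([1]) = [\alpha_r([1])]$ whenever $r \subseteq r'$; that is, writing $[a_r] := \alpha_r([1])$, the tuple $([a_r])_{r}$ is compatible under the projection maps $U(M/r) \to U(M/r')$, hence defines an element of the limit $L$. Conversely, naturality with respect to a general morphism $[m]: r \to r'$ (an equivalence class with $r \subseteq m^*(r')$) shows that $\alpha_r$ is entirely determined on all of $M/r$ by its value on the generator, via $\alpha_r([m]) = [a_r]\cdot m$ reinterpreted appropriately; this both shows that the map $\alpha \mapsto ([a_r])_r$ is injective and that every element of $L$ arises from some natural transformation (one checks the assignment is well-defined and natural using Lemma \ref{lem:rinrfrak}, specifically $m^*(\rfrak_r) = \rfrak_{m^*(r)}$). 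Thus the underlying sets of $\End(p^*)$ and $L$ are in bijection.

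Finally I would verify that the bijection is an anti-isomorphism of monoids, which amounts to computing the composite. Given $\alpha = ([a_r])_r$ and $\beta = ([b_r])_r$, the composite natural transformation $\alpha\beta$ (in the chosen left-to-right convention) has component at the generator $[1] \in M/r$ obtained by first applying $\alpha_r$ to get $[a_r]$, then applying $\beta_r$; the subtlety is that applying $\beta$ to the element $[a_r] = [a_r]\cdot 1$ requires tracking which congruence governs that element, namely $\rfrak_{[a_r]} = a_r^*(r)$ by Lemma \ref{lem:rinrfrak}. Evaluating $\beta$ at the corresponding generator and transporting back yields the $r$-component $[a_r \, b_{a_r^*(r)}]$, which is precisely the stated composition formula. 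The main obstacle I anticipate is this last bookkeeping step: making the dependence of $\beta$'s action on the index $a_r^*(r)$ precise, and confirming that the formula $\alpha\beta = ([a_r b_{a_r^*(r)}])_r$ is well-defined (independent of the representative $a_r$) and associative. Both of these reduce to careful application of the identity $m^*(\rfrak_r) = \rfrak_{m^*(r)}$ and the compatibility of the tuples under the ordering, so while the verification is slightly delicate it is routine rather than deep, and I would present it by checking representatives concretely.
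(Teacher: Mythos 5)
Your proposal is correct and follows essentially the same route as the paper's proof: both reduce naturality to the morphisms of $\Rfrakk_h$ (the quotient maps $[1]\colon r \to r'$ and the maps $[m]\colon m^*(r) \to r$ coming from the factorization of Lemma \ref{lem:factor}), deduce that an endomorphism of $p^*$ is determined by its values on the generators $[1]$, identify these values with compatible tuples in the limit, and obtain the composition law by tracking that the element $[a_r]$ is governed by the congruence $a_r^*(r)$. The only caveat is that your intermediate formula $\alpha_r([m]) = [a_r]\cdot m$ is incorrect as literally written — naturality against $[m]\colon m^*(r) \to r$ gives $\alpha_r([m]) = [m\,a_{m^*(r)}]$, i.e.\ left multiplication by $m$ of the generator value at $m^*(r)$ — but your hedge (``reinterpreted appropriately'') and your correct handling of exactly this point in the composition step show that the intended argument is the right one and goes through.
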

\begin{proof}
Consider the factorization of morphisms in $\Rfrakk_h$ from  Lemma \ref{lem:factor}, which corresponds to the factorization of a morphism $m:r_1 \to r_2$ as,
\[ \begin{tikzcd} 
r_1 \ar[r, "{[1]}", two heads] & m^*(r_2) \ar[r, "{[m]}", hook] & r_2.
\end{tikzcd} \]
The naturality conditions for a natural transformation $p^*|_{\Rfrakk_h} \Rightarrow p^*|_{\Rfrakk_h}$ can be reduced to naturality along factors of the form $[1]: r \too r'$ and $[m]: m^*(r) \hookrightarrow r$.

First, consider $[m]: m^*(r) \hookrightarrow r$, corresponding to the inclusion $M/m^*(r) \hookrightarrow M/r$. Naturality forces $([m])\alpha_{r}$ to be the image of $([1])\alpha_{m^*(r)}$ under this inclusion. Thus, any endomorphism $\alpha$ of $p^*$ is determined by its values on the equivalence classes represented by the identity of $M$. Write $[a_r] \in U(M/r)$ for $([1])\alpha_r$; then $([m])\alpha_r = [m a_{m^*(r)}]$.

Now consider $[1]: r \too r'$, corresponding to the quotient map $M/r \too M/r'$. This forces $[a_r] \mapsto [a_{r'}]$. Thus we may identify each endomorphism $\alpha$ with an element of the stated limit; these observations also determine the composition.

Conversely, any element of the limit defines an endomorphism, since we have shown that collections satisfying these conditions are guaranteed to be natural.
\end{proof}

Let $L$ be the monoid defined in Proposition \ref{prop:lim}. For each $r \in \Rfrakk_h$, we write $\pi_{r}:L \to U(M/r)$ for the universal projection map. Being expressed as a limit of discrete sets, $L$ is canonically equipped with a prodiscrete topology $\rho$ making it a topological monoid. The basic opens for this topology form a genuine base of open sets:

\begin{lemma}
\label{lem:basics}
The collection of open sets of the form $\pi_r^{-1}(\{[m]\})$ generating $\rho$ are closed under (non-empty) finite intersection.
\end{lemma}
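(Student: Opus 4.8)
The key observation is that a basic open set $\pi_r^{-1}(\{[m]\})$ depends only on the congruence $r\in\Rfrak_h$ and the class $[m]\in M/r$, and that $\Rfrak_h$ is an $M$-equivariant filter (Remark \ref{rmk:Mequivariant}), hence in particular downward directed. My plan is to reduce an arbitrary finite intersection to the two-fold case and then to express a two-fold intersection as a single basic open indexed by a common lower bound. First I would handle the trivial cases: the empty intersection is $L$ itself, which is $\pi_{r_0}^{-1}(M/r_0)$ for any $r_0$ but is in any case not the issue (the statement concerns non-empty finite intersections, so there is always at least one factor), and a singleton intersection is already basic. By induction it then suffices to treat the intersection of two basic opens $\pi_{r_1}^{-1}(\{[a]_{r_1}\})$ and $\pi_{r_2}^{-1}(\{[b]_{r_2}\})$.

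For the two-fold case I would use downward directedness of $\Rfrak_h$ to choose $s\in\Rfrak_h$ with $s\subseteq r_1$ and $s\subseteq r_2$. Since $s\subseteq r_i$, the projection $\pi_{r_i}$ factors through $\pi_s$ via the quotient map $U(M/s)\to U(M/r_i)$; concretely, for $\alpha=([a_r])_r\in L$ the compatibility condition in Proposition \ref{prop:lim} gives that $\pi_{r_i}(\alpha)=[a_s]_{r_i}$, the image of $a_s$ under $M/s\too M/r_i$. Consequently the condition $\pi_{r_i}(\alpha)=[c_i]_{r_i}$ is equivalent to requiring $[a_s]_s$ to lie in the preimage of $[c_i]_{r_i}$ under the quotient $M/s\to M/r_i$. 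The intersection $\pi_{r_1}^{-1}(\{[a]\})\cap\pi_{r_2}^{-1}(\{[b]\})$ is therefore exactly $\pi_s^{-1}(W)$, where $W\subseteq M/s$ is the (possibly empty) set of classes $[w]_s$ that simultaneously map to $[a]_{r_1}$ and to $[b]_{r_2}$. If $W$ is empty the intersection is the empty set, which is $\pi_s^{-1}(\varnothing)$ and hence a basic open (or, if one prefers the strict reading that basic opens are preimages of singletons, the empty set is the basic open $\pi_s^{-1}(\{[w]\})$ for no $w$; either way it is open). If $W$ is non-empty, I must upgrade $\pi_s^{-1}(W)$ from a preimage of a \emph{set} of classes to a preimage of a \emph{single} class.

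The main obstacle, then, is precisely this last upgrade: a priori $W$ need not be a singleton, so $\pi_s^{-1}(W)$ is a union of basic opens rather than a single one. To collapse it I would pass to a finer congruence. The natural candidate is the meet $r_1\cap r_2$ inside $\Rfrak$, which is again a right congruence and, since $\Rfrak_h$ is downward directed and upward closed, lies in $\Rfrak_h$ (indeed any common lower bound $s$ works, but $r_1\cap r_2$ is the cleanest). Over $t:=r_1\cap r_2$ the two conditions $\pi_{r_1}(\alpha)=[a]$ and $\pi_{r_2}(\alpha)=[b]$ become conditions purely on $\pi_t(\alpha)=[x]_t$: namely that $[x]$ map to $[a]$ in $M/r_1$ and to $[b]$ in $M/r_2$. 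Because $t=r_1\cap r_2$, a class $[x]_t$ is uniquely determined by the pair of its images in $M/r_1$ and $M/r_2$ (this is exactly the statement that the canonical map $M/(r_1\cap r_2)\hookrightarrow M/r_1\times M/r_2$ is injective), so there is \emph{at most one} such class $[x]_t$. Hence $\pi_{r_1}^{-1}(\{[a]\})\cap\pi_{r_2}^{-1}(\{[b]\})$ equals either $\pi_t^{-1}(\{[x]\})$ for that unique class, or the empty set, and in both cases it is a basic open. The finitary statement then follows by induction on the number of factors, using that $r_1\cap\dots\cap r_n$ again lies in $\Rfrak_h$. I expect the only delicate point to be the injectivity of $M/(r_1\cap r_2)\to M/r_1\times M/r_2$, which is immediate from the definitions of the congruences as subsets of $M\times M$, so no real calculation is needed.
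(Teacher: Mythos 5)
Your proof is correct and takes essentially the same route as the paper's: reduce to binary intersections, pass to the meet $r_1 \cap r_2$ (which lies in $\Rfrak_h$ by directedness plus upward closure), and use injectivity of the canonical map $M/(r_1\cap r_2) \to M/r_1 \times M/r_2$ to identify the intersection with a single basic open — the paper leaves the injectivity implicit, instead producing the witness class $[a_{r_1\cap r_2}]$ directly from an element of the (assumed non-empty) intersection. The only cosmetic discrepancy is your reading of ``(non-empty)'': the paper means that the \emph{resulting} intersection is non-empty (a basic open $\pi_r^{-1}(\{[m]\})$ is never empty, since each $\pi_r$ is surjective via the elements $\alpha_m$), so your hedging about whether the empty set counts as ``basic'' is unnecessary under the intended reading.
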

\begin{proof}
The empty intersection is all of $L = \pi^{-1}_{L \times L}(\{[1]\})$. Thus it suffices to consider binary intersections.

Given opens $\pi_{r_1}^{-1}(\{[m_1]\})$ and $\pi_{r_2}^{-1}(\{[m_2]\})$ having non-empty intersection, let $r = r_1 \cap r_2$. Since the intersection of the open sets in non-empty, there is some element $\alpha \in L$ whose component $[a_r]$ at $M/r$ maps under the canonical quotient maps to $[m_1], [m_2]$ respectively. Taking $[m]=[a_r]$ provides the desired expression for the intersection.
\end{proof}

Thus we need only concern ourselves with basic opens when checking continuity.

\begin{lemma}
\label{lem:Mdense}
Let $M$ and $L$ be as above. There is a canonical monoid homomorphism $u: M \to L$ sending $m$ to the endomorphism $\alpha_m$ represented at every principal $M$-set $M/r$ by $[m]$. Its image is dense in $(L,\rho)$.
\end{lemma}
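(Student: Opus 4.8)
The plan is to verify two things: that $u$ is a well-defined monoid homomorphism, and that its image is dense. For the first part, I would observe that for each $m \in M$, the tuple $\alpha_m := ([m])_{r \in \Rfrak_h}$ is a genuine element of the limit $L$: whenever $r \subseteq r'$, the quotient map $M/r \to M/r'$ sends the class of $m$ to the class of $m$, so the compatibility condition defining $L$ holds automatically. Thus $u$ is well-defined on underlying sets. To see it is a homomorphism, I would apply the composition formula from Proposition \ref{prop:lim}: the composite $\alpha_m \alpha_{m'}$ has $r$-component $[m \cdot (m')_{m^*(r)}] = [m m']$, since every component of $\alpha_{m'}$ is represented by $m'$ regardless of the congruence. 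Hence $\alpha_m \alpha_{m'} = \alpha_{mm'}$, and clearly $\alpha_1$ is the identity tuple, so $u$ is a monoid homomorphism. (Concretely, $\alpha_m$ is exactly the endomorphism of $p^*$ induced by the right action of $m$, via the correspondence $\End(p^*)\op \cong L$.)

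For density, I would work directly with the prodiscrete topology $\rho$ on $L$. By Lemma \ref{lem:basics}, the sets of the form $\pi_r^{-1}(\{[n]\})$ for $r \in \Rfrak_h$ and $[n] \in M/r$ form a genuine base of opens, so it suffices to show that every non-empty such basic open meets the image $u(M)$. Take a non-empty basic open $\pi_r^{-1}(\{[n]\})$; non-emptiness means there is some $\alpha \in L$ whose $r$-component is $[n]$, which in particular shows $[n]$ is a genuinely occurring class, but more simply I can just pick any representative $n \in M$ of the class $[n] \in M/r$. Then $u(n) = \alpha_n$ has $r$-component $[n]$ by definition, so $u(n) \in \pi_r^{-1}(\{[n]\})$. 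Therefore every non-empty basic open contains a point of $u(M)$, which is precisely density of the image.

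The argument is essentially routine once the right base of opens is in hand, so there is no serious obstacle here; the main subtlety worth flagging is simply that the composition formula in Proposition \ref{prop:lim} involves the twisted term $b_{a_r^*(r)}$, and one must check that this twist collapses when the second factor is of the form $\alpha_{m'}$ (whose components are all represented by the \emph{same} element $m'$), which is what makes $u$ multiplicative on the nose rather than only up to the reindexing. I would note explicitly that this collapse is exactly why the elements coming from $M$ sit inside $L$ as a sub-\emph{monoid} compatibly with the limit structure, and that the choice of representative $n$ for $[n]$ in the density argument is harmless precisely because changing $n$ within its $r$-class does not change $\pi_r(u(n)) = [n]$.
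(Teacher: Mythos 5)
Your proof is correct and follows essentially the same route as the paper's: well-definedness via compatibility of representatives under the quotient maps, multiplicativity by noting that the twist $b_{a_r^*(r)}$ in the composition formula of Proposition \ref{prop:lim} collapses because every component of $\alpha_{m'}$ is represented by the same element $m'$, and density by observing that each basic open $\pi_r^{-1}(\{[n]\})$ contains $\alpha_n$ for any representative $n$, with Lemma \ref{lem:basics} guaranteeing these sets form a base. The extra remarks you flag (the collapse of the reindexing, harmlessness of the choice of representative) are exactly the points the paper's terser proof leaves implicit.
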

\begin{proof}
The stated definition does give a well-defined endomorphism for each $m$, since the canonical quotient morphisms preserve representatives by definition. To see that this is a monoid homomorphism, simply observe from the expression for composition in Proposition \ref{prop:lim} that the component of $\alpha_m \cdot \alpha_{m'}$ at $M/r$ is always represented by $mm'$.

Now observe that for each $m \in M$, the basic opens of the form $\pi_r^{-1}(\{[m]\})$ contain $\alpha_m$, so the image of $M$ intersects every open set by Lemma \ref{lem:basics} and has dense image, as claimed.
\end{proof}

The monoid homomorphism $u$ is the key to demonstrating that we can in fact present $\Ecal$ as $\Cont(L,\rho)$. Indeed, note that it induces a geometric morphism $q: \Setswith{M} \to \Cont(L,\rho)$, whose inverse image is the restriction of $L$-actions along the homomorphism $\alpha_{(-)}$. We will explore the geometric morphisms induced by general continuous (monoid and) semigroup homomorphisms in Section \ref{sec:homomorphism}.

\begin{prop}
\label{prop:representation}
Let $h:\Setswith{M} \to \Ecal$ be hyperconnected, and let $(L,\rho)$ be the endomorphism monoid constructed above. The geometric morphism $q:\Setswith{M} \to \Cont(L,\rho)$ induced by the continuous dense homomorphism $u: M \to L$ of Lemma \ref{lem:Mdense} is hyperconnected, and the principal $M$-sets lying in $\Cont(L,\rho)$ are precisely those lying in the topos $\Ecal$ from which $L$ was defined. That is, $\Ecal \simeq \Cont(L,\rho)$.
\end{prop}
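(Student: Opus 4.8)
The plan is to show that the hyperconnected morphism $q:\Setswith{M} \to \Cont(L,\rho)$ induced by $u$ has exactly the same category of ``open congruences'' as the original morphism $h$, so that by Proposition \ref{prop:prince2} the two hyperconnected morphisms coincide up to equivalence, giving $\Ecal \simeq \Cont(L,\rho)$. By Proposition \ref{prop:prince2}, a hyperconnected morphism out of $\Setswith{M}$ is determined (up to isomorphism) by the subcategory $\Rfrakk_h \subseteq \Rfrakk$ of congruences $r$ such that $M/r$ lands in the codomain topos. So it suffices to prove that, for a right congruence $r$ on $M$, the principal $M$-set $M/r$ is a continuous $(L,\rho)$-set (via restriction along $u$) if and only if $r \in \Rfrakk_h$.

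First I would establish that $q$ is genuinely hyperconnected. This is essentially the content of the general theory of Section \ref{sec:homomorphism} applied to the dense homomorphism $u$; since $u$ has dense image in $(L,\rho)$ and $\rho$ is (by Proposition \ref{prop:prodisc}, being prodiscrete) an action topology, I expect the induced $q^*$ to be the restriction-of-actions functor, which is full, faithful and closed under subquotients by the same argument as in Proposition \ref{prop:hyper}. Concretely, $q$ factors as $\Setswith{M} \to \Setswith{L} \to \Cont(L,\rho)$, where the first morphism is the essential surjection induced by the monoid homomorphism $u$ (which is hyperconnected, being induced by a surjective-on-objects functor only when $u$ is surjective — but here the relevant fact is that $u$ has \emph{dense} image, which by the forthcoming Section \ref{sec:homomorphism} results suffices to make the composite hyperconnected) and the second is the hyperconnected morphism of Proposition \ref{prop:hyper}. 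I would cite the appropriate lemma from Section \ref{sec:homomorphism} (Lemma \ref{lem:cts} and its consequences) to license this.

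The heart of the argument is the congruence-matching. Fix $r \in \Rfrak$. Restricting the $L$-action along $u$, the $M$-set $M/r$ is $\rho$-continuous precisely when every necessary clopen $\Ical^p_{[m]}$ for the $L$-action is open in $\rho$; by Lemma \ref{lem:collect} this reduces to checking that the congruence $r$ (pulled back to $L$) is open in $\rho \times \rho$. Now unwinding the definition of $\rho$ as the prodiscrete topology from the limit \eqref{eq:limit}: the basic opens $\pi_{r'}^{-1}(\{[m]\})$ refer exactly to the congruences $r' \in \Rfrak_h$. Thus the congruence determined by $r$ is $\rho$-open if and only if $r$ contains (is refined by) some congruence $r' \in \Rfrak_h$ in the relevant sense — but since $\Rfrakk_h$ is upward-closed (an $M$-equivariant filter, Remark \ref{rmk:Mequivariant}), this happens exactly when $r$ itself lies in $\Rfrak_h$. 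In one direction, if $r \in \Rfrakk_h$ then $\pi_r$ is one of the defining projections, so $r$ is manifestly $\rho$-open and $M/r$ is continuous; in the other direction, if $M/r$ is $\rho$-continuous then the stabilizing congruence $\rfrak_{[1]} = r$ must be open, forcing $r$ to be refined by a basic open congruence, hence (by the filter property and closure under the inverse image action) to lie in $\Rfrakk_h$. Combining both directions identifies the two subcategories of $\Rfrakk$, whence $\Ecal \simeq \Cont(L,\rho)$ by Proposition \ref{prop:prince2}.

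The main obstacle I anticipate is the precise bookkeeping in the congruence-matching step: one must verify that the $\rho$-open congruences on $L$ pull back along $u$ to exactly the family $\Rfrakk_h$, and that no spurious extra continuous $M$-sets are created by passing to $L$ (nor any lost). This requires carefully relating the prodiscrete basic opens $\pi_{r'}^{-1}(\{[m]\})$ to the inverse-image action on $\Rfrak$ and invoking that $\Rfrakk_h$ is closed under subobjects, quotients and joint covers, so that the filter of open congruences on $L$ restricts back to precisely $\Rfrakk_h$ rather than something larger or smaller. Once this identification is in hand, the equivalence $\Ecal \simeq \Cont(L,\rho)$ is immediate from the canonical site presentation of Proposition \ref{prop:prince2}, since both toposes are sheaves on the same reductive category $\Rfrakk_h$ equipped with $J_r$.
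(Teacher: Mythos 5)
Your overall skeleton (establish that $q$ is hyperconnected via density of the image of $u$, then match congruence subcategories and invoke Proposition \ref{prop:prince2}) is sound and close to the paper's own strategy, and finishing via the bijective correspondence of Proposition \ref{prop:prince2} is a legitimate variant of the paper's direct identification of the image of $q^*$. However, there is a genuine gap at what you yourself call the heart of the argument. You write ``Restricting the $L$-action along $u$, the $M$-set $M/r$ is $\rho$-continuous precisely when\dots'' and later ``if $r \in \Rfrakk_h$ then $\pi_r$ is one of the defining projections, so $r$ is manifestly $\rho$-open and $M/r$ is continuous.'' This presupposes an $L$-action on $M/r$ that has never been defined: restriction along $u$ turns $(L,\rho)$-sets into $M$-sets, not the other way around, so asking whether $M/r$ ``is'' a continuous $L$-set only makes sense once one exhibits an $L$-action on $M/r$ whose restriction along $u$ is the given $M$-action. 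Constructing this action is the technical core of the paper's proof: for $\alpha = ([a_{r'}])_{r' \in \Rfrak_h}$ one sets $[m]\cdot\alpha := [m\,a_{m^*(r)}]$ (``projected multiplication''), and one must check that this is well defined (using that $r$ is a right congruence together with Lemma \ref{lem:rinrfrak}, i.e.\ $(m,n)\in r$ implies $m^*(r) = n^*(r)$) and compatible with composition in $L$; equivalently, one must verify that the kernel of the projection $\pi_r : L \to M/r$ is a right congruence on $L$. Only after this does your continuity check make sense (the stabilizer of the generator is exactly the basic open $\pi_r^{-1}(\{[a_r]\})$, so Lemma \ref{lem:collect}, applied to $L$, gives continuity), and only then can the converse direction run as you sketch it: the generator's stabilizer of a principal continuous $L$-set contains a basic open, which is itself the generator's stabilizer of the $L$-set $M/r'$, so the given $L$-set is a quotient of $M/r'$ and its restriction lies in $\Ecal$. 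Without the action construction, both directions of your congruence-matching fail to get off the ground.

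Two smaller points. First, your citation of Proposition \ref{prop:prodisc} to conclude that $\rho$ is an action topology is wrong: $(L,\rho)$ is not known to be a prodiscrete \emph{monoid} (the paper explicitly warns after Theorem \ref{thm:characterization} that it generally is not, since the $M/r$ carry no monoid structure), and the fact that $\rho$ is an action topology is Proposition \ref{prop:Lpowder}, proved only after this proposition. Fortunately you do not need it: since the domain of $q$ carries the discrete topology, the continuity hypotheses of Lemma \ref{lem:cts} hold trivially. Second, your appeal to Section \ref{sec:homomorphism} for hyperconnectedness is a forward reference; it is not circular (the density argument there is self-contained), but it is cleaner to prove fullness of $q^*$ directly, as the paper does: given an $M$-set homomorphism $g$ between continuous $L$-sets, $p$ in the domain and $\alpha \in L$, density provides $m$ with $u(m) \in \Ical_p^\alpha$, whence $g(p\cdot\alpha) = g(p\cdot u(m)) = g(p)\cdot u(m) = g(p)\cdot\alpha$, the last equality by continuity of the target action.
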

\begin{proof}
Since $\Cont(L,\rho)$ is supercompactly generated, it is necessary and sufficient to show that $q^*$ is full and faithful and that any principal $(L,\rho)$-set is a quotient of (and hence equal to) an object which is mapped by $q^*$ to a principal $M$-set: indeed, arbitrary colimits are computed in both $\Setswith{M}$ and $\Cont(L,\rho)$ at the level of underlying sets, so considering the principal objects is sufficient.

It is immediate that $q^*$ is faithful since the underlying function of an $L$-set homomorphism is unaffected by applying $q^*$. To show that any $M$-set homomorphism $g:P \to Q$ between $(L,\rho)$-sets $P$ and $Q$ is an $(L,\rho)$-set homomorphism, consider an element $p \in P$ and $\alpha \in L$. We have that $\alpha_m \in \Ical_p^\alpha$ for some $m \in M$ by density of $M$. Thus $g(p \cdot \alpha) = g(p \cdot \alpha_m) = g(p) \cdot \alpha_m = g(p) \cdot \alpha$, where the final equality is by continuity of the action of $L$ on $Q$. Thus $q^*$ is full, as expected.

Let $\Rfrakk_h$ be the category of right congruences defined in Section \ref{ssec:EqRel}. There is a canonical right action of $L$ on $M/r$ for each $r \in \Rfrak_h$: if $\alpha = ([a_r])_{r \in \Rfrak_h}$, then $\alpha$ acts on $[m] \in M/r'$ by `projected multiplication', sending it to $[ma_{m^*(r')}]$.

We should verify that this action is well-defined and continuous with respect to $\rho$. If $\beta = ([b_r])_{r \in \Rfrak_h}$ is another element of $L$, then acting by $\alpha$ and then $\beta$ gives $[m] \mapsto [ma_{m^*(r')}] \mapsto [ma_{m^*(r')}b_{(ma_{m^*(r')})^*(r')}]$, which indeed is equal to the action of $\alpha\beta$, so $M/r'$ is a right $L$-set.

For continuity of the action on $M/r'$, consider
\begin{align*}
\Ical_{[1]}^\alpha &= \{\beta = ([b_r])_{r \in \Rfrak_h} \mid [a_{r'}] = [b_{r'}]\}\\
&= \pi_{r'}^{-1}(\{[a_{r'}]\}).
\end{align*}
By definition of the induced prodiscrete topology, this is open in $L$, as required. By Lemma \ref{lem:collect}, since $r'$ was arbitrary, this is sufficient to conclude that the actions of $L$ on the principal $M$-sets lying in $\Ecal$ are continuous with respect to $\rho$. Moreover, it is clear that these are principal $L$-sets, since we can obtain all of $M/r'$ by applying the elements $\alpha_m$ to the generator of $M/r'$.

Now $q^*$ returns the $L$-sets defined above to the principal $M$-sets they extended. Given a quotient $L \too K$ in $\Setswith{L}$, observe that to be continuous with respect to $\rho$ it must be that for each $\alpha \in L$, $\Ical_{[1]}^\alpha$ is open in $\rho$, so must contain an open set (and hence a basic open set) around $\alpha$, say $\pi_{r'}^{-1}(\{[a_{r'}]\})$. But by the equation above, this is precisely $\Ical_{[1]}^\alpha$, where here $[1]$ is the canonical generator of $M/r'$. Thus $K$ must be a quotient of $M/r'$ as an $L$-set, and hence as an $M$-set, as required.
\end{proof}

We can summarize the results obtained so far in this section with the following theorem.
\begin{thm}
\label{thm:characterization}
A topos is equivalent to one of the form $\Cont(M,\tau)$ if and only if it has a surjective point which factors as an essential surjection followed by a hyperconnected geometric morphism. Moreover, every topological monoid is canonically Morita equivalent to a monoid endowed with a prodiscrete topology.
\end{thm}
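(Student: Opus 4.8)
The plan is to prove the two halves of Theorem \ref{thm:characterization} by assembling the machinery developed throughout this section, since almost all of the substantive work has already been carried out in the preceding propositions. I would present the proof as a synthesis rather than a fresh argument.

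For the first statement, the forward direction is immediate: we established in Section \ref{ssec:apply} (specifically in the discussion surrounding Proposition \ref{prop:hyper} and Corollary \ref{crly:conttop}) that for any topological monoid the forgetful functor $V:\Cont(M,\tau) \to \Setswith{M}$ is the inverse image of a hyperconnected morphism, and the canonical point of $\Setswith{M}$ is an essential surjection by the results of Chapter \ref{chap:TDMA}. Composing these gives the required factorization of a surjective point. For the converse, I would take a topos $\Ecal$ equipped with such a point, presented as in diagram \eqref{eqn:p}, so that $\Ecal$ receives a hyperconnected morphism $h:\Setswith{M} \to \Ecal$ from the presheaf topos determined by the essential surjective point. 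The key step is then to invoke Proposition \ref{prop:representation}: from $h$ we construct the endomorphism monoid $L = \End(p^*)\op$ of Proposition \ref{prop:lim}, endow it with the prodiscrete topology $\rho$ via Lemma \ref{lem:basics}, and Proposition \ref{prop:representation} yields $\Ecal \simeq \Cont(L,\rho)$ directly. This settles the first statement.

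For the second statement — that every topological monoid is canonically Morita equivalent to one with a prodiscrete topology — I would apply the construction just described to the canonical hyperconnected morphism $h:\Setswith{M} \to \Cont(M,\tau)$ itself. Here $\Rfrakk_h = \Rfrakk_\tau$ is the category of open congruences of Scholium \ref{schl:Morita2}, and the resulting monoid $L$ carries the prodiscrete topology $\rho$. By Proposition \ref{prop:representation}, $\Cont(M,\tau) \simeq \Cont(L,\rho)$, and $(L,\rho)$ is prodiscrete by its very construction as a limit of the discrete monoids $U(M/r)$ (indeed Proposition \ref{prop:prodisc} confirms such monoids are powder monoids). The word \emph{canonically} is justified because the whole construction depends only on $M$ and the canonical point of $\Cont(M,\tau)$, not on any further choices.

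The main obstacle, if any, is purely expository rather than mathematical: one must check that the point $p$ appearing in the converse of the first statement genuinely has $L$ as its endomorphism monoid in the sense required by Proposition \ref{prop:lim}, i.e.\ that the hyperconnected factor $h$ realizes $\Ecal$ as $\Sh(\Rfrakk_h,J_r)$ so that endomorphisms of $p^*$ are determined by their components on $\Rfrakk_h$. This is exactly what Proposition \ref{prop:prince2} guarantees, so the verification amounts to citing that result and noting that the essential surjection supplies the presenting monoid $M$ via Theorem \ref{thm:point} of Chapter \ref{chap:TDMA}. I would therefore keep the proof short, emphasizing that the theorem is a packaging of Propositions \ref{prop:prince2}, \ref{prop:lim} and \ref{prop:representation}, and explicitly remarking that the second statement is the instance of the first applied to the tautological point of $\Cont(M,\tau)$.
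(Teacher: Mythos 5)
Your overall architecture matches the paper's: the theorem is introduced there explicitly as a summary of the section, and its proof is exactly the assembly you describe --- Proposition \ref{prop:hyper} plus the canonical point of $\Setswith{M}$ give the forward direction, while Theorem \ref{thm:point}, Proposition \ref{prop:prince2}, Proposition \ref{prop:lim} and Proposition \ref{prop:representation} give the converse, with the second statement obtained by running the construction on the tautological hyperconnected morphism $\Setswith{M} \to \Cont(M,\tau)$. Your treatment of the first statement, including the verification that the intermediate topos of the factorization is $\Setswith{M}$ via Theorem \ref{thm:point}, is correct.

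However, your justification of the second statement contains a genuine error, and it is precisely the one the paper warns against in the remark immediately following the theorem. You write that $(L,\rho)$ is prodiscrete ``by its very construction as a limit of the discrete monoids $U(M/r)$.'' The objects $U(M/r)$ are principal right $M$-sets, not monoids: a quotient $M/r$ by a right congruence carries no multiplication in general (it does so only when $r$ is also a left congruence, cf.\ Corollary \ref{crly:prodisc}). Consequently $L$ is \emph{not} a limit of discrete monoids, i.e.\ it is not a prodiscrete monoid; its multiplication is the endomorphism composition of Proposition \ref{prop:lim}, defined on the limit set rather than inherited componentwise. What is true --- and all that the carefully worded statement requires --- is that $L$ is a limit of discrete \emph{sets}, so the limit topology $\rho$ is a prodiscrete topology on the (independently defined) monoid $L$. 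Your citation of Proposition \ref{prop:prodisc} is likewise misplaced: that result concerns genuine prodiscrete monoids and so does not apply to $(L,\rho)$; the relevant fact that $(L,\rho)$ is a powder monoid is Proposition \ref{prop:Lpowder}, and it is not needed for the theorem. With this step rephrased, your proof is complete and coincides with the paper's.
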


\begin{rmk}
We have had to be careful with the language used in Theorem \ref{thm:characterization}: $(L,\rho)$ is \textit{not} in general a prodiscrete monoid, since the principal $M$-sets are not in general equipped with the structure of monoids. Even so, $\rho$ is always a `good' topology in the following sense.
\end{rmk}

\begin{prop}
\label{prop:Lpowder}
Given a monoid $M$ and a hyperconnected geometric morphism $h:\Setswith{M} \to \Ecal$, the corresponding topological monoid $(L,\rho)$ is a powder monoid.
\end{prop}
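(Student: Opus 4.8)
<br>

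The plan is to show that $(L,\rho)$ satisfies the two defining conditions of a (right) powder monoid from Definition \ref{dfn:powder}: that it is $T_0$ (Kolmogorov), and that $\rho$ is a right action topology, meaning it has a base of clopen sets $U$ for which $\Ical_U^\alpha = \{\beta \in L \mid \beta^*(U) = \alpha^*(U)\}$ is open for every $\alpha \in L$. First I would verify the $T_0$ condition. By construction, the elements of $L$ are tuples $\alpha = ([a_r])_{r \in \Rfrak_h}$, and two distinct elements $\alpha \neq \beta$ must differ in some component, say $[a_{r'}] \neq [b_{r'}]$ for some $r' \in \Rfrak_h$. Then the basic open set $\pi_{r'}^{-1}(\{[a_{r'}]\})$ contains $\alpha$ but not $\beta$, which immediately gives topological distinguishability, so $(L,\rho)$ is $T_0$.

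The bulk of the work is establishing that $\rho$ is a right action topology. For this I would take the base of basic opens $U = \pi_{r'}^{-1}(\{[m]\})$, which by Lemma \ref{lem:basics} is closed under finite intersection and hence genuinely forms a base, and show directly that for each such $U$ and each $\alpha \in L$, the set $\Ical_U^\alpha$ is open. The key computation is to identify $\alpha^*(U)$ explicitly. Writing $\alpha = ([a_r])_{r\in\Rfrak_h}$, an element $\beta = ([b_r])_{r \in \Rfrak_h}$ lies in $\alpha^*(U)$ precisely when $\alpha\beta \in U$, i.e.\ when the component of $\alpha\beta$ at $r'$ equals $[m]$; using the composition formula $\alpha\beta = ([a_r b_{a_r^*(r)}])_{r \in \Rfrak_h}$ from Proposition \ref{prop:lim}, this condition reads $[a_{r'} b_{a_{r'}^*(r')}] = [m]$ in $M/r'$. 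The natural guess, by analogy with the prodiscrete case of Proposition \ref{prop:prodisc}, is that $\alpha^*(U)$ is itself a union of basic opens indexed by the finer congruence $a_{r'}^*(r')$, and that $\Ical_U^\alpha$ contains the basic open $\pi_{r'}^{-1}(\{[a_{r'}]\})$ around $\alpha$.

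Concretely, I would show that $\Ical_U^\alpha \supseteq \pi_{r'}^{-1}(\{[a_{r'}]\})$: if $\beta$ agrees with $\alpha$ in its $r'$-component, then since $a_{r'}^*(r') = b_{r'}^*(r')$ by Lemma \ref{lem:rinrfrak} (equal representatives modulo $r'$ give equal inverse-image congruences), the conditions defining membership of $\alpha^*(U)$ and $\beta^*(U)$ coincide, so $\alpha^*(U) = \beta^*(U)$ and thus $\beta \in \Ical_U^\alpha$. Since this holds for every point of $\Ical_U^\alpha$, that set is a union of basic opens and hence open in $\rho$. This mirrors the structure of the proof of Proposition \ref{prop:prodisc} almost exactly, the difference being that the "coordinates" $a_r$ are equivalence classes coming from the inverse-image action on $\Rfrak_h$ rather than components in a product of discrete monoids.

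The main obstacle I anticipate is the careful bookkeeping in the composition formula: verifying that the component at which membership in $U$ is tested, namely $a_{r'}^*(r')$, genuinely depends only on the $r'$-class of $\alpha$ (so that it is shared by all $\beta$ in the same basic open), and dually checking that $\alpha^*(U)$ is open so that the base behaves well. Both reduce to invoking Lemma \ref{lem:rinrfrak}, which guarantees $a_{r'}^*(r')$ is well-defined from $[a_{r'}] \in M/r'$ and that the inverse-image action respects the order structure; once that well-definedness is secured the openness of $\Ical_U^\alpha$ follows formally. Having established both defining properties, Definition \ref{dfn:powder} applies and $(L,\rho)$ is a powder monoid, completing the proof.
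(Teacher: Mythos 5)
Your proposal is correct and follows essentially the same route as the paper's own proof: both verify the separation axiom via the limit components and basic opens, and both establish the action-topology condition by using the composition formula to show that $\alpha^*(\pi_{r'}^{-1}(\{[m]\}))$ depends only on the class $[a_{r'}]$ (the paper even sharpens your "union of basic opens" to a single basic open or $\emptyset$), whence $\Ical_U^\alpha \supseteq \pi_{r'}^{-1}(\{[a_{r'}]\})$ and openness follows. The only cosmetic differences are that the paper notes explicitly that the basic opens at fixed $r$ partition $L$ (hence are clopen) and phrases the conclusion as the basic opens being continuous elements of $\Pcal(L)$, giving $\tilde{\rho}=\rho$.
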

\begin{proof}
Clearly the basic opens at fixed $r \in \Rfrak_h$ partition $L$, so they are clopen; it therefore suffices to check that the basic open sets are continuous elements of $\Pcal(L)$ in $\Setswith{L}$.

Using the established notation for the components of $\alpha$ and $\beta$,
\begin{align*}
\alpha^*(\pi_r^{-1}(\{[m]\}))
&= \{\beta \in L\op \mid [a_r b_{a_r^*(r)}] = [m] \in M/r \}\\
&= \{\beta \in L\op \mid b_{a_r^*(r)} \in a_r^*(\Ical_n^m) \}\\
&= \begin{cases}
\pi_{a_r^*(r)}^{-1}(\{[m']\}) & \text{if } \exists m' \in a_r^*(\Ical_n^m)\\
\emptyset & \text{otherwise.}
\end{cases}
\end{align*}
This shows that the inverse image action preserves basic opens, and consequently we need only consider:
\begin{align*}
\Ical_{\pi_r^{-1}(\{[m]\})}^{\alpha}
&= \{\beta \in L\op \mid \beta^*(\pi_r^{-1}(\{[m]\})) = \alpha^*(\pi_r^{-1}(\{[m]\})) \}\\
& \supseteq \pi_r^{-1}(\{[a_r]\}),
\end{align*}
by inspection of the fact that $\alpha^*(\pi_r^{-1}(\{[m]\}))$ depends only on $a_r$. Thus the basic opens are continuous elements; since an action topology has at most as many opens as the topology it is derived from, it follows that $\tilde{\rho}=\rho$ as claimed.

To see that $(L,\rho)$ is Hausdorff we simply note that two points are equal if and only if they are equal in every component (by the definition of $L$ as a limit) and points which differ in any component are separated by basic opens from the corresponding projection maps.
\end{proof}

\begin{schl}
\label{schl:completion}
The topological monoid $(L,\rho)$ constructed in Proposition \ref{prop:lim} from a hyperconnected geometric morphism $h: \Setswith{M} \to \Ecal$ depends only on the point $\Set \to \Setswith{M} \to \Ecal$ of $\Ecal$ which, after composing with the equivalence $\Ecal \simeq \Cont(L,\rho)$, is naturally isomorphic to the canonical point of the latter topos. In particular, \textbf{any} factorization of the point of $\Ecal$ into an essential surjection followed by a hyperconnected geometric morphism produces the same representing monoid.
\end{schl}
\begin{proof}
While we can identify $L$ with the limit described in Proposition \ref{prop:lim}, it remains the opposite of the monoid of endomorphisms of the stated point, so is independent of $M$. Since $\rho$ is an action topology by Proposition \ref{prop:Lpowder}, it is uniquely determined by the topos $\Cont(L,\rho)$ after establishing $L$ by Theorem \ref{thm:tau}, which completes the proof.
\end{proof}

\begin{crly}
\label{crly:extend}
Suppose that $(M,\tau)$ is a powder monoid and let $(L,\rho)$ be the opposite of the topological monoid of endomorphisms of the corresponding canonical point of $\Cont(M,\tau)$. Then the monoid homomorphism $u: M \to L$ of Lemma \ref{lem:Mdense} is injective and continuous with respect to $\tau$ and $\rho$.
\end{crly}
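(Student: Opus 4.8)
The plan is to unpack the definition of the homomorphism $u: M \to L$ from Lemma~\ref{lem:Mdense} and verify the two claimed properties directly, exploiting the fact that $(M,\tau)$ is already a powder monoid. Recall that $u$ sends $m \in M$ to the endomorphism $\alpha_m$ whose component at each principal $M$-set $M/r$ is $[m]$; here the indexing poset is $\Rfrak_h = \Rfrak_\tau$, the poset of open congruences of the canonical hyperconnected morphism $h: \Setswith{M} \to \Cont(M,\tau)$. Lemma~\ref{lem:Mdense} already gives that $u$ is a well-defined monoid homomorphism with dense image, and Proposition~\ref{prop:representation} gives that the induced morphism is hyperconnected, so the only new content to establish is \emph{injectivity} and \emph{continuity}.

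For injectivity, I would argue that if $\alpha_m = \alpha_{m'}$ then $[m] = [m']$ in $M/r$ for every open congruence $r \in \Rfrak_\tau$; that is, $(m,m') \in r$ for every such $r$. The key step is then to show that the open congruences separate points of $M$, i.e. that for $m \neq m'$ there is some $r \in \Rfrak_\tau$ with $(m,m') \notin r$. This is exactly where the powder (in particular $T_0$/Hausdorff, via Lemma~\ref{lem:T0T2} and Definition~\ref{dfn:powder}) hypothesis is used: since $\tau = \tilde\tau$ is an action topology and $(M,\tau)$ is $T_0$, distinct points $m, m'$ are topologically distinguishable, so there is a basic clopen $U \in \tau$ (from the base $T$ of Theorem~\ref{thm:tau}) containing one but not the other. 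The associated congruence $\rfrak_U$ (which lies in $\Tcal$, hence is an open congruence by Lemma~\ref{lem:collect} and Schol.~\ref{schl:Morita2}) then distinguishes $m$ from $m'$, because membership of $U$ is detected by the $U$-component of $\rfrak_U$. Hence $\alpha_m = \alpha_{m'}$ forces $m = m'$.

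For continuity of $u$ with respect to $\tau$ on the domain and $\rho$ on the codomain, I would check that the preimage under $u$ of each basic open of $\rho$ is open in $\tau$. By Lemma~\ref{lem:basics} the basic opens of $\rho$ have the form $\pi_r^{-1}(\{[a]\})$ for $r \in \Rfrak_\tau$ and $a \in M$, and by the definition of $u$,
\[
u^{-1}\bigl(\pi_r^{-1}(\{[a]\})\bigr) = \{ m \in M \mid [m] = [a] \text{ in } M/r \} = [a]_r,
\]
the $r$-equivalence class of $a$. Since $r \in \Rfrak_\tau$ is an open congruence, $M/r$ is a continuous $(M,\tau)$-set by Lemma~\ref{lem:collect}, so each of its equivalence classes (these are precisely the necessary clopens $\Ical_a^{\,a'}$ of $M/r$) is open in $\tau$ by Lemma~\ref{lem:Inx}. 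Thus the preimage is open, and $u$ is continuous.

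The main obstacle I anticipate is the separation step inside the injectivity argument: one must be careful that $T_0$-separation with respect to the \emph{action} topology $\tilde\tau$ (rather than some finer topology) genuinely yields a \emph{congruence} in $\Rfrak_\tau$ separating the two points, rather than merely an open set. The cleanest route is to reduce from the open set $U$ to the congruence $\rfrak_U$ and invoke the inclusion $\rfrak_U \subseteq \rfrak_{\rfrak_U}$ from Lemma~\ref{lem:rinrfrak} together with the characterization of open congruences in Schol.~\ref{schl:Morita2}, rather than attempting to build a congruence by hand. Everything else is routine unwinding of the definitions of $u$, $\rho$, and the necessary clopens.
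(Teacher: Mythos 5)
Your proof is correct and follows essentially the same route as the paper's: injectivity comes from the $T_0$ hypothesis together with the congruence $\rfrak_U$ attached to a basic clopen $U \in T$ (the paper argues this directly, you contrapositively as ``open congruences separate points''), and continuity is the same computation identifying $u^{-1}(\pi_r^{-1}(\{[a]\}))$ with the necessary clopen $\Ical_{[1]}^a$ of the continuous $M$-set $M/r$. The only cosmetic difference is your justification that $\rfrak_U$ is an open congruence: the paper simply observes that $M/\rfrak_U$ is the principal sub-$M$-set of the continuous $M$-set $T$ generated by $U$, which is more direct than your detour through Lemma \ref{lem:rinrfrak} and Scholium \ref{schl:Morita2}.
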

\begin{proof}
Let $(M,\tau)$ be a powder monoid and $m, m' \in M$. Suppose $\alpha_m = \alpha_{m'}$, and let $U \in T$ with $m \in U$, whence the same is true for $\Ical_U^m$. Consider the principal sub-$M$-set of $T$ generated by $U$, say $M/r$. Consider $\pi_r^{-1}(\{[m]\})$ and $\pi_r^{-1}(\{[m']\})$; since $\alpha_m = \alpha_{m'}$, these open sets must be equal in $L$, which is to say that $[m] = [m']$ in $M/r$, and hence $m' \in U$. As such, $m,m'$ are topologically indistinguishable and hence equal in $(M,\tau)$, and so $u$ is injective as claimed.

To demonstrate continuity, consider a basic open $U' := \pi_r^{-1}(\{[a]\})$ in $L$. Then
\[u^{-1}(U') = \{m \mid \alpha_m \in U' \} = \{m \mid [m] = [a] \text{ in } M/r\} \supseteq \Ical_{[1]}^a,\]
where $[1]$ is the canonical generator of $M/r$. Since $M/r$ is a continuous $M$-set by assumption, $\Ical_{[1]}^a$ is open in $(M,\tau)$, as required.
\end{proof}

\begin{xmpl}
\label{xmpl:Z+}
Consider the following example, to be contrasted with Example \ref{xmpl:N+}. Let $\Zbb$ be the group of integers under addition. Taking the topology $\tau$ on $\Zbb$ in which the subgroups $n\Zbb \subseteq \Zbb$ and their cosets are open for each $n\neq 0$, we find that the continuous principal $\Zbb$-sets are precisely the finite cyclic groups $\Zbb/n\Zbb$ with $n > 0$; we thus obtain the topos $\Cont(\Zbb,\tau)$ of torsion $\Zbb$-sets. The topology $\tau$ is not discrete since every neighbourhood of $0$ is infinite, but it is nearly discrete and hence $(\Zbb,\tau)$ is a powder group. However, the monoid obtained from Proposition \ref{prop:lim} is the \textit{profinite completion} of the integers. Thus even for a powder monoid, the comparison map may fail to be an isomorphism.
\end{xmpl}

\begin{dfn}
\label{dfn:cpltmon}
In light of Corollary \ref{crly:extend} and Example \ref{xmpl:Z+}, we say that a monoid is (right) \textbf{complete} if the comparison morphism $u:(M,\tau) \to (L,\rho)$ is an isomorphism of topological monoids.
\end{dfn}

We shall see in Section \ref{ssec:monads} that complete monoids form a reflective subcategory of the ($2$-)category of monoids and also of the ($2$-)category of powder monoids; the comparison homomorphism $u$ is the unit of these reflections.

\begin{crly}
\label{crly:Mlim}
Let $(M,\tau)$ be a powder monoid and $h:\Setswith{M} \to \Cont(M,\tau)$ the canonical hyperconnected morphism. Consider the poset $\Rfrak_h$ as a subcategory of $\Setswith{M}$. Then $M$ is complete if and only if it is the limit of $\Rfrak_h$ in $\Setswith{M}$.
\end{crly}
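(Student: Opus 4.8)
The plan is to show that the two constructions producing a topological monoid from the hyperconnected morphism $h:\Setswith{M}\to\Cont(M,\tau)$ coincide: on one hand, the limit of the diagram $\Rfrak_h$ computed inside $\Setswith{M}$, and on the other hand, the endomorphism monoid $(L,\rho)$ of Proposition \ref{prop:lim}. Since $M$ is complete precisely when the comparison map $u:(M,\tau)\to(L,\rho)$ is an isomorphism, it suffices to identify $\varprojlim_{r\in\Rfrak_h} (M/r)$, computed in $\Setswith{M}$, with the monoid $L$ (and to check this identification is compatible with $u$). The key observation is that Proposition \ref{prop:lim} already realizes $L$ as the limit $\varprojlim_{r\in\Rfrak_h} U(M/r)$ taken in $\Set$; so what must be verified is that the \emph{underlying set} of the limit computed in $\Setswith{M}$ agrees with this set-level limit, and that the $M$-action it carries corresponds under this bijection to the action of $u(M)$ inside $L$.

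First I would recall that limits in $\Setswith{M}$ are computed on underlying sets, since the forgetful functor $U$ preserves all limits (it is monadic and comonadic, as noted in Section \ref{sec:properties0} of Chapter \ref{chap:TDMA}). Hence the underlying set of $\varprojlim_{r\in\Rfrak_h}(M/r)$ in $\Setswith{M}$ is exactly $\varprojlim_{r\in\Rfrak_h} U(M/r)=L$ as a set, by Proposition \ref{prop:lim}. Next I would identify the $M$-action on this limit: an element $m\in M$ acts on a compatible tuple $([a_r])_{r\in\Rfrak_h}$ componentwise via the right action on each $M/r$, sending it to $([a_r m])_{r}$. The point of the argument is that this componentwise action is exactly the restriction along $u$ of the monoid multiplication of $L$; that is, the action of $m$ agrees with right multiplication by $u(m)=\alpha_m$ in the monoid structure described in Proposition \ref{prop:lim}. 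This is a direct unwinding of the composition formula $\alpha\beta=([a_r b_{a_r^*(r)}])_r$ specialized to $\beta=\alpha_m$, where $b_s=m$ for all $s$. Thus $\varprojlim_{r\in\Rfrak_h}(M/r)$, as an object of $\Setswith{M}$, is $L$ regarded as a right $M$-set via $u$.

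With this identification in hand, the equivalence becomes transparent. If $M$ is complete, then $u:M\to L$ is an isomorphism of topological monoids, so $L$ as a right $M$-set via $u$ is just $M$ acting on itself, but more to the point the canonical cone from $M$ (i.e.\ from the representable, via the quotient maps $M\too M/r$) exhibits $M$ as the limit, since $u$ being a bijection means every compatible tuple is realized by a unique element of $M$. Conversely, if $M$ is the limit of $\Rfrak_h$ in $\Setswith{M}$, then the canonical comparison morphism from $M$ to the limit object $L$ is an isomorphism of $M$-sets; since this comparison morphism has underlying function $u$ by the computation above, $u$ is a bijection, and by Corollary \ref{crly:extend} it is already a continuous injective homomorphism, so one checks that its inverse is continuous using that $\rho$ and $\tau$ are both action topologies with bases indexed by the same congruences $r\in\Rfrak_h$ (so $u$ is a homeomorphism onto its dense image, which is all of $L$). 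Hence $u$ is an isomorphism of topological monoids and $M$ is complete.

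The main obstacle I anticipate is the bookkeeping around the direction of the action and the matching of topologies, rather than any deep difficulty: one must be careful that the limit in $\Setswith{M}$ is taken over the \emph{poset} $\Rfrak_h$ (morphisms indexed by $1$) and not the larger category $\Rfrakk_h$, so that the cone data is precisely a compatible family of classes $[a_r]$ with $[a_r]=[a_{r'}]$ whenever $r\subseteq r'$, matching Proposition \ref{prop:lim} exactly. The one genuine point requiring care is confirming that the $M$-set comparison map $M\to L$ really does have $u$ as its underlying function and that completeness (an isomorphism of \emph{topological} monoids) is equivalent to this comparison being an isomorphism of $M$-sets; this uses Proposition \ref{prop:Lpowder} to guarantee $\rho$ is the action topology and Corollary \ref{crly:extend} to supply continuity and injectivity for free, reducing the topological statement to the purely algebraic bijectivity of $u$.
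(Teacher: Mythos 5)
Your proposal is correct and follows essentially the same route as the paper's own (much terser) proof: both rest on the fact that limits in $\Setswith{M}$ are computed on underlying sets, identify the limit of $\Rfrak_h$ with $L$ via Proposition \ref{prop:lim}, and use Corollary \ref{crly:extend} to see that the comparison map is $u$, so that completeness reduces to bijectivity of $u$. Your extra care about the componentwise $M$-action and the matching of the two action topologies is exactly the content the paper compresses into "the action of $M$ expressed as the limit is respected by these morphisms by definition."
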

\begin{proof}
Since limits in $\Setswith{M}$ are computed from their underlying sets, this follows from Proposition \ref{prop:lim} and Corollary \ref{crly:extend}, after observing that the action of $M$ expressed as the limit is respected by these morphisms by definition.
\end{proof}

\begin{rmk}
\label{rmk:completegrp}
It must be stressed that our notion of completeness for powder monoids does not quite coincide with that of completeness for groups described in Caramello's paper \cite[\S 2.3]{TGT}. This is clear from a comparison between our Proposition \ref{prop:lim} and Caramello and Lafforgue's construction in \cite[Proposition 5.7]{ATGT}, since they begin by constructing the topological monoid of endomorphisms of the canonical point as we do (which is complete in our sense), but then restrict to the subgroup of automorphisms in order to obtain the representing topological group (which is complete in their sense), and it is an important fact that this gives a genuinely different representation in general; see Example \ref{xmpl:notpro2} below.

For consistency, we say a powder group is \textbf{complete} if it is isomorphic to the topological subgroup of units of the corresponding complete monoid; this is true to Caramello and Lafforgue's terminology, and coincides with ours whenever the complete monoid happens to be a group.
\end{rmk}

\begin{xmpl}
\label{xmpl:notpro2}
We encountered the Schanuel topos in Example \ref{xmpl:nonpresh}. Thanks to the work of Caramello, \cite[\S 6.3]{TGT}, we know that any infinite set $X$ provides, via its corresponding point, an equivalence $\Ecal \simeq \Cont(\Aut(X),\tau_{\mathrm{fin}})$, where $\tau_{\mathrm{fin}}$ is the topology generated from the base of subgroups stabilizing finite subsets. The topological group of automorphisms of $\Nbb$ considered in Example \ref{xmpl:notpro} is one such representation. In particular, \textit{all points of the Schanuel topos are of the form required by Theorem \ref{thm:characterization}}, and the corresponding complete monoids are simply $(\End_{\mathrm{mono}}(X),\tau_{\mathrm{fin}})$, where this time $\tau_{\mathrm{fin}}$ has basic open sets consisting of subsets of the form
\[\{f \in \End_{\mathrm{mono}}(X) \mid f(x_1) = y_1, \dotsc, f(x_k) = y_k\}\]
for each finite set of pairs of elements $(x_i,y_i) \in X \times X$. 

In computing these complete monoids, we have the advantage of being able to inspect the endomorphisms of objects in the category of models for the theory classified by the topos, since these represent endomorphisms of the corresponding points. Computing one of these complete monoids with \eqref{eq:limit} and identifying the result with a monoid of injective endomorphisms is a more demanding, albeit instructive, exercise. This is one motivation for characterizing the theories classified by toposes of topological monoid actions in Chapter \ref{chap:TSGT}.
\end{xmpl}

\subsection{Bases of Congruences}
\label{ssec:base}

While we made abstract use of the limit expression \eqref{eq:limit} in proofs in the last subsection, in practice it will be more convenient to compute $L$ after re-indexing the limit over a smaller collection of right congruences. 

\begin{dfn}
\label{dfn:basecong}
Given a hyperconnected morphism $h:\Setswith{M} \to \Ecal$, a collection of right congruences $\Rfrak' \subseteq \Rfrak_h$ is called a \textbf{base of open congruences for $h$} if for every $r \in \Rfrak_h$ there is some $r' \in \Rfrak'$ with $r' \subseteq r$. A base of open congruences is precisely an initial subcategory of the poset $\Rfrak_h$, and as such we can replace \eqref{eq:limit} with
\begin{equation}
\label{eq:limit2}
L := \varprojlim_{r \in \Rfrak'} U(M/r),
\end{equation}
where the morphisms are simply inclusions of congruences. Moreover, the expression for the prodiscrete topology on $L$ restricts to this re-indexing, thanks to the fact that the basic opens for this topology coming from the projection maps along the omitted indices are necessarily unions of opens coming from any initial collection of indices.
\end{dfn}

\begin{xmpl}
\label{xmpl:algbase}
Suppose $(M,\tau)$ is a topological group. Recall from \cite[following Lemma 2.1]{TGT} that an \textit{algebraic base} for $(M,\tau)$ is a neighbourhood base of the identity $\Bcal$, consisting of open subgroups, such that for any $H,K \in \Bcal$ there exists $P \in \Bcal$ with $P \subseteq H \cap K$ and for any $g \in M$, there exists $Q \in \Bcal$ with $Q \subseteq g^{-1}Hg$.

Suppose we are given an algebraic base $\Bcal$ of open subgroups of $(M,\tau)$. In accordance with Example \ref{xmpl:grprln}, we can identify the open congruences for the canonical hyperconnected morphism $h:\Setswith{M} \to \Cont(M,\tau)$ with open subgroups. Every such open subgroup of $M$ must contain one belonging to $\Bcal$, whence the congruences corresponding to groups in $\Bcal$ form a base of open congruences on $(M,\tau)$. Conversely, any base of open congruences gives an algebraic base for $(M,\tau)$. 

The limit \eqref{eq:limit2} corresponds to the $\Bcal$-indexed limit expression for the monoid of endomorphisms presented in \cite[Proposition 5.7(i)]{ATGT}.
\end{xmpl}

\begin{prop}
\label{prop:algbase}
Let $h:\Setswith{M} \to \Ecal$ be a hyperconnected morphism. Suppose $\Rfrakk' \subseteq \Rfrakk_h$ is a base of open congruences. Suppose further that we extend $\Rfrakk'$ to a subcategory of $\Rfrakk_h$ such that given any $r,r' \in \Rfrakk'$ with $r \subseteq m^*(r')$, $\Rfrakk'$ contains a span of morphisms,
\begin{equation}
\label{eq:rspan}
\begin{tikzcd}
r & \ar[l, "{[1]}"', two heads] r'' \ar[r, "{[m]}"] & r'.
\end{tikzcd} 	
\end{equation}
For example, it suffices that $\Rfrakk'$ be a full subcategory. Then the morphisms indexed by $[1]$ in $\Rfrakk'$ form a stable class $\Tcal$ (in the sense of Definition \ref{dfn:stable}), and $(\Rfrakk',J_{\Tcal})$ is a dense subsite of $(\Rfrakk_h,J_r)$, which is to say that
\[ \Ecal \simeq \Sh(\Rfrakk',J_{\Tcal}).\]
\end{prop}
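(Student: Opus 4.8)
The plan is to verify that $\Tcal$ (the class of $[1]$-indexed morphisms in $\Rfrakk'$) is a stable class, and then to invoke the Comparison Lemma to conclude that $(\Rfrakk',J_{\Tcal})$ is a dense subsite of the canonical reductive site $(\Rfrakk_h,J_r)$ for $\Ecal$, which by Proposition \ref{prop:prince2} presents $\Ecal$. Stability has three axioms to check (Definition \ref{dfn:stable}). Axiom 1 (identities) is immediate, since $[1]:r \to r$ is the identity and is $[1]$-indexed. Axiom 2 (closure under composition) follows because the composite of two $[1]$-indexed morphisms in $\Rfrakk_h$ is again $[1]$-indexed (composition is multiplication in $M$, and $1\cdot 1 = 1$), and is in $\Rfrakk'$ since $\Rfrakk'$ is a subcategory. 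The substantive work is axiom 3 (stability under pullback): given $[1]:r'' \too r$ in $\Tcal$ and an arbitrary morphism $[m]:r' \to r$ of $\Rfrakk'$, I must produce a commutative square in $\Rfrakk'$ whose opposite edge is again in $\Tcal$. Here is where the hypothesis \eqref{eq:rspan} is designed to be used.

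First I would unpack the data. A morphism $[m]: r' \to r$ means $r' \subseteq m^*(r)$ by Definition \ref{dfn:Rfrak}; composing the cover $[1]: r'' \too r$ means $r'' \subseteq r$. I would consider the congruence $m^*(r'') \in \Rfrak_h$ (it lies in $\Rfrak_h$ since this poset is closed under the inverse image action, by Remark \ref{rmk:Mequivariant}), which satisfies $m^*(r'') \subseteq m^*(r)$ by monotonicity, hence receives $[1]:r' \cap m^*(r'') \text{ or similar}$... more precisely, since $r' \subseteq m^*(r)$, the relevant pullback of the quotient $M/r'' \too M/r$ along $M/r' \to M/r$ in $\Setswith{M}$ is computed at the level of underlying sets, and its domain is a principal $M$-set (being a subobject-quotient combination inside the supercompactly generated topos $\Ecal$, hence lying in $\Rfrakk_h$ by Proposition \ref{prop:prince2}). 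This produces the required square in $\Rfrakk_h$ with the parallel edge to $[1]:r'' \too r$ being a strict epimorphism, i.e.\ $[1]$-indexed. The point of hypothesis \eqref{eq:rspan} is precisely to guarantee that this square can be taken to live inside $\Rfrakk'$ rather than merely in $\Rfrakk_h$: given the factorization data it supplies a span realizing the pullback within the subcategory.

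Once stability is established, $J_{\Tcal}$ is a genuine principal Grothendieck topology on $\Rfrakk'$ by Definition \ref{dfn:stable}. To conclude $\Ecal \simeq \Sh(\Rfrakk',J_{\Tcal})$ I would apply the Comparison Lemma, which requires showing that the inclusion $\Rfrakk' \hookrightarrow \Rfrakk_h$ is a \emph{dense} morphism of sites, i.e.\ that it is cover-preserving, cover-reflecting, and that every object of $\Rfrakk_h$ admits a $J_r$-cover by objects of $\Rfrakk'$. The last of these is exactly the defining base-of-open-congruences condition (Definition \ref{dfn:basecong}): for every $r \in \Rfrak_h$ there is $r' \in \Rfrak'$ with $r' \subseteq r$, giving the strict epimorphism $M/r' \too M/r$, which is a singleton $J_r$-cover. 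Cover-preservation holds because a $J_{\Tcal}$-covering sieve by construction contains a $[1]$-indexed (strict epi) morphism, which is $J_r$-covering in $\Rfrakk_h$; cover-reflection uses that the objects of $\Rfrakk'$ are supercompact, so any $J_r$-covering sieve restricted to $\Rfrakk'$ still contains a strict epimorphism coming from $\Tcal$ (here I would lean on the characterization of strict epimorphisms in $\Ccal_s$ from Corollary \ref{crly:strict} and Fact \ref{fact1}).

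The main obstacle I anticipate is axiom 3 of stability, specifically the bookkeeping needed to see that the pullback square obtained in $\Rfrakk_h$ descends to $\Rfrakk'$ using only the span \eqref{eq:rspan} rather than a genuine pullback — one must check that the $[1]$-indexed leg of \eqref{eq:rspan}, composed appropriately, really yields a stability square with the top edge in $\Tcal$, and that the commutativity condition on representatives (equivalence classes modulo the relevant congruences) is satisfied. The secondary subtlety is confirming, via Lemma \ref{lem:rinrfrak} and the identification of $\Rfrakk_h$ with $\Ccal_s$, that the intermediate congruence $m^*(r'')$ and the constructed object indeed lie in $\Rfrak_h$ so that the construction stays inside the admissible class of open congruences; this is where closure of $\Rfrak_h$ under the inverse image action (an $M$-equivariant filter, Remark \ref{rmk:Mequivariant}) does the essential work. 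Everything else reduces to routine applications of the Comparison Lemma and the density criteria of \cite{Dense}.
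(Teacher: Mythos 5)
Your overall strategy (verify stability of $\Tcal$, then apply the Comparison Lemma) is the same as the paper's, and your treatment of axioms 1 and 2 and of the density conditions is fine. But there is a genuine gap at the crux, axiom 3, and it is precisely the step you defer to the end as ``the main obstacle I anticipate'': you never actually show how the span hypothesis \eqref{eq:rspan} brings the stability square into $\Rfrakk'$. Asserting that \eqref{eq:rspan} ``supplies a span realizing the pullback within the subcategory'' is not a proof — it is a restatement of what needs to be done, and since the whole point of the proposition's extra hypothesis is to make this step work in a subcategory of $\Rfrakk_h$ that need not be full, leaving it unverified means stability of $\Tcal$ (and hence the very existence of the topology $J_{\Tcal}$) is not established. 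The paper's proof carries this out in two moves: first, it invokes the already-established stability of strict epimorphisms in $\Rfrakk_h$ (Theorem \ref{thm:canon}) to get a square in $\Rfrakk_h$ with fourth corner $r'$, and uses the base property to precompose with a $[1]$-cover of $r'$ by an object of $\Rfrakk'$, so that without loss of generality $r' \in \Rfrakk'$; second, the two edges out of $r'$ are now $\Rfrakk_h$-morphisms between objects of $\Rfrakk'$, i.e.\ exactly of the form covered by \eqref{eq:rspan}, so each can be replaced by a span in $\Rfrakk'$ whose left leg is a $[1]$-cover, and pasting these spans (refining again inside $\Rfrakk'$ as needed) yields a commutative square lying entirely in $\Rfrakk'$ whose edge parallel to the original $[1]$-morphism is again $[1]$-indexed. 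Your outline contains neither the ``WLOG the corner lies in $\Rfrakk'$'' reduction nor the pasting argument.

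A secondary error: your justification for the square in $\Rfrakk_h$ claims that the pullback of $M/r'' \too M/r$ along $M/r' \to M/r$ in $\Setswith{M}$ has principal domain. This is false in general — pulling back $M \too 1$ along itself gives $M \times M$, which is typically not principal (cf.\ Proposition \ref{prop:strongcon}). What is true, and what the argument needs, is that the pullback is \emph{covered} by principal $M$-sets, and supercompactness of $M/r'$ then yields an epimorphism from one of them onto $M/r'$; alternatively, and more efficiently, you can simply cite stability of strict epimorphisms in $\Rfrakk_h \simeq \Ccal_s$, which holds because $\Ecal$ is supercompactly generated (Theorem \ref{thm:hype2} and Theorem \ref{thm:canon}) — this is all the paper does at that point.
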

\begin{proof}
Given $[1]:r_1 \too r$ and $[m]:r_2 \to r$ in $\Rfrakk'$, stability of strict epimorphisms in $\Rfrakk_h$ provides a square there,
\[ \begin{tikzcd}
r' \ar[d, "{[m']}"'] \ar[r, "{[1]}", two heads] &
r_2 \ar[d, "{[m]}"] \\
r_1 \ar[r, "{[1]}"', two heads] & r;
\end{tikzcd}\]
without loss of generality we may assume $r' \in \Rfrakk'$ since any $r'$ is covered by a member of $\Rfrakk'$. Then we may construct spans on the upper and left-hand sides using \eqref{eq:rspan} in order to produce a similar square all of whose morphisms lie in $\Rfrakk'$. Moreover, the morphisms indexed by $1$ are precisely the morphisms inherited from $\Rfrakk_h$ which generate covering families, whence we see that $\Rfrakk'$ meets the definition of dense subsite required to apply the Comparison Lemma. This allows us to deduce the stated presentation of $\Ecal$.
\end{proof}

We can use open bases of congruences to address the question of when the completion $(L,\rho)$ of a powder monoid is (isomorphic to) a prodiscrete monoid.

\begin{crly}
\label{crly:prodisc}
Suppose that $h:\Setswith{M} \to \Ecal$ is hyperconnected. The corresponding complete monoid $(L,\rho)$ is discrete if and only if there exists a base of open congruences $\Rfrak' \subseteq \Rfrak_h$ with $\Rfrak'$ finite. More generally, $(L,\rho)$ is prodiscrete if and only if there exists a base of open congruences $\Rfrak' \subseteq \Rfrak_h$ where each $r \in \Rfrak'$ is a two-sided congruence. In the latter case, if $M$ is also a group, or $M/r$ is a group for each $r \in \Rfrak'$, then so is the resulting prodiscrete monoid.

In particular, if $M$ is finite or commutative, so that any right congruence $r$ on $M$ is also a left congruence, then the codomain of any hyperconnected morphism out of $\Setswith{M}$ is equivalent to the topos of continuous actions of a finite discrete monoid or commutative prodiscrete monoid respectively.
\end{crly}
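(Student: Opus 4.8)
The plan is to compute the complete monoid $(L,\rho)$ as a re-indexed limit $L = \varprojlim_{r \in \Rfrak'} U(M/r)$ over a base of open congruences $\Rfrak' \subseteq \Rfrak_h$ (Definition \ref{dfn:basecong}), and then read off the topological and algebraic structure of $L$ from the shape of the indexing poset $\Rfrak'$ and the nature of the quotients $M/r$. Both biconditionals reduce to matching a property of $L$ with the existence of a base of the appropriate kind, so the work splits into a (usually easy) ``construction'' direction, where a good base produces the desired $L$, and a ``necessity'' direction, where the property of $L$ is used to exhibit a good base.

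For the discrete case, the forward direction is immediate: given a finite base $\Rfrak'$, each $\alpha \in L$ is determined by its finitely many components, so $\{\alpha\} = \bigcap_{r \in \Rfrak'} \pi_r^{-1}(\{\pi_r(\alpha)\})$ is a finite intersection of basic opens, whence $\rho$ is discrete (equivalently, $L$ is a subspace of the finite product $\prod_{r\in\Rfrak'} U(M/r)$ of discrete spaces). For the converse I would argue via density: by Lemma \ref{lem:Mdense} the image of $u\colon M \to L$ is dense, and a dense subset of a discrete space is everything, so $u$ is surjective and $L \cong M/r^{*}$ as monoids, where $r^{*} = \ker u = \bigcap_{r \in \Rfrak_h} r$ is a two-sided congruence. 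Since $L$ is the representable continuous $L$-set of $\Cont(L,\rho) \simeq \Ecal$, its underlying $M$-set $M/r^{*}$ lies in $\Ecal$, so $r^{*} \in \Rfrak_h$ by Proposition \ref{prop:prince2}; being contained in every member of $\Rfrak_h$, it is the minimum, and $\{r^{*}\}$ is the desired finite (indeed singleton) base.

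For the prodiscrete case, the construction direction is where the only genuine computation lies. If $\Rfrak'$ is a base of two-sided congruences, each $M/r$ is a discrete quotient \emph{monoid} and each connecting map is a monoid homomorphism, so $\varprojlim_{\Rfrak'} M/r$ is prima facie a prodiscrete monoid; one must check that the multiplication of $L$ from Proposition \ref{prop:lim}, namely $\alpha\beta = ([a_r b_{a_r^{*}(r)}])_r$, agrees with the componentwise product $([a_r b_r])_r$. This follows because $r \subseteq a_r^{*}(r)$ for two-sided $r$, so compatibility of $\beta$ gives $(b_{a_r^{*}(r)},b_r)\in a_r^{*}(r)$, hence $(a_r b_{a_r^{*}(r)}, a_r b_r) \in r$ and the two representatives coincide in $M/r$; the group refinement then follows at once, since componentwise inverses exhibit a limit of groups as a group. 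For the necessity direction I would write $L = \varprojlim_i L_i$ as a directed limit of discrete monoids with structure maps $v_i$ and pull these back along $u$ to two-sided congruences $\lambda_i = \ker(v_i u)$ on $M$, each $M/\lambda_i$ embedding into the discrete $L_i$ as a continuous $L$-set, so that $\lambda_i \in \Rfrak_h$.

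The hard part will be the final step of the necessity direction: proving that the $\lambda_i$ form a \emph{base} of $\Rfrak_h$, i.e. that every open right congruence $r$ contains some $\lambda_i$. Openness of the class $[1]_r$ together with Remark \ref{rmk:Mequivariant} and the identity-neighbourhood base $\{v_i^{-1}(1)\}$ of the prodiscrete topology yields $v_i^{-1}(1) \subseteq [1]_r$ for some $i$; the obstacle, absent in the group case of \cite{ATGT}, is to upgrade this containment of identity classes to the full congruence containment $\lambda_i \subseteq r$, since for a mere monoid the remaining classes of $\lambda_i$ need not be right-translates of $v_i^{-1}(1)$. I expect to resolve this by refining the index $i$ and exploiting that $r$ is itself open and $M$-equivariant, so that finitely much coordinate data controls each $r$-class. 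Finally, the two stated special cases follow painlessly from the easy directions only: if $M$ is finite then $\Rfrak_h$ is a finite subposet of the finite set of congruences on $M$, hence is its own finite base and the discrete case gives $L \cong M/r^{*}$, a finite discrete monoid; and if $M$ is commutative then every right congruence is two-sided, so $\Rfrak_h$ is itself a base of two-sided congruences and the construction direction exhibits $L$ as a commutative prodiscrete monoid.
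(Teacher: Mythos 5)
Most of your proposal is sound and tracks the paper's own proof. Your discrete case is correct (your density argument for the converse is a mild variant of the paper's, which instead notes $\Cont(L,\rho)=\Setswith{L}$ and identifies $L$ with some $M/r^*$); your construction direction in the prodiscrete case is correct and in fact \emph{more} careful than the paper's, since your check that $([a_rb_{a_r^*(r)}])_r = ([a_rb_r])_r$ for two-sided $r$ is exactly what the paper's phrase ``is their limit as discrete monoids'' silently requires; and the special cases, as you say, need only the easy directions. The genuine gap is the one you flag yourself: in the necessity direction you must show the two-sided congruences $\lambda_i=\ker(v_iu)$ form a \emph{base} of $\Rfrak_h$, and you only obtain $v_i^{-1}(1)\subseteq[1]_r$, a containment of identity classes. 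For groups this suffices because every class of a right congruence is a right translate of the identity class; for monoids there is no such propagation, and ``refining the index $i$'' cannot produce one index working simultaneously for all classes of $r$. You should know that the paper's own proof has the same hole: it simply asserts that ``the collection of such congruences gives the desired base of open congruences,'' so you have isolated a real weakness rather than missed an argument available in the paper.

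Worse, the missing step appears to be false in general, so no routine refinement will close it. Let $L=\Nbb^*\cup\Nbb^{\omega}$ be the monoid of finite and infinite words over the alphabet $\Nbb$ under concatenation (with $\xi w=\xi$ for $\xi$ infinite), presented as $\varprojlim_n \Nbb^*/d_n$ where $d_n$ relates words that are equal or both have length $\geq n$ and share their first $n$ letters; this is prodiscrete, and it is complete: for a compatible family $([a_r])_{r\in\Rfrak_h}$, the $d_n$-components determine $\xi\in L$, and since each class $[a_r]_r$ is clopen and meets every neighbourhood $[\xi]_{d_n}$ of $\xi$ (witnessed by the component at $r\cap d_n$), one gets $[a_r]_r=[\xi]_r$, so $u$ is surjective. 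Now let $q$ relate $x,y$ iff $x=y$, or $x$ and $y$ have the same first letter $k$, both have length $\geq k+1$, and agree in their first $k+1$ letters. This is a right congruence, all classes of all $\delta^*(q)$ are open, so $L/q$ is a continuous $L$-set and $q\in\Rfrak_h$ (taking $M=L$ discrete, $h:\Setswith{M}\to\Cont(L,\rho)$). But no two-sided member of $\Rfrak_h$ lies inside $q$: such an $s$ has open classes, so the $s$-class of $(k,0,0,\dots)$ contains all words with prefix $(k,0^{m-1})$ for some finite $m$; left-multiplying by a one-letter word $(j)$ with $j\geq m+1$ then gives $s$-related, hence $q$-related, pairs with first letter $j$ that differ at position $m+2\leq j+1$, contradicting the definition of $q$. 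Thus $(L,\rho)$ is a complete prodiscrete monoid whose $\Rfrak_h$ admits no base of two-sided congruences. So closing your gap is not a matter of refining indices: either an additional hypothesis is needed or the necessity direction of the statement itself must be repaired, and your write-up should say so rather than defer the point.
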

\begin{proof}
As observed in Definition \ref{dfn:basecong}, given any base of open congruences $\Rfrak'$, we may express $L$ as the limit \eqref{eq:limit2} over congruences in $\Rfrak'$. Note that this limit is directed. When $\Rfrak'$ is finite, the induced topology is a finite product of discrete topologies, so it is discrete, and there must be an initial congruence in this case by directedness. Conversely, if $L$ is discrete, then $\Cont(L,\rho) = \Setswith{L}$. By construction, $\Rfrakk_h$ is equivalent to the category of quotients of $L$ in this topos, which contains a generating object, namely $L$ itself. As such, there is some relation $r^*$ in $\Rfrakk_h$ such that the equivalence $\Ecal \simeq \Setswith{L}$ identifies $M/r^*$ with $L$. Then $\{r^*\}$ is a finite base of open congruences for $h$, as required.  

Now suppose instead that each $r \in \Rfrak'$ is also a left congruence. Then the quotients $M/r$ are naturally equipped with a multiplication operation compatible with the multiplication from $M$, and the topological monoid $(L,\rho)$ constructed in Proposition \ref{prop:lim} is their limit as discrete monoids in the category of topological monoids, hence is a prodiscrete monoid. Conversely, if $(L,\rho)$ is prodiscrete, it can be defined as a limit of its discrete quotients, which are quotients of $L$ by a two-sided congruences. The restriction of such a congruence along $u$ is also a two-sided congruence on $M$; the collection of such congruences gives the desired base of open congruences.

If $M$ is a group and $\Rfrak'$ consists of two-sided congruences, then the quotients $M/r$ for $r \in \Rfrak'$ are also groups, whence $(L,\rho)$ is a prodiscrete group, fulfilling the claim regarding groups.
\end{proof}

\begin{xmpl}
As an example application of Corollary \ref{crly:prodisc} on a monoid which is neither commutative nor finite, consider the (non-commutative) monoid $M$ obtained from the non-negative integers $\Nbb$ with addition by freely adjoining a left absorbing element $l$. The elements of $M$ are of the form $(1,n)$ or $(l,n)$ with $n \in \Nbb$, and multiplication is defined by $(1,m)(l,n) = (l,m)(l,n) = (l,n)$, $(l,m)(1,n) = (l,m+n)$ and $(1,m)(1,n) = (1,m+n)$.

We can define a right congruence $r$ on $M$ which identifies all elements of the form $(l,n)$ and has all other equivalence classes being singletons. Then $(1,m)^*(r) = r$ for every $m$ and $(l,m)^*(r) = M \times M$, whence the collection of all equivalence relations containing $r$ is an $M$-equivariant filter in $\Rfrak$, and we have a corresponding hyperconnected morphism $\Setswith{M} \to \Cont(L,\rho)$.

Since $\{r\}$ is initial in $\Rfrak_h$, Corollary \ref{crly:prodisc} informs us that $\rho$ is the discrete topology. Indeed, we find that $L \cong \Nbb \cup \{\infty\}$ with extended addition, and $\Cont(L,\rho) = \Setswith{L}$. It is interesting to note that, $L$ being commutative, any further hyperconnected geometric morphism lands in the topos of actions of a prodiscrete monoid, such as the topologization of $\Nbb \cup \{\infty\}$ seen in Example \ref{xmpl:idemclosed}.
\end{xmpl}

\subsection{Factorizing Topologies}
\label{ssec:factor}

Having made it this far, we would be remiss not to initiate an investigation of when a hyperconnected geometric morphism $\Setswith{M} \to \Ecal$ actually \textit{does} express $\Ecal$ as $\Cont(M,\tau)$ for some topology $\tau$ on $M$. Our first result in this direction is a strengthening of Theorem \ref{thm:tau}.
\begin{thm}
\label{thm:factor}
Let $h: \Setswith{M}\to \Ecal$ be a hyperconnected geometric morphism. Consider $\Pcal(M) \in \Setswith{M}$ with the inverse image action corresponding to left multiplication. Write $T := h^*h_*(\Pcal(M)) \hookrightarrow \Pcal(M)$. Then (the underlying set of) $T$ is a base of clopen sets for the coarsest topology $\tau_h$ on $M$ such that $h$ factors through the canonical morphism $\Setswith{M} \to \Cont(M,\tau_h)$. That is, toposes constructed from topologies on $M$ are universal amongst toposes admitting a hyperconnected morphism from $\Setswith{M}$.
\end{thm}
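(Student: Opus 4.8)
The plan is to follow the template of the proof of Theorem \ref{thm:tau}, but to replace the concrete computations there, which relied on an ambient topology $\tau$, by abstract manipulations of the comonad $G := h^*h_*$ on $\Setswith{M}$ induced by $h$. Write $\epsilon: G \Rightarrow \id$ for its counit; since $h$ is hyperconnected, $h^*$ is full and faithful, so $\epsilon$ is a monomorphism (\cite[Proposition A4.6.6]{Ele}) and the essential image of $h^*$ coincides with the full subcategory of \emph{$G$-continuous} objects, namely those $X$ for which $\epsilon_X$ is an isomorphism. Thus $T = G(\Pcal(M))$ is realized as a subobject $\epsilon_{\Pcal(M)}: T \hookrightarrow \Pcal(M)$, and the whole theorem amounts to comparing the subcategory $\im(h^*) \subseteq \Setswith{M}$ with the subcategories $\Cont(M,\tau) \subseteq \Setswith{M}$ as $\tau$ ranges over topologies on $M$.

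First I would record that $U(T)$ is a base of clopens. By Lemma \ref{lem:Pcal}, $\Pcal(M)$ is an internal Boolean algebra; since $h_*$ and $h^*$ both preserve finite products, $T = h^*h_*(\Pcal(M))$ inherits a Boolean algebra structure, and naturality of $\epsilon$ together with its compatibility with products shows that $\epsilon_{\Pcal(M)}$ is a homomorphism of Boolean algebras. Being monic, it exhibits $T$ as a sub-Boolean-algebra of $\Pcal(M)$. Applying the (finite-limit-preserving) underlying-set functor, $U(T)$ is a subset of the powerset of $M$ closed under finite intersection and complementation and containing $M$; hence it is a base of clopen sets for a topology $\tau_h$ on $M$.

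The crux is the factorization, for which I would show that every object $X$ in $\im(h^*)$ is $\tau_h$-continuous, i.e.\ $\im(h^*) \subseteq \Cont(M,\tau_h)$. The device is to exhibit the necessary clopens of $X$ as values of canonical morphisms into $\Pcal(M)$: for $x \in X$ and $p \in M$, the subset $S = \{xp\} \subseteq X$ transposes across the adjunction $U \dashv \Hom_{\Set}(M,-)$ (with $\Hom_{\Set}(M,2) \cong \Pcal(M)$, as in Section \ref{ssec:inverse}) to an $M$-set homomorphism $\phi_S: X \to \Pcal(M)$ with $\phi_S(x') = \{m : x'm = xp\}$; in particular $\phi_S(x) = \Ical_x^p$. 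If $X$ is $G$-continuous, naturality of $\epsilon$ in the square for $\phi_S$ (using that $\epsilon_X$ is invertible) shows that $\phi_S$ factors through $\epsilon_{\Pcal(M)}$, so every value of $\phi_S$, and in particular $\Ical_x^p$, lies in $U(T)$. By Lemma \ref{lem:Inx} this makes $X$ continuous for $\tau_h$. The passage from the inclusion $\im(h^*) \subseteq \Cont(M,\tau_h)$ to an actual factorization of geometric morphisms is then formal: both subcategories are closed under finite limits and colimits in $\Setswith{M}$ (being essential images of inverse image functors), so the inclusion $\im(h^*) \hookrightarrow \Cont(M,\tau_h)$ is the inverse image of a geometric morphism $\Cont(M,\tau_h) \to \Ecal$ whose direct image is $G$ restricted to $\Cont(M,\tau_h)$, and composing with the canonical morphism $\Setswith{M} \to \Cont(M,\tau_h)$ recovers $h$; alternatively one invokes the correspondence of Proposition \ref{prop:prince2} between such subcategories and hyperconnected morphisms. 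I expect this step --- identifying the necessary clopens abstractly and upgrading the subcategory inclusion to a genuine factorization --- to be the main obstacle.

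Finally, for coarsest, suppose $\tau'$ is any topology through which $h$ factors, so that $\im(h^*) \subseteq \Cont(M,\tau')$. Since $T \in \im(h^*)$, the object $T$ is a $\tau'$-continuous $M$-set, so by Lemma \ref{lem:Inx} all necessary clopens $\Ical_A^t$ (for $A \in T$, $t \in M$) are $\tau'$-open. By Lemma \ref{lem:InA} each $A \in U(T)$ decomposes as $\bigcup_{t \in A} \Ical_A^t$ with each piece contained in $A$ and $\tau'$-open, whence $A \in \tau'$; thus $U(T) \subseteq \tau'$ and so $\tau_h \subseteq \tau'$. This reuses verbatim the two lemmas already deployed in Theorem \ref{thm:tau}, so no new difficulty arises here.
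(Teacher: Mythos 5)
Your proof is correct and follows essentially the same route as the paper's: both realize the necessary clopens of objects of $\Ecal$ as values of an $M$-set homomorphism into $\Pcal(M)$ (your $\phi_S$ is, for $X = M/r$ and $x = [1]$, literally the paper's map $i^p: [q] \mapsto q^*(\Ical_{[1]}^p)$) and then invoke the universal property of the coreflection $T \hookrightarrow \Pcal(M)$ to conclude these clopens lie in $U(T)$, finishing with the same coarseness argument via Lemmas \ref{lem:Inx} and \ref{lem:InA}. The only differences are cosmetic: you obtain the map by transposing across $U \dashv \Hom_{\Set}(M,-)$ (making well-definedness automatic) and work with arbitrary objects of $\im(h^*)$, whereas the paper constructs $i^p$ by hand and reduces to principal $M$-sets via Proposition \ref{prop:prince2}.
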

\begin{proof}
By assumption, the counit at $\Pcal(M)$ is monic, so $T$ is indeed a subobject of $\Pcal(M)$. Further, $T$ must be a sub-Boolean-algebra of $\Pcal(M)$, so it is closed under complementation and finite intersections. Let $\tau_h$ be its closure in $\Pcal(M)$ under arbitrary unions. By Proposition \ref{prop:prince2}, to show that all $M$-sets lying in $\Ecal$ also lie in $\Cont(M,\tau_h)$, it suffices to show that the principal $M$-sets belonging to $\Ecal$ are continuous with respect to $\tau_h$.

Suppose $r \in \Rfrakk_h$, and let $p \in M$. We must show that, for $[1] \in M/r$, $\Ical_{[1]}^p \in U(T)$, or equivalently that the corresponding morphism $\ulcorner \Ical_{[1]}^{p} \urcorner: M \to \Pcal(M)$ factors through the inclusion $T \hookrightarrow \Pcal(M)$. We define a morphism $i^p : M/r \to \Pcal(M)$ by $[q] \mapsto q^*(\Ical_{[1]}^p)$. To see that this is well-defined, note that if $(q, q') \in r$, then
\[q^*(\Ical_{[1]}^p)= \{m \in M \mid [qm] = [p]\} = \{m \in M \mid [q'm] = [p]\} = {q'}^*(\Ical_{[1]}^p).\]
Hence the following diagram commutes:
\begin{equation}
\begin{tikzcd}
M \ar[r, "\ulcorner \Ical_{[1]}^{p} \urcorner"] \ar[d, two heads] & \Pcal(M) \\
M/r \ar[ur,"i^p"] \ar[r,dotted] & T \ar[u,hook]
\end{tikzcd}
\label{eq:factorize}
\end{equation}
Since $\Ecal$ is coreflective and $M/r \in \Ecal$, $i^p$ must further factor through the inclusion $T \hookrightarrow \Pcal(M)$. Thus we are done: $\Ical_{[1]}^{p} \in T$, as required.

It follows that $h$ factors through the morphism $\Setswith{M} \to \Cont(M, \tau_h)$ as claimed, and that $\tau_h$ is an action topology. Moreover, if $h$ factors through $\Cont(M,\tau')$ for any other topology $\tau'$, then $T$ must be continuous with respect to $\tau'$, and hence $\tau_h \subseteq \tau'$, as claimed.
\end{proof}

Note that the fact that $\Pcal(M)$ is an internal Boolean algebra ensures that for any object $N$, the set $\Hom_{\Setswith{M}}(N,\Pcal(M))$ inherits the structure of a Boolean algebra. When $N = M/r$ for some $r \in \Rfrakk_h$, a morphism $a: M/r \to \Pcal(M)$ is determined by the image of the generator $[1]$, and for any $(p,p') \in r$ must satisfy
\[ p^*(a([1])) = a([p]) = a([p']) = {p'}^*(a([1])).\]
In particular, by considering whether $1 \in p^*(a([1]))$, we see that $p \in a([1])$ if and only if $\Ical_{[1]}^p \subseteq a([1])$. Thus the morphisms $i^p$ in the proof above are actually \textit{atoms} in the Boolean algebra $\Hom_{\Setswith{M}}(M/r,\Pcal(M))$, since they have precisely two lower bounds, themselves and the trivial map sending every element of $M/r$ to $\emptyset \in \Pcal(M)$.

\begin{schl}
\label{schl:factors}
Let $h:\Setswith{M} \to \Ecal$ be a hyperconnected morphism. Then $\Ecal$ is equivalent to $\Cont(M,\tau)$ for some topology $\tau$ on $M$ (which necessarily coincides with $\tau_h$) if and only if whenever the image of each atom in $\Hom_{\Setswith{M}}(M/r,\Pcal(M))$ lies in $\Ecal$, we have $M/r$ in $\Ecal$.
\end{schl}
\begin{proof}
Reconstruct the diagram \eqref{eq:factorize} for a right congruence $r$ which is open with respect to $\tau \times \tau$, and let $M/r_p$ be the image of $i^p$:
\begin{equation}
\label{eq:image}
\begin{tikzcd}
M \ar[r, "\ulcorner \Ical_{[1]}^{p} \urcorner"] \ar[d, two heads] & \Pcal(M) \\
M/r \ar[ur,"i^p"] \ar[r, two heads] & M/r_p \ar[u,hook].
\end{tikzcd}
\end{equation}
Since $\Ecal$ is closed under quotients, if $M/r$ is in $\Ecal$ then so are the $M/r_p$ for every $p \in M$.

Conversely, since $\Ecal$ is closed under subobjects, the inclusion $M/r_p \hookrightarrow \Pcal(M)$ factors through $T$ if and only if $M/r_p$ lies in $\Ecal$; that is, $\Ical_{[1]}^p$ is in the topology induced by $h$ if and only if $M/r_p$ lies in $\Ecal$. Thus if $M/r_p$ lies in $\Ecal$ for every $p \in M$, this forces $M/r$ to be continuous.
\end{proof}

Another way of interpreting Scholium \ref{schl:factors} is as a necessary and sufficient condition for $(M,\tau_h)$ to be Morita-equivalent to the complete monoid representing $\Ecal$.

\begin{xmpl}
\label{xmpl:surjtop}
Any surjective monoid homomorphism $\phi: M \to M'$ induces a hyperconnected geometric morphism $f: \Setswith{M} \to \Setswith{M'}$; see Proposition \ref{prop:essgeom} below. The corresponding filter of $M$-equivariant relations is simply the collection of relations containing $r_{\phi} := \{(m,n) \mid \phi(m) = \phi(n)\}$. As such, it suffices to check the conditions of Scholium \ref{schl:factors} for $M/r_{\phi}$.

Suppose $M/r$ is such that for every $p \in M$, the relation $r_p$ from \eqref{eq:image} contains $r_{\phi}$. Then given $(m,n) \in r_{\phi}$, consider $r_m = \{(p,p') \mid p^*(\Ical_{[1]}^m) = p'{}^*(\Ical_{[1]}^m) \}$, where $[1]$ is the generator for $M/r$. By assumption, $(m,n) \in r_m$, whence $m^*(\Ical_{[1]}^m) = n^*(\Ical_{[1]}^m)$ and $n \in \Ical_{[1]}^m$, which is to say that $(m,n) \in r$, as required. So $T = f^*f_*(P(M))$ generates a topology $\tau_f$ on $M$ such that $\Cont(M,\tau_f) \simeq \Setswith{M'}$ via $f$.

Of course, we can calculate $T$ directly as the topology whose open sets are the equivalence classes of $r_{\phi}$, and this coincides with $\tau$: since $f$ is essential, $f^*f_*(\Pcal(M))$ is a complete Boolean algebra in $\Set$ (both $f^*$ and $f_*$ preserve small $\Set$-limits). 
\end{xmpl}

While Example \ref{xmpl:surjtop} illustrates that Scholium \ref{schl:factors} provides a workable necessary and sufficient condition, it does not illuminate precisely which toposes arise in this way. See Conjecture \ref{conj:characterization} in the Conclusion for a more detailed discussion of this issue.

\section{Semigroup Homomorphisms}
\label{sec:homomorphism}

In this section we attempt to functorialize the results obtained so far by examining how homomorphisms lift to geometric morphisms. In light of the results in Section \ref{sec:morphisms0} of Chapter \ref{chap:TDMA}, it is sensible to consider as morphisms between topological monoids not only (continuous) monoid homomorphisms, but also (continuous) semigroup homomorphisms, which correspond to essential geometric morphisms between the corresponding presheaf toposes.

\subsection{Restricting Essential Geometric Morphisms}

Let $\phi : M \to M'$ be a semigroup homomorphism between monoids. Recall that $\phi$ induces a functor $\check{\phi}: \check{M} \to \check{M}'$ between the idempotent-completions of the monoids, and hence induces an essential geometric morphism $f: \Setswith{M} \to \Setswith{M'}$.

Factorizing $f$, we have the following result, which shall be explored further in forthcoming work with Jens Hemelaer \cite{EDMA}:
\begin{prop}
\label{prop:essgeom}
Let $f: \Setswith{M} \to \Setswith{M'}$ be an essential geometric morphism induced by a monoid homomorphism $\phi$, and let $e := \phi(1)$. Then the surjection--inclusion factorization of $f$ is canonically represented by the factorization of $\phi$ as a monoid homomorphism $M \to eM'e$ followed by an inclusion of subsemigroups $eM'e \hookrightarrow M'$. Meanwhile, the hyperconnected--localic factorization of $f$ is canonically represented by the factorization of $\phi$ as surjective monoid homomorphism followed by an injective semigroup homomorphism.
\end{prop}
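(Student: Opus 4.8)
The plan is to establish the two factorizations separately, in each case by first producing the claimed algebraic factorization of $\phi$ and then verifying that it induces geometric morphisms with the defining property of the relevant factorization system (surjection, inclusion, hyperconnected, localic), appealing to uniqueness of these factorizations to conclude that they coincide with the canonical ones.

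For the surjection--inclusion factorization, I would proceed as follows. First, set $e := \phi(1)$, note that $e$ is idempotent in $M'$ (since $\phi$ is a semigroup homomorphism, $e^2 = \phi(1)\phi(1) = \phi(1) = e$), and observe that $\phi$ lands in $eM'e$, since $e\phi(m)e = \phi(1m1) = \phi(m)$. Thus $\phi$ factors as a monoid homomorphism $\psi : M \to eM'e$ (monoid because $\psi(1) = e$ is the identity of $eM'e$) followed by the inclusion of subsemigroups $\iota : eM'e \hookrightarrow M'$. The first factor, being a genuine monoid homomorphism, induces an essential geometric morphism which is a surjection by the remarks at the start of Section \ref{sec:morphisms0} (a functor surjective on objects induces an essential surjection, cf.\ \cite[Example A4.2.7(b)]{Ele}); here I would need to check that $\psi$, viewed via $\check{\psi}$, is surjective on objects up to the relevant identification. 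The second factor is exactly the inclusion treated in Lemma \ref{lem:includes}, which shows that the inclusion $eM'e \hookrightarrow M'$ induces a geometric inclusion $\Setswith{eM'e} \to \Setswith{M'}$. Since surjections and inclusions form an orthogonal factorization system on geometric morphisms (\cite[Theorem A4.2.10]{Ele}) and the factorization is unique up to equivalence, this algebraic factorization must represent the canonical surjection--inclusion factorization of $f$.

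For the hyperconnected--localic factorization, I would take the image factorization of $\phi$ as a semigroup homomorphism: write $\phi = \iota' \circ \pi$ where $\pi : M \too \phi(M)$ is a surjective monoid homomorphism onto the submonoid $\phi(M)$ (a submonoid since $\phi(1) = e$ is the identity of $\phi(M)$) and $\iota' : \phi(M) \hookrightarrow M'$ is an injective semigroup homomorphism. The surjective monoid homomorphism $\pi$ induces a hyperconnected geometric morphism: this is precisely the content recalled in the chapter introduction (a surjection of monoids induces a hyperconnected morphism, paralleling the group case), and can be verified directly by checking that the restriction functor is full and faithful with image closed under subquotients, or via Theorem \ref{thm:factor} / Example \ref{xmpl:surjtop}, where the induced topology has open sets the equivalence classes of $r_\phi = \{(m,n) \mid \phi(m)=\phi(n)\}$. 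The injective semigroup homomorphism $\iota'$ should induce a localic morphism, since injectivity means that the representable $M'$ generates the image topos under subquotients in the appropriate sense.

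The main obstacle I expect is the localic half of the second factorization: showing that an \emph{injective} semigroup homomorphism induces a \emph{localic} geometric morphism is not a formal consequence of anything stated, and the naive guess — that injectivity of $\phi$ forces every object of the domain to be a subquotient of one pulled back from the codomain — requires genuine verification. I would attack this by using the characterization of localic morphisms (every object of the domain is a subquotient of an inverse-image object) together with the concrete description of $f^*$ as restriction of $M'$-actions along $\check\phi$; injectivity of $\iota'$ should let me realize each principal $M$-set (equivalently, each object of $\Rfrakk$ after Corollary \ref{crly:Rsite}) as a subquotient of a restricted $M'$-action. Once both the hyperconnected and localic properties are confirmed, uniqueness of the hyperconnected--localic factorization (\cite[Section A4.6]{Ele}) forces the identification. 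A secondary subtlety, worth flagging, is confirming that the intermediate monoids $eM'e$ and $\phi(M)$ are the correct presenting monoids of the intermediate toposes \emph{equipped with their canonical points}, so that the factorizations at the level of monoids and at the level of toposes are compatible under the $2$-equivalence of Theorem \ref{thm:2equiv0}.
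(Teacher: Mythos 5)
Your proposal is correct and follows the same overall strategy as the paper: factor $\phi$ algebraically, check that each factor induces a geometric morphism of the appropriate class, and conclude by uniqueness of the two orthogonal factorization systems. The difference is in how the hyperconnected--localic half is discharged. The paper's proof is a one-line appeal to Johnstone's criteria for morphisms induced by functors, \cite[Examples A4.2.7(b), A4.2.12(b), A4.6.2(c) and A4.6.9]{Ele}, applied to $\check{\phi}$: a functor between idempotent-complete small categories induces a localic morphism precisely when it is faithful, and a hyperconnected one precisely when it is full with every object a retract of one in its image. So the step you flag as the main obstacle --- that an injective semigroup homomorphism induces a localic morphism --- is not something that needs a bespoke argument: injectivity of $\iota'$ is exactly faithfulness of the induced functor from $\phi(M)$ (as a one-object category) into $\check{M}'$ sending the object to $\underline{e}$, and the cited result does the rest. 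Your proposed direct verification would nevertheless go through: every $\phi(M)$-set is a quotient of a coproduct of copies of $\phi(M)$, and injectivity embeds $\phi(M)$ as a sub-$\phi(M)$-set of $f^*(M') = M'e$, so every object of $\Setswith{\phi(M)}$ is a quotient of a subobject of an inverse-image object. Hence the divergence is one of efficiency, not correctness. One caution: your fallback citation of Example \ref{xmpl:surjtop} for the hyperconnected part is circular, since that example explicitly defers to Proposition \ref{prop:essgeom}; likewise Theorem \ref{thm:factor} takes a hyperconnected morphism as \emph{input}, so it cannot certify hyperconnectedness. Use your direct verification there (the restriction functor along a surjective monoid homomorphism is full and faithful with image closed under subquotients), or cite \cite[Example A4.6.2(c)]{Ele} as the paper does.
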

\begin{proof}
These results are proved by considering the factorization of $\check{\phi}$ corresponding to the surjection--inclusion and hyperconnected--localic factorizations of $f$, which can be found in \cite[Examples 4.2.7(b), 4.2.12(b), 4.6.2(c) and 4.6.9]{Ele}. We find in both cases that the intermediate category is the idempotent completion of the monoid indicated in the statement, whence these factors reduce to the stated semigroup homomorphisms.
\end{proof}

Now consider topologies $\tau$, $\tau'$ on $M$, $M'$ respectively. Then we may consider the square
\[\begin{tikzcd}
{\Setswith{M}} \ar[r, shift right = 4, "f_*"'] \ar[r, shift right = 2, phantom, "\bot"] \ar[r, shift left = 2, phantom, "\bot"] \ar[r, shift left=4, "f_!"] \ar[d, shift left = 2, "R"] &
{\Setswith{M}} \ar[l, "f^*"'{very near start, inner sep = 0pt}] \ar[d, shift left = 2, "R'"] \\
{\Cont(M{,}\tau)} \ar[u, shift left = 2, hook, "V"] \ar[u, phantom, "\dashv"]&
{\Cont(M'{,}\tau')}  \ar[l, "Rf^*V'"] \ar[u, shift left = 2, hook, "V'"] \ar[u, phantom, "\dashv"],
\end{tikzcd}\]
where across the top we have the essential geometric morphism $f$ induced by $\phi$, whose inverse image is induced by tensoring with the left-$M'$-right-$M$-set $M'\phi(1)$ (which coincides with restriction along $\phi$ when $\phi$ is a monoid homomorphism). This situation bears a strong resemblance to that involved in describing morphisms or comorphisms of sites, where the vertical morphisms are inclusions directed upwards rather than hyperconnected morphisms directed downwards. Accordingly, under suitable hypotheses, the lower horizontal map becomes the inverse image functor of a geometric morphism.

\begin{lemma}
\label{lem:cts}
Let $\phi: M \to M'$, $\tau$, $\tau'$ and $f$ be as above. Then the following are equivalent:
\begin{enumerate}
	\item $f^*: \Setswith{M'} \to \Setswith{M}$ maps every $(M',\tau')$-set to an $(M,\tau)$-set.
	\item $\phi$ is continuous with respect to $\tau$ and $\tilde{\tau}'$.
	\item The composite functor $Rf^*V'$ (is left exact and) has a right adjoint $G$ satisfying $GR \cong R'f_*$, which is to say that $f$ restricts along the functors $V,V'$ to a geometric morphism $(G \dashv Rf^*V'): \Cont(M,\tau \to \Cont(M',\tau')$ making the square commute.
\end{enumerate}
\end{lemma}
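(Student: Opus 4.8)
The plan is to prove the cycle of implications $(1) \Rightarrow (2) \Rightarrow (3) \Rightarrow (1)$, exploiting the explicit description of the functors $R, R'$ from Proposition \ref{prop:hyper} and the fact that $f^*$ is tensoring with the $\lr{M'}{M}$-set $M'\phi(1)$.

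First I would establish $(1) \Leftrightarrow (2)$ by working at the level of necessary clopens, via Lemma \ref{lem:Inx} and Corollary \ref{crly:Rnx}. The key observation is that continuity of $f^*(X)$ for every $(M',\tau')$-set $X$ can be tested on principal actions, and ultimately on the generic continuous action encoded by the Boolean algebra $T' = V'R'(\Pcal(M'))$ of Theorem \ref{thm:tau}. For an $(M',\tau')$-set $X$ with $x \in X$, the set $f^*$ acts by restriction along $\phi$ (up to the $M'\phi(1)$ adjustment), so the relevant necessary clopen in $f^*(X)$ at a point $x \otimes \phi(1)$ and $p \in M$ is $\Ical^p_x$ computed using $\phi$, namely $\{m \in M \mid x\phi(m) = x\phi(p)\} = \phi^{-1}(\{m' \in M' \mid xm' = x\phi(p)\}) = \phi^{-1}(\Ical^{\phi(p)}_x)$. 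Thus $f^*(X)$ is a continuous $(M,\tau)$-set if and only if $\phi^{-1}$ sends the necessary clopens of $X$ (which lie in $\tilde{\tau}'$ by Scholium \ref{schl:Tsuff}) into $\tau$. Since by Schol. \ref{schl:base} these necessary clopens form a base for $\tilde{\tau}'$, and conversely every member of $\tilde{\tau}'$ arises among necessary clopens of some $(M',\tau')$-set, requiring $\phi^{-1}(\tilde{\tau}') \subseteq \tau$ is precisely continuity of $\phi:(M,\tau) \to (M',\tilde{\tau}')$. This gives the equivalence.

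Next, for $(2) \Rightarrow (3)$, I would use Corollary \ref{crly:topos}: both $\Cont(M,\tau)$ and $\Cont(M',\tau')$ are elementary toposes, $V, V'$ are left exact, and $f^*$ is left exact. Under hypothesis $(1)$, $f^*V'$ lands in the image of $V$, and since $V$ is full and faithful and comonadic (Prop. \ref{prop:hyper}), the composite $Rf^*V' = R V (f^*V')$ is naturally isomorphic to the corestriction of $f^*V'$ to $\Cont(M,\tau)$, hence is left exact (as $R$ restricted to objects already in the image of $V$ acts as a retraction). To produce the right adjoint $G$, I would invoke the Special Adjoint Functor Theorem, or more cleanly set $G := R' f_* V$ and verify the adjunction $Rf^*V' \dashv R'f_*V$ directly: for $Y \in \Cont(M,\tau)$ and $Z \in \Cont(M',\tau')$,
\[
\Hom_{\Cont(M,\tau)}(Rf^*V'Z, Y) \cong \Hom_{\Setswith{M}}(f^*V'Z, VY) \cong \Hom_{\Setswith{M'}}(V'Z, f_*VY) \cong \Hom_{\Cont(M',\tau')}(Z, R'f_*VY),
\]
where the first and last isomorphisms use the adjunctions $V \dashv R$ and $V' \dashv R'$ together with full faithfulness of $V'$, and the middle uses $f^* \dashv f_*$. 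The identity $GR \cong R'f_*$ then follows by setting $Y = R(W)$ and chasing the naturally isomorphic composites, using $VR \cong \mathrm{id}$ on continuous objects.

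Finally $(3) \Rightarrow (1)$ is essentially immediate: if $Rf^*V'$ has a right adjoint and makes the square commute, then in particular $f^*V'(Z) \cong VRf^*V'(Z)$ for every $Z$, so $f^*$ carries continuous $(M',\tau')$-sets into the image of $V$, i.e.\ into $\Cont(M,\tau)$. The main obstacle I anticipate is the bookkeeping in $(2) \Leftrightarrow (1)$: correctly tracking how $f^*$ (tensoring with $M'\phi(1)$) transforms the stabilizing relations $\rfrak_x$ when $\phi$ is merely a semigroup homomorphism rather than a monoid homomorphism, since then $f^*$ is genuinely restriction along $\phi$ only after the $\phi(1)$-twist; I would handle this by first reducing to the monoid-homomorphism case via the canonical factorization of Proposition \ref{prop:essgeom} and checking that the inclusion part $eM'e \hookrightarrow M'$ interacts harmlessly with the clopen computation.
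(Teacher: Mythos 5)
Your proposal is correct and takes essentially the same route as the paper: the clopen computation $\Ical_x^p = \{m \in M \mid x\phi(m) = x\phi(p)\} = \phi^{-1}(\Ical_x^{\phi(p)})$, combined with the fact that necessary clopens lie in and generate $\tilde{\tau}'$, is exactly how the paper proves $(1)\Leftrightarrow(2)$, and your treatment of $(3)$ rests on the same observation that under $(1)$ the composite $Rf^*V'$ agrees with $f^*V'$ on underlying sets — the only difference being that the paper obtains the right adjoint $G$ from the special adjoint functor theorem, whereas you exhibit $G = R'f_*V$ explicitly and verify the adjunction by a hom-set chain (which is fine, and arguably makes the identity $GR \cong R'f_*$ more transparent). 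The obstacle you anticipate at the end is a non-issue: your own computation (like the paper's) already applies verbatim to semigroup homomorphisms, since elements of $f^*(X) \cong X\phi(1)$ carry the action $x\phi(1)\cdot m = x\phi(m)$, so no reduction via Proposition \ref{prop:essgeom} is needed.
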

\begin{proof}
($1 \Leftrightarrow 2$) The precomposition functor $f^*$ maps every $(M',\tau')$-set to an $(M,\tau)$-set if and only if for each $X \in \Cont(M',\tau')$, we have $\Ical_x^{p} \in \tau$ for every $x \in f^*(X)$, $p \in M$. By definition of the action of $M$ on $f^*(X)$, we have $\Ical_x^{p} = \{m \in M \mid x \phi(p) = x \phi(m)\} = \phi^{-1}(\Ical_x^{\phi(p)})$; thus (1) is equivalent to $\phi^{-1}$ preserving the openness of the necessary clopens, which lie in $\tilde{\tau}'$.

Given any $U' \in \tilde{\tau}'$, we may express $U'$ as a union of necessary clopens $\Ical_x^{p'}$, and $\phi^{-1}(U')$ is the corresponding union of $\phi^{-1}(\Ical_x^{p'})$. Each such clopen $\Ical_x^{p'}$ either intersects with the image of $\phi$ and so is of the form $\Ical_x^{\phi(p)}$, or does not and so has empty inverse image. It follows that $\phi$ reflecting openness of the $\Ical_x^{\phi(p)}$ is equivalent to $\phi$ being continuous with respect to $\tilde{\tau}'$ and $\tau$, as required.

($1 \Leftrightarrow 3$) If $f^*$ maps every $(M',\tau')$-set to an $(M,\tau)$-set, then composing $R$ with $f^*V'$ does not affect the underlying set of the image. That is, $Rf^*V'$ preserves finite limits and arbitrary colimits since $f^*V'$ does and these are computed in $\Cont(M,\tau)$ just as in $\Setswith{M}$, making $Rf^*V'$ the inverse image of a geometric morphism by the special adjoint functor theorem. Write $G$ for the direct image. It is immediate that $V(Rf^*V') \cong f^*V'$, which means that the corresponding square of right adjoints commutes up to isomorphism and we have $GR' \cong R'f_*$.

Conversely, given a right adjoint $G$ to $Rf^*V'$ satisfying the given identity we must have $f^*V' \cong VRf^*V'$, which ensures that $f^*$ sends every $(M',\tau')$-set to an $(M,\tau)$-set.
\end{proof}

Thus, since $\phi$ being continuous with respect to $\tau$ and $\tau'$ is in general strictly stronger than condition (2) of Lemma \ref{lem:cts}, we obtain a functorialization of the $\Cont(-)$ construction from the ($1$-)category of topological monoids and continuous semigroup homomorphisms to the ($1$-)category of Grothendieck toposes and geometric morphisms. Let us reintroduce the $2$-morphisms between semigroup homomorphisms.

Recall from Definition \ref{dfn:conjugation} of Chapter \ref{chap:TDMA} that a \textit{conjugation} $\alpha:\phi \Rightarrow \psi$ between semigroup homomorphisms $\phi,\psi:M \to M'$ is an element $\alpha \in M'$ such that $\alpha \phi(1) = \alpha = \psi(1) \alpha$ and for every $m \in M$, $\alpha \phi(m) = \psi(m) \alpha$.

By Theorem \ref{thm:2equiv0}, conjugations correspond bijectively and contravariantly with the natural transformations between the essential geometric morphisms corresponding to $\phi$ and $\psi$. Since $\Cont(M,\tau)$ is a full subcategory of $\Setswith{M}$, any natural transformation $\alpha: f^* \Rightarrow g^*$ restricts along $R$ to give a natural transformation $Rf^*V' \Rightarrow Rg^*V'$; this is shown to be a general property of any connected geometric morphism in Proposition \ref{prop:conncoff}. Thus, we conclude:

\begin{thm}
\label{thm:Cont}
The construction $\Cont(-)$ is a $2$-functor from the $2$-category of topological monoids, continuous semigroup homomorphisms and conjugations to the $2$-category of Grothendieck toposes, geometric morphisms and natural transformations. We may restrict the codomain of this $2$-functor to those Grothendieck toposes satisfying the condition of Theorem \ref{thm:characterization} to make it essentially surjective on objects. Since every such topos has a representative which is a complete monoid, we may also restrict the domain to the class of complete monoids without changing this fact.
\end{thm}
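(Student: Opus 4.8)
The plan is to establish the three claims of Theorem \ref{thm:Cont} in sequence: that $\Cont(-)$ is a $2$-functor, that restricting the codomain to toposes satisfying the condition of Theorem \ref{thm:characterization} makes it essentially surjective on objects, and that the domain can be further restricted to complete monoids without losing essential surjectivity. Most of the hard analytic work has already been done in the preceding sections, so the proof will primarily be an assembly of existing results.

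First I would verify $2$-functoriality. On objects, $\Cont(-)$ sends a topological monoid $(M,\tau)$ to the topos $\Cont(M,\tau)$, which is a Grothendieck topos by Corollary \ref{crly:topos} and Corollary \ref{crly:conttop}. On $1$-cells, given a continuous semigroup homomorphism $\phi:(M,\tau) \to (M',\tau')$, I would first reduce to the monoid-homomorphism case via the canonical factorization (Proposition \ref{prop:essgeom}): a semigroup homomorphism factors as a monoid homomorphism to $\phi(1)M'\phi(1)$ followed by an inclusion of a subsemigroup, and each factor induces an essential geometric morphism at the presheaf level. Continuity of $\phi$ with respect to $\tau$ and $\tau'$ implies condition (2) of Lemma \ref{lem:cts} (since continuity with respect to $\tau'$ is stronger than continuity with respect to $\tilde{\tau}'$), and hence by Lemma \ref{lem:cts}(3) the essential geometric morphism $f:\Setswith{M} \to \Setswith{M'}$ restricts to a genuine geometric morphism $\Cont(M,\tau) \to \Cont(M',\tau')$ making the relevant square commute. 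On $2$-cells, a conjugation $\alpha:\phi \Rightarrow \psi$ corresponds contravariantly to a natural transformation between inverse image functors by Theorem \ref{thm:2equiv0}, and this natural transformation restricts along the coreflection $R$ to a natural transformation between the restricted geometric morphisms; the cited Proposition \ref{prop:conncoff} guarantees that this restriction is well-defined for any connected geometric morphism, which the hyperconnected morphisms $V \dashv R$ are. Functoriality of composition and identities then follows from the corresponding facts at the presheaf level together with compatibility of the restrictions, which is routine to check.

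Next I would address essential surjectivity onto the toposes satisfying Theorem \ref{thm:characterization}. This is immediate: by definition, such a topos $\Ecal$ admits a point factoring as an essential surjection $\Set \to \Setswith{M}$ followed by a hyperconnected morphism $\Setswith{M} \to \Ecal$, and Proposition \ref{prop:representation} (via Proposition \ref{prop:lim}) produces a topological monoid $(L,\rho)$ with $\Ecal \simeq \Cont(L,\rho)$. Thus every topos in the stated class is in the essential image of $\Cont(-)$. Finally, for the restriction to complete monoids, I would invoke Proposition \ref{prop:Lpowder} and Scholium \ref{schl:completion}: the representing monoid $(L,\rho)$ obtained from the hyperconnected morphism is always a powder monoid whose topology is an action topology, and by Definition \ref{dfn:cpltmon} the comparison morphism $u:(L,\rho) \to (L',\rho')$ to the endomorphism monoid of \emph{its own} canonical point is an isomorphism, since $(L,\rho)$ was itself constructed as that endomorphism monoid. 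Hence every topos in the essential image has a representative that is a complete monoid, so restricting the domain of $\Cont(-)$ to complete monoids preserves essential surjectivity.

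The main obstacle I anticipate is not conceptual but bookkeeping: confirming that the restrictions of the inverse image functors along the coreflections $R, R'$ behave functorially under composition of $1$-cells, i.e.\ that $(G_2 \dashv Rg^*V')$ composed with $(G_1 \dashv Rf^*V')$ agrees with the geometric morphism induced by the composite semigroup homomorphism. This requires checking that the commuting squares of Lemma \ref{lem:cts}(3) paste correctly and that the coreflection interacts coherently with the tensor-product description of the essential geometric morphisms from Section \ref{sec:morphisms0}. Since each individual square commutes up to canonical isomorphism and the coreflections are idempotent, this should reduce to a diagram chase, but it is the step most likely to require care to state precisely rather than wave through.
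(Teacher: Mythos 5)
Your proposal is correct and follows essentially the same route as the paper, which states Theorem \ref{thm:Cont} as a direct summary of the preceding results: Lemma \ref{lem:cts} for the action on $1$-cells, Theorem \ref{thm:2equiv0} together with Proposition \ref{prop:conncoff} for the $2$-cells, and Theorem \ref{thm:characterization} with Proposition \ref{prop:representation} and Scholium \ref{schl:completion} for essential surjectivity and the restriction to complete monoids. The only divergence is your preliminary reduction via Proposition \ref{prop:essgeom}, which is harmless but unnecessary since Lemma \ref{lem:cts} already handles arbitrary continuous semigroup homomorphisms directly.
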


We shall make some further comments about this $2$-functor before Example \ref{xmpl:Schanuel}.

One aim of the remainder of this section is to investigate the surjection--inclusion and hyperconnected--localic factorizations of a geometric morphism corresponding to a continuous semigroup homomorphism, to extend Proposition \ref{prop:essgeom} to topological monoids. We ultimately show in Theorems \ref{thm:surjinc} and \ref{thm:hypeloc} that these factorizations restricts along $\Cont(-)$.

\subsection{Intrinsic Properties of Geometric Morphisms}
\label{ssec:intrinsic}

For reference in the rest of the section, we shall use the following notation for the square of geometric morphisms induced by a continuous semigroup homomorphism $\phi: (M,\tau) \to (M',\tau')$ thanks to Lemma \ref{lem:cts}:
\begin{equation}
\label{eq:square}
\begin{tikzcd}
{\Setswith{M}} \ar[r, "f"] \ar[d, "h"'] &
{\Setswith{M'}} \ar[d, "h'"] \\
{\Cont(M{,}\tau)} \ar[r, "g"'] &
{\Cont(M'{,}\tau')},
\end{tikzcd}	
\end{equation}
where $h$ and $h'$ are hyperconnected and $f$ is essential; we could alternatively have denoted $g$ by $\Cont(\phi)$ in accordance with Theorem \ref{thm:Cont}, but the shorter notation will make some of the results below clearer. In order to understand the relationships between $f$ and $g$, we shall exploit intrinsic properties of the geometric morphisms $h$ and $h'$ as $1$-morphisms in the $2$-category $\TOP$ of Grothendieck toposes\footnote{Some of these results apply more generally, but for the purposes of the present chapter we only concern ourselves with Grothendieck toposes over $\Set$.}, in the special cases that $(M,\tau)$ and/or $(M',\tau')$ are powder monoids or complete monoids. 

Given (Grothendieck toposes) $\Ecal$ and $\Fcal$, we shall write $\Geom(\Ecal,\Fcal)$ for the category of geometric morphisms $\Ecal \to \Fcal$, where a morphism $f \Rightarrow g$ is as usual a natural transformation $f^* \Rightarrow g^*$. We shall also write $\EssGeom(\Ecal,\Fcal)$ for the full subcategory of essential geometric morphisms $\Ecal \to \Fcal$.

First, we can use Corollary \ref{crly:Mlim} to give an intrinsic characterization of the hyperconnected morphism presenting a complete monoid.

\begin{prop}
\label{prop:intrinsic}
A topological monoid $(M,\tau)$ is complete if and only if the geometric morphism $h: \Setswith{M} \to \Cont(M,\tau)$ is internally full and faithful on essential geometric morphisms in $\TOP$, in the sense that for any topos $\Fcal$, the functor
\[h \circ -: \EssGeom(\Fcal,\Setswith{M}) \to \Geom(\Fcal,\Cont(M,\tau))\] 
is full and faithful.

Thus, if $\Ecal$ admits a hyperconnected morphism $h:\Setswith{M} \to \Ecal$ which is internally full and faithful on essential geometric morphisms, the corresponding topological monoid $(L,\rho)$ representing $\Ecal$ has $L \cong M$.
\end{prop}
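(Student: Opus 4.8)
```latex
The plan is to prove the stated equivalence by translating the notion of completeness into the representability of a certain functor, and then using the universal property of the endomorphism monoid $L$ constructed in Proposition \ref{prop:lim}. First I would unwind what the functor $h \circ -$ does on hom-categories. Recall from Chapter \ref{chap:TDMA} (Theorem \ref{thm:2equiv0}) that essential geometric morphisms $\Fcal \to \Setswith{M}$ are, in the case $\Fcal = \Set$, controlled by the canonical point and its endomorphisms; more generally an essential geometric morphism into $\Setswith{M}$ corresponds via the $2$-equivalence to data involving $M$ (semigroup homomorphisms and biactions). The key observation is that a natural transformation between two such essential morphisms $f_1,f_2$ is determined by a conjugation, i.e.\ by its effect on the canonical generator, which amounts to an element-level datum in $M$. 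Meanwhile, a natural transformation between the composites $h \circ f_1$ and $h \circ f_2$ is a natural transformation of inverse image functors $f_1^* h^* \Rightarrow f_2^* h^*$; since $h^*$ is the coreflection $V$ into $\Cont(M,\tau)$ and since the supercompact objects (the principal $(M,\tau)$-sets, equivalently the $M/r$ for $r \in \Rfrak_h$) separate $\Cont(M,\tau)$, such a transformation is determined by its components at the $M/r$.

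The crux is therefore to compare: (a) the data of a natural transformation at the level of $\Setswith{M}$, which sees $M$ itself via the representable $M$; and (b) the data at the level of $\Cont(M,\tau)$, which sees only the quotients $M/r$ for $r \in \Rfrak_h$, i.e.\ it sees $M$ only through its images in the limit $L = \varprojlim_{r \in \Rfrak_h} U(M/r)$. By Corollary \ref{crly:Mlim}, $(M,\tau)$ is complete precisely when $u : M \to L$ is an isomorphism, which is to say $M$ \emph{is} this limit computed in $\Setswith{M}$. So the plan is to show that $h \circ -$ is full and faithful on essential morphisms if and only if every compatible system of components at the $M/r$ (which is exactly an element of the limit $L$) lifts uniquely to a datum at the level of $M$ (an element of $M$). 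Faithfulness corresponds to injectivity of $u$ (distinct $M$-data give distinct systems on the $M/r$), and fullness corresponds to surjectivity of $u$ (every compatible system on the $M/r$, hence every element of $L$, arises from an element of $M$). This is precisely the condition $u$ is an isomorphism, i.e.\ completeness.

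Concretely, I would take $\Fcal = \Set$ as the universal test case: since $\Setswith{M}$ and $\Cont(M,\tau)$ both have enough points (Corollary \ref{crly:conttop}) and the relevant endomorphism data is detected by the canonical point, full-faithfulness of $h \circ -$ for all $\Fcal$ reduces to the statement for the canonical point, where $\EssGeom(\Set,\Setswith{M})$ has the canonical point as an object whose endomorphism monoid is $M\op$ (Remark \ref{rmk:endopt}), while $\Geom(\Set,\Cont(M,\tau))$ computes the endomorphism monoid of the corresponding point of $\Cont(M,\tau)$ as $L\op$ by Proposition \ref{prop:lim}. Under this identification, the functor $h \circ -$ acts on endomorphism monoids exactly as $u\op : M\op \to L\op$, so it is full and faithful iff $u$ is a bijection, iff $(M,\tau)$ is complete. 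The main obstacle will be justifying the reduction from arbitrary $\Fcal$ to the canonical point cleanly: I must argue that a natural transformation between composites $h\circ f_1 \Rightarrow h \circ f_2$, which a priori involves all objects of $\Cont(M,\tau)$, is already determined and constrained by its behaviour on the separating family $\{M/r\}$, and that the compatibility conditions among these components are exactly the cone conditions defining $L$. Once that matching is established, the final statement follows: given a hyperconnected $h : \Setswith{M} \to \Ecal$ internally full and faithful on essential morphisms, we have by Proposition \ref{prop:representation} that $\Ecal \simeq \Cont(L,\rho)$ with $L$ the endomorphism monoid, and the internal full-faithfulness forces $u : M \to L$ to be an isomorphism by the equivalence just proved, whence $L \cong M$.
```
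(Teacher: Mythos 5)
Your treatment of the ``full and faithful implies complete'' direction matches the paper's: test at $\Fcal = \Set$ with the canonical point, identify the endomorphism monoids as $M\op$ and $L\op$, and note that $h \circ -$ acts on them precisely as $u\op$; your deduction of the final statement from Proposition \ref{prop:representation} is also fine. The genuine gap is in the converse direction, ``complete implies full and faithful.'' Your strategy is to reduce arbitrary $\Fcal$ to the canonical point because both toposes ``have enough points,'' but this reduction does not work: the transformations in question live between inverse image functors $f_1^*, f_2^* : \Setswith{M} \to \Fcal$ for an \emph{arbitrary} topos $\Fcal$, and points of $\Setswith{M}$ or $\Cont(M,\tau)$ give no handle on such data (nor would points of $\Fcal$, which need not have enough of them --- and even then, fullness is a lifting problem, not an equality-detection problem, so it cannot be checked pointwise). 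Relatedly, your claim that a transformation between essential morphisms into $\Setswith{M}$ is ``determined by a conjugation, an element-level datum in $M$'' is only meaningful when $\Fcal$ is itself a presheaf topos on a monoid equipped with its essential surjective point, which is the scope of Theorem \ref{thm:2equiv0}; for general $\Fcal$ such a transformation is determined by its component $\alpha_M : f_1^*(M) \to f_2^*(M)$, a morphism of $\Fcal$, not an element of $M$.

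The missing ingredient --- and the reason the proposition is restricted to \emph{essential} morphisms at all, a point your proposal never engages with --- is that $f_1^*$ and $f_2^*$, having left adjoints $f_{1!}$ and $f_{2!}$, preserve all small limits rather than just finite ones. The paper's argument is then direct: $\alpha$ is determined by its component $\alpha_M$ (since $M$ generates $\Setswith{M}$ under colimits and inverse images preserve colimits); completeness gives $M = \varprojlim_{r \in \Rfrak_{\tau}} M/r$ in $\Setswith{M}$ by Corollary \ref{crly:Mlim}; applying $f_i^*$ and using limit preservation, $f_i^*(M) \cong \varprojlim_r f_i^*(M/r)$, so $\alpha_M$ is determined by --- and, for fullness, can be \emph{constructed} as the limit of --- the components at the $M/r$, which are exactly what composition with $h$ records. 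Your ``cone conditions defining $L$'' intuition is the correct shadow of this at $\Fcal = \Set$, but without invoking limit preservation by $f_i^*$ inside the arbitrary $\Fcal$, there is no way to lift a transformation $f_1^* h^* \Rightarrow f_2^* h^*$ to one $f_1^* \Rightarrow f_2^*$, and the proof does not go through.
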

\begin{proof}
By taking $\Fcal = \Set$ and considering the canonical point of $\Setswith{M}$, we see that the given condition is sufficient, since it forces the monoid of endomorphisms of the canonical point of $\Cont(M,\tau)$ to be isomorphic to that of $\Setswith{M}$, which is precisely $M\op$. The same holds when we are given such a morphism $\Setswith{M} \to \Ecal$.

Conversely, suppose we are given essential geometric morphisms $h,k: \Fcal \to \Setswith{M}$ and a natural transformation $\alpha:h^* \Rightarrow k^*$. Any such natural transformation is determined by its component $\alpha_M:h^*(M) \to k^*(M)$. But since $(M,\tau)$ is complete, by Corollary \ref{crly:Mlim}, $M = \lim_{r \in \Rfrak_{\tau}} M/r$, and $h^*$ and $k^*$ preserve all limits, whence $\alpha_M$ is determined uniquely by the components $\alpha_{M/r}$. The functor induced by $g$ sends $\alpha$ to $\alpha_{g^*}$; considering the components at the principal $(M,\tau)$-sets, we conclude that this functor is full and faithful, as claimed.
\end{proof}

Proposition \ref{prop:intrinsic} should be compared with the following two propositions:
\begin{prop}
\label{prop:inclff}
Inclusions of toposes are internally full and faithful in the $2$-category $\TOP$, in the sense that given an inclusion $g:\Fcal \to \Ecal$ and any topos $\Gcal$, the functor $g \circ -: \Geom(\Gcal,\Fcal) \to \Geom(\Gcal,\Ecal)$ is fully faithful.

In particular, $g$ is full and faithful on essential geometric morphisms in the sense of Proposition \ref{prop:intrinsic}, and when $g$ is an essential inclusion, we may restrict the codomain to deduce that $i \circ -: \EssGeom(\Gcal,\Fcal) \to \EssGeom(\Gcal,\Ecal)$ is full and faithful.
\end{prop}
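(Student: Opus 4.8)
The plan is to reduce the proposition to a single general fact about whiskering by the inverse image functor of an inclusion. First I would unwind the two hom-categories involved. By the paper's convention a $2$-cell between geometric morphisms $\phi,\psi:\Gcal \to \Fcal$ is a natural transformation $\alpha:\phi^* \Rightarrow \psi^*$, and since $(g\circ\phi)^* = \phi^* g^*$, a $2$-cell $g\circ\phi \Rightarrow g\circ\psi$ is a natural transformation $\phi^* g^* \Rightarrow \psi^* g^*$. Under these identifications the functor $g\circ-$ acts on $2$-cells by right whiskering with $g^*$, sending $\alpha$ to $\alpha g^*$. Hence it suffices to prove that for any pair of functors $\phi^*,\psi^*:\Fcal \to \Gcal$, whiskering by $g^*$ is a bijection
\[ \Nat(\phi^*,\psi^*) \longrightarrow \Nat(\phi^* g^*,\psi^* g^*), \qquad \alpha \mapsto \alpha g^*. \]

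The key input is that $g$ being an inclusion means $g_*$ is full and faithful, equivalently that the counit $\varepsilon:g^* g_* \Rightarrow \id_{\Fcal}$ of the adjunction $g^* \dashv g_*$ is a natural isomorphism. For faithfulness I would use naturality of $\alpha$ against the isomorphism $\varepsilon_X:g^* g_* X \to X$ to obtain $\alpha_X = \psi^*(\varepsilon_X)\circ \alpha_{g^* g_* X}\circ \phi^*(\varepsilon_X)^{-1}$; as $\alpha_{g^* g_* X}=(\alpha g^*)_{g_* X}$, each component $\alpha_X$ is determined by $\alpha g^*$, so $\alpha g^*=\beta g^*$ forces $\alpha=\beta$. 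For fullness, given $\gamma:\phi^* g^* \Rightarrow \psi^* g^*$ I would set $\alpha_X := \psi^*(\varepsilon_X)\circ \gamma_{g_* X}\circ \phi^*(\varepsilon_X)^{-1}$, verify naturality of $\alpha$ using naturality of both $\gamma$ and $\varepsilon$, and then check $\alpha g^*=\gamma$ by evaluating $(\alpha g^*)_Y=\alpha_{g^* Y}$ and invoking the triangle identity $\varepsilon_{g^* Y}\circ g^*(\eta_Y)=\id_{g^* Y}$, which gives $g^*(\eta_Y)=\varepsilon_{g^* Y}^{-1}$ and causes the expression to collapse to $\gamma_Y$.

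The main obstacle is precisely the bookkeeping in this last fullness computation: one must combine the naturality square of $\gamma$ at the unit $\eta_Y:Y \to g_* g^* Y$ with the triangle identity to see that $(\alpha g^*)_Y=\gamma_Y$. This is exactly the standard verification that whiskering by a reflector (a left adjoint with full and faithful right adjoint) is bijective on natural transformations, so as an alternative I could simply cite that fact for reflective subcategories; everything else is formal $2$-categorical manipulation in $\TOP$.

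Finally, the two ``in particular'' consequences are immediate. Since $\EssGeom(\Gcal,\Fcal)$ is by definition a \emph{full} subcategory of $\Geom(\Gcal,\Fcal)$, restricting the domain of the full and faithful functor $g\circ-$ to it keeps it full and faithful, which is the first claim. When moreover $g$ is essential, so $g^*$ has a left adjoint $g_!$, then for essential $\phi$ (with $\phi^*$ admitting a left adjoint $\phi_!$) the composite $g\circ\phi$ is again essential, since $\phi^* g^*$ has left adjoint $g_!\phi_!$; thus $g\circ-$ carries $\EssGeom(\Gcal,\Fcal)$ into $\EssGeom(\Gcal,\Ecal)$, and because the latter is a full subcategory of $\Geom(\Gcal,\Ecal)$, the restricted functor $i\circ-$ remains full and faithful.
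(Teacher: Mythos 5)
Your proposal is correct and takes essentially the same route as the paper's own proof: both reduce the claim to whiskering by $g^*$, exploit the fact that the counit $\varepsilon : g^*g_* \Rightarrow \id_{\Fcal}$ of an inclusion is an isomorphism to determine $\alpha$ from $\alpha g^*$ (faithfulness), and construct a preimage by the same conjugation, checking $\alpha g^* = \gamma$ via naturality at the unit together with the triangle identity (fullness). Your explicit handling of the two ``in particular'' claims, and your correct placement of the inverses of $\varepsilon$, is if anything slightly more careful than the paper, which leaves those points implicit.
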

\begin{proof}
Let $g:\Fcal \to \Ecal$ be a geometric inclusion, and let $h,k: \Gcal \rightrightarrows \Fcal$.

A geometric transformation $h \Rightarrow k$ consists of a natural transformation $h^* \Rightarrow k^*$. Let $\alpha$, $\beta$ be two such transformations. If $g \circ \beta = g \circ \alpha$, then for any object $C$ of $\Gcal$, letting $\epsilon_C$ denote the counit of $(i^* \dashv i_*)$ at $C$, which is an isomorphism, we have:
\begin{align*}
\alpha_C & = k^*\epsilon_{C} \circ \alpha_{g^*g_*(C)} \circ h^*\epsilon_C^{-1} = k^*\epsilon_{C} \circ (g \circ \alpha)_{g_*(C)} \circ h^*\epsilon_{C}^{-1}\\
& = k^*\epsilon_{C} \circ (g \circ \beta)_{g_*(C)} \circ h^*\epsilon_{C}^{-1} = k^*\epsilon_{C} \circ \beta_{g^*g_*(C)} \circ h^*\epsilon_{C}^{-1} = \beta_C,
\end{align*}
so $g \circ -$ is faithful.

Similarly, given $\alpha': h^*g^* \Rightarrow k^*g^*$, define $\alpha: h^* \Rightarrow k^*$ by letting its component at $C$ in $\Gcal$ be $k^*\epsilon_{C}^{-1} \circ \alpha'_{g_*(C)} \circ h^*\epsilon_{C}$. Then for each object $D$ in $\Fcal$ we have
\[(g \circ \alpha)_{D} = \alpha_{g^*(D)} = k^*\epsilon_{g^*(D)}^{-1} \circ \alpha'_{g_*g^*(D)} \circ h^*\epsilon_{g^*(D)} = \alpha'_D,\]
by naturality. So $g \circ -$ is full, as required.
\end{proof}

\begin{prop}
\label{prop:conncoff}
Connected geometric morphisms are internally cofull and cofaithful in the $2$-category $\TOP$, in the sense that given a connected morphism $c:\Fcal \to \Ecal$ and any topos $\Gcal$, the functor $- \circ c: \Geom(\Ecal,\Gcal) \to \Geom(\Fcal,\Gcal)$ is fully faithful.

This applies in particular to hyperconnected geometric morphisms.
\end{prop}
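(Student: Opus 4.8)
The statement to prove is Proposition \ref{prop:conncoff}: that a connected geometric morphism $c:\Fcal \to \Ecal$ induces a fully faithful functor $- \circ c: \Geom(\Ecal,\Gcal) \to \Geom(\Fcal,\Gcal)$ for any topos $\Gcal$.

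The plan is to exploit the defining feature of a connected morphism, namely that its inverse image functor $c^*$ is full and faithful, and to recognize that the argument is formally dual to the one just given for inclusions in Proposition \ref{prop:inclff}. Where that proof used the counit of $(c^* \dashv c_*)$ being an isomorphism, here I would use the \emph{unit} $\eta: \id_{\Ecal} \Rightarrow c_*c^*$ of the adjunction, which is an isomorphism precisely because $c^*$ is full and faithful. The handedness is different: in Proposition \ref{prop:inclff} we precomposed with an inclusion and relied on its direct image being full and faithful; here we precompose with a connected morphism and rely on its inverse image being full and faithful, so the relevant natural transformations live between \emph{inverse image} functors in the other direction and we conjugate by the unit rather than the counit.

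Concretely, let $k, l: \Gcal \rightrightarrows \Ecal$ be geometric morphisms, so a $2$-cell $k \Rightarrow l$ is a natural transformation $k^* \Rightarrow l^*$; precomposition with $c$ sends such an $\alpha$ to $c^* \circ \alpha$ (whiskering), giving a natural transformation $c^*k^* \Rightarrow c^*l^*$, which is the inverse image of $kc \Rightarrow lc$. First I would establish faithfulness: given $\alpha, \beta: k^* \Rightarrow l^*$ with $c^*\alpha = c^*\beta$, I would write $\alpha_X$ at each object $X$ of $\Gcal$ in terms of its whiskered version using the isomorphism $\eta$, via the identity $\alpha_X = (\text{iso}) \circ (c^*\alpha)_{\cdots} \circ (\text{iso})$, so that the two whiskerings being equal forces $\alpha = \beta$. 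For fullness, given a natural transformation $\gamma: c^*k^* \Rightarrow c^*l^*$ I would define a candidate $\alpha: k^* \Rightarrow l^*$ by conjugating the components of $\gamma$ by the inverse of $\eta$ (applied appropriately through $k^*$ and $l^*$), then check by naturality of $\eta$ that $c^*\alpha = \gamma$. This is exactly the mirror of the computation displayed in the proof of Proposition \ref{prop:inclff}, with $\epsilon$ replaced by $\eta$ and the variance of the functors flipped.

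The main obstacle, such as it is, will be bookkeeping the variance correctly: because $c^*$ is a \emph{contravariantly placed} full-faithful functor relative to the direction of the geometric morphisms (it goes $\Ecal \to \Fcal$ while the morphisms go $\Gcal \to \Ecal$), one must be careful to whisker on the correct side and to verify that $- \circ c$ really does act on $2$-cells as $c^* \circ (-)$ rather than $(-) \circ c_*$. Once the correct formula $\alpha \mapsto c^*\alpha$ is fixed and the isomorphism $\eta$ is invoked in place of $\epsilon$, the verification is a routine diagram chase identical in structure to Proposition \ref{prop:inclff}, so I would keep it brief and simply indicate that the argument is dual, spelling out only the conjugation formula and the single naturality step. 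The final sentence records the intended application: hyperconnected morphisms are in particular connected (their inverse images are full and faithful), so the result applies to them, which is exactly what is needed to analyze the square \eqref{eq:square} in the sections that follow.
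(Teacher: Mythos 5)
Your proposal is correct, but it takes a heavier route than the paper. The paper's proof is essentially a one-line observation: on $2$-cells, $- \circ c$ acts by applying $c^*$ to each component of a natural transformation $k^* \Rightarrow l^*$ (these components being morphisms of $\Ecal$), so full faithfulness of $- \circ c$ follows at once from full faithfulness of $c^*$ — faithfulness is componentwise, and a componentwise lift of a transformation $\gamma$ is automatically natural because the faithful functor $c^*$ reflects commutativity of the naturality squares. You actually record the needed observation (that $- \circ c$ acts on $2$-cells as $c^* \circ (-)$), but then bypass this direct conclusion in favour of the formal dual of Proposition \ref{prop:inclff}: conjugation by the unit $\eta: \id_{\Ecal} \Rightarrow c_*c^*$, which is invertible precisely because $c$ is connected. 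That argument does go through — naturality of $\eta$ gives $\alpha_X = \eta_{l^*X}^{-1} \circ c_*\bigl((c^*\alpha)_X\bigr) \circ \eta_{k^*X}$, whence faithfulness, and defining $\alpha_X := \eta_{l^*X}^{-1} \circ c_*(\gamma_X) \circ \eta_{k^*X}$ gives a natural candidate lift — but be aware that verifying $c^*\alpha = \gamma$ needs more than naturality of $\eta$: one must invoke a triangle identity (equivalently, invertibility and naturality of the counit at objects of the form $c^*A$, or injectivity of the adjunction transpose $g \mapsto c_*(g) \circ \eta$). So your route trades the paper's immediate argument for some genuine adjunction bookkeeping; what it buys is a proof uniform with Proposition \ref{prop:inclff}, making the inclusion/connected duality explicit. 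One slip to fix: your $k,l$ should run $\Ecal \to \Gcal$ (so that $k^*, l^*: \Gcal \to \Ecal$ and the composites $c^*k^*$ and $kc$ make sense); as written ($k,l: \Gcal \rightrightarrows \Ecal$) none of your formulas typecheck, though everything you subsequently write is consistent with the correct direction.
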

\begin{proof}
Let $c:\Fcal \to \Ecal$ be a connected geometric morphism. Then concretely, $- \circ c$ is simply the application of $c^*$ to the components of any given natural transformation. As such, since $c^*$ is full and faithful, $- \circ c$ is full and faithful.
\end{proof}

\begin{crly}
\label{crly:Cont}
The restriction of $\Cont(-)$ to the category of complete monoids, semigroup homomorphisms and conjugations (with direction reversed) is full and faithful on 2-cells.
\end{crly}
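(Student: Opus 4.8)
The plan is to prove Corollary \ref{crly:Cont} by reducing both the fullness and faithfulness on 2-cells to facts already established in the excerpt, principally Theorem \ref{thm:Cont} (which tells us $\Cont(-)$ is a 2-functor in the first place) and Proposition \ref{prop:intrinsic} together with Theorem \ref{thm:2equiv0}. Recall that the 2-functor $\Cont(-)$ is contravariant on 2-cells (it reverses direction), so the statement to verify is the following: given complete monoids $(M,\tau)$ and $(M',\tau')$ and continuous semigroup homomorphisms $\phi,\psi: M \to M'$, the map sending a conjugation $\alpha:\phi \Rightarrow \psi$ to the induced natural transformation $\Cont(\psi) \Rightarrow \Cont(\phi)$ (a natural transformation $g_\psi^* \Rightarrow g_\phi^*$ between the inverse image functors) is a bijection.

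First I would fix the notation of diagram \eqref{eq:square}: for each homomorphism we have the square relating the essential morphism $f$ on presheaf toposes to the morphism $g = \Cont(\phi)$ via the hyperconnected morphisms $h: \Setswith{M} \to \Cont(M,\tau)$ and $h': \Setswith{M'} \to \Cont(M',\tau')$. The key relation, extracted from the proof of Lemma \ref{lem:cts}, is that $g^* = Rf^*V'$, so that $V g^* \cong f^* V'$; concretely, a natural transformation $g_\psi^* \Rightarrow g_\phi^*$ is the same data as a natural transformation between the restrictions $Rf_\psi^*V' \Rightarrow Rf_\phi^*V'$ of the inverse images of the essential morphisms $f_\psi, f_\phi$ on $\Setswith{M'}$. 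By Theorem \ref{thm:2equiv0}, conjugations $\alpha:\phi \Rightarrow \psi$ correspond bijectively (and contravariantly) to natural transformations $f_\psi^* \Rightarrow f_\phi^*$ between the inverse images of the full essential morphisms on presheaf toposes. So the whole problem is to show that the restriction operation
\[
\Nat(f_\psi^*, f_\phi^*) \longrightarrow \Nat(Rf_\psi^*V', Rf_\phi^*V')
\]
is a bijection.

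Faithfulness of this restriction is immediate from the general fact recorded in Proposition \ref{prop:conncoff}: precomposition with the connected (indeed hyperconnected) morphism $h': \Setswith{M'} \to \Cont(M',\tau')$ is full and faithful on 2-cells, since $- \circ h'$ is just application of $h'^* = V'$ to components, and $V'$ is full and faithful. This already shows that a conjugation is determined by the induced transformation on $\Cont(-)$, which was essentially observed in the paragraph preceding Theorem \ref{thm:Cont}. For fullness, I would use Proposition \ref{prop:intrinsic}: since $(M,\tau)$ is complete, the hyperconnected morphism $h: \Setswith{M} \to \Cont(M,\tau)$ is internally full and faithful on essential geometric morphisms, meaning $h \circ -: \EssGeom(\Fcal, \Setswith{M}) \to \Geom(\Fcal, \Cont(M,\tau))$ is full and faithful for every $\Fcal$. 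Taking $\Fcal = \Setswith{M'}$ and applying this to the essential morphisms $f_\phi, f_\psi: \Setswith{M'} \to \Setswith{M}$ (which exist because $\phi,\psi$ are semigroup homomorphisms), fullness tells us that any natural transformation $h f_\psi \Rightarrow h f_\phi$ lifts uniquely to one $f_\psi \Rightarrow f_\phi$. The final step is to identify the transformations $g_\psi^* \Rightarrow g_\phi^*$ with transformations $h f_\psi \Rightarrow h f_\phi$ using the commutativity $g h' \cong h f$ of the square, so that a 2-cell on the $\Cont$ side is exactly a 2-cell of the form to which Proposition \ref{prop:intrinsic} applies.

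The main obstacle I anticipate is bookkeeping the variance and the precise domain topos over which the internal full-faithfulness of Proposition \ref{prop:intrinsic} is applied: that proposition is phrased for $\EssGeom(\Fcal,\Setswith{M})$ with varying $\Fcal$, and I must check that the relevant $\Fcal$ here is $\Setswith{M'}$ (not $\Cont(M',\tau')$) and that the natural transformations genuinely correspond under the identifications $g^* = Rf^*V'$ and $g h' = h f$. In particular I must confirm that the transformation obtained on the $\Cont$ side restricts, via precomposition with $h'$ and the equivalence $Vg^* \cong f^*V'$, to exactly the datum that Proposition \ref{prop:intrinsic} accepts as input, rather than to some related but distinct transformation. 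Once this compatibility is pinned down, fullness and faithfulness combine to give the bijection, and the corollary follows; everything else is a routine diagram chase using that $V$, $V'$ are full and faithful and that the squares of adjoints in the proof of Lemma \ref{lem:cts} commute up to coherent isomorphism.
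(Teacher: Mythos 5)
Your overall skeleton matches the paper's: reduce to 2-cells between the essential morphisms on presheaf toposes via Theorem \ref{thm:2equiv0}, then transfer across the square \eqref{eq:square} using Proposition \ref{prop:intrinsic} and Proposition \ref{prop:conncoff}. But you have applied these two propositions to the \emph{wrong sides} of the square, and neither application is valid as written. For faithfulness you invoke Proposition \ref{prop:conncoff} for $h'$; that proposition concerns \emph{pre}composition $- \circ h'$, i.e.\ 2-cells between geometric morphisms whose \emph{domain} is $\Cont(M',\tau')$, where the operation is literally ``apply $V'$ to each component.'' What you actually need is injectivity of \emph{post}composition $h' \circ -$ on 2-cells between $f_\psi, f_\phi : \Setswith{M} \to \Setswith{M'}$, whose effect on a transformation $\alpha: f_\psi^* \Rightarrow f_\phi^*$ is to \emph{restrict} its components to the objects $V'(X)$. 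Full faithfulness of $V'$ does not give this: restriction to a full subcategory of the domain of the functors is injective only if that subcategory detects the transformation, which is exactly the faithfulness half of Proposition \ref{prop:intrinsic} applied to $h'$ and genuinely requires completeness (or at least the powder property) of $(M',\tau')$. Indeed your argument never uses completeness of $(M',\tau')$ for faithfulness, yet faithfulness fails without such a hypothesis: for the indiscrete topology on a nontrivial group $M'$ and $M = 1$, every element of $M'$ is a conjugation between the unique homomorphisms, but all of them induce the identity transformation on $\Cont(M',\tau') \simeq \Set$.

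Your fullness argument has a direction error that cannot be repaired in place: the essential morphisms induced by $\phi,\psi: M \to M'$ go $\Setswith{M} \to \Setswith{M'}$ (you have confused geometric morphisms with their inverse image functors), so the composites $h \circ f_\phi$ you form do not exist, and Proposition \ref{prop:intrinsic} for $h$ --- i.e.\ completeness of the \emph{domain} monoid $(M,\tau)$ --- cannot be brought to bear; there are no essential morphisms $\Setswith{M'} \to \Setswith{M}$ available here. The paper's proof does the swap correctly: Proposition \ref{prop:intrinsic} is applied to $h'$ with $\Fcal = \Setswith{M}$ (this is the only place completeness is used, and it is completeness of the \emph{codomain} monoid $(M',\tau')$), yielding a bijection between 2-cells $f_\psi \Rightarrow f_\phi$ and 2-cells $h'f_\psi \Rightarrow h'f_\phi = g_\psi h \Rightarrow g_\phi h$; then Proposition \ref{prop:conncoff} is applied to $h$ --- this is the only legitimate role for that proposition, since $- \circ h$ really is ``apply $V = h^*$ to components'' --- to identify 2-cells $g_\psi h \Rightarrow g_\phi h$ with 2-cells $g_\psi \Rightarrow g_\phi$. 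Until you exchange the roles of the two propositions in exactly this way, both halves of your bijection are unsupported.
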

\begin{proof}
By Theorem \ref{thm:2equiv0}, the $2$-equivalence mapping a discrete monoid to its presheaf topos is (contravariantly) full and faithful on $2$-cells, so for essential geometric morphisms $f,f': \Setswith{M} \to \Setswith{M'}$ induced by $\phi$, $\phi'$ respectively, each geometric transformation $\alpha: f \Rightarrow f'$ corresponds to a unique conjugation $\phi \Rightarrow \phi'$. Since $h'$ is full and faithful on essential geometric morphisms, the same is true of $2$-cells $h' \circ f \Rightarrow h' \circ f'$, by Proposition \ref{prop:intrinsic}. Passing across the square \eqref{eq:square}, since $h$ is cofull and cofaithful by Proposition \ref{prop:conncoff}, we obtain a further identification with the geometric transformations $g \Rightarrow g'$ (where $g$, $g'$ are also induced by $\phi$, $\phi'$ respectively), as required.
\end{proof}

\begin{rmk}
Explicitly, the conjugation corresponding to a transformation $\beta: g \Rightarrow g'$ is obtained as follows. First, compose with $h$ and take the limit of the components at the principal $(M',\tau')$-sets to obtain an $M$-set homomorphism $\alpha_{M'}: f^*(M') \to f'{}^*(M')$ (using the limit expression from Corollary \ref{crly:Mlim} and essentialness of $f,f'$ again), and by extension a natural transformation $\alpha: f^* \Rightarrow f'{}^*$. Then we take the mate $\overline{\alpha}:f'_! \Rightarrow f_!$, whose component at $M$ is the desired conjugation.
\end{rmk}

Corollary \ref{crly:Cont} can be understood as a strengthening of Proposition \ref{prop:lim}, since taking the domain monoid $M$ to be the trivial monoid and $\phi = \psi$ to be the unique monoid homomorphism to $(M',\tau')$, the conjugations are precisely the elements of $M'$.

Returning to properties of geometric morphisms, Proposition \ref{prop:inclff} will allow us to constrain the interactions between hyperconnected geometric morphisms and geometric inclusions in Section \ref{ssec:id}, but we also need to consider localic geometric morphisms.
\begin{prop}
\label{prop:locfaith}
Localic geometric morphisms are internally faithful in the $2$-category $\TOP$ of toposes, in the sense that given a localic geometric morphism $f:\Fcal \to \Ecal$ and any topos $\Gcal$, the functor $f \circ -: \Geom(\Gcal,\Fcal) \to \Geom(\Gcal,\Ecal)$ is faithful.
\end{prop}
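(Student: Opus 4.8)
```latex
The plan is to prove this by establishing a general principle about localic geometric morphisms, namely that they reflect equality of natural transformations after post-composition. Recall that a geometric morphism $f:\Fcal \to \Ecal$ is localic precisely when every object of $\Fcal$ is a subquotient of one of the form $f^*(X)$ for $X$ in $\Ecal$. The faithfulness we want to prove is the statement that if $\alpha, \beta: h^* \Rightarrow k^*$ are two natural transformations between inverse images of geometric morphisms $h,k: \Gcal \rightrightarrows \Fcal$, and if $f \circ \alpha = f \circ \beta$ (that is, $\alpha_{f^*(X)} = \beta_{f^*(X)}$ for every object $X$ of $\Ecal$), then $\alpha = \beta$.

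First I would reduce the problem to checking components at objects of the form $f^*(X)$. Given an arbitrary object $D$ of $\Fcal$, localicness provides a subquotient presentation: there is an object $X$ of $\Ecal$, a subobject $S \hookrightarrow f^*(X)$, and an epimorphism $S \too D$. I would like to conclude that agreement of $\alpha$ and $\beta$ on $f^*(X)$ forces agreement on $D$. The key structural facts to exploit are that $h^*$ and $k^*$, being inverse image functors, preserve monomorphisms, epimorphisms, and finite limits, so the naturality squares for $\alpha$ and $\beta$ relate the components at $D$, $S$ and $f^*(X)$ in a controlled way. Concretely, the inclusion $m: S \hookrightarrow f^*(X)$ gives a naturality square forcing $\alpha_S$ and $\beta_S$ to be compatible with $\alpha_{f^*(X)} = \beta_{f^*(X)}$ via the monomorphisms $h^*(m)$ and $k^*(m)$; since $k^*(m)$ is monic, agreement on $f^*(X)$ propagates to agreement on $S$. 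Dually, the epimorphism $e: S \too D$ gives a naturality square in which $h^*(e)$ is epic, so agreement on $S$ propagates to agreement on $D$.

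The two steps I would carry out in detail are therefore: using the monomorphism to descend equality from $f^*(X)$ to the subobject $S$, and using the epimorphism to push equality from $S$ up to the quotient $D$. For the first, from $\alpha_{f^*(X)} = \beta_{f^*(X)}$ and naturality ($k^*(m) \circ \alpha_S = \alpha_{f^*(X)} \circ h^*(m)$, and similarly for $\beta$) I would deduce $k^*(m) \circ \alpha_S = k^*(m) \circ \beta_S$, and then cancel the monomorphism $k^*(m)$ to obtain $\alpha_S = \beta_S$. For the second, from $\alpha_S = \beta_S$ and naturality with respect to $e$ ($\alpha_D \circ h^*(e) = k^*(e) \circ \alpha_S$, similarly for $\beta$) I would deduce $\alpha_D \circ h^*(e) = \beta_D \circ h^*(e)$, and then cancel the epimorphism $h^*(e)$ to get $\alpha_D = \beta_D$. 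Since $D$ was arbitrary, $\alpha = \beta$, establishing faithfulness of $f \circ -$.

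The main obstacle I anticipate is bookkeeping around the subquotient presentation rather than any deep difficulty: one must be careful that the cancellation of $k^*(m)$ uses that $k^*$ preserves monomorphisms while the cancellation of $h^*(e)$ uses that $h^*$ preserves epimorphisms, and to make sure the naturality squares are set up on the correct sides. It is worth emphasising that \emph{both} $h^*$ and $k^*$ enter, and that no full faithfulness of $f^*$ is needed (which is exactly why the conclusion is merely faithfulness, not full faithfulness, matching the weaker role localic morphisms play compared with the internally full and faithful inclusions of Proposition \ref{prop:inclff}). If a presentation as a single subquotient is inconvenient, an equivalent route is to note that the objects $f^*(X)$ together with the monomorphisms and epimorphisms between their subquotients form a generating configuration, so that a natural transformation out of $h^*$ is determined by its restriction to this configuration; this is the same argument organised diagrammatically.
```
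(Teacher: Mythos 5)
Your proposal is correct and follows essentially the same route as the paper's own proof: take a subquotient presentation $D \twoheadleftarrow S \hookrightarrow f^*(X)$, cancel the monomorphism $k^*(m)$ to descend equality of components from $f^*(X)$ to $S$, then cancel the epimorphism $h^*(e)$ to push it to $D$. The bookkeeping of naturality squares and which of $h^*$, $k^*$ supplies the preserved mono or epi matches the paper exactly.
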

\begin{proof}
A geometric morphism $g:\Fcal \to \Ecal$ is localic if and only if every object $Y$ of $\Fcal$ is a subquotient of one of the form $g^*(X)$ for $X$ in $\Ecal$, so there exists a diagram
\[\begin{tikzcd}
Y & \ar[l, two heads, "e"'] Z \ar[r, hook, "m"] & g^*(X).
\end{tikzcd}\]
Given geometric morphisms $h,k: \Gcal \rightrightarrows \Fcal$ and geometric transformations $\alpha,\beta: h \rightarrow k$ with $g \circ \alpha = g \circ \beta$, this is equivalent to the condition that $\alpha_{g^*(X)} = \beta_{g^*(X)}$ for all objects $X$ of $\Ecal$. Then considering naturality across the diagram above, since $h^*$ and $k^*$ both preserve epimorphisms and monomorphisms, we have:
\[\begin{tikzcd}
h^*g^*(X) \ar[r,"\alpha_{g^*(X)}", shift left] \ar[r,"\beta_{g^*(X)}"', shift right] &
k^*g^*(X) \\
h^*(Z) \ar[r,"\alpha_{Z}", shift left] \ar[r,"\beta_{Z}"', shift right] \ar[d, "h^*e"', two heads] \ar[u, "h^*m", hook] &
k^*g^*(X) \ar[d, "k^*e", two heads] \ar[u, "k^*m"', hook] \\
h^*(Y) \ar[r,"\alpha_{Y}", shift left] \ar[r,"\beta_{Y}"', shift right] & k^*(Y),
\end{tikzcd}\]
whence we see that $\alpha_{Z} = \beta_{Z}$ and then $\alpha_Y = \beta_Y$. Since $Y$ was a generic object of $\Fcal$, $\alpha = \beta$, as required.
\end{proof}

\begin{crly}
\label{crly:powderfaith}
Let $\tau$ be an action topology on $M$. Then $(M,\tau)$ is a powder monoid (equivalently, $\tau$ is a $T_0$ topology on $M$) if and only if the hyperconnected geometric morphism $\Setswith{M} \to \Cont(M,\tau)$ is internally faithful on essential geometric morphisms.
\end{crly}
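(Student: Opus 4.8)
The plan is to prove Corollary \ref{crly:powderfaith} by specializing the internal faithfulness criterion of Proposition \ref{prop:locfaith} to the hyperconnected--localic structure at hand, and to translate the topological condition ($T_0$, equivalently powder) into a statement about the relevant geometric morphisms. Since $\tau$ is assumed to be an action topology, the only remaining ingredient distinguishing a powder monoid from an arbitrary action topology is the $T_0$ (Kolmogorov) separation axiom, by Definition \ref{dfn:powder}; so the whole content is to show that $T_0$-ness of $\tau$ corresponds precisely to internal faithfulness of $h:\Setswith{M}\to\Cont(M,\tau)$ on essential geometric morphisms.

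First I would unwind what ``internally faithful on essential geometric morphisms'' means here: for each topos $\Fcal$, the functor $h\circ -:\EssGeom(\Fcal,\Setswith{M})\to\Geom(\Fcal,\Cont(M,\tau))$ should be faithful. Concretely, taking $\Fcal=\Set$ and the canonical point, an essential geometric morphism $\Set\to\Setswith{M}$ is a point, and a natural transformation between the inverse images of two such points is determined by its single component at $M$, which by the Yoneda-type argument of Remark \ref{rmk:endopt} is an element of $M$ (or of $M\op$). Composing with $h$ and using that $h^*$ is full and faithful (Proposition \ref{prop:conncoff} shows hyperconnected morphisms are cofull and cofaithful), the question of whether $h\circ-$ is faithful reduces to whether distinct elements $m,m'\in M$ can induce the same natural transformation after passing to $\Cont(M,\tau)$. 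Two elements induce the same transformation precisely when they act identically on every continuous $(M,\tau)$-set, which by the construction in Theorem \ref{thm:tau} (and the analysis of topologically indistinguishable points in the proof of Theorem \ref{thm:Hausd}) happens exactly when $m$ and $m'$ are topologically indistinguishable in $\tilde\tau=\tau$. Thus faithfulness fails exactly when $\tau$ fails to be $T_0$.

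For the forward direction (powder $\Rightarrow$ internally faithful) I would invoke Proposition \ref{prop:locfaith}: the localic part of $h$ carries the separation data, and for a powder monoid the points of $(M,\tau)$ separate elements, so the localic reflection is rich enough to force equality of natural transformations. More carefully, I expect to argue that when $\tau$ is $T_0$, distinct elements of $M$ are separated by a necessary clopen, hence by a principal $(M,\tau)$-set on which they act differently; this makes the component of any geometric transformation at that principal $(M,\tau)$-set detect the difference, so no two distinct essential morphisms can become identified after composing with $h$. For the converse (internally faithful $\Rightarrow$ $T_0$) I would argue contrapositively: if $\tau$ is not $T_0$, there exist genuinely distinct $m,m'\in M$ that are topologically indistinguishable, hence act identically on every object of $\Cont(M,\tau)$; the two corresponding points of $\Setswith{M}$ (or the two natural endotransformations they induce) are distinct in $\EssGeom$ but become equal after applying $h\circ-$, contradicting faithfulness.

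The main obstacle I anticipate is bookkeeping the variance and the precise identification of the ``essential geometric morphisms'' being distinguished: I must be careful that the relevant $2$-cells are exactly captured by elements of $M$ (via the monoid $\End(U)\cong M\op$ of Remark \ref{rmk:endopt}) and that composing with the hyperconnected $h$ neither creates nor destroys distinctions beyond those accounted for by topological indistinguishability. Concretely, the delicate point is to confirm that ``acts identically on all continuous $M$-sets'' is equivalent to ``indistinguishable in $\tau$'', which requires using Scholium \ref{schl:Tsuff} (that the necessary clopens $\Ical_x^p$ already lie in $T$) together with the fact that the separating family of principal $(M,\tau)$-sets detects exactly the $\tau$-opens. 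Once that equivalence is pinned down, the two implications fall out of Propositions \ref{prop:locfaith} and \ref{prop:conncoff}, and the statement that powder is equivalent to $T_0$ (for an action topology) is immediate from Definition \ref{dfn:powder}.
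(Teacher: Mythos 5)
There is a genuine gap in your forward direction (powder $\Rightarrow$ internally faithful). Internal faithfulness quantifies over \emph{all} toposes $\Fcal$ and all pairs of essential geometric morphisms $p,q:\Fcal \rightrightarrows \Setswith{M}$, with $2$-cells $\alpha:p^*\Rightarrow q^*$ being arbitrary natural transformations; your argument only treats the case $\Fcal = \Set$, where $2$-cells between copies of the canonical point are elements of $M$ (via Remark \ref{rmk:endopt}) and can be detected by their action on necessary clopens. For a general $\Fcal$, the component $\alpha_M : p^*(M)\to q^*(M)$ is just a morphism of $\Fcal$, and ``separating elements of $M$ by a principal $(M,\tau)$-set'' says nothing about it. Moreover, your appeal to Proposition \ref{prop:locfaith} via ``the localic part of $h$'' cannot work: $h$ is hyperconnected, so its localic part is an equivalence and carries no information. (Your citation of Proposition \ref{prop:conncoff} is also misplaced, since it concerns \emph{pre}composition with a connected morphism, while the functor at issue is \emph{post}composition $h\circ -$.)

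The missing idea, which is exactly what the paper's proof supplies, is to route through the completion $(L,\rho)$ of $(M,\tau)$: by Corollary \ref{crly:extend} the comparison homomorphism $u:M\to L$ is injective when $(M,\tau)$ is powder, so the induced morphism $f:\Setswith{M}\to\Setswith{L}$ is localic (Proposition \ref{prop:essgeom}) and hence internally faithful by Proposition \ref{prop:locfaith}; since $(L,\rho)$ is complete, $h':\Setswith{L}\to\Cont(L,\rho)$ is full and faithful on essential geometric morphisms (Proposition \ref{prop:intrinsic}), and $g$ is an equivalence, so faithfulness of $g\circ h \cong h'\circ f$ transfers to $h$. Alternatively, you could repair your own approach without the square: since $(M,\tau)$ is powder, $M$ embeds as an $M$-set into $\varprojlim_{r\in\Rfrak_\tau} M/r$; essential inverse images preserve all limits and monomorphisms, so any $2$-cell between essential morphisms is determined by its components at the continuous principal $M$-sets $M/r$, which is precisely what agreement after composing with $h$ gives you. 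Your converse direction (faithful $\Rightarrow$ $T_0$), by exhibiting distinct topologically indistinguishable elements as distinct $2$-cells that $h\circ-$ identifies, is sound and is in fact more elementary than the paper's converse, which instead recovers $u$ as a restriction of $f\circ -$, deduces its injectivity, and quotes that a subspace of a $T_0$ space is $T_0$.
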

\begin{proof}
Let $(L,\rho)$ be the completion of $(M,\tau)$ and consider the continuous, dense monoid homomorphism $u:(M,\tau) \to (L,\rho)$. By Corollary \ref{crly:extend}, when $(M,\tau)$ is a powder monoid, $u$ is injective, so the induced geometric morphism $f:\Setswith{M} \to \Setswith{L}$ is a localic surjection. Considering the square \eqref{eq:square}, since $h'$ is full and faithful on essential geometric morphisms by Proposition \ref{prop:intrinsic} and $f$ is faithful on these, it follows that $gh$ and hence $h$ are both faithful on essential geometric morphisms, as claimed.

Conversely, the morphism $g$ induced by $u$ is an equivalence, so if $h$ is faithful on essential geometric morphisms, then so is $f$, since $h'$ is full and faithful on such. Thus, $u:M \to L$ must be injective, since we can recover it as the restriction of the functor $f \circ -: \EssGeom(\Set,\Setswith{M}) \to \EssGeom(\Set,\Setswith{L})$ to the endomorphisms of the canonical point of $\Setswith{M}$. But $(L,\tau)$ is $T_0$, and any submonoid/subspace of a $T_0$ monoid must also be $T_0$, as required.
\end{proof}

Note that unlike in Proposition \ref{prop:intrinsic}, we cannot deduce that an arbitrary hyperconnected morphism $\Setswith{M} \to \Ecal$ which is faithful on essential geometric morphisms expresses $\Ecal$ as $\Cont(M,\tau)$; the non-topological factor of Theorem \ref{thm:factor} may be non-trivial.

As we noted after Lemma \ref{lem:includes} in Chapter \ref{chap:TDMA}, we can factorize any semigroup homomorphism $\phi:M \to M'$ into a monoid homomorphism followed by the inclusion of a subsemigroup of the form $\phi(1)M'\phi(1)$ into $M'$, and this lifts to (a canonical representation of) the surjection-inclusion factorization of the essential geometric morphism corresponding to $\phi$. Accordingly, we separate the analysis of geometric morphisms coming from continuous semigroup homomorphisms into the analysis of continuous monoid homomorphisms and inclusions of subsemigroups, in Sections \ref{ssec:monhom} and \ref{ssec:id} respectively.

\subsection{Subsemigroups}
\label{ssec:id}

Throughout this section, $(M',\tau')$ is a right powder monoid and $e \in M'$ an idempotent. Out of curiosity, we make some topological observations about the ideals generated by idempotents.

\begin{lemma}
\label{lem:ideal}
The principal left ideal $M'e$ and the principal right ideal $eM'$ of $M'$ are closed in $(M',\tau')$.
\end{lemma}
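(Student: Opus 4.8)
The plan is to show that $M'e$ and $eM'$ are closed by exhibiting them as sets of continuous elements, using the powder monoid structure directly. Recall that in a powder monoid every open set is clopen (the topology $\tau'$ is an action topology, hence zero-dimensional), so closedness is most naturally established by showing that the complement is open, or equivalently by showing the set itself is a union of basic clopens. Since the two statements are genuinely asymmetric (we are working with a \emph{right} powder monoid, so the necessary clopens $\Ical^p_x$ control \emph{right} multiplication), I expect $M'e$ and $eM'$ to require somewhat different arguments, and I would treat them separately.

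First I would handle $M'e$. The key observation is that $x \in M'e$ if and only if $xe = x$: indeed, if $x = me$ then $xe = me^2 = me = x$ since $e$ is idempotent, and conversely $xe = x$ exhibits $x$ as a member of $M'e$. Thus $M'e = \{x \in M' \mid xe = x\} = \Ical^{1}_{x}$-type condition, but more precisely $M'e$ is exactly the equalizer of the two self-maps $x \mapsto xe$ and $x \mapsto x$ of $M'$. To see this set is closed, consider its complement: for $x \notin M'e$ we have $xe \neq x$, and I would use the $T_0$ (indeed Hausdorff, by Lemma \ref{lem:T0T2}) separation together with the existence of a clopen base to separate $x$ from $xe$; since right multiplication by $e$ should be continuous with respect to $\tilde{\tau}'$ (by Proposition \ref{prop:ctsx}, multiplication is continuous), the map $x \mapsto (x, xe)$ is continuous into $M' \times M'$, and $M'e$ is the preimage of the diagonal, which is closed in a Hausdorff space. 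This realizes $M'e$ as a closed set cleanly.

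For $eM'$ the situation is less immediate, because membership $x \in eM'$ is equivalent to $ex = x$, a condition governed by left multiplication, which is not directly the continuous operation controlled by the right action topology. Here I would instead argue that $eM'$ is the set of fixed points of the continuous idempotent self-map $x \mapsto ex$ of $M'$: if $x = em$ then $ex = e^2m = em = x$, and conversely $ex = x$ gives $x \in eM'$. The hard part will be establishing that left multiplication by $e$ is continuous with respect to $\tau'$, since a \emph{right} powder monoid topology is only guaranteed to interact well with right multiplication. I would resolve this by recalling that $\tau' = \tilde{\tau}'$ is a (two-sided) action topology making $M'$ a genuine topological monoid by Proposition \ref{prop:ctsx}, so multiplication $\mu$ is jointly continuous; fixing the first coordinate at $e$ then gives continuity of $x \mapsto ex$. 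Granted this, $eM'$ is the equalizer of $x \mapsto ex$ and the identity, hence the preimage of the diagonal under the continuous map $x \mapsto (x, ex)$, and is therefore closed in the Hausdorff space $(M',\tau')$. The main obstacle throughout is thus confirming joint (two-sided) continuity of multiplication rather than merely one-sided continuity, but this is exactly what passing to the action topology $\tilde{\tau}'$ secures via Proposition \ref{prop:ctsx}, so the two closedness claims follow symmetrically once that is in hand.
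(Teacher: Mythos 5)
Your proof is correct, but it takes a genuinely different route from the paper's. You and the paper start from the same characterizations, $M'e = \{x \in M' \mid xe = x\}$ and $eM' = \{x \in M' \mid ex = x\}$, but you then observe that each set is the equalizer of a continuous self-map (right, respectively left, multiplication by $e$) with the identity, i.e.\ the preimage of the diagonal under $x \mapsto (x,xe)$ or $x \mapsto (x,ex)$, and conclude by Hausdorffness (Lemma \ref{lem:T0T2}) that these preimages are closed. This needs only joint continuity of multiplication, which is available both because a powder monoid is by definition a topological monoid and by Proposition \ref{prop:ctsx}; in particular your worry about the left/right asymmetry dissolves, as you note, once joint continuity is in hand, and the two cases become symmetric. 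The paper instead argues directly with the clopen machinery: for $x \notin M'e$ it separates $x$ from $xe$ by a basic clopen $U$ and shows that the necessary clopen $\Ical_U^x$ (open because $\tau'$ is an action topology) is a neighbourhood of $x$ disjoint from $M'e$; for $x \notin eM'$ it uses the neighbourhood $U \cap e^*(M' \setminus U)$, exploiting stability of clopens under the inverse image action. Your argument is shorter and relies only on the standard general-topology fact that fixed-point sets of continuous maps valued in Hausdorff spaces are closed, treating both ideals uniformly; the paper's argument is more explicit and stays inside its toolkit of necessary clopens, exhibiting concrete clopen separations, which is in keeping with how the surrounding results (e.g.\ Proposition \ref{prop:eMe}) are proved. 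Both are complete proofs.
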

\begin{proof}
We can characterize $M'e$ and $eM'$ as the subsets of $M'$ on those elements $p$ such that $pe = p$ and $p = ep$ respectively. 

Suppose $x$ is outside $M'e$. Since $(M',\tau')$ is zero-dimensional Hausdorff, we can find a basic clopen set $U$ with $x \in U$ and $xe$ in the complement of $U$. Then $\Ical_U^x$ is an open set containing $x$; if $p \in \Ical_U^x$ then since $e \notin x^*(U)$, we have $e \notin p^*(U)$ so $pe \neq p$. Thus we conclude that $M'e$ is contained in the complement of $\Ical_U^x$, and $M'e$ is closed.

Similarly, if $x$ is outside $eM'$, let $U$ be a basic clopen set containing $x$ but not $ex$. Then $e^*(M \backslash U)$ contains $x$, so we may consider the smaller neighbourhood $U \cap e^*(M \backslash U)$ of $x$. This excludes any element $p$ with $p = ep$, so that in particular $eM'$ is contained in the complement. Thus $eM'$ is closed.
\end{proof}

\begin{prop}
\label{prop:eMe}
Let $(M',\tau')$ be a powder monoid, $e$ an idempotent of $M'$ and $M := eM'e$. Let $\phi: M \hookrightarrow M'$ be the corresponding inclusion of semigroups, and let $\tau$ be the topology on $eM'e$ obtained by restricting $\tau'$. Then $(M,\tau)$ is a powder monoid, and any subsemigroup of this form is closed in $M'$.
\end{prop}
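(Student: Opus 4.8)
The plan is to prove Proposition \ref{prop:eMe} in three parts: first that $(M,\tau) = (eM'e, \tau'|_{eM'e})$ is a topological monoid, then that its topology is a right action topology and it is $T_0$ (so it is a powder monoid), and finally that $eM'e$ is a closed subset of $M'$. The last claim is the most natural place to start, since the tools are already at hand.

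First I would establish closedness. Observe that $M = eM'e = M'e \cap eM'$ as subsets of $M'$, since $p = epe$ if and only if $pe = p$ and $ep = p$ (using $e^2 = e$): indeed if $p = epe$ then $pe = epe\cdot e = epe = p$ and similarly $ep = p$, while conversely $pe = p$ and $ep = p$ give $epe = pe = p$. By Lemma \ref{lem:ideal}, both $M'e$ and $eM'$ are closed in $(M',\tau')$, and the intersection of two closed sets is closed, so $M = eM'e$ is closed in $M'$. This disposes of the final sentence of the statement immediately.

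Next I would verify that $(M,\tau)$ is a topological monoid. The multiplication on $M = eM'e$ is inherited from $M'$ (with $e$ as identity, since $e(eme)e = eme$ for $m \in M'$), and $\tau$ is the subspace topology. Since the inclusion $M \hookrightarrow M'$ is continuous and $(M',\tau')$ is a topological monoid by virtue of being a powder monoid (its topology is an action topology, so Proposition \ref{prop:ctsx} applies), the restriction of a continuous multiplication to a subspace is continuous; concretely $\mu_M$ is the composite $M \times M \hookrightarrow M' \times M' \xrightarrow{\mu_{M'}} M'$ which lands in $M$ by the ideal characterization above, and continuity into the subspace $M$ follows from continuity into $M'$. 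The $T_0$ property is inherited by any subspace of a $T_0$ space, so $(M,\tau)$ is Kolmogorov.

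The main obstacle, and the step requiring the most care, is showing that $\tau$ is a right \emph{action} topology on $M$ in the precise sense of Definition \ref{dfn:powder}: that $\tau$ has a base of clopen sets $U$ with $\Ical_U^p = \{q \in M \mid q^*(U) = p^*(U)\} \in \tau$ for every $p \in M$, where the inverse-image action and necessary clopens are now computed \emph{internally} to $M$, not inherited verbatim from $M'$. The point is that for $U$ a basic clopen of $M'$ and $q, p \in M = eM'e$, the relation $q^*(U) = p^*(U)$ tested against elements of $M$ differs from the relation tested against elements of $M'$. The cleanest route is to take a basic clopen $W$ of $(M',\tau')$ with $\Ical_W^{p}\in\tau'$ for all $p$ (available since $\tau'$ is an action topology) and set $U := W \cap M$, which is clopen in $\tau$; I would then show that the internal necessary clopen $\Ical_U^p$ (for $p\in M$) equals the intersection with $M$ of a $\tau'$-open set built from $W$, using that for $q \in M$ the internal inverse image $q^\ast(U)\cap M$ relates to $q^\ast(W)$ via multiplication by $e$. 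The reduction from right-continuity of multiplication by $e$ — i.e. the identity $\Ical^p_{qe} = q^\ast(\Ical^{qp}_{\,\cdot})$ type manipulation appearing in the proof of Proposition \ref{prop:hyper} — should convert $\tau'$-openness of the ambient necessary clopens into $\tau$-openness of the internal ones. I expect verifying this compatibility of the two inverse-image actions (ambient versus internal) to be the genuinely delicate bookkeeping, and would lean on Scholium \ref{schl:base} to reduce to necessary clopens of $(M,\tau)$-sets rather than working with a generating set abstractly.
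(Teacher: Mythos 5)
Your overall structure is correct, and your closedness argument is exactly the paper's: $eM'e = eM' \cap M'e$ together with Lemma \ref{lem:ideal}. For the rest you take a genuinely different route. The paper handles the action-topology claim in one stroke of topos theory: the subspace topology is by definition the coarsest topology making $\phi$ continuous, hence by Lemma \ref{lem:cts} the coarsest for which the hyperconnected part of $h'\circ f$ in the square \eqref{eq:square} factors through $\Setswith{M} \to \Cont(M,\tau)$, and Theorem \ref{thm:factor} says any topology arising this way is an action topology; the separation axiom is then inherited as for any subspace. Your route is a direct point-set verification, which is more elementary, but the step you defer as ``delicate bookkeeping'' is the whole content, and the mechanism you point to (relating $q^*(U)\cap M$ to $q^*(W)$ ``via multiplication by $e$'', and the identity $\Ical_{xq}^p = q^*(\Ical_x^{qp})$ from the proof of Proposition \ref{prop:hyper}) is not what is needed: since $M = eM'e$ is closed under the ambient multiplication, for $q \in M$ the internal inverse image is simply the trace $q^*(U) = q^*(W)\cap M$, with no conjugation by $e$ involved. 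What does the work is the following observation. For $U = W\cap M$ with $W$ a basic continuous clopen of $(M',\tau')$, the internal clopen $\Ical_U^p = \{q\in M \mid q^*(W)\cap M = p^*(W)\cap M\}$ is in general \emph{larger} than the trace $\Ical_W^p\cap M$ of a single ambient necessary clopen (the ambient condition $q^*(W)=p^*(W)$ is stronger), but it is a \emph{union} of such traces: for each $q$ in it, $\Ical_W^q\cap M$ is a $\tau$-open neighbourhood of $q$ contained in it, because $q'^*(W) = q^*(W)$ in $M'$ forces equal traces on $M$. Hence $\Ical_U^p = \bigcup_{q}\left(\Ical_W^q\cap M\right)$ is $\tau$-open, the sets $W\cap M$ give the clopen basis required by Definition \ref{dfn:powder}, and with Proposition \ref{prop:ctsx} supplying continuity of multiplication your argument closes. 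What the paper's approach buys is precisely the avoidance of this pointwise analysis; what yours buys is independence from the machinery of Theorem \ref{thm:factor}.
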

\begin{proof}
Being the coarsest topology on $M$ such that $\phi:M \to M'$ is continuous, $\tau$ is the coarsest topology such that (the hyperconnected part of) the morphism $h' \circ f$ in the square \eqref{eq:square} induced by $\phi$ factors through $\Setswith{M} \to \Cont(M,\tau)$, by Theorem \ref{thm:factor}. Thus $\tau$ is an action topology on $M$. As a subspace of a Hausdorff space, $(M,\tau)$ is Hausdorff (we could alternatively have used Corollary \ref{crly:powderfaith} to deduce this). 

To see that $M$ is closed in $M'$, observe that it is the intersection of the ideals $eM'$ and $M'e$ which we showed to be closed in Lemma \ref{lem:ideal}.
\end{proof}

\begin{xmpl}
\label{xmpl:nopen}
On the other hand, $eM'e$ is not always open in $M'$. Indeed, consider the prodiscrete monoid constructed in Example \ref{xmpl:idemclosed}. The idempotent element $e = \infty$ is a zero element, so that the corresponding subsemigroup is simply $\{\infty\}$, which from the description of the topology on this monoid clearly fails to be open.
\end{xmpl}

Intuitively, we might expect the geometric morphism induced by the subsemigroup inclusion $M \hookrightarrow M'$ in Proposition \ref{prop:eMe} to be a geometric inclusion. To explain why this is the case in Example \ref{xmpl:nopen}, in the sense that the inclusion of $\{\infty\}$ induces a geometric inclusion, we show that this intuition is at least valid for complete monoids\footnote{We have not been able to demonstrate it for powder monoids more generally.}.

\begin{thm}
\label{thm:inccomplete}
Let $(M',\tau')$ be a complete monoid, let $M = eM'e$ for some idempotent $e \in M'$, and let $\phi:M \to M'$ be the subsemigroup inclusion. Then the restricted topology $\tau := \tau'|_M$ makes $(M,\tau)$ a complete topological monoid, and hence the induced geometric morphism $\Cont(M,\tau) \to \Cont(M',\tau')$ is a geometric inclusion.
\end{thm}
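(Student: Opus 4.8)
The plan is to route everything through the square \eqref{eq:square} induced by $\phi$, exploiting the intrinsic $2$-categorical properties of hyperconnected, inclusion and localic morphisms from Propositions \ref{prop:intrinsic}, \ref{prop:inclff}, \ref{prop:conncoff} and \ref{prop:locfaith}. First I would record the standing facts. By Lemma \ref{lem:includes} the essential geometric morphism $f:\Setswith{M}\to\Setswith{M'}$ induced by the subsemigroup inclusion $\phi$ is a geometric \emph{inclusion}; by Proposition \ref{prop:eMe} and Lemma \ref{lem:ideal}, $(M,\tau)$ is a powder monoid and $M=eM'e$ is \emph{closed} in $M'$. Consequently, by Corollary \ref{crly:powderfaith}, the hyperconnected morphism $h:\Setswith{M}\to\Cont(M,\tau)$ is internally faithful on essential geometric morphisms, and we work with the commutative square relating $f,g,h,h'$.

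The crux, which I would tackle first, is to show that $g$ is \emph{localic}. Since every inclusion is localic, $f$ is localic, and I would compute the hyperconnected--localic factorization of $h'\circ f=g\circ h$. Factoring $g=m\circ e$ into its hyperconnected part $e$ and localic part $m$ gives $g\circ h=m\circ(e\circ h)$ with $e\circ h$ hyperconnected, so this \emph{is} the hyperconnected--localic factorization of $h'\circ f$. The intermediate topos is then the codomain of the hyperconnected morphism $e\circ h$ out of $\Setswith{M}$, and the whole step reduces to showing that this intermediate topos is exactly $\Cont(M,\tau)$, i.e.\ that the hyperconnected part of $h'\circ f$ is $h$ itself, whence $e$ is an equivalence and $g\cong m$ is localic. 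By Theorem \ref{thm:factor} the hyperconnected part of $h'\circ f$ is governed by the congruences $r$ for which $M/r$ is a subquotient of some $(h'\circ f)^*(X')$; since $(h'\circ f)^*=f^*\circ V'$ is restriction of continuous $M'$-sets along $\phi$ and $\tau=\tau'|_M$ is precisely the subspace topology, these subquotients are exactly the $\tau$-continuous principal $M$-sets. \textbf{Verifying this identification of the continuous $M$-sets with subquotients of restrictions of continuous $M'$-sets — and in particular that no non-topological factor survives (as it can in Example \ref{xmpl:N+}) — is the main obstacle, and it is exactly where the hypothesis $\tau=\tau'|_M$ is used.} Granting it, $g$ is localic, and by Proposition \ref{prop:locfaith} the functor $g\circ-$ is internally faithful.

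With $g$ localic I would then establish completeness of $(M,\tau)$ via Proposition \ref{prop:intrinsic}, for which it suffices to show that $h\circ-\colon\EssGeom(\Fcal,\Setswith{M})\to\Geom(\Fcal,\Cont(M,\tau))$ is full and faithful for every topos $\Fcal$. Faithfulness is already in hand from Corollary \ref{crly:powderfaith}. For fullness, along the square one has $(g\circ-)\circ(h\circ-)=(h'\circ-)\circ(f\circ-)$ as functors out of $\EssGeom(\Fcal,\Setswith{M})$, and the right-hand composite is full and faithful because $f\circ-$ is (Proposition \ref{prop:inclff}, $f$ an essential inclusion) and $h'\circ-$ is (Proposition \ref{prop:intrinsic}, since $M'$ is complete). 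Hence $(g\circ-)\circ(h\circ-)$ is full and faithful; invoking the elementary fact that if $G\circ F$ is full and $G$ is faithful then $F$ is full, together with the internal faithfulness of $g$ from the previous paragraph, gives fullness of $h\circ-$. Proposition \ref{prop:intrinsic} then yields that $(M,\tau)$ is complete.

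Finally, for the ``hence'' — that $g$ is a geometric inclusion, not merely localic — I would read $g\circ h=h'\circ f$ in the \emph{surjection--inclusion} factorization system. Since $h$ is hyperconnected, hence surjective, and $f$ is an inclusion, the image of $g$ coincides with that of $h'\circ f$; writing $g=i\circ s$ with $s$ a surjection onto this image topos $\mathcal{K}$ and $i$ an inclusion, it remains to show $s\colon\Cont(M,\tau)\to\mathcal{K}$ is an equivalence. This is where closedness of $M=eM'e$ in the complete monoid $M'$ is essential: the canonical point of $\Cont(M,\tau)$ pushes forward to a point of $\mathcal{K}$, and computing its endomorphism monoid as the limit of principal objects of Proposition \ref{prop:lim}, the density of $u(M)$ in the completion together with the closedness of $M$ in $M'$ forces this monoid to be exactly $M$. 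Since $(M,\tau)$ is now complete, Scholium \ref{schl:completion} identifies $\mathcal{K}$ with $\Cont(M,\tau)$ and $s$ with an equivalence, so $g\cong i$ is a geometric inclusion, completing the proof. I expect the endomorphism-monoid computation here, like the subquotient identification in the localic step, to hinge on the closed/dense dichotomy and to be the place where the argument needs the most care.
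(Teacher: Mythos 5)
Your proposal assembles the right ingredients but in an order that leaves its central step unproven. The crux of your argument — that the hyperconnected part of $h'\circ f$ is $h$ itself, i.e.\ that every $\tau$-continuous $M$-set is a subquotient of some $(h'\circ f)^*(X')$ with $X'$ a \emph{continuous} $M'$-set — is essentially the whole substance of the theorem, and you explicitly ``grant it'' rather than prove it. It cannot be granted: Example \ref{xmpl:N+} shows that the codomain of a hyperconnected morphism out of $\Setswith{M}$ need not be of the form $\Cont(M,\sigma)$ at all, and nothing in your sketch (in particular not the bare hypothesis $\tau=\tau'|_M$) rules this out here. Nor does the obvious direct attack work: since $f$ is an inclusion the counit $f^*f_*\Rightarrow 1$ is an isomorphism, so every $M$-set $M/r$ is a restriction $f^*(f_*(M/r))$ of \emph{some} $M'$-set, but $f_*(M/r)$ need not be continuous, which is exactly where the difficulty sits. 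Since your completeness step needs $g$ localic (to apply Proposition \ref{prop:locfaith}), and your final step needs completeness, everything downstream of this gap is conditional, and your concluding step is itself only a sketch (``I expect the endomorphism-monoid computation\dots to be the place where the argument needs the most care'').

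The paper closes the gap by reversing the order of deduction, and the key fact you are missing is \cite[Proposition A4.6.10]{Ele}: for a composite of an inclusion followed by a hyperconnected morphism, such as $h'\circ f$, the hyperconnected--localic and surjection--inclusion factorizations coincide, so the localic part of the factorization is automatically an \emph{inclusion}. Writing $h'\circ f = g_0\circ k$ with $k$ hyperconnected onto an intermediate topos $\Ecal$ (not yet identified with anything) and $g_0$ an inclusion, one argues: the composite is full and faithful on essential geometric morphisms (Proposition \ref{prop:inclff} for the essential inclusion $f$, Proposition \ref{prop:intrinsic} for $h'$ since $(M',\tau')$ is complete); since $g_0\circ -$ is faithful (Proposition \ref{prop:inclff} again), the hyperconnected part $k$ inherits full faithfulness on essential geometric morphisms; Proposition \ref{prop:intrinsic} then identifies the underlying monoid of the complete monoid presenting $\Ecal$ as $M$, and Theorem \ref{thm:factor} identifies its topology as $\tau'|_M$, exactly as in the proof of Proposition \ref{prop:eMe}. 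Thus $\Ecal\simeq\Cont(M,\tau)$, $(M,\tau)$ is complete, and $g\cong g_0$ is an inclusion: both the subquotient identification you wanted to prove first and the representing-monoid computation you deferred to the end come out as \emph{consequences} of the factorization argument rather than inputs to it, and the closed/dense machinery (Lemma \ref{lem:ideal}, Corollary \ref{crly:powderfaith}) you lean on is not needed.
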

\begin{proof}
As usual, let $f:\Setswith{M} \to \Setswith{M'}$ be the essential inclusion induced by $\phi$. Consider the hyperconnected-localic factorization of the composite morphism $\Setswith{M} \to \Cont(M',\tau')$:
\[\begin{tikzcd}
{\Setswith{M}} \ar[r, "f"] \ar[d, "h"'] &
{\Setswith{M'}} \ar[d, "h'"] \\
{\Ecal} \ar[r, "g"] &
\Cont(M'{,}\tau').
\end{tikzcd}\]
Since the upper composite is an inclusion followed by a hyperconnected morphism, by \cite[Proposition A4.6.10]{Ele} the lower geometric morphism is an inclusion: the surjection-inclusion and hyperconnected-localic factorizations of the composite coincide.

Moreover, combining Proposition \ref{prop:intrinsic} with Proposition \ref{prop:inclff}, we have that the composite is full and faithful on essential geometric morphisms, and hence the hyperconnected part $\Setswith{M} \to \Ecal$ also is. Thus the complete monoid representing $\Ecal$ has $M$ as its underlying monoid. That the corresponding topology is the restriction topology follows from Theorem \ref{thm:factor}, just as in the proof of Proposition \ref{prop:eMe}.
\end{proof}

\subsection{Monoid Homomorphisms}
\label{ssec:monhom}

Now suppose $\phi:(M,\tau) \to (M',\tau')$ is a continuous \textit{monoid} homomorphism, so that the essential geometric morphism $f:\Setswith{M} \to \Setswith{M'}$ it induces is a surjection, and hence examining the resulting square \eqref{eq:square}, so is the induced morphism $g: \Cont(M,\tau) \to \Cont(M',\tau')$. Combining this observation with Theorem \ref{thm:inccomplete}, we have that:
\begin{thm}
\label{thm:surjinc}
Let $\phi:(M,\tau) \to (M',\tau')$ be a continuous semigroup homomorphism between complete monoids inducing $g: \Cont(M,\tau)\to \Cont(M',\tau')$, and let $e:= \phi(1)$. Then the surjection--inclusion factorization of $g$ is canonically represented by the factorization of $\phi$ into a monoid homomorphism $M \to eM'e$ followed by an inclusion of subsemigroups $eM'e \hookrightarrow M'$, where $eM'e$ is equipped with the subspace topology.
\end{thm}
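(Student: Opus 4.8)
The plan is to deduce Theorem \ref{thm:surjinc} by combining the factorization of the semigroup homomorphism $\phi$ at the algebraic level with the two results already established for the separate factors: Theorem \ref{thm:inccomplete} for the inclusion of the subsemigroup, and the remark that continuous monoid homomorphisms induce surjections for the other factor. Recall from the discussion following Lemma \ref{lem:includes} in Chapter \ref{chap:TDMA} that any semigroup homomorphism $\phi: M \to M'$ factors canonically as a monoid homomorphism $\psi: M \to eM'e$ (where $e = \phi(1)$) followed by the subsemigroup inclusion $\iota: eM'e \hookrightarrow M'$, and that this algebraic factorization lifts, at the level of discrete presheaf toposes, to (a canonical representation of) the surjection--inclusion factorization of the essential geometric morphism $f$, by Proposition \ref{prop:essgeom}.

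First I would equip $eM'e$ with the subspace topology $\tau'' := \tau'|_{eM'e}$ and check that both factors are continuous semigroup homomorphisms between complete monoids. For the inclusion $\iota$, Theorem \ref{thm:inccomplete} directly gives that $(eM'e,\tau'')$ is complete and that the induced geometric morphism $\Cont(eM'e,\tau'') \to \Cont(M',\tau')$ is an inclusion. For the monoid homomorphism $\psi$, I would note that $\phi = \iota \circ \psi$ is continuous and $\iota$ is a topological embedding (the subspace topology is by definition the coarsest making $\iota$ continuous), so $\psi$ is continuous with respect to $\tau$ and $\tau''$; since $\psi$ is a genuine monoid homomorphism, the induced morphism $g_\psi: \Cont(M,\tau) \to \Cont(eM'e,\tau'')$ is a surjection by the observation opening Section \ref{ssec:monhom} (continuous monoid homomorphisms induce surjections via the square \eqref{eq:square}, as $f$ is a surjection and $h,h'$ are hyperconnected). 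The one subtlety is that completeness of the intermediate monoid $(eM'e,\tau'')$ is genuinely needed to invoke the $2$-functoriality of $\Cont(-)$ coherently; this is exactly what Theorem \ref{thm:inccomplete} supplies.

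Next, using that $\Cont(-)$ is a $2$-functor (Theorem \ref{thm:Cont}), I would observe that $g = \Cont(\phi) \cong \Cont(\iota) \circ \Cont(\psi) = g_\iota \circ g_\psi$, so $g$ factors as the surjection $g_\psi$ followed by the inclusion $g_\iota$. By the uniqueness (up to canonical isomorphism) of the surjection--inclusion factorization of a geometric morphism, established in \cite[Theorem A4.2.10]{Ele}, this \emph{is} the surjection--inclusion factorization of $g$, with intermediate topos $\Cont(eM'e,\tau'')$. This identifies the factorization of $g$ with the image of the algebraic factorization of $\phi$ under $\Cont(-)$, which is precisely the content of the theorem.

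The main obstacle I expect is verifying that the factorization is \emph{canonically} represented rather than merely abstractly equivalent, i.e. that the intermediate topos genuinely is $\Cont(eM'e,\tau'')$ and not just some topos equivalent to it. This is where completeness of $eM'e$ does the essential work: by Theorem \ref{thm:inccomplete} the intermediate monoid arising from the hyperconnected--localic analysis of the composite $\Setswith{M} \to \Cont(M',\tau')$ has underlying monoid exactly $eM'e$ with the subspace topology, and Schol\-ium \ref{schl:completion} guarantees that the complete monoid representing a topos is uniquely determined by the relevant point. I would therefore make explicit that the surjection--inclusion factorization of $g$ corresponds, under the equivalence $\Cont(L,\rho) \simeq \Ecal$ of Theorem \ref{thm:characterization}, to the factorization $M \xrightarrow{\psi} eM'e \xrightarrow{\iota} M'$ of complete monoids, with no further reduction occurring because each monoid is already complete. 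Everything else reduces to routine bookkeeping with the square \eqref{eq:square} and the cited factorization-system results.
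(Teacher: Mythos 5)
Your proposal is correct and follows essentially the same route as the paper: the paper likewise combines the observation that a continuous monoid homomorphism induces a surjection (via the square \eqref{eq:square}) with Theorem \ref{thm:inccomplete} for the subsemigroup inclusion. The only difference is that the paper states this combination in one line, whereas you spell out the supporting details (continuity of $M \to eM'e$ for the subspace topology, $2$-functoriality of $\Cont(-)$, and uniqueness of the surjection--inclusion factorization) that the paper leaves implicit.
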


We can characterize which morphisms arise from continuous monoid homomorphisms, up to fixing sober representing monoids. Since sobriety has not been a focus of our account of monoids in this thesis, we restrict further to powder monoids.

\begin{prop}
\label{prop:phisom}
Let $(M,\tau)$ and $(M',\tau')$ be powder monoids; let $T$, $T'$ be their respective Boolean algebras of clopen sets. A surjective geometric morphism $g:\Cont(M,\tau)\to \Cont(M',\tau')$ is induced by a continuous monoid homomorphism $\phi:(M,\tau) \to (M',\tau')$ if and only if $T' \cong g_*(T)$ in $\Cont(M',\tau')$.
\end{prop}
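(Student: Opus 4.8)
The statement splits into an easy forward direction and a more delicate converse. For the forward direction, suppose $g = \Cont(\phi)$ is induced by a continuous monoid homomorphism $\phi$, so that $g$ fits into the square \eqref{eq:square} with $f$ the essential \emph{surjection} $\Setswith{M} \to \Setswith{M'}$ determined by $\phi$. By Lemma \ref{lem:cts} we have $g^* \cong R f^* V'$, and passing to right adjoints in the commuting square of inverse images yields $g_* R \cong R' f_*$. Since $T = R(\Pcal(M))$ as an object of $\Cont(M,\tau)$, I would compute $g_*(T) \cong g_* R(\Pcal(M)) \cong R' f_*(\Pcal(M))$. The crux is then to identify $f_*(\Pcal(M)) \cong \Pcal(M')$: for any $X \in \Setswith{M'}$ one has $\Hom(X, f_*\Pcal(M)) \cong \Hom(f^*X, \Pcal(M)) \cong \Pcal(U(f^*X))$ by Lemma \ref{lem:represent}, and since $f^*$ is restriction of scalars along the \emph{monoid} homomorphism $\phi$ it preserves underlying sets, so $U(f^*X) = U(X)$ and this is naturally $\Pcal(U(X)) \cong \Hom(X,\Pcal(M'))$. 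By Yoneda $f_*(\Pcal(M)) \cong \Pcal(M')$, whence $g_*(T) \cong R'(\Pcal(M')) = T'$.

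For the converse I am given a surjection $g$ with $T' \cong g_*(T)$ and must reconstruct $\phi$. The first step is to pass from this object-isomorphism to an isomorphism of the \emph{canonical points}. By Yoneda and Scholium \ref{schl:Tseparator}, $T' \cong g_*(T)$ produces a natural isomorphism $\Pcal(U V'(-)) \cong \Pcal(U V g^*(-))$ of functors on $\Cont(M',\tau')$; here $U V'$ is the inverse image of the canonical point $p_{M'}$ and $U V g^*$ is the inverse image of the composite point $g \circ p_M$. The plan is to reflect this through the powerset: since $T'$ and $g_*(T)$ are internal complete atomic Boolean algebras (the latter because $g_*$ preserves finite limits) and the representing isomorphisms of Scholium \ref{schl:Tseparator} are Boolean-algebra isomorphisms, I would show using the rigidity of $T'$ (its only object-automorphisms are the identity and complementation, by the classical rigidity of the powerset functor together with Lemma \ref{lem:Pcal}) that the induced natural transformation respects atoms up to complementation. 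Recovering the atoms on each side then gives a natural bijection $\theta\colon U V g^*(-) \cong U V'(-)$, i.e.\ an isomorphism of points $g \circ p_M \cong p_{M'}$.

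Next I would transport the action. Each $m \in M$ determines a natural endomorphism $\lambda_m$ of the forgetful functor $U V$ (right multiplication by $m$), and conjugating its whiskering by $\theta$ produces an endomorphism $\psi(m)$ of $U V'$; since $\End(U V') \cong L'^{\mathrm{op}}$ for the completion $(L',\rho')$ (Proposition \ref{prop:lim}), this defines a homomorphism $\psi\colon M \to L'$. The \emph{main obstacle} is to show that $\psi$ actually lands in $M' \subseteq L'$ via the dense injection $u'$ of Corollary \ref{crly:extend}, rather than merely in the completion (Example \ref{xmpl:Z+} shows that escape into $L' \setminus M'$ is exactly what must be excluded). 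This is precisely where the full strength of the hypothesis is needed: the isomorphism $T' \cong g_*(T)$ is $M'$-equivariant, not merely a point-isomorphism, and I expect to leverage this equivariance together with the sobriety of the powder monoid $(M',\tau')$ (Lemma \ref{lem:T0T2}) to certify that each $\psi(m)$, viewed as a compatible family $([a_{r,m}])_r \in \varprojlim_r M'/r$, is represented by a genuine element of $M'$. Granting this, setting $\phi := \psi$ gives a continuous monoid homomorphism, and I would finish by checking $\Cont(\phi)^* \cong g^*$ on the coseparator $T'$ and the generating principal objects, appealing to Theorem \ref{thm:factor} for the uniqueness of the inducing topology and data. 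I anticipate the reflection-through-the-powerset step and the ``lands in $M'$'' step to be the two places where the argument is genuinely subtle, the latter being the heart of the matter.
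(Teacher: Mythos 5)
Your forward direction is correct and is essentially the paper's own argument: monoid homomorphisms commute with the canonical points, and representability of the powerset functor (Lemma \ref{lem:represent}, Scholium \ref{schl:Tseparator}) plus Yoneda then gives $g_*(T) \cong T'$. The first step of your converse --- extracting from $T' \cong g_*(T)$ that $g$ commutes with the canonical points --- is also in line with the paper, which deduces this from the fact that both objects represent $\Pcal\op \circ U \circ V$. But the heart of your converse has a genuine gap, and you have located it yourself: the claim that each $\psi(m)$ lands in $M' \subseteq L'$. Conjugating the right-multiplication endomorphisms of the canonical point through a point isomorphism can, by Proposition \ref{prop:lim}, only ever produce elements of the completion $(L',\rho')$, and Example \ref{xmpl:Z+} shows that $M'$ may be a \emph{proper} dense submonoid of $L'$; nothing about a point isomorphism alone excludes escape into $L' \setminus M'$. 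The sentence ``I expect to leverage this equivariance together with sobriety'' is precisely the missing proof, and the detour through $\End(UV')$ gives you no purchase on it, because any descent from $L'$ back to $M'$ must be argued at the level of the clopen algebras --- which your construction has already thrown away by passing to abstract endomorphisms of the point.

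The paper's converse avoids the completion entirely by transporting the topological rather than the algebraic structure. Having established that $g$ commutes with the canonical points, it composes the counit of $(g^* \dashv g_*)$ at $T$ with $g^*$ applied to the given isomorphism, obtaining a homomorphism of internal Boolean algebras $g^*(T') \to T$ in $\Setswith{M}$. On underlying sets this is a Boolean algebra homomorphism from the clopen algebra of $(M',\tau')$ to that of $(M,\tau)$, and since powder monoids are sober spaces, such a homomorphism determines a unique continuous map $\phi: M \to M'$ on points --- Stone-duality style, with no reference to $L'$. The $M$-equivariance of this counit map is then exactly what forces $\phi$ to be a monoid homomorphism, and one checks that the morphism induced by $\phi$ recovers $g$. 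In short: where you try to transport the monoid action and then recover the topology, the paper transports the clopen algebra and recovers the monoid map from it; only the latter order of operations closes the argument.
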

\begin{proof}
First suppose that $g$ is induced by some continuous monoid homomorphism $\phi$. Consider the internal Boolean algebras $T$ and $T'$ in the respective toposes. Since $\phi$ is a monoid homomorphism, $g$ commutes with the canonical points of $\Cont(M,\tau)$ and $\Cont(M',\tau')$ (which are induced by the unique monoid homomorphisms $1 \to M$ and $1 \to M'$ respectively). In particular, we have a canonical isomorphism $T' \cong g_*(T)$, as required.

Conversely, given such an isomorphism, both $T'$ and $g_*(T)$ represent $\Pcal\op \circ U \circ V$ by Scholium \ref{schl:Tseparator}, from which it follows that $g$ commutes with the canonical points. As such, $g^*(T')$ has an underlying set which can be identified with that of $T'$, and the counit of $g$ at $T$ provides a homomorphism of Boolean algebras $\phi^{-1}: g^*(T') \to T$ in $\Setswith{M}$ which, since powder monoids are sober as spaces, uniquely defines a map $M \to M'$. The fact that $\phi^{-1}$ is an $M$-set homomorphism ultimately ensures that $\phi$ is a monoid homomorphism. Moreover, when we generate $g$ from a monoid homomorphism $\phi$, we find that this is the morphism we recover from $\phi^{-1}$.
\end{proof}

For an injective monoid homomorphism, we have a partial analogue of Theorem \ref{thm:inccomplete}:
\begin{lemma}
\label{lem:surjective}
Suppose $(M',\tau')$ is a powder monoid and $\phi:M \to M'$ is an injective monoid homomorphism. Then the subspace topology $\tau := \tau'|_M$ on $M$ makes $(M,\tau)$ a powder monoid.
\end{lemma}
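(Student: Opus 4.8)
The plan is to follow the same strategy used in the proof of Proposition \ref{prop:eMe} and Theorem \ref{thm:inccomplete}, which reduces the claim to an application of Theorem \ref{thm:factor} together with the observation that subspaces of powder monoids retain the requisite separation properties. The statement to prove has two parts: first, that $\tau$ is an action topology on $M$ (equivalently, by Definition \ref{dfn:powder}, that $\tau$ has a basis of clopen sets $U$ with $\Ical_U^p \in \tau$ for every $p$), and second, that $(M,\tau)$ is $T_0$, so that together these make $(M,\tau)$ a powder monoid.

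First I would address the action-topology condition. Let $f:\Setswith{M} \to \Setswith{M'}$ be the essential geometric morphism induced by $\phi$ (which is a localic surjection, since $\phi$ is an injective monoid homomorphism, by Proposition \ref{prop:essgeom}). Composing with the hyperconnected morphism $h':\Setswith{M'} \to \Cont(M',\tau')$ gives a geometric morphism $h' \circ f:\Setswith{M} \to \Cont(M',\tau')$. By Theorem \ref{thm:factor}, there is a canonical coarsest topology $\tau_{h' \circ f}$ on $M$ through which this morphism factors, and this topology is automatically an action topology. The key point is that $\tau_{h' \circ f}$ coincides with the subspace topology $\tau = \tau'|_M$: since $\phi$ is continuous and $\tau'$ is an action topology, $\phi^{-1}$ of the necessary clopens generating $\tau'$ generate exactly the subspace topology, so $\tau$ is precisely the coarsest topology making $\phi$ continuous, which is what Theorem \ref{thm:factor} produces. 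Hence $\tau$ is an action topology, as required.

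For the $T_0$ condition, I would argue directly and topologically: $(M',\tau')$ is a powder monoid, so by Lemma \ref{lem:T0T2} it is Hausdorff, and in particular $T_0$. Any subspace of a $T_0$ space is again $T_0$, and $\tau = \tau'|_M$ is by definition the subspace topology. This is the same reasoning used at the end of the proof of Corollary \ref{crly:powderfaith}. Combining the two parts, $(M,\tau)$ is a $T_0$ topological monoid whose topology is an action topology, hence a powder monoid by Definition \ref{dfn:powder}.

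I do not expect any serious obstacle here; the statement is deliberately weaker than Theorem \ref{thm:inccomplete}, precisely because for general powder monoids we cannot conclude completeness (the completion may enlarge $M$, as witnessed by Example \ref{xmpl:Z+}). The only point requiring a little care is verifying that $\tau_{h' \circ f}$ really equals the subspace topology rather than merely being contained in it; this is handled by noting that, for an action topology $\tau'$ on $M'$, the subspace topology already has a basis of clopen sets of the form $\phi^{-1}(U)$ with $U$ a necessary clopen of $M'$, and that the object $T := h'^*h'_*(\Pcal(M'))$ restricts along $f$ to exactly the continuous clopens of $M$, so no coarsening occurs. Everything else is a routine transport of the separation argument through the subspace inclusion.
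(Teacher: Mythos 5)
Your proof is correct, but it takes a genuinely different route from the one in the paper. The paper's own proof is a three-line $2$-categorical argument: it forms the square \eqref{eq:square} induced by $\phi$, observes that $f$ is localic (by Proposition \ref{prop:essgeom}, since $\phi$ is an injective monoid homomorphism) and hence internally faithful on essential geometric morphisms (Proposition \ref{prop:locfaith}), that $h'$ is likewise faithful because $(M',\tau')$ is a powder monoid (Corollary \ref{crly:powderfaith}), deduces from $g \circ h = h' \circ f$ that $h$ is faithful on essential geometric morphisms, and then applies Corollary \ref{crly:powderfaith} in the converse direction to conclude. Your argument instead works directly with the topologies: Theorem \ref{thm:factor} identifies the subspace topology with a canonical action topology, and the $T_0$ condition is inherited by subspaces via Lemma \ref{lem:T0T2}; this is closer in spirit to the proof of Proposition \ref{prop:eMe} than to the paper's proof of this lemma. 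Two remarks on the comparison. First, a cosmetic caveat on your side: Theorem \ref{thm:factor} is stated only for \emph{hyperconnected} morphisms, so strictly you must first take the hyperconnected part of $h' \circ f$ before invoking it, exactly as the paper does parenthetically in the proof of Proposition \ref{prop:eMe}; the identification of the resulting coarsest topology with $\tau'|_M$ then goes through Lemma \ref{lem:cts}, which is worth citing at that point. Second, your route buys explicitness on a point the paper's proof leaves tacit: Corollary \ref{crly:powderfaith} presupposes that the topology in question is an action topology, so the paper's final inference silently requires exactly the fact you verify, namely that $\tau'|_M$ is an action topology (otherwise one could only conclude that $(M,\tilde{\tau})$ is a powder monoid). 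What the paper's route buys in exchange is brevity and uniformity with the intrinsic-characterization machinery of Section \ref{ssec:intrinsic}, and it absorbs the $T_0$ check into the faithfulness condition rather than treating it separately as you do.
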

\begin{proof}
Consider the square \eqref{eq:square} induced by $\phi$. By Proposition \ref{prop:locfaith}, the localic surjection $f$ is faithful on essential geometric morphisms, and by Corollary \ref{crly:powderfaith}, so is $h'$. It follows that $h$ must also be faithful on essential geometric morphisms, and hence that $(M,\tau)$ also is.
\end{proof}

As in Corollary \ref{crly:powderfaith}, we encounter the problem that we have no guarantee that the morphism $g:\Cont(M,\tau) \to \Cont(M',\tau')$ in Lemma \ref{lem:surjective} induced by an injective monoid homomorphism $\phi$ will have a trivial hyperconnected part. However, we can use whatever hyperconnected part there may be to produce a complete monoid, and when the codomain is also a complete monoid, this gives us a canonical factorization of $\phi$, as follows.

\begin{thm}
\label{thm:locextend}
Suppose $(M',\tau')$ is a complete monoid, and $\phi: M \to M'$ is an injective monoid homomorphism. Let $\tau$ be the restriction of $\tau'$ as in Lemma \ref{lem:surjective}. Then there is a complete monoid $(L,\rho)$ and a dense, continuous, injective monoid homomorphism $(M,\tau) \to (L,\rho)$ such that $\phi$ extends to a continuous injection $\psi: (L,\rho) \to (M',\tau')$ inducing a localic surjection $\Cont(L,\rho) \to \Cont(M',\tau')$.
\end{thm}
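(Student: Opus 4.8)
The plan is to mimic the construction and proofs established in Sections \ref{ssec:complete} and \ref{ssec:id}, applying the completion machinery not to the original monoid but to the factorization provided by the injective homomorphism $\phi$. First I would form the essential surjection $f:\Setswith{M} \to \Setswith{M'}$ induced by $\phi$ and compose it with the canonical hyperconnected morphism $h':\Setswith{M'}\to\Cont(M',\tau')$, obtaining a geometric morphism $\Setswith{M}\to\Cont(M',\tau')$. Taking its hyperconnected--localic factorization yields an intermediate topos $\Ecal$ together with a hyperconnected morphism $k:\Setswith{M}\to\Ecal$ and a localic morphism $g':\Ecal\to\Cont(M',\tau')$. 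By Proposition \ref{prop:lim} and Proposition \ref{prop:Lpowder}, the hyperconnected morphism $k$ determines a complete (in particular powder) monoid $(L,\rho)$ with $\Ecal\simeq\Cont(L,\rho)$, and Scholium \ref{schl:completion} guarantees that this $(L,\rho)$ depends only on the relevant point, so it is genuinely complete in the sense of Definition \ref{dfn:cpltmon}.

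Next I would identify the two maps. The dense, continuous, injective homomorphism $(M,\tau)\to(L,\rho)$ should be the comparison homomorphism $u$ of Lemma \ref{lem:Mdense}, whose density is automatic and whose injectivity follows from Corollary \ref{crly:extend}, since $(M,\tau)$ is a powder monoid by Lemma \ref{lem:surjective}. For the extension $\psi:(L,\rho)\to(M',\tau')$, the key observation is that the localic morphism $g'$ is a surjection: indeed, $f$ is a localic surjection (being induced by an injective monoid homomorphism, by Proposition \ref{prop:essgeom} applied at the level of discrete monoids), and $h'$ is hyperconnected hence surjective, so the composite $h'\circ f$ is a surjection; since surjections are stable under the hyperconnected--localic factorization in the appropriate sense, $g'$ inherits surjectivity. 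Then, because both $(L,\rho)$ and $(M',\tau')$ are complete monoids and $g'$ is a localic surjection commuting suitably with the canonical points, Proposition \ref{prop:phisom} (or rather its argument, combined with the completeness of both monoids established via Proposition \ref{prop:intrinsic}) produces a continuous monoid homomorphism $\psi:(L,\rho)\to(M',\tau')$ inducing $g'$. That $\psi\circ u = \phi$ and that $\psi$ is injective would then follow by tracking the canonical points and using full faithfulness of $h'$ on essential geometric morphisms.

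The verification I would carry out most carefully is that $g'$ really is induced by a \emph{monoid} homomorphism rather than a mere semigroup homomorphism, and that this homomorphism is injective. For the former, I would invoke Proposition \ref{prop:phisom}: it suffices to exhibit an isomorphism $T'\cong g'_*(T_L)$ between the relevant Boolean algebras of clopens, which should follow because $g'$ commutes with the canonical points (both factorizations being taken over the same base point $\Set\to\Cont(M',\tau')$). For injectivity of $\psi$, I would argue that $g'$ being a localic surjection means $\psi$ must be injective as a map of (sober) powder monoids, since a non-injective $\psi$ would collapse points and contradict either surjectivity or the faithfulness on essential geometric morphisms guaranteed by Propositions \ref{prop:locfaith} and \ref{prop:intrinsic}.

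The main obstacle I anticipate is precisely the transition from the topos-level localic surjection $g'$ back to a genuine continuous injective monoid homomorphism $\psi$ at the monoid level. Propositions \ref{prop:phisom} and \ref{prop:locfaith} give the ingredients, but the delicate point is ensuring that the sobriety and completeness hypotheses are strong enough to reconstruct $\psi$ uniquely and to confirm both that it is a homomorphism of monoids (preserving the unit) and that $\psi\circ u=\phi$ holds on the nose. I would handle this by working through the counit of $g'$ at the Boolean algebra object $T_L$, using the representability statement of Scholium \ref{schl:Tseparator} to pin down the underlying function $L\to M'$, and then checking the homomorphism and compatibility conditions exactly as in the second half of the proof of Proposition \ref{prop:phisom}. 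The remaining claims (density, continuity, that the intermediate monoid is complete) are then either immediate or already packaged in the cited results.
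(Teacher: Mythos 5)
Your construction of $(L,\rho)$ is exactly the paper's: both take the hyperconnected--localic factorization of $h'\circ f$ and let $(L,\rho)$ be the complete monoid attached to the hyperconnected part $k$ (the paper's $v\circ h$). Where you genuinely diverge is in producing $\psi$. The paper writes $\psi$ down explicitly: it defines $t(r')$ as the pullback of $f^*(r')$ along $\phi$, observes that $M/t(r')\hookrightarrow f^*(M'/r')$ lies in $\Cont(L,\rho)$, sets $\psi(\alpha)=([\phi(a_{t(r')})])_{r'\in\Rfrak_{\tau'}}$, and then checks multiplicativity (via the identity $t(\phi(a)^*(r'))=a^*(t(r'))$), unit preservation and continuity by direct computation; with this formula, injectivity of $\psi$, the equality $\psi\circ u=\phi$ and injectivity of $u$ are all immediate. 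You instead get $\psi$ abstractly from Proposition \ref{prop:phisom}: the localic part $g'$ is surjective (being the second factor of the surjection $h'\circ f$) and commutes with the canonical points (because $\phi$ is a \emph{monoid} homomorphism, so $f$ commutes with the canonical points of the presheaf toposes, plus Scholium \ref{schl:completion}), whence $T'\cong g'_*(T_L)$ by the representability argument of Scholium \ref{schl:Tseparator}, and a continuous monoid homomorphism $\psi$ inducing $g'$ exists. This is a legitimate and shorter route; its cost, which you correctly identify, is that topos-level statements hold only up to isomorphism, so extracting $\psi\circ u=\phi$ on the nose forces you back to the counit of $g'$ at $T_L$ and sobriety, as in the second half of the proof of Proposition \ref{prop:phisom}. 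Your injectivity argument for $\psi$ (localic implies faithful on points by Proposition \ref{prop:locfaith}, combined with Proposition \ref{prop:intrinsic} for the complete monoid $L$) follows the same pattern as Corollary \ref{crly:powderfaith} and Proposition \ref{prop:denseisom} and goes through.

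One citation is wrong, although the claim it is meant to support is true. You justify injectivity (and implicitly continuity) of $u:(M,\tau)\to(L,\rho)$ by Corollary \ref{crly:extend}. That corollary concerns the comparison map into the completion of $(M,\tau)$ itself, i.e.\ the complete monoid attached to the hyperconnected morphism $\Setswith{M}\to\Cont(M,\tau)$; but your $(L,\rho)$ is attached to $k:\Setswith{M}\to\Ecal$, and $\Ecal$ is in general strictly smaller than $\Cont(M,\tau)$. Concretely, for $\phi:\Nbb\hookrightarrow\hat{\Nbb}$ the inclusion into the profinite completion (cf.\ Example \ref{xmpl:N+}), the subspace topology $\tau$ on $\Nbb$ is discrete, so the completion of $(\Nbb,\tau)$ is $\Nbb$ itself, yet $(L,\rho)=\hat{\Nbb}$: the map $u$ in the theorem is not the one Corollary \ref{crly:extend} speaks about, and its injectivity genuinely uses injectivity of $\phi$ rather than powder-ness of $(M,\tau)$ alone. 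The repair fits inside your own argument: density and the homomorphism property of $u$ come from Lemma \ref{lem:Mdense} (which applies to any hyperconnected morphism), continuity comes from the same computation as in Corollary \ref{crly:extend} since every congruence in $\Rfrak_{k}$ is $\tau$-open, and injectivity of $u$ follows at the end from $\psi\circ u=\phi$ together with injectivity of $\phi$.
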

\begin{proof}
Of course, we define $(L,\rho)$ to be the complete monoid obtained from the hyperconnected part of the composite $h' \circ f$ in the square \eqref{eq:square} induced by $\phi$. Thus we have a diagram of geometric morphisms:
\begin{equation}
\label{eq:Mfactors}
\begin{tikzcd}
{\Setswith{M}} \ar[rr, "f"] \ar[dr, "u"'] \ar[dd, "h"'] & &
{\Setswith{M'}} \ar[dd, "h'"] \\
& {\Setswith{L}} \ar[dd, "k", near start] \ar[ur, dashed] & \\
{\Cont(M{,}\tau)} \ar[rr, "g", near start] \ar[dr, "v"'] & &
{\Cont(M'{,}\tau')}, \\
& {\Cont(L{,}\rho)} \ar[ur, "w"'] &
\end{tikzcd}	
\end{equation}
in which $h$, $h'$, $k$ and $v$ are hyperconnected, while $f$ and $w$ are localic surjections and $f$ and $u$ are essential. It suffices for us to construct the inclusion $\psi: L \to M'$ to provide the (dashed) essential geometric morphism which restricts to $w$.

The homomorphism $\phi$ induces a morphism $M \to f^*(L') = f^*f_!(M)$, the unit of the adjunction $(f_! \dashv f^*)$, whose element-wise action coincides with $\phi$; we abuse notation and call this unit map $\phi$, too.

Define a mapping $t: \Rfrak_{\tau'} \to \Rfrak_{\tau}$ by pullback: for each $r' \in \Rfrak_{\tau'}$, let $t(r')$ be the pullback of $f^*(r')$ along $\phi$. The resulting relation is such that the intermediate principal $M$-set in the epi-mono factorization of $M \to f^*(M') \too f^*(M'/r')$ is precisely $M/t(r')$, so we have $M \too M/t(r') \hookrightarrow f^*(M'/r')$. By construction, $f^*$ sends $(M',\tau')$-sets to $(M,\tau)$-sets lying in $\Cont(L,\rho)$ and this subcategory is closed under subobjects, so $M/t(r')$ is naturally an $(L,\rho)$-set. As such, we obtain maps $L \too M/t(r') \hookrightarrow f^*(M'/r')$ by factoring each $M \too M/t(r')$ through $L = \varprojlim_{r \in \Rfrak_{v \circ h}} M/r$. These assemble into an $M$-set homomorphism
\[\psi: L \to f^*(M') = \varprojlim_{r \in \Rfrak_{\tau'}} f^*(M'/r'),\]
which explicitly sends $\alpha \in L$ to $\psi(\alpha) = ([\phi(a_{t(r')})])_{r' \in \Rfrak_{\tau'}}$. It remains to show that this map underlies a monoid homomorphism. As such, observe that for $a \in M$ and $r' \in \Rfrak_{\tau'}$:
\begin{align*}
t(\phi(a)^*(r')) & =
t\left( \{(x,y) \in M' \times M' \mid (\phi(a)x,\phi(a)y) \in r' \} \right) \\ & =
\{(m,n) \in M \times M \mid (\phi(a)\phi(m),\phi(a)\phi(n)) \in r' \} \\ & =
a^* \left( \{(m,n) \in M \times M \mid (\phi(m),\phi(n)) \in r' \} \right) \\ & =
a^*(t(r')),
\end{align*}
where we have omitted each instance of $f^*$. Therefore, given $\alpha,\beta \in L$, we have
\begin{align*}
\psi(\alpha\beta) & =
([\phi(a_{t(r')}b_{a_{t(r')}^*(t(r'))})]) \\ & =
([\phi(a_{t(r')})\phi(b_{t(\phi(a_{t(r')})^*(r'))})]) \\ & =
\psi(\alpha)\psi(\beta),
\end{align*}
as required. Preservation of the unit follows from our assumption that $\phi$ was a monoid homomorphism. To demonstrate continuity, observe that if $U = \pi_{r'}^{-1}(\{[m']\})$ is a basic open set in $M'$, then $\psi^{-1}(U)$ is exactly the basic open set $\pi_{t(r')}^{-1}(\{[m]\})$ in $L$ if there is some $m \in M$ with $(\phi(m),m') \in r'$, and is empty otherwise.

Having shown that it exists, we immediately have that $\psi$ is the unique continuous monoid homomorphism making the desired triangle commute, since dense inclusions of Hausdorff spaces are epimorphisms in the category of such spaces.
\end{proof}

The construction of the factoring map $\psi$ in the above relies on the expression of $L$ as a limit. It is useful to have a more topological characterization, for which we need some further preliminary results.

\begin{lemma}
\label{lem:densects}
Let $\phi: (M,\tau) \to (M',\tau')$ be a continuous monoid homomorphism whose image is dense. Then the geometric morphism $g: \Cont(M,\tau) \to \Cont(M',\tau')$ induced by $\phi$ is hyperconnected.
\end{lemma}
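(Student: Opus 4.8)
The plan is to show that the inverse image functor $g^*$ of the geometric morphism induced by a dense continuous monoid homomorphism $\phi$ is full and faithful with image closed under subobjects and quotients, which is exactly the defining property of hyperconnectedness. Recall that $g^*$ is precisely the functor $Rf^*V'$ from the square \eqref{eq:square}, which at the level of underlying sets agrees with restriction of actions along $\phi$; faithfulness is immediate since this restriction does not alter underlying functions. The key point throughout will be to exploit density of $\phi(M)$ in $(M',\tau')$, in close analogy with the argument already given in the proof of Proposition \ref{prop:representation}, where density of the image of $u:M \to L$ was used to show $q^*$ was full and faithful.

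First I would establish fullness. Given $(M',\tau')$-sets $P$ and $Q$ and an $M$-set homomorphism $t: g^*(P) \to g^*(Q)$, I must show $t$ is automatically an $M'$-set homomorphism. Fixing $p \in P$ and $m' \in M'$, by density of $\phi(M)$ the necessary clopen $\Ical_p^{m'}$ (which is open by continuity of the action of $M'$ on $P$) contains some $\phi(m)$ with $m \in M$; then $p \cdot m' = p \cdot \phi(m)$, and similarly $t(p) \cdot m' = t(p) \cdot \phi(m)$ using continuity of the action on $Q$. Hence $t(p \cdot m') = t(p \cdot \phi(m)) = t(p) \cdot \phi(m) = t(p) \cdot m'$, so $t$ respects the full $M'$-action. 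This is the same mechanism as in Proposition \ref{prop:representation}, and density is precisely what makes it work.

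Next I would verify that the image of $g^*$ is closed under subobjects and quotients. Since $\Cont(M',\tau')$ and $\Cont(M,\tau)$ are both supercompactly generated (Corollary \ref{crly:conttop}) and $g^*$ preserves all colimits, it suffices to understand the behaviour on principal objects and to check that a sub-$M$-set or quotient $M$-set of something in the image of $g^*$ again carries a (necessarily unique, by fullness) continuous $M'$-action restricting to the given $M$-action. For a quotient, the $M'$-action descends because the congruence on $g^*(P)$ is $M$-equivariant and, by the density argument above, automatically $M'$-equivariant, with continuity of the resulting action following since every necessary clopen of the quotient is a union of necessary clopens of $P$. For a subobject, one uses that $g^*$ reflects monomorphisms and that a sub-$M$-set of $g^*(P)$ closed under the $M$-action is, again by density, closed under the $M'$-action, its continuity being inherited from $P$. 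Together with fullness and faithfulness this gives that $g^*$ is full, faithful and closed under subquotients, which is the definition of a hyperconnected morphism.

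The main obstacle I anticipate is the closure-under-subobjects step: one must be careful that a sub-$M$-set of $g^*(P)$ genuinely arises as $g^*$ of a subobject in $\Cont(M',\tau')$, rather than merely admitting an abstract $M'$-action. The cleanest route is probably to avoid reproving everything by hand and instead invoke the characterization machinery already developed: identify $g$ with the hyperconnected part of a factorization and appeal to Theorem \ref{thm:factor} or to the fact that, by the density hypothesis, $\phi$ and the canonical comparison map $u: (M,\tau) \to (L,\rho)$ have the same dense image, so that $g$ and the equivalence arising in Proposition \ref{prop:representation} coincide up to the relevant identifications. In other words, once fullness and faithfulness are in hand, the quickest finish is to observe that $g^*$ has a fully faithful image and then deduce hyperconnectedness from the supercompactly generated structure together with Proposition \ref{prop:intrinsic} and the completeness results of Section \ref{ssec:complete}, rather than checking the subquotient conditions directly.
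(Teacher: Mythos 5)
Your overall strategy coincides with the paper's: verify directly that $g^*$ is full and faithful with image closed under subobjects, using density of $\phi(M)$ together with openness of the necessary clopens. (The paper first reduces, harmlessly, to the case where $\phi$ is a dense \emph{inclusion}, by factoring $\phi$ as a surjective monoid homomorphism, which induces a hyperconnected morphism, followed by a dense inclusion; your choice to work with a general dense $\phi$ is fine.) However, your key fullness step fails as written. You choose $\phi(m) \in \Ical_p^{m'}$, which gives $p \cdot m' = p \cdot \phi(m)$, and then assert that ``similarly $t(p)\cdot m' = t(p)\cdot \phi(m)$ using continuity of the action on $Q$.'' Continuity of the action on $Q$ only tells you that $\Ical_{t(p)}^{m'}$ is open; it does not place your chosen $\phi(m)$ inside it, and the chain $t(p \cdot m') = t(p\cdot\phi(m)) = t(p)\cdot\phi(m) = t(p)\cdot m'$ needs the \emph{same} $m$ to lie in both clopens. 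The repair is exactly what the paper does: pick $\phi(m)$ in the intersection $\Ical_p^{m'} \cap \Ical_{t(p)}^{m'}$, which is open (both sets are open) and non-empty (both contain $m'$), hence meets the dense image. The same two-clopen intersection is silently needed in your quotient argument, where well-definedness of the descended action requires a single $\phi(m)$ lying in $\Ical_{y_1}^{m'} \cap \Ical_{y_2}^{m'}$ for two identified elements $y_1, y_2$.

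Two further remarks. First, once $g^*$ is full and faithful with image closed under subobjects, hyperconnectedness already follows (this is one of the equivalent characterizations in \cite[Proposition A4.6.6]{Ele}), so your separate treatment of quotients is unnecessary; the paper accordingly checks only subobjects. Second, your proposed ``quickest finish'' via Theorem \ref{thm:factor}, Proposition \ref{prop:representation} or Proposition \ref{prop:intrinsic} does not work as stated: Proposition \ref{prop:representation} takes a hyperconnected morphism as \emph{input}, so invoking it to prove hyperconnectedness of $g$ is circular, and the comparison map $u \colon M \to L$ lands in the completion of $(M,\tau)$, not in $M'$, so the claim that $\phi$ and $u$ ``have the same dense image'' is not meaningful. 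The direct verification, with the intersection fix above, is both the shortest route and the paper's own.
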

\begin{proof}
First, observe that without loss of generality we may assume $\phi$ is a dense inclusion of monoids. Indeed, $\phi$ always factors as a surjective monoid homomorphism followed by a dense inclusion of monoids, and the former factor induces a hyperconnected essential morphism $f$ at the level of the presheaf toposes, whence by consideration of the square \eqref{eq:square}, $g$ must also be hyperconnected. As such, we identify $M$ with its image in $M'$.

Given an $M$-set homomorphism $s: g^*(X) \to g^*(Y)$, $x \in g^*(X)$ and $m' \in M'$, let $m \in M \cap \Ical_x^{m'} \cap \Ical_{s(x)}^{m'}$. Then we have $s(x \cdot m') = s(x \cdot m) = s(x) \cdot m = s(x) \cdot m'$, whence $s$ is an $M'$-set homomorphism, and so $g^*$ is full; it is always faithful when $\phi$ is a monoid homomorphism.

Moreover, the image of $g^*$ is closed under subobjects: given a sub-$M$-set $A \hookrightarrow g^*(Y)$ and $m' \in M'$, for each $y \in A$ we have some $m \in M \cap \Ical_y^{m'}$, whence $y \cdot m = y \cdot m' \in A$, and hence $A$ is a sub-$M'$-set. Altogether, this ensures that $g$ is hyperconnected, as claimed.
\end{proof}

\begin{prop}
\label{prop:denseisom}
Suppose $\phi:(M,\tau) \to (M',\tau')$ is a monoid homomorphism between topological monoids inducing an equivalence $\Cont(M,\tau) \simeq \Cont(M',\tau')$, that $(M,\tau)$ is a complete monoid and that $(M',\tau')$ is a powder monoid. Then $\phi$ is an isomorphism.
\end{prop}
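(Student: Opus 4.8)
The plan is to compare $(M,\tau)$ and $(M',\tau')$ through the completions of their canonical points, exploiting the fact that this completion is an invariant of the topos equipped with its canonical point. Since $\phi$ induces the equivalence $g:\Cont(M,\tau)\to\Cont(M',\tau')$, it is in particular continuous (Lemma \ref{lem:cts}), and because $\phi$ is a \emph{monoid} homomorphism, $g$ commutes with the canonical points of the two toposes, exactly as recorded in the proof of Proposition \ref{prop:phisom}. Write $(L,\rho)$ and $(L',\rho')$ for the topological monoids obtained from the canonical points of $\Cont(M,\tau)$ and $\Cont(M',\tau')$ via Proposition \ref{prop:lim}, with dense comparison homomorphisms $u_M:M\to L$ and $u':M'\to L'$ from Lemma \ref{lem:Mdense}. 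By Schlolium \ref{schl:completion} the completion depends only on the point, so the equivalence $g$, which matches the canonical points, induces an isomorphism of topological monoids $\theta:(L,\rho)\xrightarrow{\ \cong\ }(L',\rho')$.

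The first substantive step is to establish the compatibility
\[
u'\circ\phi \;=\; \theta\circ u_M
\]
as homomorphisms $M\to L'$. Both sides admit explicit descriptions: by Lemma \ref{lem:Mdense} the units send an element to the endomorphism of the relevant point that it represents, so $u'\circ\phi$ sends $m$ to $\alpha_{\phi(m)}$, while $\theta$ is whiskering by $g^{*}$ under the identification of $p'^{*}$ with $p^{*}g^{*}$. Since $g^{*}$ is restriction along $\phi$, whiskering the endomorphism $\alpha_{m}$ of $p^{*}$ by $g^{*}$ yields precisely $\alpha_{\phi(m)}$, which gives the identity. I expect this verification to be the main obstacle: it is essentially the functoriality of the completion construction worked out by hand in this instance, and it must be done carefully from the explicit formulae for $u_M$, $u'$ and $\theta$ rather than cited, since the reflectivity results of Section \ref{ssec:monads} come later.

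Granting the compatibility, bijectivity of $\phi$ follows formally. As $(M,\tau)$ is complete, $u_M$ is an isomorphism by Definition \ref{dfn:cpltmon}, and $\theta$ is an isomorphism, so $u'\circ\phi=\theta\circ u_M$ is a bijection. Hence $\phi$ is injective and $u'$ is surjective; but $u'$ is injective by Corollary \ref{crly:extend} since $(M',\tau')$ is a powder monoid, so $u'$ is bijective. Therefore $\phi=u'^{-1}\circ(u'\circ\phi)$ is a bijection, i.e.\ an isomorphism of abstract monoids.

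It remains to see that $\phi$ is a homeomorphism. Now that $\phi$ is a monoid isomorphism, the essential morphism $f:\Setswith{M}\to\Setswith{M'}$ of the square \eqref{eq:square} is an equivalence, and in that square $f$ and $g$ are equivalences while $h$ and $h'$ are hyperconnected. Transport $\tau$ along $\phi$ to a topology $\phi_{*}\tau$ on $M'$; since necessary clopens transport along monoid isomorphisms, $\phi_{*}\tau$ is again an action topology and $\phi:(M,\tau)\to(M',\phi_{*}\tau)$ is a homeomorphism by construction. The commuting square identifies, under the equivalence $f$, the coreflective subcategory $\Cont(M,\tau)\subseteq\Setswith{M}$ with $\Cont(M',\tau')\subseteq\Setswith{M'}$, so $\Cont(M',\phi_{*}\tau)$ and $\Cont(M',\tau')$ are the \emph{same} subcategory of $\Setswith{M'}$. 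Two action topologies on $M'$ with the same category of continuous actions must coincide, by the coarseness and idempotence clauses of Theorem \ref{thm:tau}; hence $\phi_{*}\tau=\tau'$ and $\phi$ is a homeomorphism, which completes the proof.
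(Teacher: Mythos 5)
Your proof is correct, but it takes a genuinely different route from the paper's. The paper argues entirely $2$-categorically: since $(M,\tau)$ is complete, $h \circ -$ is full and faithful on essential geometric morphisms (Proposition \ref{prop:intrinsic}); since $(M',\tau')$ is a powder monoid, $h' \circ -$ is faithful on them (Corollary \ref{crly:powderfaith}); composing with the equivalence $g$ in the square \eqref{eq:square} and cancelling, $f \circ -$ is full and faithful on essential geometric morphisms, and its restriction to endomorphisms of the canonical points is precisely $\phi$, which is therefore bijective. You instead inline the content of those two intrinsic results: you transport the completion along the equivalence via Scholium \ref{schl:completion}, verify the naturality square $u' \circ \phi = \theta \circ u_M$ by the explicit whiskering formulae, and then read off bijectivity from ``$u_M$ iso (completeness), $\theta$ iso, $u'$ injective (powder, Corollary \ref{crly:extend})''. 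The two arguments rest on the same underlying facts — completeness means $u_M$ is an isomorphism, the powder condition means $u'$ is injective — so the mathematical content overlaps heavily, but your element-level presentation is more self-contained, at the cost of having to check the compatibility square by hand (which you do correctly; the identification $p'^* \cong p^* g^*$ is via the counit of the coreflection, whose components are essentially identities on underlying sets, so the computation $\theta(\alpha_m) = \alpha_{\phi(m)}$ goes through). One further point in your favour: your final paragraph, identifying $\Cont(M',\phi_*\tau)$ with $\Cont(M',\tau')$ as subcategories of $\Setswith{M'}$ and invoking the coarseness/idempotence clauses of Theorem \ref{thm:tau}, explicitly establishes that $\phi$ is a homeomorphism and hence an isomorphism of \emph{topological} monoids; the paper's proof stops at bijectivity of the underlying monoid homomorphism and leaves the topological half of the claim implicit, so your version is actually the more complete one.
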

\begin{proof}
Since an equivalence is full and faithful on geometric morphisms, in the square \eqref{eq:square} induced by $\phi$ is follows that $f$ is faithful on essential geometric morphisms and $h$ is full on those in the image of $f$. Since $\phi$ is a monoid homomorphism, it commutes with the canonical points of $\Setswith{M}$ and $\Setswith{M'}$, whence the latter point is in the image of $f$, and hence ($h$ being faithful on essential geometric morphisms by assumption) $M'$ is a complete monoid, and $h$ is full and faithful on essential geometric morphisms. It follows that $f$ is also full and faithful on essential geometric morphisms, and so $\phi$ (the restriction of $f \circ -$ to the canonical point of $\Setswith{M}$) is an isomorphism, as claimed.
\end{proof}

\begin{crly}
\label{crly:closure}
The monoid $(L,\rho)$ constructed in Theorem \ref{thm:locextend} is the closure of $(M,\tau)$ in $(M',\tau')$.
\end{crly}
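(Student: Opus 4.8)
The plan is to identify the image $\psi(L) \subseteq M'$ with the topological closure $N := \overline{\phi(M)}$ of $\phi(M)$ in $(M',\tau')$. First I would record some easy structural facts. Since $(M',\tau')$ is complete it is in particular a topological monoid (Proposition \ref{prop:ctsx}), so the closure $N$ of the submonoid $\phi(M)$ is again a submonoid; equipping $N$ with the subspace topology $\tau_N := \tau'|_N$ and applying Lemma \ref{lem:surjective} to the injective monoid homomorphism $N \hookrightarrow M'$ shows that $(N,\tau_N)$ is a powder monoid. Because $\phi = \psi \circ u$ with $u(M)$ dense in $L$ (Theorem \ref{thm:locextend}) and $\psi$ continuous, $\phi(M) = \psi(u(M))$ is dense in $\psi(L)$; density within the subspace $\psi(L)$ forces $\psi(L) \subseteq \overline{\phi(M)} = N$, and since $\phi(M)$ is by definition dense in $N$, the larger set $\psi(L)$ is dense in $N$ as well.

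Next I would corestrict $\psi$ to a continuous injective monoid homomorphism $\bar{\psi} : L \to N$ with dense image. By Lemma \ref{lem:densects} this induces a hyperconnected morphism $a := \Cont(\bar{\psi}) : \Cont(L,\rho) \to \Cont(N,\tau_N)$, while the closed inclusion $\iota : N \hookrightarrow M'$ induces $b := \Cont(\iota)$. Since $\psi = \iota \circ \bar{\psi}$, the $2$-functoriality of $\Cont(-)$ (Theorem \ref{thm:Cont}) gives $w = b \circ a$, where $w$ is the localic surjection produced by Theorem \ref{thm:locextend}.

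The heart of the argument is then to upgrade $a$ from hyperconnected to an equivalence. I would factor $b = b_{\mathrm{loc}} \circ b_{\mathrm{hyp}}$ into its hyperconnected and localic parts. As hyperconnected morphisms are closed under composition, $b_{\mathrm{hyp}} \circ a$ is hyperconnected, so $w = b_{\mathrm{loc}} \circ (b_{\mathrm{hyp}} \circ a)$ is the hyperconnected--localic factorization of $w$; because $w$ is localic, uniqueness of this factorization forces $b_{\mathrm{hyp}} \circ a$ to be an equivalence. Then $a$ is hyperconnected (so $a^*$ is fully faithful) and admits a left inverse up to isomorphism, namely $(b_{\mathrm{hyp}} \circ a)^{-1} \circ b_{\mathrm{hyp}}$, whence $a^*$ is also essentially surjective and $a$ is an equivalence. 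Finally, $a$ is the morphism induced by the monoid homomorphism $\bar{\psi}$ from the complete monoid $(L,\rho)$ to the powder monoid $(N,\tau_N)$, so Proposition \ref{prop:denseisom} applies and $\bar{\psi}$ is an isomorphism of topological monoids. Consequently $\psi(L) = N = \overline{\phi(M)}$, which is exactly the assertion that $(L,\rho)$ is the closure of $(M,\tau)$ in $(M',\tau')$.

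I expect the main obstacle to be the step establishing that a hyperconnected morphism $a$ with $b \circ a$ localic is necessarily an equivalence; the rest is bookkeeping with the already-constructed factorization of Theorem \ref{thm:locextend} and with functoriality. This step rests essentially on the uniqueness and compositionality of the hyperconnected--localic factorization, together with the characterization of equivalences of toposes as those whose inverse image is both fully faithful and essentially surjective.
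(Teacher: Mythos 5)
Your proof is correct and takes essentially the same route as the paper's: both identify the closure $\overline{\phi(M)}$ as a powder monoid via Lemma \ref{lem:surjective}, use Lemma \ref{lem:densects} to see that the (co)restriction of $\psi$ induces a hyperconnected factor of the localic morphism $w$, conclude that this factor is an equivalence, and finish with Proposition \ref{prop:denseisom}. The only cosmetic difference is your justification of the equivalence step via uniqueness of the hyperconnected--localic factorization of $w$, where the paper implicitly uses the equivalent observation that a hyperconnected morphism through which a localic morphism factors is itself localic, hence an equivalence.
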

\begin{proof}
Certainly $(M,\tau)$ is dense in $(L,\rho)$. Consider the dense-closed factorization of $\psi: (L,\rho) \to (M',\tau')$. If the dense part is non-trivial, by Lemma \ref{lem:densects} it produces a hyperconnected factor of the geometric morphism induced by $\psi$, and hence must be an equivalence. The intermediate monoid is a powder monoid by Lemma \ref{lem:surjective}, and thus the dense part is an isomorphism by Proposition \ref{prop:denseisom}.
\end{proof}

\begin{crly}
\label{crly:closed}
Any monoid which is a closed subsemigroup of a complete monoid is complete.
\end{crly}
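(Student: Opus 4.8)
The plan is to deduce this directly from Theorem \ref{thm:locextend} and Corollary \ref{crly:closure}, after first reducing to the case of an inclusion which is a genuine monoid homomorphism. So let $M$ be a monoid which is a closed subsemigroup of a complete monoid $(M',\tau')$, and equip $M$ with the subspace topology $\tau := \tau'|_M$. Write $e := 1_M$. Since multiplication in $M$ is inherited from $M'$, the relation $ee = e$ holds in $M'$, so $e$ is an idempotent of $M'$; moreover $em = m = me$ for every $m \in M$, which gives the inclusion $M \subseteq eM'e$. The point of passing through $eM'e$ is that the inclusion $M \hookrightarrow eM'e$ sends $1_M = e$ to the identity $e$ of $eM'e$, so it is an \emph{injective monoid homomorphism} rather than merely a semigroup homomorphism.

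First I would record the ingredients needed to apply Theorem \ref{thm:locextend} with $eM'e$ playing the role of the ambient complete monoid. By Theorem \ref{thm:inccomplete}, the subsemigroup $eM'e$ equipped with the subspace topology $\tau'|_{eM'e}$ is itself a complete monoid. Since $eM'e$ is a subspace of $M'$ (indeed a closed one, by Lemma \ref{lem:ideal}) and $M \subseteq eM'e$, the closedness of $M$ in $M'$ transfers to closedness of $M$ in $eM'e$; and, subspace topologies composing, $\tau$ is equally the restriction of $\tau'|_{eM'e}$ to $M$. Applying Theorem \ref{thm:locextend} to the injective monoid homomorphism $M \hookrightarrow eM'e$ then yields a complete monoid $(L,\rho)$ together with a dense, continuous, injective monoid homomorphism $u : (M,\tau) \to (L,\rho)$.

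To finish, I would invoke Corollary \ref{crly:closure}, which realises $(L,\rho)$ as the closure of $(M,\tau)$ inside $eM'e$ (with its subspace topology). Because $M$ is closed in $eM'e$, this closure is $M$ itself, so the dense map $u$ has image all of $L$; and since Corollary \ref{crly:closure} identifies $\rho$ with precisely the subspace topology onto which $u$ carries $\tau$, the map $u$ is an isomorphism of topological monoids. As $(L,\rho)$ is complete and, by Scholium \ref{schl:completion}, completeness in the sense of Definition \ref{dfn:cpltmon} depends only on the topological monoid up to isomorphism (being determined by $\Cont(M,\tau)$ together with its canonical point), it follows that $(M,\tau)$ is complete.

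The substance of the argument is carried entirely by Theorem \ref{thm:locextend} and Corollary \ref{crly:closure}; the only genuine care is required in the opening reduction. The main obstacle I anticipate is bookkeeping around the identity element: a subsemigroup that is abstractly a monoid need not share the identity of $M'$, so one cannot apply Theorem \ref{thm:locextend} to the inclusion directly, since that result presupposes a \emph{monoid} homomorphism. The passage to $eM'e$ resolves this, but it must be verified that the relevant topologies agree—that $\tau$ is simultaneously the restriction of $\tau'$ and of $\tau'|_{eM'e}$, and that $\rho$ pulls back to $\tau$ along $u$—so that the concluding isomorphism is one of topological, and not merely abstract, monoids.
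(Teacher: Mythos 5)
Your proof is correct and takes essentially the same route as the paper's: apply Theorem \ref{thm:locextend} together with Corollary \ref{crly:closure} to see that a closed submonoid of a complete monoid coincides with the complete intermediate monoid of the factorization, and reduce the general closed-subsemigroup case to this by passing to $eM'e$ (the paper compresses this reduction into a citation of Proposition \ref{prop:eMe}, where you spell it out via Theorem \ref{thm:inccomplete} and Lemma \ref{lem:ideal}). The extra identity-element bookkeeping you carry out is exactly what the paper leaves implicit, so there is no substantive difference.
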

\begin{proof}
Applying the factorization of Theorem \ref{thm:locextend} to the inclusion of a closed submonoid, the submonoid is dense in the complete intermediate monoid and so, being closed, must coincide with it, whence it is complete by Corollary \ref{crly:closure}. Combining this with Proposition \ref{prop:eMe}, the result follows.
\end{proof}

To summarize, we have that:
\begin{thm}
\label{thm:hypeloc}
Let $\phi:(M,\tau) \to (M',\tau')$ be a continuous semigroup homomorphism between complete monoids inducing $g: \Cont(M,\tau)\to \Cont(M',\tau')$. Then the hyperconnected--localic factorization of $g$ is canonically represented by the dense-closed factorization of $\phi$.
\end{thm}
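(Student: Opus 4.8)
The plan is to exhibit the dense--closed factorization of $\phi$ directly and then transport it through the $2$-functor $\Cont(-)$ of Theorem \ref{thm:Cont}, appealing to the uniqueness of the hyperconnected--localic factorization (which, being an orthogonal factorization system on $\TOP$, is determined up to equivalence). First I would set $e := \phi(1)$ and let $L := \overline{\phi(M)}$ be the closure of the image of $\phi$ in $(M',\tau')$. Since $(M',\tau')$ is complete, hence a powder monoid (Proposition \ref{prop:Lpowder}) with jointly continuous multiplication (Proposition \ref{prop:ctsx}), the closure of the subsemigroup $\phi(M)$ is again a subsemigroup: writing $\mu$ for the multiplication, $\mu(\overline{\phi(M)} \times \overline{\phi(M)}) = \mu(\overline{\phi(M) \times \phi(M)}) \subseteq \overline{\mu(\phi(M)\times\phi(M))} \subseteq \overline{\phi(M)}$. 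Moreover $e \in \phi(M) \subseteq L$ acts as a two-sided identity on $L$, because $\{y \mid ey = y = ye\}$ is closed (an intersection of equalizers of continuous maps into a Hausdorff space) and contains $\phi(M)$. Thus $L$ is a closed submonoid of $M'$ with identity $e$, and by Corollary \ref{crly:closed} it is complete. The corestriction $j : M \to L$ is then a dense, continuous monoid homomorphism (it preserves the identity since $j(1) = e = 1_L$) and the inclusion $\psi: L \hookrightarrow M'$ is a closed semigroup inclusion, so $\phi = \psi \circ j$ is precisely the dense--closed factorization of $\phi$.

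For the hyperconnected part, I would apply Lemma \ref{lem:densects} to the dense continuous monoid homomorphism $j$, which immediately gives that $\Cont(j) : \Cont(M,\tau) \to \Cont(L,\rho)$ is hyperconnected.

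For the localic part, the key is to factor the closed inclusion $\psi$ through $eM'e$ as $L \overset{\psi'}{\hookrightarrow} eM'e \overset{\iota}{\hookrightarrow} M'$, where $L \subseteq eM'e$ because $eM'e$ is closed (Lemma \ref{lem:ideal}, Proposition \ref{prop:eMe}) and contains $\phi(M)$. The inclusion $\iota$ induces a geometric inclusion by Theorem \ref{thm:inccomplete}, and geometric inclusions are localic. The map $\psi'$ is an injective monoid homomorphism between the complete monoids $L$ and $eM'e$ (both with identity $e$) realising $L$ as a closed submonoid of $eM'e$; applying Theorem \ref{thm:locextend} to $\psi'$ and invoking Corollary \ref{crly:closure}, the intermediate complete monoid it produces is the closure of $L$ in $eM'e$, which is $L$ itself. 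Hence the dense factor is an isomorphism and $\psi'$ itself induces a localic surjection. As localic morphisms are closed under composition, $\Cont(\psi) = \Cont(\iota) \circ \Cont(\psi')$ is localic.

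Finally, by functoriality of $\Cont(-)$ we have $g = \Cont(\phi) = \Cont(\psi) \circ \Cont(j)$, a composite of a hyperconnected morphism followed by a localic one; by the uniqueness of the hyperconnected--localic factorization this is the factorization of $g$, and its vertex $\Cont(L,\rho)$ is the canonical representative since $(L,\rho)$ is complete. I expect the main obstacle to be the bookkeeping forced by the semigroup (rather than monoid) case: one must be careful that $\overline{\phi(M)}$ is a monoid for the idempotent $e$ rather than for $1_{M'}$, and consequently that the localic factor is only obtained after passing through $eM'e$, where Theorem \ref{thm:locextend} (which is stated for identity-preserving homomorphisms) can legitimately be applied. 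The topological lemma that the closure of a subsemigroup is a subsemigroup with $e$ as identity, together with the completeness of this closure via Corollary \ref{crly:closed}, is the crux that makes the two factorizations line up.
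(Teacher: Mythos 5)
Your proof is correct and takes essentially the same route as the paper, which states this theorem as a summary of the preceding section: you assemble exactly the intended ingredients, namely Lemma \ref{lem:densects} for the hyperconnected (dense) part, and Theorem \ref{thm:inccomplete} together with Theorem \ref{thm:locextend} and Corollaries \ref{crly:closure} and \ref{crly:closed} for the localic (closed) part, concluding by uniqueness of the hyperconnected--localic factorization. The extra topological bookkeeping you supply (that $\overline{\phi(M)}$ is a closed, hence complete, submonoid with identity $e = \phi(1)$ sitting inside the closed subsemigroup $eM'e$) is precisely what the paper leaves implicit.
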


\subsection{Morita Equivalence}
\label{ssec:Morita}

In Sections \ref{sec:montop} and \ref{sec:surjpt} we saw how an arbitrary monoid can be reduced to a powder monoid and then extended to a complete monoid without changing its topos of actions (up to canonical equivalence). Proposition \ref{prop:denseisom} above demonstrates that a continuous monoid homomorphism $\phi$ between complete monoids induces an equivalence if and only if $\phi$ is an isomorphism, just as in the discrete case. Thus, as far as Morita equivalence for complete monoids \textit{via semigroup homomorphisms} goes, we are reduced to considering subsemigroups of the form $eM'e \hookrightarrow M'$.

First, observe that Morita equivalences of discrete monoids descend to complete topologies on those monoids. We shall discuss this further in Conjecture \ref{conj:inclusion} below.
\begin{schl}
\label{schl:Morita3}
Let $(M',\tau')$ be a complete monoid, and suppose $e \in M'$ is an idempotent such that the inclusion $\iota: M := eM'e \hookrightarrow M'$ induces an equivalence $\Setswith{M} \simeq \Setswith{M'}$. Let $\tau$ be the restriction topology on $M$ from Theorem \ref{thm:inccomplete}. Then $\iota$ induces an equivalence $\Cont(M,\tau) \simeq \Cont(M',\tau')$.
\end{schl}
\begin{proof}
As above, we use the notation of \eqref{eq:square}. We know from the proof of Theorem \ref{thm:inccomplete} that in this situation, $g: \Cont(M',\tau') \to \Cont(M,\tau)$ is the inclusion part of the hyperconnected-inclusion factorization of $h' \circ f$, so when $f$ is an equivalence, the inclusion part must be trivial. That is, $g$ is itself an equivalence.
\end{proof} 

In summary, we have:
\begin{thm}
\label{thm:Moritaphi}
Let $\phi:(M,\tau) \to (M',\tau')$ be a continuous semigroup homomorphism between complete monoids. Then if $\phi$ induces an equivalence $g:\Cont(M,\tau) \to \Cont(M',\tau')$, then $M \cong eM'e$ for some idempotent $e \in M'$, $\phi$ is the canonical inclusion of subsemigroups, and $\tau$ is the restriction of $\tau'$ along this inclusion.
\end{thm}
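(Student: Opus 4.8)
The plan is to assemble Theorem \ref{thm:Moritaphi} directly from the factorization results established in the two immediately preceding subsections, so that almost no new work is required. The statement is essentially a corollary of Theorems \ref{thm:surjinc} and \ref{thm:hypeloc}, combined with the rigidity of equivalences. First I would recall that by Theorem \ref{thm:surjinc}, the surjection--inclusion factorization of $g$ is canonically represented by the factorization of $\phi$ through $eM'e$, where $e := \phi(1)$, as a monoid homomorphism $M \to eM'e$ followed by the subsemigroup inclusion $eM'e \hookrightarrow M'$, with $eM'e$ carrying the subspace topology. Since $g$ is assumed to be an equivalence, both parts of its surjection--inclusion factorization must be equivalences: an equivalence is in particular both a surjection and an inclusion, so by the uniqueness (up to canonical isomorphism) of the surjection--inclusion factorization, the surjective part and the inclusion part are each equivalences.

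The key deduction is then to analyze each factor. The inclusion part, induced by $eM'e \hookrightarrow M'$, is an equivalence; by Theorem \ref{thm:inccomplete} the intermediate monoid $eM'e$ (with the restriction topology) is complete, so this factor is a geometric morphism between toposes of actions of complete monoids. For the surjective part, induced by the monoid homomorphism $M \to eM'e$, I would invoke Proposition \ref{prop:denseisom}: a monoid homomorphism between a complete monoid and a powder monoid inducing an equivalence of toposes of continuous actions is necessarily an isomorphism. Here $M$ is complete by hypothesis and $eM'e$ is complete (hence a powder monoid, by Proposition \ref{prop:Lpowder} or simply since complete monoids are powder monoids via the preceding development), so the surjective factor $M \to eM'e$ is an isomorphism of topological monoids. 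This identifies $M \cong eM'e$ and shows $\phi$ is, up to this isomorphism, precisely the canonical subsemigroup inclusion $eM'e \hookrightarrow M'$.

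Finally, to conclude the statement about the topology, I would note that the topology $\tau$ on $M$ must coincide with the restriction of $\tau'$. Since $M \cong eM'e$ via the surjective factor, which is an isomorphism of topological monoids, $\tau$ is carried to the topology on $eM'e$; but by Theorem \ref{thm:inccomplete} (and the proof of Proposition \ref{prop:eMe}, which identifies the topology on $eM'e$ making the inclusion continuous as the restriction of $\tau'$, via Theorem \ref{thm:factor}), that topology is exactly $\tau'|_{eM'e}$. Hence $\tau$ is the restriction of $\tau'$ along the inclusion, as claimed.

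The main obstacle, such as it is, lies not in any single hard computation but in correctly marshalling the uniqueness of the surjection--inclusion factorization for equivalences and checking that the hypotheses of Proposition \ref{prop:denseisom} are genuinely met -- in particular, confirming that $eM'e$ is complete (so that we may apply \ref{prop:denseisom} with a complete domain and a complete, and hence powder, codomain) rather than merely a powder monoid. This is guaranteed by Theorem \ref{thm:inccomplete}, so the argument closes cleanly; the theorem is really a repackaging of Theorems \ref{thm:surjinc}, \ref{thm:inccomplete} and Proposition \ref{prop:denseisom}, and the bulk of the ``proof'' is ensuring these are cited in the right logical order.
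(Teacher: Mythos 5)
Your proposal is correct and follows essentially the same route as the paper: factor $\phi$ through $eM'e$ via Theorem \ref{thm:surjinc}, observe that both factors of the surjection--inclusion factorization of an equivalence are equivalences, and conclude that the monoid-homomorphism factor is an isomorphism, with the topology statement coming from the restriction topology on the intermediate monoid. Your added care in invoking Theorem \ref{thm:inccomplete} to see that $eM'e$ is complete (hence a powder monoid) before applying Proposition \ref{prop:denseisom} is exactly the verification the paper's terse proof presupposes -- indeed the paper cites Proposition \ref{prop:phisom} at that step, which appears to be a slip for Proposition \ref{prop:denseisom}, the result you correctly identify.
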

\begin{proof}
Factoring $\phi$ as a monoid homomorphism followed by an inclusion of semigroups, the induced geometric morphisms between toposes of continuous actions must also be equivalences, whence the former must be an isomorphism by Proposition \ref{prop:phisom}. The topology on the intermediate monoid is the restriction topology by \ref{thm:surjinc}.
\end{proof}

We would have liked to extend the $2$-equivalence between the $2$-category of discrete monoids and the $2$-category of their toposes of actions from \ref{chap:TDMA}
by characterizing the geometric morphisms arising from continuous semigroup homomorphisms. However, this is not possible because, unlike in the discrete case, \textit{not all equivalences of toposes of topological monoid actions lie in the image of $\Cont(-)$}. Since all equivalences have indistinguishable categorical properties, but only some such equivalences are induced by semigroup homomorphisms, we cannot hope for an intrinsic characterization of geometric morphisms lying in the image of $\Cont(-)$.

\begin{xmpl}
\label{xmpl:Schanuel}
Consider once again the Schanuel topos of Examples \ref{xmpl:nonpresh} and \ref{xmpl:notpro2}. Let $X$ be $\mathbb{N}$ or $\mathbb{R}$. In either case, the monoid $(\End_{\mathrm{mono}}(X),\tau_{\mathrm{fin}})$ representing the Schanuel topos has no non-identity idempotents, since $e^2(x) = e(x)$ implies $e(x) = x$ by injectivity, so any semigroup homomorphism in either direction must be a monoid homomorphism. Since the two representing monoids are complete and non-isomorphic, no such homomorphism can induce the equivalence of toposes
\[ (\End_{\mathrm{mono}}(\Nbb),\tau_{\mathrm{fin}}) \simeq
(\End_{\mathrm{mono}}(\Rbb),\tau_{\mathrm{fin}}). \]
\end{xmpl}

\begin{rmk}
Recall from Theorem \ref{thm:2equiv1} of Chapter \ref{chap:TDMA} that general geometric morphisms between toposes of discrete monoid actions $\Setswith{M} \to \Setswith{M'}$ correspond to flat-left-$M'$-right-$M$-sets. The conclusion of the above is that, to fully understand Morita equivalences of toposes of topological monoid actions, a necessary next step is to investigate the analogous class of biactions for topological monoids. We leave this effort to future work.
\end{rmk}

\subsection{Reflective Categories of Topological Monoids}
\label{ssec:monads}

In spite of continuous semigroup homomorphisms not capturing the full richness of geometric morphisms, they do nonetheless produce well-behaved ($2$-)categories of monoids. We show in this section how the classes of monoids we have discussed so far form reflective subcategories of the category of monoids with topologies from which we began.

Let $\mathrm{MonT}_s$, $\mathrm{TMon}_s$, $T_0\mathrm{Mon}_s$, $\mathrm{PMon}_s$, $\mathrm{CMon}_s$ respectively be the $2$-categories of monoids equipped with topologies, topological monoids, $T_0$ topological monoids, \textit{right} powder monoids, and complete monoids, all equipped with continuous semigroup homomorphisms as their $1$-morphisms and conjugations as their $2$-morphisms. We have $2$-functors:
\begin{equation}
\label{eq:monadic}
\begin{tikzcd}
\mathrm{MonT}_s & \ar[l, "G_1"] \mathrm{TMon}_s & \ar[l, "G_2"] \mathrm{T_0Mon}_s & \ar[l, "G_3"] \mathrm{PMon}_s & \ar[l, "G_4"] \mathrm{CMon}_s;
\end{tikzcd}
\end{equation}
all of these subcategories are full on $1$- and $2$-morphisms. In the following results, we demonstrate that all of these functors have adjoints, which makes them reflective ($2$-)subcategories. Recall that for strict $2$-functors $F:\Ccal \to \Dcal$ and $G:\Dcal \to \Ccal$ to form a strict $2$-adjunction $(F \dashv G)$, we require there to be isomorphisms of categories
\[ \Hom_{\Dcal}(FX,Y) \cong \Hom_{\Ccal}(X,GY),\]
natural in $X$ and $Y$. Since the data of a conjugation consists of an element of the codomain monoid, which is not affected by any of the $G_i$, it suffices to prove that the $G_i$ have left adjoints as $1$-functors; preservation of the $2$-morphisms will then be automatic. As such, the functors constructed in the results below are informally referred to as adjoints.

Note that all of the following results also hold when we restrict to the subcategories of monoid homomorphisms, since all of the units of the adjunctions we construct are continuous monoid homomorphisms.

\begin{lemma}
\label{lem:G1}
The functor $G_1: \mathrm{TMon}_s \to \mathrm{MonT}_s$ has a left adjoint.
\end{lemma}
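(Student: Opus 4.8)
The statement to prove is that the forgetful $2$-functor $G_1: \mathrm{TMon}_s \to \mathrm{MonT}_s$ from topological monoids to monoids-with-topologies has a left adjoint. As noted just before the statement, it suffices to construct a left adjoint on underlying $1$-functors, since conjugations are elements of the codomain monoid and are unaffected by the forgetful functor, so the $2$-dimensional part of the adjunction follows automatically.

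The plan is to send a monoid with an arbitrary topology $(M,\tau)$ to a genuine topological monoid by \emph{coarsening} the topology as little as possible to make multiplication continuous. Concretely, I would define $F_1(M,\tau) := (M,\tau_\mu)$, where $\tau_\mu \subseteq \tau$ is the finest topology contained in $\tau$ for which the multiplication $\mu: M \times M \to M$ is (jointly) continuous. The first step is to check that such a finest topology exists: the collection of subtopologies of $\tau$ making $\mu$ continuous is non-empty (the indiscrete topology belongs to it) and is closed under taking arbitrary joins in the lattice of topologies on $M$ that are contained in $\tau$, because if each $\tau_i \subseteq \tau$ makes $\mu$ continuous then so does the topology generated by $\bigcup_i \tau_i$; this requires verifying that $\mu^{-1}$ of a set in the generated topology is open in the product, which follows since $\mu^{-1}$ commutes with unions and finite intersections and these generate the join. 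Hence $\tau_\mu$ is the join of all such subtopologies and is itself a member of the collection.

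The second step is to establish the universal property. Given $(M,\tau)$ and a topological monoid $(M',\tau')$, I must exhibit a natural bijection between continuous semigroup homomorphisms $(M,\tau_\mu) \to (M',\tau')$ and continuous semigroup homomorphisms $(M,\tau) \to (M',\tau') = G_1(M',\tau')$. Since $\tau_\mu \subseteq \tau$, any map continuous with respect to $\tau_\mu$ is continuous with respect to $\tau$, giving one direction. For the converse, suppose $\phi: (M,\tau) \to (M',\tau')$ is a continuous semigroup homomorphism; I would pull back $\tau'$ along $\phi$ to get a topology $\phi^{-1}(\tau') \subseteq \tau$, and argue that because $(M',\tau')$ is a topological monoid and $\phi$ respects multiplication, the initial topology $\phi^{-1}(\tau')$ makes $\mu$ continuous, hence $\phi^{-1}(\tau') \subseteq \tau_\mu$ by maximality, so $\phi$ remains continuous when the domain carries $\tau_\mu$. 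The unit of the adjunction is then the identity-on-underlying-monoids map $(M,\tau) \to (M,\tau_\mu)$, which is continuous precisely because $\tau_\mu \subseteq \tau$, and naturality is immediate since every map involved is the identity on underlying sets.

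The main obstacle I anticipate is the converse direction of the universal property: verifying that the initial topology $\phi^{-1}(\tau')$ actually makes $\mu$ continuous. The subtle point is that joint continuity of $\mu$ with respect to $\phi^{-1}(\tau')$ must be deduced from joint continuity of the multiplication $\mu'$ on $(M',\tau')$ together with the homomorphism property $\phi \circ \mu = \mu' \circ (\phi \times \phi)$; this amounts to checking that $\mu^{-1}(\phi^{-1}(U')) = (\phi\times\phi)^{-1}(\mu'^{-1}(U'))$ is open in $\phi^{-1}(\tau') \times \phi^{-1}(\tau')$ for each $U' \in \tau'$, which follows by a short diagram chase from the continuity of $\mu'$ and the definition of the product of initial topologies. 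Once this is in hand, maximality of $\tau_\mu$ closes the argument. I would remark in passing that this construction is manifestly idempotent and that, as Proposition \ref{prop:ctsx} shows, the further passage to action topologies is compatible with it, foreshadowing the composite reflections assembled from the chain \eqref{eq:monadic}.
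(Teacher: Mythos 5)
Your proof is correct, and it reaches the same reflected object as the paper (the finest topology contained in $\tau$ making $\mu$ jointly continuous, with the identity-on-underlying-monoids map as unit), but by a genuinely different construction. The paper builds the topology \emph{from above}: it iterates the refinement $\tau_{i+1} = \{U \in \tau_i \mid \mu^{-1}(U) \in \tau_i \times \tau_i\}$, sets $\tau_\infty = \bigcap_i \tau_i$, and equivalently describes $\tau_\infty$ as those $U \in \tau$ with $\mu^{-k}(U)$ open in $\tau \times \cdots \times \tau$ for all $k$; the universal property is then checked via the identity $\mu^{-k}(\phi^{-1}(U')) = \phi^{-1}(\mu'^{-k}(U'))$. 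You instead build it \emph{from below}, as the join of all subtopologies of $\tau$ making $\mu$ continuous, with existence secured by your (correct) observation that this family is closed under arbitrary joins; the universal property then follows from maximality applied to the pullback topology $\phi^{-1}(\tau')$, whose compatibility with $\mu$ you verify by exactly the diagram chase you flag, using $\phi \circ \mu = \mu' \circ (\phi \times \phi)$ and the fact that $(\phi\times\phi)^{-1}$ carries a union of open rectangles in $\tau' \times \tau'$ to a union of open rectangles in $\phi^{-1}(\tau') \times \phi^{-1}(\tau')$. Your lattice-theoretic route arguably buys robustness: the paper's claim that the $\omega$-step intersection $\tau_\infty$ still makes $\mu$ continuous is delicate, since a set open in every $\tau_i \times \tau_i$ need not \emph{a priori} be open in $\tau_\infty \times \tau_\infty$ (products do not commute with intersections of topologies), whereas your maximality argument needs no iteration or stabilization at all. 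What the paper's route buys in exchange is an explicit, element-level description of the reflected topology (the $\mu^{-k}$ criterion), which is more directly usable in computations. Your closing remark on idempotence and compatibility with Proposition \ref{prop:ctsx} is accurate but not needed for the adjunction.
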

\begin{proof}
Given a monoid with a topology $(M,\tau)$, we can inductively define sub-topologies $\tau_i$ by letting $\tau_0 = \tau$ and $\tau_{i+1}$ consisting of those open subsets $U \in \tau_i$ such that $\mu^{-1}(U)$ is open in $\tau_i \times \tau_i$. Then letting $\tau_{\infty} := \bigcap_{i=0}^\infty \tau_i$, we claim that $(M,\tau_{\infty})$ is a topological monoid. Indeed, given $U \in \tau_{\infty}$, $\mu^{-1}(U) \in \tau_{\infty} \times \tau_{\infty}$ by construction. This is clearly the finest topology contained in $\tau$ with respect to which the multiplication on $M$ is continuous. We could alternatively define $\tau_{\infty}$ as the collection of $U \in \tau$ such that $\mu^{-k}(U)$ (which is a well-defined subset of $M^{k+1}$ by associativity of multiplication) is open in $\tau \times \cdots \times \tau$ ($k+1$ times) for every positive integer $k$.

The identity homomorphism $(M,\tau) \to (M,\tau_{\infty})$ is automatically continuous. Given any topological monoid $(M',\tau')$ and continuous semigroup homomorphism $\phi: (M,\tau) \to (M',\tau')$, since $\phi$ commutes with multiplication we have that for each $U' \in \tau'$,
\[\mu^{-k}(\phi^{-1}(U')) = \phi^{-1}(\mu'{}^{-k}(U'))\]
is open in $\tau$ by continuity of $\phi$, whence $\phi^{-1}(U')$ is a member of $\tau_{\infty}$. Thus $\phi$ factors (uniquely) through $(M,\tau) \to (M,\tau_{\infty})$, as required to make this map the unit of an adjunction.
\end{proof}

\begin{lemma}
\label{lem:G2}
The functor $G_2: T_0\mathrm{Mon}_s \to \mathrm{TMon}_s$ has a left adjoint.
\end{lemma}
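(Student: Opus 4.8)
The plan is to construct the left adjoint $F_2$ as the \emph{Kolmogorov quotient}: send a topological monoid $(M,\tau)$ to the quotient $M/{\sim}$ by the relation $\sim$ of topological indistinguishability, where $x \sim y$ if and only if every open set containing $x$ contains $y$ and conversely. I would endow $M/{\sim}$ with the quotient topology $\bar\tau$ and take the quotient map $q:(M,\tau) \to (M/{\sim},\bar\tau)$ to be the unit of the adjunction. The three things to verify are that $M/{\sim}$ is a $T_0$ topological monoid, that $q$ is a continuous semigroup homomorphism, and that $q$ enjoys the requisite universal property.

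First I would show that $\sim$ is a two-sided congruence on $M$, which is exactly the argument already carried out in the proof of Theorem \ref{thm:Hausd}, except that here I may invoke genuine continuity of the multiplication $\mu$ (which is part of the data of a topological monoid) rather than needing Proposition \ref{prop:ctsx}: given $x \sim x'$, $y \sim y'$ and an open $U \ni xy$, continuity of $\mu$ provides a basic rectangle $V \times W \subseteq \mu^{-1}(U)$ with $x \in V$, $y \in W$, and indistinguishability forces $x' \in V$, $y' \in W$, so $x'y' \in U$; by symmetry $xy \sim x'y'$. Thus $M/{\sim}$ inherits a monoid structure and $q$ is a semigroup homomorphism. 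The crucial simplification throughout is that \emph{every} open set of $(M,\tau)$ is automatically $\sim$-saturated (if $v$ lies in an open $V$ and $v \sim v'$ then $v' \in V$ by definition of $\sim$), so the frame of opens of $M/{\sim}$ is isomorphic to that of $M$ and $q$ is both continuous and open; that $M/{\sim}$ is $T_0$ is then immediate, since distinct classes are separated by a descended open set.

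The main obstacle I anticipate is verifying that the induced multiplication $\bar\mu$ on $M/{\sim}$ is continuous, since the product $q \times q$ of quotient maps need not be a quotient map in $\Top$, so one cannot simply transport continuity of $\mu$ across it. I would sidestep this by arguing pointwise using saturation: given an open $U \subseteq M/{\sim}$ and a point $([x],[y]) \in \bar\mu^{-1}(U)$, the set $\tilde U := q^{-1}(U)$ is open and $\sim$-saturated, continuity of $\mu$ yields an open rectangle $V \times W \subseteq \mu^{-1}(\tilde U)$ around $(x,y)$, and because $V,W$ are automatically saturated their images $q(V), q(W)$ are open in $M/{\sim}$ and satisfy $([x],[y]) \in q(V) \times q(W) \subseteq \bar\mu^{-1}(U)$. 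Hence $\bar\mu^{-1}(U)$ is open and $(M/{\sim},\bar\tau)$ is a $T_0$ topological monoid.

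Finally, for the universal property I would take a continuous semigroup homomorphism $\phi:(M,\tau) \to (N,\sigma)$ with $(N,\sigma)$ being $T_0$ and show it factors uniquely through $q$. Indeed $\phi$ respects $\sim$: if $x \sim x'$ but $\phi(x) \neq \phi(x')$, then $T_0$-ness of $N$ gives an open set separating their images whose $\phi$-preimage separates $x$ from $x'$, a contradiction; so $\phi = \bar\phi \circ q$ for a unique semigroup homomorphism $\bar\phi:M/{\sim} \to N$, which is continuous because $q^{-1}(\bar\phi^{-1}(O)) = \phi^{-1}(O)$ is open for every open $O \subseteq N$. Uniqueness is forced by surjectivity of $q$. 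Since conjugations are elements of the codomain monoid and are untouched by $G_2$, the resulting bijection on $1$-cells is automatically natural and compatible with $2$-cells, giving the strict $2$-adjunction $(F_2 \dashv G_2)$; moreover $q$ is a monoid homomorphism, so the adjunction restricts to the subcategories of monoid homomorphisms as required.
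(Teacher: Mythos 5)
Your proposal is correct and follows essentially the same route as the paper: the left adjoint is the Kolmogorov quotient, topological indistinguishability is shown to be a two-sided congruence via exactly the same open-rectangle argument using continuity of $\mu$, and the universal property is deduced from the fact that a continuous homomorphism into a $T_0$ monoid identifies indistinguishable points. The only difference is that you explicitly verify continuity of the induced multiplication and $T_0$-ness of the quotient via saturation of open sets, details the paper leaves implicit.
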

\begin{proof}
If $(M,\tau)$ is a topological monoid, then the equivalence relation $\sim$ on $M$ identifying topologically indistinguishable elements is necessarily a two-sided congruence, since given $m_1 \sim m'_1$, $m_2 \sim m'_2$ and a neighbourhood $U$ of $m_1m_2$, we have that $\mu^{-1}(U)$ contains an open rectangle $U_1 \times U_2$ with $m_i \in U_i$, whence $(m'_1,m'_2) \in U_1 \times U_2$ and hence $m'_1m'_2 \in U$. Thus the quotient map $(M,\tau) \to (M/{\sim}, \tau)$ is a continuous semigroup homomorphism.

Given a continuous semigroup homomorphism $\phi$ from $(M,\tau)$ to a $T_0$ topological monoid $(M',\tau')$ and given $m \sim m'$, observe that $\phi(m)$ and $\phi(m')$ must be topologically indistinguishable and hence equal in $(M',\tau')$ by continuity. Thus $\phi$ factors through the quotient map above, as required.
\end{proof}

\begin{lemma}
\label{lem:G3}
The functor $G_3: \mathrm{PMon}_s \to T_0\mathrm{Mon}_s$ has a left adjoint. 
\end{lemma}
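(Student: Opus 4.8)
The plan is to exhibit $G_3$ as a reflective inclusion. Since $\mathrm{PMon}_s$ is a full sub-$2$-category of $T_0\mathrm{Mon}_s$ and, as already noted, conjugations consist of an element of the codomain monoid and are untouched by the $G_i$, it suffices to construct a left adjoint at the level of $1$-categories. I would take as the value of the reflector $F_3$ at a $T_0$ topological monoid $(M,\tau)$ the canonical powder monoid $(\tilde M,\tilde\tau)$ of Theorem \ref{thm:Hausd}, where $\tilde\tau$ is the action topology induced by $\tau$ (Theorem \ref{thm:tau}) and $\tilde M = M/{\sim}$ is the quotient by the congruence identifying $\tilde\tau$-indistinguishable elements. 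The unit $\eta \colon (M,\tau)\to(\tilde M,\tilde\tau)$ is the composite of the identity-on-underlying-monoid map $(M,\tau)\to(M,\tilde\tau)$, which is continuous because $\tilde\tau\subseteq\tau$, with the quotient homomorphism $(M,\tilde\tau)\to(\tilde M,\tilde\tau)$; it is a continuous monoid homomorphism and is surjective on underlying sets. (As a sanity check, when $(M,\tau)$ is already a powder monoid then $\tilde\tau=\tau$ and $\sim$ is trivial, so $\eta$ is an isomorphism, as a reflector onto a full subcategory must be.)

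To verify the universal property, I would fix a powder monoid $(P,\sigma)$ and a continuous semigroup homomorphism $\phi\colon(M,\tau)\to(P,\sigma)$, and seek a unique continuous semigroup homomorphism $\bar\phi\colon(\tilde M,\tilde\tau)\to(P,\sigma)$ with $\bar\phi\circ\eta=\phi$. Uniqueness is immediate, since $\eta$ is surjective. For existence the key step is a continuity upgrade: I claim $\phi$ is continuous not only for $\tau$ but for the coarser topology $\tilde\tau$. Granting this, the argument of Theorem \ref{thm:Hausd} applies: if $m,m'$ are $\tilde\tau$-indistinguishable and $U\in\sigma$ contains $\phi(m)$, then $\phi^{-1}(U)\in\tilde\tau$ contains $m$, hence $m'$, so $\phi(m)$ and $\phi(m')$ are $\sigma$-indistinguishable and therefore equal (as $(P,\sigma)$ is $T_0$). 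Thus $\phi$ is constant on $\sim$-classes and descends to a monoid homomorphism $\bar\phi\colon\tilde M\to P$; and $\bar\phi$ is continuous because the frame of opens of $(\tilde M,\tilde\tau)$ coincides with that of $(M,\tilde\tau)$ (as in the proof of Theorem \ref{thm:Hausd}) and $\eta^{-1}\bar\phi^{-1}(U)=\phi^{-1}(U)\in\tilde\tau$ for every $U\in\sigma$.

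The main obstacle is precisely this continuity upgrade, and I expect to settle it using Lemma \ref{lem:cts}. Because $(P,\sigma)$ is a powder monoid, $\sigma$ is an action topology, so $\tilde\sigma=\sigma$; hence $\phi$, being continuous for $\tau$ and $\sigma=\tilde\sigma$, satisfies condition (2) of Lemma \ref{lem:cts}, whence condition (1) holds: the inverse image $f^*$ of the essential geometric morphism $f\colon\Setswith{M}\to\Setswith{P}$ induced by $\phi$ sends every $(P,\sigma)$-set to an $(M,\tau)$-set. By Scholium \ref{schl:base} the necessary clopens $\Ical_y^p$ of $(P,\sigma)$-sets form a base for $\sigma$, so it is enough to show each $\phi^{-1}(\Ical_y^p)\in\tilde\tau$. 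Restricting the action of $P$ on such a $(P,\sigma)$-set $Y$ along $\phi$ yields the $M$-set $f^*(Y)$, in which $\phi^{-1}(\Ical_y^p)=\{m\in M: y\cdot m = yp\}$ is either a necessary clopen of $f^*(Y)$ or empty. Since $f^*(Y)$ is an $(M,\tau)$-set, hence an $(M,\tilde\tau)$-set by Theorem \ref{thm:tau}, its necessary clopens lie in $\tilde\tau$; thus $\phi^{-1}(\Ical_y^p)\in\tilde\tau$, and taking unions gives $\phi^{-1}(U)\in\tilde\tau$ for all $U\in\sigma$, as required.

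This establishes the unit together with its universal property, so $F_3$ is left adjoint to $G_3$ as a $1$-functor, and functoriality of $F_3$ with naturality of $\eta$ then follow formally. Finally, as remarked before the lemma, a conjugation between two continuous semigroup homomorphisms into a fixed monoid is simply an element of that codomain monoid, and is therefore unaffected by passage to $\tilde M$ and $\tilde\tau$; consequently the $1$-adjunction automatically enhances to a strict $2$-adjunction, exhibiting $\mathrm{PMon}_s$ as a reflective sub-$2$-category of $T_0\mathrm{Mon}_s$.
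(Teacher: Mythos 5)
Your proof is correct, and it follows the paper's overall strategy: the reflector is the powder monoid $(\tilde{M},\tilde{\tau})$ of Theorem \ref{thm:Hausd}, the unit is the quotient homomorphism, and everything reduces to the ``continuity upgrade'' that a continuous semigroup homomorphism $\phi\colon(M,\tau)\to(P,\sigma)$ into a powder monoid remains continuous for the coarser topology $\tilde{\tau}$. Where you differ is in how that upgrade is established. The paper proves it by a direct computation with necessary clopens: for a basic clopen $U'\in T'$ and $p \in M$ it checks the containment $\phi^{-1}(\Ical_{U'}^{\phi(p)})\subseteq \Ical_{\phi^{-1}(U')}^{p}$, which (since every element of the right-hand side admits such an open neighbourhood around it) shows $\phi^{-1}(U')\in T$; the descent to $\tilde{M}$ is then delegated to the proof of Lemma \ref{lem:G2}. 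You instead route through Lemma \ref{lem:cts}: since $\sigma=\tilde{\sigma}$ for a powder codomain, condition (2) there holds, so $f^*$ carries $(P,\sigma)$-sets to $(M,\tau)$-sets, and combining Scholium \ref{schl:base} with the identification $\Cont(M,\tau)=\Cont(M,\tilde{\tau})$ of Theorem \ref{thm:tau} puts the preimages of basic opens in $\tilde{\tau}$. This is a genuinely softer argument: it reuses already-established machinery rather than redoing a computation (indeed the identity $\Ical_x^p=\phi^{-1}(\Ical_x^{\phi(p)})$ for restricted actions, which drives the proof of Lemma \ref{lem:cts}, is essentially the paper's computation in disguise), at the cost of invoking the topos-level apparatus where the paper stays self-contained at the level of subsets of $M$. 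Your spelled-out descent via $T_0$-separation and the frame isomorphism from Theorem \ref{thm:Hausd} is exactly what the paper compresses into its citation of Lemma \ref{lem:G2}, and your closing remarks on uniqueness, naturality and the automatic $2$-categorical enhancement match the paper's preamble to the lemma. One terminological slip: the descended map $\bar{\phi}$ is in general only a \emph{semigroup} homomorphism (as $\phi$ is), not a monoid homomorphism; nothing in your argument depends on this.
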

\begin{proof}
The construction of the powder monoid $(\tilde{M},\tilde{\tau})$ associated to $(M,\tau)$ in Theorem \ref{thm:Hausd} (via the construction of the action topology in Theorem \ref{thm:tau}) defines the value of the adjoint functor on objects and provides a candidate for the unit in the quotient homomorphism $(M,\tau) \to (\tilde{M},\tilde{\tau})$.

Let $\phi:(M,\tau) \to (M',\tau')$ be a continuous semigroup homomorphism with $(M',\tau')$ a powder monoid. Given $U' \in T'$ and $p \in M$, consider
\[\phi^{-1}(\Ical_{U'}^{\phi(p)}) = \{q \in M \mid \phi(q)^*(U') = \phi(p)^*(U')\},\]
which is clearly contained in
\begin{align*}
\Ical_{\phi^{-1}(U')}^p &= \{q \in M \mid q^*(\phi^{-1}(U')) = p^*(\phi^{-1}(U'))\}\\
&= \{q \in M \mid \phi^{-1}(\phi(q)^*(U')) = \phi^{-1}(\phi(p)^*(U'))\}.
\end{align*}
Since the former is open in $\tau$, so is the latter, and hence $\phi^{-1}(U') \in T$. Since any open in $\tau'$ is a union of basic opens in $T'$, we conclude that $\phi$ factors through $(M,\tilde{\tau})$, and hence through $(\tilde{M},\tilde{\tau})$ by the proof of Lemma \ref{lem:G2}, as required.
\end{proof}

For the fourth result, we recycle the proof of Theorem \ref{thm:locextend}.
\begin{schl}
\label{schl:G4}
The functor $G_4: \mathrm{CMon}_s \to \mathrm{PMon}_s$ has a left adjoint. 
\end{schl}
\begin{proof}
Given a powder monoid $(M,\tau)$, a complete monoid $(L',\rho')$ and a semigroup homomorphism $g:(M,\tau) \to (L',\rho')$, we must show that $g$ factors uniquely through the canonical monoid homomorphism $u: (M,\tau) \to (L,\rho)$, where the latter is the completion of $(M,\tau)$.

As in the proof of Theorem \ref{thm:locextend}, we may assume $\phi: M \to L'$ is a monoid homomorphism, since we may factor $\phi$ as a monoid homomorphism followed by an inclusion of subsemigroups, and the intermediate monoid is canonically a complete monoid with the restriction topology by Theorem \ref{thm:inccomplete}. We then construct the factoring $M$-set homomorphism $\psi: L \to f^*(L')$ just as in the proof of Theorem \ref{thm:locextend}, which did not depend on injectivity of $\phi$.
\end{proof}

Our results from previous sections demonstrate that the units of these four adjunctions all induce equivalences at the level of toposes of continuous actions of monoids.

Recalling the asymmetry in the definition of powder monoids (see the comments after Definition \ref{dfn:powder}), we briefly consider the ($2$-)categories of left powder monoids and left complete monoids.

For this purpose, we employ the dual of the notation introduced in Section \ref{ssec:necessary}, writing ${}_x^p\Ical$ for the necessary clopen associated to an element $x$ in a left $M$-set $X$ and $p \in M$. Then we obtain a complementary result to Lemma \ref{lem:InA2}.
\begin{lemma}
\label{lem:lrclopen}
Let $U$ be a subset of $M$. Let $A = \Ical_U^p$ and $B = {}_U^p\Ical$. Then $\Ical_B^p = {}_A^p\Ical$. In particular, a two-sided powder monoid has a base of clopens expressible in this coincident form.
\end{lemma}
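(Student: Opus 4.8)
The plan is to show that both $\Ical_B^p$ and ${}_A^p\Ical$ coincide with a single, manifestly left--right symmetric set, the \emph{two-sided stabilizer of $U$ at $p$},
\[
\Theta \;=\; \{\, m \in M \mid \forall\, u,v \in M,\ umv \in U \Leftrightarrow upv \in U \,\}.
\]
First I would record the basic anchors: the identities $m^\ast U = m^\ast U$ and its left dual hold trivially, so $p \in A$ and $p \in B$, and Lemma~\ref{lem:InA2} together with its left--right dual give $\Ical_A^p = A$ and ${}_B^p\Ical = B$. These confirm that $A = \Ical_U^p$ and $B = {}_U^p\Ical$ really are the right- and left-stabilizers of $U$ at $p$, which is the structure the computation exploits.

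The core step is to unwind $\Ical_B^p$. By definition $m \in \Ical_B^p$ iff $m$ and $p$ induce the same action on $B$, i.e.\ iff $mt \in B \Leftrightarrow pt \in B$ for all $t \in M$. Substituting $B = {}_U^p\Ical$ and using associativity to slide the inner factor — so that membership of $mt$ in $B$ is tested through products of the form $u\,(mt) = (um)\,t$ — this condition rewrites purely in terms of $U$ and, once the nested quantifiers are merged, collapses exactly to membership in $\Theta$. The inclusion $\Theta \subseteq \Ical_B^p$ is the easy direction (two-sided agreement with $p$ forces every one-sided comparison), while the reverse inclusion is where the definition of $B$ does the work, repackaging the "other-sided" quantifier that a naive one-sided stabilizer would miss. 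Running the mirror computation for ${}_A^p\Ical$ through $A = \Ical_U^p$, with the roles of left and right interchanged, yields ${}_A^p\Ical = \Theta$ by the same associativity manipulation. Hence $\Ical_B^p = \Theta = {}_A^p\Ical$, which is the asserted identity and exhibits the common clopen explicitly as $\Theta$.

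For the "in particular" clause I would take $U$ from a clopen base witnessing both the right and the left action-topology conditions of a two-sided powder monoid, so that $A = \Ical_U^p$ and $B = {}_U^p\Ical$ are open for every $p$, and $A$, $B$ are continuous elements for the respective structures. The identity just proved then presents the single clopen $\Theta$ simultaneously as the right necessary clopen $\Ical_B^p$ and the left necessary clopen ${}_A^p\Ical$. Finally, as in Scholium~\ref{schl:base}, one checks that as $U$ ranges over the base and $p$ over $M$ these sets $\Theta$ cover every basic clopen, so they form a base of the topology in the desired coincident form.

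The step I expect to be the main obstacle is the reverse inclusion in the core computation: showing that the one-sided stabilizer condition defining $\Ical_B^p$ (respectively ${}_A^p\Ical$) genuinely forces the full two-sided condition $\Theta$. This is exactly where one must use that $B$ is itself a \emph{stabilizer} rather than an arbitrary subset, and keep scrupulous track of which products encode the inverse-image (right) action and which encode the dual (left) action. The associativity relation $u(mt) = (um)t$ is the engine that converts a comparison on one side into a comparison on the other, and getting its bookkeeping right across both mirror computations is the delicate part.
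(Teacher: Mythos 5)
Your strategy---showing that both $\Ical_B^p$ and ${}_A^p\Ical$ coincide with the two-sided stabilizer $\Theta = \{q \in M \mid \forall u,v \in M,\ uqv \in U \Leftrightarrow upv \in U\}$---is exactly the reduction that the paper's own one-line proof asserts, and the inclusion $\Theta \subseteq \Ical_B^p$ (with its mirror) is indeed the easy direction. But the reverse inclusion, which you single out as the delicate step and justify only by saying the nested quantifiers "merge", is not merely unproven: it is false. Unwinding the definitions, $q \in \Ical_B^p$ says that for every $t \in M$,
\[
\bigl(\forall x,\ xqt \in U \Leftrightarrow xp \in U\bigr) \ \Longleftrightarrow\ \bigl(\forall x,\ xpt \in U \Leftrightarrow xp \in U\bigr),
\]
i.e.\ $qt$ and $pt$ lie in $B$ simultaneously, where membership in $B$ is anchored at the fixed trace $\{x \mid xp \in U\}$. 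Whenever $qt$ and $pt$ both lie \emph{outside} $B$, this imposes no relation whatsoever between $\{x \mid xqt \in U\}$ and $\{x \mid xpt \in U\}$, whereas $q \in \Theta$ requires these sets to be equal for every $t$. The quantifier shapes $\forall t\,[P(t) \Leftrightarrow Q(t)]$ and $\forall t\,\forall x\,[\cdots \Leftrightarrow \cdots]$ are genuinely different, and no associativity bookkeeping converts one into the other.

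A concrete counterexample shows the gap is irreparable, because the lemma's conclusion itself fails for arbitrary $U$. Let $M$ be the free monoid on $\{c,d,e\}$ (with the discrete topology this is even a two-sided powder monoid), let $U = \{de,\ ece\}$ and $p = c$. No word of $U$ begins or ends with $c$, so $B = {}_U^c\Ical$ is the set of words that are not suffixes of $de$ or $ece$, i.e.\ $B = M \setminus \{1, e, de, ce, ece\}$, and $A = \Ical_U^c$ is the set of words that are not prefixes, i.e.\ $A = M \setminus \{1, d, de, e, ec, ece\}$. Take $q = d$. For every $t$ one has $dt \notin B$ iff $t = e$ iff $ct \notin B$, so $d \in \Ical_B^c$. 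Yet $d \notin \Theta$ (take $u = 1$, $v = e$: $de \in U$ while $ce \notin U$), and moreover $d \notin {}_A^c\Ical$, since already at $t = 1$ we have $d \notin A$ ($d$ is a prefix of $de$) while $c \in A$. Hence $\Ical_B^c \neq {}_A^c\Ical$. Since the paper asserts the same double-coset characterization, the defect lies in the source's statement and proof as much as in your write-up; but as a proof, your proposal cannot be completed---any correct version of this lemma requires hypotheses on $U$ that neither you nor the paper invokes.
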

\begin{proof}
After expanding the definitions, we find that both subsets are equal to the set of $q \in M$ such that for all $w,z \in M$, $q \in w^*(U){}^*z$ if and only if $p \in w^*(U){}^*z$ (this is reminiscent of a double-coset construction).
\end{proof}

\begin{schl}
\label{schl:G3'}
The inclusion $G'_3$ of the sub-$2$-category $\mathrm{P'Mon}_s$ of \textbf{left} powder monoids into $T_0\mathrm{Mon}_s$ has a left adjoint. The idempotent monads $P$ and $P': T_0\mathrm{Mon}_s \to T_0\mathrm{Mon}_s$ induced by $G_3$ and $G'_3$ respectively commute, in the sense that $PP' = P'P$, whence the ($2$-)category $\mathrm{P''Mon}_s$ of two-sided powder monoids is also a reflective subcategory of $T_0\mathrm{Mon}_s$.
\end{schl}
\begin{proof}
The first part is clear by inspection of the proof of Lemma \ref{lem:G3}, which can be dualized without difficulty. To see that $PP' = P'P$, fix a ($T_0$) monoid $(M,\tau)$ and consider the left action topology associated to the right action topology associated to $\tau$. This is generated by those $U \in \tilde{\tau}$ such that for every $p \in M$, the subset ${}_U^p\Ical$ is in $\tilde{\tau}$ (and hence, being clopen, in $T$). It is necessary and sufficient to verify that for each fixed $p$ that $B := {}_U^p\Ical$ has $\Ical_B^p \in \tau$. By Lemma \ref{lem:lrclopen}, letting $A := \Ical_U^p$, we have that $\Ical_B^p = {}_A^p\Ical$, whence we have $A$ in the left action topology associated to $\tau$ and hence $U$ is in the right action topology associated to this. In summary, the topologies obtained by applying $P$ and $P'$ in either order are the same.
\end{proof}

\begin{schl}
\label{schl:G4'}
The inclusion $G'_4$ of the sub-$2$-category $\mathrm{C'Mon}_s$ of \textbf{left} complete monoids into $\mathrm{P'Mon}_s$ has a left adjoint.
\end{schl}
\begin{proof}
The proof proceeds exactly as for Scholium \ref{schl:G4}.
\end{proof}

With Scholia \ref{schl:G3'} and \ref{schl:G4'} we may extend the diagram of monadic functors \eqref{eq:monadic} as follows,
\begin{equation}
\label{eq:monadic2}
\begin{tikzcd}
& \ar[dl, "G_3"] \mathrm{PMon}_s & & \ar[ll, "G_4"'] \mathrm{CMon}_s \\
\mathrm{T_0Mon}_s & & \ar[dl, "G_3"'] \ar[ul, "G'_3"] \mathrm{P''Mon}_s \\
& \ar[ul, "G'_3"'] \mathrm{P'Mon}_s & & \ar[ll, "G'_4"] \mathrm{C'Mon}_s.
\end{tikzcd}
\end{equation}
Clearly, there is more of this picture to fill in; see Conjecture \ref{conj:complete} in the Conclusion.

We could have included further reflective subcategories of $\mathrm{TMon}_s$ in this section, such as the category of zero-dimensional monoids, but we hope the examples we have included are sufficiently illustrative. Combined with the fact that the category of topological monoids is `crudely' monadic over the category $\Top$ of topological spaces, we obtain that, upon restricting to monoid homomorphisms and ignoring conjugations, each of the underlying $1$-categories is monadic over $\Top$, which hints at a concrete algebraic way to study complete monoids as algebras for the resulting monad.

\chapter{Topological semi-Galois Theory}
\label{chap:TSGT}

Our final task is to combine the results of the last two chapters, to extend results appearing in \cite{TGT}. Our approach is to use the characterization of toposes of topological monoid actions as those which are hyperconnected under a topos of discrete monoid actions. Indeed, we shall see that any point of a topos $\Ecal$ provides a geometric morphism from a candidate topos of discrete monoid actions, and given a site for $\Ecal$ we may thus apply the results of \cite{Dense} to establish when that morphism is hyperconnected.

Our approach is a little different from Caramello's. In \cite{TGT} they use established syntactic results regarding theories classified by Boolean toposes in order to extract a syntactic result first, which is then specialized to produce categorical conditions. While we examine necessary conditions on theories classified by toposes of monoid actions here, we rely more heavily on insights coming from the theory of accessible categories. This demonstrates the versatility of a topos-theoretic approach: we have many tools at our disposal for tackling these problems.

\subsection*{Overview}

In Section \ref{sec:setup} we record how an arbitrary point of Grothendieck toposes has a canonical factorization through a topos of actions of a discrete monoid (Proposition \ref{prop:coreflect}); we use this to extract a general characterization of when a topos of sheaves on a site is equivalent to a topos of topological monoid actions (Theorem \ref{thm:basic}). In Section \ref{sec:cases} we apply this Theorem, first to the case of a geometric syntactic site, then to principal sites. We interrupt this analysis in Section \ref{ssec:wfs} to expose some relevant results about factorization systems, enabling us to re-express the characterization of the point in terms of its properties of as an object of the inductive completion of the dual of the category underlying a principal site with a compatible factorization system (Theorem \ref{thm:qhomogen}). We go on to apply this formulation to the special cases of sites derived from the supercompactly generated theories of Chapter \ref{chap:logic} and then the reductive categories of Chapter \ref{chap:sgt}.

In Section \ref{sec:Fraisse}, we generalize the original \Fraisse construction to demonstrate that, subject to a countability criterion, the necessary conditions on a principal site with a compatible factorization system are also sufficient (Theorem \ref{thm:Fraisse1}), and in fact the resulting point is weakly initial in the category of points with the right properties (Proposition \ref{prop:Fraisse2}).

Finally, in Section \ref{sec:sgal} we explain how these results yield a `semi-Galois theory,' by observing that a principal site with a compatible factorization system admits a canonical anafunctor to the category of open right congruences on the monoid corresponding to a point with the right properties, and we give some examples.

\section{Setup}
\label{sec:setup}

\subsection{Pointed toposes}
\label{ssec:factorpts}

An observation that we used only implicitly in the last chapter, but which we make explicit now, is the following:
\begin{prop}
\label{prop:coreflect}
Consider the $1$-category whose objects are toposes of actions of discrete monoids equipped with their canonical point, and whose morphisms are the (surjective) essential geometric morphisms which commute with these points. This $1$-category is coreflective (up to equivalence) in the $1$-category of pointed Grothendieck toposes and all geometric morphisms.
\end{prop}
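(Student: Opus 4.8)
The plan is to exhibit an explicit right adjoint $R$ to the (non-full) inclusion $\iota$ of the category $\mathcal{C}$ described into the category $\mathcal{D}$ of pointed Grothendieck toposes. Given a pointed topos $(\Ecal, p)$, following the recipe of Remark \ref{rmk:endopt}, I would set $M := \End(p^*)^{\op}$, the opposite of the monoid of natural endotransformations of the inverse image functor $p^* : \Ecal \to \Set$; this is a genuine (small) monoid because $p^*$, being cocontinuous between locally presentable categories, is accessible, so that natural transformations $p^* \Rightarrow p^*$ form a set. There is a tautological lift $\tilde p : \Ecal \to \Setswith{M}$ sending $X$ to $p^*(X)$ equipped with the right $M$-action in which $\alpha \in \End(p^*)$ acts by its component $\alpha_X$; functoriality and naturality make this well-defined, and by construction $U_M \circ \tilde p = p^*$, where $U_M$ is the forgetful functor, i.e.\ the inverse image of the canonical point $\mathrm{can}_M$ of $\Setswith{M}$.

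The first batch of verifications is that $\tilde p$ is the inverse image of a geometric morphism $f : \Setswith{M} \to \Ecal$ which commutes with the distinguished points. Since limits and colimits in $\Setswith{M}$ are computed on underlying sets and $p^*$ preserves finite limits and all colimits, $\tilde p$ preserves finite limits and all small colimits; as a cocontinuous functor between Grothendieck toposes it has a right adjoint by the (special) adjoint functor theorem, so $\tilde p = f^*$ for a geometric morphism $f : \Setswith{M} \to \Ecal$. That $f$ commutes with the points is the computation $(f \circ \mathrm{can}_M)^* = U_M \circ \tilde p = p^*$. I would then set $R(\Ecal, p) := (\Setswith{M}, \mathrm{can}_M)$ and take $f$ to be the counit of the intended adjunction.

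The heart of the proof is the universal property, that is, the natural bijection
\[
\Hom_{\mathcal{D}}\bigl((\Setswith{N}, \mathrm{can}_N), (\Ecal, p)\bigr) \;\cong\; \Hom_{\mathcal{C}}\bigl((\Setswith{N}, \mathrm{can}_N), (\Setswith{M}, \mathrm{can}_M)\bigr).
\]
For the right-hand side, I would observe, using Theorem \ref{thm:2equiv0}, that essential geometric morphisms $\Setswith{N} \to \Setswith{M}$ correspond to semigroup homomorphisms $N \to M$, and that commuting with the canonical points (induced by $1 \to N$ and $1 \to M$) forces the unit to be preserved; hence the $\mathcal{C}$-morphisms are exactly the \emph{monoid} homomorphisms $N \to M$, which are moreover automatically surjective, consistent with the parenthetical in the statement. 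For the left-hand side, a geometric morphism $g$ commuting with points is a lift $g^*$ of $p^*$ along $U_N$, i.e.\ a natural family of right $N$-actions on the sets $p^*(X)$; such a family is precisely a monoid homomorphism $h : N \to \End(p^*)^{\op} = M$, since each $n$ acts by a natural endomorphism $\rho_n$ of $p^*$, and the action axioms make $n \mapsto \rho_n$ a homomorphism to $M$. These two descriptions agree, and the bijection is realised by composition with $f$: given $h$, the essential surjection $\phi_h$ it induces satisfies $f \circ \phi_h = g$ because $\phi_h^* \circ \tilde p = g^*$ by a direct unwinding of the actions, while $h$ is recovered from $g$ as the action it defines, so the two assignments are mutually inverse. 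I would finish by checking naturality in both arguments (routine whiskering) and recording that $R$ is thereby functorial, sending a pointed geometric morphism $k$ with $q \cong k \circ p$ to the monoid homomorphism $\End(p^*) \to \End(q^*)$ obtained by whiskering with $k^*$.

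The step I expect to be the main obstacle — more conceptually than technically — is the clean identification of the left-hand hom-set with monoid homomorphisms: one must argue that \emph{every} geometric morphism $\Setswith{N} \to \Ecal$ commuting with points is determined by, and arises from, a natural $N$-action on $p^*$, and conversely that each such action is genuinely the inverse image of a geometric morphism, which it is, being $\phi_h^* \circ \tilde p = (f \circ \phi_h)^*$. Care is also needed to confirm that $\End(p^*)$ is small and to track the various isomorphisms of points coherently, which is precisely why the statement is asserted only up to equivalence.
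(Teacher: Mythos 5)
Your proposal is correct and takes essentially the same route as the paper's proof: both set $M = \End(p^*)\op$, obtain the counit from the tautological lift $X \mapsto p^*(X)$ equipped with its endomorphism action, and use whiskering together with the identification $\End(\mathrm{can}_N^*)\op \cong N$ of Remark \ref{rmk:endopt}. The only difference is presentational: you verify adjointness by an explicit hom-set bijection (identifying both sides with monoid homomorphisms $N \to M$ via Theorem \ref{thm:2equiv0}), whereas the paper deduces universality from naturality of the whiskering assignment $E(-)$ and the fact that the counit is the identity on toposes of the form $\Setswith{N}$ --- your packaging makes the uniqueness half of the factorization slightly more explicit, while sharing the same tolerance as the paper on the ``up to equivalence'' issues (e.g.\ that commuting with the canonical points only pins down semigroup homomorphisms up to conjugation/Morita-type identifications of idempotents).
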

\begin{proof}
Given a pointed topos $(\Ecal,p:\Set \to \Ecal)$, let $L = \End(p)\op$. Then there is a canonical morphism $q: \Setswith{L} \to \Ecal$ such that $p$ is the composite of $q$ with the canonical point of $\Setswith{L}$. Indeed, since each element of $L\op$ is a natural transformation $p^* \Rightarrow p^*$, their components at an object $X$ of $\Ecal$ provide a left action of $L\op$, and hence a right action of $L$, on $p^*(X)$, and naturality ensures that morphisms $X \to Y$ are sent to right $L$-set homomorphisms. The fact that the resulting functor $\Ecal \to \Setswith{L}$ preserves finite limits and small colimits follows from the fact that $p^*$ does so and the forgetful functor $\Setswith{L} \to \Set$ creates them, so we obtain a geometric morphism $q_{\Ecal,p}:\Setswith{L} \to \Ecal$.

Now suppose we are given a second pointed topos $(\Fcal,p':\Set \to \Fcal)$ and a geometric morphism $h : \Fcal \to \Ecal$ commuting with the points $p,p'$. Let $L'$ be $\End({p'}^*)\op$. Then for any natural transformation $\alpha:{p'}^* \Rightarrow {p'}^*$, we have (up to isomorphism) an induced endomorphism $\alpha_{h^*}: p^* \Rightarrow p^*$ by restriction along the inverse image $h^*$. This restriction operation is compatible with composition and preserves the identity transformation by inspection, whence it is a monoid homomorphisms $L' \to L$. This induces an essential surjection $Eh:\Setswith{L} \to \Setswith{L'}$ which commutes with the canonical points. Thus, since these points are surjective, we conclude that this essential geometric morphism completes the required commutative square up to isomorphism, which demonstrates naturality of the transformation $q_{-,-}$:
\begin{equation}
\label{eq:qnatural}
\begin{tikzcd}
& \Setswith{L'} \ar[r, "q_{\Fcal,p'}"] \ar[dd,"Eh"'] & \Fcal \ar[dd, "h"] \\
\Set \ar[ur] \ar[dr] & & \\
& \Setswith{L} \ar[r, "q_{\Ecal,p}"'] & \Ecal.
\end{tikzcd}	
\end{equation}
This is compositional by inspection.

For universality, consider any morphism of pointed toposes $k:\Setswith{M} \to \Ecal$, and let $u$ be the canonical point of $\Setswith{M}$, so that $p = k \circ u$. We know from Remark \ref{rmk:endopt} of Chapter \ref{chap:TDMA} that the monoid of endomorphisms of $u$ is $M\op$, whence $q_{\Setswith{M},u}$ is the identity on $\Setswith{M}$. As such, $k = q_{\Ecal,p} \circ Ek$ as a special case of \eqref{eq:qnatural}.
\end{proof}

\begin{rmk}
Following the theme of $2$-categorical results discussed up to now in this thesis, one might like to upgrade this to a sensible $2$-categorical result. We expect this should be possible, but there is an obstacle.

Given geometric morphisms $g,h:\Fcal \rightrightarrows \Ecal$ and a natural transformation $\beta: g^* \Rightarrow h^*$, we can apply the functor $q_{\Fcal,p'}^*$ and translate across the square in \eqref{eq:qnatural} to obtain a transformation $\beta':(Eg)^*q_{\Ecal,p}^* \Rightarrow (Eh)^*q_{\Ecal,p}^*$. Extracting a natural transformation $(Eg)^* \Rightarrow (Eh)^*$ requires appealing to $2$-categorical properties of the morphism $q_{\Ecal,p}$ related to those discussed in Section \ref{ssec:intrinsic} of Chapter \ref{chap:TTMA}. Since we shall only need to use the construction of the morphisms $q_{\Ecal,p}$ in this chapter, we shall not attempt to extract those properties to the point of achieving this more complete result here.
\end{rmk}

As a consequence of Theorem \ref{thm:characterization} from the last chapter, \textit{a topos $\Ecal$ is equivalent to a topos of topological monoid actions if and only if, for some choice of point $p:\Set \to \Ecal$, the geometric morphism $q_{\Ecal,p}$ from Proposition \ref{prop:coreflect} is hyperconnected}. We use this characterization to obtain site-theoretic characterizations of these toposes and by extension syntactic characterizations in terms of theories they classify.

\subsection{Points of sheaf toposes}
\label{ssec:shfpoint}

Suppose $\Ecal \simeq \Sh(\Ccal,J)$ for some site $(\Ccal,J)$. As usual, we write $\ell:\Ccal \to \Sh(\Ccal,J)$ for the composite of the Yoneda embedding with the $J$-sheafification functor. The definition of morphisms of sites (which we recalled in Definition \ref{dfn:morsite} of Chapter \ref{chap:sgt}) is precisely what is required to produce a correspondence,
\[\Geom(\Fcal,\Ecal) \simeq \Site((\Ccal,J),(\Fcal,J_{can})),\]
for any Grothendieck topos $\Fcal$, where the correspondence identifies a geometric morphism $f$ with the morphism of sites $C \mapsto f^*(\ell(C))$. In particular, we can identify a point of $\Ecal$ with a morphism of sites $(\Ccal,J) \to (\Set,J_{can})$, often called a \textit{$J$-continuous flat functor}, which is to say a flat functor $\Ccal \to \Set$ sending $J$-covers to jointly epimorphic families. Note that this correspondence is an equivalence of categories, which is to say that given $p,q:\Fcal \rightrightarrows \Ecal$ the geometric transformations $p^* \Rightarrow q^*$ correspond to natural transformations between the corresponding flat functors.

Given a point $p:\Set \to \Ecal$ with endomorphism monoid $L\op$, the morphism of sites $F$ corresponding to the induced geometric morphism $q_{\Ecal,p}:\Setswith{L} \to \Ecal$ is the one sending an object $C$ to $p^*(\ell(C))$ with the induced right $L$-action. We quote the following result in order to apply it to the present situation.
\begin{prop}[{\cite[Proposition 6.23]{Dense}}]
\label{prop:sitehype}
Let $(\Ccal,J)$ be a small-generated site, $\Fcal$ a Grothendieck topos and $F : \Ccal \to \Fcal$ (the underlying functor of) a morphism of sites. Then the geometric morphism $f : \Fcal \to \Sh(\Ccal,J)$ induced by $F$ is hyperconnected if and only if the following two conditions hold:
\begin{itemize}
	\item $F$ is cover-reflecting with respect to the canonical topology on $\Fcal$, and
	\item for every subobject $A \hookrightarrow F(C)$ in $\Fcal$ there exists a $J$-closed sieve $S$ on $C$ such that $A$ is the union of the images of the arrows $Fh$ for $h \in S$.
\end{itemize}
\end{prop}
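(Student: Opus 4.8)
The plan is to reduce hyperconnectedness of $f$ to conditions about how the inverse image functor behaves on the separating family of representable sheaves, using the characterization (cf. \cite[A4.6.6]{Ele}) that a geometric morphism $f:\Fcal\to\Ecal$ is hyperconnected precisely when $f^*$ is full and faithful and its image is closed under subobjects, closure under quotients then coming for free (as I note at the end). Throughout I would exploit that, since $F$ is a morphism of sites, the induced $f^*:\Sh(\Ccal,J)\to\Fcal$ is the essentially unique cocontinuous, finite-limit-preserving functor with $f^*\circ\ell\cong F$, and that every object of $\Sh(\Ccal,J)$ is a colimit of the representables $\ell(C)$. The first preliminary step is to rephrase the two hypotheses cleanly. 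I would recall that subobjects of $\ell(C)$ in $\Sh(\Ccal,J)$ are classified by $J$-closed sieves $S$ on $C$, the subsheaf attached to $S$ being $\bigcup_{h\in S}\im(\ell(h))$; applying the exact, cocontinuous functor $f^*$ then identifies $f^*$ of this subobject with $\bigcup_{h\in S}\im(F(h))$. Consequently the second hypothesis is exactly the assertion that the comparison map $\Sub_\Ecal(\ell(C))\to\Sub_\Fcal(F(C))$ is \emph{surjective} for every object $C$. I would similarly invoke the site-level criterion for surjectivity, whereby a morphism of sites induces a geometric surjection if and only if it is cover-reflecting, so that the first hypothesis amounts to saying $f$ is a surjection, equivalently that $f^*$ is faithful.

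For the forward implication I would assume $f$ hyperconnected. Connectedness forces $f$ to be a surjection, whence $F$ is cover-reflecting, giving the first condition. For the second, full faithfulness of $f^*$ together with closure of its image under subobjects makes each comparison map $\Sub_\Ecal(\ell(C))\to\Sub_\Fcal(F(C))$ a bijection: any $A\hookrightarrow F(C)$ lies in the image of $f^*$, say $A\cong f^*(Y)$, and by fullness the inclusion is $f^*$ of a mono $Y\hookrightarrow\ell(C)$. In particular the map is surjective, which by the rephrasing above is precisely the second condition.

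For the converse I would assume both conditions. Cover-reflecting gives that $f$ is a surjection, so $f^*$ is faithful; since a faithful finite-limit-preserving functor between (balanced) toposes is conservative and preserves intersections, each map $\Sub_\Ecal(\ell(C))\to\Sub_\Fcal(F(C))$ is injective (if $f^*(A)=f^*(B)$ then $f^*(A\cap B\hookrightarrow A)$ is an isomorphism, hence so is $A\cap B\hookrightarrow A$, forcing $A\subseteq B$, and symmetrically), while the second condition makes it surjective; so these maps are bijections on representables. The substance is then to \emph{globalise} this to a bijection $\Sub_\Ecal(X)\to\Sub_\Fcal(f^*(X))$ for arbitrary $X$, equivalently to show the image of $f^*$ is closed under subobjects. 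I would write $X$ as a colimit of representables, so $f^*(X)$ is the corresponding colimit of the $F(C_i)$; a subobject $A\hookrightarrow f^*(X)$ pulls back along each colimit leg to some $A_i\hookrightarrow F(C_i)$, which by the representable case is $f^*$ of a unique subobject of $\ell(C_i)$, and these must be glued into a single $S\hookrightarrow X$ with $f^*(S)\cong A$. Once closure under subobjects holds, full faithfulness follows by the graph trick: a morphism $f^*(X)\to f^*(Y)$ has graph a subobject of $f^*(X\times Y)$, necessarily $f^*$ of some $G\hookrightarrow X\times Y$, and the equations expressing that $G$ is a functional graph are finite-limit and epimorphism statements that $f^*$ preserves and (being conservative) reflects, so $G$ is the graph of a unique morphism $X\to Y$ inducing the given one.

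The main obstacle I anticipate is exactly this gluing step in the converse: passing from subobjects of representables to subobjects of an arbitrary object is not formal, since subobjects do not glue along arbitrary colimit cocones. The role of the cover-reflecting hypothesis here is essential and goes beyond merely supplying surjectivity of $f$; it is what guarantees that a descent datum for subobjects computed in $\Fcal$ descends to a genuine subobject in $\Sh(\Ccal,J)$, using that subobject lattices form a sheaf for the canonical topology and that cover-reflecting transports canonical covers back along $F$. Getting this compatibility book-keeping right, in the small-generated rather than strictly small setting, is where the care lies. Finally, closure of the image under quotients I would deduce rather than establish separately: given $f^*(X)\twoheadrightarrow Q$, its kernel pair is a subobject of $f^*(X\times X)$, hence $f^*$ of an equivalence relation $R$ on $X$ (the equivalence-relation axioms being finite-limit conditions preserved and reflected by the exact conservative $f^*$), and since $f^*$ preserves effective quotients we obtain $Q\cong f^*(X/R)$.
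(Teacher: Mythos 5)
Your overall route is sound and in fact goes beyond the paper, which does not prove this proposition at all: it quotes \cite[Proposition 6.23]{Dense} and adds only the gloss that the first condition is equivalent to $f$ being a surjection (by \cite[Theorem 6.1(i)]{Dense}) and that the second amounts to the image of $f^*$ being closed under subobjects, with hyperconnectedness then read off from \cite[Proposition A4.6.6]{Ele}. Your reduction to bijectivity of the comparison maps $\Sub_{\Sh(\Ccal,J)}(X) \to \Sub_{\Fcal}(f^*(X))$ is consistent with that gloss but more careful on the one point it elides: surjectivity of $f$ together with mere closure of the image of $f^*$ under subobjects (as a class of objects) does \emph{not} yield hyperconnectedness --- a surjective point of the Schanuel topos has an essentially surjective, hence trivially subobject-closed, inverse image without being hyperconnected --- so fullness genuinely needs an argument, and your graph trick supplies it correctly, using only conservativity of the faithful exact $f^*$. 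Your identification of the second condition with surjectivity of $\Sub_{\Sh(\Ccal,J)}(\ell(C)) \to \Sub_{\Fcal}(F(C))$, via $J$-closed sieves and preservation of images and unions by $f^*$, is exactly right.

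The one genuine defect is your diagnosis of the globalization step, which you flag as the main obstacle and propose to handle by a descent argument powered by cover-reflection. That mechanism is both unnecessary and not how cover-reflection operates (it concerns covers of objects of $\Ccal$, not gluing of subobjects in $\Fcal$); its only role in the converse is to make $f$ surjective, hence $f^*$ faithful. The globalization is in fact formal: given $A \hookrightarrow f^*(X)$, write $X = \colim_i \ell(C_i)$ with legs $\lambda_i$, pull $A$ back along each $f^*(\lambda_i)$ to $A_i \hookrightarrow F(C_i)$, choose by the second condition subobjects $S_i \hookrightarrow \ell(C_i)$ with $f^*(S_i) = A_i$ (no uniqueness or compatibility is needed), and set $S := \bigcup_i \im\bigl(S_i \to \ell(C_i) \to X\bigr)$. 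Since $f^*$ preserves coproducts and image factorizations, it preserves arbitrary unions of subobjects, so $f^*(S) = \bigcup_i \im\bigl(A_i \to f^*(X)\bigr)$; and this union is all of $A$ because the legs of a colimit cocone are always jointly epic and joint epimorphy is stable under pullback along $A \hookrightarrow f^*(X)$, so that $\coprod_i A_i \to A$ is epic. Thus the step you describe as ``not formal'' is a one-line union argument requiring no sheaf-of-subobjects machinery, and with it in place the remainder of your proof (injectivity from faithfulness via intersections, fullness via graphs, quotients deduced at the end) goes through as written.
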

By \cite[Theorem 6.1(i)]{Dense}, the first condition is equivalent to the geometric morphism $f$ being a surjection, while the latter condition amounts to the image of $f^*$ being closed under subobjects, which together are necessary and sufficient for $f$ to be hyperconnected, by \cite[Proposition A4.6.6]{Ele}.

We apply this to the $F$ described above, with $\Fcal = \Setswith{L}$. Observe that a family of morphisms $\{A_i \to C \mid i \in I\}$ is sent to a jointly epic family by $F$ if and only if the original point $p^*$ sends it to a jointly epic family, which is to say we can ignore the $L$-action for the first condition. As such, we have the following maximally general result, which we shall refine in the next section.

\begin{thm}
\label{thm:basic}
A topos $\Ecal = \Sh(\Ccal,J)$ is equivalent to a topos of actions of a topological monoid if and only if there exists a $J$-continuous flat functor $F:\Ccal \to \Set$ such that,
\begin{itemize}
	\item $F$ reflects jointly epimorphic families to $J$-covering families, and
	\item letting $L$ be the opposite of the monoid of natural endomorphisms of $F$ and equipping each $F(C)$ with its canonical right $L$-action, given any sub-$L$-set $A$ of $F(C)$ there exists a $J$-closed sieve $S$ on $C$ such that $A$ is the union of the images of the arrows $Fh$ for $h \in S$.
\end{itemize}
Given such a functor, $\Ecal \simeq \Cont(L,\rho)$, where $\rho$ is the coarsest topology making the actions of $L$ on the objects $F(C)$ continuous; in other words the topology is generated by the necessary clopens of the form,
\begin{equation}
\label{eq:endopens}
\Ical_x^m := \{m' \in L \mid x \cdot m = x \cdot m'\},\	
\end{equation}
ranging over $x \in F(C)$, $m \in L$.
\end{thm}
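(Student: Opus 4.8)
The plan is to assemble the statement directly from Proposition \ref{prop:coreflect}, Theorem \ref{thm:characterization} of the previous chapter, and Proposition \ref{prop:sitehype}, translating each abstract condition into the site-theoretic language. First I would recall the characterization just established: by the remark following Proposition \ref{prop:coreflect}, $\Ecal$ is equivalent to a topos of topological monoid actions if and only if, for some point $p:\Set \to \Ecal$, the induced essential-surjection-plus-point morphism $q_{\Ecal,p}:\Setswith{L} \to \Ecal$ (with $L = \End(p^*)\op$) is hyperconnected. This is exactly the content of Theorem \ref{thm:characterization}, since $q_{\Ecal,p}$ composed with the canonical point of $\Setswith{L}$ recovers $p$, and $q_{\Ecal,p}$ is always essential and surjective onto its image in the relevant sense. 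So the whole statement reduces to translating ``$q_{\Ecal,p}$ is hyperconnected'' into conditions on the flat functor $F$ presenting $p$.

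The main translation step is to identify points of $\Ecal = \Sh(\Ccal,J)$ with $J$-continuous flat functors $F:\Ccal \to \Set$, as recalled in Section \ref{ssec:shfpoint}, and then to observe that the morphism of sites underlying $q_{\Ecal,p}$ is precisely $F$ regarded as a functor into $\Setswith{L}$, sending $C$ to $F(C) = p^*(\ell(C))$ equipped with its canonical right $L$-action. I would then invoke Proposition \ref{prop:sitehype} with $\Fcal = \Setswith{L}$: the two bullet points of that proposition become exactly the two bullet points of the present theorem once I note that a family of morphisms in $\Ccal$ is sent by $F$ to a jointly epic family in $\Setswith{L}$ if and only if it is sent to a jointly epic family by the underlying-set functor (since the forgetful functor $\Setswith{L}\to\Set$ is faithful and creates colimits, so jointly epic families in $\Setswith{L}$ are detected on underlying sets). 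This lets me phrase the first (surjectivity) condition purely in terms of $F$ ignoring the $L$-action, exactly as stated. The second condition transcribes verbatim, now with subobjects taken in $\Setswith{L}$, i.e.\ sub-$L$-sets.

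For the final clause identifying the topology, I would appeal to Proposition \ref{prop:Lpowder} and the construction in Theorem \ref{thm:characterization}: once $q_{\Ecal,p}$ is known to be hyperconnected, the codomain is $\Cont(L,\rho)$ where $\rho$ is the action topology obtained from the canonical point, and by Theorem \ref{thm:tau} this is the coarsest topology making all the relevant principal actions continuous. Concretely, continuity of the $L$-action on each $F(C)$ is governed exactly by the necessary clopens $\Ical_x^m$ of Lemma \ref{lem:Inx}, giving the displayed generating family \eqref{eq:endopens}; since the objects $F(C)$ generate $\Cont(L,\rho)$ (being the images under $q_{\Ecal,p}^*$ of a separating family), these necessary clopens generate $\rho$, and coarseness follows from the universal property in Theorem \ref{thm:tau}.

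The main obstacle I anticipate is not any single deep step but rather the bookkeeping of which category subobjects and jointly epic families are computed in, and ensuring the two conditions of Proposition \ref{prop:sitehype} are faithfully matched to the two bullets here. In particular I would want to be careful that the first condition genuinely reduces to a statement about $F$ with the $L$-action forgotten (this uses that surjectivity/cover-reflection is a colimit-theoretic property insensitive to the equivariant structure), whereas the second condition genuinely requires the $L$-action, since the image of $q_{\Ecal,p}^*$ being closed under subobjects is a statement about sub-$L$-sets, not arbitrary subsets. Verifying that the $J$-closed sieves in Proposition \ref{prop:sitehype} correspond to the right congruences parametrizing principal $M$-sets (cf.\ Lemma \ref{lem:Jclosed} and the apparatus of Section \ref{ssec:EqRel}) is the delicate point, but it is exactly what Proposition \ref{prop:sitehype} supplies, so the argument should go through by direct citation rather than fresh computation.
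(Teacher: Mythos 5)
Your proposal follows essentially the same route as the paper's own proof: reduce to the criterion that $q_{\Ecal,p}$ be hyperconnected (via Proposition \ref{prop:coreflect} and Theorem \ref{thm:characterization}), apply Proposition \ref{prop:sitehype} with $\Fcal = \Setswith{L}$, observe that the cover-reflection condition is insensitive to the $L$-action since jointly epic families in $\Setswith{L}$ are detected on underlying sets, and identify $\rho$ as the coarsest topology making the $F(C)$ continuous. The only divergence is cosmetic — the paper justifies the topology clause by noting that unions and quotients of continuous actions are continuous rather than citing Theorem \ref{thm:tau}, and your worry about matching $J$-closed sieves to right congruences is unnecessary, since the second condition of Proposition \ref{prop:sitehype} is already stated in terms of sieves on $\Ccal$ and transcribes verbatim.
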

\begin{proof}
The conditions are a direct translation of Proposition \ref{prop:sitehype}. To see that the stated topology coincides with the topology on $L$ from the previous chapter, we observe that every $F(C)$ must be continuous with respect to that topology, and any union or quotient of continuous actions is continuous, so this is the coarsest topology making all objects of $\Sh(\Ccal,J)$ continuous $L$-sets.
\end{proof}

We saw in Chapter \ref{chap:logic} that flat functors on $\Ccal$ can be identified with objects of $\Ind(\Ccal\op)$, the free cocompletion of $\Ccal\op$ under filtered colimits. The point of $\Setswith{\Ccal}$ corresponding to an object $U$ of $\Ind(\Ccal\op)$ is the restriction of the representable functor $C \mapsto \Hom_{\Ind(\Ccal\op)}(C,U)$. As such, a $J$-continuous flat functor on $\Ccal$ corresponds to an object $U$ of $\Ind(\Ccal\op)$ such that $\Hom_{\Ind(\Ccal\op)}(-,U)$ sends $J$-covering families to jointly epimorphic families. In this formulation, $L$ is the opposite of the monoid of endomorphisms of $U$.

In Section \ref{ssec:wfs}, we shall characterize the objects $U$ of $\Ind(\Ccal\op)$ which correspond to points of $\Sh(\Ccal,J)$ that satisfy the conditions of Theorem \ref{thm:basic} in the special case that $J = J_{\Tcal}$ is a principal topology satisfying some extra conditions. This will enable us to arrive at a characterization of these toposes which doesn't explicitly involve the monoid action.

\section{Cases of interest}
\label{sec:cases}

\subsection{Syntactic sites}
\label{ssec:syntactic2}

Throughout, we write $\Ccal_{\Tbb}$ for the (geometric) syntactic category of a geometric theory $\Tbb$ over a signature $\Sigma$, as constructed in Chapter \ref{chap:logic}, and we write $\Set[\Tbb] := \Sh(\Ccal_{\Tbb},J_{\Tbb})$ for the classifying topos of $\Tbb$.

Let $\Mbb$ be a model of $\Tbb$, which corresponds to some point $p: \Set \to \Set[\Tbb]$; the $\Tbb$-model endomorphisms of $\Mbb$ correspond to the endomorphisms of $p$. Letting $L := \End(\Mbb)\op$, the morphism of sites $(\Ccal_{\Phi},J_{\Tcal}) \to \Setswith{L}$ corresponding to $q_{\Set[\Tbb],p}$ sends $\{\vec{x}_i \cdot \phi_i \}$ to its interpretation $[[\vec{x}_i \cdot \phi_i]]_{\Mbb}$, with the action induced by restriction; recall from Chapter \ref{chap:logic} that the interpretations of geometric formulae-in-context are preserved by any $\Tbb$-model homomorphism.

Applying Theorem \ref{thm:basic} to the syntactic category of $\Tbb$, the second condition simplifies rather conveniently.
\begin{crly}
\label{crly:generaltheory}
A theory $\Tbb$ is classified by a topos of topological monoid actions if and only if there exists a model $\Mbb$ of $\Tbb$ in $\Set$ such that the following two conditions are verified:
\begin{itemize}
	\item whenever a family of $\Tbb$-provably functional formulae
	\[\left\{[\theta_i]: \{\vec{x} \cdot \phi_i\} \to \{\vec{x} \cdot \phi\} \mid i \in I \right\}\]
	in $\Ccal_{\Tbb}$ have interpretations $[\theta_i]_{\Mbb}$ which are jointly epimorphic in $\Set$, the sequent $\phi \vdash_{\vec{x}} \bigvee_{i \in I} (\exists \vec{y})\theta_i$ is provable in $\Tbb$, and
	\item letting $L := \End_{\Tbb}(\Mbb)\op$, given a sub-$L$-set $X$ of $[[\vec{x} \cdot \phi]]_{\Mbb}$, there exists a geometric formula $\psi$ in the context $\vec{x}$ such that $\psi \vdash_{\vec{x}} \phi$ is provable in $\Tbb$ and $[[\vec{x} \cdot \psi]]_{\Mbb}$ is isomorphic to $X$ as a subobject of $[[\vec{x} \cdot \phi]]_{\Mbb}$.
\end{itemize}
\end{crly}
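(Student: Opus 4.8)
The plan is to specialize Theorem \ref{thm:basic} to the syntactic site $(\Ccal_{\Tbb},J_{\Tbb})$, translating each of its two abstract conditions into the stated syntactic conditions using the established dictionary between the syntactic category and the classifying topos. First I would fix a model $\Mbb$ of $\Tbb$ in $\Set$, which by the semantics of Chapter \ref{chap:logic} corresponds to a point $p:\Set \to \Set[\Tbb]$, and identify the endomorphisms of $p$ with the $\Tbb$-model endomorphisms of $\Mbb$, so that $L := \End_{\Tbb}(\Mbb)\op$ is the monoid whose actions we are characterizing. The morphism of sites $F:\Ccal_{\Tbb} \to \Setswith{L}$ corresponding to $q_{\Set[\Tbb],p}$ sends a formula-in-context $\{\vec{x}\cdot\phi\}$ to its interpretation $[[\vec{x}\cdot\phi]]_{\Mbb}$ equipped with the right $L$-action by restriction; the fact that $\Tbb$-model homomorphisms preserve interpretations of geometric formulae guarantees this action is well-defined and functorial.

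The two conditions of Theorem \ref{thm:basic} would then be transcribed one at a time. For the first (cover-reflection) condition, I would note that $F$ reflects jointly epimorphic families to $J_{\Tbb}$-covering families exactly when the underlying functor $\{\vec{x}\cdot\phi\}\mapsto[[\vec{x}\cdot\phi]]_{\Mbb}$ does so, since the $L$-action is irrelevant to joint epimorphy. Recalling from the construction of the syntactic topology that a $J_{\Tbb}$-covering family of $\Tbb$-provably functional formulae over $\{\vec{x}\cdot\phi\}$ is precisely one for which $\phi\vdash_{\vec{x}}\bigvee_{i\in I}(\exists\vec{y})\theta_i$ is provable in $\Tbb$, this yields the first displayed condition directly. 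For the second condition, the key simplification is that in a syntactic (geometric) site we can replace the abstract requirement involving a $J_{\Tbb}$-closed sieve $S$ on $\{\vec{x}\cdot\phi\}$ with the existence of a single representing subobject: by Lemma \ref{lem:Jclosed} and Proposition \ref{prop:subobj}, $J_{can}$-closed sieves in a geometric category are principal and generated by monomorphisms, and via the corollary that $\ell_{\Tbb}$ is closed under subobjects, every subobject of a representable is represented by a geometric formula $\psi$ in the same context with $\psi\vdash_{\vec{x}}\phi$ provable. This is what converts the union-of-images condition into the stated existence of a formula $\psi$ with $[[\vec{x}\cdot\psi]]_{\Mbb}\cong X$.

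The main obstacle I anticipate is the careful handling of the second condition: one must verify that the sub-$L$-set $X\hookrightarrow[[\vec{x}\cdot\phi]]_{\Mbb}$ appearing in Corollary \ref{crly:generaltheory} corresponds correctly to the subobject $A\hookrightarrow F(C)$ of Theorem \ref{thm:basic}, and that the $J_{\Tbb}$-closed sieve $S$ in the general statement can be faithfully replaced by a single monomorphism coming from a formula $\psi$. The subtlety is that the general condition quantifies over sub-$L$-sets of $F(C)$ in $\Setswith{L}$ (that is, after applying the hyperconnected candidate morphism), whereas the formula $\psi$ represents a subobject in $\Set[\Tbb]$; one must use that $q_{\Set[\Tbb],p}^*$ preserves and reflects the relevant subobject structure, together with the fact that $\psi$-represented subobjects are exactly the images of the arrows $Fh$ for $h$ in the principal sieve generated by $\psi$. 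Once this correspondence is pinned down, the equivalence is a mechanical reading-off of Theorem \ref{thm:basic}, and the presenting topological monoid is $(L,\rho)$ with $\rho$ generated by the necessary clopens \eqref{eq:endopens}, so no separate argument for the topology is needed.
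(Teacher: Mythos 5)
Your proposal is correct and follows essentially the same route as the paper: specialize Theorem \ref{thm:basic} to the syntactic site $(\Ccal_{\Tbb},J_{\Tbb})$, read off the first condition from the explicit description of $J_{\Tbb}$-covering families, and collapse the closed-sieve condition to a single formula $\psi$ via Lemma \ref{lem:Jclosed} (principal closed sieves) and the representability of subobjects of representables. The extra care you flag about matching sub-$L$-sets of $F(C)$ with formula-represented subobjects is exactly the point the paper's parenthetical ``so every subobject of a representable is representable'' is doing, so nothing further is needed.
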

\begin{proof}
The first item is a straightforward translation of the first condition in Theorem \ref{thm:basic}, while the latter follows from Lemma \ref{lem:Jclosed} of Chapter \ref{chap:logic}, which states that every $J$-closed sieve on $\{\vec{x} \cdot \phi\}$ in $\Ccal_{\Tbb}$ is principal (so every subobject of a representable is representable).
\end{proof}

Let's break down the consequences of this result; for the most part this amounts to expressing syntactically the categorical requirements we saw in Chapter \ref{chap:TTMA}.

\begin{dfn}
\label{dfn:consmodel}
A model $\Mbb$ of a geometric theory $\Tbb$ in a topos $\Ecal$ (in our case we will have $\Ecal = \Set$, as in classical model theory) is said to be \textbf{$\Tbb$-conservative} if any geometric sequent $\phi \vdash_{\vec{x}} \psi$ which is valid in $\Mbb$ is provable in $\Tbb$.
\end{dfn}

$\Mbb$ is conservative if and only if the corresponding geometric morphism is a surjection (cf. \cite[D3.2.6(ii)]{Ele}). Following the observations after Proposition \ref{prop:sitehype} above, we deduce that \textit{a model $\Mbb$ of a theory $\Tbb$ satisfies the first condition of Corollary \ref{crly:generaltheory} if and only if it is conservative}. This in particular means that for every geometric formula-in-context $\{\vec{x} \cdot \phi\}$, either $\phi \vdash_{\vec{x}} \bot$ is provable in $\Tbb$ or the interpretation $[[\vec{x} \cdot \phi]]_\Mbb$ is inhabited, by consideration of the propositional formula $(\exists \vec{x})\phi$. This leads us to another standard definition.

\begin{dfn}
\label{dfn:completetheory}
A theory $\Tbb$ is said to be (geometrically) \textbf{complete} if for every geometric sentence over $\Sigma$, one of,
\[ \top \vdash_{[]} \phi \hspace{10pt} \text{ or } \hspace{10pt} \phi \vdash_{[]} \bot \]
is provable in $\Tbb$. 
\end{dfn}
Geometric completeness of $\Tbb$ corresponds to the topos $\Set[\Tbb]$ being two-valued (cf. \cite[Remark 2.5]{AtTCC}). Thus \textit{any theory $\Tbb$ satisfying the conditions of Corollary \ref{crly:generaltheory} must be complete}.

Let $\vec{a} \in [[\vec{x} \cdot \top]]_{\Mbb}$. It is true for any model of $\Tbb$ that the image of $\vec{a}$ under a $\Tbb$-model endomorphism of $\Mbb$ must satisfy all geometric formulae in the context $\vec{x}$ which $\vec{a}$ satisfies. The second condition of Corollary \ref{crly:generaltheory} might be described as a \textit{homogeneity condition} which is a partial converse to this observation: applying it to the principal sub-$L$-set generated by $\vec{a}$, it says that there exists a formula-in-context $\{\vec{x} \cdot \psi\}$ such that $\psi(\vec{a})$ holds and given any element $\vec{b}$ such that $\psi(\vec{b})$ holds, there is some $\Tbb$-model endomorphism $m$ of $\Mbb$ such that $m(\vec{a}) = \vec{b}$. This is an analogue of one of the conditions appearing in \cite[Theorem 3.1]{TGT}. The basic opens described in \eqref{eq:endopens} can hence be described in this context as:
\[\Ical_{\vec{a}}^m = \{m' \in L \mid \vec{a} \cdot m = \vec{a} \cdot m'\}. \]

Now recall the notion of $\Tbb$-supercompact formula from Definition \ref{dfn:Tscompact}, and the derived notion of supercompactly generated theory from Theorem \ref{thm:scompform} of Chapter \ref{chap:sgt}, which correspond to $\Set[\Tbb]$ being a supercompactly generated topos. Clearly, \textit{any theory $\Tbb$ satisfying the conditions of Corollary \ref{crly:generaltheory} must be supercompactly generated}.

We shall examine how the conditions of Corollary \ref{crly:generaltheory} can be refined for supercompactly generated theories in Subsection \ref{ssec:sgtheory} below.

\subsection{Principal sites}
\label{ssec:principal2}

Knowing that any topos of topological monoid actions is supercompactly generated, it makes sense to restrict our attention to the principal sites discussed in Section \ref{sec:principal} of Chapter \ref{chap:sgt}. As such, let $\Tcal$ be a stable class of morphisms in a small category $\Ccal$, producing a principal site $(\Ccal,J_{\Tcal})$. We have an immediate refinement of Theorem \ref{thm:basic}.

\begin{crly}
\label{crly:principalcondition}
Let $(\Ccal,J_{\Tcal})$ be a principal site. Then $\Sh(\Ccal,J_{\Tcal})$ is equivalent to a topos of actions of a topological monoid if and only if there exists a flat functor $F:\Ccal \to \Set$ sending $\Tcal$-morphisms to epimorphisms such that,
\begin{itemize}
	\item whenever $\{f_i:C_i \to C\}$ is sent by $F$ to a jointly epimorphic family, one of the $f_i$ lies in $\Tcal$, and
	\item letting $L$ be the opposite of the monoid of natural endomorphisms of $F$ and equipping each $F(C)$ with its canonical right $L$-action, for each sub-$L$-set $A$ of $F(C)$ there exists a $J_{\Tcal}$-closed sieve $S$ on $C$ such that $A$ is the union of the images of the morphisms $Fh$ for $h \in S$.
\end{itemize}
\end{crly}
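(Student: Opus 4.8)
The plan is to obtain Corollary \ref{crly:principalcondition} as a direct specialization of Theorem \ref{thm:basic} to the case where the Grothendieck topology is principal, simplifying each of the two conditions using the specific structure of $J_{\Tcal}$. The overall strategy is to interpret the general hyperconnectedness criterion (surjectivity plus closure of the inverse image under subobjects) in the language of stable classes of morphisms developed in Section \ref{sec:principal}, so that no condition refers to the Grothendieck topology in an opaque way. Since Theorem \ref{thm:basic} already furnishes the equivalence ``$\Sh(\Ccal,J) \simeq \Cont(L,\rho)$ if and only if there exists a $J$-continuous flat functor with the two listed properties'', my task reduces to translating those two properties when $J = J_{\Tcal}$.

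First I would address the flatness and $J_{\Tcal}$-continuity requirements on $F$. A flat functor $F:\Ccal \to \Set$ is $J_{\Tcal}$-continuous precisely when it sends each $J_{\Tcal}$-covering family to a jointly epimorphic family; since by Proposition \ref{prop:representable} the $J_{\Tcal}$-covering sieves are exactly those containing a principal sieve generated by a $\Tcal$-morphism, $J_{\Tcal}$-continuity is equivalent to the condition that $F$ sends every $\Tcal$-morphism to an epimorphism (a single $\Tcal$-morphism generates a covering sieve, and conversely any covering family factors through such a morphism). This gives the phrase ``a flat functor $F:\Ccal \to \Set$ sending $\Tcal$-morphisms to epimorphisms'' in the statement. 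Next I would simplify the first bulleted condition of Theorem \ref{thm:basic}, namely that $F$ reflects jointly epimorphic families to $J_{\Tcal}$-covering families. By Fact \ref{fact1} and the description of $J_{\Tcal}$-covers, a sieve on $C$ is $J_{\Tcal}$-covering exactly when it contains a $\Tcal$-morphism; hence requiring that a family $\{f_i : C_i \to C\}$ whose image under $F$ is jointly epic must generate a $J_{\Tcal}$-covering sieve is equivalent to requiring that one of the $f_i$ lie in $\Tcal$ (after enlarging $\Tcal$ to satisfy axiom 4 of Lemma \ref{lem:pbstable}, which does not change $J_{\Tcal}$). This yields the first bullet of the Corollary.

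For the second condition, I would observe that it is essentially unchanged from Theorem \ref{thm:basic}: the requirement that the image of $q_{\Ecal,p}^*$ be closed under subobjects says exactly that every sub-$L$-set $A$ of $F(C)$ arises as the union of images of $Fh$ for $h$ ranging over a $J_{\Tcal}$-closed sieve $S$ on $C$. The only thing to record is that ``$J$-closed'' becomes ``$J_{\Tcal}$-closed'', and that the monoid $L$ and the right actions on the $F(C)$ are defined exactly as in the general statement, via $L = \End(F)\op$. Finally, I would note that in the affirmative case the resulting topos is $\Cont(L,\rho)$ with $\rho$ generated by the necessary clopens \eqref{eq:endopens}, exactly as delivered by Theorem \ref{thm:basic}. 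I do not expect a genuine obstacle here, since everything is a transcription of Theorem \ref{thm:basic} through Proposition \ref{prop:representable} and Fact \ref{fact1}; the one point requiring mild care is the passage from ``$F$ reflects jointly epic families to covering families'' to ``one of the $f_i$ lies in $\Tcal$'', which relies on the push-forward closure of $\Tcal$ (axiom 4) and hence on implicitly replacing $\Tcal$ by its saturation $\hat{\Tcal}$ as permitted in Section \ref{ssec:stable}.
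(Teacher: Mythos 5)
Your proposal is correct and is essentially the paper's intended argument: the paper presents this corollary as an ``immediate refinement of Theorem \ref{thm:basic}'' with no separate proof, and your translation of $J_{\Tcal}$-continuity into ``sends $\Tcal$-morphisms to epimorphisms'' and of cover-reflection into ``one of the $f_i$ lies in $\Tcal$'' (modulo saturating $\Tcal$ to $\hat{\Tcal}$, which the paper explicitly permits since it leaves $J_{\Tcal}$ unchanged) is exactly the specialization required. The only trivial quibble is that the description of $J_{\Tcal}$-covering sieves as those containing a $\Tcal$-morphism is definitional (Definitions \ref{dfn:stable} and \ref{dfn:sites}) rather than the content of Proposition \ref{prop:representable}, but nothing in your argument depends on this attribution.
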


\begin{crly}
\label{crly:psitescompact}
Letting $(\Ccal,J_{\Tcal})$, $F$ and $L$ be as in the statement of Corollary \ref{crly:principalcondition}, each of the $L$-sets $F(C)$ must be principal, and each supercompact subobject of $F(C)$ is the image of some morphism $Fh:F(D) \to F(C)$.
\end{crly}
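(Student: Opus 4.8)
The goal is to prove Corollary \ref{crly:psitescompact}: given the data of Corollary \ref{crly:principalcondition} for a principal site $(\Ccal,J_{\Tcal})$, each $L$-set $F(C)$ is principal and each of its supercompact subobjects is the image of some $Fh$. The plan is to reduce both claims to the conditions already established in Corollary \ref{crly:principalcondition}, reinterpreting them through the hyperconnected geometric morphism $q:\Setswith{L} \to \Sh(\Ccal,J_{\Tcal})$ they produce.

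First I would recall that $F$ is (the underlying functor of) a morphism of sites inducing $q$, and that $q^*(\ell(C)) = F(C)$ equipped with its canonical right $L$-action. Since $q$ is hyperconnected by construction (this is exactly what the two conditions of Corollary \ref{crly:principalcondition} guarantee, via Proposition \ref{prop:sitehype}), its inverse image $q^*$ preserves and reflects supercompact objects by Corollary \ref{crly:hypepres}. In the principal site $(\Ccal,J_{\Tcal})$, the representable sheaf $\ell(C)$ is supercompact by Proposition \ref{prop:representable}; hence $F(C) = q^*(\ell(C))$ is a supercompact object of $\Setswith{L}$, and by Proposition \ref{prop:prince} the supercompact objects of $\Setswith{L}$ are precisely the principal $L$-sets. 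This gives the first claim directly.

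For the second claim, let $A \hookrightarrow F(C)$ be a supercompact subobject in $\Setswith{L}$; in particular $A$ is a sub-$L$-set of $F(C)$. The second condition of Corollary \ref{crly:principalcondition} supplies a $J_{\Tcal}$-closed sieve $S$ on $C$ such that $A$ is the union of the images of the morphisms $Fh$ for $h \in S$. The key step is then to upgrade this union to a single morphism using supercompactness: the collection $\{\,\im(Fh) \hookrightarrow A \mid h \in S\,\}$ is a family of subobjects covering the supercompact object $A$, so by Lemma \ref{lem:scompact} one member $\im(Fh_0) \hookrightarrow A$ is an isomorphism, whence $A = \im(Fh_0)$ for that single $h_0 \in S$. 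Thus $A$ is the image of $Fh_0: F(D) \to F(C)$, as required.

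The main obstacle I anticipate is bookkeeping rather than a deep difficulty: I must be careful that the subobjects $\im(Fh) \hookrightarrow F(C)$ are genuine $L$-subobjects (sub-$L$-sets) of $F(C)$ and that their union as described in Corollary \ref{crly:principalcondition} is computed in $\Setswith{L}$ — which it is, since images and unions in a topos are stable and $F(h)$ is an $L$-set homomorphism by naturality. One should also confirm that the family indexed by $S$ is legitimately a covering family of subobjects of $A$ (after factoring each $Fh$ through $A$), so that Lemma \ref{lem:scompact} applies; this is immediate because $A$ is by hypothesis their union. No further appeal to the internal structure of $\Ccal$ or to the topology beyond the two given conditions is needed.
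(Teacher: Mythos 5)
Your proof is correct and follows essentially the same route as the paper's: the first claim via hyperconnectedness of the induced morphism (Proposition \ref{prop:sitehype}), supercompactness of representables on principal sites (Proposition \ref{prop:representable}), and preservation of supercompact objects by hyperconnected inverse images (Corollary \ref{crly:hypepres}); the second claim by covering the supercompact subobject $A$ with the images of the $Fh$ for $h$ in the $J_{\Tcal}$-closed sieve and extracting a single isomorphism via Lemma \ref{lem:scompact}. Your additional bookkeeping about sub-$L$-sets and naturality is a harmless elaboration of what the paper leaves implicit.
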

\begin{proof}
The first part can be proved directly from the conditions of Corollary \ref{crly:principalcondition}. However, we know from the derivation of Theorem \ref{thm:basic} that the conditions make the geometric morphism $f : \Setswith{L} \to \Sh(\Ccal,J_{\Tcal})$ hyperconnected, from Proposition \ref{prop:representable} that the representable sheaves $\ell(C)$ are supercompact, from Corollary \ref{crly:hypepres} that the inverse image functor of a hyperconnected morphism preserves supercompact objects, and finally that $F(C) \cong f^*(\ell(C))$.

For the second part, if $A \hookrightarrow F(C)$ in $\Setswith{L}$ is supercompact then, since it is the union of the images of morphisms $Fh$ for $h$ ranging over a $J_{\Tcal}$-closed sieve $S$, it is isomorphic to one of these images: there exists $h \in S$ with $A \hookrightarrow F(C)$ the image of $Fh$.
\end{proof}

In this case, the topology on $L$ from Theorem \ref{thm:basic} can be generated by basic clopens of the form,
\[\Ical_a^m := \{m' \in L \mid a \cdot m = a \cdot m'\},\]
for $m \in L$ and $a$ ranging over generators for the principal $L$-sets $F(C)$.

To further illuminate the conditions of Corollary \ref{crly:principalcondition}, we can translate properties of toposes of topological monoid actions into site-theoretic properties, yielding necessary conditions for that Corollary to apply.

\begin{dfn}
Recall the notion of $\Tcal$-span from Definition \ref{dfn:Tspan}. Say a principal site $(\Ccal,J_{\Tcal})$ is \textbf{$\Tcal$-span connected} if, for any objects $A,B$ of $\Ccal$, there exists a $\Tcal$-span from $A$ to $B$.
\end{dfn}

\begin{lemma}
A principal site $(\Ccal,J_{\Tcal})$ is $\Tcal$-span connected if and only if $\Sh(\Ccal,J_{\Tcal})$ is two-valued.
\end{lemma}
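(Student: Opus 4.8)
The plan is to characterize two-valuedness of $\Sh(\Ccal,J_{\Tcal})$ directly in terms of the subterminal objects of the topos, using the fact (from Proposition \ref{prop:hype2}) that a Grothendieck topos is two-valued exactly when its only subterminal objects are the initial and terminal objects. Since $(\Ccal,J_{\Tcal})$ is a principal site, the representable sheaves $\ell(C)$ are supercompact by Proposition \ref{prop:representable}, and in particular $\Sh(\Ccal,J_{\Tcal})$ is supercompactly generated. The terminal object $1$ is the union (join) of the images of the morphisms $\ell(A) \to 1$ as $A$ ranges over $\Ccal$, so every subterminal object is determined by which of these images it contains; the whole analysis reduces to understanding morphisms into $1$ and the relations between the representables.

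First I would unwind what a proper subterminal object would look like. A subterminal $U \hookrightarrow 1$ corresponds to a $J_{\Tcal}$-closed sieve of objects, i.e.\ a collection of objects $\Ucal \subseteq \ob(\Ccal)$ closed under the property that $A \in \Ucal$ whenever there is any morphism $A \to B$ with $B \in \Ucal$ (since $\ell$ sends everything to well-supported-or-initial objects and every object maps to $1$), and closed under $J_{\Tcal}$-covers. The key translation is: $\ell(A)$ and $\ell(B)$ lie in the same connected component of the subterminal lattice precisely when they can be linked, and the obstruction to two-valuedness is exactly the failure of any two objects to be joinable. I would then use Fact \ref{fact1}, which says that a sieve $T$ on $\ell(C)$ is jointly epimorphic in the topos if and only if the corresponding sieve in $\Ccal$ is $J_{\Tcal}$-covering, to control when one representable factors through another up to cover.

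The central step is to show that $\ell(A)$ and $\ell(B)$ have a common lower bound in the subterminal lattice (equivalently, $\ell(A) \times \ell(B) \neq 0$, forcing them into the same component) if and only if there is a $\Tcal$-span from $A$ to $B$. In one direction, a $\Tcal$-span $A \xleftarrow{f} E \xrightarrow{g} B$ with $f \in \Tcal$ gives, after applying $\ell$, a morphism $\ell(E) \to \ell(A)$ which is a strict epimorphism (since $f \in \Tcal$ generates a $J_{\Tcal}$-cover) together with a morphism $\ell(E) \to \ell(B)$; this yields a morphism $\ell(E) \to \ell(A) \times \ell(B)$, and since $\ell(E)$ is non-initial (it is supercompact), the product is non-initial, so $\ell(A)$ and $\ell(B)$ are not separated. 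Conversely, if $\Sh(\Ccal,J_{\Tcal})$ is two-valued, then $\ell(A) \times \ell(B) \neq 0$ for all $A,B$ (being non-initial objects of a two-valued topos, both are well-supported, so their product is well-supported), and I would produce a $\Tcal$-span by covering $\ell(A) \times \ell(B)$ with representables, obtaining some $\ell(E) \to \ell(A) \times \ell(B)$; projecting and using supercompactness of $\ell(A)$ together with Fact \ref{fact1}, the projection $\ell(E) \to \ell(A)$ refines to a $\Tcal$-morphism, giving the span in $\Ccal$ (up to passing to a suitable $E'$ covering $E$).

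The main obstacle I anticipate is the direction going from two-valuedness back to the existence of a genuine $\Tcal$-span \emph{in $\Ccal$ itself}, rather than merely in the topos: the product $\ell(A) \times \ell(B)$ need not be representable, and the morphism $\ell(E) \to \ell(A)$ obtained from covering it is only a morphism of sheaves, which by Lemma \ref{lem:Trel} corresponds to a $\Tcal$-arch rather than a single leg lying in $\Tcal$. I would need to argue that $\Tcal$-span connectedness (existence of \emph{some} $\Tcal$-span, with its weaker closure properties) is exactly the right notion to extract from this arch-level data, using the reduction that any morphism into a representable factoring through the product can be pulled back along a $\Tcal$-morphism. Finally, I would assemble these pairwise-connectivity statements: $\Tcal$-span connectedness makes all representables pairwise unseparated, hence (by supercompact generation, every object being a union of representables) the subterminal lattice collapses to $\{0,1\}$, giving two-valuedness; and conversely two-valuedness forces pairwise $\Tcal$-spans. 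I would phrase the bookkeeping so that the transitivity needed to merge the pairwise statements into a single connected subterminal lattice is handled by the join-closure of subterminals rather than by composing spans directly, since $\Tcal$-spans need not compose.
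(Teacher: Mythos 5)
Your second direction (two-valuedness implies $\Tcal$-span connectedness) is sound and is essentially the paper's own argument: the obstacle you flag is handled exactly as you suggest, by applying Lemma \ref{lem:Trel} twice so as to precompose the sheaf-level span with the $\Tcal$-legs of the representing arches, and then observing that a representable morphism is epic if and only if it lies in $\Tcal$ (Fact \ref{fact1}, together with the standing convention that stable classes are closed under left factors). The genuine gap is in the other direction. Your ``central step'' equates $\ell(A) \times \ell(B) \neq 0$ with the existence of a $\Tcal$-span from $A$ to $B$, and your final assembly infers two-valuedness from the representables being pairwise unseparated. Neither claim can be correct: the condition $\ell(A) \times \ell(B) \neq 0$ is symmetric in $A$ and $B$, whereas the existence of a $\Tcal$-span from $A$ to $B$ is not, and pairwise non-disjointness of the representables is strictly weaker than two-valuedness. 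Concretely, let $\Ccal$ be the arrow category $a \to b$ and let $\Tcal$ be its class of split epimorphisms (just the identities), so that $J_{\Tcal}$ is the trivial topology and $\Sh(\Ccal,J_{\Tcal}) \simeq \Setswith{\Ccal}$ is the Sierpi\'{n}ski topos. Here every binary product of representables is nonzero, since $\ell(a) \times \ell(b) \cong \ell(a)$, so all representables are pairwise unseparated; yet $\ell(a)$ is a nonzero proper subterminal object, so the topos is not two-valued. (Consistently with the lemma, there is no $\Tcal$-span from $b$ to $a$; this is exactly the information your symmetric invariant discards.)

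The repair is to retain the asymmetry carried by the covering leg rather than passing to products. A $\Tcal$-span from $A$ to $B$ gives an epimorphism $\ell(E) \too \ell(A)$ together with a morphism $\ell(E) \to \ell(B)$, whence $\mathrm{supp}\,\ell(A) = \mathrm{supp}\,\ell(E) \leq \mathrm{supp}\,\ell(B)$: an epimorphism preserves supports, while a general morphism only yields an inequality. Span-connectedness for all ordered pairs therefore forces all representables to have the \emph{same} support, which must be $1$ because the representables jointly cover $1$; then every nonzero sheaf receives at least one morphism from a representable, hence contains a well-supported subobject (the image of that morphism, a quotient of the representable), hence is well-supported, and Proposition \ref{prop:hype2} yields two-valuedness. (The paper runs this direction at the level of sheaves instead: if $X(A)$ is inhabited, then for any $B$ a $\Tcal$-span between $B$ and $A$ transports inhabitedness across the span, the sheaf condition for the covering leg showing that $X \to 1$ is epic.) Note that mere pairwise meeting of supports, which is all your product criterion provides even after your transitivity bookkeeping, can never suffice: in the example above $\mathrm{supp}\,\ell(a)$ and $\mathrm{supp}\,\ell(b) = 1$ meet without being equal.
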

\begin{proof}
Recall from Proposition \ref{prop:hype2} that a topos is two-valued if and only if all non-initial objects are well-supported. Given a $J_{\Tcal}$-sheaf $X$ on $\Ccal$, suppose $X(A)$ is inhabited for some object $A$ (else $X$ is initial). Given any other object $B$, we have some $\Tcal$-span
\[\begin{tikzcd}
& E \ar[dl,"f"'] \ar[dr,"g"] &\\
A & & B,
\end{tikzcd}\]
with $f \in \Tcal$. Now $Xf$ must be an isomorphism by the sheaf condition, so $X(E)$ is inhabited and hence $X(B)$ must be inhabited too. Thus $X$ is well-supported, as required.

Conversely, if $\Sh(\Ccal,J_{\Tcal})$ is two-valued, consider the sheaf $\ell(A) \times \ell(B)$. Since $\ell(A),\ell(B)$ are not initial, these objects are well-supported and the projection maps
\[\ell(A) \xleftarrow{\pi_1} \ell(A) \times \ell(B) \xrightarrow{\pi_2} \ell(B)\]
are epimorphisms. Moreover, the product is covered by objects of the form $\ell(C)$, whence by composition with $\pi_1$ we have a jointly epic family $\ell(C_i) \to \ell(A)$ which we can reduce to a singleton by supercompactness of $\ell(A)$. Thus we have a span in $\Sh(\Ccal,J_{\Tcal})$ from $A$ to $B$ whose domain is representable and whose left leg is an epimorphism. We can apply Lemma \ref{lem:Trel} twice to recover representable morphisms,
\begin{equation}
\label{eq:scaffold}
\begin{tikzcd}
\ell(D) \ar[dd, "\ell(q)"'] \ar[dr, two heads, "\ell(p)"] & &
\ell(E) \ar[dd, "\ell(s)"] \ar[ll, "\ell(r)"', two heads] \\
& \ell(C) \ar[dl,"f"', two heads] \ar[dr,"g"] &\\
\ell(A) & & \ell(B).
\end{tikzcd}
\end{equation}
Here $\ell(q)$ is epic by construction, and then $(q \circ r,s)$ is a $\Tcal$-span from $A$ to $B$, since a representable morphism is epic if and only if it lies in $\Tcal$.
\end{proof}

We can derive a stronger property here: we saw in Section \ref{ssec:jcp} that the category of supercompact objects in $\Cont(M,\tau)$ has the \textit{joint covering property}.

\begin{dfn}
\label{dfn:jTcp}
We say a principal site $(\Ccal,J_{\Tcal})$ has the \textbf{joint $\Tcal$-covering property} if for any pair of objects $A$ and $B$, there is a span from $A$ to $B$ with \textit{both} legs lying in $\Tcal$.
\end{dfn}

\begin{schl}
\label{schl:jointT}
A principal site $(\Ccal,J_{\Tcal})$ has the joint $\Tcal$-covering property if and only if the category of supercompact objects in $\Sh(\Ccal,J_{\Tcal})$ has the joint covering property.
\end{schl}
\begin{proof}
For the `if' direction, the joint covering property provides a joint cover of $\ell(A)$ and $\ell(B)$ by some supercompact object, which is a quotient of a representable supercompact object, whence we have a span of epimorphisms
\[\begin{tikzcd}
& \ell(C) \ar[dl,"f"', two heads] \ar[dr,"g", two heads] &\\
\ell(A) & & \ell(B)
\end{tikzcd}\]
in $\Sh(\Ccal,J_{\Tcal})$. Applying the construction of \eqref{eq:scaffold}, we recover a representable such span, and hence a joint $\Tcal$-cover of $A$ and $B$.

Conversely, supercompact objects $X$ and $Y$ are quotients of representables $\ell(A)$ and $\ell(B)$, whence the image under $\ell$ of a joint $\Tcal$-cover of $A$ and $B$ extends to a joint cover of $X$ and $Y$, as required.
\end{proof}

\begin{rmk}
\label{rmk:TGTextend}
Both the $\Tcal$-span connectedness property and the joint $\Tcal$-covering property are natural generalization of (the dual of) the `joint embedding property' of \cite[Definition 3.3]{TGT}, which is one of the necessary conditions on a category $\Ccal$ in order for $\Sh(\Ccal\op,J_{at})$ to be equivalent to a topos of actions of a topological \textit{group}.

The `amalgamation property' which also appears in \cite[Definition 3.3]{TGT}, naturally generalizes (after dualizing) to the stability condition for $\Tcal$-morphisms. Another generalization which one might consider is the property that every cospan whose legs are $\Tcal$-morphisms can be completed to a square of $\Tcal$-morphisms. However, this version fails in general in categories of principal actions of a monoid, even in the discrete case. For example, in $\Setswith{\Nbb}$, the cospan,
\[\begin{tikzcd}
\Nbb \ar[dr,"0 \mapsto 0"', two heads] & & \Nbb \ar[dl,"0 \mapsto 1", two heads]\\
& \Zbb/2\Zbb, &
\end{tikzcd}\]
cannot be completed to a square of epimorphisms whose final corner is a supercompact object, since the only such epimorphism available over $\Nbb$ is the identity morphism. Similarly, the dual of the amalgamation property fails to hold in general: recall the right Ore property of Definition \ref{dfn:rOre} in Chapter \ref{chap:mpatti}; by definition, in any monoid which is not right Ore, there exist principal ideals $m_1M,m_2M$ with empty intersection, which corresponds to the existence of a cospan of principal $M$-sets which cannot be completed to a commutative square.
\end{rmk}

\subsection{Interlude: factorization systems}
\label{ssec:wfs}

Observe that, in the second condition of Corollary \ref{crly:psitescompact}, the principal sieve generated by the morphism $h$ need not be $J_{\Tcal}$-closed for a general site $(\Ccal,J_{\Tcal})$. This is unfortunate, since it would be convenient to be able to identify supercompact subobjects of $F(C)$ with suitable individual morphisms in $\Ccal$. In order to understand conditions under which this might be achieved, let us characterize $J_{\Tcal}$-closed sieves.

\begin{lemma}
\label{lem:Tcalclosed}
A sieve $S$ on an object $D$ of $\Ccal$ is $J_{\Tcal}$-closed if and only if it is closed under right cancellation of $\Tcal$-morphisms, in the sense that if $f \circ t \in S$ with $t \in \Tcal$, then $f \in S$. In particular, a principal sieve generated by a morphism $h$ is $J_{\Tcal}$-closed if and only if $h$ has the \textbf{right lifting property} (RLP) relative to $\Tcal$-morphisms, in the usual sense that for any commuting square
\[\begin{tikzcd}
A \ar[r] \ar[d, "t"'] & C \ar[d, "h"] \\
B \ar[r] & D
\end{tikzcd}\]
in $\Ccal$ with $t \in \Tcal$, there exists a diagonal filler morphism $k: B \to C$ making both triangles commute.
\end{lemma}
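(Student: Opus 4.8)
The plan is to characterize $J_{\Tcal}$-closed sieves directly from the definition of the principal topology $J_{\Tcal}$, then specialize to principal sieves. Recall that a sieve $S$ on $D$ is $J_{\Tcal}$-closed if and only if for every $f: E \to D$, whenever $f^*(S)$ is $J_{\Tcal}$-covering then $f \in S$. Since $J_{\Tcal}$ is generated by the singleton covers coming from $\Tcal$-morphisms, $f^*(S)$ is $J_{\Tcal}$-covering precisely when it contains a $\Tcal$-morphism $t: E' \to E$, i.e.\ when $f \circ t \in S$ for some $t \in \Tcal$. Thus $S$ being $J_{\Tcal}$-closed is exactly the condition that $f \circ t \in S$ with $t \in \Tcal$ forces $f \in S$, which is the stated right-cancellation property.

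First I would make the above precise: the $J_{\Tcal}$-covering sieves are those containing a member of $\Tcal$, so for a morphism $f: E \to D$, the pullback sieve $f^*(S) = \{g \mid f \circ g \in S\}$ is $J_{\Tcal}$-covering if and only if it contains some $t \in \Tcal$, i.e.\ $f \circ t \in S$. Feeding this into the definition of closure yields the right-cancellation characterization directly; this is the bulk of the argument and is essentially a bookkeeping exercise with the definition of the principal topology from Definition \ref{dfn:stable}. I would note that one should check the pullback sieve is used correctly, but no subtlety arises because $\Tcal$-morphisms generate the topology as singleton presieves.

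Next I would specialize to a principal sieve $S = \{f \mid f \text{ factors through } h\}$ generated by $h: C \to D$. Translating the right-cancellation property: $S$ is $J_{\Tcal}$-closed if and only if whenever $f \circ t$ factors through $h$ (with $t \in \Tcal$), then $f$ itself factors through $h$. Given a commuting square with $t \in \Tcal$ on the left and $h$ on the right, the bottom composite $B \to D$ together with the factorization of the top-and-right composite through $h$ exhibits exactly such an $f \circ t \in S$; closure then provides a factorization of $f$ through $h$, which is precisely a diagonal filler $k: B \to C$. Conversely, a diagonal filler witnesses the required factorization, so the RLP relative to $\Tcal$-morphisms is equivalent to closure of the principal sieve. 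I would write this equivalence out by carefully matching the square's data to the factorization statement.

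The main obstacle, such as it is, is purely one of translation rather than difficulty: one must verify that the existence of the diagonal filler $k$ with $h \circ k$ equal to the bottom edge \emph{and} $k \circ t$ equal to the top edge corresponds exactly to right-cancellation closure, being careful that both triangle-commutativity conditions are accounted for and that the generating morphism $h$ is fixed while $f$ ranges over morphisms out of the relevant domain. Since $h$ has a fixed codomain $D$ and the principal sieve consists of all morphisms into $D$ factoring through $h$, the correspondence is clean, and I expect no genuine difficulty beyond this careful matching of data.
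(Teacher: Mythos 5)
Your proposal is correct and matches the paper's own proof: both unwind the definition of $J_{\Tcal}$-closedness using the fact that a sieve is $J_{\Tcal}$-covering exactly when it contains a $\Tcal$-morphism, obtaining the right-cancellation characterization, and then observe that for a principal sieve this cancellation property is precisely the RLP of $h$ relative to $\Tcal$. The only difference is that you spell out the square-to-factorization translation more explicitly than the paper, which simply says "the same argument shows" it.
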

\begin{proof}
By definition, a sieve $S$ on $D$ is $J_{\Tcal}$-closed if and only if for every morphism $f:B \to D$ with $f^*(S) \in J_{\Tcal}$, we have $f \in S$. But $f^*(S) = \{t:A \to B \mid f \circ t \in S\}$, which lies in $J_{\Tcal}$ if and only if some $t \in f^*(S)$ lies in $\Tcal$, whence the given condition emerges.

If $h$ has the RLP relative to $\Tcal$, the same argument shows that any $f$ with $f^*(S) \in J_{\Tcal}$ has $f$ in the sieve generated by $h$, and conversely.
\end{proof}

The dual of the right lifting property is the \textit{left lifting property} (LLP).
\begin{dfn}[{\cite[Definition 1.2]{WFS}}]
A \textbf{weak factorization system} (WFS) on a category $\Ccal$ is a pair $(\Tcal,\Mcal)$ of morphism classes in $\Ccal$ such that
\begin{enumerate}
	\item each morphism can be factored as a $\Tcal$-morphism followed by an $\Mcal$-morphism, and
	\item every member of $\Tcal$ has the LLP relative to $\Mcal$ and every member of $\Mcal$ has the RLP relative to $\Tcal$.
\end{enumerate}
\end{dfn}

\begin{crly}
\label{crly:wfs}
Suppose a stable class $\Tcal$ of morphisms in $\Ccal$ forms the left class in a WFS $(\Tcal,\Mcal)$ on $\Ccal$. Given an object $D$ in $\Ccal$, consider the full subcategory $\Mcal/D$ of $\Ccal/D$ whose objects are those morphisms $m:X \to D$ lying in $\Mcal$. The $J_{\Tcal}$-closed sieves on $C$ correspond to the ideals in $\Mcal/D$: the collections of objects $K \subseteq \Mcal/D$ such that given any morphism
\[\begin{tikzcd}
Y \ar[dr, "n"'] \ar[rr] & & X \ar[dl, "m"] \\
& D &
\end{tikzcd}\]
in $\Mcal/D$, we have $n \in K$ whenever $m \in K$. In particular, principal sieves generated by $\Mcal$-morphisms are $J_{\Tcal}$-closed sieves.
\end{crly}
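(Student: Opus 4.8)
The goal is to establish a bijective correspondence between $J_{\Tcal}$-closed sieves on an object $D$ and ideals in the full subcategory $\Mcal/D$, given that $\Tcal$ is the left class of a weak factorization system $(\Tcal,\Mcal)$. The plan is to construct explicit maps in both directions using the factorization, then verify they are mutually inverse. The key fact to exploit is Lemma \ref{lem:Tcalclosed}: a sieve is $J_{\Tcal}$-closed precisely when it is closed under right-cancellation of $\Tcal$-morphisms, and a principal sieve generated by $h$ is closed exactly when $h$ has the RLP relative to $\Tcal$. Since every $\Mcal$-morphism has the RLP relative to $\Tcal$ by the WFS axioms, principal sieves generated by $\Mcal$-morphisms are immediately seen to be $J_{\Tcal}$-closed, giving the final sentence of the statement for free; the real work is the general correspondence.

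\textbf{Key steps.} First I would construct the forward map, sending a $J_{\Tcal}$-closed sieve $S$ on $D$ to the collection $K_S$ of those $m: X \to D$ in $\Mcal/D$ with $m \in S$. I would check $K_S$ is an ideal: given $n = m \circ g$ in $\Mcal/D$ with $m \in K_S$, sieve-closure of $S$ under precomposition forces $n \in S$, hence $n \in K_S$. Conversely, given an ideal $K \subseteq \Mcal/D$, I would define a sieve $S_K$ consisting of every morphism $f: A \to D$ whose $(\Tcal,\Mcal)$-factorization $f = m \circ t$ with $t \in \Tcal$, $m \in \Mcal$ has $m \in K$. The crucial technical point is that this is well-defined independent of the chosen factorization: any two $(\Tcal,\Mcal)$-factorizations of $f$ are connected by a (unique) isomorphism obtained from the lifting property, and since $K$ is an ideal it is stable under such isomorphisms of $\Mcal$-objects over $D$. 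Then I would verify $S_K$ is genuinely a sieve (closed under precomposition, using that $t' \circ (\text{factorization of } f)$ refactors compatibly) and that it is $J_{\Tcal}$-closed via the right-cancellation criterion of Lemma \ref{lem:Tcalclosed}. Finally I would check the two constructions are mutually inverse: $K_{S_K} = K$ because an $\Mcal$-morphism is its own $\Mcal$-part up to iso, and $S_{K_S} = S$ because membership of $f$ in a $J_{\Tcal}$-closed sieve $S$ is equivalent to membership of its $\Mcal$-part (one direction is sieve-closure under precomposition by the $\Tcal$-morphism $t$; the other is right-cancellation of $t$, which is permitted precisely because $S$ is $J_{\Tcal}$-closed).

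\textbf{Main obstacle.} The step I expect to require the most care is the well-definedness of $S_K$ under change of factorization, together with the verification that $S_{K_S} = S$ uses the $J_{\Tcal}$-closedness hypothesis in an essential way. The former hinges on the uniqueness (up to canonical isomorphism) of WFS factorizations, which follows from the lifting property applied to the square comparing two factorizations of the same map; I would want to confirm that the diagonal filler is an isomorphism and that ideals are invariant under it. The latter is where Lemma \ref{lem:Tcalclosed} does the heavy lifting: recovering $f \in S$ from $m \in S$ (where $f = m \circ t$, $t \in \Tcal$) is exactly an instance of right-cancellation of a $\Tcal$-morphism, valid only because $S$ is $J_{\Tcal}$-closed. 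Everything else is routine diagram-chasing, so I would keep those verifications brief and concentrate the exposition on these two points.
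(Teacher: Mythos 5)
Your overall route is the same as the paper's: both directions of the correspondence are mediated by Lemma \ref{lem:Tcalclosed} together with the $(\Tcal,\Mcal)$-factorization, and your disposal of the final sentence via the RLP half of that lemma is fine. However, there is one genuine error, and it sits exactly at the step you yourself flag as crucial. You justify well-definedness of $S_K$ by claiming that any two $(\Tcal,\Mcal)$-factorizations of $f$ are connected by a \emph{unique isomorphism} obtained from the lifting property. That is a property of \emph{orthogonal} factorization systems, where fillers are unique; for a mere weak factorization system it fails. The lifting property only provides comparison maps in both directions between the two middle objects, and these need not be mutually inverse: for the (mono, epi) WFS on $\Set$, a map $A \to B$ factors through both $A \sqcup B$ and $A \sqcup B \sqcup B$, which are generally not isomorphic. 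Consequently ``$K$ is stable under isomorphisms of $\Mcal$-objects over $D$'' is also not the right invariance to invoke.

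The repair is immediate, but it genuinely uses the ideal condition rather than isomorphism-invariance: if $f = m \circ t = m' \circ t'$ are two $(\Tcal,\Mcal)$-factorizations, lifting $t \in \Tcal$ against $m' \in \Mcal$ yields $l$ with $m' \circ l = m$, i.e.\ a morphism $m \to m'$ in $\Mcal/D$, and symmetrically there is a morphism $m' \to m$; the ideal condition then transports membership of $K$ in both directions, which is all that well-definedness requires. It is worth noting that the paper sidesteps this issue entirely by defining the inverse map as the sieve \emph{generated} by the ideal under precomposition --- a definition involving no choice of factorization --- and then verifying closure under right cancellation of $\Tcal$-morphisms; your factorization-based definition of $S_K$ is equivalent to it, but only after the corrected well-definedness argument above.
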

\begin{proof}
By Lemma \ref{lem:Tcalclosed}, the $J_{\Tcal}$-closed sieves are precisely sieves closed under right cancellation of $\Tcal$-morphisms. Given the WFS, such a sieve is determined by the $\Mcal$-morphisms it contains, and these form an ideal in $\Mcal/D$. Conversely, given an ideal in $\Mcal/D$, its extension to a sieve over $D$ by precomposition is automatically closed under right cancellation of $\Tcal$-morphisms, so is $J_{\Tcal}$-closed, as required.
\end{proof}

\begin{xmpl}
\label{xmpl:wfsnogo}
In spite of the statement regarding principal sieves in Corollary \ref{crly:wfs}, $\Mcal$-morphisms need not be mapped to monomorphisms by $\ell$ in general. For example, consider the factorization system $(\Tcal,\Mcal) = {}$(identity, all) on the category
\begin{equation}
\label{eq:coeq2}
A \rightrightarrows B \xrightarrow{c} C	,
\end{equation}
where $c$ coequalizes the parallel morphisms. The principal Grothendieck topology $J_{\Tcal}$ is the trivial one, so $\ell$ is just the Yoneda embedding. Here, $c$ is not a monomorphism, and nor is its image under the Yoneda embedding, in spite of the fact that $c$ generates an $\Mcal$-closed sieve which determines a subobject of the terminal object $\yon(C)$, namely the subterminal object which is the support of $\yon(B)$.

On the other hand, if all morphisms in $\Mcal$ are monomorphisms, then these are automatically preserved by $\ell$, so the subobjects corresponding to principal $J_{\Tcal}$-closed sieves are representable; moreover, the category $\Mcal/D$ described in Corollary \ref{crly:wfs} collapses into a preorder in this case.
\end{xmpl}

Note that if $\Tcal$-morphisms are all epic or $\Mcal$-morphisms are all monic then the diagonal filler for the lifting property squares are automatically \textit{unique}, so the weak factorization system is upgraded to an \textbf{orthogonal factorization system} (OFS), which are widely studied (cf.\ \cite[\S5.5]{HCA}). Moving from a WFS to an OFS simplifies the statement of Corollary \ref{crly:wfs} in another way, since it forces the morphisms in $\Mcal/D$ to be $\Mcal$-morphisms too. This occurs in both of the special cases of interest to be discussed below, so we shall immediately dive into the assumption that $(\Tcal,\Mcal)$ is an OFS on $\Ccal$.

We now dualize everything. Given a WFS $(\Tcal,\Mcal)$ on $\Ccal$, we obtain a dual factorization system $(\Mcal\op,\Tcal\op)$ on $\Ccal\op$; the latter is orthogonal if and only if the former is. Supposing that $\Tcal$ is a stable class of morphisms in $\Ccal$, $\Tcal\op$ is a `costable class' in the evident dual sense; we saw some examples of costable classes in the computations of Chapter \ref{chap:logic}.

Let us consider how a factorization system on $\Ccal\op$ can be extended to one on $\Ind(\Ccal\op)$. There is a standard way of constructing (weak) factorization systems on accessible categories starting from a class of morphisms in the separating subcategory, namely the \textit{small object argument}. However, this construction typically requires the presence of pushouts; the variant appearing in \cite[Theorem 2.3]{InjAccess} merely requires the possibility of completing a span to a commuting square, but even this is not possible in $\Ind(\Ccal\op)$ for a general $\Ccal$, or even for the cases we are interested in (as a consequence of Remark \ref{rmk:TGTextend}, above). For similar reasons, the construction of a model structure on $\Ind(\Ccal\op)$ from a model category $\Ccal$ such as that found in \cite{promodelstruct} cannot be specialized to our situation, since the presence of small (co)limits in $\Ccal$ is frequently used in the constructions there. As such, we prove a version from scratch here for the case of an OFS. To simplify constructions, we shall need some preliminary results regarding (finitely) accessible categories.

\begin{fact}[{\cite[Theorem 1.5]{Accessible}}]
\label{fact:directedfinal}
Every filtered category admits a final functor from a directed poset.
\end{fact}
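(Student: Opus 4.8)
The plan is to construct the directed poset by hand and exhibit the final functor, following the template of the cited result. Recall that a functor $F: P \to \Dcal$ is \emph{final} precisely when, for every object $D$ of $\Dcal$, the comma category $(D \downarrow F)$ is non-empty and connected; when $P$ is a directed poset this unpacks into two requirements: every $D$ admits a morphism $D \to F(p)$ for some $p \in P$, and any two such morphisms $D \to F(p)$, $D \to F(q)$ become equal after composing with the structure maps $F(p) \to F(r)$, $F(q) \to F(r)$ for some common upper bound $r \geq p, q$. These two requirements are exactly the images under $F$ of the two non-trivial filteredness axioms for $\Dcal$ (every finite family of objects admits a cocone, and every parallel pair of morphisms is coequalized by some further morphism), so the essential task is to repackage the filtered structure of $\Dcal$ into a directed poset. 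I assume throughout that $\Dcal$ is (essentially) small, which is the case relevant to the applications in this chapter.

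First I would take the elements of $P$ to be the finite subdiagrams of $\Dcal$ --- finite sets of objects together with a finite set of morphisms between them --- each equipped with a chosen cocone, that is, a vertex $v_S$ of $\Dcal$ and a compatible family of morphisms from the objects of the subdiagram $S$ to $v_S$, whose existence is guaranteed by filteredness. The functor $F: P \to \Dcal$ sends such a datum to its cocone vertex $v_S$, and the order is by refinement: $(S, v_S) \leq (S', v_{S'})$ when $S \subseteq S'$ and a chosen structure morphism $v_S \to v_{S'}$ is part of the data and compatible with both cocones. Directedness of $P$ then follows from filteredness: given two elements, their underlying subdiagrams have finite union, to which I adjoin the two cocone vertices and the relevant morphisms, and filteredness supplies a cocone over this enlarged finite diagram dominating both, giving the required upper bound. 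Finality of $F$ is then checked against the two requirements above: non-emptiness of $(D \downarrow F)$ holds because the singleton subdiagram on $D$ has a cocone vertex receiving a morphism from $D$, and connectedness holds because, given two morphisms $D \to F(p)$ and $D \to F(q)$, an upper bound whose subdiagram contains both of these morphisms forces the two composites into $F(r)$ to agree --- this is where the coequalizing axiom, built into the admissible upper bounds, is used.

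The main obstacle is not the filtered-to-directed translation, which is essentially formal, but ensuring that $P$ is genuinely a \emph{poset} (antisymmetric) and that $F$ is genuinely \emph{functorial} rather than merely defined up to the non-canonical cocone choices. A naive version of the construction yields only a directed \emph{preorder}, and passing to its poset reflection does not obviously carry the final functor along. The standard remedy, which I would adopt, is to fix once and for all a well-ordering of the objects and morphisms of $\Dcal$ and to make every cocone choice canonical with respect to it --- selecting, for each finite diagram, the least admissible cocone --- so that the chosen structure morphisms compose strictly and the refinement relation becomes antisymmetric. Carrying out this bookkeeping carefully, and in particular verifying that the canonical choices remain mutually compatible under refinement so that $F$ respects composition, is the technical heart of the argument; once it is in place, the directedness and finality verifications sketched above go through unchanged. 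As this is precisely the content of \cite[Theorem 1.5]{Accessible}, we defer to that reference for the complete details.
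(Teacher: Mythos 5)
The paper itself offers no proof of this statement: it is imported verbatim as a Fact with the citation to \cite[Theorem 1.5]{Accessible}, so there is nothing internal to match against, and your closing deferral to that reference is consistent with the paper's treatment. Judged as a proof on its own terms, your skeleton is the right one (and is indeed the one behind the cited theorem): your unpacking of finality over a directed poset into the two conditions is correct, and your directedness argument --- adjoining the two cocone vertices and their component morphisms to the union before taking a new cocone --- is exactly right.

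The genuine gap is at the point you yourself flag as the technical heart, and the remedy you propose does not close it. Choosing, via a well-ordering, the ``least admissible cocone'' and least admissible connecting morphisms does not make $F$ functorial: if $S \leq S' \leq S''$ and the connecting maps $v_S \to v_{S'}$ and $v_{S'} \to v_{S''}$ are each least among the compatible ones, their composite is \emph{a} compatible morphism $v_S \to v_{S''}$ but has no reason to be the least one, so ``canonical choices compose strictly'' is simply false in general; minimality is not stable under composition, and no amount of bookkeeping rescues it. The device that actually works is structural rather than choice-theoretic, and it is already implicit in your own directedness step: build the containment of cocone data into the order itself, i.e.\ declare $S \leq S'$ only when the subdiagram $S'$ contains the vertex $v_S$ \emph{and all the component arrows} of the cocone of $S$ (equivalently, take as elements the finite subdiagrams containing a cocone over themselves, ordered by inclusion, with an internal cocone chosen for each by the axiom of choice). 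Then the connecting morphism $F(S) \to F(S')$ is forced to be the component of the cocone of $S'$ at the object $v_S$, and both its compatibility and the functoriality equation $\sigma''_{v_{S'}} \circ \sigma'_{v_S} = \sigma''_{v_S}$ are literally instances of the cocone identities of the larger element applied to arrows it contains --- no coherence among the choices is needed, and antisymmetry comes for free from inclusion of subdiagrams. As written, with the wrong fix proposed and the heart of the argument deferred to the very theorem being proved, the proposal does not stand as a proof.
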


A particular consequence of Fact \ref{fact:directedfinal} is that we can simplify the construction of $\Ind(\Ccal\op)$. Indeed, $\Ind(\Ccal\op)$ can be identified as the full subcategory of $[\Ccal,\Set]$ on the filtered colimits of representables, but the above shows that we may instead identify it with the full subcategory on the \textit{directed} colimits of representables. In a directed poset $\Ibb$, we shall write $i,i',j,j'$ for generic elements and whenever $i \leq i'$, we shall denote by $k_i^{i'}:i \to i'$ the unique morphism in $\Ibb$.

\begin{fact}[{\cite[Theorem 5.1]{Makkai}}]
\label{fact:fpmorphism}
Let $\Fbb$ be any finite category. Then $\Ind(\Ccal\op)^{\Fbb} \simeq \Ind\left((\Ccal\op)^\Fbb\right)$. In particular, for $\Fbb = 2$ the walking arrow category, the finitely presentable objects in $\Ind(\Ccal\op)^{2}$ are precisely the morphisms between objects of $\Ccal\op$.
\end{fact}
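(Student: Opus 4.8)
The plan is to prove the general equivalence by appealing to the recognition theorem for finitely accessible categories: a category is equivalent to $\Ind(\mathcal{B})$ for some small $\mathcal{B}$ precisely when it has all filtered colimits and admits a small dense generating family of finitely presentable objects, in which case $\mathcal{B}$ may be taken to be (the idempotent completion of) the full subcategory of finitely presentable objects. Writing $\mathcal{A} := \Ccal\op$, it therefore suffices to verify three things about $\mathcal{K} := \Ind(\mathcal{A})^{\Fbb}$: that its filtered colimits exist and are computed pointwise; that its finitely presentable objects are, up to retracts, exactly the diagrams $\Fbb \to \mathcal{A}$; and that these diagrams generate $\mathcal{K}$ under filtered colimits. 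Since idempotents split in any category with filtered colimits and retracts of finitely presentable objects are again finitely presentable, we have $\Ind(\mathcal{B}) \simeq \Ind(\check{\mathcal{B}})$, so passing to idempotent completions is harmless; the last two points then identify $\mathcal{K}_{fp}$ with the completion of $\mathcal{A}^{\Fbb}$, whence $\mathcal{K} \simeq \Ind(\mathcal{A}^{\Fbb})$. The pointwise computation of filtered colimits in a functor category is standard and I would simply recall it.

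The identification of the finitely presentable objects is where finiteness of $\Fbb$ first enters, and I would prove it directly. Recall that the finitely presentable objects of $\Ind(\mathcal{A})$ are exactly the retracts of objects of $\mathcal{A}$. Given a diagram $D : \Fbb \to \Ind(\mathcal{A})$ all of whose values $D(i)$ are finitely presentable, I would show that $\Hom_{\mathcal{K}}(D,-)$ preserves a filtered colimit $\colim_j E_j$: a natural transformation $D \Rightarrow \colim_j E_j$ consists of finitely many components $D(i) \to \colim_j E_j(i)$, each of which factors through some stage because $D(i)$ is finitely presentable and the colimit is pointwise; since $\Fbb$ has finitely many objects, filteredness provides a common stage through which all components factor, and since $\Fbb$ has finitely many morphisms, a further common stage at which every naturality square already commutes. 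This yields a factorization $D \to E_{j_0}$ in $\mathcal{K}$, and an essentially identical argument gives the injectivity needed for an isomorphism. Hence every pointwise-finitely-presentable $\Fbb$-diagram, and in particular every diagram valued in $\mathcal{A}$, is finitely presentable in $\mathcal{K}$.

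The main obstacle is the density step: showing that an arbitrary diagram $D : \Fbb \to \Ind(\mathcal{A})$ is a filtered colimit of diagrams valued in $\mathcal{A}$, which simultaneously establishes that the $\mathcal{A}$-valued diagrams generate and that there are no finitely presentable objects beyond their retracts. The subtlety is that presenting each $D(i)$ separately as a filtered colimit of objects of $\mathcal{A}$, even after using Fact \ref{fact:directedfinal} to replace the indexing by a directed poset, does not glue the presentations together compatibly across the morphisms of $\Fbb$. My approach would be to form the comma category $(\iota \downarrow D)$, where $\iota : \mathcal{A}^{\Fbb} \hookrightarrow \Ind(\mathcal{A})^{\Fbb}$ is postcomposition with the canonical embedding $\mathcal{A} \hookrightarrow \Ind(\mathcal{A})$, and to prove that $(\iota \downarrow D)$ is filtered with colimit $D$. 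Filteredness is exactly where finiteness of $\Fbb$ is indispensable: given finitely many objects of the comma category one must build a common cocone, and given a parallel pair between such objects one must coequalize it, and in both cases this requires choosing factorizations at the finitely many objects of $\Fbb$ and then reconciling the finitely many naturality and commutativity constraints at a single later stage, which is possible precisely because a finite diagram of constraints can be satisfied simultaneously in a filtered colimit. Once $(\iota \downarrow D)$ is shown filtered and $D$ identified as its colimit, the recognition theorem applies and yields $\Ind(\mathcal{A})^{\Fbb} \simeq \Ind(\mathcal{A}^{\Fbb})$.

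Finally, specializing to $\Fbb = 2$ the walking arrow category, the functor category $\mathcal{A}^{2} = (\Ccal\op)^{2}$ is the category of morphisms of $\Ccal\op$, so the preceding identification of finitely presentable objects reads: the finitely presentable objects of $\Ind(\Ccal\op)^{2}$ are precisely (the retracts of) the morphisms between objects of $\Ccal\op$, as claimed.
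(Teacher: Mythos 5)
Since the paper records this statement as a Fact with a citation to Makkai and gives no proof of its own, there is nothing to compare your argument against; it must stand on its own merits. Its first two stages do stand: filtered colimits in $\Ind(\Ccal\op)^{\Fbb}$ are computed pointwise; your verification that pointwise finitely presentable diagrams are finitely presentable (the finitely many components factor through a common stage, and the finitely many morphisms of $\Fbb$ let one enforce naturality at a later stage) is correct; and you are right that everything then reduces, via the recognition theorem for finitely accessible categories, to density of the $\mathcal{A}$-valued diagrams, where $\mathcal{A} := \Ccal\op$.

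The gap is precisely that density step, and it is a mathematical gap, not an expository one. Your justification that $(\iota \downarrow D)$ is filtered with colimit $D$ --- ``choose factorizations at the finitely many objects of $\Fbb$ and reconcile the finitely many naturality constraints at a single later stage'' --- fails as soon as $\Fbb$ has a non-identity endomorphism, because functoriality at an endomorphism is an equation that must hold on the nose in $\mathcal{A}$; it is not a finite list of compatibilities between approximants that filteredness can arrange. Indeed the statement is false at the stated generality, so no repair is possible. Take $\mathcal{A}$ to be the category of finite sets of \emph{odd} cardinality and $\Fbb = B(\Zbb/2)$ the one-object category whose non-identity morphism is an involution. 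Every object of $\mathcal{A}^{\Fbb}$ has a fixed point (by parity), fixed points are preserved by equivariant maps, and a filtered colimit of inhabited sets is inhabited, so every object of $\Ind(\mathcal{A}^{\Fbb})$ admits a morphism from the terminal object (the point with trivial involution). But $\Ind(\mathcal{A}) \simeq \Set$, since $\Ind$ is invariant under idempotent completion and the idempotent completion of $\mathcal{A}$ is $\FinSet$; hence $\Ind(\mathcal{A})^{\Fbb}$ is the category of \emph{all} sets with involution, in which the free orbit $(\{0,1\},\mathrm{swap})$ receives no morphism from the terminal object. The two sides are therefore inequivalent, and for $D$ the free orbit your comma category $(\iota \downarrow D)$ is actually empty. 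What survives of your argument is exactly what the paper needs: for $\Fbb = 2$ (and more generally for finite $\Fbb$ with no non-identity endomorphisms) the approximations can be built one vertex at a time --- factor $A \to X \to Y$ through a stage of $Y$, correct the square at a later stage, and so on --- and the conclusion then holds with the caveat you yourself inserted, namely that the finitely presentable objects of $\Ind(\Ccal\op)^{2}$ are the \emph{retracts} of morphisms of $\Ccal\op$, i.e.\ the morphisms of the idempotent completion. That parenthetical is a necessary correction to the paper's literal wording, and whatever form of the theorem Makkai actually proves must carry hypotheses (loop-freeness of $\Fbb$, or idempotent-completeness of the base category) that exclude the example above.
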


This fact makes arguments a lot neater, since it circumvents the need to reason about morphisms of $\Ind(\Ccal\op)$ in terms of domain and codomain objects.

\begin{prop}
\label{prop:ofs}
Let $\Ccal$ be a category equipped with an OFS $(\Tcal,\Mcal)$. Then there is an OFS $(\Lcal,\Rcal)$ on $\Ind(\Ccal\op)$ such that:
\begin{enumerate}
	\item $\Rcal$ is the set of morphisms having the unique RLP relative to $\Mcal\op$;
	\item $\Lcal$ is closed under directed colimits in the arrow category $\Ind(\Ccal\op)^2$;
	\item $\Rcal$ is closed under directed colimits in the arrow category $\Ind(\Ccal\op)^2$; and
	\item the restriction of $(\Lcal,\Rcal)$ to $\Ccal\op$, viewed as a full subcategory of $\Ind(\Ccal\op)$, is precisely $(\Mcal\op,\Tcal\op)$.
\end{enumerate}
Moreover,
\begin{enumerate}[label = ({\alph*})]
	\item morphisms in $\Lcal$ and $\Rcal$ can respectively be presented as directed colimits of morphisms in $\Mcal\op$ and $\Tcal\op$;
	\item if $\Mcal$-morphisms are monomorphisms (resp. regular monomorphisms, strict monomorphisms) then $\Lcal$-morphisms are epimorphisms (resp. regular, strict epimorphisms), and
	\item if $\Tcal$-morphisms are epimorphisms (resp. regular epimorphisms) then $\Rcal$-morphisms are monomorphisms (resp. regular monomorphisms).
\end{enumerate} 
\end{prop}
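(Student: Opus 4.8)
The plan is to construct the factorization system $(\Lcal,\Rcal)$ on $\Ind(\Ccal\op)$ directly by extending the OFS $(\Mcal\op,\Tcal\op)$ along directed colimits, exploiting the fact that $\Ind(\Ccal\op)$ is the free directed-colimit cocompletion of $\Ccal\op$. The guiding principle is that everything in $\Ind(\Ccal\op)$ is a directed colimit of objects of $\Ccal\op$, and by Fact \ref{fact:fpmorphism} every morphism in $\Ind(\Ccal\op)$ is a directed colimit (in the arrow category $\Ind(\Ccal\op)^2$) of morphisms of $\Ccal\op$, the latter being the finitely presentable objects of $\Ind(\Ccal\op)^2$. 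This immediately suggests the definitions: let $\Lcal$ (resp. $\Rcal$) be the class of morphisms expressible as directed colimits of $\Mcal\op$-morphisms (resp. $\Tcal\op$-morphisms) in $\Ind(\Ccal\op)^2$, and then verify that $\Rcal$ coincides with the class having the unique RLP relative to $\Mcal\op$, which is statement (1).

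First I would establish the factorization property. Given an arbitrary morphism $g$ of $\Ind(\Ccal\op)$, present it by Fact \ref{fact:fpmorphism} as a directed colimit $g = \colim_{i \in \Ibb} g_i$ of morphisms $g_i$ in $\Ccal\op$, with $\Ibb$ directed (using Fact \ref{fact:directedfinal} to replace a filtered indexing category by a directed poset). Factor each $g_i = \tau_i \circ \mu_i$ using the OFS on $\Ccal\op$, i.e.\ $\mu_i \in \Mcal\op$ and $\tau_i \in \Tcal\op$. The orthogonality of $(\Mcal\op,\Tcal\op)$ supplies, for each $i \leq i'$, unique diagonal fillers making the intermediate objects into a functor $\Ibb \to \Ccal\op$; this is the crux of why the factorizations assemble coherently. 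Taking the colimit of the resulting diagram of factorizations yields $g = (\colim \tau_i) \circ (\colim \mu_i)$ with the left factor in $\Lcal$ and the right factor in $\Rcal$ by definition. The orthogonality of $(\Lcal,\Rcal)$ — unique diagonal fillers for squares with left leg in $\Lcal$ and right leg in $\Rcal$ — then follows by a standard argument reducing to the finitely presentable case: a lifting problem against a colimit of finitely presentable objects factors through a stage, where the OFS on $\Ccal\op$ applies, and compatibility of the partial lifts is forced by uniqueness. Properties (2) and (3), closure under directed colimits in $\Ind(\Ccal\op)^2$, are then essentially built into the definitions, though one must check that a directed colimit of $\Lcal$-morphisms is again a directed colimit of $\Mcal\op$-morphisms (a routine reindexing of a colimit of colimits), and dually for $\Rcal$. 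Property (4) is immediate since the finitely presentable morphisms recover exactly $(\Mcal\op,\Tcal\op)$, and (a) holds by construction.

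For the remaining claims (b) and (c), I would use that the forgetful/inclusion $\Ind(\Ccal\op) \hookrightarrow [\Ccal,\Set]$ creates directed colimits, so that (regular, strict) monos and epis can be detected and are preserved under filtered colimits in $\Set$ pointwise. Since filtered colimits in $\Set$ preserve monomorphisms and epimorphisms, and preserve finite limits (hence regular monos as equalizers and regular epis as coequalizers of kernel/cokernel pairs when these exist), a directed colimit of $\Tcal\op$-morphisms which are monic stays monic, giving (c), and dually a directed colimit of $\Mcal\op$-morphisms which are epic stays epic, giving (b). The strict cases require slightly more care, since strictness is a colimit/limit-flavoured universal property rather than a pointwise one, but the exactness of filtered colimits in a Grothendieck topos (and the stability of strict epimorphisms under such colimits, parallel to Lemma \ref{lem:closed2}) should deliver the conclusion.

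I expect the main obstacle to be verifying the orthogonality and factorization coherence without recourse to the small object argument or to (co)limits that $\Ind(\Ccal\op)$ lacks. The delicate point is assembling the stagewise factorizations into a genuine diagram over $\Ibb$: this relies essentially on the \emph{uniqueness} of diagonal fillers in an OFS (which a mere WFS would not provide), so the functoriality of the intermediate-object assignment $i \mapsto \mathrm{im}(g_i)$ is forced rather than chosen. Once functoriality is secured, passing to the colimit is formal, but checking that the resulting left and right factors lie in the classes as \emph{defined} (i.e.\ that the colimit of factorizations is the factorization of the colimit) is where I would be most careful, leaning on Fact \ref{fact:fpmorphism} to reduce lifting problems and factorization uniqueness to the finitely presentable stages where the original OFS $(\Tcal,\Mcal)$ governs everything.
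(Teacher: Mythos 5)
Your bottom-up definition of the classes ($\Lcal$, $\Rcal$ as directed-colimit closures of $\Mcal\op$, $\Tcal\op$) inverts the paper's organization: the paper takes statement (1) as the \emph{definition} of $\Rcal$ (and defines $\Lcal$ by unique LLP against it), proves the closure properties (2), (3) first, and only derives your definition at the very end, as property (a). The core gluing technique --- stagewise factorization assembled into a functor by uniqueness of fillers, with Fact \ref{fact:fpmorphism} reducing lifting problems to finitely presentable stages --- is the same in both. But your organization creates a proof obligation that the proposal never discharges: with your definitions, statement (1) asserts that \emph{every} morphism with the unique RLP relative to $\Mcal\op$ is a directed colimit of $\Tcal\op$-morphisms, and you only say ``verify''. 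The argument needs three steps you do not mention: first upgrade the RLP from $\Mcal\op$ to all of $\Lcal$ by gluing fillers along a colimit presentation (this is exactly the paper's proof of property 2); then factor the given morphism as $r \circ l$ using your factorization; then run the standard OFS argument (both $l \circ d$ and $\mathrm{id}$ fill the square with $l$ on the left and $r$ on the right, where $d$ is the filler of the square with top $\mathrm{id}$, so uniqueness forces $l$ to be invertible) to conclude the morphism lies in $\Rcal$. Without this, property (1) --- on which later results such as Theorem \ref{thm:qhomogen} lean --- is simply asserted. Relatedly, your claim that (2) and (3) are ``built into the definitions'' modulo a ``routine reindexing'' is heavier than you suggest: that a directed colimit of directed colimits of objects from a class of finitely presentable objects is again such a colimit is a genuine (if standard) uniformization argument, whereas the paper gets (2) and (3) almost for free from the lifting-property definitions.

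The second genuine problem is your argument for (b): epimorphisms in $\Ind(\Ccal\op)$ \emph{cannot} be detected pointwise under the embedding into $[\Ccal,\Set]$. That embedding restricts to the Yoneda embedding of $\Ccal\op$, which takes an epimorphism to a pointwise surjection only when the epimorphism splits; for instance, with $\Ccal\op$ the category of finitely presented commutative rings (so $\Ind(\Ccal\op)$ is all commutative rings), the localization $\Zbb \to \Zbb[1/2]$ is an epimorphism whose Yoneda image is not pointwise surjective. Consequently the stages of an $\Lcal$-morphism are typically not pointwise epic, and ``filtered colimits in $\Set$ preserve surjections'' never engages. The appeal to exactness in a Grothendieck topos for the strict case is likewise misplaced: $\Ind(\Ccal\op)$ is not a topos (it need not even have finite colimits), and strictness of an epimorphism there is a universal property tested against ind-objects, not presheaves. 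The repair is the paper's terse but correct argument, carried out \emph{inside} $\Ind(\Ccal\op)$: epimorphisms, regular epimorphisms and strict epimorphisms are witnessed by colimit cocones, and directed colimits commute with colimits; dually, monomorphisms are tested against finitely presentable objects, where finite presentability reduces the test to the stages (equivalently, directed colimits commute with finite limits). Your pointwise route does happen to work for plain monomorphisms in (c), since monicity in $\Ind(\Ccal\op)$ is equivalent to pointwise injectivity, but it fails for (b) and for the strict and regular cases.
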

\begin{proof}
The first property provides the definition of $\Rcal$, and hence the definition of $\Lcal$ as those morphisms having the (unique) LLP with respect to $\Rcal$.

For the second property, let $\Ibb$ be a directed indexing poset and $D:\Ical \to \Ind(\Ccal\op)^2$ a diagram such that $D(i):d_D(i) \to c_D(i)$ lies in $\Lcal$ for every $i$ in $\Ibb$. Write $l:K \to K'$ for the colimit and $\delta_i,\gamma_i$ for the legs of the colimit cocone at $D(i)$. Given a square
\[\begin{tikzcd}
K \ar[r, "p"] \ar[d, "l"'] & P \ar[d, "t"] \\
K' \ar[r, "q"'] & Q
\end{tikzcd}\]
with $t \in \Rcal$, for any $i$ we may compose with the colimit legs to obtain a square with a diagonal filler,
\[\begin{tikzcd}
d_D(i) \ar[r, "p \circ \delta_i"] \ar[d, "D(i)"'] & P \ar[d, "t"] \\
c_D(i) \ar[r, "q \circ \gamma_i"'] \ar[ur, dashed, "\lambda_i"] & Q.
\end{tikzcd}\]
By uniqueness of lifts, the $\lambda_i$ form a cocone under the functor $c_D:\Ibb \to \Ind(\Ccal\op)$, whence we have a unique morphism $K' \to P$ which is immediately a diagonal filler for the original square; uniqueness follows from the universal property of $c_D(i)$ as a colimit and a second application of the uniqueness of the $\lambda_i$. Thus $l \in \Lcal$ as claimed.

For the third property, suppose we are instead given a directed indexing poset $\Ibb$ and a diagram $D:\Ibb \to \Ind(\Ccal\op)^2$ such $D(i)$ is an $\Rcal$-morphism for every $i$. Denote its colimit by $r:K \to K'$ and suppose we are given a square,
\[\begin{tikzcd}
A \ar[r, "p"] \ar[d, "m"'] & K \ar[d, "r"] \\
B \ar[r, "q"'] & K'
\end{tikzcd}\]
with $m$ an $\Mcal\op$-morphism. By Fact \ref{fact:fpmorphism}, $m$ is finitely presentable as an object of $\Ind(\Ccal\op)^2$, so this square factorizes through one of the morphisms $D(i)$. Taking the final subcategory of $\Ibb$ of elements above $i$, we express $r$ as a colimit of morphisms forming a square with $m$ having a diagonal filler, and hence obtain a unique diagonal filler of the colimit square by a similar argument to that above, whence $r \in \Rcal$.

The fourth property is a direct consequence of the fact that $\Ccal\op$ is a full subcategory of $\Ind(\Ccal\op)$.

All that remains is to verify that $(\Lcal,\Rcal)$ does in fact form a factorization system. Given a morphism $q:X \to Y$ viewed as an object of $\Ind(\Ccal\op)^2$, we may express $q$ as a directed colimit of finitely presentable morphisms, say $D:\Ibb \to (\Ccal\op)^2$. By the assumption that $(\Mcal\op,\Tcal\op)$ is an OFS, we may factor each $D(i)$ uniquely up to isomorphism as $t_{i} \circ m_{i}$ with $t_{i} \in \Tcal\op$ and $m_{i} \in \Mcal\op$; let $b_D(i)$ be the codomain of $m_{i}$.\footnote{Having to make many choices seems inevitable here, but the end result is well-defined up to unique isomorphism.}We can extend $b_D$ to a functor $\Ibb \to \Ccal\op$ by defining $Fk_i^{i'}$ as the filler in the following rectangle:
\[\begin{tikzcd}
{d_D(i)} \ar[d, "m_{i}"'] \ar[r, "d_Dk_i^{i'}"] & {d_D(i')} \ar[d, "m_{i'}"] \\
{b_D(i)} \ar[d, "t_{i}"'] \ar[r, dashed, "b_Dk_i^{i'}"] & {b_D(i')} \ar[d, "t_{i'}"] \\
{c_D(i)} \ar[r, "c_Dk_i^{i'}"] & {c_D(i')}.
\end{tikzcd}\]
Uniqueness of lifts guarantees functoriality. By the second and third properties above, the colimit of the resulting upper and lower diagrams $\Ibb$-indexed diagram in $\Ind(\Ccal\op)^2$ provide the desired $(\Lcal,\Rcal)$-factorization. This construction also demonstrates point (a).

Point (b) follows from the fact that colimits commute with colimits, while point (c) follows from the fact that directed colimits commute with finite limits.
\end{proof}

\begin{rmk}
In conversation about Proposition \ref{prop:ofs} with Ivan di Liberti, he suggested a shorter proof using the fact that a category equipped with an orthogonal factorization system is an algebra for the monad on $\Cat$ defined by $\Dcal \mapsto \Dcal^2$. Applying the $\Ind$construction to an algebra $\Dcal^2 \to \Dcal$ and employing Fact \ref{fact:fpmorphism} should provide an algebra structure on $\Ind(\Dcal)$ compatible with the one on $\Dcal$ via the full embedding $\Dcal \to \Ind(\Dcal)$. We shall not make this idea precise here.
\end{rmk}

Since we do not want to assume the presence of coequalizers, we also need the following.

\begin{lemma}[Asymptotic identity lemma]
\label{lem:asym}
Suppose we have a directed diagram $V:\Ibb \to \Ccal\op$ with colimit $U$ in $\Ind(\Ccal\op)$. Suppose that for each $i \in \Ibb$, we are given an element $j_i \geq i$. Then:
\begin{enumerate}
\item We may construct a final subposet $\Ibb' \hookrightarrow \Ibb$ such that $j_i \leq j_{i'}$ whenever $i \leq i'$.
\item The diagram $V':\Ibb' \to (\Ccal\op)^2$ sending $i$ to $Vk_i^{j_i}$,
\[\begin{tikzcd}
V(i) \ar[d, "Vk_i^{j_i}"'] \ar[r, "Vk_i^{i'}"] &
V(i') \ar[d, "Vk_{i'}^{j_{i'}}"] \ar[r] & \cdots \\
V(j_i) \ar[r, "Vk_{j_i}^{j_{i'}}"'] & V(j_{i'}) \ar[r] & \cdots
\end{tikzcd}\]
has as colimit in $\Ind(\Ccal\op)^2$ the identity on $U$.
\end{enumerate}
\end{lemma}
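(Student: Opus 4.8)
The plan is to separate the order-theoretic content of the first claim from the formal colimit computation of the second, since the latter follows almost immediately once the former is combined with two finality observations. The substance of claim (1) is to produce a subposet $\Ibb' \subseteq \Ibb$ that is cofinal and directed and on which the \emph{a priori} non-monotone assignment $i \mapsto j_i$ becomes monotone. I record in passing that monotonicity is precisely what is needed to define $V'$: in a poset the index square with edges $k_i^{i'}$, $k_i^{j_i}$, $k_{i'}^{j_{i'}}$ and $k_{j_i}^{j_{i'}}$ commutes automatically (both legs are the unique arrow $i \to j_{i'}$), so applying the functor $V$ yields a commuting naturality square as soon as the morphism $k_{j_i}^{j_{i'}}$ exists, i.e.\ as soon as $j_i \le j_{i'}$.

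For the construction of $\Ibb'$ I would first treat the case where $\Ibb$ is countable: enumerating $\Ibb = \{e_0, e_1, \dots\}$, I build an increasing sequence $c_0 \le c_1 \le \cdots$ by choosing $c_{k+1}$ to be an upper bound of the \emph{finite} set $\{c_k, e_k, j_{c_k}\}$, which exists by directedness. Then $\Ibb' := \{c_k\}$ is a cofinal chain, and monotonicity is automatic, since $c_k \le c_{k+1}$ gives $j_{c_k} \le c_{k+1} \le j_{c_{k+1}}$. For a general directed $\Ibb$ one cannot hope for a cofinal chain, so I would replace this $\omega$-recursion by a transfinite recursion organised along an Iwamura-type decomposition of $\Ibb$ into an increasing continuous chain of directed subposets of strictly smaller cardinality, building coherent pieces $\Ibb'_\alpha$ whose union is directed, cofinal and $j$-monotone. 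Granting such an $\Ibb'$, both finality statements needed below follow from directedness, cofinality, monotonicity and $j_i \ge i$: the inclusion $\Ibb' \hookrightarrow \Ibb$ is final because a cofinal directed subposet is final, and $j|_{\Ibb'} \colon \Ibb' \to \Ibb$ is final because, for any $m$, directedness of $\Ibb'$ together with monotonicity connects any two indices $i_1, i_2$ satisfying $m \le j_{i_1}$ and $m \le j_{i_2}$ through a common upper bound $i_3$ with $j_{i_3} \ge m$.

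For claim (2), I would use that $\Ind(\Ccal\op)$ has all directed colimits (computed as in $[\Ccal,\Set]$) and that directed colimits in the arrow category $\Ind(\Ccal\op)^2$ are computed pointwise on domains and codomains. The domain diagram of $V'$ is $V|_{\Ibb'}$, whose colimit is $U$ by finality of $\Ibb' \hookrightarrow \Ibb$; the codomain diagram is $V \circ (j|_{\Ibb'})$, whose colimit is likewise $U$ by finality of $j|_{\Ibb'}$ (invoking the standard fact, in the spirit of Fact \ref{fact:directedfinal}, that colimits are preserved along final functors). It then remains to identify the comparison morphism $\phi \colon U \to U$, the colimit of the components $V k_i^{j_i}$. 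Writing $\lambda_i \colon V(i) \to U$ for the legs of the original colimit cocone of $V$, finality lets me identify the colimit legs of the two pointwise colimits with $\lambda_i$ and $\lambda_{j_i}$ respectively, so that $\phi$ is characterised by $\phi \circ \lambda_i = \lambda_{j_i} \circ V k_i^{j_i}$. Since $\lambda$ is a cocone, the right-hand side equals $\lambda_i$, giving $\phi \circ \lambda_i = \lambda_i$ for every $i \in \Ibb'$; as these legs jointly form a colimit cocone, the universal property forces $\phi = \mathrm{id}_U$, as required.

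The main obstacle is the uncountable case of claim (1). The clean $\omega$-recursion fails because an uncountable directed poset need not contain a cofinal well-ordered chain, and naive attempts to repair monotonicity by passing to upper bounds do not terminate, since the value $j_w$ at a newly chosen bound $w$ is itself unconstrained. Controlling this requires the cardinality induction sketched above, where the delicate point is to extend the cofinal directed $j$-monotone subposets coherently across limit stages while preserving all three properties simultaneously; everything else in the lemma is formal.
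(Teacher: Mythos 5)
Your claim (2) is correct: computing the colimit pointwise in the arrow category, using finality of $\Ibb' \hookrightarrow \Ibb$ and of $i \mapsto j_i$, and then observing that the comparison morphism $\phi$ satisfies $\phi \circ \lambda_i = \lambda_{j_i} \circ Vk_i^{j_i} = \lambda_i$, hence $\phi = \mathrm{id}_U$, is a perfectly good (mild) variant of the paper's argument, which instead exhibits your diagram and the diagonal as two final subdiagrams of the single diagram $(i,j) \mapsto Vk_i^{j}$ indexed by $\{(i,j) \in \Ibb \times \Ibb \mid i \leq j\}$. The genuine gap is in claim (1), exactly where you flag it: the uncountable case. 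The Iwamura-style induction you gesture at has a real unresolved coherence problem -- the inductive hypothesis hands you \emph{some} cofinal $j$-monotone directed subset of each piece, with no mechanism for choosing these compatibly across stages (nor for keeping the pieces closed under $j$), and you do not carry it out. Since the lemma is invoked in the proof of Theorem \ref{thm:qhomogen} for an arbitrary directed presentation of $U$, the countable case alone does not suffice; worse, the statement you are attempting -- a cofinal sub\emph{set} of $\Ibb$, with the induced order, on which $j$ becomes monotone -- is strictly stronger than what the lemma asserts, and it is not at all clear that it is true in general.

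The gap is self-inflicted, and disappears once you read ``subposet'' as a wide (non-full) subposet rather than a subset with the induced order: take $\Ibb'$ to have the \emph{same elements} as $\Ibb$ but the finer order $i \leq' i'$ if and only if $i \leq i'$ and $j_i \leq j_{i'}$. This is directed, with no recursion or cardinality hypotheses: given $i,i'$, any $\Ibb$-upper bound $i''$ of $j_i$ and $j_{i'}$ satisfies $i \leq' i''$ and $i' \leq' i''$, since $i \leq j_i \leq i''$ and $j_i \leq i'' \leq j_{i''}$ (using $j_\bullet \geq \bullet$ twice). The identity-on-elements map $(\Ibb,\leq') \to (\Ibb,\leq)$ is final, monotonicity of $i \mapsto j_i$ holds by construction, and your proof of claim (2) goes through verbatim over this $\Ibb'$: both finality checks you make (for $\Ibb' \to \Ibb$ and for $i \mapsto j_i$) use only directedness, cofinality on elements, and $j_i \geq i$, all of which survive the re-ordering. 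This is precisely the paper's construction, and it is the reason the lemma needs no countability or transfinite combinatorics at all.
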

\begin{proof}
We may define $\Ibb'$ be the poset with the same elements as $\Ibb$ but with order relation defined by $i \leq' i'$ if and only if $i \leq i'$ and $j_i \leq j_{i'}$. This is a directed poset because for any $i,i'$ there exists an element $j$ larger than $j_i$ and $j_{i'}$.

Now let $\Jbb$ be the directed set $\{(i,j) \in \Ibb \times \Ibb \mid i \leq j\}$ with the product ordering (so $(i,j) \leq (i',j')$ if and only if $i \leq i'$ and $j \leq j'$), and define $W:\Jbb \to (\Ccal\op)^2$ by $(i,j) \mapsto Vk_{i}^{j}$. This has two pertinent final subdiagrams. The first is the diagonal, which clearly converges to the identity on $U$ since all of its components are identities. The second is the diagram $V'$ described above; thus the colimit of $V'$ is the identity on $U$ too, as claimed.
\end{proof}

With all of this theory regarding $\Ind(\Ccal\op)$ under our belts, we can return to the principal sites which motivated this excursion.

\begin{lemma}
\label{lem:Tinj}
Let $(\Ccal,J_{\Tcal})$ be a principal site. Let $U$ be an object of $\Ind(\Ccal\op)$, and consider the flat functor $F := \Hom_{\Ind(\Ccal\op)}(-,U)$. $F$ is $J_{\Tcal}$-continuous if and only if $U$ is \textbf{$\Tcal\op$-injective}, in the sense that given a span
\[\begin{tikzcd}
A \ar[r, "p"] \ar[d, "t"'] & U \\
B, \ar[ur, dashed]
\end{tikzcd}\]
with $t \in \Tcal\op$, there exists a morphism $B \to U$ in $\Ind(\Ccal\op)$ making the triangle commute.
\end{lemma}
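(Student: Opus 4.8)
The plan is to reduce $J_{\Tcal}$-continuity of $F$ to a surjectivity condition on the single generating $\Tcal$-morphisms, and then to translate that condition, through the description of $F$ as a representable functor on $\Ind(\Ccal\op)$, into the stated extension property.

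First I would recall that, since $J_{\Tcal}$ is the principal topology generated by the stable class $\Tcal$, every $J_{\Tcal}$-covering sieve contains the principal sieve generated by a single $\Tcal$-morphism $s \colon C \to D$. A flat functor $F$ is $J_{\Tcal}$-continuous exactly when it sends covering sieves to jointly epimorphic families in $\Set$; since every morphism of the principal sieve generated by $s$ factors through $s$, the images under $F$ of the members of this sieve are all contained in the image of $F(s) \colon F(C) \to F(D)$, and $s$ itself lies in the sieve. Hence this family is jointly surjective if and only if $F(s)$ is surjective, and I conclude that $F$ is $J_{\Tcal}$-continuous if and only if $F(s)$ is surjective for every $s \in \Tcal$.

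Next I would unwind $F$ inside the inductive completion. Writing $\yon \colon \Ccal\op \hookrightarrow \Ind(\Ccal\op)$ for the canonical embedding, we have $F(C) = \Hom_{\Ind(\Ccal\op)}(\yon(C),U)$, and a $\Tcal$-morphism $s \colon C \to D$ of $\Ccal$ induces $\yon(s\op) \colon \yon(D) \to \yon(C)$, so that $F(s)$ is precomposition with $\yon(s\op)$. Thus $F(s)$ is surjective precisely when every morphism $\yon(D) \to U$ factors through $\yon(s\op)$ by some morphism $\yon(C) \to U$ — that is, when every map out of $\yon(D)$ extends along $\yon(s\op)$. Putting $A := \yon(D)$ and $B := \yon(C)$, and observing that $\yon(s\op)$ is a $\Tcal\op$-morphism (the image under $\yon$ of the $\Tcal$-morphism $s$), this is exactly the defining instance of $\Tcal\op$-injectivity for the span whose top-left leg is $\yon(s\op) \colon A \to B$. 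Since every $\Tcal\op$-morphism arises this way, running the translation in both directions yields the claimed equivalence.

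The work here is bookkeeping rather than genuine difficulty, and the two points demanding care are the following. First, the reversal of direction under $(-)\op$ must be tracked: a $\Tcal$-morphism $s \colon C \to D$ produces a $\Tcal\op$-morphism $\yon(s\op) \colon \yon(D) \to \yon(C)$, so the arrow of the injectivity span points from $\yon(D)$ to $\yon(C)$ while $F(s)$ points from $F(C)$ to $F(D)$. Second, every morphism of $\Tcal\op$ has its domain and codomain in $\Ccal\op$ and hence representable in $\Ind(\Ccal\op)$, so the injectivity test ranges only over spans of representables; no hidden condition on general objects of $\Ind(\Ccal\op)$ is involved. I would also note that flatness of $F$, automatic since $U \in \Ind(\Ccal\op)$, plays no role in the equivalence itself, serving only to ensure that a $J_{\Tcal}$-continuous $F$ is the restriction to $\Ccal$ of the inverse image of a genuine point.
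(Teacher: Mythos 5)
Your proof is correct and is essentially the paper's argument: the paper disposes of the lemma in one line as ``a direct translation of what it means for $F$ to send $\Tcal$-morphisms to epimorphisms,'' and your two steps (reducing $J_{\Tcal}$-continuity to surjectivity of $F(s)$ for $s \in \Tcal$ via the principal sieves, then unwinding $F(s)$ as precomposition in $\Ind(\Ccal\op)$ with the variance tracked) are precisely the bookkeeping that translation leaves implicit.
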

\begin{proof}
This is a direct translation of what it means for $F$ to send $\Tcal$-morphisms to epimorphisms.
\end{proof}

\begin{thm}
\label{thm:qhomogen}
Let $(\Ccal,J_{\Tcal})$, $U$ and $F$ be as in Lemma \ref{lem:Tinj}. Suppose that $\Tcal$ is the left class in an orthogonal factorization system $(\Tcal,\Mcal)$ on $\Ccal$, such that $\Mcal$ is contained in the class of monomorphisms. Let $(\Lcal,\Rcal)$ be the extension of $(\Mcal\op,\Tcal\op)$ to a factorization system on $\Ind(\Ccal\op)$. Then $F$ meets the criteria of Corollary \ref{crly:principalcondition} if and only if the following conditions are met:
\begin{enumerate}
	\item $U$ is \textbf{$\Tcal\op$-injective};
	\item $U$ is \textbf{$\Rcal$-universal}, in the sense that every object $X$ of $\Ccal\op$ admits an $\Rcal$-morphism to $U$ in $\Ind(\Ccal\op)$, and
	\item $U$ can be expressed as a directed colimit of finitely presentable objects such that the legs of the colimit cone lie in $\Rcal$, or equivalently such that the morphisms in the diagram lie in $\Tcal\op$.
\end{enumerate}
\end{thm}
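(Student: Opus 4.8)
The plan is to translate the two conditions of Corollary \ref{crly:principalcondition} into the language of $\Ind(\Ccal\op)$ equipped with the factorization system $(\Lcal,\Rcal)$ of Proposition \ref{prop:ofs}, under which our point is the object $U$ with $F=\Hom_{\Ind(\Ccal\op)}(-,U)$ and $L=\End(U)\op$. Throughout I would exploit three consequences of the hypotheses: that $\Mcal\subseteq{}$monomorphisms makes every $\Lcal$-morphism an epimorphism (Proposition \ref{prop:ofs}(b)); that the restriction of $(\Lcal,\Rcal)$ to $\Ccal\op$ recovers $(\Mcal\op,\Tcal\op)$, so that between finitely presentable objects $\Tcal\op\subseteq\Rcal$ and $\Mcal\op\subseteq\Lcal$; and that $\Rcal$ is closed under composition. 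The first condition of the theorem, $\Tcal\op$-injectivity of $U$, is identified with $F$ sending $\Tcal$-morphisms to epimorphisms, i.e. with $J_{\Tcal}$-continuity of $F$, directly by Lemma \ref{lem:Tinj}; this is the baseline shared by all three conditions, and I would dispatch it first.

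Next I would match the surjectivity (cover-reflecting) clause of Corollary \ref{crly:principalcondition} with $\Rcal$-universality (condition 2). For the direction from condition 2, suppose $\{f_i\colon C_i\to C\}$ is sent by $F$ to a jointly epic family. Pick an $\Rcal$-morphism $\rho\colon C\to U$ supplied by universality; by hypothesis $\rho=y\circ f_i$ factors through some $f_i$. Taking the $(\Mcal\op,\Tcal\op)$-factorization $f_i=r\circ l$ of $f_i$ in $\Ccal\op$ and applying orthogonality of $l\in\Lcal$ against $\rho\in\Rcal$ to the square with top $\operatorname{id}_C$, left $l$, right $\rho$ and bottom $y\circ r$ exhibits $l$ as a split monomorphism; since $l$ is also an epimorphism it is invertible, whence $f_i\in\Tcal$. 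The converse (cover-reflecting $\Rightarrow$ universality) I would obtain with the remaining data in hand: conservativity of the inverse image of a surjection gives $F(X)\neq\varnothing$ for every non-degenerate $X$, and Corollary \ref{crly:psitescompact} realises the relevant supercompact subobjects as images $\im(Fh)$ with $h$ an $\Mcal$-morphism, from which an $\Rcal$-morphism $X\to U$ is extracted by the same factorization bookkeeping (a non-invertible $\Mcal$-part would, via cover-reflection, place an $\Mcal$-morphism in $\Tcal$, contradicting that $\Tcal\cap\Mcal$ consists of isomorphisms).

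The image-closure clause is where the real work lies. Using that every subobject is a union of its cyclic sub-$L$-sets, that a cyclic (hence supercompact) sub-$L$-set covered by images must equal one of them, and Corollary \ref{crly:wfs} identifying $J_{\Tcal}$-closed sieves with ideals of $\Mcal$-subobjects, I would reduce this clause to a single \emph{homogeneity} statement: for finitely presentable $B$, any $\Rcal$-morphism $\rho\colon B\to U$ and any $y\colon B\to U$ admit $\alpha\in\End(U)$ with $\alpha\rho=y$. Granting this, condition 3 lets me write an arbitrary $x\colon C\to U$ as $\lambda_i\circ r_i\circ l_i$ with $\lambda_i$ a colimit leg, $r_i\in\Tcal\op$ and $l_i\in\Mcal\op$ finitely presentable (so $l_i$ corresponds to an $\Mcal$-morphism $h_i$ and is epic); the inclusion $\langle x\rangle\subseteq\im(Fh_i)$ is formal, and homogeneity for $\rho=\lambda_i r_i$ yields the reverse after cancelling the epimorphism $l_i$, forcing $\langle x\rangle=\im(Fh_i)$. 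To prove homogeneity I would factor $\rho$ through a leg $\lambda_j$ as $\rho=\lambda_j\rho_j$; then $\lambda_j\circ(\text{$\Tcal\op$-part of }\rho_j)\in\Rcal$ makes this an $(\Lcal,\Rcal)$-factorization of the $\Rcal$-morphism $\rho$, so its $\Lcal$-part is invertible and $\rho_j\in\Tcal\op$. Extending $y$ along $\rho_j$ by $\Tcal\op$-injectivity (condition 1) gives $\beta\colon X_j\to U$ with $\beta\rho_j=y$, and it remains to extend $\beta$ along the $\Rcal$-leg $\lambda_j$ to an endomorphism $\alpha$ of $U=\colim X_i$. Crucially, since we build an endomorphism and not an automorphism, only a ``forth'' construction is needed: I would extend stage-by-stage along the $\Tcal\op$-connecting maps by injectivity and reconcile the independently chosen extensions into a coherent cocone using Lemma \ref{lem:asym}. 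This one-sidedness is exactly why no countability hypothesis appears here, in contrast with Theorem \ref{thm:Fraisse1}.

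Finally, the converse directions of conditions 2 and 3 I would read off from hyperconnectedness. Condition 3 is obtained by taking any presentation $U=\colim X_i$ from the definition of $\Ind(\Ccal\op)$ and replacing each leg by the $\Rcal$-part of its $(\Lcal,\Rcal)$-factorization, invoking Corollary \ref{crly:psitescompact} to keep the intermediate objects finitely presentable and hence the connecting maps in $\Tcal\op$. I expect the principal obstacle to be the homogeneity lemma, and within it the coherent assembly of the cocone defining $\alpha$ over a possibly large directed index: extending along one $\Tcal\op$-morphism is immediate from injectivity, but organising these extensions into a compatible family valued in $U$ is the delicate point, and it is there that Lemma \ref{lem:asym} together with the finite presentability of the stages (Fact \ref{fact:fpmorphism}) must be deployed with care.
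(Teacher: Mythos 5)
Your proposal reproduces the paper's own proof in its essential architecture: the same three-way decomposition (condition 1 via Lemma \ref{lem:Tinj}; condition 2 matched against the cover-reflecting clause by the orthogonality/split-monomorphism argument, using that $\Lcal$-morphisms are epic by Proposition \ref{prop:ofs}; the image-closure clause reduced, via Corollary \ref{crly:psitescompact} and Corollary \ref{crly:wfs}, to a homogeneity statement for $\Rcal$-morphisms out of finitely presentable objects, with the $(\Lcal,\Rcal)$-factorization through a finitely presentable intermediate doing the same work in both directions). Up to that level your proposal and the paper's proof coincide, and your orthogonality argument for ``cover-reflection from $\Rcal$-universality'' is a correct unwinding of the cancellation property the paper uses.

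Where your justification diverges from the paper's and falls short is in the deployment of Lemma \ref{lem:asym}, in two complementary ways. First, in the homogeneity lemma you claim that Lemma \ref{lem:asym} ``reconciles the independently chosen extensions into a coherent cocone''. It cannot: that lemma only re-indexes a directed diagram of connecting morphisms $Vk_i^{j_i}$ so that its colimit in the arrow category is an identity; it says nothing about selecting a compatible element of the inverse system of extension sets $S_i=\{u_i\colon V(i)\to U \mid u_i\circ t_i=y\}$ (nonempty, with surjective transitions by $\Tcal\op$-injectivity), which is what assembling the endomorphism requires. The paper at this exact point instead chooses the $u_i$ compatibly along the $\Tcal\op$-connecting maps and asserts that ``the sequence $u_i$ converges''; your side remark that the one-sided (``forth'') nature of the construction is why no countability hypothesis is needed is not a justification either, since surjective inverse systems over large directed posets can have empty limit --- so the delicate point you correctly identify is not closed by anything you cite. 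Second, and conversely, you omit Lemma \ref{lem:asym} exactly where the paper needs it: in the forward direction for condition 3, replacing each colimit leg $\lambda_i$ by the $\Rcal$-part of its factorization only exhibits $U$ a priori as a retract of the colimit of the new diagram of finitely presentable objects; the paper's interleaving bookkeeping (arranging $h_i\circ m_i = Vk_i^{j_i}$ and then applying Lemma \ref{lem:asym} over a final subposet to conclude that the interleaved diagram still has colimit $U$, with legs in $\Rcal$) is precisely what turns this retraction into the required colimit presentation, and your sketch skips it.
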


\begin{rmk}
\label{rmk:qhomogen}
The first two conditions in Theorem \ref{thm:qhomogen} are natural extensions of (the duals of) the definitions appearing in \cite[Definition 3.3]{TGT} of \textit{$\Ccal$-homogeneous} object and \textit{$\Ccal$-universal} object, respectively. This terminology originates in model theory (cf.\ \cite[Chapter 7]{Model}). We can also generalize the notion of \textit{$\Ccal$-ultrahomogeneous} object; we will need this definition in the proof, so we give it here.

Given the set-up of Theorem \ref{thm:qhomogen}, we say an object $U$ of $\Ind(\Ccal\op)$ is \textbf{$\Ccal$-quasi-homogeneous} if every object $D$ of $\Ccal\op$ admits a morphism $g_0:D \to U$ such that for all morphisms $d:D \to E$ in $\Ccal\op$ and $h: E \to U$ there exists an endomorphism $u:U \to U$ satisfying $h \circ d = u \circ g_0$:
\[\begin{tikzcd}
D \ar[r, "g_0"] \ar[d, "d"'] & U \ar[d, dashed, "\exists u"]\\
E \ar[r, "h"'] & U.
\end{tikzcd}\]
Clearly we may take $d$ to be the identity on $\id_D$ without loss of generality, but the principle we want to express is that `every $\Ccal\op$-morphism extends to an endomorphism of $U$'.
\end{rmk}

\begin{proof}
Suppose $F$ meets the conditions of Corollary \ref{crly:principalcondition}, and let $L := \End_{\Ind(\Ccal\op)}(U)\op$ as usual. We know from Lemma \ref{lem:Tinj} that $\Tcal\op$-injectivity is necessary. By Corollary \ref{crly:psitescompact}, for each object $D$ of $\Ccal$, $F(D)$ must be a principal $L$-set, which is equivalent to $U$ being $\Ccal$-quasi-homogeneous in the sense of Remark \ref{rmk:qhomogen}.

We claim that any generating morphism $g_0 \in \Hom_{\Ind(\Ccal\op)}(D,U)$ is an $\Rcal$-morphism. Indeed, expressing $U$ as a directed colimit of a diagram $V: \Ibb \to \Ccal\op$, $g_0$ corresponds to an equivalence class of morphisms of the form $q: D \to V(i)$. Consider the $(\Lcal,\Rcal)$-factorization of $q$, say $q = q' \circ m$. Let $h:D \to U$ be any other morphism and take its $(\Lcal,\Rcal)$-factorization, say $h = h' \circ n$. By assumption, there exists an endomorphism $u$ of $U$ making this rectangle commute:
\[\begin{tikzcd}
D \ar[r, "m"] \ar[rrr, bend left, "g_0"] \ar[d, equal] & D' \ar[r, "q'"] \ar[d, dashed] & V(i) \ar[r, "\lambda_i"] & U \ar[d, "u"] \\
D \ar[r, "n"'] \ar[rrr, bend right, "h"'] & D'' \ar[rr,"h'"'] & & U.
\end{tikzcd}\]
By orthogonality, we obtain a morphism $D' \to D''$ as indicated making both squares commute. But consequently,
\[F(m) = (- \circ m) : \Hom_{\Ind(\Ccal\op)}(D',U) \to \Hom_{\Ind(\Ccal\op)}(D,U)\]
is a surjection. Since $F$ reflects epimorphisms to $\Tcal$-morphisms, it follows that $m$ lies in both $\Lcal$ and $\Rcal$ and hence is an isomorphism. Thus $q$ is a $\Tcal\op$-morphism. Fixing any such $q$, we may express $g_0$ as the colimit of the following $i/\Ibb$-indexed diagram in $\Ind(\Ccal\op)^2$,
\[\begin{tikzcd}
D \ar[r, equal] \ar[d, "q"'] & D \ar[d, "Vk_i^{i'} \circ q"] \\
V(i) \ar[r, "Vk_i^{i'}"'] & V(i'),
\end{tikzcd}\]
where each of the vertical morphisms is in $\Tcal\op$ by the argument above. Thus, by the third point of Proposition \ref{prop:ofs}, $g_0$ is a member of $\Rcal$, as claimed. Thus $U$ is $\Rcal$-universal.

Consider again the directed diagram $V:\Ibb \to \Ccal\op$ with colimit $U$ in $\Ind(\Ccal\op)$. Clearly, each $\lambda_i:V(i) \to U$ generates a principal sub-$L$-set of $F(V(i))$, whence there is an $\Mcal$-morphism\footnote{We are abusing notation a little here by not distinguishing between the morphism $m_i$ in $\Ccal$ and its dual in $\Ccal\op$.} (which we index for subsequent reference) $m_i:X(i) \to V(i)$ such that
\[Fm_i = (- \circ m_i): F(X(i)) \to F(V(i))\]
has image this sub-$L$-set; since $\Mcal$ consists of monomorphisms by assumption, there exists some generator $g_i$ of $F(X(i))$ which is mapped to $\lambda_i$ under composition with $m_i\op$, whence $g_i \circ m_i$ is the $(\Lcal,\Rcal)$-factorization of $\lambda_i$.

By finite presentability of $X(i)$, $g_i$ factors through some object $V(j_i)$ as $\lambda_{j_i} \circ h_i$; without loss of generality, $j_i \geq i$. Then we have $\lambda_{j_i} \circ h_i \circ m_i = \lambda_{i} = \lambda_{j_i} \circ Vk_i^{j_i}$; by finite presentability, this identity holds when $\lambda_{j_i}$ is replaced by $Vk_{j_i}^{j'_i}$ for sufficiently large $j'_i$. In particular, substituting $h_i$ for $Vk_{j_i}^{j'_i} \circ h_i$ and $j_i$ for $j'_i$, we can without loss of generality assume that $h_i \circ m_i = Vk_i^{j_i}$, which makes this the $(\Lcal,\Rcal)$-factorization of $Vk_i^{j_i}$. We can now apply Lemma \ref{lem:asym} to deduce that after restricting to a suitable final subdiagram $\Ibb'$ of $\Ibb$, the $Vk_i^{j_i}$ assemble into an $\Ibb'$-indexed diagram in $\Ccal\op$ whose colimit in $\Ind(\Ccal\op)$ is the identity on $U$. Moreover, by the proof of Proposition \ref{prop:ofs}, the $h_i,m_i$ assemble into respective diagrams (by orthogonality) converging to the $(\Lcal,\Rcal)$-factorization of the identity, which is trivial. In particular, the colimit of the diagram $X$ is (isomorphic to) $U$, and the colimit cone consists of morphisms in $\Rcal$ since the $g_i$ are in $\Rcal$, as required.

Conversely, suppose we are given $U$ with the stated properties. It is immediate that $F$ sends $\Tcal$-morphisms to epimorphisms by $\Tcal$-injectivity.

Suppose $F$ sends some collection of morphisms $\{f_i:C_i \to D\}$ to a jointly epimorphic family, and let $g_0:D \to U$ be an $\Rcal$-morphism. This means in particular that there exists some $g: C_i \to U$ in $\Ind(\Ccal\op)$ such that $g \circ f_i\op = g_0$. But $g_0$ is a member of $\Rcal$, whence so is $f_i\op$, which is to say that $f_i \in \Tcal$, as required for the first point of Corollary \ref{crly:principalcondition}.

We claim that $g_0$ is also a generator of $F(D)$ as a right $L$-set. Indeed, letting $V$ be a directed diagram as above but now assuming that the morphisms $Vk_i^{i'}$ lie in $\Tcal\op$, it must be that $g_0$ factors through one of the $\lambda_i:V(i) \to U$ via a $\Tcal\op$-morphism $t_i$, whence for any other morphism $h:D \to U$, $\Tcal\op$-injectivity gives,
\[\begin{tikzcd}
D \ar[r, "t_i"] \ar[drr, "h"'] \ar[rr, bend left, "g_0"] & V(i) \ar[r, "\lambda_i"] \ar[dr, dashed, "\exists u_i"] & U\\
& & U;
\end{tikzcd}\]
the sequence $u_i$ thus converges to the desired endomorphism of $U$; this gives $\Ccal$-quasi-homogeneity.

Now, a principal sub-$L$-set of $\Hom_{\Ind(\Ccal\op)}(D,U)$ has some generating morphism $x:D \to U$. By finite presentability, $x$ factors through some $\Rcal$-morphism $V(i) \to U$. In particular, the intermediate object in the $(\Lcal,\Rcal)$-factorization $x = r \circ l$ is finitely presented, and hence the image of $Fl\op$ is precisely the sub-$L$-set generated by $x$. It follows that for any sub-$L$-set $A \hookrightarrow F(D)$ we can construct a $J_{\Tcal}$-closed sieve (consisting of $\Mcal$-morphisms derived as $l\op$ was above) on $D$ in $\Ccal$ whose images jointly cover $A$, as required.
\end{proof}

\begin{rmk}
We only used the condition that $\Mcal$ is contained in the class of monomorphisms in a single place in this proof; we suspect it may be possible to remove this condition, but it is not such a strong condition as to be obstructive for the purposes of the present chapter.
\end{rmk}

\subsection{Supercompactly generated theories}
\label{ssec:sgtheory}

Let $\Tbb$ be a supercompactly generated theory, and let
\[\Phi := \left\{\{\vec{x}_i \cdot \phi_i\} \mid i \in I \right\}\]
be the set of $\Tbb$-supercompact formulae-in-context (up to $\alpha$-equivalence); let $\Ccal_{\Phi}$ be the corresponding full subcategory of the syntactic category $\Ccal_{\Tbb}$. We include all of the $\Tbb$-supercompact formulae because this gives us an (epi,mono) factorization system on $\Ccal_\Phi$. Indeed, since $\Ccal_{\Tbb}$ is closed under subobjects in $\Sh(\Ccal_{\Tbb},J_{\Tbb})$, the intermediate object in the image factorization of any morphism between formulae in $\Phi$ is represented by some formula in $\Phi$. Moreover, by the same proof as Corollary \ref{crly:strict} in Chapter \ref{chap:sgt}, all epimorphisms in $\Ccal_{\Phi}$ are strict.

\begin{rmk}
Including all supercompact formulae-in-context in $\Phi$ is slight overkill: we could simply have closed any generating set of $\Tbb$-supercompact objects under image factorizations; we leave consideration of this generalization to the interested reader. 
\end{rmk}

For the remainder of this section, let $\Tbb$, $\Phi$, $\Ccal_{\Phi}$ and $\Tcal$ be as described above.

The fact that the formulae in $\Phi$ are $\Tbb$-supercompact means that the restriction of the canonical topology to $\Ccal_{\Phi}$ will be a principal topology generated by the (stable) class $\Tcal$ of strict epimorphisms, so that $\Set[\Tbb] \simeq \Sh(\Ccal_{\Phi},J_{\Tcal})$ (by the Comparison Lemma, say). In general there will be supercompact objects of $\Set[\Tbb]$ which are not representable, so $\Ccal_{\Phi}$ need not be a reductive category, but it still has all the features needed to apply the results of the last two sections. First, we refine the presentation of morphisms in $\Ccal_{\Phi}$ and then give syntactic characterizations of the necessary condition derived in Scholium \ref{schl:jointT}.

\begin{lemma}
\label{lem:scompactreduce}
Let $\theta$ be a $\Tbb$-provably functional formula presenting a morphism $\{\vec{x} \cdot \phi\} \to \{\vec{y}\cdot \psi\}$ in $\Ccal_{\Phi}$. Then $\theta$ is ($\Tbb$-provably equivalent to) a $\Tbb$-supercompact formula.
\end{lemma}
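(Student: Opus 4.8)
The statement claims that any $\Tbb$-provably functional formula $\theta$ representing a morphism $\{\vec{x} \cdot \phi\} \to \{\vec{y} \cdot \psi\}$ between two $\Tbb$-supercompact formulae in $\Ccal_{\Phi}$ is itself $\Tbb$-supercompact. The natural approach is to exploit the fact that, via the representable embedding $\ell_{\Tbb} : \Ccal_{\Tbb} \to \Set[\Tbb]$, the formula $\theta$ is (isomorphic to) a subobject of the product $\{\vec{x},\vec{y} \cdot \phi \wedge \psi\}$, and that supercompactness is an intrinsic topos-theoretic property preserved by the relevant constructions. First I would recall that, as a subobject of $\{\vec{x} \cdot \phi\} \times \{\vec{y} \cdot \psi\}$, the graph of $\theta$ is isomorphic to its domain $\{\vec{x} \cdot \phi\}$: the three defining sequents of a $\Tbb$-provably functional formula say precisely that the projection $[\theta] : \{\vec{x},\vec{y} \cdot \theta\} \to \{\vec{x} \cdot \phi\}$ (given by the formula $\theta \wedge (\vec{x} = \vec{x}')$, say) is an isomorphism in $\Ccal_{\Tbb}$, with inverse the morphism $[\theta]$ interpreting $\theta$ itself.

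\textbf{Key steps.} Concretely: (1) Observe that the formula-in-context $\{\vec{x},\vec{y} \cdot \theta\}$ is, by the functionality axioms, isomorphic in $\Ccal_{\Tbb}$ to $\{\vec{x} \cdot \phi\}$ — the sequent $\phi \vdash_{\vec{x}} (\exists \vec{y})\theta$ gives a cover, the second gives containment in the product, and the third gives single-valuedness, so that the span recording $\theta$ is a graph of an isomorphism onto $\{\vec{x} \cdot \phi\}$. (2) Transport supercompactness across this isomorphism: since $\{\vec{x} \cdot \phi\}$ is $\Tbb$-supercompact (i.e. $\ell_{\Tbb}(\{\vec{x}\cdot\phi\})$ is supercompact in $\Set[\Tbb]$ by Theorem~\ref{thm:scompform}), and supercompactness is stable under isomorphism, the object $\ell_{\Tbb}(\{\vec{x},\vec{y}\cdot\theta\})$ is supercompact too. (3) Translate back syntactically: by Theorem~\ref{thm:scompform} (and the correspondence between $\Tbb$-supercompactness of a formula and supercompactness of its representable sheaf established in its proof), this means $\theta$ is $\Tbb$-provably equivalent to a disjunction of $\Tbb$-supercompact formulae which reduces to a single $\Tbb$-supercompact formula, since any supercompact object admitting an isomorphism from a representable supercompact object is represented by a $\Tbb$-supercompact formula. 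Alternatively, one may argue purely proof-theoretically: given a cover $\theta \vdash_{\vec{x},\vec{y}} \bigvee_{k} \chi_k$, pull it back along the isomorphism to a cover of $\phi$, use $\Tbb$-supercompactness of $\phi$ to select a single disjunct, and push the result forward through $\theta$.

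\textbf{The main obstacle.} The genuinely delicate point is step (3): making precise the claim that an object isomorphic (as a representable) to a $\Tbb$-supercompact one is again represented by a $\Tbb$-supercompact \emph{formula}, rather than merely by a disjunction of such. This is where the closure of $\Phi$ under \emph{all} $\Tbb$-supercompact formulae (and the resulting image factorization system noted just before the lemma) does the work: the isomorphism class of a supercompact representable is witnessed by an actual object of $\Ccal_{\Phi}$, and one must check that the $\Tbb$-provable equivalence furnished by the functoriality axioms is precisely the equivalence of formulae-in-context that identifies objects of the syntactic category. I expect the cleanest route is the intrinsic one — prove it at the level of $\Set[\Tbb]$ using that $\ell_{\Tbb}$ is full and faithful and closed under subobjects, so that the graph of $\theta$ is literally the representable $\ell_{\Tbb}(\{\vec{x}\cdot\phi\})$ up to isomorphism — rather than manipulating sequents directly, since the syntactic bookkeeping around substituting contexts $\vec{x}, \vec{y}$ is the only place where errors are likely to creep in.
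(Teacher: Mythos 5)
Your proof is correct, and it takes a genuinely different route from the paper's. The paper works entirely with sequents: using the standing hypothesis that $\Tbb$ is supercompactly generated, it writes $\theta$ as a disjunction $\bigvee_{i \in I}\chi_i$ of $\Tbb$-supercompact formulae, uses supercompactness of the \emph{domain} formula $\phi$ to isolate a single disjunct with $\phi \vdash_{\vec{x}} (\exists\vec{y})\chi_i$, checks that this $\chi_i$ is again $\Tbb$-provably functional with $\chi_i \vdash_{\vec{x},\vec{y}} \theta$, and concludes that $\chi_i$ and $\theta$ are provably equivalent because containment between two functional relations presenting morphisms with the same domain and codomain in $\Ccal_{\Tbb}$ forces equality. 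You instead transport supercompactness along the isomorphism $\{\vec{x},\vec{y}\cdot\theta\} \cong \{\vec{x}\cdot\phi\}$ given by the graph projection, and back-translate through the formula/representable correspondence established in the proof of Theorem \ref{thm:scompform}. This buys two things: your argument never invokes the hypothesis that $\Tbb$ is supercompactly generated (only $\Tbb$-supercompactness of $\phi$), so it is valid over an arbitrary geometric theory; and it shows that $\theta$ is \emph{itself} $\Tbb$-supercompact rather than merely provably equivalent to such a formula (equivalent conclusions, since $\Tbb$-supercompactness is invariant under provable equivalence, but your phrasing is cleaner). What the paper's route buys in exchange is an explicit supercompact disjunct and a proof that never leaves the sequent calculus. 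One correction to your commentary: the ``main obstacle'' you describe in step (3) is not an obstacle at all, and the appeal to closure properties of $\Phi$ is unnecessary --- the correspondence from the proof of Theorem \ref{thm:scompform} applies directly to the formula-in-context $\{\vec{x},\vec{y}\cdot\theta\}$, so supercompactness of $\ell_{\Tbb}(\{\vec{x},\vec{y}\cdot\theta\})$, which you have established by isomorphism-invariance, simply \emph{is} $\Tbb$-supercompactness of $\theta$; no second representing formula needs to be found. Indeed, your own proof-theoretic variant (pull a covering $\theta \vdash_{\vec{x},\vec{y}} \bigvee_k \chi_k$ back along the graph isomorphism to a covering of $\phi$, select a single disjunct by supercompactness of $\phi$, and push it forward using single-valuedness of $\theta$) is the syntactic shadow of this semantic argument and is the cleanest way to finish.
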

\begin{proof}
Express $\theta$ as a disjunction of supercompact formulae $\bigvee_{i \in I} \chi_i$. Recalling that existential quantification commutes with disjunctions, the following sequents are provably in $\Tbb$:
\begin{align*}
\phi & \vdash_{\vec{x}} \bigvee_{i \in I} \left(\exists \vec{y}\right) \chi_i &
\bigvee_{i \in I} \chi_i & \vdash_{\vec{x},\vec{y}} \phi \wedge \psi &
\left(\bigvee_{i \in I} \chi_i\right) \wedge \left(\bigvee_{i \in I} \chi_i [\vec{y}'/\vec{y}] \right) & \vdash_{\vec{x},\vec{y},\vec{y}'} \vec{y} = \vec{y}'
\end{align*}
Since $\phi$ is supercompact, there exists an index $i$ such that $\phi \vdash_{\vec{x}} \left(\exists \vec{y}\right) \chi_i$. By inspection, $\chi_i$ satisfies the requirements to be a $\Tbb$-provably functional formula and $\chi_i \vdash_{\vec{x},\vec{y}} \theta$. This means that the relation presented by $\chi_i$ is contained in that presented by $\theta$, but both are functional relations in $\Ccal_{\Tbb}$, and the ordering on such relations (which corresponds to $\Tbb$-provability) is discrete, so $\chi_i$ is $\Tbb$-provably equivalent to $\theta$, as required.
\end{proof}

\begin{rmk}
\label{rmk:indexmorph}
In particular, Lemma \ref{lem:scompactreduce} means that we can index the morphisms of $\Ccal_{\Phi}$ by elements of $\Phi$ modulo $\Tbb$-provable equivalence. This is important because it means that when the signature of $\Tbb$ is countable, then there are only countably many morphisms between the objects of $\Ccal_{\Phi}$, a fact which we shall be able to take advantage of in Section \ref{sec:Fraisse}. The composition operation, which involves taking a $\Tbb$-supercompact formula $\Tbb$-provably equivalent $(\exists \vec{y})(\gamma \wedge \theta)$, is a little abstract, but the following Lemma makes it somewhat easier.
\end{rmk}

\begin{lemma}
\label{lem:Tcovering}
The site $(\Ccal_{\Phi},J_{\Tcal})$ has the joint-$\Tcal$-covering property if and only if for every pair of formulae-in-context $\{\vec{x} \cdot \phi\}$ and $\{\vec{y}\cdot \psi\}$ in $\Phi$ there exists a third formula $\{\vec{x},\vec{y} \cdot \chi\}$ in $\Phi$ such that the sequents
\[ \chi \vdash_{\vec{x},\vec{y}} \phi \wedge \psi
\hspace{10pt} \text{ and } \hspace{10pt}
\phi \vdash_{\vec{x}} \left(\exists \vec{y}\right) \chi
\hspace{10pt} \text{ and } \hspace{10pt}
\psi \vdash_{\vec{y}} \left(\exists \vec{x}\right) \chi \]
are provable in $\Tbb$. We say $\Phi$ has the \textbf{joint-$\Tbb$-covering property} if this holds.
\end{lemma}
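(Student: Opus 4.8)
The plan is to translate the joint-$\Tcal$-covering property into syntax by identifying a jointly-$\Tcal$-covering span with a suitable $\Tbb$-supercompact subobject of the product $\{\vec{x},\vec{y} \cdot \phi \wedge \psi\}$. First I would recall, from the proof of Proposition \ref{prop:geomstruct}, that in the syntactic category the product of $\{\vec{x}\cdot\phi\}$ and $\{\vec{y}\cdot\psi\}$ (with disjoint contexts) is $\{\vec{x},\vec{y}\cdot\phi\wedge\psi\}$, that a subobject of it is represented by a formula $\chi$ with $\chi \vdash_{\vec{x},\vec{y}} \phi\wedge\psi$ provable, and that a morphism $[\theta]$ is a cover precisely when the corresponding sequent $\psi \vdash_{\vec{y}} (\exists \vec{x})\theta$ is provable. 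The other preliminary observation I need is that, since $(\Ccal_\Phi,J_\Tcal)$ is subcanonical and $\ell_{\Tbb}$ restricts to a full and faithful embedding of $\Ccal_\Phi$, the $\Tcal$-morphisms of $\Ccal_\Phi$ (the strict epimorphisms) are exactly the morphisms sent by $\ell_{\Tbb}$ to epimorphisms of $\Set[\Tbb]$, which by the cover characterization above are exactly the covers between supercompact formulae; this is the analogue for $\Ccal_\Phi$ of Corollary \ref{crly:strict}.

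For the forward implication I would suppose the joint-$\Tcal$-covering property holds and fix $\{\vec{x}\cdot\phi\},\{\vec{y}\cdot\psi\} \in \Phi$, choosing a span with vertex $\{\vec{z}\cdot\xi\}\in\Phi$ and legs $[\theta_1],[\theta_2]$ both in $\Tcal$. Pairing the legs gives a morphism $\langle[\theta_1],[\theta_2]\rangle:\{\vec{z}\cdot\xi\}\to\{\vec{x},\vec{y}\cdot\phi\wedge\psi\}$, presented by $\theta_1(\vec{z},\vec{x}) \wedge \theta_2(\vec{z},\vec{y})$, whose image is $\{\vec{x},\vec{y}\cdot\chi\}$ with $\chi := (\exists\vec{z})(\theta_1\wedge\theta_2)$. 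Because $\{\vec{x},\vec{y}\cdot\chi\}$ is a quotient of the supercompact object $\{\vec{z}\cdot\xi\}$, it is supercompact by Lemma \ref{lem:closed} and hence lies in $\Phi$, and being a subobject of the product it satisfies $\chi \vdash_{\vec{x},\vec{y}} \phi\wedge\psi$. Finally, each cover $[\theta_i]$ factors as the image projection $\{\vec{z}\cdot\xi\}\twoheadrightarrow\{\vec{x},\vec{y}\cdot\chi\}$ followed by the relevant product projection restricted to $\{\vec{x},\vec{y}\cdot\chi\}$; since covers are closed under right cancellation in a geometric category, those restricted projections are themselves covers, which translates (via the cover characterization and a renaming of variables) into the sequents $\phi \vdash_{\vec{x}} (\exists\vec{y})\chi$ and $\psi \vdash_{\vec{y}} (\exists\vec{x})\chi$.

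The converse is essentially this construction read backwards. Given $\{\vec{x},\vec{y}\cdot\chi\}\in\Phi$ satisfying the three sequents, I would take the subobject $\{\vec{x},\vec{y}\cdot\chi\}\hookrightarrow\{\vec{x},\vec{y}\cdot\phi\wedge\psi\}$ together with its two product projections as a span from $\{\vec{x}\cdot\phi\}$ to $\{\vec{y}\cdot\psi\}$; the second and third sequents are precisely the cover conditions for these two (restricted) projections, so both legs are covers and hence $\Tcal$-morphisms, exhibiting the joint-$\Tcal$-covering property.

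I expect the main obstacle to be the bookkeeping in the forward direction: correctly computing the image of the paired morphism as $\{\vec{x},\vec{y}\cdot(\exists\vec{z})(\theta_1\wedge\theta_2)\}$, keeping the contexts and variable renamings straight when matching the restricted projections to the stated sequents, and cleanly justifying the right-cancellation of covers. The one genuinely conceptual point — that the image is automatically supercompact — is dispatched by Lemma \ref{lem:closed}, so no new supercompactness argument is required.
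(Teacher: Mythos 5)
Your proposal is correct and follows essentially the same route as the paper's proof: pair the two legs of a joint $\Tcal$-cover into the product $\{\vec{x},\vec{y} \cdot \phi \wedge \psi\}$, take the image to obtain $\{\vec{x},\vec{y}\cdot\chi\}$ (which lies in $\Phi$ since $\Phi$ is closed under image factorizations), and observe that the remaining two sequents are exactly the statements that the restricted projections are covers, with the converse read off by reversing this construction. The extra bookkeeping you flag (computing the image as $(\exists\vec{z})(\theta_1\wedge\theta_2)$, right-cancellation of epimorphisms) is routine and matches what the paper leaves implicit.
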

\begin{proof}
Given an arbitrary joint cover of the given objects, it must admit a morphism in $\Ccal_{\Tbb}$ to their product, and taking the image of that morphism we obtain a joint cover in the context $\vec{x},\vec{y}$, whence an object $\{\vec{x},\vec{y} \cdot \chi\}$ for which the first identity holds. The remaining sequents are translations of what it means for the projections to $\{\vec{x} \cdot \phi\}$ and $\{\vec{y}\cdot \psi\}$ to be epimorphisms.
\end{proof}

\begin{rmk}
We can deduce completeness of $\Tbb$ (in the sense of Definition \ref{dfn:completetheory} above) directly from the joint-$\Tbb$-covering property on $\Phi$: applying existential quantification to the formulae in Lemma \ref{lem:Tcovering}, we see that all supercompact propositional formulae are $\Tbb$-provably equivalent, whence (since all propositional formulae are unions of supercompact ones) every proposition is either $\Tbb$-provably equivalent to $\top$ or $\bot$, and either $\Tbb$ is contradictory or $\top$ is a $\Tbb$-supercompact formula.
\end{rmk}

It should be possible to translate each of the conditions of Theorem \ref{thm:qhomogen} into syntactic properties, with only the mild obstacle that we lack a characterization of $\Rcal$-morphisms (in the sense of Proposition \ref{prop:ofs}) in order to describe $\Rcal$-universality. However, our attempts at this translation were no more enlightening than the necessary and sufficient conditions already extracted in Section \ref{ssec:syntactic2}, so we move on immediately. 

\subsection{Reductive sites}
\label{ssec:redsite2}

Finally, we arrive in our analysis to the canonical sites of definition for supercompactly generated toposes.

We saw in Corollary \ref{crly:orthog} of Chapter \ref{chap:sgt} that any reductive category has a (strict epi,mono) factorization system whose left class is stable by definition. Thus we can directly apply the results of Section \ref{ssec:wfs}.

\begin{crly}
\label{crly:reductivecondition}
Let $(\Ccal,J_r)$ be a reductive site. Then $\Sh(\Ccal,J_r)$ is equivalent to a topos of actions of a topological monoid if and only if there exists a flat functor $F:\Ccal \to \Set$ preserving strict epimorphisms such that,
\begin{itemize}
	\item whenever $\{f_i:C_i \to C\}$ is sent by $F$ to a jointly epimorphic family, one of the $f_i$ is a strict epimorphism, and
	\item letting $L$ be the opposite of the monoid of natural endomorphisms of $F$ and equipping each $F(C)$ with its canonical right $L$-action, for each sub-$L$-set $A$ of $F(C)$ there exists a subobject $B \hookrightarrow C$ in $\Ccal$ such that $A \cong F(B)$.
\end{itemize}
Equivalently, using Theorem \ref{thm:qhomogen}, this occurs if and only if there exists an object $U$ in $\Ind(\Ccal\op)$ such that,
\begin{enumerate}
	\item $U$ is injective with respect to strict monomorphisms in $\Ccal\op$;
	\item every object $D$ of $\Ccal\op$ admits a monomorphism to $U$ whose left factors lying in $\Ccal\op$ are strict;
	\item $U$ can be expressed as a colimit of a directed diagram whose constituent morphisms are strict monomorphisms.
\end{enumerate}
\end{crly}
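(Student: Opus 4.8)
The plan is to derive Corollary \ref{crly:reductivecondition} as a direct specialization of two results already established: Corollary \ref{crly:principalcondition} for the first (action-based) characterization, and Theorem \ref{thm:qhomogen} for the second (object-in-$\Ind(\Ccal\op)$) characterization. The key observation that makes both specializations go through is that a reductive category $\Ccal$ carries, by Corollary \ref{crly:orthog}, an orthogonal factorization system $(\Tcal,\Mcal)$ in which $\Tcal$ is the class of strict epimorphisms (which is stable by the definition of reductive category, Definition \ref{dfn:reductive}) and $\Mcal$ is the class of monomorphisms; moreover $J_r = J_{\Tcal}$ is exactly the principal topology generated by $\Tcal$ by Theorem \ref{thm:canon}. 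Thus a reductive site is in particular a principal site, and all the hypotheses of the earlier theorems are in force.

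First I would treat the action-based characterization. Since $J_r = J_{\Tcal}$, Corollary \ref{crly:principalcondition} applies verbatim: $\Sh(\Ccal,J_r)$ is a topos of topological monoid actions if and only if there is a flat functor $F:\Ccal \to \Set$ sending $\Tcal$-morphisms (that is, strict epimorphisms) to epimorphisms, satisfying the stated jointly-epimorphic reflection condition and the subobject-covering condition. The only simplification needed is in the second bullet. In the general principal-site statement, a sub-$L$-set $A \hookrightarrow F(C)$ is covered by the images of morphisms $Fh$ ranging over a $J_{\Tcal}$-closed sieve $S$; I would invoke Corollary \ref{crly:psitescompact} to note that each supercompact subobject of $F(C)$ is already the image of a single $Fh$, and then use that $\Ccal$ is reductive. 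Because $\Mcal$-morphisms are exactly the monomorphisms (Lemma \ref{lem:monocoincide}) and these are preserved by $\ell$, Corollary \ref{crly:wfs} tells us that a $J_r$-closed principal sieve is generated by a single monomorphism $m: B \hookrightarrow C$ in $\Ccal$. Hence $A$ is the image of $F(m)$, and since $m$ is monic and $F$ is flat (so preserves finite limits, in particular monomorphisms), $A \cong F(B)$ for an actual subobject $B \hookrightarrow C$ in $\Ccal$, exactly as stated.

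Next I would treat the $\Ind(\Ccal\op)$ characterization by applying Theorem \ref{thm:qhomogen}. Its hypotheses require $\Tcal$ to be the left class of an orthogonal factorization system $(\Tcal,\Mcal)$ on $\Ccal$ with $\Mcal$ contained in the monomorphisms; for a reductive category this is precisely the content of Corollary \ref{crly:orthog}, with $\Mcal$ being all monomorphisms. The extension $(\Lcal,\Rcal)$ of $(\Mcal\op,\Tcal\op)$ to $\Ind(\Ccal\op)$ is furnished by Proposition \ref{prop:ofs}. The three conditions of Theorem \ref{thm:qhomogen} then transcribe directly: $\Tcal\op$-injectivity of $U$ becomes injectivity with respect to strict monomorphisms in $\Ccal\op$ (the duals of strict epimorphisms), since $\Tcal$ is exactly the strict epimorphisms here; $\Rcal$-universality unwinds, via part (b) of Proposition \ref{prop:ofs} together with the fact that $\Mcal$ consists of monomorphisms, to the statement that every $D$ admits a monomorphism to $U$ whose $\Ccal\op$-factors are strict; and the third condition, that $U$ is a directed colimit of finitely presentable objects with colimit legs in $\Rcal$ (equivalently, transition morphisms in $\Tcal\op$), becomes expressibility of $U$ as a directed colimit along strict monomorphisms. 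Each of these is a routine dualization, so I would simply record the correspondence rather than re-prove it.

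The step that demands the most care — and which I expect to be the main obstacle — is verifying that the $\Mcal$-morphisms of $\Ccal$ really do behave as cleanly as claimed when passed through $\ell$ and $F$, so that the covering sieves collapse to single subobjects $B \hookrightarrow C$. The warning in Example \ref{xmpl:wfsnogo} shows that for a general weak factorization system $\Mcal$-morphisms need not be sent to monomorphisms by $\ell$, which would break the clean restatement; the saving grace in the reductive case is precisely that $\Mcal$ is the class of \emph{all} monomorphisms and that these coincide with, and are preserved as, monomorphisms in the topos (Lemma \ref{lem:monocoincide}, Corollary \ref{crly:orthog}). I would therefore make sure to flag explicitly that it is the honest orthogonality and the monicity of $\Mcal$, rather than a mere weak factorization, that licenses identifying each supercompact subobject with an actual subobject $B \hookrightarrow C$ in $\Ccal$ and each $J_r$-closed principal sieve with a single monomorphism, thereby turning the abstract covering conditions of Corollaries \ref{crly:principalcondition} and \ref{crly:psitescompact} into the concrete conditions stated.
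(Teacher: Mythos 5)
Your overall strategy --- specializing Corollary \ref{crly:principalcondition} and Theorem \ref{thm:qhomogen} to the orthogonal factorization system (strict epi, mono) supplied by Corollary \ref{crly:orthog} --- is the same as the paper's, but the proposal has genuine gaps at precisely the points where the paper's proof does its actual work. The main one is in the second characterization: you dismiss the passage between the corollary's conditions 1--3 (phrased with monomorphisms) and the hypotheses of Theorem \ref{thm:qhomogen} (phrased with $\Rcal$-morphisms) as ``routine dualization,'' but it is not, because $\Rcal$ is \emph{not} the class of monomorphisms of $\Ind(\Ccal\op)$. All that is known is the containment $\Rcal \subseteq \{\text{monomorphisms}\}$, which is Proposition \ref{prop:ofs}(c) --- you cite part (b), which concerns $\Lcal$ and epimorphisms --- and this containment only gives \emph{necessity} of the stated conditions. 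For \emph{sufficiency} one must show that the monomorphisms produced by conditions 2 and 3 actually lie in $\Rcal$, and this is the content of the paper's converse argument: express $U$ as a directed colimit $E:\Ibb \to \Ccal\op$ along $\Tcal\op$-morphisms (condition 3); factor a monomorphism $D \to U$ through some stage $E(i)$ by finite presentability of $D$; observe that the factor $D \to E(i)$ is a left factor lying in $\Ccal\op$, hence strict, hence in $\Tcal\op \subseteq \Rcal$; and note that the colimit leg $\lambda_i$ lies in $\Rcal$ by closure of $\Rcal$ under directed colimits (Proposition \ref{prop:ofs}, property 3), so the composite is in $\Rcal$. The paper even flags why no shortcut exists --- ``we lack a characterization of the extension $\Rcal$'' --- so this step cannot be recorded as a correspondence; it has to be proved, and your proposal contains none of it.

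There is also a gap in your treatment of the first characterization. Your chain of reductions --- sub-$L$-set $A$ is covered by a $J_{\Tcal}$-closed sieve, closed principal sieves are generated by single monomorphisms (Corollary \ref{crly:wfs}), hence $A$ is the image of a single $F(m)$ --- does not close, because Corollary \ref{crly:psitescompact} only identifies \emph{supercompact} subobjects of $F(C)$ with images of single morphisms $Fh$, while Corollary \ref{crly:wfs} says that $J_r$-closed sieves correspond to \emph{ideals} of subobjects in $\Mcal/C$, not to principal ones; a general sub-$L$-set is neither supercompact nor a priori covered by a principal closed sieve, so the passage from the sieve condition to a single subobject $B \hookrightarrow C$ with $A \cong F(B)$ is unjustified as you present it. (For what it is worth, the paper disposes of this half of the statement with the single assertion that it is ``a direct translation of Theorem \ref{thm:basic},'' so the burden you took on is real, but the argument you give does not discharge it.)
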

\begin{proof}
The first characterization is a direct translation of Theorem \ref{thm:basic}. For the latter, just as in the last section we lack a characterization of the extension $\Rcal$ of the class $\Tcal\op$ of strict monomorphisms in $\Ccal\op$ to $\Ind(\Ccal\op)$, but we at least know that $\Rcal$-morphisms are monic by Proposition \ref{prop:ofs}(c), so the conditions are necessary.

Conversely, given the stated conditions and an expression of $U$ as a colimit of a directed diagram $E:\Ibb \to \Ccal\op$ consisting of strict epimorphisms, a monomorphism $X \hookrightarrow U$ must factor as a monomorphism $X \hookrightarrow E(i)$ followed by the colimit leg $\lambda_i: E(i) \to U$ which is an $\Rcal$-morphism, and since all monomorphisms in $\Ccal\op$ are strict monomorphisms, it follows that the monomorphism $X \hookrightarrow U$ is an $\Rcal$-morphism, as required.
\end{proof}

\section{An extension of the \Fraisse construction}
\label{sec:Fraisse}

While the necessary and sufficient conditions presented at various levels of generality in Section \ref{sec:setup} appear to be reasonable generalizations of those obtained by Caramello in \cite{TGT}, they are \textit{a priori} useless, since we have not provided any way to \textit{build} an object $U$ with the relevant qualities.

Obtaining such a point, after imposing some mild hypotheses, is the purpose of the \textit{\Fraisse construction}, whose relation to topos theory Caramello studied in her own doctoral thesis (published in \cite{Fraisse}, relying in places on the work of Kubi\'{s} \cite{Fraisse0}). The origin of this construction is recounted in \cite[\S 7.1]{Model}: Roland \Fraisse demonstrated that the rationals (viewed as a countable linearly ordered set) can be reconstructed as the filtered colimit of its finite linear suborders, in such a way that any other countable linear order embeds into the rationals. This construction extends to a demonstration that for any class of finite structures satisfying the joint embedding property and the amalgamation property, there exists a unique countable structure which is universal and ultrahomogeneous with respect to this class, where the categorical interpretation of these terms is that employed by Caramello, as described in Remarks \ref{rmk:TGTextend} and \ref{rmk:qhomogen} above.

\subsection{Existence theorem}
\label{ssec:exists}

We shall not produce such general results here; we simply give a special case analogous to \Fraisse's original construction, following \cite[Theorem 7.1.4]{Model}.

\begin{thm}
\label{thm:Fraisse1}
Let $\Ccal$ be a (finite or) \textbf{countable} category equipped with an OFS $(\Tcal,\Mcal)$ such that $\Tcal$ is a stable class and $\Mcal$ is contained in the class of monomorphisms. Suppose that $\Ccal$ has the joint $\Tcal$-covering property. Then $\Ind(\Ccal\op)$ contains an object $U$ satisfying the conditions of Theorem \ref{thm:qhomogen}.
\end{thm}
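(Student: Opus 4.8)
The plan is to emulate the classical \Fraisse back-and-forth construction adapted to the categorical setting of $\Ind(\Ccal\op)$. First I would set up the construction of $U$ as a directed colimit. Since $\Ccal$ is countable, I can enumerate all morphisms of $\Ccal\op$ (equivalently of $\Ccal$) as $(d_n : D_n \to E_n)_{n \in \Nbb}$, possibly with repetitions, ensuring every morphism appears infinitely often. I would build an increasing $\omega$-indexed chain of objects $V(0) \to V(1) \to V(2) \to \cdots$ in $\Ccal\op$, connected by $\Tcal\op$-morphisms, whose colimit in $\Ind(\Ccal\op)$ will be the desired $U$. The chain is constructed by recursion: at each stage I handle one of the enumerated morphisms, using the joint $\Tcal$-covering property (equivalently, by Scholium \ref{schl:jointT}, the joint covering property of the supercompact objects) to amalgamate. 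Concretely, given $V(n)$ and a morphism $d:D \to V(n)$ together with $d_n: D \to E$ (matching up domains via the enumeration), I would use the joint $\Tcal$-covering property to find an object $V(n+1)$ with $\Tcal\op$-morphisms making the appropriate square commute, so that $d_n$ extends to a morphism $E \to V(n+1)$. The key to making this work is that the joint $\Tcal$-covering property guarantees, for any cospan, a completing object reachable by $\Tcal$-morphisms on both legs.

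The heart of the argument is to verify that the colimit $U := \colim_n V(n)$ satisfies the three conditions of Theorem \ref{thm:qhomogen}. Condition (3), that $U$ is a directed colimit of finitely presentable objects with the diagram morphisms in $\Tcal\op$, holds by construction since each $V(n)$ is an object of $\Ccal\op$ (hence finitely presentable in $\Ind(\Ccal\op)$) and the connecting morphisms are $\Tcal\op$-morphisms. For condition (1), $\Tcal\op$-injectivity (equivalently $U$ being injective with respect to $\Tcal\op$-morphisms in the sense of Lemma \ref{lem:Tinj}), I would use the enumeration: given a span $B \leftarrow A \to U$ with $A \to B$ in $\Tcal\op$, the morphism $A \to U$ factors through some $V(n)$ by finite presentability of $A$, and since $A \to B$ appears in the enumeration at some later stage, the recursive construction will have arranged a filler $B \to V(m) \to U$. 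For condition (2), $\Rcal$-universality, I would argue that every object $D$ of $\Ccal\op$ admits an $\Rcal$-morphism to $U$; here I would exploit the joint $\Tcal$-covering property again to produce at least one morphism $D \to V(n) \to U$, then take its $(\Lcal,\Rcal)$-factorization from Proposition \ref{prop:ofs} and show (using $\Tcal\op$-injectivity and the enumeration, arguing as in the proof of Theorem \ref{thm:qhomogen}) that the left factor must be invertible, so the morphism itself is in $\Rcal$.

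The main obstacle I anticipate is the bookkeeping needed to ensure that the single chain simultaneously witnesses $\Tcal\op$-injectivity for \emph{all} spans while remaining a chain of $\Tcal\op$-morphisms. The subtlety is that a span $A \to B$, $A \to U$ to be filled has its vertex $A$ and its map $A \to U$ only appearing after some finite stage, so I must enumerate not just morphisms of $\Ccal$ but pairs consisting of a morphism $d_n$ and a "partial factorization" through the chain built so far; the standard device is to enumerate over $\Nbb \times \Nbb$ or to interleave a universal enumeration so that every relevant lifting problem posed at a finite stage is eventually resolved. A secondary technical point is that, lacking pushouts in $\Ccal$, I cannot simply glue along amalgamation squares freely; instead each amalgamation step must be realized by an honest object of $\Ccal$ supplied by the joint $\Tcal$-covering property, and I must check that the resulting connecting morphism genuinely lies in $\Tcal\op$ (using stability of $\Tcal$, Definition \ref{dfn:stable}, and the factorization system axioms). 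Once the enumeration is organized correctly, the verification of the three conditions is largely a matter of applying finite presentability (Fact \ref{fact:fpmorphism}) to reduce each lifting or factorization problem in $\Ind(\Ccal\op)$ to a finite stage $V(n)$ where the recursive step has already done the work.
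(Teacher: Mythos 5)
Your skeleton is the same as the paper's (an $\omega$-chain of $\Tcal\op$-morphisms in $\Ccal\op$ built by enumeration, colimit $U$, verification via finite presentability), but you have misassigned the roles of the two hypotheses, and the mechanism you describe for the amalgamation step would fail. The joint $\Tcal$-covering property (Definition \ref{dfn:jTcp}) is a property of pairs of \emph{objects}: it supplies a span in $\Ccal$ with both legs in $\Tcal$ (dually, a cospan in $\Ccal\op$ with both legs in $\Tcal\op$), with \emph{no} commutativity condition tying it to any given pair of morphisms. It therefore cannot ``complete a cospan to a commuting square,'' and the morphism $E \to V(n+1)$ you extract from it need not satisfy the equation required to solve the lifting problem, so it does not witness $\Tcal\op$-injectivity of the colimit. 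What actually performs the amalgamation is the stability of $\Tcal$ (axiom 3 of Definition \ref{dfn:stable}), dualized: given $f : A \to U_k$ in $\Ccal\op$ and $t : A \to B$ in $\Tcal\op$, costability produces a genuinely commuting square with a $\Tcal\op$-morphism $U_k \to U'_{k+1}$ and a morphism $B \to U'_{k+1}$. In the paper's proof this is the entire content of the injectivity step; the joint $\Tcal$-covering property plays the separate role of folding the next object $A_{k+1}$ into the chain at each stage, and that is what gives $\Rcal$-universality immediately: a $\Tcal\op$-morphism $A_{k+1} \to U_{k+1}$ composed with the colimit leg $U_{k+1} \to U$ lies in $\Rcal$, since $\Tcal\op \subseteq \Rcal$, colimit legs of $\Tcal\op$-chains are in $\Rcal$ by Proposition \ref{prop:ofs}, and $\Rcal$ is closed under composition. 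Your proposed detour for condition (2) --- factorize an arbitrary morphism and prove the $\Lcal$-part invertible --- is both unnecessary and circular, since the invertibility argument in Theorem \ref{thm:qhomogen} presupposes properties of $U$ you are still constructing.

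A related symptom of the same confusion: you enumerate \emph{all} morphisms of $\Ccal\op$, each infinitely often, and propose to extend every enumerated $d_n : D \to E$ along the chain. For $d_n \notin \Tcal\op$ no such extension need exist (stability gives nothing for it), and none is needed, because $\Tcal\op$-injectivity only asks for fillers against $\Tcal\op$-morphisms. The correct bookkeeping is the paper's: enumerate the $\Tcal\op$-morphisms $t_i : A_i \to B_i$ (this also enumerates objects, via identities) and fix a pairing $\pi : \omega \times \omega \times \omega \to \omega$ with $\pi(i,j,k) \geq k$, so that the $j$-th span from $A_i$ into $U_k$ is resolved by costability at stage $\pi(i,j,k)$. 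With these corrections --- stability doing the amalgamation, the joint $\Tcal$-covering property doing universality, and the enumeration restricted to $\Tcal\op$ --- your argument becomes the paper's proof.
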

\begin{proof}
We shall construct an $\omega$-indexed sequence $\{U_i: i < \omega\}$ in $\Ind(\Ccal\op)$ such that each $u_{k}^{k+1} : U_k \to U_{k+1}$ lies in $\Tcal\op$, such that for any object $A$ in $\Ccal$, $U_k$ admits a $\Tcal\op$ morphism from $A$ for all sufficiently large $k$, and such that the colimit is $\Tcal\op$-injective.

Enumerate $\Tcal\op$ as $\{t_{i}:A_i \to B_i \mid i < \omega\}$;\footnote{This can be adapted to the finite case either by repetition or by choosing a suitable finite indexing set throughout the proof.} since every object appears as a domain of a $\Tcal\op$-morphism (the identity on itself) there is no harm in using this as an enumeration of the objects too. Let $\pi:\omega \times \omega \times \omega \to \omega$ be a bijection such that $\pi(i,j,k) \geq k$. We define $U_k$ inductively as follows:

Let $U_0 := A_0$.

Given $U_k$, enumerate spans,
\[\left\{
\begin{tikzcd}[row sep = tiny]
A_i \ar[d, "t_i"'] \ar[r, "f_{i,j,k}"] & U_k \\
B_i
\end{tikzcd}
\middle| \; j < \omega \right\},\]
If there exists $i',j',k'$ with $\pi(i',j',k') = k$, then define $U'_{k+1}$ by (co)stability of $t_{i'}$ along $u_{k'}^k \circ f_{i',j',k'}$; otherwise set $U'_{k+1} = U_k$. Now define $U_{k+1}$ by applying the joint $\Tcal$-covering property to $A_{k+1}$ and $U'_{k+1}$; we illustrate this (with hooked arrows for $\Tcal\op$-morphisms) as follows:
\begin{equation}
\label{eq:construct}
\begin{tikzcd}
A_{i'} \ar[d, "t_{i'}"', hook] \ar[r, "f_{i',j',k'}"] &
U_{k'} \ar[r, "u_{k'}^k", hook] &
U_k \ar[d, "u'_{k}"', hook] \ar[dd, bend left, "u_{k}^{k+1}"]\\
B_{i'} \ar[rr] & &
U'_{k+1} \ar[d, "u''_k"', hook] \\
& A_{k+1} \ar[r, hook] & U_{k+1}
\end{tikzcd}	
\end{equation}

That every object in $\Ccal\op$ admits a $\Tcal\op$-morphism to some $U_k$ and hence an $\Rcal$-morphism to $U$ is immediate. To verify $\Tcal\op$-injectivity, observe that any $f:A \to U$ must factor through $U_k$ by finite presentability, and given any $\Tcal\op$-morphism $t:A \to B$, we can choose $k$ large enough that there is a morphism completing the triangle, as required.
\end{proof}

Applying this to the category of $\Ccal_{\Phi}$ of $\Tbb$-supercompact formulae discussed in Section \ref{ssec:sgtheory} produces the following result:

\begin{crly}
\label{crly:countabletheory}
Suppose that $\Tbb$ is a supercompactly generated theory over a countable signature, and let $\Phi$ be the class of $\Tbb$-supercompact formulae. If $\Phi$ has the joint-$\Tbb$-covering property, then there exists a model $\Mbb$ of $\Tbb$ satisfying all of the necessary conditions highlighted in Section \ref{ssec:syntactic2}. In particular, $\Tbb$ is classified by a topos of topological monoid actions.
\end{crly}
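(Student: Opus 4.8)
The statement I want to prove is Corollary \ref{crly:countabletheory}: if $\Tbb$ is a supercompactly generated theory over a countable signature whose class $\Phi$ of $\Tbb$-supercompact formulae has the joint-$\Tbb$-covering property, then $\Tbb$ is classified by a topos of topological monoid actions. The strategy is to reduce this entirely to the existence theorem for the \Fraisse-type construction, Theorem \ref{thm:Fraisse1}, applied to the canonical principal site extracted from the supercompact formulae. First I would recall from Section \ref{ssec:sgtheory} that the full subcategory $\Ccal_{\Phi}$ of the syntactic category $\Ccal_{\Tbb}$ on the $\Tbb$-supercompact formulae is equipped with a factorization system $(\Tcal,\Mcal)$, where $\Tcal$ is the class of strict epimorphisms (which is stable and generates the principal topology $J_{\Tcal}$) and $\Mcal$ is the class of monomorphisms inherited from $\Ccal_{\Tbb}$; by the Comparison Lemma we have $\Set[\Tbb] \simeq \Sh(\Ccal_{\Phi},J_{\Tcal})$. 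Thus the goal becomes: produce a flat functor (equivalently, an object $U$ of $\Ind(\Ccal_{\Phi}\op)$) meeting the conditions of Theorem \ref{thm:qhomogen}.

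\textbf{Verifying the hypotheses of the \Fraisse theorem.} The second step is to check that $(\Ccal_{\Phi},(\Tcal,\Mcal))$ satisfies the hypotheses of Theorem \ref{thm:Fraisse1}. There are three things to confirm. First, countability: by Lemma \ref{lem:scompactreduce} and Remark \ref{rmk:indexmorph}, both the objects of $\Ccal_{\Phi}$ (which are $\Tbb$-supercompact formulae-in-context up to $\alpha$-equivalence and $\Tbb$-provable equivalence) and the morphisms between them are indexed by $\Tbb$-supercompact formulae over $\Sigma$; since $\Sigma$ is countable there are only countably many geometric formulae over it up to the relevant equivalences, so $\Ccal_{\Phi}$ is a countable category. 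Second, that $\Tcal$ is stable and $\Mcal$ consists of monomorphisms: the former holds because strict epimorphisms in $\Ccal_{\Phi}$ are stable (this is exactly the content recalled in Section \ref{ssec:sgtheory}, via the same argument as Corollary \ref{crly:strict}), and the latter holds by construction. Third, the joint $\Tcal$-covering property of $(\Ccal_{\Phi},J_{\Tcal})$: this is precisely the translation, provided by Lemma \ref{lem:Tcovering} (through Scholium \ref{schl:jointT}), of the assumed joint-$\Tbb$-covering property of $\Phi$. Hence all hypotheses of Theorem \ref{thm:Fraisse1} are met, and it yields an object $U$ of $\Ind(\Ccal_{\Phi}\op)$ satisfying the conditions of Theorem \ref{thm:qhomogen}.

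\textbf{Concluding.} The final step is to run the machinery backwards. By Theorem \ref{thm:qhomogen}, the flat functor $F := \Hom_{\Ind(\Ccal_{\Phi}\op)}(-,U)$ meets the criteria of Corollary \ref{crly:principalcondition} for the principal site $(\Ccal_{\Phi},J_{\Tcal})$; equivalently, via Corollary \ref{crly:generaltheory} and the identification of $F$ with a $\Set$-model $\Mbb$ of $\Tbb$ (whose endomorphism monoid is $L\op$, with $L := \End_{\Ind(\Ccal_{\Phi}\op)}(U)\op$), the model $\Mbb$ satisfies all the necessary conditions from Section \ref{ssec:syntactic2}: it is $\Tbb$-conservative, $\Tbb$ is complete and supercompactly generated, and the homogeneity condition holds. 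By Theorem \ref{thm:basic} (or directly Corollary \ref{crly:principalcondition}) the induced geometric morphism $q_{\Set[\Tbb],p}:\Setswith{L} \to \Set[\Tbb]$ is hyperconnected, whence by Theorem \ref{thm:characterization} we obtain $\Set[\Tbb] \simeq \Cont(L,\rho)$ for the topology $\rho$ generated by the necessary clopens \eqref{eq:endopens}. This exhibits $\Tbb$ as classified by a topos of topological monoid actions.

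\textbf{The main obstacle.} Almost all of the genuine difficulty has been front-loaded into Theorem \ref{thm:Fraisse1} and Theorem \ref{thm:qhomogen}, so at the level of this corollary the real work is the careful bookkeeping of the three hypothesis checks — and of these, the delicate one is confirming that the countability of the signature genuinely forces $\Ccal_{\Phi}$ to be a countable \emph{category} (objects \emph{and} morphisms), for which the reduction of $\Tbb$-provably functional formulae to $\Tbb$-supercompact ones in Lemma \ref{lem:scompactreduce} is indispensable; without it the hom-sets could a priori be uncountable and the \Fraisse enumeration in the proof of Theorem \ref{thm:Fraisse1} would break down. The other subtlety worth flagging explicitly is that the conditions on $\Phi$ guarantee only a weakly terminal / universal object in the sense of Theorem \ref{thm:qhomogen}, not uniqueness; accordingly I would state the conclusion as the existence of a suitable $\Mbb$ (and hence of \emph{a} representing topological monoid), reserving any uniqueness or initiality claims for the companion result Proposition \ref{prop:Fraisse2}.
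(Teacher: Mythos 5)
Your proposal is correct and follows exactly the route the paper intends: the paper states this corollary as an immediate application of Theorem \ref{thm:Fraisse1} to the site $(\Ccal_{\Phi},J_{\Tcal})$ from Section \ref{ssec:sgtheory}, with the countability of $\Ccal_{\Phi}$ secured by Lemma \ref{lem:scompactreduce}/Remark \ref{rmk:indexmorph} and the joint $\Tcal$-covering property secured by Lemma \ref{lem:Tcovering}, then translated back through Theorem \ref{thm:qhomogen} and the conditions of Section \ref{ssec:syntactic2}. Your write-up simply makes explicit the bookkeeping the paper leaves implicit, and your closing caveats (existence rather than uniqueness, deferring initiality to Proposition \ref{prop:Fraisse2}) match the paper's treatment.
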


\subsection{Minimality theorem}
\label{sssec:unique}

We know, from Example \ref{xmpl:Schanuel} for example, that complete topological monoids representing a given topos are far from unique. However, the classical \Fraisse construction, which applies to a suitable class of finitely generated models of a theory over a countable signature, produces a model which is \textit{countably categorical}, in the model-theoretic sense that it is the only countable model with the required universal properties.

Since we saw in Chapter \ref{chap:TDMA} that the Morita-equivalence problem is non-trivial even for discrete monoids, we cannot hope to recover uniqueness. Nonetheless, we can at least generalize a weaker minimality result by emulating Caramello and Kubi\'{s}.

\begin{lemma}
\label{lem:buildamor}
Consider the set-up from Theorem \ref{thm:qhomogen}. Let $X$ be an object of $\Ind(\Ccal\op)$ expressible as a colimit of a diagram $Y:\omega \to \Ccal\op$ whose morphisms lie in $\Tcal\op$. Let $U$ be any $\Tcal\op$-injective object which admits a morphism from some object in the image of $Y$ (for example, suppose $U$ is $\Rcal$-universal). Then there exists a morphism $X \to U$.
\end{lemma}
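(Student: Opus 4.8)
The plan is to construct a cocone from the diagram $Y$ to $U$ by a straightforward induction along $\omega$, using $\Tcal\op$-injectivity at each stage, and then to conclude by the universal property of the colimit $X = \colim Y$. First I would fix an index $n_0$ and a morphism $g_{n_0}: Y(n_0) \to U$ witnessing the hypothesis that $U$ admits a morphism from an object in the image of $Y$ (in the $\Rcal$-universal case, such a morphism exists because every object of $\Ccal\op$, in particular $Y(0)$, admits an $\Rcal$-morphism, hence a morphism, to $U$).

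The inductive step is where $\Tcal\op$-injectivity does all the work. Writing $t_n:Y(n) \to Y(n+1)$ for the transition morphism of the diagram, which lies in $\Tcal\op$ by hypothesis, suppose I have built $g_n:Y(n) \to U$ in $\Ind(\Ccal\op)$ for some $n \geq n_0$. Then I apply $\Tcal\op$-injectivity (Lemma \ref{lem:Tinj}) to the span
\[\begin{tikzcd}
Y(n) \ar[r, "g_n"] \ar[d, "t_n"'] & U \\
Y(n+1) \ar[ur, dashed, "g_{n+1}"']
\end{tikzcd}\]
to obtain a filler $g_{n+1}:Y(n+1) \to U$ satisfying $g_{n+1} \circ t_n = g_n$. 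This produces morphisms $g_n$ for every $n \geq n_0$.

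It then remains to observe that these assemble into a cocone and to invoke the colimit. The relations $g_{n+1}\circ t_n = g_n$ give, by composition, $g_m \circ Y(k^m_n) = g_n$ for all $m \geq n \geq n_0$, where $Y(k^m_n)$ is the composite transition morphism; hence $(g_n)_{n \geq n_0}$ is a cocone under the restriction of $Y$ to the subposet $\{n \in \omega \mid n \geq n_0\}$. Since this subposet is cofinal in $\omega$, its colimit coincides with $X$, so the universal property yields the desired morphism $X \to U$. I expect no genuine obstacle here: the whole content is the single application of injectivity per stage, together with the cofinal-restriction remark, and because the construction only extends morphisms forward along $\Tcal\op$-arrows it requires neither a back-and-forth argument nor the countability hypothesis that was essential in Theorem \ref{thm:Fraisse1}.
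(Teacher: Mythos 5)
Your proof is correct and takes essentially the same route as the paper's: a single application of $\Tcal\op$-injectivity per stage to extend the given morphism forward along the diagram, followed by the universal property of the colimit. The only immaterial difference is that the paper completes the cocone over all of $\omega$ by precomposing with the transition morphisms at indices below the given one, whereas you restrict to the cofinal tail $\{n \geq n_0\}$; these two pieces of bookkeeping are interchangeable.
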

\begin{proof}
We must construct a cocone under the diagram $Y$ with nadir $U$. We are given a morphism $Y(n) \to U$. For $n' < n$ we simply compose with the morphism $Y(n') \to Y(n)$. Given $Y(n)$, we construct $Y(n + 1)$ by $\Tcal\op$-injectivity of $U$ and so on, inductively.
\end{proof}

\begin{prop}
\label{prop:Fraisse2}
Let $\Ccal$ be a countable category equipped with the data specified in Theorem \ref{thm:Fraisse1}. Then the object $U$ constructed in Theorem \ref{thm:Fraisse1} is weakly initial amongst $\Tcal\op$-injective, $\Rcal$-universal objects of $\Ind(\Ccal\op)$.
\end{prop}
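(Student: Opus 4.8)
The plan is to show that for any $\Tcal\op$-injective, $\Rcal$-universal object $W$ of $\Ind(\Ccal\op)$, there exists a morphism $U \to W$ in $\Ind(\Ccal\op)$. This is exactly the weak initiality claim: $U$ maps to every object of the relevant class (we do not require uniqueness of the morphism, consistent with the non-triviality of Morita equivalence).

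First I would invoke the explicit construction of $U$ from Theorem \ref{thm:Fraisse1}, where $U$ is presented as the colimit of an $\omega$-indexed diagram $\{U_k : k < \omega\}$ whose transition morphisms $u_k^{k+1}:U_k \to U_{k+1}$ all lie in $\Tcal\op$, and each $U_k$ is an object of $\Ccal\op$ (finitely presentable in $\Ind(\Ccal\op)$). This is precisely the shape of diagram required to apply Lemma \ref{lem:buildamor}: taking $X = U$ with the diagram $Y:\omega \to \Ccal\op$ given by $k \mapsto U_k$, the morphisms of $Y$ lie in $\Tcal\op$ by construction. Since $W$ is assumed $\Tcal\op$-injective and $\Rcal$-universal, in particular it admits a morphism from $U_0 = A_0$ (indeed from every object in the image of $Y$, by $\Rcal$-universality). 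The hypotheses of Lemma \ref{lem:buildamor} are therefore met, and the lemma directly yields a morphism $U \to W$.

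The substance of the argument is thus entirely delegated to Lemma \ref{lem:buildamor}, whose proof builds the required cocone under $Y$ inductively using $\Tcal\op$-injectivity of $W$ at each successor stage. I would spell out that this is the only place injectivity is used: at stage $k$ we have a morphism $Y(k) \to W$, and the transition $Y(k) \to Y(k+1)$ is a $\Tcal\op$-morphism, so $\Tcal\op$-injectivity of $W$ produces a filler $Y(k+1) \to W$ making the triangle commute; these assemble into a cocone and hence factor through the colimit $U$. The base case is supplied by $\Rcal$-universality (or merely the stated weaker hypothesis that $W$ receives a morphism from some object in the image of $Y$).

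The main (and only) obstacle worth flagging is the verification that the diagram exhibiting $U$ as a colimit genuinely has transition morphisms in $\Tcal\op$, rather than merely in the extended class $\Rcal$ on $\Ind(\Ccal\op)$. This is guaranteed by the construction in Theorem \ref{thm:Fraisse1}, where each $u_k^{k+1}$ is obtained by composing costability squares and joint-$\Tcal$-covering morphisms, all of which are $\Tcal\op$-morphisms between objects of $\Ccal\op$; one should note that $\Tcal\op$ is closed under composition since $\Tcal$ is a stable class. Beyond this bookkeeping, the result is an immediate corollary of the preceding lemma, so no further calculation is needed.
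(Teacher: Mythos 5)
Your proposal is correct and matches the paper's own argument exactly: the paper proves Proposition \ref{prop:Fraisse2} simply by citing Lemma \ref{lem:buildamor}, applied to the $\omega$-chain of $\Tcal\op$-morphisms exhibiting $U$ as a colimit, with the base case supplied by $\Rcal$-universality of the target. Your additional bookkeeping (checking the transition morphisms lie in $\Tcal\op$ and not merely in $\Rcal$) is a reasonable elaboration of what the paper leaves implicit, but introduces nothing beyond the paper's route.
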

\begin{proof}
This is immediate from Lemma \ref{lem:buildamor}.
\end{proof}

Considering the Schanuel topos of Example \ref{xmpl:nonpresh}, for example, the countable set $\Nbb$ is weakly initial in the category of infinite decidable sets, having (many) embeddings into any other infinite set.

\begin{xmpl}
We have already mentioned that non-triviality of Morita-equivalence for monoids which we saw in Chapter \ref{chap:TDMA} obstructs a generalization of the uniqueness up to isomorphism assertion regarding the \Fraisse construction. One might hope based on this that one could at least demonstrate that any pair of objects satisfying the conditions of Theorem \ref{thm:qhomogen} and constructed as countable directed colimits of $\Tcal\op$-morphisms might be retracts of one another, or embed into one another. Thanks to Jens Hemelaer, we can exhibit a counterexample to these suggestions.

Consider the bicyclic monoid, which has the following presentation:
\[ B := \langle u,v \mid uv = 1 \rangle. \]
Each element of $B$ can be uniquely expressed in the form $v^iu^j$ with $i,j \geq 0$. Consider $B$ as a one-object category, and let
\[\Ucal := \{u^j \mid j \geq 0\} \text{ and } \Vcal := \{v^i \mid i \geq 0\}.\]
By definition, $\Ucal$ and $\Vcal$ are the classes of split epimorphisms and split monomorphisms of $B$ respectively; in fact, one can check that these form an orthogonal factorization system on $B$ and that $\Ucal$ is a stable class, so that $(B,J_{\Ucal})$ is a principal site satisfying the conditions of Theorem \ref{thm:Fraisse1}. Since all of the $\Ucal$-morphisms split, $J_{\Ucal}$ coincides with the trivial topology, so the topos $\Sh(B,J_{\Ucal})$ is simply $\Setswith{B}$. As such, this topos has a canonical point (with the properties required by Theorem \ref{thm:qhomogen}) corresponding to $B$ as a left $B$-set, or equivalently as an object of the category $B\op$. On the other hand, the object constructed in the proof of Theorem \ref{thm:Fraisse1} is the colimit of the diagram,
\[\begin{tikzcd}
B \ar[r, "\cdot u"] & B \ar[r, "\cdot u"] & B \ar[r, "\cdot u"] & \cdots ;
\end{tikzcd}\]
call this colimit $A$. Of course, the specific diagram constructed according to the proof may differ from this one, but considering stability of $\Ucal$ morphisms along identity morphisms we see that the sequence of morphisms $u_k^{k+1}$ in the proof must contain infinitely many non-identity members of $\Ucal$, whence we can construct a final map $\omega \to \omega$ mapping the constructed sequence to the one above.

As a left $B$-set, we can identify $A$ with the set having elements $\{b^pa^q \mid p \in \Nbb, \, q \in \Zbb\}$, and action determined by:
\begin{align*}
u \cdot b^pa^q & =
\begin{cases}
b^{p-1}a^q &  \text{ if }p>0 \\
a^{q+1} & \text{ if }p=0
\end{cases} \\
v \cdot b^p a^q & = b^{p+1}a^q.
\end{align*}

There is an epimorphism $A \to B$, which can be constructed as the colimit in $\Ind(B\op)^2$ of the following diagram in $(B\op)^2$:
\[\begin{tikzcd}
B \ar[r, "\cdot u"] \ar[d, "\cdot 1", two heads] & B \ar[r, "\cdot u"] \ar[d, "\cdot v", two heads] & B \ar[r, "\cdot u"] \ar[d, "\cdot v^2", two heads] & \cdots \\
B \ar[r, equal] & B \ar[r, equal] & B \ar[r, equal] & \cdots ;
\end{tikzcd}\]
concretely, it sends $b^p a^q$ to $v^p u^q$ if $q \leq 0$ and to $v^{p+q}$ if $q < 0$. Since $B$ is projective as a left $B$-set, this epimorphism splits; the most obvious splitting sends $v^i u^j$ back to $b^i a^j$. Thus $B$ is a retract of $A$. However, $A$ is not a retract of $B$ since it is not principal: the element $b^p a^q$ generates a sub-$B$-set on elements of the form $b^{p'} a^{q'}$ with $q' \geq q$; there is no monomorphism $A \to B$, either.

Let $L := \End(A)\op$. This monoid can be presented as,
\[ L \cong \langle a, a^{-1}, w \mid aa^{-1} = 1 = a^{-1}a, \, wa^kw = wa^k, wa^{-k}w = a^{-k}w  \; (k\geq 0) \rangle,\]
where $a$ acts on $A$ by sending $b^p a^q$ to $b^p a^{q+1}$, $a^{-1}$ is its inverse, and $w$ is the idempotent endomorphism obtained as the composite $A \to B \to A$ of the morphisms in the retraction described above. To verify the presentation, we observe that the generators are left $B$-set homomorphisms validating the given relations and that these relations allow us to write any element of the presented monoid uniquely as either $a^{i}wa^j$ or $a^{k}$ with $i,j,k \in \mathbb{Z}$ (by an inductive argument which we omit). To check that these relations are exhaustive, consider the fact that a $B$-set endomorphism $\phi$ of $A$ is determined by its value on the generating set $\{a^q \mid q \in \mathbb{Z} \}$. If there exists some $q$ with $\phi(a^q) = b^p a^{q'}$ for $p > 0$ then for $l \geq q$ we have,
\[\phi(a^l) = u^{l-q}\phi(a^q) =
\begin{cases}
b^{p-(l-q)}a^{q'} &  \text{ if } p - (l - q) \geq 0 \\
a^{q' + (l - q) - p} & \text{ if } p - (l - q) \leq 0,
\end{cases}\]
while for $l \leq q$ we have $\phi(a^q) = u^{q-l}\phi(a^l)$, which has the unique solution $\phi(a^l) = b^{p+(q-l)} a^{q'}$. As such, $\phi$ decomposes in $L$ as $a^{-p-q}wa^{q'}$. On the other hand, if $\phi(a^q) = a^{q'}$ is a power of $a$ for all $q \in \mathbb{Z}$, $\phi$ is expressible as $a^{q'-q}$. This validates the presentation.

To compute the topology on $L$, we use Theorem \ref{thm:factor} from Chapter \ref{chap:TTMA}: the counit of the (hyperconnected) geometric morphism $\Setswith{L} \to \Setswith{B}$ induced by $A$ at $\Pcal(L)$ is
\begin{align*}
	\epsilon_{\Pcal(L)}: \Hom_L(A,\Pcal(L)) \otimes_B A & \to \Pcal(L) \\
	f \otimes a & \mapsto f(a),
\end{align*}
and the base for the topology is the image of this map. Note that $A$ is a principal $L$-set, since $1$ can be mapped to any $b^qa^p$ by the action of $L$; as such, an $L$-set homomorphism $f: A \to \Pcal(L)$ is determined by the image of $1$. A subset $U$ can be the image of $1$ under such an $f$ if and only if $(a^iwa^j)^*(U) = (a^{i+j})^*(U)$ for all $i \geq 0$, since the right congruence on $L$ generated by $(a^iwa^j,a^{i+j})$ produces the quotient map $L \too A$. In other words, $f(1)$ is a union of subsets of $L$ of the form $U_k := \{a^k\} \cup \{a^iwa^{k-i} \mid i \geq 0\}$ and singletons $\{a^iwa^j\}$ with $i \leq 0$, with no further requirements.

Considering the image of such $U$ under the inverse image action of $L$ to complete the picture, we find that for any value of $i$ and $j$, the singleton $\{a^iwa^j\}$ is open, while all basic neighbourhoods of elements of the form $a^k$ are infinite and of the form $(a^{k'})^*(U_k)$. Letting $\rho$ be the topology generated by these subsets, the conclusion is that $\Setswith{B} \simeq \Cont(L,\rho)$. Incidentally, we note that by Proposition \ref{prop:denseisom}, there cannot be a monoid homomorphism $L \to B$ inducing this equivalence, since $L$ is not isomorphic to $B$. This also demonstrates that \textit{discreteness is not a Morita-invariant property} of topological monoids. 
\end{xmpl}

\section{Semi-Galois theories}
\label{sec:sgal}

At last, we shall extract semi-Galois theories from the theory developed so far. We shall impose all of the conditions from the last section, since those are what we shall be using in order to construct examples. As such, we throughout let $(\Ccal,J_{\Tcal})$ be a principal site such that $(\Tcal,\Mcal)$ is a factorization system on $\Ccal$ with $\Mcal$ consisting of monomorphisms. We also take $(\Lcal,\Rcal)$ to be the extension of $(\Mcal\op,\Tcal\op)$ to $\Ind(\Ccal\op)$ as described in Proposition \ref{prop:ofs}.

\subsection{Expanding the site}
\label{ssec:expand}

Let $U$ be a $\Tcal\op$-injective, $\Rcal$-universal object of $\Ind(\Ccal\op)$ which is a colimit of $\Tcal\op$-morphisms, and let $L = \End(U)_{\Ind(\Ccal\op)}\op$. The topology $\rho$ on $L$ from Theorem \ref{thm:basic} is generated by the necessary clopens for the $L$-sets $\Hom_{\Ind(\Ccal\op)}(D,U)$ ranging over objects $D$ of $\Ccal$. It is worth noting that given an expression of $U$ as the colimit of a directed diagram $E:\Ibb \to \Ccal\op$ (whose morphisms lie in $\Tcal\op$, say), we have a basis consisting of the necessary clopens for the $L$-sets $\Hom_{\Ind(\Ccal\op)}(E(i),U)$.

Let $\Rfrakk_{\rho}$ be the category of open congruences of $(L,\rho)$ (see Definition \ref{dfn:Rfrak} and Scholium \ref{schl:Morita2} of the last chapter). Recall that $\Rfrakk_{\rho}$ is a canonical effectual reductive site for $\Cont(L,\rho)$. However, there is not necessarily a canonical functor $\Ccal \to \Rfrakk$, since there are typically several generators of the principal $L$-set corresponding to an object $D$ of $\Ccal$, and each of these can give a different choice of open congruence whose quotient is isomorphic to $\Hom_{\Ind(\Ccal\op)}(D,U)$; this problem does not arise in the group case, since the open congruence is entirely determined by the isomorphism type.

As such, we augment $\Ccal$ to the category $\Ccal_U$ consisting of pairs $(D,g)$, where $D$ is an object of $\Ccal$ and $g:D \to U$ is an $\Rcal$-morphism in $\Ind(\Ccal\op)$, but whose morphisms are simply those of $\Ccal$. The forgetful functor $\Ccal_U \to \Ccal$ is clearly a weak equivalence, so we can lift the morphism $J_{\Tcal}$ to $\Ccal_U$ to obtain an enlarged site for $\Sh(\Ccal,J_{\Tcal})$. Now we \textit{do} have a canonical functor $\Ccal_U \to \Rfrakk_{\rho}$ mapping $(D,g)$ to the right congruence $\rfrak_g$. This functor is:
\begin{enumerate}[label = ({\roman*})]
	\item faithful if and only if $\Tcal$ is contained in the class of epimorphisms;
	\item full and faithful if and only if $\Tcal$ is contained in the class of strict epimorphisms; and
	\item an equivalence if and only if $(\Ccal,J_{\Tcal})$ is a reductive, effectual site.
\end{enumerate}
Recall that an \textit{anafunctor} consists of a span of functors where the left leg is a (weak) equivalence, so we can equivalently say that $\Ccal$ admits an anafunctor to $\Rfrakk_{\rho}$ with the stated properties.

In particular, the syntactic site of any supercompactly generated theory $\Tbb$ satisfying the conditions of \ref{crly:countabletheory} can be identified up to equivalence with a full subcategory of the category of right congruences on any monoid classifying $\Tbb$.

\subsection{Examples}
\label{ssec:xmpls2}

\begin{xmpl}\label{xmpl:classic}
The conditions of Theorem \ref{thm:qhomogen} apply to any atomic site whose underlying category has the dual of (the amalgamation property and) the joint embedding property, with $\Tcal$ the class of all morphisms and $\Mcal$ the class of isomorphisms. Thus we recover Caramello's topological Galois theory \cite{TGT} as a special case, although we only extract the complete monoid of endomorphisms of the object $U$, rather than its dense subgroup of automorphisms; Proposition \ref{prop:densegroup} in the Conclusion is enough to bridge this discrepancy. More work is also needed to recover the uniqueness up to isomorphism of a point constructed as a countable colimit, if one exists.
\end{xmpl}

This in particular means that all of the examples provided by Caramello in \cite{TGT} can be recovered from this approach. As a particular case, consider the following.

\begin{xmpl}
The fact that the category of finite groups and embeddings between them has the joint embedding property and the amalgamation property means that one can apply the \Fraisse construction to obtain Philip Hall's universal locally finite group (cf. \cite[Chapter 7]{Model} again). But this also means that the dual of the full category of finite groups has the joint-$\Tcal$-covering property, for $\Tcal$ the dual of the class of embeddings of groups. The class $\Tcal\op$ is not costable in the full subcategory; consider the inclusion of the alternating group on $4$ elements into that with $5$ elements, for example. However, $\Tcal\op$ \textit{is} stable in the category of abelian groups, whence employing Theorem \ref{thm:Fraisse1} and Proposition \ref{prop:Fraisse2} we obtain a countable, locally finite abelian group into which all such groups embed, which is injective with respect to embeddings of groups and (since all of these embeddings are regular) such that the category of abelian groups is equivalent to a full subcategory of the open right congruences of the topologized endomorphism monoid of this group. More specifically, we can obtain this group by taking (for example) the sequence of groups,
\[n \mapsto \prod_{i=1}^n \prod_{j = 1}^{n-i+1} C_{p_i^j},\]
with component-wise maps between these.

By further restricting to the class of cyclic groups or the cyclic $p$-groups we obtain groups with related properties. In the latter case we recover the Pr\"{u}fer $p$-groups, thus recovering some known universal properties of these groups.
\end{xmpl}

\begin{xmpl}
Let $\Ccal$ be any countable regular category in which all objects are well-supported, and let $\Tcal$ be the class of regular (=strict) epimorphisms. Stability is satisfied by definition, while the joint $\Tcal$-covering property is guaranteed by considering products. Thus $\Ccal$ embeds (up to equivalence) into the category of right congruences of a topological monoid. Interestingly, this includes the dual of the example above: we can take the category of all finite abelian groups and obtain a universal countable pro-group which admits an epimorphism to all finite abelian groups and which is projective with respect to the class of epimorphisms between these.
\end{xmpl}

\begin{xmpl}
In \cite[Theorem 1.2]{Lambda}, it is shown that the category of $\Lambda$-rings over a number field $K$ is equivalent to the category of continuous actions of a topological monoid, known as the \textit{Deligne-Ribet monoid}, on finite sets. While the definition of $\Lambda$-rings is too involved to present in full here, it follows that we can refine this to an equivalence of a subcategory of $\Lambda$-rings corresponding to the principal actions of the Deligne-Ribet monoid of the type covered in this paper.
\end{xmpl}

\begin{xmpl}
The category of rings admits a factorization system whose left class consists of `integral homomorphisms', and whose right class consists of integrally closed monomorphisms cf. \cite[Example 6.1]{JCL}. Selecting a countable ring $R$ and considering the collection of algebras $r:R \to A$ which are integral over $R$ (there are only countably many, since there are countably many polynomials over $R$), we have a countable category with a factorization system. Just as above, there are two ways that this might be transformed into an analogue of classical Galois theory.

The first is to take $\Ccal$ to be the category above and let $\Tcal$ be the class of integral homomorphisms; $\Mcal$ is contained in the class of monomorphisms by definition. This validates the conditions needed for Theorem \ref{thm:qhomogen} (and hence Theorem \ref{thm:Fraisse1}) if and only if any pair of integral algebras over $R$ admit a integral homomorphisms to a common such algebra and integral homomorphisms are stable; a thorough analysis of these conditions is beyond the scope of this thesis. When this is so, we obtain an ind-object of the dual category and an anafunctor to the category of principle actions.

However, the above has the opposite variance to classical Galois theory, and the comparison functor is unlikely even to be faithful: most interesting examples of integral homomorphisms which we know of are not epimorphisms!
An alternative is to equip the above subcategory with a distinct factorization system, the more straightforward (regular epi,mono) factorization, and to apply our results to the dual of this category. When the relevant joint-embedding and stability properties are verified, we obtain from Theorem \ref{thm:Fraisse1} a `universal injective integral closure' $\overline{R}$ of $R$, as well as a contravariant functor to the category of principal continuous actions of the endomorphism monoid of $\overline{R}$. This functor will be full and faithful if and only if the embeddings of rings in the category are strict monomorphisms, which can be interpreted as a separability condition. We leave the elucidation of specific examples of this construction to future work.
\end{xmpl}

\chapter{Conclusion}
\label{chap:conclusion}

While we have gone a long way in establishing the properties of toposes of the form $\Cont(M,\tau)$ and their canonical representatives in this thesis, it is clear that there are multiple avenues for future exploration of the subject.

\section{Discrete monoids}
\label{sec:disc2}

\subsection{Further properties}
\label{ssec:further}

In Chapter \ref{chap:mpatti}, we saw many instances of how properties of the global sections morphism of a topos of the form $\Setswith{M}$ are reflected as Morita-invariant properties of $M$. Despite the rich variety that arose just from considering the global sections morphism, there are many other sources of such properties; we summarize some ideas and examples of these here:
\begin{itemize}
\item \textit{Diagonal properties}: Since the (2-)category of toposes and geometric morphisms has pullbacks, any geometric morphism $\Fcal \to \Ecal$ induces a diagonal $\Fcal \to \Fcal \times_{\Ecal} \Fcal$. We may in particular apply this to the global sections morphism, to express properties such as \textit{separatedness} of a topos (cf. \cite[Definition C3.2.12(b)]{Ele}). However, a more detailed understanding of geometric morphisms between toposes of the form $\Setswith{M}$ is needed to analyze these.
\item \textit{Relative properties}: Some properties of (Grothendieck) toposes are most succinctly expressed by the existence of geometric morphisms of a particular type to or from toposes with certain properties, as we saw in the example of \'{e}tendues at the end of Chapter \ref{chap:mpatti}.
\item \textit{Categorical properties}: There are some categorical properties of Grothendieck toposes that ostensibly aren't expressible in terms of the global sections morphism in a straightforward way, although they might be expressible in the relative sense above. This includes the property of there being a separating set of objects with a particular property, as we saw in the example of locally decidable toposes, also at the end of Chapter \ref{chap:mpatti}.
\item \textit{Internal logic properties}: As a variant of the preceding point, the internal logic of a topos is determined by the structure of subobject lattices, and so is embodied in the structure of their subobject classifiers. As we saw in Chapter \ref{chap:mpatti}, the structure of the subobject classifier in $\Setswith{M}$ corresponds to the structure of the right ideals of $M$, but we have only tackled the most basic cases in which $\Setswith{M}$ is a Boolean or de Morgan topos. There are surely further algebraic or logical properties of this lattice to investigate.
\end{itemize}

Each of these classes merits a systematic study in its own right. In the other direction, there are some notable elementary properties of monoids which we have not yet found a topos-theoretic equivalent for. The most basic is the left Ore condition, dual to Definition \ref{dfn:rOre}; of course, we could simply examine the category of \textit{left} actions of our monoid, and dualize the results presented in Chapter \ref{chap:mpatti}, but we believe it will be more informative to seek a condition intrinsic to the topos of right actions, given the variety of equivalent conditions we reached in Theorem \ref{thm:deMorgan}.

\subsection{Other presentations}
\label{ssec:other}

A related thread for future investigation is that of extending the dictionary of properties between different \textit{presentations} of toposes. We saw an instance of this realized in the comparison of toposes of the form $\Setswith{M}$ with toposes of sheaves on spaces in Chapter \ref{chap:mpatti}.

There are three types of presentation in particular that spring to mind:
\begin{itemize}
\item \textit{Syntactic presentations}: We saw how toposes can be build from geometric theories in Chapter \ref{chap:logic}. How do syntactic and model-theoretic properties translate into monoid properties?
\item \textit{Site presentations}: We know that we can use sites to present toposes, and we can further restrict to principal or finitely generated sites of Chapter \ref{chap:sgt}. How do properties of the underlying category or the Grothendieck topology translate into monoid properties?
\item \textit{Groupoid presentations}: Any Grothendieck topos can be presented as a topos of equivariant actions of a localic groupoid, and this can be chosen to be a topological monoid when the topos has enough points, cf. the work of Butz and Moerdijk, \cite{Grpoid}. Indeed, Jens Hemelaer showed in \cite{TGRM} that a topos of actions of a discrete monoid can presented as the topos of equivariant sheaves on a \textit{posetal groupoid}. Such groupoids possess both algebraic and spatial properties which might profitably be transferred to monoid properties. 
\end{itemize}

A natural approach to all of these comparisons as well as those above is Caramello's `toposes as bridges' principle \cite{TST}: by finding a way to translate a property of a given presentation into an invariant of the corresponding topos, we may subsequently translate it into an invariant of each of the other presentations.

\subsection{Relativization}
\label{ssec:relativize}

Recall that in Theorem \ref{thm:2equiv0}, we demonstrated an equivalence between the 2-category of monoids, semigroup homomorphisms and conjugations, and the 2-category of their presheaf toposes, essential geometric morphisms between these and geometric transformations. This means that we can just as systematically explore how properties of semigroup or monoid homomorphisms are reflected as properties of essential geometric morphisms between toposes of the form $\Setswith{M}$. This is a direct extension, or `relativization to a different base monoid,' of the work we did in Chapter \ref{chap:mpatti}, since the unique homomorphism $M \to 1$ corresponds under this equivalence to the global sections morphism of $\Setswith{M}$. More generally, we will be able to use the biequivalence of Theorem \ref{thm:2equiv1} to compare not-necessarily-essential geometric morphisms with biactions, as we did in Scholium \ref{schl:tidiness}. Since such tensor-hom expressions exist for geometric morphisms between presheaf toposes more generally (see \cite[Section VII.2]{MLM}), toposes of monoid actions may provide a good context from which to build an algebraic analysis of geometric morphisms.

This idea has already yielded some success: in further joint work with Jens Hemelaer \cite{NonLC}, we obtained an instance of a geometric morphism induced by a monoid homomorphism which is hyperconnected, essential and local but not locally connected, providing a counterexample for an open problem posed by Thomas Streicher.

A final direction for future investigation, related to the above, is relativization in the usual topos-theory sense of considering (elementary) toposes over a base topos other than $\Set$. Amongst internal categories in arbitrary toposes (cf.\ Section \ref{ssec:proper} of Chapter \ref{chap:sgt}), monoids are naturally defined as those whose object of objects is the terminal object. Accordingly, one might be interested in examining toposes of internal right actions of internal monoids relative to a topos other than $\Set$. While many of the results we obtained in Chapter \ref{chap:mpatti} were arrived at constructively or are expressed in a way that relativizes directly, there are some which cannot be transferred directly into an arbitrary topos. For instance, our inductive construction of the submonoid right-weakly generated by $S$ in Lemma \ref{lem:construct} requires the presence of a natural number object, while the application of Proposition \ref{prop:cc2} in Theorem \ref{thm:labsorb} relies on the law of excluded middle. More significantly, the proof that condition \ref{item:rab6} implies condition \ref{item:rab7} in Theorem \ref{thm:rabsorb} explicitly relies on a form of the axiom of choice. This investigation will therefore be non-trivial, and it will be interesting to discover the relative analogues of the results presented in this thesis.

\section{Supercompactly generated toposes}
\label{sec:sgt2}

The theory-laden middle chapters contained results in various possible directions, which already hint at directions for possible future exploration. We highlight two in particular which we spent some time investigating in the course of research for those chapters, but which didn't make it into this thesis.

\subsection{Points}
\label{ssec:pts2}

A result which appeared in the original preprint version (\cite{SGT}) of Chapter \ref{chap:sgt}, but which had to be removed due to an error in its proof, was a generalization of Deligne's classical completeness theorem for coherent toposes to the class of compactly generated toposes. This result was not needed for the purposes of the thesis, since all of our toposes have enough points constructively, but the question remains whether Deligne's original proof, \cite{SGA4Coh}, can be extended to the setting of sites without pullbacks.

Determining this will be no easy task, not least because there are various results of a similar nature, such as \cite[Theorem 6.2.4]{MakkaiReyes} which demonstrates a completeness theorem for toposes of sheaves on sites which are countable in a suitable sense (albeit still with pullbacks), which are proved by totally different means.

\subsection{Reductive logic}
\label{ssec:redlogic}

We discussed the usual account of categorical logic as it applies to toposes in Chapter \ref{chap:logic}. However, this approach does not seem the ideal fit for analyzing toposes whose canonical sites do not have finite limits. We anticipate that there is scope for developing branches of logic which admit interpretations in reductive and coalescent categories.

This too will be more of a challenge than it first appears, because pullbacks and equalizers were fundamental in the interpretations of even the most basic formulae appearing in Section \ref{ssec:geomsem}. The logics for these categories will therefore not have terms in the sense that first order theories do, which already distances it from classical logic.

\subsection{Exactness Properties}
\label{ssec:effects}

In Proposition \ref{prop:effective} and Example \ref{xmpl:countable}, we explored the relationship between the conditions of effectualness on reductive and coalescent categories introduced in Definition \ref{dfn:redeff} and the more familiar notion of effectiveness recalled in Definition \ref{dfn:regeff}. More generally, one might wonder how this relates to the general notion of \textit{exactness}, which refers to the types of interaction between classes of colimits and pullbacks which occurs in Grothendieck toposes. This notion has been explored in situations admitting finite limits (including the analogous concepts for enriched categories) in \cite{GarnerLack}. It is reasonable to hope that an examination of exactness properties resembling effectualness for sites which lack finite limits might enable one to refine this notion into one which makes sense outside of the `lex' context. This may be as simple as replacing Garner and Lack's finite limit conditions with suitable flatness conditions, although the precise nature of these conditions is not clear \textit{a priori}.

\section{Topological monoids}
\label{sec:topmonoid}

There are some natural questions which arose during the developments of Chapter \ref{chap:TTMA} that we were unable to resolve. We record them here and suggest some future directions this research might proceed, independently of the content of Chapter \ref{chap:TSGT}.

\subsection{Pathological powder monoids}
\label{ssec:moremonoid}

The reader may have noticed in Chapter \ref{chap:sgt} that we did not exhibit any examples illustrating the asymmetry in the definition of powder monoids. This is because our main classes of examples, prodiscrete monoids and powder groups, are both blind to this distinction, since their definitions are stable under dualizing. Similarly, any commutative right powder monoid is also a left powder monoid. These cases make constructing examples of right powder monoids which are not left powder monoids difficult. Nonetheless, we posit that:
\begin{conj}
\label{conj:powdery}
There exists a right powder monoid which is not a left powder monoid.
\end{conj}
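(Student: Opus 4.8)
<br>

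The plan is to exhibit a concrete finite monoid equipped with an explicit topology and verify by direct computation that it is a right powder monoid but fails to be a left one. The key observation from Definition \ref{dfn:powder} is that the powder condition is asymmetric: it requires that the topology have a basis of clopens $U$ with $\Ical_U^p \in \tau$ for every $p$, where $\Ical_U^p$ is defined using the \emph{right} action on $\Pcal(M)$. Since our obstructions in the commutative and group cases stem precisely from the self-duality of those settings, I would search among small \emph{non-commutative} monoids where left and right multiplication behave genuinely differently --- the minimal interesting candidates being monoids built from left/right absorbing elements, such as those appearing already in Example \ref{xmpl:nonjcp} or the monoid with a left absorbing element constructed after Corollary \ref{crly:prodisc}.

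First I would fix a small non-commutative monoid $M$ (for instance a three- or four-element monoid containing a right-absorbing element but no left-absorbing element, or one with asymmetric idempotent structure) and compute its complete list of right congruences together with the right and left inverse-image actions on $\Pcal(M)$. The strategy is to find a clopen subset $U$ and a topology $\tau$ generated by $U$ and its translates such that the \emph{right} necessary clopens $\Ical_U^p = \{q \mid q^*(U) = p^*(U)\}$ are all open (verifying via Theorem \ref{thm:tau} that $\tau$ is a right action topology and hence, after checking $T_0$, that $(M,\tau)$ is a right powder monoid), while the corresponding \emph{left} necessary clopens ${}_U^p\Ical = \{q \mid {}^*q(U) = {}^*p(U)\}$ fail to lie in $\tau$ for some $p$. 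Concretely, by the discussion preceding Scholium \ref{schl:G3'}, being a left powder monoid would require $\tau$ to be a right action topology on $M\op$; so the aim is to choose the absorbing-element structure so that left multiplication collapses the relevant partition in a way right multiplication does not.

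I would then package this by computing the two idempotent monads $P$ and $P'$ from Scholium \ref{schl:G3'}: $P$ fixes $(M,\tau)$ by construction, whereas applying $P'$ (the left action topology operator) strictly coarsens $\tau$, witnessing that $(M,\tau)$ is not in the image of $G'_3$ and hence not a left powder monoid. This is the cleanest route because it reduces the whole claim to a finite table of inverse-image computations rather than any delicate limit or density argument.

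The hard part will be the \emph{existence} of such an $M$: in all the symmetric examples the two operators $P$ and $P'$ agree, and one must ensure that the asymmetry in multiplication actually survives at the level of the partitions $q^*(U)$ versus ${}^*q(U)$ rather than being washed out by the $T_0$ quotient of Theorem \ref{thm:Hausd}. I expect the main obstacle to be verifying that the candidate topology is genuinely $T_0$ (so that $(M,\tau)$ is a bona fide powder monoid and not merely a monoid-with-topology) while \emph{simultaneously} keeping a left necessary clopen non-open; these two requirements pull in opposite directions, since $T_0$-ness tends to force the topology to be fine enough to separate points, which can inadvertently make it a left action topology too. Resolving this tension --- likely by a careful choice of which singletons are declared open --- is where the real work lies, and if no finite example suffices I would fall back on an infinite construction modelled on the automorphism-of-$\Nbb$ example in Example \ref{xmpl:notpro}, adapted to an endomorphism monoid where pre- and post-composition genuinely differ.
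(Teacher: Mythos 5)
The first thing to say is that the paper does not prove this statement: it is posed as Conjecture \ref{conj:powdery} precisely because no example is known, and the surrounding discussion explains why the natural candidate classes (commutative monoids, topological groups via Lemma \ref{lem:powdergrp}, prodiscrete monoids) are all blind to the left/right asymmetry. Your proposal is likewise not a proof but a search strategy: no concrete monoid, no concrete topology, and no verification are produced, and you yourself concede that ``the hard part will be the existence of such an $M$.'' Since exhibiting the example \emph{is} the entire content of the conjecture, the essential step is missing on both sides; the difference is that the paper is explicit about leaving it open.

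There is moreover a concrete flaw in your primary line of attack. You propose to search among finite (three- or four-element) non-commutative monoids, but no finite example can exist. A right powder monoid is $T_0$ and its topology is an action topology, hence zero-dimensional, with a base of clopen sets. In a finite space with a clopen base, for any $x \neq y$ there is a clopen containing $x$ but not $y$ (using complements of clopens where necessary), and the intersection of the finitely many clopens containing $x$ is itself clopen and equal to $\{x\}$ by $T_0$-ness; so the topology is discrete. But the discrete topology on any monoid is trivially both a right and a left action topology, since every subset, and in particular every necessary clopen on either side, is open. Hence every finite right powder monoid is automatically a left powder monoid, and any counterexample must be infinite. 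Your fallback of ``an infinite construction modelled on Example \ref{xmpl:notpro}'' is exactly where the difficulty lives, and it is unspecified: that example is a nearly discrete \emph{group}, hence two-sided, and no adaptation to an asymmetric endomorphism monoid is given. Your proposed verification device --- showing that the left reflection $P'$ of Scholium \ref{schl:G3'} strictly coarsens $\tau$ --- is sound as far as it goes, but without a candidate $(M,\tau)$ there is nothing to apply it to.
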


Scholium \ref{schl:G3'} puts some limits on Conjecture \ref{conj:powdery}, since it says that any right powder monoid is at most one step away from also being a left powder monoid. In particular, we never get an infinite nested sequence of topologies on a monoid by repeatedly computing the associated right and left action topologies. We have not demonstrated comparable results for complete monoids, but we expect them to hold:
\begin{conj}
\label{conj:complete}
The right completion of a left powder monoid or left complete monoid retains the respective property, and dually for left completions of right powder monoids or right complete monoids. However, we expect that there exists a right complete monoid which is not a left powder monoid.
\end{conj}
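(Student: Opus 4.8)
The plan is to treat the two halves of the statement separately: the preservation results (right completion preserves the left-powder and left-complete properties, together with the dual statements for left completion) and the existence of a right complete monoid that fails to be a left powder monoid. Throughout I would use that, by Proposition \ref{prop:Lpowder} and Lemma \ref{lem:T0T2}, any right complete monoid is already Hausdorff; hence for a right complete $(L,\rho)$ the left powder property is equivalent to the single condition that $\rho$ be a \emph{left} action topology (a right action topology on $L\op$). The whole problem therefore reduces to controlling left action topologies along the completion construction.

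For the preservation half I would first dispose of a bookkeeping point: the right completion of $(M,\tau)$ depends only on the right continuous $M$-sets, hence only on the associated right action topology $\tilde\tau$ (Theorem \ref{thm:tau}, Scholium \ref{schl:completion}). If $(M,\tau)$ is left powder then $\tau$ is a left action topology, i.e. fixed by the operator $P'$ of Scholium \ref{schl:G3'}; since $PP'=P'P$ there, $\tilde\tau=P(\tau)$ is fixed by both $P$ and $P'$, so it is a \emph{two-sided} powder topology. Thus I am reduced to showing that the right completion of a two-sided powder monoid is again left powder. Here I would argue via the limit description of Corollary \ref{crly:Mlim} together with the dense injection $u\colon (M,\tilde\tau)\to(L,\rho)$ of Corollary \ref{crly:extend}: the left necessary clopens ${}_{V}^{\beta}\Ical$ of the basic clopens $V$ of $\rho$ can be recovered from the corresponding left necessary clopens in $M$ (which are open because $\tilde\tau$ is a left action topology) by transporting along the dense embedding, exactly as in the computation underpinning Lemma \ref{lem:lrclopen}. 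For the left-complete case I would prove that the left- and right-completion monads of Scholia \ref{schl:G4} and \ref{schl:G4'} commute, by the same double-coset manipulation that gives $PP'=P'P$; an idempotent-monad argument then shows that right completion sends fixed points of the left-completion monad to fixed points, i.e. preserves left completeness. The dual statements follow by applying everything to $M\op$.

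For the existence half I would look for the counterexample among complete monoids arising from genuinely non-commutative discrete monoids with a rich but one-sided congruence structure, since commutative and discrete monoids are automatically two-sided powder. The natural first candidate is the complete monoid $(L,\rho)$ obtained from the bicyclic monoid in the worked example of Section \ref{sssec:unique}: its presentation already exhibits the required left--right asymmetry (the relations $wa^{k}w=wa^{k}$ and $wa^{-k}w=a^{-k}w$ for $k\ge 0$ treat the two sides differently, and the copy of $\Zbb$ spanned by the powers of $a$ carries only infinite neighbourhoods while the elements $a^{i}wa^{j}$ are isolated). I would verify directly that $\rho$ is not a left action topology by computing the left necessary clopen ${}_{U}^{1}\Ical$ of a basic clopen neighbourhood $U$ of $1=a^{0}$ and exhibiting that it is not $\rho$-open; equivalently, by Scholium \ref{schl:G3'}, that the associated left action topology $P'(\rho)$ is strictly coarser than $\rho$. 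Should the bicyclic monoid turn out to be two-sided after all, I would fall back on a purpose-built monoid engineered so that right multiplication by a chosen idempotent collapses the relevant clopen partition while left multiplication does not.

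The main obstacle is this last step. The preservation results are, I expect, a fairly mechanical adaptation of the commutation arguments already present in the excerpt, whereas the counterexample is where the genuine uncertainty lies: one must produce a monoid whose completion is provably one-sided, which means both confirming that the candidate is actually right complete (via Scholium \ref{schl:completion}) and pinning down a specific left necessary clopen that fails to be open, all while resisting the strong tendency of natural examples to be symmetric. Verifying non-left-powderness is a delicate finite computation in a monoid presented by generators and relations, and the real risk is precisely that every sufficiently explicit right complete monoid one can compute with turns out, unexpectedly, to be two-sided — in which case the conjecture would have to be revisited rather than proved.
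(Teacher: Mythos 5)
Be aware first of the status of what you set out to prove: this statement is a \emph{conjecture}, and the paper offers no proof of it --- indeed it says explicitly, just beforehand, that the authors ``have not demonstrated comparable results for complete monoids''. So there is no proof of the paper's to compare yours against; your proposal stands or falls on its own, and on its own terms it has genuine gaps in both halves. In the preservation half, the reduction to the two-sided case via $PP'=P'P$ (Scholium \ref{schl:G3'}) is sound, but the core step --- that the right completion $(L,\rho)$ of a two-sided powder monoid is left powder --- is supported only by an appeal to ``transporting along the dense embedding, exactly as in Lemma \ref{lem:lrclopen}'', and that analogy breaks exactly where the work is. The right-handed argument of Proposition \ref{prop:Lpowder} succeeds because $\alpha^*(\pi_r^{-1}(\{[m]\}))$ depends on the single coordinate $a_r$ of $\alpha$. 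On the left, one does get for free that $(V)\gamma^*$ is open for each basic clopen $V=\pi_r^{-1}(\{[m]\})$: by equivariance of $r$ and Lemma \ref{lem:rinrfrak} it equals $\pi_r^{-1}(S_\gamma)$, where $S_\gamma=\{[d]\in M/r \mid [d\,g_{d^*(r)}]=[m] \text{ in } M/r\}$. But the left necessary clopen ${}_{V}^{\beta}\Ical=\{\gamma\in L \mid S_\gamma=S_\beta\}$ is an intersection of clopen conditions indexed by the classes $[d]\in M/r$, each condition constraining the coordinate of $\gamma$ at a \emph{different} open congruence $d^*(r)$. For this intersection to be open, infinitely many conditions must be implied, near each $\gamma$, by a single constraint of the form $\pi_s(\gamma')=\pi_s(\gamma)$; neither density of $u(M)$ in $L$, nor two-sidedness of $\tau$ (which controls ${}_{A}^{g}\Ical$ only for $g\in M$ and for clopens $A$ in the left base $T'$, not for arbitrary opens such as the class $[m]_r$), nor Lemma \ref{lem:lrclopen} supplies such local finiteness. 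The same objection applies to your claim that the two completion monads commute ``by the same double-coset manipulation'' as $PP'=P'P$: completion adjoins new elements as limits of congruence classes rather than coarsening a topology, so that manipulation does not even apply to it. These are, presumably, the very obstructions that made the authors state this as a conjecture; describing the preservation half as ``fairly mechanical'' is where your plan goes astray.

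The existence half is weaker still. Your candidate --- the completion $(L,\rho)$ of the bicyclic monoid from Section \ref{sssec:unique} --- appears in fact to be two-sided: computing with the normal forms $a^k$ and $a^iwa^j$, the inverse images of the basic clopens under right translation are again basic clopens or empty (for instance $(U_0)(a^k)^*=U_{-k}$, while $(U_0)(a^iwa^j)^*=U_{-i-j}$ if $j\le 0$ and $\emptyset$ if $j>0$), and the resulting left necessary clopens ${}_{U}^{p}\Ical$ come out as basic clopens or as unions of the open singletons $\{a^iwa^j\}$; the one-sidedness of the relations $wa^kw=wa^k$ and $wa^{-k}w=a^{-k}w$ does not translate into one-sidedness of the topology. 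So the candidate fails, exactly as you feared, and your fallback --- ``a purpose-built monoid engineered so that\ldots'' --- is a wish, not a construction. Note finally that the failure of one candidate neither proves nor refutes an existence claim, so even the contingency you describe (``the conjecture would have to be revisited'') does not follow from anything in your plan. The proposal is a sensible research programme whose opening reduction is correct, but it proves no part of the conjecture.
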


Another way of expressing Conjectures \ref{conj:powdery} and \ref{conj:complete} is to say that we expect the diagram of monadic full and faithful functors \eqref{eq:monadic2} to extend as follows:
\[\begin{tikzcd}
& & \ar[dl] \mathrm{CMon}_s & & \\
& \ar[dl] \mathrm{PMon}_s & &
\ar[ul] \ar[dl] \mathrm{CP'Mon}_s & \\
\mathrm{T_0Mon}_s & &
\ar[dl] \ar[ul] \mathrm{P''Mon}_s & &
\ar[ul] \ar[dl] \mathrm{C''Mon}_s,\\
& \ar[ul] \mathrm{P'Mon}_s & &
\ar[dl] \ar[ul] \mathrm{C'PMon}_s \\
& & \ar[ul] \mathrm{C'Mon}_s & & 
\end{tikzcd}\]
where the notation is the intuitive extension of that employed in \eqref{eq:monadic2} and each inclusion represented is non-trivial.

\subsection{Finitely generated complete monoids}
\label{ssec:fgcompmon}

Besides these conjectures characterizing pathological examples, there is plenty of ground still to cover in understanding these classes of monoids. What does a `generic' complete monoid look like, beyond what was shown in Chapter \ref{chap:TTMA}? Is it possible to classify them?

For example, given an element $x$ in a complete monoid, we may consider the closure of the submonoid generated by $x$, which by Corollary \ref{crly:closed} is a complete monoid. One might consider this an instance of a `complete monoid generated by one element'\footnote{We include the quote marks to emphasize that this submonoid is not generated by $x$ in an algebraic sense.}. We can identify such monoids as the canonical representatives of toposes admitting a hyperconnected morphism from $\Setswith{\Nbb}$. By Corollary \ref{crly:prodisc} these are commutative prodiscrete monoids. Analogously, `finitely generated complete monoids' would correspond to complete monoids representing toposes admitting hyperconnected geometric morphisms from the toposes $\Setswith{F_{n}}$, where $F_n$ is the free (discrete) monoid on $n$ elements. By Proposition \ref{prop:prince2} they correspond to filters of right congruences on $F_n$, which we expect to have a tame classification. Is it possible to identify the `finitely presented complete monoids' amongst these? One could go on to investigate the properties of various ($2$-)categories of such monoids, taking advantage of results such as such as those in Section \ref{ssec:monads}. Future applications of the theory developed in Chapter \ref{chap:TSGT} may rely on understanding these answers to these questions.

\subsection{Invariant properties}
\label{ssec:mitopmon}

In investigating complete monoids, it will be desirable to extend the results of Chapter \ref{chap:mpatti} to the topological case. In this regard, we can already glean some positive results. Whilst we saw in Example \ref{xmpl:notpro2} that a complete monoid Morita-equivalent to a topological group need not be a group, we have the next best result.
\begin{prop}
\label{prop:densegroup}
Let $(M,\tau)$ be a topological monoid. The following are equivalent:
\begin{enumerate}
	\item $\Cont(M,\tau)$ is an atomic topos;
	\item The completion of $(M,\tau)$ has a dense subgroup;
	\item The group of units in the completion of $(M,\tau)$ is dense;
	\item For each open relation $r \in \underline{\Rcal}_{\tau}$ and $m \in M$, there exists $m' \in M$ with $(mm',1) \in r$.
\end{enumerate}
\end{prop}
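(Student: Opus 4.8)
The plan is to establish the cycle (3) $\Rightarrow$ (1) $\Leftrightarrow$ (4) $\Rightarrow$ (3) together with the trivial (2) $\Leftrightarrow$ (3). First I would dispose of the cheap equivalences. Writing $(L,\rho)$ for the complete monoid representing $\Cont(M,\tau)$, with $L = \End(p)\op$ for $p$ the canonical point, any subgroup of $L$ consists of units, so a dense subgroup forces the units to be dense, while conversely the units form a group, giving (2) $\Leftrightarrow$ (3) at once. For (1) $\Leftrightarrow$ (4) I would use the atom characterization: by Corollary \ref{crly:conttop} and Proposition \ref{prop:prince} the topos is supercompactly generated and its supercompact objects are exactly the continuous principal $M$-sets $M/r$ for $r \in \Rfrak_{\tau}$, the open congruences indexing condition (4). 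Since a supercompactly generated topos is atomic precisely when its supercompact (hence indecomposable) objects are all atoms, and since $M/r$ is an atom iff every nonempty sub-$M$-set equals $M/r$, iff $[1] \in [m]\cdot M$ for every $m$, iff there is $m'$ with $(mm',1) \in r$, condition (4) is literally the assertion that every $M/r$ is an atom. This yields (1) $\Leftrightarrow$ (4).

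For (3) $\Rightarrow$ (1) I would exploit that the group of units, as a monoid with the subspace topology, trivially satisfies condition (4): for any open congruence $r$ and unit $g$ one has $(gg^{-1},1) = (1,1) \in r$, so by (4) $\Rightarrow$ (1) the topos $\Cont(L^{\times}, \rho|_{L^{\times}})$ is atomic. Now (3) says the inclusion $L^{\times} \hookrightarrow L$ has dense image, so by Lemma \ref{lem:densects} it induces a hyperconnected geometric morphism $\Cont(L^{\times}) \to \Cont(L,\rho) \simeq \Cont(M,\tau)$. Atomicity is inherited by the codomain of a hyperconnected morphism (Theorem \ref{thm:hype2}, via Scholium \ref{schl:descend}, once one checks that being an atom descends along epimorphisms and is preserved and reflected by a hyperconnected inverse image), so $\Cont(M,\tau)$ is atomic.

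The hard direction, and the main obstacle, is (1) $\Rightarrow$ (3): from atomicity alone I must manufacture enough units to be dense in $L$, that is, for every open congruence $r$ and every $\alpha$ produce a unit $g$ with $[g] = [\alpha]$ in $L/r$ — a \emph{coherent} choice of inverses modulo every $r$ simultaneously, which cannot be arranged one congruence at a time. Here I would invoke Caramello's topological Galois theory \cite{TGT}: an atomic, two-valued Grothendieck topos equipped with its canonical point $p$ is equivalent to $\Cont(\Aut(p), \tau_{at})$, where $\Aut(p)$ is the automorphism group of the point topologised by stabilisers. Since $\Aut(p)$ is exactly the group of units $L^{\times}$ of $L = \End(p)\op$, the pointed topos $(\Cont(M,\tau), p)$ is presented both by $(L^{\times},\tau_{at})$ and by $(L,\rho)$. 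By uniqueness of the complete representative of a pointed topos (Scholium \ref{schl:completion}), the canonical comparison homomorphism from $L^{\times}$ to its completion is the inclusion $L^{\times} \hookrightarrow L$, and this comparison map always has dense image by Lemma \ref{lem:Mdense} (cf. Corollary \ref{crly:extend}). Hence $L^{\times}$ is dense in $L$, which is (3).

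I expect the delicate points to be twofold. First, one must check that Caramello's representation theorem applies to the \emph{canonical} point specifically, rather than to some auxiliary ultrahomogeneous point; this should follow from the canonical point being the surjective point underlying the hyperconnected presentation of $\Cont(M,\tau)$, but it deserves care. Second, the descent of atomicity in (3) $\Rightarrow$ (1) rests on the observation that a quotient of an atom is again an atom or initial, so that ``being an atom'' descends along epimorphisms in the sense of Scholium \ref{schl:descend}; this is the fact the paper flags as ``easily checked''. A self-contained alternative to the Galois-theoretic step, avoiding \cite{TGT}, would be a back-and-forth construction of units indexed over a base of open congruences, in the spirit of the \Fraisse argument of Chapter \ref{chap:TSGT}; but invoking the existing Galois theory is by far the most economical route.
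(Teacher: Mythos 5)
Your proposal is correct and takes essentially the same route as the paper: (1) $\Leftrightarrow$ (4) via identifying the principal continuous $M$-sets with the atoms, (3)/(2) $\Rightarrow$ (1) via a hyperconnected morphism from the topos of actions of the dense (sub)group together with Scholium \ref{schl:descend}, and (1) $\Rightarrow$ (3) by invoking \cite[Theorem 3.5]{TGT} together with \cite[Proposition 5.7]{ATGT} to identify the representing group with the group of units of the completion, which is dense. The only differences are organisational: the paper runs the cycle $3 \Rightarrow 2 \Rightarrow 1 \Rightarrow 3$, routes the descent step through $\Setswith{G}$ rather than $\Cont(L^{\times},\rho|_{L^{\times}})$, and discharges your flagged worry about the canonical point by asserting directly that it furnishes an $\underline{\Rcal}_{\tau}\op$-universal, ultrahomogeneous object of $\mathrm{Ind}$-$\underline{\Rcal}_{\tau}\op$, namely the completion itself.
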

\begin{proof}
($3 \Rightarrow 2 \Rightarrow 1$) If the group of units of (the completion of) $(M,\tau)$ is dense, then clearly this provides a dense subgroup. If the $(G,\tau|_G)$ is a dense subgroup of (the completion of) $(M,\tau)$, then $\Cont(M,\tau)$ admits a hyperconnected morphism from $\Setswith{G}$, whence the former is an atomic topos, by Scholium \ref{schl:descend}.

($1 \Rightarrow 3$) If $\Cont(M,\tau)$ is atomic, all of the supercompact objects are necessarily atoms. The opposite of $\underline{\Rcal}_{\tau}$ is easily verified to satisfy the amalgamation property and joint embedding property of \cite[Definition 3.3]{TGT}, and the canonical point of $(M,\tau)$ provides an $\underline{\Rcal}_{\tau}\op$-universal, $\underline{\Rcal}_{\tau}\op$-ultrahomogeneous object in $\mathrm{Ind}$-$\underline{\Rcal}_{\tau}\op$, namely the completion of $(M,\tau)$ itself. Thus by \cite[Theorem 3.5]{TGT}, there is a topological group $(G,\sigma)$ representing the topos (and having the same canonical point), and by the more detailed description of this construction in \cite[Proposition 5.7]{ATGT}, the group so constructed is precisely the group of units of $(M,\tau)$, and this group is dense in $(M,\tau)$.

($1 \Leftrightarrow 4$) Consider $\underline{\Rcal}_{\tau}$ when $\Cont(M,\tau)$ is atomic. Since all of the $M/r$ are atoms, all of the morphisms in this category are (strict) epimorphisms, which means that in particular the canonical monomorphisms $[m]: m^*(r) \to r$ are isomorphisms, providing $[m']: r \to m^*(r)$ such that $(mm',1) \in r$ and $(m'm,1) \in m^*(r)$, but the latter is implied by the former, so the former suffices. Conversely, if $4$ holds, then all of the morphisms of $\underline{\Rcal}_{\tau}$ are strict epimorphisms, whence the reductive topology coincides with the atomic topology, so $\Cont(M,\tau)$ is atomic as required.
\end{proof}

We anticipate a plethora of results of this nature, where a complete monoid generates a topos having a property $Q$ if and only if it has a dense submonoid having property $P$, where $P$ is the property corresponding to $Q$ for toposes of discrete monoid actions; the above is the case where $Q$ is the property of being atomic and $P$ is the property of being a group from Theorem \ref{thm:atomic} of Chapter \ref{chap:mpatti}. In order to attain these results, some preliminary work will be needed to accumulate the relevant factorization results for properties of geometric morphisms along hyperconnected geometric morphisms.

On the subject of geometric morphisms, we have two further conjectures. In the hope of improving Scholium \ref{schl:factors} to a more elegant result, we begin with the following:
\begin{conj}
\label{conj:characterization}
There exists an intrinsic characterization, independent of the representing monoid $M$, of those hyperconnected geometric morphisms with domain $\Setswith{M}$ identifying toposes of the form $\Cont(M,\tau)$.
\end{conj}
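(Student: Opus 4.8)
The plan is to establish the conjecture by rendering the presentation-dependent criterion of Scholium~\ref{schl:factors} invariant under change of representing monoid. First I would reduce the problem via Theorem~\ref{thm:factor}: a hyperconnected morphism $h:\Setswith{M}\to\Ecal$ factors canonically as $h=g\circ c_h$, where $c_h:\Setswith{M}\to\Cont(M,\tau_h)$ is the canonical hyperconnected morphism for the coarsest induced topology $\tau_h$ and $g:\Cont(M,\tau_h)\to\Ecal$ is the residual factor. Since $h$ and $c_h$ have full and faithful inverse images whose images are closed under subquotients, the same holds for $g$, so $g$ is automatically hyperconnected; hence $h$ \emph{expresses} $\Ecal$ as a topos of the form $\Cont(M,\tau)$ (necessarily with $\tau=\tau_h$) exactly when $g$ is an equivalence, i.e.\ when the hyperconnected morphism $g$ is also localic.

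Next I would exploit the fact that $h$, $c_h$ and $g$ all induce the same point $p:\Set\to\Ecal$, and therefore, by Scholium~\ref{schl:completion}, the same complete monoid $L$ (the opposite of $\End(p^*)$), with $\Ecal\simeq\Cont(L,\rho)$ by Theorem~\ref{thm:characterization}. Thus $g$ is an equivalence if and only if the dense homomorphism $u:(M,\tau_h)\to(L,\rho)$ of Corollary~\ref{crly:extend} is a Morita equivalence, i.e.\ iff every principal continuous $L$-set already arises from a principal $M$-set. The candidate intrinsic characterization rewrites this using only data attached to the point $p$. The key identification is that, since $p=h\circ(\text{canonical point of }\Setswith{M})$, Lemma~\ref{lem:represent} gives $h_*(\Pcal(M))\cong p_*(\Omega)$, the internal locale of the induced point. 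For a supercompact object $S$ of the domain we have $\Hom_{\Setswith{M}}(S,\Pcal(M))\cong\Pcal(U(S))$, where $U$ is the domain's canonical point, so the atoms of this Boolean algebra are exactly the elements of the fibre $U(S)$. The criterion of Scholium~\ref{schl:factors}---that $S$ must lie in the image of $h^*$ whenever the supercompact subobjects of $p_*(\Omega)$ cut out by these atoms all do---is then phrased purely in terms of $p$, the internal locale $p_*(\Omega)$, supercompact objects, and image factorizations, with no reference to the multiplication or to a chosen generating set of $M$.

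The main obstacle, and the reason I expect the conjecture to be genuinely hard, is proving that this reformulated condition is \emph{independent of the presentation} $\Setswith{M}$. Example~\ref{xmpl:Schanuel} shows that one topos can be presented by non-isomorphic complete monoids, while Example~\ref{xmpl:N+} shows the property in question can hold for one presentation and fail for another; so the criterion must be shown to transfer correctly across the (generally non-essential) equivalences relating different presentations. Since not every such equivalence is induced by a semigroup homomorphism---the remark following Theorem~\ref{thm:Moritaphi}---controlling this transfer appears to require the still-undeveloped theory of biactions for topological monoids, which is the natural language for arbitrary geometric morphisms between toposes of continuous actions. Concretely, I would first test the reformulated criterion against the known cases, recovering the atomic characterization for groups via Proposition~\ref{prop:densegroup} and the prodiscrete and commutative cases via Corollary~\ref{crly:prodisc}, to confirm that it has the correct content; only then would I attempt to prove its presentation-independence, which is where the essential new ideas will be needed.
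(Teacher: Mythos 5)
The statement you are addressing is Conjecture \ref{conj:characterization}: the paper offers no proof of it, only the remark that Proposition \ref{prop:intrinsic} gives an intrinsic \emph{sufficient} condition and that the conjecture asks for this to be refined to a necessary and sufficient one. So there is no proof in the paper to compare against, and the only question is whether your proposal actually settles the conjecture. It does not, and you say so yourself: your final paragraph defers exactly the step that constitutes the conjecture.

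Concretely, your first two reduction steps are correct but are already in the paper: the factorization $h = g\circ c_h$ with $g$ hyperconnected, and the equivalence ``$h$ identifies $\Ecal$ as $\Cont(M,\tau)$ iff $g$ is an equivalence,'' is the content of Theorem \ref{thm:factor} together with Scholium \ref{schl:factors}, while the identification $\Ecal\simeq\Cont(L,\rho)$ via the complete monoid of the induced point is Theorem \ref{thm:characterization} and Scholium \ref{schl:completion}. (Two small slips along the way: $c_h$ induces a point of $\Cont(M,\tau_h)$, not of $\Ecal$, and the completion of $(M,\tau_h)$ is generally \emph{not} $L$ --- in Example \ref{xmpl:N+} the completion of $(\Nbb,\tau_h)$ is $\Nbb$ itself while $L$ is the profinite completion; moreover $(M,\tau_h)$ need not be $T_0$, so the relevant homomorphism is that of Lemma \ref{lem:Mdense}, not Corollary \ref{crly:extend}, and need not be injective.) The genuine gap is in your third step: the reformulated criterion is not intrinsic in the sense the conjecture demands. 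Under Lemma \ref{lem:represent}, $\Hom_{\Setswith{M}}(S,\Pcal(M))\cong\Pcal(U(S))$, so the atoms your criterion quantifies over are precisely the elements of the fibre $U(S)$ of the canonical point of the presentation; equivalently $\Pcal(M)\cong u_*(2)$, so your $p_*(\Omega)$-formulation has merely traded explicit mention of the multiplication of $M$ for mention of the essential surjective point $u$ --- which is exactly the datum that a characterization ``independent of the representing monoid'' must not use, since distinct presentations of the same domain topos correspond to distinct such points. By contrast, the condition of Proposition \ref{prop:intrinsic} (internal full faithfulness on essential geometric morphisms) is a property of $h$ as a $1$-cell in the $2$-category of toposes; the conjecture asks for a necessary and sufficient condition of that kind, and your proposal neither produces one nor proves that your presentation-dependent criterion transfers across Morita equivalences of the domain --- the obstacle you correctly identify, and which Examples \ref{xmpl:Schanuel} and \ref{xmpl:N+} show is substantive. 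As it stands, the proposal is a reorganization of Theorem \ref{thm:factor}, Scholium \ref{schl:factors} and Theorem \ref{thm:characterization} into a research plan, with the conjecture itself left open.
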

To be more specific, observe that Proposition \ref{prop:intrinsic} provides an intrinsic sufficient condition for a hyperconnected morphism to express its codomain topos in terms of a topology on any monoid representing its domain topos; Conjecture \ref{conj:characterization} posits that it should be possible to refine this to a necessary and sufficient condition.

We also record our expectation that the converse of Scholium \ref{schl:Morita2} fails.
\begin{conj}
\label{conj:inclusion}
There exists a complete monoid $(M',\tau')$ and an idempotent $e \in M'$ such that the semigroup inclusion $M:= eM'e \hookrightarrow M'$ is \textit{not} a Morita equivalence, but the induced geometric inclusion $\Cont(M,\tau'|_{M}) \hookrightarrow \Cont(M',\tau')$ is an equivalence.
\end{conj}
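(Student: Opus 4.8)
The plan is to verify Conjecture \ref{conj:inclusion} by reusing the bicyclic-monoid computation carried out in the final worked example of Section \ref{sec:Fraisse}. There we began with the bicyclic monoid $B = \langle u,v \mid uv = 1\rangle$, produced a point $A$ of $\Setswith{B}$, set $L := \End(A)\op$, and equipped $L$ with a topology $\rho$ for which $\Cont(L,\rho) \simeq \Setswith{B}$ while $L \not\cong B$. I would take $(M',\tau') := (L,\rho)$, which is complete by construction, and let $e := w \in L$ be the idempotent arising from the retraction $A \too B \hookrightarrow A$ exhibited there (so $w$ is the composite of the quotient $A \to B$ with its splitting $B \to A$, read as an element of $\End(A) = L\op$).

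First I would identify the local monoid $eLe = wLw$ with $B$. Since $B$ is a retract of $A$ via the idempotent $w$, the standard fact that the local monoid at a splitting idempotent is the endomorphism monoid of the retract gives $w\End(A)w \cong \End(B)$, where $B$ is regarded as a left $B$-set; as $\End(B) \cong B\op$ and $B\op \cong B$ (swap $u \leftrightarrow v$), passing to opposites yields $eLe = wLw \cong B$. The bookkeeping of these opposite-monoid identifications is the one place requiring genuine care, but it is routine.

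Next I would show the subsemigroup inclusion $\iota: eLe \hookrightarrow L$ induces an equivalence $g: \Cont(eLe,\tau'|_{eLe}) \to \Cont(L,\rho)$. By Theorem \ref{thm:inccomplete}, $(eLe,\tau'|_{eLe})$ is already complete and $g$ is a geometric inclusion, so it suffices to prove $g$ is a surjection, whence (being both an inclusion and a surjection) an equivalence. I would do this by showing $\tau'|_{eLe}$ is the \emph{discrete} topology: from the explicit description of $\rho$ obtained in that example, every element of $L$ of the form $a^iwa^j$ (those involving $w$) has an open singleton, whereas the pure powers $a^k$ do not. Every element of $wLw$ involves $w$, hence has an open singleton in $L$, so its intersection with $eLe$ is open; thus $\tau'|_{eLe}$ is discrete and $\Cont(eLe,\tau'|_{eLe}) = \Setswith{eLe} = \Setswith{B}$. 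Matching the categories of supercompact objects on either side of $g$ via Scholium \ref{schl:Morita2} (equivalently Corollary \ref{crly:MoritaCs}) then identifies $g$ as an equivalence.

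Finally I would confirm that $\iota$ is \emph{not} a Morita equivalence of the underlying discrete monoids, i.e.\ $\Setswith{eLe} \not\simeq \Setswith{L}$. This is immediate: $\Setswith{B} \simeq \Cont(L,\rho)$ is a proper subtopos of $\Setswith{L}$, since the regular representation of $L$ fails to be $\rho$-continuous — the necessary clopen $\Ical_1^{a} = \{a\}$ is not open in $\rho$ — so $L$ itself lies in $\Setswith{L}$ but not in $\Cont(L,\rho)$. Hence $eLe \cong B$ and $L$ are not discretely Morita equivalent (in the sense of Theorem \ref{thm:discMorita}), while the topological inclusion is an equivalence, establishing the conjecture. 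The main obstacle, such as it is, is not in any single deduction but in carefully aligning the handedness conventions so that $eLe$, the topos $\Setswith{B}$, and the restriction topology all match up; everything else follows from results already in hand together with the explicit data of the worked example.
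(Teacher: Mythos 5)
Since the statement you are proving is one of the paper's \emph{conjectures}, there is no proof in the paper to compare against; your proposal, if sound, would actually settle it. Your choice of example is the right one, and the first half of the argument is correct: $(L,\rho)$ is complete (Scholium \ref{schl:completion}), the local monoid $wLw$ is isomorphic to $B$ by the standard splitting-idempotent argument, and every element of $wLw$ has normal form $a^iwa^j$, so $\rho|_{wLw}$ is discrete and $\Cont(wLw,\rho|_{wLw}) = \Setswith{wLw} \simeq \Setswith{B}$. The trouble is that your two closing steps rest on the same invalid inference, namely conflating abstract (in)equivalence of two toposes with a \emph{particular} geometric morphism between them being (or failing to be) an equivalence. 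Knowing that the domain and codomain of $g$ are each equivalent to $\Setswith{B}$ does not make the inclusion $g$ an equivalence: a Grothendieck topos can be a proper subtopos of a topos equivalent to itself (e.g.\ $\Set^{\Nbb}$ and the closed subtopos $\Set^{\Nbb\setminus\{0\}}$), and neither Scholium \ref{schl:Morita2} nor Corollary \ref{crly:MoritaCs} ``matches the supercompact objects on either side of $g$'' --- those results produce \emph{some} equivalence, with no control over $g^*$. To close this gap you must argue about $g$ itself. For instance: every object of $\Cont(L,\rho)$ is isomorphic to $X \otimes_B A$ for a right $B$-set $X$ (it lies in the image of the full and faithful inverse image of the hyperconnected morphism $\Setswith{L} \to \Setswith{B}$ of the example), and since $A$ is generated as a right $L$-set by $1_A = 1_A \cdot w \in Aw$, the subset $g^*(Y) = Yw$ generates $Y$ as an $L$-set; hence $g^*$ is faithful on $\Cont(L,\rho)$, so $g$ is a surjection, and a surjective inclusion is an equivalence. (Alternatively, verify the natural isomorphism $(X \otimes_B A)w \cong X$, compatible with $wLw \cong B$, exhibiting $g^*$ as a quasi-inverse to $-\otimes_B A$.)

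The final step has the same defect in the other direction: your (correct) observation that $\Ical_1^{a} = \{a\}$ is not $\rho$-open shows that $\Cont(L,\rho)$ is a \emph{proper} subtopos of $\Setswith{L}$, but properness of that inclusion does not by itself imply $\Setswith{B} \not\simeq \Setswith{L}$. What it does give, once the first gap is repaired, is the reading of the conjecture intended by the paper (failure of the converse of Scholium \ref{schl:Morita3}): in the square \eqref{eq:square} for $\iota$ the left vertical is an identity because $\rho|_{wLw}$ is discrete, so $g = h' \circ f$; since $g$ is an equivalence while $h'$ is not (it is hyperconnected and $L \notin \Cont(L,\rho)$, which is closed under isomorphism), the inclusion $f \colon \Setswith{wLw} \to \Setswith{L}$ induced by $\iota$ cannot be an equivalence, i.e.\ $\iota$ is not a Morita equivalence. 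If you want the stronger, absolute statement that $B$ and $L$ are not Morita equivalent as discrete monoids, compare idempotent completions via Lemma \ref{lem:idempotent}: in $\check{B}$ every idempotent $v^iu^i$ splits through $\underline{1}$, so all objects are isomorphic, whereas in $\check{L}$ the objects $\underline{1}$ and $\underline{w}$ are not isomorphic, because the monoid homomorphism $L \to \{1,0\}$ sending $a^{\pm 1} \mapsto 1$ and $w \mapsto 0$ shows that no $\beta, \beta'$ with $\beta w = \beta$ can satisfy $\beta\beta' = 1$; hence $\check{B} \not\simeq \check{L}$. With these two repairs, your example does establish the conjecture.
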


\subsection{Actions on Topological Spaces}

A proof of, or counterexample to, Conjecture \ref{conj:powdery} will establish the extent of the symmetry in the type of Morita equivalence studied in this Chapter \ref{chap:TTMA}. Whichever way this result falls, however, we have shown that the category of right actions of a topological monoid on discrete spaces is a very coarse invariant of such a monoid. Moreover, anyone interested in actions of topological monoids is likely to wish to examine their actions on more general classes of topological space. A solution to this, which is viable in any Grothendieck topos $\Ecal$, is to first consider the topos $[M\op,\Ecal]$, which can be constructed as a pullback in $\TOP$, as in the lower square here:
\begin{equation}
\label{eq:pbEM}
\begin{tikzcd}
\Ecal \ar[r] \ar[d] \ar[dr, phantom, "\lrcorner", very near start] & \Set \ar[d] \\
{[M\op{,}\Ecal]} \ar[r] \ar[d] \ar[dr, phantom, "\lrcorner", very near start] & {[M\op{,}\Set]} \ar[d] \\
\Ecal \ar[r] & \Set,
\end{tikzcd}	
\end{equation}
since $M$ induces an internal monoid in $\Ecal$ by its image under the inverse image functor of the global sections morphism of $\Ecal$. Taking $\Ecal$ to be the topos of sheaves on a space $X$, we can view the objects of $[M\op,\Ecal]$ as right actions of $M$ on spaces which are discrete fibrations over $X$; taking $\Ecal$ to be a more general topos of spaces, we similarly get actions of $M$ on such spaces.

In each case, we can construct the subcategory of $[M\op,\Ecal]$ on the actions which are continuous with respect to a topology $\tau$ on $M$. In the best cases, this will produce a topos hyperconnected under $[M\op,\Ecal]$, and the analysis can proceed analogously to that of Chapter \ref{chap:TTMA}, taking advantage of the $\Ecal$-valued point constructed in the upper square of \eqref{eq:pbEM}. If this can be done with sufficient generality, one will be able to address a host of interesting Morita-equivalence problems in this way.

\subsection{Topological Categories}
\label{ssec:topcat}

Another direction to generalize is to consider topologies on small categories with more than one object. Let $\Ccal$ be a small category with set of objects $C_0$, set of morphisms $C_1$, identity map $i:C_0 \to C_1$, domain and codomain maps $d,c: C_1 \to C_0$, and composition $m:C_2\to C_1$, where $C_2$ is the pullback:
\[\begin{tikzcd}
C_2 \ar[r] \ar[d] \ar[dr, phantom, "\lrcorner", very near start] & C_1 \ar[d, "d"] \\
C_1 \ar[r, "c"'] & C_0;
\end{tikzcd}\]
this matches the presentation of internal categories recapitulated in Section \ref{ssec:proper} of Chapter \ref{chap:sgt}. As such, a presheaf on $\Ccal$ can be expressed as an object $a:F_0 \to C_0$ of $\Set/C_0$, equipped with a morphism $b:F_1 \to F_0$ where $F_1$ is the pullback,
\[\begin{tikzcd}
F_1 \ar[r, "\pi_2"] \ar[d, "\pi_1"'] \ar[dr, phantom, "\lrcorner", very near start] & F_0 \ar[d, "a"] \\
C_1 \ar[r, "c"'] & C_0,
\end{tikzcd}\]
satisfying $a \circ b = d \circ \pi_2$, $b \circ (\id_{F_0} \times_{C_0} i) = \id_{F_0}$ and $b \circ (b \times_{C_0} \id_{C_1}) = b \circ (\id_{F_0} \times_{C_0} m)$.

Equipping $C_0$ and $C_1$ with topologies such that $c$ and $d$ are continuous, we might call a presheaf as above \textit{continuous} if $b$ is continuous when $C_0$ and $F_0$ are equipped with the discrete topology and $F_1$ is equipped with the pullback topology\footnote{The map $a$ is automatically continuous when the topology on $F_0$ is discrete.}. Yet again, we can consider the full subcategory of $\Setswith{\Ccal}$ on these presheaves, and we expect it to be coreflective. In good cases, we will have the analogue of Proposition \ref{prop:hyper}, and the analysis can proceed as in Chapter \ref{chap:TTMA}, leading to a class of genuine topological categories representing these toposes.

\subsection{Localic monoids and constructiveness}
\label{ssec:localicmon}

Topos theorists tend to try to work constructively wherever possible, since doing so ensures that all results can be applied over an arbitrary topos. In this light, our frequent reliance on complementation in the underlying sets of our monoids in Chapter \ref{chap:TTMA} is quite restrictive, since \textit{a priori} it means our results are applicable only over Boolean toposes, and we have not formally demonstrated here that they apply even to this level of generality.

From a constructive perspective, more suitable objects of study than topological monoids would be \textbf{localic monoids}, which are monoids in the category of locales over a given base topos, typically $\Set$. Early on in the research for this thesis, Steve Vickers suggested that we consider pursuing this direction. However, while the category of actions of a localic monoid on sets (again viewed as discrete spaces) is easy to define, it is much harder to show that such a category is a topos. In \cite[Example B3.4.14(b)]{Ele}, we see that the more powerful results of \textit{descent theory} are required to show that categories of actions of localic groups are toposes. While descent theory is an important tool, it is far more abstract than the comonadicity theorem we used in Corollary \ref{crly:topos}, making concrete characterization results for these toposes more challenging to prove.

While we did not end up treating localic monoids in this thesis, we anticipate that the present work will be valuable in that analysis. Indeed, the functor sending a locale to its topological space of points preserves limits, so that it provides a canonical `forgetful' functor from a category of actions of a localic monoid to a category of actions of a topological monoid. We anticipate that, just as in Section \ref{sec:properties}, this functor can be used to constrain the properties of a category of actions of a localic monoid. 

We should mention that another obstacle in our study of localic monoids is a lack of easily tractable examples, especially examples of localic monoids (or even localic groups) which one can show are \textit{not} Morita equivalent to topological monoids in their actions on discrete spaces. While the construction of the localic group $\mathrm{Perm}(A)$ of permutations of a locale $A$, described by Wraith in \cite{LocGrp}, is used as a basis for the Localic Galois Theory of Dubuc \cite{LGT}, the latter author provides no specific examples of instances of these. We expect that the construction of such examples will further illuminate the appropriate approach to studying categories of actions of localic monoids.

\section{Applying toposes of topological monoid actions}
\label{ssec:TSGT}

Orthogonal factorization systems abound in category theory, and we anticipate many applications of the results of Chapter \ref{chap:TSGT}; already there is much work to do in elucidating the algebraic examples sketched there. It is worth noting that in a category with pullbacks, the conditions needed for those results coincides with the concept of \textit{stable factorization system}, which is an orthogonal factorization system in which the left class is stable under pullback.

We know from Theorem \ref{thm:characterization} of Chapter \ref{chap:TTMA} that any hyperconnected morphism $\Setswith{M} \to \Ecal$ is enough to ensure the existence of a topological monoid presentation for $\Ecal$. As such, we can weaken the statement of Theorem \ref{thm:basic} and its derivatives by replacing $L$ with any monoid $M$ which acts by endomorphisms on a point of $\Ecal$. However, extra conditions along the lines of those discussed in Section \ref{ssec:factor} of the Chapter \ref{chap:TTMA} would be required to guarantee that $\Ecal$ can be identified with the actions of $M$ which are continuous with respect to some topology.

A final direction which would tie together many of the themes of the present thesis would be to complement the translation of topos-theoretic invariants into properties of topological monoid suggested in Section \ref{ssec:mitopmon} above with a translation of the same properties into site-theoretic ones described in Section \ref{ssec:other}, since this would yield concrete results about the presenting monoids without needing to explicitly calculate them.

We hope that some of these ideas will eventually lead to fruitful research uniting diverse areas of mathematics.

\bibliographystyle{alpha}
\bibliography{thesisbib}

\end{document}